\numberwithin{equation}{section}
\tikzset{cross/.style={cross out, draw=black, minimum size=2.5*(#1-\pgflinewidth), inner sep=2pt, outer sep=0.5pt},
	cross/.default={1pt}}
\pgfplotsset{compat=1.15}
\newcommand{\one}{\mathds{1}}
\def\eps{\varepsilon}
\newcommand{\E}{\mathbb{E}}
\newcommand{\N}{\mathbb{N}}
\renewcommand{\P}{\mathbb{P}}
\newcommand{\R}{\mathbb{R}}
\newcommand{\Z}{\mathbb{Z}}
\newcommand{\BB}{\mathcal{B}}
\newcommand{\CC}{\mathcal{C}}
\newcommand{\DD}{\mathcal{D}}
\newcommand{\GG}{\mathcal{G}}
\newcommand{\HH}{\mathcal{H}}
\newcommand{\NN}{\mathcal{N}}
\newcommand{\PP}{\mathcal{P}}
\newcommand{\TT}{\mathcal{T}}
\newcommand{\XX}{\mathcal{X}}
\newcommand{\YY}{\mathcal{Y}}
\newcommand{\AAA}{\mathscr{A}}
\newcommand{\CCC}{\mathscr{C}}
\newcommand{\EEE}{\mathscr{E}}
\newcommand{\FFF}{\mathscr{F}}
\newcommand{\GGG}{\mathscr{G}}
\newcommand{\LLL}{\mathscr{L}}
\newcommand{\MMM}{\mathscr{M}}
\newcommand{\NNN}{\mathscr{N}}
\newcommand{\PPP}{\mathscr{P}}
\newcommand{\XXX}{\mathscr{X}}
\newcommand{\YYY}{\mathscr{Y}}
\newcommand{\ZZZ}{\mathscr{Z}}
\newcommand{\ttt}{\mathfrak{t}}
\newcommand{\uuu}{\mathfrak{u}}
\newcommand{\ppp}{\mathfrak{p}}
\newcommand{\upp}{\boldsymbol{\mathfrak{p}}}
\newcommand{\vvv}{\mathfrak{v}}
\newcommand{\fff}{\mathfrak{f}}
\newcommand{\bDelta}{\boldsymbol{\Delta}}
\newcommand{\naesat}{\textsc{nae-sat}}
\newcommand{\bZ}{\textnormal{\textbf{Z}}}
\newcommand{\bN}{\textnormal{\textbf{N}}}
\newcommand{\bq}{\textnormal{\textbf{q}}}
\newcommand{\bF}{\textnormal{\textbf{F}}}
\newcommand{\bB}{\textnormal{\textbf{B}}}
\newcommand{\bH}{\textnormal{\textbf{H}}}
\newcommand{\sig}{\underline{\sigma}}
\newcommand{\uL}{\underline{\texttt{L}}}
\newcommand{\ubx}{\underline{\textbf{x}}}
\newcommand{\ux}{\underline{x}}
\newcommand{\uy}{\underline{y}}
\newcommand{\utau}{\underline{\tau}}
\newcommand{\utheta}{\underline{\theta}}
\newcommand{\uh}{\underline{h}}
\newcommand{\us}{\underline{s}}
\newcommand{\bx}{\mathbf{x}}
\newcommand{\bw}{\mathbf{w}}
\newcommand{\bp}{\mathbf{p}}
\newcommand{\bh}{\mathbf{h}}
\newcommand{\buh}{\underline{\mathbf{h}}}
\newcommand{\ba}{\mathbf{a}}
\newcommand{\bsigma}{\boldsymbol{\sigma}}
\newcommand{\bpi}{\boldsymbol{\pi}}
\newcommand{\bupi}{\underline{\boldsymbol{\pi}}}
\newcommand{\upi}{\underline{\pi}}
\newcommand{\bsig}{\underline{\boldsymbol{\sigma}}}
\newcommand{\boeta}{\boldsymbol{\eta}}
\newcommand{\butau}{\underline{\boldsymbol{\tau}}}
\newcommand{\btau}{\boldsymbol{\tau}}
\newcommand{\bxi}{\boldsymbol{\xi}}
\newcommand{\btheta}{\boldsymbol{\theta}}
\newcommand{\butheta}{\underline{\boldsymbol{\theta}}}
\newcommand{\bmu}{\boldsymbol{\mu}}
\newcommand{\bgamma}{\boldsymbol{\gamma}}
\newcommand{\bLa}{\boldsymbol{\Lambda}}
\newcommand{\bXi}{\boldsymbol{\Xi}}
\newcommand{\dbh}{\mathbf{\dot{h}}}
\newcommand{\dbB}{\mathbf{\dot{B}}}
\newcommand{\hbB}{\mathbf{\hat{B}}}
\newcommand{\bbB}{\mathbf{\bar{B}}}
\newcommand{\dbH}{\mathbf{\dot{H}}}
\newcommand{\hbH}{\mathbf{\hat{H}}}
\newcommand{\bbH}{\mathbf{\bar{H}}}
\newcommand{\dbq}{\mathbf{\dot{q}}}
\newcommand{\hbq}{\mathbf{\hat{q}}}
\newcommand{\ula}{\boldsymbol{\lambda}}
\newcommand{\lit}{\textnormal{lit}}
\newcommand{\sm}{\textnormal{sm}}
\newcommand{\sy}{\textnormal{sy}}
\newcommand{\tr}{\textnormal{tr}}
\newcommand{\ind}{\textnormal{ind}}
\newcommand{\op}{\textnormal{op}}
\newcommand{\lab}{\textnormal{lab}}
\newcommand{\qdot}{\dot{\mathbf{q}}}
\newcommand{\qhat}{\hat{\mathbf{q}}}
\newcommand{\dotq}{\dot{q}}
\newcommand{\BP}{\textnormal{BP}}
\newcommand{\uttt}{\mathfrak{u}}
\newcommand{\comp}{\Omega_{\textnormal{com}}}
\newcommand{\csig}{\underline{\sigma}^{\textnormal{com}}}
\newcommand{\csigma}{\sigma^{\textnormal{com}}}
\newcommand{\bcsig}{\underline{\boldsymbol{\sigma}}^{\textnormal{com}}}
\newcommand{\lsig}{\underline{\sigma}^{\textnormal{lab}}}
\newcommand{\lsigma}{\sigma^{\textnormal{lab}}}
\newcommand{\srr}{{{\small{\texttt{r}}}}}
\newcommand{\rr}{{{\scriptsize{\texttt{R}}}}}
\newcommand{\bb}{{{\scriptsize{\texttt{B}}}}}
\newcommand{\pppp}{{{\scriptsize{\texttt{P}}}}}
\newcommand{\gggg}{{{\scriptsize{\texttt{G}}}}}
\newcommand{\rrrr}{{{\small{\texttt{r}}}}}
\newcommand{\fs}{{\scriptsize{\texttt{S}}}}
\newcommand{\ff}{\textnormal{\small{\texttt{f}}}}
\newcommand{\tz}{\small{\texttt{z}}}
\newcommand{\mm}{{{\texttt{m}}}}
\newcommand{\fF}{\scriptsize{\texttt{F}}}
\newcommand{\bs}{{\textbf{s}}}
\newcommand{\tL}{\texttt{L}}
\newcommand{\tZ}{\bZ^{(L),\textnormal{tr}}}
\newcommand{\la}{\lambda}
\newcommand{\proj}{\textnormal{proj}_n}
\newcommand{\pj}{\textnormal{pj}}
\newcommand{\cyc}{\textnormal{cyc}}
\newcommand{\mult}{\textnormal{mult}}
\newcommand{\uni}{\textnormal{uni}}
\newcommand{\ee}{\mathfrak{E}}
\newcommand{\pee}{\prescript{}{2}{\mathfrak{E}}}
\newcommand{\kF}{\mathfrak{F}}
\newcommand{\sep}{\textnormal{sep}}
\newcommand{\ns}{\textnormal{ns}}
\newcommand{\tns}{\textnormal{s}}
\newcommand{\tsf}{\textnormal{\textsf{f}}}
\newcommand{\pcomp}{\Omega_{\textnormal{com},2}}
\newcommand{\pbDelta}{\prescript{}{2}{\boldsymbol{\Delta}}^{\textnormal{b}}}
\newcommand{\pF}{\mathfrak{F}}
\newcommand{\pHdot}{\dot{H}_\circ}
\newcommand{\pHhat}{\hat{H}_{\textnormal{fc}}}
\newcommand{\ppjw}{\mathbf{w}^{\textnormal{pj}}}
\newcommand{\su}{\textsf{u}}
\newcommand{\up}{\textsf{up}}
\newcommand{\dsigma}{\dot{\sigma}}
\newcommand{\wt}[1]{\widetilde{#1}}
\DeclareMathOperator{\Var}{Var}
\DeclareMathOperator{\Cov}{Cov}
\DeclareMathOperator{\id}{id}
\DeclareMathOperator*{\argmin}{arg\,min} 
\DeclarePairedDelimiter\ceil{\lceil}{\rceil}
\newtheorem{thm}{Theorem}[section]
\newtheorem{prop}[thm]{Proposition}
\newtheorem{cor}[thm]{Corollary}
\newtheorem{lemma}[thm]{Lemma}
\theoremstyle{definition}
\newtheorem{defn}[thm]{Definition}
\newtheorem{remark}[thm]{Remark}
\renewcommand{\thefootnote}{\fnsymbol{footnote}}
\title[1RSB of random regular NAE-SAT]{One-step replica symmetry breaking of random regular NAE-SAT I}
\subjclass[2020]{
	60K35, 
	82B44 
	} 
\keywords{Random constraint satisfaction problems, NAE-SAT model, Condensation phase transition, Replica symmetry breaking}
\author{Danny Nam}
\address{\newline Department of Mathematics \newline Princeton University \newline Princeton, NJ 08544 \newline \textup{\tt dhnam@math.princeton.edu}}
\author{Allan Sly}
\address{\newline Department of Mathematics \newline Princeton University \newline Princeton, NJ 08544 \newline \textup{\tt asly@math.princeton.edu}}
\author{Youngtak Sohn}
\address{\newline Department of Mathematics \newline Massachusetts Institute of Technology \newline Cambridge, MA 02139 \newline \textup{\tt youngtak@mit.edu}}
\begin{document}
	\bibliographystyle{acm}
	
	\renewcommand{\thefootnote}{\arabic{footnote}} \setcounter{footnote}{0}

	\begin{abstract}
		In a broad class of sparse random constraint satisfaction problems (\textsc{csp}), deep heuristics from statistical physics predict that there is a \textit{condensation phase transition} before the satisfiability threshold, governed by \textit{one-step replica symmetry breaking} ({\tiny{1}}\textsc{rsb}). In fact, in random regular $k$-\textsc{nae-sat}, which is one of such random \textsc{csp}s, it was  verified~\cite{ssz22} that its free energy is well-defined and the explicit value follows the \textsc{{\tiny{1}}rsb} prediction. However, for any model of sparse random \textsc{csp}, it has been unknown whether the solution space indeed \textit{condenses} on $O(1)$ clusters according to the {\tiny{1}}\textsc{rsb} prediction. In this paper, we give an affirmative answer to this question for the random regular $k$-\textsc{nae-sat} model. 
		Namely, we prove that with probability bounded away from zero, most of the solutions lie inside a bounded number of solution clusters whose sizes are comparable to the scale of the free energy. Furthermore, we establish that the overlap between two independently drawn solutions concentrates precisely at two values. Our proof is based on a detailed moment analysis of a spin system, which has an infinite spin space that encodes the structure of solution clusters. We believe that our method is applicable to a broad range of random \textsc{csp}s in the {\tiny{1}}\textsc{rsb} universality class.

	\end{abstract}
	
	\maketitle
	
	
	
	\setcounter{tocdepth}{1}
	\tableofcontents

	\section{Introduction}
	A random constraint satisfaction problem (r\textsc{csp}) is defined by a collection of variables whose configuration should satisfy a set of randomly chosen constraints. In the mathematics literature, there are $n$ variables $\underline{x}=\{x_i\}_{i=1}^n\in \mathfrak{X}^n$ taking values in a finite alphabet set $\mathfrak{X}$, and they are subject to $m\equiv \alpha n$ randomly drawn constraints. The major interest is to understand the structure of the solution space of r\textsc{csp}s as $n,m\to \infty$ while $\alpha$ is fixed. Since the early 2000's, statistical physicists developed a deep but non-rigorous theory to study these problems and conjectured that in a wide class of r\textsc{csp}s, there is a fascinating series of phase transitions as $\alpha$ varies (\cite{mpz02,kmrsz07}; cf. \cite{anp05} and  Chapter 19 of \cite{mm09} for a survey). As we detail below, the present paper focuses on investigating the solution space structure when $\alpha$ is in the \textit{condensation regime}, for a r\textsc{csp} model called the \textit{random regular} $k$-\naesat.

	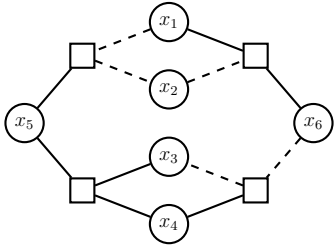
\begin{figure}
		\centering
		\begin{tikzpicture}[square/.style={regular polygon,regular polygon sides=4},thick,scale=0.64, every node/.style={transform shape}]
		\node[circle,draw, minimum size=.1cm] (x4) at  (0,0) {\Large{$x_4$}};
		\node[circle,draw, minimum size=.1cm] (x3) at  (0,1.4) {\Large{$x_3$}};
		\node[circle,draw, minimum size=.1cm] (x2) at  (0,2.8) {\Large{$x_2$}};
		\node[circle,draw, minimum size=.1cm] (x1) at  (0,4.2) {\Large{$x_1$}};
		\node[circle,draw, minimum size=.1cm] (x5) at  (-3,2.1) {\Large{$x_5$}};
		\node[circle,draw, minimum size=.1cm] (x6) at  (3,2.1) {\Large{$x_6$}};
		
		\node[rectangle,draw, minimum size=.5cm] (c1) at  (1.8,3.5) { };
		\node[rectangle,draw, minimum size=.5cm] (c2) at  (1.8,.7) { };
		\node[rectangle,draw, minimum size=.5cm] (c3) at  (-1.8,3.5) { };
		\node[rectangle,draw, minimum size=.5cm] (c4) at  (-1.8,.7) { };
		
		\draw (x1) -- (c1) -- (x6);
		\draw[dashed] (x2) -- (c1);
		\draw[dashed] (x6) -- (c2) -- (x3);
		\draw (c2) -- (x4);
		
		\draw (x5) -- (c4) -- (x3);
		\draw (x4) -- (c4);
		
		\draw[dashed] (x1) -- (c3) -- (x2);
		\draw (x5) -- (c3);

		\end{tikzpicture}
		\caption{The hypergraph illustrating  an instance of a  $2$-regular $3$-\textsc{(nae-)sat} with $6$ variables. Variables and clauses are drawn by the circular and square nodes, respectively, and the dashed edges denote the negated literals. Its \textsc{cnf} formula is given by $(\neg x_1 \vee \neg x_2 \vee x_5) \wedge (x_1 \vee \neg x_2 \vee x_6) \wedge (x_3\vee x_4 \vee x_5) \wedge (\neg x_3 \vee x_4 \vee \neg x_6 )$.} \label{fig1}
	\end{figure}

	The canonical r\textsc{csp} is random $k$-\textsc{sat}, a random Boolean \textsc{cnf} formula formed by taking the \textsc{and} of clauses, each of which is the \textsc{or} of $k$ variables or their negations.  A \textit{not-all-equal-satisfiability} (\textsc{nae-sat}) formula, has the same form as $k$-\textsc{sat} but asks that both $\underline{ x}$ an assignment of the variables and $\neg \underline{ x}$ its negation evaluate to \textsf{true} in the formula.  We call such formula $k$-\textsc{nae-sat} if the clauses appearing in the \textsc{cnf} formula have exactly $k$ literals, and it is called $d$-regular if each variable appears precisely in $d$  clauses (Figure \ref{fig1}). One can then choose a  $d$-regular $k$-\textsc{nae-sat} problem of $n$ variables uniformly at random, which gives the \textit{random d-regular k}-\textsc{nae-sat} problem, with clause density $\alpha=d/k$ (See Section \ref{sec:model} for a formal definition of the model). Compared to the $k$-\textsc{sat} problem, the \textsc{nae-sat} problem possesses extra symmetries that make it more tractable from a mathematical perspective. Nevertheless,  it  is predicted to belong to the same universality class of r\textsc{csp}s as  random $k$-\textsc{sat} and random graph coloring, and hence is expected to share the most interesting qualitative behaviors with them.
	
	Let $Z\equiv Z_n$ denote the number of solutions for a given random $d$-regular $k$-\textsc{nae-sat} instance. Physicists predict that for each fixed $\alpha$, there exists $\textsf{f}(\alpha)$ such that
	$$\frac{1}{n} \log Z \;\longrightarrow\; \textsf{f}(\alpha) \quad \textnormal{in probability}. $$
	A direct computation of the first moment $\E Z$ gives that 
	$$\E Z = 2^n \left(1-2^{-k+1} \right)^m = e^{n\textsf{f}^{\textsf{rs}}(\alpha)}, \quad \textnormal{where}\quad \textsf{f}^{\textsf{rs}}(\alpha)\equiv \log 2+ \alpha \log \left( 1-2^{-k+1}\right), $$
	(the superscript \textsf{rs} refers to the \textit{replica-symmetric} free energy) and we see that $\textsf{f}\leq \textsf{f}^{\textsf{rs}}$, by Markov's inequality. The previous works of Ding-Sly-Sun \cite{dss16} and Sly-Sun-Zhang \cite{ssz22}  established some of the physics conjectures on the description of $Z$ and $\textsf{f}$ given in \cite{zk07,kmrsz07,mrs08}, which can be summarized as follows. 
	\begin{itemize}
		\item (\cite{dss16}) For large enough $k$, there exists the \textit{satisfiability threshold} $\alpha_{\textsf{sat}}\equiv \alpha_{\textsf{sat}}(k)>0$ such that
		\begin{equation*}
		\lim_{n\to\infty} \P(Z>0)
		=
		\begin{cases}
		1 & \textnormal{ for } \alpha\in(0,\alpha_{\textsf{sat}});\\
		0 & \textnormal{ for }\alpha > \alpha_{\textsf{sat}}.
		\end{cases}
		\end{equation*}

		\item (\cite{ssz22}) For large enough $k$, there exists the \textit{condensation threshold} $\alpha_{\textsf{cond}}\equiv \alpha_{\textsf{cond}}(k)\in(0,\alpha_{\textsf{sat}})$  such that 
		\begin{equation}\label{eq:intro:freeEformula}
		\textsf{f}(\alpha)=
		\begin{cases} 
		\textsf{f}^{\textsf{rs}}(\alpha) &
		\textnormal{ for } \alpha \leq \alpha_{\textsf{cond}};\\
		\textsf{f}^{1\textsf{rsb}}(\alpha) 
		& \textnormal{ for } \alpha > \alpha_{\textsf{cond}},
		\end{cases}
		\end{equation}
		where $\textsf{f}^{1\textsf{rsb}}\equiv\textsf{f}^{1\textsf{rsb}}(\alpha)$ is the {\small{1}}\textsc{rsb} \textit{free energy}. Moreover, $\textsf{f}^{\textsf{rs}} > \textsf{f}^{\textsf{1\textsf{rsb}}}$ on $(\alpha_{\textsf{cond}},\alpha_{\textsf{sat}})$. For the explicit formula and derivation of $\textsf{f}^{1\textsf{rsb}}$, we refer to Section 1.6 of \cite{ssz22} for a concise overview.
	\end{itemize}

	\begin{figure}
		\centering
		\begin{tikzpicture}[scale=0.9]
		\node[inner sep=0pt] (unique) at (0,0)
		{\includegraphics[width=2cm]{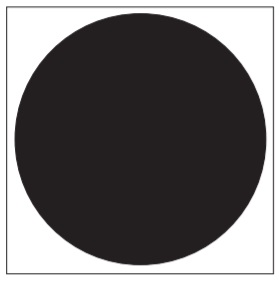}};
		
		\node[inner sep=0pt] (ext) at (2.4,0)
		{\includegraphics[width=2cm]{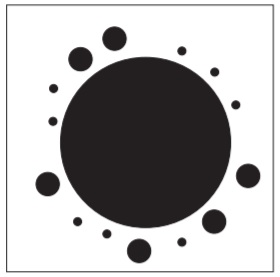}};
		
		\node[inner sep=0pt] (clust) at (4.8,0)
		{\includegraphics[width=2cm]{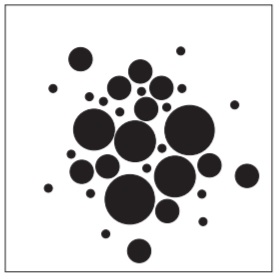}};
		
		\node[inner sep=0pt] (cond) at (7.2,0)
		{\includegraphics[width=2cm]{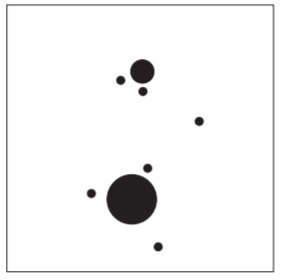}};
		
		\node[inner sep=0pt] (unsat) at (9.6,0)
		{\includegraphics[width=2cm]{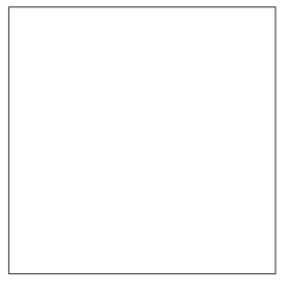}};
		
		\draw (1.2,1.1) -- (1.2,0.5);
		\draw (3.6,1.1) -- (3.6,0.5);
		\draw (6,1.1) -- (6,0.5);
		\draw (8.4,1.1) -- (8.4,0.5);
		\node[anchor=south] (un) at (1.2,1) {$\alpha_{\textsf{uniq}}$};
		\node[anchor=south] (un) at (3.6,1) {$\alpha_{\textsf{clust}}$};
		\node[anchor=south] (un) at (6,1) {$\alpha_{\textsf{cond}}$};
		\node[anchor=south] (un) at (8.4,1) {$\alpha_{\textsf{sat}}$};
		\node[anchor=north] (asd) at (0,-1) {uniqueness};
		\node[anchor=north] (asd) at (2.4,-1) {extremality};
		\node[anchor=north] (asd) at (4.8,-1) {clustering};
		\node[anchor=north] (asd) at (7.2,-1) {condensation};
		\node[anchor=north] (asd) at (9.6,-1) {unsat};
		
		\draw[->] (6.6,-1.72)--(10.5,-1.72);
		\node[anchor=north] (aaas) at (8.8,-1.7) {constraint density $\alpha$};
		\end{tikzpicture}
		\caption{\textit{Figure adapted from} \cite{kmrsz07,dss15ksat}. A pictorial description of the conjectured phase diagram of random regular $k$-\textsc{nae-sat}. In the condensation regime $(\alpha_{\textsf{cond}},\alpha_{\textsf{sat}})$, there remains a bounded number of clusters containing most of the solutions.}\label{fig:pt}
	\end{figure}

	Furthermore, the physics predictions say that the solution space of the random regular $k$-\textsc{nae-sat} is  \textit{condensed} when $\alpha \in(\alpha_{\textsf{cond}},\alpha_{\textsf{sat}})$ into a finite number of \textit{clusters} (Figure \ref{fig:pt}). Here, \textit{clusters} are defined by the connected components of the solution space, where we connect two solutions if they differ in one variable (see Remark \ref{rmk:cluster:definition} for slightly different definition of clusters). Our first main result verifies the prediction for all $k\geq k_0$, where $k_0$ is a universal constant. It is the first to  provide a rigorous \textit{cluster-level} description of the solution space of a sparse r\textsc{csp} in the condensation regime. 
	
	\begin{thm}\label{thm1}
		Let $k\geq k_0$ and $\alpha \in (\alpha_{\textsf{cond}}, \alpha_{\textsf{sat}})$ such that $d\equiv \alpha k\in \mathbb{N}$. For all $\eps >0$, there exists a constant $C\equiv C(\eps,\alpha ,k)>0$ such that with probability at least $1-\eps$, the random $d$-regular $k$-\textsc{nae-sat} instance satisfies the following:
		\begin{enumerate}
			\item[\textnormal{(a)}] The number of solutions is no greater than $\exp (n\textnormal{\textsf{f}}^{1\textnormal{\textsf{rsb}}}(\alpha) -c^\star\log n +C)$, where $\textnormal{\textsf{f}}^{1\textnormal{\textsf{rsb}}}$ is the {\textnormal{{\small{1}}}}\textsc{rsb}  free energy and $c^\star\equiv c^\star(\alpha,k)>0$ is a fixed constant (See Definition \ref{def:la:s:c:star}).
		\end{enumerate}
		Moreover, there exists $\delta\equiv \delta(\alpha,k)>0$ and a constant $C^\prime \equiv C^\prime(M,\alpha,k)\in \R$ for $M \in \N$ such that for every $M\in \N$, with probability at least $\delta$, the random $d$-regular $k$-\textsc{nae-sat} instance satisfies the following:
		\begin{enumerate}[resume]
		    \item[\textnormal{(b)}] For all $\eta>0$, there exists $K\equiv K(\eta,\alpha,k)\in \mathbb{N}$ such that the $K$  largest solution clusters, $\CC_1,\ldots,\CC_K$, occupy at least $1-\eta$ fraction of the solution space;
			
			\item[\textnormal{(c)}] There are at least $\exp(n\textnormal{\textsf{f}}_{1\textnormal{\textsf{rsb}}}(\alpha) -c^\star\log n -C^\prime)$ many solutions in each of $\CC_1,\ldots,\CC_{M}$, the $M$ largest clusters. 
		\end{enumerate} 
	\end{thm}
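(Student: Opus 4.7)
The plan is to build a weighted cluster partition function $Z_\lambda := \sum_{\CC} |\CC|^\lambda$ whose sharp first and second moments control all three parts of the theorem, with the Parisi parameter $\lambda^\star\in(0,1]$ chosen to optimize the 1RSB variational formula. In the first step I would assign to each cluster $\CC$ a \emph{frozen-configuration representative} valued in an infinite alphabet (each variable is classified as frozen to $\{0,1\}$ or free, together with the free-tree structure through it), so that clusters are in bijection with valid frozen configurations and $|\CC|$ equals the number of solutions compatible with the representative. Counting these representatives gives the sharp first moment
\[\E Z_{\lambda^\star} \;=\; \Theta\!\bigl(n^{-c^\star\lambda^\star}\bigr)\, e^{n\lambda^\star \textsf{f}^{1\textsf{rsb}}(\alpha)},\]
where the polynomial factor comes from a Gaussian integral over the quadratic fluctuations at the unique optimizer of the single-replica Bethe functional, with the precise exponent $c^\star$ determined by the Parisi stationarity condition at $\lambda^\star$ as in the theorem.

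For part (a), by subadditivity of $x\mapsto x^{\lambda^\star}$ on $[0,\infty)$ with $\lambda^\star\in(0,1]$, we have $Z^{\lambda^\star}=\bigl(\sum_{\CC}|\CC|\bigr)^{\lambda^\star}\le\sum_{\CC}|\CC|^{\lambda^\star}=Z_{\lambda^\star}$. Combined with the first moment estimate above and Markov's inequality, taking $t=e^{n\textsf{f}^{1\textsf{rsb}}(\alpha)-c^\star\log n+C}$ gives
\[\P\bigl(Z\ge t\bigr)\le \P\bigl(Z_{\lambda^\star}\ge t^{\lambda^\star}\bigr)\le \frac{\E Z_{\lambda^\star}}{t^{\lambda^\star}}=O\!\bigl(e^{-\lambda^\star C}\bigr),\]
which is at most $\eps$ once $C=C(\eps,\alpha,k)$ is chosen large enough.

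For parts (b) and (c), the central analytic input is the matching second moment bound $\E Z_{\lambda^\star}^2\le C_0(\E Z_{\lambda^\star})^2$. Expanding the square yields a double sum over pairs of frozen representatives, which I would organize by the joint empirical pair-profile on the infinite-dimensional pair-spin simplex. The strategy is to show that the two-replica Bethe functional is maximized at exactly two pair-profiles: the independent-product profile (two solutions from distinct clusters) and the diagonal profile (two solutions inside a single cluster); these correspond precisely to the two concentration values of the overlap asserted in the abstract. Paley--Zygmund then yields $\P(Z_{\lambda^\star}\ge \tfrac12 \E Z_{\lambda^\star})\ge \delta$ for some $\delta>0$, and small subgraph conditioning---using the Poisson-distributed counts of short cycles to absorb the constant ratio $C_0$---upgrades this to convergence of the normalized cluster masses $(|\CC_i|^{\lambda^\star}/Z_{\lambda^\star})_{i\ge 1}$ to a Poisson--Dirichlet$(\lambda^\star,0)$ law, which almost surely has only finitely many macroscopic atoms, each bounded below by a positive constant. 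Untangling the $\lambda^\star$-tilt converts the first statement into (b) and the lower bound on the first $M$ atoms into (c), with the polynomial $n^{-c^\star}$ correction transferred from $Z_{\lambda^\star}$ to each individual $|\CC_i|$.

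The main obstacle is the second moment upper bound on the infinite-dimensional pair-profile space. One must establish non-degeneracy of the Hessian of the two-replica Bethe functional at both the product and diagonal maximizers, rule out a continuum of spurious intermediate maximizers (which would correspond to forbidden intermediate overlap values), and track short-cycle corrections exactly so that $\E Z_{\lambda^\star}^2/(\E Z_{\lambda^\star})^2$ is a genuine $O(1)$ constant rather than merely $e^{o(n)}$. The combination of an infinite spin space with the need for polynomial-order moment control is what makes this step substantially more delicate than the existence-of-free-energy argument of \cite{ssz16}, and is where I expect the bulk of the technical work to lie.
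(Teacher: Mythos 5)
Your high-level plan—tilted cluster partition function, sharp first and second moments, Paley--Zygmund—is in the right family, but two of the key steps as stated do not work, and one of them is the crux of the polynomial factor in the theorem.

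First, the first-moment asymptotic you write down, $\E Z_{\lambda^\star} = \Theta(n^{-c^\star\lambda^\star})\,e^{n\lambda^\star\textsf{f}^{1\textsf{rsb}}(\alpha)}$, is not correct. The paper's Theorem~\ref{thm:1stmo:constant} shows $\E\bZ^{\tr}_{\lambda^\star}\to C_1(\lambda^\star)\,e^{nF_{\lambda^\star}(B^\star_{\lambda^\star})}$ with a bona fide constant and \emph{no} polynomial decay; the factor $n^{-1/2}=n^{-c^\star\lambda^\star}$ appears only when you restrict the sum to a window of cluster sizes $[e^{ns},e^{ns+1})$, as in Theorem~\ref{thm:1stmo:constant:s} for $\bZ^{\tr}_{\lambda^\star,s_n}$. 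The windowing is essential: the Gaussian fluctuation in the log-size spreads the mass over $\Theta(\sqrt{n})$ windows, so the polynomial washes out when you sum over $s$. Consequently your subadditivity-plus-Markov argument for part (a) gives, with $t=e^{n\textsf{f}^{1\textsf{rsb}}-c^\star\log n+C}$,
\begin{equation*}
\P(Z\ge t)\le \frac{\E Z_{\lambda^\star}}{t^{\lambda^\star}} \asymp n^{1/2}\,e^{-\lambda^\star C},
\end{equation*}
which diverges with $n$; you do not recover the $-c^\star\log n$ improvement unless $C\gtrsim\log n$, defeating the purpose. The paper instead first rules out clusters of size larger than $e^{ns_\circ(C_0)}$ using $\E\bN_{s_\circ(C)}\lesssim e^{-\lambda^\star C}$ (which already contains the windowed $n^{-1/2}$), and then applies Markov to the \emph{mass} $\sum_{C\le C_0}\bZ_{1,s_\circ(C)}$ coming only from the surviving small clusters. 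The polynomial factor $n^{-c^\star}$ in part (a) genuinely requires this size decomposition.

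Second, the proposed route to parts (b) and (c) via small subgraph conditioning and convergence of $(|\CC_i|^{\lambda^\star}/Z_{\lambda^\star})$ to a Poisson--Dirichlet law is not what the paper does and cannot be what it does: Section~1.2 explicitly states that establishing the Poisson--Dirichlet limit of the cluster sizes is left as an open problem. Instead, parts (b) and (c) are obtained by a Paley--Zygmund second-moment argument applied directly to the cluster \emph{count} $\bN^{\tr}_{s_\circ(-C_0)}[\ee_{1/4}]$ in a single size window, together with separate tail bounds (Markov on $\sum_{C\le C(j)}\bZ_{1,s_\circ(C)}$, control of multi-cyclic free components, and the upper bound from part~(a)) that are combined on a good event of probability $\ge\delta$. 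This yields the weaker but provable conclusion—finitely many clusters carry a $1-\eta$ fraction, and the $M$ largest are of the expected order—without any convergence-in-distribution claim. Your second-moment bound $\E Z_{\lambda^\star}^2\le C_0(\E Z_{\lambda^\star})^2$ in the unwindowed form is also more than the paper establishes (or needs): the paper controls $\E\bZ^{2,\tr}_{\lambda^\star,s_n,\ind}$ and the correlated-regime contributions separately, and the near-identical diagonal term in $\E Z_{\lambda^\star}^2$ involves $\sum_\CC|\CC|^{2\lambda^\star}$, which is not automatically dominated by $(\E Z_{\lambda^\star})^2$ for all $\lambda^\star\in(0,1)$.
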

	\begin{remark}\label{rem:k:adjusted}
	Throughout the paper, we allow for the constant $k_0$ to be adjusted. In particular we take $k_0$ to be a large absolute constant, so that the results of \cite{dss16} and \cite{ssz22} hold. Moreover, it was shown in \cite[Proposition 1.4]{ssz22} that $(\alpha_{\textsf{cond}}, \alpha_{\textsf{sat}})$ is a subset of $[\alpha_{\textsf{lbd}}, \alpha_{\textsf{ubd}}]$, where $\alpha_{\textsf{lbd}}\equiv (2^{k-1}-2)\log 2$ and $\alpha_{\textsf{ubd}}\equiv 2^{k-1}\log 2$, so we will restrict our attention to $\alpha \in  (\alpha_{\textsf{cond}}, \alpha_{\textsf{sat}})\subset [\alpha_{\textsf{lbd}}, \alpha_{\textsf{ubd}}]$.
	\end{remark}
	\begin{remark}\label{rem:whp}
	In the companion paper \cite{nss2}, we show that $(b)$ and $(c)$ of Theorem \ref{thm1} actually holds with probability close to $1$.
	\end{remark}
	\begin{remark}\label{rmk:cluster:definition}
	Although our definition of a cluster in Theorem \ref{thm1} is a connected component of the solution space, our proof shows that Theorem \ref{thm1} also holds for a slightly different definition of clusters, where we merge the connected components if they differ in a small, say $\log n$, number of variables. This follows from the control of the number of the pairs of clusters in the so-called \textit{near-identical regime} (see Lemma \ref{lem:identical regime decay estim}).
	\end{remark}
	In what follows, we briefly discuss the principles underlying the condensation predictions which are helpful to understand the main theorem. As shown in Figure \ref{fig:pt}, the solution space of the random regular $k$-\textsc{nae-sat} is predicted to be \textit{clustered} into exponentially many clusters with each of them occupying an exponentially small mass when $\alpha \in (\alpha_{\textsf{clust}},\alpha_{\textsf{cond}})$. As $\alpha$ gets larger than $\alpha_{\textsf{cond}}(>\alpha_{\textsf{clust}})$ (the \textit{condensation regime}), the solution space becomes \textit{condensed}, which causes the failure of the first moment analysis as seen in~\eqref{eq:intro:freeEformula}.  When $\alpha\in(\alpha_{\textsf{cond}},\alpha_{\textsf{sat}})$, the number of clusters that contribute the most to $\E Z$ is exponentially small in $n$, meaning that those clusters are no longer present in a typical instance of the r\textsc{csp}. Thus, the leading order of $Z$ is given by the largest clusters that can typically exist (which are thus smaller than the main contributors to $\E Z$), and the number of such clusters is believed to be bounded. Moreover, it is expected that the sizes of those clusters are comparable to  the {\small{1}}\textsc{rsb} free energy.

	Theorem \ref{thm1} verifies that the solution space indeed becomes \textit{condensed} in the condensation regime, while the previous works \cite{bchrv16,ssz22} obtained the evidence of the condensation phenomenon in the level of free energy. Furthermore, it is believed that the nature of the condensation is governed by \textit{one-step replica symmetry breaking}, which we detail in the following subsection. 
	
	Compared to the previous related works \cite{dss15ksat,dss16maxis,dss16,ssz22} in similar settings, we interpret the partition function from a different perspective in order to acquire information on the \textit{number of clusters of particular sizes}. Our approach requires a detailed analysis of an auxiliary spin system with an \textit{infinite} spin space, and one of our major accomplishments is to develop new ideas and generalize existing theories to understand such a system.

	\subsection{One-step replica symmetry breaking}\label{subsec:intro:rsb}
	
	In the condensation regime $\alpha\in(\alpha_{\textsf{cond}},\alpha_{\textsf{sat}})$, the random regular $k$-\textsc{nae-sat} model is believed to possess a single layer of hierarchy of clusters in the solution space. The prediction is that the solutions are \textit{well-connected} inside each cluster so that no additional hierarchical structure occurs in the cluster. Such behaviors are conjectured in various other models such as random graph coloring and random $k$-\textsc{sat} \cite{kmrsz07}. However, we remark that there are also other models such as maximum independent set (or high-fugacity hard-core model) in random graphs with small degrees \cite{bkzz13} and Sherrington-Kirkpatrick  model (on the complete graph) \cite{SK75, t06}, which are expected (or proven \cite{acz20}) to undergo \textit{full} \textsc{rsb}, meaning that there are infinitely many levels of hierarchy inside the solution clusters.
	
	One way to characterize {\small{1}}\textsc{rsb} is to look at the \textit{overlap} between two uniformly and independently drawn solutions. In the condensation regime, since there are a bounded number of clusters containing most of the mass, with a non-trivial probability the two solutions belong to the same cluster. According to the description of {\small{1}}\textsc{rsb}, there is no additional structure inside each cluster, and hence the Hamming distance between two independently selected solutions is expected to be concentrated  precisely at  \textit{two values}, depending on whether they came from the same cluster or not.
	
	Our second result verifies that this is indeed the case for the random regular $k$-\textsc{nae-sat} for large enough $k$, which establishes for the first time a rigorous characterization of {\small{1}}\textsc{rsb} in sparse r\textsc{csp}s.

	\begin{defn}
		For $\underline{x}^1,\underline{x}^2 \in \{0,1\}^n$, let $\underline{y}^i = 2\underline{x}^i - \textbf{1}$. The overlap $\rho(\underline{x}^1,\underline{x}^2)$ is defined by
		\begin{equation*}
		\rho(\underline{x}^1,\underline{x}^2) \equiv \frac{1}{n} \uy^1 \cdot \uy^2 = \frac{1}{n} \sum_{i=1}^n y^1_i y^2_i.
		\end{equation*}
	\end{defn}

		\begin{thm}\label{thm2}
		Let $k\geq k_0$, $\alpha \in (\alpha_{\textsf{cond}}, \alpha_{\textsf{sat}})$ such that $d\equiv\alpha k\in \mathbb{N}$. There exists an explicit constant $p^\star\equiv p^\star(\alpha,k)\in(0,1)$ (see Definition \ref{def:pstar}) such that the following holds for random $d$-regular $k$-\textsc{nae-sat} instance $\GGG$ with probability at least $\delta\equiv \delta(\alpha,k)$ (for the formal definition of $\GGG$, see Section \ref{sec:model}):
		 let $\underbar{x}^1, \underbar{x}^2\in \{0,1\}^n$ be independent, uniformly chosen satisfying assignments given $\GGG$.  Then, for some constant $C\equiv C(\alpha,k)$, the absolute value $\rho_{\textnormal{abs}} \equiv |\rho|$ of their overlap $\rho \equiv \rho (\underline{x}^1,\underline{x}^2)$ satisfies
		\begin{enumerate}
			\item[\textnormal{(a)}] $\mathbb{P}(\rho_{\textnormal{abs}}\leq n^{-1/3} |\GGG ) \geq \delta$;
			
			\item[\textnormal{(b)}] $\mathbb{P}( \big|\rho_{\textnormal{abs}}-p^\star \big| \leq n^{-1/3} |\GGG )\geq \delta$;
			
			\item[\textnormal{(c)}] $\mathbb{P}( \min\{ \rho_{\textnormal{abs}}, |\rho_{\textnormal{abs}}-p^\star|\} \geq n^{-1/3}|\GGG )\leq C n^{-1/3}$.
		\end{enumerate}
		
	\end{thm}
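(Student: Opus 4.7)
The plan is to deduce Theorem~\ref{thm2} from Theorem~\ref{thm1} together with a moment analysis of the pair spin system that encodes two solutions and their cluster memberships simultaneously. First, condition on the event of probability at least $\delta$ supplied by Theorem~\ref{thm1}(b)(c): clusters $\CC_1,\dots,\CC_K$ cover a fraction at least $1-\eta$ of all solutions, and the $M$ largest each have size at least $\exp(n\textsf{f}^{1\textsf{rsb}}-c^\star\log n-C')$. Combined with the matching upper bound of Theorem~\ref{thm1}(a), each of the top $M$ clusters contains at least a fixed positive fraction of $Z$. Choosing $M=M(\delta)$ large and $\eta=\eta(\delta)$ small, two independent uniform solutions $\ux^1,\ux^2$ fall into a common cluster (or into NAE-conjugate clusters) with probability bounded below, and into two mutually unrelated clusters with probability bounded below.

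The constant $p^\star$ should be interpreted as the fraction of rigid (``frozen'') coordinates in a typical cluster, a quantity naturally encoded in the auxiliary spin system whose saddle point yields $\textsf{f}^{1\textsf{rsb}}$. Within a single cluster $\CC$, all solutions agree on every rigid coordinate, and the remaining $(1-p^\star)n$ free coordinates, for two independent uniform samples, are approximately independent uniform bits---a manifestation of the 1\textsc{rsb} prediction that no further hierarchy exists inside a cluster, which must be verified from the structural analysis of the spin system. This yields $\rho(\ux^1,\ux^2)=p^\star+O(n^{-1/2})$ or $-p^\star+O(n^{-1/2})$ depending on whether the samples lie in $(\CC,\CC)$ or $(\CC,\neg\CC)$, hence $|\rho|\approx p^\star$. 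For two unrelated clusters the rigid patterns themselves are approximately uncorrelated, giving $\rho=O(n^{-1/2})$. Combining with the previous paragraph yields (a) and (b).

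Part (c) is the technical heart. Introduce the pair partition function $Z^{(2)}(\rho)$ counting pairs of solutions whose overlap lies in an $n^{-1/3}$-window around $\rho$, and compute its first moment via the pair spin system. The key claim is that $\E Z^{(2)}(\rho)$ is exponentially smaller in $n$ than its maximum whenever $|\rho|$ is bounded away from $\{0,p^\star\}$, with a uniform gap. Combined with the lower bound $Z\ge\exp(n\textsf{f}^{1\textsf{rsb}}-c^\star\log n-C')$ from the first step, Markov's inequality applied to $Z^{(2)}(\rho)/Z^2$ and summed over an $n^{-1/3}$-net of $\rho$ gives the $O(n^{-1/3})$ conditional bound in (c).

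The main obstacle throughout is the variational analysis of the pair spin system. Its spin space is infinite because each variable must record the joint cluster membership of both copies rather than just a pair of $0/1$ values. The challenge is to identify the saddle points of the resulting free-energy functional and show they correspond exactly to the two overlaps $0$ and $p^\star$, with no spurious extremum at intermediate values. This is precisely the infinite-dimensional moment machinery the paper develops, extending the single-copy analysis of~\cite{ssz16} to the two-replica setting; once that analysis is in place, the reduction from it to Theorem~\ref{thm2} is the comparatively soft argument sketched above.
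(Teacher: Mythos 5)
Your overall strategy---reduce to cluster-level events via the second moment method, show same-cluster pairs give overlap $\pm p^\star$ and different-cluster pairs give overlap $0$, and control atypical overlaps by pair-moment estimates---matches the paper's. But two of your key steps are wrong as stated. First, $p^\star$ is \emph{not} the fraction of frozen coordinates, and the free coordinates are \emph{not} approximately independent uniform bits. The free variables are organized into free trees whose boundary values are pinned by the frozen configuration, so individual free variables have non-uniform marginals and each tree $\ttt$ contributes a generically nonzero average overlap $\textnormal{overlap}(\ttt)$ between two independent extensions. Accordingly $p^\star = 1-\sum_{\ttt} v(\ttt)p^\star_{\ttt,\la^\star} + \sum_{\ttt}\textnormal{overlap}(\ttt)\,p^\star_{\ttt,\la^\star}$ (Definition \ref{def:pstar}): the frozen fraction \emph{plus} the aggregate tree overlap. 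The concentration at $p^\star$ comes from a law of large numbers over the collection of free trees (whose empirical profile must first be shown to concentrate at $p^\star_{\ttt,\la^\star}$, which is condition (4) of $\Gamma_1^\star$), not from independence of individual free bits; your version would produce the wrong constant at the $n^{-1/3}$ precision demanded.

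Second, your argument for (c) via Markov's inequality on $Z^{(2)}(\rho)/Z^2$ over a net of $\rho$ does not work. The untilted first moment $\E Z^{(2)}(\rho)$ is dominated, even at the good overlaps $\rho\in\{0,\pm p^\star\}$, by pairs of clusters of size $e^{ns_1}$ that do not exist in a typical instance; dividing by the typical value $Z^2\asymp n^{-1/\la^\star}e^{2ns^\star}$ therefore gives a ratio that is exponentially \emph{large}, not small, and no uniform exponential gap away from $\{0,p^\star\}$ is available for the solution-pair partition function (only for the frozen-overlap in the intermediate regime). The paper instead works throughout with the $\la^\star$-tilted, size-restricted cluster partition functions, shows that with the stated polynomial probability both sampled clusters lie in a typical set $\CCC_\bullet(\GGG)$, and then obtains (c) from a concentration bound conditional on the pair of clusters; the $n^{-1/3}$ rate comes from the mass of atypical clusters, not from a union bound over overlap windows. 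You also omit entirely the treatment of clusters containing cyclic free components, for which the second-moment machinery gives no profile information; the paper handles these by a coupling with \emph{locally rewired} instances $\GGG'\in\textsf{Loc}(\GGG)$ in which the same frozen configurations become tree-type, together with a union bound over all such rewirings. Without these ingredients the reduction from the pair-moment analysis to (a)--(c) is not the ``comparatively soft argument'' you describe.
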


	We remark that in (b), $\rho$ can take either $p^\star+ O(n^{-1/3})$ or $-p^\star+O(n^{-1/3})$ with asymptotically equal probability as $n\to\infty$. This is due to the symmetric nature of the \textsc{nae-sat}, where $-\ux$ is also a solution if $\ux$ is. Thus, the clusters of solutions come in pairs as well: if $\mathcal{C}$ is a cluster, then so is $-\mathcal{C}:= \{-\ux: \ux \in \mathcal{C}\} .$ We also remark that the probabilities $\delta$ in Theorems \ref{thm1} and \ref{thm2} are not necessarily the same. We need different technical properties to deduce the conclusions of the two theorems.

	According to the physics predictions \cite{kmrsz07}, the relative sizes of the largest clusters of the r\textsc{csp}s with {\small{1}}\textsc{rsb} in the condensation regime are conjectured to converge to a Poisson-Dirichlet process. Here, the conjectured description in \cite{kmrsz07} corresponds to the definition of clusters in Remark \ref{rmk:cluster:definition}, where we merge the connected components if they differ in a small number of variables. Although we provide a cluster-level illustration of the solution space and show that it follows the {\small{1}}\textsc{rsb} prediction, our method is not strong enough to study the limiting distributions of the cluster sizes, and the conjecture is left as an important open problem in the field. 

	\subsection{Related works}\label{subsec:intro:related}
	
	Many of the earlier works on r\textsc{csp}s focused on  determining their satisfiability thresholds and verifying the sharpness of \textsc{sat}-\textsc{unsat} transitions. For r\textsc{csp} models that are known not to exhibit \textsc{rsb}, such goals were established. These models include random 2-\textsc{sat} \cite{cr92,bbckw01},  random 1-\textsc{in}-$k$-\textsc{sat} \cite{acim01}, $k$-\textsc{xor-sat} \cite{dm02,dgmm10,ps16}, and random linear equations \cite{acgm17}. On the other hand, for the models which are predicted to display the condensation phenomenon, intensive studies have been conducted to estimate their satisfiability threshold, as shown in \cite{kkks98,ap04,cp16} (random $k$-\textsc{sat}), \cite{am06,cz12,cp12} (random $k$-\textsc{nae-sat}), and \cite{an05,c13,cv13,ceh16} (random graph coloring). 
	
	The satisfiability thresholds for r\textsc{csp}s with \textsc{rsb} have been rigorously determined in several models (random regular $k$-\textsc{nae-sat} \cite{dss16},  maximum independent set \cite{dss16maxis}, random regular $k$-\textsc{sat} \cite{cp16} and  random $k$-\textsc{sat} \cite{dss15ksat}), where they looked at the number of \textit{clusters} instead of the number of solutions and carried out a demanding second moment method. Although determining the location of colorability threshold is left open, the condensation threshold for random graph coloring at the level of the free energy, similar to \eqref{eq:intro:freeEformula}, was settled in \cite{bchrv16}. They conducted a technically challenging analysis based on a clever ``planting" technique, where the results were further generalized to other models in \cite{ckpz18}. Similarly,~\cite{bc16} identified the condensation threshold for random regular $k$-\textsc{sat}, where each variable appears $d/2$-times positive and $d/2$-times negative. 
 
	Further theory was developed in \cite{ssz22} to establish the {\small{1}}\textsc{rsb} free energy prediction for random regular $k$-\textsc{nae-sat} in the condensation regime. 
	However, \cite{ssz22} was not able to present a cluster-level description of an r\textsc{csp} instance, nor to explain the nature of the condensation phenomenon. Our main contribution is to illustrate the solution space of the random regular \textsc{nae-sat} instance at the cluster-level and to verify that its condensation is governed by {\small{1}}\textsc{rsb}.
	
    Although we are not aware of previous rigorous analysis of r\textsc{csp} in the condensation regime at cluster-level, the {\small{1}}\textsc{rsb} prediction was established in \cite{Subag17} for the pure $p$-spin spherical spin glass model \cite{CS92}, where \cite{Subag17} verified that the Gibbs measure in low temperature is split into spherical `bands' playing the role of `pure states', which is the analog of clusters in r\textsc{csp}. Similar to Theorem \ref{thm1}, \cite{Subag17} showed an explicit logarithmic correction term for the free energy (see Theorem 2 therein). For the same model, \cite{SZ17} established the limiting distribution of the ground-state. Additionally, \cite{SZ17} showed that the extremal point process of critical points at zero temperature converges in distribution to a Poisson point process, which is analogous to describing the joint law of the largest $O(1)$ clusters in the context of \textsc{nae-sat}. For certain mixed $p$-spin spherical spin glass models, the existence of {\small{2}}\textsc{rsb} at zero temperature was established in \cite{AZ19}.
      
	Lastly, the recent work \cite{bsz19} studied the random $k$-\textsc{max-nae-sat} beyond $\alpha_{\textsf{sat}}$ and verified that the {\small{1}}\textsc{rsb} description breaks down before $\alpha \asymp k^{-3}4^k$. Indeed, the \textit{Gardner transition} from {\small{1}}\textsc{rsb} to \textsc{frsb} is expected at $\alpha_{\textsf{Ga}}\asymp k^{-3}4^k >\alpha_{\textsf{sat}}$ \cite{mr03,kpw04}, and \cite{bsz19} provides evidence of this phenomenon.

	\subsection{Heuristic description of condensation}\label{subsec:intro:cond}
	
	We   briefly overview  what happens in an r\textsc{csp} as the clause density $\alpha=d/k$ varies, as well as a heuristic illustration of condensation. 
	
	Let us denote $0\equiv \textsf{true}$ and $1\equiv\textsf{false}$. When $\alpha$ is fairly small, most of the solutions lie inside a single well-connected cluster. 
	As $\alpha $ becomes larger than $\alpha_{\textsf{clust}}$, the \textit{clustering threshold}, the solution space becomes shattered into exponentially many clusters, each containing exponentially many solutions yet exponentially small compared to the whole solution space. In this regime,  define $\Sigma(s)\equiv\Sigma(s;\alpha)$, the \textit{cluster complexity function}, as
	$$\exp (n\Sigma(s)) \equiv \textnormal{expected number of clusters of size }e^{ns}. $$
	Indeed, the number of size--$e^{ns}$ clusters is believed to concentrate around its mean $e^{n\Sigma(s)}$. Thus, the expected number of solutions can be written as
	$$\E Z = \sum_{s} \exp (n \{s+\Sigma(s)\})   
	\doteq \exp (n\cdot \max \{s+\Sigma(s) : s\geq0 \}  ),$$
	where $\doteq$ denotes the equality up to the leading exponential order. The function $\Sigma(s;\alpha)$ is believed to be smooth and concave in $s$ for each fixed $\alpha$, and indeed physicists predict an explicit formula for $\Sigma(s)$ via the {\small{1}}\textsc{rsb} \textit{cavity method} \cite{kmrsz07,mm09}. Hence, if this is the case, we have that 
	$$\E Z \doteq \exp (n\{s_1 +\Sigma (s_1) \} ),$$
	where $s_1\equiv s_1(\alpha)>0$ is the unique solution of $\Sigma'(s_1;\alpha) = -1$. However, if $\Sigma(s_1;\alpha)<0$, meaning that the expected number of size--$e^{ns_1}$ clusters are exponentially small, those clusters are unlikely to exist in a typical instance and hence the main contribution to $Z$ is given by 
	$$Z \doteq \exp(n\{s_\star + \Sigma(s_\star) \} ), $$
	where $s_\star$ is defined as 
	\begin{equation}\label{eq:def:sstar}
	s_\star \equiv s_\star(\alpha) \equiv \arg \max_s \{s+\Sigma(s): \Sigma(s)\geq 0  \}.
	\end{equation}

	\begin{figure}
		\begin{tikzpicture}[scale=1.1]  
		\begin{axis}[
		width=7.2cm,
		height=6cm,
		xtick=\empty, 
		ytick=\empty, 
		xlabel={$s$},
		ylabel={},
		xmin=0, xmax=5.3,
		ymin=-2.4, ymax = 2.4,
		axis lines=center, 
		domain=0.2:5.2,
		samples=100]
		\addplot[blue] {-.18*(x-1.5)^2+0.7 };
		\addplot [red] {-.2*(x-2)^2-0.2};
		\addplot [green] {-.16*(x-.8)^2+1.9};
		\addplot[gray] coordinates{ (3.277778-.2,+.2+1-0.688889)  (5.377778, -1.1-0.6888889)};
		\addplot[gray] coordinates {
			(3.925-1.1, 1.1+ 0.3375)  (3.925+1.15,-1.15+ 0.3375)
		};
		\addplot[gray] coordinates {
			(2.5-1.2+2,1.2-1.45)  (2+2.5+1.2,-1.2-1.45)
		};
		\node[] at (axis cs: 0.3,2.1) {\tiny{(A)}};
		
		\node[] at (axis cs: 0.3,0.75) {\tiny{(B)}};
		
		\node[] at (axis cs: 0.3,-.5) {\tiny{(C)}};
		\end{axis}

		\end{tikzpicture}
		\hspace{4mm}
		\begin{tikzpicture}[scale=1.1]  
		\begin{axis}[
		width=7.2cm,
		height=6cm,
		xtick=\empty, 
		ytick=\empty, 
		xlabel={$s$},
		ylabel={},
		xmin=0, xmax=5.3,
		ymin=-2.3, ymax = 2.3,
		axis lines=center, 
		domain=0.2:5.2,
		samples=100]
		\addplot[blue] {-.18*(x-1.5)^2+0.7 };
		\addplot[gray] coordinates{ (3.277778-.2,+.2+1-0.688889)  (5.377778, -1.1-0.6888889)};
		\node[] at (axis cs: 0.3,0.75){\tiny{(B)}} ;
		\node[] at (axis cs: 4.277778,0.25) {{$s_1$}};
		\addplot[densely dashed] coordinates{
			(4.277778,0) (4.277778,-0.688889)};
		
		\addplot[solid] coordinates{(4.277778,0) (4.277778,0.08) };
		\addplot[solid] coordinates{(3.472027,0) (3.472027,0.08) };
		\node[] at (axis cs: 3.472027,-.25){$s_\star$};
		
		\end{axis}

		\end{tikzpicture}
		\caption{A description of $\Sigma(s;\alpha)$ in $s$ for different values of $\alpha$. In the left, the curves correspond to the different values of $\alpha$, with (A) $\alpha\in(\alpha_{\textsf{clust}},\alpha_{\textsf{cond}})$, (B)  $\alpha \in (\alpha_{\textsf{cond}}, \alpha_{\textsf{sat}})$, and (C)  $\alpha>\alpha_{\textsf{sat}}$, with the gray lines depicting the locations of $s_1$. In the right, curve (B) is shown with the values $s_1$ and $s_\star$.}\label{fig:cond}
	\end{figure}
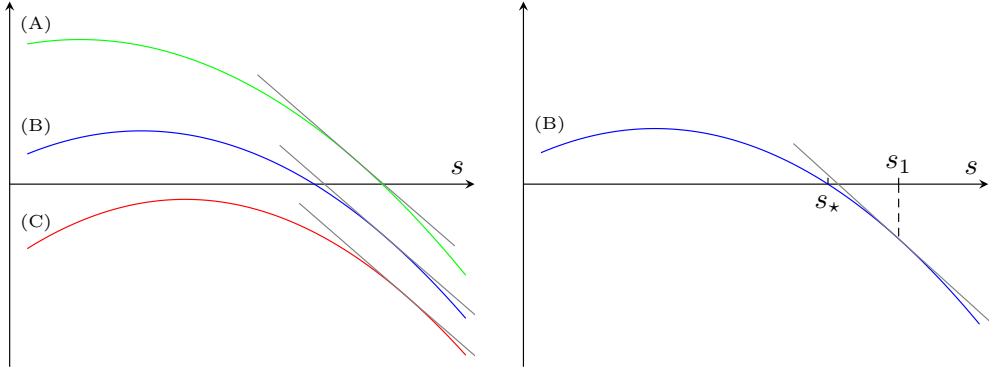

	\noindent This is the regime where the \textit{condensation phenomenon} occurs, and hence the \textit{condensation threshold} $\alpha_{\textsf{cond}}$ is defined by
	$$\alpha_{\textsf{cond}} \equiv \max\{\alpha: \Sigma(s_1(\alpha);\alpha) \geq 0 \} = \max\{\alpha: s_\star(\alpha) \geq s_1(\alpha) \}. $$
	
	For larger $\alpha$, the problem becomes unsatisfiable ($Z=0$) \textsf{whp} beyond the \textit{satisfiability threshold} $\alpha_{\textsf{sat}}$ given by
	$$\alpha_{\textsf{sat}}\equiv \min\{\alpha : \Sigma(s;\alpha)\leq 0 \textnormal{ for all } s \}. $$
	An illustration of the above discussion is given in Figure \ref{fig:cond}. Note that when $\alpha \in ( \alpha_{\textsf{cond}}, \alpha_{\textsf{sat}})$, $\Sigma(s_\star(\alpha);\alpha) =0$, which suggests that the primary contribution to $Z$ should come from a bounded number of clusters of size roughly $e^{ns_\star}$, whereas if $\alpha < \alpha_{\textsf{cond}}$ the leading term consists of the clusters of size roughly $e^{ns_1}$ whose numbers are exponentially large. Indeed, in the latter case $Z$ becomes concentrated around $\E Z$ \cite{am06,cz12,cp12}. As $k\to\infty$, asymptotic values of the thresholds  are known to be 
	$$\alpha_{\textsf{cond}} = \left(2^{k-1}-1\right)\log 2 + o_k(1), \quad \alpha_{\textsf{sat}} = \left(2^{k-1} - \frac{1}{2} - \frac{1}{4\log 2} \right) \log 2 +o_k(1).$$
	The known upper bound for $\alpha_{\textsf{clust}}$ \cite{mr13} tells us that it is relatively much smaller than $\alpha_{\textsf{cond}}$ and $\alpha_{\textsf{sat}}$ if $k$ is large.  Moreover, $\alpha_{\textsf{clust}}$ is believed to coincide with the \textit{reconstruction threshold}. We refer the readers  to \cite{gm07,kmrsz07,mrt11, BS20} for further information.

	\subsection{Tilted cluster partition function and encoding clusters}\label{subsec:intro:encoding}
	
	The main object of study in the present paper shares the same spirit as \cite{ssz22}, and its derivation is based on the ideas discussed in Section \ref{subsec:intro:cond}. We consider the \textit{tilted cluster partition function} $\overline{\bZ}_\lambda$, defined as
	\begin{equation}\label{eq:def:bZbar:elementary}
	\overline{\bZ}_\lambda \equiv \sum_{\Upsilon} |\Upsilon|^\lambda,
	\end{equation} 
	where the sum is taken over all clusters $\Upsilon$.  If we  compute $\E \overline{\bZ}_\lambda$ for $\lambda\equiv\lambda(\alpha) \equiv - \Sigma'(s_\star;\alpha)$ (with $s_\star$ as in \eqref{eq:def:sstar}), then we see that the main contribution comes from the clusters of size $e^{ns_\star}$, following the same reasoning as Section \ref{subsec:intro:cond}. Thus, we expect to have $\overline{\bZ}_\lambda \doteq \E \overline{\bZ}_\lambda$.
	
	The next objective is to obtain a combinatorial representation of a cluster. We follow the \textit{coarsening algorithm}, which is an inductive process starting from  a solution $\underline{x}$ that sets a variable in $\underline{x}$ to be \texttt{f} (free) one by one, if no clause is violated when the variable is flipped (that is, $0\to 1$ or $1\to 0$). We will see in Section \ref{sec:model} that the resulting \textit{frozen configuration} $\underline{y}\equiv\underline{y}(\underline{x})
	\in \{0,1,\texttt{f}\}^n$ obtained by such a procedure serves as a good representation for a cluster.
	
	To study the size of a cluster, we adapt the framework from \cite{ssz22}  to count the number of ways to assign $0/1$-values to free variables in a frozen configuration, which we detail as follows. In the regime of our interest, an important observation is that most of the variables in a solution $\underline{x}$ are \textit{frozen} (so that those variables cannot be flipped in the solution space), while a small constant fraction of them are \textit{free}. Thus, in a frozen configuration $\underline{y}
	\in \{0,1,\texttt{f}\}^n$, the connected structure  among the free variables (and their neighboring clauses) would mostly be trees that are not too large. Heuristically, they can be thought of subcritical branching processes, so the maximal connected free component will have size $O(\log n)$. \cite{ssz22} utilized the idea of \textit{belief propagation} from statistical physics to effectively count the number of \textsc{nae-sat} assignments on a given tree of free variables. These methods will be reviewed in detail in Section \ref{sec:model}. 
	
	The previous work \cite{ssz22} studied the \textit{truncated} partition function $\overline{\bZ}_{\lambda,L}$, which only counts the contributions from the frozen configurations whose free components are \textit{trees} of size at most some finite threshold~$L$.  Again based on the branching process heuristics, there  is always a constant probability for a subcritical branching process to be larger than $L$, and hence we may expect that
	$$\overline{\bZ}_{\lambda,L} \doteq e^{-\delta n} \overline{\bZ}_\lambda, $$
	where $\delta(L) \to 0$ as $L$ tends to infinity. Thus, they investigated the moments of $\overline{\bZ}_{\lambda,L}$ and let $L\to\infty$ to deduce the conclusion on the free energy of the original model. Imposing the finite-size truncation played a crucial role in their work, since it makes the space of \textit{free trees} to be finite so that some of the important methods from the earlier works \cite{dss15ksat,dss16maxis,dss16} are applicable without significant changes. However, to obtain Theorem \ref{thm1}, working with the truncated model is insufficient, since we cannot afford the cost of $e^{-\delta n}$ for any small $\delta>0$. In the following subsection, we describe a brief overview of the ideas to overcome such difficulties and give an outline of the proof.

	\subsection{Proof ideas}\label{subsec:intro:pfapproach} 
	
	The  major difficulties in  understanding the solution space in the cluster-level can be summarized as follows.
	\begin{enumerate}
		\item In addition to investigating $\overline{\bZ}_\lambda$, we need to study the contributions from clusters of sizes in a constant window $[e^{ns},e^{ns+1})$:
		\begin{equation}\label{eq:def:bZsbar:elementary}
		    \overline{\bZ}_{\lambda,s} := \sum_{\Upsilon} |\Upsilon|^\lambda\, \one{\{|\Upsilon| \in [e^{ns}, e^{ns+1}) \}}.
		\end{equation}  
		
		\item As mentioned above, it is required to work with the full space of \textit{free trees} which is infinite.
	\end{enumerate}

	The proof consists of two major parts. We first compute the first and second moments of $\overline{\bZ}_{\lambda,s_\circ} $, with $s_\circ$ defined as
	$$s_\circ \equiv s_\circ(n,\alpha,C) \equiv \textsf{f}^{1\textsf{rsb}}(\alpha) -\frac{ c^\star \log n}{n} + \frac{C}{n}, $$
	where $c^\star$ is the constant introduced in Theorem \ref{thm1} and $C\in \mathbb{Z}$. Let $\overline{\bN}_{s}$ denote the number of clusters whose size is in the interval $[e^{ns}, e^{ns +1} )$:
	\begin{equation}\label{eq:def:bN:elementary}
	    \overline{\bN}_{s} := \sum_{\Upsilon} \one{\{|\Upsilon| \in [e^{ns}, e^{ns+1}) \}}.
	\end{equation}
	Since $e^{-\lambda} \overline{\bZ}_{\lambda,s} \leq e^{n\lambda s} \overline{\bN}_{s} \leq  \overline{\bZ}_{\lambda,s} $,  a successful computation of first and second moments of $\overline{\bZ}_{\lambda,s_\circ}$ will give us information on $\overline{\bN}_{s_\circ}$ based on the second moment method. We show that
	\begin{equation*}
	\P (\overline{\bN}_{s_\circ} >0) 
	\begin{cases}
	\to 0, & \textnormal{ as } C\to \infty;\\
	\geq c>0, & \textnormal{ as } C \to -\infty.
	\end{cases}
	\end{equation*}
	In what follows, we demonstrate the main ideas in computing the first and second moments of $\overline{\bZ}_{\lambda,s_\circ}$ and showing the concentration of the overlaps based on the moment computations.

	\subsubsection{Moment computations}\label{subsubsec:intro:moment}
	
	The previous approaches in \cite{dss15ksat,dss16maxis,dss16,ssz22} to study the moments of $\overline{\bZ}_\lambda$ were to decompose the quantity into the contributions from different types of 
	``local neighborhood profile'' of configurations. However, in our case where we have infinitely many types of free components, such methods do not give a good enough understanding of $\overline{\bZ}_\lambda$, since the Stirling approximations which were crucial in the earlier works are no longer precise.
	
	Instead, we focus on computing the \textit{cost} of containing each type of free component inside a cluster. A major observation we make is that conditioned on the ``boundary profile" of non-free variables and certain type of clauses, the profile of free components is given by the result of \textit{independently throwing in each type of free component with a prescribed probability}. Then, we observe that computing the first and second moments of $\overline{\bZ}_{\lambda,s_\circ}$ conditioned on the boundary profile amounts to computing the probability of certain large deviation event. After making an appropriate choice of $\lambda$ and an exponential tilting, we appeal to the local central limit theorem (\textsc{clt}) to calculate the probability. This method is a well-established technique in large deviations theory \cite{DZ10} and has also been employed in other combinatorial settings \cite{baez97, Harashita21}. Subsequently, to sum up the contribution from different types of boundary profile, we use the \textit{resampling method} which we describe below.
	
	\subsubsection{The resampling method}\label{subsubsec:resampling}
	The resampling method was first introduced in \cite{ssz22} to show negative definiteness of the \textit{free energy} of local neighborhood profile around its maximizer. Here, the \textit{free energy} of a local neighborhood profile refers to the exponential growth of first and second moments of $\overline{\bZ}_{\la}$ given the local neighborhood profile. The main idea behind the method can be summarized as follows. Given a \textsc{nae-sat} instance $\GGG$ and a frozen configuration $\underline{y}\in \{0,1,\ff\}^{n}$, sample small, say $\eps$, fraction of variables $Y$. We sample $v\in Y$ far away from each other so that each free tree containing $v\in Y$ do not intersect. Denote by $\NNN(Y)$ the $\frac{3}{2}$ neighborhood of $Y$ in graph distance and let $\GGG_{\partial}:=\GGG\setminus \NNN(Y)$. Next, resample the spins around $Y$ along with literals of $\NNN(Y)$ and the edge connections between $\NNN(Y)$ and $\GGG_{\partial}$ according to certain Gibbs measure conditioned on $\GGG_{\partial}$ and the spin configuration on $\GGG_{\partial}$ (see Definition \ref{def:resampling:markovchain} for the formal definition). Then, we show the empirical profile of the spins becomes closer to the \textit{optimal profile}, which is obtained by solving a fixed point equation of a certain tree recursion called \textit{belief propagation}(BP) (see Proposition \ref{prop:BPcontraction:1stmo} for the definition of BP fixed point). The main issue is to quantify the improvement coming from this \textit{local} update procedure, and it turns out that it is closely related to a convex \textit{tree optimization}.
	 
	However, the techniques from \cite{ssz22} are limited to the analysis of spin systems with bounded number of spins. In the \textit{untruncated} partition function $\overline{\bZ}_{\la}$, the large free trees inevitably appear and we can no longer sample $Y$ so that the free trees around $Y$ are guaranteed to never intersect. In order to overcome this issue, we first show that the frequency of large free trees decays exponentially in the number of variables (cf. Proposition \ref{prop:1stmo:aprioriestimate}). We then appeal to this rareness of the large free trees to show that the free trees around $Y$ do not intersect with good enough probability under uniform sampling of $Y$. We then perform the resampling procedure $O(\frac{1}{\eps})$ times to show the negative definiteness of the free energy of boundary profiles around its maximizer. The details of the proof are given in Section \ref{subsec:resampling}.

	\subsubsection{Concentration of the overlap}
	
	Theorem \ref{thm2} can be obtained based on the ideas and techniques discussed above. For two  uniformly and independently drawn solutions $\underline{x}^1, \underline{x}^2$ from a random regular $k$-\textsc{nae-sat} instance, Theorem \ref{thm1} tells us that they can be contained  either in the same cluster or in different ones, each with strictly positive probability.
	
	If they are from the same cluster, the set of frozen variables in both solutions will be the same. Moreover, from the moment computations, the number of free trees will concentrate around an explicit value. Since the $0/1$-values for the free variables are assigned independently for each free trees, we show that the absolute value of the overlap concentrates on a single value $p^\star$. On the other hand, if the two solutions are from different clusters, the results from the second moment computation show that the corresponding two frozen configurations are near-independent and from the $0/1$ symmetry of \textsc{nae-sat} model, we will conclude that the overlap concentrates around $0$. 
	
	The actual proof is more complicated than the description above, since we need to take account of the free components containing a cycle. Based on our methods, we develop a coupling argument between the clusters containing cyclic free components and those without cyclic free components, which requires an extended analysis on the moment computations. The details of the proof are given in Section \ref{sec:overlap}.

	\subsection{Organization of the article}
	
	In Section \ref{sec:model}, we formally define
	the combinatorial model which gives a mathematical representation of solution clusters. In Section \ref{sec:1stmo}, we compute the first moment and prove Theorem \ref{thm1}-$(a)$. In Section \ref{sec:2ndmo}, we compute the second moment and finish the proof of Theorem \ref{thm1}. In Section \ref{subsec:resampling}, we use the resampling method to analyze the free energy, which is crucially used in Sections \ref{sec:1stmo} and \ref{sec:2ndmo}. Finally, Section \ref{sec:overlap} is devoted to the proof of Theorem \ref{thm2}.
	\subsection{Notational conventions}
	For non-negative quantities $f=f_{d,k, n}$ and $g=g_{d,k,n}$, we use any of the equivalent notations $f=O_{k}(g), g= \Omega_k(f), f\lesssim_{k} g$ and $g \gtrsim_{k} f $ to indicate that there exists a constant $C_k$, which only depends on $k$, such that
	\begin{equation*}
	    \limsup_{n\to\infty} \frac{f}{g} \leq C_k,
	\end{equation*}
	with the convention $0/0\equiv 1$. We drop the subscript $k$ if there exists a universal constant $C$ such that
	\begin{equation*}
	    \limsup_{n\to\infty} \frac{f}{g} \leq C.
	\end{equation*}
	When $f\lesssim_{k} g$ and $g\lesssim_{k} f$, we write $f\asymp_{k} g$. Similarly when $f\lesssim g$ and $g\lesssim f$, we write $f \asymp g$.

	\section{The combinatorial model}\label{sec:model}

	We start by building the rigorous framework to study the clusters of solutions. In Section \ref{subsec:model:sszrev} (resp. Section \ref{subsec:model:pair}), we introduce the combinatorial model to compute the first (resp. second) moment. Most of the definitions in Section \ref{subsec:model:sszrev} are based on \cite[Section 2]{ssz22} except for Sections \ref{subsubsec:model:J} and \ref{subsubsec:model:free:comp}.
	
	An instance of a $d$-regular $k$-\textsc{nae-sat} problem can be represented by a labeled $(d,k)$-regular bipartite graph as follows. Let $V=\{ v_1, \ldots , v_n \}$ and $F=\{a_1, \ldots, a_m \}$ be the sets of variables and clauses, respectively. Connect $v_i$ and $a_j$ by an edge if the variable $v_i$ participates in the clause $a_j$. Denote this  bipartite graph by $\GG=(V,F,E)$, and for $e\in E$, let $\tL_e\in \{0,1\}$ denote the literal assigned to the edge $e$. Then, a \textsc{nae-sat} instance is defined by $\GGG=(V,F,E,\uL)\equiv (V,F,E,\{\tL_{e}\}_{e\in E})$.
	
	For each $e\in E$, we denote the variable (resp. clause) adjacent to it by $v(e)$ (resp. $a(e)$).  Moreover, $\delta v$ (resp. $\delta a$) are the collection of adjacent edges to $v\in V$ (resp. $a \in F$). We denote $\delta v \setminus e := \delta v \setminus \{e\}$ and $\delta a \setminus e := \delta a \setminus \{e\}$ for simplicity. Formally speaking, we regard $E$ as a perfect matching between the set of \textit{half-edges} adjacent to variables and those to clauses which are labeled from $1$ to $nd=mk$, and hence a permutation in $S_{nd}$. 
	
	\begin{defn}
		For an integer $l\ge 1$ and $\ubx  =(\bx_i) \in \{0,1\}^l$, define
		\begin{equation}\label{eq:def:INAE}
		I^{\textsc{nae}}(\ubx) :=  \mathds{1} \{\ubx \textnormal{ is neither identically } 0 \textnormal{ nor }1 \}.
		\end{equation} 
		Let $\GGG = (V,F,E,\uL)$ be a \textsc{nae-sat} instance. An assignment $\ubx \in \{0,1\}^V$ is called a \textbf{solution}  if 
		\begin{equation}\label{eq:def:INAE G}
		I^{\textsc{nae}}(\bx;\GGG) :=
		\prod_{a\in F} I^{\textsc{nae}} \big((\bx_{v(e)} \oplus \tL_e)_{e\in \delta a}\big) =1,
		\end{equation}
		where $\oplus$ denotes the addition mod 2. Denote the set of solutions by $\textsf{SOL}(\GGG)\subset\{0,1\}^V$ and endow a graph structure on $\textsf{SOL}(\GGG)$ by connecting $\ubx \sim \ubx'$ if and only if they have a unit Hamming distance. Also, let $\textsf{CL}(\GGG)$ be the set of \textbf{clusters}, namely the connected components under this adjacency.
	\end{defn}

	\subsection{The frozen configuration, Bethe formula, and colorings}\label{subsec:model:sszrev}
	
	Our interpretation of the clusters is largely based on the ideas used in \cite{ssz22} and we review those concepts in this subsection. Readers who are familiar with \cite[Section 2]{ssz22} may skip this subsection, except for further concepts we introduce in Sections \ref{subsubsec:model:J} and \ref{subsubsec:model:free:comp}.

	\subsubsection{The frozen configuration}
	Our first step is to define \textit{frozen configuration} which is a basic way of encoding  clusters.
	We introduce \textit{free variable} which we denote by $\ff$, whose Boolean addition is defined as $\ff \oplus 0 := \ff =: \ff \oplus 1$. Recalling the definition of $I^{\textsc{nae}}$ \eqref{eq:def:INAE G}, a \textit{frozen configuration} is defined as follows.
	
	\begin{defn}[Frozen configuration]\label{def:frozenconfig}
		For $\GGG = (V,F,E, \uL)$, $\ux\in\{0,1,\ff \}^V$ is called a (valid) \textbf{frozen configuration} if the following conditions are satisfied:
		\begin{itemize}
			\item No \textsc{nae-sat} constraints are violated for $\ux$. That is, $I^{\textsc{nae}}(\ux;\GGG)=1$.
			
			\item For $v\in V$, $x_v\in \{0,1\}$ if and only if it is forced to be so. That is, $x_v\in \{0,1\}$ if and only if there exists $e\in \delta v$ such that $a(e)$ becomes violated if $\tL_e$ is negated, i.e., $I^{\textsc{nae}} (\ux; \GGG\oplus\one_e )=0$ where $\GGG\oplus\one_e$ denotes $\GGG$ with $\tL_e$ flipped. $x_v=\ff$ if and only if no such $e\in \delta v$ exists.
		\end{itemize}
	\end{defn}
	
	We record the following observations about frozen configurations. Details can be found in the previous works (\cite{dss16}, Section 2 and \cite{ssz22}, Section 2).
	
	\begin{enumerate}
		\item We can map a \textsc{nae-sat} solution $\ubx \in \{0,1 \}^V$ to a frozen configuration via the following \textit{coarsening} algorithm: If there is a variable $v$ such that $\bx_v\in\{0,1\}$ and $I^{\textsc{nae}}(\ubx;\GGG) = I^{\textsc{nae}}(\ubx\oplus \one_v ;\GGG)=1$ (i.e., flipping $\bx_v$ does not violate any clause), then set $x_v = \ff$. Iterate this process until additional modifications are impossible.
		
		\item All solutions in a cluster $\Upsilon\in \mathsf{CL}(\GGG)$ are mapped to the same frozen configuration $\ux\equiv \ux[\Upsilon] \in \{0,1,\ff\}^V$ via the coarsening algorithm. Here, two clusters may be mapped to the same frozen configuration $\ux$. However, in the case where the free subgraph of $\ux$ (see Definition \ref{def:freetree:basic} below) consists of connected components which have at most one cycle, there is at most one cluster that is mapped to $\ux$.

		\item Coarsening algorithm is not necessarily surjective. For instance, a typical instance of $\GGG$ does not have a cluster corresponding to all-free ($\ux \equiv \ff$).
	\end{enumerate}

	\subsubsection{Message configurations}\label{subsubsec:msg:config}
	Although the frozen configurations provides a representation of clusters, it does not tell us how to comprehend the size of clusters.  The main obstacle in doing so comes from the connected structure of free variables which can potentially be complicated. We now introduce the notions to comprehend this issue in a tractable way.
	
	\begin{defn}[Separating and forcing clauses]
		Let $\ux$ be a given frozen configuration on $\GGG = (V,F,E,\uL)$.  A clause $a\in F$ is called \textbf{separating} if there exist $e, e^\prime \in \delta a$ such that $\tL_{e}\oplus x_{v(e)} = 0, \quad \tL_{e^\prime} \oplus x_{v(e^\prime)}=1.$
		We say $a\in F$ is \textbf{non-separating} if it is not a separating clause. Moreover, $e\in E$ is called \textbf{forcing} if $\tL_{e}\oplus x_{v(e)} \oplus 1 = \tL_{e'}\oplus x_{v(e')}\in \{0,1\}$ for all $e'\in \delta a \setminus e$. We say $a\in F$ is \textbf{forcing}, if there exists $e\in \delta a$ which is a forcing edge. In particular, a forcing clause is also separating.
	\end{defn}
	
	Observe that a non-separating clause must be adjacent to at least two free variables, which is a fact frequently used throughout the paper.
	
	\begin{defn}[Free cycles]
		Let $\ux$ be a given frozen configuration on $\GGG = (V,F,E,\uL)$. A cycle in $\GGG$ (which should be of an even length) is called a \textbf{free cycle} if 
		\begin{itemize}
			\item Every variable $v$ on the cycle is $x_v = \ff$;
			
			\item Every clause $a$ on the cycle is non-separating.
		\end{itemize}
	\end{defn}
	
	Our primary interest is in the frozen configurations which do not contain any free cycles. If $\ux$ does not have any free cycle, then we can easily extend it to a \textsc{nae-sat} solution in $\ubx$ such that $\bx_v = x_v$ if $x_v\in\{0,1\}$, since \textsc{nae-sat} problem on a tree is always solvable.  
	
	\begin{defn}[Free trees: basic definition]\label{def:freetree:basic}
		Given a frozen configuration $\ux$ in $\GGG$, the \textbf{free subgraph} $H$ of $\ux$ is defined to be the subgraph of $\GGG$ induced by free variables and non-separating clauses. Each connected component of $H$ is called a \textbf{free piece} of $\ux$ and denoted by $\ttt^{\textnormal{in}}$. When $\ux$ does not contain any free cycles, the {\textbf{free tree}} $\ttt$ is defined by the union of the free piece $\ttt^{\textnormal{in}}$ and the \textit{half-edges} incident to $\ttt^{\textnormal{in}}$.
		
		For the pair $(\ux, \GGG)$, where $\ux$ has no free cycles, we write $\mathscr{F}(\ux,\GGG)$ to denote the collection of free trees inside $(\ux, \GGG)$. We write $V(\ttt)=V(\ttt^{\textnormal{in}})$, $F(\ttt)=F(\ttt^{\textnormal{in}})$ and $E(\ttt)=E(\ttt^{\textnormal{in}})$ to be the collection of variables, clauses and (full-)edges in $\ttt$. Moreover, define $\dot{\partial} \ttt$ (resp. $\hat{\partial} \ttt$) to be the collection of boundary half-edges that are adjacent to $F(\ttt)$ (resp. $V(\ttt)$), and write $\partial\ttt := \dot{\partial}\ttt \sqcup \hat{\partial} \ttt$

	\end{defn} 
	
	\begin{remark}
		In Definition \ref{def:freetree:bdlab}, we extend the definition of free trees by introducing the labeling scheme of boundary half-edges $\partial \ttt$ that characterizes a free tree. We also remark that in \cite{ssz22}, they called $\ttt$  the free piece and $\ttt^{\textnormal{in}}$ the free tree. We decided to swap the two definitions since $\ttt$ plays a more important role than $\ttt^{\textnormal{in}}$ in our paper.	
	\end{remark}

	We now introduce the \textit{message configuration}, which enables us to calculate the size of a free tree (that is, number of \textsc{nae-sat} solutions on $\ttt$ that extends $\ux$) by local quantities. The message configuration is given by $\utau = (\tau_e)_{e\in E} \in \mathscr{M}^E$ ($\mathscr{M}$ is defined below). Here, $\tau_e=(\dot{\tau}_e,\hat{\tau}_e)$, where $\dot{\tau}$ (resp. $\hat{\tau}$) denotes the message from $v(e)$ to $a(e)$ (resp. $a(e)$ to $v(e)$).
	A message will carry information of the structure of the free tree it belongs to. To this end, we first define the notion of \textit{joining} $l$ trees at a vertex (either variable or clause) to produce a new tree.
	Let $t_1,\ldots , t_l$ be  a collection of rooted bipartite factor trees satisfying the following conditions:
	\begin{itemize}
		\item Their roots $\rho_1,\ldots,\rho_l$ are all of the same type (i.e., either all-variables or all-clauses) and are all degree one.
		
		\item 
		If an edge in $t_i$ is adjacent to a degree one vertex, which is not the root, then the edge is called a \textbf{boundary-edge}. The rest of the edges are called \textbf{internal-edges}. For the case where $t_i$ consists of a single edge and a single vertex, we regard the single edge to be a boundary-edge.
		
		\item $t_1,\ldots,t_l$ are \textbf{boundary-labeled trees}, meaning that their variables, clauses, and internal edges are unlabeled (except we distinguish the root), but the boundary edges  are assigned with values from $\{0,1,\fs \}$, where $\fs$ stands for `separating'. 
	\end{itemize}
	
    Then, the joined tree $t \equiv  \textsf{j}(t_1,\ldots, t_l) $ is obtained by identifying all the roots as a single vertex $o$, and adding an edge which joins $o$ to a new root $o'$ of an opposite type of $o$ (e.g., if $o$ was a variable, then $o'$ is a clause). Note that $t= \textsf{j}(t_1,\ldots,t_l)$ is also a boundary-labeled tree, whose labels at the boundary edges are induced by those of $t_1,\ldots,t_l$.
	
	For the simplest trees that consist of single vertex and a single edge, we use $0$ (resp. $1$) to stand for the ones whose edge is labeled $0$ (resp. $1$): for the case of $\dot{\tau}$, the root is the clause, and for the case of $\hat{\tau}$, the root is the variable. Also,  if its root is a variable and its edge is labeled $\fs$, we write the tree as $\fs$. 
	
	We can also define the Boolean addition to a boundary-labeled tree $t$ as follows. For the trees $0,1$, the Boolean-additions $0\oplus \tL$, $1\oplus\tL$ are defined as above ($t\oplus \tL$), and we define $\fs \oplus \tL = \fs$ for $\tL\in\{0,1\}$. For the rest of the trees, $t \oplus 0 := t$, and $t\oplus 1$ is the boundary-labeled tree with the same graphical structure as $t$ and the labels of the boundary Boolean-added by $1$ (Here, we define $\fs \oplus 1 = \fs$ for the $\fs$-labels).
	
	\begin{defn}[Message configuration]\label{def:model:msg config}
		Let $\dot{\MMM}_0:= \{0,1,\star \}$ and $\hat{\MMM}_0:= \emptyset$. Supposing that $\dot{\MMM}_t, \hat{\MMM}_t$ are defined, we inductively define $\dot{\MMM}_{t+1}, \hat{\MMM}_{t+1}$ as follows. For $\hat{\utau} \in (\hat{\MMM}_t)^{d-1}$, $\dot{\utau} \in (\dot{\MMM}_t)^{k-1}$, we write $\{\hat{\tau}_i \}:= \{\hat{\tau}_1,\ldots,\hat{\tau}_{d-1} \}$ and similarly for $\{\dot{\tau}_i \}$. We define
		\begin{eqnarray}
		\hat{T}\left(\dot{\utau} \right):= 
		\begin{cases}
		0 & \{\dot{\tau}_i \} = \{  1 \};\\
		1 & \{ \dot{\tau }_i \} = \{0  \};\\
		\fs & \{\dot{\tau}_i \} \supseteq \{ 0,1 \};\\
		\star & \star \in \{ \dot{\tau}_i \}, \{0,1 \} \nsubseteq \{\dot{\tau}_i \};\\
		\mathsf{j}\left(\dot{\utau} \right) & \textnormal{otherwise}, 
		\end{cases}
		&  
		\dot{T}(\hat{\utau}) :=
		\begin{cases}
		0 & 0 \in \{\hat{\tau}_i \} \subseteq \hat{\MMM}_t \setminus\{ 1\};\\
		1 &  1\in \{\hat{\tau}_i\} \subseteq \hat{\MMM}_t \setminus \{0 \};\\
		\tz & \{0,1 \} \subseteq \{\hat{\tau}_i \};\\
		\star & \star \in \{\hat{\tau}_i \} \subseteq \hat{\MMM}_t \setminus \{0,1\};\\
		\mathsf{j}\left(\hat{\utau} \right) & \{\hat{\tau}_i \} \subseteq \hat{\MMM}_t\setminus \{0,1,\star \}.
		\end{cases}
		\end{eqnarray}
		Further, we set $\dot{\MMM}_{t+1} := \dot{\MMM}_t \cup \dot{T}( \hat{\MMM}_t^{d-1}  ) \setminus \{\tz \}$, and $\hat{\MMM}_{t+1}:= \hat{\MMM}_t \cup \hat{T}(\dot{\MMM}_t^{k-1} )$, and define $\dot{\MMM}$ (resp. $\hat{\MMM}$) to be the union of all $\dot{\MMM}_t$ (resp. $\hat{\MMM}_t$) and $\MMM:= \dot{\MMM} \times \hat{\MMM}$. Then, a (valid) \textbf{message configuration} on $\GGG=(V,F,E,\uL)$ is a configuration $\utau \in \MMM^E$ that satisfies (i) the local equations given by
		\begin{equation}\label{eq:def:localeq:msg}
		\tau_e = (\dot{\tau}_e, \hat{\tau}_e) = \left(\dot{T}\big(\hat{\utau}_{\delta v(e)\setminus e} \big), \tL_e \oplus  \hat{T} \big((\uL \oplus \dot{\utau})_{\delta a(e)\setminus e} \big) \right),
		\end{equation}
		for all $e\in E$, and $(ii)$ $(ii)$ if one element of $\{\dot{\tau}_e,\hat{\tau}_e\}$ equals $\star$ then the other element is in $\{0,1\}$.
	\end{defn} 
	
	In the definition, $\star$ is the symbol introduced to cover  cycles, and $\tz$ is an error message. See \cite[Figure 2]{ssz22} for an example of $\star$ message. 
	
	When a frozen configuration $\ux$ on $\GGG$ with no free cycles is given, we can construct a message configuration $\utau$ via the following procedure:
	\begin{enumerate}
		\item For a forcing edges $e$, set $\hat{\tau}_e=x_{v(e)}$. Also, for an edge $e\in E$, if there exists $e^\prime \in \delta v(e) \setminus e$ such that $\hat{\tau}_{e^\prime} \in \{0,1\}$, then set $\dot{\tau}_e=x_{v(e)}$.
		\item For an edge $e\in E$, if there exists $e_1,e_2\in \delta a(e)\setminus e$ such that $\{\tL_{e_1}\oplus\dot{\tau}_{e_1}, \tL_{e_2}\oplus\dot{\tau}_{e_2}\}=\{0,1\}$, then set $\hat{\tau}_e = \fs$.
		
		\item After these steps, apply the local equations \eqref{eq:def:localeq:msg} recursively to define $\dot{\tau}_e$ and $\hat{\tau}_e$ wherever possible.
		
		\item For the places where it is no longer possible to define their messages until the previous step, set them to be $\star$.
	\end{enumerate}
	
	In fact, the following lemma shows the relation between the frozen and message configurations. We refer to \cite[Lemma 2.7]{ssz22} for its proof.
	
	\begin{lemma}\label{lem:model:bij:frozen and msg}
		The mapping explained above defines a bijection
		\begin{equation}\label{eq:frz msg 1to1}
		\begin{Bmatrix}
		\textnormal{Frozen configurations } \ux \in \{0,1,\ff \}^V\\
		\textnormal{without free cycles}
		\end{Bmatrix}
		\quad
		\longleftrightarrow
		\quad
		\begin{Bmatrix}
		\textnormal{Message configurations}\\
		\utau \in \MMM^E
		\end{Bmatrix}.
		\end{equation}
	\end{lemma}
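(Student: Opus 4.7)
The plan is to establish the bijection by exhibiting an explicit inverse and verifying that both compositions are the identity. Write $\Phi$ for the procedure (1)--(4) described above taking a frozen configuration $\ux$ (without free cycles) to a message configuration, and define the candidate inverse $\Psi$ by setting $x_v := \hat{\tau}_e$ whenever some $e \in \delta v$ satisfies $\hat{\tau}_e \in \{0,1\}$, and $x_v := \ff$ otherwise.

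First I would verify that $\Phi$ is well defined as a map into $\MMM^E$. Steps (1) and (2) assign values in $\{0,1,\fs\}$ on the boundary between the free and forced regions, and these assignments are mutually consistent by Definition \ref{def:frozenconfig}: two forcing edges meeting at a variable cannot disagree, and the non-forcing clauses adjacent to the free region have exactly the structure encoded by $\fs$. Step (3) propagates messages inward along each free piece via the local equations \eqref{eq:def:localeq:msg}; since $\ux$ has no free cycles, every connected component of the free subgraph is a tree and the recursion terminates at finite depth, producing boundary-labelled trees in $\dot{\MMM} \cup \hat{\MMM}$. Any edges still undetermined after step (3) must lie on cycles of $\GGG$ passing through the free region. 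By Definition \ref{def:model:msg config}, $\star$ is precisely the output of $\dot{T}$ or $\hat{T}$ when the incoming messages avoid $\{0,1\}$-patterns but include at least one previously unresolved message, so the $\star$-assignment in step (4) is consistent with \eqref{eq:def:localeq:msg}.

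Next I would verify that $\Psi$ is well defined and that its output is a valid frozen configuration without free cycles. Consistency at variables is forced by \eqref{eq:def:localeq:msg}: if $e, e' \in \delta v$ have $\hat{\tau}_e, \hat{\tau}_{e'} \in \{0,1\}$, the first two cases of $\dot{T}$ applied at $v$ imply the two candidate values agree, so $x_v$ is unambiguous. The NAE-SAT constraints hold at every clause: non-separating clauses have at least two free neighbours which can be split, while separating or forcing clauses already carry both Boolean values among their forced neighbours. The condition ``$x_v \neq \ff$ iff $v$ is forced'' is built in through the first two cases of $\dot{T}$. Absence of free cycles follows because a free cycle would require every message around the cycle to lie outside $\{0,1,\fs\}$, forcing the tree-valued fifth cases of $\dot{T}$ and $\hat{T}$ to produce strictly increasing tree depth around the cycle, which is impossible; the only remaining option, a consistent assignment of $\star$, requires an external unresolved input that cannot exist on a purely free cycle. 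Finally $\Psi \circ \Phi = \id$ at forced variables is immediate from step (1), and $\Phi \circ \Psi = \id$ follows from the uniqueness of the solution to \eqref{eq:def:localeq:msg} given consistent boundary data.

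The main obstacle is the careful treatment of $\star$-messages in tandem with the no-free-cycle hypothesis. One must argue that every cycle of $\GGG$ meeting the free subgraph of $\ux$ contains a forcing edge or a separating clause acting as an ``escape'', so that $\star$ absorbs the cycle without violating \eqref{eq:def:localeq:msg}; conversely one must rule out that a given message configuration could covertly encode a free cycle. Both directions hinge on the trichotomy imposed by $\dot{T}$ and $\hat{T}$, namely that the three families of outputs (tree-valued, $\{0,1,\fs\}$-valued, and $\star$) are mutually exclusive on any given edge, and this combinatorial case analysis is where the proof demands the most care.
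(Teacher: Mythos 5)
The paper does not actually prove this lemma: it is quoted from the earlier work and the text explicitly defers to \cite{ssz16}, Lemma 2.7 for the argument. So there is no in-paper proof to compare against, and your proposal has to stand on its own. Its skeleton --- define $\Psi$ by reading off $x_v$ from the incoming $\hat{\tau}$-messages, check well-definedness of both maps, and verify the two compositions --- is the natural one and is surely close to what \cite{ssz16} does.

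The genuine gap is exactly at the step you yourself flag as the main obstacle, and your resolution of it does not work. You claim that a free cycle in $\Psi(\utau)$ is impossible because a consistent assignment of $\star$ around it ``requires an external unresolved input that cannot exist on a purely free cycle.'' But the local equations \eqref{eq:def:localeq:msg} are a pointwise consistency (fixed-point) condition, not a well-founded recursion with base cases: nothing in Definition \ref{def:model:msg config} requires a $\star$-message to be traceable back to an ``external'' source. Concretely, take a cycle $v_1 a_1 v_2 a_2 \cdots v_\ell a_\ell$ with all $v_i$ free and all $a_i$ non-separating, and set $\dot{\tau}_e=\hat{\tau}_e=\star$ on every cycle edge (propagating $\star$ outward through the rest of the free component as the equations demand). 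At each $v_i$ no incoming $\hat{\tau}$ lies in $\{0,1\}$, so $\dot{T}$ returns $\star$; at each $a_i$ the lit-adjusted inputs contain $\star$ but not both $0$ and $1$ (else $a_i$ would be separating), so $\hat{T}$ returns $\star$. Every local equation is satisfied, $\star\in\dot{\MMM}\cap\hat{\MMM}$, and $\Psi$ of this configuration is a frozen configuration \emph{with} a free cycle. Your increasing-tree-depth argument correctly kills the all-tree-valued case, but it says nothing about this self-consistent $\star$-loop; ruling it out (or showing such a $\utau$ is not actually a valid message configuration under the precise conventions of \cite{ssz16}) is the real content of the lemma and is missing. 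The same issue undermines your one-line justification of $\Phi\circ\Psi=\id$: ``uniqueness of the solution to \eqref{eq:def:localeq:msg} given consistent boundary data'' is precisely what fails if the $\star$-region of $\utau$ can be strictly larger than the set of edges left undetermined by steps (1)--(3) of $\Phi$ applied to $\Psi(\utau)$. You need a genuine structural argument characterizing exactly which edges can carry $\star$ in a valid message configuration, rather than an appeal to a recursion that the definitions do not provide.
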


	Next, we introduce a dynamic programming method based on \textit{belief propagation} to calculate the size of a free tree by local quantities from a message configuration. 
	
	\begin{defn}\label{def:msg:m}
		Let $\mathcal{P}\{0,1\} $ denote the space of probability measures on $\{0,1\}$. We define the mappings $\dot{\mm}:\dot{\MMM} \rightarrow \PP\{0,1\}$ and $\hat{\mm} : \hat{\MMM} \rightarrow \PP \{0,1\}$ as follows. For $\dot{\tau}\in\{0,1\}$ and $\hat{\tau}\in\{0,1\}$, let $\dot{\mm}[\dot{\tau}] =\delta_{\dot{\tau}}$, $\hat{\mm}[\hat{\tau}] = \delta_{\hat{\tau}}$. For $\dot{\tau}\in\dot{\MMM} \setminus \{0,1,\star\}$ and $\hat{\tau}\in\hat{\MMM} \setminus \{0,1,\star \}$, $\dot{\mm}[\dot{\tau}]$ and $\hat{\mm}[\hat{\tau}]$ are recursively defined:
		\begin{itemize}
			\item Let $\dot{\tau} = \dot{T}(\hat{\tau}_1,\ldots,\hat{\tau}_{d-1})$, with $\star \notin \{\hat{\tau}_i \}$. Define
			\begin{equation}\label{eq:def:bethe:bpmsg:dot}
			\dot{z}[\dot{\tau}] := \sum_{\bx\in\{0,1\} }
			\prod_{i=1}^{d-1} \hat{\mm}[\hat{\tau}_i](\bx), \quad 
			\dot{\mm}[\dot{\tau}](\bx) :=
			\frac{1}{\dot{z}[\dot{\tau}]} \prod_{i=1}^{d-1} \hat{\mm}[\hat{\tau}_i](\bx).
			\end{equation}
			Note that $\dot{z}[\dot{\tau}]$ and $\dot{\mm}[\dot{\tau}](\bx)$ are well-defined, since $(\hat{\tau}_1,\ldots, \hat{\tau}_{d-1})$ can be recoved from $\dot{\tau}$ up to permutation.

			\item Let $\hat{\tau} = \hat{T} ( \dot{\tau}_1,\ldots,\dot{\tau}_{k-1})$, with $\star \notin \{\dot{\tau}_i \}$. Define
			\begin{equation}\label{eq:def:bethe:bpmsg:hat}
			\hat{z}[\hat{\tau}] := 2-\sum_{\bx\in\{0,1\}} \prod_{i=1}^{k-1} \dot{\mm}[\dot{\tau}_i](\bx), \quad 
			\hat{\mm}[\hat{\tau}](\bx) :=
			\frac{1}{\hat{z}[\hat{\tau}]}
			\left\{1- \prod_{i=1}^{k-1} \dot{\mm}[\dot{\tau}_i](\bx) \right\} .
			\end{equation}
			Similarly as above, $\hat{z}[\hat{\tau}]$ and $	\hat{\mm}[\hat{\tau}]$ are well-defined.
		\end{itemize}
		Moreover, observe that inductively, $\dot{\mm}[\dot{\tau}], \hat{\mm}[\hat{\tau}] $ are not Dirac measures unless $\dot{\tau}, \hat{\tau}\in \{0,1\}$. 
	\end{defn} 
	It turns out that $\dot{\mm}[\star], \hat{\mm}[\star]$ can be arbitrary measures for our purpose, and hence we assume that they are uniform measures on $\{0,1\}$.
	
	The equations \eqref{eq:def:bethe:bpmsg:dot} and \eqref{eq:def:bethe:bpmsg:hat} are known as \textit{belief propagation} equations. We refer the detailed explanation to \cite[Section 2]{ssz22} where the same notions are introduced, or to \cite[Ch. 14]{mm09} for more fundamental background. From these quantities, we define the following local weights.
	\begin{equation}\label{eq:def:phi}
	\begin{split}
	&\bar{\varphi} (\dot{\tau}, \hat{\tau}) := \bigg\{ \sum_{\bx\in \{0,1\}} \dot{\mm}[\dot{\tau}](\bx) \hat{\mm}[\hat{\tau}](\bx) \bigg\}^{-1}; \quad \hat{\varphi}^{\textnormal{lit}} (\dot{\tau}_1,\ldots, \dot{\tau}_k):= 1-\sum_{\bx \in \{0,1\}} \prod_{i=1}^k \dot{\mm}[\dot{\tau}_i](\bx);\\
	&\dot{\varphi} (\hat{\tau}_1,\ldots,\hat{\tau}_d):=
	\sum_{\bx\in\{0,1\} }\prod_{i=1}^d \hat{\mm}[\hat{\tau}_i](\bx).
	\end{split}
	\end{equation}
	
	These weight factors can be used to derive the size of a free tree. Let $\ttt$ be a free tree in $\mathscr{F}(\ux,\GGG)$, and let $w^{\lit} (\ttt; \ux,\GGG)$ be the number of \textsc{nae-sat} solutions that extend $\ux$ to $\{0,1\}^{V(\ttt)}$.  Further, let $\textsf{size}(\ux,\GGG)$ denote the total number of \textsc{nae-sat} solutions that extend $\ux$ to $\{0,1\}^V.$
	\begin{lemma}[\cite{ssz22}, Lemma 2.9 and Corollary 2.10; \cite{mm09}, Ch. 14]\label{lem:size:msg and trees}
		Let $\ux$ be a frozen configuration on $\GGG=(V,F,E,\uL)$ without any free cycles, and $\utau$ be the corresponding message configuration. For a free tree $\ttt \in \mathscr{F}(\ux;\GGG)$, denote the number of \textsc{nae-sat} extensions of $\ux\mid_{\ttt}$ on $\ttt$ by $\textsf{size}(\ttt,\ux,\GGG)$. Then, we have that
		\begin{equation}\label{eq:free:tree:weight:lit}
		\textsf{size}(\ttt,\ux,\GGG)=w^{\textnormal{lit}}(\ttt,\ux,\GGG):= \prod_{v\in V(\ttt)} \left\{ \dot{\varphi}(\hat{\utau}_{\delta v}) \prod_{e\in \delta v} \bar{\varphi}(\tau_e) \right\} \prod_{a\in F(\ttt)} \hat{\varphi}^{\textnormal{lit}}\big( (\dot{\utau} \oplus \uL)_{\delta a} \big).
		\end{equation}
		Furthermore, let $\Upsilon \in \textsf{CL}(\GGG)$ be the cluster corresponding to $\ux$. Then, we have
		\begin{equation*}
		\textsf{size}(\ux;\GGG):=	|\Upsilon| = \prod_{v\in V} \dot{\varphi} (\hat{\utau}_{\delta v}) \prod_{a\in F} \hat{\varphi}^{\textnormal{lit}}\big((\dot{\utau}\oplus \uL)_{\delta a} \big) \prod_{e \in E} \bar{\varphi} (\tau_e).
		\end{equation*} 
	\end{lemma}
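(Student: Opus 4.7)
The formula is the standard Bethe partition-function identity applied to a tree, so the natural approach is to induct on the tree structure and invoke the fact that belief propagation is exact on trees. The key conceptual observation is that for each edge $e \in E(\ttt)\cup \partial \ttt$, the normalized measures $\dot{\mm}[\dot{\tau}_e]$ and $\hat{\mm}[\hat{\tau}_e]$ can be interpreted as the marginal distributions of $\bx_{v(e)}\in\{0,1\}$ induced by the partial NAE-\textsc{sat} problem on the variable-side (resp.\ clause-side) subtree of $e$, conditioned on the frozen values at the boundary; the associated partition functions $\dot{z}[\dot{\tau}_e]$ and $\hat{z}[\hat{\tau}_e]$ defined in \eqref{eq:def:bethe:bpmsg:dot}--\eqref{eq:def:bethe:bpmsg:hat} are precisely the corresponding normalizing constants. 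I would prove this claim by simultaneous induction on the depth of $\dot{\tau}$ and $\hat{\tau}$ in the recursive construction of $\dot{\MMM}$ and $\hat{\MMM}$, using the local equations \eqref{eq:def:localeq:msg} to reduce depth $t+1$ messages to products of depth $\leq t$ ones, and noting that the boundary labels $0,1,\fs$ correspond exactly to Dirac or uniform marginals coming from frozen/separated sides.

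Given this interpretation, the first identity \eqref{eq:free:tree:weight:lit} follows by rewriting
\[
\textsf{size}(\ttt,\ux,\GGG) \;=\; \prod_{v\in V(\ttt)} \dot{z}^{\textnormal{full}}_v \;\Big/\; \prod_{\textnormal{internal } e} \bar{z}_e,
\]
where the variable-node partition function $\dot{\varphi}(\hat{\utau}_{\delta v})$ combines all $d$ incident half-edge marginals (boundary ones being Dirac at the frozen value, internal ones inherited from the BP recursion), and $\hat{\varphi}^{\textnormal{lit}}((\dot{\utau}\oplus\uL)_{\delta a})$ is exactly the NAE constraint weight at clause $a$ when each incident edge carries marginal $\dot{\mm}[\dot{\tau}]$. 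The edge factor $\bar{\varphi}(\tau_e)$ is the standard Bethe correction that cancels the double-counting of marginals on both sides of an internal edge. A one-vertex base case plus induction on $|V(\ttt)|$, peeling off a leaf variable or clause of $\ttt$ and applying the local equation at that vertex, converts the two steps into algebraic identities that match the definitions of $\dot{\mm},\hat{\mm},\dot{z},\hat{z}$ term-by-term.

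The second identity, for the full cluster size, then follows from the free-tree formula by taking the product over all $\ttt\in\mathscr{F}(\ux,\GGG)$ and observing that outside the free subgraph the formula trivializes: for any non-free variable $v$, $\hat{\mm}[\hat{\tau}_e]$ is Dirac at $x_v$ on every incident edge, so $\dot\varphi(\hat{\utau}_{\delta v})=1$ and $\bar\varphi(\tau_e)=1$; for any separating (in particular forcing) clause $a$, the constraint is automatically satisfied by the frozen assignment and $\hat{\varphi}^{\textnormal{lit}}$ evaluates to $1$ as well. Boundary half-edges of adjacent free trees also contribute a factor of $1$ for the same reason, so the per-free-tree products assemble into a single product over all $v\in V$, $a\in F$, $e\in E$, giving the stated formula for $|\Upsilon|$. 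The bijection of Lemma \ref{lem:model:bij:frozen and msg} guarantees that $\utau$ is well-defined, and the absence of free cycles guarantees that $\mathscr{F}(\ux,\GGG)$ consists of genuine trees on which the Bethe formula is exact.

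The main obstacle will be bookkeeping rather than substance: carefully handling the boundary half-edges $\dot\partial\ttt$, $\hat\partial\ttt$ and verifying that the induction base correctly incorporates the Dirac marginals induced by forcing edges (so that $\hat{\mm}[\hat\tau]=\delta_{\hat\tau}$ really does encode the frozen boundary value), and checking that the recursive definitions \eqref{eq:def:bethe:bpmsg:dot}--\eqref{eq:def:bethe:bpmsg:hat} are consistent regardless of the arbitrary choice of $\dot\mm[\star],\hat\mm[\star]$ (which is harmless because $\star$-messages only occur on edges whose subtree contains a cycle, and these are excluded by the no-free-cycle hypothesis for edges inside $\ttt$; outside $\ttt$ they appear only on frozen sides where the marginals are already Dirac). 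With these bookkeeping points nailed down, the result reduces to the well-known exactness of the Bethe free energy on trees, as in Chapter 14 of \cite{mm09}.
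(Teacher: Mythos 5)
Your overall approach—invoking the exactness of the Bethe formula on trees and interpreting $\dot{\mm},\hat{\mm}$ as cavity marginals with $\dot z,\hat z$ their normalizers—is the standard route and matches the cited sources. But the step assembling the per-tree formula into the full cluster formula has a concrete error. You assert that for a non-free variable $v$, $\hat{\mm}[\hat\tau_e]$ is Dirac at $x_v$ on \emph{every} incident edge, so that $\dot\varphi(\hat\utau_{\delta v})=1$ and $\bar\varphi(\tau_e)=1$ individually. This is false. A frozen variable $v$ has at least one forcing edge with $\hat\tau_e = x_v$ (Dirac), but its non-forcing incident edges satisfy $\hat\tau_e = \fs$ and $\hat{\mm}[\fs]$ is uniform, not Dirac. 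Consequently $\dot\varphi(\hat\utau_{\delta v}) = \prod_{e\in\delta v}\hat{\mm}[\hat\tau_e](x_v) = 2^{-\#\{\textnormal{non-forcing }e\in\delta v\}}$ and $\bar\varphi(\tau_e) = \{\hat{\mm}[\hat\tau_e](x_v)\}^{-1} = 2$ on each such non-forcing edge; neither factor is $1$.

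What saves the formula is a cancellation you did not state: since $\dot{\mm}[\dot\tau_e] = \delta_{x_v}$ for every $e\in\delta v$ (it is the variable-to-clause messages that are Dirac, not the clause-to-variable ones), one has identically
\[
\dot\varphi(\hat\utau_{\delta v})\prod_{e\in\delta v}\bar\varphi(\tau_e)
= \prod_{e\in\delta v}\hat{\mm}[\hat\tau_e](x_v)\cdot\prod_{e\in\delta v}\{\hat{\mm}[\hat\tau_e](x_v)\}^{-1} = 1
\]
for each frozen $v$, together with $\hat\varphi^{\textnormal{lit}}=1$ at every separating clause. Your argument, as written, would never force you to notice this cancellation, and without it the assembly step does not close. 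You should replace the claim that each non-free factor equals $1$ with the correct statement that the combined variable-and-incident-edge weight equals $1$ per frozen variable; the rest of your plan (induction on the free tree, peeling a leaf, using the no-free-cycle hypothesis to exclude $\star$) is sound.
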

	
	\subsubsection{Colorings}\label{subsubsec:model:col}
	In this subsection, we introduce the \textit{coloring configuration}, which is a simplification of the message configuration. We give its definition analogously to in \cite{ssz22}.

	Recall the definition of $\MMM=\dot{\MMM}\times \hat{\MMM}, $ and let $\{ \fF \} \subset \MMM$ be defined by $\{\fF\}:= \{\tau \in \MMM: \, \dot{\tau} \notin \{ 0,1,\star\} \textnormal{ and } \hat{\tau}\notin \{ 0,1,\star\} \}$.
	Note that $\{\fF \}$ corresponds to the messages on the edges of free trees, except the boundary edges labeled either 0 or 1.
	Define $\Omega :=  \{\rr_0, \rr_1, \bb_0, \bb_1\} \cup \{\fF \}$ and let $\textsf{S}: \MMM \to \Omega$ be the projections given by
	\begin{equation*}
	\textsf{S}(\tau) := 
	\begin{cases}
	\rr_0 & \hat{\tau}=0;\\
	\rr_1 & \hat{\tau}=1;\\
	\bb_0 & \hat{\tau } \neq 0, \, \dot{\tau}=0;\\
	\bb_1 & \hat{\tau} \neq 1, \, \dot{\tau}=1;\\
	\tau & \textnormal{otherwise, i.e., } \tau \in \{ \fF \},
	\end{cases}
	\end{equation*}
	For convenience, we abbreviate $\{\rr \}= \{\rr_0, \rr_1 \}$ and $\{\bb \} = \{\bb_0, \bb_1 \}$, and  define the Boolean addition  as $\bb_\bx \oplus \tL := \bb_{\bx \oplus \tL}$, and similarly for $\rr_\bx$. Also, for $\sigma \in \{ \rr,\bb,\fs\}$, we set $\dot{\sigma} :=  \sigma=:\hat{\sigma}$.
		
		
		
	\begin{defn}[Colorings]\label{def:model:col}
		For $\sig \in \Omega ^d$, let
		\begin{equation*}
		\dot{I}(\sig) : =
		\begin{cases}
		1 & \rr_0 \in \{\sigma_i\} \subseteq \{\rr_0, \bb_0 \};\\
		1& \rr_1 \in \{\sigma_i\} \subseteq \{\rr_1,\bb_1 \};\\
		1 & \{\sigma_i \} \subseteq \{ {\fF} \}, \textnormal{ and } \dot{\sigma}_i = \dot{T}\big( (\hat{\sigma}_j)_{j\neq i} \big), \ \forall i;\\
		0 & \textnormal{otherwise}.
		\end{cases}
		\end{equation*}
		Also, define $	\hat{I}^{\textnormal{lit}}: \Omega^k \to \mathbb{R}$ to be
		\begin{equation*}
		\begin{split}
		\hat{I}^{\textnormal{lit}}(\sig)&:=
		\begin{cases}
		1 & \exists i:\, \sigma_i = \rr_0 \textnormal{ and } \{\sigma_j \}_{j\neq i} = \{\bb_1 \};\\
		1 & \exists i:\, \sigma_i = \rr_1 \textnormal{ and } \{\sigma_j \}_{j\neq i} = \{\bb_0 \};\\
		1 & \{\bb \} \subseteq \{\sigma_i \} \subseteq \{\bb\} \cup \{\sigma \in \{\fF \}: \,\hat{\sigma}=\fs \};\\
		1 & \{\sigma_i \} \subseteq \{ \bb_0, {\fF} \}, \, |\{i: \sigma_i\in  \{{\fF} \}\} | \ge 2 , \textnormal{ and } \hat{\sigma}_i = \hat{T}((\dot{\sigma}_j)_{j\neq i}; 0), \ \forall i \textnormal{ s.t. } \sigma_i \neq \bb_0;\\
		1 & \{\sigma_i \} \subseteq \{ \bb_1, {\fF} \}, \, |\{i: \sigma_i\in  \{{\fF} \}\}| \ge 2 , \textnormal{ and } \hat{\sigma}_i = \hat{T}((\dot{\sigma}_j)_{j\neq i}; 0), \ \forall i \textnormal{ s.t. } \sigma_i \neq \bb_1;\\
		0 & \textnormal{otherwise}.
		\end{cases}
		\end{split}
		\end{equation*}
		On a \textsc{nae-sat} instance $\GGG = (V,F,E,\uL)$, $\sig\in \Omega^E$ is a (valid) \textbf{coloring} if $\dot{I}(\sig_{\delta v})=\hat{I}^{\textnormal{lit}}((\sig\oplus\uL)_{\delta a}) =1 $ for all $v\in V, a\in F$. 
			
	\end{defn}
	
	Given \textsc{nae-sat} instance $\GGG$, it was shown in \cite[Lemma 2.12]{ssz22} that there is a bijection
	\begin{equation}\label{eq:msg col 1to1}
	\begin{Bmatrix}
	\textnormal{message configurations}\\
	\utau \in \MMM^E
	\end{Bmatrix}
	\ \longleftrightarrow \
	\begin{Bmatrix}
	\textnormal{colorings} \\
	\sig \in \Omega^E
	\end{Bmatrix}
	\end{equation}
	The weight elements for coloring, denoted by $\dot{\Phi}, \hat{\Phi}^{\textnormal{lit}}, \bar{\Phi}$, are defined as follows. For $\sig \in \Omega^d,$ let
	\begin{equation*}
	\begin{split}
	\dot{\Phi}(\sig) := 
	\begin{cases}
	\dot{\varphi}(\hat{\sig}) & \dot{I}(\sig) =1 \textnormal{ and } \{\sigma_i \} \subseteq \{\fF \};\\
	1 & \dot{I}(\sig) =1 \textnormal{ and } \{\sigma_i \}\subseteq \{\bb, \rr \};\\
	0 & \textnormal{otherwise, i.e., } \dot{I}(\sig)=0.
	\end{cases}
	\end{split}
	\end{equation*}
	For $\sig \in \Omega^k$, let
	\begin{equation*}
	\hat{\Phi}^{\textnormal{lit}}(\sig) :=
	\begin{cases}
	\hat{\varphi}^\lit((\dot{\tau}(\sigma_i))_i)
	& \hat{I}^{\textnormal{lit}} (\sig) = 1 \textnormal{ and } \{\sigma_i \} \cap \{\rr \} = \emptyset;\\
	1 & \hat{I}^{\textnormal{lit}}(\sig) = 1 \textnormal{ and } \{\sigma_i \} \cap \{\rr \} \neq \emptyset;\\
	0 & \textnormal{otherwise, i.e., } \hat{I}^{\textnormal{lit}}(\sig)=0.
	\end{cases}
	\end{equation*}
	(If $\sigma \notin \{\rr \}, $ then $\dot{\tau}(\sigma_i)$ is well-defined.)
	Lastly, let
	\begin{equation*}
	\bar{\Phi}(\sigma) := 
	\begin{cases}
	\bar{\varphi} (\sigma) & \sigma \in \{\fF \};\\
	1 & \sigma \in \{\rr, \bb \}.
	\end{cases}
	\end{equation*}
	Note that if $\hat{\sigma}=\fs$, then $\bar{\varphi}(\dot{\sigma},\hat{\sigma})=2$ for any $\dot{\sigma}$. 
	The rest of the details explaining the compatibility of $\varphi$ and $\Phi$ can be found in \cite[Section 2.4]{ssz22}.
	Then, the formula for the cluster size we have  seen in Lemma \ref{lem:size:msg and trees} works the same for the coloring configuration.
	
	\begin{lemma}[\cite{ssz22}, Lemma 2.13]\label{lem:model:size:col}
		Let 	$\ux \in \{0,1,\ff \}^V$ be a frozen configuration on $\GGG=(V,F,E,\uL)$, and let $\sig \in \Omega^E$ be the corresponding coloring. Define
		\begin{equation*}
		w_\GGG^{\textnormal{lit}}(\sig):= \prod_{v\in V}\dot{\Phi}(\sig_{\delta v}) \prod_{a\in F} \hat{\Phi}^{\textnormal{lit}} ((\sig \oplus \uL)_{\delta a}) \prod_{e\in E} \bar{\Phi}(\sigma_e).
		\end{equation*}
		Then, we have $\textsf{size}(\ux;\GGG) = w_\GGG^{\textnormal{lit}}(\sig)$.\
	\end{lemma}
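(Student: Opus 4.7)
The plan is to reduce this directly to Lemma \ref{lem:size:msg and trees} by carrying the identity across the bijection \eqref{eq:msg col 1to1}. Let $\utau\in\MMM^E$ be the message configuration associated with $\ux$, and let $\sig$ be the coloring obtained from $\utau$ via \eqref{eq:msg col 1to1}. By Lemma \ref{lem:size:msg and trees},
\begin{equation*}
\textsf{size}(\ux;\GGG)\;=\;\prod_{v\in V}\dot{\varphi}(\hat{\utau}_{\delta v})\prod_{a\in F}\hat{\varphi}^{\textnormal{lit}}\big((\dot{\utau}\oplus\uL)_{\delta a}\big)\prod_{e\in E}\bar{\varphi}(\tau_e),
\end{equation*}
so it suffices to check, factor by factor, that each of $\dot{\varphi}$, $\hat{\varphi}^{\textnormal{lit}}$, and $\bar{\varphi}$ agrees with its $\Phi$-counterpart given by Definition of $\dot{\Phi},\hat{\Phi}^{\textnormal{lit}},\bar{\Phi}$.

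The verification splits naturally into three cases according to the local coloring pattern. First, at any variable $v$ with $\sig_{\delta v}\subseteq\{\fF\}$, and any clause $a$ with $(\sig\oplus\uL)_{\delta a}\cap\{\rr\}=\emptyset$ and at least two $\fF$-edges, the defining formulas match tautologically: $\dot{\Phi}(\sig_{\delta v})=\dot{\varphi}(\hat{\utau}_{\delta v})$, $\hat{\Phi}^{\textnormal{lit}}=\hat{\varphi}^{\textnormal{lit}}$, and $\bar{\Phi}(\sigma_e)=\bar{\varphi}(\tau_e)$ for every internal $\fF$-edge. Second, on boundary $\bb$-edges of a free tree we have $\hat{\tau}=\fs$, for which $\bar{\varphi}(\dot\tau,\fs)=2$ while $\bar{\Phi}(\bb_\bx)=1$; I claim the missing factor of $2$ is absorbed by the unique incident non-separating clause, where $\hat{\varphi}^{\textnormal{lit}}$ includes the Dirac $\dot{\mm}[0]$ or $\dot{\mm}[1]$ factor whereas $\hat{\Phi}^{\textnormal{lit}}$ reads only the $\fF$-messages; a small bookkeeping shows the two expressions agree precisely (using that non-separating clauses adjacent to $\bb$ are $\fs$-valued on their free side).

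Third, one must handle the ``fully frozen'' parts of the graph, namely variables/clauses outside all free trees and edges carrying $\{\rr,\bb\}$. Here the $\Phi$-weights are $1$ by definition, and I need to confirm that the $\varphi$-weights are also $1$. For a $\{\rr,\bb\}$-edge, both $\dot{\mm}[\dot\tau]$ and $\hat{\mm}[\hat\tau]$ are Dirac and consistent (by the local equations \eqref{eq:def:localeq:msg}), so $\bar{\varphi}(\tau_e)=1$. For a frozen variable $v$, all incident $\hat{\mm}[\hat{\tau}_e]$ are Dirac agreeing on a single value, so $\dot{\varphi}(\hat{\utau}_{\delta v})=1$. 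For a forcing clause $a$, the formula \eqref{eq:def:phi} for $\hat{\varphi}^{\textnormal{lit}}$ similarly reduces to $1$ because one literal already eliminates both assignments of the free variables in the product. For a separating clause all of whose edges are $\{\bb,\rr\}$, an identical Dirac-collapse argument gives $\hat{\varphi}^{\textnormal{lit}}=1$.

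Multiplying these equalities over all $v,a,e$ yields $\textsf{size}(\ux;\GGG)=w_\GGG^{\textnormal{lit}}(\sig)$. The main obstacle is the middle case: correctly accounting for the factor of $2$ at each $\bb$-boundary edge of a free tree, and verifying that no double counting arises when a single non-separating clause has several incident $\bb$-edges. This is the one place where the translation between messages (which remember the $\fs$ label on boundary half-edges) and colorings (which collapse $\fs$ into $\bb$) is nontrivial, and it has to be checked by a direct case-by-case comparison of \eqref{eq:def:bethe:bpmsg:hat} with the definition of $\hat{\Phi}^{\textnormal{lit}}$.
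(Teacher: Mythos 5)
Your overall plan---push the identity of Lemma \ref{lem:size:msg and trees} through the bijection \eqref{eq:msg col 1to1} and compare the $\Phi$- and $\varphi$-factors locally---is the right one (the paper itself only cites \cite{ssz16} for this statement), and your first case is fine. However, the local bookkeeping in your second and third cases is incorrect, and the cancellation you propose is not the one that actually occurs. Concretely: (i) the edges carrying the message $\hat{\tau}_e=\fs$ are those of $\hat{\partial}\ttt$, running from a \emph{free} variable to a separating clause; under $\textsf{S}$ these are colored as elements of $\{\fF\}$ (with $\hat{\sigma}=\fs$), so there $\bar{\Phi}(\sigma_e)=\bar{\varphi}(\tau_e)=2$ and no discrepancy arises. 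The $\bb$-colored boundary edges of a free tree are those of $\dot{\partial}\ttt$, where $\hat{\tau}_e$ is a nontrivial joined tree (not $\fs$) and $\bar{\varphi}(\tau_e)=\hat{\mm}[\hat{\tau}_e](x_{v(e)})^{-1}$, which is neither $1$ nor $2$ in general. (ii) At a frozen variable only the \emph{forcing} edges deliver Dirac messages; edges to non-separating clauses deliver tree-valued $\hat{\tau}$ and edges to separating non-forcing clauses deliver $\fs$ (with $\hat{\mm}[\fs]$ uniform), so $\dot{\varphi}(\hat{\utau}_{\delta v})=\prod_{e\in\delta v}\hat{\mm}[\hat{\tau}_e](x_v)$, which is strictly less than $1=\dot{\Phi}(\sig_{\delta v})$ in general. (iii) At a forcing clause $a$ with forcing edge $e$, the forced variable's outgoing message $\dot{\tau}_e$ is non-Dirac whenever $v(e)$ has a unique forcing edge, and one computes $\hat{\varphi}^{\lit}((\dot{\utau}\oplus\uL)_{\delta a})=\dot{\mm}[\dot{\tau}_e](x_{v(e)})\neq 1$, whereas $\hat{\Phi}^{\lit}=1$ there. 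So three of the individual equalities you assert are false, and the mechanism ``factor of $2$ per $\bb$-boundary edge absorbed by the adjacent non-separating clause'' cannot be carried out: for non-separating clauses (and for separating non-forcing clauses) one has $\hat{\Phi}^{\lit}((\sig\oplus\uL)_{\delta a})=\hat{\varphi}^{\lit}((\dot{\utau}\oplus\uL)_{\delta a})$ exactly, since $\dot{\tau}(\sigma_i)=\dot{\tau}_i$, so the clauses have nothing left to absorb.

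The cancellations in fact pair each frozen variable with its incident $\bb$-edges, and each forcing clause with its $\rr$-edge. Since $\dot{\tau}_e=x_v$ on a $\bb$-edge and $\hat{\tau}_e=x_v$ on an $\rr$-edge, one has $\bar{\varphi}(\tau_e)=\hat{\mm}[\hat{\tau}_e](x_v)^{-1}$ in the first case and $\bar{\varphi}(\tau_e)=\dot{\mm}[\dot{\tau}_e](x_v)^{-1}$ in the second, whence
\begin{equation*}
\frac{\dot{\Phi}(\sig_{\delta v})}{\dot{\varphi}(\hat{\utau}_{\delta v})}\prod_{e\in\delta v:\,\sigma_e\in\{\bb\}}\frac{\bar{\Phi}(\sigma_e)}{\bar{\varphi}(\tau_e)}=1
\qquad\textnormal{and}\qquad
\frac{\hat{\Phi}^{\lit}((\sig\oplus\uL)_{\delta a})}{\hat{\varphi}^{\lit}((\dot{\utau}\oplus\uL)_{\delta a})}\cdot\frac{\bar{\Phi}(\sigma_{e})}{\bar{\varphi}(\tau_{e})}=1
\end{equation*}
for every frozen $v$ and every forcing clause $a$ with forcing edge $e$ (the forcing edges contribute trivial factors $\hat{\mm}[x_v](x_v)=1$ to the first product). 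Together with the exact agreement at free variables, at all non-forcing clauses, and on all $\{\fF\}$-edges, the total products coincide. As written, your verification would fail at cases 2 and 3 and needs to be redone along these lines.
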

	Among the valid frozen configurations, we can ignore the contribution from the configurations with too many free or red colors, as observed in the following lemma.
		\begin{lemma}[\cite{dss16}, Proposition 2.2; \cite{ssz22}, Lemma 3.3]\label{lem:free:red}
		For a frozen configuration $\ux \in \{0,1,\ff \}^{V}$, let $\rr(\ux)$ count the number of forcing edges and $\ff(\ux)$ count the number of free variables.
		There exists a constant $c_k>0$ that only depends on $k$ such that for $k\geq k_0$, $\alpha \in [\alpha_{\textsf{lbd}}, \alpha_{\textsf{ubd}}]$, and $\lambda \in(0,1]$, 
		\begin{equation*}
		\sum_{\ux \in \{0,1,\ff\}^V} \E \left[	\textsf{size}(\ux;\GGG)^\lambda\right] \mathds{1}\left\{ \frac{\rr(\ux)}{nd} \vee \frac{\ff(\ux)}{n}> \frac{7}{2^k} \right\}  \le e^{-c_k n},
		\end{equation*}
		where $\textsf{size}(\ux;\GGG)$ is the number of \textsc{nae-sat} solutions $\ubx \in \{0,1\}^{V}$ which extends $\ux\in \{0,1,\ff\}^{V}$.
	\end{lemma}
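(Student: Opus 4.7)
The plan is to execute a first-moment computation of the left-hand side in the coloring formalism of Section \ref{subsec:model:sszrev}, and then to show that the resulting variational problem has strictly negative value whenever $\ff(\ux)/n$ or $\rr(\ux)/(nd)$ exceeds $7/2^k$. Since the lemma is already known, I only outline the re-derivation.

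First, combining the bijection between frozen and coloring configurations (Lemma \ref{lem:model:bij:frozen and msg} together with \eqref{eq:msg col 1to1}) with Lemma \ref{lem:model:size:col}, I would rewrite
\[
\sum_{\ux \in \{0,1,\ff\}^V} \E\bigl[\textsf{size}(\ux;\GGG)^\la \, \mathds{1}_{\textnormal{bad}}\bigr] \;=\; \E\sum_{\sig \in \Omega^E \textnormal{ valid}} w_\GGG^{\lit}(\sig)^\la \, \mathds{1}_{\textnormal{bad}},
\]
where $\mathds{1}_{\textnormal{bad}}$ is the indicator of $\{\ff(\ux)/n \vee \rr(\ux)/(nd) > 7/2^k\}$, $\ff(\ux)$ coincides with the number of variables all of whose incident edges carry an $\fF$-color, and $\rr(\ux)$ coincides with the number of $\rr$-colored edges. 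I would then stratify by the empirical ``neighborhood profile'' $H = (\dot H, \hat H)$ recording, for each $\sig_{\delta v}$-type and each $(\sig\oplus \uL)_{\delta a}$-type, the fraction of variables resp.\ clauses of that type.

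Second, a standard configuration-model count (Stirling on the type counts plus a permanent-style count of perfect matchings between consistent half-edges) converts the expectation into $\exp(n \Phi_\la(H) + O(\log n))$ for each profile $H$, where
\[
\Phi_\la(H) \;=\; \Ent(\dot H) + \alpha \, \Ent(\hat H) - (\text{matching entropy}) + \la \sum \dot H \log \dot\Phi + \la\alpha \sum \hat H \log \hat\Phi^{\lit} + \la \sum \bar H \log \bar\Phi,
\]
the last three terms encoding the $\la$-tilted weights from Lemma \ref{lem:model:size:col}. Since the event $\mathds{1}_{\textnormal{bad}}$ only depends on the aggregated densities $(f,r) = (\ff(\ux)/n,\, \rr(\ux)/(nd))$, I would contract $\Phi_\la$ to a two-variable function $\psi_\la(f,r)$ by marginalizing over the remaining profile coordinates.

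Third, the analytic core is to show that for every $k\geq k_0$, every $\alpha\in[\alpha_{\textsf{lbd}},\alpha_{\textsf{ubd}}]$, and every $\la\in(0,1]$, the unique maximizer $(f^\star_\la, r^\star_\la)$ of $\psi_\la$ satisfies $f^\star_\la, r^\star_\la = \Theta(2^{-k})$ (the typical densities from the subcritical branching-process heuristic for free components and forcing events), and that $\psi_\la(f,r) \le -c$ whenever $f\vee r \ge 7/2^k$. This is carried out via a large-$k$ Taylor expansion of $\psi_\la$ around its maximizer; the estimates are exactly those of \cite[Prop.~2.2]{dss16} and \cite[Lem.~3.3]{ssz16}. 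Summing the resulting $e^{-cn}$ over the polynomially many admissible profiles then yields the claim. The main obstacle is the uniformity over $\la\in(0,1]$: the $\la$-tilting shifts the maximizer of $\psi_\la$, so one must verify that even the tilted optimum remains strictly inside $\{f\vee r < 7/2^k\}$. It is precisely this uniformity requirement that forces the threshold to be the generous $7/2^k$ rather than a sharper multiple of $2^{-k}$.
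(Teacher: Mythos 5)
Your first step undercounts. The rewrite via Lemma \ref{lem:model:bij:frozen and msg} and \eqref{eq:msg col 1to1}, together with Lemma \ref{lem:model:size:col}, yields a bijection only with frozen configurations \emph{without free cycles}; frozen configurations with free cycles are also in the domain of the left-hand sum and generically have $\textsf{size}(\ux;\GGG)\ge 1$ (the free subgraph is bipartite, so any free cycle has even length and is \textsc{nae}-satisfiable). The passage from $\sum_{\ux}\E[\textsf{size}(\ux;\GGG)^\lambda]\mathds{1}_{\textnormal{bad}}$ to $\E\sum_{\sigma\in\Omega^E}w_\GGG^{\lit}(\sigma)^\lambda\mathds{1}_{\textnormal{bad}}$ therefore drops a nonnegative contribution, which is the wrong direction when you want an upper bound. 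A correct account must either work in the component-coloring space $\Omega_{\textnormal{com}}$ of Section \ref{subsubsec:model:free:comp}, which is in one-to-one correspondence with \emph{all} frozen configurations, or separately dominate the cyclic contribution.

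There are three further problems. Deferring the analytic negativity bound to ``exactly those of \cite[Prop.~2.2]{dss16} and \cite[Lem.~3.3]{ssz16}'' is circular: those are precisely the references from which the lemma is quoted, so invoking them as a black box proves nothing. The concern about uniformity over $\lambda\in(0,1]$ is unnecessary: since $\textsf{size}(\ux;\GGG)$ is a nonnegative integer, $\textsf{size}(\ux;\GGG)^\lambda\le\textsf{size}(\ux;\GGG)$ for every $\lambda\le 1$, so the entire claim reduces to the $\lambda=1$ case and no tilted optimum needs to be located. Finally, stratifying by the full coloring profile $H$ and claiming a uniform $\exp\{n\Phi_\lambda(H)+O(\log n)\}$ Stirling estimate is exactly what the paper flags as unavailable over the infinite spin space $\Omega$ (see the remarks just after \eqref{eq:1stmo dec by H}); you would need either to truncate and argue the bound lifts, or to replace $H$ by the finite-dimensional boundary profile of Definition \ref{def:empirical:boundary}.
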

	
	Thus, our interest is in counting the number of frozen configurations and colorings such that the fractions of red edges and the fraction of free variables are bounded by $7/2^k$. To this end, we define
	\begin{equation}\label{eq:def:Z:lambda}
	\begin{split}
	&\bZ_{\la}:=	\sum_{\ux \in \{0,1,\ff\}^V} 	\textsf{size}(\ux;\GGG)^\lambda \mathds{1}\left\{ \frac{\rr(\ux)}{nd} \vee \frac{\ff(\ux)}{n}\leq \frac{7}{2^k} \right\};\quad \bZ_{\lambda}^{\tr}:= \sum_{\sig \in \Omega^E}w_\GGG^{\textnormal{lit}} (\sig)^\lambda \mathds{1}\left\{ \frac{\rr(\sig)}{nd} \vee \frac{\ff(\sig)}{n} \le \frac{7}{2^k} \right\} ;\\
	&\bZ_{\lambda,s}:= 	\sum_{\ux \in \{0,1,\ff\}^V} 	\textsf{size}(\ux;\GGG)^\lambda \mathds{1}\left\{ \frac{\rr(\ux)}{nd} \vee \frac{\ff(\ux)}{n}\leq \frac{7}{2^k}, ~~~~~e^{ns} \le \textsf{size}(\ux;\GGG) < e^{ns+1}\right\};\\
	&\bZ_{\lambda,s}^{\tr}:= \sum_{\sig\in \Omega^E} w_\GGG^{\textnormal{lit}}(\sig)^{\la} \mathds{1}\left\{ \frac{\rr(\sig)\vee \fs(\hat{\sig})}{nd} \le \frac{7}{2^k},~~~~~e^{ns} \le w_\GGG^{\textnormal{lit}}(\sig) < e^{ns+1} \right\},
	\end{split}
	\end{equation}
	where $\rr(\sig)$ count the number of red edges and $\ff(\sig)$ count the number of free variables of valid $\sig \in \Omega^E$. The superscript $\tr$ is to emphasize that the above quantities count the contribution from frozen configurations whose subgraph consists solely of free trees, i.e. no free cycles (Recall that by Lemma \ref{lem:model:bij:frozen and msg} and \eqref{eq:msg col 1to1}, the space of coloring has a bijective correspondence with the space of frozen configurations without free cycles). Similarly, recalling the definition of $\overline{\bN}_s$ in \eqref{eq:def:bN:elementary}, total number of clusters of size in $[e^{ns},e^{ns+1})$ , $\bN_s$ is defined to be
	\begin{equation*}
	    \bN_s:=\bZ_{0,s}\quad\textnormal{and}\quad \bN_s^{\tr}:=\bZ^{\tr}_{0,s}.
	\end{equation*}
	Hence, $e^{-n\la s-\la}\bZ_{\la,s}\leq \bN_{s}\leq e^{-n\la s} \bZ_{\la,s}$ holds. For the purpose of calculating $\E \bZ_{\la,s}$ up to constant, we will see in Proposition \ref{prop:ratio:uni:1stmo} that it suffices to calculate $\E \bZ_{\lambda,s}^{\tr}$.
	\begin{defn}[Truncated colorings]
		Let $1\leq L< \infty$,  $\ux $ be a frozen configuration on $\GGG$ without free cycles and $\sig\in \Omega^E$ be the coloring corresponding to $\ux$. Recalling the notation $\mathscr{F}(\ux;\GGG)$ (Definition \ref{def:freetree:basic}), we say $\sig$ is a (valid) $L$-\textbf{truncated coloring} if $|V(\ttt)| \le L$ for all $\ttt \in \mathscr{F}(\ux;\GGG)$. For an equivalent definition, let $|\sigma|:=v(\dot{\sigma})+v(\hat{\sigma})-1$ for $\sigma \in \{\fF\}$, where $v(\dot{\sigma})$ (resp. $v(\hat{\sigma})$) denotes the number of variables in $\dot{\sigma}$ (resp. $\hat{\sigma}$). Define $\Omega_L := \{\rr,\bb \}\cup\{\fF \}_L$, where $\{\fF \}_L$ be the collection of $\sigma\in \{\fF \}$ such that $|\sigma| \le L$. Then,  $\sig$ is a (valid) $L$-truncated coloring if $\sig \in \Omega_L^E$.
		
		To clarify the names, we often call the original coloring $\sig\in \Omega^E$ the \textbf{untruncated coloring}.
	\end{defn}
	Analogously to \eqref{eq:def:Z:lambda}, define the truncated partition function
	\begin{equation*}
	\begin{split}
	&\bZ_{\lambda}^{(L),\tr} := \sum_{\sig \in \Omega_L^E}w_\GGG^{\textnormal{lit}} (\sig)^\lambda \mathds{1}\left\{ \frac{\rr(\sig)}{nd} \vee \frac{\ff(\sig)}{n} \le \frac{7}{2^k} \right\} ;\\
	&\bZ_{\lambda,s}^{(L), \tr}:= \sum_{\sig\in \Omega_L^E} w_\GGG^{\textnormal{lit}}(\sig)^{\la} \mathds{1}\left\{ \frac{\rr(\sig)}{nd} \vee \frac{\ff(\sig)}{n} \le \frac{7}{2^k},~~~~~e^{ns} \le w_\GGG^{\textnormal{lit}}(\sig) < e^{ns+1} \right\}.
	\end{split}
	\end{equation*}
	
	\subsubsection{Averaging over the literals}\label{subsubsec:model:avglit:1stmo}
	Let $\GGG=(V,F,E,\uL)$ be a \textsc{nae-sat} instance and $\GG=(V,F,E)$ be the factor graph without the literal assignment. As the first step towards computing the moment of $\bZ_\lambda^{\tr}$ (or $\bZ_{\lambda,s}^{\tr}$), we attempt to calculate
	$\mathbb{E} [\bZ_\lambda^{\tr} | \GG ]$. That is, taking the average over the literal assignment. To this end, we study $\mathbb{E}^{\textnormal{lit}}[w_\GGG^{\textnormal{lit}}(\sig) ]$ for a given coloring $\sig \in \Omega^E,$ where $\mathbb{E}^{\textnormal{lit}}$ denotes the expectation over the literals $\uL \sim \textnormal{Unif} [\{0,1\}^E]$. From Lemma \ref{lem:model:size:col}, we can write
	\begin{equation}\label{def:w}
	w_{\GGG}(\sig)^\la:=\E^{\textnormal{lit}} [ w_\GGG^{\textnormal{lit}}(\sig)^\lambda] = \prod_{v\in V} \dot{\Phi}(\sig_{\delta v})^\lambda \prod_{a\in F} \E^{\textnormal{lit}} \hat{\Phi}^{\textnormal{lit}}((\sig\oplus \uL)_{\delta a})^\lambda \prod_{e\in E} \bar{\Phi}(\sigma_e)^\lambda.
	\end{equation}
	Define $\hat{\Phi}(\sig_{\delta a})^\lambda := \E^{\textnormal{lit}}[ \hat{\Phi}^{\textnormal{lit}}((\sig\oplus\uL)_{\delta a})^\lambda]. $ To give a more explicit expression of this formula, we recall a property of $\hat{\Phi}^{\lit}$ from \cite{ssz22}.
	\begin{lemma}[\cite{ssz22}, Lemma 2.17]\label{lem:decompose:Phi:hat}
		For $\sig \in \Omega^k$, $\hat{\Phi}^{\lit}$ can be factorized as $\hat{\Phi}^{\lit}(\sig\oplus\uL) = \hat{I}^{\lit}(\sigma \oplus \uL) \hat{\Phi}^{\textnormal{m}}(\sig)$, where 
		\begin{equation}\label{eq:def:Phi:hat:max}
		\hat{\Phi}^{\textnormal{m}}(\sig) := \max\big\{\hat{\Phi}^{\lit}(\sig\oplus\uL): \uL \in \{0,1\}^k \big\}=
		\begin{cases}
		1 & \sig \in \{\rr,\bb\}^{k},\\
		\frac{\hat{z}[\hat{\sigma}_j]}{\bar{\varphi}(\sigma_j)} &\sig \in \Omega^{k}\textnormal{ with } \sigma_j \in \{\ff\}. 
		\end{cases}
		\end{equation}
	\end{lemma}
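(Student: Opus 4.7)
My plan is to prove Lemma \ref{lem:decompose:Phi:hat} by a purely local analysis at a single clause, exploiting the fact that $\hat{\Phi}^{\lit}(\sig\oplus\uL)$ is nonzero precisely when $\hat{I}^{\lit}(\sig\oplus\uL)=1$. The content of the factorization is therefore that, whenever this indicator is one, the weight $\hat{\Phi}^{\lit}(\sig\oplus\uL)$ depends on $\sig$ alone and coincides with $\hat{\Phi}^{\textnormal{m}}(\sig)$. The first step is to dispose of the case $\sig\in\{\rr,\bb\}^k$: by the definition of $\hat{\Phi}^{\lit}$, the weight is $\{0,1\}$-valued, so both sides agree with $\hat{\Phi}^{\textnormal{m}}(\sig)=1$.

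Next I would treat the case where some $\sigma_j\in\{\fF\}$. The heart of the argument is a BP identity obtained by substituting the recursion \eqref{eq:def:bethe:bpmsg:hat} into $\bar{\varphi}(\sigma_j)^{-1}=\sum_{\bx}\dot{\mm}[\dot{\sigma}_j](\bx)\hat{\mm}[\hat{\sigma}_j](\bx)$, yielding
\begin{equation*}
\frac{\hat{z}[\hat{\sigma}_j]}{\bar{\varphi}(\sigma_j)} \;=\; \sum_{\bx\in\{0,1\}} \dot{\mm}[\dot{\sigma}_j](\bx)\Bigl(1-\prod_{i\ne j}\dot{\mm}[\dot{\sigma}_i](\bx)\Bigr) \;=\; 1-\sum_{\bx\in\{0,1\}}\prod_{i=1}^k \dot{\mm}[\dot{\sigma}_i](\bx) \;=\; \hat{\varphi}^{\lit}\bigl((\dot{\sigma}_i)_i\bigr),
\end{equation*}
where the last equality is the definition \eqref{eq:def:phi}. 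This gives the desired $\sig$-only expression; what remains is to show that the literal-shifted weight $\hat{\varphi}^{\lit}\bigl((\dot{\tau}(\sigma_i\oplus\tL_i))_i\bigr)$ collapses to $\hat{\varphi}^{\lit}\bigl((\dot{\sigma}_i)_i\bigr)$ whenever $\hat{I}^{\lit}(\sig\oplus\uL)=1$.

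I expect this last step to be the main obstacle, and would verify it by going through cases (3)--(5) of the definition of $\hat{I}^{\lit}$. In case (3) the shifted profile contains both $\bb_0$ and $\bb_1$; since $\dot{\mm}[\bx]=\delta_\bx$ is a Dirac mass on a blue message, each term $\prod_i\dot{\mm}[\dot{\tau}(\sigma_i\oplus\tL_i)](\bx)$ vanishes, so $\hat{\varphi}^{\lit}$ equals $1$, and the unshifted product vanishes as well since the constraint $\hat{\sigma}_j=\fs$ forces both Boolean values to appear among $(\dot{\sigma}_i)_i$ via the $\hat{T}$ rule. In cases (4)--(5) all non-free colors share a common parity $\by\in\{0,1\}$ after shifting, so the Dirac factors pin $\bx=\by$, and the BP-compatibility relations $\hat{\sigma}_i=\hat{T}((\dot{\sigma}_\ell)_{\ell\ne i};0)$ built into $\hat{I}^{\lit}$ ensure that the literal shift on the free coordinates can be absorbed into the summation variable $\bx$, making the shifted and unshifted products coincide. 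Combining this with the BP identity from the previous paragraph yields $\hat{\Phi}^{\lit}(\sig\oplus\uL)=\hat{I}^{\lit}(\sig\oplus\uL)\cdot\hat{z}[\hat{\sigma}_j]/\bar{\varphi}(\sigma_j)$, which matches the definition of $\hat{\Phi}^{\textnormal{m}}(\sig)$ in \eqref{eq:def:Phi:hat:max} and completes the proof.
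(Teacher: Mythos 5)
Your global strategy — reduce the factorization to a single-clause BP computation via $\hat{z}[\hat{\sigma}_j]$ and $\bar{\varphi}(\sigma_j)$ — is the right one, and the case $\sig\in\{\rr,\bb\}^k$ is handled correctly. But the key step for the free case has a gap. Substituting \eqref{eq:def:bethe:bpmsg:hat} to get $\hat{z}[\hat{\sigma}_j]\hat{\mm}[\hat{\sigma}_j](\bx)=1-\prod_{i\neq j}\dot{\mm}[\dot{\sigma}_i](\bx)$ requires $\hat{\sigma}_j=\hat{T}\big((\dot{\sigma}_i)_{i\neq j}\big)$; yet the compatibility actually encoded by $\hat{I}^{\lit}(\sig\oplus\uL)=1$ (and by \eqref{eq:def:localeq:msg}) is $\hat{\sigma}_j\oplus\tL_j=\hat{T}\big((\dot{\sigma}_i\oplus\tL_i)_{i\neq j};0\big)$, which reduces to your unshifted identity only when all literals vanish. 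Concretely, in case (3) of $\hat{I}^{\lit}$ one can have two frozen neighbors carrying $\bb_0$ and $\bb_1$ \emph{after} the shift while their unshifted colors coincide (e.g.\ $\tL_1=1,\tL_2=0$): then $\hat{\sigma}_j=\fs$, so $\hat{z}[\fs]\hat{\mm}[\fs](\bx)=1$ for every $\bx$, whereas $1-\prod_{i\neq j}\dot{\mm}[\dot{\sigma}_i](\bx)<1$ at the common unshifted Boolean value. So the stated BP identity with unshifted $(\dot{\sigma}_i)$ is false.

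The later ``collapse'' step $\hat{\varphi}^{\lit}\big((\dot{\sigma}_i\oplus\tL_i)_i\big)=\hat{\varphi}^{\lit}\big((\dot{\sigma}_i)_i\big)$ does not patch this and is also false (same counterexample: shifted side equals $1$, unshifted side does not). The correct route is to run the BP recursion on the \emph{shifted} colors, giving
\begin{equation*}
\hat{\varphi}^{\lit}\big((\dot{\sigma}_i\oplus\tL_i)_i\big)=\frac{\hat{z}[\hat{\sigma}_j\oplus\tL_j]}{\bar{\varphi}(\sigma_j\oplus\tL_j)},
\end{equation*}
and then invoke the Boolean-flip invariances $\hat{z}[\hat{\tau}\oplus 1]=\hat{z}[\hat{\tau}]$ and $\bar{\varphi}(\sigma\oplus 1)=\bar{\varphi}(\sigma)$, which follow by induction from $\dot{\mm}[\dot{\tau}\oplus 1](\bx)=\dot{\mm}[\dot{\tau}](\bx\oplus 1)$ and the analogous identity for $\hat{\mm}$, together with a change of summation variable $\bx\mapsto\bx\oplus 1$. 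This yields $\hat{\Phi}^{\lit}(\sig\oplus\uL)=\hat{z}[\hat{\sigma}_j]/\bar{\varphi}(\sigma_j)$ on the support of $\hat{I}^{\lit}(\sig\oplus\uL)$ in one pass, which both establishes literal-independence and matches \eqref{eq:def:Phi:hat:max}; there is no separate ``shifted equals unshifted $\hat{\varphi}^{\lit}$'' step to verify.
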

	As a consequence, we can write $\hat{\Phi}(\sig)^\lambda = \hat{\Phi}^{\textnormal{m}}(\sig)^\lambda \hat{v}(\sig)$, where
	\begin{equation}\label{eq:def:vhat:basic}
	\hat{v}(\sig) := \E^{\lit} [ \hat{I}^{\lit}(\sig\oplus\uL)]. 
	\end{equation}

	\subsubsection{Embedding number of free trees}\label{subsubsec:model:J}
	
	In this subsection, we introduce the notion of \textit{embedding number} of a free tree. Later, we will see that the embedding numbers play a crucial role in quantifying the contribution of each free tree to  $ \E\bZ_\lambda^{\tr}$ (see Proposition \ref{prop:1stmo:B nt decomp}). To this end, we first refine the basic definition of free trees (Definition \ref{def:freetree:basic}) and give the complete definition below.
		\begin{defn}[Free trees: complete definition]\label{def:freetree:bdlab}
		Let $\ux$ be a frozen configuration in $\GGG$ without any free cycles, and $\sig\in \Omega^{E}$ be the corresponding coloring configuration. For each free tree $\ttt\in \mathscr{F}(\ux,\GGG)$, we label each internal edge and boundary half-edge as follows.
		\begin{itemize}
			\item Each internal edge $e\in E(\ttt)$ is labeled with $\tL_e$ endowed from $\GGG$.
			
			\item Each $e\in \dot{\partial}\ttt$, is labeled $(\sigma_e,\tL_e)$. Note that $\sigma_e \in \{\bb\}$ for $e\in \dot{\partial}\ttt$.
			
			\item Each $e\in \hat{\partial} \ttt$ is labeled $\sigma_e=\fs$.
		\end{itemize}
  Observe that for a labeled free tree $\ttt$, we can uniquely determine the coloring configuration 
 \begin{equation}\label{eq:def:col on freetree}
	\sig(\ttt) := \{\sigma_e(\ttt) \}_{e\in E(\ttt)\sqcup \partial \ttt}
 \end{equation}
  using the recursive equation \eqref{eq:def:localeq:msg} and the labels on $\ttt$ defined above. Note that there can be cases where two different free trees $\ttt$, $\ttt'$ give the same $\sig(\ttt) = \sig(\ttt')$; for instance, consider $\ttt$ and $\ttt'$ that have the isomorphic tree structure and $\{\sigma_e(\ttt)\}_{e\in \partial\ttt}=\{\sigma_e(\ttt^\prime)\}_{e\in \partial \ttt^\prime}$, but have opposite literal on each edge. Then, $\sig(\ttt) = \sig(\ttt')$ holds. Since we cannot distinguish the two free trees in the coloring configuration, we define an equivalence relation given by
\begin{equation}\label{eq:def:freetreeequivclass}
\ttt \sim \ttt' \quad \textnormal{if and only if} \quad \sig(\ttt) = \sig(\ttt'). 
\end{equation}
For the rest of the paper, we view a \textbf{free tree}  $\ttt$ as an equivalence class  with respect to this equivalence relation and we denote by $\FFF_{\tr}$ the set of equivalence classes, i.e. free trees. Since $\sig(\ttt)\in \Omega^{E(\ttt)\sqcup \partial \ttt}$ is well-defined for a free tree $\ttt$, the size of $\ttt$, which we denote by $w^{\lit}_{\ttt}$, is determined by \eqref{eq:free:tree:weight:lit}.  
Moreover, we define the (literal-)averaged $\lambda-$tilted weight of $\ttt$ by $w_{\ttt}^{\la}:= \prod_{v\in V(\ttt)} \left\{ \dot{\Phi}(\sig_{\delta v})^{\la} \prod_{e\in \delta v} \bar{\Phi}(\sigma_e)^{\la} \right\} \prod_{a\in F(\ttt)} \hat{\Phi}(\sig_{\delta a})^{\la}$, where $\sig=\sig(\ttt)$. Also, we will often abbreviate $v_{\ttt}\equiv v(\ttt):=|V(\ttt)|,f_{\ttt}\equiv f(\ttt):=|F(\ttt)|$ and $e_{\ttt}\equiv e(\ttt):=|E(\ttt)|$.
	\end{defn}

	\begin{defn}[Embedding number of free trees]\label{def:fitnum:trees}
		For a free tree $\ttt \in \mathscr{F}_{\tr}$, let $\sig=\sig(\ttt)$ be the  coloring on $\ttt$ given by \eqref{eq:def:col on freetree}. For each $v\in V(\ttt)$ and $a\in F(\ttt)$, let $\langle \sig_{\delta v} \rangle$, $\langle \sig_{\delta a}\rangle\in \N^{\Omega}$ be  integer-valued vectors defined as follows:
		\begin{equation}\label{eq:def:spin multi index}
		\begin{split}
		\langle \sig_{\delta v} \rangle (\sigma) := \sum_{e\in \delta v} \mathds{1}\{\sigma = \sigma_e \}, \ \ \langle \sig_{\delta a} \rangle (\sigma ) := \sum_{e\in \delta a} \mathds{1} \{\sigma = \sigma_e \}, \ \forall \sigma \in \Omega.
		\end{split}
		\end{equation}
		Note that sum of all coordinates of $\langle \sig_{\delta v} \rangle$ (resp. $\langle \sig_{\delta a}\rangle$) is  $d$ (resp. $k$). Then, the \textbf{embedding number} $J_\ttt$ of $\ttt$ is defined as
		\begin{equation}\label{eq:def:embnum:freetree}
		J_{\ttt} := d^{1-|V(\ttt)|} k^{-|F(\ttt)|} \prod_{v\in V(\ttt)} {d \choose \langle \sig_{\delta v} \rangle } \prod_{a \in F(\ttt)} {k\choose \langle \sig_{\delta a } \rangle}.
		\end{equation}
	\end{defn}
	We remark that the embedding number $J_{\ttt}$ for a free tree $\ttt\in \FFF_{\tr}$ is used in the definition of \textit{optimal free tree profile} $p_{\ttt}^\star$ in Definition \ref{def:opt:bdry:1stmo} below. We also note that the embedding number $J_{\ttt}$ arises from certain \textit{belief propagation} recursions in Appendix \ref{sec:appendix:Properties of BP fixed point} (see equation \eqref{eq:compat:arranging:tree:1stmo}). Another concrete interpretation is given in Lemma \ref{lem:w vs wcom} below. 
	\subsubsection{Free components and component colorings} \label{subsubsec:model:free:comp}
	Although our primary interest is in frozen configurations with no free cycles, we will need to show that the contribution from frozen configurations with free cycles to the first moment is comparable to the contribution from such without free cycles (see Proposition \ref{prop:ratio:uni:1stmo}). In order to do so, we introduce the \textit{free components}, which resembles the basic definition of free trees in Definition \ref{def:freetree:basic}.
	
	\begin{defn}[Free components]\label{def:freecomp:basic}
		Let $\ux$ be a frozen configuration on a \textsc{nae-sat} instance $\GGG$, which potentially contains a free cycle.  On the subgraph $H\subset \GGG$ consisting of free variables and non-separating clauses, let $\fff^{\textnormal{in}}$ denote a free piece, which is a connected component of $H$ (Definition~\ref{def:freetree:basic}). A \textbf{free component} is a union of $\fff^{\textnormal{in}}$ and the \textit{half-edges} adjacent to $\fff^{\textnormal{in}}$. Moreover, each free component $\fff$ has a labeling induced by $(\ux, \GGG)$, given by the following notations and explanation:
		\begin{enumerate}
			\item Let $V(\fff)=V(\fff^{\textnormal{in}})$, $F(\fff) = F(\fff^{\textnormal{in}})$ and $E(\fff) = E(\fff^{\textnormal{in}})$ denote the collection of variables, clauses and edges of $\fff$, respectively. Let $\dot{\partial} \fff$ (resp. $\hat{\partial}\fff$) be the collection of boundary half-edges adjacent to $F(\fff)$ (resp. $V(\fff)$), and write $\partial{\fff}:=\dot{\partial}\fff \sqcup \hat{\partial}\fff$.
			
			\item The variables $V(\fff)$ and clauses $F(\fff)$ are unlabeled.
			
			\item Each edge $e\in E(\fff)$ is labeled by $\tL_e$, the literal assignment on $e$ given by $\GGG$.
			
			\item Each $e\in \hat{\partial}\fff$ is labeled by $\fs$, and each $e\in \dot{\partial }\fff$ is labeled by $(\bb_0, \tL_e)$ (resp. $(\bb_1, \tL_e)$) if $x_{v(e)}= 0$ (resp. $x_{v(e)}= 1$). For $e \in \partial\fff$, the first argument of the label, either $\bb_0,\bb_1$, or $\fs$, is called the spin-label.
		\end{enumerate}
		We write $\FFF(\ux,\GGG)$ to be the collection of free components inside $(\ux, \GGG)$. Also, we denote  by $\FFF$ the set of all possible free components. For $\fff\in \FFF$, we denote by $w^\lit(\fff)$ the number of \textsc{nae-sat} solutions of $\fff$. For simplicity, we will often write $v(\fff):=|V(\fff)|, f(\fff):=|F(\fff)|, e(\fff):=|E(\fff)|$ and $\gamma(\fff):=e(\fff)-v(\fff)-f(\fff)$, which is the number of cycles in $\fff$ minus $1$. Moreover, $\eta_{\fff}(\sigma)$ for $\sigma \in \{\bb_0,\bb_1,\fs\}$ denotes the number of boundary half-edges with spin $\sigma$, i.e.
		\begin{equation}\label{eq:def:eta}
		\eta_\fff( \bb_0 ):= \Big|\big\{e\in \dot{\partial}\fff : \, \textnormal{spin-label of }e \textnormal{ is } \bb_0 \big\}\Big| , \ \ 	\eta_\fff( \bb_1 ):= \Big|\big\{e\in \dot{\partial}\fff : \, \textnormal{spin-label of }e \textnormal{ is } \bb_1 \big\}\Big|,\textnormal{ and }\eta_\fff(\fs) := |\hat{\partial} \fff|.
	    \end{equation}
	\end{defn}
	\begin{remark}\label{rmk:FFF:tr:subset:FFF}
		Although $\FFF_{\tr}$ in Definition \ref{def:freetree:bdlab} is defined in terms of equivalence classes of a subset of $\FFF$, with a slight abuse of notations, we will often think $\FFF_{\tr}$ as a subset of $\FFF$ by forgetting this equivalence relation. Also, note that $\eta_{\ttt}(\sigma)$ for $\sigma \in \{\bb_0,\bb_1,\fs\}$ is well-defined by the same equation \eqref{eq:def:eta}.
	\end{remark}
	We now define the \textbf{component colorings}, which is a combinatorial model to include the frozen configurations that contain a free cycle; these colorings are in one-to-one correspondence with the frozen configurations. Such model will be useful when showing the contribution of frozen configurations with a free cycle is of the same order as the entire first moment (see Proposition \ref{prop:ratio:uni:1stmo}). Define $\Omega_{\textnormal{com}}$ as
	\begin{equation}\label{eq:def:Omegcom1}
	\Omega_{\textnormal{com}}:= \{\rr_0, \rr_1, \bb_0, \bb_1, \fs \} \cup \{ (\fff, e): \fff \in \mathscr{F}, \, e\in E(\fff)  \}.
	\end{equation}
	In the above notation $(\fff,e)$, we take the convention that $(\fff,e)=(\fff^\prime,e^\prime)$ if there exists a graph isomorphism which keeps the spin labels and literal labels, and takes $e$ to $e^\prime$. For example for $\fff\in \FFF$ and $e,e'\in E(\fff), e\neq e^\prime$, it might be that $(\fff,e)=(\fff,e^\prime)$.
	\begin{defn}[Component colorings]\label{def:compcol}
		Let $\ux$ be a frozen configuration on $\GGG$. The component coloring $\sig^{\textnormal{com}}\in \Omega_{\textnormal{com}}^E$ corresponding to $\ux$ is defined as follows:
		\begin{enumerate}
			\item For each $v \in V$ such that $x_v\in\{0,1\}$ and $e\in \delta v$, assign $\sigma^{\textnormal{com}}_e = \rr_{x_v}$ if $e$ is forcing, $\sigma^{\textnormal{com}}_e=\bb_{x_v}$ otherwise;
			
			\item For each separating clause $a$, assign $\sigma^{\textnormal{com}}_e = \fs$ for $e\in \delta a$ such that $x_{v(e)}= \ff$.
			
			\item For each $e\in E$ such that $x_{v(e)}=\ff$ and $a(e)$ is non-separating, let $\fff(e)\in \mathscr{F}$ be the free component that contains $e$. We then set $\sigma^{\textnormal{com}}_e = (\fff(e),e)$.
		\end{enumerate}
	\end{defn}
		
		
		
	Given $\GGG$, we call component coloring $\csig$ valid on $\GGG$ if there exists a valid frozen configuration such that it maps to $\csig$ with the above procedure. Then, it is easy to see that the above procedure produces a one-to-one correspondence between the frozen configurations (including the ones with free cycles) and the (valid) component colorings.

	Note that we defined $\fff$ so that it records all the literal information of the edges. Thus, we define the expected weight of $\fff$ under averaging over the literal assignment as
	\begin{equation}\label{eq:def:weight:freecomp:avg}
	w^{\textnormal{com}}(\fff)^{\la}:=  w^{\textnormal{lit}}(\fff)^\lambda 2^{-k |F(\fff)|}. 
	\end{equation} 
	The notation $w^{\textnormal{com}}(\fff)$ is introduced to prevent confusion from $w_{\ttt}$ in the case when $\fff$ is a tree (see Lemma \ref{lem:w vs wcom} below for the relationship between the two notions when $\fff$ is a tree).
	Let $\sig^{\textnormal{com}}\in \Omega_{\textnormal{com}}^E$ be a valid component coloring on $\GGG=(\GG,\uL)$ and denote by $w^{\lit}_{\GGG}(\csig)$ the number of \textsc{nae-sat} solutions which extend $\csig$, i.e. $\textsf{size}(\ux,\GGG)$ for $\ux$ corresponding to $\csig$. Then, we can write $\E[ w_\GGG^{\lit} (\sig^{\textnormal{com}})^\lambda | \GG ] $ as follows: writing $\mathscr{F}(\sig^{\textnormal{com}})$ and $F_{\textnormal{sep}}(\sig^{\textnormal{com}})$ to denote the free components and the separating clauses in $\sig^{\textnormal{com}}$ respectively, 
	\begin{equation}\label{eq:sizeformula:freecomp:1stmo}
	w(\sig^{\textnormal{com}})^\lambda := 	\E^{\lit}\left[\left.w_\GGG^{\lit} (\sig^{\textnormal{com}})^\lambda\right| \GG  \right] = \prod_{\fff \in \mathscr{F}(\sig^{\textnormal{com}})} w^{\textnormal{com}}(\fff)^\lambda  \prod_{a\in F_{\textnormal{sep}}(\sig^{\textnormal{com}})} \hat{v}(\sig^{\textnormal{com}}_a).
	\end{equation}
	
\subsection{The pair model}\label{subsec:model:pair}
In this subsection, we introduce concepts and notations that are required to understand the second moment of the partition functions.
\subsubsection{Pair-colorings}
To begin with, for $\ula = (\lambda_1,\lambda_2) \in [0,1]^2$ and $\bs=(s_1,s_2)\in [0,\log 2)^2$, define
\begin{equation}\label{eq:def:pair:partition}
\begin{split}
&\bZ_{\ula}^2:= \bZ_{\la_1}^{\tr}\bZ_{\la_2}^{\tr}=\sum_{\sig^1, \sig^2 \in \Omega^E} w_\GGG^{\textnormal{lit}}(\sig^1)^{\lambda_1} w_\GGG^{\textnormal{lit}}(\sig^2)^{\lambda_2}\mathds{1}\left\{ \frac{\rr(\sig^1)}{nd} \vee \frac{\ff(\sig^1)}{n}\vee \frac{\rr(\sig^2)}{nd} \vee \frac{\ff(\sig^2)}{n} \le \frac{7}{2^k} \right\},\\
&\bZ_{\ula}^{2,(L)}:= \bZ_{\la_1}^{(L),\tr}\bZ_{\la_2}^{(L),\tr}=\sum_{\sig^1, \sig^2 \in \Omega_L^E} w_\GGG^{\textnormal{lit}}(\sig^1)^{\lambda_1} w_\GGG^{\textnormal{lit}}(\sig^2)^{\lambda_2}\mathds{1}\left\{ \frac{\rr(\sig^1)}{nd} \vee \frac{\ff(\sig^1)}{n}\vee \frac{\rr(\sig^2)}{nd} \vee \frac{\ff(\sig^2)}{n} \le \frac{7}{2^k} \right\},\\
&\bZ_{\ula,\bs}^2:= \bZ^{\tr}_{\la_1,s_1}\bZ^{\tr}_{\la_2}, \quad \bN^{2}_{\bs}:=\bN_{s_1}^{\tr}\bN_{s_2}^{\tr},\quad\textnormal{and}\quad \bZ_{\ula,\bs}^{2,(L)}:= \bZ^{(L),\tr}_{\la_1,s_1}\bZ^{(L),\tr}_{\la_2}.
\end{split}
\end{equation}
\begin{remark}
Observe that in the above definitions, we restricted our attention to the second moments of $\bZ^{\tr}_{\la}$, i.e. the contribution from the frozen configurations without free cycles. This is because we will apply the second moment method to $\bN_{s}^{\tr}$, rather than $\bN_s$.
\end{remark}
The following estimate was established in \cite{ssz23}, which is the analog of Lemma \ref{lem:free:red} for the second moment. 
    \begin{lemma}[\cite{ssz23}, Corollary F.5]
    \label{lem:free:red:2}
    For $\sig^1,\sig^2\in \Omega^E$, let $\ux^1,\ux^2\in \{0,1,\ff\}^V$ be the corresponding frozen configurations. Let $\zeta(\sig^1,\sig^2)\in [0,1]$ be the fraction of variables such that $\ux^1,\ux^2$ differ (see also Definition \ref{def:overlapofcol} below). Then, there exists a constant $c_k>0$ that only depends on $k$ such that 
    \begin{equation*}
        \E\Bigg[\sum_{\sig^1, \sig^2 : \big|\zeta(\sig^1,\sig^2)-\frac{1}{2}\big|\leq \frac{k^2}{2^{k/2}}} w_\GGG^{\textnormal{lit}}(\sig^1)w_\GGG^{\textnormal{lit}}(\sig^2)\mathds{1}\left\{ \frac{\rr(\sig^1)}{nd} \vee \frac{\ff(\sig^1)}{n}\vee \frac{\rr(\sig^2)}{nd} \vee \frac{\ff(\sig^2)}{n} > \frac{7}{2^k} \right\}\Bigg] \leq e^{-c_k n}\,.
    \end{equation*}
    \end{lemma}
We can consider $\bZ_{\ula}^2$ (resp. $\bZ_{\ula}^{2,(L)}$) as the first moment of the \textbf{pair  coloring} $\bsig =(\sig^1,\sig^2) \in \Omega_{ 2}^E$ (resp. $\Omega_{2,L}$), where $\Omega_{2}:=\Omega\times \Omega$ and $\Omega_{2,L}:=\Omega_L\times \Omega_L$, along with the weight factor $\bw_\GGG^{\textnormal{lit}}(\bsig)^{\ula}:= w_\GGG^{\textnormal{lit}}(\sig^1 )^{\lambda_1}w_\GGG^{\textnormal{lit}}(\sig^2)^{\lambda_2}$. For a pair-coloring $\bsig=(\sig^1,\sig^2)$, we call $\sig^1$ (resp. $\sig^2$) the first (resp. second) copy. $\bsig$ is called a \textbf{valid pair-coloring} if both copies are valid colorings on $\GGG$. Thus, a valid pair-coloring is a special case of a (valid) \textbf{pair frozen configuration} $(\ux^{1},\ux^{2})\in (\{0,1,\ff\}^{2})^{V}$, where each copy has no free cycles. By Lemma \ref{lem:model:size:col} and Lemma \ref{lem:decompose:Phi:hat} we can write
\begin{equation}\label{eq:def:Phi:pair}
\begin{split}
\bw_\GGG(\bsig)^{\ula}:=\E^{\textnormal{lit}} [ \bw_\GGG^{\textnormal{lit}}(\bsig)^{\ula}] = \prod_{v\in V} \dot{\Phi}_2^{\ula}(\sig_{\delta v}) \prod_{a\in F} \hat{\Phi}_2^{\ula}((\bsig\oplus \uL)_{\delta a}) \prod_{e\in E} \bar{\Phi}_2^{\ula}(\bsigma_e),
\end{split}
\end{equation}
where $\dot{\Phi}_2^{\ula} := \dot{\Phi}^{\lambda_1} \otimes \dot{\Phi}^{\lambda_2}, \bar{\Phi}_2^{\ula} := \bar{\Phi}^{\lambda_1}\otimes \bar{\Phi}^{\lambda_2}$ and \begin{equation}
    \hat{\Phi}_2^{\ula}(\bsig) := \E^{\lit} \left[ \hat{\Phi}^{\lit}((\sig^1 \oplus \uL)^{\la_1}\hat{\Phi}^{\lit}((\sig^2 \oplus \uL)^{\la_2} \right]=\hat{v}_2(\bsig)\Big((\hat{\Phi}^{\textnormal{m}})^{\la_1}\otimes(\hat{\Phi}^{\textnormal{m}})^{\la_2}\Big)(\bsig)
\end{equation}
for $\hat{v}_2(\bsigma):=\E^{\textnormal{lit}}[\hat{I}^{\lit}(\sig^1\oplus\uL)\hat{I}^{\lit}(\sig^2\oplus\uL)]$.
\begin{remark}
For the purpose of proving Theorem \ref{thm1} and Theorem \ref{thm2}, we only need to compute the moment of $\bZ^2_{\la}:=\bZ^2_{(\la,\la)}$. That is, we must consider the case where $\la_1=\la_2$. However, we will need to consider the 
$(\la_1,\la_2)$ model to prove Lemma \ref{lem:relation:leading:constants}, which will play a crucial role in the companion paper \cite{nss2}.
\end{remark}
\subsubsection{Union-free components} Studying $\E\bZ_{\ula}^2$ from the pair-coloring perspectives can be difficult: let ${\ttt}_1\in\mathscr{F}(\sig^1;\GGG)$ be a free tree in the first copy, and let $e_1, e_2$ be two distinct edges in $\ttt_1$. Although the free trees induced by $\sigma_{e_1}^1$ and $\sigma_{e_2}^1$ are both $\ttt_1$, $\sigma_{e_1}^2$ and $\sigma_{e_2}^2$ do not necessarily give the same free tree in the second copy. To deal with this issue, we introduce the analogous notion of free components for the pair model.

\begin{defn}[pair-separating clauses and union-free variables]\label{def:sep:pairmo}
Let $\bsig=(\sig^1,\sig^2)\in\Omega_{2}^E$ be a valid pair-coloring in $\GGG=(V,F,E,\uL)$, and let $\ux^1, \ux^2$ be their corresponding frozen configurations. A clause $a\in F$ is \textbf{pair-separating} if $a$ is separating in both $(\ux^1,\GGG)$ and $(\ux^2,\GGG)$. If $a$ is not pair-separating, i.e. $a$ is non-separating in at least one of the copy, we say $a$ is \textbf{non-pair-separating}. Moreover, a variable $v\in V $ is called \textbf{union-free} if at least one of $x_v^1, x_v^2$ is $\ff$.
\end{defn}

\begin{defn}[union-free components in the pair model]\label{def:model:unioncomp}
Let $\bsig=(\sig^1,\sig^2)\in\Omega_{2}^E$ be a valid pair-coloring in $\GGG=(V,F,E,\uL)$. Consider the induced subgraph $H$ of $\GGG$ that consists of the union-free variables and non-pair-separating clauses. Connected components of $H$ are called a \textbf{union-free piece} of $(\bsig,\GGG)$, denoted by $\vvv^{\textnormal{in}}$. For each union-free piece $\vvv^{\textnormal{in}}$, the \textbf{union-free component}, denoted by $\vvv$, is the union of $\vvv^{\textnormal{in}}$ and the boundary \textit{half-edges} incident to $\vvv^{\textnormal{in}}$. Detailed description on $\vvv$ is given by the following.
\begin{enumerate}
\item The variables $V(\vvv)=V(\vvv^{\textnormal{in}})$ and clauses $F(\vvv^{\textnormal{in}})$ of $\vvv$ are unlabeled.

\item The collection of internal edges is denoted by $E(\vvv)=E(\vvv^{\textnormal{in}})$, and the collections of boundary half-edges $\dot{\partial}\vvv$, $\hat{\partial}\vvv$ and $\partial \vvv$ are defined analogously to Definition \ref{def:freecomp:basic}-(1).

\item Define the projection $\textsf{P}:\Omega \to \{\rr_0, \rr_1, \bb_0, \bb_1, \fs,\ff \}$ by
\begin{equation}\label{eq:def:projP}
\textsf{P}(\sigma) :=
\begin{cases}
\sigma & \textnormal{if } \sigma\in \{\rr_0, \rr_1, \bb_0, \bb_1 \};\\
\fs & \textnormal{if } \hat{\sigma}=\fs;\\
\ff & \textnormal{if } \sigma \in \{\fF \} ,
\end{cases}
\end{equation}
and let $\textsf{P}_2:\Omega_{2} \to \{\rr_0, \rr_1,\bb_0,\bb_1,\fs,\ff \}^2$ be $\textsf{P}_2(\bsigma) := (\textsf{P}(\sigma^1), \textsf{P}(\sigma^2) )$. Then, edge $e\in E(\vvv)\sqcup \dot{\partial} \vvv$  is labeled with $(\textsf{P}_2(\bsigma_e), \tL_e)$ while edge $e\in \hat{\partial} \vvv$ is labeled with $\textsf{P}_2(\bsigma_e)$, i.e. no literal information on $\hat{\partial} \vvv$. We will write
$(\textsf{P}_2(\vvv,e), \tL_e)$ to indicate the label at $e$ of the union-free component $\vvv$. We call $\textsf{P}_2(\vvv,e)$ (resp. $\tL_e$) the \textbf{spin label} or \textbf{color} (resp. \textbf{literal label}) at $e$.
\end{enumerate}
We denote the collection of union-free components in $(\bsig,\GGG)$ by $\mathscr{F}_2(\bsig,\GGG)$, and let $\mathscr{F}_2$ be the collection of all free components. Similar to the case of free components in Definition \ref{def:freecomp:basic}, we write $v(\vvv):=|V(\vvv)|, f(\vvv):=|F(\vvv)|,e(\vvv):=|E(\vvv)|$ and $\gamma(\vvv):=e(\vvv)-v(\vvv)-f(\vvv)$, which is the number of cycles in $\vvv$ minus 1.
\end{defn}
Observe that for $\vvv\in \FFF_2$ and $v\in V(\vvv)$, we can recover $(x_v^{1},x_v^{2})\in \{0,1,\ff\}^2$ from the information of spin labels $(\textsf{P}_2(\vvv,e))_{e\in \delta v}$. Thus, we can determine the collection of free trees in first (resp. second) copy in $\vvv$, which we denote by $(\ttt^1_i)$ (resp. $(\ttt^2_j)$). Then, the size of $\vvv$ in the first and the second copy are defined by
\begin{equation}\label{eq:def:size:union-comp:each:copy}
   \bw^{\lit,\ell}(\vvv)\equiv \prod_{i} w^\lit(\ttt^{\ell}_i)\quad\textnormal{for}\quad \ell=1,2.
\end{equation}
Also, define the size of $\vvv$ with respect to the exponent $\ula=(\lambda_1,\lambda_2)$ by
\begin{equation}\label{eq:def:size:freecomp:2ndmo:lit}
\bw^{\textnormal{lit}}(\vvv)^{\ula} \equiv \bw^{\lit,1}(\vvv)^{\la_1}\bw^{\lit,2}(\vvv)^{\la_2}
\end{equation}
Another observation is that for $e\in \hat{\partial}\vvv$, the color at $e$ is $\fs$ in at least for one of the two copies, since $x_{v(e)}^{1}$ or $x_{v(e)}^2$ is free while $a(e)$ is pair-separating. Similarly for $e\in \dot{\partial}\vvv$, the color at $e$ is $\bb$ in at least one copy. Thus,
\begin{itemize}
\item $\textsf{P}_2(\vvv,e)  \in  \{\rr_0,\rr_1, \bb_0, \bb_1 \}^2\setminus \{\rr_0, \rr_1\}^2$ if $e\in \dot{\partial}\vvv$;

\item $\textsf{P}_2(\vvv,e) \in  \{\rr_0,\rr_1, \bb_0, \bb_1, \fs \}^2 \setminus \{\rr_0, \rr_1, \bb_0,\bb_1 \}^2$ if $e\in \hat{\partial} \vvv$.
\end{itemize}
Based on this observation,  we define the space of boundary colors for the pair model by
\begin{equation}\label{eq:def:boundarycolors:pair}
\dot{\partial}_2:= \{\rr_0,\rr_1, \bb_0, \bb_1\}^2\setminus \{\rr_0, \rr_1\}^2, \quad \hat{\partial}_2:= \{\rr_0,\rr_1, \bb_0, \bb_1, \fs \}^2 \setminus \{\rr_0, \rr_1, \bb_0,\bb_1 \}^2,\quad \partial_2:=\dot{\partial}_2\sqcup \hat{\partial}_2.
\end{equation}
Moreover, given $\vvv \in \FFF_2$, we define
\begin{equation*}
\eta_\vvv(\bx):= | \{ e\in \dot{\partial} \vvv\sqcup\hat{\partial}\vvv: \textsf{P}_2(\vvv,e) =\bx \}|,~~~~~~~~\bx\in \dot{\partial}_2\sqcup\hat{\partial}_2
\end{equation*}
\subsubsection{Component coloring for the pair model}\label{subsubsec:model:compcol:pair}
Based on the notion of union-free components, we can define pair component coloring for the pair model analogously to Definition \ref{def:compcol}.
Let $\FFF_2$ denote the collection of all union-free components, and define
\begin{equation}\label{eq:def:Omegcom2}
\Omega_{\textnormal{com},2} := \{\rr_0,\rr_1,\bb_0,\bb_1,\fs \}^2 \cup \{ (\vvv,e): \vvv \in \FFF_2, \ e \in E(\vvv) \}.
\end{equation}
As in the definition of $\Omega_{\textnormal{com}}$ \eqref{eq:def:Omegcom1}, $(\vvv,e)$ is considered as an isomorphism class with respect to the graph isomorphism which keeps spin labels, literal labels and $e$ unchanged.
Then, for a pair frozen configuration $(\ux^1,\ux^2 )\in\{0,1,\ff \}^{2V}$, a \textbf{pair component coloring} $\bsig^{\textnormal{com}}\in \Omega_{\textnormal{com},2}^E$ corresponding to $(\ux^1,\ux^2)$ is defined analogously to Definition \ref{def:compcol}: 
\begin{enumerate}
\item  For each $v\in V$ such that $(x_v^1, x_v^2)\in\{0,1 \}^2$ and $e\in \delta v$, assign $\sigma^{i}_e = \rr_{x_v^i}$ if $e$ is forcing in the $i$-th copy, and $\sigma_e^i = \bb_{x_v^i}$ otherwise for $i=1,2$. Then, we set $\bsigma^{\textnormal{com}}_e = (\sigma_e^1,\sigma_e^2).$
\item For pair-separating clause $a$ and $e\in \delta a$, assign $\sigma_e^{i}=\fs$ if $x_{v(e)}^i=\ff$ for $i=1,2$. Otherwise, assign $\sigma_e^{i}=\bb_{0}$ if $x_{v(e)}^i=0$ and assign $\sigma_e^{i}=\bb_{1}$ if $x_{v(e)}^i=1$. Then, we set $\bsigma^{\textnormal{com}}_e = (\sigma_e^1,\sigma_e^2).$
\item For each $e\in E$ such that $v(e)$ is union-free and $a(e)$ is non-pair-separating, let $\vvv(e)\in \mathscr{F}_2$ be the union-free component that contains $e$. We then set $\bsigma^{\textnormal{com}}_e = (\vvv(e),e)$.
\end{enumerate}
We note from the definition that there is a natural one-to-one correspondence as follows:
\begin{equation}\label{eq:corr:pair-frozen:color:comp}
\begin{Bmatrix}
\textnormal{pair frozen configurations }\\
\textnormal{without free cycles in both copies}\\
(\ux^1,\ux^2) \in (\{0,1,\ff \}^2)^E
\end{Bmatrix}
\ \longleftrightarrow \
\begin{Bmatrix}
\textnormal{pair-colorings} \\
\bsig \in \Omega_2^E
\end{Bmatrix} \ \longleftrightarrow \
\begin{Bmatrix}
\textnormal{pair component colorings} \\
\bsig^{\textnormal{com}} \in \Omega_{\textnormal{com},2}^E.
\end{Bmatrix}
\end{equation}

For a free component $\vvv\in \mathscr{F}_2$, recall the definition of its size $\bw^{\textnormal{lit}}(\vvv)$ \eqref{eq:def:size:freecomp:2ndmo:lit}. Analogously to \eqref{eq:def:weight:freecomp:avg}, we can express the expected size of $\vvv$ under averaging of the literal assignments by
\begin{equation}\label{eq:def:weight:freecomp:avg:2ndmo}
\bw^{\textnormal{com}}(\vvv)^{\ula}:= \bw^{\textnormal{lit}}(\vvv)^{\ula}2^{-k|F(\vvv)|}.
\end{equation}

\subsubsection{Union coloring and union-free trees}\label{subsubsec:model:unifreetree}

In this subsection, we introduce the notions of union coloring, union-free tree and its embedding number. The union coloring configuration will be discussed only briefly, since it will only be used to define the notion of union-free tree, an analog of free tree in Definition \ref{def:freetree:bdlab}.
\begin{defn}[Union coloring]\label{def:unioncol}
Given a \textsc{nae-sat} instance $\GGG$, consider a valid pair-coloring configuration $\bsig \in \Omega_2^E$ such that its union-free components, i.e. $\mathscr{F}_2(\bsig,\GGG)$, consist only of trees. Similar to the coloring configuration, we define \textbf{union-coloring configuration} $\bsig^{\textsf{u}}:= (\bsigma^{\textsf{u}}_e)_{e\in E}$ corresponding to $\bsig \in \Omega_2^E$ as follows.
\begin{enumerate}
\item If $e\in E$ and $\bsigma_e \in \{\rr_0,\rr_1,\bb_0,\bb_1,\fs \}^2$, then $\bsigma_e^{\textsf{u}} := \bsigma_e$.

\item If not, then $e\in E$ must be contained in a union-free component $\vvv$, which is a tree. Recall that in Section \ref{subsubsec:msg:config}, $\textsf{j}(t_1,...,t_\ell)$ denoted the joined tree of rooted bipartite factor trees $t_1,...,t_\ell$ with boundary labels $\{0,1,\fs\}$. Similarly, we now consider the rooted bipartite factor trees $u_1,...,u_{\ell}$, where the inner edges of $u_i$ are labeled with a color (there is no literal label) in $\Omega_2^{\textnormal{in}}:=\{\ff\}\times \{\rr_0,\rr_1,\bb_0,\bb_1,\fs\}\sqcup \{\rr_0,\rr_1,\bb_0,\bb_1,\fs\}\times \{\ff\}$ and the boundary edges are labeled with a color in $\partial_2$. For such $u_i$, $1\oplus u_i$ then denotes the rooted bipartite factor tree with all its boundary colors and inner colors flipped, e.g. $\ff\rr_0$ to $\ff\rr_1$. For $\bsigma \in \Omega_2^{\textnormal{in}}$, denote by $\textsf{j}_2(u_1,...,u_\ell;\bsigma)$ the joined tree of $u_1,...,u_{\ell}$ which has color $\bsigma$ on the unique edge adjacent to its root. Then, analogously to \eqref{eq:def:localeq:msg}, recursively apply $\textsf{j}_2(\cdot)$ from the boundary of $\vvv$: for $e\in \partial \vvv$, $\bsigma^{\textsf{u}}_e$ is determined from the previous step and we take the convention that $\bx \in \dot{\partial}_2$ (resp. $\bx \in \hat{\partial}_2$) is the variable-to-clause tree (resp. clause-to-variable tree) with a single edge whose color is $\bx$. That is, for $e\in E(\vvv)$,
\begin{equation}\label{eq:union:coloring:recursive}
    \bsigma^{\textsf{u}}_e = (\dot{\bsigma}^{\textsf{u}}_e, \hat{\bsigma}^{\textsf{u}}_e) = \left(\textsf{j}_2 \big(\hat{\bsig}^{\textsf{u}}_{\delta v(e)\setminus e} \big), \tL_e \oplus  \textsf{j}_2 \big((\uL \oplus \dot{\bsig}^{\textsf{u}})_{\delta a(e)\setminus e} \big) \right).
\end{equation}
Here, $\dot{\bsigma}^{\textsf{u}}_e$ (resp. $\hat{\bsigma}^{\textsf{u}}_e$) denotes the variable-to-clause tree (resp. clause-to-variable tree). We denote by $\dot{\Omega}^{\textsf{u}}$ (resp. $\hat{\Omega}^{\textsf{u}}$) the set of all possible $\dot{\bsigma}^{\textsf{u}}_e$ (resp. $\hat{\bsigma}^{\textsf{u}}_e$) from \eqref{eq:union:coloring:recursive}. Then, we let $\Omega^{\textsf{u}}:=\dot{\Omega}^\su\times \hat{\Omega}^\su$.
\end{enumerate}
\end{defn}
Observe that for $\dot{\bsigma}^{\textsf{u}} \in \dot{\Omega}^{\textsf{u}}$, we can determine the variable to clause component of the free tree in the $i$'th copy ($i=1,2$) that contains the root clause. This is because we can determine which variables in $\dot{\bsigma}^{\textsf{u}}$ are free in $i$'th copy by the information of the color of each edge in $\dot{\bsigma}^{\textsf{u}}$. Denote by $\dot{\pi}_i(\dot{\bsigma}^{\textsf{u}})\in \dot{\Omega}$ such variable to clause component of the free tree in the $i$'th copy. $\hat{\pi}_i(\hat{\bsigma}^{\textsf{u}})\in \hat{\Omega}$ for $\hat{\bsigma}^{\textsf{u}} \in \hat{\Omega}^{\textsf{u}}$ and $i=1,2$ is analogously defined. Then, let $\pi_i(\bsigma^{\textsf{u}}):=\left(\dot{\pi}_i(\dot{\bsigma}^{\textsf{u}}), \hat{\pi}_i(\hat{\bsigma}^{\textsf{u}})\right)\in \Omega$ for $i=1,2$ and write $\pi(\bsigma^{\textsf{u}}) := (\pi_1(\bsigma^{\textsf{u}}), \pi_2(\bsigma^{\textsf{u}})) \in \Omega_2$. Then, the $\ula\in [0,1]^2$ weight factors of the union coloring $\dot{\Phi}_{\su}^{\ula}, \hat{\Phi}_{\textsf{u}}^{\ula}$ and $\bar{\Phi}_{\textsf{u}}^{\ula}$ are defined by
\begin{equation}
\dot{\Phi}_{\textsf{u}}^{\ula}(\bsig^{\textsf{u}}):= \dot{\Phi}_2\big(\pi(\bsig^{\textsf{u}})\big)^{\ula} ; \quad
\hat{\Phi}_{\textsf{u}}^{\ula}(\bsig^{\textsf{u}}):= \hat{\Phi}_2\big(\pi(\bsig^{\textsf{u}})\big)^{\ula}; \quad \bar{\Phi}_{\textsf{u}}^{\ula}(\bsig^{\textsf{u}}):= \bar{\Phi}_2\big(\pi(\bsig^{\textsf{u}})\big)^{\ula}.
\end{equation}
\begin{defn}[Union-free trees]
\label{def:union:tree}
Let $\vvv\in \FFF_2$ be a union-free component whose underlying graph structure is a tree. Analogously to \eqref{eq:def:col on freetree}, we can determine the union coloring 
\begin{equation}\label{eq:def:col:union:tree}
\bsig^{\textsf{u}}({\vvv}):= (\bsigma_{e}^{\textsf{u}}(\vvv))_{e\in E(\vvv)\sqcup \partial \vvv}
\end{equation}
on the edges of $\vvv$ by recursively applying \eqref{eq:union:coloring:recursive} from the boundary of $\vvv$. As before, we define an equivalence relation for the union-free components which are trees by $\vvv_1 \sim \vvv_2$ if and only if there exists an graph isomorphism from $\vvv_1$ to $\vvv_2$ that preserves $\bsig^{\textsf{u}}({\vvv_1}) = \bsig^{\textsf{u}}({\vvv_2})$. Then, we define the set $\uuu$ of \textbf{union-free trees} as the equivalence class $\uuu=[\vvv]$ under this equivalence relation and denote by $\FFF_2^{\tr}$ the set of union-free trees. Thus, $\bsig^{\textsf{u}}(\uuu):=\big(\bsigma_e^{\textsf{u}}(\uuu)\big)_{e\in E(\uuu)\sqcup \partial \uuu}$ is well-defined. The size of $\uuu$ in the first and second copy are denoted by $\bw_{\uuu}^{\lit,i}$ for $i=1,2$ (see \eqref{eq:def:size:union-comp:each:copy}). Moreover, its averaged weight $\bw_{\uuu}^{\ula}$ is defined as
\begin{equation}
\bw_{\uuu}^{\ula} := \prod_{v\in V(\uuu)} \Big\{\dot{\Phi}_{\textsf{u}}^{\ula}\big(\bsig_{\delta v}^{\textsf{u}}(\uuu)\big) \prod_{e \in \delta v} \bar{\Phi}_{\textsf{u}}^{\ula}\big(\bsigma_e^{\textsf{u}}(\uuu)\big)\Big\} \prod_{a \in F(\uuu)} \hat{\Phi}_{\textsf{u}}^{\ula} \big(\bsig_{\delta a}^{\textsf{u}}(\uuu)\big).
\end{equation}
\end{defn}

We conclude this section with the lemma which shows the relationship $w_{\ttt}^{\la}$ and $w^{\textnormal{com}}(\fff)^{\la}$ in the first moment, and $\bw_{\uuu}^{\ula}$ and $\bw^{\textnormal{com}}(\vvv)^{\ula}$ in the second moment. It will be used in Proposition \ref{prop:1stmo:B nt decomp} and Proposition \ref{prop:2ndmo:B nt decomp}. To do so, we define the set of labeled components for free components and union-free components.

\begin{defn}[labeled components]\label{def:embedding:number}
		For a free tree $\ttt\in \mathscr{F}_{\tr}$, we introduce the set $\LLL(\ttt)$, namely the set of \textbf{labeled components} $\ttt^{\lab}$  corresponding to $\ttt$, where $\ttt^{\lab}$ is obtained from $\ttt$ by adding additional labels on the half-edges and the full edges of $\ttt$ as follows: for each variable $v \in V(\ttt)$ (resp. $a\in F(\ttt)$), arbitrarily label half-edges adjacent to $v$ (resp. a) by $1,...,d$ (resp. $1,...,k$). Then, $\LLL(\ttt)$ is the set of isomorphism classes of resulting $\ttt^\lab$, where an isomorphism is a graph isomorphism which keeps all the labels on the half-edges and full edges consistent. For $\fff\in \FFF\setminus\FFF^{\tr}$, the labeled component $\fff^{\lab}$ corresponding to $\fff$ is defined the same except that we further choose a spanning tree $T$ of $\fff^{\textnormal{in}}$ and add another label to the edges of $T$ by `tree'. Similarly, define $\LLL(\fff)$ to be set of isomorphism classes of resulting $\fff^\lab$. The \textbf{embedding number} $J_\fff$ of a cyclic free component $\fff\in \FFF\setminus \FFF_{\tr}$ is defined as
		\begin{equation*}
		J_\fff := d^{1-v(\fff)} k^{-f(\fff)} \frac{|\mathscr{L}(\fff)|}{T_\fff},,
		\end{equation*}
		where $T_{\fff}$ is the number of spanning trees of $\fff$. The embedding number $J_{\fff}$ of a cyclic free component appears in Proposition \ref{prop:1stmo:B nt decomp}.
	\end{defn}

 \begin{defn}[Embedding number of union-free trees and union-free components]
For a union-tree $\uuu\in \FFF_2^{\tr}$, define its \textbf{embedding number} as
\begin{equation}\label{eq:def:embnum:pairr}
J_{\uuu} := d^{1-v(\uuu)} k^{-f(\uuu)}\prod_{v\in V(\uuu)} {d \choose \big\langle \bsig_{\delta v}^{\textsf{u}}(\uuu) \big\rangle } \prod_{a \in F(\uuu)} {k \choose \big\langle \bsig_{\delta a}^{\textsf{u}}(\uuu) \big\rangle },
\end{equation}
where the notations $\langle \bsig_{\delta v}^{\textsf{u}}(\uuu) \rangle$ and $\langle \bsig_{\delta a}^{\textsf{u}}(\uuu) \rangle$ are defined analogously to \eqref{eq:def:spin multi index}. The embedding number for a union-free component $\vvv\in \mathscr{F}_2$ is defined analogously to Definition \ref{def:embedding:number}: $\mathscr{L}(\vvv)$ denotes the set of all labeled components $\vvv^{\textnormal{lab}}$ corresponding to $\vvv$, where $\vvv^{\textnormal{lab}}$ is obtained from $\vvv$ by putting labels on the half-edges adjacent to variables (resp. clauses) $1,...,d$ (resp. $1,...,k$) and putting extra labels on the inner edges by `tree' according to a spanning tree. Here, the last labeling scheme is redundant if $\vvv$ is a union-free tree. The \textbf{embedding number} $J_\vvv$ of a cyclic union-free component $\vvv\in \FFF_2\setminus \FFF_2^{\tr}$ is defined as 
		\begin{equation*}
		J_\vvv := d^{1-v(\vvv)} k^{-f(\vvv)} \frac{|\mathscr{L}(\vvv)|}{T_\vvv}\,,
		\end{equation*}
  where $T_{\vvv}$ is the number of spanning tress of $\vvv$.
\end{defn}
	\begin{lemma}\label{lem:w vs wcom}
		Recall that $\ttt\in \FFF_{\tr}$ in Definition \ref{def:freetree:bdlab} was defined in terms of an equivalence class. Also recall $\LLL(\ttt)$ from Definition \ref{def:embedding:number}. For any $\la \in [0,1]$, we have
		\begin{equation*}
		d^{v(\ttt)-1} k^{f(\ttt)} J_\ttt w_{\ttt}^\lambda = \sum_{\ttt^\prime \in \FFF_{\tr}: \ttt = [\ttt^\prime]} |\mathscr{L}(\ttt^\prime)| w^{\textnormal{com}}(\ttt^\prime)^\lambda.
		\end{equation*}
		The analog also holds for the pair model. That is, for $\ula \in [0,1]^2$ and $\uuu\in \FFF_2^{\tr}$, we have
		\begin{equation*}
		d^{v(\uuu) -1} k^{f(\uuu)} J_{\uuu} \bw_{\uuu}^{\ula} = 
		\sum_{\uuu^\prime \in \FFF_2^{\tr} : \uuu = [\uuu^\prime]} |\mathscr{L}(\uuu^\prime)| \bw^{\textnormal{com}}(\uuu^\prime)^{\ula}.
		\end{equation*}
	\end{lemma}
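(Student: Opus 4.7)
This identity is a bookkeeping lemma: both sides compute the total $\lambda$-tilted literal-averaged weight attached to the equivalence class $\ttt$, organized in two different ways. The plan is to interpret the two sides as the same weighted enumeration of ``fully labelled'' free trees and read off the equality.

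First I would unpack the LHS. By Definition \ref{def:fitnum:trees} we have
\begin{equation*}
d^{v(\ttt)-1} k^{f(\ttt)} J_\ttt \;=\; \prod_{v \in V(\ttt)} \binom{d}{\langle \sig_{\delta v}\rangle} \prod_{a \in F(\ttt)} \binom{k}{\langle \sig_{\delta a}\rangle},
\end{equation*}
and the multinomial on the right counts exactly the number of distinguishable half-edge labellings (labels $1,\dots,d$ at each variable, $1,\dots,k$ at each clause) consistent with the color multi-index of $\sig(\ttt)$. Next, since $\hat\Phi(\sig_{\delta a})^\la = \E^{\lit}[\hat\Phi^\lit((\sig\oplus\uL)_{\delta a})^\la]=2^{-k}\sum_{\uL_{\delta a}}\hat\Phi^\lit((\sig\oplus\uL)_{\delta a})^\la$ by \eqref{eq:def:vhat:basic} and Lemma \ref{lem:decompose:Phi:hat}, the averaged weight $w_\ttt^\la$ can be rewritten as $2^{-kf(\ttt)}\sum_{\uL} w^\lit(\ttt,\uL)^\la$, where the sum ranges over literal patterns on all $kf(\ttt)$ half-edges incident to clauses of $\ttt$, and $w^\lit(\ttt,\uL)^\la$ denotes the product of $\dot\Phi$, $\bar\Phi$ and $\hat\Phi^\lit$ factors against the specified literals.

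Second, I would identify a common combinatorial object. Consider tuples $(T,\mathrm{hlab},\uL)$, where $T$ has the topological type of $\ttt$, $\mathrm{hlab}$ is a half-edge labelling, and $\uL$ is a literal pattern on the half-edges of $T$ incident to clauses, and the combined data induces the coloring $\sig(\ttt)$ via \eqref{eq:def:col on freetree}. After the manipulation of the first step, the LHS equals $2^{-kf(\ttt)}$ times the total $w^\lit(\cdot)^\la$-weight of this set. On the other hand, the same set modulo graph isomorphisms preserving all half-edge labels and all literals is in bijection with $\bigsqcup_{\fff : [\fff] = \ttt} \LLL(\fff)$, because $T_\fff = 1$ for trees and $\LLL(\fff)$ is by Definition \ref{def:embedding:number} the set of such half-edge labellings of $\fff$ modulo these isomorphisms. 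Grouping the tuple-sum by $\fff$ and invoking $w^{\textnormal{com}}(\fff)^\la = w^\lit(\fff)^\la\cdot 2^{-kf(\fff)}$ from \eqref{eq:def:weight:freecomp:avg} reproduces the RHS.

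The pair-model statement follows by the identical argument, with $\ttt$, $\sig(\ttt)$, $\LLL(\fff)$, $w^{\textnormal{com}}(\fff)$ replaced by $\uuu$, $\bsig^{\textsf u}(\uuu)$, $\LLL(\vvv)$, $\bw^{\textnormal{com}}(\vvv)$ respectively, invoking \eqref{eq:def:embnum:pairr} and \eqref{eq:def:weight:freecomp:avg:2ndmo} in place of their single-copy counterparts. The main obstacle lies in the second step: one must verify carefully that an automorphism of $\fff$ (resp.\ $\vvv$) fixing the half-edge labels automatically fixes both the internal literal pattern and the boundary data, so that the isomorphism-quotient on the tuple side produces exactly the disjoint union $\bigsqcup_\fff \LLL(\fff)$, and that the multinomial on the LHS therefore counts orbit representatives without over- or under-counting. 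This is a routine but fiddly group-action bookkeeping, and once it is in place the rest of the argument is a rearrangement of sums.
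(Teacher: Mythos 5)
Your proposal is correct and follows the same route as the paper: rewrite the multinomial factor $d^{v(\ttt)-1}k^{f(\ttt)}J_\ttt$ as a product of multinomial coefficients, absorb the literal-averaging factor $\prod_a\hat v(\sig_{\delta a}(\ttt))$ into a count of admissible literal assignments, and then set up a bijection between the resulting product set and $\bigsqcup_{\fff:[\fff]=\ttt}\LLL(\fff)$. The paper makes this bijection explicit as a pair of mutually inverse maps $\Phi,\Psi$ (choosing, at each vertex, a bijection $i_v:\delta^{\shortparallel}v\to[d]$ compatible with $\utau^v$, and checking the result is independent of the choice), whereas your sketch phrases it as a quotient of $(T,\mathrm{hlab},\uL)$-tuples by label-preserving isomorphisms; these are the same idea. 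One small correction to your discussion of the ``obstacle'': it is not a question of whether an automorphism fixing half-edge labels also fixes the literals and boundary data --- the isomorphisms defining $\LLL(\fff)$ preserve those by definition. The well-definedness issue is rather the converse one: when colors at a vertex repeat, the tuple $(\{\utau^v\},\{\utau^a\},\uL)$ admits several compatible bijections $i_v,i_a$, and one must check that all choices land in the \emph{same} class of $\LLL(\fff)$ (the paper notes this but does not spell out the verification). Once that is in place, the rest is the sum-rearrangement you describe.
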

	\begin{proof}[Proof of Lemma \ref{lem:w vs wcom}]
    We prove the first identity, since the second one for the union-free tree follows from the same argument. Let $\ttt$ be a free tree and $\fff\in \FFF$ be a free component such that $[\fff] = \ttt$. Note that
		\begin{equation*}
		\frac{w(\ttt)^\lambda }{w^{\textnormal{com}} (\fff)^\lambda }
		= 2^{k f(\ttt)} \prod_{a\in F(\ttt)} \hat{v}\big(\sig_{\delta a}(\ttt)\big),
		\end{equation*}
		For $\uL \in \{0,1\}^{E(\ttt)\sqcup \partial \ttt}$, write $\uL \sim \ttt$ if and only if there exists $\fff \in \FFF$ whose literal-labels are given by $\uL$ and $[\fff]=\ttt$. Then, from the definition of $\hat{v}$ in \eqref{eq:def:vhat:basic}, it is not hard to see that the number of $\uL \in \{0,1\}^{E(\ttt)\sqcup \dot{\partial} \ttt}$ such that $\uL \sim \ttt$ is $2^{k f(\ttt)} \prod_{a\in F(\ttt)} \hat{v}\big(\sig_{\delta a}(\ttt)\big)$. Thus, we have
		\begin{equation*}
		\frac{w(\ttt)^\lambda }{w^{\textnormal{com}}(\fff)^\lambda} =\Big| \big\{\uL\in \{0,1\}^{E(\ttt)\sqcup \dot{\partial} \ttt}: \uL \sim \ttt \big\}\Big|.
		\end{equation*}
		Hence, for the rest of the proof, we aim to show that
		\begin{equation*}
		\prod_{v\in V(\ttt)} {d \choose \big \langle \sig_{\delta v}(\ttt) \big \rangle } \prod_{a\in F(\ttt)} {k \choose \big\langle \sig_{\delta a}(\ttt) \big\rangle} \Big| \big\{\uL\in \{0,1\}^{E(\ttt)\sqcup \dot{\partial} \ttt}: \uL \sim \ttt \big\}\Big| = \sum_{\fff: [\fff]=\ttt} |\LLL(\fff)|.
		\end{equation*}
		For $\utau \in \Omega^l$ for $l\geq 1$, we adopt the notation $\{\utau \}$ to denote the multi-set $\{ \tau_1, \ldots, \tau_l \}$, that respects multiplicities but ignores the ordering. Then, we have the following elementary observation:
		\begin{equation*}
		{d \choose \big \langle \sig_{\delta v}(\ttt) \big \rangle }= \Big|\big\{ \utau \in \Omega^d: \{\utau \} = \{ \sig_{\delta v}(\ttt) \} \big\}\Big|;\qquad {k \choose \big\langle \sig_{\delta a}(\ttt) \big\rangle} = \Big|\big\{ \utau \in \Omega^k: \{\utau \} = \{ \sig_{\delta a}(\ttt) \} \big\}\Big|.
		\end{equation*}
		Thus, our goal is to construct one-to-one correspondence $\Phi$ between the sets
		\begin{equation*}
	\XX:=	\prod_{v \in V(\ttt) } \Big\{ \utau^v: \{ \utau^v \} = \{ \sig_{\delta v}(\ttt) \}  \Big\} \times \prod_{a \in F(\ttt)} \Big\{\utau^a: \{ \utau^a \} = \{ \sig_{\delta a}(\ttt) \} \Big\} \times \{\uL : \uL \sim \ttt \}\quad\textnormal{and}\quad \sqcup_{\fff: [\fff]=\ttt} \LLL(\fff).
		\end{equation*}
		From now on, we adopt the following notation: $e^{\shortparallel}$ denotes a half-edge and $\bar{e}^{\shortparallel}$ denotes the full edge containing $e^{\shortparallel}$. Also, $\delta^{\shortparallel}a$ (resp.~$\delta^{\shortparallel}v$) is the collection of half-edges adjacent to $a$ (resp.~$v$). Moreover, let $T$ be the underlying graph of $\ttt$, with no literal and spin information.
		
		Given $(\{\utau^a \}_{a\in F(\ttt)}, \{\utau^v \}_{v\in V(\ttt)}, \{\tL_e \}_{e\in E(\ttt)\sqcup\dot{\partial}\ttt}) \in \XX,$ choose $i_a : \delta^{\shortparallel} a \to [k]$ and $i_v: \delta^{\shortparallel} v \to [d]$ such that for all $e^{\shortparallel}\in \delta^{\shortparallel}a$ (resp.~$e^{\shortparallel} \in \delta^{\shortparallel} v$), $\tau^a_{i_a(e^{\shortparallel})} = \sigma_{\bar{e}^{\shortparallel}}(\ttt)$ (resp.~$\tau^v_{i_v(e^{\shortparallel})} = \sigma_{\bar{e}^{\shortparallel}}(\ttt)$). If $\big(\sigma_e(\ttt)\big)_{e\in \delta a}$ are all distinct, there exists unique $i_a$, but if some are equal there could be many $i_a$. For the graph $T$, label $e^{\shortparallel}\in \delta^{\shortparallel} a$ (resp.~$e^{\shortparallel}\in \delta^{\shortparallel}v$) with $i_a(e^{\shortparallel})$ (resp.~$i_v(e^{\shortparallel})$), and assign $\tL_e$ as a literal at edge $e$. Note that the spin labels at boundary edge adjacent to a clause of $T$, which are either $\bb_0$ or $\bb_1$, are determined by $\ttt$. This whole procedure leads to a labeled component $\ttt^{\textnormal{lab}} \in \sqcup_{\fff: [\fff]=\ttt} \LLL(\fff)$, and it is not hard to see that the resulting $\ttt^{\textnormal{lab}}$ does not depend on the choice of $\{i_a \}, \{i_v\}$, i.e. results in the same isomorphism class described in Definition \ref{def:embedding:number}. We denote this map by
		\begin{equation*}
		\Phi: (\{\utau^a \}_{a\in F(\ttt)}, \{\utau^v \}_{v\in V(\ttt)}, \{\tL_e \}_{e\in E(\ttt)} ) \mapsto \ttt^{\textnormal{lab}}
		\end{equation*}
 		We show $\Phi$ is a one-to-one correspondence by constructing its inverse $\Psi$: given $\ttt^{\textnormal{lab}}\in \sqcup_{\fff: [\fff]=\ttt} \LLL(\fff)$, define $\utau^a := \big( \sigma_{\bar{e}_1^{\shortparallel}}(\ttt),\ldots, \sigma_{\bar{e}_k^{\shortparallel}} (\ttt)\big)$ for $a\in F(\ttt)$, where $e_j^{\shortparallel}\in \delta^{\shortparallel} a$ is the half-edge labeled $j$ for $1\leq j \leq k$. We can define $\utau^v$ analogously, and the literal $\tL_e$ for $e\in E(\ttt)\sqcup \dot{\partial}\ttt$ can be read off from $\ttt^{\textnormal{lab}}$. This gives rise to $\Psi(\ttt^{\textnormal{lab}}):=(\{\utau^a \}_{a\in F(\ttt)}, \{\utau^v \}_{v\in V(\ttt)}, \{\tL_e \}_{e\in E(\ttt)} )$, and it does not depend on the choice of a representative in $\ttt^{\textnormal{lab}}$. The proof follows from the fact that the maps $\Psi \circ \Phi$ and $\Phi \circ \Psi$ are identities.		
	\end{proof}

\section{The first moment}\label{sec:1stmo}
	The goal of this section is to compute the first moment $\E \bZ_{\lambda^\star}$ and $\E \bZ_{\lambda^\star,s_n}$ up to the leading constant for $|s_n-s^\star|\leq n^{-2/3}$ (for the definition of $\la^\star$ and $s^\star$, see \eqref{eq:def:sstarandlambdastar} below). In Section \ref{subsec:1stmo:apriori}, we state \textit{a priori estimates}, which gives the first estimate on the number of large free components and cyclic free components. In Section \ref{subsec:1stmo:optimal}, we show that the number of free trees concentrates on an explicit value, which can be calculated from the \textit{Belief Propagation}(BP) fixed point. In Section \ref{subsec:1stmo:leading constant}, we finish the calculation of the first moment and prove Theorem \ref{thm1}-(a).
\subsection{A priori estimates}\label{subsec:1stmo:apriori}
To begin with, we decompose $\E \bZ_{\la}$ in terms of contributions from each ``local neighborhood profile", which is the same approach done in the previous works \cite{dss15ksat,dss16maxis,dss16,ssz22}. The \textit{coloring profile} is one of such notions which was introduced in \cite{ssz22}. Hereafter, $\mathscr{P}(\mathfrak{X})$ denotes the space of probability measures on $\mathfrak{X}$.
	
	\begin{defn}[coloring profile and the simplex of coloring profile; \cite{ssz22}, Definition 3.1 and 3.2]\label{def:empiricalssz}
		Given a \textsc{nae-sat} instance $\GGG$ and a coloring configuration $\sig \in \Omega^E $, the \textit{coloring profile} of $\underline{\sigma}$ is the triple $H[\underline{\sigma}]\equiv H\equiv (\dot{H},\hat{H},\bar{H}) $ defined as follows. 
		\begin{equation}\label{eq:def:H}
		\begin{split}
		\dot{H}\in \mathscr{P}(\Omega^d), \quad &\dot{H}(\underline{\tau})
		 \equiv
		  \big|\{v\in V: \underline{\sigma}_{\delta v}=\underline{\tau} \}\big| / |V| \quad
		  \textnormal{for all } \underline{\tau}\in \Omega^d;\\
		  \hat{H}\in \mathscr{P}(\Omega^k), \quad &\hat{H}(\underline{\tau})
		  \equiv
		  \big|\{a\in F: \underline{\sigma}_{\delta a}=\underline{\tau} \}\big| / |F| \quad
		  \textnormal{for all } \underline{\tau}\in \Omega^k;\\
		  \bar{H}\in \mathscr{P}(\Omega), \quad &\bar{H}(\tau)
		  \equiv
		  \big|\{e\in E: \sigma_e=\tau \} \big| / |E| \quad
		  \textnormal{for all } \tau \in \Omega.
		\end{split}
		\end{equation}
	A valid $H$ must satisfy the following compatibility equation:
	\begin{equation}\label{eq:compatibility:coloringprofile}
	       \frac{1}{d} \sum_{\utau \in \Omega^{d}}\dot{H}(\utau)\sum_{i=1}^{d}\one\{\tau_i=\tau\} = \bar{H}(\tau) = \frac{1}{k}\sum_{\utau \in \Omega^k}\hat{H}(\utau)\sum_{j=1}^{k}\one\{\tau_j = \tau \}\quad\textnormal{for all}\quad \tau \in \Omega
	\end{equation}
	The \textit{simplex of coloring profile} $\bDelta$ is the space of triples $H=(\dot{H},\hat{H},\bar{H})$ which satisfies the following conditions:
	\begin{enumerate}
	\item[$\bullet$] $\dot{H} \in \mathscr{P}(\textnormal{supp}\,\dot{\Phi}), \hat{H} \in \mathscr{P}(\textnormal{supp}\,\hat{\Phi})$ and $\bar{H} \in \mathscr{P}(\Omega)$.
	\item[$\bullet$] $\dot{H},\hat{H}$ and $\bar{H}$ satisfy \eqref{eq:compatibility:coloringprofile}.
	\item[$\bullet$] From the definition of $\bZ_{\la}$ in \eqref{eq:def:Z:lambda}, $\dot{H},\hat{H}$ and $\bar{H}$ satisfy $\max\{\bar{H}(\ff),\bar{H}(\rr)\} \leq \frac{7}{2^k}$.
	\end{enumerate}
	For $L <\infty$, we let $\bDelta^{(L)}$ be the subspace of $\bDelta$ satisfying the following extra condition:
	\begin{enumerate}[resume]
	    \item[$\bullet$] $\dot{H} \in \mathscr{P}(\textnormal{supp}\,\dot{\Phi}\cap \Omega_{L}^{d}), \hat{H} \in \mathscr{P}(\textnormal{supp}\,\hat{\Phi}\cap \Omega_L^{k})$ and $\bar{H} \in \mathscr{P}(\Omega_L)$.
	\end{enumerate}
	\end{defn}
    We remark that $\bDelta^{(L)}$ in Defintion \ref{def:empiricalssz} has an extra condition $\max\{\bar{H}(\ff),\bar{H}(\rr)\} \leq \frac{7}{2^k}$ compared to \cite[Definition 3.2]{ssz22}. In \cite{ssz22}, $\bDelta^{(L)}$ in Defintion \ref{def:empiricalssz} was denoted by $\bN_{\circ}$. Because the contribution to the first moment from $H$ such that $\max\{\bar{H}(\ff),\bar{H}(\rr)\} \geq \frac{7}{2^k}$ is exponentially small in $n$ (cf. Lemma \ref{lem:free:red}), we impose the condition $\max\{\bar{H}(\ff),\bar{H}(\rr)\} \leq \frac{7}{2^k}$ in the definition of $\bDelta^{(L)}$.

	Given a coloring profile $H\in \bDelta$, denote by $\bZ_{\lambda}^{\tr}[H]$ the contribution to $\bZ_{\lambda}^{\tr}$ from the coloring configurations whose coloring profile is $H$. That is, $\bZ_\lambda^{\tr}[H] := \sum_{\underline{\sigma}:\; H[\underline{\sigma}] = H} w^{\lit}(\underline{\sigma})^\lambda$. For $H \in \bDelta^{(L)}$, $\tZ_{\lambda}[H]$ is analogously defined. 
	In \cite{ssz22}, they showed that $\E \tZ_\lambda[H]$ for the \textit{L-truncated} coloring model can be written as the following formula, which is a result of Stirling's approximation:
	\begin{equation}\label{eq:1stmo dec by H}
	\begin{split}
	\E \tZ_\lambda[H] &= n^{O_{L}(1)} \exp\left\{n F_{\lambda,L}(H)\right\}\quad\textnormal{for}\\
	F_{\lambda,L}(H)&\equiv \bigg\langle \dot{H}, \log\Big(\frac{\dot{\Phi}^\lambda}{\dot{H}}\Big) \bigg\rangle
	+ \frac{d}{k}  \bigg\langle \hat{H}, \log\Big(\frac{\hat{\Phi}^\lambda}{\hat{H}}\Big) \bigg\rangle
	+ d \bigg\langle \bar{H}, \log\big(\bar{\Phi}^\lambda\bar{H}\big) \bigg\rangle
	\end{split}
	\end{equation}
	Unfortunately, this approach has several crucial limitations to apply to our setting:
	\begin{enumerate}
		\item Our goal is to calculate the \textit{untruncated} first moment $\E \bZ_{\lambda}$. Since the underlying spin system $\Omega$ is infinite, Stirling's approximation is inaccurate and the exponent in the polynomial correction term, $O_{L}(1)$ in the equation above, tends to infinity as $L$ tends to infinity.
		\item The contribution from frozen configurations having free cycles cannot be analyzed by the above formula.
	\end{enumerate}
	Instead, we decompose $\E \bZ_\lambda$ in terms of a different type of empirical measure. Rather than revealing information on $H$, we will record $n_\fff$, namely the number of free components $\fff\in \mathscr{F}$. For the variables, clauses and edges that are not included in a free component, we record their profile as in Definition \ref{def:empiricalssz}. To formalize this idea, define the collections $\dot{\partial}^\bullet, \hat{\partial}^\bullet$ of \textit{non-free colors} by
	\begin{equation}\label{eq:def:nonfreecol}
	\dot{\partial}^\bullet \equiv \{\rr_0,\rr_1,\bb_0,\bb_1 \}, \quad \hat{\partial}^\bullet \equiv \{\rr_0,\rr_1,\bb_0,\bb_1,\fs \},
	\end{equation}
	which are the colors that can be adjacent to a variable or a clause outside of the free components. Similarly, we define $\partial$ to be the non-free colors which can be boundary colors of free components:
		\begin{equation}\label{eq:model:bij:msg and col}
		\dot{\partial}:= \{\bb_0, \bb_1 \}, \quad \hat{\partial} := \{\fs \}, \quad \partial:=\dot{\partial}\sqcup \hat{\partial}.
		\end{equation}	
	Then, we have the following definition of \textit{free component profile} and \textit{boundary profile}.
	
	\begin{defn}[free component profile, boundary profile] \label{def:empirical:boundary}
	Given a \textsc{nae-sat} instance $\GGG$ and a valid component configuration $\csig \in \comp^E$, the \textit{boundary profile} of $\csig$ is the tuple $(B[\csig],\uh[\csig])\equiv(B,\uh) \equiv (\dot{B},\hat{B},\bar{B}, \uh)$, and the \textit{free component profile} is the sequence $(n_\fff[\csig])_{\fff\in \mathscr{F}}\equiv(n_\fff)_{\fff\in \mathscr{F}}$, defined as follows.
	\begin{enumerate}
	    \item[$\bullet$] For each $\fff\in\mathscr{F}$, $n_\fff[\csig]$ is the number of free component $\fff$ inside $(\GGG,\csig)$. Also, we denote its normalization by $p_{\fff}[\csig]:=\frac{n_{\fff}[\csig]}{n}$.
		\item[$\bullet$] $\dot{B},\hat{B},$ and $\bar{B}$ are measures on $(\dot{\partial}^\bullet)^d$, $(\hat{\partial}^\bullet)^k$ and $\hat{\partial}^\bullet$ respectively, defined by
		\begin{equation*}
		\begin{split}
		&\dot{B}(\underline{\tau})
		:=
		|\{v\in V: \csig_{\delta v}=\underline{\tau} \} | / |V| \quad
		\textnormal{for all } \underline{\tau}\in (\dot{\partial}^\bullet)^d;\\
		 &\hat{B}(\underline{\tau})
		:=
		|\{a\in F: \csig_{\delta a}=\underline{\tau} \} | / |F| \quad
		\textnormal{for all } \underline{\tau}\in (\hat{\partial}^\bullet)^k;\\
		 &\bar{B}(\tau)
		:=
		|\{e\in E: \csigma_e=\tau \} | / |E| \quad
		\textnormal{for all } {\tau}\in \hat{\partial}^\bullet.
		\end{split}
		\end{equation*}
		Hence the total mass of each $\dot{B},\hat{B},$ and $\bar{B}$ is at most $1$. Furthermore, $\uh:=(h(\circ), \{h(x)\}_{x\in \partial})$ records the total number of the free components and the total number of boundary colors adjacent to the free components, normalized by the number of variables. That is,
	\begin{equation}\label{eq:def:h}
	h(\circ) := \frac{1}{|V|}\sum_{\fff\in \mathscr{F}} n_\fff,\quad\textnormal{and}\quad h(x) := \frac{1}{|V|} \sum_{\fff\in\mathscr{F}} \eta_{\fff}(x)\,n_\fff, \quad \textnormal{for}\quad x\in \partial,
	\end{equation}
	where $\{\eta_\fff(x)\}_{x\in \partial}$ are defined in Definition \ref{def:freecomp:basic}. Note that a valid boundary profile $(B,h)$ must satisfy the following compatibility condition: for all $x\in \hat{\partial}^\bullet$,
	\begin{equation}\label{eq:def:compat:1stmo}
	\begin{split}
	\bar{B}(x)
	&=
	\frac{1}{d} \sum_{\underline{\sigma}\in(\dot{\partial}^\bullet)^d } \dot{B}(\underline{\sigma})  \sum_{i=1}^d \one\{\sigma_i =x \} + \frac{\one\{x\in \hat{\partial}\}  }{d} h(x)\\
	&=
	\frac{1}{k}
	\sum_{\underline{\sigma}\in (\hat{\partial}^\bullet)^k} \hat{B} (\underline{\sigma})
	\sum_{j=1}^k \one \{\sigma_j =x \}
	+
	\frac{\one\{x\in \dot{\partial} \}}{d}  h(x).
	\end{split}
	\end{equation}
	\end{enumerate}
	\end{defn}
    The following remark shows that the boundary profile $(B[\csig],\uh[\csig])$ is determined by the free component profile $\{n_{\fff}[\csig]\}_{\fff\in \FFF}$ if $\csig$ consists of free trees. It also introduces the notation $(n_{\ttt})_{\ttt\in \FFF_{\tr}}\sim (B,s)$.
	\begin{remark}\label{rem:compat:bdry:tree}
	\begin{enumerate}
	    \item 	If $\csig\in \comp^{E}$ does not contain any free cycles, it corresponds to a unique coloring $\sig\in \Omega^{E}$. In such a case, $h(\circ)$ can also be computed from $B$ by summing up Euler characteristics:
	\begin{equation}\label{eq:def:compat:1stmo:tree}
	 h(\circ)= 1-\langle \dot{B}, \one \rangle+ \frac{d}{k}(1-\langle \hat{B}, \one \rangle) -d(1-\langle \bar{B}, \one \rangle)
	\end{equation}
	where $\one$ denotes the all-ones vector. Thus, $\uh[\sig]:= \uh[\csig]$ is fully determined from $B[\sig]:= B[\csig]$ by \eqref{eq:def:compat:1stmo} and \eqref{eq:def:compat:1stmo:tree}. With a slight abuse of notation, we denote such relation by $\uh=\uh(B)=\big(h_x(B)\big)_{x\in \partial\sqcup\{\circ\}}$. Moreover, the free component profile is encoded by the \textit{free tree profile}, $(n_\ttt[\sig])_{\ttt\in \mathscr{F}_{\tr}}:= (n_\ttt[\csig])_{\ttt\in \mathscr{F}_{\tr}}$, since $n_\fff =0$ for $\fff\in \FFF\setminus \FFF_{\tr}$. Note that the boundary profile and free tree profile from valid coloring $\sig$ must be compatible, i.e. satisfy \eqref{eq:def:h}-\eqref{eq:def:compat:1stmo:tree}. We denote this relation by $(n_{\ttt})_{\ttt \in \mathscr{F}_{\tr}} \sim B$.
	\item 	Given a \textsc{nae-sat} instance $\GGG$ and a valid coloring configuration $\sig$, define $s[\sig]:=\frac{1}{n}\log w_{\GGG}^\lit(\sig)$. Then, by Lemmas \ref{lem:size:msg and trees} and \ref{lem:model:size:col}, we can express $s[\sig]$ by
	\begin{equation*}
	    s[\sig] =\frac{1}{n}\sum_{\ttt\in \FFF_{\tr}} n_\ttt[\sig]\log w_{\ttt}^\lit \equiv \sum_{\ttt\in \FFF_{\tr}} p_\ttt[\sig]s_\ttt^\lit,
	\end{equation*} 
	where $s_{\ttt}^{\lit} :=\log w_{\ttt}^\lit$. We write $(n_\ttt)_{\ttt \in \FFF_{\tr}} \sim (B,s)$ if $(n_\ttt)_{\ttt\in \FFF_{\tr}}\sim B$ and $\sum_{\ttt\in \FFF_{\tr}} n_{\ttt}s_{\ttt}^\lit\in [ns,ns+1)$.
	\end{enumerate}
	\end{remark}
	\begin{defn}[simplex of boundary profile]
	$\bDelta^{\textnormal{b}}$ is the space of boundary profiles $B$ with the following conditions.
		\begin{enumerate}
				\item[$\bullet$] $\dot{B}, \hat{B}$ and $\bar{B}$ are measures supported on $\textnormal{supp}~\dot{I}, \textnormal{supp}~\hat{v}$ and $\hat{\partial}^\bullet$ respectively.
				\item[$\bullet$] $\dot{B}, \hat{B}$ and $\bar{B}$ have total mass at most 1 and also satisfy the bound
				\begin{equation}\label{eq:B:red:free:small}
				\bar{B}(\{\rr_0,\rr_1\}) \le 7/2^k \quad\text{and}\quad \bar{B}(\{\rr_0,\rr_1,\bb_0,\bb_1\}) \ge 1- 7/2^k.
				\end{equation}
				\item[$\bullet$] There exists $\{h(x)\}_{x\in \partial} \in \R_{\geq 0}^{\partial}$ such that \eqref{eq:def:compat:1stmo} holds.\\
			Moreover, we denote by $\bDelta^{\textnormal{b}}_n$ the subspace of $\bDelta^{\textnormal{b}}$ satisfying the following extra condition.
				\item[$\bullet$] $\dot{B}, \hat{B}$ and $\bar{B}$ are integer multiples of $\frac{1}{n}, \frac{1}{m}$ and $\frac{1}{nd}$ respectively. That is,  \begin{equation*}
				\dot{B} \in \left(n^{-1}\mathbb{Z}_{\geq 0}\right)^{(\dot{\partial}^\bullet)^d}, \quad 
				\hat{B} \in \left(m^{-1}\mathbb{Z}_{\geq 0}\right)^{(\hat{\partial}^\bullet)^k}, \quad \textnormal{and} \quad 
				\bar{B} \in \left((nd)^{-1}\mathbb{Z}_{\geq 0}\right)^{\hat{\partial}^\bullet}.
				\end{equation*}
		\end{enumerate}
	\end{defn}
	The first step towards calculating the first moment is to give \textit{a priori} estimates that there are few large free components and cyclic free components. For a valid component configuration $\csig$, denote respectively the number of cyclic components and the number of \textit{multi-cyclic} edges by
	\begin{equation}\label{eq:def:cyclic:comp:multi:edge}
	    n_\cyc[\csig]
	    := \sum_{\fff \in \mathscr{F}, n_{\fff}[\csig]\neq 0 }\one\left\{\gamma(\fff)\geq 0\right\}~~~\textnormal{ and }~~~~e_\mult[\csig]:= \sum_{\fff \in \mathscr{F}, n_{\fff}[\csig]\neq 0}\gamma(\fff)\one\left\{\gamma(\fff)\geq 0\right\}.
	\end{equation}
	For $r>0$, let $\ee_{r}$ be the set of free component profiles obeying exponential decay of frequencies in its number of variables with rate $2^{-rk}$:
	\begin{equation}\label{eq:def:exp:decay:profile}
	    \ee_{r}:= \Big\{(n_\fff)_{\fff \in \FFF}: \sum_{\fff \in \FFF,v(\fff)=v} n_\fff \leq n2^{-rkv}, \forall v\geq 1\Big\}.
	\end{equation}
	In what follows, $\bZ^{\tr}_{\la}[(\ee_{r})^{\mathsf{c}}]$ denotes the contribution to $\bZ^{\tr}_{\la}$ from the $\sig \in \Omega^{E}$ such that $\big(n_{\ttt}[\sig]\big)_{\ttt\in \FFF_{\tr}}\notin \ee_{r}$. Other quantities are similarly defined. Proposition \ref{prop:1stmo:aprioriestimate} plays a crucial role in computing the first moment and its proof is presented in Appendix \ref{sec:app:apriori}.
	\begin{prop}\label{prop:1stmo:aprioriestimate}
	For $k\geq k_0, \lambda \in [0,1], L<\infty$ and $c\in [1, 3]$, the following holds.\footnote{$\frac{2}{3}$ in the exponent can be substituted by any $x \in (0,1)$, if we adjust $k_0$. For our purposes, $x \in (\frac{1}{2}, 1)$ will suffice.}
	\begin{enumerate}
    \item $\E \tZ_{\lambda}[(\ee_{\frac{1}{c+1}})^{\mathsf{c}}] \lesssim_{k} n^{-\frac{2}{3}c}\log n \E \tZ_{\lambda}$ and $\E \bZ^\tr_{\lambda}[(\ee_{\frac{1}{c+1}})^{\mathsf{c}}] \lesssim_{k} n^{-\frac{2}{3}c}\log n \E \bZ^\tr_{\lambda}$.
    \item $\E \bZ_\la[\exists \fff\in \FFF(\ux, \GGG),~~~~f(\fff)\geq v(\fff)+2]\lesssim_{k} n^{-2}\E\bZ_\la$.
    \item $\E \bZ_\la[(\ee_{\frac{1}{c+1}})^{\mathsf{c}} \quad\textnormal{and}\quad \forall \fff\in \FFF(\ux, \GGG),~~~~f(\fff)\leq v(\fff)+1]\lesssim_{k} n^{-\frac{2}{3}c}\log n\E \bZ_\la$.
	\end{enumerate}
	Moreover, there exists a universal constant $C$ such that for every $r,\gamma \in \Z_{\geq 0}$, the following holds.
	\begin{enumerate}[resume]
	\item $\E\bZ_{\lambda}[n_{\textnormal{cyc}}\geq r, e_{\textnormal{mult}}\geq \gamma\quad\textnormal{and}\quad \ee_{\frac{1}{4}}]\lesssim_{k} \frac{1}{r!}(\frac{Ck^2}{2^k})^{r}(\frac{C\log^{3} n}{n})^{\gamma}\E\bZ_{\lambda}^{\tr}$.
	\end{enumerate}
	\end{prop}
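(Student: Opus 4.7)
\medskip

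\noindent\textbf{Proof plan.} The unifying theme is that all four bounds reduce to two inputs: a per-shape subcriticality estimate that says the expected frequency of a free component $\fff$ with $v$ variables decays like $(C/2^k)^{v}$, and the configuration-model identity that embedding a component of excess $\gamma(\fff)$ at a specified location costs a factor $n^{-(\gamma(\fff)+1)}$. The plan is to prove a single clean estimate
\begin{equation*}
\E\bZ_\la\bigl[n_\fff=1,\ \text{at a specified location}\bigr]\ \lesssim_k\ n^{-(\gamma(\fff)+1)}(C/2^k)^{v(\fff)}\,\E\bZ_\la^{\tr},
\end{equation*}
together with a ``Poisson/independence'' factorisation over distinct components conditional on the boundary profile, and then specialise.

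\smallskip

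\noindent\emph{Part (1).} First fix a boundary profile $B\in\bDelta^\bbb_n$ and condition on it. From the explicit first moment formula for $\E\bZ_\la^{\tr}[B]$ (obtained by the usual Stirling expansion on variables and clauses outside free trees, and Lemma \ref{lem:w vs wcom} inside), the tree profile $(n_\ttt)_\ttt$ is multinomial-like with mean proportional to $n\,J_\ttt w_\ttt^\la$ times a boundary factor. I would show that, summing over shapes of fixed size $v$, $\sum_{v(\ttt)=v}\E[n_\ttt\mid B]\lesssim_k n(C/2^k)^{v-1}$: each internal variable of a free tree forces an adjacent non-separating clause, which is a $O(1/2^k)$ event, while the combinatorial enumeration of rooted bipartite trees is absorbed by choosing $k_0$ large. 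Then Markov gives
\begin{equation*}
\P\Bigl(\textstyle\sum_{v(\ttt)=v}n_\ttt>n2^{-kv/(c+1)}\Bigr)\lesssim_k 2^{-ckv/(c+1)+k}.
\end{equation*}
The factor of $\log n$ in the claim comes from restricting $v\le C\log n/k$ (larger $v$ are handled deterministically since there are at most $n$ free variables), and summing the geometric series over $v\ge 1$ yields the $n^{-2c/3}\log n$ bound after averaging over $B$.

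\smallskip

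\noindent\emph{Parts (2) and (3).} A component with $f(\fff)\ge v(\fff)+2$ must have $\gamma(\fff)\ge 2$, because every non-separating clause has at least two free variable neighbours, so $e(\fff)\ge 2f(\fff)$ and hence $\gamma(\fff)=e(\fff)-v(\fff)-f(\fff)\ge f(\fff)-v(\fff)\ge 2$. The basic per-shape bound then gives a prefactor $n^{-3}$ per shape; summing over $v(\fff)$ using the $(C/2^k)^v$ tail produces the claimed $n^{-2}$ (one factor of $n$ being spent on the choice of location). Part (3) is obtained by intersecting the event in Part (1) with the complement of that in Part (2): restricted to $\gamma(\fff)\le 1$ for every free component, the unicyclic components only perturb $\E\bZ_\la$ by a bounded factor relative to $\E\bZ_\la^{\tr}$ (see Part (4) below with $r=O(1)$), so the tree-level estimate of Part (1) transports to $\bZ_\la$ directly.

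\smallskip

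\noindent\emph{Part (4) and the main obstacle.} After restricting to the profile class $\ee_{1/4}$ (so that all free trees are ``short'', in particular of size $O(\log n)$), the cyclic components may be treated as rare decorations on top of a tree configuration. I would run the first moment computation for $\bZ_\la$ conditional on the tree part, identifying the sum over unicyclic shapes by expected count $\lesssim_k Ck^2/(n\cdot 2^k)$ per unicyclic placement (the $k^2$ accounts for cycle lengths and labels) and over shapes of excess $\gamma$ by $O(n^{-\gamma-1}\mathrm{polylog}(n))$, where the $\log^3 n$ originates from summing cycle lengths up to $O(\log n)$. Independence of disjoint shape-placements in the configuration model (valid after conditioning on the boundary and using a Poissonisation argument for the perfect matching) yields the $1/r!$ factor and the product form of the bound. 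The main obstacle throughout is to ensure that the enumeration of free-tree/free-component shapes does not overwhelm the per-shape $(C/2^k)^v$ gain; this is where the Belief-Propagation contraction implicit in the weights $w_\ttt^\la$ (in the regime $k\ge k_0$, $\la\in[0,1]$) must be invoked, together with care that the a priori restriction $\bar B(\ff)\le 7/2^k$ holds so that the boundary factors do not inflate the per-shape rate.
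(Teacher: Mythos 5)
Your high-level intuition — per-shape subcriticality at rate $(C/2^k)^{v}$, a cost of $n^{-(\gamma(\fff)+1)}$ per embedded excess, and Poissonization for the cyclic decorations — matches the mechanism of the paper, and your sketches of Parts (2) and (4) are broadly consistent with it (the paper indeed uses $f(\fff)\ge v(\fff)+2\Rightarrow\gamma(\fff)\ge 2$ for Part (2), and a pruning-to-trees map with a $1/r!$ from choosing which components are cyclic for Part (4)). However, Part (1) as written contains a genuine gap. A first-moment Markov bound on the count $\sum_{v(\ttt)=v}n_\ttt$ can never produce a probability that is polynomially small \emph{in $n$} for exceeding a threshold of the form $n2^{-kv/(c+1)}$: your displayed bound $2^{-ckv/(c+1)+k}$ is independent of $n$ (and for $v=1$ it is not even small), so ``summing the geometric series over $v$'' cannot yield the factor $n^{-\frac{2}{3}c}$. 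The quantity to be controlled is a ratio of tilted partition functions, and the correct mechanism is multiplicative: one must show that the relative weight of configurations containing $\ell$ components of size $a$ decays like $\bigl(\tfrac{Cn}{\ell a}(Ck^{2}/2^{k})^{a}\bigr)^{\ell}$, so that evaluating at the threshold $\ell_{\max}(a)=\lceil n2^{-kv/(c+1)}\rceil$ gives $(\ell_{\max}/n)^{\frac{2c}{3}\ell_{\max}}\le n^{-2c/3}$, with the worst case $\ell_{\max}=1$ occurring at $a\asymp\log n/k$ (this is also where the $\log n$ comes from, via $\sum_a a^{-1}$). In the moderate-$v$ regime you need exponential (Chernoff/Poisson-tail) concentration of the component counts under the tilted measure, not Markov.

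A second omission affects all four parts: the events in question \emph{change the boundary profile}. Inserting or removing an $(a,b)$-component alters the number of non-separating clauses and of $\fs$-edges, so one cannot simply condition on $B$ and treat $(n_\ttt)$ as multinomial at fixed boundary data. The paper's comparison therefore has to control the ratio $f(m_{\tns},E_{\fs})/f(m_{\tns}+\delta_1,E_{\fs}+\delta_2)\lesssim_k e^{C(\delta_1+\delta_2)}$ of the separating-clause weight under such perturbations (Proposition \ref{prop:apriori:1stmo:separating}), which requires a separate Lagrangian/local-CLT computation. Without this stability estimate, the ``independence conditional on the boundary profile'' that your factorization relies on is not available, and the per-shape costs you quote are not justified.
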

    We remark that in Proposition \ref{prop:1stmo:aprioriestimate}-(2), if $f(\fff)\geq v(\fff)+2$ holds, then $\fff$ has at least $3$ cycles. This is because each clause in $\fff$ has at least $2$ internal edges, so $2f(\fff)\leq e(\fff)$ holds. In Proposition \ref{prop:1stmo:aprioriestimate}-(3), the condition $f(\fff)\leq v(\fff)+1\,, \forall \fff\in \FFF(\ux, \GGG)$ is useful for the proof. Note that such condition and the condition $(n_{\fff})_{\fff\in\FFF}\in \ee_{r}$ imply that $(n_{\fff})_{\fff\in\FFF}$ satisfies an exponential decay in the number of clauses, where we replace $v(\fff)$ in \eqref{eq:def:exp:decay:profile} by $f(\fff)$, up to a multiplicative constant. As a corollary of Proposition \ref{prop:1stmo:aprioriestimate}, we have the following.
	\begin{cor}\label{cor:cyclic:contribution:1stmo}
	For $k\geq k_0$ and $\la \in [0,1]$, $\E \bZ_{\la} \asymp \E \bZ^{\tr}_\la$ holds. 
	\end{cor}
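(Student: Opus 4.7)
The inequality $\E \bZ^{\tr}_\la \leq \E \bZ_\la$ is immediate, since frozen configurations without free cycles form a subclass of all frozen configurations (equivalently, by Lemma~\ref{lem:model:bij:frozen and msg}, the coloring configurations are in bijection with this subclass). The plan, therefore, is to bound the contribution from frozen configurations carrying at least one free cycle, i.e.\ to show $\E\bZ_\la[n_\cyc \geq 1] \lesssim \E\bZ^\tr_\la$, using the estimates already packaged in Proposition~\ref{prop:1stmo:aprioriestimate}. The main obstacle is not conceptual — it is simply to avoid summing a divergent geometric series of cyclic contributions — and this is handled by first restricting to the good event $\ee_{1/4}$ where the free-component frequencies decay exponentially.

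Concretely, I would split
\[
\E\bZ_\la[n_\cyc \geq 1] \;\leq\; \E\bZ_\la\bigl[n_\cyc \geq 1,\, \ee_{1/4}\bigr] \;+\; \E\bZ_\la\bigl[\ee_{1/4}^{\mathsf{c}}\bigr].
\]
For the first term, apply Proposition~\ref{prop:1stmo:aprioriestimate}(4) with $r=1$ and $\gamma = 0$: since a single free cycle need not produce any multi-cyclic edge, this is the correct choice, and it yields
\[
\E\bZ_\la\bigl[n_\cyc \geq 1,\, \ee_{1/4}\bigr] \;\lesssim_k\; \frac{C k^2}{2^k}\, \E\bZ^\tr_\la.
\]
For the second term, decompose according to whether any free component is "more than unicyclic": using Proposition~\ref{prop:1stmo:aprioriestimate}(2) for the $\{\exists\,\fff:\, f(\fff)\geq v(\fff)+2\}$ piece, and Proposition~\ref{prop:1stmo:aprioriestimate}(3) with $c=3$ (so that $\tfrac{1}{c+1} = \tfrac14$) for the complementary piece, one obtains
\[
\E\bZ_\la\bigl[\ee_{1/4}^{\mathsf{c}}\bigr] \;\lesssim_k\; n^{-2}\log n \cdot \E\bZ_\la.
\]

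Combining the two bounds gives
\[
\E\bZ_\la \;\leq\; \left(1 + \frac{C' k^2}{2^k}\right)\E\bZ^\tr_\la \;+\; C''_k\, n^{-2}\log n\cdot \E\bZ_\la,
\]
and for $k\geq k_0$ and $n$ sufficiently large, the last term is absorbed into the left-hand side. Since $k_0$ is a fixed absolute constant and $1 + C'k^2/2^k$ is bounded uniformly in $k\geq k_0$, rearranging yields $\E\bZ_\la \lesssim \E\bZ^\tr_\la$ with a universal implicit constant, which together with the trivial direction proves $\E\bZ_\la \asymp \E\bZ^\tr_\la$. Effectively, the corollary just bookkeeps the fact that Proposition~\ref{prop:1stmo:aprioriestimate}(4) already shows that each additional free cycle costs a factor $O_k(k^2/2^k)$ in expectation, so the geometric series in the number of cycles converges to a $k$-uniform constant multiple of the cycle-free expectation.
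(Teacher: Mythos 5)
Your proof is correct and supplies precisely the implicit argument the paper intends: the corollary is stated immediately after Proposition~\ref{prop:1stmo:aprioriestimate} with no separate proof, and your split of $\E\bZ_\la[n_\cyc\geq 1]$ into the exponential-decay regime $\ee_{1/4}$ (treated by part (4) with $r=1$, $\gamma=0$) and its complement (treated by part (2) together with part (3) at $c=3$, so that $\tfrac{1}{c+1}=\tfrac14$ and $\tfrac{2c}{3}=2$) is the right bookkeeping, after which the $n^{-2}\log n\cdot\E\bZ_\la$ term is absorbed for $n$ large. One small over-claim at the end: all four bounds in Proposition~\ref{prop:1stmo:aprioriestimate} are $\lesssim_k$, which carries an unspecified $k$-dependent prefactor in addition to the explicit $Ck^2/2^k$, so the argument actually yields $\E\bZ_\la\asymp_k\E\bZ^\tr_\la$ rather than a genuinely universal constant; this matches how the corollary is invoked later in the paper (always as $\lesssim_k$, e.g.\ in the proof of Proposition~\ref{prop:ratio:uni:1stmo}), and the unsubscripted $\asymp$ in the statement is best read as a mild abuse of notation rather than something your argument fails to deliver.
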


	\subsection{Optimal profiles}\label{subsec:1stmo:optimal}
		Denote by $\bZ_\lambda [B,\{n_\fff \}_{\fff \in \mathscr{F}}]$  the contribution to $\bZ_{\lambda}$ from component configuration $\sig \in \comp^{E}$ with boundary profile $B[\sig]=B$ and free component profile $\big(n_\fff[\sig]\big)_{\fff \in \mathscr{F}}=\big(n_\fff\big)_{\fff \in \mathscr{F}}$. The following proposition shows how to compute the cost of including free components inside a frozen configuration in the first moment.
	\begin{prop}\label{prop:1stmo:B nt decomp}
	For every $B\in \bDelta_n$ and $\{n_{\fff}\}_{\fff\in \mathscr{F}}\sim B$, we have
	\begin{equation}\label{eq:prod:form:1stmo}
	   \E \bZ_\lambda [B, \{n_\fff \}_{\fff\in \mathscr{F}}] = \frac{n! m!}{nd !} \frac{(nd\bar{B})!}{(n\dot{B})! (m\hat{B})!}\prod_{\sig\in (\hat{\partial}^\bullet)^k}\hat{v}(\sig)^{m\hat{B}(\sig)} \prod_{\fff\in \mathscr{F}}\left[ \frac{1}{n_\fff !} \Big(d^{e(\fff)-f(\fff)}k^{f(\fff)}J_\fff w_\fff^{\la} \Big)^{n_\fff}\right],
	\end{equation}
	where $w_{\fff}^{\la} \equiv w^{\textnormal{com}}(\fff)^{\la}$ if $\fff\in \FFF \backslash\FFF_{\tr}$ and $w_{\ttt}^{\la}\equiv w(\ttt)^{\la}$ if $\ttt\in \FFF_{\tr}$. Stirling approximation in $\frac{n! m!}{nd !} \frac{(nd\bar{B})!}{(n\dot{B})! (m\hat{B})!}$ in \eqref{eq:prod:form:1stmo} gives
	\begin{equation}\label{eq:prod:form:stir:1stmo}
		\E \bZ_\lambda [B, \{n_\fff \}_{\fff\in \mathscr{F}}] = \left(1+O_{k}\left(\frac{1}{n\kappa(B)}\right)\right)\frac{ e^{n\Psi_\circ(B)}}{p_\circ(n;B)} \prod_{\fff\in \mathscr{F}}\left[ \frac{1}{n_\fff !} \left(\left(\frac{e}{n}\right)^{\gamma(\fff)}J_\fff w_\fff^\lambda \right)^{n_\fff}\right],
	\end{equation}
	where $\kappa(B)\equiv \min_{\dot{B}(\sig)\neq 0,\hat{B}(\utau)\neq 0, \bar{B}(\sigma)\neq 0}\left\{\dot{B}(\sig),\hat{B}(\utau),\bar{B}(\sigma)\right\}$ for $B \in \bDelta$. $\Psi_\circ(B)$ and $p_{\circ}(n,B)$ are defined by
	\begin{equation}\label{eq:def:Psi:circ:p:circ:1stmo}
	\begin{split}
	\Psi_\circ(B)
	&\equiv
	\bigg\langle \dot{B}, \log\frac{1}{\dot{B}} \bigg\rangle
	+ \frac{d}{k}  \bigg\langle \hat{B}, \log\frac{\hat{v}}{\hat{B}} \bigg\rangle
	+ d \bigg\langle \bar{B}, \log{\bar{B}} \bigg\rangle;\\
	p_\circ(n,B)
	&\equiv
	\left\{\frac{\prod_{\dot{B}(\sig)\neq0}\dot{B}(\sig)\prod_{\hat{B}(\utau)\neq 0}\hat{B}(\utau)}{\prod_{\bar{B}(\sigma)\neq 0}\bar{B}(\sigma)}\right\}^{1/2}(2\pi n)^{\phi_1(B)/2}d^{\phi_2(B)/2}k^{\phi_3(B)/2},
	\end{split}
	\end{equation}
	where $\phi_1(B):= |\textnormal{supp}\dot{B}|+|\textnormal{supp}\hat{B}|-|\textnormal{supp}\bar{B}|-1,\phi_2(B):=|\textnormal{supp}\hat{B}|-|\textnormal{supp}\bar{B}|$, and $\phi_3(B):= 1-|\textnormal{supp}\hat{B}|$.
	\end{prop}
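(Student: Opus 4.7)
The plan is to establish the combinatorial identity \eqref{eq:prod:form:1stmo} first, and then derive \eqref{eq:prod:form:stir:1stmo} from it by a direct Stirling expansion. I would first integrate out the literal assignment $\uL$ conditional on $\GG$. By Lemma~\ref{lem:decompose:Phi:hat}, $\hat{\Phi}^{\lit} = \hat{I}^{\lit}\hat{\Phi}^{\textnormal{m}}$; at every separating clause $a$ (colors in $\{\rr,\bb,\fs\}^{k}$) we have $\hat{\Phi}^{\textnormal{m}} \equiv 1$, so summing $\hat{I}^{\lit}$ over the $k$ uniform literals at $a$ yields the factor $\hat{v}(\csig_{\delta a})$, while at every clause inside a free component $\fff$ the $k$ literals are pinned by $\fff$'s labels and the resulting $2^{-kf(\fff)}$ is exactly what \eqref{eq:def:weight:freecomp:avg} absorbs into $w^{\textnormal{com}}(\fff)^{\lambda}$. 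Collecting these reproduces \eqref{eq:sizeformula:freecomp:1stmo}.

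Next I would enumerate pairs $(\GG,\csig)$ compatible with $(B,\{n_\fff\})$ in the half-edge configuration model, where $\GG$ is a uniform perfect matching among $nd$ labeled variable half-edges and $nd$ labeled clause half-edges, so $\P(\GG)=1/(nd)!$. Organize the count as: (i)~assign each variable either to a non-free spin profile $\utau\in(\dot{\partial}^\bullet)^{d}$ (with $n\dot{B}(\utau)$ variables per profile) or to a labeled position inside one of the $n_\fff$ labeled copies $\fff^{\textnormal{lab}}\in\mathscr{L}(\fff)$ of each $\fff$, with the analogous clause-side assignment using $\hat{B}$; (ii)~sum over the labeled representatives $\fff^{\textnormal{lab}}$ for each copy, dividing by the copy-ordering overcount $n_\fff!$ and the spanning-tree overcount $T_\fff^{n_\fff}$; (iii)~match the residual half-edges (those not internal to any $\fff^{\textnormal{lab}}$) consistently with the boundary color profile $\bar{B}$, which by color matching gives $\prod_{\sigma\in\hat{\partial}^\bullet}(nd\bar{B}(\sigma))!=(nd\bar{B})!$ ordered pairings. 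Assembling the multinomials from (i), the embedding sum from (ii), the matching count from (iii), and the per-clause factor $\prod_\utau \hat{v}(\utau)^{m\hat{B}(\utau)}$ from the literal average, then dividing by $(nd)!$, and finally invoking Lemma~\ref{lem:w vs wcom} to convert the per-copy sum $|\mathscr{L}(\fff)|w^{\textnormal{com}}(\fff)^{\lambda}/T_\fff$ into $d^{v(\fff)-1}k^{f(\fff)}J_\fff w_\fff^{\lambda}$, produces exactly \eqref{eq:prod:form:1stmo}.

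For the Stirling form \eqref{eq:prod:form:stir:1stmo}, I would apply $\log N!=N\log N-N+\tfrac12\log(2\pi N)+O(1/N)$ to every factorial in the prefactor. Using the mass-balance identities $n(1-|\dot{B}|)=\sum_{\fff}n_\fff v(\fff)$, $m(1-|\hat{B}|)=\sum_{\fff}n_\fff f(\fff)$, and $nd(1-|\bar{B}|)=\sum_{\fff}n_\fff e(\fff)$, the linear Stirling terms telescope to $\sum_{\fff}n_\fff\gamma(\fff)$, contributing the factor $e^{\gamma(\fff)}$ per copy. The $N\log N$ terms split into a per-copy contribution $-\log\bigl[n^{\gamma(\fff)}d^{e(\fff)-f(\fff)}k^{f(\fff)}\bigr]$, which exactly cancels the $d^{e(\fff)-f(\fff)}k^{f(\fff)}$ of \eqref{eq:prod:form:1stmo} and leaves $n^{-\gamma(\fff)}$ per copy, together with the non-free piece $nd\langle\bar{B},\log\bar{B}\rangle-n\langle\dot{B},\log\dot{B}\rangle-m\langle\hat{B},\log\hat{B}\rangle$; combining the latter with the $m\langle\hat{B},\log\hat{v}\rangle$ coming from $\prod_\utau\hat{v}(\utau)^{m\hat{B}(\utau)}$ gives $n\Psi_\circ(B)$. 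The $\tfrac12\log(2\pi N)$ corrections aggregate into $-\log p_\circ(n,B)$ with exponents $\phi_1,\phi_2,\phi_3$ dictated by the support-size arithmetic, and the Stirling remainders $O(1/N)$ are dominated by the smallest factorial, of order $n\kappa(B)$, yielding the stated multiplicative error. The main bookkeeping obstacle is step~(ii) above: namely the application of Lemma~\ref{lem:w vs wcom} to translate labeled-embedding sums into the clean $J_\fff w_\fff^{\lambda}$ form that unifies the tree and cyclic cases via the spanning-tree count $T_\fff$ and $|\mathscr{L}(\fff)|$. Once this identification is made, the remainder of both parts reduces to routine multinomial and Stirling arithmetic.
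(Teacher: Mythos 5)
Your proposal is correct and follows essentially the same route as the paper's proof: integrate out literals via Lemma~\ref{lem:decompose:Phi:hat} to recover \eqref{eq:sizeformula:freecomp:1stmo}, then count pairs $(\GG,\csig)$ in the half-edge configuration model by (a)~multinomially distributing variables and clauses over frozen profiles and labelled free-component positions, (b)~summing over the labelled representatives while dividing by the copy-ordering overcount $n_\fff!$ and the spanning-tree overcount $T_\fff^{n_\fff}$, (c)~counting the color-constrained matchings $(nd\bar{B})!$ of the residual half-edges against the total $(nd)!$, and finally invoking Lemma~\ref{lem:w vs wcom} to package the per-copy sums as $d^{v(\fff)-1}k^{f(\fff)}J_\fff w_\fff^\lambda$. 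The only presentational difference is that the paper isolates an intermediate partition function $\mathscr{Z}_\lambda$ over labelled configurations $\lsig$ and then divides by $\prod_\fff T_\fff^{n_\fff}$ in a separate step, whereas you fold that division directly into the enumeration; the combinatorial content is identical. Your Stirling outline — using the mass-balance identities to telescope the linear terms into $\sum_\fff n_\fff\gamma(\fff)$, splitting the $N\log N$ terms into a per-copy piece that cancels $d^{e(\fff)-f(\fff)}k^{f(\fff)}$ against $(e/n)^{\gamma(\fff)}$ and a non-free piece that produces $n\Psi_\circ(B)$, collecting the $\tfrac12\log(2\pi N)$ corrections into $p_\circ$, and bounding the Stirling remainders by $O(1/(n\kappa(B)))$ — is also the intended computation, which the paper states but does not spell out.
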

		\begin{proof}
	Given a valid component coloring $\csig \in \comp^E$ and \textsc{nae-sat} instance $\GGG$, we construct \textit{labeled} configuration $\lsig =(\lsigma_e)_{e\in E}$ by the algorithm described below. We will see that it will be useful when calculating $\E \bZ_\lambda [B, \{n_\fff \}_{\fff\in \mathscr{F}}]$.
	\begin{enumerate}[label=\textnormal{Step \arabic*:}]
	    \item If $\csigma_e \in \hat{\partial}^\bullet$, then set $\lsigma_e \equiv \csigma_e$.
	    \item If $\csigma_e \notin \hat{\partial}^\bullet$, denote by $\fff(e)$ (resp. $\fff^{\textnormal{in}}(e)$) the free component (resp. free piece) that contains $e$. Choose a spanning tree $\mathscr{T}$ of $\fff^{\textnormal{in}}(e)$. For each edge $\tilde{e}$ of $\fff^{\textnormal{in}}(e)$, add an additional label to $\tilde{e}$ by \textnormal{`tree'} if $\tilde{e}$ is contained in $\mathscr{T}$. Otherwise, add a label of \textnormal{`cycle'} to $\tilde{e}$.
	    \item Uniquely label half-edges of $\fff(e)$(including the boundary ones) by $1$ to $d$ for variable adjacent half-edges and $1$ to $k$ for clause adjacent half-edges, where labels respect the orderings of the half-edges in $\GGG$. This step depends solely on how $\fff(e)$ is embedded in $\GGG$.
	    \item Let $\fff^{\textnormal{lab}}(e)$ be the labeled component resulting from Step $2$ and $3$. Finally, set $\lsigma_e$ to be the isomorphism class of $\left(\fff^{\textnormal{lab}}(e), e\right)$, where an isomorphism is a graph isomorphism that respects all the labels of the edges, half-edges and also the marked edge $e$.
	\end{enumerate}
	Observe that $\mathscr{L}(\fff)$ (see Definition \ref{def:embedding:number}) is the set of \textit{labeled components} $\fff^{\textnormal{lab}}(e)$ corresponding to $\fff$, i.e. $\fff(e)=\fff$. We define $\mathscr{L} := \sqcup_{\fff \in \mathscr{F}} \mathscr{L}(\fff)$ and denote by $\Omega_{\textnormal{lab}}$ the set of all possible outputs $\lsigma_e$ from the algorithm above. We now gather important properties of the labeled configuration, which we detail below.
	\begin{itemize}
	    \item For a valid component configuration $\csig$ containing a free cycle, there are more than one output $\lsig$ from the algorithm above. This is because Step 2 \textit{chooses} a spanning tree. More precisely, writing $\lsig \sim \csig$ if $\lsig$ could be obtained from $\csig$ by the algorithm above,
	     \begin{equation}\label{eq:labeled:component}
	    |\{\lsig: \lsig\sim \csig\}| = \prod_{\fff \in \mathscr{F}} T_\fff^{n_{\fff}(\csig)},
	    \end{equation}
	    where $T_\fff$ denotes the number of spanning trees of $\fff^{\textnormal{in}}$.
	    \item Given $\fff^{\textnormal{lab}}\in \mathscr{L}$, let $V_{\fff^{\textnormal{lab}}},F_{\fff^{\textnormal{lab}}}$ and $E^{\frac{1}{2}}_{\fff^{\textnormal{lab}}}$ be the set of variables, clauses and half-edges of $\fff^{\textnormal{lab}}$ respectively. Observe that by Step $3$ of the algorithm uniquely determines $\lsig_{\delta v}\equiv \lsig_{\delta v}[\fff^{\textnormal{lab}}]$ and $\lsig_{\delta a}\equiv  \lsig_{\delta a}[\fff^{\textnormal{lab}}]$, for $v\in V_{\fff^{\textnormal{lab}}}$ and $a \in F_\fff^{\textnormal{lab}}$ respectively. If we denote by $e_i$ the half-edge adjacent to $v \in V_{\fff^{\textnormal{lab}}}$ with label $i$ for $1\leq i\leq d$, then $\lsig_{\delta v}=(\sigma^{\textnormal{lab},1}_{v},...,\sigma^{\textnormal{lab},d}_{v})$, where
	    \begin{equation*}
	        \sigma^{\textnormal{lab},i}_{v} \equiv
	        \begin{cases}
	        \textnormal{color of }e_i & e_i\textnormal{ is a boundary half-edge}\\
	        \textnormal{isomorphism class of } (\fff^{\textnormal{lab}},\bar{e}_i)&e_i\textnormal{ is a internal half-edge}
	        \end{cases}
	    \end{equation*}
	    In the above $\bar{e}_i$ is the unique full edge containing the half-edge $e_i$. Similarly, $\lsig_{\delta a}=(\sigma^{\textnormal{lab},1}_{a},...,\sigma^{\textnormal{lab},k}_{a})$ is defined. Note that this need not be the case for component configurations, because the order of the elements of $\csig_{\delta v}$ heavily depends on how $\fff$ is embedded in $\GGG$.
	    \item By Step 2 and Step 3 of the algorithm, if $v\neq v^\prime$ holds, where $v,v^\prime\in V_{\fff^{\textnormal{lab}}}$, and $a\neq a^\prime$ holds, where $ a, a^\prime\in F_{\fff^{\textnormal{lab}}}$, then $\lsig_{\delta v} \neq \lsig_{\delta v^{\prime}}$ and $\lsig_{\delta a} \neq \lsig_{\delta a^{\prime}}$ hold. Moreover, if $\sigma^{\textnormal{lab},i}_{v}\notin \hat{\partial}$ for some $v\in V_{\fff^{\textnormal{lab}}}, 1\leq i \leq d$, then there exists a unique $a \in F_{\fff^{\textnormal{lab}}}$ and $1\leq j \leq k$ such that $\sigma^{\textnormal{lab},i}_{v}=\sigma^{\textnormal{lab},j}_{a}$. This is because $\fff^{\textnormal{lab}}$ is a finite bipartite factor graph with labeled edges of a spanning tree and labeled half-edges.
	\end{itemize}
	Analogously to Definition \ref{def:empirical:boundary}, we can also define the boundary profile and labeled free component profile of of $\lsig$, which we denote by $B[\lsig]$ and $\left(n_{\fff^{\textnormal{lab}}}[\lsig]\right)_{\fff^{\textnormal{lab}}\in \mathscr{L}}$ respectively. Letting $w^{\textnormal{lit}}(\lsig)^{\lambda}:= w^{\textnormal{lit}}(\csig)^{\lambda}$, where $\csig$ is the unique valid component configuration such that $\lsig\sim \csig$(if there exists none, define $w^{\textnormal{lit}}(\lsig)^{\lambda}:=0$), define the partition function
	\begin{equation*}
	    \mathscr{Z}_{\lambda}[B,\left(n_{\fff^{\textnormal{lab}}}\right)_{\fff^{\textnormal{lab}}\in \mathscr{L}}]:= \sum_{\lsig\in \Omega_{\textnormal{lab}}^{E}} w^{\textnormal{lit}}(\lsig)^{\lambda} \one{\Big\{B[\lsig]=B, n_{\fff^{\textnormal{lab}}}[\lsig]=n_{\fff^{\textnormal{lab}}},\ \forall\fff^{\textnormal{lab}}\in \LLL \Big\}}.
	\end{equation*}
	By \eqref{eq:labeled:component}, we have the following relationship between $\E\bZ_{\lambda}$ and $\E\mathscr{Z}_{\lambda}$:
	\begin{equation}\label{eq:partition:label}
	    \E\bZ_{\lambda}[B,\left(n_{\fff}\right)_{\fff \mathscr{F}}]=\frac{\sum \E\mathscr{Z}_{\lambda}[B,\left(n_{\fff^{\textnormal{lab}}}\right)_{\fff^{\textnormal{lab}}\in \mathscr{L}}]}{\prod_{\fff \in \mathscr{F}} T_\fff^{n_{\fff}[\csig]}},\quad\textnormal{where the sum  is for} \sum_{\fff^{\textnormal{lab}}\in \mathscr{L}(\fff)}n_{\fff^{\textnormal{lab}}}=n_{\fff}, \forall \fff \in \mathscr{F}.
	\end{equation}
	Thus, we now aim to compute $\E \mathscr{Z}_{\lambda}[B,\left(n_{\fff^{\textnormal{lab}}}\right)_{\fff^{\textnormal{lab}}\in \mathscr{L}}]$ by a matching scheme: first, locate the spins adjacent to frozen variables and separating clauses, which have empirical counts $n\dot{B}$ and $m\hat{B}$ respectively. Next, give an ordered list of $1,...,n_\fff^{\textnormal{lab}}$ to the $n_\fff^{\textnormal{lab}}$ number of free components for each $\fff^{\textnormal{lab}}\in \mathscr{L}$. Then, for each variable $v\in V_{\fff^{\textnormal{lab}}}$ and clause $a\in F_{\fff^{\textnormal{lab}}}$ in the listed free component, locate $\lsig_{\delta v}[\fff^{\textnormal{lab}}]$ and $\lsig_{\delta a}[\fff^{\textnormal{lab}}]$. Finally, we match the half-edges between variables and clauses, having the same spin and also the same list, if they have one. There are $n_{\fff^{\textnormal{lab}}}!$ number of lists leading to the same $\lsig$, so altogether we have
	\begin{equation}\label{eq:compute:1stmo:labeled}
	    \E\mathscr{Z}_{\lambda}[B,\left(n_{\fff^{\textnormal{lab}}}\right)_{\fff^{\textnormal{lab}}\in \mathscr{L}}]=\frac{n! m!}{nd !} \frac{(nd\bar{B})!}{(n\dot{B})! (m\hat{B})!}\prod_{\sig_{\delta a}\in \hat{\partial}^{k}}\hat{v}(\sig_{\delta a})^{m\hat{B}(\sig_{\delta a})}\prod_{\fff^{\textnormal{lab}}\in \mathscr{L}}\left[\frac{1}{n_\fff^{\textnormal{lab}}!}(w_\fff^{\textnormal{lab}})^{\lambda n_\fff^{\textnormal{lab}}}\right],
	\end{equation}
	where we defined $(w_\fff^{\textnormal{lab}})^{\lambda}:= w^{\textnormal{com}}(\fff)^{\lambda}$ for the unique free component $\fff$ corresponding to $\fff^{\textnormal{lab}}$ and $\underline{x}!\equiv \prod_{i} x_i !$ for a vector $\underline{x}=(x_1,x_2,...,x_\ell)$. Therefore, pluggging in \eqref{eq:compute:1stmo:labeled} to \eqref{eq:partition:label} gives a multinomial sum and together with Lemma \ref{lem:w vs wcom} concludes the proof of \eqref{eq:prod:form:1stmo}.
	\end{proof}
	Having Proposition \ref{prop:1stmo:B nt decomp} in hand, we first aim to compute $\E\bZ_{\lambda}^{\tr}$ by summing up $\E\bZ_{\lambda}^{\tr}[B,(n_{\ttt})_{\ttt\in \FFF_{\tr}}]$: it turns out that in the summation, the major contribution comes from $B$ and $(n_{\ttt})_{\ttt\in \FFF_{\tr}}$ which are close to \textit{optimal boundary profile} $B^\star_{\lambda}$ and \textit{optimal free tree profile} $(n_{\ttt,\la}^\star)_{\ttt\in \FFF_{\tr}}$, defined in terms of the so called \textit{belief propagation}(\textsc{bp}) fixed point. To this end, we now define the \textsc{bp} functional for the coloring model, which was introduced in \cite[Section 5]{ssz22}. For more background on belief propagation, we refer to \cite[Chapter 14]{mm09}. For probability measures $\qdot,\qhat \in \PPP(\Omega_L)$, where $L<\infty$, let
	\begin{equation}\label{eq:pre:BP:1stmo}
	\begin{split}
	    &[\dot{\mathbf{B}}_{1,\lambda}(\qhat)](\sigma)\cong \bar{\Phi}(\sigma)^\lambda \sum_{\sig \in \Omega_L^{d}}\one\{\sigma_1=\sigma\}\dot{\Phi}(\sig)^{\lambda}\prod_{i=2}^{d}\qhat(\sigma_i)\\
	    &[\hat{\mathbf{B}}_{1,\lambda}(\qdot)](\sigma)\cong \bar{\Phi}(\sigma)^\lambda \sum_{\sig \in \Omega_L^{k}}\one\{\sigma_1=\sigma\}\hat{\Phi}(\sig)^{\lambda}\prod_{i=2}^{d}\qdot(\sigma_i),
	\end{split}
	\end{equation}
	where $\sigma \in \Omega_L$ and $\cong$ denotes equality up to normlization, so that the output is a probability measure. We denote by $\dot{\mathscr{Z}}\equiv\dot{\mathscr{Z}}_{\hat{q}} ,\hat{\mathscr{Z}}\equiv \hat{\mathscr{Z}}_{\dot{q}}$ the normalizing constants for \eqref{eq:pre:BP:1stmo}. Now, restrict the domain to the probability measures with \textit{one-sided} dependence, i.e. satisfying $\qdot(\sigma)=\dot{f}(\dot{\sigma})$ and $\qhat(\sigma)=\hat{f}(\hat{\sigma})$ for some $\dot{f} :\dot{\Omega}_L\to\R_{\geq 0}$ and $\hat{f} :\hat{\Omega}_L\to\R_{\geq 0}$. It can be checked that $\dot{\mathbf{B}}_{1,\lambda}, \hat{\mathbf{B}}_{1,\lambda}$ preserve the one-sided property, inducing
	\begin{equation*}
	    \dot{\textnormal{BP}}_{\lambda,L}:\PPP(\hat{\Omega}_{L}) \rightarrow \PPP(\dot{\Omega}_{L}),\quad\hat{\textnormal{BP}}_{\lambda,L}:\PPP(\dot{\Omega}_L) \rightarrow \PPP(\hat{\Omega }_L).
	\end{equation*}
    More precisely, for $\hat{q}\in \PPP(\hat{\Omega}_L)$ and $\dot{q} \in \PPP(\dot{\Omega}_L)$, define the probability measures $\dot{\textnormal{BP}}_{\la,L}(\hat{q})\in \PPP(\dot{\Omega}_L)$ and $\hat{\textnormal{BP}}_{\la,L}(\dot{q})\in \PPP(\hat{\Omega}_L)$ as follows. For $\dot{\sigma}\in \dot{\Omega}_L$ and $\hat{\sigma}\in \hat{\Omega}_L$, let
    \begin{equation}\label{eq:def:BP}
    \begin{split}
    &[\dot{\textnormal{BP}}_{\la,L}(\hat{q})](\dot{\sigma})=\big(\dot{\ZZZ}_{\hat{q}}\big)^{-1} \cdot \bar{\Phi}(\dot{\sigma}, \hat{\sigma}^\prime)^\lambda \sum_{\sig \in \Omega_L^{d}}\one\{\sigma_1=(\dot{\sigma},\hat{\sigma}^\prime)\}\dot{\Phi}(\sig)^{\lambda}\prod_{i=2}^{d}\hat{q}(\hat{\sigma}_i)\,,\\
    &[\hat{\textnormal{BP}}_{\la,L}(\dot{q})](\hat{\sigma})=\big(\hat{\ZZZ}_{\dot{q}}\big)^{-1} \cdot \bar{\Phi}(\dot{\sigma}^\prime, \hat{\sigma})^\lambda \sum_{\sig \in \Omega_L^{k}}\one\{\sigma_1=(\dot{\sigma}^\prime, \hat{\sigma})\}\hat{\Phi}(\sig)^{\lambda}\prod_{i=2}^{k}\dot{q}(\dot{\sigma}_i)\,,
    \end{split}
    \end{equation}
    where $\hat{\sigma}^\prime \in \hat{\Omega}_L$ and $\dot{\sigma}^\prime \in \dot{\Omega}_L$ are arbitrary with the only exception that when $\dot{\sigma}\in \{\rr,\bb\}$ (resp. $\hat{\sigma}\in \{\rr,\bb\}$), then we take $\hat{\sigma}^\prime = \dot{\sigma}$ (resp. $\dot{\sigma}^\prime = \hat{\sigma}$) so that the \textsc{rhs} above is non-zero. From the definition of $\dot{\Phi},\hat{\Phi}$, and $\bar{\Phi}$, it can be checked that the choices of $\hat{\sigma}^\prime \in \hat{\Omega}_L$ and $\dot{\sigma}^\prime \in \dot{\Omega}_L$ do not affect the values of the \textsc{rhs} above. The normalizing constants $\dot{\ZZZ}_{\hat{q}}$ and $\hat{\ZZZ}_{\dot{q}}$ are given by
    \begin{equation}\label{eq:BP:normalization}
 \begin{split}
 &\dot{\ZZZ}_{\hat{q}}\equiv\sum_{\dot{\sigma} \in \dot{\Omega}_L} \bar{\Phi}(\dot{\sigma}, \hat{\sigma}^\prime)^\lambda \sum_{\sig \in \Omega_L^{d}}\one\{\sigma_1=(\dot{\sigma},\hat{\sigma}^\prime)\}\dot{\Phi}(\sig)^{\lambda}\prod_{i=2}^{d}\hat{q}(\hat{\sigma}_i)\,,\\
 &\hat{\ZZZ}_{\dot{q}}\equiv \sum_{\hat{\sigma} \in \hat{\Omega}_L}\bar{\Phi}(\dot{\sigma}^\prime, \hat{\sigma})^\lambda \sum_{\sig \in \Omega_L^{k}}\one\{\sigma_1=(\dot{\sigma}^\prime, \hat{\sigma})\}\hat{\Phi}(\sig)^{\lambda}\prod_{i=2}^{k}\dot{q}(\dot{\sigma}_i)\,.
 \end{split}
 \end{equation}
Here, $\hat{\sigma}^\prime \in \hat{\Omega}_L$ and $\dot{\sigma}^\prime \in \dot{\Omega}_L$ are again arbitrary. We then define the \textit{Belief Propagation functional} by $\textnormal{BP}_{\lambda,L}:= \dot{\textnormal{BP}}_{\lambda,L}\circ \hat{\textnormal{BP}}_{\lambda,L}$. The untruncated BP map, which we denote by $\textnormal{BP}_{\lambda}:\PPP(\dot{\Omega}) \to \PPP(\dot{\Omega})$, is analogously defined, where we replace $\dot{\Omega}_L$(resp. $\hat{\Omega}_L$) with $\dot{\Omega}$(resp. $\hat{\Omega}$). Let $\mathbf{\Gamma}_C$ be the set of $\dot{q} \in \PPP(\dot{\Omega})$ such that 
	\begin{equation}\label{eq:def:bp:contract:set:1stmo}
	    \dot{q}(\dot{\sigma})=\dot{q}(\dot{\sigma}\oplus 1)\quad\text{for}\quad\dot{\sigma} \in \dot{\Omega },\quad\text{and}\quad \frac{\dot{q}(\rr)+2^k\dot{q}(\ff)}{C}\leq \dot{q}(\bb) \leq \frac{\dot{q}(\rr)}{1-C2^{-k}}.
	\end{equation}
	\begin{prop}[\cite{ssz22}, Proposition 5.5]
	\label{prop:BPcontraction:1stmo}
	For $\lambda \in [0,1]$, the following holds:
	\begin{enumerate}
	    \item There exists a large enough universal constant $C$ such that the map $\textnormal{BP}\equiv\textnormal{BP}_{\lambda,L}$ has a unique fixed point $\dot{q}^\star_{\lambda,L}\in \mathbf{\Gamma}_C\cap \PPP(\dot{\Omega}_L)$. Moreover, if $\dotq \in \mathbf{\Gamma}_C\cap \PPP(\dot{\Omega}_L)$, $\textnormal{BP}\dotq \in \mathbf{\Gamma}_C\cap \PPP(\dot{\Omega}_L)$ holds with
	    \begin{equation}\label{eq:BPcontraction:1stmo}
	        ||\textnormal{BP}\dotq-\dotq^\star_{\lambda,L}||_1\lesssim k^2 2^{-k}||\dotq-\dotq^\star_{\lambda,L}||_1.
	    \end{equation}
	    The same holds for the untruncated BP, i.e. $\textnormal{BP}_{\la}$, with fixed point $\dot{q}^\star_{\lambda}\in \Gamma_C$. $\dot{q}^\star_{\la,L}$ for large enough $L$ and $\dot{q}^\star_{\la}$ have full support in their domains.
	    \item In the limit $L \to \infty$, $||\dot{q}^\star_{\lambda,L}-\dot{q}^\star_{\lambda}||_1 \to 0$.
	\end{enumerate}
	\end{prop}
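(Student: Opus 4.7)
The plan is to apply Banach's fixed point theorem to $\mathbf{\Gamma}_C$ equipped with the $\ell^1$ (total variation) metric, viewing it as a closed convex subset of the probability simplex. Two ingredients are needed: (a) forward invariance $\BP(\mathbf{\Gamma}_C)\subset \mathbf{\Gamma}_C$, and (b) the quantitative contraction estimate~\eqref{eq:BPcontraction:1stmo} on $\mathbf{\Gamma}_C$.

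For forward invariance I would verify the three defining properties of $\mathbf{\Gamma}_C$ directly. The symmetry $\dotq(\dot\sigma)=\dotq(\dot\sigma\oplus 1)$ is preserved because averaging over the uniform literal assignment makes the half-edge update equivariant under global Boolean flipping, and this carries through the variable update. For the two-sided mass bound, I would compute using~\eqref{eq:pre:BP:1stmo}: a clause-to-variable output is in $\{\rr\}$ only when the clause is forcing, which needs $k-1$ incoming messages to all lie in $\{\bb\}$ with a single compatible literal pattern — an event of probability $O(k 2^{-k})$ when $\dotq\in\mathbf{\Gamma}_C$; the output lies in $\{\fF\}$ only when at least one incoming message is itself free, of the same order by the a priori bound on $\dotq(\ff)$. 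Propagating through $\dot{\BP}$ and comparing the $\rr$, $\ff$, and $\bb$ masses yields the sandwich inequalities in~\eqref{eq:def:bp:contract:set:1stmo} for any sufficiently large universal $C$.

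For the contraction I would compare $\BP\dotq_1$ and $\BP\dotq_2$ coordinatewise. Writing $\dotq_1=\dotq_2+\mu$ with $\mu$ a signed measure of total mass zero, the linear response of $\hat{\BP}$ to a perturbation in one of the $k-1$ incoming slots acts nontrivially only when the remaining $k-2$ messages produce a non-$\bb$ output; by the mass estimate just proved this has probability $O(k 2^{-k})$, giving a contraction factor of that order for $\hat{\BP}$. An identical argument for $\dot{\BP}$ — whose nontrivial action requires at least one free or red input among the $d-1$ half-edges — produces a second factor $O(k 2^{-k})$, and composition yields the claimed rate $O(k^2 2^{-k})$. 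Uniqueness of $\dotq^\star_{\la,L}$ and~\eqref{eq:BPcontraction:1stmo} are then immediate, and full support follows because every $\dot\sigma$ in the support of $\dot\Phi$ is hit with positive probability by a single BP step applied to any strictly positive input.

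The main obstacle is the untruncated case: since $\dot{\Omega}$ is countably infinite, one has to check that the Jacobian expansion above is absolutely convergent and that the fixed point is tight. This is where the exponential-decay estimate of Proposition~\ref{prop:1stmo:aprioriestimate} enters, controlling the mass on messages whose underlying free tree is large. Granted the untruncated contraction, part (2) follows by a triangle-inequality argument: the natural inclusion $\PPP(\dot{\Omega}_L)\hookrightarrow\PPP(\dot{\Omega})$ (by extension by zero and renormalization) makes $\BP_{\la,L}\dotq^\star_\la$ and $\BP_\la \dotq^\star_\la=\dotq^\star_\la$ close, with $\|\BP_{\la,L}\dotq^\star_\la-\dotq^\star_\la\|_1\to 0$ as $L\to\infty$; combining with the contraction gives $(1-k^2 2^{-k})\|\dotq^\star_{\la,L}-\dotq^\star_\la\|_1 \le \|\BP_{\la,L}\dotq^\star_\la-\dotq^\star_\la\|_1\to 0$.
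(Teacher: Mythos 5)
The paper does not give a proof of this proposition at all: it is labelled ``Proposition 5.5 item a,b of \cite{ssz16}'' and is taken directly from that reference, so there is no in-paper argument to compare against. Your high-level plan --- forward invariance of $\mathbf{\Gamma}_C$ plus a Lipschitz contraction estimate and then Banach's fixed point theorem --- is consistent with how \cite{ssz16} obtains the truncated statement, so at the level of strategy you are on the right track.

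That said, a few details in your sketch do not hold up. First, your decomposition of the contraction into ``$O(k2^{-k})$ for $\hat{\BP}$ times $O(k2^{-k})$ for $\dot{\BP}$'' does not match the claimed bound --- the product would be $O(k^22^{-2k})$, not $O(k^22^{-k})$ --- and more importantly $\dot{\BP}$ on its own is generally \emph{not} a contraction: it averages over $d-1 \asymp k2^k$ slots and can amplify a perturbation in $\hat q$; the $O(k^22^{-k})$ rate comes from tracking the composition $\dot{\BP}\circ\hat{\BP}$ as a whole, exploiting the structure of $\mathbf{\Gamma}_C$, not from multiplying two separate contraction constants. Second, your treatment of the untruncated case appeals to Proposition \ref{prop:1stmo:aprioriestimate} for ``tightness,'' but that proposition bounds expected partition functions over random \textsc{nae-sat} instances and says nothing about a measure $\dot q$ on the abstract message space $\dot\Omega$. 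In fact no tightness argument is required here: $(\PPP(\dot\Omega), \|\cdot\|_1)$ is a complete metric space (a closed subset of $\ell^1(\dot\Omega)$), the conditions defining $\mathbf{\Gamma}_C$ are closed under $\ell^1$ limits, and the BP map is a polynomial of bounded degree in the input probabilities, so Banach applies once forward invariance and the contraction estimate are in hand. Third, in part (2) you write $\BP_{\lambda,L}\dot{q}^\star_\lambda$, but $\dot{q}^\star_\lambda\in\PPP(\dot\Omega)$ is not in the domain of the truncated map; the natural route is $\|\dot{q}^\star_{\lambda,L}-\dot{q}^\star_\lambda\|_1 \le \|\BP_{\lambda,L}\dot{q}^\star_{\lambda,L}-\BP_\lambda\dot{q}^\star_{\lambda,L}\|_1 + \|\BP_\lambda\dot{q}^\star_{\lambda,L}-\BP_\lambda\dot{q}^\star_\lambda\|_1$, viewing $\dot{q}^\star_{\lambda,L}$ inside $\PPP(\dot\Omega)$, using the untruncated contraction on the second term, and showing the first term (which measures the pointwise discrepancy between the truncated and untruncated normalized recursions at $\dot{q}^\star_{\lambda,L}$) tends to zero as $L\to\infty$.
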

	For $\dot{q} \in \PPP(\dot{\Omega})$, denote $\hat{q}\equiv \hat{\textnormal{BP}}\dot{q}$, and define $H_{\dot{q}}=(\dot{H}_{\dot{q}},\hat{H}_{\dot{q}}, \bar{H}_{\dot{q}})\in \bDelta$ by
	\begin{equation}\label{eq:H:q:1stmo}
	    \dot{H}_{\dot{q}}(\sig)=\frac{\dot{\Phi}(\sig)^{\lambda}}{\dot{\mathfrak{Z}}}\prod_{i=1}^{d}\hat{q}(\hat{\sigma}_i),\quad \hat{H}_{\dot{q}}(\sig)=\frac{\hat{\Phi}(\sig)^{\lambda}}{\hat{\mathfrak{Z}}}\prod_{i=1}^{k}\dot{q}(\dot{\sigma}_i),\quad \bar{H}_{\dot{q}}(\sigma)=\frac{\bar{\Phi}(\sigma)^{-\lambda}}{\bar{\mathfrak{Z}}}\dot{q}(\dot{\sigma})\hat{q}(\hat{\sigma}),
	\end{equation}
	where $\dot{\mathfrak{Z}}\equiv \dot{\mathfrak{Z}}_{\dot{q}},\hat{\mathfrak{Z}}\equiv\hat{\mathfrak{Z}}_{\dot{q}}$ and $\bar{\mathfrak{Z}}\equiv \bar{\mathfrak{Z}}_{\dot{q}}$ are normalizing constants.
	\begin{defn}[\cite{ssz22}, Definition 5.6]\label{def:opt:coloring:profile}
	    The \textit{optimal coloring profiles} for the truncated model and the untruncated model is the tuple $H^\star_{\lambda,L}=(\dot{H}^\star_{\lambda,L},\hat{H}^\star_{\lambda,L},\bar{H}^\star_{\lambda,L})$ and $H^\star_{\lambda}=(\dot{H}^\star_{\lambda},\hat{H}^\star_{\lambda},\bar{H}^\star_{\lambda})$, defined respectively by $ H^\star_{\lambda,L}:= H_{\dot{q}^\star_{\lambda,L}}$ and $H^\star_{\lambda}:=H_{\dot{q}^\star_{\lambda}}$.
	\end{defn}
	\begin{defn}[optimal boundary profile, free tree profile and weight]\label{def:opt:bdry:1stmo}
	The \textit{optimal boundary profile}, the \textit{optimal free tree profile} and the \textit{optimal weight} are defined by the following.
	\begin{itemize}
	    \item  The optimal boundary profile for the truncated model is the tuple $B^\star_{\lambda,L} \equiv(\dot{B}^\star_{\lambda,L},\hat{B}^\star_{\lambda,L},\bar{B}^\star_{\lambda,L})$, defined by restricting the optimal coloring profile to $(\dot{\partial}^\bullet)^d, (\hat{\partial}^\bullet)^k, \hat{\partial}^\bullet$:
	 \begin{equation}\label{eq:def:optimal:bdry}
	 \begin{split}
	     &\dot{B}^\star_{\lambda,L}(\sig):=  \dot{H}^\star_{\lambda,L}(\sig)\quad\textnormal{for}\quad \sig \in (\dot{\partial}^\bullet)^d\\
	     &\hat{B}^\star_{\lambda,L}(\sig) := \sum_{\utau \in \Omega^k, \utau_{\fs}=\sig} \hat{H}^\star_{\lambda,L}(\utau)\quad\textnormal{for}\quad \sig \in (\hat{\partial}^\bullet)^k
	     \\
	     &\bar{B}^\star_{\lambda,L}(\sigma):= \sum_{\tau \in\Omega, \tau_{\fs}= \sigma} \bar{H}^\star_{\lambda,L}(\tau)\quad\textnormal{for}\quad \sigma \in \hat{\partial}^\bullet,
	 \end{split}
	 \end{equation}
	 where $\tau_{\fs}$ is defined by the simplified coloring of $\tau\in \Omega$, where $\tau_{\fs}:=\tau$, if $\hat{\tau}\neq \fs$, and $\tau_{\fs}:=\fs$, if $\hat{\tau}=\fs$. $\utau_{\fs}$ is the coordinate-wise simplified coloring of $\utau$. The optimal boundary profile for the untruncated model is defined analogously by dropping the subscript $L$ in \eqref{eq:def:optimal:bdry}. Recalling Remark \ref{rem:compat:bdry:tree}, we denote $\uh^\star_{\lambda,L}:= \uh(B^\star_{\lambda,L})$ and $\uh^\star_{\lambda}:= \uh(B^\star_{\lambda})$.
	 
	 \item The (normalized) optimal free tree profile $(p_{\ttt, \lambda,L}^\star)_{\ttt\in \FFF_{\tr}}$ for the truncated model is defined as follows. Recall the normalizing constants $\dot{\mathscr{Z}}^\star \equiv \dot{\mathscr{Z}}_{\hat{q}^\star_{\lambda,L}}, \hat{\mathscr{Z}}\equiv \hat{\mathscr{Z}}_{\dot{q}^\star_{\lambda,L}}$ for the BP map in \eqref{eq:pre:BP:1stmo}, where $\hat{q}^\star_{\lambda,L}\equiv \hat{\textnormal{BP}}\dot{q}^\star_{\lambda,L}$, and $\bar{\mathfrak{Z}}^\star \equiv\bar{\mathfrak{Z}}_{\dot{q}^\star_{\lambda,L}}$ in \eqref{eq:H:q:1stmo}. Writing $\dot{q}^\star=\dot{q}^\star_{\lambda,L}$ and $\hat{q}^\star=\hat{q}^\star_{\lambda,L}$, define
	 \begin{equation}\label{eq:optimal:tree:1stmo}
	     p_{\ttt, \lambda,L}^\star := \frac{J_{\ttt} w_{\ttt}^\lambda}{\bar{\mathfrak{Z}}^\star (\dot{\ZZZ}^\star)^{v(\ttt)}(\hat{\ZZZ}^\star)^{f(\ttt)}}\dot{q}^\star(\bb_0)^{\eta_{\ttt}(\bb_0)+\eta_{\ttt}(\bb_1)}(2^{-\lambda}\hat{q}^\star(\fs))^{\eta_{\ttt}(\fs)},
	 \end{equation}
	 for $\ttt\in \FFF$ with $v(\ttt)\leq L$. The optimal free tree profile $(p_{\ttt, \lambda}^\star)_{\ttt\in\FFF}$ for the untruncated model is defined by the same equation \eqref{eq:optimal:tree:1stmo} with $\bar{\mathfrak{Z}}^\star, \dot{\ZZZ},\hat{\ZZZ},\dot{q}^\star$ and $\hat{q}^\star$ for the untruncated model.
	 \item The optimal weight $s^\star_{\lambda,L}$ for the $\lambda$-tilted $L$-truncated model is defined by the weight of a coloring configuration having optimal free tree profile. That is, for $p^\star_{\ttt}=p^\star_{\ttt,\lambda,L}$,
	\begin{equation}\label{eq:def:slambda}
	s^\star_{\lambda,L} :=\sum_{\ttt\in\mathscr{F}_{\textsf{tr}}} p^\star_{\ttt} \log w_\ttt^\lit=\sum_{\ttt\in\mathscr{F}_{\textsf{tr}}} p^\star_{\ttt} s_\ttt^\lit. 
	\end{equation} 
	The optimal weight $s^\star_{\lambda}$ for the untruncated model is defined by the same equation \eqref{eq:def:slambda}, but with $p^\star_{\ttt}$ for the untruncated model.
	\end{itemize}
	\end{defn}
	In Lemma \ref{lem:compat:optfr:optbd:1stmo} of Appendix \ref{sec:appendix:Properties of BP fixed point}, we show that $p^\star_{\ttt,\la,L}$ and $B^\star_{\la,L}$ are compatible in the sense that they satisfy \eqref{eq:def:h} and \eqref{eq:def:compat:1stmo}. The next proposition shows that the most of the contribution to the first moment comes from the boundary profiles and weights close to their optimal values. Its proof is done by the \textit{resampling method}, which is presented in Section \ref{subsec:resampling}.
	\begin{prop}\label{prop:maxim:1stmo}
	Fix $\lambda \in [0,1]$ and large enough $L\geq L_0(\lambda,d,k)$. For any $\delta>0$, there exists $c(\delta)=c(\delta,\lambda,L,d,k)>0$ such that for $n\geq n_0(\delta,\la,L,d,k)$,
	\begin{equation}\label{eq:prop:maxim:1stmo:truncated}
	    \E \bZ^{(L),\tr}_\lambda\left[||(B,s)-(B^\star_{\lambda,L},s^\star_{\la,L})||_1 >\delta\quad\text{and}\quad (n_{\ttt})_{\ttt\in \FFF_{\tr}}\in \ee_{\frac{1}{4}} \right]\leq e^{-c(\delta)n} \E \bZ^{(L),\tr}_\lambda.
	\end{equation}
	The same holds for the untruncated model, namely for any $\delta>0$, there exists $c(\delta)=c(\delta,\lambda,d,k)>0$ such that for $n\geq n_0(\delta,\la,d,k)$
	\begin{equation}\label{eq:prop:maxim:1stmo:untruncated}
	    \E \bZ^{\tr}_\lambda\left[||(B,s)-(B^\star_{\lambda},s^\star_{\la})||_1 >\delta\quad\text{and}\quad(n_{\ttt})_{\ttt\in \FFF_{\tr}}\in \ee_{\frac{1}{4}}\right]\leq e^{-c(\delta)n} \E \bZ^{\tr}_\lambda.
	\end{equation}
	\end{prop}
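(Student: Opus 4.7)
The plan is to execute the resampling Markov-chain argument outlined in Section~\ref{subsubsec:resampling}, first for the truncated model where the underlying spin space $\Omega_L$ is finite, and then to extend it to the untruncated model using the a priori estimates of Proposition~\ref{prop:1stmo:aprioriestimate}.

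First I would fix $\lambda$ and $L$ (or work directly with the untruncated model), and decompose $\E\bZ^{\tr}_\lambda$ by the pair $(B,s)$ and compatible free tree profile $(n_\ttt)_\ttt \sim (B,s)$, using the explicit product formula of Proposition~\ref{prop:1stmo:B nt decomp}. After factoring out the Stirling polynomial correction, the problem reduces to showing that the exponential rate
\[
\Psi_\lambda(B,s) := \lim_{n\to\infty} \frac{1}{n}\log \E\bZ^{\tr}_\lambda\big[B,\{(n_\ttt)_\ttt \sim (B,s)\}\big]
\]
attains its maximum \emph{uniquely and with uniform quadratic gap} at $(B^\star_\lambda, s^\star_\lambda)$ over the set of profiles compatible with $\ee_{1/4}$. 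Restricting to $\ee_{1/4}$ imposes exponential decay $\sum_{v(\ttt)=v} n_\ttt \le n 2^{-kv/4}$, which makes the relevant space effectively compact (tight under an exponential moment on $v(\ttt)$), so a uniform quadratic gap yields the claimed $e^{-c(\delta)n}$ bound.

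To obtain the gap, I would run the resampling chain: given a configuration $\sig$ with boundary/weight profile $(B,s)$, sample $Y \subset V$ uniformly with $|Y|=\eps n$, form the neighborhood $\NNN(Y)$ of radius $\tfrac32$, and resample the spins, literals, and matching on $\NNN(Y)$ from the Gibbs measure obtained by freezing the exterior $\GGG_\partial$ and weighting by $w_\GGG^{\lit}(\cdot)^\lambda$. Under the $\ee_{1/4}$ event, the probability that the free trees of two variables in $Y$ meet, or that a short cycle enters $\NNN(Y)$, is $O_k(\eps^2)$ plus a contribution exponentially small in $k$, so with probability $1-o_\eps(1)$ the resampling acts as $|Y|$ disjoint single-vertex updates. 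Each such update is precisely one step of the BP operator \eqref{eq:pre:BP:1stmo}, and Proposition~\ref{prop:BPcontraction:1stmo} then gives a contraction of the outgoing message law toward $\dot q^\star_\lambda$ by a factor $k^2 2^{-k}$. Iterating the resampling $\Theta(1/\eps)$ times and reading the chain in reverse, every configuration with $\|(B,s)-(B^\star_\lambda,s^\star_\lambda)\|_1 > \delta$ is mapped, at a multiplicative cost of at most $e^{-c(\delta)n}$ in the partition function, into the $O(\delta)$-neighborhood of the optimal profile, which yields \eqref{eq:prop:maxim:1stmo:truncated} and \eqref{eq:prop:maxim:1stmo:untruncated}.

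The main obstacle is precisely the extension to the untruncated model. In \cite{ssz16} the finiteness of $\Omega_L$ made $\NNN(Y)$ a union of trees of size bounded by $L$ with probability $1$, so the contraction was a clean application of BP on a finite alphabet. Here the free trees rooted at $Y$ can be arbitrarily large, and one must check: (i) that the exponential tail from $\ee_{1/4}$ is strong enough to make the ``bad'' intersection/cycle events negligible even after summing over $\Theta(1/\eps)$ iterations, (ii) that the BP contraction constant and the quadratic-gap constant $c(\delta)$ can be taken independent of the sizes of the free trees encountered, and (iii) that cyclic free components, controlled by Proposition~\ref{prop:1stmo:aprioriestimate}(4), contribute at most a polylogarithmic factor that is absorbed by $e^{-c(\delta)n}$. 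These three points require a careful coupling of the local Gibbs measure on $\NNN(Y)$ with its infinite-volume counterpart driven by $\dot q^\star_\lambda$, which is where the bulk of the work of Section~\ref{subsec:resampling} will be spent.
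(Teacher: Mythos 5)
Your proposal has the right family of ideas — resampling plus reversibility, compactness induced by the $\ee_{1/4}$ event, and uniqueness of the BP fixed point — but it conflates the mechanism for Proposition~\ref{prop:maxim:1stmo} with that for Proposition~\ref{prop:negdef} in a way that introduces genuine gaps.

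First, the paper proves Proposition~\ref{prop:maxim:1stmo} using only a \emph{single} step of the resampling chain and the reversibility identity $\mu(\AAA_0)\le\mu(\AAA_1)\max_{A_1}\pi(A_1,\AAA_0)$ of Lemma~\ref{lem:resampling:reversingmeasure}; the transition probability $\pi(A_1,\AAA_0)$ is then bounded via Lemma~\ref{lem:resampling:transitionprob} in terms of the tree-optimization functional $\Xi$. The $\Theta(1/\eps)$ iteration you describe is reserved for the negative-definiteness of the Hessian (Proposition~\ref{prop:crux:negdef:1stmo} and hence~\ref{prop:negdef}), where one needs quantitative \emph{quadratic} growth and must track how far the chain wanders. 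Proposing iteration here is not merely overkill: making it rigorous requires precisely the one-step stability analysis of Lemma~\ref{lem:1stmo:negdef:onestep} that this proposition does not have available as input.

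Second, your claim that resampling acts as $|Y|$ disjoint single-vertex updates ``with probability $1-o_\eps(1)$'' is incorrect. Lemma~\ref{lem:1stmo:resampling:treedisjoint} establishes only a \emph{lower bound} of order $e^{-C_k n\eps^2\log(1/\eps)}$ on the probability that the directed trees at $\delta\NNN(Y)$ are disjoint and not too large; for fixed $\eps$ and large $n$ this probability is exponentially small, not close to $1$. The argument works anyway because this cost has exponent $O(\eps^2\log(1/\eps))$, which is dominated by the gain $\tfrac{\eps}{4}G(\zeta(\delta))$ from $\Xi(H^\sm_0)\gtrsim G(\zeta(\delta))>0$ once $\eps$ is small enough. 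An argument that presumes the good sampling event is typical would be circular.

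Third, you invoke a ``uniform quadratic gap'' at $(B^\star_\lambda,s^\star_\lambda)$ and per-iteration BP contraction by $k^2 2^{-k}$. For this proposition only the strictly weaker statement is needed: $\Xi$ has a unique zero at $H^\star_\lambda$ among symmetric profiles with exponential tails (Lemma~\ref{lem:1stmo:unique:zero:Xi}) and is continuous on the compact set $\bDelta^{\textnormal{exp}}_{1/20}$ (Lemma~\ref{lem:Xi:continuous:1stmo}), so $G(\eta)=\inf\{\Xi(H):\|H-H^\star_\lambda\|_1\ge\eta\}>0$. BP contraction enters only indirectly, via the uniqueness lemmas. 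The quadratic growth of $\Xi$ (Lemma~\ref{lem:quadratic:growth:Xi:1stmo}) is proved and used only for the negative-definiteness proposition and only for the truncated model.

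Finally, your item~(iii) about cyclic free components is a red herring: the quantities $\bZ^\tr_\lambda$ and $\bZ^{(L),\tr}_\lambda$ in the statement already exclude configurations with free cycles, so Proposition~\ref{prop:1stmo:aprioriestimate}(4) plays no role in this particular proof.
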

    We remark that the result \eqref{eq:prop:maxim:1stmo:truncated} for the truncated model is a consequence of \cite[Proposition 3.4]{ssz22} (see also Remark \ref{remark:resampling:diff:ssz} below). However, the untruncated model needs much more careful analysis, which is done in Section \ref{subsec:resampling}.

	We now define the \textit{optimal rescaling factor} $\utheta^\star_{\lambda,L} \equiv \utheta^\star\equiv (\theta^\star_{\circ}, \{\theta^\star_{x}\}_{x \in\partial}, \theta^\star_{s}) \in \R^{|\partial|+2}$ for the truncated model as follows. $\dot{\ZZZ}^\star, \hat{\ZZZ}^\star, \bar{\mathfrak{Z}}^\star, \dot{q}^\star$ and $\hat{q}^\star$ below are for the $\lambda$-tilted and $L$-truncated model.
	\begin{equation}\label{eq:def:opt:theta}
	\begin{split}
	    \theta^\star_{\circ}&:= \log\left(\frac{(\dot{\ZZZ}^\star)^{\frac{k}{kd-k-d}}(\hat{\ZZZ}^\star)^{\frac{d}{kd-k-d}}}{\bar{\mathfrak{Z}}^\star}\right);\quad\quad \theta^\star_{\bb_0}\equiv\theta^\star_{\bb_1} := \log\left(\frac{\dot{q}^\star(\bb_0)}{(\dot{\ZZZ}^\star)^{\frac{1}{kd-k-d}}(\hat{\ZZZ}^\star)^{\frac{d-1}{kd-k-d}}}\right);\\
	    \theta^\star_{\fs} &:= \log\left(\frac{2^{-\la}\hat{q}^\star(\fs)}{(\dot{\ZZZ}^\star)^{\frac{k-1}{kd-k-d}}(\hat{\ZZZ}^\star)^{\frac{1}{kd-k-d}}}\right);\quad\quad\quad \theta^\star_s:=0.
	\end{split}
	\end{equation}
	The optimal rescaling factor $\utheta^\star_{\lambda}$ for the untruncated model is defined by \eqref{eq:def:opt:theta} with $\dot{\ZZZ}^\star, \hat{\ZZZ}^\star, \bar{\mathfrak{Z}}^\star, \dot{q}^\star$ and $\hat{q}^\star$ for the untruncated model. The optimal rescaling factor $\utheta^\star$ was designed to satisfy
	\begin{equation}\label{eq:theta:opt:tree:prob}
	    J_{\ttt}w_{\ttt}^\lambda \exp\left(\langle\utheta^\star,\boeta_{\ttt} \rangle\right) = p^\star_{\ttt}
	\end{equation}
	for both the untruncated and the truncated model, where $\boeta_{\ttt}:= (\eta_{\ttt}(\circ),\{\eta_{\ttt}(x)\}_{x\in\partial},\eta_{\ttt}(s))$ with $\eta_{\ttt}(\circ):=1$ and $\eta_{\ttt}(s):= s_{\ttt}^\lit$. Here, $\{\eta_{\ttt}(x)\}_{x\in\partial}$ was defined in \eqref{eq:def:eta}. Hence, recalling the definition of $s^\star_{\lambda,L}$ and $s^\star_{\lambda}$ in \eqref{eq:def:slambda}, Lemma \ref{lem:compat:optfr:optbd:1stmo} shows
	\begin{equation}\label{eq:grad:psi:optimal}
	\begin{split}
	    &\nabla \psi_{\lambda,L}(\utheta^\star_{\lambda,L})
	    =(\uh^\star_{\lambda,L},s^\star_{\lambda,L})\quad\textnormal{where}\quad \psi_{\lambda,L}(\utheta) := \sum_{\ttt\in \FFF_{\tr}:v(\ttt) \leq L}J_{\ttt}w_{\ttt}^{\lambda}\exp\left(\langle \utheta, \boeta_{\ttt}\rangle\right), \quad \utheta\in\R^{|\partial|+2};\\
	    &\nabla \psi_{\lambda}(\utheta^\star_{\lambda})
	    =(\uh^\star_{\lambda},s^\star_{\lambda}),\quad\textnormal{where}\quad \psi_{\lambda}(\utheta) := \sum_{\ttt\in \FFF_{\tr}}J_{\ttt}w_{\ttt}^{\lambda}\exp\left(\langle \utheta, \boeta_{\ttt}\rangle\right), \quad \utheta\in\R^{|\partial|+2}.
	\end{split}
	\end{equation}
	We also consider an analog of \eqref{eq:grad:psi:optimal} for $\utheta^{-}\in \R^{|\partial|+1}$, where we write $\utheta=(\utheta^{-},\theta_{s})$. That is, define
	\begin{equation*}
	    \psi_{\lambda,L}^{-}(\utheta^{-}):= \psi_{\lambda,L}(\utheta^{-},0)\quad\textnormal{and}\quad\psi_{\lambda}^{-}(\utheta^{-}):= \psi_{\lambda}(\utheta^{-},0).
	\end{equation*}
	Then, since $\theta^\star_s\equiv 0$ for both the truncated and the untruncated model, we have
	\begin{equation*}
	    \nabla \psi_{\lambda,L}^{-}(\utheta^{\star,-}_{\lambda,L})=h^\star_{\lambda,L}\quad\textnormal{and}\quad \nabla \psi_{\lambda}^{-}(\utheta^{\star,-}_{\lambda})=h^\star_{\lambda,L}.
	\end{equation*}
	By perturbative analysis, we have the next lemma.
	\begin{lemma}\label{lem:exist:theta}
	For $\delta >0$, denote the $\delta$-neighborhood around $(B^\star_{\la,L},s^\star_{\la,L})$ and $B^\star_{\la,L}$ by \begin{equation}\label{eq:def:BB:delta}
	\begin{split}
	     &\BB_{\la,L}(\delta) := \{(B,s)\in \bDelta^{\textnormal{b}}\times \R_{\geq 0}:||(B,s)-(B^\star_{\lambda,L},s^\star_{\la,L})||_1 \leq \delta\};\\
	     &\BB^{-}_{\la,L}(\delta) := \{B\in \bDelta^{\textnormal{b}}:||B-B^\star_{\lambda,L}||_1 \leq \delta\}.
	\end{split}
	\end{equation}
	For sufficiently large $L$($L\geq d$ suffices), there exist $\delta_0\equiv \delta_0(\lambda,L,d,k)>0, \utheta_{\la,L}:\BB_{\la,L}(\delta_0) \to\R^{|\partial|+2}$, and $\utheta^{-}_{\la,L}:\BB^{-}_{\la,L}(\delta_0)\to\R^{|\partial|+1}$, such that the following properties hold.
	\begin{itemize}
	    \item $\utheta_{\lambda,L}(B^\star_{\lambda,L},s^\star_{\lambda,L})=\utheta^\star_{\lambda,L}$ and $\utheta^{-}_{\lambda,L}(B^\star_{\lambda,L})=\utheta^{\star,-}_{\lambda,L}$.
	    \item $\nabla \psi_{\lambda,L}\left(\utheta_{\lambda,L}(B,s)\right) = \left(\uh(B),s\right)$ and $\nabla \psi_{\lambda,L}^{-}\left(\utheta^{-}_{\lambda,L}(B)\right) = \uh(B)$.
	    \item $\utheta_{\la,L}(\cdot)$ and $\utheta^{-}_{\la,L}(\cdot)$ are differentiable in the interior of their domains.
	\end{itemize}
	The analog for the untruncated model also holds: define $\BB_{\la}(\delta)$ and $\BB^{-}_\la(\delta)$ analogously to \eqref{eq:def:BB:delta}, where the subscript $L$ is dropped. Then, there exist $\delta_0(\lambda,d,k)>0$, $\utheta_{\la}:\BB_{\la}(\delta_0)\to \R$ and $\utheta_{\la}^{-}:\BB^{-}_{\la}(\delta_0)\to \R$ such that the same properties as above hold with subscript $L$ dropped.
	\end{lemma}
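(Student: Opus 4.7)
The proof proceeds by the implicit function theorem applied to $F_{\la,L}(\utheta) := \nabla \psi_{\la,L}(\utheta)$ at the base point $\utheta^\star_{\la,L}$. By \eqref{eq:grad:psi:optimal} we already have $F_{\la,L}(\utheta^\star_{\la,L}) = (\uh^\star_{\la,L}, s^\star_{\la,L})$, so the core task is to verify that the Jacobian
\begin{equation*}
DF_{\la,L}(\utheta^\star_{\la,L}) \;=\; \nabla^2 \psi_{\la,L}(\utheta^\star_{\la,L}) \;=\; \sum_{\ttt \in \FFF_{\tr},\, v(\ttt)\le L} J_\ttt w_\ttt^\lambda e^{\langle \utheta^\star_{\la,L}, \boeta_\ttt\rangle}\,\boeta_\ttt \boeta_\ttt^{T} \;=\; \sum_{\ttt} p^\star_{\ttt, \la, L}\,\boeta_\ttt \boeta_\ttt^{T},
\end{equation*}
where the last equality uses \eqref{eq:theta:opt:tree:prob}, is non-degenerate. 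Once invertibility is established, the IFT supplies a $C^1$ local inverse $\Theta_{\la,L}$ of $F_{\la,L}$ on a neighborhood of $(\uh^\star_{\la,L}, s^\star_{\la,L})$; composing with the affine map $(B,s) \mapsto (\uh(B), s)$ (well-defined via \eqref{eq:def:compat:1stmo}--\eqref{eq:def:compat:1stmo:tree}) then yields $\utheta_{\la,L}(B,s) := \Theta_{\la,L}(\uh(B), s)$ on a small $L^1$-ball $\BB_{\la,L}(\delta_0)$ around $(B^\star_{\la,L},s^\star_{\la,L})$, satisfying all three listed properties.

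To establish positive definiteness of $\sum_\ttt p^\star_{\ttt,\la,L}\boeta_\ttt\boeta_\ttt^T$, note that by Proposition \ref{prop:BPcontraction:1stmo}(1), $\dot q^\star_{\la,L}$ has full support for $L$ large, hence $p^\star_{\ttt,\la,L}>0$ for every admissible free tree with $v(\ttt)\le L$. It therefore suffices to exhibit $|\partial|+2 = 5$ admissible free trees whose vectors $\boeta_\ttt = (1,\eta_\ttt(\bb_0),\eta_\ttt(\bb_1),\eta_\ttt(\fs),s_\ttt^{\lit})$ span $\R^5$. This is a direct combinatorial verification: the single-free-variable tree already contributes $\boeta=(1,0,0,d,\log 2)$; enlarging by attaching a single non-separating clause and varying how many of its non-free neighbors are $\bb_0$ versus $\bb_1$ (both available by the $0/1$-symmetry of $\dot q^\star_{\la,L}$) produces vectors that separate the $\bb_0$ and $\bb_1$ coordinates from each other and from $\eta(\fs)$, while choosing trees of different graph-topology and literal profile shifts the last coordinate $s_\ttt^\lit$ independently. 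Five such vectors, all of size $\le d$, exist whenever $L\ge d$. The map $\utheta^{-}_{\la,L}(B)$ is constructed identically upon restricting to $\theta_s=0$: the relevant Jacobian is the principal $(|\partial|+1)$-minor of the above Hessian, which inherits positive definiteness from any four of the five vectors we just exhibited (e.g.\ by projecting off the $s$-coordinate).

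For the untruncated statement, the only additional issue is the convergence of $\psi_\la(\utheta)$ and its derivatives in a neighborhood of $\utheta^\star_\la$. At $\utheta=\utheta^\star_\la$ the series $\sum_\ttt p^\star_{\ttt,\la}$ equals $h^\star_\la(\circ)<\infty$, and the a priori exponential decay of the optimal free-tree profile furnished by Proposition \ref{prop:1stmo:aprioriestimate} yields $\sum_\ttt p^\star_{\ttt,\la}\,e^{c(v(\ttt)+f(\ttt))}<\infty$ for some $c>0$. Since $\|\boeta_\ttt\|$ grows at most linearly in $v(\ttt)+f(\ttt)$, this is amply sufficient for $\psi_\la$ to extend to a real-analytic function of $\utheta$ on a small neighborhood of $\utheta^\star_\la$, with derivatives obtained by termwise differentiation. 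The IFT argument then proceeds verbatim, using Proposition \ref{prop:BPcontraction:1stmo}(2) to transfer the linear-independence verification from the truncated to the untruncated fixed point (the vectors $\boeta_\ttt$ for the small trees above are independent of $L$).

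The main obstacle is the positive-definiteness of the Hessian, which reduces entirely to producing the five free trees with linearly independent $\boeta$-vectors. Everything else is routine: the IFT in finite dimensions, and, in the untruncated case, a dominated-convergence check based on Proposition \ref{prop:1stmo:aprioriestimate}.
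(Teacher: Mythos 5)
Your proposal is correct and follows essentially the same route as the paper: the implicit function theorem applied to $\nabla\psi_{\la,L}$, reduced to positive definiteness of $\nabla^2\psi(\utheta^\star)=\sum_\ttt p^\star_\ttt\,\boeta_\ttt\boeta_\ttt^T$, which in turn reduces to exhibiting $|\partial|+2$ free trees with linearly independent $\boeta_\ttt$ (a verification the paper explicitly leaves as an exercise, and which you sketch), with the $\utheta^-$ case handled as a principal submatrix. Your additional care about termwise differentiability of the untruncated series $\psi_\la$ near $\utheta^\star_\la$ is a detail the paper glosses over entirely, and is a welcome addition rather than a divergence.
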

	\begin{proof}
	We consider the untruncated model throughout the proof. The result for truncated model with sufficiently large $L$ follows by the exact same argument. Since $B \to \uh(B)$ is a linear projection, it is differentiable. Thus, by \eqref{eq:grad:psi:optimal} and implicit function theorem, it suffices to show that $\nabla^2\psi_{\lambda}(\utheta^\star_{\lambda})\succ 0$ and $\nabla^2\psi^{-}_{\lambda}(\utheta^{\star,-}_{\lambda})\succ 0$. Also, because $\nabla^2\psi^{-}_{\lambda}(\utheta^{\star,-}_{\lambda})$ is a submatrix of $\nabla^2\psi_{\lambda}(\utheta^\star_{\lambda})$, it suffices to show the former inequality. For $v=(v_x)_{x\in \partial \sqcup\{\circ,s\}} \in \R^{|\partial|+2}$, we can use \eqref{eq:theta:opt:tree:prob} to compute
	\begin{equation*}
	    v^{T}\nabla^2\psi_{\lambda}(\utheta^\star_{\lambda})v=\sum_{\ttt \in \FFF_{\tr}}p^\star_{\ttt}\left(\sum_{x\in \partial \sqcup\{\circ, s\}}\eta_{\ttt}(x)v_x\right)^2.
	\end{equation*}
	Note that $p^\star_{\ttt}>0$ and there exists $A\subset \FFF$ with $A=|\partial|+2$ such that $\{\boeta_{\ttt}: \ttt\in A\}$ is linearly independent (we leave it as an exercise to the reader to find such a subset of free trees). Therefore, the right hand side of the equation above is positive for $v\neq 0$.
	\end{proof}
    Later, we will use $\utheta_\la$ to compute $\E \bZ_{\la,s}^\tr$ and $\utheta^{-}_\la$ to compute $\E\bZ^\tr_\la$. The next lemma shows that the optimal free tree profile decays exponentially in the number of variables.
	\begin{lemma}\label{lem:opt:tree:decay}
	Fix any $\lambda \in [0,1]$ and sufficiently large $L$. Then,
	\begin{equation}\label{eq:lem:opt:tree:decay}
	    \sum_{\ttt\in \FFF_{\tr}:v(\ttt)=v}p^\star_{\ttt,\lambda,L} 
	    \leq 2^{-kv/2}\quad\textnormal{for}\quad 1\leq v \leq L\quad\textnormal{and}
	    \sum_{\ttt\in \FFF_{\tr}:v(\ttt)=v}p^\star_{\ttt,\lambda} 
	    \leq 2^{-kv/2}\quad\textnormal{for}\quad v\geq 1.
	    \footnote{Modifying $k_0$, $\frac{1}{2}$ can be replaced by any $x\in (0,1)$.}
	\end{equation}
	\end{lemma}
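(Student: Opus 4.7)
The plan is to exploit the explicit formula for $p^\star_{\ttt,\lambda,L}$ in Definition \ref{def:opt:bdry:1stmo} together with the $\mathbf{\Gamma}_C$-bound on the BP fixed point from Proposition \ref{prop:BPcontraction:1stmo}, which forces $\dot{q}^\star(\ff) \lesssim 2^{-k}$. Intuitively, $p^\star_\ttt$ is the mass that the branching process induced by the BP fixed point assigns to the free tree $\ttt$, and this branching process is heavily subcritical in $k$, with per-vertex survival probability $O(2^{-k})$.

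First I would collect deterministic constraints on the shape of a free tree. Since $\ttt$ is a bipartite tree with $e(\ttt) = v(\ttt) + f(\ttt) - 1$ internal edges, half-edge counting at variables and clauses yields
\begin{equation}
\eta_\ttt(\fs) = (d-1)v(\ttt) - f(\ttt) + 1, \qquad \eta_\ttt(\bb_0) + \eta_\ttt(\bb_1) = (k-1)f(\ttt) - v(\ttt) + 1.
\end{equation}
Also, every internal non-separating clause must be adjacent to at least two free variables, giving $2f(\ttt) \leq e(\ttt)$ and hence $f(\ttt) \leq v(\ttt) - 1$. Combined with the trivial $(d,k)$-regular bound, the number of free-tree topologies on $v$ variables is at most $(Cdk)^v$ for some absolute $C$.

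Next I would unwind the formula \eqref{eq:optimal:tree:1stmo} using the BP fixed-point equations \eqref{eq:def:bethe:bpmsg:dot}--\eqref{eq:def:bethe:bpmsg:hat} satisfied by $(\dot{q}^\star,\hat{q}^\star)$. The combinatorial weight $J_\ttt w_\ttt^\lambda$ factors over $V(\ttt) \cup F(\ttt)$, and for each internal edge $e$ the product of the vertex weights along with the normalizers $\bar{\mathfrak{Z}}^\star,\dot{\ZZZ}^\star,\hat{\ZZZ}^\star$ can be telescoped so that each internal edge contributes either $\dot{q}^\star(\dot{\sigma}_e(\ttt))$ or $\hat{q}^\star(\hat{\sigma}_e(\ttt))$ with a free message. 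Schematically this expresses $p^\star_\ttt$ as a product of local BP weights along the boundary and along each internal branching step of $\ttt$, i.e.\ $\ttt$ is generated by a Galton--Watson-like process whose survival at each new variable costs a free mass from $\dot{q}^\star$.

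Finally I would apply Proposition \ref{prop:BPcontraction:1stmo}: since $\dot{q}^\star\in \mathbf{\Gamma}_C$, the total free mass $\dot{q}^\star(\{\fF\})\leq C2^{-k}\dot{q}^\star(\bb)\leq C2^{-k}$, and by the BP symmetry relation an analogous bound $\hat{q}^\star(\{\fF\})\lesssim k2^{-k}$ propagates to the clause side. Each of the $v$ free variables in $\ttt$ then contributes a factor bounded by $C2^{-k}$, giving
\begin{equation}
\sum_{\ttt\in\FFF_\tr:\,v(\ttt)=v} p^\star_{\ttt,\lambda,L} \;\leq\; (Cdk)^v (C' 2^{-k})^v \;\leq\; 2^{-kv/2},
\end{equation}
for $k\geq k_0$ with $k_0$ large enough, using $d=\alpha k \leq 2^{k-1}\log 2$ from Remark \ref{rem:k:adjusted}. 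The untruncated case follows by exactly the same computation applied to $\dot{q}^\star_\lambda$, whose $\mathbf{\Gamma}_C$-membership is also guaranteed by Proposition \ref{prop:BPcontraction:1stmo}.

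The main obstacle is the branching-process identification in the second step: one must verify that the normalizers in \eqref{eq:optimal:tree:1stmo}, the embedding factor $J_\ttt$, and the per-vertex weights $\dot{\Phi},\hat{\Phi},\bar{\Phi}$ conspire to yield exactly one free-message factor from $\dot{q}^\star$ per internal edge. The bookkeeping is routine but needs the BP equations to be applied at every internal vertex; the reductions in the first step (relating $\eta_\ttt(\fs)$ and $\eta_\ttt(\bb)$ to $v,f$, and bounding $f\leq v-1$) are essential to convert a per-edge decay into a per-vertex decay with the exponent $v$ that the statement requires.
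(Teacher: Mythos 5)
Your proposal takes a genuinely different route from the paper. The paper proves the lemma by contradiction: it assumes the optimal free-tree profile fails to decay at rate $2^{-kv/2}$ for some $v_0$, then uses the moment factorization (Proposition \ref{prop:1stmo:B nt decomp}), a local CLT lower bound, and a Chernoff bound for the empirical count of size-$v_0$ trees to conclude that the contribution to $\E\bZ_\lambda^{\tr}$ from $(n_\ttt)\in\ee_{1/4}$ and $\|B-B^\star_\lambda\|_1\le\delta_1$ would be exponentially small, contradicting the combination of the a priori estimate (Proposition \ref{prop:1stmo:aprioriestimate}$(4)$) and Proposition \ref{prop:maxim:1stmo}. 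This indirect route sidesteps any explicit bound on the BP normalizers. Your proposal instead attempts a direct Galton--Watson-type factorization of $p^\star_\ttt$ via the BP fixed-point equations. In principle this could work and would be more transparent, but as written the argument has a gap.

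The gap is in the final arithmetic. You assert the number of free-tree topologies on $v$ variables is at most $(Cdk)^v$ and the per-variable decay factor is $C'2^{-k}$, concluding
\begin{equation*}
\sum_{\ttt:\,v(\ttt)=v} p^\star_{\ttt,\lambda,L} \le (Cdk)^v(C'2^{-k})^v \le 2^{-kv/2}.
\end{equation*}
But $d=\alpha k$ with $\alpha\in[\alpha_{\textsf{lbd}},\alpha_{\textsf{ubd}}]$, so $d\asymp k2^{k-1}$; hence $(Cdk)^v(C'2^{-k})^v\asymp(CC'k^2/2)^v$, which grows with $v$ rather than decaying — the claimed inequality is false. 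The root cause is a double-count: if $(Cdk)^v$ is meant to count embedded trees, then the embedding number $J_\ttt$ (which is already of order $d^{O(f)}k^{O(v)}$ and appears inside $p^\star_\ttt$) must not be counted again. The number of \emph{unlabeled} free trees with $v$ variables is at most $C^v$ for an absolute constant $C$ (the paper itself uses $32^v$ in the proof of Lemma \ref{lem:Xi:continuous:1stmo}), so the correct shape of the argument is to bound $p^\star_\ttt$ itself by $(C'2^{-k})^v$ for each $\ttt$. That per-tree bound, however, hides the real difficulty: the product $J_\ttt w_\ttt^\lambda \dot q^\star(\bb_0)^{\eta_\bb}(2^{-\lambda}\hat q^\star(\fs))^{\eta_\fs}$ grows like $(dk)^{\Theta(v)}$, and the decay must come from $(\dot{\ZZZ}^\star)^{-v}(\hat{\ZZZ}^\star)^{-f}$. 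You have not estimated these normalizers, and $\dot q^\star(\{\ff\})\lesssim 2^{-k}$ alone does not control them. The "routine bookkeeping" you defer to is precisely where the cancellation between the embedding multiplicities and the normalizers must be established, and is not at all routine. Either carry out that computation explicitly (tracking $\dot{\ZZZ}^\star,\hat{\ZZZ}^\star$ via the BP fixed-point equations and Lemma \ref{lem:1stmo:BPfixedpoint:estimates}) or switch to the paper's indirect moment-based contradiction, which bypasses the issue.
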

	\begin{proof}
    We consider the untruncated model throughout the proof. The result for the truncated model with sufficiently large $L$ follows by the exact same argument. Fix $\lambda \in [0,1]$ and $v_0\geq 1$. Suppose by contradiction that there exists $\eps>0$ such that $ \sum_{\ttt\in\FFF_{\tr}:v(\ttt)=v_0}p^\star_{\ttt,\lambda} 
	    >(1+\eps) 2^{-kv_0/2}$ holds. Then, recalling $\delta_{0}$ and $\utheta^{-}(B)\equiv \utheta^{-}_{\lambda}(B)$ from Lemma \ref{lem:exist:theta}, \eqref{eq:theta:opt:tree:prob} and the continuity of $B\to\utheta(B)$ show that there exists some $\delta_1<\delta_0$ such that
	\begin{equation}\label{eq:lem:contrad:tree:decay}
	    \sum_{v(\ttt)=v_0} J_{\ttt}w_{\ttt}^\lambda \exp\left(\langle \utheta^{-}(B),\boeta^{-}_{\ttt}\rangle\right)>(1+\eps)2^{-kv_0/2}
	\end{equation}
	holds for $||B-B^\star_{\lambda}||_1<\delta_1$, where we wrote $\boeta_{\ttt}=\left(\boeta_{\ttt}^{-},\eta_\ttt(s)\right)$. To begin with, Proposition \ref{prop:1stmo:B nt decomp} shows that for $B\in \bDelta_n$ with $||B-B^\star_{\lambda}||_1\leq \delta_1$, we have
	\begin{equation}\label{eq:tree:decay:technical:1}
	\begin{split}
	    &\frac{\E \bZ^{\tr}_{\lambda}\left[B\quad\text{and}\quad \sum_{\ttt \in \mathscr{F}_{\tr},v(\ttt)=v} n_{\ttt} \leq n2^{-kv/2},~~~~~\forall v\geq 1 \right]}{\E \bZ^{\tr}_{\lambda}[B]}\\
	    &= \frac{\P_{\utheta^{-}(B)}\left(\sum_{i=1}^{nh_{\circ}(B)}\boeta^{-}_{X_i}=n\uh(B)\quad\text{and}\quad \sum_{i=1}^{nh_{\circ}(B)}\one(v(X_i)=v)\leq n2^{-kv/2},~~~~~~\forall v\geq 1\right)}{\P_{\utheta^{-}(B)}\left(\sum_{i=1}^{nh_{\circ}(B)}\boeta^{-}_{X_i}=n\uh(B)\right)},
	\end{split}
	\end{equation}
	where $\uh(B)=\big(h_x(B)\big)_{x\in \partial\sqcup\{\circ\}}$ is from Remark \ref{rem:compat:bdry:tree}, and $\P_{\utheta^{-}(B)}$ is taken with respect to i.i.d. random free trees $X_1,...,X_{nh_{\circ}(B)}\in \FFF_{\tr}$ with distribution
	\begin{equation}\label{eq:def:rescaling:prob}
	    \P_{\utheta^{-}(B)}(X_i=\ttt):=\frac{J_{\ttt}w_{\ttt}^\lambda \exp\left(\langle\utheta^{-}(B),\boeta^{-}_{\ttt}\rangle\right)}{h_{\circ}(B)}.
	\end{equation}
	Note that by Lemma \ref{lem:exist:theta}, $\E_{\utheta^{-}(B)}[\boeta^{-}_{X_i}]=\frac{\uh(B)}{h_{\circ}(B)}$ holds, so local central limit theorem(\textsc{clt}) implies (e.g. see Theorem 3.1 of \cite{Borokov17})
	\begin{equation}\label{eq:tree:decay:technical:2}
	    \P_{\utheta^{-}(B)}\left(\sum_{i=1}^{nh_{\circ}(B)}\boeta^{-}_{X_i}=n\uh(B)\right)\geq C n^{-|\partial|/2},
	\end{equation}
	for $||B-B^\star_{\lambda}||_1<\delta_1$ and $C=C(\delta_1,\lambda,d,k)>0$. On the other hand, by \eqref{eq:lem:contrad:tree:decay},
	\begin{equation}\label{eq:tree:decay:technical:3}
	\begin{split}
	&\P_{\utheta^{-}(B)}\left(\sum_{i=1}^{nh_{\circ}(B)}\boeta^{-}_{X_i}=n\uh(B)\quad\text{and}\quad\sum_{i=1}^{nh_{\circ}(B)}\one\left(v(X_i)=v\right)\leq n2^{-kv/2},~~~~~~\forall v\geq 1\right)\\
	&\leq \P_{\utheta^{-}(B)}\left(\sum_{i=1}^{nh_{\circ}(B)}\one\left(v(X_i)=v_0\right)\leq n2^{-kv_0/2}\right)\leq \exp\Big(-\frac{n\eps^2 2^{-kv_0/2}}{2(1+\eps)}\Big),
	\end{split}
	\end{equation}
	where the last bound is due to Chernoff bounds for binomial random variables. However, (4) of Proposition \ref{prop:1stmo:aprioriestimate} with $c=1$, and Proposition \ref{prop:maxim:1stmo} altogether imply that
	\begin{equation}\label{eq:tree:decay:technical:0}
	   \E \bZ^{\tr}_\lambda\left[||B-B^\star_{\lambda}||_1\leq \delta_1\quad\text{and} \sum_{\ttt \in \mathscr{F}_{\tr},v(\ttt)=v} n_{\ttt} \leq n2^{-kv/2},~~~~~~\forall v\geq 1\right]\geq \left(1-O_{k}(n^{-\frac{2}{3}}\log n)\right)\E\bZ^{\tr}_\lambda.
	\end{equation}
	Combining \eqref{eq:tree:decay:technical:1}, \eqref{eq:tree:decay:technical:2} and \eqref{eq:tree:decay:technical:3} contradict \eqref{eq:tree:decay:technical:0} for large enough $n$.
	\end{proof}
	Taking advantage of the previous lemma, the next lemma shows some uniform convergence properties of $\psi_{\la,L}(\cdot), \utheta_{\la,L}(\cdot)$ and $\utheta_{\la,L}^{-}(\cdot)$ as $L \to \infty$.
	\begin{lemma}\label{lem:conv:psi:theta}
	There exists some $\eps_0 =\eps_0(\lambda,k,d)>0$ such that as $L\to\infty$,
	\begin{equation}\label{eq:lem:conv:psi}
	    \sup_{||\utheta-\utheta^\star_{\lambda}||_1\leq \eps_0}\sup_{\substack{x_1,...,x_i \in \partial\sqcup\{\circ,s\}\\0\leq i\leq 3}} \big\lvert \partial_{x_1...x_i}\psi_{\lambda,L}(\utheta)-\partial_{x_1...x_i}\psi_{\lambda}(\utheta)\big\lvert \to 0
	\end{equation}
	where $\partial_{x_1,..,x_i}$ denotes partial differentiation with respect to $\theta_{x_1},...,\theta_{x_i}$(for $i=0$, interpret it as the identity). Furthermore, there exist $\delta_{0}^\prime=\delta_{0}^\prime(\eps_0)<\delta_0$ and $L(\eps_0)$ such that if $L\geq L(\eps_0)$, then
    \begin{equation*}
    \begin{split}
    \utheta_{\lambda,L}\lvert_{\BB_{\la}(\delta_0^\prime)}\;,\; \utheta_{\la}\lvert_{\BB_{\la}(\delta_0^\prime)}&:\BB_{\la}(\delta_0^\prime)\longrightarrow \{\utheta:||\utheta-\utheta^\star_{\lambda}||<\eps_{0}\}\;,\\
    \utheta^{-}_{\lambda,L}\lvert_{\BB^{-}_{\la}(\delta_0^\prime)}\;,\;\utheta^{-}_{\la}\lvert_{\BB^{-}_{\la}(\delta_0^\prime)}&:\BB^{-}_\la(\delta_0^\prime)\longrightarrow \{\utheta^{-}:||\utheta^{-}-\utheta^{\star,-}_{\lambda}||<\eps_0\}
    \end{split}
    \end{equation*}
    are twice differentiable, and satisfy the following as $L\to \infty$:
	\begin{equation}\label{eq:lem:conv:theta}
	\begin{split}
	    &\sup_{(B,s)\in \BB_{\la}(\delta_0^\prime)}\sup_{\substack{\sigma_1,...,\sigma_i\in (\dot{\partial}^\bullet)^d\sqcup(\hat{\partial}^\bullet)^k\sqcup\hat{\partial}^\bullet\sqcup\{s\}\\0\leq i\leq 2}}\big\Vert\partial_{\sigma_1,...,\sigma_i}\utheta_{\lambda,L}(B,s)-\partial_{\sigma_1,...,\sigma_i}\utheta_{\lambda}(B,s)\big\Vert_1\to 0\;,\\
	    &\sup_{B\in \BB^{-}_{\la}(\delta_0^\prime)}\sup_{\substack{\sigma_1,...,\sigma_i\in (\dot{\partial}^\bullet)^d\sqcup(\hat{\partial}^\bullet)^k\sqcup\hat{\partial}^\bullet\\0\leq i\leq 2}}\big\Vert\partial_{\sigma_1,...,\sigma_i}\utheta^{-}_{\lambda,L}(B)-\partial_{\sigma_1,...,\sigma_i}\utheta^{-}_{\lambda}(B)\big\Vert_1\to 0.
	\end{split}
	\end{equation}
	\end{lemma}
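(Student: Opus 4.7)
The plan is to deduce both statements from the exponential decay of the optimal free tree profile established in Lemma \ref{lem:opt:tree:decay}, combined with a quantitative implicit function theorem.

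For the first claim \eqref{eq:lem:conv:psi}, I would control the tail
\[
\psi_{\lambda}(\utheta) - \psi_{\lambda,L}(\utheta) \;=\; \sum_{\ttt\in\FFF_{\tr}:\,v(\ttt)>L} J_{\ttt}\, w_{\ttt}^{\lambda}\, e^{\langle \utheta,\,\boeta_{\ttt}\rangle}
\]
by factoring $e^{\langle\utheta,\boeta_{\ttt}\rangle}=e^{\langle\utheta^\star_{\lambda},\boeta_{\ttt}\rangle}\,e^{\langle\utheta-\utheta^\star_{\lambda},\boeta_{\ttt}\rangle}$. By \eqref{eq:theta:opt:tree:prob} the first factor turns each summand into $p^\star_{\ttt,\lambda}$, to which Lemma \ref{lem:opt:tree:decay} applies. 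The coordinates of $\boeta_{\ttt}=(1,\{\eta_{\ttt}(x)\}_{x\in\partial},s_{\ttt}^{\lit})$ satisfy $\|\boeta_{\ttt}\|_{1}\le C_{1}v(\ttt)$ for a constant $C_{1}=C_{1}(d,k)$: the number of boundary half-edges is at most $(d+k)v(\ttt)$, and the \textsc{nae-sat} extension count $w_{\ttt}^{\lit}$ is bounded by $2^{v(\ttt)}$ so that $s_{\ttt}^{\lit}\le v(\ttt)\log 2$. Choosing $\eps_{0}$ small enough that $C_{1}\eps_{0}<\tfrac{k\log 2}{4}$ gives, uniformly for $\|\utheta-\utheta^\star_{\lambda}\|_{1}\le\eps_{0}$,
\[
|\psi_{\lambda}(\utheta) - \psi_{\lambda,L}(\utheta)| \;\le\; \sum_{v>L} 2^{-kv/2}\, e^{C_{1}\eps_{0} v} \;\le\; C\, 2^{-kL/4}.
\]
Each additional partial derivative $\partial_{x_{i}}$ introduces a factor $\eta_{\ttt}(x_{i})\le C_{1}v(\ttt)$ inside each summand; since $\sum_{v}2^{-kv/4}v^{3}<\infty$, the same tail estimate (with a worse constant) handles all mixed partial derivatives of order $i\le 3$, establishing \eqref{eq:lem:conv:psi}.

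For the second claim \eqref{eq:lem:conv:theta}, recall from Lemma \ref{lem:exist:theta} that $\utheta_{\lambda,L}$ and $\utheta_{\lambda}$ are defined implicitly by $\nabla\psi_{\lambda,L}(\utheta_{\lambda,L}(B,s))=(\uh(B),s)$ and $\nabla\psi_{\lambda}(\utheta_{\lambda}(B,s))=(\uh(B),s)$, with Hessians positive definite at the base point (as shown in the proof of Lemma \ref{lem:exist:theta}). The convergence of second derivatives in \eqref{eq:lem:conv:psi} gives uniform convergence of $\nabla^{2}\psi_{\lambda,L}$ to $\nabla^{2}\psi_{\lambda}$ on the $\eps_{0}$-neighborhood of $\utheta^\star_{\lambda}$, so for $L\ge L(\eps_{0})$ both Hessians remain uniformly positive definite there. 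A quantitative implicit function theorem then produces both maps on a common neighborhood $\BB_{\lambda}(\delta_{0}^{\prime})$ with values inside $\{\utheta:\|\utheta-\utheta^\star_{\lambda}\|<\eps_{0}\}$, and the standard stability estimate
\[
\|\utheta_{\lambda,L}(B,s)-\utheta_{\lambda}(B,s)\|_{1}\;\lesssim\;\sup_{\|\utheta-\utheta^\star_\la\|_1\le\eps_0}\|\nabla\psi_{\lambda,L}(\utheta)-\nabla\psi_{\lambda}(\utheta)\|_{\infty}\;\longrightarrow\; 0
\]
gives $C^{0}$-convergence. Differentiating the implicit relation yields $D\utheta_{\lambda}=(\nabla^{2}\psi_{\lambda}\circ\utheta_{\lambda})^{-1}\,D(\uh,s)$ and similarly for $\utheta_{\lambda,L}$; continuity of matrix inversion, together with the already-established uniform convergence of $\utheta_{\lambda,L}$ and of $\nabla^{2}\psi_{\lambda,L}$, yields uniform convergence of the first derivatives. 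Differentiating once more and invoking the uniform convergence of $\nabla^{3}\psi_{\lambda,L}$ yields the second-derivative statement. The argument for $\utheta^{-}_{\lambda,L}$ is identical with $\psi^{-}_{\lambda,L}$ in place of $\psi_{\lambda,L}$.

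The only delicate point is ensuring the tail bound on $\psi_{\lambda}-\psi_{\lambda,L}$ survives both the polynomial factor $v(\ttt)^{i}$ coming from differentiation and the exponential factor $e^{\langle\utheta-\utheta^\star_{\lambda},\boeta_{\ttt}\rangle}$ in the neighborhood of $\utheta^\star_{\lambda}$; this forces the order-of-operations choice of shrinking $\eps_{0}$ so that the decay rate $k(\log 2)/2$ from Lemma \ref{lem:opt:tree:decay} strictly dominates $C_{1}\eps_{0}$, with enough margin to absorb cubic polynomial factors.
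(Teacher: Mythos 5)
Your proposal is correct and follows essentially the same route as the paper: the tail of $\psi_\lambda-\psi_{\lambda,L}$ and its derivatives is controlled by converting summands to $p^\star_{\ttt,\lambda}$ via \eqref{eq:theta:opt:tree:prob}, bounding $\eta_\ttt(x)$ linearly in $v(\ttt)$ (the paper justifies this via $f(\ttt)+1\le v(\ttt)$), and invoking Lemma \ref{lem:opt:tree:decay} with $\eps_0$ chosen so the exponential factor is dominated; the second part then follows from the inverse/implicit function theorem, uniform positive definiteness of the Hessian near $\utheta^\star_\lambda$, and the convergence of the optimal base points. The paper writes the derivative of $\utheta_{\lambda,L}$ via the adjugate formula rather than your continuity-of-inversion phrasing, but this is a cosmetic difference.
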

	\begin{proof}
	We first prove \eqref{eq:lem:conv:psi}: recalling the definition of $\psi_{\lambda,L}$ and $\psi_\lambda$ in \eqref{eq:grad:psi:optimal}, we have
	\begin{equation}\label{eq:lem:conv:psi:theta:technical-1}
	    \sup_{\substack{x_1,...,x_i \in \partial\sqcup\{\circ,s\}\\0\leq i\leq 3}}\big\lvert \partial_{x_1...x_i}\psi_{\lambda,L}(\utheta)-\partial_{x_1...x_i}\psi_{\lambda}(\utheta)\big\lvert = \sum_{\ttt:v(\ttt)>L}\Big(\max_{x\in \partial\sqcup \{\circ ,s\}}\eta_{\ttt}(x)\Big)^3 J_{\ttt}w_{\ttt}^{\lambda}\exp\left(\langle \utheta, \boeta_{\ttt}\rangle\right).
	\end{equation}
	Note that for a valid free tree $\ttt$, each clause must have internal degree at least $2$, so $f(\ttt)+1\leq v(\ttt)$ holds. Thus, we can crudely bound $\max_{x\in \partial\sqcup \{\circ ,s\}}\eta_{\ttt}(x)\leq dv(\ttt)$. Moreover, recalling \eqref{eq:theta:opt:tree:prob}, we can bound for $||\utheta-\utheta^\star_{\lambda}||_1\leq \eps$,
	\begin{equation*}
	    J_{\ttt}w_{\ttt}^{\lambda}\exp\left(\langle \utheta, \boeta_{\ttt}\rangle\right)\leq p^\star_{\ttt,\lambda}\exp\Big(\eps\sum_{ x\in \partial\sqcup \{\circ ,s\}}\eta_{\ttt}(x)\Big)\leq p^\star_{\ttt,\lambda}\exp\left(5\eps dv(\ttt)\right).
	\end{equation*}
	Therefore, for any $||\utheta-\utheta^\star_{\lambda}||_1\leq \eps$, we can bound the summand in the \textsc{rhs} of \eqref{eq:lem:conv:psi:theta:technical-1} by
	\begin{equation}\label{eq:lem:conv:psi:theta:technical-2}
	    \Big(\max_{x\in \partial\sqcup \{\circ ,s\}}\eta_{\ttt}(x)\Big)^3 J_{\ttt}w_{\ttt}^{\lambda}\exp\left(\langle \utheta, \boeta_{\ttt}\rangle\right)
	    \leq d^3 v^3 \exp\left(5\eps d v\right)\sum_{v(\ttt)=v}p^\star_{\ttt,\lambda}\leq d^3 v^3 \exp\left(5\eps d v\right)2^{-kv/2},
	\end{equation}
	where the last bound is due to Lemma \ref{lem:opt:tree:decay}. Therefore, by \eqref{eq:lem:conv:psi:theta:technical-1} and \eqref{eq:lem:conv:psi:theta:technical-2}, taking $\eps_0\equiv\frac{k}{15d}$ gives the first claim \eqref{eq:lem:conv:psi}. Turning to the second claim, we make the following observations.
	\begin{itemize}
	    \item $\utheta_{\lambda,L}(B,s)$, defined in Lemma \ref{lem:exist:theta}, satisfy $\utheta_{\lambda,L}(B,s)= (\nabla \psi_{\lambda,L})^{-1}\left(h(B),s\right)$. Therefore, by inverse function theorem and chain rule,
	    \begin{equation*}
	    \left(\utheta_{\lambda,L}(B,s)\right)^\prime=\left(\nabla^2\psi_{\lambda,L}(\utheta_{\lambda,L})\right)^{-1}\left(h(B),s\right)^\prime=\det\left((\nabla^2\psi_{\lambda,L}(\utheta_{\lambda,L})\right)^{-1}\textnormal{adj}\left(\nabla^2\psi_{\lambda,L}(\utheta_{\lambda,L})\right)\cdot\left(h(B),s\right)^\prime,
	    \end{equation*}
	    where $\textnormal{adj}(A)$ denotes the adjugate matrix of $A$ and $\left(f(B,s)\right)^\prime$ denotes the Jacobian of $f$ with respect to $(B,s)$. The analog holds for $\utheta_{\lambda}(B,s), \utheta^{-}_{\lambda,L}(B)$ and $\utheta^{-}_{\lambda}(B)$.
	    \item By $(2)$ of Proposition \ref{prop:BPcontraction:1stmo}, $\utheta^\star_{\lambda,L}, B^\star_{\lambda,L}$ and $h^\star_{\lambda,L}$ converges to $\utheta^\star_{\lambda},B^\star_{\lambda}$, and $h^\star_{\lambda}$ respectively as $L\to\infty$ in $\ell^1$ distance.
	    \item In the proof of Lemma \ref{lem:exist:theta}, we have shown that $\nabla^2\psi_{\lambda}(\utheta^\star_{\lambda})\succ 0$. Hence, together with \eqref{eq:lem:conv:psi}, the following holds: for sufficiently small $\beta>0$, there exists $\eps=\eps(\beta)<\eps_0$ such that for any $\utheta$ with $||\utheta-\utheta^\star_{\la}||_1<\eps$, we have $\nabla^2\psi_{\lambda}(\utheta)\succeq \beta I$.
	\end{itemize}
	Having the above observations in hand, \eqref{eq:lem:conv:theta} is straightforward from \eqref{eq:lem:conv:psi}, thus we omit the details.
	\end{proof}
	\begin{remark}\label{rem:delta:smaller}
	$\BB_{\la}(\delta_0)$ and $\BB^{-}_{\la}(\delta_0)$ in Lemma \ref{lem:exist:theta} will play a crucial role when we compute $\E\bZ_{\la,s}^\tr$ and $\E \bZ_\la$. Indeed, by Proposition \ref{prop:maxim:1stmo}, we can neglect the contribution of $\E \bZ_{\la}[B]$ when $B$ is at least a constant distance away from $B^\star_\la$. 
	From now on, we will consider $\delta_0$ small enough to suit our needs when summing $\E \bZ_{\la}^{\tr}[B]$ and $\E \bZ_{\la,s}^{\tr}[B]$ over $\Vert B-B^\star_{\la}\Vert _1<\delta_0$. In particular, we take $\delta_0$ small enough so that the following holds.
	\begin{itemize}
	\item For sufficiently large $L$, set $\delta_0=\delta_0(\la,d,k)=\delta_0(\la,L,d,k)$ small enough so that \eqref{eq:lem:conv:theta} in Lemma \ref{lem:conv:psi:theta} hold for $\delta_0^\prime=\delta_0$.
	    \item Any $B\in\BB_{\la}^{-}(\delta_0)$ has full support and $\inf_{B\in\BB^{-}_{\la}(\delta_0)}\kappa(B)=:\eps(\delta_{0})>0$ holds, where $\kappa(B)$ is defined in Proposition \ref{prop:1stmo:B nt decomp}.
	    \item For $(B,s)\in \BB_{\la}(\delta_0)$ and $B\in \BB_{\la}^{-}(\delta_0)$, define $(p_{\ttt,\la}(B,s))_{\ttt\in \FFF_\tr},(p_{\ttt,\la}(B))_{\ttt\in \FFF_\tr},(p_{\ttt,\la,L}(B,s))_{v(\ttt)\leq L}$ and $(p_{\ttt,\la,L}(B))_{v(\ttt)\leq L}$ by 
	    \begin{equation}\label{eq:opt:tree:prob:B:s}
	    \begin{split}
	       p_{\ttt,\la}(B,s) &:= J_{\ttt}w_{\ttt}^\la \exp\left(\langle \utheta_{\la}(B,s), \boeta_{\ttt}\rangle\right);\qquad p_{\ttt,\la}(B) := J_{\ttt}w_{\ttt}^\la \exp\left(\langle \utheta^{-}_{\la}(B), \boeta^{-}_{\ttt}\rangle\right);\\
	        p_{\ttt,\la,L}(B,s) &:= J_{\ttt}w_{\ttt}^\la \exp\left(\langle \utheta_{\la,L}(B,s), \boeta_{\ttt}\rangle\right);\qquad p_{\ttt,\la,L}(B) := J_{\ttt}w_{\ttt}^\la \exp\left(\langle \utheta^{-}_{\la,L}(B), \boeta^{-}_{\ttt}\rangle\right).
	    \end{split}
	    \end{equation}
	    By \eqref{eq:theta:opt:tree:prob}, $p_{\ttt, \lambda}(B,s)$ for $(B,s) \in \BB_{\la}(\delta_\circ)$ can deviate from $p^{\star}_{\ttt,\la}$ at most by a factor of $e^{\delta_{0}\sum_{x\in \partial \sqcup\{\circ,s\}}\eta_{\ttt}(x)}\leq e^{\delta_0(k+d+2)v(\ttt)}$. Note that the same is true for $p_{\ttt,\la}(B), p_{\ttt,\la,L}(B,s)$ and $p_{\ttt,\la,L}(B)$. Thus, we can consider $\delta_0$ small enough so that for $(B,s) \in \BB_{\la}(\delta_0)$ and $v\geq 1$,
	    \begin{equation*}
	    \max\left\{\sum_{\ttt:v(\ttt)=v}p_{\ttt,\lambda}(B,s),\sum_{\ttt:v(\ttt)=v}p_{\ttt,\lambda}(B), \sum_{\ttt:v(\ttt)=v}p_{\ttt,\lambda,L}(B,s),\sum_{\ttt:v(\ttt)=v}p_{\ttt,\lambda,L}(B)
	    \right\}\leq 2^{-kv/3}.
	    \end{equation*} 
	\end{itemize}
	\end{remark}
	In what follows, we denote by $\proj(B)$ the projection of $B\in \bDelta^{\textnormal{b}}$ onto $\bDelta^{\textnormal{b}}_n$:
	\begin{equation*}
	    \proj(B) \in \argmin_{B^\prime \in \bDelta^{\textnormal{b}}_n}\Vert B^\prime-B\Vert_1.
	\end{equation*}
	\begin{lemma}\label{lem:exist:free:energy:1stmo}
	For $(B,s) \in \BB_{\la}(\delta_0)$, define its truncated and untruncated free energy by
	\begin{equation}\label{eq:def:free:energy:B:s}
	    F_{\la,L}(B,s):= \Psi_\circ(B)-\Big\langle \utheta_{\la,L}(B,s), \left(\uh(B),s\right)\Big\rangle\quad\textnormal{and}\quad F_{\la}(B,s):= \Psi_\circ(B)-\Big\langle \utheta_{\la}(B,s), \left(\uh(B),s\right)\Big\rangle.
	\end{equation}
	 Then, the following holds for $(B,s) \in \BB_{\la}(\delta_0)$:
	\begin{equation}\label{eq:1stmo:B:s}
	\begin{split}
	&\E \bZ^{(L),\tr}_{\lambda,s}\left[\proj(B)\right] = \exp\Big(nF_{\la,L}(B,s)+O_{k}(\log n)\Big);\\
	&\E \bZ^{(L),\tr}_{\lambda,s}\left[\proj(B) ,(n_{\ttt})_{\ttt\in \FFF_{\tr}}\in \ee_{\frac{1}{4}}\right] = \exp\Big(nF_{\la,L}(B,s)+O_{k}(\log n)\Big).
	\end{split}
	\end{equation}
	The analog of \eqref{eq:1stmo:B:s} also holds for the untruncated model. Similarly, for $B\in \BB^{-}_{\la}(\delta_0)$, define
	\begin{equation}\label{eq:def:free:energy:B}
	    F_{\la,L}(B):= \Psi_\circ(B)-\Big\langle \utheta^{-}_{\la,L}(B), \uh(B)\Big\rangle\quad\textnormal{and}\quad F_{\la}(B):= \Psi_\circ(B)-\Big\langle \utheta^{-}_{\la}(B), \uh(B)\Big\rangle
	\end{equation}
	Then, the following equations hold for $B\in \BB^{-}_{\la}(\delta_0)$:
	\begin{equation}\label{eq:1stmo:B}
	\begin{split}
	    &\E \bZ^{(L),\tr}_{\lambda}\left[\proj(B)\right]= \exp\Big(nF_{\la,L}(B)+O_{k}(\log n)\Big);\\
	    &\E \bZ^{(L),\tr}_{\lambda}\left[\proj(B), (n_{\ttt})_{\ttt\in \FFF_{\tr}}\in \ee_{\frac{1}{4}}\right]= \exp\Big(nF_{\la,L}(B)+O_{k}(\log n)\Big).
	\end{split}
	\end{equation}
	The analog of \eqref{eq:1stmo:B} also holds for the untruncated model.
	\end{lemma}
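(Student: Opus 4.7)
The approach is to start from the product formula in Proposition \ref{prop:1stmo:B nt decomp}, specialize to $\fff = \ttt \in \FFF_{\tr}$ (so $\gamma(\ttt) = -1$, contributing a factor $n/e$ per tree), and evaluate the sum over free-tree profiles $(n_\ttt)\sim(B,s)$ (resp.\ $\sim B$) by tilting with the rescaling $\utheta_{\la,L}(B,s)$ (resp.\ $\utheta^{-}_{\la,L}(B)$) from Lemma \ref{lem:exist:theta}. Since $B \in \BB^{-}_\la(\delta_0)$ has $\kappa(B) = \Omega_k(1)$ by Remark \ref{rem:delta:smaller}, the Stirling error in \eqref{eq:prod:form:stir:1stmo} is negligible, and $p_\circ(n,B) = n^{O_k(1)}$ absorbs into the $O_k(\log n)$ error.

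Concretely, I would first write
\begin{equation*}
\E \bZ_{\la,s}^{(L),\tr}[\proj(B)] = \bigl(1+O_k(n^{-1})\bigr)\frac{e^{n\Psi_\circ(B)}}{p_\circ(n,B)} \sum_{(n_\ttt)\sim(B,s)} \prod_{\substack{\ttt \in \FFF_{\tr} \\ v(\ttt)\le L}} \frac{1}{n_\ttt !}\left(\frac{nJ_\ttt w_\ttt^\la}{e}\right)^{n_\ttt}.
\end{equation*}
Using the tilt identity $p_{\ttt,\la,L}(B,s) = J_\ttt w_\ttt^\la\exp(\langle\utheta_{\la,L}(B,s),\boeta_\ttt\rangle)$ from \eqref{eq:opt:tree:prob:B:s} together with the compatibility $\sum_\ttt n_\ttt \boeta_\ttt = (nh_\circ(B),\{nh_x(B)\}_{x\in\partial},ns+O(1))$ on the summation set, each term rewrites as
\begin{equation*}
(n/e)^{N}\exp\!\bigl(-n\langle\utheta_{\la,L}(B,s),(\uh(B),s)\rangle+O(1)\bigr)\prod_\ttt\frac{p_{\ttt,\la,L}(B,s)^{n_\ttt}}{n_\ttt !}
\end{equation*}
with $N := nh_\circ(B)$. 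Factoring out a multinomial coefficient, the residual sum equals $\tfrac{h_\circ(B)^N}{N!}\,\P_{\textnormal{tilt}}\bigl((n_\ttt)\sim(B,s)\bigr)$, where $\P_{\textnormal{tilt}}$ is the law of $N$ i.i.d.\ tree draws from $p_{\ttt,\la,L}(B,s)/h_\circ(B)$. By Lemma \ref{lem:exist:theta} the tilt is calibrated precisely so that $(\uh(B),s)$ is the expected value of $\sum_i\boeta_{Y_i}$; hence a local CLT on the $|\partial|+1$ nontrivial components (with the $s$-coordinate handled via its $O(1)$ window giving a $\Theta(n^{-1/2})$ factor by the density version) yields $\P_{\textnormal{tilt}} = n^{-O_k(1)}$. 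Stirling's formula applied to $N!$ cancels the $(n/e)^N h_\circ(B)^N$ prefactor up to polynomial corrections, producing $\exp(nF_{\la,L}(B,s)+O_k(\log n))$.

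The main obstacle is the untruncated case, where $\FFF_{\tr}$ is infinite so a naive local CLT on the tree-type alphabet does not apply; I would address this by noting that $\boeta_\ttt \in \R^{|\partial|+2}$ is fixed-finite-dimensional, and by Remark \ref{rem:delta:smaller} the tilted marginals satisfy $\sum_{v(\ttt)=v}p_{\ttt,\la}(B,s)/h_\circ(B)\le 2^{-kv/3}$, so $\boeta_Y$ has all exponential moments uniformly in $n$ and standard finite-dimensional local CLT with moment conditions applies. For extra safety one may truncate the sum at $v(\ttt)\le C\log n$; the tail costs $n^{-\omega(1)}$ by Chernoff. The same Chernoff bound forces $(n_\ttt)\in\ee_{1/4}$ with tilted probability $1-e^{-\Omega(n)}$, giving the restricted version in \eqref{eq:1stmo:B:s}. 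Formulas \eqref{eq:1stmo:B} follow from the identical argument upon replacing $\utheta_{\la,L}(B,s)$ by $\utheta^{-}_{\la,L}(B)$ and dropping the $s$-coordinate from the local CLT, and the truncated-vs-untruncated analogues are consistent because Lemma \ref{lem:conv:psi:theta} provides the uniform convergences $\utheta_{\la,L}\to\utheta_\la$ and $\psi_{\la,L}\to\psi_\la$ on the neighborhoods in question.
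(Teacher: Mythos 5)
Your proposal is correct and follows essentially the same route as the paper's proof: apply Proposition \ref{prop:1stmo:B nt decomp}, tilt the free-tree profile sum by $\utheta_{\la,L}(B,s)$ (resp.\ $\utheta^{-}_{\la,L}(B)$) so that the residual sum becomes a local-CLT probability for $nh_\circ(B)$ i.i.d.\ tilted tree draws calibrated to have mean $(\uh(B),s)/h_\circ(B)$, absorb the Stirling and $p_\circ(n,B)$ factors into $O_k(\log n)$, and handle the $\ee_{1/4}$ restriction and the infinite alphabet in the untruncated case via Chernoff bounds using the exponential decay of the tilted marginals from Remark \ref{rem:delta:smaller}. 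The only (immaterial) quantitative difference is that the paper's Chernoff estimate for the complement of $\ee_{1/4}$ is $\exp(-\Omega_k(n^{1/3}))$ rather than $e^{-\Omega(n)}$, which is still superpolynomially small and suffices.
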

	\begin{proof}
	We only prove \eqref{eq:1stmo:B:s} for the untruncated model since the other conclusions hold with similar argument. For simplicity, denote $B_n\equiv \proj(B)$. Note that $\kappa(B_n)\gtrsim_{k} 1$ holds for $B \in \BB_{\la}^{-}(\delta_0)$ (see Remark \ref{rem:delta:smaller}), where $\kappa(B_n)$ is defined in Proposition \ref{prop:1stmo:B nt decomp}. Thus, Proposition \ref{prop:1stmo:B nt decomp} and Lemma \ref{lem:exist:theta} show
	\begin{equation}\label{eq:1stmo:B:s:express:by:event}
	\begin{split}
 	    &\E \bZ^\tr_{\la,s}[B_n] 
	    \asymp_{k}\frac{\exp\big(nF_{\la}(B,s)\big)}{p_{\circ}(n,B)\left(nh_{\circ}(B_n)\right) !}\left(\frac{nh_{\circ}(B_n)}{e}\right)^{nh_{\circ}(B_n)}\P_{\utheta(B,s)}\Big(\AAA_{\uh(B_n),s}\Big);\\
	    &\E \bZ^\tr_{\la,s}\left[B_n, (n_{\ttt})_{\ttt\in \FFF_{\tr}}\in \ee_{\frac{1}{4}}\right] 
	    \asymp_{k}\frac{\exp\big(nF_{\la}(B,s)\big)}{p_{\circ}(n,B)\left(nh_{\circ}(B_n)\right) !}\left(\frac{nh_{\circ}(B_n)}{e}\right)^{nh_{\circ}(B_n)}\P_{\utheta(B,s)}\left(\AAA^{\ee}_{\uh(B_n),s}\right),
	\end{split}
	\end{equation}
	where $\P_{\utheta(B,s)}$ is taken with respect to i.i.d. random free trees $X_1,...,X_{nh_{\circ}(B_n)}\in \FFF$ with distribution
	\begin{equation}\label{eq:def:rescaling:prob:B:s}
	    \P_{\utheta(B,s)}(X_i=\ttt):=\frac{p_{\ttt,\la}(B,s)}{h_{\circ}(B)}=\frac{J_{\ttt}w_{\ttt}^\lambda \exp\left(\langle\utheta(B,s),\boeta_{\ttt}\rangle\right)}{h_{\circ}(B)},
	\end{equation}
	and the events $\AAA_{\uh(B_n),s}$ and $\AAA^{\ee}_{\uh(B_n),s}$ are defined by
	\begin{equation}\label{eq:def:lclt:event:B:s}
	\begin{split}
	     &\AAA_{\uh(B_n),s}:= \left\{\sum_{i=1}^{nh_{\circ}(B_n)}\boeta^{-}_{X_i}=n\uh(B_n)\quad\textnormal{and}\quad \sum_{i=1}^{nh_{\circ}(B_n)}\eta_{X_i}(s)\in [ns,ns+1)\right\}\\
	     &\AAA^{\ee}_{\uh(B_n),s}:= \AAA_{\uh(B),s}\bigcap \left\{\sum_{i=1}^{nh_{\circ}(B_n)}\one\{v(X_i)=v\}\leq n2^{-kv/4},~~~~~~\forall v\geq 1 \right\}
	\end{split}
	\end{equation}
	In \eqref{eq:1stmo:B:s:express:by:event}, observe that $\frac{1}{\left(nh_{\circ}(B_n)\right)!}\left(\frac{nh_\circ(B_n)}{e}\right)^{nh_\circ(B_n)}\asymp \left(nh_{\circ}(B_n)\right)^{-1/2}$ by Stirling's approximation. Also, the degree of the monomial $p_{\circ}(n,B)$, defined in \eqref{eq:def:Psi:circ:p:circ:1stmo}, is bounded as a function of $k$, so
	our goal \eqref{eq:1stmo:B:s} is proven if we show $ \P_{\utheta(B,s)}\left(\AAA_{\uh(B_n),s}^{\ee}\right)=n^{-\Omega_k(1)}$: first, observe that $\E_{\utheta(B,s)}[\boeta_{X_i}]=\left(h_\circ(B)\right)^{-1}\left(\uh(B),s\right)$ holds by the construction of $\utheta(B,s)$ in Lemma \ref{lem:exist:theta}. Thus, local \textsc{clt} implies that $ \P_{\utheta(B,s)}\left(\AAA_{\uh(B_n),s}\right)=\Omega_{k}(n^{-(|\partial|+1)/2})$ holds. Moreover, union bound shows
	\begin{equation*}
	\begin{split}
	    \P_{\utheta(B,s)}\left(\AAA^{\ee}_{\uh(B),s}\right)&\geq \P_{\utheta(B,s)}\left(\AAA_{\uh(B),s}\right)-\sum_{v\leq \frac{4\log n}{k\log 2}}\P_{\utheta(B,s)}\Big(\sum_{i=1}^{nh_{\circ}(B_n)}\one\{v(X_i)=v\}> n2^{-kv/4}\Big)\\
	    &\quad -\P_{\utheta(B,s)}\Big(\sum_{i=1}^{nh_{\circ}(B_n)}\one\Big\{v(X_i)>\frac{4\log n}{k\log 2}\Big\}\geq 1\Big).
	\end{split}
	\end{equation*}
	Recalling Remark \ref{rem:delta:smaller}, $\sum_{v(\ttt)=v}p_{\ttt,\la}(B,s)\leq 2^{-kv/3}, v\geq 1$ holds for all $(B,s)\in \BB_{\la}(\delta_0)$. Thus, by Chernoff bound for binomial random variables, we have
	\begin{equation*}
	\begin{split}
	    \sum_{v\leq \frac{4\log n}{k\log 2}}\P_{\utheta(B,s)}\Big(\sum_{i=1}^{nh_{\circ}(B_n)}\one\{v(X_i)=v\}>n2^{-kv/4}\Big)&= \exp\big(-\Omega_{k}(n^{1/3})\big);\\
	    \P_{\utheta(B,s)}\Big(\sum_{i=1}^{nh_{\circ}(B_n)}\one\Big\{v(X_i)>\frac{4\log n}{k\log 2}\Big\}\geq 1\Big)&=\exp\big(-\Omega_{k}(n^{1/3})\big).
	\end{split}
	\end{equation*}
	Therefore, we conclude that $\P_{\utheta(B,s)}\left(\AAA^{\ee}_{\uh(B_n),s}\right)=\Omega_{k}(n^{-(|\partial|+1)/2})$.
	\end{proof}
	Observe that the uniform convergence properties of $\utheta_{\la,L}(\cdot)$ and $\utheta_{\la,L}^{-}(\cdot)$, and the convergence of $(B^\star_{\la,L},s^\star_{\la,L})$ as $L\to\infty$, which are guaranteed by Proposition \ref{prop:BPcontraction:1stmo} and Lemma \ref{lem:conv:psi:theta}, imply that
	\begin{equation}\label{eq:conv:hess:L:free:energy:1stmo}
	\begin{split}
	    &\lim_{L\to\infty}\Vert \nabla^2_{B} F_{\lambda,L}(B^\star_{\lambda,L},s^\star_{\lambda,L})-\nabla^2_{B} F_{\lambda}(B^\star_{\lambda},s^\star_{\lambda})\Vert_{\textnormal{op}}=0,\\
	    &\lim_{L\to\infty}\Vert \nabla^2 F_{\lambda,L}(B^\star_{\lambda,L})-\nabla^2 F_{\lambda}(B^\star_{\lambda})\Vert_{\textnormal{op}}=0,
	\end{split}
	\end{equation}
	where $\nabla^2_{B}$ indicates that the Hessian is taken with respect to $B$. The proposition below plays a crucial role in computing the first moment and its proof is done by the \textit{resampling method}, which is presented in Section \ref{subsec:resampling}.
	\begin{prop}\label{prop:negdef}
		For $\lambda \in [0,1]$, the following holds.
		\begin{enumerate}
		\item The unique maximizer of $F_{\lambda}(B,s)$ in $(B,s)\in\BB_{\la}(\delta_0)$ is given by $(B^\star_\lambda, s^\star_\lambda)$. Similarly, the unique maximizer of $F_{\lambda}(B)$ in $B\in\BB^{-}_{\la}(\delta_0)$ is given by $B^\star_\lambda$. The analog for the truncated model also holds.
		\item There exists a constant $\beta=\beta(k)>0$, which does not depend on $L$, such that for large enough $L\geq L_0(\la,k,d)$,
		\begin{equation}\label{eq:freeenergy:negdef:1stmo:truncated}
		    \nabla^2_{B} F_{\la,L}(B^\star_{\la,L},s^\star_{\la,L}), \nabla^2 F_{\la,L}(B^\star_{\la,L})\prec -\beta I. 
		\end{equation}
		Hence, $\nabla^2_{B} F_{\la}(B^\star_\la,s^\star_\la), \nabla^2 F_{\la}(B^\star_\la)\prec 0$ holds by \eqref{eq:conv:hess:L:free:energy:1stmo}.
		\end{enumerate}
	\end{prop}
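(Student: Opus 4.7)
The plan is to deduce both parts from Lemma \ref{lem:exist:free:energy:1stmo} together with Proposition \ref{prop:maxim:1stmo}, upgrading in part (2) to a quantitative quadratic sharpening of Proposition \ref{prop:maxim:1stmo} extracted from the resampling argument of Section \ref{subsec:resampling}. I will write the argument for $(B^\star_{\la,L}, s^\star_{\la,L})$ in the truncated model; the cases $B^\star_{\la,L}$, $(B^\star_\la, s^\star_\la)$, and $B^\star_\la$ follow identically from the corresponding statements of those two results. For part (1), Lemma \ref{lem:exist:free:energy:1stmo} gives $\E\bZ^{(L),\tr}_{\la,s}[\proj(B)] = \exp(n F_{\la,L}(B,s) + O_k(\log n))$ uniformly on $\BB_\la(\delta_0)$, so summing over the polynomially-many points of $\BB_\la(\delta_0)\cap(\bDelta^{\textnormal{b}}_n\times n^{-1}\Z)$ yields $\E\bZ^{(L),\tr}_\la \leq n^{O_k(1)}\exp(n\sup_{\BB_\la(\delta_0)} F_{\la,L})$. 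If some $(B_0,s_0)\in\BB_\la(\delta_0)$ with $\|(B_0,s_0)-(B^\star_{\la,L},s^\star_{\la,L})\|_1\geq\delta$ satisfied $F_{\la,L}(B_0,s_0)\geq F_{\la,L}(B^\star_{\la,L},s^\star_{\la,L})-c(\delta)/2$, then a neighborhood of $(B_0,s_0)$ would contribute on the order of $\exp(nF_{\la,L}(B^\star_{\la,L},s^\star_{\la,L})-nc(\delta)/2+O_k(\log n))$, contradicting for large $n$ the exponential suppression $\leq e^{-c(\delta)n}\E\bZ^{(L),\tr}_\la$ provided by Proposition \ref{prop:maxim:1stmo}. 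Letting $n\to\infty$ then gives $F_{\la,L}(B^\star_{\la,L},s^\star_{\la,L})-F_{\la,L}(B,s)\geq c(\delta)$ on the $\delta$-complement of the optimum, yielding uniqueness of the maximizer and hence part (1).

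Part (1) together with the interior second-order optimality condition already implies $\nabla^2_B F_{\la,L}(B^\star_{\la,L},s^\star_{\la,L})\preceq 0$ and $\nabla^2 F_{\la,L}(B^\star_{\la,L})\preceq 0$. To upgrade to $\prec -\beta I$ with $\beta=\beta(k)$ independent of $L$, the plan is to establish a quadratic drop
\[
F_{\la,L}(B^\star_{\la,L},s^\star_{\la,L})-F_{\la,L}(B,s)\gtrsim_k \|(B,s)-(B^\star_{\la,L},s^\star_{\la,L})\|_2^2 \quad\text{on } \BB_\la(\delta_0),
\]
with implicit constant independent of $L$. Given this, the $L$-uniform $C^3$ regularity of $F_{\la,L}$ at the optimum supplied by Lemmas \ref{lem:exist:theta} and \ref{lem:conv:psi:theta} (together with $\nabla F_{\la,L}=0$ at the optimum, which follows from part (1)) allows a Taylor expansion that directly yields the claimed Hessian inequality. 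The transfer to $\nabla^2 F_\la\prec 0$ in the untruncated limit is then immediate from the Hessian convergence \eqref{eq:conv:hess:L:free:energy:1stmo}.

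The quadratic drop is where the resampling argument of Section \ref{subsec:resampling} enters. The idea will be that, starting from a profile $(B,s)\neq (B^\star_{\la,L},s^\star_{\la,L})$, one round of resampling on an $\eps$-fraction of the variables produces an expected improvement in the $\la$-tilted log-weight equal, to leading order, to $\eps$ times a strictly positive-definite quadratic form in the deviation; iterating $O(1/\eps)$ rounds sweeps the profile back to the BP fixed point and accumulates a total gain of order $\|(B,s)-(B^\star_{\la,L},s^\star_{\la,L})\|_2^2$. Strict positive-definiteness, together with its $L$-independence, will come from linearizing the belief-propagation recursion at $\dot{q}^\star_{\la,L}$ and invoking the $k^22^{-k}$ contraction of Proposition \ref{prop:BPcontraction:1stmo}, whose rate does not depend on $L$. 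The main obstacle will be executing this quantitative resampling in the presence of the infinite spin space $\Omega$: one must first use the a priori decay estimate of Proposition \ref{prop:1stmo:aprioriestimate} to restrict to free trees of size $O(\log n)$ at negligible cost so the resampled neighborhoods do not intersect, and then show that the resulting local quadratic form inherits the BP contraction uniformly in $L$. This is exactly the content of Section \ref{subsec:resampling}, and its output combined with the Taylor reduction above completes the proof.
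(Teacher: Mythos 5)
Your proposal is correct and follows essentially the same route as the paper: part (1) from Proposition \ref{prop:maxim:1stmo} together with Lemma \ref{lem:exist:free:energy:1stmo}, and part (2) by establishing an $L$-uniform quadratic drop of $F_{\la,L}$ away from the optimum via iterated resampling (the single-step gain is Proposition \ref{prop:crux:negdef:1stmo}, with $L$-uniformity coming from the BP-contraction-based quadratic lower bound on $\Xi_L$ in Lemma \ref{lem:quadratic:growth:Xi:1stmo}), then reading off the Hessian bound and passing to the untruncated limit via \eqref{eq:conv:hess:L:free:energy:1stmo}. One small descriptive inaccuracy worth noting: the paper's iteration does not ``sweep the profile back to the BP fixed point''; rather the recursion lets the admissible ball around $B$ expand at each step while the accumulated gain $a_t$ grows, and one stops once the ball has radius $D/2$ so that $\max$ over it is controlled by $F_{\la,L}(B^\star_{\la,L})$ in the limit $K\to\infty$ --- but the output (a quadratic drop with $L$-independent constant) is exactly as you describe.
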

	\begin{remark}\label{rem:truncated:negdef:SSZ}
	In \cite{ssz22}, they analyzed the truncated free energy $F_{\la,L}(H)$ of a given coloring profile $H$, explicitly defined in \eqref{eq:1stmo dec by H}. They introduced the \textit{resampling method} to show that for large enough $L\geq L_0(\lambda,k,d)$, the unique maximizer of $F_{\la,L}(H), H\in \bDelta$ is given by $H^\star_{\la,L}$ with $\nabla^2 F_{\la,L}(H^\star_{\la,L})\prec 0$ (cf. \cite[Proposition 3.4]{ssz22}). Here, note that our definition of $\bDelta$ includes the condition $\max\{\bar{H}(\ff),\bar{H}(\rr)\} \leq \frac{7}{2^k}$. Hence, Proposition \ref{prop:negdef} is a generalization of \cite{ssz22} to the untruncated model. In particular, the conclusion of (1) of Proposition \ref{prop:negdef} for the truncated model and a version of \eqref{eq:freeenergy:negdef:1stmo:truncated}, for which $\beta>0$ can depend on $L$, is a consequence of \cite{ssz22}, because $F_{\la,L}(B)$ and $F_{\la,L}(B,s)$ can be obtained by the maximum of $F_{\la,L}(H)$ under a linear constraint. That is,
	\begin{equation}\label{eq:corollary:freeenergy:SSZ:1stmo}
	   (B^\star_{\la,L},s^\star_{\la,L})=\textnormal{argmax}\big\{F_{\la,L}(B,s):B\in \bDelta^{\textnormal{b}},s\in [0,\log 2]\big\}\textnormal{ and } \nabla^2 F_{\la,L}(B^\star_{\la,L},s^\star_{\la,L})\prec 0.
	\end{equation}
	Note that in \eqref{eq:corollary:freeenergy:SSZ:1stmo}, we have assumed that $F_{\la,L}(B,s)\equiv \lim_{n\to\infty}\frac{1}{n}\log \E \bZ^{(L),\tr}_{\la,s}[B]$ for $B\in \bDelta^{\textnormal{b}},s\in [0,\log 2]$ is well-defined, which follows from \cite{ssz22}. However, the proof of \cite[Proposition 3.4]{ssz22} cannot be directly applied to the untruncated model and we need to develop new techniques to deal with the unbounded spin space as we demonstrate in Section \ref{subsec:resampling}.
	\end{remark}
	\begin{remark}\label{rem:freeenergy:s:star}
		Although we did not mention in the statement of Proposition \ref{prop:negdef}, it turns out that the maximal value of $F$ also corresponds to the 1\textsc{rsb} free energy functional from the physics computations. This was already established in \cite{ssz22} but only for the \textit{truncated} model. Based on the proof of the proposition presented in Section \ref{sec:1stmo}, the same computations done in \cite{ssz22} give the correspondence between the maximal value of $F$ and the 1\textsc{rsb} free energy functional.
	\end{remark}
	\subsection{Pinning down the leading constant}\label{subsec:1stmo:leading constant}
	In this subsection, we calculate the first moment up to a leading constant. For the purpose of proving Theorem \ref{thm1}, we only need to calculate the first moment up to a constant. However, in the companion paper \cite{nss2}, where we we prove that $(b)$ and $(c)$ of Theorem \ref{thm1} hold with probability $1-\eps$ for arbitrary $\eps>0$, we need the continuity properties of the leading constants as well as their existence. Hence, we will provide details for such in the subsection. We start by defining the crucial quantities which appeared in Theorem \ref{thm1}.
	\begin{defn}\label{def:la:s:c:star}
	    For $k\geq k_0$ and $\alpha \in (\alpha_{\textsf{cond}}, \alpha_{\textsf{sat}})$, define the constants $\la^\star \equiv \la^\star(\alpha, k), s^\star\equiv s^\star(\alpha,k)$, and $c^\star\equiv c^\star(\alpha,k)$ by
	    	\begin{equation}\label{eq:def:sstarandlambdastar}
	      \lambda^\star \equiv 
	      \sup\{\lambda\in [0,1]: F_{\la}(B^\star_\lambda) \geq \lambda s^\star_{\la} \},\quad 
	       s^\star\equiv s^\star_{\la^\star},\quad\textnormal{and}\quad c^\star\equiv (2\la^\star)^{-1},
	     \end{equation}
	 respectively. We remark that $s^\star = \textsf{f}^{1\textsf{rsb}}(\alpha)$ (see Remark \ref{rem:freeenergy:s:star}) and $\la^\star \in (0,1)$ holds for $\alpha \in (\alpha_{\textsf{cond}}, \alpha_{\textsf{sat}})$ (cf. \cite[Proposition 1.4]{ssz22}).
	\end{defn}
	\begin{thm}\label{thm:1stmo:constant} 
		Let $\lambda \in [0,1]$. For sufficiently large $L$, the constants
		\begin{equation}\label{eq:thm:1stmo:constant:la}
	{C}_1(\lambda) := \lim_{n\to\infty} \frac{ \E \bZ^{\tr}_{\lambda} }{ \exp \Big( n F_{\la}(B^\star_\lambda )\Big)} \quad\textnormal{and}\quad {C}_{1,L}(\lambda):=
		\lim_{n\to\infty} \frac{ \E \tZ_{\lambda} }{\exp \Big( n F_{\la,L}(B^\star_{\lambda,L})\Big)}
		\end{equation} 
		are well-defined and continuous in $[0,1]$. Furthermore, $\lim_{L\to\infty} {C}_{1,L} (\lambda) = {C}_1 (\lambda)$ holds.
	\end{thm}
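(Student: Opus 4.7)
The plan is to establish a precise Laplace-type asymptotic for $\E\bZ^{\tr}_{\lambda}$ by combining three ingredients: the exponential concentration around the optimal boundary profile (Proposition \ref{prop:maxim:1stmo}), the exact combinatorial identity for $\E\bZ_{\lambda}[B,\{n_{\fff}\}_{\fff \in \FFF}]$ (Proposition \ref{prop:1stmo:B nt decomp}), and the strict negative-definiteness of the Hessian at the maximizer (Proposition \ref{prop:negdef}). The a priori estimates of Proposition \ref{prop:1stmo:aprioriestimate} let me restrict once and for all to $(n_{\ttt})_{\ttt\in\FFF_{\tr}}\in\ee_{1/4}$, at multiplicative cost $1+O(n^{-2/3}\log n)$.

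First I would decompose $\E\bZ^{\tr}_{\lambda}=\sum_{B\in\bDelta^{\textnormal{b}}_n}\E\bZ^{\tr}_{\lambda}[B]$. By Proposition \ref{prop:maxim:1stmo}, the contribution from $B\notin\BB^{-}_{\lambda}(\delta_0)$ is exponentially smaller than $\exp(nF_{\lambda}(B^{\star}_\lambda))$ and may be discarded. For $B\in\BB^{-}_{\lambda}(\delta_0)$, I would sum out the free tree profile first. Summing the product identity of Proposition \ref{prop:1stmo:B nt decomp} over $(n_{\ttt})_{\ttt\in\FFF_{\tr}}\sim B$ and reorganising via the rescaling measure $\P_{\utheta^{-}(B)}$ of \eqref{eq:def:rescaling:prob}, as already done inside the proof of Lemma \ref{lem:exist:free:energy:1stmo}, gives the \emph{exact} identity
\begin{equation*}
\E\bZ^{\tr}_{\lambda}[B_n]=\bigl(1+O_k(1/n)\bigr)\,\frac{e^{n\Psi_\circ(B_n)}}{p_\circ(n,B_n)}\cdot\frac{\bigl(nh_\circ(B_n)\bigr)^{nh_\circ(B_n)}e^{-nh_\circ(B_n)}}{(nh_\circ(B_n))!}\,e^{-\langle\utheta^{-}(B),\uh(B_n)\rangle n}\,\P_{\utheta^{-}(B)}\!\bigl(\AAA_{\uh(B_n)}\bigr),
\end{equation*}
where $B_n=\proj(B)$. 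Stirling reduces the combinatorial factor to $(2\pi n h_\circ(B_n))^{-1/2}(1+O(1/n))$, and the local central limit theorem for the i.i.d.\ sum $\sum_i\boeta^{-}_{X_i}$ under $\P_{\utheta^{-}(B)}$ produces the sharp asymptotic $\P_{\utheta^{-}(B)}(\AAA_{\uh(B_n)})=C_{\textnormal{lclt}}(B)\,n^{-|\partial|/2}(1+o(1))$ with $C_{\textnormal{lclt}}(B)$ an explicit smooth function of $B$ (the density of the non-degenerate Gaussian with covariance $\nabla^2\psi^{-}_\lambda(\utheta^{-}(B))$ at the origin). The a priori tail bound from Proposition \ref{prop:1stmo:aprioriestimate}(1), together with Chernoff bounds on $\sum_i\one\{v(X_i)=v\}$ as in the proof of Lemma \ref{lem:opt:tree:decay}, shows that restricting to $(n_{\ttt})\in\ee_{1/4}$ modifies this leading-order asymptotic only by a factor $1+e^{-\Omega_k(n^{1/3})}$, which is irrelevant at the level of the constant.

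Next I would perform a Laplace expansion in $B$. Writing $B=B^\star_\lambda+n^{-1/2}u$ with $u$ in the tangent space of $\bDelta^{\textnormal{b}}$ at $B^\star_\lambda$ (let $D$ denote its dimension), Taylor expansion gives
\begin{equation*}
nF_\lambda(B)=nF_\lambda(B^\star_\lambda)+\tfrac12\,u^{T}H_\lambda u+O(|u|^3/\sqrt n),\qquad H_\lambda:=\nabla^2F_\lambda(B^\star_\lambda)\prec 0,
\end{equation*}
by Proposition \ref{prop:negdef}. Combining with the smoothness of $B\mapsto p_\circ(n,B)^{-1}C_{\textnormal{lclt}}(B)h_\circ(B)^{-1/2}$ and the fact that $\bDelta^{\textnormal{b}}_n$ is a lattice of mesh $O(1/n)$ inside $\bDelta^{\textnormal{b}}$, the Riemann-sum approximation
\begin{equation*}
\sum_{B_n\in\bDelta^{\textnormal{b}}_n\cap\BB^{-}_\lambda(\delta_0)}e^{nF_\lambda(B_n)-nF_\lambda(B^\star_\lambda)}\cdot g(B_n)\;\longrightarrow\;g(B^\star_\lambda)\cdot\kappa\cdot\int_{\R^D}e^{\tfrac12 u^T H_\lambda u}\,du
\end{equation*}
converts the sum into a convergent Gaussian integral, where $\kappa$ is an explicit lattice-density constant determined by the linear compatibility relations \eqref{eq:compatibility:coloringprofile}. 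The cubic remainder is absorbed by $\nabla^2F_\lambda(B^\star_\lambda)\prec 0$ combined with the restriction to $\BB^{-}_\lambda(\delta_0)$; tails with $|u|\gtrsim n^{1/2-\epsilon}$ are swallowed by the Gaussian. Putting the three polynomial prefactors together, the powers of $n$ cancel (this is forced by the theorem: the limit is finite and positive), and the resulting constant is
\begin{equation*}
C_1(\lambda)=\kappa\,\frac{C_{\textnormal{lclt}}(B^\star_\lambda)}{p_\circ(1,B^\star_\lambda)\,\sqrt{2\pi h_\circ(B^\star_\lambda)}}\cdot\frac{(2\pi)^{D/2}}{\sqrt{\det(-H_\lambda)}}.
\end{equation*}

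For continuity in $\lambda$, every object in the displayed formula is continuous in $\lambda\in[0,1]$: the BP fixed point $\dot q^\star_\lambda$ is continuous by the uniform contraction \eqref{eq:BPcontraction:1stmo}, whence $B^\star_\lambda$, $\utheta^\star_\lambda$, $H_\lambda$ and the tree-level covariance $\nabla^2\psi^{-}_\lambda(\utheta^{\star,-}_\lambda)$ all vary continuously (note $\dot q^\star_\lambda$ has full support in the untruncated model, by Proposition \ref{prop:BPcontraction:1stmo}, keeping all log-quantities bounded). The same scheme, carried out with $\FFF_{\tr}$ replaced by $\{\ttt:v(\ttt)\le L\}$ and $\utheta^{-}_\lambda$ by $\utheta^{-}_{\lambda,L}$, produces $C_{1,L}(\lambda)$. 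Convergence $C_{1,L}(\lambda)\to C_1(\lambda)$ then follows from Lemma \ref{lem:conv:psi:theta}, which furnishes the uniform $C^2$-convergence of $\psi^{-}_{\lambda,L}\to\psi^{-}_\lambda$, hence of $\utheta^{-}_{\lambda,L}\to\utheta^{-}_\lambda$ and of $H_{\lambda,L}\to H_\lambda$, together with the tail bound $\sum_{v(\ttt)>L}p^\star_{\ttt,\lambda}\le\sum_{v>L}2^{-kv/2}\to 0$ from Lemma \ref{lem:opt:tree:decay}, controlling the local-CLT constant.

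I expect the technical bottleneck to be the bookkeeping in the Laplace/local-CLT combination: the tangent dimension $D$, the lattice-density factor $\kappa$, and the exponent $\phi_1(B^\star_\lambda)$ of $n$ in $p_\circ(n,B)$ must conspire to cancel all polynomial powers of $n$. This cancellation is guaranteed \emph{a posteriori} by the fact that $F_\lambda(B^\star_\lambda)$ is the correct exponential rate (so the limit must be of finite, positive order); the careful step is to verify this by a dimension count tying $|\textnormal{supp}\,\dot B^\star_\lambda|,|\textnormal{supp}\,\hat B^\star_\lambda|,|\textnormal{supp}\,\bar B^\star_\lambda|$ to the rank of the compatibility relations, and to organise the proof so that the same argument works uniformly in $\lambda$ and in $L$ (after $L$ is chosen large enough that $\dot q^\star_{\lambda,L}$ has full support and the proof of Lemma \ref{lem:exist:theta} applies).
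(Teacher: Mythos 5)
Your proposal follows essentially the same route as the paper's proof: a priori restriction to the exponential-decay profiles and a small $L^1$-ball around $B^\star_\lambda$ (Propositions \ref{prop:1stmo:aprioriestimate}, \ref{prop:maxim:1stmo}, \ref{prop:negdef}); the exact decomposition of $\E\bZ^{\tr}_\lambda[B]$ via Proposition \ref{prop:1stmo:B nt decomp} and the rescaling measure $\P_{\utheta^{-}(B)}$; Stirling and the local CLT to extract the sharp polynomial prefactor; and a Gaussian Laplace expansion in $B$ using $\nabla^2 F_\lambda(B^\star_\lambda)\prec 0$. Continuity in $\lambda$ and $C_{1,L}\to C_1$ via Lemma \ref{lem:conv:psi:theta} and \eqref{eq:conv:hess:L:free:energy:1stmo} also match the paper.

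The one place where you wave your hands and the paper does not is the cancellation of the powers of $n$. You write that the cancellation is "forced by the theorem: the limit is finite and positive," which is circular if taken as justification; you then concede the real work is a dimension count. The paper carries out exactly this count: after assembling the Stirling prefactor from $p_\circ(n,B)$ and $(nh_\circ(B))!$, which contributes $(2\pi n)^{(|\hat{\partial}^\bullet|-|\textnormal{supp}\hat{v}|-|\textnormal{supp}\dot{I}|)/2}$, and the local CLT factor $(2\pi n)^{-|\partial|/2}$, it observes that the total exponent $(|\hat{\partial}^\bullet|-|\textnormal{supp}\hat{v}|-|\textnormal{supp}\dot{I}|-|\partial|)/2$ equals $\dim(\bDelta^{\textnormal{b}})/2$ exactly, so the subsequent Gaussian integration in $B$ (over the lattice $\bDelta^{\textnormal{b}}_n$) absorbs it with no residue. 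Your plan to "verify this by a dimension count tying $|\textnormal{supp}\,\dot B^\star_\lambda|$, $|\textnormal{supp}\,\hat B^\star_\lambda|$, $|\textnormal{supp}\,\bar B^\star_\lambda|$ to the rank of the compatibility relations" is precisely the right step, and once carried out it also shows your lattice-density factor $\kappa$ is accounted for inside the $\dim(\bDelta^{\textnormal{b}})$ bookkeeping rather than appearing as a separate constant. So there is no logical gap in the strategy, but the one line you defer is the line the paper's proof actually turns on.
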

	\begin{proof}
	As a consequence of Proposition \ref{prop:1stmo:aprioriestimate}, \ref{prop:maxim:1stmo} and \ref{prop:negdef}, we have that $\E \bZ^\tr_{\la}[||B-B^\star_{\la}||_1\leq \frac{\log n}{\sqrt{n}}] \geq \big(1-O_{k}(n^{-2}\log n)\big)\E\bZ^\tr_\la$ holds, so we restrict our attention to the case $||B-B^\star_\la||_1 \leq \frac{\log n}{\sqrt{n}}$. Note that $B^\star_\la$ has full support with finite dimension, so any $B\in \bDelta^{\textnormal{b}}$ with $||B-B^\star_\la||_1 \leq \frac{\log n}{\sqrt{n}}$ has full support with $\kappa(B)$ bounded away from zero, for large enough $n$. Hence, Proposition \ref{prop:1stmo:B nt decomp} shows
	\begin{equation}\label{eq:1stmo:B:expansion}
	    \E \bZ^\tr_\la[B] =\left(1+O_k(n^{-1})\right)\frac{\exp\left\{n F_\la(B)\right\}}{p_0(n,B)}\frac{1}{\left(nh_\circ(B)\right)!}\left(\frac{nh_\circ(B)}{e}\right)^{nh_\circ(B)}\P_{\utheta^{-}(B)}(\AAA_{\uh(B)}),
	\end{equation}
	where $\AAA_{\uh(B)}\equiv \left\{\sum_{i=1}^{nh_{\circ}(B_n)}\boeta^{-}_{X_i}=n\uh(B_n)\right\}$ and $X_1,...,X_{nh_{\circ}(B)}\in \FFF_{\tr}$ are i.i.d with distribution $\P_{\utheta^{-}(B)}(X_i=\ttt)=\big(h_\circ(B)\big)^{-1}p_{\ttt,\la}(B)$ (cf. \eqref{eq:def:rescaling:prob}). To this end, we now aim to sum \eqref{eq:1stmo:B:expansion} over $||B-B^\star_{\la}||_1\leq \frac{\log n}{\sqrt{n}}$. Henceforth, we write $g(n,B)=o_n(1)$ whenever 
	$\lim_{n\to\infty}\sup_{||B-B^\star_{\la}||_1\leq \frac{\log n}{\sqrt{n}}}\big| g(n,B) \big| =0$. By definition of $p_{\circ}(n,B)$ in \eqref{eq:def:Psi:circ:p:circ:1stmo} and Stirling's approximation in $\left(nh_\circ(B)\right)!$, we have
	\begin{equation}\label{eq:1stmo:stirling:technical-1}
	\begin{split}
	    \frac{1}{p_\circ(n,B)}\frac{1}{\left(nh_\circ(B)\right)!}\left(\frac{nh_\circ(B)}{e}\right)^{nh_\circ(B)} =
	    &\left(1+o_n(1)\right)\left(\frac{\prod_{\sigma}\bar{B}^\star_{\la}(\sigma)}{h^\star_{\la}(\circ)\prod_{\sig}\dot{B}^\star_{\la}(\sig)\prod_{\sig}\hat{B}^\star_{\la}(\sig)}\right)^{1/2}\\
	    &\times (2\pi n )^{(|\hat{\partial}^\bullet|-|\textnormal{supp} \hat{v}|-|\textnormal{supp} \dot{I}|)/2}d^{(|\hat{\partial}^\bullet|-|\textnormal{supp}\hat{v}|)/2}k^{(|\textnormal{supp}\hat{v}|-1)/2}.
	\end{split}
	\end{equation}
	$\E_{\utheta^{-}(B)}[\boeta^{-}_{X_i}]=\frac{\uh(B)}{h_{\circ}(B)}$ holds by construction of $\utheta^{-}(B)=\utheta^{-}_{\la}(B)$ in Lemma \ref{lem:exist:theta}, so local \textsc{clt} shows
	\begin{equation}\label{eq:B:1stmo:local:CLT}
	    \P_{\utheta^{-}(B)}\left(\AAA_{\uh(B)}\right)=\left(1+o_n(1)\right)(2\pi n)^{-|\partial|/2} \det\left(\big(h^\star_{\la}(\circ)\big)^{-2}\left[\nabla^2 \psi^{-}_{\la}(\utheta^{\star,-}_\la)\right]_{-\circ}\right)^{-1/2},
	\end{equation}
	where $\left[\nabla^2 \psi^{-}_{\la}(\utheta^{\star,-}_\la)\right]_{-\circ}$ denotes the $|\partial|\times|\partial|$ submatrix obtained from $\nabla^2 \psi^{-}_{\la}(\utheta^{\star,-}_\la)$ by deleting the row and column indexed with $\circ$. Hence, plugging \eqref{eq:1stmo:stirling:technical-1} and \eqref{eq:B:1stmo:local:CLT} into \eqref{eq:1stmo:B:expansion} shows
	\begin{equation}\label{eq:1stmo:B:intermediate}
	\begin{split}
	     \E \bZ^\tr_\la[B]
	     &=\left(1+o_n(1)\right)\left(\frac{\prod_{\sigma}\bar{B}^\star_{\la}(\sigma)}{h^\star_{\la}(\circ)\prod_{\sig}\dot{B}^\star_{\la}(\sig)\prod_{\sig}\hat{B}^\star_{\la}(\sig)}\right)^{1/2} d^{(|\hat{\partial}^\bullet|-|\textnormal{supp}\hat{v}|)/2}k^{(|\textnormal{supp}\hat{v}|-1)/2}\\
	     &\times (2\pi n )^{(|\hat{\partial}^\bullet|-|\textnormal{supp} \hat{v}|-|\textnormal{supp} \dot{I}|-|\partial|)/2}\det\left(\big(h^\star_{\la}(\circ)\big)^{-2}\left[\nabla^2 \psi^{-}_{\la}(\utheta^{\star,-}_\la)\right]_{-\circ}\right)^{-1/2}\exp\left(nF_\la(B)\right),
	\end{split}
	\end{equation}
	The exponent above $2\pi n$ in the equation above is $\frac{|\hat{\partial}^\bullet|-|\textnormal{supp} \hat{v}|-|\textnormal{supp} \dot{I}|-|\partial|}{2}=\frac{\textnormal{dim}(\bDelta^{\textnormal{b}})}{2}$, so we can sum
	\begin{equation}\label{eq:sum:free:energy:1stmo}
	\begin{split}
	\sum_{||B-B^\star_\la||_1 \leq \frac{\log n}{\sqrt{n}}}(2\pi n)^{\textnormal{dim}(\bDelta^{\textnormal{b}})/2}\exp\left(nF_\la(B)\right)=\det\left(-\nabla^2 F_\la(B^\star_\la)\right)^{-1/2}\left(1+o_n(1)\right),
	\end{split}
	\end{equation}
	where we used Gaussian integration in the last equality and $\det\left(-\nabla^2 F_\la(B^\star_\la)\right)\neq 0$ holds by Proposition \ref{prop:negdef}. Therefore, by \eqref{eq:1stmo:B:intermediate} and \eqref{eq:sum:free:energy:1stmo}, the first part of our goal \eqref{eq:thm:1stmo:constant:la} holds with constant
	\begin{equation*}
	\begin{split}
	    C_1(\la) := &\left(\frac{\prod_{\sigma}\bar{B}^\star_{\la}(\sigma)}{h^\star_{\la}(\circ)\prod_{\sig}\dot{B}^\star_{\la}(\sig)\prod_{\sig}\hat{B}^\star_{\la}(\sig)}\right)^{1/2} d^{(|\hat{\partial}^\bullet|-|\textnormal{supp}\hat{v}|)/2}k^{(|\textnormal{supp}\hat{v}|-1)/2}\\
	    &\times \det\left(\big(h^\star_{\la}(\circ)\big)^{-2}\left[\nabla^2 \psi^{-}_{\la}(\utheta^{\star,-}_\la)\right]_{-\circ}\right)^{-1/2}\det\left(-\nabla^2 F_\la(B^\star_\la)\right)^{-1/2}.
	\end{split}
	\end{equation*}
	The same calculations work for the truncated model holds, so the second part of \eqref{eq:thm:1stmo:constant:la} holds with constant 
	\begin{equation*}
	\begin{split}
	    C_{1,L}(\la) := &\left(\frac{\prod_{\sigma}\bar{B}^\star_{\la,L}(\sigma)}{h^\star_{\la,L}(\circ)\prod_{\sig}\dot{B}^\star_{\la,L}(\sig)\prod_{\sig}\hat{B}^\star_{\la,L}(\sig)}\right)^{1/2} d^{(|\hat{\partial}^\bullet|-|\textnormal{supp}\hat{v}|)/2}k^{(|\textnormal{supp}\hat{v}|-1)/2}\\
	    &\times \det\left(\big(h^\star_{\la,L}(\circ)\big)^{-2}\left[\nabla^2 \psi^{-}_{\la,L}(\utheta^{\star,-}_{\la,L})\right]_{-\circ}\right)^{-1/2}\det\left(-\nabla^2 F_{\la,L}(B^\star_{\la,L})\right)^{-1/2}.
	\end{split}
	\end{equation*}
	The continuity of $C_1(\la),C_1(\la,L)$ in $\la \in [0,\la^\star]$ is straightforward from their explicit forms in the equations above. Moreover, $\lim_{L\to\infty} C_{1,L}(\la)=C_1(\la)$ is guaranteed by Lemma \ref{lem:conv:psi:theta} and \eqref{eq:conv:hess:L:free:energy:1stmo}.
	\end{proof} 
	
	\begin{thm}\label{thm:1stmo:constant:s} 
		Let $(s_n)$ be a converging sequence whose limit is  $s^\star$, satisfying $|s_n-s^\star|\leq n^{-2/3}$. Then  the constant
		\begin{equation}\label{eq:def:C:1stmo}
		C_1(\lambda^\star, s^\star) := \lim_{n\to\infty} \frac{\sqrt{n} \E \bZ^{\tr}_{\lambda^\star, s_n} }{\exp\left(nF_{\la^\star}(B^\star_{\lambda^\star})\right) }
		\end{equation}
		is well-defined regardless of the specific choice of $s_n$.
	\end{thm}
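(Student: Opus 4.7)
The strategy parallels the proof of Theorem \ref{thm:1stmo:constant}; the key novelty is handling the additional $O(1/n)$-window constraint on $s$, which accounts for the extra $\sqrt{n}$ factor in \eqref{eq:def:C:1stmo}. First, Propositions \ref{prop:1stmo:aprioriestimate}, \ref{prop:maxim:1stmo} and \ref{prop:negdef} combine exactly as in the proof of Theorem \ref{thm:1stmo:constant} to reduce the sum to $B \in \bDelta^{\textnormal{b}}_n$ with $\|B - B^\star_{\lambda^\star}\|_1 \leq \frac{\log n}{\sqrt n}$, the contribution from the complement being of negligible relative order. On this window Proposition \ref{prop:1stmo:B nt decomp} together with Lemma \ref{lem:exist:free:energy:1stmo} (in its $(B,s)$ form) yields, uniformly in such $B$,
\begin{equation*}
\E \bZ^\tr_{\lambda^\star, s_n}[B]
= (1+o_n(1))\, \frac{\exp\{nF_{\lambda^\star}(B, s_n)\}}{p_\circ(n,B)}\, \frac{1}{(nh_\circ(B))!}\Big(\tfrac{nh_\circ(B)}{e}\Big)^{nh_\circ(B)}\, \P_{\utheta_{\lambda^\star}(B,s_n)}\!\big(\AAA_{\uh(B), s_n}\big),
\end{equation*}
with the event $\AAA_{\uh(B), s_n}$ as in \eqref{eq:def:lclt:event:B:s}.

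Next I will evaluate $\P_{\utheta_{\lambda^\star}(B,s_n)}(\AAA_{\uh(B), s_n})$ by a $(|\partial|+1)$-dimensional local central limit theorem applied to the i.i.d.\ sum $\sum_i (\boeta^-_{X_i}, \eta_{X_i}(s))$. Lemma \ref{lem:opt:tree:decay} together with Remark \ref{rem:delta:smaller} provides a geometric tail on $v(X_i)$ that is uniform over $(B,s) \in \BB_{\lambda^\star}(\delta_0)$, giving in turn uniform control of all moments of $(\boeta^-_{X_i}, \eta_{X_i}(s))$ and a uniformly non-degenerate covariance; the local CLT thus delivers
\begin{equation*}
\P_{\utheta_{\lambda^\star}(B,s_n)}\!\big(\AAA_{\uh(B), s_n}\big)
= (1+o_n(1))\,(2\pi n)^{-(|\partial|+1)/2}\, \det\!\Big(h^{\star -2}_{\lambda^\star}(\circ)\,\big[\nabla^2 \psi_{\lambda^\star}(\utheta^\star_{\lambda^\star})\big]_{-\circ}\Big)^{-1/2}.
\end{equation*}
Compared with the analogous step in the proof of Theorem \ref{thm:1stmo:constant}, this produces an extra factor $n^{-1/2}$ precisely because we are fixing $s$ in a $1/n$-window rather than summing it out, and this factor will cancel the $\sqrt n$ in \eqref{eq:def:C:1stmo}.

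Finally I will Taylor-expand $F_{\lambda^\star}(B, s_n)$ around $(B^\star_{\lambda^\star}, s^\star)$. The linear term vanishes by Proposition \ref{prop:negdef}(1); the $s_n$-dependent cross contributions are bounded after multiplication by $n$ by $n|s_n - s^\star|^2 + n|s_n - s^\star|\|B - B^\star_{\lambda^\star}\|_1 = o(1)$, using $|s_n - s^\star| \leq n^{-2/3}$ and $\|B - B^\star_{\lambda^\star}\|_1 \leq \frac{\log n}{\sqrt n}$. Combining with the Stirling expansion of the prefactors exactly as in \eqref{eq:1stmo:stirling:technical-1} and summing over $B$ in the $\frac{\log n}{\sqrt n}$-window reduces the sum to a Gaussian integral governed by $\nabla^2_B F_{\lambda^\star}(B^\star_{\lambda^\star}, s^\star)$, which is strictly negative definite by Proposition \ref{prop:negdef}(2); this integral therefore converges to $\det(-\nabla^2_B F_{\lambda^\star}(B^\star_{\lambda^\star}, s^\star))^{-1/2}$. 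The resulting explicit constant $C_1(\lambda^\star, s^\star)$ depends only on the limiting data $(B^\star_{\lambda^\star}, s^\star)$ and the Hessians of $F_{\lambda^\star}$ and $\psi_{\lambda^\star}$ at that point, so it is independent of the specific choice of the sequence $s_n$. The main technical point is the uniformity of the local CLT across the $\frac{\log n}{\sqrt n}$-neighborhood of $B^\star_{\lambda^\star}$, which rests on the uniform exponential tail of the $X_i$'s provided by Remark \ref{rem:delta:smaller}.
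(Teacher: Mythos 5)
Your proposal is correct and follows essentially the same route as the paper's proof: reduction to the $\frac{\log n}{\sqrt n}$-window around $B^\star_{\lambda^\star}$, the expansion from Proposition \ref{prop:1stmo:B nt decomp}, the $(|\partial|+1)$-dimensional local CLT supplying the extra $n^{-1/2}$, the Taylor expansion of $F_{\lambda^\star}(B,s_n)$ using $|s_n-s^\star|\leq n^{-2/3}$ to kill the $s$-dependence, and the final Gaussian integration via Proposition \ref{prop:negdef}. No gaps.
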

	\begin{proof}
	We proceed in the same manner as in the proof of Theorem \ref{thm:1stmo:constant}. For simplicity, we abbreviate $B^\star\equiv B^\star_{\la^\star}$. For $||B-B^\star||_1\leq \frac{\log n}{\sqrt{n}}$, Proposition \ref{prop:1stmo:B nt decomp} shows
	\begin{equation}\label{eq:B:s:1stmo:expansion}
	\begin{split}
	    &\E \bZ^\tr_{\la^\star, s_n}[B]=\left(1+O_k(n^{-1})\right)\frac{\exp\left(n F_{\la^\star}(B,s_n)\right)}{p_{\circ}(n,B)}\frac{1}{\left(nh_\circ(B)\right)!}\left(\frac{nh_\circ(B)}{e}\right)^{nh_\circ(B)}\P_{\utheta(B,s_n)}\left(\AAA_{\uh(B),s_n}\right),\textnormal{ for}\\
	    &\AAA_{\uh(B),s}:= \left\{\sum_{i=1}^{nh_{\circ}(B)}\boeta^{-}_{X_i}=n\uh(B)\quad\textnormal{and}\quad \sum_{i=1}^{nh_{\circ}(B)}\eta_{X_i}(s)\in [ns_n,ns_n+1)\right\},
	\end{split}
	\end{equation}
	where $X_1,...,X_{nh_{\circ}(B)}\in \FFF_{\tr}$ are i.i.d with distribution $\P_{\utheta(B,s_n)}(X_i=\ttt)=\big(h_\circ(B)\big)^{-1}p_{\ttt,\la}(B,s_n)$ (cf. \eqref{eq:def:rescaling:prob:B:s}). By the construction of $\utheta_{\la}(B,s)$ in Lemma \ref{lem:exist:theta}, $\E_{\utheta_{\la}(B,s_n)}[\boeta_{X_i}]=\left(h_\circ(B)\right)^{-1}\left(\uh(B),s_n\right)$ holds, so local \textsc{clt} shows (e.g. see Theorem 3.1 of \cite{Borokov17})
	\begin{equation}\label{eq:B:s:1stmo:local:CLT}
	    \P_{\utheta(B,s_n)}\left(\AAA_{\uh(B),s_n}\right)=\left(1+o_n(1)\right)(2\pi n)^{-(|\partial|+1)/2}\det\left((h^\star_{\la^\star,\circ})^{-2}\left[\nabla^2 \psi_{\la^\star}(\utheta^{\star}_{\la^\star})\right]_{-\circ}\right)^{-1/2},
	\end{equation}
	where $[A]_{-\circ}$ denotes the matrix obtained from $A$ by deleting the row and column indexed with $\circ$, and we write $g(n,B,s_n)=o_n(1)$ whenever $\lim_{n\to\infty}\sup_{||B-B^\star_{\la}||_1\leq \frac{\log n}{\sqrt{n}}}\big| g(n,B,s_n) \big| =0$ holds. We plug \eqref{eq:B:s:1stmo:local:CLT} into \eqref{eq:B:s:1stmo:expansion} and use the Stirling's approximation as done in \eqref{eq:1stmo:stirling:technical-1} to have
	\begin{equation}\label{eq:1stmo:B:s:intermediate}
	\begin{split}
	     \E \bZ^\tr_{\la^\star,s_n}[B]
	     &=\left(1+o_n(1)\right)\left(\frac{\prod_{\sigma}\bar{B}^\star(\sigma)}{h^\star_{\la^\star}(\circ)\prod_{\sig}\dot{B^\star}(\sig)\prod_{\sig}\hat{B}^\star(\sig)}\right)^{1/2} d^{(|\hat{\partial}^\bullet|-|\textnormal{supp}\hat{v}|)/2}k^{(|\textnormal{supp}\hat{v}|-1)/2}\\
	     &\times (2\pi n )^{(|\hat{\partial}^\bullet|-|\textnormal{supp} \hat{v}|-|\textnormal{supp} \dot{I}|-|\partial|-1)/2}\det\left(\big(h^\star_{\la^\star}(\circ)\big)^{-2}\left[\nabla^2 \psi_{\la^\star}(\utheta^{\star}_{\la^\star})\right]_{-\circ}\right)^{-1/2}\exp\left(nF_{\la^\star}(B,s_n)\right).
	\end{split}
	\end{equation}
	Having Proposition \ref{prop:negdef} in mind, we Taylor expand $F_{\la^\star}(B,s_n)$ around $(B^\star,s^\star)$ to see
	\begin{equation*}
	\begin{split}
	    nF_{\la^\star}(B,s_n)
	    &=nF_{\la^\star}(B^\star)+\frac{n}{2}\Big\langle (B-B^\star, s_n-s^\star), \nabla^2 F_{\la^\star}(B^\star,s^\star)(B-B^\star, s_n-s^\star)\Big\rangle +o_n(1)\\
	    &=nF_{\la^\star}(B^\star)+\frac{n}{2}\Big\langle B-B^\star, \nabla^2_{B} F_{\la^\star}(B^\star,s^\star)(B-B^\star)\Big\rangle +o_n(1),
	\end{split}
	\end{equation*}
	where the last equality is due to $|s_n-s^\star|\leq n^{-2/3}$. Thus, we can sum \eqref{eq:1stmo:B:s:intermediate} over $||B-B^\star||_1\leq \frac{\log n}{\sqrt{n}}$ by using Gaussian integration and Proposition \ref{prop:negdef}. Therefore, our goal \eqref{eq:def:C:1stmo} holds with constant
	\begin{equation*}
	 \begin{split}
	    C_1(\la^\star,s^\star) := &\left(\frac{\prod_{\sigma}\bar{B}^\star(\sigma)}{h^\star_{\la^\star}(\circ)\prod_{\sig}\dot{B}^\star(\sig)\prod_{\sig}\hat{B}^\star(\sig)}\right)^{1/2} d^{(|\hat{\partial}^\bullet|-|\textnormal{supp}\hat{v}|)/2}k^{(|\textnormal{supp}\hat{v}|-1)/2}\\
	    &\times (2\pi)^{-1/2}\det\left(\big(h^\star_{\la^\star}(\circ)\big)^{-2}\left[\nabla^2 \psi_{\la^\star}(\utheta^{\star}_{\la^\star})\right]_{-\circ}\right)^{-1/2}\det\left(-\nabla^2_{B} F_{\la^\star}(B^\star,s^\star)\right)^{-1/2}.
	\end{split}
	\end{equation*}
	\end{proof}
	\begin{prop}\label{prop:ratio:uni:1stmo}
		Let $(s_n)$ be a converging sequence whose limit is  $s^\star$, satisfying $|s_n-s^\star|\leq n^{-2/3}$. Then  the constant
		\begin{equation}\label{eq:def:beta:1stmo}
		\beta_1(\lambda^\star, s^\star) := \lim_{n\to\infty} \frac{\E \bZ_{\lambda^\star, s_n}  }{\E \bZ^\tr_{\la^\star, s_n} }
		\end{equation}
		is well-defined regardless of the specific choice of $s_n$.
	\end{prop}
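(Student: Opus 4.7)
The plan is to show that the extra contribution from frozen configurations containing free cycles can be captured by a convergent Poisson-type sum over unicyclic free components, whose value is independent of the sequence $(s_n)$.

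\textbf{Step 1 (A priori reduction to bounded unicyclic configurations).} First apply Proposition~\ref{prop:1stmo:aprioriestimate}(4): with $\gamma=1$ this bounds the contribution of configurations containing a multi-cyclic free component by $O_k(\log^3 n/n)\,\E\bZ^{\tr}_{\la^\star}$; with $\gamma=0$ and $r=R$ it bounds the contribution of configurations with $\ge R$ cyclic components by $\tfrac{1}{R!}(Ck^2/2^k)^R\,\E\bZ^{\tr}_{\la^\star}$. Since $\E\bZ^{\tr}_{\la^\star,s_n}\asymp n^{-1/2}\,\E\bZ^{\tr}_{\la^\star}$ by Theorems~\ref{thm:1stmo:constant} and~\ref{thm:1stmo:constant:s}, and the tail $\sum_{R>R_0}(Ck^2/2^k)^R/R!$ tends to zero, one may restrict to configurations in which every cyclic free component is unicyclic ($\gamma(\fff)=0$) and $n_{\text{cyc}}\le R$, then send $R\to\infty$. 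We also restrict to $(n_\ttt)\in\ee_{1/4}$ via part~(1). (A version of (4) with $\E\bZ^{\tr}_{\la^\star,s_n}$ on the right follows by the same argument because adding cyclic components perturbs $s$ only by $O(R/n)$.)

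\textbf{Step 2 (Factorization).} Applying Proposition~\ref{prop:1stmo:B nt decomp} to such a configuration gives, since $(e/n)^{\gamma(\fff)}=1$ when $\gamma(\fff)=0$,
\begin{equation*}
\E\bZ_{\la^\star,s_n}[B,(n_\ttt),(n_\fff)]=\bigl(1+O_k(\tfrac{1}{n})\bigr)\frac{e^{n\Psi_\circ(B)}}{p_\circ(n,B)}\prod_{\ttt}\frac{1}{n_\ttt!}\Bigl(\tfrac{n}{e}J_\ttt w_\ttt^{\la^\star}\Bigr)^{n_\ttt}\prod_{\fff:\gamma(\fff)=0}\frac{(J_\fff w_\fff^{\la^\star})^{n_\fff}}{n_\fff!}.
\end{equation*}
Adding $\sum n_\fff\le R$ unicyclic components perturbs the boundary profile by $\|B_\text{cyc}\|_1=O(R/n)$ and the total weight $s$ by $O(R/n)=o(n^{-2/3})$, which remains inside the window used in Theorem~\ref{thm:1stmo:constant:s}.

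\textbf{Step 3 (Poisson limit).} Pair each cyclic configuration with its tree-only counterpart by reparametrising $B'=B-B_\text{cyc}$, $s'=s-s_\text{cyc}$, $(n_\ttt)'=(n_\ttt)$. The ratio of the two weights is, by Taylor expansion of $\Psi_\circ$, $\utheta^\star_{\la^\star}$ and $F_{\la^\star}$ at the strict maximizer $(B^\star,s^\star)$ from Proposition~\ref{prop:negdef}, equal to $1+O(R/\sqrt{n})$ uniformly over the Gaussian-width window $\|B-B^\star\|_1\lesssim\log n/\sqrt{n}$ of Theorem~\ref{thm:1stmo:constant:s}. Summing the unicyclic weights gives
\begin{equation*}
\sum_{(n_\fff):\sum n_\fff\le R}\prod_{\fff:\gamma(\fff)=0}\frac{\bigl(J_\fff w_\fff^{\la^\star}\exp\langle\utheta^\star_{\la^\star},\boeta_\fff\rangle\bigr)^{n_\fff}}{n_\fff!}\;\xrightarrow[R\to\infty]{}\;\exp\bigl(\Lambda^\star\bigr),\qquad \Lambda^\star\equiv\sum_{\fff:\gamma(\fff)=0} J_\fff w_\fff^{\la^\star}e^{\langle\utheta^\star_{\la^\star},\boeta_\fff\rangle},
\end{equation*}
where convergence of $\Lambda^\star$ is obtained by the same exponential-decay estimate used in the proof of Proposition~\ref{prop:1stmo:aprioriestimate}(4) (unicyclic components of size $v$ carry total weight $\lesssim 2^{-kv/2}$). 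This yields $\beta_1(\la^\star,s^\star)=e^{\Lambda^\star}$, a constant that depends only on $\la^\star$, $s^\star$, and the graph parameters — in particular, independent of the chosen sequence $s_n\to s^\star$.

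The main obstacle is Step~3: one must show that the $O(R/n)$ perturbations of $(B,s)$ caused by the cyclic components produce only a $1+o(1)$ multiplicative correction to the local-CLT/Stirling expansions of Lemma~\ref{lem:exist:free:energy:1stmo} and Theorem~\ref{thm:1stmo:constant:s}, uniformly over the cyclic profile with $\sum n_\fff\le R$. The key facts enabling this are the strict concavity $\nabla^2_{B} F_{\la^\star}(B^\star,s^\star)\prec 0$ from Proposition~\ref{prop:negdef} and the uniform bounds on $\utheta_{\la^\star}$ and its derivatives from Lemma~\ref{lem:conv:psi:theta} together with Remark~\ref{rem:delta:smaller}; together these allow the errors to be summed to $o(1)$ and the tree and cyclic contributions to decouple cleanly in the limit.
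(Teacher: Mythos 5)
Your overall strategy --- reduce to unicyclic configurations via the a priori estimates, factor the weight with Proposition~\ref{prop:1stmo:B nt decomp}, and extract a Poisson-type factor $e^{\Lambda^\star}$ with $\Lambda^\star=\sum_{\gamma(\fff)=0}J_\fff w_\fff^{\la^\star}e^{\langle\utheta^\star_{\la^\star},\boeta_\fff\rangle}$ --- is the same as the paper's, and your limiting constant agrees with the paper's $e^{\xi^{\uni}(B^\star,s^\star)}$. There is, however, a genuine gap: you never dispose of the unicyclic configurations whose boundary profile lies \emph{outside} the window $\|B-B^\star_{\la^\star}\|_1\le n^{-1/3}$; your Step~3 ratio is only claimed ``uniformly over the Gaussian-width window.'' This regime cannot be discarded by the a priori estimates alone. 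Proposition~\ref{prop:1stmo:aprioriestimate}(4) with $r=1$, $\gamma=0$ bounds the entire unicyclic contribution by $O_k(k^2 2^{-k})\,\E\bZ^{\tr}_{\la^\star}\asymp_k \sqrt{n}\,\E\bZ^{\tr}_{\la^\star,s_n}$ --- a factor of $\sqrt{n}$ too large --- and Proposition~\ref{prop:maxim:1stmo} localizes $B$ only for the tree-only partition function. The paper spends its entire ``Case~1'' on exactly this point: a surgery map $\TT$ that disassembles each unicyclic component into single free variables and valid separating clauses (cutting internal edges and swapping $O_k(\log^2 n)$ boundary edges to preserve validity), with $|\TT^{-1}|\le e^{O_k(\log^3 n)}$ and weight ratio $e^{O_k(\log^3 n)}$, so that the far regime is absorbed by $\E\bZ^{\tr}_{\la^\star}[\|B-B^\star\|_1>0.5\,n^{-1/3},\,\ee_{1/4}]=e^{-\Omega_k(n^{1/3})}\E\bZ^{\tr}_{\la^\star}$. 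Some argument of this kind is required and is absent from your proposal; a local-CLT comparison cannot replace it, since for atypical $B$ the tree-only counterpart may carry negligible or zero probability while the unicyclic configuration does not.

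A second, more repairable issue is the pairing in Step~3. Subtracting ``$B_{\textnormal{cyc}}$'' from $B$ does not produce a valid boundary profile: the boundary half-edges of a unicyclic component (colored $\bb_0,\bb_1,\fs$) are shared with frozen variables and separating clauses whose full neighborhoods are recorded in $\dot{B}$ and $\hat{B}$, so deleting the component leaves dangling half-edges and breaks the compatibility condition \eqref{eq:def:compat:1stmo}. The paper instead keeps $B$ --- hence $\uh(B)$ --- fixed and compares $\E\bZ^{\uni,\circ}_{\la^\star,s_n}[B,n_{\cyc}=r]$ directly to $\E\bZ^{\tr}_{\la^\star,s_n}[B]$ via a local CLT for the constrained sum $\sum_i\boeta_{X_i}=n\uh(B)$ over i.i.d.\ components drawn from a tilted law on $\FFF_{\tr}\sqcup\FFF_{\uni}^{\circ,n}$; conditionally on the constraint, the number of unicyclic components is asymptotically $\textnormal{Poisson}(\xi^{\uni}(B^\star,s^\star))$. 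Recasting your Step~3 in this form would both fix the bookkeeping and make the $1+o(1)$ claim precise inside the window.
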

	\begin{proof}
	Recall the definition of $n_\cyc$ and $e_\mult$ from \eqref{eq:def:cyclic:comp:multi:edge}. By Proposition \ref{prop:1stmo:aprioriestimate}, we can bound
	\begin{equation*}
	    \E \bZ_{\la^\star, s_n}[n_\cyc \geq \log n \quad\textnormal{or}\quad e_\mult \geq 1\quad \textnormal{or}\quad (n_\fff)_{\fff \in \FFF} \notin \ee_{\frac{1}{2}}]\lesssim_k \frac{\log n}{n^{2/3}}\E\bZ_{\la^\star}\lesssim_k \frac{\log n}{n^{2/3}}\E\bZ^\tr_{\la^\star},
	\end{equation*}
	where the last inequality is due to Corollary \ref{cor:cyclic:contribution:1stmo}. Furthermore, $\E \bZ^{\tr}_{\la^\star} \lesssim_k \sqrt{n}\E \bZ^{\tr}_{\la^\star, s_n}$ holds by Theorem \ref{thm:1stmo:constant} and Theorem \ref{thm:1stmo:constant:s}. Thus, we have
	\begin{equation}\label{eq:uni:apriori}
	      \E \bZ_{\la^\star, s_n}[n_\cyc \geq \log n \quad\textnormal{or}\quad e_\mult \geq 1\quad \textnormal{or}\quad (n_\fff)_{\fff \in \FFF} \notin \ee_{\frac{1}{2}}]\lesssim_k \frac{\log n}{n^{1/6}}\E\bZ^\tr_{\la^\star,s_n}.
	\end{equation}
	Having \eqref{eq:uni:apriori} in mind, we now consider the case where $n_\cyc <\log n$, $e_\mult =0$ and $(n_\fff)_{\fff \in \FFF} \in \ee_{\frac{1}{2}}$. Denote by $\E\bZ^\uni_{\la^\star, s_n}[B]$ the contribution to $\E\bZ_{\la^\star, s_n}$ from component configurations $\csig$ with $e_\mult[\csig]=0$ and $B[\csig]=B$. We now divide the set of $B\in \bDelta_n^{\textnormal{b}}$ into $||B-B^\star||_1>n^{-1/3}$ and $||B-B^\star||_1\leq n^{-1/3}$ to compute the \textsc{rhs} of \eqref{eq:def:beta:1stmo}. We will argue that the former case of $||B-B^\star||_1>n^{-1/3}$ gives a negligible contribution to $\E\bZ_{\la^\star,s_n}$ while the latter case gives the precise constant. Throughout, we assume $(n_{\fff})_{\fff \in \FFF_\uni} \in \ee_{\frac{1}{2}}$ and $(n_\fff)_{\fff\in\FFF_\uni}\sim (B,s_n)$, where $\FFF_\uni \equiv \{\fff\in \FFF:\gamma(\fff)\leq 0\}$, and $(n_\fff)_{\fff\in\FFF_\uni}\sim (B,s)$ indicates that \begin{equation}\label{eq:compat:free:profile:boundary:cyclic}
	   (n_\fff)_{\fff\in\FFF_\uni}\sim (B,s) \overset{def}{\iff} \sum_{\fff\in\FFF_\uni} n_\fff\boeta^{-}_{\fff}=n\uh(B)\quad\text{and}\sum_{\fff\in\FFF_\uni} n_\fff s_\fff^{\lit} \in [ns,ns+1)
	\end{equation}
	where $\boeta^{-}_{\fff}\equiv\big(-\gamma(\fff), \boeta^{\partial}_\fff \big)\equiv \left(-\gamma(\fff), \{\eta_\fff(x)\}_{x \in \partial}\right)$. Here, note that $\boeta^{-}_{\fff}$ for $\fff\in \FFF_{\uni}$ is a generalization of $\boeta^{-}_{\ttt}$ for $\ttt\in \FFF_{\tr}$. In particular, $v(\fff)+f(\fff)<\frac{4\log n}{k}$ holds for all $\fff \in \FFF_\uni$ such that $n_{\fff}\neq 0$, and $\sum_{\fff\in \FFF_\tr}n_\fff = nh_{\circ}(B)$.
	
	\vspace{2mm}
    \noindent \textbf{Case 1.} $B\in \bDelta_n^{\textnormal{b}}$, $||B-B^\star||_1>n^{-1/3}$.
	\vspace{2mm}
	
	We proceed by a comparison argument. For free component profile $(n_{\fff})_{\fff \in \FFF_\uni}\sim B$, we construct $\TT\left(B, (n_{\fff})_{\fff \in \FFF_\uni}\right)\equiv \left(B^\prime,(n^\prime_{\ttt})_{\ttt \in \FFF_\tr}\right)$ such that $(n^\prime_{\ttt})_{\ttt \in \FFF_\tr}\sim B^\prime$ holds and $\left(B^\prime,(n^\prime_{\ttt})_{\ttt \in \FFF_\tr}\right)$ is sufficiently close to $\left(B,(n_{\ttt})_{\ttt \in \FFF_\tr}\right)$, and $\Big| \TT^{-1}\left(B^\prime,(n^\prime_{\ttt})_{\ttt \in \FFF_\tr}\right) \Big|$ is not too large. The steps to construct $\TT$ can be found below. In what follows, we denote by $e^{\textnormal{sw}}\equiv e^{\textnormal{sw}}(B)$ the number of $\bb$ edges, either $\bb_0$ or $\bb_1$, that neighbor separating clause, which could be swapped with $\fs$ edge and still make the clause a valid separating clause, i.e.
	\begin{equation*}
	    e^{\textnormal{sw}} = \sum_{\sig:\textnormal{non-forcing}}m\hat{B}(\sig)\left(\sum_{i=1}^{k}\one(\sigma_i \in \{\bb\})-2\right).
	\end{equation*}
	Denote the number of separating, but non-forcing, clauses by $m_{\fs}\equiv m_{\fs}(B)$. By \eqref{eq:B:red:free:small}, the number of $\fs$ edges are bounded above by $\frac{7k}{2^k}m$, because the variable adjacent to $\fs$ edge must be free. Hence,
	\begin{equation}\label{eq:swap:edge:linear}
	    e^{\textnormal{sw}} \geq (k-2)m_{\fs}-\frac{7k}{2^k}m \geq (k-2)(1-\frac{14k}{2^k})m-\frac{7k}{2^k}m=\left(k-2-O(k^2 2^{-k})\right)m,
	\end{equation}
	which ensures that there are at least linear number of fraction of $\bb$ edges that could be swapped with $\fs$ and still make the separating clauses valid. This fact will be important for the Step 2 below.
	\begin{enumerate}[label=\textnormal{Step \arabic*:}]
	\item For each $\fff \in \FFF_\uni\setminus \FFF_{\tr}$, we disassemble $n_\fff$ number of $\fff$'s by cutting all internal edges $e \in E(\fff^{\textnormal{in}})$ into half and adding the color $\fs$ to all cut half-edges. In the process of cutting, we delete the information of literals. Hence, every free variable $v \in V(\fff)$ becomes a free tree with a single variable. Likewise, each non-separating clause $a \in F(\fff)$ becomes a possibly invalid separating clause with its neighborhood color all determined. The only way for $a$ to be invalid after this cutting process, is when $a$ has internal degree $k-1$ or $k$ in $\fff$ i.e. when $a$ has $k-1$ or $k$ number of $\fs$ edges after being cut.
	\item For each invalid clause $a$, we swap two of its neighboring $\fs$ edges with two of $e^{\textnormal{sw}}$ $\bb$ edges, which could be swapped with $\bb$ edge and still make the separating clauses valid. Since the total number of clauses contained in the unicylic components is no greater than $\frac{4(\log n)^2}{k}\ll e^{\textnormal{sw}}$, we can guarantee that every invalid clause can be made valid by this swapping process.
	\end{enumerate}
	Step 1 above produces a new free tree profile $(n^\prime_{\ttt})_{\ttt \in \FFF_\tr}$ while Step 1 and 2 produce a new boundary profile $B^\prime$. We define $\TT\left(B,(n_{\fff})_{\fff \in \FFF_\uni}\right)\equiv \left(B^\prime,(n^\prime_{\ttt})_{\ttt \in \FFF_\tr}\right)$. The crucial properties of $\TT$ are summarized as follows.
	\begin{itemize}
	    \item By our construction, $B^\prime \in \bDelta_n$ and $(n^\prime_{\ttt})_{\ttt\in\FFF_\tr}\sim B^\prime$.
	    \item Because we have changed $O_{k}\left((\log n)^2\right)$ number of boundary spins in the process above, $||B^\prime-B||_1\lesssim_k \frac{\log^2 n}{n}$. Moreover, $n^\prime_\ttt =n_\ttt$ if $\ttt$ is not the free tree with a single free variable, while $|n^\prime_{\ttt}-n_\ttt|\lesssim \log^2 n$ holds if $\ttt$ is the free tree with a single variable. As a result, \eqref{eq:prod:form:1stmo} in Proposition \ref{prop:1stmo:B nt decomp} shows
	    \begin{equation}\label{eq:TT:ratio:uni:tree:1stmo}
	    \E \bZ^\uni_{\la^\star}[B,(n_{\ttt})_{\ttt \in \FFF_\tr}, (n_{\fff})_{\fff \in \FFF_\uni}] = \exp\left(O_k(\log^3 n)\right)\E \bZ^\tr_{\la^\star}[\TT\left(B, (n_{\fff})_{\fff \in \FFF_\uni}\right)]
	    \end{equation}
	    
	    \item$(n^\prime_{\ttt})_{\ttt\in\FFF_{\tr}}\in \ee_{\frac{1}{4}}$, since $(n_{\fff})_{\fff \in \FFF_\uni}\in \ee_{\frac{1}{2}}$.
	    \item For any $(n^\prime_{\ttt})_{\ttt\in\FFF_\tr}\sim B^\prime$, we can upper bound $\big| \TT^{-1}(B^\prime, n^\prime_{\ttt})_{\ttt\in\FFF_\tr})\big|$ by multiplying the number of ways to choose the new single free trees, the new separating clauses, the edges to be swapped and the ways to form the unicylic components among the single free trees and separating clauses. Note that the number of ways to form cyclic components among at most $a$ variables and $b$ clauses can be crudely upper bounded by $(4da)^{kb}$ since the number of matching between the clauses and variables are at most $(da)^{kb}$ and there are choices for literals of each half-edge adjacent to clauses and also the choice to be boundary or internal edge. Hence, we can crudely bound
	    \begin{equation*}
	    \begin{split}
	        \Big| \TT^{-1}(B^\prime, n^\prime_{\ttt})_{\ttt\in\FFF_\tr})\Big|
	        &\leq \underbrace{ \left[{n\choose \frac{4(\log n)^{2}}{k}}\right]}_\text{single free trees}\underbrace{\left[{m\choose \frac{4(\log n)^{2}}{k}}\right]}_\text{separating clauses}\underbrace{\left[{mk\choose \frac{4(\log n)^{2}}{k}}\right]}_\text{swapped edges}\Big(\frac{16d(\log n)^{2}}{k}\Big)^{4(\log n)^{2}}=\exp\left(O_{k}(\log^3 n)\right)
	   \end{split}
	    \end{equation*}
	\end{itemize}
	Therefore, using the above properties of $\TT$, we can bound
	\begin{equation}\label{eq:substitute:cyclic:by:tree:bound}
	\begin{split}
	    &\E \bZ^\uni_{\la^\star,s_n}[||B-B^\star||_1>n^{-1/3}, n_\cyc<\log n, \left((n_{\fff})_{\fff \in \FFF_\uni}\right)\in \ee_{\frac{1}{2}}]
	    \\
	    &\leq\exp\left(O_k(\log^3 n)\right)\E \bZ^\tr_{\la^\star}[||B-B^\star||_1>0.5n^{-1/3}, (n_\ttt)_{\ttt\in\FFF_\tr}\in \ee_{\frac{1}{4}}].
	\end{split}
	\end{equation}
	By Proposition \ref{prop:maxim:1stmo} and \ref{prop:negdef}, we can further bound the RHS above by
	\begin{equation*}
	   \E\bZ^\tr_{\la^\star}[||B-B^\star||_1>0.5n^{-1/3}, (n_\ttt)_{\ttt\in\FFF_\tr}\in \ee_{\frac{1}{4}}]= \exp\left(-\Omega_k(n^{1/3})\right)\E\bZ^\tr_{\la^\star}.
	\end{equation*}
	By Theorem \ref{thm:1stmo:constant} and Theorem \ref{thm:1stmo:constant:s}, $\E\bZ^\tr_{\la^\star,s_n}$ differs $\E\bZ^\tr_{\la^\star}$ by a factor $n^{-\frac{1}{2}}$. As a result,
	\begin{equation}\label{eq:prop:uni:negligible}
	    \E \bZ^\uni_{\la^\star,s_n}[||B-B^\star||_1>n^{-1/3}, n_\cyc<\log n, \left((n_{\fff})_{\fff \in \FFF_\uni}\right)\in \ee_{\frac{1}{2}}] =\exp\left(-\Omega_k(n^{1/3})\right)\E\bZ^\tr_{\la^\star,s_n}
	\end{equation}
	
	\vspace{2mm}
    \noindent \textbf{Case 2.} $B\in \bDelta_n^{\textnormal{b}}$, $||B-B^\star||_1\leq n^{-1/3}$.
	\vspace{2mm}
	
	Denote by $\bZ^{\uni,\circ}_{\la^\star, s_n}$ the contribution to $\bZ^{\uni}_{\la^\star, s_n}$ where there is no free component larger than $\frac{4\log n}{k}$, i.e. $n_\fff=0$ if $v(\fff)+f(\fff)>\frac{4\log n}{k}$. In this regime, we compute $\bZ^{\uni,\circ}_{\la^\star, s_n}[B,n_\cyc=r]$ and show that it is asymptotically a constant factor of $\E \bZ^\tr_{\la^\star,s_n}[B]$, where the constant does not depend on $B$. Let $\FFF_\uni^{\circ,n}\equiv \{\fff\in \FFF_\uni: \gamma(\fff)=0,v(\fff)+f(\fff)\leq \frac{4\log n}{k}\}$ and denote $g(n,B,r,s_n)=o_n(1)$, whenever 
	$$\lim_{n\to\infty}\sup_{||B-B^\star||_1\leq n^{-1/3}}\sup_{0\leq r<\log n}|g(n,B,r,s_n)|=0.$$
	Recall the definition of $\utheta_\la(B,s)$ in Lemma \ref{lem:exist:theta}. The constant below will be crucial in the calculation: by using Lemma \ref{lem:opt:tree:decay} and finding an appropriate subtree for a given unicylic component it is not hard to see that for small enough $\delta>0$ and $||(B,s)-(B^\star,s^\star)||_1<\delta$,
	\begin{equation*}
	    \xi^\uni(B,s) := \sum_{\fff\in \FFF_\uni, \gamma(\fff)=0}J_{\fff}w_{\fff}^{\la^{\star}}\exp\left(\langle\utheta_{\la^\star}(B,s),\boeta_{\fff}\rangle\right)<\infty.
	\end{equation*}
	We denote $\xi^\uni_{n}(B,s)\equiv \sum_{\fff\in \FFF_\uni^{\circ,n}}J_{\fff}w_{\fff}^{\la^{\star}}\exp\left(\langle\utheta_{\la^\star}(B,s),\boeta_{\fff}\rangle\right)$ and it is clear from the above equation that $\xi^\uni_{n}(B,s)=(1+o_n(1))\xi^\uni(B^\star,s^\star)$.
	
	First, we can use \eqref{eq:prod:form:stir:1stmo} in Proposition \ref{prop:1stmo:B nt decomp} to compute
	\begin{equation}\label{eq:uni:B:r:expansion}
	\begin{split}
	    \E\bZ^{\uni,\circ}_{\la^\star,s_n}[B,n_\cyc=r]=\left(1+o_n(1)\right)&
	    \frac{\exp\left(nF_{\la^\star}(B,s_n)\right)}{p_\circ(n,B)}\frac{1}{\left(nh_{\circ}(B)+r\right)!}\frac{\left(nh_\circ(B)\right)^{nh_\circ(B)+r}}{e^{nh_\circ(B)}}\\
	    &\times \left(1+\frac{\xi^\uni_n(B,s_n)}{nh_0(B)}\right)^{nh_\circ(B)+r}\P^{\uni}_{r,\utheta_{\la^\star}(B,s_n)}\left(\AAA_{\uh(B),s_n,r}\right),
	\end{split}
	\end{equation}
	where $\P^{\uni}_{r,\utheta_{\la^\star}(B,s)}$ is taken with respect to i.i.d random free components $X_1,...,X_{nh_\circ(B)+r}\in \FFF_\tr \sqcup \FFF_\uni^{\circ,n}$ with distribution given below. Writing $\boeta_{\fff}\equiv(\boeta^{-}_{\fff},s_\fff^\lit)$,
	\begin{equation}\label{eq:distr:uni:rescaled}
	\begin{split}
	    \P^{\uni}_{r,\utheta_{\la^\star}(B,s)}(X_i=\ttt)&:=\frac{J_\ttt w_\ttt^{\la^\star}\exp\left(\langle\utheta_{\la^\star}(B,s), \boeta_{\ttt}\rangle\right)}{h_\circ(B)\left(1+\frac{\xi^\uni_{n}(B,s_n)}{nh_0(B)}\right)}\quad\textnormal{for}\quad \ttt \in \FFF_{\tr}\quad\textnormal{and}\\
	    \P^{\uni}_{r,\utheta_{\la^\star}(B,s)}(X_i=\fff)&:=\frac{J_\fff w_\fff^{\la^\star}\exp\left(\langle\utheta_{\la^\star}(B,s), \boeta_{\fff}\rangle\right)}{nh_\circ(B)\left(1+\frac{\xi^\uni_n(B,s_n)}{nh_0(B)}\right)}\quad\textnormal{for}\quad \fff \in \FFF_{\uni}^{\circ,n}.
	\end{split}
	\end{equation}
	$\AAA_{\uh(B),s_n,r}$ is the event regarding $X_1,...,X_{nh\circ(B)+r}$, defined by $\AAA_{\uh(B),s_n,r}\equiv \AAA^\prime_{\uh(B),s_n,r} \cap \EEE_r$, where 
	\begin{equation*}
	\begin{split}
	    \AAA^\prime_{\uh(B),s_n,r}&:=\bigg\{\sum_{i=1}^{nh_\circ(B)+r}\boeta^{\partial}_{X_i}=n\uh^{\partial}(B),\quad\textnormal{and}\quad \sum_{i=1}^{nh_\circ(B)+r}s_{X_i}^\lit \in [ns_n,ns_n+1)\bigg\}\\
	    \EEE_r &:=\bigg\{\sum_{i=1}^{nh_\circ(B)+r}\one\left(X_i \in \FFF_\uni^{\circ,n}\right)=r\bigg\}
	\end{split}
	\end{equation*}
	In the equation above, $\uh(B)\equiv\left(h_\circ(B),\uh^\partial(B)\right)$. We compute $\P^{\uni}_{r,\utheta_{\la^\star}(B,s_n)}\left(\AAA_{\uh(B),s_n,r}\right)$ by conditioning on the event where we specify the index and the type of cyclic free components. For $I=\{i_1,...,i_r\}$, where  $1\leq i_1\leq ...\leq i_r\leq nh_{\circ}(B)+r$, and $J=(\fff_1,...,\fff_r) \in (\FFF_\uni^{\circ,n})^{r}$, define the event
	\begin{equation*}
	    \EEE_{I,J}:= \left\{\textnormal{$X_{i_\ell}=\fff_\ell$ for all $1\leq \ell \leq r$ and $X_j \in \FFF_{\tr}$ for $j \notin \{i_1,....,i_r\}$}\right\}.
	\end{equation*}
	Note that conditional on $\EEE_{I,J}$, the distribution of $(X_j)_{j\notin \{i_1,....,i_r\}} \in \FFF_\tr$ under $\P^{\uni}_{r,\utheta_{\la^\star}(B,s)}$ is given by $\P_{\utheta_{\la^\star}(B,s)}$, defined in \eqref{eq:def:rescaling:prob:B:s}. Moreover, since $\fff_1,...,\fff_r \in \FFF_\uni^{\circ,n}$, $||\sum_{i=1}^{r}\boeta_{\fff_i}||_1 \lesssim_k \log^2 n \ll\sqrt{n}$, so local CLT shows
	\begin{equation*}
	    \P^{\uni}_{r,\utheta_{\la^\star}(B,s)}\left(\AAA^\prime_{\uh(B),s_n,r} \Big| ~~\EEE_{I,J}\right)=\left(1+o_n(1)\right)\P_{\utheta_{\la^\star}(B,s)}\left(\AAA_{\uh(B),s_n}\right),
	\end{equation*}
	where $\AAA_{\uh(B),s}$ is defined in \eqref{eq:B:s:1stmo:expansion}. Therefore,
	\begin{equation}\label{eq:uni:local:CLT}
	\begin{split}
	   \P^{\uni}_{r,\utheta_{\la^\star}(B,s)}\left(\AAA_{\uh(B),s_n,r}\right)
	   &=\sum_{I \subset\{1,...,nh_{\circ}(B)+r\}}~~~\sum_{J \in (\FFF_\uni^{\circ,n})^{r}} \P^{\uni}_{r,\utheta_{\la^\star}(B,s)}\left(\AAA^\prime_{\uh(B),s_n,r} \Big| ~~\EEE_{I,J}\right)\P^{\uni}_{r,\utheta_{\la^\star}(B,s)}(\EEE_{I,J})\\
	   &=\left(1+o_n(1)\right)\P_{\utheta(B,s_n)}\left(\AAA_{\uh(B),s_n}\right)\P^{\uni}_{r,\utheta_{\la^\star}(B,s)}\big(\EEE_r\big)
	\end{split}
	\end{equation}
	Since $\P^{\uni}_{r,\utheta_{\la^\star}(B,s)}\big(X_i \in \FFF_\uni^{\circ, n}\big)=\left(1+\frac{\xi^\uni_n(B,s_n)}{nh_0(B)}\right)^{-1}\frac{\xi^\uni_n(B,s_n)}{nh_0(B)}$, we can approximate $\P^{\uni}_{r,\utheta_{\la^\star}(B,s)}\big(\EEE_r\big)$ by
	\begin{equation}\label{eq:number:cycle:poisson}
	    \P^{\uni}_{r,\utheta_{\la^\star}(B,s)}\big(\EEE_r\big)=\left(1+o_n(1)\right)\P\big(Z=r\big),\quad\textnormal{where}\quad Z \sim \textnormal{Poisson}\left(\xi^{\uni}(B^\star,s^\star)\right).
	\end{equation}
	Moreover, other terms in the \textsc{rhs} of \eqref{eq:uni:B:r:expansion} can be approximated by
	\begin{equation}\label{eq:uni:stirling:approx}
	\begin{split}
	    \frac{1}{\left(nh_{\circ}(B)+r\right)!}\frac{\left(nh_\circ(B)\right)^{nh_\circ(B)+r}}{e^{nh_\circ(B)}}&=\left(1+o_n(1)\right)\frac{1}{\left(nh_{\circ}(B)\right)!}\left(\frac{nh_\circ(B)}{e}\right)^{nh_\circ(B)}\\
	    \left(1+\frac{\xi^\uni_n(B,s_n)}{nh_0(B)}\right)^{nh_\circ(B)+r}&=\left(1+o_n(1)\right)e^{\xi^\uni(B^\star,s^\star)}.
	\end{split}
	\end{equation}
	 After plugging \eqref{eq:uni:local:CLT}, \eqref{eq:number:cycle:poisson} and \eqref{eq:uni:stirling:approx} into \eqref{eq:uni:B:r:expansion}, and comparing it with \eqref{eq:B:s:1stmo:expansion}, we have
	 \begin{equation*}
	      \E\bZ^{\uni,\circ}_{\la^\star,s_n}[B,n_\cyc=r]=\left(1+o_n(1)\right)\E\bZ^\tr_{\la^\star, s_n}[B]e^{\xi^\uni(B^\star,s^\star)}\P\big(Z=r\big).
	 \end{equation*}
	 Finally, summing the above equation for $0\leq r <\log n$ shows 
	\begin{equation}\label{eq:uni:B:compare:tr}
	    \E\bZ^{\uni,\circ}_{\la^\star,s_n}[B,n_\cyc<\log n]=\left(1+o_n(1)\right)e^{\xi^\uni(B^\star,s^\star)}\E\bZ^\tr_{\la^\star, s_n}[B].
	\end{equation}
	Finally, by \eqref{eq:uni:apriori} and \eqref{eq:prop:uni:negligible}, summing \eqref{eq:uni:B:compare:tr} for $||B-B^\star||<n^{-1/3}$ shows that \eqref{eq:def:beta:1stmo} holds with $\beta_1(\la^\star,s^\star)=e^{\xi^\uni(B^\star,s^\star)}$.
    \end{proof}

	\begin{proof}[Proof of Theorem \ref{thm1}-\textnormal{(a)}]
		Fix $\eps>0$. For $\lambda^\star, s^\star$ defined in \eqref{eq:def:sstarandlambdastar}, denote $s_{\circ}(C) := s^\star - \frac{\log n}{2\lambda^\star n} +\frac{C}{n}$ for $C\in \Z$ (recall  that $s^\star = \textsf{f}^{1\textsf{rsb}}(\alpha)$). We first show that there are no clusters bigger than $e^{n s_n(C_0)}$ with probability $1-\frac{\eps}{2}$ for some $C_0=C_0(\eps,\alpha,k)$: note that Theorem \ref{thm:1stmo:constant:s} and Proposition \ref{prop:ratio:uni:1stmo} imply that for $0\leq C\leq n^{1/4}$,
		\begin{equation*}
		\E \bN_{s_{\circ}(C)}\leq e^{-n\la^\star s_{\circ}(C)}\E \bZ_{\la^\star,s_{\circ}(C)}\lesssim_{k}e^{-n\la^\star s_{\circ}(C)}\E \bZ_{\la^\star,s_{\circ}(C)}^{\tr} \lesssim_{k} \frac{1}{\sqrt{n}}e^{n\la^\star s^\star}e^{-n\la^\star s_{\circ}(C)}=e^{-\la^\star C}.
		\end{equation*}
		On the other hand, $\sum_{C\geq n^{1/4}}\bN_{s_{\circ}(C)}\leq e^{-n\la^\star s_{\circ}( n^{1/4})}\bZ_{\la^\star}$ holds. Thus, Corollary \ref{cor:cyclic:contribution:1stmo} and Theorem \ref{thm:1stmo:constant} show
		\begin{equation*}
		    \sum_{C\geq n^{1/4}}\E \bN_{s_{\circ}(C)}\leq e^{-n\la^\star s_{\circ}(n^{1/4})}\E\bZ_{\la^\star}\lesssim_{k}e^{-n\la^\star s_{\circ}(n^{1/4})}\E\bZ_{\la^\star}^{\tr}\lesssim_{k} \sqrt{n} e^{-\la^\star n^{1/4}}.
		\end{equation*}
		Consequently, Markov's inequality implies that for $C_0\in \Z$ with $C_0\leq n^{\frac{1}{5}}$,
		\begin{equation}\label{eq:mainthm:1}
		\P\Big(\sum_{C\geq C_0} 
		\bN_{s_{\circ}(C)}\geq 1
		\Big) \leq \sum_{C\geq C_0}\E \bN_{s_{\circ}(C)}\lesssim_{k} e^{-\la^\star C_0}.
		\end{equation}
		 Hence, by taking $C_0=C_0(\eps,k)$ large enough for given $\eps>0$, there are no clusters of size bigger than $e^{ns_{\circ}(C_0)}$ with probability $1-\frac{\eps}{2}$.
		 
		 Next, we upper bound $\sum_{C\leq C_0} \E \bZ_{1, s_{\circ}(C)}$: proceeding in a similar fashion as before, Theorem \ref{thm:1stmo:constant:s} and Proposition \ref{prop:ratio:uni:1stmo} imply that for $-n^{1/4}\leq C\leq C_0$,
		 \begin{equation*}
		     \E \bZ_{1,s_{\circ}(C)}\leq e^{n(1-\la^\star)s_{\circ}(C)+1} \E \bZ_{\la^\star, s_{\circ}(C)}\lesssim_{k}\frac{1}{\sqrt{n}}e^{n\la^\star s^\star}e^{n(1-\la^\star)s_{\circ}(C)}=n^{-\frac{1}{2\la^\star}}e^{ns^\star+(1-\la^\star)C}.
		 \end{equation*}
		 In the regime where $C\leq -n^{1/4}$, Corollary \ref{cor:cyclic:contribution:1stmo} and Theorem \ref{thm:1stmo:constant} show
		 \begin{equation*}
		     \sum_{C\leq -n^{1/4}}\E \bZ_{1,s_{\circ}(C)}\leq e^{n(1-\la^\star)s_{\circ}(-n^{1/4})+1}\E \bZ_{\la^\star}\lesssim_{k}e^{n(1-\la^\star)s_{\circ}(-n^{1/4})}e^{n\la^\star s^\star}=n^{-\frac{1}{2\la^\star}}e^{ns^\star}\times \sqrt{n}e^{-(1-\la^\star)n^{1/4}}.
		 \end{equation*}
		 Thus, altogether we have the following:
		 \begin{equation}\label{eq:thm1:proof:useful}
		 \sum_{C\leq C_0} \E \bZ_{1, s_{\circ}(C)}\lesssim_{k}n^{-\frac{1}{2\la^\star}}e^{ns^\star+(1-\la^\star)C_0}.
		 \end{equation}
		 Hence, Markov's inequality implies that
		 \begin{equation}\label{eq:mainthm:2}
		     \P\Big(\sum_{C\leq C_0}\bZ_{1, s_{\circ}(C)}\geq C_k \eps^{-1}n^{-\frac{1}{2\la^\star}}e^{ns^\star+(1-\la^\star)C_0}\Big)\leq \frac{\eps}{2},
		 \end{equation}
		 for some constant $C_k$, which only depends on $k$. Therefore, by \eqref{eq:mainthm:1} and \eqref{eq:mainthm:2}, we can take $C(\eps,\alpha,k)=(1-\la^\star)C_0-\log \eps +\log C_k$ in the statement of Theorem \ref{thm1}-(a) to conclude the proof.
	\end{proof}
	
	\section{The second moment}\label{sec:2ndmo}
	
		The goal of this section is to compute $\E (\bZ_{\la^\star,s^\star})^2$ up to a constant. Computing the second moment is equivalent to calculating the first moment of the pair frozen model partition function and we restrict our attention to the case where neither individual frozen configuration $\ux^i$ ($i=1,2$) contains a free cycle, so that the pair frozen model has 1-1 correspondence with pair-coloring model or pair-component model (see \eqref{eq:corr:pair-frozen:color:comp}). In Section \ref{subsec:2ndmo:independent}, we calculate the contribution from near-independence regime, where the \textit{overlap} $\zeta(\ux^1,\ux^2)$ satisfies $|\zeta(\ux^1,\ux^2)-\frac{1}{2}|< \frac{k^2}{2^{k/2}}$, and in Section \ref{subsec:2ndmo:correlated}, we calculate the contribution from the correlated regime, where $|\zeta(\ux^1,\ux^2)-\frac{1}{2}|\geq\frac{k^2}{2^{k/2}}$. To this end, we first define the overlap $\zeta(\ux^1,\ux^2)$:
	
		\begin{defn}\label{def:overlapofcol}
	 	For a pair frozen configuration $(\ux^1,\ux^2)\in (\{0,1,\ff\}^{2})^V$, the \textit{overlap} $\zeta(\ux^1,\ux^2)$ is defined as
	 	\begin{equation*}
	 	\zeta(\ux^1,\ux^2) = \frac{1}{n} d(\ux^1,\ux^2),
	 	\end{equation*}
	 	where $d(\ux^1,\ux^2)$ denotes the Hamming distance between $\ux^1$ and $\ux^2$. For a valid pair-coloring configuration $\bsig \in \Omega_2^E$, there exists a unique pair frozen configuration $(\ux^1,\ux^2)$ corresponding to $\bsig$, so $\zeta(\bsig) \equiv \zeta(\ux^1,\ux^2)$ is well defined. Similarly, $\zeta(\bcsig)$ for a valid pair component configuration $\bcsig \in \Omega_{\textnormal{com},2}^E$ is well defined.
	 \end{defn}

	\subsection{Near-independence regime}\label{subsec:2ndmo:independent}
	For $\ula\in [0,1]^2$, denote by $\bZ^{2}_{\ula,\ind}$(resp. $\bZ^{2,(L)}_{\ula,\ind}$) the contribution to $\bZ^{2}_{\ula}$(resp. $\bZ^{2,(L)}_{\ula}$) from pair-coloring $\bsig$ with $|\zeta(\bsig)-\frac{1}{2}|< \frac{k^2}{2^{k/2}}$. Moreover, denote the contribution to $\bZ^2_{\ula,\ind}$ from the pair-coloring whose union components are trees by
	\begin{equation*}
	    \bZ^{2,\tr}_{\ula,\ind}:= \sum_{\substack{\bsig\in \Omega_{2}^{E}\\ \big|\zeta(\bsig)-\frac{1}{2}\big|< \frac{k^2}{2^{k/2}}}}\bw_{\GGG}^\lit(\bsig)^{\ula}\one\Big\{\textnormal{union-free components of $(\GGG,\bsig)$ are trees and } \frac{\rr(\bsig)}{nd}\vee \frac{\ff(\bsig)}{n}\leq \frac{7}{2^k} \Big\}, 
	\end{equation*}
     where $\rr(\bsig)\equiv \rr(\sig^1)\vee\rr(\sig^2)$ denotes the maximum of the number of red edges in first and second copies, and similarly, $\ff(\bsig)\equiv \ff(\sig^1)\vee\ff(\sig^2)$. 
	
    Similarly for $\bs=(s_1,s_2) \in [0,\log 2)^{2}$, we denote by $\bZ^{2,\tr}_{\ula,\bs,\ind}$ the contribution to $\bZ^{2}_{\ula,\bs}$ from $\bsig=(\sig^1,\sig^2) \in \Omega_{2}^{E}$ whose union-free components are trees and $|\zeta(\bsig)-\frac{1}{2}\Big|< \frac{k^2}{2^{k/2}}$ holds. Also, we add the superscript $(L)$ (e.g. $\bZ^{2,(L),\tr}_{\ula,\bs,\ind}$) when considering the $L$-truncated model($L$-truncated in each of the copies).
 
	In the near-independence regime, most of the computation for the second moment will follow the same argument as the computation for the first moment from Section \ref{sec:1stmo}. Hence, we will omit the proof whenever the argument for the first moment extends to the second moment in almost identical fashion, and refer to the proof done in Section \ref{sec:1stmo}.
	
	To begin with, define the sets of \textit{non-free} pair-colors  $\dot{\partial}_2^\bullet$, $\hat{\partial}_2^\bullet$ by
	\begin{equation*}
	\dot{\partial}_2^\bullet := \{\rr_0,\rr_1,\bb_0,\bb_1\}^2; \quad \hat{\partial}_2^\bullet :=\{\rr_0,\rr_1,\bb_0,\bb_1,\fs\}^2,
	\end{equation*}
	which are the colors that can be adjacent to pair frozen variables or pair-separating clauses. Analogously to the single copy model, we define the \textit{union-free component profile} and \textit{boundary profile} for the pair model.
	
	\begin{defn} \label{def:empirical:boundary:2ndmo}
	Given a \textsc{nae-sat} instance $\GGG$ and a valid pair component configuration $\bcsig \in \pcomp^E$, the \textit{boundary profile} of $\bcsig$ is the tuple $(\bB[\bcsig],\buh[\bcsig])\equiv(\bB,\buh) \equiv (\dbB,\hbB,\bbB, \buh)$, and the \textit{union component profile} is the sequence $(n_\vvv[\bcsig])_{\vvv\in \mathscr{F}_2}\equiv(n_\vvv)_{\vvv\in \mathscr{F}_2}$, defined as follows.
	\begin{enumerate}
	    \item[$\bullet$] For each $\vvv\in\mathscr{F}_2$, let $n_\vvv$ is the number of union-free component $\vvv$ inside $(\GGG,\bcsig)$.
		\item[$\bullet$] $\dbB,\hbB,$ and $\bbB$ are measures on $(\dot{\partial}_2^\bullet)^d$, $(\hat{\partial}_2^\bullet)^k$ and $\hat{\partial}_2^\bullet$ respectively, given by
		\begin{equation*}
		\begin{split}
		&\dbB(\butau)
		:=
		|\{v\in V: \bcsig_{\delta v}=\butau \} | / |V| \quad
		\textnormal{for all } \butau\in (\dot{\partial}_2^\bullet)^d;\\
		 &\hbB(\butau)
		:=
		|\{a\in F: \bcsig_{\delta a}=\butau \} | / |F| \quad
		\textnormal{for all } \butau\in (\hat{\partial}_2^\bullet)^k;\\
		 &\bbB(\btau)
		:=
		|\{e\in E: \boldsymbol{\sigma}^{\textnormal{com}}_e=\btau \} | / |E| \quad
		\textnormal{for all } {\btau}\in \hat{\partial}_2^\bullet.
		\end{split}
		\end{equation*}
		Furthermore, $\buh=(\bh(\circ), \{\bh(\bx)\}_{\bx\in \dot{\partial}_2\sqcup\hat{\partial}_2})$ records the total number of  components and boundary colors of all union components normalized by the number of variables, where $\dot{\partial}_2$ and $\hat{\partial}_2$ are defined in \eqref{eq:def:boundarycolors:pair}:
	\begin{equation}\label{eq:def:h:2ndmo}
	\bh(\circ) := \frac{1}{|V|}\sum_{\vvv\in \mathscr{F}_2} n_\vvv,\qquad\bh(\bx) := \frac{1}{|V|} \sum_{\vvv\in\mathscr{F}_2} \eta_\vvv(\bx)\,n_\vvv, ~~~~\bx\in \dot{\partial}_2\sqcup\hat{\partial}_2
	\end{equation}
	Note that a valid boundary profile $(\bB,\buh)$ must satisfy the following compatibility condition for $\bx\in \hat{\partial}_2^\bullet$:
	\begin{equation}\label{eq:def:compat:2ndmo}
	\begin{split}
	\bbB(\bx)
	&=
	\frac{1}{d} \sum_{\bsig \in(\dot{\partial}_2^\bullet)^d } \dbB(\bsig)  \sum_{i=1}^d \one\{\bsigma_i =\bx \} + \frac{\one\{\bx\in \hat{\partial}_2\}  }{d} \bh(\bx)\\
	&=
	\frac{1}{k}
	\sum_{\bsig\in (\hat{\partial}_2^\bullet)^k} \hbB (\bsig)
	\sum_{j=1}^k \one \{\bsigma_j =\bx \}
	+
	\frac{\one\{\bx\in \dot{\partial}_2 \}}{d}  \bh(\bx).
	\end{split}
	\end{equation}
	\end{enumerate}
	\end{defn}
	\begin{remark}\label{rem:clustersize:2ndmo}
	\begin{enumerate}
	\item Henceforth, we denote $(n_{\vvv})_{\vvv \in \mathscr{F}_2} \sim \bB$ when $(n_{\vvv})_{\vvv \in \mathscr{F}_2}$ is compatible with $\bB$. That is, $\{\bh(\bx)\}_{\bx\in \dot{\partial}_2\sqcup\hat{\partial}_2}$ induced from $(n_{\vvv})_{\vvv \in \mathscr{F}_2}$ by \eqref{eq:def:h:2ndmo} satisfies the compatibility equation \eqref{eq:def:compat:2ndmo}, and $\sum_{\vvv\in \mathscr{F}_2}n_\vvv v(\vvv)=n\big(1-\langle \dbB, \one \rangle \big)$ holds.
	\item If all union-free components of $\bcsig$ are trees, i.e. $n_{\vvv}[\bcsig]=0$ for $\vvv \in \FFF_2\backslash \FFF_2^\tr$, $\bh(\circ)$ can be computed from $\bB$ by the same equation as in the first moment (see \eqref{eq:def:compat:1stmo:tree}). Together with \eqref{eq:def:compat:2ndmo}, $\buh$ corresponding to $\bB$ is well defined and we denote it by $\buh[\bB]$. 
	\item Given $\bB$, denote its marginal onto the first and the second copy by $B^1\equiv (\dot{B}^1,\hat{B}^1,\bar{B}^1)$ and $B^2\equiv(\dot{B}^2,\hat{B}^2,\bar{B}^2)$ respectively, e.g. $(\dot{B}^1,\dot{B}^2)$ are marginals of $\dbB$ onto the first and the second copy. Similarly, denote by $(n_{\ttt}^{i})_{\ttt\in \FFF_{\tr}},i=1,2$ the free tree profile in each copy induced by the union component profile $(n_{\vvv})_{\vvv\in \FFF_2}$.  
	\item Let $\bsig=(\sig^1,\sig^2)\in \Omega_2^{E}$ be the unique pair-coloring configuration corresponding to $\bcsig$ through \eqref{eq:corr:pair-frozen:color:comp}. Recalling \eqref{eq:def:size:union-comp:each:copy}, define $s_{\vvv}^{\lit, i} \equiv \log w^{\lit,i}(\vvv)$ for $\vvv \in \FFF_2$. Then, for $s_1,s_2\geq 0$ and $i=1,2$, $$w^{\lit}_{\GGG}(\sig^{i})\in [e^{ns_i},e^{ns_i+1}) \iff \sum_{\vvv\in \FFF_2}n_\vvv[\bcsig]s_{\vvv}^{\lit,i}\in [ns_i,ns_i+1).$$
	\end{enumerate}
	\end{remark}
	\begin{defn}
	$\pbDelta$ is the space of boundary profiles $\bB$ with the following conditions.
		\begin{enumerate}
				\item[$\bullet$] $\dbB,\hbB$ and $\bbB$ are measures supported on $(\textnormal{supp}~\dot{I})^2$, $\textnormal{supp}~\hat{v}_2$, and $\hat{\partial}_2^{\bullet}$ respectively, and they have total mass at most $1$. 
				\item[$\bullet$] If we denote marginals of $\bB$ by $B^1,B^2$, then \eqref{eq:B:red:free:small} holds for both $B^1$ and $B^2$.
				\item[$\bullet$] There exists $\{\bh(\bx)\}_{\bx\in \dot{\partial}_2\sqcup\hat{\partial}_2}\in \R_{\geq 0}^{|\dot{\partial}_2|+|\hat{\partial}_2|}$ such that \eqref{eq:def:compat:2ndmo} holds.\\
			Moreover, denote by $\pbDelta_n$ the set of $\bB \in \pbDelta$ satisfying the following extra condition.
				\item[$\bullet$] $\dbB, \hbB$ and $\bbB$ are integer multiples of $\frac{1}{n}, \frac{1}{m}$ and $\frac{1}{nd}$, respectively.
		\end{enumerate}
	\end{defn}

	Analogously to \eqref{eq:def:cyclic:comp:multi:edge}, denote by $\prescript{}{2}{n}_\cyc=\prescript{}{2}{n}_\cyc[\bcsig]$ and $\prescript{}{2}{e}_\mult=\prescript{}{2}{e}_\mult[\bcsig]$ the number of cyclic union-free components and multicylic edges of union-free components of $\bcsig$ respectively. Also, analogously to \eqref{eq:def:exp:decay:profile}, let $\pee_{r},r>0,$ be the set of union-free component profile obeying exponential decay of frequencies in its number of variables with rate $2^{-rk}$. Proposition \ref{prop:2ndmo:aprioriestimate} is the analog of Proposition \ref{prop:1stmo:aprioriestimate} for the pair model, although its proof is technically more involved. The proof of Proposition \ref{prop:2ndmo:aprioriestimate} is presented in Appendix \ref{subsec:apriori:secmo}.
	\begin{prop}\label{prop:2ndmo:aprioriestimate}
	For $k\geq k_0, \ula \in [0,1]^{2}, L<\infty$ and $c\in [1,3]$, the following holds.
	\begin{enumerate}
    \item $\E \bZ^{2,(L),\tr}_{\ula,\ind}[(\pee_{\frac{1}{c+1}})^{\mathsf{c}}] \lesssim_{k} n^{-\frac{2}{3}c}\log n \E \bZ^{2,(L),\tr}_{\ula,\ind}$ and $\E \bZ^{2,\tr}_{\ula,\ind}[(\pee_{\frac{1}{c+1}})^{\mathsf{c}}] \lesssim_{k} n^{-\frac{2}{3}c}\log n \E \bZ^{2,\tr}_{\ula,\ind}$.
    \item $\E \bZ^{2}_{\ula,\ind}[\exists \vvv\in \FFF_2(\bsig,\GGG),~~~~f(\vvv)\geq v(\vvv)+2]\lesssim_{k} n^{-2}\E \bZ^{2}_{\ula,\ind}$.
    \item $\E \bZ^{2}_{\ula,\ind}[(\pee_{\frac{1}{c+1}})^{\mathsf{c}} \quad\textnormal{and}\quad \forall \vvv\in \FFF_2(\bsig,\GGG),~~~~f(\vvv)\leq v(\vvv)+1]\lesssim_{k} n^{-\frac{2}{3}c}\log n \E \bZ^{2}_{\ula,\ind}$.
	\end{enumerate}
	Moreover, there exists a universal constant $C$ such that for every $r,\gamma \in \Z_{\geq 0}$, the following holds.
	\begin{enumerate}[resume]
	\item $\E\bZ^{2}_{\ula,\ind}[\prescript{}{2}{n}_{\textnormal{cyc}}\geq r, \prescript{}{2}{e}_{\textnormal{mult}}\geq \gamma\textnormal{ and } \pee_{\frac{1}{4}}]\lesssim_{k} \frac{1}{r!}(\frac{Ck^2}{2^k})^{r}(\frac{C\log^{3} n}{n})^{\gamma}\E\bZ^{2,\tr}_{\ula,\ind}$.
	\end{enumerate}
	\end{prop}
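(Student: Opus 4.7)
The plan is to follow closely the structure of the first-moment a priori estimates in Proposition \ref{prop:1stmo:aprioriestimate}, adapted to the pair model by replacing free components with union-free components and free trees with union-free trees. The key structural fact exploited is that in the near-independence regime $|\rho(\bsig) - 1/2| < k^2/2^{k/2}$, the two copies behave essentially like independent first-moment configurations at the macroscopic scale, so the boundary profile $\bB$ concentrates near a product-like optimal profile $\bB^\star_{\ula,\ind}$. I would first establish the pair-model analogue of Proposition \ref{prop:1stmo:B nt decomp}, giving a decomposition
\begin{equation*}
\E \bZ^{2,\tr}_{\ula,\ind}[\bB,\{n_\vvv\}] = \frac{n!\, m!}{(nd)!}\frac{(nd\bbB)!}{(n\dbB)!(m\hbB)!}\prod_{\butau}\hat{v}_2(\butau)^{m\hbB(\butau)}\prod_{\uuu\in \FFF_2^{\tr}}\frac{1}{n_\uuu!}\Big(d^{e(\uuu)-f(\uuu)}k^{f(\uuu)}J_\uuu \bw_\uuu^{\ula}\Big)^{n_\uuu},
\end{equation*}
extended to include cyclic union-free components via Lemma \ref{lem:w vs wcom}.

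For parts (1) and (3), I would prove an analogue of Lemma \ref{lem:opt:tree:decay}: the optimal union-tree profile $(p^\star_{\uuu,\ula,\ind})_{\uuu \in \FFF_2^{\tr}}$ satisfies $\sum_{v(\uuu)=v} p^\star_{\uuu,\ula,\ind} \leq 2^{-kv/2}$. This requires setting up a pair-model BP contraction analogous to Proposition \ref{prop:BPcontraction:1stmo}, showing that in the near-independence regime the pair BP has a unique fixed point close to $\dot{q}^\star_{\la_1}\otimes \dot{q}^\star_{\la_2}$, with contraction factor $O(k^2 2^{-k})$. The central estimate driving the decay is that every clause inside a union-free component must be non-pair-separating, hence non-separating in at least one copy, which carries a density cost of order $2^{-k}$ per clause; this is the analogue of the single-copy decay mechanism. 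Once exponential decay is in place, the Chernoff bound argument applied to i.i.d.\ union trees under the measure $\P_{\utheta(\bB,\bs)}$ (as in Lemma \ref{lem:exist:free:energy:1stmo}) gives the $n^{-2c/3}\log n$ ratio exactly as in the single-copy case.

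For parts (2) and (4), I would adapt the cycle-accounting argument from Appendix \ref{sec:app:apriori}. The matching-model formulation tells us that each additional internal cycle in a union-free component corresponds to fixing two half-edges to match, contributing a relative factor of $O(1/n)$; combined with the $O(\log n)$ typical size of the components (enforced by the exponential decay in $\pee_{1/4}$), this gives the $O(\log^3 n / n)$ factor per excess cycle. Summing over the number $r$ of cyclic components and the total excess $\gamma$ yields the Poisson-type bound in (4), where the rate $Ck^2/2^k$ captures the per-cyclic-component cost of the non-pair-separating clauses and the $\tfrac{1}{r!}$ factor comes from the symmetry of placing $r$ distinguishable cycles among $\Theta(n)$ slots. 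Part (2) is then the specialization to $\gamma\geq 2$, and part (3) is the combination of (1) and (2).

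The principal technical obstacle is establishing the exponential decay estimate on $p^\star_{\uuu,\ula,\ind}$ in the pair model. Unlike the single-copy setting where $\dot{q}^\star_\la \in \mathbf{\Gamma}_C$ has a well-understood structure, the pair-model fixed point must be shown to approximately factorize across the two copies throughout the near-independence regime, requiring a perturbative contraction argument around the product fixed point. A secondary subtlety is handling the enlarged boundary space $\partial_2$, which includes mixed spin labels such as $(\fs,\bb_0)$ with no single-copy analogue; these contribute additional combinatorial terms through the embedding number $J_\uuu$ in \eqref{eq:def:embnum:pairr}, and the weight-averaging identity from \eqref{eq:def:Phi:pair} must be invoked to show these do not disrupt the contraction constant. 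With these in hand, the remainder of the argument is a faithful transcription of the single-copy proof, which is why we defer the full technical execution to Appendix \ref{subsec:apriori:secmo}.
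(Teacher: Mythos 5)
Your route for parts (1) and (3) has a genuine circularity. You propose to first prove exponential decay of the \emph{optimal} union-tree profile $(p^\star_{\uuu,\ula})$ (an analogue of Lemma \ref{lem:opt:tree:decay}) and then deduce the a priori bound by a Chernoff argument under the tilted measure $\P_{\utheta(\bB,\bs)}$, ``exactly as in the single-copy case.'' But this is not how the single-copy case goes: Proposition \ref{prop:1stmo:aprioriestimate} is proved \emph{first}, by a disassembly--comparison argument in the projected-coloring model (Section \ref{subsec:model:proj}, Lemma \ref{lem:apriori:1stmo:comparison}), and Lemma \ref{lem:opt:tree:decay} is proved \emph{afterwards using} Proposition \ref{prop:1stmo:aprioriestimate} together with the resampling result Proposition \ref{prop:maxim:1stmo}; the resampling result in turn needs the a priori estimates to guarantee that sampled neighborhoods avoid large trees. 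So the ingredients you want to start from sit strictly downstream of the statement you are trying to prove. Even setting the circularity aside, the tilted-measure Chernoff argument only applies for boundary profiles $\bB$ in a small neighborhood of $\bB^\star_{\ula}$ where the rescaling factors of Lemma \ref{lem:exist:theta} exist, whereas parts (1)--(3) are unconditional in $\bB$. The actual proof (Appendix \ref{subsec:apriori:secmo}) fixes arbitrary boundary data, compares the contribution of a free subgraph containing $\ell$ union $(a,b)$-components against the same subgraph with those components disassembled into isolated union-free variables, and bounds the ratio $\prescript{}{2}{\textnormal{\textbf{R}}}^{a,b}_{\ell,A}$ uniformly; the near-independence hypothesis enters precisely here, to ensure that roughly half of the clause-adjacent half-edges carry each of the colors $\bb\bb^{=}$, $\bb\bb^{\neq}$, so that the $2^{-k\ell b}$ matching cost cancels the $2^{k\ell b}$ recoloring entropy. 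Your proposal never identifies where near-independence is used, which is the one place the pair model genuinely differs from the first moment.

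Two smaller but real errors in the logical architecture: part (2) is \emph{not} a specialization of part (4) — part (4) assumes $\pee_{1/4}$ while (2) does not, and $\gamma\ge 2$ in (4) would only give $(\log^3 n/n)^2$, not $n^{-2}$; in the paper (2) is proved by the same comparison lemma summed over $b\ge a+2$, using that each such component forces at least $\ell(b-a+1)$ excess internal edges. Likewise part (3) is not ``the combination of (1) and (2)'': it is the comparison bound summed over $b\le a+1$ for the possibly-cyclic partition function. Your sketch of part (4) (pruning cycles, $O(1/n)$ per excess edge, $1/r!$ from exchangeability, $Ck^2/2^k$ per unicyclic component) is essentially the right mechanism, but it too must be run through the projected-model matching formula rather than through any BP or optimal-profile input — no BP contraction is used anywhere in the a priori estimates.
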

	\begin{cor}\label{cor:2ndmo:treecontribution:constant}
	For $k\geq k_0$, $\ula \in [0,1]^2$, $\E \bZ^{2}_{\ula,\ind} \lesssim_k \E \bZ^{2,\tr}_{\ula,\ind}$.    
	\end{cor}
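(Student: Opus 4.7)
The plan is to mirror exactly the argument used for the first moment analogue (Corollary \ref{cor:cyclic:contribution:1stmo}): decompose $\E\bZ^{2}_{\ula,\ind}$ according to whether the union-free component profile lies in $\pee_{1/4}$, bound the ``good'' part $\{\pee_{1/4}\}$ by item (4) of Proposition \ref{prop:2ndmo:aprioriestimate} after summing over the possible numbers of cyclic components and multi-cyclic edges, and bound the ``bad'' part $\{(\pee_{1/4})^{\mathsf c}\}$ by items (2) and (3) and absorb it into the left-hand side.

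First I would write
\begin{equation*}
\E\bZ^{2}_{\ula,\ind}\big[\pee_{1/4}\big]
=\sum_{r,\gamma\ge 0}\E\bZ^{2}_{\ula,\ind}\big[\prescript{}{2}{n}_{\textnormal{cyc}}=r,\;\prescript{}{2}{e}_{\textnormal{mult}}=\gamma,\;\pee_{1/4}\big]
\end{equation*}
and bound each summand by item (4) of Proposition \ref{prop:2ndmo:aprioriestimate} applied with the thresholds $r,\gamma$; summing the geometric/exponential series gives
\begin{equation*}
\E\bZ^{2}_{\ula,\ind}\big[\pee_{1/4}\big]\lesssim_k\Big(\sum_{r\ge 0}\tfrac{1}{r!}\big(\tfrac{Ck^2}{2^k}\big)^{r}\Big)\Big(\sum_{\gamma\ge 0}\big(\tfrac{C\log^3 n}{n}\big)^{\gamma}\Big)\E\bZ^{2,\tr}_{\ula,\ind}\lesssim_k \E\bZ^{2,\tr}_{\ula,\ind},
\end{equation*}
since $\exp(Ck^2/2^k)$ is bounded for $k\ge k_0$ and the $\gamma$-series has sum $1+O(\log^3 n/n)$ for large $n$.

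For the complementary event $(\pee_{1/4})^{\mathsf c}$ I would split further according to whether some union-free component has $f(\vvv)\ge v(\vvv)+2$. Item (2) of Proposition \ref{prop:2ndmo:aprioriestimate} bounds the latter contribution by $n^{-2}\E\bZ^{2}_{\ula,\ind}$, and item (3) with $c=3$ (so $\tfrac{1}{c+1}=\tfrac14$) bounds the remaining piece by $n^{-2}\log n\,\E\bZ^{2}_{\ula,\ind}$. Combining,
\begin{equation*}
\E\bZ^{2}_{\ula,\ind}\big[(\pee_{1/4})^{\mathsf c}\big]\lesssim_k n^{-2}\log n\,\E\bZ^{2}_{\ula,\ind}.
\end{equation*}
Adding the two pieces yields $\E\bZ^{2}_{\ula,\ind}\le C_k\,\E\bZ^{2,\tr}_{\ula,\ind}+o(1)\E\bZ^{2}_{\ula,\ind}$, and absorbing the second term into the left-hand side (valid for $n$ large, with a uniform-in-$\ula$ constant since all constants above depend only on $k$) produces the desired bound $\E\bZ^{2}_{\ula,\ind}\lesssim_k \E\bZ^{2,\tr}_{\ula,\ind}$.

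There is no real obstacle here beyond bookkeeping, since Proposition \ref{prop:2ndmo:aprioriestimate} already packages the two nontrivial a priori estimates: the exponential decay in the number of cyclic components (factor $(Ck^2/2^k)^r/r!$) and the $n^{-1}$-polynomial penalty per additional multi-cyclic edge. The only subtle point worth emphasizing in the write-up is that item (4) bounds the tail events $\{\prescript{}{2}{n}_{\textnormal{cyc}}\ge r,\prescript{}{2}{e}_{\textnormal{mult}}\ge\gamma\}$, not the exact-count events, so the decomposition must be written as a telescoping sum (or one simply uses that the summand for $(r,\gamma)$ is at most the tail bound, which is precisely what makes the geometric summation go through).
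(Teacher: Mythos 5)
Your argument is correct and is exactly the intended deduction from Proposition \ref{prop:2ndmo:aprioriestimate} (the paper leaves this corollary, like its first-moment analogue, without an explicit proof). The only simplification worth noting is that the $\pee_{1/4}$ part follows at once from item (4) with $r=\gamma=0$, since that event is then the whole of $\{\pee_{1/4}\}$ and the prefactor equals $1$, so the telescoping sum over $(r,\gamma)$ is not needed; the rest of your splitting via items (2) and (3) with $c=3$ and the absorption step are fine.
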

	Denote by $\bZ^{2}_{\ula} [\bB,\{n_\vvv \}_{\vvv \in \mathscr{F}_2}]$ the contribution to $\bZ^2_{\ula}$ from pair component coloring $\bcsig \in \pcomp^{E}$ with boundary profile $\bB[\bcsig]=\bB$ and union-free component profile $\{n_\vvv[\bcsig]\}_{\vvv \in \mathscr{F}_2}=\{n_\vvv\}_{\vvv \in \mathscr{F}_2}$. Then, the same proof for Proposition \ref{prop:1stmo:B nt decomp} extends to the second moment.
	\begin{prop}\label{prop:2ndmo:B nt decomp}
	For every $\bB\in \pbDelta_n$ and $\{n_{\vvv}\}_{\vvv\in \mathscr{F}_2}\sim \bB$, we have
	\begin{equation}\label{eq:prod:form:2ndmo}
	   \E \bZ^{2}_{\ula} [\bB, \{n_\uuu\}_{\vvv \in \mathscr{F}_2}] = \frac{n! m!}{nd !} \frac{(nd\bbB)!}{(n\dbB)! (m\hbB)!}\prod_{\bsig\in (\hat{\partial}_2^{\bullet})^{k}}\hat{v}_2(\bsig)^{m\hbB(\bsig)} \prod_{\vvv\in \mathscr{F}_2}\left[ \frac{1}{n_\vvv !} (d^{e(\vvv)-f(\vvv)}k^{f(\vvv)}J_\vvv \bw_\vvv^{\ula} )^{n_\vvv}\right],
	\end{equation}
	where $\bw_{\vvv}^{\ula} \equiv \bw^{\textnormal{com}}(\vvv)^{\ula}$ if $\vvv\in \FFF_2 \backslash\FFF_2^{\tr}$ and $\bw_{\uuu}^{\ula}\equiv \bw(\uuu)^{\ula}$ if $\uuu\in \FFF_2^{\tr}$. Thus, Stirling's approximation of $\frac{n! m!}{nd !} \frac{(nd\bbB)!}{(n\dbB)! (m\hbB)!}$ in \eqref{eq:prod:form:2ndmo} gives
	\begin{equation}\label{eq:prod:form:stir:2ndmo}
		\E \bZ^{2}_{\ula} [\bB, \{n_\vvv\}_{\vvv\in \mathscr{F}_2}] = \left(1+O_{k}\left(\frac{1}{n\kappa(\bB)}\right)\right)\frac{ e^{n\Psi_\circ(\bB)}}{p_\circ(n;\bB)} \prod_{\vvv\in \mathscr{F}_2}\left[ \frac{1}{n_\vvv !} \left(\left(\frac{e}{n}\right)^{\gamma(\vvv)}J_\vvv \bw_\vvv^{\ula} \right)^{n_\vvv}\right],
	\end{equation}
	where $\kappa(\bB)\equiv \min_{\dbB(\bsig)\neq 0,\hbB(\butau)\neq 0, \bbB(\bsigma)\neq 0}\left\{\dbB(\bsig),\hbB(\butau),\bbB(\bsigma)\right\}$, and $\Psi_\circ(\bB)$ and $p_{\circ}(n,\bB)$ are defined analogously to \eqref{eq:def:Psi:circ:p:circ:1stmo}, i.e. replace $B$ (resp. $\hat{v}$) by $\bB$ (resp. $\hat{v}_2$) on both sides of \eqref{eq:def:Psi:circ:p:circ:1stmo}.
	\end{prop}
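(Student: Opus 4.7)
The plan is to mirror the proof of Proposition \ref{prop:1stmo:B nt decomp} verbatim, replacing each single-copy object with its pair-model analogue: free components $\fff$ by union-free components $\vvv$, component colorings by pair component colorings, $\hat{v}$ by $\hat{v}_2$, the boundary profile $B$ by $\bB$, embedding numbers $J_\fff$ by $J_\vvv$, and literal-averaged weights $w^{\textnormal{com}}(\fff)^{\lambda}$ by $\bw^{\textnormal{com}}(\vvv)^{\ula}$. The second identity \eqref{eq:prod:form:stir:2ndmo} is then just Stirling's formula applied to $\frac{n!\,m!}{(nd)!}\frac{(nd\bbB)!}{(n\dbB)!(m\hbB)!}$, exactly as in the first-moment case.

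First I would introduce a labelled pair component configuration $\bsig^{\lab}=(\bsigma^{\lab}_e)_{e\in E}$ as follows: if $\bsigma^{\textnormal{com}}_e\in \hat{\partial}_2^{\bullet}$, set $\bsigma^{\lab}_e=\bsigma^{\textnormal{com}}_e$; otherwise, $e$ lies in a unique union-free component $\vvv(e)$, and we decorate $\vvv(e)$ by choosing a spanning tree of $\vvv^{\textnormal{in}}(e)$ (marking each internal edge as tree or cycle) and by labelling the half-edges with indices in $[d]$ at variables and $[k]$ at clauses, compatibly with the embedding in $\GGG$. Letting $\LLL(\vvv)$ denote the resulting set of labelled pair components of type $\vvv$ (cf.\ the union-tree case in Definition \ref{def:union:tree}), the number of labelled pair configurations collapsing to a given $\bcsig$ is $\prod_{\vvv\in \FFF_2}T_\vvv^{\,n_\vvv[\bcsig]}$, where $T_\vvv$ is the number of spanning trees of $\vvv^{\textnormal{in}}$. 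Thus, writing $\mathscr{Z}_{\ula}[\bB,(n_{\vvv^{\lab}})_{\vvv^{\lab}\in \mathscr{L}_2}]$ for the labelled partition function, we obtain the analogue of \eqref{eq:partition:label}:
\begin{equation*}
    \E\bZ^2_{\ula}[\bB,(n_{\vvv})_{\vvv\in \FFF_2}]
    =\frac{\sum \E\mathscr{Z}_{\ula}[\bB,(n_{\vvv^{\lab}})_{\vvv^{\lab}\in \mathscr{L}_2}]}{\prod_{\vvv\in\FFF_2}T_\vvv^{\,n_\vvv}},
\end{equation*}
where the numerator sum is over $(n_{\vvv^{\lab}})$ with $\sum_{\vvv^{\lab}\in\LLL(\vvv)}n_{\vvv^{\lab}}=n_\vvv$ for every $\vvv$.

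Next I would evaluate $\E\mathscr{Z}_{\ula}$ by the same matching scheme: over the randomness in literals and in the uniform matching of half-edges (a permutation in $S_{nd}$), we first locate the variable- and clause-neighborhoods of frozen and pair-separating vertices, contributing $\frac{n!}{(n\dbB)!}$ and $\frac{m!}{(m\hbB)!}$ respectively together with the factor $\prod_{\bsig\in(\hat{\partial}^\bullet_2)^k}\hat{v}_2(\bsig)^{m\hbB(\bsig)}$ coming from the average over literals at pair-separating clauses (using $\hat{\Phi}_2^{\ula}=\hat{v}_2\cdot(\hat{\Phi}^{\textnormal{m}})^{\la_1}\otimes(\hat{\Phi}^{\textnormal{m}})^{\la_2}$ inherited from Lemma \ref{lem:decompose:Phi:hat}). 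For each $\vvv^{\lab}$ we then place the $n_{\vvv^{\lab}}$ variable- and clause-neighborhoods of $\vvv^{\lab}$, tag them with an ordered list of indices $1,\dots,n_{\vvv^{\lab}}$ indicating which copy of $\vvv^{\lab}$ they belong to, and match half-edges of equal color and equal list index. There are $n_{\vvv^{\lab}}!$ lists producing the same $\bsig^{\lab}$, so the final enumeration yields
\begin{equation*}
    \E\mathscr{Z}_{\ula}[\bB,(n_{\vvv^{\lab}})]
    =\frac{n!\,m!}{(nd)!}\frac{(nd\bbB)!}{(n\dbB)!(m\hbB)!}\prod_{\bsig\in(\hat{\partial}^\bullet_2)^k}\hat{v}_2(\bsig)^{m\hbB(\bsig)}\prod_{\vvv^{\lab}\in\mathscr{L}_2}\frac{(\bw^{\textnormal{com}}(\vvv)^{\ula})^{n_{\vvv^{\lab}}}}{n_{\vvv^{\lab}}!},
\end{equation*}
where $\vvv$ is the unique union-free component whose labelled class contains $\vvv^{\lab}$.

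Finally, summing over $(n_{\vvv^{\lab}})$ subject to $\sum_{\vvv^{\lab}\in\LLL(\vvv)}n_{\vvv^{\lab}}=n_\vvv$ produces a multinomial coefficient $\frac{n_\vvv!}{\prod_{\vvv^{\lab}} n_{\vvv^{\lab}}!}$ per $\vvv$, and combining with the factor $T_\vvv^{-n_\vvv}$ from the labelled-to-unlabelled conversion, Lemma \ref{lem:w vs wcom} (applied to union-free trees with $\uuu=[\vvv]$, and to general cyclic $\vvv$ via the definition $J_\vvv=d^{1-v(\vvv)}k^{-f(\vvv)}|\LLL(\vvv)|/T_\vvv$) converts $\sum_{\vvv^{\lab}\in\LLL(\vvv)}(\bw^{\textnormal{com}}(\vvv)^{\ula})$ into $d^{e(\vvv)-f(\vvv)}k^{f(\vvv)}J_\vvv\bw_{\vvv}^{\ula}$ at the cost of a single $1/n_\vvv!$, which yields \eqref{eq:prod:form:2ndmo}. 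The Stirling approximation in \eqref{eq:prod:form:stir:2ndmo} follows by writing $\frac{n!\,m!}{(nd)!}\frac{(nd\bbB)!}{(n\dbB)!(m\hbB)!}\prod\hat{v}_2^{m\hbB}$ in the form $e^{n\Psi_\circ(\bB)}/p_\circ(n;\bB)$ with $(1+O_k(1/(n\kappa(\bB))))$ relative error, exactly as in \eqref{eq:def:Psi:circ:p:circ:1stmo}. The only bookkeeping subtlety, and the mildly non-routine check, is that $J_\vvv$ for cyclic union-free components is defined with the spanning-tree normalization $T_\vvv$, so one must confirm that the $T_\vvv^{-n_\vvv}$ produced by the labelled-to-unlabelled conversion matches precisely the $T_\vvv$ in the denominator of $J_\vvv$; this is the pair-model content of Lemma \ref{lem:w vs wcom} and proceeds by the same bijection argument already given there.
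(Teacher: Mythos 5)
Your proposal is correct and follows exactly the route the paper takes: the paper gives no separate argument for this proposition, stating only that the proof of Proposition \ref{prop:1stmo:B nt decomp} extends verbatim to the pair model, which is precisely the labelled-configuration, matching-scheme, and Lemma \ref{lem:w vs wcom} argument you reproduce with the correct pair-model substitutions. The spanning-tree normalization issue you flag at the end is handled exactly as you describe.
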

	Analogously to Definition \ref{def:opt:coloring:profile} and \ref{def:opt:bdry:1stmo}, we now define the optimal coloring and boundary profiles for the pair model. To do so, we first state the BP contraction results from \cite{ssz22}: replacing $\dot{\Phi}^{\la}, \hat{\Phi}^{\la},\bar{\Phi}^{\la}$ by $\dot{\Phi}_2^{\ula}, \hat{\Phi}_2^{\ula},\bar{\Phi}_2^{\ula}$ in \eqref{eq:pre:BP:1stmo} defines

	\begin{equation*}
	    \dot{\textnormal{BP}}_{\ula,L}:\PPP\big((\hat{\Omega}_{L})^{2}\big) \rightarrow \PPP\big((\dot{\Omega}_{L})^{2}\big),\quad\hat{\textnormal{BP}}_{\ula,L}:\PPP\big((\dot{\Omega}_L)^{2}\big) \rightarrow \PPP\big((\hat{\Omega }_L)^{2}\big).
	\end{equation*}
	Then, define $\textnormal{BP}_{\ula,L} \equiv \dot{\textnormal{BP}}_{\ula,L}\circ \hat{\textnormal{BP}}_{\ula,L}$. The BP map for the untruncated model $\textnormal{BP}_{\ula}\equiv \textnormal{BP}_{\ula,\infty}$ is analogously defined. For $C>0, (c,\kappa) \in [0,1]^2$, let $\mathbf{\Gamma}(C, c,\kappa)$ be the set of $\dbq \in \PPP\big((\dot{\Omega}_L)^2\big)$ satisfying $\dbq(\boldsymbol{\dot{\sigma}})=\dbq(\boldsymbol{\dot{\sigma}}\oplus \mathbf{1})$ for $\boldsymbol{\dot{\sigma}}\in (\dot{\Omega}_L)^2$ and 
	\begin{align}\label{eq:def:bp:contract:set:2ndmo}
	   &|\dbq(\bb_0\bb_0)-\dbq(\bb_0\bb_1)|\leq (k^9/2^{ck})\dbq(\bb\bb),~~~~\textnormal{and}~~~~ \dbq(\ff\ff)+\dbq\big(\{\ff\rr,\rr\ff\}\big)/2^{k}+\dbq(\rr\rr)/4^k \leq (C/2^k)\dbq(\bb\bb);\\
	   &\dbq\big(\{\rr\ff,\ff\rr\}\big) \leq (C/2^{k\kappa})\dbq(\bb\bb)~~~~\textnormal{and}~~~~ \dot{q}(\rr\rr)\leq C 2^{k(1-\kappa)}\dbq(\bb\bb);\\
	   &\dbq(\rr_x\dsigma)\geq (1-C/2^k)\dbq(\bb_x\dsigma)~~~~\textnormal{and}~~~~\dbq(\dsigma \rr_x)\geq (1-C/2^k)\dbq(\dsigma \bb_x)~~~~\textnormal{for all}~~~x\in \{0,1\}, \dsigma \in \dot{\Omega}.
	\end{align}
	The following proposition for $\la_1=\la_2$ was shown in \cite{ssz22} and exactly the same proof works for the general case where $\la_1,\la_2 \in [0,1]$.
	\begin{prop}[\cite{ssz22}, Proposition 5.5]
	\label{prop:BPcontraction:2ndmo}
	Fix $\ula=(\la_1,\la_2) \in [0,1]^2$ and $1\leq L \leq \infty$.
	\begin{enumerate}
	    \item The map $\textnormal{BP}_{\ula,L}$ has a unique fixed point in $\mathbf{\Gamma}(1,1)$, given by $\dbq^\star_{\ula,L}:= \dot{q}^\star_{\la_1,L}\otimes \dot{q}^\star_{\la_2,L}$ with $\dot{q}^\star_{\la,L}$ as in Proposition \ref{prop:BPcontraction:1stmo}. Moreover, for $c\in [0,1]$ and $k$ sufficiently large, there is no other fixed point of $\textnormal{BP}_{\ula,L}$ in $\mathbf{\Gamma}(c,1)$: if $\dbq\in \mathbf{\Gamma}(c,1)$, then $\textnormal{BP}_{\ula,L}\dbq\in \mathbf{\Gamma}(1,1)$, with $$||\textnormal{BP}_{\ula,L}\dbq-\dbq^\star_{\ula,L}||_1=O(k^4/2^k)||\dbq-\dbq^\star_{\ula,L}||_1.$$ Hereafter, we will simply denote $\dbq^\star_{\ula}\equiv \dbq^\star_{\ula,\infty}$.
	    \item If $\dbq\in \mathbf{\Gamma}(c,0)$ with $\dbq=\textnormal{BP}_{\ula,L}\dbq$ for some $c\in (0,1]$, then $\dbq\in \mathbf{\Gamma}(c,1)$.
	\end{enumerate}
	\end{prop}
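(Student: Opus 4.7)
The plan is to follow the proof of \cite[Proposition 5.5.A,B]{ssz16} step by step and verify that nothing in the argument uses $\la_1 = \la_2$ in an essential way. The key structural observation is that the pair-model weight factors split as $\dot{\Phi}_2^{\ula} = \dot{\Phi}^{\la_1} \otimes \dot{\Phi}^{\la_2}$, $\bar{\Phi}_2^{\ula} = \bar{\Phi}^{\la_1} \otimes \bar{\Phi}^{\la_2}$, and $\hat{\Phi}_2^{\ula} = \hat{v}_2 \cdot \big((\hat{\Phi}^{\textnormal{m}})^{\la_1} \otimes (\hat{\Phi}^{\textnormal{m}})^{\la_2}\big)$; the only genuinely non-factorizing piece is the indicator $\hat{v}_2$, which is independent of $\ula$. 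Thus $\la_1$ and $\la_2$ enter only as exponents applied separately to single-copy factors that Proposition \ref{prop:BPcontraction:1stmo} has already controlled uniformly in $\la \in [0,1]$.

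First I would verify that $\dbq^\star_{\ula,L} := \dot{q}^\star_{\la_1,L} \otimes \dot{q}^\star_{\la_2,L}$ is a fixed point of $\textnormal{BP}_{\ula,L}$. Applying $\hat{\textnormal{BP}}_{\ula,L}$ to a product measure $\dot{q}^1 \otimes \dot{q}^2$, one sums over clause neighborhoods with weight $\hat{\Phi}_2^{\ula}(\bsig)\prod_{j=2}^{k}\dot{q}^1(\sigma^1_j)\dot{q}^2(\sigma^2_j)$. Using Lemma \ref{lem:decompose:Phi:hat} to absorb the literal dependence via $\hat{v}_2$, the factorized weight separates across the two copies and reduces to the two single-copy $\hat{\textnormal{BP}}_{\la_i,L}$ maps applied to $\dot{q}^i$. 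The variable-to-clause side factorizes exactly because $\dot{\Phi}$ does. Combining the two sides, the product $\dot{q}^\star_{\la_1,L} \otimes \dot{q}^\star_{\la_2,L}$ is preserved, which gives the fixed point.

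For the contraction estimate I would decompose $\Delta := \dbq - \dbq^\star_{\ula,L}$ along the pair-spin classes $\{\rr\rr,\rr\bb,\bb\rr,\bb\bb,\rr\ff,\ff\rr,\bb\ff,\ff\bb,\ff\ff\}$ (refined by literal where needed) and track how each component propagates under $\textnormal{BP}_{\ula,L} = \dot{\textnormal{BP}}_{\ula,L} \circ \hat{\textnormal{BP}}_{\ula,L}$. The inequalities defining $\mathbf{\Gamma}(c,1)$ in \eqref{eq:def:bp:contract:set:2ndmo} — smallness of $\dbq(\ff\ff)$ relative to $\dbq(\bb\bb)$, near-equality of $\dbq(\rr_x\bsig)$ and $\dbq(\bb_x\bsig)$, and the bound on $\dbq(\{\rr\ff,\ff\rr\})$ — combined with the worst-case accounting of how many $\ff$ half-edges a clause can support (which is where the factor $k^4$ appears), yield exactly the bound $\|\textnormal{BP}_{\ula,L}\dbq - \dbq^\star_{\ula,L}\|_1 = O(k^4/2^k)\|\Delta\|_1$. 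The exponents $\la_i$ affect this analysis only through ratios of the form $\dot{\Phi}^{\la_i}/\dot{\Phi}^{\la_i}$ and $\hat{\Phi}^{\la_i}/\hat{\Phi}^{\la_i}$ arising in the perturbation expansion, and these are uniformly bounded for $\la_i \in [0,1]$, so the estimates from \cite{ssz16} transfer verbatim.

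For item (2), the bootstrap from $\mathbf{\Gamma}(c,0)$ to $\mathbf{\Gamma}(c,1)$ uses that at a fixed point the variable-to-clause update transmits a $\rr$-spin on the first copy only when at least one of the $d-1$ incident clause-to-variable messages on that copy is forcing, yielding the extra $2^{-k}$ factor that upgrades $\kappa=0$ to $\kappa=1$ for $\dbq(\{\rr\ff,\ff\rr\})$ and symmetrically for $\dbq(\rr\rr)$. The main obstacle I anticipate is the bookkeeping: \cite{ssz16} contains several ``by symmetry'' reductions that swap the two copies, and for $\la_1 \ne \la_2$ each such step must be replaced by a pair of asymmetric estimates, one per copy, invoking Proposition \ref{prop:BPcontraction:1stmo} with the appropriate exponent. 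Because that single-copy proposition holds with constants uniform in $\la\in[0,1]$, the substitution succeeds, but one has to resist any intermediate temptation to average or identify the two exponents, and carefully check that all implicit constants in the $O(\cdot)$ notation are independent of the particular pair $(\la_1,\la_2)$.
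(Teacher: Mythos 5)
Your proposal is correct and follows exactly the route the paper takes: the paper offers no proof of its own but simply asserts that "the following proposition for $\la_1=\la_2$ was shown in \cite{ssz16} and exactly the same proof works for the general case where $\la_1,\la_2\in[0,1]$." Your elaboration — that the pair-model weights tensor-factorize with the only non-split piece being the $\ula$-independent $\hat{v}_2$, so the two exponents enter only as separately controlled single-copy factors — is precisely the reason that deferral is valid.
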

	\begin{defn}[\cite{ssz22}, Definition 5.6]
	    For $\dbq \in \PPP(\dot{\Omega}^2)$, define $\bH_{\dbq}$ analogously to \eqref{eq:H:q:1stmo} for the pair model:
	   \begin{equation}\label{eq:H:q:2ndmo}
	    \dbH_{\dbq}(\bsig)=\frac{\dot{\Phi}_2^{\ula}(\bsig)}{\dot{\mathfrak{Z}}_2}\prod_{i=1}^{d}\hbq(\hat{\bsigma}_i),\quad \hbH_{\dbq}(\bsig)=\frac{\hat{\Phi}_2^{\ula}(\bsig)}{\hat{\mathfrak{Z}}_2}\prod_{i=1}^{k}\dbq(\dot{\bsigma}_i),\quad \bbH_{\dbq}(\bsigma)=\frac{1}{\bar{\mathfrak{Z}}_2}\frac{\dbq(\dot{\bsigma})\hbq(\hat{\bsigma})}{\bar{\Phi}_2^{\ula}(\bsigma)},
	\end{equation}
	where $\dot{\mathfrak{Z}}_2\equiv \dot{\mathfrak{Z}}_{2,\dbq}, \hat{\mathfrak{Z}}_2\equiv \hat{\mathfrak{Z}}_{2,\dbq},\bar{\mathfrak{Z}}_2\equiv \bar{\mathfrak{Z}}_{2,\dbq}$ are normalizing constants and $\hbq\equiv \textnormal{BP}\dbq$.
	Then the optimal coloring profiles for the truncated pair model and the untruncated pair model are the tuples $\bH^\star_{\ula,L}:=\bH_{\dbq^\star_{\ula,L}}\equiv (\dbH^\star_{\ula,L},\hbH^\star_{\ula,L},\bbH^\star_{\ula,L})$ and $\bH^\star_{\ula}:=\bH_{\dbq^\star_{\ula}}\equiv(\dbH^\star_{\ula},\hbH^\star_{\ula},\bbH^\star_{\ula})$ respectively.
	\end{defn}
	\begin{defn}\label{def:opt:bdry:2ndmo}
	For $\ula\in [0,1]^2$, the optimal boundary profile and the optimal union-free tree profile for the pair model are defined as follows.
	\begin{itemize}
	    \item  The optimal boundary profile $\bB^\star_{\ula,L}$(resp. $\bB^\star_{\ula}$) for the truncated pair model(resp. the untruncated pair model) is defined analogously to \eqref{eq:def:optimal:bdry}, i.e. by restriction of $\bH^\star_{\ula,L}$(resp. $\bH^\star_{\ula}$) to $(\dot{\partial}^\bullet_2)^d, (\hat{\partial}_2^\bullet)^k, \hat{\partial}_2^\bullet$. Moreover, recalling Remark \ref{rem:compat:bdry:tree}, we denote $\buh^\star_{\ula,L}\equiv \buh[\bB^\star_{\ula,L}]$ and $\buh^\star_{\ula}\equiv \buh[\bB^\star_{\ula}]$.
	 \item The optimal union-free tree  profile $(\bp_{\uttt,\ula,L}^\star)_{\uttt \in \FFF_2^\tr}$ is given by a similar formula as \eqref{eq:optimal:tree:1stmo}: recalling the normalizing constant $\bar{\mathfrak{Z}}_{\dot{q}}$ for $\dot{H}_{\dot{q}}$ in \eqref{eq:H:q:1stmo}, let $\bar{\mathfrak{Z}}^\star_2 \equiv \bar{\mathfrak{Z}}_{\dot{q}^\star_{\lambda_1,L}}\cdot \bar{\mathfrak{Z}}_{\dot{q}^\star_{\lambda_2,L}} $. Similarly, let $\dot{\ZZZ}^\star_2\equiv \dot{\ZZZ}_{\hat{q}^\star_{\la_1,L}}\cdot \dot{\ZZZ}_{\hat{q}^\star_{\la_2,L}}$ and $\hat{\ZZZ}^\star_2\equiv \hat{\ZZZ}_{\dot{q}^\star_{\la_1,L}}\cdot \hat{\ZZZ}_{\dot{q}^\star_{\la_2,L}}$. Moreover, for $\bx \in \hat{\partial}_2$, define $g(\bx):= 2^{-\lambda_1 \mathds{1}\{\bx^1 = \fs \} - \lambda_2 \mathds{1}\{\bx^2 = \fs \} }.$ Then, for $\dbq^\star=\dbq^\star_{\ula,L}$ and $\hbq^\star=\hbq^\star_{\ula,L}$, we have
    \begin{equation}\label{eq:optimal:tree:2ndmo}
      \bp_{\uttt,\ula,L}^\star := \frac{J_{\uttt} \bw_{\uttt}^{\ula}}{\bar{\mathfrak{Z}}^\star_2 (\dot{\ZZZ}_2^\star)^{v(\uuu)}(\hat{\ZZZ}_2^\star)^{f(\uuu)}} \prod_{\bx \in \dot{\partial}_2} \dbq^\star(\bx)^{\eta_{\uuu}(\bx)}
     \prod_{\bx\in \hat{\partial}_2} (g(\bx) \hbq^{\star} (\bx))^{\eta_\uuu (\bx) }.
     \end{equation}
     The optimal union-free tree profile $(p_{\uuu, \ula}^\star)_{\uuu\in\FFF_2^{\tr}}$ for the untruncated model is defined by the same equation \eqref{eq:optimal:tree:2ndmo} with $\bar{\mathfrak{Z}}_2^\star, \dot{\ZZZ}_2,\hat{\ZZZ}_2,\dbq^\star$ and $\hbq^\star$ for the untruncated model.
	\end{itemize}
	\end{defn}
	In Appendix \ref{subsec:compat:pair}, we gather the compatibility results regarding the optimal union-free tree profile.
	
	The next proposition shows that the most of the contribution to the second moment comes from the boundary profiles and weights close to their optimal values. The proof is presented in Section \ref{subsec:2ndmo:resampling}.
	\begin{prop}\label{prop:maxim:2ndmo}
	For $\ula=(\la_1,\la_2) \in [0,1]^2$, denote $\bs_{\ula,L}^\star:= (s^\star_{\la_1,L},s^\star_{\la_2,L})$ and $\bs_{\ula}^\star:= (s^\star_{\la_1},s^\star_{\la_2})$. For large enough $L\geq L_0(\ula,d,k)$ and $\delta>0$, there exists $c(\delta)=c(\delta,\ula,L,d,k)>0$ such that
	\begin{equation*}
	    \E \bZ^{2,(L),\tr}_{\ula,\ind}\left[||(\bB,\bs)-(\bB^\star_{\ula,L},\bs^\star_{\ula,L})||_1 >\delta\textnormal{ and }\pee_{\frac{1}{4}}\right]\leq e^{-c(\delta)n} \E \bZ^{2,(L),\tr}_{\ula,\ind}.
	\end{equation*}
	The same holds for the untruncated model: for any $\delta>0$, there exists $c(\delta)=c(\delta,\ula,d,k)>0$ such that
	\begin{equation*}
	   \E \bZ^{2,\tr}_{\ula,\ind}\left[||(\bB,\bs)-(\bB^\star_{\ula},\bs^\star_{\ula})||_1 >\delta\textnormal{ and }\pee_{\frac{1}{4}}\right]\leq e^{-c(\delta)n} \E \bZ^{2,\tr}_{\ula,\ind}.
	\end{equation*}
	\end{prop}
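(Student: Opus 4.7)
The plan is to mirror, in the pair-model setting, the resampling argument that is used to prove Proposition \ref{prop:maxim:1stmo}, while exploiting the product structure of the optimum afforded by Proposition \ref{prop:BPcontraction:2ndmo}. First, by Proposition \ref{prop:2ndmo:aprioriestimate}(1), the contribution from pair component configurations with $(n_\vvv)_{\vvv\in\FFF_2}\notin \pee_{1/4}$ is already negligible, so we may restrict attention to configurations whose union-free component profile decays exponentially. Next, by Proposition \ref{prop:BPcontraction:2ndmo}(1), the unique \textsc{bp} fixed point is the product $\dbq^\star_{\ula,L} = \dot{q}^\star_{\la_1,L}\otimes \dot{q}^\star_{\la_2,L}$, which lies in $\mathbf{\Gamma}(1,1)$ and is a contraction on that set; the near-independence constraint $|\rho-\tfrac{1}{2}|<k^2/2^{k/2}$ (combined with the red/free bounds in the definition of $\bZ^{2,\tr}_{\ula,\ind}$) forces the empirical pair boundary profile to lie in a neighborhood of $\mathbf{\Gamma}(c,0)$ for some $c\in(0,1]$, so that Proposition \ref{prop:BPcontraction:2ndmo}(2) applies and the fixed-point structure is inherited. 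The aim is to prove that any $(\bB,\bs)$ with $\|(\bB,\bs)-(\bB^\star_{\ula,L},\bs^\star_{\ula,L})\|_1>\delta$ is pushed, under a single step of a suitable resampling Markov chain, to a strictly smaller $L^1$ distance, with a deterministic gain that accumulates to an exponential penalty after $O(1/\epsilon)$ iterations.

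Concretely, I would set up the pair-model analogue of the resampling Markov chain from Section \ref{subsec:resampling} as follows. Given $(\GGG,\bcsig)$, sample a set $Y\subset V$ of $\epsilon n$ variables that are pairwise far in $\GGG$ and such that the union-free components $\vvv\in\FFF_2$ containing any $v\in Y$ are pairwise disjoint; the rareness of large union-free components, which follows from the pair-model a priori estimate in Proposition \ref{prop:2ndmo:aprioriestimate}(1) on $\pee_{1/4}$, ensures this disjointness holds with probability bounded away from $0$ under uniform sampling of $Y$ (this is the pair analogue of the corresponding step for free trees sketched in Section \ref{subsubsec:resampling}). Then resample the pair colors, literals, and half-edge connections inside the $3/2$-neighborhood $\NNN(Y)$ from the Gibbs measure conditional on $\GGG_\partial = \GGG\setminus\NNN(Y)$ together with the pair spin configuration on $\GGG_\partial$. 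The key quantitative input is that, in expectation, this Gibbs measure around each $v\in Y$ coincides with the local law induced by the pair \textsc{bp} recursion with message measure $\dbq$ determined by the boundary profile $\bB$. Because that message measure lies in $\mathbf{\Gamma}(c,1)$ by the near-independence assumption, Proposition \ref{prop:BPcontraction:2ndmo}(1) guarantees that one application of $\textnormal{BP}_{\ula,L}$ moves $\dbq$ strictly closer to $\dbq^\star_{\ula,L}$, and this translates into a strict $L^1$ decrease of the boundary profile toward $\bB^\star_{\ula,L}$ after resampling.

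The remaining step is the convex tree-optimization estimate that turns the per-iteration $L^1$ decrease into an exponential weight penalty on the configurations with $\|(\bB,\bs)-(\bB^\star_{\ula,L},\bs^\star_{\ula,L})\|_1>\delta$. This is carried out, as in the first moment, by writing the log-ratio of the Gibbs weights before and after resampling in terms of a sum over local contributions indexed by the $3/2$-neighborhoods of $Y$, and bounding each local contribution by the strictly convex Bethe functional associated with the pair \textsc{bp} fixed point; strict convexity then yields a gain of order $\epsilon\delta^2$ per iteration. Iterating $O(1/\epsilon)$ times produces a multiplicative weight factor $e^{-c(\delta)n}$ on the event $\{\|(\bB,\bs)-(\bB^\star_{\ula,L},\bs^\star_{\ula,L})\|_1>\delta\}\cap\pee_{1/4}$, with $c(\delta)>0$ independent of $L$ in the truncated setting; the untruncated case follows by the same argument using the a priori estimate that large union-free components are exponentially rare.

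The main obstacle will be the pair-model version of the no-intersection property for resampled neighborhoods. In the first moment, this was handled by showing that free trees around uniformly chosen $Y$ are disjoint with positive probability, leveraging the exponential decay of free-tree frequencies in $v(\ttt)$ from Proposition \ref{prop:1stmo:aprioriestimate}. In the pair model, union-free components can be significantly larger, since they are built from free trees in both copies simultaneously, and the near-independence constraint alone is not enough to give immediate size control. The required bound must be extracted from Proposition \ref{prop:2ndmo:aprioriestimate}(1) together with the product structure in the near-independence regime (so that the size of a union-free component is stochastically dominated by a product of two subcritical branching sizes). Verifying this stochastic domination and controlling the error introduced by the small fraction of pairs of copies whose free trees overlap nontrivially is the technically delicate step; once this is in hand, the resampling argument proceeds essentially verbatim as in Section \ref{subsec:resampling}.
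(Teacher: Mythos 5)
Your overall strategy --- a pair-model resampling Markov chain combined with the a priori estimates and the product structure of the \textsc{bp} fixed point --- is the same family of argument as the paper's, but the mechanism you describe for extracting the exponential penalty has a genuine gap. You claim that one resampling step produces ``a strict $L^1$ decrease of the boundary profile toward $\bB^\star_{\ula,L}$'' with ``a gain of order $\eps\delta^2$ per iteration'' from strict convexity, and that iterating $O(1/\eps)$ times accumulates the factor $e^{-c(\delta)n}$. That is the mechanism for the \emph{negative-definiteness} statement (Proposition \ref{prop:negdef:2ndmo}), where one works in a small neighborhood of the optimum and the quadratic growth of the tree-optimization gap (the pair analogue of Lemma \ref{lem:quadratic:growth:Xi:1stmo}) is actually available. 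For the present proposition the bad set is $\{\|(\bB,\bs)-(\bB^\star_{\ula,L},\bs^\star_{\ula,L})\|_1>\delta\}$, which is \emph{far} from the optimum; no deterministic contraction of the empirical profile holds there, and the quadratic-growth estimate does not apply. What is needed instead is the single-step reversibility identity $\mu_\eps(\AAA_0)\le \mu_\eps(\AAA_1)\max_{A_1}\pi(A_1,\AAA_0)$ together with a \emph{uniform} lower bound $\Xi_2(\bH^{\sm})\ge G(\delta)>0$ over all sampled empirical measures arising from the bad set, which gives $\max_{A_1}\pi(A_1,\AAA_0)\le \exp(-\eps n G(\delta)/2 + O(\log n))$ and finishes the proof in one step; no iteration is required.

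Establishing that uniform lower bound is the real content of the proposition, and your proposal does not address it. It requires (i) showing that for every exponentially decaying $\dot{\bh}$ in the near-independence regime there is a unique $\dot{\bq}$ with $\dot{\bh}_{\dot{\bq}}=\dot{\bh}$, together with the lower bound $\dot{\bq}(\bb\bb^=)+\dot{\bq}(\bb\bb^{\neq})\gtrsim_{k} 1$ (Corollaries \ref{cor:hdot:exptail:q:b:lowerbd} and \ref{cor:2ndmo:hdot:uniqueness}); (ii) the resulting continuity of $\Xi_2$ on the compact set of exponentially decaying profiles; and (iii) the uniqueness of the zero of $\Xi_2$ at $\bH^\star_{\ula}$ (Lemma \ref{lem:2ndmo:Xi:unique and quad}), which is where Proposition \ref{prop:BPcontraction:2ndmo} actually enters --- to rule out non-product fixed points consistent with the data. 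Your assertion that the constraint $|\rho-\frac{1}{2}|<k^2 2^{-k/2}$ ``forces the empirical pair boundary profile to lie in a neighborhood of $\mathbf{\Gamma}(c,0)$'' is precisely the nontrivial step being skipped: $\mathbf{\Gamma}(c,0)$ is a set of message measures, not of boundary profiles, and passing from the empirical profile to a message measure in that set is the content of the appendix analysis. On the other hand, your worry about controlling union-free component sizes via stochastic domination by products of branching processes is unnecessary: the event in the statement already restricts to $\pee_{\frac{1}{4}}$, and the disjointness of the sampled neighborhoods then follows from the pair analogues of Lemmas \ref{lem:1stmo:resampling:bad:variables} and \ref{lem:1stmo:resampling:treedisjoint} (via Corollary \ref{cor:2ndmo:resampling:bad:variables}) exactly as in the first moment.
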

	Having Proposition \ref{prop:maxim:2ndmo} in hand, we can restrict our attention to the boundary profiles and weights close to the optimal. Furthermore, at the optimal profiles, it is straightforward to see the existence of optimal rescaling factor for the pair-model $\butheta^\star_{\ula}\equiv \butheta^\star\equiv \big(\btheta^\star_{\circ},\{\btheta^\star_{\bx}\}_{\bx\in \dot{\partial}_2\sqcup\hat{\partial}_2}, \btheta^\star_{s_1},\btheta^\star_{s_2}\big)$, which is analogous to \eqref{eq:def:opt:theta}: $\dot{\ZZZ}_2^\star,\hat{\ZZZ}_2^\star,\bar{\mathfrak{Z}}_2^\star,\dbq^\star$ and $\hbq^\star$ below are for $\ula$-tilted untruncated model.
	\begin{equation}\label{eq:def:opt:theta:2ndmo}
	\begin{split}
	    \btheta^\star_{\circ}&:= \log\left(\frac{(\dot{\ZZZ}^\star_2)^{\frac{k}{kd-k-d}}(\hat{\ZZZ}^\star_2)^{\frac{d}{kd-k-d}}}{\bar{\mathfrak{Z}}^\star}\right);\quad\quad\quad\quad\quad \btheta^\star_{\bx} := \log\left(\frac{\dbq^\star(\bx)}{(\dot{\ZZZ}_2^\star)^{\frac{1}{kd-k-d}}(\hat{\ZZZ}_2^\star)^{\frac{d-1}{kd-k-d}}}\right),\quad \bx\in \dot{\partial}_2;\\
	    \btheta^\star_{\bx} &:= \log\left(\frac{g(\bx)\hbq^\star(\bx)}{(\dot{\ZZZ}_2^\star)^{\frac{k-1}{kd-k-d}}(\hat{\ZZZ}_2^\star)^{\frac{1}{kd-k-d}}}\right),\quad \bx\in \hat{\partial}_2;\quad \btheta^\star_{s_1}\equiv\btheta^\star_{s_2} :=0.
	\end{split}
	\end{equation}
	$\butheta^\star_{\ula,L}\in \R^{|\dot{\partial}_2|+|\hat{\partial}_2|+3}$ is defined by the same equation as above with $\dot{\ZZZ}_2^\star,\hat{\ZZZ}_2^\star,\bar{\mathfrak{Z}}_2^\star,\dbq^\star$ and $\hbq^\star$ for $\ula$-tilted $L-$truncated model. Then, $J_{\uuu}w_{\uuu}^{\ula}\exp\big(\langle \butheta^\star_{\ula,L}, \boeta_{\uuu} \rangle\big)=\bp_{\uuu,\ula,L}^\star$ and $J_{\uuu}w_{\uuu}^{\ula}\exp\big(\langle \butheta^\star_{\ula}, \boeta_{\uuu} \rangle\big)=\bp_{\uuu,\ula}^\star$ hold, where $$\boeta_{\uuu}\equiv \big(\eta_{\uuu}(\circ),\{\eta_{\uuu}(\bx)\}_{\bx\in \dot{\partial}_2\sqcup\hat{\partial}_2}, \eta_{\uuu}(s_1),\eta_{\uuu}(s_2)\big):= \big(1,\{\eta_{\uuu}(\bx)\}_{\bx\in \dot{\partial}_2\sqcup\hat{\partial}_2},s_{\uuu}^{\lit,1},s_{\uuu}^{\lit,2}\big).$$  By perturbative analysis as done in Lemma \ref{lem:exist:theta}, we can also guarantee the existence of appropriate rescaling factor for $\bB, \bs$ close enough to the optimal, and having Proposition \ref{prop:2ndmo:aprioriestimate} and Proposition \ref{prop:2ndmo:B nt decomp} in hand, the same arguments as in Lemma \ref{lem:opt:tree:decay}, \ref{lem:conv:psi:theta} and \ref{lem:exist:free:energy:1stmo} naturally generalize to the pair model. We summarize the results for the pair model in the next proposition, which we present without proof since they follow from the same arguments as in the single copy case.
	\begin{prop}\label{prop:exist:free:energy:2ndmo}
	For $\delta>0$, denote the neighborhood of $(\bB^\star_{\ula,L},\bs^\star_{\ula,L})$ and $\bB^\star_{\ula,L}$ by
	\begin{equation}\label{eq:def:BB:delta:2ndmo}
	\begin{split}
	     &\prescript{}{2}{\BB}_{\ula,L}(\delta) \equiv \big\{(\bB,\bs)\in \pbDelta\times \R_{\geq 0}^{2}:||(\bB,\bs)-(\bB^\star_{\ula,L},\bs^\star_{\ula,L})||_1 \leq \delta\big\};\\
	     &\prescript{}{2}{\BB}^{-}_{\ula,L}(\delta) \equiv \big\{\bB\in \pbDelta:||\bB-\bB^\star_{\ula,L}||_1 \leq \delta\big\}.
	\end{split}
	\end{equation}
	$\prescript{}{2}{\BB}_{\ula}$ and $\prescript{}{2}{\BB}^{-}_{\ula}$ for the untruncated model is analogously defined. Then, there exist $\delta_0 =\delta_0(\ula,d,k)>0$ such that the following holds.
	\begin{enumerate}
	    \item For $L$ sufficiently large, the free energy of $\bB\in\prescript{}{2}{\BB}^{-}_{\ula,L}(\delta_0)$ (resp. $(\bB,\bs)\in \prescript{}{2}{\BB}_{\ula,L}(\delta_0)$), denoted by $\bF_{\ula,L}(\bB)$(resp. $\bF_{\ula,L}(\bB, \bs)$), are well-defined quantities satisfying
	\begin{equation}\label{eq:2ndmo:freeenergy:truncated}
	\begin{split}
	&\E \bZ^{2,(L),\tr}_{\ula,\ind}[\proj(\bB)] = \exp\left(n\bF_{\ula,L}(\bB)+O_{k}(\log n)\right);\\
	&\E \bZ^{2,(L),\tr}_{\ula,\bs,\ind}[\proj(\bB)] = \exp\left(n\bF_{\ula,L}(\bB,\bs)+O_{k}(\log n)\right).
	\end{split}
	\end{equation}
	\item For the untruncated model, the free energy of $\bB\in\prescript{}{2}{\BB}^{-}_{\ula}(\delta_0)$ (resp. $(\bB,\bs)\in \prescript{}{2}{\BB}_{\ula}(\delta_0)$), denoted by $\bF_{\ula}(\bB)$ (resp. $\bF_{\ula}(\bB, \bs)$) are also well-defined and satisfy the analog of \eqref{eq:2ndmo:freeenergy:truncated}, where we drop the subscript $L$ in the equation.
	\item The free energies defined above are twice differentiable in the interior of their domains, i.e. their Hessians are well-defined.
	\item $\nabla^2 \bF_{\ula,L}(\bB^\star_{\ula,L},\bs^\star_{\ula,L})$ (resp. $\nabla^2 \bF_{\ula,L}(\bB^\star_{\ula,L})$) converge in operator norm to $\nabla^2 \bF_{\ula}(\bB^\star_{\ula},\bs^\star_{\ula})$ (resp. $\nabla^2 \bF_{\ula}(\bB^\star_{\ula})$)
	\end{enumerate}
	\end{prop}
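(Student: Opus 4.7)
The plan is to mirror the single-copy arguments (Lemma \ref{lem:exist:theta}, Lemma \ref{lem:opt:tree:decay}, Lemma \ref{lem:conv:psi:theta}, Lemma \ref{lem:exist:free:energy:1stmo}) in a dictionary where free trees $\ttt\in\FFF_{\tr}$, boundary colors $\partial$, rescaling factors $\utheta_{\la,L}$, and moment generating function $\psi_{\la,L}$ are replaced by their pair-model analogues $\uuu\in \FFF_2^{\tr}$, $\dot{\partial}_2\sqcup\hat{\partial}_2$, $\butheta_{\ula,L}$, and
\[
\bpsi_{\ula,L}(\butheta):=\sum_{\uuu\in\FFF_2^{\tr}:v(\uuu)\le L} J_{\uuu}\bw_{\uuu}^{\ula}\exp(\langle\butheta,\boeta_{\uuu}\rangle).
\]
First I would construct the rescaling factors $\butheta_{\ula,L}(\bB,\bs)$ and $\butheta^{-}_{\ula,L}(\bB)$ on neighborhoods of the optimal profiles by the implicit function theorem, using the identity $J_{\uuu}\bw_{\uuu}^{\ula}\exp(\langle\butheta^\star_{\ula,L},\boeta_{\uuu}\rangle)=\bp^\star_{\uuu,\ula,L}$ and the compatibility results collected in Appendix \ref{subsec:compat:pair}. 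The positive definiteness of $\nabla^2\bpsi_{\ula,L}(\butheta^\star_{\ula,L})$ follows from
\[
v^{\top}\nabla^2\bpsi_{\ula,L}(\butheta^\star_{\ula,L})v=\sum_{\uuu}\bp^\star_{\uuu,\ula,L}\Big(\sum_{\bx}\eta_{\uuu}(\bx)v_{\bx}\Big)^{2},
\]
together with the exhibition of $|\dot{\partial}_2|+|\hat{\partial}_2|+3$ union-free trees whose $\boeta_{\uuu}$ vectors are linearly independent; this is a direct verification left as a combinatorial exercise.

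The second step is the pair-model analogue of Lemma \ref{lem:opt:tree:decay}: $\sum_{v(\uuu)=v}\bp^\star_{\uuu,\ula,L}\le 2^{-kv/2}$. The proof is by contradiction exactly as in the single-copy case — assuming the opposite, one picks $\bB$ in a small neighborhood of $\bB^\star_{\ula,L}$, uses Proposition \ref{prop:2ndmo:B nt decomp} to express $\E\bZ^{2,\tr}_{\ula,\ind}[\proj(\bB)\text{ and }\pee_{1/4}]/\E\bZ^{2,\tr}_{\ula,\ind}[\proj(\bB)]$ as a ratio of probabilities for i.i.d.\ union-free trees sampled under the tilted law $\P_{\butheta^{-}(\bB)}$, and derives a Chernoff bound contradicting the apriori estimate (4) in Proposition \ref{prop:2ndmo:aprioriestimate} combined with Proposition \ref{prop:maxim:2ndmo}. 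Once this exponential decay is in place, the uniform convergence of $\bpsi_{\ula,L}$ and its first three derivatives on a neighborhood of $\butheta^\star_{\ula}$ follows from the crude bound $\max_{\bx}\eta_{\uuu}(\bx)\le dv(\uuu)$ and the bound $\bp_{\uuu,\ula,L}(\bB,\bs)\le \bp^\star_{\uuu,\ula,L}\exp(C\delta_0 v(\uuu))$ valid on the neighborhood; the uniform convergence of $\butheta_{\ula,L}(\cdot)$ and $\butheta^{-}_{\ula,L}(\cdot)$ together with their first two derivatives then follows by the inverse function theorem applied pointwise, exactly as in Lemma \ref{lem:conv:psi:theta}.

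For items (1) and (2) of the proposition I would then imitate the proof of Lemma \ref{lem:exist:free:energy:1stmo}: Proposition \ref{prop:2ndmo:B nt decomp} rewrites the moment as a product of a Stirling factor, an exponential factor $\exp(n\bF_{\ula,L}(\bB,\bs))$, and the probability of a hitting event under $\P_{\butheta_{\ula,L}(\bB,\bs)}$; the local CLT gives that this probability is $\Omega_k(n^{-(|\dot{\partial}_2|+|\hat{\partial}_2|+2)/2})$, after the same Chernoff truncation to restrict to $\pee_{1/4}$ is performed. Items (3) and (4) are then automatic: twice-differentiability of $\bF_{\ula,L}$ and $\bF_{\ula}$ follows from twice-differentiability of $\butheta_{\ula,L}$ and $\butheta_{\ula}$, and the operator-norm convergence of Hessians follows from (3) of Lemma \ref{lem:conv:psi:theta}'s pair analogue plus the $L\to\infty$ convergence of $(\bB^\star_{\ula,L},\bs^\star_{\ula,L})$ guaranteed by Proposition \ref{prop:BPcontraction:2ndmo}.

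The main obstacle I anticipate is the decay estimate in Step 2, because it is the one place where one genuinely needs the apriori estimates to hold for the untruncated pair model rather than just bootstrapping from the first moment; in particular, the Chernoff argument requires that the rarity of large union-free components already be known through Proposition \ref{prop:2ndmo:aprioriestimate}(4), which is itself the heaviest input from Appendix \ref{subsec:apriori:secmo}. Once that decay estimate is in hand, every other step is a mechanical transcription of the single-copy proof with $\Omega\leadsto\Omega_2$, and no conceptually new difficulty arises from the larger spin space.
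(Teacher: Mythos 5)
Your proposal reproduces exactly the route the paper takes: the authors present this proposition without proof, stating explicitly that it follows by transcribing Lemma \ref{lem:exist:theta}, Lemma \ref{lem:opt:tree:decay}, Lemma \ref{lem:conv:psi:theta}, and Lemma \ref{lem:exist:free:energy:1stmo} to the pair model with Propositions \ref{prop:2ndmo:aprioriestimate} and \ref{prop:2ndmo:B nt decomp} as the moment-computation inputs, which is precisely the dictionary you set up. Your identification of the pair-model decay estimate as the one step where the untruncated apriori bound of Proposition \ref{prop:2ndmo:aprioriestimate}(4) is genuinely load-bearing matches what the paper relies on (via Appendix \ref{subsec:apriori:secmo}), and the remaining steps (IFT construction of $\butheta_{\ula,L}$, uniform convergence, Stirling plus local CLT, operator-norm convergence via Proposition \ref{prop:BPcontraction:2ndmo}) are the same mechanical transcription the authors intend.
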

    Moreover, because $\bB^\star_{\ula,L}$ and $(\bp^\star_{\uuu,\ula,L})_{\uuu\in\FFF_2^\tr}$ (resp. $\bB^\star_{\ula}$ and $(\bp^\star_{\uuu,\ula})_{\uuu\in\FFF_2^\tr})$ are defined in terms of the product measure $\dbq^\star_{\ula,L}=\dot{q}^\star_{\la_1,L}\otimes \dot{q}^\star_{\la_2,L}$(resp. $\dbq^\star_{\ula}=\dot{q}^\star_{\la_1}\otimes \dot{q}^\star_{\la_2}$), the following relations between the free energies in the single and the pair copy hold for $\ula=(\la_1,\la_2)\in [0,1]^2$:
	\begin{equation}\label{eq:freeenergy:product}
	\begin{split}
	    &\bF_{\ula,L}(\bB^\star_{\ula,L},s^\star_{\ula,L})=\bF_{\ula,L}(\bB^\star_{\ula,L})=F_{\la_1,L}(B^\star_{\la_1,L},s^\star_{\la_1,L})+F_{\la_2,L}(B^\star_{\la_2,L},s^\star_{\la_2,L});\\
	    &\bF_{\ula}(\bB^\star_{\ula},s^\star_{\ula})=\bF_{\ula}(\bB^\star_{\ula})=F_{\la_1}(B^\star_{\la_1},s^\star_{\la_1})+F_{\la_2}(B^\star_{\la_2},s^\star_{\la_2}).
	\end{split}
	\end{equation}
	The proof of \eqref{eq:freeenergy:product} is deferred to Appendix \ref{subsec:compat:pair} (see Lemma \ref{lem:freeenergy:product}). The next proposition shows the negative definiteness of the Hessian of the free energy for the pair model and its proof is given in Section \ref{subsec:2ndmo:resampling}.
	\begin{prop}\label{prop:negdef:2ndmo}
		For $\ula \in [0,1]^2$, the following holds.
		\begin{enumerate}
		\item The unique maximizer of $\bF_{\ula}(\bB,\bs),(\bB,\bs)\in\prescript{}{2}{\BB}_{\ula}(\delta_0)$ is given by $(\bB^\star_{\ula}, \bs^\star_{\ula})$. Similarly, the unique maximizer of $\bF_{\ula}(\bB),\bB\in\prescript{}{2}{\BB}^{-}_{\ula}(\delta_0)$ is given by $\bB^\star_{\ula}$. The analog for the truncated model also holds.
		\item  There exists a constant $\beta=\beta(k)>0$, which does not depend on $L$, such that for sufficiently large $L$,
		\begin{equation*}
		    \nabla^2_{\bB} \bF_{\ula,L}(\bB^\star_{\ula,L},\bs^\star_{\ula,L}), \nabla^2 \bF_{\ula,L}(\bB^\star_{\la,L})\prec -\beta I, 
		\end{equation*}
		where $\nabla^2_{\bB}$ denotes the Hessian with respect to $\bB$. Hence, $\nabla^2_{\bB} \bF_{\ula}(\bB^\star_{\ula},\bs^\star_{\ula}), \nabla^2 \bF_{\ula}(\bB^\star_{\la})\prec 0$ holds by Proposition \ref{prop:exist:free:energy:2ndmo}.
		\end{enumerate}
	\end{prop}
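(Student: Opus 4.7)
The plan is to mirror the proof of Proposition~\ref{prop:negdef} for the single-copy model, with the single-copy BP contraction (Proposition~\ref{prop:BPcontraction:1stmo}) systematically replaced by its pair-model analogue (Proposition~\ref{prop:BPcontraction:2ndmo}). As in Remark~\ref{rem:truncated:negdef:SSZ}, a weaker form of (2) with $\beta$ allowed to depend on $L$, together with (1) for the truncated model, is already extractable from the coloring-profile analysis of \cite{ssz16}, since $\bF_{\ula,L}(\bB,\bs)$ is a constrained maximum of an $\ula$-tilted free energy on the pair-model analogue of $\bDelta$. The real content is therefore to make $\beta$ uniform in $L$ and then transfer everything to the untruncated model.

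First I would set up the resampling scheme of Section~\ref{subsec:resampling} directly for the pair partition function $\bZ^{2,\tr}_{\ula,\ind}$, conditioned on $(\bB,\bs)\in \prescript{}{2}{\BB}_{\ula,L}(\delta_0)$ and on the exponential-decay event $\pee_{1/4}$. Fix a small $\eps>0$ and sample uniformly a set $Y\subset V$ of size $\eps n$; Proposition~\ref{prop:2ndmo:aprioriestimate}(1) guarantees that the frequency of union-free components of $v$ variables decays like $2^{-kv/4}$, so with probability $1-O(\eps)$ the union-free components touching distinct elements of $Y$ are pairwise disjoint and have total size $O_k(\eps n)$. I would then resample the half-edge colors, literals and matching inside $\NNN(Y)$ according to the natural Gibbs measure conditioned on $\GGG_{\partial}:=\GGG\setminus \NNN(Y)$, following the pair-model version of Definition~\ref{def:resampling:markovchain}. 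The Markov chain preserves the weighted pair measure, and the one-step improvement of the boundary profile reduces to a convex tree optimization linearized by $\textnormal{BP}_{\ula,L}$.

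The key analytic input is that $\textnormal{BP}_{\ula,L}$ is a strict contraction with rate $O(k^4/2^k)$ on the region $\mathbf{\Gamma}(c,1)$ of Proposition~\ref{prop:BPcontraction:2ndmo}, and the near-independence constraint $|\rho(\bsig)-\tfrac{1}{2}|<k^2/2^{k/2}$ together with $\pee_{1/4}$ keep all messages produced by the resampling inside this region uniformly in $L$. Iterating the local resampling $O(\eps^{-1})$ times and tracking the cumulative improvement, I expect to obtain a quadratic bound
\begin{equation*}
\bF_{\ula,L}(\bB,\bs)\;\le\;\bF_{\ula,L}(\bB^\star_{\ula,L},\bs^\star_{\ula,L})\;-\;c(k)\,\|(\bB,\bs)-(\bB^\star_{\ula,L},\bs^\star_{\ula,L})\|_1^2
\end{equation*}
valid on $\prescript{}{2}{\BB}_{\ula,L}(\delta_0)$, with $c(k)>0$ independent of $L$. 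This immediately yields both uniqueness in (1) for the truncated model and the uniform Hessian bound $\nabla^2_{\bB}\bF_{\ula,L}\prec -\beta I$ in (2); the version with $\bs$ frozen is essentially identical since $\btheta^\star_{s_1}=\btheta^\star_{s_2}=0$ makes the $s$-coordinates decouple smoothly from the $\bB$-coordinates. Finally, Proposition~\ref{prop:exist:free:energy:2ndmo}(4) transfers both conclusions to the untruncated model by sending $L\to\infty$.

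The main obstacle will be the faithful adaptation of the single-copy tree optimization of Section~\ref{subsec:resampling} to the pair setting while keeping the contraction constants $L$-independent. The structural fact that makes this possible is that at the optimum $\dbq^\star_\ula=\dot{q}^\star_{\la_1}\otimes\dot{q}^\star_{\la_2}$, so the linearization of the pair tree recursion around the optimum is essentially a tensor product of the two single-copy operators whose contraction rates are already known to be uniform in $L$ by Proposition~\ref{prop:BPcontraction:1stmo}. The delicate point will be to verify that during the resampling the joint messages never drift outside $\mathbf{\Gamma}(c,1)$, so that the $O(k^4/2^k)$ rate persists at every step; here part (2) of Proposition~\ref{prop:BPcontraction:2ndmo}, which upgrades $\mathbf{\Gamma}(c,0)$-fixed points to $\mathbf{\Gamma}(c,1)$, is what absorbs the perturbations introduced by each local update.
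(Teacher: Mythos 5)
Your proposal is correct and follows essentially the same route as the paper: the authors likewise reduce Proposition \ref{prop:negdef:2ndmo} to a pair-model version of Proposition \ref{prop:crux:negdef:1stmo} via the resampling Markov chain on union-free trees, with the $L$-uniform quadratic growth of $\Xi_{2,L}$ (Lemma \ref{lem:2ndmo:Xi:unique and quad}) supplied by the pair BP contraction of Proposition \ref{prop:BPcontraction:2ndmo} and the product structure $\dbq^\star_{\ula}=\dot{q}^\star_{\la_1}\otimes\dot{q}^\star_{\la_2}$, followed by the same recursive amplification as in Proposition \ref{prop:negdef} and the $L\to\infty$ Hessian convergence from Proposition \ref{prop:exist:free:energy:2ndmo}. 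The only cosmetic difference is that the paper's initial states $\AAA_0'$ additionally pin the union-free tree frequencies to within $n^{-1/3}$ of $p_{\uuu,\ula,L}(\bB)$ (not merely the decay event $\pee_{1/4}$), which is needed to run the one-step displacement lemma; your plan would encounter and absorb this when adapting Lemma \ref{lem:1stmo:negdef:initialstate}.
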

	\begin{remark}\label{rem:freeenergy:2ndmo:ssz}
	analog of Remark \ref{rem:truncated:negdef:SSZ} for the pair model also holds for the pair model. \cite{ssz22} analyzed the free energy of the truncated model in the pair model when $\la_1=\la_2$, but their argument works goes through the case where $\la_1\neq \la_2$. That is, we can conclude from \cite{ssz22} that for $\ula\in [0,1]^2$,
	\begin{equation}\label{eq:rem:freeenergy:2ndmo:ssz}
	    (\bB^\star_{\ula,L},\bs^\star_{\ula,L})=\textnormal{argmax}\big\{\bF_{\ula,L}(\bB,\bs): \bB\in \pbDelta, \bs\in [0,\log 2]^2\big\}\textnormal{ and } \nabla^2 \bF_{\ula,L}(\bB^\star_{\ula,L},\bs^\star_{\ula,L})\prec 0,
	\end{equation}
	where $\bF_{\ula,L}(\bB,\bs)\equiv \lim_{n\to\infty}\frac{1}{n}\log \E \bZ^{2,(L),\tr}_{\ula,\bs,\ind}[\bB]$ for  $\bB\in \pbDelta, \bs\in [0,\log 2]^{2}$ is well-defined.
	\end{remark}
	Having Proposition \ref{prop:maxim:2ndmo} and \ref{prop:negdef:2ndmo} in hand, the same computations done in the proof of Theorem~\ref{thm:1stmo:constant} and \ref{thm:1stmo:constant:s} extend to the pair model to show Proposition \ref{prop:2nd mo constant for tr} and \ref{prop:2nd mo constant for tr:s} below. Hereafter, for $\la \in [0,1]$ and $s\in [0,\log 2]$, we denote $\bZ^{2,\tr}_{\la}\equiv \bZ^{2,\tr}_{(\la,\la)}$ and $\bZ^{2,\tr}_{\la,s}\equiv \bZ^{2,\tr}_{(\la,\la),(s,s)}$ for simplicity. In general, we simply use the subscript $\la$(resp. $s$) instead of $(\la,\la)$(resp. $(s,s)$) for all the quantities defined in the pair model.
	
	\begin{prop}\label{prop:2nd mo constant for tr}
	For $\lambda \in [0,\lambda^\star]$, the constant
		\begin{equation*}
		C_2(\lambda) := 
		\lim_{n\to\infty} 
		\frac{ \E \bZ^{2,\tr}_{\lambda,\ind}}{\exp\big(2n\bF_{\la}(\bB^\star_{\la})\big)} 
		\end{equation*}
		is well-defined and continuous on $[0,\lambda^\star]$. For the truncated model with $L$ sufficiently large,
		\begin{equation*}
		C_{2,L}(\lambda) := 
		\lim_{n\to\infty} 
		\frac{ \E \bZ^{2,(L),\tr}_{\lambda,\ind}}{\exp\big(2n\bF_{\la,L}(\bB^\star_{\la,L})\big)} 
		\end{equation*}
		is well-defined and continuous on $[0,\lambda_{L}^\star]$. Furthermore, we have for each $\lambda\in[0,\lambda^\star]$ that
		\begin{equation*}
		\lim_{L\to\infty} C_{2,L}(\lambda) = C_2(\lambda).
		\end{equation*}
	\end{prop}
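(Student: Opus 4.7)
The plan is to mirror the proof of Theorem \ref{thm:1stmo:constant} term by term, replacing every first-moment ingredient with its pair-model analogue that has already been set up in Section \ref{subsec:2ndmo:independent}. First, I would combine Proposition \ref{prop:2ndmo:aprioriestimate}, Proposition \ref{prop:maxim:2ndmo}, and Proposition \ref{prop:negdef:2ndmo} to conclude
\begin{equation*}
\E\bZ^{2,\tr}_{\la,\ind}\big[\,\Vert\bB-\bB^\star_\la\Vert_1 > n^{-1/2}\log n\,\big]\,\le\,\exp(-\Omega_k(\log^2 n))\,\E\bZ^{2,\tr}_{\la,\ind},
\end{equation*}
so that only $\bB\in\pbDelta_n$ with $\Vert\bB-\bB^\star_\la\Vert_1 \le \tfrac{\log n}{\sqrt n}$ contribute to leading order. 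On this neighborhood $\bB$ has full support and $\kappa(\bB)$ is bounded below, so Proposition \ref{prop:2ndmo:B nt decomp} gives the expansion
\begin{equation*}
\E\bZ^{2,\tr}_{\la,\ind}[\bB] \,=\, \bigl(1+O_k(n^{-1})\bigr)\,\frac{\exp(n\bF_\la(\bB))}{p_\circ(n,\bB)}\cdot\frac{1}{(n\bh_\circ(\bB))!}\Bigl(\frac{n\bh_\circ(\bB)}{e}\Bigr)^{n\bh_\circ(\bB)}\P_{\butheta^{-}(\bB)}(\prescript{}{2}{\AAA}_{\buh(\bB)}),
\end{equation*}
where $\butheta^{-}(\bB)$ is the pair analogue of $\utheta^{-}(B)$ from Lemma \ref{lem:exist:theta} (its existence being guaranteed by Proposition \ref{prop:exist:free:energy:2ndmo}), and $\prescript{}{2}{\AAA}_{\buh(\bB)}$ is the event that i.i.d.\ samples $X_1,\dots,X_{n\bh_\circ(\bB)}$ from the tilted distribution on $\FFF_2^{\tr}$ sum to $n\buh(\bB)$ in their boundary statistics.

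Next, I would apply Stirling's approximation to the factorial prefactor to produce the same structural formula as in \eqref{eq:1stmo:stirling:technical-1}, and apply a local central limit theorem to $\P_{\butheta^{-}(\bB)}(\prescript{}{2}{\AAA}_{\buh(\bB)})$. The exponential decay of the pair-model optimal tree profile (the straightforward pair analogue of Lemma \ref{lem:opt:tree:decay}) gives finite second moments of the summands, so local CLT yields
\begin{equation*}
\P_{\butheta^{-}(\bB)}(\prescript{}{2}{\AAA}_{\buh(\bB)}) = \bigl(1+o_n(1)\bigr)(2\pi n)^{-|\partial_2|/2}\det\Bigl(\bigl(\bh^\star_{\la,\circ}\bigr)^{-2}\bigl[\nabla^2\bpsi^{-}_\la(\butheta^{\star,-}_\la)\bigr]_{-\circ}\Bigr)^{-1/2},
\end{equation*}
where $\partial_2 = \dot{\partial}_2\sqcup\hat{\partial}_2$. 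Counting exponents in $n$, the total polynomial power from Stirling, local CLT, and $p_\circ(n,\bB)$ equals $\tfrac{1}{2}\dim(\pbDelta)$, matching what Gaussian integration against $\exp(n\bF_\la(\bB))$ consumes.

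For the Gaussian sum, I would Taylor-expand $\bF_\la(\bB)$ around $\bB^\star_\la$; since $\nabla\bF_\la(\bB^\star_\la) = 0$ by Proposition \ref{prop:negdef:2ndmo}(1) and $\nabla^2\bF_\la(\bB^\star_\la)\prec 0$ by Proposition \ref{prop:negdef:2ndmo}(2), the Riemann sum over $\bB\in\pbDelta_n$ converges to a Gaussian integral yielding the factor $\det(-\nabla^2\bF_\la(\bB^\star_\la))^{-1/2}$. Combining everything and using the key identity \eqref{eq:freeenergy:product} that $\bF_\la(\bB^\star_\la) = 2F_\la(B^\star_\la)$, this establishes existence of $C_2(\la)$ with the explicit form
\begin{equation*}
C_2(\la) = \Bigl(\tfrac{\prod\bbB^\star_\la}{\bh^\star_{\la,\circ}\prod\dbB^\star_\la\prod\hbB^\star_\la}\Bigr)^{1/2} d^{\#/2} k^{\#/2}\det\bigl(\bigl[\nabla^2\bpsi^{-}_\la(\butheta^{\star,-}_\la)\bigr]_{-\circ}\bigr)^{-1/2}\det\bigl(-\nabla^2\bF_\la(\bB^\star_\la)\bigr)^{-1/2},
\end{equation*}
up to appropriate powers and constants; the truncated version $C_{2,L}(\la)$ has the same form with subscripts $L$ added. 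Continuity in $\la$ is immediate from this explicit formula together with the continuity of $\dbq^\star_\la$ in $\la$ (Proposition \ref{prop:BPcontraction:2ndmo}). The convergence $C_{2,L}(\la)\to C_2(\la)$ as $L\to\infty$ follows from Proposition \ref{prop:BPcontraction:2ndmo}(2) (convergence of BP fixed points), the pair analogue of Lemma \ref{lem:conv:psi:theta} (uniform convergence of $\bpsi$ and $\butheta$), and Proposition \ref{prop:exist:free:energy:2ndmo}(4) (operator-norm convergence of Hessians).

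The main technical obstacle I anticipate is verifying that the restriction to the near-independence regime $|\rho-\tfrac 1 2|<k^2/2^{k/2}$ is compatible with the Gaussian integration, i.e.\ that this restriction does not truncate the Gaussian neighborhood of $\bB^\star_\la$. This should be safe because $\bB^\star_\la$ is built from the product measure $\dbq^\star_{(\la,\la)} = \dot q^\star_\la\otimes\dot q^\star_\la$, which forces the overlap at the optimal profile to be $\tfrac 1 2$ up to $O(2^{-k})$ fluctuations, well inside the allowed window; but one must carefully record how the overlap depends on $\bB$ to confirm that the sum over $\pbDelta_n\cap\{\Vert\bB-\bB^\star_\la\Vert_1\le n^{-1/2}\log n\}$ is not distorted by intersecting with the near-independence constraint. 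Beyond this, the computation is a direct transcription of the single-copy argument, with the pair-model counting organized by Lemma \ref{lem:w vs wcom}.
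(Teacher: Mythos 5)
Your proposal is correct and follows essentially the same route as the paper, which itself only states that "the same computations done in the proof of Theorem \ref{thm:1stmo:constant} and \ref{thm:1stmo:constant:s} extend through the pair model" using Propositions \ref{prop:2ndmo:aprioriestimate}, \ref{prop:2ndmo:B nt decomp}, \ref{prop:maxim:2ndmo}, \ref{prop:exist:free:energy:2ndmo} and \ref{prop:negdef:2ndmo}; your transcription invokes exactly these ingredients in the right order. Your closing observation — that the near-independence constraint is not binding because $\bB^\star_{\ula}$ comes from the product measure $\dot q^\star_\la\otimes\dot q^\star_\la$ and the overlap is Lipschitz in $\bB$, so it stays within $O(2^{-k})+O(n^{-1/2}\log n)$ of $\tfrac12$ on the relevant neighborhood — is a legitimate point the paper leaves implicit, and you resolve it correctly.
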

	
	\begin{prop}\label{prop:2nd mo constant for tr:s}
		Let $(s_n)$ be a converging sequence whose limit is $s^\star$, satisfying $|s_n-s^\star|\leq n^{-2/3}$. Then the constant
		\begin{equation}\label{eq:def:C2:s:untrun}
	 C_2(\lambda^\star,s^\star) \equiv 
		\lim_{n\to\infty} 
		\frac{n \E \bZ^{2,\tr}_{\lambda^\star, s_n,\ind}}{\exp\big(2n\bF_{\la^\star}(\bB^\star_{\lambda^\star})\big)} 
		\end{equation}
		is well-defined regardless of the specific choice of $(s_n)$.
		For the truncated model with $L$ sufficiently large,
			\begin{equation}\label{eq:def:C2:s:trun}
	 C_{2,L}(\lambda^\star,s^\star) \equiv 
		\lim_{n\to\infty} 
		\frac{n \E \bZ^{2,(L),\tr}_{\lambda^\star, s_n,\ind}}{\exp\big(2n\bF_{\la,L}(\bB^\star_{\lambda^\star,L})\big)} 
		\end{equation}
		is well-defined. Furthermore, we have
		\begin{equation*}
		    \lim_{L \to\infty} C_{2,L}(\la^\star,s^\star)=C_{2}(\la^\star,s^\star).
		\end{equation*}
	\end{prop}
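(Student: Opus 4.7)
The plan is to follow the proof of Theorem \ref{thm:1stmo:constant:s} almost verbatim in the pair setting, with adjustments reflecting that we now impose two weight constraints $s_n^1 = s_n^2 = s_n$ rather than one, and then to handle the limit $L\to\infty$ by combining the single-copy convergence results of Section \ref{sec:1stmo} with the product structure of the optimal pair BP fixed point. First I would combine Proposition \ref{prop:2ndmo:aprioriestimate}, Proposition \ref{prop:maxim:2ndmo}, and Proposition \ref{prop:negdef:2ndmo} to restrict attention to pair component configurations whose boundary profile $\bB$ lies in the window $\|\bB-\bB^\star_{\lambda^\star}\|_1 \le \frac{\log n}{\sqrt{n}}$ (all other contributions being $\exp(-\Omega_k(\log^2 n))$ smaller after accounting for the $n$ prefactor); in this window $\kappa(\bB) \gtrsim_k 1$ and $\bB$ has full support. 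Applying Proposition \ref{prop:2ndmo:B nt decomp} then yields, for each such $\bB \in \pbDelta_n$,
\begin{equation*}
\E \bZ^{2,\tr}_{\lambda^\star, s_n, \ind}[\bB]
= (1+o_n(1))\,\frac{\exp\bigl(n\bF_{\lambda^\star}(\bB,\bs_n)\bigr)}{p_\circ(n,\bB)}\cdot \frac{1}{(n\bh_\circ(\bB))!}\Bigl(\frac{n\bh_\circ(\bB)}{e}\Bigr)^{n\bh_\circ(\bB)}\,\P_{\butheta_{\lambda^\star}(\bB,\bs_n)}\bigl(\AAA_{\buh(\bB),\bs_n}\bigr),
\end{equation*}
where $\bs_n = (s_n,s_n)$ and the probability is taken over i.i.d.\ rescaled samples from $\FFF_2^{\tr}$, with $\AAA_{\buh(\bB),\bs_n}$ the event that the empirical profile matches $n\buh(\bB)$ and both weight sums lie in $[ns_n, ns_n+1)$. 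The pair-model local CLT (which follows from the analogues of Lemma \ref{lem:opt:tree:decay} and Lemma \ref{lem:conv:psi:theta} for $\FFF_2^{\tr}$, obtained via the product structure $\dbq^\star_{\ula} = \dot q^\star_{\la_1}\otimes \dot q^\star_{\la_2}$) then gives
\begin{equation*}
\P_{\butheta_{\lambda^\star}(\bB,\bs_n)}\bigl(\AAA_{\buh(\bB),\bs_n}\bigr) = (1+o_n(1))\,(2\pi n)^{-(|\dot{\partial}_2|+|\hat{\partial}_2|+2)/2}\,\det\!\Bigl([\nabla^2 \psi^{(2)}_{\lambda^\star}(\butheta^\star_{\lambda^\star})]_{-\circ}\bigr/\bigl(\bh^\star_{\lambda^\star}(\circ)\bigr)^{2}\Bigr)^{-1/2},
\end{equation*}
where the extra factor $n^{-1}$ compared to the single-copy case (cf.\ \eqref{eq:B:1stmo:local:CLT}) is the source of the prefactor $n$ on the left-hand side of \eqref{eq:def:C2:s:untrun}.

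Next, I would Taylor-expand $\bF_{\lambda^\star}(\bB,\bs_n)$ around $(\bB^\star_{\lambda^\star}, \bs^\star_{\lambda^\star})$. Since $|s_n - s^\star| \le n^{-2/3}$, the linear-in-$\bs$ and cross-in-$(\bs,\bB)$ terms contribute only $o_n(1)$ after summing over $\bB$ in the $\frac{\log n}{\sqrt{n}}$-window, so effectively
\begin{equation*}
n\bF_{\lambda^\star}(\bB,\bs_n) = n\bF_{\lambda^\star}(\bB^\star_{\lambda^\star}, \bs^\star_{\lambda^\star}) + \tfrac{n}{2}\bigl\langle \bB-\bB^\star_{\lambda^\star},\, \nabla^2_{\bB} \bF_{\lambda^\star}(\bB^\star_{\lambda^\star},\bs^\star_{\lambda^\star})(\bB-\bB^\star_{\lambda^\star})\bigr\rangle + o_n(1).
\end{equation*}
Proposition \ref{prop:negdef:2ndmo} provides $\nabla^2_{\bB} \bF_{\lambda^\star}(\bB^\star_{\lambda^\star},\bs^\star_{\lambda^\star}) \prec 0$, which justifies replacing the Riemann-style sum over the lattice $\pbDelta_n$ by the corresponding Gaussian integral, producing a factor $\det(-\nabla^2_{\bB} \bF_{\lambda^\star}(\bB^\star_{\lambda^\star},\bs^\star_{\lambda^\star}))^{-1/2}$ and absorbing the polynomial factors from $p_\circ(n,\bB)$, Stirling, and the local CLT into the correct explicit leading constant $C_2(\lambda^\star,s^\star)$. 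The identity \eqref{eq:freeenergy:product} (which collapses $\bF_{\lambda^\star}(\bB^\star_{\lambda^\star},\bs^\star_{\lambda^\star})$ to $2F_{\lambda^\star}(B^\star_{\lambda^\star},s^\star_{\lambda^\star})$) together with $F_{\lambda^\star}(B^\star_{\lambda^\star},s^\star_{\lambda^\star}) = F_{\lambda^\star}(B^\star_{\lambda^\star})$ (because $\theta^\star_s = 0$) matches the denominator $\exp(2n\bF_{\lambda^\star}(\bB^\star_{\lambda^\star}))$ in \eqref{eq:def:C2:s:untrun}. Independence of the limit from the sequence $s_n$ is immediate since $C_2(\lambda^\star,s^\star)$ depends on $(s_n)$ only through $s^\star$. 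The identical argument applied inside $\Omega_L^{E}$ yields $C_{2,L}(\lambda^\star,s^\star)$.

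For the convergence $C_{2,L}(\lambda^\star,s^\star) \to C_2(\lambda^\star,s^\star)$, I would combine three ingredients: (i) Proposition \ref{prop:BPcontraction:2ndmo}, which gives $\dbq^\star_{\lambda^\star,L} \to \dbq^\star_{\lambda^\star}$ and hence $(\bB^\star_{\lambda^\star,L},\bs^\star_{\lambda^\star,L}) \to (\bB^\star_{\lambda^\star},\bs^\star_{\lambda^\star})$; (ii) the pair-model analogue of Lemma \ref{lem:conv:psi:theta}, yielding uniform convergence of the rescaling factor $\butheta_{\lambda^\star,L}(\cdot)$ and of its derivatives to $\butheta_{\lambda^\star}(\cdot)$ on a common neighborhood $\prescript{}{2}{\BB}_{\lambda^\star}(\delta_0')$; and (iii) Proposition \ref{prop:exist:free:energy:2ndmo}(4), which provides operator-norm convergence of the Hessians. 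Plugging these into the explicit formula for $C_{2,L}(\lambda^\star,s^\star)$ (the pair-model analogue of the explicit constant derived in the proof of Theorem \ref{thm:1stmo:constant:s}) gives the desired limit. I expect the main obstacle to be establishing the pair-model analogue of Lemma \ref{lem:conv:psi:theta} with control uniform in $L$: the underlying space $\FFF_2^{\tr}$ is much richer than $\FFF_{\tr}$, and the crude bound $\max_{\bx} \eta_{\uuu}(\bx) \le d\,v(\uuu)$ used in the single-copy case must be replaced by a similar estimate that interacts correctly with the product-form exponential decay $\sum_{v(\uuu)=v} \bp^\star_{\uuu,\ula} \lesssim 2^{-kv/2}$. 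Fortunately, because $\bp^\star_{\uuu,\ula,L}$ factors through $\dot q^\star_{\la_1,L} \otimes \dot q^\star_{\la_2,L}$, the decay and uniform-convergence statements for the pair model reduce, after bookkeeping, to the corresponding single-copy statements already in hand, so this obstacle is manageable rather than fundamental.
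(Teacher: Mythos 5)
Your proposal is correct and follows essentially the same route as the paper, which explicitly establishes this proposition by noting that the computations of Theorems \ref{thm:1stmo:constant} and \ref{thm:1stmo:constant:s} extend to the pair model once Propositions \ref{prop:maxim:2ndmo} and \ref{prop:negdef:2ndmo} are in hand; your identification of the extra $n^{-1}$ from the two weight constraints in the pair local CLT, and your use of \eqref{eq:freeenergy:product} together with the product structure $\dbq^\star_{\ula}=\dot q^\star_{\la_1}\otimes\dot q^\star_{\la_2}$ to reduce the $L\to\infty$ convergence to the single-copy statements, are exactly the intended ingredients.
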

	The lemma below establishes an algebraic relationship between the leading constants of the first and the second moment. Although it is not necessary for the proof of Theorem \ref{thm1} and \ref{thm2}, it will play a crucial role in the companion paper \cite{nss2}.
	\begin{lemma}\label{lem:relation:leading:constants}
	Recall the constants $C_1(\la^\star),C_1(\la^\star,s^\star), C_2(\la^\star)$ and $C_2(\la^\star,s^\star)$, defined in Theorems \ref{thm:1stmo:constant}, \ref{thm:1stmo:constant:s}, Propositions \ref{prop:2nd mo constant for tr} and \ref{prop:2nd mo constant for tr:s} respectively. Then, we have
	\begin{equation}\label{eq:lem:relation:leading:constants}
	    \left(\frac{C_1(\la^\star,s^\star)}{C_1(\la^\star)}\right)^{2}=\frac{C_2(\la^\star,s^\star)}{C_2(\la^\star)}
	\end{equation}
	\end{lemma}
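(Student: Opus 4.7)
The plan is to interpret both sides of \eqref{eq:lem:relation:leading:constants} as limiting Gaussian densities under tilted Gibbs measures, and then reduce the identity to a factorization property of the asymptotic covariance by exploiting the product structure at the pair optimum (Proposition~\ref{prop:BPcontraction:2ndmo}).

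First, I would unpack the four constants from Theorems \ref{thm:1stmo:constant}, \ref{thm:1stmo:constant:s} and Propositions \ref{prop:2nd mo constant for tr}, \ref{prop:2nd mo constant for tr:s}; using $\bF_{\la^\star}(\bB^\star_{\la^\star}) = 2F_{\la^\star}(B^\star_{\la^\star})$ from \eqref{eq:freeenergy:product}, the two sides of \eqref{eq:lem:relation:leading:constants} become the cleaner expressions
\begin{equation*}
\frac{C_1(\la^\star,s^\star)}{C_1(\la^\star)} = \lim_{n\to\infty}\sqrt n\cdot \frac{\E \bZ^{\tr}_{\la^\star,s_n}}{\E \bZ^{\tr}_{\la^\star}},\qquad \frac{C_2(\la^\star,s^\star)}{C_2(\la^\star)} = \lim_{n\to\infty} n\cdot \frac{\E \bZ^{2,\tr}_{\la^\star,s_n,\ind}}{\E \bZ^{2,\tr}_{\la^\star,\ind}}.
\end{equation*}
These are rescaled probabilities that the empirical log-weight $s[\sig]=\tfrac1n\log w^{\lit}_{\GGG}(\sig)$ lies in a window of width $1/n$ around $s^\star$ (respectively $(s^\star,s^\star)$), under the $\la^\star$-tilted Gibbs measure on valid single (resp.\ pair) colorings restricted to the near-independence regime. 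The local CLT arguments already carried out inside the proofs of Theorem \ref{thm:1stmo:constant:s} and Proposition \ref{prop:2nd mo constant for tr:s} identify these rescaled probabilities with Gaussian densities at the mean,
\begin{equation*}
\frac{C_1(\la^\star,s^\star)}{C_1(\la^\star)} = \frac{1}{\sqrt{2\pi}\,\sigma},\qquad \frac{C_2(\la^\star,s^\star)}{C_2(\la^\star)} = \frac{1}{2\pi\sqrt{\det\Sigma}},
\end{equation*}
where $\sigma^2>0$ is the asymptotic variance of $\sqrt n(s[\sig]-s^\star)$ under the single-copy tilted measure and $\Sigma$ is the analogous $2\times 2$ asymptotic covariance of $\sqrt n((s[\sig^1],s[\sig^2])-(s^\star,s^\star))$ under the pair-copy tilted measure.

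The identity \eqref{eq:lem:relation:leading:constants} thus reduces to $\det\Sigma = \sigma^4$. By the copy-swap symmetry of the pair model, both diagonal entries of $\Sigma$ coincide; writing $\tau^2$ for this common value and $\rho$ for the correlation, so that $\det\Sigma = \tau^4(1-\rho^2)$, the task is to establish $\tau = \sigma$ and $\rho = 0$. Both assertions I would deduce from the product structure at the optimum: by Proposition \ref{prop:BPcontraction:2ndmo}, $\dbq^\star_{\la^\star} = \dot q^\star_{\la^\star}\otimes \dot q^\star_{\la^\star}$, hence $\bB^\star_{\la^\star}$ and the optimal rescaling $\butheta^\star_{\la^\star}$ from \eqref{eq:def:opt:theta:2ndmo} decompose additively across the two copies on every ``split'' union-free tree $\uuu$ whose underlying graph is the disjoint union of two single-copy free trees. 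Under the tilted measure $\bp^\star_{\uuu,\la^\star}$ of \eqref{eq:optimal:tree:2ndmo}, the contribution of split union-free trees to $\Sigma$ is exactly $\sigma^2 I_2$, as they factorize into an independent product of two copies of the single-copy tilted free-tree measure that produced $\sigma^2$ in the single-copy analysis.

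The main obstacle is the contribution to $\Sigma$ from the non-split union-free trees, which genuinely span both copies through the coupling factor $\hat v_2(\bsig) = \E^{\lit}[\hat I^{\lit}(\sig^1\oplus \uL)\hat I^{\lit}(\sig^2\oplus \uL)]$, the shared literal $\uL$ preventing $\hat v_2$ from splitting as $\hat v\otimes \hat v$. I would control this contribution by combining two ingredients: the pair analogue of Lemma \ref{lem:opt:tree:decay}, whose proof extends mutatis mutandis using Proposition \ref{prop:2ndmo:aprioriestimate} and shows that the mass of $\bp^\star_{\uuu,\la^\star}$ on union-free trees with $v(\uuu)=v$ decays like $2^{-kv/2}$; and the identities $\btheta^\star_{s_1}=\btheta^\star_{s_2}=0$ from \eqref{eq:def:opt:theta:2ndmo} together with the product factor $\dot q^\star_{\la^\star}\otimes \dot q^\star_{\la^\star}$, which force the contribution of non-split union-free trees to the $s_1$-$s_2$ mixed derivative of the pair cumulant generating function to vanish exactly in the limit. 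This yields $\rho=0$ and $\tau=\sigma$, and hence $\det\Sigma=\sigma^4$, completing the proof of \eqref{eq:lem:relation:leading:constants}.
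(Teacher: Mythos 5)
Your initial reductions are sound: expressing the two ratios as limiting Gaussian densities, and reducing \eqref{eq:lem:relation:leading:constants} to $\det\Sigma = \sigma^4$, is equivalent to the paper's Hessian identity $\det\big(-\nabla^2 \bF^{\max}_{\la^\star,L}(\bs^\star)\big) = \big(\tfrac{d^2}{ds^2}F^{\max}_{\la^\star,L}(s^\star)\big)^2$. However, the argument you give for $\rho=0$ has a genuine gap. The quantity $\partial_{\theta_{s_1}}\partial_{\theta_{s_2}}\psi_{2}(\butheta^\star_{\ula^\star}) = \sum_{\uuu}s^{\lit,1}_{\uuu}s^{\lit,2}_{\uuu}\,\bp^\star_{\uuu,\ula^\star}$ is the mixed second derivative of the pair cumulant generating function, and this is \emph{strictly positive}: any non-split union-free tree $\uuu$ has $s^{\lit,1}_\uuu, s^{\lit,2}_\uuu > 0$, and $\bp^\star_{\uuu,\ula^\star}>0$ on these trees. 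The product factorization $\dbq^\star_{\ula^\star} = \dot q^\star_{\la^\star}\otimes\dot q^\star_{\la^\star}$ and the first-order conditions $\btheta^\star_{s_1}=\btheta^\star_{s_2}=0$ do not force this positive sum to vanish — they are a statement about the critical point, not about the second derivatives. Moreover $\Sigma = (-\nabla^2_{\bs}\bF^{\max})^{-1}$ is a Schur complement in which the boundary-profile fluctuations are marginalized, so $\rho=0$ requires showing that the indirect coupling through $\bB$ exactly cancels the direct coupling through non-split union trees; this cancellation is what must actually be proved, and your argument does not address it.

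The paper sidesteps the covariance computation entirely. Since $\bF_{\ula,L}(\bB,\bs) = \bF_L(\bB,\bs) + \langle\ula,\bs\rangle$ with $\bF_L$ independent of $\ula$, and since the pair optimum at any $\ula=(\la_1,\la_2)$ factorizes into single-copy optima by \eqref{eq:freeenergy:product}, one can change $\la^\star$ to $\ula=(\la(s_1),\la(s_2))$ where $\la(\cdot)$ inverts $\la\mapsto s^\star_{\la,L}$; at this $\ula$ the target point $(s_1,s_2)$ \emph{is} the pair optimum, so $\bF^{\max}_{(\la(s_1),\la(s_2)),L}(s_1,s_2) = F^{\max}_{\la(s_1),L}(s_1) + F^{\max}_{\la(s_2),L}(s_2)$, and undoing the $\ula$-shift gives the additive identity $\bF^{\max}_{\la^\star,L}(s_1,s_2) = F^{\max}_{\la^\star,L}(s_1) + F^{\max}_{\la^\star,L}(s_2)$ as a pointwise identity in a neighborhood of $\bs^\star$. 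This makes $\nabla^2_{\bs}\bF^{\max}$ diagonal with equal entries by inspection, which is exactly the Schur-complement cancellation your argument would need but cannot reach directly. I would recommend replacing your vanishing-mixed-derivative claim with this reparametrization argument.
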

	\begin{proof}
	First recall from Remark \ref{rem:truncated:negdef:SSZ} and \ref{rem:freeenergy:2ndmo:ssz} that $F_{\la,L}(B,s)$ for $B\in \bDelta^{\textnormal{b}}, s\in [0,\log 2]$ and $\bF_{\ula,L}(\bB,\bs)$ for $\bB \in \pbDelta, \bs\in [0,\log 2]^{2}$ are well-defined. For $\la\in [0,1], s\in [0,\log 2]$ and $\ula\in [0,1]^2, \bs\in [0,\log 2]^2$, define
	\begin{equation*}
	    F_{\la, L}^{\textnormal{max}}(s)\equiv \max_{B\in \bDelta^{\textnormal{b}}} F_{\la,L}(B,s), \quad \bF_{\ula, L}^{\textnormal{max}}(\bs)\equiv \max_{\bB\in \pbDelta} \bF_{\ula,L}\big(\bB,\bs\big).
	\end{equation*}
	Then, the same computations done in the proof of Theorem \ref{thm:1stmo:constant:s} show the following generalization: there exists some $\delta_0=\delta_0(d,k)>0$ and continuous functions $C_{i,L}(\la^\star,\cdot):(s^\star-\delta_0,s^\star+\delta_0)\to \R, i=1,2$ such that for $L$ sufficiently large enough, $C_{i,L}(\la^\star,s_{L})$ converges to $C_{i}(\la^\star,s^\star)$ as $L\to\infty$ if $(s_L)_{L\geq 1}$ converges to $s^\star$, and  satisfy
	\begin{equation}\label{eq:leadingconstant:s:1stmo:2ndmo}
	    \lim_{n\to \infty}\sup_{|s-s^\star|<\delta_0}\Big|\frac{\sqrt{n}\E\bZ_{\la^\star,s}^{(L),\tr}}{\exp\big(nF_{\la^\star,L}^{\max}(s)\big)}-C_{1,L}(\la^\star,s)\Big|=0; \quad \lim_{n\to \infty}\sup_{|s-s^\star|<\delta_0}\Big|\frac{n\E\bZ_{\la^\star,s, \ind}^{2,(L),\tr}}{\exp\big(n\bF_{\la^\star,L}^{\textnormal{max}}(s)\big)}-C_{2,L}(\la^\star,s)\Big|=0.
	\end{equation}
	To this end, we aim to show \eqref{eq:lem:relation:leading:constants} for the truncated model, namely $\left(\frac{C_{1,L}(\la^\star,s^\star_{\la^\star,L})}{C_{1,L}(\la^\star)}\right)^{2}=\frac{C_{2,L}(\la^\star,s^\star_{\la^\star,L})}{C_{2,L}(\la^\star)}$, since taking $L\to\infty$ in the equation above shows \eqref{eq:lem:relation:leading:constants}.
	
	To this end, we first compute $\frac{C_{1,L}(\la^\star,s^\star_{\la^\star,L})}{C_{1,L}(\la^\star)}$. For sufficiently large $L$ so that $s^\star_{\la^\star,L} \in (s^\star-\delta_0,s^\star+\delta_0)$, we can use Proposition \ref{prop:maxim:1stmo} and \eqref{eq:leadingconstant:s:1stmo:2ndmo} to compute
	\begin{equation*}
	    \E \bZ_{\la^\star}^{(L),\tr}=\big(1+o_n(1)\big)\sum_{s\in (s^\star-\delta_0,s^\star+\delta_0) \cap \frac{1}{n}\Z}\E \bZ_{\la^\star,s}^{(L),\tr}=\big(1+o_n(1)\big)\sum_{s\in (s^\star-\delta_0,s^\star+\delta_0) \cap \frac{1}{n}\Z}\frac{C_{1,L}(\la^\star, s)}{\sqrt{n}}\exp\big(nF_{\la^\star,L}^{\textnormal{max}}(s)\big),
	\end{equation*}
	where $o_n(1)$ denotes quantity that tends to $0$ as $n\to\infty$. Note that by Remark \ref{rem:truncated:negdef:SSZ}, $F_{\la^\star,L}(B,s)$ is uniquely maximized at $(B,s)=(B^\star_{\la^\star,L},s^\star_{\la^\star,L})$ and strictly concave around its maximizer, which shows that $F_{\la^\star,L}^{\textnormal{max}}(s)$ is uniquely maximized at $s=s^\star_{\la^\star,L}$ with $\frac{d^{2}}{ds^{2}}F_{\la^\star,L}^{\textnormal{max}}(s^\star_{\la^\star,L})<0$. Thus, using Taylor expansion of $F_{\la^\star,L}^{\textnormal{max}}(s)$ around $s^\star_{\la^\star,L}$ and Gaussian integration in the equation above show 
	\begin{equation*}
	    \E \bZ^{(L),\tr}_{\la^\star}=\big(1+o_n(1)\big)C_{1,L}(\la^\star,s^\star_{\la^\star,L})\Big(-2\pi\frac{d^{2}}{ds^{2}}F_{\la^\star,L}^{\textnormal{max}}(s^\star_{\la^\star,L})\Big)^{-1/2}\exp\big(nF^{\max}_{\la^\star,L}(s^\star_{\la^\star,L})\big).
	\end{equation*}
	Therefore, by definition of $C_{1,L}(\cdot)$ in \eqref{eq:thm:1stmo:constant:la}, we can compute
	\begin{equation}\label{eq:ratio:leadingconstant:1stmo}
	\frac{C_{1,L}(\la^\star,s^\star_{\la^\star,L})}{C_{1,L}(\la^\star)}=\lim_{n\to\infty}\frac{C_{1,L}(\la^\star,s^\star_{\la^\star,L})\exp\big(nF_{\la^\star,L}(B^\star_{\la^\star,L})\big)}{ \E \bZ^{(L),\tr}_{\la^\star}}=\Big(-2\pi\frac{d^{2}}{ds^{2}}F_{\la^\star,L}^{\textnormal{max}}(s^\star_{\la^\star,L})\Big)^{-1/2}.
	\end{equation}
	Proceeding in the same fashion for the second moment, we have
	\begin{equation}\label{eq:ratio:leadingconstant:2ndmo}
	    \frac{C_{2,L}(\la^\star,s^\star_{\la^\star,L})}{C_{2,L}(\la^\star)}=2\pi \Big(\det\big(-\nabla^2 \bF^{\max}_{\ula^\star,L}(s^\star_{\la^\star,L},s^\star_{\la^\star,L})\big)\Big)^{-1/2}
	\end{equation}
	
	To this end, we now aim to show $\det\big(-\nabla^2 \bF^{\max}_{\ula^\star,L}(s^\star_{\la^\star,L},s^\star_{\la^\star,L})\big)=\Big(\frac{d^{2}}{ds^{2}}F_{\la^\star,L}^{\textnormal{max}}(s^\star_{\la^\star,L})\Big)^{2}$, which together with \eqref{eq:ratio:leadingconstant:1stmo} and \eqref{eq:ratio:leadingconstant:2ndmo} finishes the proof. Note that by definition, $\bZ^{2,(L),\tr}_{\ula,\bs,\ind}\asymp e^{n\langle \ula, \bs \rangle }\bN^{2,(L),\tr}_{\bs,\ind}$ holds, where $\bN^{2,(L),\tr}_{\bs,\ind}$ denotes the contribution to $\bN^{2}_{\bs}\equiv \bN_{s_1}\bN_{s_2}$ from $L$-truncated pair-colorings whose union-free components are composed of trees and they are in the near-independence regime. Hence, there exists a well-defined quantity $\bF_{L}(\bB,\bs)$, which does not depend on $\ula$, such that $$\bF_{\ula,L}(\bB,\bs)=\bF_{L}(\bB,\bs)+\big\langle \ula, \bs \big\rangle. $$ The analogous equation for the free energy in the single copy $F_{\la,L}(B,s)$ also holds. Thus, if we let $\la(s)\equiv \la_{L}(s)$ to be the inverse map of $\la\to s^\star_{\la,L}$, we can express $\bF^{\max}_{\la^\star,L}(\bs)\equiv \bF^{\max}_{(\la^\star,\la^\star),L}(\bs)$ as 
	\begin{equation}\label{eq:F:max:s:sum}
	\begin{split}
	\bF^{\max}_{\la^\star,L}(s_1,s_2)&=\bF_{\la(s_1),\la(s_2),L}^{\max}(s_1,s_2)+\big(\la^\star-\la(s_1)\big)s_1+\big(\la^\star-\la(s_2)\big)s_2\\
	&=F_{\la(s_1),L}^{\max}(s_1)+F_{\la(s_2),L}^{\max}(s_2)+\big(\la^\star-\la(s_1)\big)s_1+\big(\la^\star-\la(s_2)\big)s_2\\
	&=F_{\la^\star,L}^{\max}(s_1)+F_{\la^\star,L}^{\max}(s_2), 
	\end{split}
	\end{equation}
	where the second equation is due to \eqref{eq:freeenergy:product} and \eqref{eq:rem:freeenergy:2ndmo:ssz}. The equation above certainly implies our goal $\det\big(-\nabla^2 \bF^{\max}_{\ula^\star,L}(s^\star_{\la^\star,L},s^\star_{\la^\star,L})\big)=\Big(\frac{d^{2}}{ds^{2}}F_{\la^\star,L}^{\textnormal{max}}(s^\star_{\la^\star,L})\Big)^{2}$, which concludes the proof.
	\end{proof}
	Having Proposition \ref{prop:2ndmo:aprioriestimate}, \ref{prop:maxim:2ndmo} and \ref{prop:negdef:2ndmo} in hand, the proof of Proposition \ref{prop:ratio:uni:1stmo} extends to the second moment to show the following propositions.
	\begin{prop}\label{prop:2ndmo:comparison tr vs unic}
	Let $\lambda\in[0,\lambda^\star].$ The constant 
		\begin{equation*}
		\beta_2(\lambda):=\lim_{n\to\infty} \frac{\E \bZ^{2}_{\lambda,\ind} }{\E \bZ^{2,\tr}_{\lambda,\ind}}
		\end{equation*}
		is well-defined and continuous on $[0,\lambda^\star]$. For the truncated model $L>L_0$,
		\begin{equation*}
 	    \beta_{2,L}(\lambda):=\lim_{n\to\infty} \frac{\E \bZ^{2,(L)}_{\lambda,\ind} }{\E \bZ^{2,(L)\tr}_{\lambda,\ind}}
		\end{equation*}
		is well-defined and continuous on $[0,\lambda_L^\star]$. Furthermore, we have for each $\lambda\in[0,\lambda^\star]$ that 
		\begin{equation*}
		\lim_{L\to\infty} \beta_{2,L}(\lambda) = \beta_2(\lambda).
		\end{equation*}
	\end{prop}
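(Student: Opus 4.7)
The plan is to adapt the argument of Proposition~\ref{prop:ratio:uni:1stmo} to the pair model, where the single-copy a priori estimates, maximum estimates, and negative definiteness are replaced by their pair-model analogues (Propositions~\ref{prop:2ndmo:aprioriestimate}, \ref{prop:maxim:2ndmo}, and~\ref{prop:negdef:2ndmo}). The candidate leading constant will be $\beta_2(\lambda) = \exp\bigl(\xi^{2,\uni}(\bB^\star_\la,\bs^\star_\la)\bigr)$, where
\[
\xi^{2,\uni}(\bB,\bs) := \sum_{\vvv \in \FFF_2,\; \gamma(\vvv)=0} J_\vvv \bw_\vvv^{(\la,\la)}\exp\bigl(\langle \butheta_{(\la,\la)}(\bB,\bs), \boeta_\vvv\rangle\bigr),
\]
is the pair-model Poisson-mean generating function for unicyclic union-free components. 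Continuity in $\la$ will follow from smoothness of all the ingredients (BP fixed points, boundary profiles, rescaling factors), and convergence $\beta_{2,L}(\la) \to \beta_2(\la)$ will follow from the pair analogue of Lemma~\ref{lem:conv:psi:theta}, which in turn rests on the exponential decay of the optimal union-free-tree frequencies (the pair analogue of Lemma~\ref{lem:opt:tree:decay}).

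First I would apply~(4) of Proposition~\ref{prop:2ndmo:aprioriestimate} with $r=\log n$ and $\gamma=1$, together with Corollary~\ref{cor:2ndmo:treecontribution:constant}, to restrict attention to component configurations with $\prescript{}{2}{n}_\cyc < \log n$, $\prescript{}{2}{e}_\mult = 0$, and $(n_\vvv)_{\vvv\in\FFF_2}\in \pee_{1/2}$, at a cost that is negligible compared to $\E \bZ^{2,\tr}_{\lambda,\ind}$. After this reduction, every union-free component is either a tree or unicyclic of total size at most $O(\log n / k)$, and there are at most $\log n$ unicyclic components in total.

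Next I would split $\bB \in \pbDelta_n$ into two regimes. For $\|\bB - \bB^\star_\la\|_1 > n^{-1/3}$, I would run a disassembly/swap map $\TT$ analogous to Case~1 in the proof of Proposition~\ref{prop:ratio:uni:1stmo}: each cyclic union-free component is broken into its variables (turned into single-free-variable free trees) and clauses (turned into would-be pair-separating clauses with $\fs$ spins in each copy), and any clauses that become invalid are repaired by swapping $\fs$ edges with $\bb$ edges from a linear supply $e^\mathrm{sw}$. The key fact is that in the pair model, the ``forced-to-be-free'' condition of non-pair-separating clauses still leaves a linear fraction of swappable $\bb$ edges --- the bound in~\eqref{eq:swap:edge:linear} survives coordinate-wise because $\bbB(\hat{\partial}_2 \setminus \dot{\partial}_2^\bullet) = O(k/2^k)$. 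Since at most $O((\log n)^2)$ edges are modified, the resulting $\TT(\bB,(n_\vvv))$ lies within $O(\log^2 n / n)$ of the original $\bB$, the preimage has size $\exp(O_k(\log^3 n))$, and Proposition~\ref{prop:2ndmo:B nt decomp} gives $\E\bZ^{2,\uni}_\la[\bB, \cdot] \leq \exp(O_k(\log^3 n))\, \E\bZ^{2,\tr}_\la[\TT(\bB,\cdot)]$. Combined with Propositions~\ref{prop:maxim:2ndmo} and~\ref{prop:negdef:2ndmo}, this contribution is $\exp(-\Omega_k(n^{1/3}))\,\E \bZ^{2,\tr}_{\la,\ind}$.

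For the close regime $\|\bB - \bB^\star_\la\|_1 \leq n^{-1/3}$, I would use the expansion~\eqref{eq:prod:form:stir:2ndmo} and the local CLT with Poisson book-keeping for the number of unicyclic components, mirroring Case~2. Condition on the indices $I = \{i_1,\dots,i_r\}$ and types $J = (\vvv_1,\dots,\vvv_r) \in (\FFF_2^{\uni,\circ,n})^r$ of unicyclic components; conditional on this event, the remaining $X_j \in \FFF_2^{\tr}$ have the tree distribution $\P_{\butheta(\bB)}$ from the pair-model analogue of~\eqref{eq:def:rescaling:prob:B:s}. Because each unicyclic component contributes $\|\boeta_\vvv\|_1 = O_k(\log n)$ to the sum, the local CLT factor is unchanged to leading order, while the Poisson factor converges to $\P(Z=r)$ with $Z\sim \mathrm{Poisson}(\xi^{2,\uni}(\bB^\star_\la,\bs^\star_\la))$. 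Summing over $r < \log n$ and then over $\bB$ with $\|\bB-\bB^\star_\la\|_1 \leq n^{-1/3}$ produces the factor $e^{\xi^{2,\uni}(\bB^\star_\la,\bs^\star_\la)}$ relative to $\E \bZ^{2,\tr}_{\la,\ind}$. The main obstacle I anticipate is verifying that the pair-model swap supply in the disassembly step remains linear under the constraint~\eqref{eq:B:red:free:small} applied to each marginal $B^1,B^2$; once this is established, the rest is a direct coordinate-wise generalization of the single-copy proof.
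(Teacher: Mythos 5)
Your proposal is correct and follows essentially the same route the paper takes: the paper itself states that Proposition~\ref{prop:2ndmo:comparison tr vs unic} is obtained by running the argument of Proposition~\ref{prop:ratio:uni:1stmo} through the pair model, using Propositions~\ref{prop:2ndmo:aprioriestimate}, \ref{prop:maxim:2ndmo}, \ref{prop:negdef:2ndmo} and \ref{prop:2ndmo:B nt decomp} in place of their single-copy analogues, and you reproduce exactly that argument (a priori reduction, disassembly in the far regime, Poisson bookkeeping via local CLT in the near regime, and the pair analogue of Lemma~\ref{lem:conv:psi:theta} for $L\to\infty$). One small notational remark: since $\beta_2(\lambda)$ has no size constraint, the natural rescaling factor in your $\xi^{2,\uni}$ is $\butheta^{-}_{(\la,\la)}(\bB)$ rather than $\butheta_{(\la,\la)}(\bB,\bs)$, though at the optimal profile the two coincide because $\theta^\star_{s_1}=\theta^\star_{s_2}=0$.
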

	\begin{prop}\label{prop:2ndmo:comparison tr vs unic:s}
			Let $(s_n)$ be a converging sequence whose limit $s^\star$,  satisfying $|s_n-s^\star|\leq n^{-2/3}$.  Then, the constant
		\begin{equation*}
		\beta_2(\lambda^\star,s^\star):=\lim_{n\to\infty} \frac{\E \bZ^{2}_{\lambda^\star,s_n,\ind} }{\E \bZ^{2,\tr}_{\lambda^\star, s_n,\ind}}
		\end{equation*}
		is well-defined regardless of the specific choice of $(s_n)$. Furthermore, for the constant $\beta_2(\lambda)$ defined in Proposition \ref{prop:2ndmo:comparison tr vs unic}, we have
		\begin{equation*}
		\beta_2(\lambda^\star,s^\star) = \beta_2(\lambda^\star).
		\end{equation*}
	\end{prop}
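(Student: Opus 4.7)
The plan is to mirror the proof of Proposition \ref{prop:ratio:uni:1stmo} in the pair model, reducing the cyclic union-component contribution to a Poisson correction, and then to obtain the equality $\beta_2(\lambda^\star, s^\star) = \beta_2(\lambda^\star)$ by observing that the relevant Poisson parameter does not depend on the $\bs$-window. I start by applying Proposition \ref{prop:2ndmo:aprioriestimate} together with Corollary \ref{cor:2ndmo:treecontribution:constant} and Proposition \ref{prop:2nd mo constant for tr:s} to restrict to pair component colorings $\bcsig$ with $\prescript{}{2}{n}_\cyc[\bcsig] < \log n$, $\prescript{}{2}{e}_\mult[\bcsig] = 0$, and union-component profile in $\pee_{1/4}$; the remainder contributes at most $O_k(n^{-1/6}\log n)\,\E \bZ^{2,\tr}_{\lambda^\star, s_n, \ind}$, so it suffices to analyze the restricted quantity, which I denote $\bZ^{2,\uni}_{\lambda^\star, s_n, \ind}$.

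Next, I split the sum over boundary profiles $\bB$ into the regimes $\|\bB - \bB^\star\|_1 > n^{-1/3}$ and $\|\bB - \bB^\star\|_1 \leq n^{-1/3}$. For the far regime, I adapt the swap construction $\TT$ from Case~1 of the first-moment proof to the pair setting: for each cyclic union component I cut along all internal edges to produce single-variable union-free trees together with (possibly invalid) pair-separating clauses, then swap pairs of newly created $\fs\fs$ slots with $\bb\bb$-slots drawn from the reservoir of still-valid pair-separating clauses. The pair-model analogue of \eqref{eq:swap:edge:linear}, which I expect is the main obstacle, must be established by noting that in the near-independence regime the $\fs$-edge density is $O_k(k/2^k)$ in \emph{each} copy, so the majority of pair-separating clauses have no $\fs$-edges in either copy and thus yield $\Omega_k(m)$ swappable $\bb\bb$-slots; this dominates the at most $O_k((\log n)^2)$ problematic clauses. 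Preimages under the pair version of $\TT$ are crudely bounded by $\exp(O_k(\log^3 n))$, so Propositions \ref{prop:2ndmo:B nt decomp}, \ref{prop:maxim:2ndmo}, and \ref{prop:negdef:2ndmo} together yield a far-regime contribution of at most $\exp(-\Omega_k(n^{1/3}))\,\E \bZ^{2,\tr}_{\lambda^\star, s_n, \ind}$.

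For the close regime, restrict additionally to union components with $v(\vvv) + f(\vvv) \leq 4\log n / k$ and fix $\prescript{}{2}{n}_\cyc = r < \log n$. Applying \eqref{eq:prod:form:stir:2ndmo} and the pair-model analogue of Lemma \ref{lem:exist:theta} to express the count in terms of i.i.d.\ union components $X_1,\ldots,X_{n\bh_\circ(\bB)+r}$ drawn from the tilted distribution at $\butheta_{\lambda^\star}(\bB, \bs_n)$, I condition on the index set and type of the cyclic $X_i$; the pair-model local CLT (valid because $\|\sum \boeta_{\vvv_i}\|_1 = O_k(\log^2 n) \ll \sqrt{n}$) factors out the pure tree contribution, and the cycle count converges to $\mathrm{Poisson}(\xi^\uni_2(\bB^\star, \bs^\star))$ with
\begin{equation*}
\xi^\uni_2(\bB^\star, \bs^\star) := \sum_{\vvv \in \FFF_2,\,\gamma(\vvv) = 0} J_\vvv \bw_\vvv^{\lambda^\star} \exp\bigl(\langle \butheta^\star_{\lambda^\star}, \boeta_\vvv \rangle\bigr).
\end{equation*}
Summing over $r$ and $\bB$ and invoking Proposition \ref{prop:2nd mo constant for tr:s} yields $\beta_2(\lambda^\star, s^\star) = e^{\xi^\uni_2(\bB^\star, \bs^\star)}$, and the uniformity of all bounds in $(s_n)$ with $|s_n - s^\star| \leq n^{-2/3}$ shows independence of the choice of sequence. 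Finally, for the equality with $\beta_2(\lambda^\star)$: running the same argument without the $\bs$-window (i.e.\ in the setting of Proposition \ref{prop:2ndmo:comparison tr vs unic}) gives $\beta_2(\lambda^\star) = e^{\tilde{\xi}^\uni_2(\bB^\star)}$ with $\butheta^{\star,-}_{\lambda^\star}$ replacing $\butheta^\star_{\lambda^\star}$. But by the explicit formula \eqref{eq:def:opt:theta:2ndmo} we have $\btheta^\star_{s_1} = \btheta^\star_{s_2} = 0$, so $\langle \butheta^\star_{\lambda^\star}, \boeta_\vvv \rangle = \langle \butheta^{\star,-}_{\lambda^\star}, \boeta^{-}_\vvv \rangle$ for every $\vvv$, whence $\xi^\uni_2(\bB^\star, \bs^\star) = \tilde{\xi}^\uni_2(\bB^\star)$ and the two Poisson factors coincide.
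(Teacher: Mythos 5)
Your proposal matches the paper's approach exactly: the authors state only that ``the proof of Proposition \ref{prop:ratio:uni:1stmo} extends through the second moment,'' and your argument is precisely that extension, with the key observation $\btheta^\star_{s_1} = \btheta^\star_{s_2} = 0$ correctly supplying the final equality $\beta_2(\lambda^\star,s^\star)=\beta_2(\lambda^\star)$. One minor imprecision: cutting internal edges of a union-free component produces clause-adjacent half-edges valued in $\hat{\partial}_2$ generally, not only $\fs\fs$ (a variable frozen in one copy yields, e.g., $\fs\bb_x$ or $\bb_x\fs$), but this does not affect your linear-versus-polylogarithmic swap count, since the $\Omega_k(m)$ reservoir of $\bb\bb$-slots at pair-separating clauses is available regardless of which $\hat{\partial}_2$-colors need placing.
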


	\subsection{Correlated regime}
	\label{subsec:2ndmo:correlated}
	In this subsection, we study the contributions to the second moment of $\bN_s^{\tr}$ from the correlated regime, where $|\zeta(\bsig) - \frac{1}{2}| > k^2 2^{-k/2}$. 
	
	The total number of clusters in the correlated regime was studied \cite[Section 4]{dss16}. Although we have the additional restriction that the clusters should be of a certain size, the proof is similar to the arguments given in \cite{dss16}. To this end, we adopt similar notations as in \cite[Section 4]{dss16}: abbreviate $\srr:= \{0,1\}$ and partition $\srr\srr:=\{0,1\}^{2}$ into $\srr\srr^{=}:=\{00,11\}$ and $\srr\srr^{\neq}:=\{01,10\}$. Also, for $\bsig \in \Omega_2^{E}$, define
	\begin{equation*}
	    \alpha(\bsig):=\frac{\pi_{\srr\srr^{=}}}{\pi_{\srr\srr}},
	\end{equation*}
	where $\pi_{\srr\srr^{=}}$ (resp.$\pi_{\srr\srr}$) denote the fraction of $\srr\srr^{=}$ variables (resp. $\srr\srr$) variables in $\bsig$. Note that if $\big|\zeta(\bsig)-\frac{1}{2}\big|>k^{2}2^{-k/2}$, then $\big|  2\alpha(\bsig)-\frac{1}{2}\big|>2k^{2}2^{-k/2}-O(k2^{-k})>k2^{-k/2}$ holds. We divide the contributions to $\E\bN^{2}_{\bs}\equiv \E \bN^{\tr}_{s^1}\bN^{\tr}_{s^2}$ from the correlated regime into \textit{near-identical} and \textit{intermediate} regimes and write
	\begin{equation}\label{eq:def:corrN}
	\begin{split}
	\bN_{\bs, \textnormal{id}}^2& := \bN_{\bs}^2\Big[ \alpha(\bsig) \wedge ( 1-\alpha(\bsig)) \le 2^{-3k/4}\quad\textnormal{and}\quad(n_{\ttt}^{i})_{\ttt\in \FFF_{\tr}}\in \ee_{1/4}\textnormal{ for $i=1,2$} \Big];\\
	\bN_{\bs,\textnormal{int}}^2 &:= \bN_{\us}^2
	\Big[\big|2\alpha(\bsig) -1\big| \in \big[k2^{-k/2}, 1- 2^{-3k/4}\big]\quad\textnormal{and}\quad (n_{\ttt}^{i})_{\ttt\in \FFF_{\tr}}\in \ee_{1/4}\textnormal{ for $i=1,2$}\Big],
	\end{split}
	\end{equation}
	where $(n_{\ttt}^{i})_{\ttt\in \FFF_{\tr}}, i=1,2,$ is the free tree profile of $\bsig\in \Omega_2^{E}$ in $i$'th copy (cf. Remark \ref{rem:clustersize:2ndmo}).
	\begin{prop}\label{prop:2ndmo:correlated overlap}
		For any sequence $(\bs_n)_{n\geq 1} \in (0,\log 2)^{2}$, there exists a constant $\widetilde{C}=\widetilde{C}(\alpha,k)$ such that
			\begin{align*}
			&1.\;\; \E\bN_{\bs_n, \textnormal{int}}^2 \leq e^{-\Omega(nk^22^{-k})};& & & & & & & & & & & & & & & & & & & & &\\
			&2.\;\; \E\bN_{\bs_n, \textnormal{id}}^2  \leq \widetilde{C} \big( \E\bN_{s_n^1}  + \E\bN_{s_n^2} \big) + e^{-\Omega(n2^{-k/2})}.& & & & & & & & & & & & & & & & & & & & &
			\end{align*}
	Moreover, the analog of the second item also holds for the truncated model. Namely, if we define $\bN^{2,(L)}_{\bs,\id}$ to be the contribution to $\bZ^{(L),\tr}_{0,s^1}\bZ^{(L),\tr}_{0,s_2}$ from near-identical regime $\big|2\alpha(\bsig) -1\big| \in \big[k2^{-k/2}, 1- 2^{-3k/4}\big]$, then we have
	\begin{equation}\label{eq:truncated:identical:bound}
	    \E \bN^{2,(L)}_{\bs,\textnormal{id}}\lesssim_{k} \E \bZ_{0,s_n^1}+\E\bZ_{0,s_n^2}+ e^{-\Omega(n2^{-k/2})}
	\end{equation}
	\end{prop}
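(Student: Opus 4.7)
The plan is to adapt the arguments of \cite[Section 4]{dss16}, which established analogous bounds for the second moment of the unrestricted cluster count $\bN^2$. The cluster-size constraint enters only as a weaker upper bound in the intermediate regime, and requires a careful injection in the near-identical regime. In both cases I will use the pair first-moment formula of Proposition \ref{prop:2ndmo:B nt decomp} together with the a priori exponential decay of Proposition \ref{prop:2ndmo:aprioriestimate} to reduce to the event $\pee_{1/4}$.

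For item (1), the elementary bound $\bN^2_{\bs_n, \textnormal{int}} \le \bN^2_{\textnormal{int}}$ (dropping the size constraint entirely) reduces the claim to $\E \bN^2_{\textnormal{int}} \le e^{-\Omega(n k^2 2^{-k})}$. Decomposing by the overlap slice $\{\alpha(\bsig) = \alpha\}$ and applying Proposition \ref{prop:2ndmo:B nt decomp}, each slice contributes at most $\exp\bigl(n F^{\textnormal{pair}}(\alpha) + O_k(\log n)\bigr)$ for an appropriate Bethe rate function $F^{\textnormal{pair}}$. The essential input from \cite{dss16} is the uniform gap
\begin{equation*}
F^{\textnormal{pair}}(\alpha) \le 2 F_{\lambda^\star}(B^\star_{\lambda^\star}) - c k^2 2^{-k}
\end{equation*}
on the intermediate range $|2\alpha - 1| \in [k 2^{-k/2}, 1 - 2^{-3k/4}]$, which is a BP fixed-point rigidity statement tied to Proposition \ref{prop:BPcontraction:2ndmo}. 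Summing over $\alpha$ then yields the desired bound.

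For item (2), I will construct an explicit injection. Given a near-identical pair $(\sig^1, \sig^2)$ with $\alpha(\bsig) \wedge (1 - \alpha(\bsig)) \le 2^{-3k/4}$, applying the NAE symmetry $\ux \mapsto \neg \ux$ on one copy if necessary reduces to the case $\alpha(\bsig) \le 2^{-3k/4}$, so the frozen variables of $\sig^1$ and $\sig^2$ agree outside a set $D \subseteq V$ of size at most $n \cdot 2^{-3k/4}$. The map $(\sig^1, \sig^2) \mapsto \sig^1$ has fiber at most the number of ways to specify $D$ together with the local modifications to $\sig^1$ on $D$ and its bounded-radius neighborhood, bounded by $\binom{n}{\lceil n 2^{-3k/4} \rceil} \cdot e^{O(n k 2^{-3k/4})}$, a subexponential quantity absorbable into $\widetilde{C}$. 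Since $\sig^1$ and $\sig^2$ share nearly all of their free trees, $|\log w^{\textnormal{lit}}_\GGG(\sig^1) - \log w^{\textnormal{lit}}_\GGG(\sig^2)| = O(n 2^{-3k/4})$, so $\bN^2_{\bs_n, \textnormal{id}}$ vanishes unless $|s_n^1 - s_n^2| = O(2^{-3k/4})$, in which case the size constraint on $\sig^2$ is automatically satisfied up to finitely many adjacent windows. The additive error $e^{-\Omega(n 2^{-k/2})}$ accounts for rare local modifications that produce forbidden structures (free cycles, forcing violations), following \cite[Lemma 4.2]{dss16}. The truncated analogue \eqref{eq:truncated:identical:bound} follows from the same injection together with the trivial bound $\bN^{(L)}_{s} \le \bZ^{(L),\tr}_{0,s} \le \bZ_{0,s}$.

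The principal obstacle will be verifying the subexponential fiber bound in the injection above: one must show that frozen configurations are locally rigid, so that the number of valid local modifications on a disagreement set $D$ of size $n 2^{-3k/4}$ is at most $e^{o(n)}$, and that the rare events producing the $e^{-\Omega(n 2^{-k/2})}$ error can be isolated at precisely the claimed rate. Both follow from the forcing structure of \textsc{nae-sat} along the lines of \cite{dss16}, but the bookkeeping — in particular keeping the constant $\widetilde{C}$ independent of $n$ while tracking propagation of disagreements through forcing chains — is delicate and constitutes the main technical content of the proof.
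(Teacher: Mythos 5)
Your treatment of item (1) is fine and matches the paper: both drop the size constraint, reduce to the unrestricted pair count $\E\bZ^2_{0,\textnormal{int}}$, and invoke the rate-function gap from \cite[Lemma 4.2 and Proposition 1.1]{dss16} on the intermediate overlap range (the paper works with the explicit function $\bar{\textnormal{\textbf{a}}}(\rho)$ and its concavity rather than a BP rigidity statement, but the input is the same).

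Item (2) has a genuine gap. Your injection $(\sig^1,\sig^2)\mapsto \sig^1$ has fiber bounded by $\binom{n}{\lceil n2^{-3k/4}\rceil}\cdot e^{O(nk2^{-3k/4})}$, which you call subexponential and absorbable into $\widetilde{C}$. But $\binom{n}{\lceil n2^{-3k/4}\rceil}=e^{\Theta_k(n)}$: for fixed $k$ it grows like $e^{cn}$ with $c\asymp k2^{-3k/4}>0$, so it cannot be absorbed into a constant $\widetilde{C}(\alpha,k)$, and the resulting bound $\E\bN^2_{\bs_n,\id}\le e^{c n}\,\E\bN_{s_n^1}$ is exponentially weaker than the claim. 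More fundamentally, no purely combinatorial fiber count can close this: the statement is about expectations over the random graph, and the entropy of choosing the disagreement set $D$ must be beaten by the \emph{conditional probability} that the second, nearly identical $\{0,1,\texttt{f}\}$-configuration is again a valid frozen configuration given that the first one is. This is exactly what the paper's Lemma \ref{lem:identical:aux} supplies: conditioning on the first copy together with its free-tree profile (so that its size equals $s_n^1$, via the matching scheme of Proposition \ref{prop:2ndmo:B nt decomp}), it introduces the events $Q_{\srr}$ and $Q_{\ff}$ — each clause forcing a disagreeing frozen variable, and each free tree acquiring a second-copy frozen variable, must be wired to additional disagreeing variables through the random matching — and shows this probability decays like $e^{-ck\hat{\Delta}}$, where $\hat{\Delta}$ counts disagreements weighted by free-tree sizes. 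That exponential gain in $k\hat{\Delta}$ is what beats the $\binom{n}{\Delta}$-type entropy (see the $x\log(a/x)\le a/e$ step in \eqref{eq:identicalreg:freetreedecay} and the resulting $2^{-k\Delta/10}$ factor in Lemma \ref{lem:identical regime decay estim}), and summing over disagreement profiles then yields a genuine constant $\widetilde{C}$. Note also that the paper sidesteps your "adjacent size windows" issue by simply dropping the size constraint on the second copy and bounding by $\E\bN_{s_n^1}[\pi^1]$ alone, choosing which copy to condition on according to whether $\pi_{\srr\ff}\ge\pi_{\ff\srr}$.
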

	We remark that although \eqref{eq:truncated:identical:bound} is not needed for the current paper, it will be used in the companion paper \cite{nss2}.
	
	The proof of Proposition \ref{prop:2ndmo:correlated overlap} is deferred to Appendix \ref{subsec:app:2ndmo:corr}. In addition to Proposition \ref{prop:2ndmo:correlated overlap}, we will need a stronger version of the second statement to establish Theorem \ref{thm2} in Section \ref{sec:overlap}. Adopting similar notations as in \cite[Lemma 4.9]{dss16}, decompose $\bN^2_{\bs}=\bN^2_{\bs}[\pi]$, where $\bN^2_{\bs}[\pi]$ denotes the contribution from empirical measure $\pi$ on $\{0,1,\ff\}^{2}$. For $i=1,2$ we write $\pi^{i}$ for the projection of $\pi$ onto the $i$'th coordinate. The following lemma is analog of \cite[Lemma 4.9]{dss16}:
	

	\begin{lemma}\label{lem:identical regime decay estim}
Let $(\pi_n)$ be a sequence of probability measures on $\{0,1,\ff\}^{2}$	satisfying $\pi_n^1(\ff)\vee \pi_n^2(\ff) \le 7 \cdot 2^{-k}$ and  $\Delta\equiv \Delta[\pi_n]:=n\pi(\eta^{1}\neq \eta^{2}) \le n 2^{-k/2}$.	For any sequence $(\bs_n)_{n\geq 1} \in (0,\log 2)^{2}$, there exists a constant $\widetilde{C} = \widetilde{C}(\alpha, k)$ such that
		\begin{equation}\label{eq:2ndmo:identical:mainlem}
		\E \bN_{\bs_n,\id}^2[\pi_n] \le \widetilde{C}2^{-k\Delta/10} \Big(\E \bN_{s^1_n}[\pi^1_n] +\E  \bN_{s^2_n}[\pi^2_n] \Big) + e^{-\Omega(n2^{-k/2})}.
		\end{equation}
	\end{lemma}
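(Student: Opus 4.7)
The plan is to adapt the proof of the analogous estimate \cite[Lemma 4.9]{dss16}, which establishes the bound $\E \bN^2_{\id}[\pi_n] \lesssim 2^{-k\Delta/10}\E\bN[\pi_n^1]$ in the near-identical regime without any cluster-size restriction, and to show that imposing the size constraints $s[\sig^i]\in [ns^i_n, ns^i_n+1)$ for $i=1,2$ costs at most a constant factor. The core combinatorial input from \cite{dss16} is the following: once we enumerate the $\Delta$ variables on which $\ux^1$ and $\ux^2$ disagree and fix the shared frozen configuration on the $n-\Delta$ ``common'' variables, the remaining choice of types at the differing variables together with the literal and half-edge matching around them is constrained by the pair-NAE-SAT validity to contribute a factor of at most $C^\Delta 2^{-k\Delta/10}$ per valid first-copy configuration. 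The factor $2^{-3k/4}$ cutoff in the definition of $\bN^2_{\bs_n,\id}$ guarantees we are in the perturbative regime of \cite[Section 4]{dss16}, and the excluded boundary-profile configurations contribute at most $e^{-\Omega(n2^{-k/2})}$ as in that work.

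The new ingredient is the cluster-size window, so first I will quantify how close $s[\sig^1]$ and $s[\sig^2]$ must be. Recalling \eqref{eq:cluster:size}, the cluster size is $s[\sig^i] = \frac{1}{n}\sum_{\ttt\in\FFF_\tr} n_\ttt[\sig^i]\log w^\lit_\ttt$. Every free tree $\ttt$ that is disjoint from the $\Delta$ differing variables (including a radius-one neighborhood of them) contributes identically to both copies, so only the free trees in each copy that touch a differing variable can make $s[\sig^1]$ and $s[\sig^2]$ differ. By the a priori estimate of Proposition~\ref{prop:2ndmo:aprioriestimate}, after paying at most a multiplicative factor $1+O_k(n^{-2/3}\log n)$ we may restrict to profiles in $\pee_{1/4}$, which in particular forces every such touching free tree to have $v(\ttt)\leq C\log n / k$. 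Hence there are at most $O(\Delta \log n)$ variables in the ``affected'' free trees, giving $|n s[\sig^1]-n s[\sig^2]| = O(\Delta\log n)$.

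Next, I would decompose $\E \bN^2_{\bs_n,\id}[\pi_n]$ by conditioning on the first-copy frozen configuration $\ux^1$ together with the list of differing variables and their types. For fixed $\ux^1$ with $s[\sig^1]\in[ns^1_n, ns^1_n+1)$, the standard \cite{dss16} bound yields a factor $C^\Delta 2^{-k\Delta/10}$ for summing over pair perturbations compatible with the NAE-SAT constraints. It remains to check that the additional constraint $s[\sig^2]\in [ns^2_n, ns^2_n+1)$ is compatible with only a constant-factor loss. Given the bound $|s[\sig^1]-s[\sig^2]|=O(\Delta\log n / n)$, the second-copy size is essentially determined by the first-copy size up to a window of $O(\Delta \log n)$, which is dominated by the exponential factor $2^{-k\Delta/10}$ provided we absorb the resulting polynomial into the constant $C$. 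Thus summing over all $\pi^2_n$ consistent with $\pi^1_n$ and the fixed $\Delta$ differing variables produces at most an $O(\log n)^\Delta$ overhead, and the product $C^\Delta (\log n)^\Delta 2^{-k\Delta/10}\leq 2^{-k\Delta/10}\cdot 2^{-k\Delta/20}$ (absorbing polynomial factors using $k\geq k_0$) is still bounded by $2^{-k\Delta/10}$ after adjusting constants.

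Putting the bounds together gives
\begin{equation*}
\E \bN^2_{\bs_n,\id}[\pi_n]
\;\leq\; \widetilde{C}\, 2^{-k\Delta/10}\,\E \bN_{s^1_n}[\pi^1_n] + e^{-\Omega(n2^{-k/2})},
\end{equation*}
and the symmetric argument (conditioning on $\ux^2$ instead) yields the term with $\E \bN_{s^2_n}[\pi^2_n]$, so at the cost of a factor of two we obtain \eqref{eq:2ndmo:identical:mainlem}. The main obstacle I expect is the careful bookkeeping that shows the size-window constraint does not alter the combinatorial counting of \cite{dss16} by more than a constant factor; this requires uniform control on the number and sizes of free trees adjacent to differing variables, which is precisely what Proposition~\ref{prop:2ndmo:aprioriestimate} and Lemma~\ref{lem:opt:tree:decay} provide.
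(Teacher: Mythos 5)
Your overall plan (adapt \cite[Lemma 4.9]{dss16} and exploit the exponential decay of free-tree frequencies) matches the paper's, but the proposal misplaces the difficulty and contains a quantitative error at the point where you treat it. The second-copy size window is not an obstacle for an upper bound: adding the constraint $s[\sig^2]\in[ns_n^2,ns_n^2+1)$ only decreases the count, so one may simply sum over all second-copy sizes and bound $\E\bN^2_{\bs_n,\id}[\pi_n]$ by $\sum_{s^2}\E\bN^2_{(s^1_n,s^2),\id}[\pi_n]$. This is exactly what the paper does; the reduction $\pi_n(\ff\srr)\le\pi_n(\srr\ff)$ decides which marginal to keep, which is why both $\E\bN_{s^1_n}[\pi^1_n]$ and $\E\bN_{s^2_n}[\pi^2_n]$ appear in the statement (they are not obtained "for free by symmetry at the cost of a factor of two"). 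Your attempt to keep the window and argue a ``constant-factor loss'' leads to the inequality $C^\Delta(\log n)^\Delta 2^{-k\Delta/10}\le 2^{-k\Delta/10}\cdot 2^{-k\Delta/20}$, which is false: $(\log n)^\Delta$ grows with $n$ for every fixed $\Delta\ge 1$ and cannot be absorbed into a constant power of $2^{-k}$. (Moreover $\pi_n$ is fixed in the statement, so there is no sum over $\pi_n^2$ to perform in the first place.)

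More importantly, the step you take as a black box --- ``the standard \cite{dss16} bound yields a factor $C^\Delta 2^{-k\Delta/10}$'' --- is precisely where the new work lies. Prescribing the first-copy free-tree profile $\{n_\ttt\}$ (necessary to fix $s[\sig^1]$) destroys the uniform-matching structure that \cite{dss16} relies on; only the matching of tree-boundary half-edges to non-forcing clauses remains uniform. One must then bound the conditional probability of the event that every first-copy free tree containing a second-copy frozen variable is attached to a clause which $\underline{\eta}^2$-forces it (such a clause must also meet an $\srr\srr^{\neq}$ variable), and one must beat the entropy $\prod_\ttt\binom{n_\ttt}{\Delta_\ttt}$ of choosing which trees are affected. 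This is where the hypothesis $n_\ttt\le n2^{-ckv(\ttt)}$ from $\ee_{1/4}$ is essential: it yields a bound of the form $\exp\bigl(-\tfrac{ck}{2}\sum_\ttt v(\ttt)\Delta_\ttt+O(\Delta)\bigr)$ for the affected-tree contribution, after which the number of admissible tuples $\{\Delta_\ttt\}$ (at most $2^{4\Delta}$) is absorbed by the exponential decay. Your proposal does not address this conditional-probability computation or the associated entropy count, so as written it does not yield \eqref{eq:2ndmo:identical:mainlem}.
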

	
The proof of Lemma \ref{lem:identical regime decay estim} is deferred to Appendix \ref{subsec:app:2ndmo:corr}. 
	\begin{proof}[Proof of Theorem \ref{thm1}-$(b),(c)$] Throughout, we fix $M\in \N$. From the proof of Theorem \ref{thm1}-(a), recall the notation $s_{\circ}(C):=s^\star-\frac{\log n}{2\la^\star n}+\frac{C}{n}$ for $C\in \Z$. For any fixed $C\in \Z$, Paley-Zygmund's inequality shows
	\begin{equation*}
	    \P\Big(\bN^{\tr}_{s_{\circ}(C)}\big[\ee_{\frac{1}{4}}\big]\geq \frac{1}{2}\E\bN^{\tr}_{s_{\circ}(C)}\big[\ee_{\frac{1}{4}}\big]\Big)\geq\frac{1}{4} \frac{\Big(\E \bN^{\tr}_{s_{\circ}(C)}\big[\ee_{\frac{1}{4}}\big]\Big)^{2}}{\E \Big(\bN^{\tr}_{s_{\circ}(C)}\big[\ee_{\frac{1}{4}}\big]\Big)^2}.
	\end{equation*}
	Note that Proposition \ref{prop:1stmo:aprioriestimate}, Theorem \ref{thm:1stmo:constant} and Theorem \ref{thm:1stmo:constant:s} imply $$\E \bN^{\tr}_{s_{\circ}(C)}\big[\ee_{\frac{1}{4}}\big]=\big(1-O_{k}(n^{-\frac{3}{2}}\log n)\big)\E \bN^{\tr}_{s_{\circ}(C)} \asymp_{k} e^{-\la^\star C}.$$ Thus, there exists small enough $C_0=C_0(M,\alpha,k) \in \Z$ such that $\frac{1}{2}\E \bN^{\tr}_{s_{\circ}(C_0)}\big[\ee_{\frac{1}{4}}\big]\geq M$ holds. Moreover, $\E \Big(\bN^{\tr}_{s_{\circ}(C)}\big[\ee_{\frac{1}{4}}\big]\Big)^2\lesssim_{k}e^{-2\la^\star C}+e^{-\la^\star C}$ holds by Proposition \ref{prop:2nd mo constant for tr:s}, Proposition \ref{prop:2ndmo:comparison tr vs unic:s}, and Proposition \ref{prop:2ndmo:correlated overlap}. Hence, we have
	\begin{equation}\label{eq:thm1:first:event}
	   \P\Big(\bN^{\tr}_{s_{\circ}(C_0)}\geq M\Big)\geq 5\delta
	\end{equation}
	for some $\delta=\delta(\alpha,k)$, where we used the fact $\frac{e^{-2\la^\star C}}{e^{-2\la^\star C}+e^{-\la^\star C}}$ converges to $1$ as $C\to -\infty$. Therefore, Theorem \ref{thm1}-$(c)$ holds on the event $\AAA_1:=\Big\{\bN^{\tr}_{s_n(C_0)}\geq M\Big\}$ with $C^\prime(M,\alpha,k):=-C_0(M,\alpha,k)$.
	
	We now turn to Theorem \ref{thm1}-$(b)$. We aim to find $\left(K(j)\right)_{j\geq 1} \equiv \left(K(j,\alpha,k)\right)_{j\geq 1}$ such that for $n\geq n_0(\alpha,k)$ and $j\leq \frac{\log n}{4}$, $K(j)$ largest clusters occupy $1-e^{-j}$ fraction of the solution space. If such $\left(K(j)\right)_{j\geq 1}$ exists, then we can crudely set $\tilde{K}(j)\equiv K(j)+ 2^{e^{4j}}+2^{n_0}$ and observe that $\tilde{K}(j)$ largest clusters occupy $1-e^{-j}$ fraction of the solution space for $j\geq 1$ and $n\geq 1$.
	
	First, note that by $(a)$ of Theorem \ref{thm1}, there exists $C_{\textsf{up}}=C_{\up}(\alpha,k)$ such that
	\begin{equation*}
	  \P(\AAA_2)\leq \delta,\quad\textnormal{where}\quad\AAA_2:=\Big\{Z \geq n^{-\frac{1}{2\la^\star}}e^{ns^\star+C_{\up}}\Big\}.
	\end{equation*}
	Next, recall that in the proof of Theorem \ref{thm1}-$(a)$ (cf. \eqref{eq:thm1:proof:useful}), we showed that there exists a constant $C_k$ depending only on $k$ such that for every $C\in \Z$, $\sum_{C^\prime \leq C} \E\bZ_{1, s_n(C^\prime)}\leq C_k n^{-\frac{1}{2\la^\star}}e^{n s^\star+(1-\la^\star)C}$ holds. Thus, Markov's inequality shows that we have
	\begin{equation}\label{eq:proof:thm1:technical}
	    \P\Big(\sum_{C^\prime \leq C} \bZ_{1, s_n(C^\prime)}\geq \delta^{-1}C_kn^{-\frac{1}{2\la^\star}}e^{ns^\star+\frac{(1-\la^\star)}{2}C}\Big)\leq \delta e^{\frac{(1-\la^\star)}{2}C}.
	\end{equation}
	For $j\in \N$, let $C(j)\equiv C(j,\alpha,k)<0$ be a small enough integer so that 
	\begin{equation*}
	    \frac{1-\la^\star}{2}C(j)\leq \min\bigg\{-j\log 2\;,\; -j+C_0+\log\Big(\frac{\delta}{2C_k}\Big)\bigg\}.
	\end{equation*}
	$C(j)$ was chosen so that plugging in $C=C(j)$ into \eqref{eq:proof:thm1:technical} shows
	\begin{equation*}
	     \P\Big(\AAA_{3,j}\Big)\leq 2^{-j}\delta,\quad\textnormal{where}\quad\AAA_{3,j}:=\Big\{\sum_{C \leq C(j)} \bZ_{1, s_{\circ}(C)}\geq \big(2e^{j}\big)^{-1} n^{-\frac{1}{2\la^\star}}e^{ns^\star+C_0}\Big\}.
	\end{equation*}
	Hence, union bound shows $\P(\AAA_3)\leq \delta$, where $\AAA_3:=\cup_{j\geq 1} \AAA_{3,j}$. Note that $\E \bN_{s_{\circ}(C)}\lesssim_{k} \E \bN^{\tr}_{s_{\circ}(C)}\lesssim_{k} e^{-\la^\star C}$ holds by Theorem \ref{thm:1stmo:constant:s} and Proposition \ref{prop:ratio:uni:1stmo}. Thus, we can choose $K(j)\equiv K(j,\alpha,k)$ large enough so that
	\begin{equation*}
	    K(j) \geq \delta^{-1} 2^j\sum_{C=C(j)+1}^{C_{\up}} \E \bN_{s_{\circ}(C)}. 
	\end{equation*}
	$K(j)$ was chosen so that Markov's inequality shows
	\begin{equation*}
	      \P\Big(\AAA_{4,j}\Big)\leq 2^{-j}\delta,\quad\textnormal{where}\quad\AAA_{4,j}:=\Big\{\sum_{C=C(j)+1}^{C_{\up}}\bN_{s_{\circ}(C)}\geq K(j)\Big\}.
	\end{equation*}
	Thus, union bound shows $\P(\AAA_4)\leq \delta$, where $\AAA_4:=\cup_{j\geq 1}\AAA_{4,j}$. Next, note that by Proposition \ref{prop:1stmo:aprioriestimate},
	\begin{equation*}
	    \sum_{C\leq C_{\textsf{up}}}\E \bZ_{1,s_{\circ}(C)}\big[e_{\textnormal{mult}}\geq 1\big]\leq e^{n(1-\la^\star)s_{\circ}(C_{\textsf{up}})}\E \bZ_{\la^\star}\big[e_{\textnormal{mult}}\geq 1\big]\lesssim_{k}\frac{\log^3 n}{\sqrt{n}}n^{-\frac{1}{2\la^\star}}e^{ns^\star}.
	\end{equation*}
	Thus, by Markov's inequality, we have
	\begin{equation*}
	      \P(\AAA_5)\lesssim_{k}n^{-1/4}\log^3 n,\quad\textnormal{where}\quad\AAA_5:=\Big\{ \sum_{C\leq C_{\textsf{up}}}\bZ_{1,s_{\circ}(C)}\big[e_{\textnormal{mult}}\geq 1\big]\geq (2n^{1/4})^{-1} n^{-\frac{1}{2\la^\star}}e^{ns^\star+C_0}\Big\}.
	\end{equation*}
	Hence, $\P(\AAA_5)\leq \delta$ holds for $n\geq n_0(\alpha,k)$.

	Finally, let $\AAA:=\AAA_1 \cap \big(\cap_{2\leq i \leq 5}\AAA_i^{\textsf{c}}\big)$. Then, $\P(\AAA)\geq \delta$ holds for $n\geq n_0(\alpha,k)$. Moreover, on the event $\AAA$, $Z \geq n^{-\frac{1}{2\la^\star}}e^{ns^\star+C_0}$ holds by definition of $\AAA_1$. Thus, on the event $\AAA$, definition of $\AAA_3$ and $\AAA_5$ shows that for $j\leq \frac{\log n}{4}$, we have
	\begin{equation}\label{eq:proof:main:thm:occupy}
	    \sum_{C\leq C(j)}\bZ_{1, s_{\circ}(C)}+\sum_{C\leq C_{\textsf{up}}}\bZ_{1,s_{\circ}(C)}\big[e_{\textnormal{mult}}\geq 1\big]\leq \big((2e^{j})^{-1}+(2n^{1/4})^{-1}\big)Z\leq e^{-j}Z.
	\end{equation}
	Recall that there is at most $1$ cluster that is coarsened to the frozen configurations which do not have multi-cyclic free components. Moreover, on the event $\AAA$, $\sum_{C\geq C(j)+1} \bN_{s_{\circ}(C)}\leq K(j)$ holds by definition of $\AAA_2$ and $\AAA_4$. Therefore, \eqref{eq:proof:main:thm:occupy} shows that on the event $\AAA$, the $K(j)$ largest clusters occupy $1-e^{-j}$ fraction of the solution space for $j\leq \frac{\log n}{4}$. 
	\end{proof}

	\section{The resampling method}\label{subsec:resampling}
	The goal of this section is to prove Proposition \ref{prop:maxim:1stmo}, \ref{prop:negdef}, \ref{prop:maxim:2ndmo} and \ref{prop:negdef:2ndmo} by the resampling method. In Section \ref{subsec:localupdate}, we introduce the \textit{resampling Markov chain}, which reduces the non-convex optimization of the free energy to the convex \textit{tree optimization} by local updates. Although the definitions are nearly identical to \cite[Section 4]{ssz22}, there is a slight change due to existence of the large free trees (see Remark \ref{remark:resampling:diff:ssz} below). In Section \ref{subsec:resampling:treeopt}, we analyze the \textit{tree optimization} problem. In Section \ref{subsubsec:resampling:maximizer}, we prove Proposition \ref{prop:maxim:1stmo}. In Section \ref{subsubsec:resampling:negdef}, we prove Proposition \ref{prop:negdef}. In Section \ref{subsec:2ndmo:resampling}, we prove Proposition \ref{prop:maxim:2ndmo} and \ref{prop:negdef:2ndmo}.
	
	\subsection{The resampling Markov chain}\label{subsec:localupdate}
	Throughout this section, we fix $\la \in [0,1]$ and consider an edge in $\GGG$ to be of graph distance $1$, while an half-edge has distance $\frac{1}{2}$. Moreover, we consider the coloring configuration, i.e. we do not simplify the spin $\sigma=(\dot{\sigma},\fs)$ as $\fs$.

	First, we specify the law of the sampled variables $Y \subset V(\GGG)$.
	\begin{defn}[sampling mechanism]
	\label{def:sampling-mech}
		For $\eps>0$, define the $\eps$-sampling mechanism $\P_{\eps}(Y\mid \GGG)$ by the law of the set $Y= \{v\in V(\GGG): I_v=1\}$, where i.i.d random varaibles $\{I_v\}_{v\in V(\GGG)}$ has law $I_v \sim \textnormal{Ber}(\eps)$.
	\end{defn}
	Given $\GGG$, denote the $\frac{3}{2}$ neighborhood of $Y$ by $\NNN\equiv\NNN(Y)\equiv(\NN(Y), \equiv(\NN,\uL_{\NN})$. Here, $\uL_\NN$ includes the literals at $\delta \NN$, where $\delta\NN$ denotes the half-edges hanging at $\NN$. Observe that when the $\frac{3}{2}$ neighborhood of $v\in Y$ do not intersect, $\NNN$ is composed of $|Y|\equiv \kappa$ disjoint copies of $\frac{3}{2}$ depth tree $\DD$ illustrated below.
	
		\begin{figure}[h]
		\begin{tikzpicture}[square/.style={regular polygon,regular polygon sides=4},thick,scale=0.64, every node/.style={transform shape}]
		\node[circle,draw, scale=0.8, fill=black] (v1) at (0,0) {};
		\node[square,draw, scale=0.8] (a1) at (-2,-2) {};
		\node[square,draw, scale=0.8] (a2) at (0,-2) {};
		\node[square,draw, scale=0.8] (a3) at (2,-2) {};
		\draw (v1)--(a1);
		\draw (v1)--(a2);
		\draw (v1)--(a3);
		\draw[draw=orange] (a1)--(-3,-3);
		\draw[draw=orange] (a1)--(-1.5,-3);
		\draw[draw=orange] (a2)--(-0.5,-3);
		\draw[draw=orange] (a2)--(0.5,-3);
		\draw[draw=orange] (a3)--(1.5,-3);
		\draw[draw=orange] (a3)--(3,-3);
		\end{tikzpicture}
		\caption{$\frac{3}{2}$ depth tree $\DD$. The edges in the boundary $\delta \DD$ are highlighted orange.}\label{fig:depthonetree}
	\end{figure}
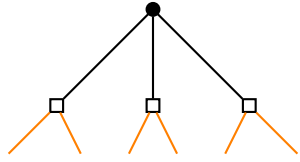
	
	Then, detaching $\NNN$ from $\GGG$ induces the cavity graph $\GGG_{\partial}\equiv (\GG_{\partial},\uL_{\GG_{\partial}})$, where $\GG_{\partial} \equiv \GG \backslash \NN$. We denote the half-edges hanging at $\GG_{\partial}$ by $\delta \GG_{\partial}$. Having sampled $Y$, we record the local statistics of spins in $\NNN(Y)$ as follows.
	\begin{defn}[sample empirical measures; \cite{ssz22}, Definition 4.1]
	\label{def:sample:empirical}
	Given an \textsc{nae-sat} instance $\GGG$ and a valid coloring $\sig\in \Omega^{E}$, let $Y\subset V(G_n), |Y|= \kappa$ be a nonempty subset of the vertices. We define $H^\sm[\GGG,Y,\sig]\equiv (\dot{H}^\sm,\hat{H}^\sm, \bar{H}^\sm)$ as follows.
	\begin{equation}\label{eq:def:H:sampled}
	\begin{split}
	    \dot{H}^\sm(\utau) &\equiv \frac{1}{\kappa}\sum_{v \in Y}\one\{\sig_{\delta v} = \utau\}\quad\textnormal{for}\quad\utau\in \Omega^{d}\\
	    \hat{H}^\sm(\utau) &\equiv \frac{1}{\kappa d}\sum_{v \in Y}\sum_{e \in \delta v}\one\{(\sig_{\delta a(e)})^{(j(e))} = \utau\}\quad\textnormal{for}\quad\utau\in \Omega^{k}\\
	    \bar{H}^\sm(\tau) &\equiv \frac{1}{\kappa d}\sum_{v \in Y}\sum_{e \in \delta v}\one\{\sigma_{e} = \tau\}\quad\textnormal{for}\quad\tau\in \Omega.
	\end{split}
	\end{equation}
	In the definition of $\hat{H}^\sm(\utau)$ above, $a(e)$ is the clause adjacent to $e$, $j(e)$ is the index of $e$ in $\delta a(e)$, and $\utau^{(j)}\equiv(\tau_j, ..., \tau_k, \tau _1,...,\tau_{j-1})$, where $\utau=(\tau_1,...,\tau_k) \in \Omega^{k}$. The use of the rotation of the indices is to distinguish the spin adjacent to $Y$ when counting $\hat{H}^\sm$. Then $H^\sm$ lies in the space $\bDelta^\sm$, defined analogously to $\bDelta$ in Definition \ref{def:empiricalssz}, except that the condition \eqref{eq:compatibility:coloringprofile} is now replaced by
	\begin{equation*}
	    \frac{1}{d} \sum_{\utau \in \Omega^{d}}\dot{H}^\sm(\utau)\sum_{i=1}^{d}\one\{\tau_i=\tau\} = \bar{H}^\sm(\tau) = \sum_{\utau \in \Omega^k}\hat{H}^\sm(\utau)\one\{\tau_1 = \tau \}, 
 	\end{equation*}
 	for every $\tau \in \Omega$. For $\kappa \in \Z_{+}$, we denote by $\bDelta^\sm_{\kappa}$ the set of $H^\sm \in \bDelta^\sm$ such that $\dot{H}^\sm,\hat{H}^\sm$ and $\bar{H}^\sm$ lies in the grid of $\frac{1}{\kappa},\frac{1}{\kappa d}$ and $\frac{1}{\kappa d}$ respectively. Moreover, denote the truncated versions of $\bDelta^\sm$ and $\bDelta^{\sm}_{\kappa}$ by $\bDelta^{\sm,(L)}$ and $\bDelta^{\sm,(L)}_{\kappa}$ respectively, where $\bDelta^{\sm,(L)}$ is the set of $H^\sm \in \bDelta^\sm$  satisfying $\textnormal{supp}{\dot{H}^\sm}\subset \Omega_L^{d}, \textnormal{supp}{\hat{H}^\sm}\subset \Omega_L^{k}, \textnormal{supp}{\bar{H}^\sm}\subset \Omega_L$. $\bDelta^{\sm,(L)}_{\kappa}$ is defined analogously.
 	
 	Furthermore, for $H\in \bDelta$, we denote $H^\sy\equiv (\dot{H},\hat{H}^\sy, \bar{H})$, where $\hat{H}^\sy$ is the average over all $k$ rotations of $\hat{H}$. Then, $H^\sy \in \bDelta^\sm$ for $H \in \bDelta$. Also, for any $H^\sm \in \bDelta^\sm$, define $\dot{h}= \dot{h}[H^\sm]\in \PPP(\dot{\Omega})$ as
 	\begin{equation}\label{eq:def:hdot}
 	    \dot{h}(\dot{\tau})\equiv \frac{1}{k-1} \sum_{\utau \in \Omega^{k}}\sum_{j=2}^{k}\one\{\dot{\tau}_{j}=\dot{\tau}\} \hat{H}^\sm(\utau).
 	\end{equation}
 	If $H^\sm =H^\sm[\GGG, Y, \sig]$, $\dot{h}=\dot{h}[H^\sm]$ is the induced empirical measure of \textit{clause-to-variable} colors on $\delta \NN$.
	\end{defn}
	Having sampled $Y$, we resample the spins and literals in $\NN(Y)$ conditioned on $\dot{h}=\dot{h}[H^\sm]$. Since the \textit{variable-to-clause} colors on $\delta \NN$ can change after resampling, we need to update the colors of the tree components intersecting $Y$, which is done by the update procedure defined below.
	
	Given $(\GGG, \sig)$ and an edge $e=(av) \in E(G_n)$, let $\dot{\ttt}(e)\equiv \dot{\ttt}_{\sig}(e)$ be the \textit{variable-to-clause} directed free tree hanging at the \textit{root edge} $e$, i.e. it is the subtree of the free tree containing $e$ obtained by deleting all the variables, clauses and edges closer to $a$ than $v$. If $v$ is frozen, we define $\dot{\ttt}(e)$ to be the single edge $e$. Given a valid coloring $\utau$ on $\dot{\ttt}(e)$ and an edge $e^\prime=(a^\prime v^\prime)$ in $\dot{\ttt}(e)$, the \textit{upward} color of $\utau$ at $e^\prime$ is defined to be $\dot{\tau}_{e^\prime}$ if $a^\prime$ is closer to $e$ than $v^\prime$ in $\dot{\ttt}(e)$ and $\hat{\tau}_{e^\prime}$ otherwise. The next lemma, which was shown for the truncated model in \cite[Lemma 4.3]{ssz22} holds also for the untruncated model without any modification of the proof.
	\begin{lemma}[\cite{ssz22}, Lemma 4.3]
	\label{lem:update}
	Given a \textsc{nae-sat} instance $\GGG$ and a valid coloring $\sig \in \Omega^{E}$, let $\dot{\ttt}(e)$ be the variable-to-clause directed tree with root edge $e$, defined above. If $\eta \in \Omega$ agrees with $\sigma$ on the upward edge $e$, i.e. $\dot{\eta}_e =\dot{\sigma}_e$, then there exists a unique valid coloring in $\dot{\ttt}(e)$, $\utau \in \Omega^{E\left(\dot{\ttt}(e)\right)}$, such that $\tau_e=\eta$ and $\utau$ agrees with $\sig\lvert_{\dot{\ttt}(e)}\equiv (\sigma_e)_{e\in E\left(\dot{\ttt}(e)\right)}$ in all the upward colors. Hence, we denote such $\utau$ by
	\begin{equation}
	\label{eqn:def:update}
	\utau = \textnormal{\textbf{update}}\left(\sig\lvert_{\dot{\ttt}(e)}, \eta;\dot{\ttt}(e)\right).
	\end{equation}
	Moreover, for a valid coloring $\utau^\prime$ in $\dot{\ttt}(e)$, define its weight by
	\begin{equation*}
	    w^\lit_{\dot{\ttt}(e)}(\utau^\prime) \equiv \prod_{v\in V\left(\dot{\ttt}(e)\right)}\left\{\dot{\Phi}(\utau^\prime_{\delta v})\prod_{e\in \delta v}\bar{\Phi}(\tau^\prime_e)\right\}\prod_{a\in F\left(\dot{\ttt}(e)\right)\backslash\{a(e)\}}\hat{\Phi}(\utau^\prime_{\partial a}).
	\end{equation*}
	Then, $w_{\dot{\ttt}}^\lit(\sig)=w_{\dot{\ttt}}^\lit(\utau)$ holds for $\utau = \textnormal{\textbf{update}}\left(\sig\lvert_{\dot{\ttt}(e)}, \eta;\dot{\ttt}(e)\right)$. Finally, if $\sig$ is a valid $L$-truncated coloring, i.e. $\sig \in \Omega_L^E$, and $\eta \in \Omega_L$, then $\textnormal{\textbf{update}}\left(\sig\lvert_{\dot{\ttt}(e)}, \eta;\dot{\ttt}(e)\right)\in \Omega_L^{E\left(\dot{\ttt}(e)\right)}$.
	\end{lemma}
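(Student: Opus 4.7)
The plan is to prove the three assertions of Lemma \ref{lem:update} by a single induction on the depth of the variable-to-clause directed tree $\dot{\ttt}(e)$. The base case is trivial: if $v(e)$ is frozen, then $\dot{\ttt}(e)$ consists of the single edge $e$ and we set $\utau \equiv \eta$; the hypothesis $\dot{\eta}_e = \dot{\sigma}_e$ says $\utau$ agrees with $\sig\lvert_{\dot\ttt(e)}$ on the upward color, existence and uniqueness are immediate, the weight is the empty product, and $L$-truncation is trivial since $\eta \in \Omega_L$ by assumption.

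For the inductive step, suppose $v(e)$ is free. The idea is to first propagate the change at $e$ across the clause $a(e)$ and the adjacent variable $v = v(e)$, then recurse into each variable-to-clause subtree rooted at the edges hanging below $v$. Concretely, writing $\delta v = \{e\} \sqcup \{e_1, \ldots, e_{d-1}\}$, the upward colors of $e_1, \ldots, e_{d-1}$ are their $\hat\sigma$-components and must be left unchanged by the recipe. Using the local equation \eqref{eq:def:localeq:msg}, the updated message $\dot\tau_{e_i}$ at each $e_i$ is forced to equal $\dot{T}\bigl(\hat{\utau}_{\delta v \setminus e_i}\bigr)$, where we use the updated $\hat\tau_e = \hat\eta_e$ and the unchanged $\hat\tau_{e_j}$ ($j\neq i$). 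Hence $\dot\tau_{e_i}$ is uniquely determined and satisfies $\dot\tau_{e_i} \in \dot\Omega_L$ if $\hat\eta_e \in \hat\Omega_L$ and $\hat\sigma_{e_j} \in \hat\Omega_L$. This $\dot\tau_{e_i}$ is precisely the upward color required at the root of the subtree $\dot\ttt(e_i')$ where $e_i'$ is the edge of $\dot\ttt(e)$ lying just below $e_i$ through the clause $a(e_i)$; invoking the inductive hypothesis on each $\dot\ttt(e_i')$ with root data determined by propagating through $a(e_i)$ via \eqref{eq:def:localeq:msg} gives existence, uniqueness, and $L$-truncation on the whole tree.

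The weight identity $w^\lit_{\dot\ttt(e)}(\utau) = w^\lit_{\dot\ttt(e)}(\sig)$ will then be derived by showing that the weight depends only on the upward colors. More precisely, I will verify the \emph{telescoping} identity
\begin{equation*}
\dot\Phi(\utau_{\delta v})\,\prod_{e'\in \delta v}\bar\Phi(\tau_{e'})\cdot\prod_{i=1}^{d-1}\hat\Phi(\utau_{\delta a(e_i)}) \;=\; \dot\Phi(\sig_{\delta v})\,\prod_{e'\in \delta v}\bar\Phi(\sigma_{e'})\cdot\prod_{i=1}^{d-1}\hat\Phi(\sig_{\delta a(e_i)})
\end{equation*}
up to a factor that only involves upward colors on the edges $e_1,\ldots,e_{d-1}$, which by induction cancels against the corresponding factor appearing when the identity is invoked one level deeper. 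This uses the explicit definitions of $\dot\Phi, \hat\Phi, \bar\Phi$ in terms of $\dot\mm, \hat\mm$, together with the normalization identities \eqref{eq:def:bethe:bpmsg:dot}--\eqref{eq:def:bethe:bpmsg:hat}: the $\dot{\mathfrak z}[\dot\tau]$ and $\hat{\mathfrak z}[\hat\tau]$ factors appearing when a message is rewritten in terms of its children are precisely what allows the local ratio of weights to collapse. Equivalently, one may observe that $w^\lit_{\dot\ttt(e)}(\sig)$ coincides with the belief-propagation partition function conditioned on the upward message $\dot\sigma_e$, which is manifestly a function of the upward data alone.

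The main (mild) obstacle is keeping the bookkeeping of upward versus downward colors clean through the $\dot T$ and $\hat T$ recursions, since the ``upward'' direction flips each time the recursion crosses a variable-clause pair. Once the correct invariant is stated — ``upward colors are unchanged and the weight is a function of upward colors only'' — both parts become routine consequences of the local equations \eqref{eq:def:localeq:msg} and the definition of the weights in \eqref{eq:def:phi}. Preservation of $L$-truncation requires nothing beyond the observation that each message $\dot T(\hat\utau_{\delta v\setminus e})$ or $\hat T(\dot\utau_{\delta a\setminus e})$ produced during the update is built from constituents already present in $\sig$ or in $\eta$, all of which lie in $\Omega_L$ by hypothesis.
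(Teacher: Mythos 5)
The paper itself gives no proof of this lemma: it is quoted from \cite{ssz16} (Lemma 4.3 there), with the remark that the argument carries over verbatim to the untruncated model. Your reconstruction is the standard one and its two main components are sound. For existence/uniqueness, validity of a coloring on the tree forces each downward message to satisfy the local equations \eqref{eq:def:localeq:msg} given the (unchanged) upward messages and the new root datum $\hat\eta_e$, so the update is determined by propagating $\dot{T}$, $\hat{T}$ downward; one should also note that $\dot\eta_e=\dot\sigma_e\notin\{0,1,\star\}$ forces $\eta\in\{\fF\}$, so no forced/error messages are created and the free/separating structure of $\dot{\ttt}(e)$ is untouched. For the weight identity, your ``function of upward colors only'' invariant is exactly right: pairing each variable factor with the edge factor on its root-ward edge gives $\dot\Phi(\utau_{\delta u})\,\bar\Phi(\tau_{e_u})=\dot{z}[\dot\tau_{e_u}]$ via \eqref{eq:def:bethe:bpmsg:dot}, and pairing each clause factor with its root-ward edge factor gives $\hat{z}[\,\cdot\,]$ evaluated at the upward color; the product of these quantities (together with the constant factors $\bar\Phi=2$ at the $\fs$-labelled half-edges) depends only on upward colors, which are preserved.

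The one step whose justification does not hold up as written is the preservation of $L$-truncation. Being ``built from constituents in $\Omega_L$'' does not imply membership in $\Omega_L$: the truncation is measured by $|\tau_{e'}|=v(\dot\tau_{e'})+v(\hat\tau_{e'})-1$, the total variable count of the free tree seen from $e'$, and joining $d-1$ messages each of size at most $L$ can produce a tree with far more than $L$ variables. The correct observation is that for every edge $e'$ of $\dot{\ttt}(e)$ the updated spin satisfies $|\tau_{e'}|=v(\dot\tau_e)+v(\hat\eta_e)-1=|\eta|$, i.e.\ all edges of the updated tree report the same global variable count, namely that encoded by $\eta$; the hypothesis $\eta\in\Omega_L$ is then exactly what bounds it by $L$. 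This is a one-line fix, but the inference as you stated it is not valid and should be replaced.
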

	Now, we define the resampling Markov chain on tuples $(\GGG, Y, \sig)$, where $\GGG$ is a \textsc{nae-sat} instance $Y\subseteq V(\GGG)$ is a subset of variables, and $\sig$ is a valid coloring on $\GGG$. Given a coloring $\sig_{\NN}$ on $\NN$, define its weight by
	\begin{equation}\label{eq:def:wN:dep1nb}
	    w_{\NN}^\lit(\sig_{\NN};\uL_{\NN}) \equiv w^\lit_{\NNN}(\sig_{\NN})\equiv \prod_{v\in Y}\left\{\dot{\Phi}(\sig_{\partial v})\prod_{e\in \delta v}\left\{\hat{\Phi}^\lit\left((\sig\oplus \uL)_{\delta a(e)}\right)\bar{\Phi}(\sigma_e)\right\}\right\}
	\end{equation}
	In the equation above, we emphasize that we do not take product with respect to $\bar{\Phi}$ over the spins at $\delta \NN$. Given a coloring $\sig_{\partial}$ on $\GGG_{\partial}=(\GG_{\partial},\uL_{\GG_{\partial}}) \equiv (V_\partial, F_\partial,E_\partial, \uL_{\GG_{\partial}})$, let
	\begin{equation}\label{eq:def:wpartial:dep1nb}
	    w^\lit_{\partial}(\sig_{\partial})\equiv \prod_{v \in V_{\partial}}\dot{\Phi}(\sig_{\delta v})\prod_{a\in F_{\partial}}\hat{\Phi}^\lit\left((\sig \oplus \uL)_{\delta a}\right)\prod_{e\in E_{\partial}}\bar{\Phi}(\sigma_e).
	\end{equation}
	Note that $\delta \GG_{\partial}$ is included in $E_{\partial}$, so for a valid coloring $\sig$ in $\GGG$, we have
	\begin{equation*}
	    w^\lit_{\GGG}(\sig) = w^\lit_{\partial}(\sig_{\partial})w^\lit_{\NN}(\sig_{\NN};\uL_\NN)
	\end{equation*}
	With a slight abuse of notation, denote by $\dot{h}(\sig_{\delta \NN})$ the empirical measure of the colors $(\dot{\sigma}_{e})_{e \in \delta \NN}$. Then, the resampling Markov chain is defined as follows.
	
	\begin{defn}[The resampling Markov chain]
	\label{def:resampling:markovchain}
	With initial state $A_0=(\GGG, Y, \sig)$, where $\GGG$ is a \textsc{nae-sat} instance, $Y\subset V(\GGG)$ and $\sig$ is a valid coloring, we take a step in the untruncated resampling Markov chain to arrive at $A_1 = (\GGG^\prime, Y, \utau)$ by the following.
	\begin{enumerate}
	    \item[1.] If $\{\dot{\ttt}_{\sig}(e)\}_{e \in \delta \NN(Y)}$ are not disjoint, then $A_1=A_0$ with probability $1$.
	    \item[2.] Otherwise, detach $\NN = \NN(Y)$ from $G_n$ and sample new literals and spins $(\uL_{\NN}^\prime, \utau_{\NN})$ on $\NN$ from the probability measure
	    \begin{equation}\label{eq:def:resampling:markovchain:prob}
	        p\Big((\uL_{\NN}^\prime,\utau_{\NN})\Big|
	        (\uL_{\NN}, \sig_{\NN})\Big) = \frac{w_{\NN}^\lit(\utau_{\NN};\uL^\prime_{\NN})^{\lambda}\one\left\{\dot{h}(\utau_{\delta \NN}) = \dot{h}(\sig_{\delta \NN})\right\}}{Z},
	    \end{equation}
	    where $Z=Z\left(|Y|, \dot{h}(\sig_{\delta\NN})\right)$ is the normalizing constant to make $p$ a probability measure.
	    \item[3.] Form a new graph $\GGG^\prime$ by sampling a uniformly random matching between $\delta \NN$ and $\delta G_{\partial}$, conditioned on the constraint that $e \in \delta G_{\partial}$ is matched to $e^\prime \in \delta \NN$ with $\dot{\sigma}_{e}=\dot{\tau}_{e^\prime}$. Note that the number of such matchings only depends on $|Y|$ and $\dot{h}(\sig_{\delta\NN})$, which we denote by $M\left(|Y|, \dot{h}(\sig_{\delta\NN})\right)$. Also, the literals of $\GGG^\prime$ at $\delta \GG_{\partial}$ are determined by $\delta \NN$. Then, update the downward colors of the directed trees $\{\dot{\ttt}(e)\}_{e\in \delta \GG_{\partial}}$ by 
	    \begin{equation*}
	        \utau\lvert_{\dot{\ttt}(e)} \equiv \textnormal{\textbf{update}}\left(\sig\lvert_{\dot{\ttt}(e)}, \tau_e;\dot{\ttt}(e)\right),
	    \end{equation*}
	    for every $e \in \delta \GG_{\partial}$. On the rest of $\GG_\partial$, we take $\utau$ to be the same with $\sig$. 
	\end{enumerate}
	We also define the $L$-truncated resampling Markov chain by restricting the state space to the $L$-truncated colorings, i.e. the space of $(\GGG,Y,\sig)$, where $\sig$ is a valid $L$-truncated coloring. Then, note that \eqref{eq:def:resampling:markovchain:prob} is changed to 
	\begin{equation*}
	      p_L\Big((\uL_{\NN}^\prime,\utau_{\NN})\Big|
	        (\uL_{\NN}, \sig_{\NN})\Big) = \frac{w_{\NN}^\lit(\utau_{\NN};\uL^\prime_{\NN})^{\lambda}\one\left\{\dot{h}(\utau_{\delta \NN}) = \dot{h}(\sig_{\delta \NN}), \tau_{\NN} \in \Omega_L^{E(\NN)}\right\}}{Z_L},
	\end{equation*}
	where $E_{\NN}$ is the (full) edge set of $\NN$. We denote by $\pi$ and $\pi_L$ the transition probability for the untruncated and truncated resampling Markov chain respectively.
	\end{defn}
	\begin{remark}
	\label{remark:resampling:diff:ssz}
	Definition \ref{def:resampling:markovchain} is the same as the resampling Markov chain defined in \cite[Section 4]{ssz22}, except for the first item; in \cite{ssz22}, they have sampled $Y$ given $\GGG, \sig$ for the truncated model so that $\{\dot{\ttt}_{\sig}(e)\}_{e \in \delta \NN(Y)}$ are disjoint with probability $1$, so there was no need to deal with the case when they are not disjoint. However, the same approach cannot be applied for the untruncated model, because of the appearance of large trees. Instead, we show in Lemma \ref{lem:1stmo:resampling:treedisjoint} below that with good enough probability, $\{\dot{\ttt}(e)\}_{e \in \delta \NN(Y)}$ are disjoint under the sampling mechanism given in Definition \ref{def:sampling-mech}.
	\end{remark}
	One of the key features of the resampling Markov chain is that it is reversible with respect to the measure that is proportional to the weight of the colorings. Indeed the lemma below confirms this fact, whose proof is identical to \cite[Lemma 4.5]{ssz22}.
	\begin{lemma}
	\label{lem:resampling:reversingmeasure}
	Recall the $\eps$-sampling mechanism $\P_{\eps}(Y\mid \GGG)$ in Definition \ref{def:sampling-mech}. For every $\eps>0$, a reversing measure for both the untruncated and truncated resampling Markov chain is given by
	\begin{equation*}
	    \mu_{\eps}(\GGG, Y, \sig) \equiv \P(\GGG) \P_{\eps}(Y \mid \GGG) w_{\GGG}^\lit(\sig)^{\lambda}.
	\end{equation*}
	\end{lemma}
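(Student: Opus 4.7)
The plan is to directly verify detailed balance
\begin{equation*}
\mu_\eps(A)\,\pi(A,A') \;=\; \mu_\eps(A')\,\pi(A',A),
\end{equation*}
for every ordered pair of states $A=(\GGG,Y,\sig)$ and $A'=(\GGG',Y,\utau)$, where it suffices to consider $A\neq A'$, i.e.\ the case in which $\{\dot{\ttt}_\sig(e)\}_{e\in \delta\NN(Y)}$ are pairwise disjoint (the other case is reflexive). Note the sampled set $Y$ is the same in $A$ and $A'$, and that $\P(\GGG)=\P(\GGG')$ because the random $d$-regular $k$-\textsc{nae-sat} law is uniform among instances with $n$ variables and $m$ clauses and the matching step in Step~3 produces another such instance; similarly $\P_\eps(Y\mid\GGG)=\P_\eps(Y\mid\GGG')=\eps^{|Y|}(1-\eps)^{n-|Y|}$ depends only on $|Y|$ and $n$. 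Hence the ratio $\mu_\eps(A)/\mu_\eps(A')$ equals $w_\GGG^\lit(\sig)^\la/w_{\GGG'}^\lit(\utau)^\la$.

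Next I would factor the weights through the cavity decomposition. Writing $\sig_\partial$, $\utau_\partial$ for the restrictions of $\sig,\utau$ to $\GG_\partial$, one has
\begin{equation*}
w_\GGG^\lit(\sig)^\la = w_\partial^\lit(\sig_\partial)^\la\,w_\NN^\lit(\sig_\NN;\uL_\NN)^\la,\qquad w_{\GGG'}^\lit(\utau)^\la = w_\partial^\lit(\utau_\partial)^\la\,w_\NN^\lit(\utau_\NN;\uL'_\NN)^\la,
\end{equation*}
so the cancellation reduces the problem to comparing the cavity factors. The crucial observation is that Step~3 changes $\sig$ into $\utau$ on $\GG_\partial$ only by applying the $\mathbf{update}$ map of Lemma~\ref{lem:update} on each disjoint directed free tree $\dot{\ttt}(e)$, $e\in\delta\GG_\partial$. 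Since $\mathbf{update}$ preserves the tree weight $w^\lit_{\dot\ttt(e)}$, and the portion of $w_\partial^\lit$ outside these directed trees is untouched (same variables, same clauses, same literals, same spins), one obtains $w_\partial^\lit(\sig_\partial)^\la = w_\partial^\lit(\utau_\partial)^\la$.

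Finally I would compare transition probabilities. From \eqref{eq:def:resampling:markovchain:prob},
\begin{equation*}
\pi(A,A') = \frac{w_\NN^\lit(\utau_\NN;\uL'_\NN)^\la\,\one\{\dot h(\utau_{\delta\NN})=\dot h(\sig_{\delta\NN})\}}{Z(|Y|,\dot h(\sig_{\delta\NN}))}\cdot\frac{1}{M(|Y|,\dot h(\sig_{\delta\NN}))},
\end{equation*}
where $M(|Y|,\dot h)$ is the number of downward-color-preserving matchings in Step~3. Swapping the roles of $\sig,\utau$, both normalizing constants $Z$ and $M$ agree because they depend only on $|Y|$ and $\dot h(\sig_{\delta\NN})=\dot h(\utau_{\delta\NN})$. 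Therefore
\begin{equation*}
\frac{\pi(A,A')}{\pi(A',A)} = \frac{w_\NN^\lit(\utau_\NN;\uL'_\NN)^\la}{w_\NN^\lit(\sig_\NN;\uL_\NN)^\la} = \frac{w_\GGG^\lit(\sig)^\la}{w_{\GGG'}^\lit(\utau)^\la}\cdot\frac{w_\partial^\lit(\utau_\partial)^\la}{w_\partial^\lit(\sig_\partial)^\la} = \frac{\mu_\eps(A)}{\mu_\eps(A')},
\end{equation*}
which gives detailed balance. The argument is identical for $\pi_L$ after restricting all sums to $\Omega_L$ and invoking the truncated version of Lemma~\ref{lem:update}, since the update preserves the truncation level. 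No step is genuinely delicate; the only point requiring care is the bookkeeping that $Z$, $M$, $\P(\GGG)$ and $\P_\eps(Y\mid\GGG)$ are all symmetric in $(A,A')$, which is why the sampling mechanism was defined in Definition~\ref{def:sampling-mech} independently of $\sig$.
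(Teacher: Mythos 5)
Your proof is correct in structure and follows exactly the route the paper intends: the paper gives no proof of this lemma and instead points to \cite[Lemma 4.5]{ssz16}, whose argument is precisely this detailed-balance verification via the cavity factorization $w_\GGG^\lit(\sig)=w_\partial^\lit(\sig_\partial)\,w_\NN^\lit(\sig_\NN;\uL_\NN)$, the weight-preservation of \textbf{update}, and the symmetry of $Z$ and $M$ in $(|Y|,\dot h)$. All the ingredients you isolate (uniformity of $\P(\GGG)$, $\sig$-independence of the sampling mechanism, cancellation of $w_\partial^\lit$) are the right ones.

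One algebraic slip in your final display should be fixed: since $w_\NN^\lit(\utau_\NN;\uL'_\NN)=w_{\GGG'}^\lit(\utau)/w_\partial^\lit(\utau_\partial)$ and $w_\NN^\lit(\sig_\NN;\uL_\NN)=w_{\GGG}^\lit(\sig)/w_\partial^\lit(\sig_\partial)$, you actually get
\begin{equation*}
\frac{\pi(A,A')}{\pi(A',A)}=\frac{w_\NN^\lit(\utau_\NN;\uL'_\NN)^\la}{w_\NN^\lit(\sig_\NN;\uL_\NN)^\la}=\frac{w_{\GGG'}^\lit(\utau)^\la}{w_{\GGG}^\lit(\sig)^\la}\cdot\frac{w_\partial^\lit(\sig_\partial)^\la}{w_\partial^\lit(\utau_\partial)^\la}=\frac{\mu_\eps(A')}{\mu_\eps(A)},
\end{equation*}
not $\mu_\eps(A)/\mu_\eps(A')$ as written; the corrected identity is the one equivalent to detailed balance $\mu_\eps(A)\pi(A,A')=\mu_\eps(A')\pi(A',A)$, whereas the version you wrote would assert the opposite. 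It would also be worth one sentence noting why the reverse move from $A'$ actually reaches $A$: by the uniqueness clause of Lemma \ref{lem:update}, applying \textbf{update} to $\utau|_{\dot\ttt(e)}$ with root value $\sigma_e$ recovers $\sig|_{\dot\ttt(e)}$, and the disjointness/structure of the directed trees is unchanged under the update, so the reverse transition is admissible with the same $Z$ and $M$.
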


	\subsection{The tree optimization}\label{subsec:resampling:treeopt}
	The tree optimization defined below is closely related to the transition probability of the resampling Markov chain, which is made precise in Lemma \ref{lem:resampling:transitionprob} below.
	\begin{defn}[Tree optimization]
	\label{def:resampling:treeoptimization}
	Let $\HH(p)$ denote the Shannon entropy of a discrete probability measure $p$. For $H^\sm \in \bDelta^\sm$, define the following quantities:
	\begin{equation}\label{eq:def:treeop:Sigma and s}
	\begin{split}
	    \mathbf{\Sigma}^\tr(H^\sm) &\equiv \HH(\dot{H})+d\HH(\hat{H}^\sm)-d\HH(\bar{H}^\sm)+d\langle \log \hat{v}, \hat{H}^\sm \rangle\,,\\
	    \mathbf{s}^\tr(H^\sm) &\equiv \langle \log \dot{\Phi}, \dot{H}^\sm \rangle + d \langle \log \hat{\Phi}^{\textnormal{m}}, \hat{H}^\sm \rangle + d \langle \log \bar{\Phi}, \bar{H}^\sm \rangle\,.
	\end{split}
	\end{equation}
	The tree analog of $F_{\la,L}(H)$, defined in \eqref{eq:1stmo dec by H}, is then defined by
	\begin{equation}\label{eq:def:treeop:Lambda}
	    \mathbf{\Lambda}(H^\sm) \equiv \mathbf{\Sigma}^\tr(H^\sm)+\lambda \mathbf{s}^\tr(H^\sm).
	\end{equation}
	Given the boundary constraint $\dot{h}[H]=\dot{h}$, we denote the optimal $\mathbf{\Lambda}(H^\sm)$ for the truncated and untruncated models by
	\begin{equation}\label{eq:def:Lambda:op}
	\begin{split}
	    \mathbf{\Lambda}^{\textnormal{op}}(\dot{h}) &\equiv \sup\{\mathbf{\Lambda}(H^\sm): H^\sm \in \bDelta^\sm, \dot{h}[H^\sm]=\dot{h}\}\,,\\
	    \mathbf{\Lambda}^{\textnormal{op}}_{L}(\dot{h}) &\equiv \sup\{\mathbf{\Lambda}(H^\sm): H^\sm \in \bDelta^{\sm,(L)}, \dot{h}[H^\sm]=\dot{h}\}\,.
	\end{split}
	\end{equation}
	Finally, we denote their difference by
	\begin{equation}\label{eq:def:treeop:Xi}
	\begin{split}
	    \mathbf{\Xi}(H^\sm) &\equiv \mathbf{\Lambda}^{\textnormal{op}}\left(\dot{h}[H^\sm]\right)-\mathbf{\Lambda}(H^\sm)\quad\textnormal{for}\quad H^\sm \in \bDelta^{\sm}\,,\\
	    \mathbf{\Xi}_{L}(H^\sm) &\equiv \mathbf{\Lambda}^{\textnormal{op}}_{L}\left(\dot{h}[H^\sm]\right)-\mathbf{\Lambda}(H^\sm)\quad\textnormal{for}\quad H^\sm \in \bDelta^{\sm,(L)}\,.
	\end{split}
	\end{equation}
 	\end{defn}
 	We now introduce certain subsets of the full state space, which will be useful throughout this section. Given $H^\sm \in \bDelta^\sm$, $Y\subset V(\GGG)$, and $\eps>0$, let $\AAA(H^\sm,Y,\eps)$ be the set of $(\GGG,Y, \sig)$ such that the following conditions hold:
	\begin{itemize}
	\item $H^\sm[\GGG,Y,\sig]=H^\sm$.
	\item $\{\dot{\ttt}_{\sig}(e)\}_{e \in \delta \NN(Y)}$ are disjoint with $v\left (\dot{t}_{\sig}(e)\right)\leq\frac{-4\log\eps}{k\log 2}$ for all $e\in \delta \NN(Y)$, where $v\left (\dot{t}_{\sig}(e)\right)$ denote the number of variables in $\dot{t}_{\sig}(e)$. 
	\end{itemize}
	The choice of $\frac{-4\log\eps}{k\log 2}$ above is justified by Lemma \ref{lem:1stmo:resampling:treedisjoint} below. Also, for $H^\sm\in \bDelta^{\sm,(L)}$ and $Y\subset V(\GGG)$, let $\AAA_L(H^\sm,Y)$ be the set of $(\GGG,Y,\sig)$ such that the following conditions hold:
	\begin{itemize}
	    \item $\sig\in \Omega_L^{E}$ with $H^\sm[\GGG,Y,\sig]=H^\sm$.
	    \item $\{\dot{\ttt}_{\sig}(e)\}_{e \in \delta \NN(Y)}$ are disjoint.
	\end{itemize}
	\begin{lemma}
	\label{lem:resampling:transitionprob}
	Consider $Y\subset V(\GGG)$ with $\eps n/2 \leq |Y|\leq 2 \eps n$, and $\eps>0$. For $H^\sm \in \bDelta^\sm$, let $A_1=(\GGG^\prime,Y, \utau)$ be the state one-step reachable from $\AAA(H^\sm,Y,\eps)$ by the untruncated resampling Makrov chain with transition probability $\pi$. Then, for a constant $C_{k,\eps}$ depending only on $k$ and $\eps$, 
	\begin{equation}\label{eq:lem:resampling:transitionprob:untruncated}
	   \pi\left(A_1, \AAA(H^\sm,Y,\eps)\right) \leq \exp\left(-\frac{\eps n}{2}\Xi(H^\sm) + C_{k,\eps}\log n\right)
	\end{equation}
	Moreover, for $H^\sm \in \bDelta^{\sm,(L)}$, let $A_1=(\GGG^\prime,Y,\utau)$ is one-step reachable from $\AAA_L(H^\sm,Y)$ by the $L$-truncated resampling Markov chain. Then, for a constant $C_{k,L}$ depending on $k$ and $L$,
	\begin{equation*}
	     \pi_L\left(A_1, \AAA_L(H^\sm,Y)\right) \leq \exp\left(-\frac{\eps n}{2}\Xi_L(H^\sm) + C_{k,L}\log n\right)
	\end{equation*}
	\end{lemma}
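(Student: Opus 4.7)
The plan is to bound $\pi(A_1, \AAA(H^\sm, Y, \eps))$ by a ratio of combinatorial weights on the local neighborhood $\NN = \NN(Y)$ and then apply Stirling's formula to identify this ratio as $\exp(-\kappa \Xi(H^\sm) + O_{k,\eps}(\log n))$, where $\kappa \equiv |Y|$. Since $A_1$ is one-step reachable from $\AAA(H^\sm, Y, \eps)$, its boundary color statistic agrees with $\dot h \equiv \dot h[H^\sm]$ and its neighborhood trees are disjoint, so a fresh step of the chain at $A_1$ resamples $(\uL''_\NN, \utau''_\NN)$ with density proportional to $w_\NN^\lit(\utau''; \uL'')^\lambda \one\{\dot h(\utau''_{\delta\NN}) = \dot h\}/Z(\kappa, \dot h)$ and then chooses a uniformly random color-compatible matching. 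The event $\{A_2 \in \AAA(H^\sm, Y, \eps)\}$ forces the resample to satisfy $H^\sm[\utau''] = H^\sm$, while the additional tree-size-and-disjointness requirement restricts the matching to some subset of the $M(\kappa, \dot h)$ color-compatible matchings; since each such matching carries weight $1/M(\kappa, \dot h)$, the matching step contributes at most $1$, yielding
\begin{equation*}
\pi(A_1, \AAA(H^\sm, Y, \eps)) \leq \frac{1}{Z(\kappa, \dot h)}\sum_{(\uL''_\NN, \utau''_\NN):\, H^\sm[\utau''] = H^\sm} w_\NN^\lit(\utau''; \uL'')^\lambda.
\end{equation*}

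For the numerator, $\NN$ decomposes into $\kappa$ disjoint copies of the depth-$3/2$ star, and $w_\NN^\lit$ factorizes over $Y$ according to \eqref{eq:def:wN:dep1nb}. The number of configurations realizing a given profile $H^\sm$ with compatible literal and spin choices equals a product of multinomial coefficients of the form $\binom{\kappa}{\kappa\dot H^\sm}$, $\binom{\kappa d}{\kappa d\hat H^\sm}$, and $\binom{\kappa d}{\kappa d \bar H^\sm}^{-1}$. Averaging literals via $\hat v$ as in Lemma \ref{lem:decompose:Phi:hat} and applying Stirling's formula yields
\begin{equation*}
\sum_{(\uL''_\NN, \utau''_\NN):\, H^\sm[\utau''] = H^\sm} w_\NN^\lit(\utau''; \uL'')^\lambda = \exp\big(\kappa\Lambda(H^\sm) + O_{k,\eps}(\log n)\big),
\end{equation*}
where the polynomial correction is $O_{k,\eps}(\log n)$ because $\AAA(H^\sm, Y, \eps)$ confines $H^\sm$ to be supported on an $O_{k,\eps}(1)$-sized subset of $\Omega$, namely spins associated to free trees of size at most $-4\log\eps/(k\log 2)$. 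Stratifying $Z(\kappa, \dot h)$ by the resulting profile $H'^\sm$ and applying the same Stirling count stratum by stratum gives $Z(\kappa, \dot h) = \sum_{H'^\sm}\exp(\kappa\Lambda(H'^\sm) + O_k(\log n))$, with $H'^\sm$ ranging over $\{H'^\sm \in \bDelta^\sm_\kappa : \dot h[H'^\sm] = \dot h\}$.

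For the $L$-truncated chain the sum runs over $\bDelta^{\sm,(L)}_\kappa$, a set of polynomial cardinality in $\kappa$, so $\log Z \leq \kappa\Lambda^{\textnormal{op}}_L(\dot h) + O_{k,L}(\log n)$; combining with the numerator estimate gives the claimed bound for $\pi_L(A_1, \AAA_L(H^\sm, Y))$ since $\kappa \geq \eps n/2$. The main obstacle is the untruncated case, where the sum defining $Z(\kappa, \dot h)$ runs over the infinite index set $\bDelta^\sm_\kappa$. To handle this, I would prove an a priori tail estimate showing that the contribution of profiles $H'^\sm$ containing any tree type of size $\geq v$ is suppressed by a factor $\exp(-c_k v \kappa)$ relative to the supremum, in the spirit of Lemma \ref{lem:opt:tree:decay} and Proposition \ref{prop:1stmo:aprioriestimate}; summing this geometric tail over $v$ shows that only profiles supported on trees of size $O_{k,\eps}(\log n)$ contribute to leading order, which reduces $Z$ to a polynomial-in-$n$ sum concentrated at the supremum, giving $\log Z \leq \kappa\Lambda^{\textnormal{op}}(\dot h) + O_{k,\eps}(\log n)$. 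Dividing numerator by denominator and invoking $\kappa \geq \eps n/2$ yields $\pi(A_1, \AAA(H^\sm, Y, \eps)) \leq \exp(-\tfrac{\eps n}{2}\Xi(H^\sm) + C_{k,\eps}\log n)$, as required.
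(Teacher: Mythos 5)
Your outline of the ratio formula for $\pi$, the Stirling count for the numerator, and the treatment of the truncated case are all essentially the same as the paper's proof. The gap is in the untruncated case. You observe that the index set $\{H'^\sm \in \bDelta^\sm_\kappa: \dot h[H'^\sm] = \dot h\}$ is a priori infinite, and propose to control the normalizer $Z(\kappa,\dot h)$ by proving a new tail-decay estimate for large-tree profiles. This is the wrong route, for two reasons. First, it is speculative: nothing in Lemma \ref{lem:opt:tree:decay} or Proposition \ref{prop:1stmo:aprioriestimate} concerns the local resampling kernel, and a decay bound of the form $\exp(-c_k v \kappa)$ for profiles with trees of size $\ge v$ would require a separate argument that you do not supply. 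Second, and more importantly, the estimate is unnecessary because the index set is already finite for a structural reason.

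The key observation you are missing is the following. By definition of $\AAA(H^\sm, Y, \eps)$, the trees $\dot{\ttt}_\sig(e)$ for $e \in \delta\NN(Y)$ all have at most $\frac{-4\log\eps}{k\log 2}$ variables, so $\dot h = \dot h[H^\sm]$ is supported on messages of size at most $\frac{-4\log\eps}{k\log 2}$. Now suppose $H' = H^\sm[\GGG', Y, \utau]$ satisfies $\dot h[H'] = \dot h$. Every coloring spin $\tau_e$ on an internal edge $e$ of $\NN$ is determined, via the local recursions \eqref{eq:def:localeq:msg} propagated from the boundary inward through the depth-$3/2$ trees, by the boundary messages $\{\dot\tau_{e'}\}_{e'\in\delta\NN}$, and their sizes are bounded by $d$ times the maximal boundary message size. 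Hence $H' \in \bDelta^{\sm,(-4d\log\eps/\log 2)}_\kappa$ automatically; the constraint $\dot h[H'] = \dot h$ truncates the sum for you. Since $|\bDelta^{\sm,(-4d\log\eps/\log 2)}_\kappa| \le n^{C_{k,\eps}}$, the untruncated normalizer is a sum of at most polynomially many terms, each bounded by $\exp(\kappa\Lambda^{\op}(\dot h) + O_{k,\eps}(\log n))$, and the bound on $\pi(A_1, \AAA(H^\sm,Y,\eps))$ follows exactly as in your truncated argument.
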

	\begin{proof}
	We only provide the proof for the untruncated model, i.e. \eqref{eq:lem:resampling:transitionprob:untruncated}, since the truncated case follows by the same argument. The definition of $H^\sm$ in \eqref{eq:def:H:sampled} only depends on $\sig_{\NN}$ and $\uL_{\NN}$, so denote this relation by $H^\sm=H^\sm(\sig_{\NN},\uL_{\NN})$ with abuse of notation. Then, by definition of the resampling Markov chain, we have the following expression for the transition probability:
	\begin{equation}\label{eq:1stmo:resampling:transition:prob}
	    \pi\left(A_1,\AAA(H^\sm, Y,\eps)\right)=\frac{\sum_{H^\sm(\sig_{\NN},\uL_{\NN})=H^\sm}w^{\lit}_{\NN}(\sig_{\NN}; \uL_{\NN})^{\la}}{\sum_{H^\prime \in \bDelta^\sm_{\kappa}}\sum_{H^\sm(\utau_{\NN},\uL_{\NN}^\prime)=H^\prime}\one\{\dot{h}[H^\prime]=\dot{h}[H^\sm]\}w^{\lit}_{\NN}(\utau_{\NN}; \uL_{\NN}^\prime)^{\la}}.
	\end{equation}
	Note that by definition of $\AAA(H^\sm,Y,\eps)$, $\dot{h}[H^\sm]$ is supported on $\dsigma \in \dot{\Omega}$ with $v(\dsigma)\leq \frac{-4\log \eps}{k\log 2}$, where $v(\dsigma)$ is the number of variables in the tree $\dsigma$. Hence, if $\dot{h}[H^\prime]=\dot{h}[H^\sm]$, then $H^\prime \in \bDelta^{\sm,(\frac{-4d \log \eps}{\log 2})}_{\kappa}$. Moreover, we can compute 
	\begin{equation*}
	\sum_{H^\sm(\utau_{\NN},\uL_{\NN}^\prime)=H^\prime}w^{\lit}_{\NN}(\utau_{\NN}; \uL_{\NN}^\prime)^{\la}= \frac{\kappa!}{(\kappa\dot{H}^\prime)!}\frac{\kappa d!}{(\kappa d\hat{H}^\prime)!}2^{\kappa kd}(\kappa d \bar{H}^\prime)!\exp\big(\kappa d\langle \log \hat{v}, \hat{H}^\prime \rangle+\la \kappa s^\tr(H^\prime)\big).
	\end{equation*}
	Using the fact $\left(\frac{x}{e}\right)^{x}\leq x! \leq e\sqrt{x}\left(\frac{x}{e}\right)^{x}$ in $(\kappa\dot{H}^\prime)!, (\kappa d\hat{H}^\prime)!$ and $(\kappa d \bar{H}^\prime)!$, we have the crude bound
	\begin{multline}\label{eq:1stmo:resampling:long}
	    	(e\sqrt{\kappa})^{-|\textnormal{supp}(\dot{H}^\prime)|} (e\sqrt{\kappa d})^{-|\textnormal{supp}(\hat{H}^\prime)|}\exp\big(\kappa\Lambda(H^\prime)\big) \leq (\kappa !\kappa d! 2^{\kappa kd})^{-1}\sum_{H^\sm(\utau_{\NN},\uL_{\NN}^\prime)=H^\prime}w^{\lit}_{\NN}(\utau_{\NN}; \uL_{\NN}^\prime)^{\la} \\
	    	 \leq (e\sqrt{\kappa d})^{|\textnormal{supp}(\bar{H}^\prime)|}\exp\big(\kappa\Lambda(H^\prime)\big).
	\end{multline}
	Since $H^\prime \in \bDelta^{\sm,(\frac{-4d \log \eps}{\log 2})}_{\kappa}$ and $\kappa\leq 2\eps n$, $|\textnormal{supp}(\dot{H}^\prime)|\vee |\textnormal{supp}(\hat{H}^\prime)|\vee |\textnormal{supp}(\bar{H}^\prime)|\leq n^{C_{k,\eps}}$ for a constant depending only on $k$ and $\eps$. Thus, using the bound \eqref{eq:1stmo:resampling:long} in \eqref{eq:1stmo:resampling:transition:prob} finishes the proof of \eqref{eq:lem:resampling:transitionprob:untruncated}.
	\end{proof}
	We now gather key properties of $\Xi(\cdot)$ and $\Xi_L(\cdot)$.	Recall that when the $\frac{3}{2}$ neighborhoods of $v\in Y$ do not intersect, $\NN(Y)=\sqcup_{i=1}^{\kappa}\DD_{i}$, where $\kappa =|Y|$ and $\DD_1,...,\DD_{\kappa}$ are disjoint copies of $\DD$. For coloring $\sig_\DD$, define its weight by 
	\begin{equation*}
	w_{\DD}(\sig_\DD) \equiv \dot{\Phi}(\sig_{\delta v})\prod_{e\in \delta v}\left\{\bar{\Phi}(\sigma_e)\hat{\Phi}(\sig_{\delta a(e)})\right\},
	\end{equation*}
	where $v$ is the unique variable in $\DD$. Denote by $\Omega_{\DD}\equiv \{\sig_{\DD}:w_{\DD}(\sig_\DD)\neq 0\}$ the space of valid colorings $\sig_\DD$ on $\DD$. Given $(\GGG,Y,\sig)$, the statistics of $\sig_{\NN(Y)}$ are summarized by $\nu\equiv \nu[\GGG,Y,\sig] \in \PPP(\Omega_\DD)$, where $\nu(\sig_{\DD})$ is the fraction of $\sig_{\DD}$ among $\sig_{\DD_i},1\leq i\leq |Y|$. Then $H^\sm[\GGG,Y,\sig]$ is a linear projection of $\nu[\GGG,Y,\sig]$ and we denote this relation by $H^\sm =H^\tr(\nu)$. Then, $\bLa(H)$ in Definition \ref{def:resampling:treeoptimization} has the following characterization.
\begin{lemma}[\cite{ssz22}, Lemma 5.2]
	\label{lem:resampling:Lambda:sup}
	For $H^\sm \in \bDelta^\sm$, we have 
	\begin{equation}\label{eq:express:Lambda}
	    \bLa(H^\sm) =\sup\left\{\HH(\nu)+\la\langle \log w_{\DD}, \nu \rangle: \nu \in \PPP(\Omega_{\DD})\textnormal{ with } H^\tr(\nu)=H^\sm \right\}.
	\end{equation}
	\end{lemma}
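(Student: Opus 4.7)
The plan is to construct the maximising measure $\nu^\star$ explicitly and verify the identity by direct computation, exploiting the tree structure of $\DD$. Write $\DD$ as a star centred at the variable $v$ with $d$ adjacent clauses $a_{1},\dots,a_{d}$ connected to $v$ by the edges $e_{1},\dots,e_{d}$, and $(k-1)d$ boundary half-edges hanging off the $a_{i}$'s. A coloring $\sig_{\DD}\in\Omega_{\DD}$ is determined by the tuple $(\sig_{\delta v},\sig_{\delta a_{1}},\dots,\sig_{\delta a_{d}})$ subject to agreement at each $e_{i}$, and the constraint $H^{\tr}(\nu)=H^{\sm}$ fixes exactly the three single-region marginals $\nu_{\delta v}=\dot H^{\sm}$, $\nu_{\delta a_{i}}=\hat H^{\sm}$, and $\nu_{e_{i}}=\bar H^{\sm}$.

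The first step is to observe that $w_{\DD}$ factorises into one variable factor, $d$ edge factors, and $d$ clause factors, so $\langle\log w_{\DD},\nu\rangle$ is an affine functional of the marginals alone. Using Lemma \ref{lem:decompose:Phi:hat} to split $\hat{\Phi}=\hat v\cdot\hat{\Phi}^{\textnormal{m}}$, this rewrites $\la\langle\log w_{\DD},\nu\rangle$ as an explicit expression in $H^{\sm}$ that is \emph{constant} over the feasible set. The supremum in the lemma therefore reduces to the purely entropic problem
\begin{equation*}
\sup\bigl\{\HH(\nu):\nu\in\PPP(\Omega_{\DD}),\,H^{\tr}(\nu)=H^{\sm}\bigr\}.
\end{equation*}

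The second step is to identify the maximiser. Because the constraints only couple across the shared single edges $e_{i}$ of the star $\DD$, the Gibbs/Lagrange analysis for linearly constrained entropy maximisation produces the tree-factorised candidate
\begin{equation*}
\nu^\star(\sig_{\DD})\;:=\;\dot H^{\sm}(\sig_{\delta v})\,\prod_{i=1}^{d}\frac{\hat H^{\sm}(\sig_{\delta a_{i}})}{\bar H^{\sm}(\sigma_{e_{i}})}.
\end{equation*}
The compatibility equations built into $\bDelta^{\sm}$ guarantee that $\nu^\star$ is a genuine probability measure, that its marginals coincide with $H^{\sm}$, and that it is supported on $\Omega_{\DD}$ (the only nontrivial point here is that $\hat H^{\sm}$ is automatically supported on $\{\hat v>0\}$ because $\hat H^{\sm}\in\mathscr{P}(\supp\hat{\Phi})$, and validity of $\sig_{\DD}$ factorises into local validity at $v$ and at each $a_{i}$). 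Strict concavity of the Shannon entropy on the simplex gives uniqueness of $\nu^\star$.

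The third step is a direct calculation: expanding $-\log\nu^\star$ yields $\HH(\nu^\star)=\HH(\dot H^{\sm})+d\HH(\hat H^{\sm})-d\HH(\bar H^{\sm})$, and combining this with the constant weight term computed in the first step produces $\mathbf{\Sigma}^{\tr}(H^{\sm})+\la\mathbf{s}^{\tr}(H^{\sm})=\bLa(H^{\sm})$. The main obstacle is precisely the bookkeeping in Step 1: Lemma \ref{lem:decompose:Phi:hat} must be used to peel the $\hat v$ piece off $\hat{\Phi}$ so that the ``entropy-like'' contribution $d\langle\log\hat v,\hat H^{\sm}\rangle$ lands inside $\mathbf{\Sigma}^{\tr}$ while the $\hat{\Phi}^{\textnormal{m}}$ piece contributes $\la d\langle\log\hat{\Phi}^{\textnormal{m}},\hat H^{\sm}\rangle$ to $\la\mathbf{s}^{\tr}$; once this split is carried out carefully, the remaining manipulations are standard Shannon-entropy identities on a tree.
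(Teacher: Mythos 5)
Your proof is correct and is the standard argument that the cited Lemma 5.2 of \cite{ssz16} relies on: the weight term $\la\langle\log w_\DD,\nu\rangle$ is affine in the single-region marginals (using Lemma \ref{lem:decompose:Phi:hat} to route $d\langle\log\hat v,\hat H^{\sm}\rangle$ into $\mathbf{\Sigma}^\tr$) and hence constant on the feasible set, while the Bethe-factorized $\nu^\star$ maximizes the entropy, as follows from $\HH(\nu)=\HH(\nu^\star)-\DD_{\textnormal{KL}}(\nu\mid\nu^\star)$ for every feasible $\nu$ (any such $\nu$ is automatically absolutely continuous with respect to $\nu^\star$ because its marginals vanish wherever $\dot H^{\sm}$ or $\hat H^{\sm}$ do). One small imprecision worth noting: by Definition \ref{def:sample:empirical}, the constraint $H^\tr(\nu)=H^\sm$ only pins down the rotated \emph{average} $\frac{1}{d}\sum_{i}\nu_{\delta a_i}=\hat H^{\sm}$ rather than each clause marginal separately, but since both the weight term and $\langle\log\nu^\star,\nu\rangle$ depend only on these averages, your argument goes through unchanged.
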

	Hence, $\bLa^{\op}(\dot{h})$ for $\dot{h}\in \PPP(\dot{\Omega})$ and $\bLa^{\op}_{L}(\dot{h})$ for $\dot{h} \in \PPP(\Omega_{L})$ in \eqref{eq:def:Lambda:op} can be expressed as
	\begin{equation}\label{eq:express:Lambda:op:sup}
	\begin{split}
	&\bLa^\op(\dot{h})=\sup\left\{\HH(\nu)+\la\langle \log w_{\DD}, \nu \rangle: \nu \in \PPP(\Omega_{\DD})\textnormal{ with } \dot{h}\left[H^\tr(\nu)\right]=\dot{h}\right\}\,,\\
	&\bLa^\op_L(\dot{h})=\sup\left\{\HH(\nu)+\la\langle \log w_{\DD}, \nu \rangle: \nu \in \PPP(\Omega_{\DD})\textnormal{ with } \dot{h}\left[H^\tr(\nu)\right]=\dot{h}\textnormal{ and } H^\tr(\nu)\in \bDelta^{\sm, (L)} \right\}\,.
	\end{split}
	\end{equation}
	It was shown in \cite[Appendix C]{ssz22} that the optimization in the expression above for $\bLa^\op_L(\dot{h})$, which is a finite dimensional entropy maximization subject to a linear constraint, has a unique maximizer $\nu= \nu^{\op}_L(\dot{h})$, and there exists a unique $\dot{q}[\dot{h}]\equiv\dot{q}_L[\dot{h}] \in \PPP(\dot{\Omega}_L)$ such that $\nu^{\op}_{L}[\dot{h}]$ can be expressed as
	\begin{equation}\label{eq:def:nu:q:dot}
	    \nu^\op_{L}[\dot{h}]= \nu_{\dot{q}[\dot{h}]},\textnormal{ where }~~\nu_{\dot{q}}(\sig_{\DD})\equiv \frac{w_{\DD}(\sig_{\DD})^\la}{Z_{\dot{q}}}\prod_{e\in \delta \DD} \dot{q}(\dot{\sigma}_e),
	\end{equation}
	where $Z_{\dot{q}}$ is the normalizing constant. Observe that $\dot{h}=\dot{h}\big[H^\tr(\nu_{\dot{q}})\big]$ holds for $\dot{q}\equiv \dot{q}_L[\dot{h}]$. Thus the inverse function of $\dot{h} \to \dot{q}_L[\dot{h}]$ is given by $\dot{q} \to \dot{h}_{\dot{q}}$, where
	\begin{equation}\label{eq:1stmo:hdot:intermsof:qdot}
	    \dot{h}_{\dot{q}}(\dot{\sigma})\equiv \sum_{\sig \in \Omega_L^{k}}\frac{\hat{\Phi}(\sig)^\la}{Z^\prime_{\dot{q}}}\prod_{i=1}^{k-1}\dot{q}(\dot{\sigma}_i) \textnormal{BP}\dot{q}(\sigma_k)\one\{\dot{\sigma}_1=\dot{\sigma}\},
	\end{equation}
	and $Z^\prime_{\dot{q}}$ is a normalizing constant. However, if $\dot{h}$ is not finitely supported, the results from \cite[Appendix C]{ssz22} do not directly apply. We show in Appendix \ref{sec:appendix:continuity:tree:optmization} that at least when $\dot{h}$ has exponential tail, there exists a unique $\dot{q}\equiv\dot{q}[\dot{h}]$, which satisfies $\dot{h}_{\dot{q}}=\dot{h}$.
	\begin{lemma}[Proved in Appendix \ref{sec:appendix:continuity:tree:optmization}]\label{lem:exists:qdot:hdot:exp:tail}
	Suppose $\dot{h}\in \PPP(\dot{\Omega})$ satisfies $\dot{h}(\rr)\vee \dot{h}(\ff)=O(\frac{1}{2^k})$ and $\sum_{\dsigma: v(\dsigma)\geq L}\dot{h}(\dsigma)\leq 2^{-ckL}$ for all $L\geq 1$, where $v(\dsigma)$ is the number of variables in $\dsigma$ and $c>0$ is an absolute constant. Then, there exists a unique $\dot{q}\equiv \dot{q}[\dot{h}]\in \PPP(\dot{\Omega})$ such that $\dot{h}_{\dot{q}}=\dot{h}$, where $\dot{h}_{\dot{q}}$ is defined in \eqref{eq:1stmo:hdot:intermsof:qdot}.
	\end{lemma}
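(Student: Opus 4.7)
My plan is to prove Lemma \ref{lem:exists:qdot:hdot:exp:tail} by two separate arguments: existence by a truncate-and-limit procedure using the finite-support case handled in \cite[Appendix~C]{ssz16}, and uniqueness via strict concavity of the entropy functional underlying the exponential family \eqref{eq:def:nu:q:dot}.

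For existence, I would first approximate $\dot h$ by a finitely supported measure $\dot h^{(L)}\in\PPP(\dot\Omega_L)$ which agrees with $\dot h$ on $\{\dsigma:v(\dsigma)\le L\}$ after a small renormalization, so that $\dot h^{(L)}$ inherits the bounds $\dot h^{(L)}(\rr)\vee \dot h^{(L)}(\ff)=O(2^{-k})$ and $\sum_{\dsigma:v(\dsigma)\ge \ell}\dot h^{(L)}(\dsigma)\le 2^{-ck\ell}$ uniformly in $L$. By the finite-dimensional entropy maximization argument of \cite[Appendix~C]{ssz16}, this yields $\dot q^{(L)}\in\PPP(\dot\Omega_L)$ with $\dot h_{\dot q^{(L)}}=\dot h^{(L)}$.

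The next step is tightness of the sequence $\{\dot q^{(L)}\}$: the exponential decay of $\dot h^{(L)}$ must force comparable decay on $\dot q^{(L)}$. Concretely, if $\dsigma\in\dot\Omega$ satisfies $v(\dsigma)\ge\ell$, then in the joined-tree representation $\dsigma=\dot T(\hat\sigma_1,\ldots,\hat\sigma_{d-1})$ at least one $\hat\sigma_j$ has $v(\hat\sigma_j)\ge (\ell-1)/(d-1)$, and each such $\hat\sigma_j$ is built from $k-1$ dot trees from $\dot q^{(L)}$ via $\hat T$. Unwinding \eqref{eq:1stmo:hdot:intermsof:qdot} along this tree structure yields an inequality of the form $\sum_{v(\dsigma)\ge \ell}\dot q^{(L)}(\dsigma)\le C\sum_{v(\dsigma)\ge c'\ell}\dot h^{(L)}(\dsigma)$ for explicit constants $C,c'>0$, while the BP contraction estimate \eqref{eq:BPcontraction:1stmo} confines $\dot q^{(L)}$ inside a set of the form $\mathbf\Gamma_C$. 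Together these give a uniform exponential tail for $\dot q^{(L)}$, so along a subsequence $\dot q^{(L_j)}\to\dot q$ in total variation on $\PPP(\dot\Omega)$. Because each value $\dot h_{\dot q}(\dsigma)$ is a sum indexed by $\sig\in\Omega^k$ whose tail is uniformly controlled on measures with common exponential tail, the map $\dot q\mapsto \dot h_{\dot q}$ is continuous in this topology, and we conclude $\dot h_{\dot q}=\dot h$.

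For uniqueness, the parametrization $\dot q\mapsto \nu_{\dot q}$ in \eqref{eq:def:nu:q:dot} realizes an exponential family on $\PPP(\Omega_\DD)$, and by Lemma \ref{lem:resampling:Lambda:sup} combined with the identity $\dot h[H^\tr(\nu_{\dot q})]=\dot h_{\dot q}$, the measure $\nu_{\dot q}$ is the unique maximizer of the strictly concave functional $\HH(\nu)+\lambda\langle \log w_\DD,\nu\rangle$ over the convex slice $\{\nu\in\PPP(\Omega_\DD):\dot h[H^\tr(\nu)]=\dot h\}$. Strict concavity of $\HH$ forces uniqueness of the maximizer, and the mean-to-natural-parameter map within this exponential family is injective (the marginals on $\delta\DD$ recover $\dot q$), so $\dot q$ is uniquely determined by $\dot h_{\dot q}=\dot h$.

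The main obstacle is the quantitative tail transfer from $\dot h$ to $\dot q^{(L)}$. Because $\dot q\mapsto \dot h_{\dot q}$ is nonlinear in $\dot q$ through the factor $\textnormal{BP}\dot q$, which itself involves a $k$-fold tensor structure followed by $\hat T$ and $\dot T$, one must carefully unwind the joined-tree description of elements of $\dot\Omega$ and exploit the near-Dirac concentration of $\dot q$ on $\{\bb_0,\bb_1,\rr_0,\rr_1\}$ guaranteed by Proposition \ref{prop:BPcontraction:1stmo} to prevent the tail of $\dot q^{(L)}$ from inflating. Once this tail transfer is secured, the remaining pieces (tightness, continuity of $\dot q\mapsto \dot h_{\dot q}$, strict concavity of entropy, and injectivity of the exponential family) are routine.
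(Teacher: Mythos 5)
Your overall architecture (truncate $\dot h$, solve the finite-support problem via \cite[Appendix~C]{ssz16}, extract a tight subsequential limit, pass to the limit by continuity of $\dot q\mapsto \nu_{\dot q}\mapsto \dot h_{\dot q}$, and prove uniqueness from strict concavity of $\HH(\nu)+\la\langle\log w_\DD,\nu\rangle$ on the convex slice) is the same as the paper's, and your uniqueness argument matches the paper's essentially verbatim, including the need to recover $\dot q$ from $\nu_{\dot q}$ under a nondegeneracy condition ($\dot q(\bb)>0$). The existence half, however, has a genuine gap in the tightness step. You propose to confine $\dot q^{(L)}$ using Proposition \ref{prop:BPcontraction:1stmo} and membership in $\mathbf\Gamma_C$, but that proposition concerns BP fixed points and BP iterates of measures already in $\mathbf\Gamma_C$; the measure $\dot q^{(L)}=\dot q_L[\dot h^{(L)}]$ is defined by the tree-optimization/exponential-family relation \eqref{eq:def:nu:q:dot}, is in general \emph{not} a BP fixed point, and there is no a priori reason it lies in $\mathbf\Gamma_C$. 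The ``near-Dirac concentration of $\dot q$ on $\{\bb,\rr\}$'' that your tail-transfer argument leans on is exactly what must be proved from the hypotheses on $\dot h$, and it is the hardest part of the paper's proof: Lemma \ref{lem:hdot:exptail:q:b:lowerbound} derives the uniform lower bound $\dot q^{(L)}(\bb)\ge C_k$ directly from $\dot h(\rr)\vee\dot h(\ff)=O(2^{-k})$, by first showing $\dot q(\bb)\ge\dot q(\ff)$ (else too many free spins appear on $\delta\DD$, contradicting the bound on $\dot h(\ff)$) and then $\dot q(\rr)\le 2^{5k}\dot q(\bb)$ (else the event $X_\rr^\delta=d-1$ would be too likely, contradicting the bound on $\dot h(\rr)$). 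Without this input your argument is circular.

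Once that lower bound is in hand, your proposed tail transfer is also more complicated than necessary, and the tree-unwinding you describe points in the wrong direction. From \eqref{eq:1stmo:hdot:intermsof:qdot}, $\dot h_{\dot q}(\dsigma)$ already contains the factor $\dot q(\dsigma_1)=\dot q(\dsigma)$; retaining only the single configuration with $\dsigma_2=\cdots=\dsigma_k\in\{\bb\}$ (all surrounding clauses separating) and using $Z'_{\dot q}\le 2^{\la}$ together with $\dot q(\bb)\gtrsim_k 1$ gives the pointwise bound $\dot h^{(L)}(\dsigma)\gtrsim_k \dot q^{(L)}(\dsigma)$, hence $\sum_{v(\dsigma)\ge T}\dot q^{(L)}(\dsigma)\lesssim_k 2^{-ckT}$ at the \emph{same} truncation level $T$ — no propagation through the joined-tree structure or loss in the exponent is needed. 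This is how the paper (Lemma \ref{lem:exists:qdot:hdot:exp:tail:improved}, equation \eqref{eq:dot:q:L:exp:tight}) obtains tightness, after which Prokhorov and the Lipschitz continuity of $\dot q\mapsto\nu_{\dot q}$ (Lemma \ref{lem:nu:q:continuous:1stmo}, which again requires the lower bound on $\dot q(\bb)$) complete the existence proof exactly as you outline.
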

	We remark that other various properties regarding the tree optimization are also provided in Appendix \ref{sec:appendix:continuity:tree:optmization}, which are crucial for the proofs of Propositions \ref{prop:maxim:1stmo} and \ref{prop:negdef}. It was shown in \cite[Proposition 5.1]{ssz22} that the unique minimizer of $\Xi_L(H)$, when $H=H^\sy$, is given by $H=H^\star_{\la,L}$. Having Lemma \ref{lem:exists:qdot:hdot:exp:tail} in hand, the analogous statement for the untruncated model can be established.
	\begin{lemma}\label{lem:1stmo:unique:zero:Xi}
	 Consider $H\in \bDelta$ such that $H=H^{\textnormal{sy}}$ and $\dot{h}=\dot{h}[H]$ satisfies $\sum_{\dsigma: v(\dsigma)\geq L}\dot{h}(\dsigma)\leq 2^{-ckL}$ for all $L\geq 1$ and an absolute constant $c>0$. Then, $\Xi(H)=0$ if and only if $H=H^\star_{\la}$. Also, for $H \in \bDelta^{(L)}$ with $H=H^{\textnormal{sy}}$, $\Xi_{L}(H)=0$ if and only if $H=H^\star_{\la,L}$.
	\end{lemma}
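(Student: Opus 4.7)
The plan is to follow the strategy used for the truncated case in \cite[Proposition 5.1]{ssz16}, with the single new input being Lemma \ref{lem:exists:qdot:hdot:exp:tail}, which extends the bijective parametrization $\dot{h} \leftrightarrow \dot{q}[\dot{h}]$ to the untruncated setting under the exponential tail hypothesis on $\dot{h}$. The ``if'' direction is immediate from the construction of $H^\star_\la$ (resp. $H^\star_{\la,L}$) as $H_{\dot{q}^\star_\la}$ (resp. $H_{\dot{q}^\star_{\la,L}}$) via the formulas \eqref{eq:H:q:1stmo}: since $\dot{q}^\star_\la$ is a BP fixed point by Proposition \ref{prop:BPcontraction:1stmo}, the measure $\nu_{\dot{q}^\star_\la}$ from \eqref{eq:def:nu:q:dot} attains the supremum in \eqref{eq:express:Lambda:op:sup}, and projects onto $H^\star_\la$.

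For the ``only if'' direction, suppose $\Xi(H) = 0$. By \eqref{eq:express:Lambda:op:sup}, the supremum defining $\mathbf{\Lambda}^{\op}(\dot{h}[H])$ must be attained, and any maximizer $\nu$ must satisfy $H^{\tr}(\nu) = H$. I would argue that this maximum is attained uniquely at a product measure $\nu = \nu_{\dot{q}}$ from \eqref{eq:def:nu:q:dot}: in the truncated case this is the Lagrange multiplier analysis of \cite[Appendix C]{ssz16}; in the untruncated case, the exponential tail assumption on $\dot{h}[H]$ places it in the scope of Lemma \ref{lem:exists:qdot:hdot:exp:tail}, which supplies a unique $\dot{q} = \dot{q}[\dot{h}[H]] \in \mathscr{P}(\dot{\Omega})$ inverting the map $\dot{q} \mapsto \dot{h}_{\dot{q}}$ of \eqref{eq:1stmo:hdot:intermsof:qdot}; strict concavity of the objective then forces $\nu = \nu_{\dot{q}}$. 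Consequently $\dot{H}, \hat{H}, \bar{H}$ take the explicit forms of \eqref{eq:H:q:1stmo} in terms of $\dot{q}$ and $\hat{q} := \hat{\textnormal{BP}} \dot{q}$.

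Next I use the symmetry hypothesis $H = H^{\sy}$ to force the BP fixed point equation $\dot{q} = \textnormal{BP}(\dot{q})$. Since $H^{\sy}$ is the cyclic symmetrization of $\hat{H}$ over its $k$ coordinates, the identity $\hat{H} = \hat{H}^{\sy}$ combined with the product representation derived in the previous step forces consistency between the marginal of $\hat{H}$ read along any fixed coordinate and the input distribution $\dot{q}$, i.e., $\dot{q} = \dot{\textnormal{BP}}(\hat{\textnormal{BP}} \dot{q}) = \textnormal{BP}(\dot{q})$. Finally, the built-in constraint \eqref{eq:B:red:free:small} on $\bar{H}$ together with the exponential tail of $\dot{h}[H]$ place $\dot{q}$ in the contraction region $\boldsymbol{\Gamma}_C$ of \eqref{eq:def:bp:contract:set:1stmo} (applying one extra BP iteration if needed via \eqref{eq:BPcontraction:1stmo}), so Proposition \ref{prop:BPcontraction:1stmo} yields $\dot{q} = \dot{q}^\star_\la$ and hence $H = H^\star_\la$. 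The truncated statement follows by the same outline with $\dot{q}^\star_{\la,L}$ in place of $\dot{q}^\star_\la$ and the finite-dimensional uniqueness of \cite[Appendix C]{ssz16} in place of Lemma \ref{lem:exists:qdot:hdot:exp:tail}.

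The main obstacle is the uniqueness step in the untruncated case: one must ensure that the supremum in \eqref{eq:express:Lambda:op:sup} is attained, and is uniquely of product form, over an infinite-dimensional simplex. This is precisely what Lemma \ref{lem:exists:qdot:hdot:exp:tail} achieves, and its proof (deferred to Appendix \ref{sec:appendix:continuity:tree:optmization}) requires controlling the tails of candidate $\dot{q}$, establishing continuity of the BP and inverse-BP operators under the exponential-tail hypothesis, and propagating uniqueness through the $L \to \infty$ limit. A secondary technical point is verifying that $\dot{q}$ actually lies in $\boldsymbol{\Gamma}_C$ from the a priori constraints on $H$; this is routine given \eqref{eq:B:red:free:small} but requires care to translate the bounds on $\bar{H}$ into the several inequalities defining $\boldsymbol{\Gamma}_C$.
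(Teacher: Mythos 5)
Your proposal is correct and follows essentially the same route as the paper, which proves this lemma precisely by observing that the argument of \cite[Proposition 5.1]{ssz16} carries over to the untruncated model once Lemma \ref{lem:exists:qdot:hdot:exp:tail} supplies the unique $\dot{q}[\dot{h}]$ with $\dot{h}_{\dot{q}}=\dot{h}$ under the exponential-tail hypothesis. You correctly identify both the key new input and the point where it is needed (attainment and uniqueness of the product-form maximizer over the infinite-dimensional simplex), so there is nothing to add.
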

     \begin{proof}
     Recall the definition of $\bDelta^{(L)}$ in Definition \ref{def:empiricalssz}, which is the analog of $N_{\circ}$ in \cite[Eq.~(44)]{ssz22}, imposed $\max\big\{ \bar{H}(\ff), \bar{H}(\rr)\big\}\leq \frac{7}{2^k}$. Thus, the last assertion regarding the truncated model follows from the \cite[Proposition 5.1-$a$]{ssz22}. We now consider the untruncated model. Since $\dot{h}=\dot{h}[H]$ satisfies $\dot{h}(\rr)\vee \dot{h}(\ff)=O(\frac{1}{2^k})$ (recall the condition $\max\big\{ \bar{H}(\ff), \bar{H}(\rr)\big\}\leq \frac{7}{2^k}$ in Defintion \ref{def:empiricalssz}) and $\sum_{\dsigma: v(\dsigma)\geq L}\dot{h}(\dsigma)\leq 2^{-ckL}$ for all $L\geq 1$, Lemma \ref{lem:exists:qdot:hdot:exp:tail} shows that there exists $\dot{q}\in \PPP(\dot{\Omega})$ such that $\dot{h}_{\dot{q}}=\dot{h}$. Recall $\nu_{\dot{q}}$ $Z_{\dot{q}}$ in \eqref{eq:def:nu:q:dot}. Denote $\overline{\bLa}(\mu):= \HH(\mu)+\la\langle \log w_{\DD}, \mu \rangle$ for $\mu \in \PPP(\Omega_{\DD})$. Then, we have that (e.g. see the proof of \cite[Proposition 5.1]{ssz22})
     \begin{equation*}
         \overline{\bLa}(\mu)= -\DD_{\textnormal{KL}}\big(\mu\mid \nu_{\dot{q}}\big)+\log Z_{\dot{q}}-|\delta \DD| \big\langle \log \dot{q}, \dot{h}^{\tr}\big[H^{\tr}(\mu)\big]\big\rangle.
     \end{equation*}
     In particular, if $\dot{h}^{\tr}\big[H^{\tr}(\mu)\big]=\dot{h}$, we have that $\overline{\bLa}(\mu)\leq \overline{\bLa}(\nu_{\dot{q}})$. Thus, $\nu_{\dot{q}}$ solves the optimization problem for $\bLa(\dot{h})$ in \eqref{eq:express:Lambda:op:sup}. Thus, if denote by $\mu(H)\in \PPP(\Omega_{\DD})$ the unique measure achieving the supremum in \eqref{eq:def:Lambda:op}, then 
     \begin{equation*}
         \Xi(H)=\overline{\bLa}(\nu_{\dot{q}})-\overline{\bLa}\big(\mu(H)\big)=\DD_{\textnormal{KL}}\big(\mu(H) \mid \nu_{\dot{q}}\big).
    \end{equation*}
    Now, if $H=H^\star_{\la}$, then since $\dot{q}^\star_{\la}$ is the BP fixed point, $H^{\tr}[\nu_{\dot{q}^\star_{\la}}]=H^\star_{\la}$ holds. Thus, $\nu_{\dot{q}^\star_{\la}}$ belongs to the constraint set in \eqref{eq:def:Lambda:op} and we have that $\mu(H^\star_{\la})=\nu_{\dot{q}^\star_{\la}}$. Hence, the equation above show $\Xi(H^\star_{\la})=0$.

    On the other hand, if $\Xi(H)=0$ and $H=H^{\sy}$, then we must have $\mu(H)=\nu_{\dot{q}}$, which implies that $H=H^{\tr}[\nu_{\dot{q}}]$ satisfies $H=H^{\sy}$. In the proof of \cite[Lemma 5.4]{ssz22}, it was shown that if $\Xi(H)=0$ and $H=H^{\sy}\in \bDelta$ (recall that $\bDelta$ included the condition $\max\big\{ \bar{H}(\ff), \bar{H}(\rr)\big\}\leq \frac{7}{2^k}$), then $H=H_{\dot{q}}$ for a BP fixed point $\dot{q}\in \PPP(\dot{\Omega})$. Moreover, \cite[Lemma 5.7]{ssz22} shows that this further implies that $\dot{q}=\dot{q}^\star_{\la}$\footnote{ Although \cite[Lemma 5.4, Lemma 5.7]{ssz22} are stated only for truncated model, the proof of \cite[Lemma 5.4, Lemma 5.7]{ssz22} used nothing about the truncated model, and their proof applies to the untruncated model.}, which concludes the proof.
     \end{proof}
	Henceforth, we denote $h^\star_{\la,L}\equiv \dot{h}_{\dot{q}^\star_{\la,L}}=\dot{h}[H^\star_{\la,L}]$. The lemma below shows that $\Xi_L$ has quadratic growth near its minimizer, with constant uniform in $L$. 
	\begin{lemma}\label{lem:quadratic:growth:Xi:1stmo}
	There exist constants $C_k$, which only depends on $k$, and $\eps_L>0$, which depends on $k$ and $L$, such that for $H\in \bDelta^{(L)}$ with $H=H^\sy$ and $||\dot{h}[H]-\dot{h}^\star_{\la,L}||_1<\eps_L$, we have
	\begin{equation}\label{eq:lem:quadratic:growth:Xi:1stmo}
	    \Xi_{L}(H)\geq C_k||H-H^\star_{\la,L}||_1^{2}.
	\end{equation}
	\end{lemma}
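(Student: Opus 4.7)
The plan is to perform a second-order Taylor expansion of $\Xi_L$ around $H^\star_{\la,L}$ and establish the uniform-in-$L$ quadratic lower bound via a Kullback--Leibler representation of $\Xi_L$ combined with Pinsker's inequality. The starting point is a variational identity: using Lemma~\ref{lem:resampling:Lambda:sup} together with convex duality in the optimization defining $\Lambda^{\op}_L$, one verifies that for any $\nu\in\PPP(\Omega_\DD)$ whose edge marginal equals $\dot h$,
\begin{equation*}
    \Lambda^{\op}_L(\dot h) - \HH(\nu) - \la\langle\log w_\DD,\nu\rangle \;=\; D\big(\nu\,\big\|\,\nu^*_{\dot h}\big),
\end{equation*}
where $\nu^*_{\dot h}:=\nu_{\dot q[\dot h]}$ is the Gibbs measure from~\eqref{eq:def:nu:q:dot}. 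Minimizing over $\nu$ with $H^\tr(\nu)=H$ then gives
\begin{equation*}
    \Xi_L(H) \;=\; \inf_{\nu:H^\tr(\nu)=H} D\big(\nu\,\big\|\,\nu^*_{\dot h[H]}\big).
\end{equation*}
Pinsker's inequality, together with the $\ell^1$-contractivity of the linear projection $H^\tr$, yields
\begin{equation*}
    \|H - H^G(\dot h[H])\|_1 \;\le\; \sqrt{2\,\Xi_L(H)}, \qquad\text{where}\quad H^G(\dot h):=H^\tr(\nu^*_{\dot h}),
\end{equation*}
and $H^G(\dot h^\star_{\la,L})=H^\star_{\la,L}$ by definition.

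Next, I would use the triangle decomposition $\|H-H^\star_{\la,L}\|_1 \le \|H-H^G(\dot h[H])\|_1 + \|H^G(\dot h[H])-H^\star_{\la,L}\|_1$. Lipschitz continuity of $\dot h \mapsto H^G(\dot h)$ in a neighborhood of $\dot h^\star_{\la,L}$---with constant uniform in $L$, drawn from the BP contraction in Proposition~\ref{prop:BPcontraction:1stmo} and the exponential-decay input of Lemma~\ref{lem:exists:qdot:hdot:exp:tail}---bounds the second summand by $C_k\|\dot h[H]-\dot h^\star_{\la,L}\|_1$. It then remains to show that $\|\dot h[H]-\dot h^\star_{\la,L}\|_1 \lesssim_k \sqrt{\Xi_L(H)}$. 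Here the symmetry hypothesis $H=H^\sy$ is crucial: by Lemma~\ref{lem:1stmo:unique:zero:Xi}, $H^G(\dot h)$ is rotation-symmetric if and only if $\dot h=\dot h^\star_{\la,L}$ (otherwise $H^G(\dot h)$ would provide a second symmetric zero of $\Xi_L$). Projecting $H-H^G(\dot h[H])$ onto the antisymmetric component of the $\hat H$-coordinate therefore picks up the full antisymmetric part of $H^G(\dot h[H])$, and a first-order Taylor expansion of $\nu_{\dot q[\dot h]}$ at $\dot h^\star_{\la,L}$ shows this antisymmetric part has $\ell^1$-norm at least $c_k\|\dot h[H]-\dot h^\star_{\la,L}\|_1$ for some $c_k>0$ depending only on $k$. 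Combined with the Pinsker bound above, this gives the desired estimate and closes the triangle.

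The main obstacle is the quantitative, uniform-in-$L$ non-degeneracy constant $c_k$ for the antisymmetric response of $D_{\dot h}H^G$ at $\dot h^\star_{\la,L}$. Qualitative non-vanishing follows immediately from Lemma~\ref{lem:1stmo:unique:zero:Xi}, but making the bound uniform in $L$ requires controlling the inverse Jacobian of the map $\dot q \mapsto \dot h_{\dot q}$ of~\eqref{eq:1stmo:hdot:intermsof:qdot} in operator norm as $L$ varies. I would handle this by differentiating~\eqref{eq:1stmo:hdot:intermsof:qdot} directly, isolating the antisymmetric response (picked up by the asymmetry between the $\dot q$-factors at the $k-1$ anonymous neighbor-edges of a clause and the distinguished $\textnormal{BP}\dot q$-factor), and quantifying invertibility via the contraction estimate of Proposition~\ref{prop:BPcontraction:1stmo}, which is uniform in $L$. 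The $L$-dependent threshold $\eps_L$ in the hypothesis serves only to keep $\dot h[H]$ inside the neighborhood where $\dot q \leftrightarrow \dot h$ is a diffeomorphism; this neighborhood legitimately shrinks with $L$ because the ambient dimension of $\bDelta^{(L)}$ grows, even though $c_k$ (and hence the final quadratic constant) does not.
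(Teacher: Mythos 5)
Your proposal follows the same skeleton as the paper's proof: the Kullback--Leibler representation of $\Xi_L$ plus Pinsker's inequality, a triangle decomposition around the Gibbs marginal $H^{\tr}(\nu_{\dot q_L[\dot h[H]]})$, and the key use of the hypothesis $H=H^{\sy}$ to force the rotation-antisymmetric part of the clause marginal of $\nu_{\dot q}$ --- which is $\gtrsim 2^{-3k}\Vert \dot q-\textnormal{BP}\dot q\Vert_1$ by testing against configurations of the form $(\sigma_1,\bb_0,\ldots,\bb_0)$, hence $\gtrsim_k \Vert \dot q-\dot q^\star_{\la,L}\Vert_1$ by the $L$-uniform contraction of Proposition \ref{prop:BPcontraction:1stmo} --- to be absorbed into $\Vert H-H^{\tr}(\nu_{\dot q})\Vert_1\le\sqrt{2\Xi_L(H)}$. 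The one place you diverge is in measuring the second leg of the triangle in terms of $\Vert\dot h[H]-\dot h^\star_{\la,L}\Vert_1$ rather than $\Vert\dot q-\dot q^\star_{\la,L}\Vert_1$: the step $\Vert H^{G}(\dot h)-H^\star_{\la,L}\Vert_1\lesssim_k\Vert\dot h-\dot h^\star_{\la,L}\Vert_1$ requires the inverse Lipschitz bound $\Vert\dot q_L[\dot h]-\dot q^\star_{\la,L}\Vert_1\lesssim_k\Vert\dot h-\dot h^\star_{\la,L}\Vert_1$ uniformly in $L$. That estimate is true --- it is exactly Proposition \ref{prop:1stmo:Lipschitz:hdot:qdot} --- but it is a substantial standalone result (a multi-page argument introducing a weighted norm to control the normalizing constants and a four-way case split over clause types); it does not fall out of ``differentiating \eqref{eq:1stmo:hdot:intermsof:qdot} and quantifying invertibility via the contraction'' as casually as you suggest, and you have also slightly misplaced where it enters (your antisymmetric-response step only needs the easy forward Lipschitz direction $\Vert\dot h_{\dot q}-\dot h_{\dot q^\star}\Vert_1\lesssim_k\Vert\dot q-\dot q^\star\Vert_1$; it is the comparison of $H^{G}(\dot h[H])$ with $H^\star_{\la,L}$ that needs the inverse). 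The paper sidesteps all of this by keeping both legs at the level of $\dot q$: it bounds $\Vert\nu_{\dot q}-\nu_{\dot q^\star_{\la,L}}\Vert_1\lesssim_k\Vert\dot q-\dot q^\star_{\la,L}\Vert_1$ via the forward Lipschitz continuity of $\dot q\mapsto\nu_{\dot q}$ (Lemma \ref{lem:nu:q:continuous:1stmo}, which only needs $\dot q(\bb)\gtrsim_k 1$), and closes with $\Vert\dot q-\dot q^\star_{\la,L}\Vert_1\le(1-Ck2^{-k})^{-1}\Vert\dot q-\textnormal{BP}\dot q\Vert_1\lesssim_k\Vert\mu-\nu\Vert_1$. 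If you replace your $\dot h$-level comparison with this $\dot q$-level one, your argument coincides with the paper's and needs no inverse Jacobian; as written it is complete only modulo Proposition \ref{prop:1stmo:Lipschitz:hdot:qdot}, whose difficulty you underestimate.
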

	\begin{proof}
	We follow the same route taken in the proof of \cite[Proposition 5.1]{ssz22}. The only improvement is that the constant $C_k>0$ in \eqref{eq:lem:quadratic:growth:Xi:1stmo} is uniform in $L$.
	
	For $H\in \bDelta^{(L)}$ with $H=H^\sy$, let $\dot{q} \equiv \dot{q}_L\big[\dot{h}[H]\big]$. Since $\dot{h}\to \dot{q}_L[\dot{h}]$ is continuous, and $\dot{q}^\star_{\la,L}= \dot{q}_L[\dot{h}^\star_{\la,L} ]$, we take $\eps_L$ small enough so that the following holds.
	\begin{itemize}
	    \item In Lemma \ref{lem:1stmo:BPfixedpoint:estimates}, we show that $2\dot{q}^\star_{\la,L}(\bb_0)=\dot{q}^\star_{\la,L}(\bb)> \frac{1}{2}-\frac{C}{2^k}$ for some universal constant $C>0$. Also, since $\dot{q}^\star_{\la,L}$ is the BP fixed point, $\textnormal{BP} \dot{q}^\star_{\la,L} (\bb)=\dot{q}^\star_{\la,L}(\bb)$ holds. Hence, we can take $\eps_L$ small enough so that $\min\big(\dot{q}(\bb_1),\dot{q}(\bb_0),\textnormal{BP}\dot{q}(\bb_0),\textnormal{BP}\dot{q}(\bb_1)\big)\geq\frac{1}{4}-\frac{C}{2^k}$ holds.
	    \item $\dot{q}^{\textnormal{av}}\in \Gamma_{C}$, where $\Gamma_{C}$ is defined in \eqref{eq:BPcontraction:1stmo}. Here, $\dot{q}^{\textnormal{av}}(\dot{\sigma})\equiv \frac{\dot{q}(\dot{\sigma})+\dot{q}(\dot{\sigma}\oplus 1)}{2}, \dot{\sigma} \in \dot{\Omega}_L$. Hence, by Proposition \ref{prop:BPcontraction:1stmo}, $||\textnormal{BP} \dot{q}-\dot{q}^\star_{\la,L}||_1 \lesssim \frac{k^2}{2^k}||\dot{q}-\dot{q}^\star_{\la,L}||_1$.
	\end{itemize}
	Now, denote by $\mu\equiv\mu(H)\in \PPP(\Omega_{\DD})$ and $\nu\equiv\nu_{L}\left(\dot{h}[H]\right)\in \PPP(\Omega_{\DD})$ the unique measures achieving the supremum in the \textsc{rhs} of \eqref{eq:express:Lambda} and \eqref{eq:express:Lambda:op:sup} respectively. Then, $\Xi_L[H]=\DD_{\textnormal{KL}}(\mu\mid \nu) \geq \frac{1}{2}||\mu-\nu||_1^{2}$ holds, so it suffices to show that $||\mu-\nu||_1\gtrsim_{k}||H-H^\star_{\la,L}||_1$ holds.
	
	First, denote $\nu^\star\equiv \nu_{\dot{q}^\star_{\la,L}}$, and note that $H^\tr(\cdot)$ is a linear projection with $H^\tr(\mu)=H$ and $H^\tr(\nu^\star)=H^\star_{\la,L}$. Thus,
	\begin{equation*}
	    ||H-H^\star_{\la,L}||_1\lesssim ||\mu -\nu^\star||_1\leq ||\mu-\nu||_1+||\nu-\nu^\star||_1,
	\end{equation*}
	so it remains to show $||\nu-\nu^\star||_1\lesssim_{k} ||\mu-\nu||_1$. Because $\dot{q}(\bb)\geq\frac{1}{2}-\frac{C}{2^k}$ holds, Lemma \ref{lem:nu:q:continuous:1stmo} in Appendix \ref{sec:appendix:continuity:tree:optmization} shows $||\nu-\nu^\star||_1\lesssim_{k}||\dot{q}-\dot{q}^\star_{\la,L}||_1$. Moreover, for a universal constant $C>0$, we have
	\begin{equation*}
	    (1-\frac{Ck}{2^k})||\dot{q}-\dot{q}^\star_{\la,L}||_1\leq ||\dot{q}-\dot{q}^\star_{\la,L}||_1-||\textnormal{BP}\dot{q}-\dot{q}^\star_{\la,L}||_1\leq ||\dot{q}-\textnormal{BP}\dot{q}||_1,
	\end{equation*}
	so the rest of the proof is devoted to proving $||\dot{q}-\textnormal{BP}\dot{q}||_1\lesssim_{k}||\mu-\nu||_1$. Let $K\equiv (\dot{K},\hat{K},\bar{K})\equiv H^\tr(\nu)$. Also, define $\hat{K}^\prime$ to be a rotation of $\hat{K}$: $\hat{K}^\prime(\sig)\equiv \hat{K}(\sigma_2,...,\sigma_k,\sigma_1)$. Since $H=H^\sy$, we have
	\begin{equation}\label{eq:bound:K:hat:diff:q:diff-1}
	    ||\hat{K}-\hat{K}^\prime||_1\leq ||\hat{H}-\hat{K}||_1+||\hat{H}-\hat{K}^\prime||_1=2||\hat{H}-\hat{K}||_1\lesssim ||\mu-\nu||_1.
	\end{equation}
	To this end, we aim to lower bound $||\hat{K}-\hat{K}^\prime||_1$ by $||\dot{q}-\textnormal{BP}\dot{q}||_1$. First, note that
	\begin{equation*}
	    \hat{K}(\sig)=\frac{\hat{\Phi}(\sig)^{\la}}{Z^\prime_{\dot{q}}}\textnormal{BP}\dot{q}(\dot{\sigma_1})\prod_{i=2}^{k}\dot{q}_i(\dot{\sigma}_i),
	\end{equation*}
	where $Z^\prime_{\dot{q}}$ denotes a normalizing constant. Thus, we can lower bound
	\begin{equation*}
	    ||\hat{K}-\hat{K}^\prime||_1\geq \sum_{\substack{\sig \in \Omega_L^{k}\\\sigma_2=...=\sigma_k=\bb_0 }}\frac{\hat{\Phi}(\sig)^{\la}}{Z^\prime_{\dot{q}}}\Big|\textnormal{BP}\dot{q}(\dot{\sigma_1})\dot{q}(\bb_0)-\dot{q}(\dsigma_1)\textnormal{BP}\dot{q}(\bb_0)\Big|\dot{q}(\bb_0)^{k-2}
	\end{equation*}
	Note that we can crudely bound $Z^\prime_{\dot{q}}\leq 1$ since $\hat{\Phi}(\sig)^{\la}\leq 1$. Also, for any $\dsigma_1\in \dot{\Omega}$, taking $\sigma_1=\dsigma_1$ if $\dsigma \in \{\rr,\bb\}$ and $\sigma_1=(\dsigma_1,\fs)$ if $\dsigma \in \{\ff\}$, $\sig=(\sigma_1,\bb_0,...,\bb_0)$ is valid with $\hat{\Phi}(\sig)^{\la}\geq 2^{-k+1}$. Hence,
	\begin{equation}\label{eq:bound:K:hat:diff:q:diff-2}
	    ||\hat{K}-\hat{K}^\prime||_1\geq 2^{-k+1}\dot{q}(\bb_0)^{k-2}\sum_{\dsigma\in \dot{\Omega}}\Big|\textnormal{BP}\dot{q}(\dot{\sigma_1})\dot{q}(\bb_0)-\dot{q}(\dsigma_1)\textnormal{BP}\dot{q}(\bb_0)\Big|\gtrsim 2^{-3k}||\dot{q}-\textnormal{BP}\dot{q}||_1,
	\end{equation}
	where the last inequality is due to $\textnormal{BP}\dot{q}(\bb_0)\wedge \dot{q}(\bb_0) \geq \frac{1}{4}-\frac{C}{2^k}$. Reading \eqref{eq:bound:K:hat:diff:q:diff-1} and \eqref{eq:bound:K:hat:diff:q:diff-2} together, $||\dot{q}-\textnormal{BP}\dot{q}||_1\lesssim_{k} ||\mu-\nu||_1$ holds, which concludes the proof.
	\end{proof}

	\subsection{Maximizer of the exponent}\label{subsubsec:resampling:maximizer}
	We now prove Proposition \ref{prop:maxim:1stmo}. The result for the truncated model in \eqref{eq:prop:maxim:1stmo:truncated} is straightforward from the fact that $F_{\la,L}(H)$ in \eqref{eq:1stmo dec by H} is uniquely maximized at $H^\star_{\la,L}$, which was shown in \cite[Proposition 3.4]{ssz22}(see Remark \ref{rem:truncated:negdef:SSZ}). Hence, we consider the result for the untruncated model in \eqref{eq:prop:maxim:1stmo:untruncated}.
	
	The first step is to define the set of bad variables, which should be avoided while sampling $Y$: for $v\in V(\GGG)$, let $\NN(v)$ be the $\frac{3}{2}$ neighborhood of $v$ and $\delta \NN(v)$ be the set of half-edges hanging at the boundary of $\NN(v)$. Given $(\GGG,\sig)$, define $V_{\textnormal{bad}}\equiv V_{\textnormal{bad}}(\GGG,\sig)$ by 
	\begin{equation*}
	    V_{\textnormal{bad}}\equiv \{v\in V:\exists e_1,e_2 \in \delta \NN(v)\quad\textnormal{s.t.}\quad \ttt_{\sig}(e_1)\cap \ttt_{\sig}(e_2) \neq \emptyset\}
	\end{equation*}
	The next lemma shows that there are not too many bad variables on average.
	\begin{lemma}\label{lem:1stmo:resampling:bad:variables}
	Fix $B \in \bDelta^{\textnormal{b}}_n$ and $(n_{\ttt})_{\ttt\in \FFF_{\tr}} \sim B$ such that $(n_{\ttt})_{\ttt\in \FFF_{\tr}}\in \ee_{\frac{1}{4}}$. Then, we have
	\begin{equation}\label{eq:lem:1stmo:resampling:bad:variables}
	    \sum_{\substack{(\GGG,\sig):B[\sig]=B,\\ n_{\ttt}[\sig]=n_{\ttt},\forall\ttt\in\FFF_{\tr}}}\P(\GGG)w^\lit_{\GGG}(\sig)^\la\one\big\{|V_{\textnormal{bad}}|\geq \sqrt{n}\big\}\lesssim_{k}\frac{\log n}{\sqrt{n}} \sum_{\substack{(\GGG,\sig):B[\sig]=B,\\ n_{\ttt}[\sig]=n_{\ttt},\forall\ttt\in\FFF_{\tr}}}\P(\GGG)w^\lit_{\GGG}(\sig)^\la
	\end{equation}
	\end{lemma}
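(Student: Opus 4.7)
The plan is to bound the \emph{weighted conditional expectation} of $|V_{\textnormal{bad}}|$ and then apply Markov's inequality at threshold $\sqrt n$. The product formula in Proposition~\ref{prop:1stmo:B nt decomp}, via its proof using the labelled-embedding scheme, shows that the probability measure on $(\GGG,\sig)$ with density $\P(\GGG) w_\GGG^\lit(\sig)^\la$ restricted to configurations with $B[\sig]=B$ and $n_\ttt[\sig]=n_\ttt$ corresponds to a uniform placement of the prescribed multiset of free-tree copies followed by a uniformly random matching among the remaining half-edges consistent with the boundary spin profile. Denoting this weighted conditional expectation by $\E_{B,(n_\ttt)}$, it therefore suffices to prove $\E_{B,(n_\ttt)}|V_{\textnormal{bad}}| \lesssim_k 1$, which already gives \eqref{eq:lem:1stmo:resampling:bad:variables} with substantial slack.

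To estimate $\E_{B,(n_\ttt)}|V_{\textnormal{bad}}|$, note that because $\sig$ has no free cycles, the event $\ttt_\sig(e_1)\cap\ttt_\sig(e_2)\neq\emptyset$ for some $e_1,e_2\in\delta\NN(v)$ forces $e_1,e_2$ to be matched into a single free-tree copy $\ttt$. Since free trees in $\FFF_{\tr}$ are acyclic, this in turn forces at least two of $v$'s $d$ adjacent half-edges to be matched into the (clause-side) boundary $\dot\partial\ttt$ of a common copy of $\ttt$: indeed, when $v\in V(\ttt)$ the intersection $\ttt\cap\NN(v)$ equals $\{v\}$ together with all clauses of $v$ and is therefore connected, so removing it from $\ttt$ leaves pairwise disjoint subtrees and the outgoing directed trees at $\delta\NN(v)$ cannot overlap; whereas when $v\notin V(\ttt)$ an intersection requires $\ttt\cap\NN(v)$ to contain two clauses of $v$ joined in $\ttt$ by a path avoiding $v$. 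By linearity and a union bound over pairs of $v$'s half-edges, combined with the uniform-matching estimate
\begin{equation*}
\P_{B,(n_\ttt)}\bigl(\text{two specific half-edges of $v$ match into $\dot\partial\ttt_*$}\bigr) \;\le\; \frac{|\dot\partial\ttt_*|(|\dot\partial\ttt_*|-1)}{nd(nd-1)},
\end{equation*}
this yields
\begin{equation*}
\E_{B,(n_\ttt)}|V_{\textnormal{bad}}| \;\le\; \sum_{v\in V}\binom{d}{2}\sum_{\ttt\in\FFF_{\tr}}n_\ttt\,\frac{|\dot\partial\ttt|(|\dot\partial\ttt|-1)}{nd(nd-1)} \;\lesssim_k\; \frac{1}{n}\sum_{\ttt\in\FFF_{\tr}}n_\ttt\,|\partial\ttt|^2.
\end{equation*}

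The final step controls the tree-profile sum via the exponential decay $(n_\ttt)_{\ttt\in\FFF_{\tr}}\in\ee_{1/4}$, which asserts $\sum_{\ttt:\,v(\ttt)=v}n_\ttt \le n\cdot 2^{-kv/4}$. Combined with the crude bound $|\partial\ttt|\le d\cdot v(\ttt)$ this gives $\sum_\ttt n_\ttt v(\ttt)^2 \le n\sum_{v\ge 1}v^2\,2^{-kv/4}\lesssim_k n$, so $\E_{B,(n_\ttt)}|V_{\textnormal{bad}}|\lesssim_k 1$ and Markov's inequality finishes the argument. The main obstacle is the structural reduction in the second paragraph, identifying intersection of outgoing free subtrees with the simple combinatorial event that two of $v$'s half-edges land in $\dot\partial\ttt$ for a single copy of $\ttt$; this requires the acyclicity of trees in $\FFF_{\tr}$ together with a short case split on whether $v\in V(\ttt)$. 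Once this reduction is in place, the remainder is an elementary uniform-matching computation that in fact yields $\E_{B,(n_\ttt)}|V_{\textnormal{bad}}|=O_k(1)$, a stronger bound than the $\log n/\sqrt n$ the lemma demands.
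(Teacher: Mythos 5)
Your overall strategy (bound the weighted conditional expectation of $|V_{\textnormal{bad}}|$ and apply Markov at threshold $\sqrt n$) is exactly the paper's, and the identification of the conditional law with a uniform color-constrained matching is correct. The gap is in the structural reduction of the second paragraph. The half-edges $e_1,e_2$ in the definition of $V_{\textnormal{bad}}$ range over $\delta\NN(v)$, the boundary of the depth-$\tfrac32$ neighborhood, so each $e_j=(a_jv_j)$ joins a clause $a_j\sim v$ to a variable $v_j\neq v$ at distance $2$ from $v$, and $\dot{\ttt}_\sig(e_j)$ hangs from $v_j$, not from $v$. These outgoing trees can intersect without any half-edge of $v$ landing in $\dot\partial\ttt$: take $v$ frozen with two \emph{separating} clauses $a_1,a_2\sim v$, each having a free neighbor $v_1,v_2$ lying in the same free tree $\ttt$; then $\dot{\ttt}_\sig(e_1)\cap\dot{\ttt}_\sig(e_2)\neq\emptyset$, yet the half-edges $(va_1),(va_2)$ are matched to separating clauses and never touch $\dot\partial\ttt$. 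The paper's proof therefore splits $V_{\textnormal{bad}}$ into four sub-events (two $\bb$-edges of $v$ into non-separating clauses of one tree; two far $\fs$-edges at distinct clauses of $v$ into one tree; two $\fs$-edges at a single clause of $v$; and a mixed $\fs$/$\bb$ case), and your reduction only captures the first of these. The missing cases require matching estimates for $\fs$-colored half-edges, where your denominator $nd(nd-1)$ is also wrong: the matching is performed \emph{within each spin class}, so the relevant denominator is $|E_\fs|$, which is at most $7dn/2^k\ll nd$, and for the $\bb$-cases it is $|E_{\bb_0}|$ or $|E_{\bb_1}|$ separately (with no guarantee that either is $\gtrsim nd$ for an arbitrary $B\in\bDelta^{\textnormal{b}}_n$). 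For this reason your claimed $O_k(1)$ bound on $\E|V_{\textnormal{bad}}|$ is not justified; the paper instead bounds each case by $O_k(\log n)$, using that $\ee_{1/4}$ forces every tree actually present to have $|\partial\ttt|\lesssim \log n$, together with the ratio $|V_\sigma|/|E_\sigma|\le 1$, and $O_k(\log n)$ is all the lemma needs. To repair your argument you must add the three missing cases and redo the matching probabilities with the color-specific denominators.
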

	\begin{proof}
	Define the law $\P(\GGG,\sig)\equiv\frac{\P(\GGG)w^\lit_{\GGG}(\sig)^\la\one\{B[\sig]=B,n_{\ttt}(\sig)=n_{\ttt},\forall \ttt\in \FFF_{\tr}\}}{\E\Z_{\la}[B,(n_{\ttt})_{\ttt\in\FFF_{\tr}}]}$. Then, we now aim to show
	\begin{equation}\label{eq:goal:lem:bad:variables}
	    \E[|V_{\textnormal{bad}}(\GGG,\sig)|]\lesssim_{k} \log n,
	\end{equation}
	under $(\GGG,\sig)\sim \P$. Then \eqref{eq:lem:1stmo:resampling:bad:variables} is implied from \eqref{eq:goal:lem:bad:variables} by Markov's inequality. We simulate $(\GGG,\sig) \sim \P$ as follows: first, choose $\ttt^{\textnormal{lab}}\in \LLL(\ttt)$ uniformly at random for each $n_{\ttt}$ number of $\ttt$'s. Say the resulting number of $\ttt^{\textnormal{lab}}$ is $n_{\ttt^{\textnormal{lab}}}$. Second, recalling \eqref{eq:compute:1stmo:labeled}, follow the procedure given in the paragraph above \eqref{eq:compute:1stmo:labeled} to produce $\sig^{\textnormal{lab}}$ with $B[\lsig]=B$ and $n_{\ttt^{\textnormal{lab}}}(\lsig)=\ttt^{\textnormal{lab}}$ for all $\ttt^{\textnormal{lab}}$. Third, we restore $\sig$ from $\lsig$ by dropping the spurious labels on the half-edges. The proof of Proposition \ref{prop:1stmo:B nt decomp} shows that this three-step procedure gives $(\GGG,\sig) \sim \P$. In particular, the variable-adjacent half-edges colored $\sigma \in \{\bb_0,\bb_1,\fs\}$ are matched uniformly at random with the clause-adjacent half-edges colored $\sigma$.

	Since $\sig$ does not contain any cyclic free components, we can classify the bad variables as
	\begin{equation}\label{eq:V:bad:inclusion}
	    V_{\textnormal{bad}} \subset V_{\textnormal{bad}}^{1}\cup V_{\textnormal{bad}}^{2} \cup V_{\textnormal{bad}}^{3} \cup V_{\textnormal{bad}}^{4},
	\end{equation}
	where $V_{\textnormal{bad}}^{i}, i=1,2,3,4$ are defined below. Denote $a\sim v$ when $a\in F$ and $v\in V$ are connected. Then, for each $i=1,2,3,4$, $V_{\textnormal{bad}}^{i}$ is the set of $v\in V$ satisfying
	\begin{itemize}
	    \item $i=1$: $\exists$non-separating clauses $a_1,a_2\sim v$ such that $\sigma_{(a_1 v)},\sigma_{(a_2 v)}\in \{\bb\}$ and $a_1,a_2$ are contained in the same free tree.  
	    \item $i=2$: $\exists e_1,e_2 \in \delta \NN(v)$ such that $a(e_1)\neq a(e_2)$, $\hat{\sigma}_{e_1}=\hat{\sigma}_{e_2}=\fs$, and $v(e_1),v(e_2)$ are in the same free tree.
	    \item $i=3$: $\exists a\sim v, e_1,e_2\in \delta a$ such that $\hat{\sigma}_{e_1}=\hat{\sigma}_{e_2}=\fs$ and $v(e_1),v(e_2)$ are in the same free tree.
	    \item $i=4$: $\exists e_1, e_2 \in \delta \NN(v)$ such that $\hat{\sigma}_{e_1}=\fs$, $\sigma_{(a(e_2)v)}\in \{\bb\}$, $a(e_2)$ is non-separating, and $v(e_1), a(e_2)$ are in the same free tree.
	\end{itemize}
	First, we bound $\E[V_{\textnormal{bad}}^{1}]$: let $V_{\sigma}\equiv \{v\in V: \exists e \in \delta v, \sigma_e= \sigma\}$ and $E_{\sigma} \equiv \{e\in E: \sigma_e =\sigma \}$ for $\sigma \in \{\bb_0,\bb_1,\fs\}$. Note that $|V_{\sigma}|,|E_{\sigma}|$ are determined by $B$ for $\sigma \in \{\bb_0,\bb_1,\fs\}$, and $|V_\sigma|\leq |E_\sigma|$ holds. The number of boundary half-edges colored either $\bb_0$ or $\bb_1$, and adjacent to a free tree $\ttt$ is at most $k v(\ttt) \leq 4\log_{2}n$ since $(n_{\ttt^\prime})_{\ttt^\prime\in \FFF_{\tr}}\in \ee_{\frac{1}{4}}$ and $n_{\ttt}\neq 0$ implies $v(\ttt)\leq \frac{4\log_{2} n}{k}$. Thus, a union bound gives
	\begin{equation}\label{eq:bound:V:bad:1}
	    \E[V_{\textnormal{bad}}^{1}]\leq \sum_{\sigma \in\{\bb\}}\binom{d}{2}|V_\sigma|\frac{4\log _2 n}{|E_\sigma|-1}\lesssim_{k} \log n.
	\end{equation}
	Turning to bound $\E[V_{\textnormal{bad}}^{2}]$, let $F_{\sigma,\fs}\equiv \{a\in F: \exists e_1\neq e_2\in \delta a, \sigma_{e_1}=\sigma,\sigma_{e_2}=\fs\}$ for $\sigma \in \{\bb_{0},\bb_1,\fs\}$. Then $|F_{\sigma,\fs}|$ are determined by $B$ for $\sigma\in\{\bb_0,\bb_1,\fs\}$, and $|F_{\sigma,\fs}|\leq |E_\sigma| \wedge |E_{\fs}|$ holds. If $v\in V_{\textnormal{bad}}^{2}\cap V_{\sigma}$, then there exist two separating clauses $a_1,a_2 \in F_{\sigma,\fs}$ and two $\fs$ edges $e_1\in \delta a_1, e_2\in \delta a_2$ such that $v(e_1),v(e_2)$ are in the same free tree. Since the number of boundary half-edges colored $\fs$ and adjacent to a free tree $\ttt$ with $n_{\ttt}\geq 1$ is at most $4d\log_2 n/k$, union bound shows
	\begin{equation*}
	    \E[V_{\textnormal{bad}}^{2}]\leq \sum_{\sigma\in\{\bb_0,\bb_1,\fs\}} \binom{d}{2}(k-1)^2 |V_\sigma|\frac{\left((k-1)|F_{\sigma,\fs}|\right)^{2}}{|E_\sigma|(|E_\sigma|-1)}\frac{4d\log_{2}n/k}{E_{\fs}-1-2\one\{\sigma =\fs\}}\lesssim_{k} \log n.
	\end{equation*}
	For the case of $E[V_{\textnormal{bad}}^{3}]$, we bound the number of clauses $a\in F_{\fs,\fs}$, which have two neighboring $\fs$ edges connected to the same tree. Such clause has $k$ neighboring variables, so
	\begin{equation*}
	    \E[V_{\textnormal{bad}}^{3}]\leq k\binom{k}{2}|F_{\fs,\fs}|\frac{4d\log_{2}n/k}{E_{\fs}-1}\lesssim_{k}\log n.
	\end{equation*}
	Finally, we bound $\E[V_{\textnormal{bad}}^{4}]$ in a similar fashion as done in \eqref{eq:bound:V:bad:1}:
	\begin{equation}\label{eq:bound:V:bad:4}
	    \E[V_{\textnormal{bad}}^{4}]\leq \sum_{\sigma \in\{\bb\}}\binom{d}{2}|V_{\sigma}|\frac{4\log_2 n}{|E_{\sigma}|-1}\lesssim_{k}\log n.
	\end{equation}
	Therefore, \eqref{eq:V:bad:inclusion}-\eqref{eq:bound:V:bad:4} altogether finish the proof of \eqref{eq:goal:lem:bad:variables}.
	\end{proof}
	\begin{lemma}
	\label{lem:1stmo:resampling:treedisjoint}
	 Fix $0<\eps<\frac{1}{2}$ and consider $(\GGG,\sig)$ with $V_{\textnormal{bad}}(\GGG,\sig)<\sqrt{n}$ and $\big(n_{\ttt}[\sig]\big)_{\ttt\in \FFF_{\tr}}\in \ee_{\frac{1}{4}}$. Then, there exists a constant $C_k>0$, which depends only on $k$, such that if $n\geq n_{0}(\eps,k)$,
	 \begin{equation}\label{eq:lem:1stmo:resampling:treedisjoint}
	     \sum_{Y:\frac{\eps n}{2}\leq |Y|\leq 2\eps n}\P_{\eps}(Y\mid \GGG)\one\left\{\{\dot{t}_{\sig}(e)\}_{e\in \NN}\textnormal{ are disjoint, }v\left (\dot{t}_{\sig}(e)\right)\leq \frac{-4\log\eps}{k\log 2}, \forall e\in \delta \NN\right\}\gtrsim e^{-C_k n \eps^2\log(\frac{1}{\eps})}.
	 \end{equation}
	 Moreover there exists an absolute constant $C >0$ such that for all $\eta >0$ and $n\geq n_0(\eps,\eta,k)$,
	 \begin{equation}\label{eq:1stmo:resampling:sampled:close:initial}
	  \sum_{Y:\frac{\eps n}{2}\leq |Y|\leq 2\eps n}\P_{\eps}(Y\mid \GGG)\one\left\{||H^\sm[\GGG,Y,\sig]-\left(H[\sig]\right)^{\textnormal{sy}}||_1\geq \eta\right\}\leq e^{-C n \eps \eta^2 }.
	 \end{equation}
	 Thus, taking $\eta=\eps^{1/3}$ so that $\eta^{2}\gg \eps\log(\frac{1}{\eps})$ for small enough $\eps$ shows the following with respect to $\eps$-sampling mechanism: for large enough $n$, with probability at least $Ce^{-C_kn\eps^2\log(\frac{1}{\eps})}$, we have that $\{\dot{t}_{\sig}(e)\}_{e\in \NN}$ are disjoint, $v\left (\dot{t}_{\sig}(e)\right)\leq\frac{-4\log\eps}{k\log 2}$ for all $e\in \delta \NN$, and $||H^\sm[\GGG,Y,\sig]-\left(H[\sig]\right)^{\textnormal{sy}}||_1\leq \eps^{1/3}$.  
	\end{lemma}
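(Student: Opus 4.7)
The plan for the lower bound \eqref{eq:lem:1stmo:resampling:treedisjoint} is to exhibit the desired sum as $\P_\eps(\AAA_1\cap\AAA_2\cap\AAA_3)$ with three events that I can control independently. Set $L:=-4\log\eps/(k\log 2)$, chosen so that $2^{-kL/4}=\eps$, and let $V_{\textnormal{big}}$ denote the variables within graph distance $2$ of some free tree of size $>L$; define $V_{\textnormal{bad},L}:=V_{\textnormal{bad}}\cup V_{\textnormal{big}}$. The assumption $(n_\ttt)\in\ee_{\frac14}$ gives $\sum_{v>L} v\cdot n\cdot 2^{-kv/4}\lesssim_k nL\eps$, and multiplying by the $O_k(1)$ factor for the distance-$2$ neighborhood yields $|V_{\textnormal{big}}|=O_k(n\eps|\log\eps|)$; combined with $|V_{\textnormal{bad}}|\le\sqrt n$ this gives $|V_{\textnormal{bad},L}|=O_k(n\eps|\log\eps|)$. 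Next call a pair of variables $(v_1,v_2)$ a \emph{conflict pair} if some hanging tree $\dot\ttt_\sig(e_1)$, $e_1\in\delta\NN(v_1)$, meets some $\dot\ttt_\sig(e_2)$, $e_2\in\delta\NN(v_2)$. Such an intersection forces both hanging trees to lie in a common free tree $\ttt$ (or to share a frozen boundary variable), and the number of variables within distance $2$ of a given $\ttt$ is at most $C_k v(\ttt)$, so the total number of conflict pairs is bounded by $C_k^2\sum_\ttt n_\ttt v(\ttt)^2\le C_k^2 n\sum_{v\ge 1}v^2 2^{-kv/4}=O_k(n)$, once more using $\ee_{\frac14}$ together with $k\ge k_0$.

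With $\AAA_1:=\{|Y|\in[\eps n/2,2\eps n]\}$, $\AAA_2:=\{Y\cap V_{\textnormal{bad},L}=\emptyset\}$, and $\AAA_3:=\{\text{no conflict pair lies in }Y\}$, the event inside the indicator of \eqref{eq:lem:1stmo:resampling:treedisjoint} contains $\AAA_1\cap\AAA_2\cap\AAA_3$. A Chernoff bound gives $\P_\eps(\AAA_1^{\mathsf c})=e^{-\Omega(\eps n)}$. Both $\AAA_2$ and $\AAA_3$ are decreasing in $Y$ under the product measure $\P_\eps$, so FKG yields $\P_\eps(\AAA_2\cap\AAA_3)\ge\P_\eps(\AAA_2)\P_\eps(\AAA_3)$; a direct computation gives $\P_\eps(\AAA_2)=(1-\eps)^{|V_{\textnormal{bad},L}|}\ge e^{-C_k n\eps^2|\log\eps|}$, and a second application of FKG to the decreasing events $\{v_1,v_2\in Y\}^{\mathsf c}$ indexed by the $O_k(n)$ conflict pairs yields $\P_\eps(\AAA_3)\ge(1-\eps^2)^{O_k(n)}\ge e^{-C_k n\eps^2}$. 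Since $\P_\eps(\AAA_1^{\mathsf c})$ is exponentially smaller than $\P_\eps(\AAA_2\cap\AAA_3)$ for small $\eps$, one concludes $\P_\eps(\AAA_1\cap\AAA_2\cap\AAA_3)\ge \tfrac12\P_\eps(\AAA_2\cap\AAA_3)\gtrsim e^{-C_k n\eps^2|\log\eps|}$, as claimed.

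For \eqref{eq:1stmo:resampling:sampled:close:initial} I would appeal to a standard empirical-measure concentration: for each fixed pattern $\utau$, the unnormalized count $\sum_{v\in Y}\one\{\sig_{\delta v}=\utau\}$ is $\textnormal{Bin}(n\dot H[\sig](\utau),\eps)$, so a Chernoff bound gives deviation $\eta\eps n$ with probability $\le e^{-cn\eps\eta^2}$; analogous estimates hold for the clause-adjacent and edge-adjacent coordinates. Combined with the concentration $|Y|\approx\eps n$ and a Bretagnolle--Huber--Carol-style union bound over the (effectively finite) support of $H^\sy$, these give the required $e^{-Cn\eps\eta^2}$ tail for the $L^1$ deviation. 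The final ``Thus'' statement then follows from a union bound: taking $\eta=\eps^{1/3}$ makes $n\eps\eta^2=n\eps^{5/3}\gg n\eps^2|\log\eps|$ for small $\eps$, so the concentration tail is negligible against the lower bound just established. The main technical obstacle is the conflict-pair count: the naive ``conflict graph degree'' estimate gives only $(dk)^{O(L)}n\eps^2=n\eps^{2-O(\log(dk)/k)}$ conflicts in expectation, which is far too weak, and it is essential to instead organize conflicts by their common free tree and exploit the quadratic-moment bound $\sum_\ttt n_\ttt v(\ttt)^2=O_k(n)$ furnished by $\ee_{\frac14}$.
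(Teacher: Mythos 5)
Your proof of the lower bound \eqref{eq:lem:1stmo:resampling:treedisjoint} is correct but takes a genuinely different route from the paper's. The paper conditions on $|Y|=\kappa\in[\eps n/2,2\eps n]$ and samples $X_1,\dots,X_\kappa$ sequentially without replacement, declaring $X_i$ \emph{successful} if the union of $\NN(X_i)$ with all its hanging trees avoids $V_{\textnormal{bad}}\cup V_{\textnormal{big}}$ and the corresponding sets of all earlier draws; since each such set has $O_k(\log(1/\eps))$ variables, the product $\prod_{i\le\kappa}\bigl(1-O_k\bigl((i\log(1/\eps)+\sqrt n+n\eps\log(1/\eps))/n\bigr)\bigr)$ yields $e^{-C_kn\eps^2\log(1/\eps)}$ directly. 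You instead stay with the product (Bernoulli) measure and factor the target event into the two decreasing events $\AAA_2$ and $\AAA_3$, handled by Harris/FKG; the essential new input is organizing conflicts by their common free tree and invoking the second-moment bound $\sum_\ttt n_\ttt v(\ttt)^2=O_k(n)$ from $\ee_{\frac14}$, which is exactly the right substitute for the paper's sequential bookkeeping (and you correctly observe that a naive conflict-graph degree count would be hopeless). Your decomposition also makes transparent that the $\log(1/\eps)$ in the exponent comes entirely from avoiding $V_{\textnormal{big}}$, while tree-disjointness alone only costs $e^{-C_kn\eps^2}$. One caveat: the paper's successful-sampling condition additionally forces the neighborhoods $\NN(v)$, $v\in Y$, to be pairwise disjoint, which is what licenses treating $\NN(Y)$ as $|Y|$ disjoint copies of $\DD$ in the resampling step; your events only give disjointness of the hanging trees, which is all the lemma literally asserts, but if this feeds into Definition \ref{def:resampling:markovchain} you should also exclude the $O_k(n)$ pairs of variables sharing a clause, which costs nothing extra.

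For \eqref{eq:1stmo:resampling:sampled:close:initial} your argument is essentially the paper's (the paper phrases it as a hypergeometric large-deviation bound for $\nu[\GGG,Y,\sig]$ conditioned on $|Y|=\kappa$, you phrase it as per-category binomial Chernoff plus a Bretagnolle--Huber--Carol union bound), but the phrase ``effectively finite support'' is hiding a real step: the number of distinct patterns $\sig_{\delta v}$ can be of order $n$, and a BHC bound carries a $2^{M}$ prefactor that is fatal when $M\asymp n$ (indeed, if every free variable sat in a distinct large tree, $\|H^{\sm}-H^{\sy}\|_1$ would \emph{not} concentrate). What rescues the claim is again $\ee_{\frac14}$: truncating at tree size $T\asymp\log(1/\eta)$ leaves $O_{k,\eta}(1)$ low-complexity patterns, to which BHC applies, while the aggregate sampled mass on high-complexity patterns is controlled by a single Chernoff bound and the $2^{-kT/4}$ tail of $\mu$. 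The paper's own write-up (``summing up over $\nu$'') is equally terse on this point, so I would not call it a gap unique to your argument, but you should make the truncation explicit since without it the step as written fails.
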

	\begin{proof}
	We first prove \eqref{eq:lem:1stmo:resampling:treedisjoint}. To begin with, denote the conditional law of $Y$ given $|Y|=\kappa$ as $\P_{\eps,\kappa}(Y\mid \GGG)\equiv \frac{\P_\eps(Y\mid \GGG)\one\{|Y|=\kappa\}}{\P_\eps(|Y|=\kappa\mid \GGG)}$. By Hoeffding's inequality, $\P_{\eps}(\eps n/2 \leq |Y|\leq 2\eps n \mid \GGG) \geq 1-2e^{-n\eps^2/4}$ holds, so in order to prove \eqref{eq:lem:1stmo:resampling:treedisjoint}, it suffices to prove the following for $\eps n/2 \leq \kappa\leq 2\eps n$:
	\begin{equation}\label{eq:goal:lem:1stmo:resampling:treedisjoint}
	    \P_{\eps,\kappa}\left(\{\dot{t}_{\sig}(e)\}_{e\in \NN}\textnormal{ are disjoint, } v\left (\dot{t}_{\sig}(e)\right)\leq \frac{-4\log\eps}{k\log 2}, \forall e\in \delta \NN \Big| \GGG\right)\geq e^{-C_k n\eps^2\log(\frac{1}{\eps})}.
	\end{equation}
	Note that $Y=\{X_i\}_{i\leq \kappa} \sim \P_\eps(\cdot \mid \GGG,\kappa)$ is uniformly distributed among $\kappa$ variables, so sampling from $\P_\eps(\cdot \mid \GGG,\kappa)$ is equivalent to sequentially sampling $X_1,...,X_{\kappa}\in V(\GGG)$ without replacement. Define
	\begin{equation*}
	V_{\textnormal{big}}\equiv V_{\textnormal{big}}(\GGG,\sig, \eps)\equiv \{v\in V:\exists e\in \delta \NN(v)\quad\textnormal{s.t.}\quad v\left (\dot{t}_{\sig}(e)\right)>\frac{-4\log\eps}{k\log 2}\}.
	\end{equation*}
	Observe that $v\in V_{\textnormal{big}}$ implies $v$ is included in the distance $2$-neighborhood of a free tree $\ttt$ with $v(\ttt)>\frac{-4\log\eps}{k\log 2}$, where the distance is measured in graph distance. Hence,
	\begin{equation}\label{eq:bound:V:big}
	    |V_{\textnormal{big}}| \leq kd\sum_{v>\frac{-4\log\eps}{k\log 2}}\sum_{\ttt\in \FFF_{\tr}, v(\ttt)=v}n_{\ttt}(\sig)v\leq kdn\sum_{v>\frac{-4\log\eps}{k\log 2}} v2^{-kv/4}\lesssim -dn\eps \log \eps.
	\end{equation}
	Thus, if we define $V_{\textnormal{bad}}^{+}\equiv V_{\textnormal{bad}}\cup V_{\textnormal{big}}$, then  $|V_{\textnormal{bad}}^{+}|\leq \sqrt{n}+Cdn\eps \log(\frac{1}{\eps})$ holds. We now define \textit{successful} sampling as follows: given $X_1,...,X_{i-1}$, call $X_i$ a successsful sampling if it satisfies the $2$ conditions detailed below.
	\begin{enumerate}
	    \item $\NN_{+}(X_i)\cap\left(\cup_{\ell=1}^{i-1}\NN_{+}(X_\ell)\right)=\emptyset$, where $\NN_{+}(v)\equiv \NN(v) \sqcup \left(\cup_{e\in \delta \NN(v)}\dot{t}_{\sig}(e)\right), v\in V$.
	    \item $X_{i} \notin V_{\textnormal{bad}}^{+}$.
	\end{enumerate}
	Note that successful sampling of $X_1,...,X_{\kappa}$ implies that $\{\dot{t}_{\sig}(e)\}_{e\in \NN}$ are disjoint and $ v\left (\dot{t}_{\sig}(e)\right)\leq \frac{-4\log\eps}{k\log 2}$ for all $e\in \delta \NN$. To this end, we aim to lower bound the probability of a successful sampling. For $v\notin V_{\textnormal{bad}}^{+}$, the number of variables in $\NN_{+}(v)$ is at most $\frac{-4d\log\eps}{\log 2}$, so we have
	\begin{multline}
	 \prod_{i=1}^{\kappa}\P\left(X_i\textnormal{ is successful}\mid X_1,...,X_{i-1}\textnormal{ is successful}\right)\geq\prod_{i=1}^{\kappa} \left(1-\frac{\frac{-4d\log\eps}{\log 2}i+|V_{\textnormal{bad}}^{+}|}{n}\right)\\
	 \geq \prod_{i=1}^{2\eps n}\left(1-\frac{Cd\log(\frac{1}{\eps})i+Cdn\eps \log(\frac{1}{\eps})+\sqrt{n}}{n}\right)\geq e^{-C_k n\eps^2\log(\frac{1}{\eps})},
	\end{multline}
	where in the last inequality, we assumed that $n$ is large enough. Therefore, \eqref{eq:goal:lem:1stmo:resampling:treedisjoint} holds.
	
	Next, we prove \eqref{eq:1stmo:resampling:sampled:close:initial}. To do so, it suffices to prove the following for $\eps n/2 \leq \kappa \leq 2\eps n$:
	\begin{equation}\label{eq:goal:lem:1stmo:sampled:close:initial}
	    \P_{\eps,\kappa}\left(||H^\sm[\GGG,Y,\sig]-\left(H[\sig]\right)^{\textnormal{sy}}||_1\geq \eta \right)\leq e^{-C n\eps \eta^2}.
	\end{equation}
	To prove the equation above, recall the definition of $\nu[\GGG, Y, \sig]\in \PPP(\Omega_{\DD})$. Let $\mu \equiv \nu[\GGG,V(\GGG), \sig]$, i.e. $\mu(\sig_{\DD})$ is the fraction of $\sig_{\DD}$ among $\sig_{\DD_1},...,\sig_{\DD_n}$, where $\DD_1,...,\DD_n$ is all the copies of $\DD$ embedded in $\GG$. Then, observe that $H^\tr(\mu)=\left(H[\sig]\right)^{\textnormal{sy}}$ holds, so $||H^\sm[\GGG,Y,\sig]-\left(H[\sig]\right)^{\textnormal{sy}}||_1\lesssim ||\nu[\GGG,Y,\sig]-\mu||_1$ holds because $\nu \to H^\tr(\nu)$ is a projection. Hence, it suffices to show 
	\begin{equation}\label{eq:goal-2:lem:1stmo:sampled:close:initial}
	    \P_{\eps,\kappa}\left(||\nu[\GGG,Y,\sig]-\mu||_1\geq \eta \right)\leq e^{-Cn\eps \eta^2}.
	\end{equation}
	We argue \eqref{eq:goal-2:lem:1stmo:sampled:close:initial} by a standard large deviation argument: it is straightforward to compute
	\begin{equation*}
	    \P_{\eps,\kappa}\left(\nu[\GGG,Y,\sig]=\nu \right)=\frac{\prod_{\sig_{\DD}\in \Omega_{\DD}}\binom{n\mu(\sig_{\DD})}{\kappa \nu(\sig_{\DD})}}{\binom{n}{\kappa}}.
	\end{equation*}
	Using Stirling's approximation, we have $\binom{\ell}{a}\leq \exp\big(\ell \HH(\frac{a}{\ell})\big)$ for all $0\leq a \leq \ell$. Also, we can lower bound $\binom{n}{\kappa}\gtrsim \frac{\sqrt{n}}{\sqrt{\kappa}\sqrt{n-\kappa}}\exp\big(n\HH(\frac{\kappa}{n})\big)\gtrsim \frac{1}{\sqrt{n}}\exp\big(n\HH(\frac{\kappa}{n})\big)$. Thus, we can further bound
	\begin{multline}\label{eq:bound:prob:nu:Y}
	     \P_{\eps,\kappa}\left(\nu[\GGG,Y,\sig]=\nu \right)\lesssim n^{1/2}\exp\left(\sum_{\sig_{\DD}\in \Omega_{\DD}}n\mu(\sig_{\DD})\HH\bigg(\frac{\kappa \nu(\sig_{\DD})}{n\mu(\sig_{\DD})}\bigg)-n\HH\bigg(\frac{\kappa}{n}\bigg)\right)\\
	     =n^{1/2}\exp\left(-\kappa\bigg(\DD_{\textnormal{KL}}(\mu\mid \nu)+\frac{n-\kappa}{\kappa}\DD_{\textnormal{KL}}\Big(\frac{n\nu-\kappa\mu}{n-\kappa}\Big|~~~ \nu\Big)\bigg)\right)\leq n^{1/2}\exp\bigg(-\frac{\eps}{4}||\mu-\nu||_1^{2}\bigg),
	\end{multline}
	where the last inequality is due to $\DD_{\textnormal{KL}}(\mu\mid \nu)\geq \frac{1}{2}||\mu-\nu||_1^{2}$. Hence, summing up \eqref{eq:bound:prob:nu:Y} for $\nu$ with $||\nu-\mu||_1\geq \eta$ shows \eqref{eq:goal-2:lem:1stmo:sampled:close:initial} for large enough $n$, concluding the proof of \eqref{eq:1stmo:resampling:sampled:close:initial}.
	\end{proof}
    \begin{lemma}\label{lem:B:sy}
    For $B\equiv (\dot{B}, \hat{B}, \bar{B})\in \bDelta^{\textnormal{b}}$, let $B^{\sy}\equiv (\dot{B},\hat{B}^{\sy}, \bar{B})\in \bDelta^{\textnormal{b}}$, where $\hat{B}^{\sy}$ is the average over all $k$ roations of $\hat{B}$. Then, there exists a universal constant $C>0$ such that for any $\delta>0$ and $\lambda\in [0,1]$, we have for large enough $n$ that
    \begin{equation*}
    \E \bZ_{\la}^{\tr}\Big[\|B-B^{\sy}\|_1\geq \delta \Big]\equiv \sum_{B: \|B-B^{\sy}\|_1\geq \delta}\E \bZ_{\la}^{\tr}[B]\leq e^{-Cn\delta^2}\E\bZ_{\la}^{\tr}\,.
    \end{equation*}
    \end{lemma}
    \begin{proof}
     By Proposition \ref{prop:1stmo:B nt decomp}, for any $\{n_{\ttt}\}_{\ttt\in \FFF_{\tr}}\sim B$, we have
     \begin{equation*}
     \E \bZ_\lambda^{\tr} \big[B, \{n_\ttt \}_{\ttt\in \mathscr{F}_{\tr}}\big] =  n^{O_k(1)} e^{n\Psi_\circ(B)} \prod_{\ttt\in \mathscr{F}_{\tr}}\left[ \frac{1}{n_\ttt !} \left(\frac{n J_\ttt w_\ttt^\lambda}{e} \right)^{n_\ttt}\right]\,,
     \end{equation*}
     where $\Psi_{\circ}(B)$ is defined in \eqref{eq:def:Psi:circ:p:circ:1stmo}. Note that $\langle \hat{B}^{\sy}, \log \hat{v} \rangle= \langle \hat{B}, \log \hat{v} \rangle$, since $\hat{v}(\sig)$ is invariant under the permutation of the coordinates of $\sig$. Moreover, $\hat{B} \to -\langle \hat{B}, \log \hat{B} \rangle$ is a strictly concave function with concavity parameter $C>0$ for some universal constant $C>0$. Thus, 
     \begin{equation*}
     \Psi_{\circ}(B)\leq \Psi_{\circ}(B^{\sy})-C\|B-B^{\sy}\|_1^2\, 
     \end{equation*}
     Therefore, the 2 equations in the displays above conclude the proof.
    \end{proof}
    Having Lemmas \ref{lem:1stmo:resampling:bad:variables}, \ref{lem:1stmo:resampling:treedisjoint}, and \ref{lem:B:sy} in hand, we now prove Proposition \ref{prop:maxim:1stmo}.
	\begin{proof}[Proof of Proposition \ref{prop:maxim:1stmo}]
	Fix $\delta>0$ throughout the proof. We consider $\eps>0$ small enough in terms of $\delta$, to be determined below. Let $\AAA_0\equiv \AAA_0(\eps)$ be the set of $(\GGG,Y,\sig)$ which satisfy the following $3$ conditions:
	\begin{itemize}
	\item $||\left(\left(B[\sig]\right)^{\sy},s[\sig]\right)-(B^\star_{\la},s^\star_{\la})||_1>\delta$ and $\big(n_{\ttt}[\sig]\big)_{\ttt\in\FFF_{\tr}}\in \ee_{\frac{1}{4}}$.
	\item $|Y| \in [\eps n/2,2\eps n]$ and $\{\dot{\ttt}_{\sig}(e)\}_{e \in \delta \NN(Y)}$ are disjoint with $v\left(\dot{\ttt}_{\sig}(e)\right)\leq \frac{-4\log\eps}{k\log 2}$ for all $e\in \delta \NN(Y)$. 
	\item $||H^\sm[\GGG,Y,\sig]-\left(H[\sig]\right)^{\textnormal{sy}}||_1\leq \eps^{1/3}$. 
	\end{itemize}
    Here, $\left(B[\sig]\right)^{\sy}$ is defined in Lemma \ref{lem:B:sy}. Furthermore, let $\AAA_1\equiv \AAA_1(\eps)$ be the set of $A_1(\GGG,Y,\sig)$ such that $A_1$ is one-step reachable from some $A_0 \in \AAA_0$. By the reversibility of the Markov chain, stated in Lemma \ref{lem:resampling:reversingmeasure}, we have
	\begin{equation}
	\label{eqn:resampling:keyequation}
	    \mu_{\eps}(\AAA_0)=\sum_{A_0\in \AAA_0}\sum_{A_1\in \AAA_1}\mu_{\eps}(A_0)\pi(A_0,A_1) = \sum_{A_1\in \AAA_1}\sum_{A_0\in \AAA_0}\mu_{\eps}(A_1)\pi(A_1,A_0)\leq \mu_{\eps}(\AAA_1)\max_{A_1\in \AAA_1}\pi(A_1,\AAA_0).
	\end{equation}
	Observe that by Lemma \ref{lem:1stmo:resampling:treedisjoint}, we can lower bound the \textsc{lhs} of the equation above by
	\begin{equation}\label{eq:maxim:lowerbound}
	    \mu_{\eps}(\AAA_0) \gtrsim e^{-C_k n \eps^2 \log(\frac{1}{\eps})}\E \bZ^\tr_{\la}\left[||(B,s)-(B^\star_{\la},s^\star_{\la})||_1>\delta,\quad(n_{\ttt})_{\ttt\in \FFF_{\tr}}\in \ee_{\frac{1}{4}}\right].
	\end{equation}
	Turning to upper bound the \textsc{rhs} of \eqref{eqn:resampling:keyequation}, for $A_1=(\GGG^\prime,Y,\utau)\in \AAA_1$, let $\kappa^\prime\equiv |Y|$. Lemma \ref{lem:resampling:transitionprob} shows
	\begin{equation}\label{eq:maxim:1stmo:upperbound:pi}
	\begin{split}
	    \pi(A_1,\AAA_0)&\leq  \sum_{H^\sm_0\in \bDelta^\sm_{\kappa^\prime}:H^\sm_0=H^\sm[A_0], A_0\in \AAA_0}\exp\left(-\frac{\eps n}{2}\Xi(H^\sm_0)+C_{k,\eps} \log n\right)\\&\leq \exp\left(-\frac{\eps n}{2}\inf_{H^\sm_0=H^\sm[A_0], A_0\in \AAA_0}\Xi(H^\sm_0)+C_{k,\eps}^\prime \log n\right),
	\end{split}
	\end{equation}
	where the last inequality is because $H^\sm_0 \in \bDelta^{\sm,(\frac{-4d\log \eps}{\log 2})}_{\kappa^\prime}$ and $\big|\bDelta^{\sm,(\frac{-4d\log \eps}{\log 2})}_{\kappa^\prime}\big|\leq n^{C_{k,\eps}}$. To this end, we gather the key observations to lower bound $\Xi(H^\sm_0)$.
    \begin{itemize}
	\item Define $\utau=(\tau_1,\ldots \tau_d) \in \Omega^d$ to be free if $\tau_i \in \{\fF\}$ holds for all $1\leq i\leq d$. We show in Lemma \ref{lem:H:dot:to:s:lipschitz} that $|s[\sig]-s^\star_{\la}|\leq \log 2\sum_{\utau \in \Omega^{d}:\textnormal{free}}|\dot{H}(\utau)-H^\star_{\la}(\utau)|$ holds for $\dot{H}=\dot{H}[\sig]$.  Since $B[\sig]$ can be obtained by restriction of $H[\sig]$ onto frozen variables, separating clauses, and the edges adjacent to them, we have for $(\GGG,Y,\sig)\in \AAA_0$,
   \begin{equation}\label{eq:H:sy:distance:H:star:lowerbound}
   \delta< ||(\left(B[\sig]\right)^{\textnormal{sy}},s[\sig])-(B^\star_{\la},s^\star_{\la})||_1\lesssim ||\left(H[\sig]\right)^{\textnormal{sy}}-H^\star_{\la}||_1\,.
   \end{equation}
	\item For $C>0$, define 
	\begin{equation}\label{eq:def:bDelta:exp:decay}
	    \bDelta^{\textnormal{exp}}_{C}\equiv \{H^\sm\in \bDelta^\sm:\sum_{v(\dsigma)\geq L} \dot{h}[H^\sm](\dsigma)\leq 2^{-CkL},\forall L \geq 1\}.
	\end{equation}
	Note that for $(\GGG,Y,\sig)\in \AAA_0$, $\left(H[\sig]\right)^\sy \in \bDelta^{\textnormal{exp}}_{1/5}$ holds, because
	\begin{equation}\label{eq:doth:H:sy:exp:decay}
	\begin{split}
	 \sum_{v(\dot{\tau})\geq L}\dot{h}\left[\left(H[\sig]\right)^{\sy}\right](\dot{\tau})
	 &=\sum_{v(\dot{\tau})\geq L}\frac{1}{d}\sum_{\ttt\in \FFF_{\tr}}p_{\ttt}[\sig]\sum_{e\in E(\ttt)}\one\left\{\dot{\sigma}_e(\ttt)=\dot{\tau}\right\}\\
	&\leq \sum_{\ttt:v(\ttt)\geq L}v(\ttt)p_{\ttt}[\sig]\leq\sum_{v\geq L} v2^{-kv/4}\leq 2L 2^{-kL/4}\leq 2^{-kL/5},
	\end{split}
	\end{equation}
	where $\sigma_{e}(\ttt)$ is defined in \eqref{eq:def:col on freetree}. Note that by Lemma \ref{lem:opt:tree:decay}, $H^\star_{\la}\in \bDelta^{\textnormal{exp}}_{1/5}$ also holds. Since $H^\sm \to \dot{h}[H^\sm]$ is a linear projection, for $H^\sm_0=H^\sm[A_0], A_0\in \AAA_0$,
	\begin{equation*}
	    \sum_{v(\dot{\tau})\geq L}\dot{h}[H^\sm_0](\dot{\tau})\leq \sum_{v(\dot{\tau})\geq L}\dot{h}\left[\left(H[\sig]\right)^{\sy}\right](\dot{\tau}) +\eps^{1/3}\leq 2^{-kL/5}+\eps^{1/3}
	\end{equation*}
	Observe that for $L\leq \frac{-4\log\eps}{k\log 2}$, $2^{-kL/5}+\eps^{1/3}\leq 2^{-kL/20}$ holds, since $L\to 2^{-kL/20}-2^{-kL/5}$ is a decreasing function and $\eps^{1/3}+\eps^{4/5}\leq \eps^{1/5}$ for small enough $\eps$. Also, because $v\left(\dot{\ttt}_{\sig}(e)\right)\leq \frac{-4\log\eps}{k\log 2}$ for all $e\in \delta \NN(Y)$, $\sum_{v(\dot{\tau})\geq L}\dot{h}[H^\sm_0](\dot{\tau})=0$ for $L >\frac{-4\log\eps}{k\log 2}$. Therefore, we conclude that $\left(H[\sig]\right)^\sy,H^\star_{\la}, H^\sm_0\in \bDelta^{\textnormal{exp}}_{1/20}$.
	\item It is straightforward to see that $\bDelta^{\textnormal{exp}}_{1/20}$ is tight and closed, where we endow $\bDelta^{\textnormal{exp}}_{1/20}$ with the topology induced by total variation norm (or equivalently, weak convergence, since $\Omega$ is countable). Thus, $\bDelta^{\textnormal{exp}}_{1/20}$ is compact by Prokhorov's theorem. Hence, by Lemma \ref{lem:1stmo:unique:zero:Xi} and the continuity of $\Xi[H]$ on $\bDelta^{\textnormal{exp}}_{1/20}$, guaranteed by Lemma \ref{lem:Xi:continuous:1stmo} in Appendix \ref{sec:appendix:continuity:tree:optmization}, we have
	\begin{equation}\label{eq:G:positive}
	G(\eta)\equiv \inf\left\{\Xi(H):H\in \bDelta^{\textnormal{exp}}_{1/20},H=H^\sy,||H-H^\star_{\la}||_1\geq\eta \right\}>0 \quad\textnormal{for}\quad \eta>0.
	\end{equation}
	For $(\GGG,Y,\sig)\in \AAA_0$, $\left(H[\sig]\right)^\sy \in \bDelta^{\textnormal{exp}}_{1/20}$, by the previous observation. Thus, \eqref{eq:H:sy:distance:H:star:lowerbound} shows
	\begin{equation}\label{eq:lowerbound:Xi:H:sy}
	    \Xi\left(\left(H[\sig]\right)^\sy\right)\geq G(\delta)
	\end{equation}
	Moreover, since $\Xi(\cdot)$ is continuous on the compact set $\bDelta^{\textnormal{exp}}_{1/20}$, it is uniformly continuous, i.e.
	\begin{equation}\label{eq:Xi:uniform:continous}
	    \lim_{\eta\to 0}f(\eta)=0,\textnormal{ where }f(\eta) \equiv \inf\left\{\big|\Xi(H^\sm_1)-\Xi(H^\sm_2)\big|:H^\sm_1,H^\sm_2\in \bDelta^{\textnormal{exp}}_{1/20}, ||H^\sm_1-H^\sm_2||_1\leq \eta\right\}.
	\end{equation}
	\end{itemize}
	Now, because $||H^\sm_0-\left(H[\sig]\right)^\sy||\leq \eps^{1/3}$ holds for $H^\sm_0=H^\sm[\GGG,Y,\sig], (\GGG,Y,\sig)\in \AAA_0$, we have
	\begin{equation*}
	    \Xi(H^\sm_0)\geq \Xi\left(\left(H[\sig]\right)^\sy\right)-f(\eps^{1/3})\geq G(\delta)-f(\eps^{1/3}),
	\end{equation*}
	where the last inequality is due to \eqref{eq:lowerbound:Xi:H:sy}. Note that we have $G(\delta)>0$ by \eqref{eq:G:positive}. Hence, \eqref{eq:Xi:uniform:continous} shows that $f(\eps^{1/3})\leq G(\delta)/2$ for small enough $\eps$, i.e. $\eps<\eps_0(\delta)$. Therefore, by the above equation and \eqref{eq:maxim:1stmo:upperbound:pi}, for $\eps<\eps_0(\delta)$, we can upper bound
	\begin{equation}\label{eq:maxim:upperbound}
	\max_{A_1\in \AAA_1}\pi(A_1,\AAA_0)\leq \exp\Big\{-\frac{\eps G(\delta)}{4}n+C^\prime_{k,\eps}\log n\Big\}.
	\end{equation}
    Moreover, if we denote by $\rr(\sig)$ (resp. $\ff(\sig)$) the number of red edges (resp. free variables) in a coloring $\sig \in \Omega^E$, we can bound
    \begin{equation}\label{eq:maxim:upperbound:1}
        \mu_{\eps}(\AAA_1)\leq \E\bZ_{\la}^{\tr}+\E\sum_{\sig\in \Omega^E} w_{\GGG}^{\lit}(\sig)^{\la}\mathds{1}\bigg\{ \frac{\rr(\ux)}{nd} \vee \frac{\ff(\ux)}{n}\in \Big( \frac{7}{2^k}\,,\,\frac{7}{2^k}+\eps\Big) \bigg\}\leq \E\bZ_{\la}^{\tr}+e^{-cn}\lesssim_k \E\bZ_{\la}^{\tr}\,,
    \end{equation}
    where the second inequality is due to Lemma \ref{lem:free:red}. The last inequality holds because $\E\bZ_{\la}^{\tr}\asymp_k \E\bZ_{\la}^{\tr}$ (cf. Corollary \ref{cor:cyclic:contribution:1stmo}), and $\E\bZ_{\la}\gtrsim 1$ holds because there is a valid frozen configuration w.h.p. for $\alpha<\alpha_{\textsf{sat}}$. Consequently, reading \eqref{eqn:resampling:keyequation}, \eqref{eq:maxim:lowerbound}, \eqref{eq:maxim:upperbound}, and \eqref{eq:maxim:upperbound:1} altogether, we have
	\begin{equation*}
	\frac{\E \bZ^\tr_{\la}\left[||(B^{\sy},s)-(B^\star_{\la},s^\star_{\la})||_1>\delta,(n_{\ttt})_{\ttt\in \FFF_{\tr}}\in \ee_{\frac{1}{4}}\right]}{\E \bZ_{\la}^\tr} \lesssim \exp\left(-\frac{\eps G(\delta)n}{4}-C_k n \eps^2 \log\eps +C^\prime_{k,\eps}\log n\right).
	\end{equation*}
	Finally, take $\eps>0$ small enough so that $0<-\eps \log \eps<\frac{G(\delta)}{8}$ and $n$ large enough so that $C^\prime_{k,\eps}\log n \ll \frac{\eps G(\delta) n}{8}$ to conclude for some $c(\delta)>0$ and $n\geq n_0(\delta)$,
	\begin{equation*}
	    \E \bZ^\tr_{\la}\left[||(B^{\sy},s)-(B^\star_{\la},s^\star_{\la})||_1>\delta,(n_{\ttt})_{\ttt\in \FFF_{\tr}}\in \ee_{\frac{1}{4}}\right]\leq e^{-c(\delta) n}\E \bZ_{\la}^\tr\,.
	\end{equation*}
Combining with Lemma \ref{lem:B:sy} concludes the proof. 
	\end{proof}

	\subsection{Negative-definiteness of the exponent}\label{subsubsec:resampling:negdef}
	The following proposition is the crux of the proof of Proposition \ref{prop:negdef}.
	\begin{prop}\label{prop:crux:negdef:1stmo}
	For $L\geq L_{0}(k)$, there exist constants $C_1,C_2,C_3>0$, which depend on $k$ only, and $\delta_0(k,L)$, which depends on $k$ and $L$, such that the following holds: consider $B\in \bDelta^{b}$ with $B=B^\sy$ and $D\equiv ||B-B^\star_{\la,L}||_1<\delta_0(k,L)$. For $0<\eps<\eps_0(k,L,D)$, where $D\to \eps_0(k,L,D)$ is non-decreasing, we have
	\begin{equation}\label{eq:prop:crux:negdef:1stmo}
	    F_{\la,L}(B)\leq \max\left\{F_{\la,L}(B^\prime):||B^\prime-B||_1\leq C_1\eps D\right\} -C_2 \eps D^2- C_3\eps^{2}\log\eps
	\end{equation}
	\end{prop}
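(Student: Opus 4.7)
The plan is to run the $L$-truncated resampling Markov chain on triples $(\GGG, Y, \sig)$ with $\sig$ restricted to boundary profile $\proj(B)$, and to use reversibility to compare $B$ with nearby profiles $B'$. The gain should come from the tree-optimization functional $\bLa$: since the chain replaces the local statistics $H^{\sm}$ by the tree optimizer of $\bLa$ under the constraint $\dot h[H^{\sm}]=\dot h$, and since quadratic growth of $\Xi_L$ around $H^\star_{\la,L}$ (Lemma~\ref{lem:quadratic:growth:Xi:1stmo}) gives $\Xi_L(H^\sm)\gtrsim_k D^2$, the free-energy gain is $\gtrsim \eps n D^2$. The overhead $\eps^2|\log\eps|$ is the a priori cost, via Lemma~\ref{lem:1stmo:resampling:treedisjoint}, of conditioning on the event that the $\frac{3}{2}$-neighborhoods $\NN(Y)$ are disjoint with moderately-sized hanging trees.

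Fix $L$, $B$ and $D$ as in the statement. Define $\AAA_0$ to be the set of triples $(\GGG,Y,\sig)$ with $\sig\in\Omega_L^E$ valid, $B[\sig]=\proj(B)$, $|Y|\in[\eps n/2,2\eps n]$, the trees $\{\dot \ttt_\sig(e)\}_{e\in\delta\NN(Y)}$ disjoint, and $\|H^\sm[\GGG,Y,\sig]-(H[\sig])^{\sy}\|_1\le\eps^{1/3}$. Lemma~\ref{lem:1stmo:resampling:treedisjoint} gives
\[
\mu(\AAA_0)\,\gtrsim\,e^{-C_k n\eps^2\log(1/\eps)}\;\E\bZ^{(L),\tr}_\la[\proj(B)].
\]
Let $\AAA_1$ be the set of one-step images of $\AAA_0$ under the truncated chain. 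Reversibility (Lemma~\ref{lem:resampling:reversingmeasure}) yields
\[
\mu(\AAA_0)\;\le\;\mu(\AAA_1)\,\max_{A_1\in\AAA_1}\pi_L(A_1,\AAA_0).
\]

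For the transition-probability factor: since $B[\sig]=\proj(B)$ is at $L^1$-distance $D$ from $B^\star_{\la,L}$, the symmetrized profile $(H[\sig])^{\sy}$ is at distance $\gtrsim D$ from $H^\star_{\la,L}$, and the triangle inequality with the $\eps^{1/3}$ closeness gives $\|H^\sm-H^\star_{\la,L}\|_1\gtrsim D-\eps^{1/3}\gtrsim D$ once $\eps<\eps_0(k,L,D)$ is chosen so that $\eps^{1/3}\ll D$ (this is where the monotone dependence on $D$ enters). Lemma~\ref{lem:quadratic:growth:Xi:1stmo} then produces $\Xi_L(H^\sm)\ge C_k D^2$, and Lemma~\ref{lem:resampling:transitionprob} gives
\[
\max_{A_1\in\AAA_1}\pi_L(A_1,\AAA_0)\;\le\;\exp\!\Bigl(-\tfrac{C_k}{2}\,\eps n D^2+C_{k,L}\log n\Bigr).
\]
For the $\mu(\AAA_1)$ factor: the chain alters $\sig$ only on $\NN(Y)$ and on the directed trees off $\delta\NN(Y)$, and it preserves $\dot h[H^\sm]$; the updated local profile equals the tree optimizer $\nu^\op_L(\dot h)$ of \eqref{eq:def:nu:q:dot}. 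A Pinsker-type bound on $\Xi_L=D_{\textnormal{KL}}$ translates $\Xi_L(H^\sm)$ into $\|\nu-\mu\|_1\lesssim D$ displacement of local statistics, and weighting by the $\eps$-fraction of variables affected shows that the new boundary profile lies in a ball of radius $C_1\eps D$ around $\proj(B)$. Hence
\[
\mu(\AAA_1)\;\le\;n^{O_k(1)}\max_{\|B'-B\|_1\le C_1\eps D}\E\bZ^{(L),\tr}_\la[\proj(B')].
\]
Combining the three displays, taking $\tfrac{1}{n}\log$, and letting $n\to\infty$ inside $F_{\la,L}(\cdot)$ via Lemma~\ref{lem:exist:free:energy:1stmo} yields \eqref{eq:prop:crux:negdef:1stmo} after relabeling constants.

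The main obstacle will be the radius bound $\|B[\utau]-B[\sig]\|_1\le C_1\eps D$ in the estimate of $\mu(\AAA_1)$: the naive bound is only $O(\eps)$, and the factor of $D$ is essential for deducing $\prec -\beta I$ from an inequality of this form in Proposition~\ref{prop:negdef}. Obtaining it requires coupling the resampled $H^\sm$ to its tree optimum $\nu^\op_L(\dot h)$, controlling $\|\nu^\op_L(\dot h)-H^\sm\|_1$ by $\sqrt{\Xi_L(H^\sm)}$ via Pinsker, and then transferring this $L^1$ control to the full boundary profile $B$ through the linear projection $H\mapsto B$, with the $\eps$-factor coming from the fact that only the $\eps$-fraction of variables adjacent to $Y$ is altered. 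A secondary technicality is ensuring that the exponential-tail constraint $H^\sm\in\bDelta^{\textnormal{exp}}_{C}$ (needed to apply Lemma~\ref{lem:quadratic:growth:Xi:1stmo}) holds uniformly on $\AAA_0$; this follows exactly as in the derivation \eqref{eq:doth:H:sy:exp:decay} in the proof of Proposition~\ref{prop:maxim:1stmo}, since $\sig$ is $L$-truncated and the sampling mechanism forces $v(\dot\ttt_\sig(e))\le -4\log\eps/(k\log 2)$.
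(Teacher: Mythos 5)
Your overall architecture matches the paper's: reversibility of the $L$-truncated chain, the quadratic lower bound on $\Xi_L$ to kill the backward transition probability by $e^{-\Omega_k(\eps n D^2)}$, and the cost $e^{-C_k n\eps^2\log(1/\eps)}$ for conditioning on disjointness. But the argument you give for the radius bound $\|B[\utau]-B\|_1\le C_1\eps D$ has a genuine gap. You propose to control $\|\nu^\op_L(\dot h)-H^{\sm}\|_1$ by $\sqrt{\Xi_L(H^{\sm})}$ via Pinsker. Pinsker gives the \emph{lower} bound $\Xi_L\gtrsim \|\mu-\nu\|_1^2$, so it does yield $\|\mu-\nu\|_1\le\sqrt{2\Xi_L}$ — but to conclude the radius is $\lesssim\eps D$ you then need $\Xi_L(H^{\sm})\lesssim D^2$, i.e.\ a \emph{quadratic upper bound}. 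Lemma~\ref{lem:quadratic:growth:Xi:1stmo} gives only the lower bound, and no matching upper bound with $L$-uniform constant is available or used in the paper. Worse, your $\AAA_0$ constrains only $B[\sig]=\proj(B)$ and not the free tree profile, so $\dot h[H^{\sm}]$ — which is determined by the free-tree statistics, not just $B$ — can be far from $\dot h^\star_{\la,L}$, and $\Xi_L((H[\sig])^{\sy})$ can be of constant order even when $\|B-B^\star_{\la,L}\|_1=D$ is small. The paper's $\AAA_0'$ therefore carries the additional constraint $|p_\ttt[\sig]-p_{\ttt,\la,L}(B)|\le n^{-1/3}$, which pins down $\dot h[H^{\sm}]$ to within $O_k(D+n^{-1/3})$ of $\dot h^\star_{\la,L}$ (Lemma~\ref{lem:cont:h:op}), and then the radius is obtained by the Lipschitz chain $\dot h\to\dot q_L[\dot h]\to\nu_{\dot q}$ (Proposition~\ref{prop:1stmo:Lipschitz:hdot:qdot} and Lemma~\ref{lem:nu:q:continuous:1stmo}), whose constants are $k$-dependent but $L$-uniform; the Pinsker route does not replace this.

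A secondary issue is that you take $\AAA_1$ to be \emph{all} one-step images and then bound $\mu(\AAA_1)$ by a $C_1\eps D$-ball, but the chain can occasionally jump to an atypical $H^\sm_1$ far from $H^\tr[\nu^\op_L(\dot h)]$, so that containment fails for generic images. The paper restricts $\AAA_1'$ to images with $\|H^{\sm}_1-H^\tr[\nu]\|_1\le\eps^{1/3}$, proves the escape probability $1-\min_{A_0}\pi_L(A_0,\AAA_1')$ is $e^{-\Omega(\eps^{5/3}n)}$ via \eqref{eq:lowerbound:Xi:L}, and uses the two-sided reversibility inequality $\mu(\AAA_0')\min\pi_L(A_0,\AAA_1')\le\mu(\AAA_1')\max\pi_L(A_1,\AAA_0')$. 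Your one-sided inequality is correct but does not couple well with the radius bound unless you also impose this extra condition and control its complement. So: the skeleton is right, but the precise definitions of $\AAA_0',\AAA_1'$ and the Lipschitz (rather than Pinsker) route to the radius are missing, and without them the factor of $D$ in the radius — which is what ultimately gives $\nabla^2 F_{\la,L}(B^\star_{\la,L})\prec-\beta(k)I$ with $L$-uniform $\beta(k)$ — cannot be obtained.
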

	The proposition above easily implies Proposition \ref{prop:negdef}.
	\begin{proof}[Proof of Proposition \ref{prop:negdef}]
	The first item is straightforward from Proposition \ref{prop:maxim:1stmo} and Lemma \ref{lem:exist:free:energy:1stmo}. Thus, we aim to prove the second item. 
	
	We first show $\nabla^{2}F_{\la,L}(B^\star_{\la,L}) \prec -\beta(k) I$. To begin with, note that $B\to \Psi_{\circ}(B)$ is strictly concave by its definition in \eqref{eq:def:Psi:circ:p:circ:1stmo}. Also, $\uh(B)=\uh(B^\sy)$ shows $\utheta(B,s)=\utheta(B^\sy,s)$. Hence, we have 
	\begin{equation}\label{eq:neg:def:Psi:circ}
	    F_{\la,L}(B)\leq F_{\la,L}(B^\sy)-\Omega_k(||B-B^\sy||_1^{2}))
	\end{equation}
	Let $\bDelta^{\textnormal{b},\sy}$ be the space of $B\in \bDelta^{\textnormal{b}}$ such that $B=B^\sy$. For $B\in \bDelta^{\textnormal{b},\sy}$ with $D\equiv ||B-B^\star_{\la,L}||_1<\delta_0(k,L)$ and $\eps<\eps_0(k,L,D)$, \eqref{eq:prop:crux:negdef:1stmo} and \eqref{eq:neg:def:Psi:circ} show 
	\begin{equation}\label{eq:crux:negdef:sy:1stmo}
	     F_{\la,L}(B)\leq \max\left\{F_{\la,L}(B^\prime):||B^\prime-B||_1\leq C_1\eps D, B^\prime \in \bDelta^{\textnormal{b},\sy}\right\}-C_2 \eps D^2- C_3\eps^{2}\log\eps.
	\end{equation}
	We proceed by making recursive use of \eqref{eq:crux:negdef:sy:1stmo}: fix $B\in \bDelta^{\textnormal{b},\sy}$ with $D\equiv ||B-B^\star_{\la,L}||_1<\frac{\delta_0(k,L)}{2}$. Suppose at time $t\in \Z_{\geq 0}$, the following holds for some $D_t \leq \frac{D}{2}$ and $a_t>0$:
	\begin{equation}\label{eq:recursive:1}
	       F_{\la,L}(B)\leq \max\left\{F_{\la,L}(B^\prime):||B^\prime-B||_1\leq D_t, B^\prime \in \bDelta^{\textnormal{b},\sy}\right\}-a_t.
	\end{equation}
	Note that for $B^\prime\in \bDelta^{\textnormal{b},\sy}$ with $||B^\prime-B||_1\leq D_t$, the triangle inequality shows $$\frac{D}{2} \leq D-D_t \leq||B^\prime-B^\star_{\la,L}||_1\leq D+D_t\leq \frac{3}{2}D<\delta_0(k,L).$$ Using \eqref{eq:crux:negdef:sy:1stmo} with $\eps=\frac{1}{C_1 K}$, where $K$ is a large enough integer so that $\frac{1}{C_1 K}<\eps_0(k,L,\frac{D}{2})\leq \eps_0(k,L,||B^\prime-B^\star_{\la,L}||_1)$, shows
	\begin{equation}\label{eq:recursive:2}
	    F_{\la,L}(B^\prime)
	    \leq \max \left\{F_{\la,L}(\tilde{B}):||\tilde{B}-B^{\prime}||_1\leq \frac{D+D_t}{K}, \tilde{B}\in \bDelta^{\textnormal{b},\sy}\right\}-\frac{C_2(D-D_t)^2}{C_1 K}
	    +\frac{C_3\log(C_1 K)}{C_1^2K^2}
	\end{equation}
	Hence, we can plug \eqref{eq:recursive:2} into \eqref{eq:recursive:1} to obtain the bound at time $t+1$. The recurrence relation for $(D_t)_{t\geq 0}$ and $(a_t)_{t\geq 0}$ is then given by
	\begin{equation*}
	    D_{t+1}=D_{t}+\frac{D+D_{t}}{K},\quad a_{t+1}=a_{t}+\frac{C_2(D-D_t)^2}{C_1 K}-\frac{C_3\log(C_1 K)}{C_1^2K^2},
	\end{equation*}
	with initial condition $D_{0}=0, a_0=0$. Solving the recurrence relation gives
	\begin{equation*}
	    D_t=\left((1+K^{-1})^{t}-1\right)D, \quad a_t=-\frac{C_3\log(C_1 K)}{C_1^2K^2}t+\sum_{i=0}^{t-1}\frac{\left(2-(1+K^{-1})^{i}\right)^{2}}{K}\frac{C_2}{C_1}D^2.
	\end{equation*}
	The terminal condition $D_t \leq \frac{D}{2}$ shows $t\leq t_{0}(K)\equiv \lfloor K\log \frac{3}{2} \rfloor$. It is straightforward to compute
	\begin{equation*}
	    \lim_{K\to\infty}a_{t_0(K)}=\frac{C_2}{C_1}D^2\lim_{K\to\infty}\sum_{i=0}^{\lfloor K \log \frac{3}{2}\rfloor -1}\frac{\left(2-(1+K^{-1})^{i}\right)^{2}}{K}=\frac{C_2}{C_1}D^2(4\log \frac{3}{2}-\frac{11}{8})
	\end{equation*}
	Consequently, for $B\in \bDelta^{\textnormal{b},\sy}$ with $||B-B^\star_{\la,L}||_1<\frac{\delta_0(k,L)}{2}$, we have 
	\begin{equation*}
	     F_{\la,L}(B)\leq  F_{\la,L}(B^\star_{\la,L})-\frac{C_2(4\log \frac{3}{2}-\frac{11}{8})}{C_1}||B-B^\star_{\la,L}||_1^{2}.
	\end{equation*}
	 For a general $B\in \bDelta^{\textnormal{b}}$ with $||B-B^\star_{\la,L}||_1<\frac{\delta_0(k,L)}{2}$, we can combine the inequality above with \eqref{eq:neg:def:Psi:circ} to show $F_{\la,L}(B)\leq  F_{\la,L}(B^\star_{\la,L})-C_k||B-B^\star_{\la,L}||_1^{2}$ for some $C_k>0$, since $C_1,C_2$ do not depend on $L$. Therefore, we conclude that $\nabla^{2}F_{\la,L}(B^\star_{\la,L})\prec -\beta(k) I$ holds for some $\beta(k)>0$.
	 
	Next, we prove $\nabla^{2}_{B}F_{\la,L}(B^\star_{\la,L},s^\star_{\la,L})\prec -\beta(k)I$. Recalling Remark \ref{rem:truncated:negdef:SSZ}, it is straightforward to see from $\E \bZ^{(L),\tr}_{\la}[B]=\sum_{0\leq s \leq \log 2}\E \bZ^{(L),\tr}_{\la,s}[B]$ that the following holds.
	\begin{equation*}
	    F_{\la,L}(B) =\max_{0\leq s \leq \log 2} F_{\la,L}(B,s)
	\end{equation*}
	Subsequently, for $B \in \bDelta^{\textnormal{b}}$ with $||B-B^\star_{\la,L}||_1<\frac{\delta_0(k,L)}{2}$, we have
	\begin{equation*}
	    F_{\la,L}(B,s^\star_{\la,L})\leq F_{\la,L}(B)\leq F_{\la,L}(B^\star_{\la,L})-C_k||B-B^\star_{\la,L}||_1^{2}=F_{\la,L}(B^\star_{\la,L},s^\star_{\la,L})-C_k||B-B^\star_{\la,L}||_1^{2},
	\end{equation*}
	for some $C_k>0$, depending only on $k$. Therefore, $\nabla^{2}_{B} F_{\la,L} (B^\star_{\la,L},s^\star_{\la,L})\prec -\beta(k)I$ holds for some $\beta(k)>0$.
	\end{proof}
	We now aim to prove Proposition \ref{prop:crux:negdef:1stmo}. The first step is to define the set of appropriate initial configurations for the $L$-truncated resampling Markov chain: given $B\in \BB_{\la}^{-}(\delta_0)$ with $B=B^\sy$ and $\eps>0$, let $\AAA_0^\prime \equiv \AAA_0^\prime(B, \eps,L)$ be the set of $(\GGG,Y,\sig)$ satisfying the following $4$ conditions.
	\begin{itemize}
	    \item $\sig \in \Omega_L^{E}$ and $B[\sig]=\proj (B)$.
	    \item $|p_{\ttt}[\sig]-p_{\ttt,\la,L}(B)| \leq n^{-1/3}$ for all $\ttt\in \FFF_{\tr}$ with $v(\ttt)\leq L$, where $p_{\ttt,\la,L}(B)$ is defined in \eqref{eq:opt:tree:prob:B:s}.
	    \item $|Y| \in [\eps n/2,2\eps n]$ and $\{\dot{\ttt}_{\sig}(e)\}_{e \in \delta \NN(Y)}$ are disjoint. 
	\item $||H^\sm[\GGG,Y,\sig]-\left(H[\sig]\right)^{\textnormal{sy}}||_1\leq \eps^{1/3}$. 
	\end{itemize}
	Also, let $\AAA_1^\prime\equiv \AAA_1^\prime(B, \eps,L)$ be the set of $A_1=(\GGG^\prime,Y,\utau)$ satisfying the following $2$ conditions.
	\begin{itemize}
	    \item $A_1$ is one-step approachable from some $A_0 \in \AAA_0^\prime$ by the $L$-truncated resampling Markov chain.
	    \item Denote $H^\sm_{1}=H^\sm(A_1)$ and $\nu = \nu^\op_{L}\left[\dot{h}[H^\sm_{1}]\right]$, where $\nu^\op_{L}[\cdot]$ is defined in \eqref{eq:def:nu:q:dot}. Then, $||H^\sm_{1}-H^\tr[\nu]||_1\leq \eps^{1/3}$.
	\end{itemize}
	Below are the lemmas regarding the properties of $\AAA_0^\prime$ and $\AAA_1^\prime$, which play crucial roles in the proof of Proposition \ref{prop:crux:negdef:1stmo}.
	\begin{lemma}\label{lem:1stmo:negdef:initialstate}
	Consider $B \in \BB_{\la}^{-}(\delta_0)$, $\eps\in(0,\frac{1}{2})$, and the set $\AAA_0^\prime=\AAA_0^\prime(B,\eps, L)$ defined above. For a constant $C_k>0$ and $n\geq n_0(k,B,\eps,L)$, we have
	\begin{equation*}
	    \mu_{\eps}(\AAA_0^\prime) \gtrsim \exp\left(nF_{\la,L}(B)-C_k n\eps^{2}\log\left(\frac{1}{\eps}\right)\right).
	\end{equation*}
	\end{lemma}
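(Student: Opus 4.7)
The plan is to factor
\[
\mu_\eps(\AAA_0^\prime) = \sum_{(\GGG,\sig)} \P(\GGG)\,w^\lit_\GGG(\sig)^{\la}\,\P_\eps\big(Y:(\GGG,Y,\sig)\in\AAA_0^\prime\,\big|\,\GGG\big),
\]
and bound each of the two factors separately on a set $\mathcal{G}(B)$ of ``good'' pairs: the $(\GGG,\sig)$ with $\sig\in\Omega_L^E$, $B[\sig]=\proj(B)$, $|p_\ttt[\sig]-p_{\ttt,\la,L}(B)|\le n^{-1/3}$ for all $\ttt\in\FFF_{\tr}$ with $v(\ttt)\le L$, and $|V_{\textnormal{bad}}(\GGG,\sig)|<\sqrt n$.

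For the weight factor I will show $\sum_{(\GGG,\sig)\in\mathcal{G}(B)}\P(\GGG)w^\lit_\GGG(\sig)^{\la}\gtrsim e^{nF_{\la,L}(B)-O_k(\log n)}$. By \eqref{eq:1stmo:B} we already have $\E\bZ^{(L),\tr}_\la[\proj B]=\exp(nF_{\la,L}(B)+O_k(\log n))$, and the local CLT representation used in the proof of Lemma \ref{lem:exist:free:energy:1stmo} expresses $n_\ttt[\sig]$ as the count of i.i.d.\ free-tree samples $X_1,\dots,X_{nh_\circ(B)}$ drawn from the $L$-truncated analogue of the rescaled law \eqref{eq:def:rescaling:prob}, whose per-coordinate means equal $p_{\ttt,\la,L}(B)$. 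Since $\FFF_{\tr}\cap\{v(\ttt)\le L\}$ is finite, a Chernoff plus union bound shows that condition~$2$ of $\AAA_0^\prime$ fails with probability at most $\exp(-\Omega(n^{1/3}))$, negligible against the LCLT lower bound of order $n^{-O_k(1)}$. Remark \ref{rem:delta:smaller} gives $\sum_{v(\ttt)=v}p_{\ttt,\la,L}(B)\le 2^{-kv/3}$, so condition~$2$ forces $(n_\ttt[\sig])_{\ttt\in\FFF_\tr}\in\ee_{1/4}$ once $n$ is large enough in terms of $L$; then Lemma \ref{lem:1stmo:resampling:bad:variables} shows the contribution from $|V_{\textnormal{bad}}|\ge\sqrt n$ is only $O_k(\log n/\sqrt n)$ of the total, giving the claim.

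For the sampling factor, fix any $(\GGG,\sig)\in\mathcal{G}(B)$. Both $(n_\ttt[\sig])_{\ttt\in\FFF_\tr}\in\ee_{1/4}$ and $|V_{\textnormal{bad}}|<\sqrt n$ hold by construction, so Lemma \ref{lem:1stmo:resampling:treedisjoint} applies. Its first estimate \eqref{eq:lem:1stmo:resampling:treedisjoint} gives $\P_\eps(\text{condition }3)\gtrsim e^{-C_k n\eps^2\log(1/\eps)}$ (on the even stronger event that each $\dot{\ttt}_\sig(e)$ has at most $-4\log\eps/(k\log 2)$ variables), while its second estimate \eqref{eq:1stmo:resampling:sampled:close:initial} with $\eta=\eps^{1/3}$ bounds $\P_\eps(\text{condition }4\text{ fails})\le e^{-Cn\eps^{5/3}}$. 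The ratio $\eps^{5/3}/(\eps^2\log(1/\eps))=\eps^{-1/3}/\log(1/\eps)\to\infty$ as $\eps\downarrow 0$, so for $\eps\le\eps_0(k)$ the failure probability of condition~$4$ is at most half of the lower bound for condition~$3$, and by subtraction
\[
\P_\eps\big(Y:(\GGG,Y,\sig)\in\AAA_0^\prime\,\big|\,\GGG\big)\gtrsim e^{-C_k n\eps^2\log(1/\eps)};
\]
for the complementary range $\eps\in[\eps_0(k),1/2)$ the stated bound is trivially valid after further enlarging $C_k$. Multiplying the two factors and absorbing the leftover $e^{-O_k(\log n)}$ polynomial correction into the exponential (valid for $n\ge n_0(k,B,\eps,L)$ by a final enlargement of $C_k$) completes the proof.

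The main technical obstacle is the reconciliation of the various restrictions in a single argument: condition~$2$ must simultaneously (i) be implied by the LCLT representation with $1-o(1)$ conditional probability and (ii) imply the $\ee_{1/4}$ hypothesis required by Lemmas \ref{lem:1stmo:resampling:bad:variables} and \ref{lem:1stmo:resampling:treedisjoint}. Once those reductions are in place, the balance of scales $\eps^{5/3}$ vs.\ $\eps^2\log(1/\eps)$ makes the final union bound between conditions~$3$ and~$4$ essentially automatic for small $\eps$.
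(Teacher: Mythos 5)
Your proposal is correct and follows essentially the same route as the paper: observe that condition 2 of $\AAA_0^\prime$ together with Remark \ref{rem:delta:smaller} forces $(n_\ttt[\sig])_{\ttt\in\FFF_{\tr}}\in\ee_{1/4}$, invoke Lemmas \ref{lem:1stmo:resampling:bad:variables} and \ref{lem:1stmo:resampling:treedisjoint} for the sampling factor of order $e^{-C_k n\eps^2\log(1/\eps)}$, and lower-bound the restricted weight via the local CLT representation of Lemma \ref{lem:exist:free:energy:1stmo} combined with Chernoff/Hoeffding and a union bound over the finitely many trees with $v(\ttt)\le L$. The only cosmetic difference is that you subtract the failure probability of condition 4 from the lower bound for condition 3 yourself, whereas the paper packages that balance (with $\eta=\eps^{1/3}$) into the final statement of Lemma \ref{lem:1stmo:resampling:treedisjoint}.
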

	\begin{proof}
	Recall Remark \ref{rem:delta:smaller} that $\sum_{v(\ttt)=v}p_{\ttt,\la,L}(B)\leq 2^{-kv/3}$ holds for $B \in \BB_{\la}^{-}(\delta_0)$. Thus, if $(\GGG,Y,\sig) \in \AAA_0^\prime$, the number of free trees in $\sig$ with $v$ variables for $v<L$ can be bounded by 
	\begin{equation*}
	    \sum_{v(\ttt)=v} p_{\ttt}[\sig] \leq \sum_{v(\ttt)=v} p_{\ttt,\la,L}(B)+n^{-1/3}|\{\ttt\in \FFF_{\tr}: v(\ttt)=v\}|\leq 2^{-kv/3}+n^{-1/3}C_{k,L}.
	\end{equation*}
	Hence, for large enough $n$, $\left(n_{\ttt}[\sig]\right)_{\ttt\in \FFF_{\tr}}\in \ee_{\frac{1}{4}}$ holds, so we can use Lemma \ref{lem:1stmo:resampling:bad:variables} and Lemma \ref{lem:1stmo:resampling:treedisjoint} to show that for $\eps<\frac{1}{2}$,
	\begin{equation*}
	    \mu_{\eps}(\AAA_0^\prime) \gtrsim e^{-C_k n\eps^{2}\log(\frac{1}{\eps})}\E\bZ_{\la,s}^{(L),\tr}\left[B_n, |p_{\ttt}[\sig]-p_{\ttt,\la,L}(B)| \leq n^{-1/3}, \forall\ttt\in \FFF_{\tr}\right],
	\end{equation*}
	where $B_n \equiv \proj(B)$. Hence, if suffices to show the following:
	\begin{equation}\label{eq:1stmo:tree:prob:concentration}
	    \E\bZ_{\la,s}^{(L),\tr}\left[B_n, |p_{\ttt}[\sig]-p_{\ttt,\la,L}(B)| \leq n^{-1/3}, \forall\ttt\in \FFF_{\tr}\right] \geq \exp \left(nF_{\la,L}(B)+O_k(\log n)\right)
	\end{equation}
	The proof of \eqref{eq:1stmo:tree:prob:concentration} is close in spirit to the proof of Lemma \ref{lem:exist:free:energy:1stmo}: it is straightforward to compute
	\begin{multline}\label{eq:1stmo:tree:prob:concentration:inter}
	\E\bZ_{\la,s}^{(L),\tr}\left[B_n, |p_{\ttt}[\sig]-p_{\ttt,\la,L}(B,s)| \leq n^{-1/3}, \forall\ttt\in \FFF_{\tr}\right]\\
	=\exp\left(nF_{\la,L}(B,s)+O_k(\log n)\right) \P_{\utheta_{\la,L}(B,s)}(\EEE_{\uh(B),s}),
	\end{multline}
	where $\P_{\utheta_{\la,L}(B,s)}$ is taken with respect to i.i.d. random free trees $X_1,...,X_{nh_{\circ}(B_n)}$ with distribution
	\begin{equation*}
	\P_{\utheta_{\la,L}(B,s)}(X_i=\ttt)\equiv \frac{J_\ttt w_{\ttt}^{\la}\exp\big(\langle \utheta_{\la,L}(B,s), \boeta_{\ttt}\rangle\big)}{h_{\circ}(B)}\one\{v(\ttt)\leq L\}=\frac{p_{\ttt,\la,L}(B,s)}{h_{\circ}(B)}\one\{v(\ttt)\leq L\}
	\end{equation*}
	Also, recalling the event $\AAA_{\uh(B),s}$ defined in \eqref{eq:def:lclt:event:B:s}, the event $\EEE_{\uh(B),s}$ is defined by
	\begin{equation*}
	    \EEE_{\uh(B),s}=\AAA_{\uh(B),s} \bigcap\Bigg\{\bigg|\frac{1}{n}\sum_{i=1}^{nh_{\circ}(B_n)}\one\{X_i=\ttt\}-p_{\ttt,\la,L}(B,s)\bigg|\leq n^{-1/3}, \textnormal{ for all }\ttt\in\FFF_{\tr}\textnormal{ with }v(\ttt)\leq L \Bigg\}.
	\end{equation*}
	For each $\ttt\in \FFF_{\tr}$ with $v(\ttt)\leq L$, observe that Hoeffding's inequality gives
	\begin{equation*}
	    \P_{\utheta_{\la,L}(B,s)}\Bigg(\bigg|\frac{1}{n}\sum_{i=1}^{nh_{\circ}(B_n)}\one\{X_i=\ttt\}-p_{\ttt,\la,L}(B,s)\bigg|> n^{-1/3}\Bigg)\leq \exp\left(-\Omega_{k}(n^{1/3})\right).
	\end{equation*}
	Also, we have that $\P_{\utheta_{\la,L}(B,s)}(\AAA_{\uh(B),s})=\Omega_{k}(n^{-(|\partial|+1)/2})$ from local CLT, so union bound shows
	\begin{equation*}
	\P_{\utheta_{\la,L}(B,s)}(\EEE_{\uh(B),s})\geq C_kn^{-(|\partial|+1)/2}-C_{k,L}\exp\{-\Omega_{k}(n^{1/3})\},
	\end{equation*}
	where $C_k$ depends on $k$ and $C_{k,L}$ depends on $k$ and $L$. Hence, $\P_{\utheta_{\la,L}(B,s)}(\EEE_{\uh(B),s})=\Omega_{k}(n^{-(|\partial|+1)/2})$ holds, and plugging into \eqref{eq:1stmo:tree:prob:concentration:inter} finishes the proof of \eqref{eq:1stmo:tree:prob:concentration}.
	\end{proof}
	For $B \in \BB_{\la}^{-}(\delta_0)$, define $\dot{h}^\op\equiv \dot{h}^\op_{\la,L}[B]\in \PPP(\dot{\Omega})$ by
	\begin{equation}\label{eq:def:h:op:B}
	\dot{h}^\op(\dsigma) \equiv
	\begin{cases}
	\bar{B}(\dsigma) & \dsigma \in \{\rr,\bb\}\\
	\frac{1}{d}\sum_{\ttt: v(\ttt)\leq L}p_{\ttt,\la,L}(B)\sum_{e\in E(\ttt)}\one\{\dsigma_e(\ttt)=\dsigma\}&\dsigma \in \{\ff\}
	\end{cases}
	\end{equation}
	where $p_{\ttt,\la,L}(B)$ is defined in \eqref{eq:opt:tree:prob:B:s} and $\sigma_e(\ttt)$ is defined in \eqref{eq:def:col on freetree}. The properties of $\dot{h}^\op_{\la,L}[B]$ and its connection with $\AAA_0^\prime$ are summarized by the following lemma.
	\begin{lemma}\label{lem:cont:h:op}
	For $\dot{h}^\op_{\la,L}[B], B\in \BB_{\la}^{-}(\delta_0)$, defined in \eqref{eq:def:h:op:B}, the following holds.
	\begin{enumerate}
	    \item $\dot{h}^\op_{\la,L}[B^\star_{\la,L}]=h^\star_{\la,L}$.
	    \item There exists $\delta_L>0$ and $C_k>0$ such that
	 \begin{equation}\label{eq:continuity:h:op}
	||B-B^\star_{\la,L}||_1<\delta_L \implies ||\dot{h}^\op_{\la,L}[B]-\dot{h}^\star_{\la,L}||_1\leq C_k||B-B^\star_{\la,L}||_1.
	\end{equation}
	\item There exists a constant $C_{k,L}$ such that for $(\GGG,Y,\sig)\in \AAA_0^\prime(B,\eps,L)$, 
	\begin{equation}\label{eq:dot:h:close:h:op}
	    ||\dot{h}\left[\left(H[\sig]\right)^\sy\right]-\dot{h}^\op_{\la,L}[B]||_1\leq C_{k,L} n^{-1/3}.
	\end{equation}
	Thus, $ ||\dot{h}\left[H^\sm[\GGG,Y,\sig]\right]-\dot{h}^\op_{\la,L}[B]||_1\leq \eps^{1/3}+C_{k,L} n^{-1/3}$ holds for $(\GGG,Y,\sig)\in \AAA_0^\prime(B,\eps,L)$.
	\end{enumerate}
	\end{lemma}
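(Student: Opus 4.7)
For part (1), the plan is a direct verification using the compatibility between the optimal free tree profile and optimal boundary profile. On the non-free coordinates $\dsigma \in \{\rr,\bb\}$, by definition $\dot{h}^\op_{\la,L}[B^\star_{\la,L}](\dsigma)=\bar{B}^\star_{\la,L}(\dsigma)$, and this matches $\dot{h}^\star_{\la,L}(\dsigma)$ since $\dot{h}^\star_{\la,L}=\dot{h}[H^\star_{\la,L}]$ and the restriction of the optimal coloring profile to non-free edges is exactly $B^\star_{\la,L}$. On the free coordinates $\dsigma\in\{\ff\}$, we use \eqref{eq:theta:opt:tree:prob} to note $p_{\ttt,\la,L}(B^\star_{\la,L})=p^\star_{\ttt,\la,L}$, so the defining sum in \eqref{eq:def:h:op:B} reduces to $\frac{1}{d}\sum_{v(\ttt)\le L} p^\star_{\ttt,\la,L}\sum_{e\in E(\ttt)}\one\{\dsigma_e(\ttt)=\dsigma\}$, which agrees with $\dot{h}^\star_{\la,L}(\dsigma)$ by the computation analogous to \eqref{eq:doth:H:sy:exp:decay} applied to the optimal profile (this identity is a direct consequence of Lemma \ref{lem:compat:optfr:optbd:1stmo}).

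For part (2), the approach is to leverage the differentiability of $B\mapsto \utheta^{-}_{\la,L}(B)$ guaranteed by Lemma \ref{lem:exist:theta}. Writing $p_{\ttt,\la,L}(B)=J_\ttt w_\ttt^\la\exp(\langle\utheta^{-}_{\la,L}(B),\boeta^{-}_\ttt\rangle)$, for $B$ in a sufficiently small neighborhood of $B^\star_{\la,L}$ we have
\begin{equation*}
|p_{\ttt,\la,L}(B)-p^\star_{\ttt,\la,L}|\lesssim p^\star_{\ttt,\la,L}\cdot \|\boeta^{-}_\ttt\|_1\cdot \|B-B^\star_{\la,L}\|_1 \lesssim_k v(\ttt)\, p^\star_{\ttt,\la,L}\,\|B-B^\star_{\la,L}\|_1,
\end{equation*}
using the crude bound $\|\boeta^{-}_\ttt\|_1\lesssim_k v(\ttt)$. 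Summing over $\ttt$ with $v(\ttt)\le L$, the contribution to the free coordinates of $\dot{h}^\op$ is at most $\frac{1}{d}\sum_{\ttt:v(\ttt)\le L}v(\ttt)^2\, p^\star_{\ttt,\la,L}\,\|B-B^\star_{\la,L}\|_1$, which by Lemma \ref{lem:opt:tree:decay} is $\lesssim_k \|B-B^\star_{\la,L}\|_1$. On the non-free coordinates the map $B\mapsto\bar{B}$ is trivially $1$-Lipschitz. Combining yields \eqref{eq:continuity:h:op}.

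For part (3), the key observation is that $H^\sm\mapsto \dot{h}[H^\sm]$ is a linear projection, so the inequality $\|H^\sm[\GGG,Y,\sig]-(H[\sig])^\sy\|_1\le\eps^{1/3}$ immediately transfers to $\dot{h}$. The remaining work is to compare $\dot{h}[(H[\sig])^\sy]$ with $\dot{h}^\op_{\la,L}[B]$. For $(\GGG,Y,\sig)\in\AAA_0^\prime(B,\eps,L)$ the configuration is $L$-truncated, so only finitely many free trees (bounded by some $C_{k,L}$) contribute; using the defining identity \eqref{eq:doth:H:sy:exp:decay} for the free coordinates and the bound $|p_\ttt[\sig]-p_{\ttt,\la,L}(B)|\le n^{-1/3}$ from the definition of $\AAA_0^\prime$, together with $\sum_{e\in E(\ttt)}\one\{\dsigma_e(\ttt)=\dsigma\}\le dL$, gives
\begin{equation*}
\Big|\dot{h}[(H[\sig])^\sy](\dsigma)-\dot{h}^\op_{\la,L}[B](\dsigma)\Big|\le \tfrac{1}{d}\cdot C_{k,L}\cdot dL\cdot n^{-1/3}
\end{equation*}
on free coordinates, while on non-free coordinates the same difference is $O(n^{-1})$ coming from $B[\sig]=\proj(B)$. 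Summing over the (at most $C_{k,L}$) relevant $\dsigma\in\dot\Omega$ yields \eqref{eq:dot:h:close:h:op}, and the final assertion follows by the triangle inequality.

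The main technical point to be careful about is part (2), where one must ensure that the perturbative analysis of $\utheta^{-}_{\la,L}$ survives the unbounded factor $\|\boeta^{-}_\ttt\|_1\lesssim_k v(\ttt)$; this is controlled because the exponential decay in Lemma \ref{lem:opt:tree:decay} dominates any polynomial weight in $v(\ttt)$, so the Lipschitz constant $C_k$ can indeed be chosen independent of $L$.
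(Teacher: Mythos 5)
Your overall structure tracks the paper's, and parts (1) and (3) are essentially correct: for (1) the paper cites Lemma~\ref{lem:1stmo:BP:compatibility} (which gives precisely the identity $\dot{h}^\star_{\la,L}(\dsigma)=\bar B^\star_{\la,L}(\dsigma)$ on $\{\rr,\bb\}$ and the free-tree sum on $\{\ff\}$), though the underlying compatibility mechanism is the same one you invoke via Lemma~\ref{lem:compat:optfr:optbd:1stmo}; for (3) the relevant expression for $\dot h\left[(H[\sig])^{\sy}\right]$ is \eqref{eq:express:dot:h}, not \eqref{eq:doth:H:sy:exp:decay}, but your bookkeeping is the same as the paper's.

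The gap is in part (2). You write
\begin{equation*}
|p_{\ttt,\la,L}(B)-p^\star_{\ttt,\la,L}|\ \lesssim\ p^\star_{\ttt,\la,L}\,\|\boeta^{-}_\ttt\|_1\,\|B-B^\star_{\la,L}\|_1
\end{equation*}
as if it followed from the mere differentiability of $B\mapsto\utheta^{-}_{\la,L}(B)$ guaranteed by Lemma~\ref{lem:exist:theta}. But Lemma~\ref{lem:exist:theta} only guarantees differentiability for each fixed $L$; the implicit-function-theorem derivative $(\nabla^2\psi^{-}_{\la,L})^{-1}$ might a priori blow up as $L\to\infty$, which would make the Lipschitz constant of $B\mapsto\utheta^{-}_{\la,L}(B)$ depend on $L$ and destroy the $L$-independence of $C_k$ in the statement. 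What your display actually requires is the uniform-in-$L$ Lipschitz estimate $\|\utheta^{-}_{\la,L}(B)-\utheta^{\star,-}_{\la,L}\|_1\leq C_k\|B-B^\star_{\la,L}\|_1$, which the paper obtains by invoking Lemma~\ref{lem:conv:psi:theta}: it shows the derivatives of $\utheta^{-}_{\la,L}$ converge uniformly to those of $\utheta^{-}_{\la}$ on the shrunken neighborhood of Remark~\ref{rem:delta:smaller}, and hence are bounded uniformly in $L$. Your closing paragraph attributes the $L$-independence of $C_k$ entirely to the exponential decay in Lemma~\ref{lem:opt:tree:decay}; that lemma is indeed needed to make the sum $\sum_{\ttt}v(\ttt)^2p^\star_{\ttt,\la,L}$ finite with an $L$-free bound, but it does nothing to control the size of $\partial\utheta^{-}_{\la,L}/\partial B$. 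A citation to Lemma~\ref{lem:conv:psi:theta} closes the gap and makes the argument match the paper's.
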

	\begin{proof}
	The proof of the first item is deferred to Lemma \ref{lem:1stmo:BP:compatibility} and we only prove the second item and third item. For the proof of second item, we can use the triangle inequality to bound
	\begin{multline}
	    ||\dot{h}^\op_{\la,L}[B]-\dot{h}^\op[B^\star_{\la,L}]||_1
	    \leq ||B-B^\star_{\la,L}||_1+\sum_{v=1}^{L}\sum_{\ttt:v(\ttt)=v}\frac{|E(\ttt)|}{d}\big|p^\star_{\ttt,\la,L}-p_{\ttt,\la,L}(B)\big|\\
	    =||B-B^\star_{\la,L}||_1+\sum_{v=1}^{L}\sum_{\ttt:v(\ttt)=v}\frac{|E(\ttt)|}{d}p_{\ttt,\la,L}^\star\Big|\exp\Big(\big\langle \utheta_{\la,L}^{-}(B)-\utheta_{\la,L}^{\star,-}, \boeta^{-}_{\ttt} \big\rangle\Big)-1\Big|.
	\end{multline}
	Note that $|E(\ttt)|\leq dv(\ttt)$ and Lemma \ref{lem:conv:psi:theta} shows $||\utheta_{\la,L}^{-}(B)-\utheta_{\la,L}^{\star,-}||_1\leq C_k ||B-B^\star_{\la,L}||_1$, for some $C_k$, which does not depend on $L$. Hence, by taking $\delta_L$ small enough, we have
	\begin{equation*}
	\begin{split}
	    ||\dot{h}^\op_{\la,L}[B]-\dot{h}^\star_{\la,L}||_1&\lesssim_k ||B-B^\star_{\la,L}||_1\sum_{v=1}^{L}\sum_{\ttt:v(\ttt)=v} v\Big(\max_{x\in \{\circ,\bb_0,\bb_1\}}\eta_{\ttt}(x)\Big)p_{\ttt,\la,L}^{\star}\\
	    &\lesssim_k||B-B^\star_{\la,L}||_1\sum_{v=1}^{L}v^2 2^{-kv/2}\lesssim_k||B-B^\star_{\la,L}||_1 
	\end{split}
	\end{equation*}
	where the second inequality is due to Lemma \ref{lem:opt:tree:decay}.
	
	Turning to prove the third item, first observe that for a valid coloring $\sig \in \Omega^{E}$, $\dot{h}=\dot{h}\left[\left(H[\sig]\right)^\sy\right]$ can be expressed in a way which resembles \eqref{eq:def:h:op:B}:
	\begin{equation}\label{eq:express:dot:h}
	\dot{h}(\dot{\tau})=
	\begin{cases}
	\bar{B}[\sig](\dot{\tau}) & \dot{\tau} \in \{\rr,\bb\}\\
	\frac{1}{d}\sum_{\ttt:v(\ttt)\leq L}p_{\ttt}[\sig]\sum_{e\in E(\ttt)}\one\{\dot{\sigma}_e(\ttt)=\dot{\tau}\} &\dot{\tau}\in \{\ff\}
	\end{cases}
	\end{equation}	
	Thus, for $(\GGG,Y,\sig) \in \AAA_0^\prime$, using the triangle inequality shows
	\begin{multline}
	||\dot{h}\left[\left(H[\sig]\right)^\sy\right]-\dot{h}^\op_{\la,L}[B]||_1 \leq \frac{1}{n}+\frac{1}{d}\sum_{\dsigma \in \dot{\Omega}}\sum_{\ttt:v(\ttt)\leq L}\big|p_{\ttt,\la,L}(B)-p_{\ttt}[\sig]\big|\sum_{e\in E(\ttt)}\one\{\dsigma_e(\ttt)=\dsigma\}\leq C_{k,L} n^{-1/3},
	\end{multline}
	where the last inequality holds by the definition of $\AAA_0^\prime$, and the constant $C_{k,L}$ can be taken to be $C_{k,L}\equiv L|\{\ttt:v(\ttt)\leq L\}|+1$. The last conclusion is because $||\dot{h}\left[H^\sm[\GGG,Y,\sig]\right]-\dot{h}\left[\left(H[\sig]\right)^\sy\right] \leq \eps^{1/3}$ holds, and $H^\sm \to \dot{h}[H^\sm]$ is a linear projection.
	\end{proof}
	The next lemma gives quantitative estimates on the distance taken by the $L$-truncated resampling Markov chain starting from $\AAA_0^\prime$. 
	\begin{lemma}\label{lem:1stmo:negdef:onestep}
	There exist constants $\delta_0(k,L),\eps_0(k,L)>0$, and $C_k>0$ such that the following holds: let $B\in \bDelta^{\textnormal{b}}$ with $B=B^\sy$ and $D\equiv ||B-B^\star_{\la,L}||_1\leq \delta_0(k,L)$. Also, consider $\eps \leq\eps_0(k,L)$, and the set $\AAA_1^\prime=\AAA_1^\prime(B,\eps, L)$ defined above. Then, for large enough $n$, i.e. $n\geq n_0(k,B,\eps,L)$,
	\begin{equation}\label{eq:lem:1stmo:negdef:onestep}
	   \AAA_1^\prime \subset \{(\GGG^\prime,Y,\utau):||B[\utau]-B||_1\leq C_k\eps(D+\eps^{1/3})\}
	\end{equation}
	\end{lemma}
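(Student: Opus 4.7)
The plan is to reduce the claim to a local estimate
\begin{equation*}
\|B[\utau]-B[\sig]\|_1 \;\lesssim_k\; \eps\,\|H^\sm_1-H^\sm_0\|_1
\end{equation*}
(with $H^\sm_0\equiv H^\sm[\GGG,Y,\sig]$ and $H^\sm_1\equiv H^\sm[\GGG',Y,\utau]$), which combined with $\|B[\sig]-B\|_1 = O(1/n)$ (from $B[\sig]=\proj(B)$) and the triangle inequality reduces the lemma to bounding $\|H^\sm_1-H^\sm_0\|_1$ by $C_k(D+\eps^{1/3})$. I then chain both $H^\sm_0$ and $H^\sm_1$ through the common reference measure $H^{\tr,\op}[B]:=H^\tr\bigl[\nu^\op_L[\dot h^\op_{\la,L}[B]]\bigr]$ using the defining conditions of $\AAA_0^\prime$ and $\AAA_1^\prime$.

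For the local estimate, one step of the Markov chain modifies quantities only inside $\NN(Y)$ or on its boundary: inside $\NN$ spins and literals are freely resampled; at the interface the matching preserves $\dot h$; and outside $\NN$ the \textbf{update} procedure applied to each hanging free tree $\dot\ttt(e)$ preserves upward colors by Lemma~\ref{lem:update}, and hence preserves the entire free-tree combinatorics --- which variables are free, which clauses are separating, and which spins sit on boundary half-edges. Therefore only the $|Y|$ variables in $Y$, the $\le d|Y|$ clauses in $F(\NN)$, and the $O(dk|Y|)$ half-edges incident to $\NN$ can contribute to $B[\utau]-B[\sig]$. Book-keeping against the definitions of $\dot B$, $\hat B$, $\bar B$ expresses each contribution as a discrepancy between $H^\sm_0$ and $H^\sm_1$ restricted to non-free patterns, which gives the stated $\lesssim_k \eps\|H^\sm_1-H^\sm_0\|_1$ bound.

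To bound $\|H^\sm_1-H^\sm_0\|_1$, I show both $H^\sm_0$ and $H^\sm_1$ lie within $O_k(\eps^{1/3})$ of $H^{\tr,\op}[B]$. For $H^\sm_0$: the fourth condition of $\AAA_0^\prime$ gives $\|H^\sm_0-(H[\sig])^\sy\|_1\le\eps^{1/3}$, while the first two conditions ($B[\sig]=\proj(B)$ together with $|p_\ttt[\sig]-p_{\ttt,\la,L}(B)|\le n^{-1/3}$ for $v(\ttt)\le L$), combined with the exponential decay of $p_{\ttt,\la,L}(B)$ from Lemma~\ref{lem:opt:tree:decay} and Remark~\ref{rem:delta:smaller}, force $\|(H[\sig])^\sy-H^{\tr,\op}[B]\|_1 = O_{k,L}(n^{-1/3})$. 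For $H^\sm_1$: the second defining condition of $\AAA_1^\prime$ gives $\|H^\sm_1-H^\tr[\nu^\op_L[\dot h[H^\sm_1]]]\|_1\le\eps^{1/3}$; since resampling conserves $\dot h$ (so $\dot h[H^\sm_1]=\dot h[H^\sm_0]$) and Lemma~\ref{lem:cont:h:op}(3) gives $\|\dot h[H^\sm_0]-\dot h^\op_{\la,L}[B]\|_1\le\eps^{1/3}+C_{k,L}n^{-1/3}$, the $k$-uniform Lipschitz continuity $\|\nu^\op_L[\dot h_1]-\nu^\op_L[\dot h_2]\|_1\lesssim_k\|\dot h_1-\dot h_2\|_1$ of the tree optimizer (Lemma~\ref{lem:nu:q:continuous:1stmo}, applicable because both arguments lie near $\dot h^\star_{\la,L}$ by Lemma~\ref{lem:cont:h:op}(1)--(2)) yields $\|H^\sm_1-H^{\tr,\op}[B]\|_1\le C_k\eps^{1/3}$. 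Triangle inequality then gives $\|H^\sm_1-H^\sm_0\|_1\le C_k\eps^{1/3}\le C_k(D+\eps^{1/3})$ for $n\ge n_0(k,B,\eps,L)$, completing the proof.

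The main obstacle is the local estimate of the second paragraph: carefully separating the effect of the \textbf{update} propagation along each $\dot\ttt(e)$ from that of the resampling inside $\NN$, and verifying by direct book-keeping that no variable or clause outside $\NN$ has its non-free neighborhood pattern shift. This is geometrically intuitive --- the upward message flow on each $\dot\ttt(e)$ is preserved by Lemma~\ref{lem:update}, and the $\dot$-components of $\delta\NN$ edges are fixed by the matching --- but the boundary half-edges of each $\dot\ttt(e)$ sit precisely at the interface between resampled and untouched parts of the coloring and must be handled with care. A secondary point is ensuring that the Lipschitz constant of $\nu^\op_L$ cited from the appendix is genuinely $L$-uniform, so that the final constant $C_k$ in~\eqref{eq:lem:1stmo:negdef:onestep} depends only on $k$ as required.
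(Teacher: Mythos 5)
Your argument follows essentially the same route as the paper's proof: localize the change of the boundary profile to $\NN(Y)$ using the fact that the \textbf{update} procedure preserves upward colors (so only the $O(\eps n)$ vertices, clauses and half-edges touching $\NN$ contribute to $B[\utau]-B[\sig]$), and then control $\|H^\sm_1-H^\sm_0\|_1$ by chaining both measures through the tree optimizer via the defining conditions of $\AAA_0^\prime$ and $\AAA_1^\prime$ together with Lemma~\ref{lem:cont:h:op} and the Lipschitz continuity of the optimizer. The paper works with the projections $B[H^\sm_i]$ rather than the full $H^\sm_i$ and chains through $H^\tr[\nu]$ with $\nu=\nu_L^{\op}[\dot h[H^\sm_0]]$ rather than through $H^{\tr,\op}[B]$, but these are cosmetic differences.

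Two points need correction. First, your intermediate claim $\|(H[\sig])^{\sy}-H^{\tr,\op}[B]\|_1=O_{k,L}(n^{-1/3})$ is false for $B\neq B^\star_{\la,L}$: the profile $(H[\sig])^{\sy}$ determined by $(B,\{p_{\ttt,\la,L}(B)\})$ and the entropy maximizer $H^\tr[\nu_L^{\op}[\dot h^{\op}_{\la,L}[B]]]$ share (approximately) the same $\dot h$ but are generically at distance of order $D=\|B-B^\star_{\la,L}\|_1$ from one another — their discrepancy is precisely what $\Xi_L$ measures, and it vanishes only at the optimum. The correct bound is $O_k(D)+O_{k,L}(n^{-1/3})$, obtained by noting both measures lie within $O_k(D)$ of $H^\star_{\la,L}$ (for $(H[\sig])^\sy$ directly from $\|B-B^\star_{\la,L}\|_1=D$ being essentially attained; for $H^{\tr,\op}[B]$ via Lemma~\ref{lem:cont:h:op}(2) and the Lipschitz bounds). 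This does not break your proof, since your final line already weakens to $\|H^\sm_1-H^\sm_0\|_1\le C_k(D+\eps^{1/3})$, which is all that is needed for \eqref{eq:lem:1stmo:negdef:onestep}; but as written the $O(n^{-1/3})$ claim is an overstatement. Second, Lemma~\ref{lem:nu:q:continuous:1stmo} only gives Lipschitz continuity of $\dot q\mapsto\nu_{\dot q}$; the Lipschitz continuity of $\dot h\mapsto\nu_L^{\op}[\dot h]=\nu_{\dot q_L[\dot h]}$ that you invoke additionally requires the $L$-uniform Lipschitz bound on $\dot h\mapsto\dot q_L[\dot h]$ near $\dot h^\star_{\la,L}$, which is Proposition~\ref{prop:1stmo:Lipschitz:hdot:qdot} and is a substantial ingredient that must be cited (it is exactly where the paper's proof appeals to it).
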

	\begin{proof}
	For $H^\sm \in \bDelta^\sm$, let $B\equiv B[H^\sm]$ denote the empirical measure of the boundary spins induced by $H^\sm$, similar to the equation \eqref{eq:def:optimal:bdry}:
	\begin{equation}\label{eq:def:B:of:H:sm}
	 \begin{split}
	     &\dot{B}(\sig) \equiv  \dot{H}^\sm(\sig)\quad\textnormal{for}\quad \sig \in \dot{\partial}^{d}\\
	     &\hat{B}(\sig) \equiv \sum_{\utau \in \Omega^k, \utau_{\fs}=\sig} \hat{H}^\sm(\utau)\quad\textnormal{for}\quad \sig \in \hat{\partial}^{k}
	     \\
	     &\bar{B}(\sigma) \equiv \sum_{\tau \in\Omega, \tau_{\fs}= \sigma} \bar{H}^\sm(\tau)\quad\textnormal{for}\quad \sigma \in \hat{\partial}.
	 \end{split}
	 \end{equation}
	Note that $\bar{B}\left[H^\sm[\GGG,Y,\sig]\right]$ does not count the boundary spins at $\delta \NN(Y)$, but the empirical meausure of the boundary spins at $\delta \NN(Y)$ can be obtained by a linear projection of $\hat{B}\left[H^\sm[\GGG,Y,\sig]\right]$.
	
	Now, suppose $(\GGG^\prime, Y,\utau)\in \AAA_1^\prime$ is one-step approachable from $(\GGG,Y,\sig)\in \AAA_0^\prime$. For simplicity, from now and onwards, we abbreviate $H^\sm_0\equiv H^\sm[\GGG,Y,\sig]$ and $H^\sm_1 \equiv H^\sm[\GGG,Y,\utau]$. By definition of the resampling Markov chain, the changes in the boundary spins in $\sig$ (i.e. $(\sigma_e)_{\sigma_e\in \{\rr,\bb,\fs\}}$) to the boundary spins in $\utau$ occur only at $\NN(Y)$. Hence, the change from $(n\dot{B}[\sig], m\hat{B}[\sig], nd\bar{B}[\sig])$ to $(n\dot{B}[\utau], m\hat{B}[\utau], nd\bar{B}[\utau])$ can be obtained by a linear projection of the change from $\kappa B[H^\sm_0]$ to $\kappa B[H^\sm_1]$, where $\kappa\equiv |Y|\leq 2\eps n$. Hence, we have
	\begin{equation*}
	||B[\utau]-\proj(B)||_1=||B[\utau]-B[\sig]||_1\lesssim_k \eps||B[H^\sm_1]-B[H^\sm_0]||_1.
	\end{equation*}
	By definition of $\AAA_0^\prime$ and $\AAA_1^\prime$, $||H^\sm_0-\left(H[\sig]\right)^{\sy}||_1\leq \eps^{1/3}$ and $||H^\sm_1-H^\tr[\nu]||_1\leq \eps^{1/3}$ hold, where $\nu=\nu_L^\op\left[\dot{h}\left[H^\sm_1\right]\right]=\nu_L^\op\left[\dot{h}\left[H^\sm_0\right]\right]$. Also, $H^\sm \to B[H^\sm]$ is a linear projection with $B\left[\left(H[\sig]\right)^{\sy}\right]=B^\sy=B$, so we can further bound the \textsc{rhs} of the equation above by
	\begin{equation}\label{eq:upperbound:B:tau:minus:B}
	 ||B[\utau]-\proj(B)||_1\lesssim_k \eps^{4/3}+\eps||B-B\left[H^\tr[\nu]\right]||_1\leq \eps^{4/3}+\eps D+\eps||B\left[H^\tr[\nu]\right]-B^\star_{\la,L}||_1.
	\end{equation}
	We now aim to upper bound $||B\left[H^\tr[\nu]\right]-B^\star_{\la,L}||_1$: first, note that $B^\star_{\la,L}=B[H^\star_{\la,L}]=B\left[H^\tr[\nu^\star]\right]$, where $\nu^\star\equiv \nu_{\dot{q}^\star_{\la,L}}$. Also, $\mu \to H^\tr[\mu]\to B\left[H^\tr[\mu]\right]$ is a linear projection, so 
	\begin{equation}\label{eq:upperbound:B:nu:minus:B:star}
	    ||B\left[H^\tr[\nu]\right]-B^\star_{\la,L}||_1\lesssim ||\nu-\nu^\star||_1=||\nu_{\dot{q}_0}-\nu_{\dot{q}^\star_{\la,L}}||_1,
	\end{equation}
	where $\dot{q}_0\equiv \dot{q}_L\left[\dot{h}[H^\sm_0]\right]$. Moreover, Lemma \ref{lem:cont:h:op} shows that $\dot{h}[H^\sm_0]$ and $\dot{h}^\star_{\la,L}$ are close:
	\begin{equation}\label{eq:dot:h:close:h:star}
	    ||\dot{h}[H^\sm_0]-\dot{h}^\star_{\la,L}||_1\lesssim_{k}\eps^{1/3}+C_{k,L} n^{-1/3}+D.
	\end{equation}
	Hence, by taking $\eps_0(k,L),\delta_0(k,L)$ small enough and $n_0(k,B,\eps,L)$ large enough, we can guarantee that for all $\eps\leq\eps_0(k,L),\delta\leq\delta_0(k,L)$ and $n\geq n_0(k,B,\eps,L)$, $H^\sm_0$ satisfy the following $2$ conditions:
	\begin{itemize}
	    \item $||\dot{h}[H^\sm_0]-\dot{h}^\star_{\la,L}||_1<\eps_L$, where $\eps_L$ is the constant given in Proposition \ref{prop:1stmo:Lipschitz:hdot:qdot}. Thus, we have $||\dot{q}_0-\dot{q}^\star_{\la,L}||_1\lesssim_k ||\dot{h}[H^\sm_0]-\dot{h}^\star_{\la,L}||_1$.
	    \item $\dot{q}_0$ satisfy the bound $\dot{q}_0(\bb) \geq \frac{1}{2}-\frac{C}{2^k}$, where $C$ is a universal constant, so that the conclusions of Lemma \ref{lem:nu:q:continuous:1stmo} holds. In particular, we have $||\nu_{\dot{q}_0}-\nu_{\dot{q}^\star_{\la,L}}||_1\lesssim_{k}||\dot{q}_0-\dot{q}^\star_{\la,L}||_1$
	\end{itemize}
	With the $2$ conditions above, it is straightforward to bound
	\begin{equation}\label{eq:upperbound:nu:q:minus:q:star}
	||\nu_{\dot{q}_0}-\nu_{\dot{q}^\star_{\la,L}}||_1\lesssim_{k}||\dot{q}_0-\dot{q}^\star_{\la,L}||_1\lesssim_k ||\dot{h}[H^\sm_0]-\dot{h}^\star_{\la,L}||_1\lesssim_k \eps^{1/3}+C_{k,L} n^{-1/3}+D,
	\end{equation}
	where the last inequality is by \eqref{eq:dot:h:close:h:star}. Therefore, for large enough $n$, we conclude from \eqref{eq:upperbound:B:tau:minus:B}, \eqref{eq:upperbound:B:nu:minus:B:star} and \eqref{eq:upperbound:nu:q:minus:q:star} that $||B[\tau]-B||_1\leq C_k\eps(D+\eps^{1/3})$ holds, which concludes the proof.
	\end{proof}
	\begin{proof}[Proof of Proposition \ref{prop:crux:negdef:1stmo}]
	Fix $B\in \bDelta^{\textnormal{b}},B=B^\sy$ such that $0<D\equiv ||B-B^\star_{\la,L}||_1<\delta_0(k,L)$, and $\eps>0$ such that $\eps< \eps_0(k,L,D)$, where we take $\delta_0(k,L)$ and $\eps_0(k,L,D)$ so that for all such $B$ and $\eps$, the following conditions are satisfied:
	\begin{itemize}
	    \item By Lemma \ref{lem:cont:h:op}, we can ensure that for $(\GGG,Y,\sig)\in \AAA_0^\prime(B,\eps,L)$,
	    \begin{equation*}
	        ||\dot{h}\left[\left(H[\sig]\right)^\sy\right]-\dot{h}^\star_{\la,L}||_1\lesssim_k C_{k,L}n^{-1/3}+D\leq C_{k,L}n^{-1/3}+\delta_0(k,L).
	    \end{equation*}
	    Thus, by taking $\delta_0(k,L)$ small enough, $ ||\dot{h}\left[\left(H[\sig]\right)^\sy\right]-\dot{h}^\star_{\la,L}||_1<\eps_L$ holds for large enough $n$, where $\eps_L$ is the constant from Lemma \ref{lem:quadratic:growth:Xi:1stmo}. Therefore, Lemma \ref{lem:quadratic:growth:Xi:1stmo} gives 
	    \begin{equation}\label{eq:lowerbound:Xi:L:H:sy}
	        \Xi_L\left[\left(H[\sig]\right)^\sy\right]\gtrsim_{k}||\left(H[\sig]\right)^\sy-H^\star_{\la,L}||_1^{2}\geq ||\left(\proj(B)\right)^{\sy}-B^\star_{\la,L}||_1^{2}.
	    \end{equation}
	    The last inequality above is because $B\left[\left(H[\sig]\right)^\sy\right]=\left(\proj(B)\right)^{\sy}, B[H^\star_{\la,L}]=B^\star_{\la,L}$ and $H^\sm \to B[H^\sm]$ is a linear projection, where $B[H^\sm]$ is defined in \eqref{eq:def:B:of:H:sm}.
	    \item Note that $H^\sm\in \bDelta^{\sm,(L)} \to \Xi_L(H^\sm)$ is uniformly continuous, since $\bDelta^{\sm,(L)}$ is compact and it is continuous from the definition, i.e. 
	    \begin{equation*}
	        \lim_{\eta \to 0} g(\eta) \equiv \lim_{\eta \to 0}\sup_{||H_1-H_2||<\eta}\Big|\Xi_L(H_1)-\Xi_L(H_2)\Big|=0. 
	    \end{equation*}
	    Therefore, by taking $\eps_0(k,L,D)$ small enough, we can guarantee the following for $(\GGG,Y,\sig)\in \AAA_0^\prime$ with $H^\sm_0\equiv H^\sm[\GGG,Y,\sig]$:
	    \begin{equation}\label{eq:negdef:lowerbound:Xi:H:sm}
	    \Xi_L(H^\sm_0)\geq \Xi_L(\left(H[\sig]\right)^\sy)-g(\eps^{1/3})\geq C_k D^{2}-g(\eps^{1/3})\geq \frac{C_k}{2}D^{2},
	    \end{equation}
	    where the second inequality is due to \eqref{eq:lowerbound:Xi:L:H:sy} and the fact that $B=B^\sy$.
	    \item The conclusions of Lemma \ref{lem:1stmo:negdef:initialstate} and Lemma \ref{lem:1stmo:negdef:onestep} hold for large enough $n$. In particular, we take $\eps_0(k,L,D)\leq D^{3}$, so that \eqref{eq:lem:1stmo:negdef:onestep} implies that for a constant $C_1$, depending only on $k$,
	   \begin{equation}\label{eq:onestep:bound:final}
	       \AAA_1^\prime\subset \{(\GGG^\prime,Y,\utau):||B[\utau]-B||_1\leq C_1 \eps D\}.
	   \end{equation}
	\end{itemize}
	It is evident that for a fixed value of $k$ and $L$, if we define $\epsilon_0(k,L,D)$ as the largest value among those that satisfy all three conditions outlined above, then the function $D\to \epsilon_0(k,L,D)$ is non-decreasing. Also, we remark that we will eventually send $n\to \infty$, fixing $B$ and $\eps$. Note that the reversibility of Markov chain, guaranteed by Lemma \ref{lem:resampling:reversingmeasure}, shows
	\begin{equation}\label{eq:1stmo:resampling:negdef:key}
	    \mu_{\eps}(\AAA_0^\prime) \min_{A_0 \in \AAA_0^\prime}\pi_L(A_0,\AAA_1^\prime)\leq  \mu_{\eps}(\AAA_1^\prime)\max_{A_1\in \AAA_1^\prime}\pi_L(A_1,\AAA_0^\prime).
	\end{equation}
	To this end, we will first lower bound the \textsc{lhs} of the equation above and then upper bound the \textsc{rhs}. First, fix $A_0=(\GGG,Y,\sig)\in \AAA_0^\prime$ and denote $\kappa =|Y|\in [\eps n/2,2\eps n]$, and $\nu=\nu_L^\op\left[\dot{h}[H^\sm(A_0)]\right]$. Then, Lemma \ref{lem:resampling:transitionprob} shows
	\begin{equation}\label{eq:negdef:lowerbound:transitionprob}
	    \pi_L(A_0,\AAA_1^\prime)=1-\sum_{A_1\notin \AAA_1^\prime}\pi_L(A_0,A_1)\geq 1-\sum_{\substack{H^\sm \in \bDelta^{\sm,(L)}_{\kappa}\\||H^\sm-H^\tr[\nu]||_1\geq \eps^{1/3}}}\exp\left(-\frac{\eps n}{2}\Xi_L(H^\sm)+C_{k,L} \log n\right).
	\end{equation}
	For $H^\sm \in \bDelta^{\sm,(L)}_{\kappa}$, let $\mu=\mu[H^\sm]\in \PPP(\Omega_{D})$ be the unique maximizer of the optimization regarding $\Lambda(H^\sm)$ in \eqref{eq:express:Lambda}. Then, $||\mu-\nu||_1\gtrsim ||H^\sm-H^\tr[\nu]||_1$ holds, since $H^\tr[\cdot]$ is a linear projection and $H^\tr[\mu]=H^\sm$. Thus, if $||H^\sm-H^\tr[\nu]||_1\geq \eps^{1/3}$, we have
	\begin{equation}\label{eq:lowerbound:Xi:L}
	    \Xi_L(H^\sm)=\DD_{\textnormal{KL}}(\mu \mid \nu)\gtrsim ||\mu-\nu||_1^{2}\gtrsim \eps^{2/3}
	\end{equation}
	Thus, we can plug the bound \eqref{eq:lowerbound:Xi:L} into \eqref{eq:negdef:lowerbound:transitionprob} to have
	\begin{equation*}
	      \pi_L(A_0,\AAA_1^\prime)\geq 1-\exp\{-C\eps^{5/3}n+C_{k,L}\log n\}|\bDelta^{\sm,(L)}_{\kappa}|= 1-\exp\{-C\eps^{5/3}n+C_{k,L}^\prime\log n\},
	\end{equation*}
	where $C>0$ is an absolute constant and $C_{k,L}^\prime$ depends on $k$ and $L$ only. Therefore, together with Lemma \ref{lem:1stmo:negdef:initialstate}, we can lower bound the \textsc{lhs} of \eqref{eq:1stmo:resampling:negdef:key} for large enough $n$ as follows.
	\begin{equation}\label{eq:negdef:lowerbound:final}
	     \mu_{\eps}(\AAA_0^\prime) \min_{A_0 \in \AAA_0^\prime}\pi_L(A_0,\AAA_1^\prime)\gtrsim \exp\left(nF_{\la,L}(B)-C_k n\eps^{2}\log\left(\frac{1}{\eps}\right)\right)\Big(1-\exp\big(-C\eps^{5/3}n+C_{k,L}^\prime\log n\big)\Big).
	\end{equation}
	We now turn to upper bounding the \textsc{rhs} of \eqref{eq:1stmo:resampling:negdef:key}. Fix $A_1=(\GGG^\prime,Y,\utau)\in \AAA_1^\prime$ with $\kappa^\prime =|Y|$ and recall that $\Xi_{L}[H^\sm_0] \gtrsim_{k}D^{2}$ holds for $H^\sm_0=H^\sm[\GGG,Y,\sig], (\GGG,Y,\sig)\in \AAA_0^\prime$ by \eqref{eq:negdef:lowerbound:Xi:H:sm}. Thus, by Lemma \ref{lem:resampling:transitionprob},
	\begin{equation*}
	    \pi_L(A_1,\AAA_0^\prime) \leq |\bDelta^{\sm,(L)}_{\kappa^\prime}|\exp\big(-C_2 \eps n D^{2}+C_{k,L}\log n\big)=\exp\big(-C_2 \eps n D^{2}+C_{k,L}^\prime\log n\big),
	\end{equation*}
	where $C_2>0$ only depends on $k$. Therefore, by \eqref{eq:onestep:bound:final}, we can upper bound
	\begin{equation}\label{eq:negdef:upperbound:final}
	\begin{split}
	    \mu_{\eps}(\AAA_1^\prime)\max_{A_1\in \AAA_1^\prime}\pi_L(A_1,\AAA_0^\prime) 
	    &\leq \exp\big(-C_2 \eps n D^{2}+C_{k,L}^\prime\log n\big)\sum_{B^\prime \in \bDelta^{\textnormal{b}}_n:||B^\prime-B||_1\leq C_1 \eps D}\E\bZ_{\la}^{(L),\tr}[B^\prime]\\
	    &\leq \exp\left(n\left(\max_{||B^\prime-B||_1\leq C_1 \eps D}F_{\la,L}(B^\prime)-C_2\eps  D^2\right)+C_{k,L}^{\prime\prime}\log n\right),
	\end{split}
	\end{equation}
	where the last inequality is due to Lemma \ref{lem:exist:free:energy:1stmo}. Finally, reading \eqref{eq:1stmo:resampling:negdef:key}, \eqref{eq:negdef:lowerbound:final} and \eqref{eq:negdef:upperbound:final} together shows that for large enough $n$,
	\begin{equation*}
	\exp\left\{nF_{\la,L}(B)-C_k n\eps^{2}\log\left(\frac{1}{\eps}\right)\right\}\lesssim \exp\left\{n\left(\max_{||B^\prime-B||_1\leq C_1 \eps D}F_{\la,L}(B^\prime)-C_2\eps  D^2\right)+C_{k,L}^{\prime\prime}\log n\right\}.
	\end{equation*}
	 Taking $\frac{1}{n}\log$ on both sides of the inequality above and sending $n\to \infty$  finishes the proof of our goal \eqref{eq:prop:crux:negdef:1stmo}, since $C_1,C_2,C_k$ only depends on $k$.
	\end{proof}

\subsection{Resampling method in the pair model}\label{subsec:2ndmo:resampling}
	Techniques similar to the ones discussed so far can be used to prove the corresponding results for the second moment, Propositions \ref{prop:maxim:2ndmo} and \ref{prop:negdef:2ndmo}. In this section, we discuss the necessary adjustments in the procedure to apply the resampling method to the pair model. Throughout the subsection, we fix a tuple of constants $\ula = (\lambda^1, \lambda^2)$ such that $0 \le \lambda^1, \lambda^2 \le 1$.
	
To begin with,	we define the sampling empirical measures. Recall that $\Omega_2 := \Omega \times \Omega$. On a given \textsc{nae-sat} instance $\GGG$, a valid pair-coloring $\bsig \in \Omega_2^E$, and $Y\subset V(G_n)$,  the tuple  $\bH^{\sm} = \bH^{\sm}[\GGG, Y, \bsig] = (\dot{\bH}^{\sm}, \hat{\bH}^{\sm}, \bar{\bH}^{\sm})$ is defined  analogously to Definition \ref{def:sample:empirical}. Note that $\dot{\bH}^{\sm}, \hat{\bH}^{\sm}, \bar{\bH}^{\sm}$ are probability measures on $\Omega_2^d $, $\Omega_2^k$ and $\Omega_2$, respectively. The tuple $\bH^{\sy} = (\dot{\bH}, \hat{\bH}^{\sy}, \bar{\bH})$ and the probability measure $\dot{\bh}\in \PPP(\dot{\Omega}_2)$ are defined analogously to \eqref{eq:def:hdot}.

Moreover, denote the pair-coloring (resp.~pair-component coloring) on $\GGG$ by $\bsig = (\sig^1, \sig^2)$ (resp.~$\bsig^{\textnormal{com}}$), recalling the one-to-one correspondence between $\bsig$ and $\bsig^{\textnormal{com}}$.  We define $\dot{\ttt}^1(e)\equiv \dot{\ttt}^1_{\bsig}(e) $ as before, and similarly for $\dot{\ttt}^2(e)$.  Then, \eqref{eqn:def:update} can be defined for each copy of the pair model and hence we have the analog of Lemma \ref{lem:update}. 

Further, we define the pair-model analog of $\dot{\ttt}^1(e)$ as follows: For an edge $e=(av)$, let $\dot{\uuu}(e) \equiv \dot{\uuu}_{\bsig}(e)$ be the graph of the \textit{variable-to-clause} directed union-free tree hanging at the \textit{root edge} $e$, i.e., it is the subtree of the union-free tree containing $e$ obtained by deleting all the variables, clauses, and edges closer to $a$ than $v$. If $v$ is frozen in both copies, we define $\dot{\uuu}(e)$ to be the single edge $e$.  

Having defined $\bw_{\NN}^{\lit}$ and $\bw_\partial^{\lit}$ analogously to \eqref{eq:def:wN:dep1nb} and \eqref{eq:def:wpartial:dep1nb}, we define the resampling Markov chain for the pair model as Definition \ref{def:resampling:markovchain}, with one modification needed for Step 1:
 \begin{enumerate}
 	\item [$1'.$]   If the sets $\{\dot{\uuu}_{\bsig}(e) \}_{e \in \delta \NN(Y)} $ are not disjoint, then $A_1 = A_0$ with probability 1.
 \end{enumerate}
	Note that if $\{\dot{\uuu}_{\bsig}(e) \}_{e \in \delta \NN(Y) }$ is disjoint, then both $\{\dot{\ttt}^1(e) \}_{e\in \delta \NN(Y)}$ and $\{\dot{\ttt}^2(e) \}_{e\in \delta \NN(Y)}$ are collections of disjoint trees. Thus, 1' is enough to ensure that the pair-coloring $\{\dot{\bsigma}_e \}_{e\in \delta \NN(Y)}$  at the boundary is invariant after resampling. Moreover, Lemma \ref{lem:resampling:reversingmeasure} works the same and gives the reversing measure for the resampling Markov chain in the pair model.

The analog of Definition \ref{def:resampling:treeoptimization} can be stated for $\bH^{\sm} $, using $\hat{v}_2$, $\dot{\Phi}_2, \hat{\Phi}_2^{\textnormal{m}}, \bar{\Phi}_2$, $\ula$ and $\dot{\bh}$. We denote by $\Sigma_2^{\tr}$, $\bs^{\tr}_2$, $\Lambda_2$, $\Lambda^{\op}_2$, $\Lambda^{\op}_{2,L}$, $\Xi_2$ and $\Xi_{2,L}$ the corresponding quantities of (\ref{eq:def:treeop:Sigma and s}--\ref{eq:def:treeop:Xi}) for the pair model. 
Similarly to before,	the collection $\mathscr{A}(\bH^{\sm}, Y, \varepsilon)$ is defined to be the set of $(\GGG, Y, \bsig)$ such that
\begin{itemize}
	\item  $\bH^{\sm}[\GGG,Y,\bsig] = \bH^{\sm} $;

	\item $\{\dot{\uuu}_{\bsig}(e) \}_{e\in \delta \NN(Y)}$ is disjoint, and $v(\dot{\uuu}_{\bsig}(e))  \le \frac{4\log (1/\varepsilon)}{k\log 2}$ for all $e\in \delta\NN(Y)$.
\end{itemize}
The corresponding analog for the truncated	model is defined similarly. Then, it is straight-forward to see that Lemmas  \ref{lem:resampling:transitionprob} and \ref{lem:resampling:Lambda:sup} hold the same for $\Xi_2$ and $\Xi_{2,L}$ and have the same proof. Deriving the counterpart of Lemma \ref{lem:exists:qdot:hdot:exp:tail} requires extra work to adjust to the pair model, and its statement and proof are described in 
Corollary \ref{cor:2ndmo:hdot:uniqueness}. We also obtain the pair model version of Lemmas \ref{lem:1stmo:unique:zero:Xi} and \ref{lem:quadratic:growth:Xi:1stmo}  as follows. In the statement, we write $v(\dot{\bsigma}):= v(\dot{\sigma}^1) + v(\dot{\sigma}^2)$ for $\dot{\bsigma} = (\dot{\sigma}^1, \dot{\sigma}^2).$
Moreover, we consider the empirical measures that lie in the \textit{near-independent} regime, formally defined as follows: Let $\prescript{\textnormal{in}}{2}{\bDelta}$ be the space of tuples $\bH = (\dot{\bH}, \hat{\bH}, \bar{\bH})$ with $\dot{\bH} \in \mathscr{P}(\Omega_2^d)$, $\hat{\bH}\in \mathscr{P}(\Omega_2^k)$ and $\bar{\bH} \in \mathscr{P}(\Omega_2)$, such that
\begin{itemize}
    \item Each copy of $\bH$ is in $\bDelta$ (Definition \ref{def:empiricalssz})
    
    \item The \textit{overlap} $\zeta(\bH)$ satisfies $|\zeta(\bH) -\frac{1}{2} | <\frac{k^2}{2^{k/2}}$. Note that although the pair-coloring $\bsig$ is not well-defined from $\bH$, the overlap is still well-defined. 
\end{itemize}
We set $\prescript{\textnormal{in}}{2}{\bDelta}^{(L)}$ to be the $L$-truncated space of $\prescript{\textnormal{in}}{2}{\bDelta}.$

\begin{lemma}\label{lem:2ndmo:Xi:unique and quad}
	Let $\bH \in \prescript{\textnormal{in}}{2}{\bDelta}$ be $\bH = \bH^{\sy}$ and suppose $\dot{\bh} = \dot{\bh}[\bH]$  satisfies $\sum_{\dot{\bsigma}: v(\dot{\bsigma}) \ge L} \dot{\bh}(\dot{\bsigma}) \le 2^{-ckL} $ for all $L$ with an absolute constant $c>0$. Then, $\Xi_2(\bH)=0$ if and only if $\bH = \bH_{\ula}^\star$. 
	
	For the truncated model, the corresponding result holds the same with $\bH_{2,L}.$ Furthermore, there exist constants $c_k, \varepsilon_L>0$ such that for $\prescript{\textnormal{in}}{2}{\bDelta}^{(L)}$ with $\bH = \bH^{\sy}$ and $|| \dot{\bh}[\bH]- \dot{\bh}_{\ula, L}^\star ||_1 <\varepsilon_L$, we have
	\begin{equation}\label{eq:2ndmo:Xi:quad}
	\Xi_{2,L} (\bH) \ge c_k ||\bH -\bH_{\ula,L}^\star ||_1^2.
	\end{equation}
\end{lemma}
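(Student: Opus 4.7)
The plan is to follow closely the template set by Lemma \ref{lem:1stmo:unique:zero:Xi} and Lemma \ref{lem:quadratic:growth:Xi:1stmo} in the single-copy setting, substituting the pair-model contraction statement Proposition \ref{prop:BPcontraction:2ndmo} for its single-copy counterpart, and using the pair-model analogues of the variational characterization (Lemma \ref{lem:resampling:Lambda:sup}) and of the uniqueness-from-boundary-marginal statement (Lemma \ref{lem:exists:qdot:hdot:exp:tail}, whose pair version will be Corollary \ref{cor:2ndmo:hdot:uniqueness}). The role played in the single-copy case by membership in $\mathbf{\Gamma}_C$ is now played by membership in $\mathbf{\Gamma}(c,0)$ for some $c\in(0,1]$ (after which Proposition~\ref{prop:BPcontraction:2ndmo}(2) upgrades to $\mathbf{\Gamma}(c,1)$), and the near-independent hypothesis $\bH\in\prescript{\textnormal{in}}{2}{\bDelta}$ is exactly what allows this membership to be certified.

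For the uniqueness assertion, I will first use the pair-model analogue of Lemma \ref{lem:resampling:Lambda:sup} to write $\Lambda_2(\bH)=\HH(\mu)+\langle \log \bw_\DD,\mu\rangle$ for the unique maximizer $\mu=\mu(\bH)$ under $H^\tr(\mu)=\bH$, and $\Lambda_2^{\op}(\dot{\bh})$ as achieved by $\nu=\nu_{\dot{\bq}[\dot{\bh}]}$ with $\dot{\bq}[\dot{\bh}]$ guaranteed to exist and be unique by the pair-model analogue of Lemma \ref{lem:exists:qdot:hdot:exp:tail} (the exponential tail hypothesis on $\dot{\bh}$ is imposed precisely so that this analogue applies). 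Then $\Xi_2(\bH)=0$ is equivalent to $\mu=\nu$, which forces $\mu$ to be of product form across the $d$ boundary half-edges of $\DD$. The symmetry $\bH=\bH^\sy$, i.e.~invariance of $\hat{\bH}$ under cyclic rotation of the $k$ clause-adjacent edges, then upgrades the product form into the BP identity $\hat{\bq}=\hat{\textnormal{BP}}\dot{\bq}$ and $\dot{\bq}=\dot{\textnormal{BP}}\hat{\bq}$, so $\dot{\bq}$ is a pair-model BP fixed point. To conclude $\dot{\bq}=\dot{\bq}^\star_{\ula,L}$ I will verify that $\dot{\bq}$ lies in $\mathbf{\Gamma}(c,0)$ from \eqref{eq:def:bp:contract:set:2ndmo}: the $\oplus\mathbf{1}$-symmetry follows from the NAE symmetry built into $\bH$, the red/free tail bounds come from the red/free bounds built into $\pbDelta$ and from the exponential decay, and the crucial decorrelation bounds $|\dot{\bq}(\bb_0\bb_0)-\dot{\bq}(\bb_0\bb_1)|\lesssim (k^9/2^{ck})\dot{\bq}(\bb\bb)$ are the pair-model manifestation of the near-independent hypothesis $|\rho(\bH)-\tfrac12|<k^2/2^{k/2}$ and will be propagated from $\bH$ to $\dot{\bq}$ through the optimizer characterization of $\nu$. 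Proposition~\ref{prop:BPcontraction:2ndmo}(2) then gives $\dot{\bq}\in\mathbf{\Gamma}(c,1)$, and Proposition~\ref{prop:BPcontraction:2ndmo}(1) identifies $\dot{\bq}=\dot{\bq}^\star_{\ula,L}$. The converse direction is immediate.

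For the quadratic growth estimate \eqref{eq:2ndmo:Xi:quad}, I will repeat the chain of reductions in the proof of Lemma \ref{lem:quadratic:growth:Xi:1stmo}. Using $\Xi_{2,L}(\bH)=D_{\textnormal{KL}}(\mu\|\nu)\ge \tfrac12\|\mu-\nu\|_1^2$ it suffices to show $\|\mu-\nu\|_1\gtrsim_k \|\bH-\bH^\star_{\ula,L}\|_1$. The triangle inequality reduces this to $\|\nu-\nu^\star\|_1\lesssim_k \|\mu-\nu\|_1$ where $\nu^\star=\nu_{\dot{\bq}^\star_{\ula,L}}$. Shrinking $\varepsilon_L$ so that the hypotheses of the pair-model analogue of Lemma \ref{lem:nu:q:continuous:1stmo} are satisfied (in particular $\dot{\bq}(\bb\bb)\ge \tfrac12-O(2^{-k})$ and $\dot{\bq}\in\mathbf{\Gamma}(C,1,1)$ for universal $C$) converts this to $\|\dot{\bq}-\dot{\bq}^\star_{\ula,L}\|_1\lesssim_k \|\dot{\bq}-\textnormal{BP}_{\ula,L}\dot{\bq}\|_1$ by the contraction rate $O(k^4/2^k)$ from Proposition~\ref{prop:BPcontraction:2ndmo}(1). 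Finally I will bound $\|\dot{\bq}-\textnormal{BP}_{\ula,L}\dot{\bq}\|_1\lesssim_k \|\mu-\nu\|_1$ by comparing $\hat{\bK}:=H^\tr(\nu)_{\hat{}}$ with its cyclic rotation $\hat{\bK}'$: the symmetry $\bH=\bH^\sy$ yields $\|\hat{\bK}-\hat{\bK}'\|_1\lesssim\|\mu-\nu\|_1$, and on the other hand evaluating $\hat{\bK}-\hat{\bK}'$ on clauses with colour pattern $(\bsigma,\bb_0\bb_0,\dots,\bb_0\bb_0)$ extracts $|\hat{\textnormal{BP}}\dot{\bq}(\bsigma)\dot{\bq}(\bb_0\bb_0)-\dot{\bq}(\bsigma)\hat{\textnormal{BP}}\dot{\bq}(\bb_0\bb_0)|$ for each $\bsigma$, whence the uniform lower bound on $\dot{\bq}(\bb_0\bb_0)\wedge\hat{\textnormal{BP}}\dot{\bq}(\bb_0\bb_0)$ closes the chain. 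All constants extracted from Proposition~\ref{prop:BPcontraction:2ndmo} are uniform in $L$, which is exactly why $c_k$ will be $L$-independent.

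The main obstacle I expect is Step 2 above, i.e.~certifying that the fixed point $\dot{\bq}=\dot{\bq}[\dot{\bh}[\bH]]$ produced by the variational argument actually lies in the contraction set $\mathbf{\Gamma}(c,0)$. In the single-copy case the analogous membership in $\mathbf{\Gamma}_C$ was essentially automatic from the red/free tails of $\dot{h}$, but in the pair model the quantitative decorrelation inequalities in \eqref{eq:def:bp:contract:set:2ndmo} are highly nontrivial bounds on $\dot{\bq}(\bb_0\bb_0)-\dot{\bq}(\bb_0\bb_1)$ and on $\dot{\bq}(\rr\rr),\dot{\bq}(\ff\ff),\dot{\bq}(\{\rr\ff,\ff\rr\})$, and these must be transferred from the overlap hypothesis $|\rho(\bH)-\tfrac12|<k^2/2^{k/2}$ through the (nonlinear) map $\bH\mapsto \dot{\bq}[\dot{\bh}[\bH]]$. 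A secondary technical issue will be calibrating $\varepsilon_L$ in the quadratic growth statement so that the lower bounds on $\dot{\bq}(\bb_0\bb_0)$ used in both the Lipschitz step and the test-configuration step are uniform in $L$; this will follow from the fact that $\dot{\bq}^\star_{\ula,L}\to\dot{\bq}^\star_\ula$ in the appropriate sense, which is in turn a consequence of Proposition~\ref{prop:BPcontraction:2ndmo}.
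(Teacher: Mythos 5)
Your proposal is correct and follows essentially the same route as the paper: the paper's proof simply repeats the arguments of Lemma \ref{lem:1stmo:unique:zero:Xi} and Lemma \ref{lem:quadratic:growth:Xi:1stmo} with the pair-model substitutes (Corollary \ref{cor:2ndmo:hdot:uniqueness} in place of Lemma \ref{lem:exists:qdot:hdot:exp:tail}, Proposition \ref{prop:BPcontraction:2ndmo} in place of Proposition \ref{prop:BPcontraction:1stmo}, and the pair analogues of Lemmas \ref{lem:nu:q:continuous:1stmo} and \ref{lem:1stmo:BPfixedpoint:estimates}), which is exactly your plan. The obstacle you flag — certifying that the fixed point lies in the contraction set $\mathbf{\Gamma}(c,0)$ using the near-independence hypothesis — is indeed where the paper invests its effort (Corollary \ref{cor:hdot:exptail:q:b:lowerbd} and the surrounding appendix material), and your sketch of how to handle it is consistent with that treatment.
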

	
\begin{proof}
	The first part of the lemma can be done the same as Lemma \ref{lem:1stmo:unique:zero:Xi}, using Corollary \ref{cor:2ndmo:hdot:uniqueness} instead of Lemma \ref{lem:exists:qdot:hdot:exp:tail}. To establish \eqref{eq:2ndmo:Xi:quad}, we repeat the proof of Lemma \ref{lem:quadratic:growth:Xi:1stmo}, relying on the analogs of  Lemmas \ref{lem:nu:q:continuous:1stmo} and \ref{lem:1stmo:BPfixedpoint:estimates} for the pair model; see Appendix \ref{subsec:app:conti:2ndmo} for details.
\end{proof}

	To complete the proof of Propositions \ref{prop:maxim:2ndmo} and \ref{prop:negdef:2ndmo}, we obtained  the pair model versions of the tools introduced in Sections \ref{subsubsec:resampling:maximizer} and \ref{subsubsec:resampling:negdef}. One necessary element is the generalized version of Lemma \ref{lem:1stmo:resampling:bad:variables}: We need to show that there are not many \textit{bad} variables from the pair-model perspective. The property can be stated as follows:
	
	\begin{cor}\label{cor:2ndmo:resampling:bad:variables}
		For $v\in V(\GGG)$, let $\NN(v)$ be the $\frac{3}{2}$ neighborhood of $v$ and $\delta \NN(v)$ be the set of half-edges hanging at the boundary of $\NN(v)$. Given $(\GGG,\bsig)$, we define $V_{\textnormal{bad}}^{\shortparallel}\equiv V_{\textnormal{bad}}^{\shortparallel}(\GGG,\bsig)$
		\begin{equation*}
		V_{\textnormal{bad}}^{\shortparallel}\equiv \{v\in V:\exists e_1,e_2 \in \delta \NN(v)\quad\textnormal{s.t.}\quad \dot{\uuu}_{\bsig}(e_1)\cap \dot{\uuu}_{\bsig}(e_2) \neq \emptyset\}
		\end{equation*}
		Fix $\bB \in \bDelta^{\textnormal{b}}_n$ and $(n_{\uuu})_{\uuu\in \FFF_{2}} \sim \bB$ such that $\sum_{v(\uuu)=v, \uuu \in \FFF_{2}}n_{\uuu}(\bsig)\leq n2^{-kv/4}$ for all $v\geq 1$. Then, we have
		\begin{equation}\label{eq:lem:2ndmo:resampling:bad:variables}
		\sum_{\substack{(\GGG,\bsig):\bB[\bsig]=\bB,\\ n_{\uuu}(\bsig)=n_{\uuu},\forall\uuu\in\FFF_{2}}}\P(\GGG)\bw^\lit_\GGG(\bsig)^{\ula}\one\{|V_{\textnormal{bad}}^{\shortparallel} |\geq \sqrt{n}\}\lesssim_{k}\frac{\log n}{\sqrt{n}} \sum_{\substack{(\GGG,\bsig):\bB[\bsig]=\bB,\\ n_{\uuu}(\bsig)=n_{\uuu},\forall\uuu\in\FFF_{2}}}\P(\GGG)\bw^\lit_{\GGG}(\bsig)^{\ula}.
		\end{equation}
	\end{cor}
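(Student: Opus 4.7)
The plan is to adapt the proof of Lemma \ref{lem:1stmo:resampling:bad:variables} to the pair model, replacing free trees with union-free components and single colors with pair colors. Define the probability law $\P(\GGG,\bsig) \propto \P(\GGG)\bw_\GGG^{\lit}(\bsig)^{\ula} \one\{\bB[\bsig]=\bB,\, n_\uuu(\bsig)=n_\uuu \forall \uuu\}$ on the specified ensemble; by Markov's inequality, \eqref{eq:lem:2ndmo:resampling:bad:variables} reduces to showing that $\E |V^{\shortparallel}_{\textnormal{bad}}| \lesssim_k \log n$ under $\P$.

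To simulate $(\GGG,\bsig)\sim\P$, I would invoke the labeled-component matching scheme underlying the proof of Proposition \ref{prop:2ndmo:B nt decomp}: first pick a labeled representative $\vvv^{\lab}\in\LLL(\vvv)$ for each prescribed union-free component, then form the graph by a uniformly random matching of the half-edges of color $\bx\in\dot{\partial}_2$ adjacent to variables with those of the compatible pair-color at clauses. In particular, conditionally on the union components and on the number of boundary half-edges $|E_{\bx}|$ of each pair-color $\bx\in\partial_2$ (which is determined by $\bB$), any specific pair of half-edges of matching color is joined with probability $O(1/|E_{\bx}|)$, and $|E_\bx|=\Theta_k(n)$ by the standing bounds on $\bB$ in $\pbDelta$.

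Next I would decompose $V^{\shortparallel}_{\textnormal{bad}}\subset V^{\shortparallel,1}\cup\cdots\cup V^{\shortparallel,4}$ in exact analogy with the single-copy classification, replacing ``separating'' by ``pair-separating'' and boundary colors $\{\bb\},\{\fs\}$ by the pair analogues $\dot{\partial}_2,\hat{\partial}_2$:
\begin{itemize}
    \item $V^{\shortparallel,1}$: two non-pair-separating clauses $a_1,a_2\sim v$ with $\bsigma_{(a_iv)}\in\dot{\partial}_2$ lie in a common union-free component;
    \item $V^{\shortparallel,2}$: two half-edges $e_1,e_2\in\delta\NN(v)$ with $a(e_1)\neq a(e_2)$ pair-separating and colors $\hat{\bsigma}_{e_i}\in\hat{\partial}_2$, whose downstream variables share a union-free component;
    \item $V^{\shortparallel,3}$: one pair-separating clause $a\sim v$ has two half-edges in $\hat{\partial}_2$ landing in the same union-free component;
    \item $V^{\shortparallel,4}$: a mixed case joining a $\dot{\partial}_2$-edge and a $\hat{\partial}_2$-edge through the same union-free component.
\end{itemize}
The key combinatorial input is that each union-free component $\vvv$ with $n_{\vvv}\geq 1$ has at most $kv(\vvv)$ boundary half-edges of any given color, so the total number of boundary half-edges of color $\bx\in\partial_2$ attached to components of size $v$ is at most $kv\sum_{v(\vvv)=v}n_\vvv \leq kv\cdot n 2^{-kv/4}$, summing to $O_k(n)$ over all sizes, and bounded by $4\log_2 n$ per single component by the truncation $\sum_{v(\vvv)=v}n_\vvv\leq n2^{-kv/4}$.

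For each type I would then carry out the same union-bound computation as in the proof of Lemma \ref{lem:1stmo:resampling:bad:variables}: fix $v$, enumerate the (finitely many, depending only on $d,k$) local topological choices at $\NN(v)$, pick the common union-free component that realizes the obstruction (at most $O_k(\log n)$ boundary half-edges per component), and divide by $|E_{\bx}|-O(1)=\Theta_k(n)$ coming from the uniform matching. Summing $v\in V$ gives $\E|V^{\shortparallel,i}|\lesssim_k \log n$ for each $i$, whence $\E|V^{\shortparallel}_{\textnormal{bad}}|\lesssim_k\log n$. The main obstacle, and the only genuinely new work beyond the single-copy argument, is the precise enumeration of pair boundary colors $\partial_2$ and verifying in each of the four types that the matching denominator $|E_{\bx}|$ is linear in $n$ uniformly in the conditioning, which follows from $\bB\in\pbDelta$ together with the exponential-decay assumption on $(n_\uuu)$.
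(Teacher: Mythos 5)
Your proposal follows essentially the same route as the paper's proof. The paper explicitly states that the argument is obtained by repeating the proof of Lemma \ref{lem:1stmo:resampling:bad:variables} in the pair model: reduce to a bound on $\E|V^{\shortparallel}_{\textnormal{bad}}|$ via Markov, simulate $(\GGG,\bsig)\sim\P$ through the labeled-component uniform-matching scheme of Proposition \ref{prop:2ndmo:B nt decomp}, decompose $V^{\shortparallel}_{\textnormal{bad}}$ into four types exactly as you describe (with ``separating'' replaced by ``pair-separating'' and $\{\bb\},\{\fs\}$ by $\dot{\partial}_2,\hat{\partial}_2$), and then bound each type by a union bound whose numerator uses the $O_k(\log n)$ bound on boundary half-edges of any component (coming from the hypothesis $\sum_{v(\uuu)=v} n_\uuu \le n2^{-kv/4}$ forcing $v(\uuu)\lesssim \log n / k$) and whose denominator $|E_\bx|=\Theta_k(n)$ comes from $\bB\in\pbDelta$. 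Your identification of these two quantitative ingredients and the four-part decomposition matches the paper's argument in all respects.
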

	
	\begin{proof}
		Relying on the same idea used in the proof of Lemma \ref{lem:1stmo:resampling:bad:variables},
		we briefly discuss the necessary changes needed in the pair model. 
		
		In Lemma \ref{lem:1stmo:resampling:bad:variables}, we divided the bad variables into four distinct categories and estimated the contribution from each of them. We can again separate $V_{\textnormal{bad}}^{\shortparallel}$ into four parts as follows. Recall the collections of boundary spins $\dot{\partial}_2$ and $\hat{\partial}_2$
		\begin{enumerate}
			\item $\exists$ non-pair-separating clauses (i.e., non-separating in at least one copy) $a_1, a_2 \sim v$ such that $\bsigma_{a_1v}, \bsigma_{a_2v} \in \dot{\partial}_2$ and $a_1, a_2$ are contained in the same free tree.
			
			\item $\exists$ $e_1, e_2\in \delta \NN(v)$ such that $a(e_1) \neq a(e_2)$ and ${\bsigma}_{e_1}, \bsigma_{e_2} \in \hat{\partial}_2$, with $v(e_1)$ and $v(e_2)$ in the same union-free tree.

			\item $\exists a\sim v$, $\exists e_1, e_2 \in \delta a$ such that $\bsigma_{e_1}, \bsigma_{e_2} \in \hat{\partial}_2$ and $v(e_1), v(e_2)$ are in the same union-free tree.
			
			\item $\exists e_1, e_2 \in \delta \NN(v)$ such that $\bsigma_{e_1} \in \hat{\partial}_2 $, $\bsigma_{a(e_2)v} \in \dot{\partial}_2$, $a(e_2)$ is non-pair-separating, and $v(e_1), a(e_2)$ are in the same union-free tree.
		\end{enumerate}
		
		Due to the assumption that gives an exponential decay of union-free tree frequencies in their sizes, we can repeat the same argument as Lemma \ref{lem:1stmo:resampling:bad:variables} to bound the size of each of the four collections described above. This implies that their sizes are all bounded by $O_k(\log n)$ in expectation, and hence we obtain the desired conclusion.
	\end{proof}
    We also recall the following estimate from \cite{ssz22}, which shows that the contribution to the second moment from pair configuration with intermediate overlap is negligible.
    \begin{lemma}[Corollary~D.2 of \cite{ssz22}]\label{lem:ssz:inter}
    On $\GGG$, let $Z^2[\zeta]$ count the number of pairs $\ubx,\ubx^\prime \in \{0,1\}^V$ of valid $\naesat$ solutions which agree on $\zeta$ fraction of variables. Then $\E Z^2[\zeta]\leq \exp\{-nk/2^k\}$ holds for all $\zeta$ in $[\exp\{-k/\log k\}, \frac{1}{2}(1-k/2^{k/2})]\cup [\frac{1}{2}(1+k2^{k/2}, 1-\exp\{-k/\log k\}]$.
    \end{lemma}
	We now are ready to discuss  the proof of Propositions \ref{prop:maxim:2ndmo} and \ref{prop:negdef:2ndmo}.
	
	\begin{proof}[Proof of Proposition \ref{prop:maxim:2ndmo}]
		
Note that Lemma \ref{lem:1stmo:resampling:treedisjoint} generalizes naturally to the pair model, as the same proof works with union-free trees using Corollary \ref{cor:2ndmo:resampling:bad:variables}. Also, by Lemma \ref{lem:free:red:2} and Lemma \ref{lem:ssz:inter}, the contribution from pair-coloring $\bsig=(\sig^1,\sig^2)$ with either $\frac{\rr(\sig^1)\vee\rr(\sig^2)}{nd} \vee \frac{\ff(\sig^1)\vee\ff(\sig^2)}{n}>\frac{7}{2^k}$ or $\big|\zeta(\bsig)-\frac{1}{2}\big|\in [\frac{k^2}{2^{k/2}}, \frac{2k^2}{2^{k/2}}]$ is negligible, so the analog of \eqref{eq:maxim:upperbound:1} for the second moment holds. Then, we obtain Proposition \ref{prop:maxim:2ndmo} following the proof of Proposition \ref{prop:maxim:1stmo}, based on the aforementioned lemmas for the pair model.
\end{proof}

\begin{proof}[Proof of Proposition \ref{prop:negdef:2ndmo}]
	If the same result as Proposition \ref{prop:crux:negdef:1stmo} holds for the pair model, then the  subsequent argument analogous to the proof of Proposition \ref{prop:negdef} gives Proposition \ref{prop:negdef:2ndmo}.  In order to reproduce the results in the proof of Proposition \ref{prop:crux:negdef:1stmo} for the pair model, let the boundary profile $\bB$ be $\bB \in \mathcal{B}_{\ula}^-(\delta_\circ)$ with $\bB = \bB^{\sy}$ and let $\AAA_0' \equiv \AAA_0'(\bB, \varepsilon, L)$ be the collection of $(\GGG, Y, \bsig, \{n_\uuu \}_{\uuu \in \FFF_2} )$ satisfying the following conditions:
	\begin{itemize}
		\item $\bsig \sim \{n_\uuu \}_{\uuu \in \FFF_2}$, $\bsig \in \Omega_{2,L}^E$ and $\bB[\bsig] = \proj(\bB)$.
		
		\item $|\frac{n_\uuu}{n} - p_{\uuu, \ula, L}(\bB) | \le n^{-1/3} \wedge 2^{-ckv(\uuu)}$ for all $\uuu \in \FFF_2^{\tr},$ and $n_\uuu \le \log^2 n$ for $\uuu \in \FFF_2\setminus \FFF_2^{\tr}$.
		
		\item  $\{\dot{\ttt}_{\bsig}^1(e) \}_{e\in \delta \NN(Y)}$ and $\{\dot{\ttt}_{\bsig}^2(e) \}_{e\in \delta \NN(Y)}$ are disjoint within each of them, and $|Y| \in [\varepsilon n/2, 2\varepsilon n]$.
		
		\item $|| \bH^{\sm}[\GGG,Y,\bsig] - (\bH[\bsig])^{\sy} ||_1 \le \varepsilon^{1/3}.$ 
	\end{itemize} 
	Also, let $\AAA_1' \equiv \AAA_1'(\bB ,\varepsilon, L)$ be the set of $A_1 = (\GGG', Y, \underline{\btau})$ such that
	\begin{itemize}
		\item $A_1$ is one-step approachable from some $A_0 \in \AAA_0'$ by the $L$-truncated resampling Markov chain.
		
		\item Denote $\bH^{\sm} = \bH^{\sm}(A_1)$ and $\nu = \nu_L^{\op} \left[\dot{\bh}[\bH^{\sm}] \right]$, where $\nu_L^{\op}[\cdot]$ is defined in \ref{eq:def:nu:q:dot}. Then, $||\bH^{\sm} - \bH^{\tr}[\nu] ||_1 \le \varepsilon^{1/3}.$
	\end{itemize}
	
	Then, the proof of Proposition \ref{prop:crux:negdef:1stmo} for the pair model goes as follows.
	\begin{itemize}
		\item The second moment analog of Lemma \ref{lem:1stmo:negdef:initialstate} for $\AAA_0'(\bB, \varepsilon,L)$ is obtained analogously, adapting the computations given in  Lemma \ref{lem:exist:free:energy:1stmo} (cf. Proposition \ref{prop:exist:free:energy:2ndmo}-(1)).
		
		\item Lemma \ref{lem:cont:h:op} holds the same for the pair model: The first item of the lemma is justified by Corollary \ref{cor:compat:2ndmo}. The other two can be obtained analogously to the single-copy case.
		
		\item Generalization of Lemma \ref{lem:1stmo:negdef:onestep} to the pair model is done by utilizing Lemma \ref{lem:2ndmo:Lipschitz:hdot:qdot} instead of Proposition \ref{prop:1stmo:Lipschitz:hdot:qdot}, in the same proof as Lemma \ref{lem:1stmo:negdef:onestep}.
		
		\item Then, we follow the same proof as Proposition \ref{prop:crux:negdef:1stmo}, using Proposition \ref{prop:exist:free:energy:2ndmo} in place of Lemma \ref{lem:exist:free:energy:1stmo}. 
	\end{itemize}
	This gives the pair model analog of Proposition \ref{prop:crux:negdef:1stmo}, and hence we conclude the proof of Proposition \ref{prop:negdef:2ndmo}.
\end{proof}

	\section{Concentration of the overlap at two values}\label{sec:overlap}
	
	In this section, we establish Theorem \ref{thm2}. For a random regular \textsc{nae-sat} instance $\GGG$, we denote by $\CCC (\GGG)$ the collection of clusters of solutions. The procedure of drawing two solutions uniformly, independently at random can be understood in the following way:
	\begin{enumerate}
		\item Pick two clusters $\CC_1, \CC_2 \in \CCC(\GGG)$ independently at random, with probability proportional to their sizes $|\CC_1|, |\CC_2|$, respectively.
		
		\item Select two solutions $\bx_1 \in \CC_1$, $\bx_2\in \CC_2$ independently and uniformly from each cluster. 
	\end{enumerate}
	The main idea to verify Theorem \ref{thm2} has already been discussed in the previous sections:
 The two randomly drawn clusters $\CC_1, \CC_2$ in step (1) will look near-uncorrelated or  near-identical (Proposition \ref{prop:2ndmo:correlated overlap} and Lemma \ref{lem:identical regime decay estim}). After some analysis to understand the second step of sampling random solutions, the former (resp.~latter) case will give us (a) (resp.~(b)) of Theorem \ref{thm2}.
 
The primary difficulty in formalizing such an idea comes from the clusters possessing a cyclic free component. Most of our efforts have been taken to understand the moments of  $\bZ_{\lambda, s}^\tr$, which only considers the contributions from clusters without cyclic free components. Although we have Proposition \ref{prop:1stmo:aprioriestimate} to control the effect of the rest, we do not have information about the typical profile of union-free components in a pair of clusters when each copy contains a cyclic free. We resolve this issue by comparing $\GGG$ with a \textit{locally rewired instance} $\GGG^\prime$ of $\GGG$, which is identical to $\GGG$ except for a small number of edges and literals.

In Section \ref{subsec:overlap:preprocess}, we find the appropriate set of \textsc{nae-sat} instances that satisfy the properties of overlaps stated in Theorem \ref{thm2}. In Section \ref{subsec:overlap:locrewire}, we describe how \textit{locally rewired instances} can be used to control clusters containing a free cycle. In Section \ref{subsec:overlap:samplesol}, we analyze the overlaps of solutions sampled from typical clusters and prove Theorem \ref{thm2}.

\subsection{Preprocessing}
\label{subsec:overlap:preprocess}
In this subsection, we specify the collection of \textsc{nae-sat} instances $\mathfrak{G}_i, i=1,2,3,$ which gives the desired properties of overlaps. We first start with $\mathfrak{G}_1$, which is related to the first moment analysis done in Section \ref{sec:1stmo}.

To begin with, we first define $p^\star\equiv p^\star(\alpha,k)$ appearing in Theorem \ref{thm2}: for $\ubx^{1},\ubx^{2}\in \{0,1\}^{\ell}$, define the Hamming distance and the overlap of $\ubx^1,\ubx^2$ by
	\begin{equation*}
		\textnormal{Ham}(\ubx^1, \ubx^2) := \sum_{i=1}^{\ell}  \one \{  \bx_i^1 \neq \bx_i^2 \}\quad\textnormal{and}\quad \textnormal{Overlap}(\ubx^1,\ubx^2):=\ell-2\textnormal{Ham}(\ubx^1,\ubx^2).
	\end{equation*}
	For each free tree $\ttt \in \FFF_{\tr}$, let $\textsf{SOL}(\ttt) \subset \{0,1 \}^{V(\ttt)}$ be the space of valid \textsc{nae-sat} solutions on $\ttt$. Then, define
	\begin{equation}\label{eq:def:overlap:ham}
	\textnormal{ham}(\ttt) := \frac{1}{|\textsf{SOL}(\ttt)|^2} \sum_{\ubx^1, \ubx^2 \in \textsf{SOL}(\ttt) }  \textnormal{Ham}(\ubx^1, \ubx^2 )\quad\textnormal{and}\quad \textnormal{overlap}(\ttt):= v(\ttt)-2\textnormal{ham}(\ttt).
	\end{equation}
	Thus, $\textnormal{ham}(\ttt)$ (resp. $\textnormal{overlap}(\ttt)$) is the average Hamming distance (resp. overlap) between two random \textsc{nae-sat} solutions on $\ttt$. Then, $p^\star = p^\star(d,k)$ is defined as follows.
	\begin{defn}\label{def:pstar}
	For $k\geq k_0$ and $\alpha \in (\alpha_{\textsf{cond}},\alpha_{\textsf{sat}})$, define
	\begin{equation}\label{eq:def:pstar:form}
	p^\star\equiv p^\star(\alpha,k) := 1 - \sum_{\ttt \in \FFF_{\tr}} v(\ttt)p^\star_{\ttt, \lambda^\star} +  \sum_{\ttt \in \FFF_{\tr}} \textnormal{overlap}(\ttt) p^\star_{\ttt, \lambda^\star}=1 -2 \sum_{\ttt \in \FFF_{\tr}} \textnormal{ham}(\ttt) p^\star_{\ttt,\lambda^\star}.
	\end{equation}
	Hence, $p^\star$ is the sum of the fraction of frozen variables and the total average overlap on free trees.
	\end{defn}
	Next, we define $\Gamma^\star_1$, the set of boundary and free component profiles of typical clusters as follows.
	\begin{defn}\label{def:Gamma1star}
		Let $\Gamma^\star_1$ be the collection of boundary and free component profiles $(B, \{n_\fff\}_{\fff\in\FFF}) $ that satisfy the following conditions:
		\begin{enumerate}
			\item $(n_{\fff})_{\fff\in\FFF}\sim B$, where $B \in \bDelta^{\textnormal{b}}$.
			\item $(n_{\fff})_{\fff\in\FFF}\in \ee_{\frac{1}{3}}$.
			
			\item $||B-B^\star_{\lambda^\star} ||_1 \leq n^{-0.45}$.
			\item $\Big| \sum_{\ttt\in\FFF_{\tr}} (n_\ttt-np^\star_{\ttt,\la^\star})\cdot\textnormal{overlap}(\ttt)\Big| \le n^{0.6}$.
			
			\item $\sum_{\fff\in\FFF\setminus \FFF_{\tr}}n_{\fff}\leq \log n$ and $n_\fff=0$ if $\gamma(\fff)\geq 0$.
			
		\end{enumerate}
	\end{defn}
	\begin{lemma}\label{cor:B:close:optimal:1stmo}
		Let $\E \bZ_{\lambda^\star} \big[(\Gamma_1^\star)^{\textsf{c}}\big]$ denote the contribution to $\E\bZ_{\la^\star}$ from $(B, \{n_\fff\}_{\fff\in\FFF})\notin \Gamma_1^{\star}$. Then, we have $$\E \bZ_{\lambda^\star} \big[(\Gamma_1^\star)^{\textsf{c}}\big] \lesssim_{k} \frac{\log^{3} n}{n} \E\bZ^{\tr}_{\la^\star}.$$
	\end{lemma}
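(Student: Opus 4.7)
The plan is to decompose the complement event $(\Gamma_1^\star)^{\textsf{c}}$ into five sub-events, one for each failing condition (2)--(5) of Definition \ref{def:Gamma1star}, bound each sub-event separately, and conclude by a union bound together with Corollary \ref{cor:cyclic:contribution:1stmo}. The dominant $\log^3 n / n$ term will come from the contribution of frozen configurations containing a multicyclic free component (the $\gamma(\fff)\geq 1$ part of condition (5)), bounded by Proposition \ref{prop:1stmo:aprioriestimate}(4) with $r=0,\gamma=1$. All other violations will turn out to be orders of magnitude smaller: condition (2), that $(n_\fff)\notin \ee_{1/3}$, is bounded by Proposition \ref{prop:1stmo:aprioriestimate}(3) with $c=2$ (restricting first to the event where no multicyclic component is present), yielding at most $n^{-4/3}\log n\cdot \E\bZ_{\la^\star}$; the remaining part of condition (5), namely $\sum_{\fff\notin\FFF_\tr}n_\fff>\log n$, is handled on $\ee_{1/4}$ by Proposition \ref{prop:1stmo:aprioriestimate}(4) with $r=\log n,\gamma=0$, giving the super-polynomial decay $(\log n)!^{-1}(Ck^2/2^k)^{\log n}$.

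Condition (3) is handled by restricting to the neighborhood $\|B-B^\star_{\la^\star}\|_1\leq \delta_0$ at exponentially small cost via Proposition \ref{prop:maxim:1stmo}, and then invoking Lemma \ref{lem:exist:free:energy:1stmo} together with the quadratic upper bound $F_{\la^\star}(B)\leq F_{\la^\star}(B^\star_{\la^\star})-\beta\|B-B^\star_{\la^\star}\|_1^2$ guaranteed by Proposition \ref{prop:negdef}. Summing over the polynomially many $B\in\bDelta^{\textnormal{b}}_n$ with $\|B-B^\star_{\la^\star}\|_1>n^{-0.45}$ yields a contribution at most $\exp(-\beta n^{0.1}+O_k(\log n))\cdot \E\bZ_{\la^\star}^\tr$, which is negligible compared with the target rate.

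Condition (4) is the principal novel obstacle, since it controls a linear functional of the free tree profile not directly addressed by the previously established machinery. The plan is to condition on a typical $B$ satisfying (3) and exploit the i.i.d.\ representation used in the proof of Lemma \ref{lem:exist:free:energy:1stmo}: under $\P_{\utheta^-_{\la^\star}(B)}$, the tree counts $(n_\ttt)$ are realized as the empirical histogram of $nh_\circ(B)$ i.i.d.\ picks $X_1,\ldots,X_{nh_\circ(B)}\in \FFF_\tr$ with $\P_{\utheta^-_{\la^\star}(B)}(X_i=\ttt)=h_\circ(B)^{-1}p_{\ttt,\la^\star}(B)$. The statistic $T:=\sum_\ttt n_\ttt\cdot \textnormal{overlap}(\ttt)$ then becomes a sum of i.i.d.\ random variables $\textnormal{overlap}(X_i)\leq v(X_i)$, each sub-exponential with rate $\Omega(k)$ by Lemma \ref{lem:opt:tree:decay} and Remark \ref{rem:delta:smaller}. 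Its expectation deviates from $\sum_\ttt np^\star_{\ttt,\la^\star}\textnormal{overlap}(\ttt)$ by at most $O_k(n\|B-B^\star_{\la^\star}\|_1)=O(n^{0.55})$, using the Lipschitz dependence of $B\mapsto p_{\ttt,\la^\star}(B)$ combined with the exponential tail of $p^\star_{\ttt,\la^\star}$ in $v(\ttt)$. Bernstein's inequality, applied to the truncation of the summands at $v(\ttt)\lesssim \log n$ (which is cost-free under $\ee_{1/3}$), then yields $\P_{\utheta^-_{\la^\star}(B)}(|T-nT^\star|>n^{0.6})\leq \exp(-\Omega(n^{0.2}))$ with $T^\star:=\sum_\ttt p^\star_{\ttt,\la^\star}\textnormal{overlap}(\ttt)$. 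Dividing by the local CLT lower bound $\Omega(n^{-|\partial|/2})$ used in Lemma \ref{lem:exist:free:energy:1stmo} and summing over typical $B$ gives a contribution from condition (4) of at most $\exp(-\Omega(n^{0.2}))\cdot \E\bZ_{\la^\star}^\tr$, well within the target. The main technical care is in merging this sub-exponential concentration with the local CLT framework of Section \ref{subsec:1stmo:leading constant}; the remaining conditions reduce to direct invocations of the apparatus already developed.
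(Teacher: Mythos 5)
Your decomposition and most of the individual estimates coincide with the paper's proof: the dominant $\log^{3}n/n$ term does come from the multicyclic contribution via Proposition~\ref{prop:1stmo:aprioriestimate}(4); condition (2) and the counting part of (5) are dispatched exactly as you describe; and your treatment of condition (4) --- realizing the free-tree profile as the empirical histogram of i.i.d.\ draws under a tilted law, bounding the mean shift through $\Vert\utheta^{-}(B)-\utheta^{\star,-}_{\la^{\star}}\Vert_{1}\lesssim_{k}\Vert B-B^{\star}_{\la^{\star}}\Vert_{1}$ together with Lemma~\ref{lem:opt:tree:decay}, and concluding by a Chernoff-type bound at scale $n^{0.6}$ --- is essentially the argument the paper gives.

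The genuine gap is in your handling of condition (3), and it leaks into (4). Proposition~\ref{prop:maxim:1stmo}, Lemma~\ref{lem:exist:free:energy:1stmo} and Proposition~\ref{prop:negdef} are statements about $\bZ^{\tr}_{\la}$, i.e.\ about colorings, all of whose free components are trees. The event you must bound sits inside $\bZ_{\la^{\star}}$, and even after discarding multicyclic configurations it still contains frozen configurations with up to $\log n$ unicyclic free components; these are not colorings, Lemma~\ref{lem:exist:free:energy:1stmo} assigns them no free energy $F_{\la}(B)$, and none of the three cited results applies to them as written. The paper bridges this with the comparison map $\TT$ from the proof of Proposition~\ref{prop:ratio:uni:1stmo}: it disassembles the (at most $\log n$, logarithmically small) cyclic components into single free trees with $O_{k}(\log^{2}n)$ boundary-spin swaps, producing a genuine tree configuration whose boundary profile moves by $O_{k}(\log^{2}n/n)$, whose weight changes by $e^{O_{k}(\log^{3}n)}$, and whose $\TT$-preimage has size $e^{O_{k}(\log^{3}n)}$; only after this reduction can Propositions~\ref{prop:maxim:1stmo} and~\ref{prop:negdef} be invoked to get the $\exp(-\Omega_{k}(n^{0.1}))$ decay. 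The same issue appears, more mildly, in your condition-(4) step: after fixing the unicyclic profile, the i.i.d.\ tree representation must be tilted to match the residual boundary counts $\uh'$ (namely $\uh(B)$ minus the half-edges absorbed by the cyclic components), not $\uh(B)$ itself. Since $\Vert\uh'-\uh(B)\Vert_{1}\lesssim_{k}\log^{2}n/n$ this only perturbs the tilting by $o(n^{-0.45})$ and your concentration estimate survives, but as written the measure $\P_{\utheta^{-}_{\la^{\star}}(B)}$ is not the correct one to condition with.
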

	\begin{proof}
	By Proposition \ref{prop:1stmo:aprioriestimate}, we have
	\begin{equation}\label{eq:cor:B:close:optimal:1stmo}
	    \E \bZ_{\la^\star}\Big[\big(\ee_{\frac{1}{3}}\big)^{\textsf{c}}\textnormal{  or  }n_{\cyc}\geq \log n \textnormal{  or  }e_{\textnormal{mult}}\geq 1 \Big]\lesssim_{k} \frac{\log^{3}n}{n}\E\bZ_{\la^\star}.
	\end{equation}
	Moreover, the same argument to show \eqref{eq:prop:uni:negligible} in the proof of Proposition \ref{prop:ratio:uni:1stmo}, which makes use of the map $\TT(\cdot)$, can be used to show
	\begin{equation}\label{eq:cor:B:close:step1}
	    \E \bZ_{\la^\star}\Big[||B-B^\star_{\la^\star}||_1>n^{-0.45},~~n_{\cyc}\leq \log n ~~, e_{\textnormal{mult}}=0,~~(n_{\fff})_{\fff\in\FFF}\in \ee_{\frac{1}{3}}\Big]\leq \exp\big(-\Omega_k(n^{0.1}) \big)\E \bZ_{\la^\star}^{\tr}.
	\end{equation}
	To this end, we now consider $B\in \bDelta^{\textnormal{b}}_n$ with $||B-B^\star_{\la^\star}||_1\leq n^{-0.45}$. Fix cyclic free component profile $(n_{\fff})_{\fff\in \FFF\setminus \FFF_{\tr}}$, which satisfies $\sum_{\fff\in \FFF\setminus \FFF_{\tr}}n_{\fff}\leq \log n$ and $n_{\fff}=0$ if $\gamma(\fff)\geq 0$ or $v(\fff)>\frac{3\log n}{k \log 2}$. Let $\uh^\prime\equiv \big(h^\prime(\circ), \{h^\prime(x)\}_{x\in \partial}\big)$ record the number of free trees and total number of boundary colors adjacent to free trees. Then, $\uh^\prime$ is a function of $B$ and $(n_{\fff})_{\fff\in \FFF\setminus \FFF_{\tr}}$. For example, $h^\prime(x)=h_x(B)-n^{-1}\sum_{\fff\in \FFF\setminus \FFF_{\tr}}\eta_{\fff}(\sigma)n_{\fff}$ holds for $x\in \partial$. Note that the same proof done in Lemma \ref{lem:exist:theta} shows that there exists $(\utheta^\prime)^{-}\in \R^{|\partial|+1}$ such that $\nabla\psi^{-}_{\la^\star}\big((\utheta^\prime)^{-}\big)=\uh^\prime$. Then, proceeding in the similar fashion as the calculations done in \eqref{eq:tree:decay:technical:1} and \eqref{eq:tree:decay:technical:2}, Proposition \ref{prop:1stmo:B nt decomp} and local central limit theorem show
	\begin{equation}\label{eq:cor:B:close:technical-1} 
	\begin{split}
	    &\E \bZ_{\la^\star}\Big[B, \Big| \sum_{\ttt\in\FFF_{\tr}} (n_\ttt-np^\star_{\ttt,\la^\star})\cdot\textnormal{overlap}(\ttt)\Big|>n^{0.6},\textnormal{ and }(n_{\fff})_{\fff\in \FFF\setminus \FFF_{\tr}}\Big]\\
	    &\leq \exp\big(O_k(\log n)\big)\P_{(\utheta^\prime)^{-}}\bigg( \Big|\sum_{i=1}^{nh^\prime(\circ)}\textnormal{overlap}(X_i)-n\sum_{\ttt\in \FFF_{\tr}}p^\star_{\ttt,\la^\star}\cdot \textnormal{overlap}(\ttt) \Big|>n^{0.6}\bigg)\E\bZ_{\la^\star}\Big[B,(n_{\fff})_{\fff\in \FFF\setminus \FFF_{\tr}}\Big],
	\end{split}
	\end{equation}
	where $X_1,...,X_{nh^\prime(\circ)}\in \FFF_{\tr}$ are i.i.d with distribution $\P_{(\utheta^\prime)^{-}}(X_i=\ttt)\equiv \big(h^\prime(\circ)\big)^{-1} J_{\ttt}w_{\ttt}^{\la^\star}\exp\Big(\big\langle (\utheta^\prime)^{-}, \boeta_{\ttt}^{-} \big\rangle\Big)$ (cf. \eqref{eq:def:rescaling:prob}). Note that $||\uh^\prime-\uh(B)||_1\lesssim_{k} \log^{2} n$ holds, so $||\uh^\prime-\uh^\star_{\la^\star}||_1\lesssim n^{-0.45}$ holds. Thus, the same proof of Lemma \ref{lem:exist:theta} shows $||(\utheta^\prime)^{-}-\utheta^{\star,-}_{\la^\star}||_1 \lesssim_{k} n^{-0.45}$. Hence, we can bound
	\begin{equation*}
	\begin{split}
	&\Big|nh^\prime(\circ)\E_{(\utheta^\prime)^{-}}\big[\textnormal{overlap}(X)\big]-n\sum_{\ttt\in \FFF_{\tr}}p^\star_{\ttt,\la^\star}\cdot \textnormal{overlap}(\ttt)\Big|\\
	&\leq n\sum_{\ttt\in \FFF_{\tr}} \textnormal{overlap}(\ttt)p^\star_{\ttt,\la^\star} \bigg|\exp\Big(\Big\langle (\utheta^\prime)^{-}-\utheta^{\star,-}_{\la^\star}~,~ \boeta_{\ttt}^{-} \Big\rangle\Big)-1\bigg|\lesssim_{k}n^{0.55}\sum_{\ttt\in \FFF_{\tr}} v(\ttt)^2 p^\star_{\ttt,\la^\star}\ll n^{0.6},
	\end{split}
	\end{equation*}
	where we used Lemma \ref{lem:opt:tree:decay} in the last inequality. Thus, Chernoff bound shows 
	\begin{equation}\label{eq:cor:B:close:technical-2} 
	\P_{(\utheta^\prime)^{-}}\bigg( \Big|\sum_{i=1}^{nh^\prime(\circ)}\textnormal{overlap}(X_i)-n\sum_{\ttt\in \FFF_{\tr}}p^\star_{\ttt,\la^\star}\cdot \textnormal{overlap}(\ttt) \Big|>n^{0.6}\bigg)\leq \exp\big(-\Omega_k(n^{0.2})\big).
	\end{equation}
	Therefore, plugging in \eqref{eq:cor:B:close:technical-2} to \eqref{eq:cor:B:close:technical-1} and summing over $B$ and $(n_{\fff})_{\fff\in\FFF\setminus\FFF_{\tr}}$ shows that the contribution to $\E \bZ_{\la^\star}$ from boundary and free component profile $(B, \{n_\fff\}_{\fff\in\FFF})$ that satisfy the items (1),(2),(3),(5), but not (4) in Definition \ref{def:opt:bdry:1stmo} is bounded above by $\exp\big(-\Omega_k(n^{0.2}) \big)\E \bZ_{\la^\star}$. Consequently, together with \eqref{cor:B:close:optimal:1stmo} and \eqref{eq:cor:B:close:step1}, this concludes the proof.
	\end{proof}
	Let $\ux[\CC] \in \{0,1,\ff \}^V$ be the frozen configuration corresponding to $\CC \in \CCC(\GGG)$ from the coarsening algorithm in Definition \ref{def:frozenconfig}. With a slight abuse of notation, write $\CC \in \Gamma_1^\star$ if the boundary and free tree profile  pair induced by $\ux[\CC]$ is contained in $\Gamma_1^\star$. Then, define
	 \begin{equation}\label{eq:def:overlap:clustspace:restriction1}
	 \begin{split}
     \CCC_\bullet(\GGG) \equiv \CCC(\GGG; I_\bullet)&:= \left\{\CC \in \CCC(\GGG) :  \frac{1}{n}\log |\CC| \in I_\bullet \textnormal{ and } \CC \in \Gamma_1^{\star}\right\},\quad\textnormal{where}\\
     I_\bullet \equiv I_\bullet(n,d,k) &:= \left[ s^\star - \frac{1}{2\la^\star}\frac{\log n}{n}-\frac{\log^2 n}{n}, \ s^\star -\frac{1}{2\la^\star}\frac{\log n}{n}+ \frac{\log\log n}{n}  \right].
    \end{split}
	 \end{equation}
	Subsequently, define the collection of \textsc{nae-sat} instances $\mathfrak{G}_1 \equiv \mathfrak{G}_1(n,d,k)$ by 
	 \begin{equation}\label{eq:overlap:clustersize:restriction1}
	 \begin{split}
	 \mathfrak{G}_1:=\Big\{\GGG:\sum_{\CC \in \CCC(\GGG)} |\CC|\one\big\{\CC\notin \CCC_{\bullet}(\GGG)\big\} \leq n^{-\frac{1}{2\la^\star}-0.4} e^{ns^\star}\Big\}.
	 \end{split}
	 \end{equation}
	 \begin{lemma}\label{lem:mathfrak:G:1}
	 For $\mathfrak{G}_1$ defined above, $\P( \GGG \notin \mathfrak{G}_1) \lesssim_{k} (\log n)^{-\la^\star}$ holds.
	 \end{lemma}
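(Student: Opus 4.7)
The plan is to apply Markov's inequality to the bad mass
\[
Y(\GGG) := \sum_{\CC \in \CCC(\GGG)} |\CC|\, \one\{\CC \notin \CCC_\bullet(\GGG)\}
\]
against the threshold $n^{-\frac{1}{2\la^\star}-0.4} e^{ns^\star}$ appearing in the definition of $\mathfrak{G}_1$. Writing $s_\circ(C) = s^\star - \frac{\log n}{2\la^\star n}+\frac{C}{n}$ as in the proof of Theorem \ref{thm1}-(a), the event $\CC \notin \CCC_\bullet(\GGG)$ splits into three regimes: (i) $\log|\CC|/n > s^\star - \frac{\log n}{2\la^\star n} + \frac{\log \log n}{n}$; (ii) $\log|\CC|/n < s^\star - \frac{\log n}{2\la^\star n} - \frac{\log^2 n}{n}$; and (iii) $\log|\CC|/n \in I_\bullet$ but $\CC \notin \Gamma_1^\star$. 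I will bound the contribution of each regime to $\P(\GGG \notin \mathfrak{G}_1)$ by $O_k((\log n)^{-\la^\star})$.

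For regime (i), I note that if any cluster has size in this range, then $\sum_{C \geq \lfloor \log\log n\rfloor} \bN_{s_\circ(C)} \geq 1$. Exactly as in the proof of Theorem \ref{thm1}-(a), using $\E \bN_{s_\circ(C)} \lesssim_k e^{-\la^\star C}$ (from Theorem \ref{thm:1stmo:constant:s} and Proposition \ref{prop:ratio:uni:1stmo}) and summing the geometric series yields probability $\lesssim_k e^{-\la^\star \log \log n} = (\log n)^{-\la^\star}$, so with this probability the regime-(i) contribution to $Y(\GGG)$ is zero. For regime (ii), the mass is dominated by $\sum_{C \leq -\log^2 n} \bZ_{1, s_\circ(C)}$, and the telescoping bound $\sum_{C \leq C'} \E \bZ_{1, s_\circ(C)} \lesssim_k n^{-\frac{1}{2\la^\star}} e^{n s^\star} e^{(1-\la^\star)C'}$ established in the proof of Theorem \ref{thm1}-(a) gives total expectation at most $n^{-\frac{1}{2\la^\star}} e^{n s^\star} e^{-(1-\la^\star)\log^2 n}$. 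Markov's inequality then yields a super-polynomially small probability, well beneath $(\log n)^{-\la^\star}$.

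For regime (iii), I use the elementary bound $\bZ_{1,s}[\cdot] \leq e^{n(1-\la^\star)s + (1-\la^\star)} \bZ_{\la^\star, s}[\cdot]$ on each size slice (since each cluster contributing to $\bZ_{1,s}$ satisfies $|\CC| \leq e^{ns+1}$). Summing over $-\log^2 n \leq C \leq \log\log n$ and using the monotonicity of $e^{n(1-\la^\star)s_\circ(C)}$ in $C$, I get
\[
\sum_{-\log^2 n \leq C \leq \log \log n} \bZ_{1, s_\circ(C)}[(\Gamma_1^\star)^{\textsf{c}}] \;\lesssim\; e^{n(1-\la^\star) s_\circ(\log \log n)} \, \bZ_{\la^\star}[(\Gamma_1^\star)^{\textsf{c}}].
\]
Since $e^{n(1-\la^\star)s_\circ(\log \log n)} = e^{n(1-\la^\star) s^\star}\, n^{-\frac{1}{2\la^\star}+\frac{1}{2}} (\log n)^{1-\la^\star}$, and Lemma \ref{cor:B:close:optimal:1stmo} gives $\E \bZ_{\la^\star}[(\Gamma_1^\star)^{\textsf{c}}] \lesssim_k \frac{\log^3 n}{n} \E \bZ^\tr_{\la^\star} \lesssim_k \frac{\log^3 n}{n} e^{n\la^\star s^\star}$, the total expectation is $\lesssim_k e^{ns^\star} n^{-\frac{1}{2\la^\star}-\frac{1}{2}} (\log n)^{4-\la^\star}$. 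Markov then yields probability $\lesssim_k n^{-0.1} (\log n)^{4-\la^\star} = o((\log n)^{-\la^\star})$. Summing the three regime bounds gives the claim.

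The main technical step is regime (iii), where the slice-wise conversion from $\bZ_1$ to $\bZ_{\la^\star}$ loses a factor $e^{n(1-\la^\star)s_\circ(\log\log n)}$ whose $n$-growth is precisely cancelled by the scale $e^{n\la^\star s^\star}$ of $\E \bZ^\tr_{\la^\star}$, leaving only polylogarithmic slack; this is exactly what makes the asymmetric choice of $I_\bullet$ (with an $\frac{\log\log n}{n}$ upper margin and an $\frac{\log^2 n}{n}$ lower margin) work. A minor side point to address is that $\bZ_{\la,s}$ is defined with the red/free density constraint $\frac{\rr(\ux)}{nd} \vee \frac{\ff(\ux)}{n}\leq \frac{7}{2^k}$; the contribution of clusters violating it has expected total mass bounded by $e^{-cn}$ from the lemma after Lemma \ref{lem:model:size:col}, which is negligible relative to $n^{-\frac{1}{2\la^\star}-0.4} e^{ns^\star}$.
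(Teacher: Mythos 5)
Your proof is correct and follows essentially the same route as the paper's: split into the three regimes (oversized clusters, undersized clusters, in-window clusters outside $\Gamma_1^\star$), handle the first via the geometric tail from \eqref{eq:mainthm:1}, the second via the telescoping bound \eqref{eq:thm1:proof:useful}, and the third by slice-wise conversion $\bZ_{1,s} \lesssim e^{n(1-\la^\star)s}\bZ_{\la^\star,s}$ combined with Lemma \ref{cor:B:close:optimal:1stmo}, landing on the same $n^{-0.1}(\log n)^{4-\la^\star}$ bound. Your explicit aside about the red/free density constraint is a small clean-up that the paper leaves implicit; it is correct (the excess mass has expectation $e^{-cn}$ by the lemma following Lemma \ref{lem:model:size:col}) and makes the accounting airtight.
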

	 \begin{proof}
	 Recall \eqref{eq:mainthm:1} in the proof of Theorem \ref{thm1}-(a). Using \eqref{eq:mainthm:1} with $C_0=\log\log n$ shows
	 \begin{equation*}
	     \P\Big(\exists~\CC\in\CCC(\GGG)\textnormal{ s.t. }\frac{1}{n}\log |\CC|\geq s^\star-\frac{1}{2\la^\star}\frac{\log n}{n}+\frac{\log \log n}{n}\Big)\lesssim_{k}(\log n)^{-\la^\star}.
	 \end{equation*}
	 Similarly, \eqref{eq:mainthm:2} with $C_0=-\log^{2}n$ and Markov's inequality show
	 \begin{equation*}
	     \P\bigg(\sum_{\CC\in\CCC(\GGG)}|\CC|\one\Big\{\frac{1}{n}\log|\CC|\leq s^\star-\frac{1}{2\la^\star}\frac{\log n}{n}-\frac{\log^{2}n}{n}\Big\}\geq 0.5 n^{-\frac{1}{2\la^\star}-0.4} e^{ns^\star}\bigg)\lesssim_{k}n^{0.4}e^{-(1-\la^\star)\log^{2}n}.
	 \end{equation*}
     Moreover, Corollary \ref{cor:B:close:optimal:1stmo} and Markov's inequality show
	 \begin{equation*}
	 \begin{split}
	 &\P\bigg(\sum_{\CC\in\CCC(\GGG)}|\CC|\one\Big\{\frac{1}{n}\log |\CC|\in I_{\bullet}\textnormal{ and }\CC\notin \Gamma_1^\star\Big\}\geq 0.5 n^{-\frac{1}{2\la^\star}-0.4} e^{ns^\star}\bigg)\\
	 &\leq 2n^{\frac{1}{2\la^\star}+0.4} e^{-ns^\star}\sum_{s\in I_{\bullet}}\E \bZ_{1,s}\big[(\Gamma_1^\star)^{\textsf{c}}\big]
	 \lesssim_{k} n^{0.9} e^{-n\la^\star s^\star+(1-\la^\star)\log\log n}\E\bZ_{\la^\star}\big[(\Gamma_1^\star)^{\textsf{c}}\big]\lesssim_{k}
	 n^{-0.1}(\log n)^{4-\la^\star},
	 \end{split}
	 \end{equation*}
	 where we used Corollary \ref{cor:B:close:optimal:1stmo} and Theorem \ref{thm:1stmo:constant} in the last inequality. Therefore, $\P( \GGG \notin \mathfrak{G}_1) \lesssim_{k} (\log n)^{-\la^\star}$ holds.
	 \end{proof}
	 	
	Next, we define $\mathfrak{G}_2$, which is related to the second moment analysis done in Section \ref{sec:2ndmo}. To do so, we start with the following definition. Recall the notion of union-free tree in Definition \ref{def:union:tree}.
	\begin{defn}\label{def:overlap:flipedcomp}
	 Let $\uuu \in \FFF_2^{\tr}$ be a union-free tree. \textbf{Flipped component} $\textsf{fl}(\uuu)\in \FFF_2^{\tr}$ of $\uuu$ is defined as follows:
	\begin{itemize}
		\item For $\bsigma = (\sigma^1, \sigma^2) \in \{\rr_0,\rr_1,\bb_0,\bb_1,\fs,\ff \}^2$, we define $\textsf{flip}(\bsigma) := (\sigma^1, \sigma^2 \oplus 1)$, where $\fs \oplus 1 = \fs$ and $\ff\oplus 1 = \ff$.
		
		\item $\textsf{fl}(\uuu)$ is defined to have the same graphical structure as $\uuu$, that is, $V(\textsf{fl}(\uuu)) = V(\uuu)$, $F(\textsf{fl}(\uuu)) = F(\uuu)$, $E(\textsf{fl}(\uuu)) = E(\uuu)$ and $\partial \textsf{fl}(\uuu)=\partial\uuu$.
		
		\item Furthermore, the label on each $e \in E(\textsf{fl}(\uuu))\sqcup \dot{\partial}\textsf{fl}(\uuu)$ is given by $(\textsf{flip} (\textsf{P}_2(\uuu,e)), \ \tL_e  )$, and the label on $e\in \hat{\partial} \textsf{fl}(\uuu)$ is given by $\textsf{flip}(\textsf{P}_2(\uuu,e)).$
	\end{itemize}
	In words, $\textsf{fl}(\uuu)$ is the union-free tree obtained by flipping the second copy of spin labels. Since \textsc{nae}-satisfiability is invariant under global spin flip, it is clear that the labeling on $\textsf{fl}(\uuu)$ is valid.
	\end{defn}
	
	Note that for any $\uuu \in \FFF_2^{\tr}, $ we have from the symmetry that $p^\star_{\uuu,\ula^\star} = p^\star_{\textsf{fl}(\uuu), \ula^\star}$. The symmetry also implies the following lemma, which explains why the overlap concentrates around $0$ in the near-independence regime.
	
	\begin{lemma}\label{lem:overlap:pairflipsymm}
		Let $\uuu  \in \FFF_2^{\tr}$ be a union-free tree, and let $(\ubx^1, \ubx^2) = \{(\bx^1_v,\bx^2_v)\}_{v\in V(\uuu)}$ be a uniformly chosen pair of  0-1 configurations on $V(\uuu)$ among all pairs of \textsc{nae-sat} solutions on $\uuu$. We denote the law of $\textnormal{Ham}(\ubx^1, \ubx^2)$ by $\PP_\uuu$. 
		Then, we have for all $0 \le h \le v(\uuu)$ that
		\begin{equation}\label{eq:def:overlap:uniontree}
		\PP_\uuu \big( \textnormal{Ham}(\ubx^1,\ubx^2) = h \big) 
		= \PP_{\textnormal{\textsf{fl}}(\uuu)}\big(\textnormal{Ham}(\ubx^{1},\ubx^{2}) = v(\uuu) - h\big).
		\end{equation}
		Hence, if we let $\textsf{SOL}(\uuu) \subset (\{0,1 \}^2)^{V(\uuu)}$ be the space of valid \textsc{nae-sat} solutions on $\uuu$ and define
	\begin{equation*}
	\textnormal{ham}(\uuu) := \frac{1}{|\textsf{SOL}(\uuu)|} \sum_{(\ubx^1, \ubx^2) \in \textsf{SOL}(\uuu) }  \textnormal{Ham}(\ubx^1, \ubx^2 )\quad\textnormal{and}\quad \textnormal{overlap}(\uuu):= v(\uuu)-2\textnormal{ham}(\uuu),
	\end{equation*}
	then $\textnormal{overlap}(\uuu)=-\textnormal{overlap}(\textsf{fl}(\uuu))$ holds.
	\end{lemma}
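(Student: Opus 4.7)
The plan is to exhibit an explicit bijection between $\textsf{SOL}(\uuu)$ and $\textsf{SOL}(\textsf{fl}(\uuu))$ given by flipping the second copy, and then simply observe how the Hamming distance transforms under this map. The key is that Definition \ref{def:overlap:flipedcomp} is designed precisely so that the global $0/1$ symmetry of the NAE-SAT constraint implements the flip compatibly with the boundary labels.

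Concretely, define the map
\[
\Phi:\{0,1\}^{V(\uuu)}\times\{0,1\}^{V(\uuu)} \longrightarrow \{0,1\}^{V(\textsf{fl}(\uuu))}\times\{0,1\}^{V(\textsf{fl}(\uuu))},\qquad \Phi(\ubx^1,\ubx^2):=(\ubx^1,\ubx^2\oplus\mathbf{1}),
\]
where $\mathbf{1}$ denotes the all-ones vector indexed by $V(\uuu)=V(\textsf{fl}(\uuu))$. The first step is to verify that $\Phi$ restricts to a bijection from $\textsf{SOL}(\uuu)$ onto $\textsf{SOL}(\textsf{fl}(\uuu))$. Being an involution on the ambient set, $\Phi$ is automatically injective; surjectivity follows once we show that $\Phi$ maps solutions to solutions. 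For a pair $(\ubx^1,\ubx^2)\in\textsf{SOL}(\uuu)$, the first copy is unchanged, so we only need to check the second copy and the boundary compatibility on $\textsf{fl}(\uuu)$. Since NAE-SAT is invariant under global negation of all variables in a clause (equivalently, simultaneous flipping of literals and variables), the configuration $\ubx^2\oplus\mathbf{1}$ satisfies every NAE clause on the underlying graph. It remains to match it against the boundary labels of $\textsf{fl}(\uuu)$: by Definition \ref{def:overlap:flipedcomp}, each boundary spin $\textsf{P}_2(\uuu,e)=(\sigma^1,\sigma^2)$ is replaced in $\textsf{fl}(\uuu)$ by $(\sigma^1,\sigma^2\oplus 1)$ with the convention $\fs\oplus 1=\fs$ and $\ff\oplus 1=\ff$. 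A case check over $\sigma^2\in\{\rr_0,\rr_1,\bb_0,\bb_1,\fs,\ff\}$ then shows that $\ubx^2$ meets the boundary label $\sigma^2$ in $\uuu$ if and only if $\ubx^2\oplus\mathbf{1}$ meets the label $\sigma^2\oplus 1$ in $\textsf{fl}(\uuu)$; this uses that literals on internal edges (which live both on $\uuu$ and on $\textsf{fl}(\uuu)$) are inherited unchanged. Applying $\Phi$ again gives the inverse, so $\Phi$ is a bijection. In particular $|\textsf{SOL}(\uuu)|=|\textsf{SOL}(\textsf{fl}(\uuu))|$.

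The second step is immediate: for any $(\ubx^1,\ubx^2)$,
\[
\textnormal{Ham}(\ubx^1,\ubx^2\oplus\mathbf{1})=\sum_{v\in V(\uuu)}\one\{\bx^1_v\neq \bx^2_v\oplus 1\}=\sum_{v\in V(\uuu)}\one\{\bx^1_v=\bx^2_v\}=v(\uuu)-\textnormal{Ham}(\ubx^1,\ubx^2).
\]
Combining this with the bijection established above yields the identity \eqref{eq:def:overlap:uniontree}: for every $0\le h\le v(\uuu)$,
\[
\PP_\uuu(\textnormal{Ham}=h)=\frac{|\{(\ubx^1,\ubx^2)\in\textsf{SOL}(\uuu):\textnormal{Ham}=h\}|}{|\textsf{SOL}(\uuu)|}=\frac{|\{\text{pairs in }\textsf{SOL}(\textsf{fl}(\uuu)):\textnormal{Ham}=v(\uuu)-h\}|}{|\textsf{SOL}(\textsf{fl}(\uuu))|}.
\]

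Finally, averaging against $h$ gives $\textnormal{ham}(\textsf{fl}(\uuu))=v(\uuu)-\textnormal{ham}(\uuu)$, hence
\[
\textnormal{overlap}(\textsf{fl}(\uuu))=v(\uuu)-2\textnormal{ham}(\textsf{fl}(\uuu))=-\bigl(v(\uuu)-2\textnormal{ham}(\uuu)\bigr)=-\textnormal{overlap}(\uuu).
\]
No step here is really an obstacle; the only point requiring care is the boundary compatibility case check, particularly making sure that the $\fs$ and $\ff$ spin labels are genuinely fixed points of $\textsf{flip}$ as built into Definition \ref{def:overlap:flipedcomp}, so that the labels on $\textsf{fl}(\uuu)$ impose exactly the flipped constraints on the second copy while leaving the first copy's constraints untouched.
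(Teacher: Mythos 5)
Your proof is correct and uses exactly the same symmetry as the paper: the flip map $(\ubx^1,\ubx^2)\mapsto(\ubx^1,\ubx^2\oplus\mathbf{1})$ is a bijection from $\textsf{SOL}(\uuu)$ onto $\textsf{SOL}(\textsf{fl}(\uuu))$ that sends Hamming distance $h$ to $v(\uuu)-h$. The paper states this in one line and you merely spell out the boundary-compatibility check that the authors leave implicit.
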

	
	\begin{proof}
		By the definition of $\textsf{fl}(\uuu)$, $(\ubx^1, \ubx^2 \oplus \textbf{1} )$ is a \textsc{nae-sat} solution on $\textsf{fl}(\uuu)$ if and only if $(\ubx^1, \ubx^2)$ is a \textsc{nae-sat} solution on $\uuu$. The conclusion comes directly from this symmetry.
	\end{proof}
	Having Lemma \ref{lem:overlap:pairflipsymm} in mind, we now define the set of boundary and union-free component profiles of typical pair of clusters in the near-independence regime. 
	
	\begin{defn}\label{def:Gamma1star:2ndmo}
		For $\beta>1$, denote by $\Gamma^\star_2(\beta)$ the collection of boundary and union-free component profiles $(\bB, \{n_\vvv\}_{\vvv\in \FFF_2}) $ that satisfy the following conditions:
		\begin{enumerate}
			\item $(n_{\vvv})_{\vvv\in\FFF_2}\sim \bB$, where $\bB \in \pbDelta$.
			
			\item $||\bB-\bB^{\star}_{\ula^\star} ||_1 \leq n^{-0.45}$ and $\big|\sum_{\uuu\in\FFF_2^{\tr}}n_{\uuu}\cdot\textnormal{overlap}(\uuu)\big|\leq n^{0,6}$.
			
			\item $\sum_{\vvv \in \FFF_2 \setminus \FFF_2^{\tr} } n_\vvv \le (\log n)^{\beta}$ and $n_{\vvv}=0$ if $v(\vvv)\geq (\log n)^{\beta}$.
		\end{enumerate}
	\end{defn}
    The definition above is similar to $\Gamma_1^\star$ in Definition \ref{def:Gamma1star}. However, the main difference is that we include an extra parameter $\beta>1$, which controls the number and the size of the cyclic union-free components by $(\log n)^{\beta}$. This is in order to have a better error bound as shown in the next lemma, which will in turn be crucial to apply an union bound over locally rewired instances (see Lemma \ref{lem:overlap:unionbd:localflip} below). Since the proof of Lemma \ref{cor:B:close:optimal:2ndmo} relies on the estimates from Appendix \ref{sec:app:apriori}, the proof is deferred to Appendix \ref{sec:app:apriori}.
	
	\begin{lemma}\label{cor:B:close:optimal:2ndmo}
		Let $\beta\in (1,10)$ and denote by $\bZ^{2}_{\ula^\star,\ind} \Big[\big(\Gamma_2^\star(\beta)\big)^{\textsf{c}}\Big]$ the contribution to $\bZ^{2}_{\ula^\star,\ind}$ from $(\bB, \{n_\vvv\}_{\vvv\in\FFF_2})\notin \Gamma_2^{\star}(\beta)$. Then, $$\E \bZ^{2}_{\ula^\star,\ind} \Big[\big(\Gamma_2^\star(\beta)\big)^{\textsf{c}}\Big] \leq \exp\big(-\Omega_k(\log^{\beta} n)\big)\E \bZ^{2}_{\ula^\star,\ind}.$$
	\end{lemma}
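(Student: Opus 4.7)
The plan is to decompose the bad event $(\Gamma_2^\star(\beta))^{\mathsf{c}}$ according to which of the conditions in Definition \ref{def:Gamma1star:2ndmo} fails. Condition~(1) is automatic for every valid pair component coloring, so only violations of (2) and (3) need to be controlled. The overall strategy parallels the proof of Corollary \ref{cor:B:close:optimal:1stmo}, now carried out in the pair model and with sharper control on cyclic union-free components; the binding error term will come from the cyclic-count bound, which is where the exponent $(\log n)^\beta$ arises.

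First I would use Proposition \ref{prop:2ndmo:aprioriestimate}(1) to restrict to $\pee_{1/5}$, at an acceptable cost $O_k(n^{-2/3}\log n)$. Inside $\pee_{1/5}$ the inequality $\sum_{v(\vvv)=v} n_\vvv \le n\,2^{-kv/5}$ is integer-valued, so it forces $n_\vvv=0$ whenever $v(\vvv) > 5\log_2 n / k$. Since $\beta>1$, this upper bound is strictly smaller than $(\log n)^\beta$ for large $n$, so the size clause in condition~(3) is automatic. The remaining cyclic-count clause is handled by Proposition \ref{prop:2ndmo:aprioriestimate}(4) applied with $r=\lceil(\log n)^\beta\rceil$ and $\gamma=0$: together with Stirling's bound $r!\ge (r/e)^r$, this gives
\begin{equation*}
\E \bZ^2_{\ula^\star,\ind}\!\big[\,\prescript{}{2}{n}_\cyc \ge (\log n)^\beta,\ \pee_{1/4}\,\big]\lesssim_k \Big(\tfrac{eCk^2}{2^k r}\Big)^{\!r}\E \bZ^{2,\tr}_{\ula^\star,\ind}=\exp\!\big(-\Omega_k(\log^\beta n\cdot\log\log n)\big)\,\E \bZ^{2,\tr}_{\ula^\star,\ind},
\end{equation*}
and Corollary \ref{cor:2ndmo:treecontribution:constant} converts the prefactor back to $\E \bZ^2_{\ula^\star,\ind}$.

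Next, for the first half of condition~(2), namely $\|\bB-\bB^\star_{\ula^\star}\|_1 > n^{-0.45}$ restricted to the good event for~(3), I would adapt the map $\TT$ from Case~1 of Proposition \ref{prop:ratio:uni:1stmo} to the pair model. Each of the at most $(\log n)^\beta$ cyclic union-free components (each of size $O_k(\log n)$) is cut along its internal edges into pieces that are free trees attached to adjusted pair-separating clauses, changing the boundary profile by at most $O_k(\log^{\beta+1} n/n)$ and having preimage of size $\exp(O_k(\log^{\beta+2} n))$. Hence a pair configuration with $\|\bB-\bB^\star\|_1>n^{-0.45}$ is sent to a tree configuration with $\|\bB'-\bB^\star\|_1\ge n^{-0.45}/2$ for large $n$. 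Proposition \ref{prop:maxim:2ndmo} applies outside a small neighborhood of $\bB^\star$; inside that neighborhood, the uniform negative definiteness of $\nabla^2\bF_{\ula^\star}(\bB^\star)$ from Proposition \ref{prop:negdef:2ndmo}, together with Proposition \ref{prop:exist:free:energy:2ndmo}, yields a free-energy deficit $n\,c_k\|\bB^\prime-\bB^\star\|_1^2\gtrsim n^{0.1}$, so the total contribution is $\exp(-\Omega_k(n^{0.1}))$.

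Finally, for the overlap clause $|\sum_{\uuu\in\FFF_2^{\tr}} n_\uuu\cdot\textnormal{overlap}(\uuu)|>n^{0.6}$ inside the good set for (3) and $\|\bB-\bB^\star\|_1\le n^{-0.45}$, I would rely on flip symmetry. Lemma \ref{lem:overlap:pairflipsymm}, combined with $p^\star_{\uuu,\ula^\star}=p^\star_{\textsf{fl}(\uuu),\ula^\star}$ (which follows from the product form $\dbq^\star_{\ula^\star}=\dot q^\star_{\la^\star}\otimes\dot q^\star_{\la^\star}$ in Proposition \ref{prop:BPcontraction:2ndmo}), yields $\sum_\uuu p^\star_{\uuu,\ula^\star}\textnormal{overlap}(\uuu)=0$. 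Conditioning on $\bB$ and the cyclic profile, Proposition \ref{prop:2ndmo:B nt decomp} rewrites the conditional first moment as a probability for i.i.d.\ union-trees $X_i$ drawn at a tilt $(\butheta^\prime)^-$ that differs from $\butheta^{\star,-}_{\ula^\star}$ by $O_k(n^{-0.45})$; the pair-model analogue of Lemma \ref{lem:opt:tree:decay} provides exponential tails for $|\textnormal{overlap}(X_i)|\le v(X_i)$, and a perturbative expansion bounds the bias $|n\bh^\prime(\circ)\,\E_{(\butheta^\prime)^-}[\textnormal{overlap}(X)]|$ by $O_k(n^{0.55})$, so Chernoff yields $\exp(-\Omega_k(n^{0.2}))$. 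Assembling the three contributions, the binding error is $\exp(-\Omega_k(\log^\beta n\cdot\log\log n))$, which is dominated by the target bound. The main obstacle is the last paragraph: establishing the pair-model analogue of the exponential decay of $p^\star_{\uuu,\ula^\star}$ in $v(\uuu)$ and verifying, via a pair-model version of Lemma \ref{lem:conv:psi:theta}, that the shift in $(\butheta^\prime)^-$ is small enough that the expected overlap bias remains $o(n^{0.6})$ uniformly over admissible cyclic profiles.
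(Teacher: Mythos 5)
Your overall decomposition (first control cyclic components, then the boundary profile, then the overlap) is the right one and parallels the paper, and the final overlap paragraph is essentially correct, including your concern about perturbing $\butheta$ and the decay of $p^\star_{\uuu,\ula^\star}$. However, there is a genuine and fatal gap in Step 1.

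You propose to ``restrict to $\pee_{1/5}$ at an acceptable cost $O_k(n^{-2/3}\log n)$'' via Proposition \ref{prop:2ndmo:aprioriestimate}(1). But the target error for this lemma is $\exp(-\Omega_k(\log^\beta n))$ with $\beta>1$, which is \emph{superpolynomially} small: $e^{-c\log^\beta n}=n^{-c\log^{\beta-1}n}\to 0$ faster than any fixed power of $n$. Hence any polynomial error term from the packaged a priori proposition, no matter which admissible $c\in[1,3]$ you pick (and $\pee_{1/5}$ would require $c=4$ which is outside the stated range), already dominates and destroys the bound. The same problem then infects your use of Proposition \ref{prop:2ndmo:aprioriestimate}(4), which is only available on the event $\pee_{1/4}$, so your cyclic-count estimate only holds on a set whose complement you cannot afford.

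The paper avoids this by never invoking the pre-packaged propositions; instead it works directly from the comparison estimate Lemma \ref{lem:apriori:2ndmo:comparison} (together with the analogous cyclic-component comparison in Lemma \ref{lem:apriori:2ndmo:ratio:cycle}), choosing truncation thresholds so that every intermediate error is already superpolynomially small. In particular, it replaces the condition $\pee_{1/4}$ by a strictly weaker ``mixed'' decay bound $\sum_{v(\vvv)=v} n_\vvv \le \max(n\,2^{-kv/2},\,n^{1/20})$, which can be enforced (given that no component is larger than $\log^\beta n$) with error only $e^{-\Omega_k(n^{1/20})}$ rather than $n^{-\Omega(1)}$; the cyclic count $\prescript{}{2}{n}_\cyc < \log^\beta n$ is then re-derived from scratch under this weaker hypothesis rather than from Proposition \ref{prop:2ndmo:aprioriestimate}(4), yielding $e^{-\Omega_k(\log^\beta n)}$, which is the binding term. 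Your Steps 2 and 3 (the map $\TT$, the negative-definiteness of $\nabla^2 \bF_{\ula^\star}$, and the flip symmetry of the overlap) do match the paper once this sharper a priori control is in hand.
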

	With a slight abuse of notation, we denote $(\CC^1,\CC^2)\in \Gamma_2^\star(\beta)$ if the boundary and union-free component profile induced by $(\ux[\CC^1],\ux[\CC^2])$ is contained in $\Gamma_2^\star(\beta)$. Similarly to $\Gamma_2^\star(\beta)$, we define the typical set of pairs of clusters in the near-identical regime by 
	 \begin{equation}\label{eq:def:gamma:2:id}
	     \Gamma_{2,\id}^\star(\beta)  := \left\{ (\CC^1, \CC^2) \in (\CCC(\GGG))^2: \min  \left\{\zeta(\CC^1, \CC^2), \ \zeta(1\oplus\CC^1, \CC^2)\right\} \le \frac{\log^{\beta}n}{n} \textnormal{ and }\CC^1,\CC^2\in \Gamma_1^\star\right\},
	 \end{equation}
	 where $\zeta(\CC^1,\CC^2):=\zeta\big(\ux[\CC^1],\ux[\CC^2]\big)$ and $1\oplus \CC^1$ denotes the cluster corresponding to the frozen configuration $1\oplus \ux[\CC^1]$. Moreover, similarly to $I_{\bullet}$ defined in \eqref{eq:overlap:clustersize:restriction1}, we define the coarser interval $I_\circ$ and the set of clusters $\CCC^{\tr}_{\circ}(\GGG)$ as follows:
	 \begin{equation}\label{eq:def:CCC:circ}
	\begin{split}
	\CCC^{\tr}_\circ(\GGG) \equiv \CCC^{\tr}(\GGG;I_\circ)&:=\left\{\CC \in \CCC(\GGG) :  \frac{1}{n}\log |\CC| \in I_\circ ,~~\CC \in \Gamma_1^{\star,\tr}\right\},~~\textnormal{ where}\\
     I_\circ \equiv I_\circ(n,d,k) &:= \left[ 
	s^\star - \frac{\log^{3}n}{n}, \ s^\star + \frac{\log^{3}n}{n}\
	\right]\,.
	\end{split}
	\end{equation}
	Define $\Gamma_1^{\star,\tr}$ as the same set as $\Gamma_1^\star$ except that in $(2),(3)$, $(4)$ and $(5)$ in Definition \ref{def:opt:bdry:1stmo} are changed to
	\begin{enumerate}
	    \item[$(2)^\prime$] $(n_{\ttt})_{\ttt\in \FFF_{\tr}}\in \ee_{\frac{1}{4}}$.
	    \item[$(3)^\prime$]  $||B-B^\star_{\lambda^\star} ||_1 \leq 2n^{-0.45}$.
	    \item[$(4)^\prime$] $\Big| \sum_{\ttt\in\FFF_{\tr}} (n_\ttt-np^\star_{\ttt,\la^\star})\cdot\textnormal{overlap}(\ttt)\Big| \le 2n^{0.6}$.
	    \item[$(5)^\prime$]  $n_\fff=0$ if $\fff\in \FFF \setminus \FFF_{\tr}$.
	\end{enumerate}
	The reason for considering $I_\circ$ and $\Gamma_1^{\star,\tr}$ in addition to $I_\bullet$ and $\Gamma_1^\star$ is clarified in Lemma \ref{lem:overlap:unionbd:localflip} below: given $\GGG$ and $\CC \in \CCC_{\bullet}(\GGG)$, we will find a \textit{locally rewired instance} $\GGG^\prime$ such that the cluster corresponding to $\ux[\CC]$ for $\GGG^\prime$ is in $\CCC_{\circ}^{\tr}(\GGG^\prime)$. Then, define the number of bad pairs of clusters $\bN^2_{\circ, \textnormal{bad}}[\GGG]$ by
	\begin{equation*}
	    \bN^2_{\circ, \textnormal{bad}}[\GGG]:=\left|
	\left\{ 
	(\CC^1, \CC^2) \in (\CCC_\circ^{\tr}(\GGG))^2: (\CC^1,\CC^2)\notin \Gamma_2^\star(5)\textnormal{ and } (\CC^1,\CC^2)\notin \Gamma_{2,\textnormal{id}}^\star(5)
	\right\}
	\right|.
	\end{equation*}
Next, recall that for a \textsc{nae-sat} instance $\GGG = (V,F,E, \uL)$, we regard $E$ as an element in $S_{nd}$. Let $\pi_{E}\in S_{nd}$ be the permutation corresponding to $E$. We define the collection $\textsf{Loc}(\GGG)$ of \textit{locally rewired instances} as 
\begin{equation}\label{eq:def:overlap:locallyflipped}
\textsf{Loc}(\GGG) := \Big\{ \GGG' = (V,F,E^\prime, \uL'): \big|\{1\leq i \leq nd:\pi_{E^\prime}(i)\neq \pi_{E}(i)\}\big|\leq \log^{2}n\textnormal{ and }\tL^\prime_{e}=\tL_{e}\textnormal{ if } e\in E\cap E^\prime \Big\}.
\end{equation}
Finally, define the collection of \textsc{nae-sat} instances $\mathfrak{G}_2$ as
\begin{equation}\label{def:mathfrak:G:2}
\mathfrak{G}_2:=\Big\{\GGG:~\bN^2_{\circ, \textnormal{bad}}[\GGG^\prime]=0~\textnormal{holds for arbitrary }\GGG^\prime \in \textsf{Loc}(\GGG)\Big\}
\end{equation}
\begin{lemma}\label{lem:overlap:unionbd:localflip}
For $\mathfrak{G}_2$ defined above, $\P(\GGG \notin \mathfrak{G}_2)\lesssim_{k}\exp\big(-\Omega_{k}(\log^{5}n)\big)$ holds.
\end{lemma}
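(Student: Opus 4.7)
The plan is first to reduce the claim to an expectation bound on $\bN^{2}_{\circ,\textnormal{bad}}[\GGG]$ itself, then to split that expectation across the three overlap regimes of Proposition \ref{prop:2ndmo:correlated overlap}. Note that the relation ``$\GGG'\in\textsf{Loc}(\GGG)$'' is symmetric in $(\GGG,\GGG')$, and the random $d$-regular $k$-\textsc{nae-sat} law assigns the same probability to every specific $(\GG,\uL)$ pair. Markov's inequality and an exchange of summation therefore yield
\begin{equation*}
\P(\GGG\notin\mathfrak{G}_{2})\;\le\;\E\!\bigg[\sum_{\GGG'\in\textsf{Loc}(\GGG)}\bN^{2}_{\circ,\textnormal{bad}}[\GGG']\bigg]\;=\;\E\!\big[\,|\textsf{Loc}(\GGG)|\cdot\bN^{2}_{\circ,\textnormal{bad}}[\GGG]\,\big].
\end{equation*}
Since choosing at most $\log^{2}n$ half-edges among $nd$, permuting them, and reassigning their literals gives the deterministic bound $|\textsf{Loc}(\GGG)|\le\binom{nd}{\log^{2}n}(\log^{2}n)!\,2^{\log^{2}n}=\exp(O(\log^{3}n))$, it suffices to prove $\E\bN^{2}_{\circ,\textnormal{bad}}[\GGG]\le\exp(-\Omega_{k}(\log^{5}n))$.

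Every bad pair $(\CC^{1},\CC^{2})\in(\CCC^{\tr}_{\circ}(\GGG))^{2}$ falls into the near-independent, intermediate, or near-identical regime of Proposition \ref{prop:2ndmo:correlated overlap}, and I would treat the three contributions separately. In the near-independent regime, the size constraint $|\CC^{i}|\ge\exp(ns^{\star}-\log^{3}n)$ on clusters in $\CCC^{\tr}_{\circ}(\GGG)$ gives
\begin{equation*}
\bN^{2}_{\circ,\textnormal{bad},\textnormal{ind}}[\GGG]\;\le\;\exp\!\big(-2n\la^{\star}s^{\star}+2\la^{\star}\log^{3}n\big)\cdot\bZ^{2,\tr}_{\ula^{\star},\ind}\!\big[(\Gamma^{\star}_{2}(5))^{\textsf{c}}\big],
\end{equation*}
and Lemma \ref{cor:B:close:optimal:2ndmo} with $\beta=5$, combined with Proposition \ref{prop:2nd mo constant for tr} and the condensation identity $F_{\la^{\star}}(B^{\star}_{\la^{\star}})=\la^{\star}s^{\star}$, collapses the expectation to $\exp(-\Omega_{k}(\log^{5}n))$. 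In the intermediate regime, item (1) of Proposition \ref{prop:2ndmo:correlated overlap} gives $\E\bN^{2}_{\bs,\textnormal{int}}\le e^{-\Omega(nk^{2}2^{-k})}$, and a union bound over the $O(\log^{6}n)$-cell grid of $I_{\circ}^{2}$ at scale $1/n$ yields $e^{-\Omega(n)}$, far beyond what is needed.

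In the near-identical regime, the condition $(\CC^{1},\CC^{2})\notin\Gamma^{\star}_{2,\textnormal{id}}(5)$ reads $\min\{\rho,\rho(1\oplus\CC^{1},\CC^{2})\}>\log^{5}n/n$; after a global flip of $\ux[\CC^{1}]$ if necessary — a symmetry of \textsc{nae-sat} — this becomes the empirical-measure discrepancy $\Delta[\pi]\ge\log^{5}n$ of Lemma \ref{lem:identical regime decay estim}, which supplies a gain factor $2^{-k\log^{5}n/10}=\exp(-\Omega_{k}(\log^{5}n))$ multiplying $\E\bN_{s^{1}}[\pi^{1}]+\E\bN_{s^{2}}[\pi^{2}]$. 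Summing over $\pi$ collapses the right-hand side to $\E\bN_{s^{i}}$, and Theorem \ref{thm:1stmo:constant} together with Proposition \ref{prop:ratio:uni:1stmo} yield $\sum_{s\in I_{\circ}}\E\bN_{s}\le\E\bZ\cdot e^{-ns^{\star}+\log^{3}n}=\exp(O(\log^{3}n))$, easily absorbed into the gain factor. The main obstacle is bookkeeping: every polynomial-in-$n$ loss — the rewiring count $|\textsf{Loc}|=\exp(O(\log^{3}n))$, the discretization of $I_{\circ}^{2}$, and the mass $\sum_{s\in I_{\circ}}\E\bN_{s}$ — must be dominated by the $\exp(-\Omega_{k}(\log^{5}n))$ gains, which is precisely why the choices $\beta=5$ in Definition \ref{def:Gamma1star:2ndmo} and \eqref{eq:def:gamma:2:id} and the half-width $\log^{3}n/n$ of $I_{\circ}$ have been calibrated as they are; a secondary subtlety is to convert the Hamming-based overlap $\rho$ on $V$ (which ignores free variables) into the empirical-measure discrepancy $\Delta[\pi]$ on $\{0,1,\ff\}^{2}$, but since $\CC^{i}\in\Gamma_{1}^{\star,\tr}$ forces $\bar{H}(\ff)=O(2^{-k})$, this costs only a harmless $O_{k}(1)$ factor in the threshold.
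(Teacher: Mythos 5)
Your proposal is correct and follows essentially the same route as the paper: reduce to $\E\,\bN^{2}_{\circ,\textnormal{bad}}[\GGG]\le\exp(-\Omega_k(\log^{5}n))$ via Markov and a union bound over $|\textsf{Loc}(\GGG)|\le\exp(O_k(\log^{3}n))$ rewired instances (your exchangeability argument is just a more explicit way of writing that union bound), then split the expectation into the three overlap regimes and invoke Lemma \ref{cor:B:close:optimal:2ndmo}, Proposition \ref{prop:2ndmo:correlated overlap}, Lemma \ref{lem:identical regime decay estim}, and the first-moment estimates $\sum_{\bs\in(I_\circ)^2}\E\bN_{\bs}\le e^{O_k(\log^{3}n)}$. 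The only quibble is cosmetic: since $\rho(\ux^1,\ux^2)$ is the Hamming distance on $\{0,1,\ff\}^V$, it already equals $\Delta[\pi]/n$ exactly, so no $O_k(1)$ correction for free variables is needed.
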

\begin{proof}
By definition of $\bN^{2}_{\circ,\textnormal{bad}}\equiv \bN^{2}_{\circ,\textnormal{bad}}[\GGG]$, we can bound
\begin{multline*}
\E \bN^{2}_{\circ,\textnormal{bad}}\leq e^{-2n\la^\star s^\star+\la^\star \log^{3}n}\E \bZ^{2}_{\ula^\star, \ind}\Big[\big(\Gamma_2^\star(5)\big)^{\textsf{c}}\Big]\\
+\sum_{\bs\in (I_{\circ})^{2}}\bigg(\E \bN^2_{\bs,\textnormal{int}}+\E\bN^2_{\bs,\id}\Big[\min\big\{\zeta(\ux^{1},\ux^{2}),\zeta(1\oplus\ux^{1},\ux^{2})\big\}\geq \frac{\log^{5}n}{n}\Big]\bigg),
\end{multline*}
where $\bN^2_{\bs,\textnormal{int}}$ and $\bN^2_{\bs,\id}$ are defined in \eqref{eq:def:corrN}. By Proposition \ref{prop:2ndmo:correlated overlap} and Lemma \ref{lem:identical regime decay estim}, the last sum in the \textsc{rhs} of the equation above can be bounded above by $e^{-\Omega(n2^{-k/2})}+e^{-\Omega(k\log^{5}n)}\sum_{\bs\in(I_{\circ})^2}\E\bN_{\bs}$. In addition, Theorem \ref{thm:1stmo:constant:s} and Proposition \ref{prop:ratio:uni:1stmo} shows $\sum_{\bs\in(I_{\circ})^2}\E\bN_{\bs}\leq e^{O_k(\log^{3}n)}$. Combining with Lemma \ref{cor:B:close:optimal:2ndmo}, we have
\begin{equation*}
    \E \bN^{2}_{\circ,\textnormal{bad}}\leq \exp\big(-\Omega_{k}(\log^{5}n)\big).
\end{equation*}
Now, note that by definition of $\textsf{Loc}(\GGG)$ in \eqref{eq:def:overlap:locallyflipped}, we have that $\GGG^\prime \in \textsf{Loc}(\GGG)$ if any only if $\GGG \in \textsf{Loc}(\GGG^\prime)$, and $\big|\textsf{Loc}(\GGG)\big|\leq \exp\big(O_k(\log^{3} n)\big)$. Thus, Markov's inequality and a union bound show that
\begin{equation*}
\P(\GGG \notin \mathfrak{G}_2)\leq \exp\big(-\Omega_{k}(\log^{5}n)\big)\,,
\end{equation*}
which concludes the proof.
\end{proof}

	We now introduce the final collection of \textsc{nae-sat} instances $\mathfrak{G}_3$. Recall that we denoted $s_{\circ}(C):=s^\star-\frac{1}{2\la^\star}\frac{\log n}{n}+\frac{C}{n}$ for $C\in\Z$. Abbreviate $\bN^2_{s,\textnormal{cor}}\equiv \bN^{2}_{(s,s),\id}+\bN^2_{(s,s),\textnormal{int}}$ for $s\in [0,\log 2)$. For $C\in \Z$ and $(C_{k,i})_{i=1,2,3,4}>0$, constants depending only on $k$, let $\mathfrak{G}_3\big(C,(C_{k,i})_{i=1,2,3,4}\big)$ be the collection of \textsc{nae-sat} instances which satisfy the following $3$ conditions:
	\begin{enumerate}
     	\item $Z\leq C_{k,1}n^{-\frac{1}{2\la^\star}}e^{ns^\star}$, where $Z$ is the total number of solutions; 
	    \item $\bN^{\tr}_{s_{\circ}(-C)}[\ee_{\frac{1}{4}}]\in [C_{k,2} e^{\la^\star C},C_{k,3} e^{\la^\star C} ]$;
	    \item $\bN^{2}_{s_{\circ}(-C),\textnormal{cor}}\leq C_{k,4}e^{\la^\star C}$.
	\end{enumerate}
	The event $\mathfrak{G}_3\big(C,(C_{k,i})_{i=1,2,3,4}\big)$ is designed based on the following aspects:
	\begin{itemize}
		\item Let $\CCC^{\tr}_{s_{\circ}(-C)}[\ee_{\frac{1}{4}}]\equiv \CCC^{\tr}_{s_{\circ}(-C)}[\ee_{\frac{1}{4}}](\GGG)$ denote the set of clusters that are represented by $\bN^{\tr}_{s_{\circ}(-C)}[\ee_{\frac{1}{4}}]$. Then, on the event $\GGG\in \mathfrak{G}_3\big(C,(C_{k,i})_{i=1,2,3,4}\big)$, the first and second condition imply that a randomly generated solution would be from a cluster in $\CCC^{\tr}_{s_{\circ}(-C)}[\ee_{\frac{1}{4}}]$ with positive probability. That is,
		\begin{equation}\label{eq:overlap:in:s:circ:positive}
		    \P\Big(\CC^1,\CC^2\in \CCC^{\tr}_{s_{\circ}(-C)}[\ee_{\frac{1}{4}}] ~\Big|~\GGG \Big)\geq (C_{k,1})^{2} (C_{k,2})^{-2}e^{-2(1-\la^\star)C}.
		\end{equation}
		\item The second and third condition imply that conditioned on the event where two independently drawn solutions are from clusters in $\CCC^{\tr}_{s_{\circ}(-C)}[\ee_{\frac{1}{4}}]$, the event of two clusters being near-independent or near-identical would both happen with positive probability for large enough $C$. That is,
		\begin{equation}\label{eq:overlap:indep:cor:positive:probability}
		\begin{split}
		    &\P\Big(\CC^1=\CC^2~\Big|~ \CC^1,\CC^2\in \CCC^{\tr}_{s_{\circ}(-C)}[\ee_{\frac{1}{4}}]\Big)\geq e^{-2}(C_{k,3})^{-1}e^{-\la^\star C};\\
		    &\P\bigg(\Big| \zeta(\CC^1,\CC^2)-\frac{1}{2}\Big|\leq k^2 2^{-k/2} ~\bigg|~ \CC^1,\CC^2\in \CCC^{\tr}_{s_{\circ}(-C)}[\ee_{\frac{1}{4}}]\bigg)\geq 1-e^2 C_{k,4}(C_{k,2})^{-2}e^{-\la^\star C},
		\end{split}
		\end{equation}
		where the factors $e^{-2}$ and $e^2$ come from the fact that the sizes of the clusters in $\CCC^{\tr}_{s_{\circ}(-C)}[\ee_{\frac{1}{4}}]$ can differ by at most a factor of $e$.
	\end{itemize}
	
	\begin{lemma}\label{lem:overlap:G:3}
		There exist constants $(C_{k,i})_{i=1,2,3,4}>0$, which only depend on $k$ and satisfy $C_{k,2}<C_{k,3}$, such that the following holds: for every $C>0$, there exists $\delta(C)\equiv \delta(C,\alpha,k)>0$ such that $\mathfrak{G}_3(C)\equiv \mathfrak{G}_3\big(C,(C_{k,i})_{i=1,2,3,4}\big)$ satisfies
		\begin{equation*}
		\P \big(\GGG \in  \mathfrak{G}_3(C)\big) \ge \delta(C).
		\end{equation*}
	\end{lemma}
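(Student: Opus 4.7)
The plan is to establish each of the three defining conditions of $\mathfrak{G}_3(C)$ with probability bounded away from zero, then combine by a union bound. The first and third conditions are one-sided upper bounds handled by Markov's inequality (or by directly invoking Theorem \ref{thm1}(a)); the second condition is two-sided, and only its lower bound requires a genuine second-moment (Paley--Zygmund) argument.

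I first collect the relevant moment asymptotics. By Theorem \ref{thm:1stmo:constant:s}, Proposition \ref{prop:ratio:uni:1stmo}, and the a priori estimate Proposition \ref{prop:1stmo:aprioriestimate}, for each fixed $C>0$ and all $n$ large enough that $|s_{\circ}(-C)-s^\star|\le n^{-2/3}$, one has $m_k'\,e^{\la^\star C}\le \E \bN_{s_{\circ}(-C)}^{\tr}[\ee_{1/4}]\le M_k'\,e^{\la^\star C}$ for some constants $0<m_k'\le M_k'$ depending only on $\alpha,k$. For the second moment I would decompose
\begin{equation*}
\E\big(\bN_{s_{\circ}(-C)}^{\tr}[\ee_{1/4}]\big)^2 \;=\; \E\bN^2_{\bs,\ind} \,+\, \E\bN^2_{\bs,\textnormal{int}} \,+\, \E\bN^2_{\bs,\textnormal{id}}
\end{equation*}
at $\bs=(s_{\circ}(-C),s_{\circ}(-C))$, and invoke Propositions \ref{prop:2nd mo constant for tr:s} and \ref{prop:2ndmo:comparison tr vs unic:s} for the near-independent piece (which scales like $e^{2\la^\star C}$) together with Proposition \ref{prop:2ndmo:correlated overlap} for the correlated pieces (linear in $\E\bN_{s_{\circ}(-C)}$, hence $\lesssim_k e^{\la^\star C}$). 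This gives $\E(\bN_{s_{\circ}(-C)}^{\tr}[\ee_{1/4}])^2\le K_k\,e^{2\la^\star C}+K_k'\,e^{\la^\star C}$. Setting $C_{k,2}:=m_k'/2$ and applying Paley--Zygmund with threshold $\tfrac12\E\bN$ then yields
\begin{equation*}
\P\bigl(\bN_{s_{\circ}(-C)}^{\tr}[\ee_{1/4}] \ge C_{k,2}e^{\la^\star C}\bigr) \;\ge\; \frac{1}{4}\cdot\frac{(\E\bN)^2}{\E\bN^2} \;\ge\; \frac{m_k'^{\,2}}{4(K_k+K_k')} \;=:\; \delta_k^{\ast},
\end{equation*}
a strictly positive constant, uniformly in $C>0$.

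Next I would choose $C_{k,1},C_{k,3},C_{k,4}$ so that each remaining failure event occurs with probability at most $\delta_k^{\ast}/4$. Condition 1 follows directly from Theorem \ref{thm1}(a) applied with $\eps=\delta_k^{\ast}/4$, giving a suitable $C_{k,1}=e^{C(\delta_k^{\ast}/4,\alpha,k)}$. For the upper bound in condition 2, Markov's inequality together with $\E\bN\le M_k'e^{\la^\star C}$ handles any $C_{k,3}\ge 4M_k'/\delta_k^{\ast}$, which is automatically larger than $C_{k,2}=m_k'/2$. For condition 3, Proposition \ref{prop:2ndmo:correlated overlap} gives
\begin{equation*}
\E\bN^2_{s_{\circ}(-C),\textnormal{cor}} \;\le\; e^{-\Omega(nk^2\,2^{-k})} \,+\, 2\widetilde C(\alpha,k)\,\E\bN_{s_{\circ}(-C)} \;\lesssim_k\; e^{\la^\star C},
\end{equation*}
after which Markov suffices to fix an appropriate $C_{k,4}$. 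A union bound then gives $\P(\GGG\in\mathfrak{G}_3(C))\ge \delta_k^{\ast}-3\delta_k^{\ast}/4 = \delta_k^{\ast}/4$, so one may set $\delta(C,\alpha,k):=\delta_k^{\ast}/4$, which in fact can be chosen independently of $C$.

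The argument is essentially mechanical once the first- and second-moment machinery developed in Sections \ref{sec:1stmo}--\ref{subsec:resampling} is invoked, so there is no dominant obstacle specific to this lemma. The only mild subtlety is verifying that the asymptotic statements of Theorem \ref{thm:1stmo:constant:s} and its pair-model counterparts apply at $s_n=s_{\circ}(-C)$, which requires $|s_\circ(-C)-s^\star|\le n^{-2/3}$; this holds for each fixed $C$ at all sufficiently large $n$.
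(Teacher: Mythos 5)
Your proposal is correct and follows essentially the same route as the paper: Paley--Zygmund at threshold $\tfrac12\E\bN^{\tr}_{s_\circ(-C)}[\ee_{1/4}]$ using the decomposition of the second moment into near-independent and correlated parts (Propositions \ref{prop:2nd mo constant for tr:s}, \ref{prop:2ndmo:comparison tr vs unic:s}, \ref{prop:2ndmo:correlated overlap}) for the lower bound in condition 2, Markov's inequality for the upper bound in condition 2 and for condition 3, Theorem \ref{thm1}(a) for condition 1, and a union bound. The paper organizes this by choosing $C_{k,3},C_{k,4}$ so each failure event has probability at most $\delta$ against a $4\delta$ success probability from Paley--Zygmund, but this is only a cosmetic difference from your $\delta_k^{\ast}/4$ bookkeeping.
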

	\begin{proof}
	Proceeding in the same fashion as in the proof of Theorem \ref{thm1}-(b),(c) (cf. \eqref{eq:thm1:first:event}), there exist constants $C_{k,2}$ and $\delta\equiv \delta(\alpha,k)$, which depend only on $k$, such that for every $C>0$ and $n\geq n_0(C,\alpha,k)$,
	\begin{equation}\label{eq:mathfrak:G:3:C:2}
	    \P\left(\bN^{\tr}_{s_\circ(-C)}[\ee_{\frac{1}{4}}]\geq C_{k,2} e^{\la^\star C} \right)\geq 4\delta.
	\end{equation}
	Moreover, $\E\bN^{\tr}_{s_{\circ}(-C)}\lesssim_{k} e^{\la^\star C}$ holds. Thus, if we take $C_{k,3}$ to be large enough constant depending only on $k$ so that we have $\E\bN^{\tr}_{s_{\circ}(-C)}\leq \delta C_{k,3} e^{\la^\star C}$, Markov's inequality shows
	\begin{equation}\label{eq:mathfrak:G:3:C:3}
	    \P\left(\bN^{\tr}_{s_\circ(-C)}[\ee_{\frac{1}{4}}]\geq C_{k,3}e^{\la^\star C} \right)\leq \delta.
	\end{equation}
	Also, Theorem \ref{thm1}-(a) implies that there exists a constant $C_{k,1}$ depending only on $k$ such that we have
	\begin{equation}\label{eq:mathfrak:G:3:C:1}
	    \P\left(Z \geq C_{k,1} n^{-\frac{1}{2\la^\star}}e^{ns^\star} \right)\leq \delta.
	\end{equation}
	Finally, recall that $\E \bN^{2}_{s_{\circ}(-C),\textnormal{cor}}\lesssim_{k} e^{\la^\star C}$ holds by Proposition \ref{prop:2ndmo:correlated overlap}. Thus, proceeding in the same fashion as in \eqref{eq:mathfrak:G:3:C:3}, take $C_{k,4}$ large enough so that we have
	\begin{equation}\label{eq:mathfrak:G:3:C:4}
	    \P\left(\bN^{2}_{s_{\circ}(-C),\textnormal{cor}}\geq C_{k,4}e^{\la^\star C} \right)\leq \delta.
	\end{equation}
	Therefore, \eqref{eq:mathfrak:G:3:C:2}-\eqref{eq:mathfrak:G:3:C:4} conclude the proof.
	\end{proof}

	\begin{remark}
		In the companion paper \cite{nss2}, we strengthen the second moment method to show that for any $\varepsilon>0$, there exists $C_{\eps}\equiv C(\eps,\alpha,k)>0$ and $\delta_{\eps}\equiv \delta(\eps,\alpha,k)>0$ such that $\P(\bN^{\tr}_{s_{\circ}(-C_{\eps})}\geq \delta_{\eps} \E\bN^{\tr}_{s_{\circ}(-C_{\eps})}) \ge 1-\eps$. Note that $$\P\Big(\bN^{\tr}_{s_{\circ}(-C)}\big[(\ee_{\frac{1}{4}})^{\textsf{c}}\big]\geq 1 \Big)\leq \E\bN^{\tr}_{s_{\circ}(-C)}\big[(\ee_{\frac{1}{4}})^{\textsf{c}}\big]\lesssim_{k} e^{\la^\star C} n^{-3/2}\log n,$$ where the last inequality is due to Proposition \ref{prop:1stmo:aprioriestimate} and Theorem \ref{thm:1stmo:constant}. Hence, repeating the proof of Lemma \ref{lem:overlap:G:3}, there exist constants $(C_{k,\eps,i})_{i=1,2,3,4}>0$, which only depend on $k$ and $\eps$, such that $\mathfrak{G}_3\equiv \mathfrak{G}_3\big(C_{\eps},(C_{k,\eps,i})_{i=1,2,3,4}\big)$ satisfies $\P\big(\GGG\in \mathfrak{G}_3\big)\geq 1-2\eps$. Moreover, we will see later in the proof of Theorem \ref{thm2} below that $\GGG\in \mathfrak{G}_1 \cap\mathfrak{G}_2 \cap\mathfrak{G}_3$ implies $(a),(b),(c)$ of Theorem \ref{thm2}. Therefore, the strengthened second moment method in \cite{nss2} immediately implies the strengthened version of Theorem \ref{thm2}, where we push the probability with respect to $\GGG$ to $1-\eps$. 
	\end{remark}

	\subsection{Locally rewired instances}
	\label{subsec:overlap:locrewire}
	In this subsection, we clarify how the locally flipped instances defined above are used to control the clusters in $\CCC_\bullet(\GGG)\setminus \CCC^{\tr}_{\circ}(\GGG)$. The main observation is summarized by the following lemma.
	
	\begin{lemma}\label{lem:overlap:localflip:existence}
		Let $\GGG$ be a \textsc{nae-sat} instance and let $\CC^1, \CC^2 \in \CCC_\bullet (\GGG)$ be two arbitrary clusters in $\CCC_\bullet (\GGG)$. Denote by $\ux^{1}\in \{0,1,\ff\}^{V}$ and $\ux^{2}\in \{0,1,\ff\}^{V}$ the frozen configurations corresponding to $\CC^{1}$ and $\CC^{2}$ in $\GGG$ respectively. Then, there exists a locally rewired instance $\GGG'\in \textnormal{\textsf{Loc}}(\GGG)$ such that
		\begin{equation}\label{eq:overlap:flipped clusters are tree}
		\acute{\CC}^{1},\acute{\CC}^{2} \in \CCC^{\tr}_\circ(\GGG'),
		\end{equation}
		where $\acute{\CC}^{i}$ denotes the cluster corresponding to $\ux^{i}$ in the \textsc{nae-sat} instance $\GGG^\prime$ for $i=1,2$.
	\end{lemma}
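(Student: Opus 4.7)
The strategy is to construct $\GGG'$ by performing a bounded sequence of local edge rewirings in $\GGG$ that destroy every cycle in the free subgraphs of both $\ux^1$ and $\ux^2$, while preserving the validity of each as a frozen configuration. Since $\CC^1,\CC^2 \in \CCC_\bullet(\GGG)$, each $\ux^i$ satisfies the conditions defining $\Gamma_1^\star$: in particular it contains at most $\log n$ cyclic free components, all unicyclic, and the exponential-decay condition $(n_\fff)_{\fff \in \FFF} \in \ee_{1/3}$ forces each such component to involve at most $O(\log n / k)$ variables. Hence destroying every cycle needs only $O(\log n)$ edge rewirings, which is well inside the $\log^2 n$ budget in the definition of $\textsf{Loc}(\GGG)$.

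For each unicyclic free component $\fff$ of $\ux^i$ (for $i = 1$ or $2$), pick an edge $e = (a,v)$ in its unique cycle, so that $v$ is free and $a$ is non-separating in $\ux^i$. We then seek a partner edge $e' = (a', v')$ in the bulk of $\GGG$ satisfying: (i) $v'$ is frozen in both $\ux^1$ and $\ux^2$; (ii) $a'$ is separating in both copies, with enough frozen neighbors of each polarity so that it remains separating after losing $v'$; (iii) the edge $(a', v')$ is not forcing at either endpoint under either copy; and (iv) the pair color at $e'$ is compatible with that at $e$ in the sense that the new literals $\tL'_{(a, v')}$ and $\tL'_{(a', v)}$ can be chosen to preserve the values $\tL \oplus x^j$ at both $a$ and $a'$ for each $j \in \{1,2\}$. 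The existence of such $e'$ is guaranteed by the closeness condition $||B[\sig^i] - B^\star_{\la^\star}||_1 \le n^{-0.45}$ built into $\Gamma_1^\star$: edges of every admissible pair color appear $\Omega(n)$ times in the bulk, far exceeding the $O(\log n)$ partner edges needed in total. The swap then replaces $e, e'$ by $(a, v')$ and $(a', v)$ equipped with the chosen literals; by construction it preserves NAE-satisfaction of every clause as well as the forcing structure at every variable under both $\ux^1$ and $\ux^2$, and it cuts the unique cycle of $\fff$. Iterating over all cyclic free components in $\ux^1$ and $\ux^2$ yields the desired $\GGG'$.

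It remains to verify $\acute{\CC}^i \in \CCC_\circ^{\tr}(\GGG')$ for $i = 1, 2$. The absence of free cycles in $\ux^i$ relative to $\GGG'$ is immediate by construction. The boundary profile $\acute{B}^i$ of $\ux^i$ in $\GGG'$ differs from $B[\sig^i]$ by at most $O(\log^2 n / n)$ in $\ell^1$, accounting for the spin and literal changes at the $O(\log n)$ rewired edges together with the $O(\log^2 n / k)$ variables inside the affected free components; hence $||\acute{B}^i - B^\star_{\la^\star}||_1 \le n^{-0.45} + O(\log^2 n / n) \le 2 n^{-0.45}$, and the decay property $(n_\ttt) \in \ee_{1/4}$ follows analogously from $(n_\ttt) \in \ee_{1/3}$. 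For the cluster size, each cycle-breaking swap multiplies the Bethe weight given by Lemma \ref{lem:size:msg and trees} applied to $\fff$ (or its tree replacement) by a factor at most $\exp(O(v(\fff))) = n^{O(1)}$, so $\big|\log|\acute{\CC}^i| - \log|\CC^i|\big| = O(\log^2 n)$; combined with $\tfrac{1}{n}\log|\CC^i| \in I_\bullet$ and the inclusion $I_\bullet \subset I_\circ$ for large $n$, this yields $\tfrac{1}{n}\log|\acute{\CC}^i| \in I_\circ$.

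The principal technical obstacle is the simultaneous preservation of frozen-configuration validity for both $\ux^1$ and $\ux^2$ during each rewiring, which is exactly what condition (iv) enforces. The pair-spin type of $e$ together with the joint statuses of $v$ under the two copies dictates both the admissible partner pair types and the compatible literal choices. The key enabling fact is that the required pair types can be realized by partners with $v'$ frozen in both copies and $(x^1_{v'}, x^2_{v'})$ tuned to match the values that must appear at $a$; since such generic frozen-on-separating edges are abundant by the closeness of $B[\sig^i]$ to $B^\star_{\la^\star}$, a valid choice exists for every swap.
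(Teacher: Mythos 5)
Your overall strategy coincides with the paper's: break each (unicyclic, logarithmically small) free cycle by swapping one of its edges with a ``generic'' edge at a clause that is separating and non-forcing in both copies, choose literals to keep both frozen configurations valid, and then check that the boundary profile, free-tree profile and cluster size move by only $n^{-o(1)}$, $O(\log^2 n)$ and $e^{O(\log^2 n)}$ respectively. However, there is one genuine gap. You never constrain the cycle edge $e=(a,v)$ itself with respect to the \emph{other} copy. Since $e$ lies in a free cycle of $\ux^i$, the variable $v$ is free in copy $i$, but in copy $j\neq i$ it may be frozen, and $e$ may be its \emph{unique forcing edge} in $\ux^j$. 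Your swap disconnects $v$ from $a$ and attaches it to $a'$, which by your conditions (ii)--(iii) is separating and non-forcing in both copies; hence after the swap $v$ has no forcing edge in copy $j$, so by Definition \ref{def:frozenconfig} it can no longer carry the value $x^j_v\in\{0,1\}$, and $\ux^j$ ceases to be a valid frozen configuration on $\GGG'$. Your conditions (i)--(iv) all concern the partner edge $e'$ and the literals, not the forcing status of $e$, so the assertion that the swap ``preserves the forcing structure at every variable under both $\ux^1$ and $\ux^2$'' is unjustified exactly here. The paper closes this by choosing, within each free cycle $(a_1v_1\cdots a_\ell v_\ell)$ of copy $i$, a cycle edge that is non-forcing in both copies: each clause on the cycle meets two cycle edges and a forcing clause forces exactly one variable, so if $(a v_1)$ is forcing in $\ux^j$ then $(a v_\ell)$ is not, and no cycle edge is ever forcing in copy $i$ since its variables are free there. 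Your argument needs this (or an equivalent) selection rule.

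Two smaller inaccuracies are worth flagging, though they do not break the proof. First, condition (iv) as stated is unachievable: the removed neighbor $v$ is free in copy $i$ while the new neighbor $v'$ is frozen in both copies, so the evaluated value at $a$ in copy $i$ necessarily changes from $\ff$ to a definite value; the clause $a$ may thereby become separating in copy $i$, which is harmless (the cyclic component merely decomposes into free trees), but ``preserving the values at $a$'' is not the right invariant --- what must be preserved is validity, which only constrains the copy in which $v$ is frozen. Second, the claim that ``edges of every admissible pair color appear $\Omega(n)$ times'' does not follow from the single-copy bounds $\|B[\sig^i]-B^\star_{\la^\star}\|_1\le n^{-0.45}$ and is in fact false in the near-identical regime (e.g.\ $(0,1)$-frozen pairs may be entirely absent when $\ux^1=\ux^2$). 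Fortunately only one copy ever imposes a value constraint on the partner (again because $v$ is free in copy $i$), so the simpler counting used in the paper --- the number of edges that are forcing or adjacent to a free variable in either copy is at most $\tfrac{28k}{2^k}m\ll m$ --- already yields linearly many admissible partner edges.
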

	
	\begin{proof}
	We aim to construct a modified graph $\GGG^\prime \in \textsf{Loc}(\GGG)$ that satisfies \eqref{eq:overlap:flipped clusters are tree} through an edge-swapping process, which can be described as follows. Take an edge $e=(av)\in E(\GGG)$ such that $e$ is contained in a free cycle of either $\ux^{1}$ or $\ux^{2}$. For each such $e$, we identify another edge $e_0=(a_0 v_0)$ such that $a_0$ is separating, but non-forcing, in both $\ux^{1}$ and $\ux^{2}$ with $\ux^{i}_{\delta a}\in \{0,1\}^{k}$ for $i=1,2$. Henceforth, we will call such an edge $e_0$ to be a \textbf{good} edge. Then, we delete $e=(av)$ and $e_0=(a_0v_0)$ from the graph $\GG$ and form new edges, $e^\prime=(av_0)$ and $e^\prime_0=(a_0 v)$ in $\GG^\prime$. Also, we will choose literals on $e^\prime$ and $e^\prime$ so that $\ux^{1}$ and $\ux^{2}$ are valid frozen configurations in $\GGG^\prime$. By this process, we will delete every free cycle of either $\ux^1$ or $\ux^2$ in $\GGG$, so that $\ux^1$ and $\ux^2$ do not contain any free cycles in $\GGG^\prime$.
	
	In such a process, the potentially problematic case is when $e$ is an forcing edge in either $\ux^1$ or $\ux^2$. However, we can avoid such a case by choosing an appropriate edge in the free cycle of the other copy: assume that $e=(av)$ is forcing in $\ux^2$ and is contained in a free cycle $(a_1v_1a_2v_2...a_{\ell}v_{\ell})$ of $\ux^{1}$, where $a_1=a, v_1=v$ and $v_{\ell}\neq v$. Then, instead of performing edge-swapping process with $e$ and $e_0$ mentioned above, we can perform the process with $\tilde{e}\equiv (av_{\ell})$ and $e_0$ to delete the free cycle $(a_1v_1a_2v_2...a_{\ell}v_{\ell})$. Note that $\tilde{e}$ cannot be forcing in $\ux^2$ since $e=(av)$ is forcing in $\ux^2$.
	
	To this end, choose an edge for each free cycle in either $\ux^{1}$ or $\ux^{2}$ such that it is not forcing in both $\ux^1$ and $\ux^2$. Let $E_{\textsf{cyc}}\subset E(\GGG)$ be the resulting collection of the edges. Note that by definition of $\CCC_{\bullet}(\GGG)$, the number of free cycles in $\ux^{1}$ and $\ux^2$ is at most $\log n$, so $|E_{\textsf{cyc}}|\leq 2\log n \ll \log^{2}n$. On the other hand, the number of clauses that are separating, but non-forcing for $\ux^1$ and $\ux^2$, and also have no free variables in their neighbors for both copies is at least linear in $n$. This is because the number of edges that are forcing or next to free variables in either copy is bounded above by $\frac{28k}{2^k}m\ll m$. Thus, we can choose $|E_{\textsf{cyc}}|$ number of good edges. Denote by $E_{\textsf{good}}$ the set of chosen good edges.
	
	It can indeed be verified that the edge-swapping process can be performed with every pair $(e,e_0)=\big((av),(a_0v_0)\big)\in E_{\textsf{cyc}}\times E_{\textsf{good}}$ by choosing appropriate literals on the new edges $e^\prime=(av_0)$ and $e_0=(a_0 v)$ so that $\ux^1$ and $\ux^2$ are valid frozen configurations in the resulting \textsc{nae-sat} instance $\GGG^\prime$ without any free cycles. For example, without loss of generality, assume that $(x_{v_0}^1,x_{v_0}^{2})=(0,0)$ and consider the case where $a$ is a forcing clause in $\ux^2$. By our construction, $e$ is non-forcing in $\ux^2$ and contained in a free cycle in $\ux^1$. Then, choose the literal on the new edge $(av_0)$ to be $\tL_{(av_0)}=\tL_e\oplus x_{v}^2$ so that $a$ remains forcing in $\ux^2$. Note that in the resulting $\GGG^\prime$, $a$ may be separating with respect to $\ux^1$. In such a case, the cyclic free component containing $a$ in $(\GGG,\ux^1)$ is decomposed into (possibly many) free trees in $(\GGG^\prime,\ux^1)$. Regarding the new edge $(a_0v)$, set its literal to be $\tL_{(a_0v)}=\tL_{e_0}\oplus x_{v}^{2}$ so that $a_0$ remains to be separating, but non-forcing, in both copies with respect to $\GGG^\prime$. Other cases of $a$ being either non-separating or separating, but non-forcing, in $\ux^2$ can be verified in a similar fashion.
	
	Note that in the above edge-swapping process, we have changed $2|E_{\textsf{cyc}}|\leq 4\log n$ number of edges, so $\GGG^\prime \in \textsf{Loc}(\GGG)$ holds. Moreover, we deleted all cyclic free components in $(\GGG,\ux^{i}), i=1,2$, and possibly produced more free trees in $(\GGG^\prime, \ux^{i}), i=1,2$. By definition of $\CCC_{\bullet}(\GGG)$, all cyclic free components of $\ux^1$ and $\ux^2$ were unicylic with size at most $\frac{3\log n}{k\log 2}$ and the total number of them was at most $\log n$ to begin with. Hence, the size of $\ux^i, i=1,2$ can change by at most $\exp\big(O_k(\log^{2}n)\big)\ll \exp(\log^{3}n)$. Also, the free tree profile $(n_{\ttt}^{i})_{\ttt\in \FFF_{\tr}}$ of $i$'th copy can change by $O_k(\log^{2}n)$ in $\ell^1$ distance and the analog holds for the boundary profile $B^i$ for $i=1,2$. Therefore, the conclusion \eqref{eq:overlap:flipped clusters are tree} holds for the constructed $\GGG^\prime \in \textsf{Loc}(\GGG)$.
	\end{proof}
	
	
	\subsection{Sampling random solutions from typical clusters}
	\label{subsec:overlap:samplesol}
	Having Lemma \ref{lem:overlap:localflip:existence} in hand, we now aim to analyze the overlap of random solutions sampled from $\CC^1$ and $\CC^2$, where $\CC^1,\CC^2\in \CCC_{\circ}^{\tr}(\GGG)$. Recalling Lemma \ref{lem:overlap:unionbd:localflip}, it suffices to consider the cases where $(\CC^1,\CC^2)$ belongs to either $\Gamma_2^\star(5)$ or $\Gamma_{2,\id}^\star(5)$.
	\begin{prop}\label{prop:overlap:tree}
Let $\CC^1$ and $\CC^2$ be arbitrary clusters from $\CCC^{\tr}_\circ(\GGG).$ Suppose that $\ubx^1, \ubx^2 \in \{0,1 \}^V$ are two solutions drawn independently and uniformly from $\CC^1$ and $\CC^2$, respectively. Denote the probability with respect to this sampling by $\PP$. Then, the overlap $\rho(\ubx^1, \ubx^2)$ satisfies the following:
		\begin{equation}\label{eq:overlap:pair:twocases}
		\begin{split}
&\textnormal{If } (\CC^1, \CC^2)\in \Gamma_2^\star(5), \textnormal{ then }
		\PP \left(  \big|\rho(\ubx^1,\ubx^2)\big| \ge n^{-0.35}  \right) \le \exp\left(-\Omega_k(n^{1/4}) \right);\\
	&	\textnormal{If } (\CC^1, \CC^2)\in \Gamma_{2,\id}^\star(5), \textnormal{ then }
		\PP \left(  \Big| \big|\rho(\ubx^1,\ubx^2)\big| -p^\star \Big| \ge n^{-0.35}  \right) \le \exp\left(-\Omega_k(n^{1/4}) \right).
		\end{split}
		\end{equation}
	\end{prop}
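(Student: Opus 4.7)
The plan is to condition on the pair of clusters $(\CC^1, \CC^2)$, decompose the overlap $\rho(\ubx^1, \ubx^2)$ according to the union-free component structure of the pair frozen configuration $(\ux^1, \ux^2)$ induced by $(\CC^1,\CC^2)$, compute its expectation using the profile conditions imposed by $\Gamma_2^\star(5)$ or $\Gamma_{2,\textnormal{id}}^\star(5)$, and then apply Hoeffding concentration across the union-free components.

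First I would write
\begin{equation*}
\rho(\ubx^1,\ubx^2) = \frac{1}{n}\Big[\sum_{v\in V_{\textnormal{det}}} y^1_v y^2_v + \sum_{\vvv} Y_\vvv\Big],
\end{equation*}
where $V_{\textnormal{det}}:=\{v: \ux^1_v, \ux^2_v\in\{0,1\}\}$ contributes $\pm 1$ deterministically, the second sum ranges over the union-free components of $(\ux^1,\ux^2)$ in $\GGG$, and $Y_\vvv := \sum_{v\in V(\vvv)} y^1_v y^2_v$. Since $\CC^1,\CC^2\in\CCC^\tr_\circ(\GGG)$ contain no free cycles, $\ubx^i \sim \textnormal{Unif}(\CC^i)$ factorizes independently over the free trees of $\ux^i$, and because distinct union-free components have disjoint vertex sets the family $\{Y_\vvv\}$ is mutually independent. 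Moreover, restricted to each $\vvv$, the law of $(\ubx^1,\ubx^2)|_{V(\vvv)}$ coincides with the uniform measure on $\textsf{SOL}(\vvv)$, since $|\textsf{SOL}(\vvv)|$ factors as the product over first- and second-copy free trees inside $\vvv$ — which matches the independent-sampling law exactly. Consequently $\E[Y_\uuu] = \textnormal{overlap}(\uuu)$ for union-free trees in the sense of Lemma \ref{lem:overlap:pairflipsymm}, with the analogous identity for cyclic components.

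Next I would estimate $\E[\rho]$ in each regime. For $(\CC^1,\CC^2)\in\Gamma_2^\star(5)$, the bound $\|\bB - \bB^\star_{\ula^\star}\|_1 \le n^{-0.45}$ combined with the flip symmetry of $\bB^\star_{\ula^\star} = \dot{B}^\star_{\la^\star}\otimes\dot{B}^\star_{\la^\star}$ (which balances pair-frozen agreements and disagreements) forces $|\sum_{V_\textnormal{det}} y^1_v y^2_v| = O(n^{0.55})$, while $|\sum_\uuu n_\uuu\textnormal{overlap}(\uuu)|\le n^{0.6}$ together with the cyclic budget bounds $|\E \sum_\vvv Y_\vvv| = O(n^{0.6})$, giving $\E[\rho] = O(n^{-0.4})$. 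For $(\CC^1,\CC^2)\in\Gamma_{2,\textnormal{id}}^\star(5)$, after possibly flipping the second copy globally (which sends $\rho\mapsto-\rho$), I may assume $d(\ux^1,\ux^2) \le \log^5 n$; on the agreeing part ($n - O(\log^5 n)$ vertices), union-free components collapse to single-copy free trees $\ttt$ of $\ux^1$ with $\E[Y_\ttt] = \textnormal{overlap}(\ttt)$, and the first-copy conditions in $\Gamma_1^\star$ (namely $\|B-B^\star_{\la^\star}\|_1 \le n^{-0.45}$ and $|\sum(n_\ttt-np^\star_{\ttt,\la^\star})\textnormal{overlap}(\ttt)|\le n^{0.6}$) yield $\E[\rho] = p^\star + O(n^{-0.4})$ via the identity $p^\star = 1 - \sum_\ttt v(\ttt) p^\star_{\ttt,\la^\star} + \sum_\ttt \textnormal{overlap}(\ttt) p^\star_{\ttt,\la^\star}$.

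Finally, since $|Y_\vvv - \E[Y_\vvv]| \le 2 v(\vvv)$ and exponential decay of free components (inherited from $\ee_{1/4}$ on each copy) together with the $(\log n)^5$ cyclic budget gives $\sum_\vvv n_\vvv v(\vvv)^2 = O_k(n)$, Hoeffding's inequality yields $\PP(|\rho - \E[\rho]| > t) \le 2\exp(-\Omega_k(nt^2))$. Taking $t = n^{-0.35}/2$ absorbs the $O(n^{-0.4})$ bias and produces $\exp(-\Omega_k(n^{0.3}))\le \exp(-\Omega_k(n^{1/4}))$ in both regimes. The main obstacle will be justifying rigorously that the independent cluster sampling reduces, component-by-component, to uniform sampling on the pair-solution space $\textsf{SOL}(\vvv)$ — requiring careful accounting of variables that are pair-frozen, frozen-in-one-copy-free-in-the-other, or free-in-both — together with handling the $O(\log^5 n)$ mismatched variables in the near-identical case so that they disturb neither the $O(n^{-0.4})$ expectation estimate nor the $O_k(n)$ variance budget at the $n^{-0.35}$ precision.
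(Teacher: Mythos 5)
Your proposal is correct and follows essentially the same route as the paper's proof: decompose $\rho$ over union-free components using the independence of the component-wise uniform sampling, estimate the mean from the profile conditions in $\Gamma_2^\star(5)$ or $\Gamma_{2,\id}^\star(5)$ (including the reduction to doubled single-copy free trees in the near-identical case), and conclude by Hoeffding. The technical issues you flag at the end — component-by-component reduction to uniform sampling on $\textsf{SOL}(\vvv)$ and accounting for the $O(\log^5 n)$ mismatched vertices — are exactly the points the paper addresses (cf.~the bounds on $\pi_{\srr\ff}, \pi_{\ff\srr}$ and the comparison between $n_\ttt^1$ and $n_{\uuu(\ttt)}$ in the identical regime), so your outline captures the argument faithfully.
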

		\begin{proof}[Proof of Proposition \ref{prop:overlap:tree}]
		First, we aim to prove the first inequality of \eqref{eq:overlap:pair:twocases}.
		Let $(\CC^1, \CC^2)\in \Gamma_2^\star(5)$ and  let $(\ux^1, \ux^2)$ be the pair frozen configuration on $\GGG$ induced by $(\CC^1, \CC^2)$. Let $(\bB, \{n_\vvv\}_{\vvv\in \FFF_2})$ denote the boundary and union-free component profile of $(\ux^1, \ux^2)$. Also, abbreviate $\srr\equiv \{0,1\}$ and define $\pi_{\srr\srr^=}, \pi_{\srr\srr^{\neq}}$ by
		\begin{equation}\label{eq:def:overlap:pifrozen}
		\pi_{\srr\srr^=} := \frac{1}{n}\sum_{v\in V} \one \{ x^1_v = x^2_v \in \{0,1\} \}, \quad \pi_{\srr\srr^{\neq}} := \frac{1}{n} \sum_{v\in V} \one \{ (x^1_v, x^2_v) = (0,1) \textnormal{ or } (1,0) \}.
		\end{equation}
		Note that filling out \textsc{nae-sat} solutions on one of the union-free components $\vvv$ embedded in $\GGG$ has no effect in filling out \textsc{nae-sat} solutions on the other union-free components. Thus, sampling \textsc{nae-sat} solutions $(\ubx^1,\ubx^2)\in \{0,1\}^{2V}$ from $(\ux^1,\ux^2)$ can be done independently for each union-free component. In particular, if we denote by $X^{\vvv}_i$ the overlap of two uniformly chosen solutions on $i$'th $\vvv$ in $(\ux^1,\ux^2)$, where $1\leq i \leq n_{\vvv}$ and $\vvv\in \FFF_2$, then $(X^{\vvv}_i)_{\vvv\in \FFF_2,i\leq n_{\vvv}}$ are independent with $\E[X^\vvv_i]=\textnormal{overlap}(\vvv)$, where $\textnormal{overlap}(\vvv)$ denotes the average overlap in $\vvv$ as in \eqref{eq:def:overlap:uniontree}. Moreover, we can express $\rho(\ubx^1,\ubx^2)$ by
		\begin{equation}\label{eq:express:overlap}
		\rho(\ubx^1,\ubx^2)=\pi_{\srr\srr^{=}}-\pi_{\srr\srr^{\neq}}+\frac{1}{n}\sum_{\vvv\in \FFF_2}\sum_{i=1}^{n_{\vvv}}X^{\vvv}_i.
		\end{equation}
		Note that since $(\bB, \{n_\vvv\}_{\vvv\in \FFF_2})\in \Gamma_2^\star(5)$, the contribution to the sum in the above equation from cyclic union-free components is small. Namely, we can crudely bound $\frac{1}{n}\sum_{\vvv\in \FFF_2\setminus \FFF_2^{\tr}}\sum_{i=1}^{n_{\vvv}}X^{\vvv}_i\leq \frac{\log^{10}n}{n}\ll n^{-0.35}$ by (3) of Definition \ref{def:opt:bdry:2ndmo}. Also, (2) of Definition \ref{def:opt:bdry:2ndmo} shows 
		\begin{equation*}
		|\pi_{\srr\srr^{=}}-\pi_{\srr\srr^{\neq}}|\leq n^{-0.45}\ll n^{-0.35}\quad\textnormal{and}\quad \frac{1}{n}\bigg|\sum_{\uuu\in \FFF_2^{\tr}}\sum_{i=1}^{n_{\uuu}}\E[X^{\uuu}_i]\bigg|\leq n^{-0.4}\ll n^{-0.35}.
		\end{equation*}
		Thus, because $X^{\uuu}_i\leq \log^{5}n$ holds, Hoeffding's bound shows that for large enough $n$,
		\begin{equation}\label{eq:overlap:pair:firstcase}
		    	\PP \left(  \big|\rho(\ubx^1,\ubx^2)\big| \ge n^{-0.35}  \right) \leq \PP\bigg( \frac{1}{n}\Big|\sum_{\uuu\in \FFF_2^{\tr}}\sum_{i=1}^{n_{\uuu}}\Big(X^{\uuu}_i-\E[X^{\uuu}_i]\Big)\Big|\geq 0.5 n^{-0.35}\bigg)\leq\exp\bigg(-\Omega_k\Big(\frac{n^{0.3}}{\log^{10} n}\Big)\bigg).
		\end{equation}

		Next, we aim to prove the second inequality of \eqref{eq:overlap:pair:twocases}. Let $(\CC^1, \CC^2)\in \Gamma_{2,\id}^\star(5)$ and  let $(\ux^1, \ux^2)$ be the pair frozen configuration on $\GGG$ induced by $(\CC^1, \CC^2)$. Let $\pi_{\srr\srr^{=}}$ and $\pi_{\srr\srr^{\neq}}$ as in \eqref{eq:def:overlap:pifrozen}, and define $\pi_{\srr\ff}$ and $\pi_{\ff\srr}$ by
		\begin{equation}\label{eq:def:overlap:pifrozen:free}
		\pi_{\srr\ff}:= \frac{1}{n} \sum_{v\in V} \one \{x_v^1\in \{0,1\} \textnormal{ and } x_v^2 = \ff \}, \quad \pi_{\ff\srr}:= \frac{1}{n} \sum_{v\in V} \one \{x_v^1=\ff \textnormal{ and } x_v^2 \in \{0,1\} \}.
		\end{equation}
			Then, the definition of $\Gamma_{2,\id}^\star(5)$ in \eqref{eq:def:gamma:2:id} shows that either one of the following holds:
	\begin{equation}\label{eq:overlap:id:restriction}
		|\pi_{\srr\srr^{\neq} } + \pi_{\srr\ff} + \pi_{\ff\srr}| \le \frac{\log^{5}n}{n},  \ \textnormal{ or } \ |\pi_{\srr\srr^=} + \pi_{\srr\ff} + \pi_{\ff\srr}| \le \frac{\log^{5}n}{n}.
	\end{equation} 
	Without loss of generality, we assume the former case and prove that the overlap concentrates around $p^\star$. It will be clear from the proof that in the latter case, the overlap concentrates around $-p^\star$.

	To this end, assume $	|\pi_{\srr\srr^{\neq} } + \pi_{\srr\ff} + \pi_{\ff\srr}| \le \frac{\log^{5}n}{n}$ and let $\big(\bB,(n_{\vvv})_{\vvv\in \FFF_{2}})\big)$ be the boundary and union-free component profile of $(\ux^1,\ux^2)$. Since $\CC^1,\CC^2\in \CCC_{\circ}^{\tr}(\GGG)$, the boundary and free tree profile of both copies $(B^i,(n_{\ttt}^{i})_{\ttt\in \FFF_{\tr}}), i=1,2,$ are contained in $\Gamma_1^{\star,\tr}$. Thus, $||B^{1}-B^\star_{\la^\star}||_1\leq 2n^{-0.45}$ implies that
	\begin{equation}\label{eq:overlap:pair:secondcase:1}
	    \Big|\pi_{\srr\srr^{=}}+\pi_{\srr\srr^{\neq}}+\pi_{\srr\ff}-\big(1-\sum_{\ttt\in \FFF_{\tr}}v_{\ttt}p^\star_{\ttt,\la^\star}\big)\Big|\leq 2n^{-0.45}.
	\end{equation}
	Hence, $\big|\pi_{\srr\srr^{=}}-\pi_{\srr\srr^{\neq}}-\big(1-\sum_{\ttt\in \FFF_{\tr}}v_{\ttt}p^\star_{\ttt,\la^\star}\big)\big|\leq 3n^{-0.45}$ holds for large enough $n$.
	
	We now argue that most of the union-free components of $(\ux^1,\ux^2)$ are union-free trees $\uuu$ such that $\uuu$ is generated from a single free tree $\ttt\in \FFF_{\tr}$ by copying $\ttt$ in the second copy and merging them together. To this end, we embed $\FFF_{\tr}\subset \FFF_2^{\tr}$ by making two copies of $\ttt\in \FFF_{\tr}$ and merging them together. Let $\uuu(\ttt)$ be the resulting union-free tree. Then, the graph of $\uuu(\ttt)$ is the same as the graph of $\ttt$ and all of the boundary colors of $\uuu(\ttt)$ are given by either $\bb_0\bb_0, \bb_1\bb_1$ or $\fs\fs$ (in the other case where $|\pi_{\srr\srr^=} + \pi_{\srr\ff} + \pi_{\ff\srr}| \le \frac{\log^{5}n}{n}$, we flip the boundary colors of $\ttt$ in one of the copy and merge them together so that the boundary colors of $\uuu(\ttt)$ are either $\bb\bb^{\neq}$ or $\fs\fs$).
	
	Indeed, for a union-free component $\vvv$ in $(\ux^1,\ux^2)$, there could be two cases where $\vvv\not\in \FFF_{\tr}$: first is when $\vvv$ indeed has only one free tree in both copies, but one of its boundary color is $\bb\bb^{\neq}$. The second case is when there exist $\ff\srr$ or $\srr\ff$ variables inside $\vvv$. In the first case, the $\bb\bb^{\neq}$ variable can neighbor at most $d$ free trees, which have size at most $\frac{3\log n}{k\log 2}$. In the second case, the number of $\ff\ff$ variables in $\vvv$ is bounded above by $\frac{3\log n}{k\log 2}$ times the number of $\ff\srr$ and $\srr\ff$ variables by matching each $\ff\ff$ variable in $\vvv$ to the $\ff\srr$ or $\srr\ff$ variable in the same free tree. Therefore, we can bound
	\begin{equation}\label{eq:overlap:pair:secondcase:2}
	    \frac{1}{n}\sum_{\vvv\in \FFF_2\setminus \FFF_{\tr}}n_{\vvv}v(\vvv)\leq \frac{3d\log n}{k\log 2}\pi_{\srr\srr^{\neq}}+\Big(\frac{3\log n}{k\log 2}+1\Big)\big(\pi_{\srr\ff}+\pi_{\ff\srr}\big)\lesssim_{k}\frac{\log^{6}n}{n}\ll n^{-0.35}\,,
	\end{equation}
 where the last inequality holds due to \eqref{eq:overlap:id:restriction}. Having \eqref{eq:overlap:pair:secondcase:1} and \eqref{eq:overlap:pair:secondcase:2} in hand, we can proceed similarly as in \eqref{eq:overlap:pair:firstcase}:  as before, denote by $X^{\vvv}_i$ the overlap of two uniformly chosen solutions on $i$'th $\vvv$ in $(\ux^1,\ux^2)$, where $1\leq i \leq n_{\vvv}$ and $\vvv\in \FFF_2$. Note that by definition of $\textnormal{overlap}(\ttt)$ in \eqref{eq:def:overlap:ham}, $\E[X_i^{\uuu(\ttt)}]=\textnormal{overlap}(\ttt)$ holds. Moreover, \eqref{eq:overlap:pair:secondcase:2} implies $\frac{1}{n}\sum_{\ttt\in \FFF_{\tr}}|n^1_{\ttt}-n_{\uuu(\ttt)}|v_{\ttt}\lesssim_{k}\frac{\log^{6}n}{n}$. Thus, since $\big| \sum_{\ttt\in\FFF_{\tr}} (n_\ttt^1-np^\star_{\ttt,\la^\star})\cdot\textnormal{overlap}(\ttt)\big| \le 2n^{0.6}$ holds from the fact $(B^1,(n_{\ttt}^1)_{\ttt\in\FFF_{\tr}})\in \Gamma_1^{\star,\tr}$, we have
	\begin{equation}\label{eq:overlap:pair:secondcase:3}
	    \bigg|\frac{1}{n}\sum_{\ttt\in \FFF_{\tr}}\sum_{i=1}^{n_{\uuu(\ttt)}}\E[X^{\uuu(\ttt)}_i]-\sum_{\ttt\in\FFF_{\tr}}p^\star_{\ttt,\la^\star}\cdot\textnormal{overlap}(\ttt)\bigg|\lesssim_{k} n^{-0.4}\ll n^{-0.35}.
	\end{equation}
	Recalling the expression of $\rho(\ubx^1,\ubx^2)$ in \eqref{eq:express:overlap}, we can use \eqref{eq:overlap:pair:secondcase:1}, \eqref{eq:overlap:pair:secondcase:2}, \eqref{eq:overlap:pair:secondcase:3}, and Hoeffding's bound to show that for large enough $n$,
	\begin{equation}
	     	\PP \left(  \big|\rho(\ubx^1,\ubx^2)-p^\star \big| \ge n^{-0.35}  \right) \leq \PP\bigg( \frac{1}{n}\Big|\sum_{\ttt\in \FFF_{\tr}}\sum_{i=1}^{n_{\uuu(\ttt)}}\Big(X^{\uuu(\ttt)}_i-\E[X^{\uuu(\ttt)}_i]\Big)\Big|\geq 0.5 n^{-0.35}\bigg)\leq\exp\bigg(-\Omega_k\Big(\frac{n^{0.3}}{\log^{2} n}\Big)\bigg),
	\end{equation}
	which concludes the proof.
	\end{proof}
	To conclude this section, we prove Theorem \ref{thm2}.
	
	\begin{proof}[Proof of Theorem \ref{thm2}]
	By Lemmas \ref{lem:mathfrak:G:1}, \ref{lem:overlap:unionbd:localflip} and \ref{lem:overlap:G:3}, it suffices to show that (a),(b),(c) of Theorem \ref{thm2} hold if $\GGG\in \mathfrak{G}_1\cap \mathfrak{G}_2\cap \mathfrak{G}_3(C)$ for some constant $C>0$. In particular, we take $C$ to be a large enough constant depending on $k$ so that $e^2 C_{k,4}(C_{k,2})^{-2}e^{-\la^\star C}<\frac{1}{2}$ holds in \eqref{eq:overlap:indep:cor:positive:probability}. To this end, we assume that $\GGG\in \mathfrak{G}_1\cap \mathfrak{G}_2\cap \mathfrak{G}_3(C)$. By definition of $\mathfrak{G}_3(C)$, $\bN^{\tr}_{s_{\circ}(-C)}\geq 1$ holds, so the total number of solutions is at least $Z\geq e^{-C}n^{-\frac{1}{2\la^\star}}e^{ns^\star}$. Thus, if we denote by $\P\big((\CC^1,\CC^2)\in \cdot \mid \GGG\big)$ the probability measure given by sampling clusters $\CC^1,\CC^2$ independently and proportionally to their sizes, the definition of $\mathfrak{G}_1$ in \eqref{eq:overlap:clustersize:restriction1} implies
	\begin{equation}\label{eq:CCC:bullet:whp}
	    \P\big(\CC^1\notin \CCC_{\bullet}(\GGG)\textnormal{ or }\CC^2\notin \CCC_{\bullet}(\GGG)\mid\GGG\big)\lesssim_{k} n^{-0.4}.
	\end{equation}
	Note that \eqref{eq:overlap:in:s:circ:positive}, \eqref{eq:overlap:indep:cor:positive:probability} and \eqref{eq:CCC:bullet:whp} imply that we have the following for some $\delta\equiv \delta(\alpha,k)$:
	\begin{equation}\label{eq:overlap:indep:cor:positive:probability:2}
	\begin{split}
	&\P\bigg(\Big|\zeta(\CC^1,\CC^2)-\frac{1}{2}\Big|\leq k^2 2^{-k/2}\textnormal{ and } \CC^1,\CC^2\in \CCC^{\tr}_{\circ}(\GGG)~\bigg|~\GGG\bigg)\geq \delta;\\
	&\P\Big(\CC^1=\CC^2\textnormal{ and } \CC^1,\CC^2\in \CCC^{\tr}_{\circ}(\GGG)~\Big|~\GGG\Big)\geq \delta.
	\end{split}
	\end{equation}
	Since $\GGG\in \mathfrak{G}_2$, $\bN^2_{\circ,\textnormal{bad}}[\GGG]=0$ holds. Thus, if $\big|\zeta(\CC^1,\CC^2)-\frac{1}{2}\big|\leq k^2 2^{-k/2}\textnormal{ and } \CC^1,\CC^2\in \CCC^{\tr}_{\circ}(\GGG)$, $(\CC^1,\CC^2)\in \Gamma_2^\star(5)$ holds. Consequently, the first inequality in \eqref{eq:overlap:indep:cor:positive:probability:2} and Proposition \ref{prop:overlap:tree} implies that (a) of Theorem \ref{thm2} holds with probability $\frac{\delta}{2}$. Similarly, the second inequality in \eqref{eq:overlap:indep:cor:positive:probability:2} implies (b) of Theorem \ref{thm2}.
	
	To show (c) of Theorem \ref{thm2}, let us consider the case where $\CC^1,\CC^2\in \CCC_{\bullet}(\GGG)$, which holds with probability $1-O_k(n^{-0.4})$ by \eqref{eq:CCC:bullet:whp}. By Lemma \ref{lem:overlap:localflip:existence} we can find $\GGG^\prime\in \textsf{Loc}(\GGG)$ such that it satisfies $\acute{\CC}^1,\acute{\CC}^2\in \CCC^{\tr}_{\circ}(\GGG^\prime)$. Here, as before, we denoted by $\acute{\CC}^{i}$ the cluster corresponding to $(\GGG^\prime,\ux^{i})$, where $\ux^{i}\in \{0,1,\ff\}^{V}$ is the frozen configuration corresponding to $\CC^{i}$ in $\GGG$ for $i=1,2$. Also, let $\ubx^i\in \{0,1\}^{V}$ and $\acute{\ubx}^i\in \{0,1\}^{V}$ be the \textsc{nae-sat} solutions independently and uniformly drawn from $\CC^i$ and $\acute{\CC}^i$ respectively. Then, because $\CC^i\in \CCC_{\bullet}(\GGG),\acute{\CC}^i \in \CCC_{\circ}^{\tr}(\GGG)$, and $\GGG^\prime \in \textsf{Loc}(\GGG)$, it is clear that there exists a coupling between $\ubx^i$ and $\acute{\ubx}^i$ which satisfies
	\begin{equation}\label{eq:coupling:two:sol}
	    \textnormal{Ham}(\ubx^i,\acute{\ubx}^i)\leq \frac{\log^{3}n}{n},\textnormal{ almost surely for $i=1,2$.}
	\end{equation}
	Indeed, note that all the free components in $(\GGG,\ux^i)$ or $(\GGG^\prime,\ux^i)$ have number of variables and clauses at most $\frac{4\log n}{k \log 2}$, and the number of edges that differ between $\GGG^\prime$ and $\GGG$ is at most $\log^{2}n$. Thus, most of the free trees in $(\GGG,\ux^i)$ remain valid free trees in $(\GGG^\prime,\ux^i)$, and the number of free variables that are not in such free components is bounded above by $O(\frac{\log^{3} n}{k})$. Hence, in order to have \eqref{eq:coupling:two:sol}, we can sample $\ubx^i$ and $\acute{\ubx}^i$ by independently drawing uniformly chosen solutions for each common free trees using the same randomness for both $\ubx^i$ and $\acute{\ubx}^i$, and for other free components in $(\GGG,\ux^i)$ or $(\GGG^\prime,\ux^i)$ independently from each other.
	
	Note that $\bN^2_{\circ,\textnormal{bad}}[\GGG^\prime]=0$ holds since $\GGG\in\mathfrak{G}_2$. Thus, when $\CC^1,\CC^2\in \CCC_{\circ}^{\tr}(\GGG)$, Proposition \ref{prop:overlap:tree} implies that $\PP\big(\min\big(\acute{\rho}_{\textnormal{abs}}, |\acute{\rho}_{\textnormal{abs}}-p^\star|\big)\geq n^{-0.35}\big)=\exp\big(-\Omega_k(n^{1/4})\big)$ holds, where $\acute{\rho}_{\textnormal{abs}}\equiv \big|\rho(\acute{\ubx}^1,\acute{\ubx}^2)\big|$. Consequently, by \eqref{eq:coupling:two:sol}, we have the following for $\rho_{\textnormal{abs}}\equiv \big|\rho(\ubx^1,\ubx^2)\big|$:
	\begin{equation}\label{eq:overlap:concentration}
	    \PP\Big(\min\big(\rho_{\textnormal{abs}}, |\rho_{\textnormal{abs}}-p^\star|\big)\geq n^{-0.35}~\Big|~\CC^1,\CC^2\in \CCC_{\bullet}(\GGG)\Big)=\exp\big(-\Omega_k(n^{1/4})\big).
	\end{equation}
	Therefore, \eqref{eq:CCC:bullet:whp} and \eqref{eq:overlap:concentration} imply (c) of Theorem \ref{thm2}, which concludes the proof.
	

	\end{proof}
	
	

	\section*{Acknowledgements}
	We thank Amir Dembo, Nike Sun and Yumeng Zhang for helpful discussions. We thank the anonymous reviewer for a careful reading and valuable feedbacks which greatly improved our paper. DN is supported by a Samsung Scholarship. AS is  supported  by NSF grants DMS-1352013 and DMS-1855527,
Simons Investigator grant and a MacArthur Fellowship. YS is partially supported by NSF grants DMS-1613091 and DMS-1954337.
	
		\bibliography{naesatref}
	
	\newpage

	\appendix
	
	\section{A priori estimates}\label{sec:app:apriori}
	The goal of this section is to prove Proposition \ref{prop:1stmo:aprioriestimate} (Section \ref{subsec:apriori:firstmo}) and Proposition \ref{prop:2ndmo:aprioriestimate} (Section \ref{subsec:apriori:secmo}). We also provide the proof of Lemma \ref{cor:B:close:optimal:2ndmo} at the end of Section \ref{subsubsec:apriori:cycles:2ndmo}.
	
	The main idea of the proof is motivated by Section 2.3 of \cite{dss16maxis}, although the computations are technically much more involved especially for the second moment due to more variety of spins. The heart of the proof lies in the comparison argument, where we compare large components with single free trees: when there are many large free components, we disassemble them into single free trees. Then, the cost of matching the large components will be much larger than the number of possible configurations for the single free trees, so we argue that the contribution to the overall partition function is small. Also, we show that the same strategy works in the case where there exists a multi-cylic free component. Throughout, we work with the projected coloring configuration, introduced in Section \ref{subsec:model:proj}.

	\subsection{The projected coloring}\label{subsec:model:proj}
	
	We introduce the notion of the projected coloring configuration. It is a simplification of the (union-)component coloring by certain projection, which we detail below. In Section \ref{subsec:apriori:firstmo} and \ref{subsec:apriori:secmo}, we will see that the main advantage of projected coloring configuration over component coloring is that the projected coloring configuration has smaller types of boundary spins, which makes it easier to work with. Throughout this section, $\rr$ denotes a new spin, which is different from $\rr_0$ and $\rr_1$. Similarly, we consider new spins $\rr\rr^{=}$ and $\rr\rr^{\neq}$, which are different from $\rr_0\rr_0,\rr_1\rr_1,\rr_0\rr_1,$ and $\rr_1\rr_0$. Also, with a slight abuse of notation, we denote $\{\rr\}:=\{\rr_0,\rr_1\}$. Similar holds for $\{\bb\}$. Define
	\begin{equation*}
	\begin{split}
	&\Omega_{\textnormal{pj}}:= \{\rr,\bb,\fs \} \cup \{(\ff,0), (\ff,1) \};
	\quad\quad\quad \Omega_{\textnormal{pj},2}^\fs:=
	\{ \rr\rr^=, \rr\rr^{\neq}, \bb\bb^=, \bb\bb^{\neq}, \rr\bb^=, \rr\bb^{\neq}, \bb\rr^=,\bb\rr^{\neq}, \fs\rr, \fs\bb, \rr\fs, \bb\fs,\fs\fs \};\\
	&\Omega_{\textnormal{pj},2}^\ff:=
	\sqcup_{x\in \{0,1\}}\{\ff\rr_x, \ff\bb_x, \rr_x\ff,\bb_x\ff\} \sqcup \{\ff\fs, \fs\ff,\ff\ff \}  ;\quad\quad\quad
	\Omega_{\textnormal{pj},2} := 	\Omega_{\textnormal{pj},2}^\fs\sqcup \left( 	\Omega_{\textnormal{pj},2}^\ff \times \{0,1 \} \right).
	\end{split}
	\end{equation*}
    To avoid confusion, we note that the superscripts $\fs$ and $\ff$ used above should not be interpreted as powers. Recalling $\Omega_{\textnormal{com}}$ in \eqref{eq:def:Omegcom1}, define the projection $\textsf{R}: \Omega_{\textnormal{com}}\to\Omega_{\textnormal{pj}}$ as
	\begin{equation*}
	\begin{split}
	\textsf{R}(\sigma^{\textnormal{com}}) := 
	\begin{cases}
	\rr &  \sigma^{\textnormal{com}} \in \{\rr \};\\
	\bb & \sigma^{\textnormal{com}} \in \{\bb \};\\
	\fs & \sigma^{\textnormal{com}} = \fs; \\
	(\ff, \tL_e) & \sigma^{\textnormal{com}} = (\fff,e),
	\end{cases}
	\end{split}
	\end{equation*}
	where $\tL_e$ denotes the literal labeled at the edge $e$ in the free component $\fff$. Recalling the definition of $\Omega_{\textnormal{com},2}$ in \eqref{eq:def:Omegcom2}, the projection
	$\textsf{R}_2:\Omega_{\textnormal{com},2} \to \Omega_{\textnormal{pj},2}$ in the pair model is defined similarly: recall the definition of $\textsf{P}$ and $\textsf{P}_2$ in \eqref{eq:def:projP}, and let
	\begin{equation*}
	\textsf{R}_2(\bsigma^{\textnormal{com}}) :=
	\begin{cases}
	\bpi & \bpi \in 	\Omega_{\textnormal{pj},2}^\fs \textnormal{ and } \bsigma^{\textnormal{com}} \in \{\bpi \};\\
	(\sigma_e, \tL_e) &
	\bsigma^{\textnormal{com}} = ( \uuu, e),\ \uuu\in \FFF_2
	\end{cases}
	\end{equation*}
	where $\sigma_e$ and $\tL_e$ denotes the spin and the literal labeled at the edge $e$ in $\uuu$ respectively.
	
	\begin{defn}[Projected coloring]
		Given $\GG = (V,F,E)$, we call $\upi\in \Omega_{\textnormal{pj}}^E$ (resp. $\bupi \in \Omega_{\textnormal{pj},2}^E$) a \textbf{projected coloring} (resp. a \textbf{pair projected coloring}). For $\lambda \in (0,1]$, its weight $w^{\textnormal{pj}}(\upi)$ (resp. $\bw^{\textnormal{pj}}(\bupi)$) is defined as
		\begin{equation}\label{eq:def:weight:projcol}
		\begin{split}
		w^{\textnormal{pj}}(\upi)^\lambda&:= \sum_{\sig^{\textnormal{com}} \in \Omega_{\textnormal{com}}^E} w(\sig^{\textnormal{com}})^\lambda \mathds{1}\{ \textsf{R}(\sig^{\textnormal{com}}) = \upi \};\\
		\bw^{\textnormal{pj}}(\bupi)^{\ula}&:= \sum_{\bsig^{\textnormal{com}} \in \Omega_{\textnormal{com},2}^E} \bw(\bsig^{\textnormal{com}})^{\ula} \mathds{1}\{ \textsf{R}_2(\bsig^{\textnormal{com}}) = \bupi \}.
		\end{split}
		\end{equation} 
		A projected coloring $\upi\in \Omega_{\textnormal{pj}}^E$ on $\GGG$ is called \textbf{valid} if $w^{\textnormal{pj}}(\upi)>0$, that is, there exist a literal assignment $\uL$ on $\GG$ and a  component coloring $\sig^{\textnormal{com}}\in\Omega_{\textnormal{com}}^E$ such that $\sig^{\textnormal{com}} $ valid on $(\GG,\uL)$ and $\textsf{R}(\sig^{\textnormal{com}}) = \upi$. Validity of a pair projected coloring is defined analogously. 
	\end{defn}
	Thus, $\bZ_{\la}$ (resp. $\bZ^{2}_{\ula}$) is the sum of $	w^{\textnormal{pj}}(\upi)^\lambda$ (resp.$\bw^{\textnormal{pj}}(\bupi)^{\ula}$) with the constraint that the number of free variables is bounded above by $7n/2^k$ and the number of $\rr$-colored edges is bounded above by $7nd/2^k$. We remark that unlike the (union-)component coloring, which has one to one correspondence with (pair-)frozen configuration, there could be many frozen configurations which has the same (pair-)projected coloring.
	\begin{defn}[Projected components]
		Let  $\upi\in \Omega_{\textnormal{pj}}^E$ be a valid  projected coloring on $\GGG$. Note that from $\upi$, we can uniquely recover if each variable in $\GGG$ is frozen or free, as well as if each clause is separating or not. From this information, let $\fff^{\textnormal{in}}$ be a free piece in  $\GGG$, whose literal information on the edges is given by $\upi$   (Definition \ref{def:freecomp:basic}). Then, a \textbf{projected component} $\mathfrak{p}$ is defined as follows.
		\begin{itemize}
			\item It is a labeled graph whose graph structure is given by the union of a free piece $\fff^{\textnormal{in}}$ and the boundary \textit{half-edges} incident to $\fff^{\textnormal{in}}$.
			
			\item Let $E(\ppp) = E(\fff^{\textnormal{in}})$ be the collection of (full) edges, and define $\dot{\partial }\ppp$ (resp. $\hat{\partial}\ppp$) to be boundary half-edges adjacent to $F(\ppp)$ (resp. $V(\ppp)$),  and $\partial \ppp=\dot{\partial }\ppp\sqcup \hat{\partial}\ppp$. Each $e\in E(\ppp)$ is labeled by $(\ff, \tL_e)$.
			
			\item Each edge $e\in \dot{\partial} \ppp$ (resp. $e\in \hat{\partial}\ppp$) is labeled by $\bb=\pi_e$ (resp. $\fs=\pi_e$). Note that the labels do not include the literal assignment. (In fact, $\upi$ does not carry information on the literals on ${\partial}\ppp$.)
		\end{itemize}
		For a valid projected coloring $\upi$, $\mathfrak{P}(\upi)$ denotes the enumeration of the projected components in $(\upi,\GG)$. Moreover, for $\ppp \in \mathfrak{P}(\upi)$, we define the inverse image $\textsf{R}^{-1}(\ppp) \subseteq \mathscr{F} $ as 
		\begin{equation*}
		\textsf{R}^{-1}(\ppp):= \{\fff\in \mathscr{F}: \fff^{\textnormal{in}} = \fff^{\textnormal{in}}(\ppp), \ \tL_e =0  \textnormal{ for all } e\in \dot{\partial}\fff  \},
		\end{equation*}
		where $\fff^{\textnormal{in}}(\ppp)$ denotes the free piece inside $\ppp$, including the edge labels on $E(\ppp)$. We note that if a free component $\fff\in \mathscr{F}$ satisfies $\tL_e=0$ for all $e\in \dot{\partial}\fff$,  then for each $a\in F(\fff)$ the spin-labels at $e\in \delta a \cap \dot{\partial}\fff$ should be either all-$\bb_0$ or all-$\bb_1$. Recalling the definition of $w^{\textnormal{com}}(\fff)$ from \eqref{eq:def:weight:freecomp:avg}, the weight of $\ppp$ is defined by
		\begin{equation}\label{eq:def:weight:proj:comp}
		w^{\textnormal{pj}}(\ppp)^\lambda := \sum_{\fff\in \textsf{R}^{-1}(\ppp) } w^{\textnormal{com}}(\fff)^\lambda.
		\end{equation}
	\end{defn}
	
	\begin{defn}[projected union components]
		Let $\bupi \in \Omega_{\textnormal{pj},2}^E$ be a valid pair projected coloring on $\GGG$, and let $\uuu^{\textnormal{in}}$ be the union-free piece in $\GGG$, determined by $\bupi$. Then, a \textbf{projected union component} $\upp$ is defined as follows.
		\begin{itemize}
			\item $\upp$ is a labeled graph whose graph structure is given by the union of a union-free piece $\uuu^{\textnormal{in}}$ and the boundary \textit{half-edges} incident to $\uuu^{\textnormal{in}}$.
			
			\item Let $E(\upp) = E(\uuu^{\textnormal{in}})$ be the collection of (full) edges, and define $ \dot{\partial}\upp $, $\hat{\partial} \upp$ and $\partial \upp$ to be the collections of boundary half-edges analogously to Definition \ref{def:freetree:basic}. Each $e\in E(\upp)$ is labeled by $  (\bpi_e,\tL_e)$.
			
			\item Each edge $e\in \dot{\partial}\upp \sqcup \hat{\partial} \upp$ is labeled by $\bpi_e \in \Omega_{\textnormal{pj,2}}^{\fs}$. Note that the label does not include the literal assignment, as $\bupi$ does not carry information on the literals on ${\partial}\upp$.
		\end{itemize}
		For a valid pair projected coloring $\bupi$, $\mathfrak{P}_2(\bupi)$ denotes the set of the projected union components in $(\bupi,\GG).$ Moreover, for $\ppp\in \mathfrak{P}_2(\bupi)$, define $\textsf{R}_2^{-1}(\upp) \subseteq \mathscr{F}_2$ to be the collection of $\uuu\in \mathscr{F}_2$ such that
		\begin{itemize}
			\item $\uuu^{\textnormal{in}}= \uuu^{\textnormal{in}}(\upp) $, where $\uuu^{\textnormal{in}}(\upp)$ denotes the union-free piece inside $\upp$. Namely, the graph structure, spin assignments and literal assignments on the edges are the same for $\uuu^{\textnormal{in}}$ and $\uuu^{\textnormal{in}}(\upp)$.
			
			\item For each $e\in \dot{\partial} \uuu$, $\tL_e=0$ and 
		\end{itemize}
		Recalling the definition of $\bw^{\textnormal{com}}(\uuu)$ from \eqref{eq:def:weight:freecomp:avg:2ndmo}, we define the weight of $\upp$ by
		\begin{equation}\label{eq:def:weight:proj:comp:2ndmo}
		\bw^{\textnormal{pj}}(\upp)^{\ula} := \sum_{\uuu\in \textsf{R}_2^{-1}(\upp) } \bw^{\textnormal{com}}(\uuu)^{\ula}.
		\end{equation}
	\end{defn}
	Recall the functions $\hat{v}(\cdot)$ on $\Omega^k$ and $\hat{v}_2(\cdot)$ on $\Omega_2^k$. Observe that for $\upi \in \{\rr,\bb,\fs\}^{k}$, for any $\sig \in \textsf{R}^{-1}(\upi)$, where $\textsf{R}^{-1}$ is acted component-wise, $\hat{v}(\sig)$ stays constant. Thus, with a slight abuse of notation, we can define $\hat{v}(\upi):=\hat{v}(\sig), \sig \in \textsf{R}^{-1}(\upi)$. Similarly, for $\bupi \in (\Omega_{\textnormal{pj},2}^{\fs})^{k}$, $\hat{v}_2(\bupi)=\hat{v}_2(\bsig), \bsig \in \textsf{R}_2^{-1}(\bupi)$ is well-defined. Then the following lemma shows the weight of (pair-)projected coloring configuration is determined by its projected (union-)components and the spins adjacent to (pair-)separating clauses.
	\begin{lemma}\label{lem:weight:projected}
		Let $\upi \in \Omega_{\textnormal{pj}}^E$ be a valid projected coloring on $\GGG$, and respectively set $n_\ff$ and $F_{\textnormal{sep}}(\upi)$ to be the number of free variables and the collection of separating clauses in $\GG$ induced by $\upi$. Then, we have
		\begin{equation}\label{eq:weightof projcol:1stmo}
		w^{\textnormal{pj}}(\upi)^\lambda = 2^{n-n_{\textnormal{\texttt{f}}}} \prod_{\ppp \in \mathfrak{P}(\upi)} w^{\textnormal{pj}}(\ppp)^\lambda \prod_{a\in F_{\textnormal{sep}}(\upi)} \hat{v}(\upi_{\delta a}).
		\end{equation}
		Also, let $\bupi\in \Omega_{\textnormal{pj},2}^E$ be a valid pair projected coloring on $\GGG$, and respectively set $n_{\ff \ff}$ and $F_{\textnormal{sep}}(\bupi)$ to be the number of union-free variables and the collection of pair-separating clauses induced by $\bupi$. Then, we have
			\begin{equation}\label{eq:weightof projcol:2ndmo}
			\bw^{\textnormal{pj}}(\bupi)^{\ula} = 2^{n-n_{\textnormal{\texttt{f}}\textnormal{\texttt{f}}}} \prod_{\upp \in \mathfrak{P}_2(\bupi)} \bw^{\textnormal{pj}}(\upp)^{\ula} \prod_{a\in F_{\textnormal{sep}}(\bupi_a )} \hat{v}_2(\bupi_a).
			\end{equation}
		\end{lemma}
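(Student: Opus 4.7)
The plan is to prove \eqref{eq:weightof projcol:1stmo} by decomposing the sum $\sum_{\sig^{\textnormal{com}}:\textsf{R}(\sig^{\textnormal{com}})=\upi} w(\sig^{\textnormal{com}})^\lambda$ according to the auxiliary data needed to lift a projected coloring $\upi$ to a full component coloring $\sig^{\textnormal{com}}$. First, since $\hat{v}(\sig)=\E^{\lit}[\hat{I}^{\lit}(\sig\oplus\uL)]$ with $\uL$ uniform on $\{0,1\}^k$, the change of variables $\uL\mapsto\uL\oplus\uL'$ for any fixed $\uL'$ shows that $\hat{v}$ is invariant under arbitrary coordinatewise $0/1$-flips of the $\rr$ and $\bb$ spins; consequently $\hat{v}(\sig^{\textnormal{com}}_a)$ depends only on the projected spin types and equals $\hat{v}(\upi_{\delta a})$ for every separating clause $a$ and every $\sig^{\textnormal{com}}$ with $\textsf{R}(\sig^{\textnormal{com}})=\upi$. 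This immediately pulls the product $\prod_{a\in F_{\textnormal{sep}}(\upi)}\hat{v}(\upi_{\delta a})$ out of the sum.

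The remaining task is to identify $\sum_{\sig^{\textnormal{com}}:\textsf{R}=\upi}\prod_{\fff\in\mathscr{F}(\sig^{\textnormal{com}})}w^{\textnormal{com}}(\fff)^\lambda$ with $2^{n-n_\ff}\prod_{\ppp\in\mathfrak{P}(\upi)}w^{\textnormal{pj}}(\ppp)^\lambda$. The projection $\textsf{R}$ forgets only (i) the $0/1$ value $x_v$ of each frozen variable $v$, which toggles $\rr_{x_v}$ versus $\rr_{x_v\oplus 1}$ and $\bb_{x_v}$ versus $\bb_{x_v\oplus 1}$ on the adjacent edges, and (ii) the pair (boundary spin $\bb_{x_{v(e)}}$, boundary literal $\tL_e$) at each $e\in\dot{\partial}\fff$ of a free component---the convention $\tL_e=0$ built into the definition of $\textsf{R}^{-1}(\ppp)$ fixes the literal and retains only the spin. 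Because in $\sig^{\textnormal{com}}$ the boundary spin $\bb_{x_{v(e)}}$ is locally determined by the adjacent frozen variable, a valid lift of $\upi$ is in bijection with a choice $(x_v)_v\in\{0,1\}^{n-n_\ff}$, explaining the prefactor $2^{n-n_\ff}$. For each such choice the weight factorizes over the projected components, and I would invoke the joint invariance of $w^{\textnormal{com}}(\fff)$ under the flip $(\bb_x,\tL)\mapsto(\bb_{x\oplus 1},\tL\oplus 1)$ on any boundary edge (inherited from the literal averaging $w^{\textnormal{com}}(\fff)=w^{\lit}(\fff)\cdot 2^{-k|F(\fff)|}$ in \eqref{eq:def:weight:freecomp:avg}) to identify the sum over the 0/1-values of frozen variables adjacent to $\ppp$ with the enumeration of $\textsf{R}^{-1}(\ppp)$ underlying $w^{\textnormal{pj}}(\ppp)^\lambda$ in \eqref{eq:def:weight:proj:comp}.

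The pair-model identity \eqref{eq:weightof projcol:2ndmo} follows along the same route with two modifications. First, $\hat{v}_2$ is invariant under joint spin-flips in both copies by the same change-of-variables argument. Second, the $=/\neq$ markers decorating the pair symbols $\rr\rr^{=},\rr\rr^{\neq},\bb\bb^{=},\bb\bb^{\neq}$ and their $\rr\bb,\bb\rr$ variants already encode the parity $x_v^1\oplus x_v^2$ at each variable frozen in both copies, so the only residual freedom per such variable is the joint flip $(x_v^1,x_v^2)\mapsto(x_v^1\oplus 1,x_v^2\oplus 1)$; this yields the factor $2^{n-n_{\ff\ff}}$ rather than $4^{n-n_{\ff\ff}}$. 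The main technical delicacy, in both models, will be the bookkeeping when a single frozen variable is incident to multiple boundary half-edges of the same projected component: one must check that the consistency constraint on those boundary spins cuts down the enumeration of $\textsf{R}^{-1}(\ppp)$ in precisely the way the $(x_v)$-parametrization of $\sig^{\textnormal{com}}$ demands. This is exactly the content of the boundary flip invariance of $w^{\textnormal{com}}$ noted above, which ensures that the two enumerations---by boundary spin configurations in $\textsf{R}^{-1}(\ppp)$ on the one hand, and by $(x_v)$-values on the other---produce the same total weight.
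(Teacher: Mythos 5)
Your handling of the $\hat v$ factors and of the pair-model prefactor $2^{n-n_{\ff\ff}}$ is fine, but the central counting step has a genuine gap: a lift of $\upi$ to a component coloring $\sig^{\textnormal{com}}$ is \emph{not} in bijection with a choice of $(x_v)_v\in\{0,1\}^{n-n_\ff}$. By Definition \ref{def:freecomp:basic}, each free component also carries the literal label $\tL_e$ on every clause-adjacent boundary half-edge $e\in\dot{\partial}\fff$, and $\textsf{R}$ forgets these; so the lift data is the pair $(\ux,\uL_{\partial E(\upi)})$ with the boundary literals ranging freely. Your plan then asks the single sum over $(x_v)$ to do double duty: produce the prefactor $2^{n-n_\ff}$ \emph{and} enumerate $\textsf{R}^{-1}(\ppp)$ for each component. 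This cannot work, already numerically: for a component with one non-separating clause having two boundary edges to two distinct frozen variables, there are $8$ valid local lifts ($4$ choices of $(x_{v_1},x_{v_2})$ times $2$ compatible literal pairs), whereas your bijection predicts $4$; and $|\textsf{R}^{-1}(\ppp)|=2$ does not match the $4$ values of $(x_{v_1},x_{v_2})$ either. Moreover the $(x_v)$-sum does not factorize over components, since one frozen variable may border several of them --- the ``delicacy'' you flag at the end is real and is not resolved by the per-edge flip invariance alone.

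The correct bookkeeping separates the two sums. Fix $\ux\in\{0,1\}^{V_{\textnormal{fz}}(\upi)}$. Since $w^{\textnormal{com}}$ depends on a boundary label $(\bb_x,\tL)$ only through $x\oplus\tL$, and since for fixed $\ux$ the map $\tL_e\mapsto x_{v(e)}\oplus\tL_e$ is a bijection of $\{0,1\}^{\dot{\partial}\ppp}$, the sum over the boundary literals of $w^{\textnormal{com}}(\fff)^\lambda$ (with $\fff$ the free component over $\ppp$ determined by $(\ux,\uL)$) equals $\sum_{\fff'\in\textsf{R}^{-1}(\ppp)}w^{\textnormal{com}}(\fff')^\lambda=w^{\textnormal{pj}}(\ppp)^\lambda$, independently of $\ux$; the independence of the literals across edges is also what handles a frozen variable incident to several boundary half-edges, even though its spin is shared. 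The outer sum over $\ux$ then contributes the clean factor $2^{n-n_\ff}$, and the same separation is needed in the pair model. This is exactly the parametrization $\sig^{\textnormal{com}}=(\upi,\ux,\uL_{\partial E(\upi)})$ used in the paper's proof.
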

	
	\begin{proof}
		We only present a proof  of \eqref{eq:weightof projcol:1stmo} since \eqref{eq:weightof projcol:2ndmo} can be verified analogously. Suppose that a component coloring $\sig^{\textnormal{com}} \in \Omega_{\textnormal{com}}^E$ satisfies $\textsf{R}(\sig^{\textnormal{com}}) = \upi$. Then, $\sig^{\textnormal{com}}$ is determined by $\upi$, the value of frozen variables (either 0 or 1), and the literals of the boundary edges $e\in {\partial}\ppp$ for each $\ppp\in \mathfrak{P}(\upi)$. We denote this relation by $\sig^{\textnormal{com}} = (\upi, \ux, \uL )$ for $\ux \in \{0,1 \}^{V_{\textnormal{fz}}}$ and $\uL\in \{0,1\}^{{\partial}E(\upi)}$, where $V_{\textnormal{fz}}(\upi)$ is the collection of frozen variables and ${\partial}E(\upi):= \cup_{\ppp \in \mathfrak{P}(\upi)} {\partial} \ppp $. Thus, we can write
		\begin{equation*}
		w^{\textnormal{pj}}(\upi)^\lambda = \sum_{\ux \in \{0,1 \}^{V_{\textnormal{fz} }(\upi) }} \sum_{\uL_{{\partial}E(\upi)}} w(\upi, \ux, \uL_{{\partial}E(\upi)} )^\lambda.
		\end{equation*}
		Observe that the inner sum
		is independent of $\ux  $, due to the $0/1$ symmetry. Moreover, from the formula of $w(\sig^{\textnormal{com}})^\lambda $ in \eqref{eq:sizeformula:freecomp:1stmo} and the definition of $w^{\textnormal{pj}}(\ppp)$ in \eqref{eq:def:weight:proj:comp}, we have
		\begin{equation*}
		\sum_{\uL_{{\partial}E(\upi)}} w(\upi, \ux, \uL_{{\partial}E(\upi)})^\lambda  = \prod_{\ppp \in \mathfrak{P}(\upi)} w^{\textnormal{pj}}(\ppp)^\lambda \prod_{a\in F_{\textnormal{sep}}(\upi)} \hat{v}(\upi_a),
		\end{equation*}
		which concludes the proof of the lemma.
	\end{proof}
	\subsection{First moment}\label{subsec:apriori:firstmo}
	Given a \textsc{nae-sat} instance $\GGG$ and a valid projected configuration $\upi\in \Omega_\pj^{E}$, let $\mathfrak{F}[\upi]$ be the free subgraph induced by $\upi$. As usual, the free subgraph is defined by the set of free variables, the set of non-separating clauses and the matching between the half-edges adjacent to them. Hence, we can encode the subgraph $\mathfrak{F}$ alone as the subset of half-edges $H_{\mathfrak{F}}$, adjacent to the free variables and non-separating clauses, and a matching $M_{\mathfrak{F}}$ on $H_{\mathfrak{F}}$. Note that the half-edges that are not matched serve as boundary half-edges and they are labeled $\bb$ if adjacent to clauses, and $\fs$ if adjacent to variables.
	
	Let $\dot{H}_{\circ}\equiv \dot{H}_{\circ}[\upi]$ encode the empirical measure of spins adjacent to frozen variables of $\upi$:
	\begin{equation*}
	    \dot{H}_{\circ}(\utau) \equiv \frac{1}{n}|\{v\in V: \upi_{\delta v}=\utau\}|\quad\textnormal{for all}\quad\utau \in \{\rr,\bb\}^{d}\backslash \{\bb\}^d
	\end{equation*}
	Denote by $\bZ_{\la}[\dot{H}_{\circ}, \mathfrak{F}]$ the contribution to $\bZ_\la$ from projected configurations $\upi$ with $\dot{H}_{\circ}[\upi]=\dot{H}_{\circ}$ and $\mathfrak{F}[\upi]=\mathfrak{F}$. Later, we will compare $\bZ_{\la}[\dot{H}_{\circ}, \mathfrak{F}]$ and $\bZ_{\la}[\dot{H}_{\circ}, \mathfrak{F}^\prime]$, where $\mathfrak{F}^\prime$ is roughly a ``disassembled" $\mathfrak{F}$. To this end, we first compute $\bZ_{\la}[\dot{H}_{\circ}, \mathfrak{F}]$. Let $w(\mathfrak{F})^{\la} \equiv \prod_{\ppp \in \mathfrak{F}}w^{\textnormal{pj}}(\ppp)^{\la}$ be the weight of $\mathfrak{F}$, where $w(\ppp)^{\la}$ is defined in \eqref{eq:def:weight:proj:comp} and $\ppp\in\mathfrak{F}$ denotes the projected component in $\mathfrak{F}$. By Lemma \ref{lem:weight:projected}, we have
	\begin{equation}\label{eq:apriori:techinical-1}
	    \E \bZ_\la[\dot{H}_\circ, \mathfrak{F}] = \E\left[\E\left[\bZ_\la[\dot{H}_\circ, \mathfrak{F}]\Big| \GG\right]\right] = 2^{n-n_\tsf}w(\mathfrak{F})^{\la}\E \Big[\sum_{\substack{\mathfrak{F}[\upi]=\mathfrak{F}\\ \dot{H}_\circ[\upi]=\dot{H}_\circ}}\prod_{a\in F_\sep}\hat{v}(\upi_{\delta a})\Big],
	\end{equation}
	where $n_\tsf$ is the number of free variables and $F_\sep$ is the set of separating clauses, which are all determined by $\kF$. Note that if $\kF[\upi]=\kF$ and $\dot{H}_\circ[\upi]=\dot{H}_\circ$, then $(\upi_{\delta v})_{v\in V}$ is fully determined, modulo choosing the location of the spins adjacent to frozen variables with empirical $\dot{H}_\circ$. Hence, if we denote $c(n_\tsf,\dot{H}_\circ)\equiv {n-n_\tsf \choose n\dot{H}_\circ}\equiv \frac{(n-n_\tsf)!}{\prod_{\sig}\left(n\dot{H}_\circ(\sig)\right)!}$, then the rightmost term of the equation above can be computed by
	\begin{equation}\label{eq:apriori:1stmo:matching}
	    \E \Big[\sum_{\substack{\mathfrak{F}[\upi]=\mathfrak{F}\\ \dot{H}_\circ[\upi]=\dot{H}_\circ}}\prod_{a\in F_\sep}\hat{v}(\upi_{\delta a})\Big] = c(n_\tsf,\dot{H}_\circ)\E\left[\prod_{a\in F_\sep}\hat{v}(\upi_{\delta a})\one\left\{A_1 \cap A_2 \cap A_3 \cap A_4\right\}\right],
	\end{equation}
	where the expectation in the \textsc{rhs} is with respect to uniform matching of $nd$ half-edges with empirical distribution determined by $\kF$ and $\dot{H}_\circ$, and
	\begin{equation*}
	\begin{split}
	    A_1 &\equiv \{\textnormal{Each clause can contain at most one red edge}\},\\
	    A_2 &\equiv \{\textnormal{Free edges, either $(\ff,0)$ or $(\ff,1)$, must be matched according to $M_{\kF}$}\},\\
	    A_3 &\equiv \{\textnormal{$\fs$ edges are not matched to clauses that contain a red edge or a free edge}\},\\
	    A_4 &\equiv \{\textnormal{Clauses without red edge nor free edge must have at least $2$ blue edges}\}.
	\end{split}
	\end{equation*}
	Let $E_x, x\in \{\rr,\bb,\fs\}$ denote the number of edges with color $x$ and $E_{\tsf}$ denote the number of edges with color either $(\ff,0)$ or $(\ff,1)$. Note that $E_x$'s are all determined by $\kF$ and $\dot{H}_\circ$. Let $m_\ns$ denote the number of non-separating clauses, determined by $\kF$. Then, it is straightforward to compute
	\begin{equation*}
	    \P(A_1 \cap A_2) = \frac{\prod_{j=0}^{E_\rr -1 }(mk-m_\ns k-jk)}{\prod_{i=0}^{E_{\rr}+E_{\tsf} -1}(nd-i)} = \frac{k^{E_\rr}(m-m_\ns)_{E_\rr}}{(nd)_{E_\rr+E_\tsf}}.
	\end{equation*}
	Let $m_{\textnormal{s}}\equiv m-m_{\ns}-E_{\rr}$ denote the number of separating, but non-forcing clauses. On the event $A_3$, all $\fs$ edges must be matched to these $m_\fs$ clauses, so
	\begin{equation*}
	    \P(A_3 \mid A_1 \cap A_2) = \frac{(km_{\tns})_{E_\fs}}{(nd-E_{\rr}-E_\tsf)_{E_\fs}}.
	\end{equation*}
	Conditional on $A_1\cap A_2\cap A_3$, $E_\fs$ edges are matched to $km_\tns$ half-edges adjacent to separating, but non-forcing clauses. Also, for $a\in F_\sep$, if $a$ is forcing $\hat{v}(\upi_{\delta a})=2^{-k+1}$. Hence, we can write
	\begin{equation}\label{eq:apriori:technical-2}
	\begin{split}
	    \E\left[\prod_{a\in F_\sep}\hat{v}(\upi_{\delta a})\one\left\{A_4\right\}\bigg| A_1\cap A_2\cap A_3\right]&= 2^{-(k-1)E_{\rr}}\E\left[\prod_{a\in F_\sep \backslash F_{\textnormal{fc}}}\hat{v}(\upi_{\delta a})\one\left\{A_4\right\}\bigg| A_1\cap A_2\cap A_3\right]\\
	    &\equiv 2^{-(k-1)E_{\rr}}f(m_{\tns},E_{\fs}),
	\end{split}
	\end{equation}
	where $F_{\textnormal{fc}}$ denotes the set of forcing clauses. Therefore, reading \eqref{eq:apriori:techinical-1}-\eqref{eq:apriori:technical-2} altogether shows
	\begin{equation}\label{eq:apriori:firstmo:expansion}
	    \E \bZ_\la[\dot{H}_\circ, \mathfrak{F}]= 2^{n-n_\tsf-(k-1)E_\rr}w(\mathfrak{F})^{\la}c(n_\tsf,\dot{H}_\circ)\frac{k^{E_\rr}(m-m_\ns)_{E_\rr}}{(nd)_{E_\rr+E_\tsf}}\frac{(km_{\tns})_{E_\fs}}{(nd-E_{\rr}-E_\tsf)_{E_\fs}}f(m_{\tns},E_{\fs})
	\end{equation}
	\subsubsection{Exponential decay of free tree frequencies}\label{subsubsec:exp:decay:1stmo} Let $a,b,\ell,A$ be non-negative integers with $a\geq 2, b\geq 1$ and let $\kF_\circ$ be a free subgraph in projected configuration, which does not have any isolated free variable nor any projected component with $a$ variables and $b$ clauses. Henceforth, we refer to a projected component with $a$ variables and $b$ clauses as an $(a,b)$-component. Let $\Omega^{a,b}_{\ell,A}(n_\tsf; \kF_\circ)$ denote the collection of free subgraphs $\kF$ such that
	\begin{itemize}
	    \item $\kF$ contains $\kF_\circ$ and has $|V(\kF)|=n_\tsf$ variables.
	    \item $\kF\backslash \kF_\circ$ contains $\ell$ $(a,b)$-components and all the other remaining components in $\kF\backslash \kF_\circ$ have a single free variable.
	    \item $(a,b)$-components have $q\equiv \ell(a+b-1)+A$ internal edges. 
	\end{itemize}
	By \eqref{eq:apriori:firstmo:expansion}, for $\kF \in \Omega^{a,b}_{\ell,A}(n_\tsf; \kF_\circ)$, $\E\bZ_\la[\dot{H}_\circ, \kF]$ is fully determined by $a,b,\ell,A,n_\tsf,\dot{H}_\circ$ and $\kF_\circ$. The lemma below is the crux of the proof of Proposition \ref{prop:1stmo:aprioriestimate} (1),(2) and (3).
	\begin{lemma}\label{lem:apriori:1stmo:comparison}
	For $k\geq k_0$, $n_\tsf \leq 7n/2^k$, $E_\rr \leq 7nd/2^k$,$m/n \in [\alpha_{\textsf{lbd}}, \alpha_{\textsf{ubd}}]$, and $n\geq n_0(k)$, the following inequality holds. For $\kF \in \Omega^{a,b}_{\ell,A}(n_\tsf; \kF_\circ)$ and $\kF^\prime \in \Omega^{a,b}_{0,0}(n_\tsf; \kF_\circ)$,
	\begin{equation*}
	    \textnormal{\textbf{R}}^{a,b}_{\ell,A}(\dot{H}_\circ,\kF_\circ) \equiv \frac{|\Omega^{a,b}_{\ell,A}(n_\tsf; \kF_\circ)|}{|\Omega^{a,b}_{0,0}(n_\tsf; \kF_\circ)|}\frac{\E \bZ_\la[\dot{H}_\circ,\kF]}{\E \bZ_\la[\dot{H}_\circ,\kF^\prime]} \lesssim_{k} \left(\frac{n}{k q}\left(\frac{Ck}{2^k}\right)^{a}(Ck)^{b}\right)^{\ell}\left(\frac{C(a\land b)k}{n}\right)^{A},
	\end{equation*}
	where $C$ is a universal constant.
	\end{lemma}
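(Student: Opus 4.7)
\medskip

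The plan is to exploit the explicit formula \eqref{eq:apriori:firstmo:expansion} on both the numerator and the denominator of the ratio $\E\bZ_\la[\dot H_\circ,\kF]/\E\bZ_\la[\dot H_\circ,\kF']$, and separately enumerate $\Omega^{a,b}_{\ell,A}(n_\tsf;\kF_\circ)$ and $\Omega^{a,b}_{0,0}(n_\tsf;\kF_\circ)$. Since $n_\tsf$, $\dot H_\circ$, and $\kF_\circ$ are common across the comparison, the factors $2^{n-n_\tsf-(k-1)E_\rr}$, $c(n_\tsf,\dot H_\circ)$, $k^{E_\rr}$, and the $\kF_\circ$ portion of $w(\kF)^\la$ all cancel. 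What remains are: (i) the weight $\prod_{\ppp\in\kF\setminus\kF_\circ}w^{\pj}(\ppp)^\la$; (ii) Pochhammer-symbol ratios stemming from the change in internal-edge count $\Delta E_\tsf=q=\ell(a+b-1)+A$, in non-separating clauses $\Delta m_\ns=\ell b$, in separating non-forcing clauses $\Delta m_\tns=-\ell b$, and in free-to-separating edges $\Delta E_\fs=-q$; and (iii) the ratio $f(m_\tns-\ell b,E_\fs-q)/f(m_\tns,E_\fs)$ of valid $\fs$-to-separating matchings.

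\medskip

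First I would simplify (ii). Writing $(x-c)_{y-r}/(x)_y$ as the product of two telescoping pieces, the dominant behaviour is $(nd)^{-q}\cdot(km_\tns)^{-q}\cdot(nd-E_\rr-E(\kF_\circ))^{E_\fs}/(nd-E_\rr-E(\kF_\circ)-q)^{E_\fs-q}$ up to lower-order corrections controlled by the bounds $n_\tsf\le 7n/2^k$, $E_\rr\le 7nd/2^k$, and $\alpha\in[\alpha_\lbd,\alpha_\ubd]$; the remaining factors telescope to roughly $\bigl(km_\tns/(nd)\bigr)^q(1+O_k(\text{small}))$. A standard combinatorial bound shows $f(m_\tns-\ell b,E_\fs-q)/f(m_\tns,E_\fs)=1+O_k(q/n)$ since the constraint ``at least two $\bb$ edges per separating non-forcing clause'' is slack in the regime considered. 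Next, for the counting ratio, $|\Omega^{a,b}_{0,0}(n_\tsf;\kF_\circ)|$ counts only the choice of isolated free variables, whereas $|\Omega^{a,b}_{\ell,A}(n_\tsf;\kF_\circ)|$ additionally selects $\ell a$ of these variables and $\ell b$ non-separating clauses and matches them into $\ell$ connected $(a,b)$-components of total excess cyclicity $A$. This yields a factor $\binom{n_\tsf^\circ}{\ell a}\binom{m-F(\kF_\circ)}{\ell b}N_{a,b,q}/\ell!$, where $N_{a,b,q}$ is the number of labeled $(a,b)$-bipartite multigraphs with $q$ edges; a direct estimate (treating tree skeletons via the Cayley-type formula and each extra cycle edge via the usual $(a\wedge b)k$ bound) gives $N_{a,b,q}\lesssim (ad)^{a-1}(bk)^{b-1}\cdot\bigl(C(a\wedge b)k\bigr)^A$ for each admissible partition of $A$ among components.

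\medskip

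For (i), the weight of a single projected $(a,b)$-component is bounded via the \textsc{bp}-type formulas in Section~\ref{subsec:model:sszrev}: each non-separating clause contributes at most $\hat\varphi^{\mathrm{lit}}\le 2\cdot 2^{-k}$ after averaging over literals, while each free variable contributes $\dot\varphi=O(1)$. Raising to the $\la\in[0,1]$ power and combining over $b$ clauses yields $w^{\pj}(\ppp)^\la\lesssim 2^{O(a)}(Ck/2^k)^b$; in combination with the Stirling estimate $\ell!\gtrsim(\ell/e)^\ell$ and the identity $q=\ell(a+b-1)+A$, the overall product assembled in Step~1 simplifies to the announced bound, with the factor $n/(kq)$ arising from the interplay between $\binom{n_\tsf^\circ}{\ell a}/\ell!\sim(n/\ell)^{\ell a}/(a!)^\ell$ and the $(nd)^{-q}\cdot k^{\ell b}$ penalty from the matching Pochhammers, while the factor $(C(a\wedge b)k/n)^A$ records the cost per excess cyclic edge.

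\medskip

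The main obstacle is keeping track of all lower-order corrections uniformly in $(a,b,\ell,A)$. In particular, the Pochhammer ratios are well-behaved only as long as $q$ does not exceed a constant multiple of $nd/2^{k/2}$, which must be compared with the exponential decay $(Ck/2^k)^a$; the tradeoff is what ultimately allows the application of the lemma in the proof of Proposition~\ref{prop:1stmo:aprioriestimate}. A subsidiary challenge is estimating $N_{a,b,q}$ uniformly when the excess $A$ is not distributed evenly among the $\ell$ components, which I would handle by summing over the partition of $A$ and absorbing the combinatorial cost into the constant $C$.
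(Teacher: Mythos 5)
Your overall skeleton matches the paper's proof: expand $\E\bZ_\la[\dot H_\circ,\kF]$ via the matching formula \eqref{eq:apriori:firstmo:expansion}, cancel the $\kF_\circ$/$\dot H_\circ$-dependent factors, separately bound the moment ratio and the counting ratio, and assemble. But two of your key estimates are stated without justification, and on closer inspection neither is routine.

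First, the ratio of separating-constraint factors. You assert that $f(m_\tns-\ell b,E_\fs-q)/f(m_\tns,E_\fs)=1+O_k(q/n)$ by ``a standard combinatorial bound.'' This is not what the paper proves, and it is not a slack-constraint estimate. Proposition~\ref{prop:apriori:1stmo:separating} gives the much weaker (but sufficient) bound $f(m_\tns,E_\fs)/f(m_\tns+\delta_1,E_\fs+\delta_2)\lesssim_k e^{C(\delta_1+\delta_2)}$, and even that requires an exponential-tilting argument: one introduces a Lagrange parameter $\gamma(\xi)$, proves its existence via Rockafellar's theorem, and establishes the nontrivial estimates $|\gamma(\xi)|,|\Lambda_\xi(\gamma(\xi))|\lesssim k/2^k$ in Lemma~\ref{lem:apriori:1stmo:gamma:exist} before bounding $\sup_t|f'(t)|$. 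Your proposal gives no route to any such bound; the assertion $1+O_k(q/n)$ is in fact false in general, since the prefactor $C(\nu^\star_{\gamma,\xi})/C(\nu^\star_{\gamma',\xi'})$ alone can contribute $e^{\Theta(\delta_1)}=e^{\Theta(\ell b)}$. The fact that $e^{O(q)}$ is ultimately absorbable into $C^{a\ell}C^{b\ell}C^A$ rescues the outer argument, but the estimate itself is a genuine technical obstacle that you have silently skipped.

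Second, the counting ratio. The paper does not use a Cayley-type per-component count; it uses the deliberate overcount $\binom{T}{\ell a}\binom{m-m_\circ}{\ell b}\frac{(\ell a)!(\ell b)!}{\ell!(a!)^\ell(b!)^\ell}\frac{(2\ell da)^q(kb)^q}{q!}$, and the entire point is the subsequent algebraic identity \eqref{eq:apriori:factor:technical}: writing $q=\ell(a+b-1)+A$ and absorbing only the \emph{larger} of $\ell a,\ell b$ into the $q^q$ budget produces exactly $(dk)^q(a\wedge b)^A/q^\ell$. Your claimed per-cyclic-edge factor $C(a\wedge b)k$ is asserted, not derived; a direct count of an extra internal edge in a bipartite $(a,b)$-component gives $\lesssim abdk$ choices, which after the $(nd)^{-1}$ coming from the Pochhammer ratios would yield $(abk/n)^A$ rather than $((a\wedge b)k/n)^A$. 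The extra $(a\vee b)^A$ factor is not absorbable into $e^{O(q)}$, and it actually breaks the downstream application of this lemma: in the proof of Proposition~\ref{prop:1stmo:aprioriestimate}(2) one sums over $b$ up to $7km/2^k$ with $A\geq\ell(b-a+1)$, and there $(a\vee b)k/n=bk/n$ can exceed $1$, making the geometric series over $A$ diverge. So obtaining $(a\wedge b)$ rather than $ab$ is essential and your proposal does not establish it.
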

	\begin{proof}
	We first upper bound $\frac{\E \bZ_\la[\dot{H}_\circ,\kF]}{\E \bZ_\la[\dot{H}_\circ,\kF^\prime]}$ using \eqref{eq:apriori:firstmo:expansion}. As before, let $E_x, x \in \{\rr,\bb,\fs, \tsf\}$ be the number of edges colored $x$ and $m_\tns$ be the number of separating, but non-forcing, clauses, corresponding to $(\dot{H}_\circ,\kF)$. Let $E^\prime_x, x \in \{\rr,\bb,\fs, \tsf\}$ and $m^\prime_\tns$ be the same corresponding to $(\dot{H}_\circ,\kF^\prime)$. Note that $m^\prime_\tns = m_\tns+ \ell b$ and $E^\prime_\fs = E_\fs +q$ holds. In Section \ref{subsubsec:apriori:separating}, we show in Proposition \ref{prop:apriori:1stmo:separating} that the following holds for $\delta_1, \delta_2\in \Z_{\geq 0}$ in the regime stated in the statement of Lemma \ref{lem:apriori:1stmo:comparison}: there exists a universal constant $C$ such that
	\begin{equation}\label{eq:apriori:1stmo:separating}
	    \frac{f(m_\tns, E_\fs)}{f(m_\tns+\delta_1, E_\fs+\delta_2)}\lesssim_{k} e^{C(\delta_1+\delta_2)}.
	\end{equation}
	We use \eqref{eq:apriori:1stmo:separating} for $\delta_1=\ell b$ and $\delta_2 =q$. Observe that in the stated regime, $q\leq E_s+E_\tsf +E_\rr \leq 14nd/2^k$. Also, since each non-separating clauses have at least $2$ free edges, $m_\tns=m-m_\ns-E_\rr\geq (1-21k/2^{k+1})m$. Hence, together with \eqref{eq:apriori:firstmo:expansion} and \eqref{eq:apriori:1stmo:separating}, it is straightforward to bound
	\begin{equation}\label{eq:apriori:1stmo:long}
	\begin{split}
	    \frac{\E \bZ_\la[\dot{H}_\circ,\kF]}{\E \bZ_\la[\dot{H}_\circ,\kF^\prime]} &= \frac{w(\kF)^\la}{w(\kF^\prime)^\la}\frac{(m_\tns+E_\rr)_{E_\rr}}{(m_\tns+\ell b+E_\rr)_{E_\rr}}\frac{(nd)_{E_\rr+E_\tsf-q}}{(nd)_{E_\rr+E_\tsf}}\frac{\frac{(km_\tns)_{E_\fs}}{(nd-E_\rr-E_\tsf)_{E_\fs}}}{\frac{(km_\tns+k\ell b)_{E_\fs+q}}{(nd-E_\rr-E_\tsf+q)_{E_\fs+q}}}\frac{f(m_\tns,E_\fs)}{f(m_\tns+\ell b, E_\fs + q)}\\
	    &\leq \frac{w(\kF)^\la}{w(\kF^\prime)^\la}\frac{(nd)_{E_\rr+E_\tsf-q}}{(nd)_{E_\rr+E_\tsf}}
	    \frac{(nd-E_\rr-E_\tsf+q)_q}{(km_\tns+k\ell b -E_\fs)_q}\frac{f(m_\tns,E_\fs)}{f(m_\tns+\ell b, E_\fs + q)}\lesssim_{k} \frac{w(\kF)^\la}{w(\kF^\prime)^\la} \left(\frac{1}{nd}\right)^q e^{O(q)}.
	\end{split}
	\end{equation}
	To further bound the \textsc{rhs} of the equation above, note that for a projected component $\ppp$, we have $w(\ppp)^\la \leq 2^{f(\ppp)}\frac{2^{\la v(\ppp)}}{2^{k f(\ppp)}}$,
	where $f(\ppp) =|F(\ppp)|$ and $v(\ppp)=|V(\ppp)|$. This is because there are at most $2^{f(\ppp)}$ many free components corresponding to $\ppp$ in \eqref{eq:def:weight:proj:comp} by choosing $\bb_0$ or $\bb_1$ for the boundary colors adjacent to each clauses, and each of them has $\la$-tilted weight at most $\frac{2^{\la v(\ppp)}}{2^{k f(\ppp)}}$. Since a single free projected component, i.e. the unique projected component which has one variable, has $\la$-tilted weight exactly $2^\la$, we have $\frac{w(\kF)^\la}{w(\kF^\prime)^\la}=\frac{w(\kF\backslash \kF_\circ)^\la}{w(\kF^\prime\backslash \kF_\circ)^\la}\leq \frac{1}{2^{(k-1)\ell b}}$. Therefore, plugging it into \eqref{eq:apriori:1stmo:long} shows
	\begin{equation}\label{eqn:apriori:1stmo:ratio:final}
	     \frac{\E \bZ_\la[\dot{H}_\circ,\kF]}{\E \bZ_\la[\dot{H}_\circ,\kF^\prime]}\lesssim_{k} \frac{1}{2^{k\ell b}}\left(\frac{1}{nd}\right)^{q}e^{O(q)}.
	\end{equation}
	We turn now to upper bound $\frac{|\Omega^{a,b}_{\ell,A}(n_\tsf; \kF_\circ)|}{|\Omega^{a,b}_{0,0}(n_\tsf; \kF_\circ)|}$. $\kF \in \Omega^{a,b}_{\ell,A}(n_\tsf; \kF_\circ)$ is obtained as follows: first, from the $n-|V(\kF_\circ)|$ variables and $m-|F(\kF_\circ)|$ clauses, choose $T\equiv n_\tsf -|V(\kF_\circ)|$ variables and $\ell b$ clauses to belong to $\kF \backslash \kF_\circ$. From these we choose a subset of $T-\ell a$ variables to belong to single free projected components.  Next we choose $\ell a $ variables and $\ell b $ clauses to form $\ell$ $(a,b)$-components. Divide $\ell a$ variables and $\ell b$ clauses into $\ell$ groups of $a$ variables and $b$ clauses; the number of ways to do this is $(\ell a)!(\ell b)!/\ell!(a!)^\ell (b!)^\ell$. To decide the internal edges among these components, first choose an ordered list of variable-adjacent half-edges $c_1,...,c_q$ from the $\ell da$ half-edges available. Then, each $c_i$ must be matched to another half-edge $d_i$, adjacent to a clause that is in the same group as the variable adjacent to $a_i$. There are $kb$ choices of $d_i$ for each $c_i$, and $q!$ lists of ordered pairs $(c_i,d_i)_{i\leq q}$ yield the same set of internal edges. Finally, assign literal $0$ or $1$ to each one of $q$ internal edges. Therefore, altogether we have
	\begin{equation}\label{eq:apriori:Omega:upperbound}
	    |\Omega^{a,b}_{\ell,A}(n_\tsf; \kF_\circ)|\leq {n-n_\circ \choose T}{m-m_\circ \choose\ell b}{T\choose \ell a} \frac{(\ell a)!(\ell b)!}{\ell!(a!)^\ell (b!)^\ell}\frac{(2\ell da)^{q}(kb)^q}{q!},
	\end{equation}
	where $n_\circ\equiv|V(\kF_\circ)|$ and $m_\circ\equiv|F(\kF_\circ)|$. Since the above inequality is an equality when $\ell=A=0$, we can bound
	\begin{equation}\label{eq:apriori:omega:inter}
	    \frac{|\Omega^{a,b}_{\ell,A}(n_\tsf; \kF_\circ)|}{|\Omega^{a,b}_{0,0}(n_\tsf; \kF_\circ)|} \leq \frac{(m-m_\circ)_{\ell a}(T)_{\ell a}}{\ell!(a!)^\ell (b!)^\ell}\frac{(2\ell da)^{q}(kb)^q}{q!}\leq m^{\ell b}T^{\ell a}e^{\ell+a\ell+b\ell+ 2q}\frac{(\ell da)^{q}(kb)^q}{\ell^\ell a^{\ell a}b^{\ell b}q^q},
	\end{equation}
	where the last inequality is due to the bound $x! \geq (x/e)^{x}$. Recalling $q=\ell(a+b-1)+A$, we can further bound the rightmost term in the equation above by
	\begin{equation}\label{eq:apriori:factor:technical}
	    \frac{(\ell da)^{q}(kb)^q}{\ell^\ell a^{\ell a}b^{\ell b}q^q} = (dk)^q \frac{(\ell a)^{b\ell-\ell+A}(\ell b)^{a\ell-\ell+A}}{q^q \ell^A}\leq (dk)^q\frac{q^{(a+b-2)\ell+A}\left(\ell(a\land b)\right)^{A}}{q^{q}\ell^A}=(dk)^q\frac{(a\land b)^{A}}{q^{\ell}}.
	\end{equation}
	Together with the fact $T\leq 7n/2^k$, plugging \eqref{eq:apriori:factor:technical} into \eqref{eq:apriori:omega:inter} shows
	\begin{equation}\label{eq:apriori:ratio:omega}
	     \frac{|\Omega^{a,b}_{\ell,A}(n_\tsf; \kF_\circ)|}{|\Omega^{a,b}_{0,0}(n_\tsf; \kF_\circ)|} \leq e^{O(q)}n^{\ell(a+b)}d^{q+\ell b}k^{q-\ell b}\frac{(a\land b)^{A}}{2^{k\ell a}q^{\ell}}.
	\end{equation}
	Finally, recalling the bound $d\leq k2^{k-1}\log 2$ by Remark \ref{rem:k:adjusted}, we multiply \eqref{eqn:apriori:1stmo:ratio:final} and \eqref{eq:apriori:ratio:omega} to find
	\begin{equation}\label{eq:lem:apriori:1stmo:comparison:final}
	    \textnormal{\textbf{R}}^{a,b}_{\ell,A}(\dot{H}_\circ,\kF_\circ) \lesssim_{k} e^{O(q)}\frac{d^{\ell b}k^{q-\ell b}}{n^{A-\ell}}\frac{(a\land b)^{A}}{2^{k\ell (a+b)}q^{\ell}}\leq e^{O(q)} \left(\frac{n}{kq}\left(\frac{k}{2^k}\right)^{a}k^{b}\right)^{\ell}\left(\frac{(a\land b)k}{n}\right)^{A}.
	\end{equation}
	Recalling $q=\ell(a+b-1)+A$, \eqref{eq:lem:apriori:1stmo:comparison:final} concludes the proof.
	\end{proof}
	\begin{proof}[Proof of Proposition \ref{prop:1stmo:aprioriestimate} $(1)$ and $(3)$]
	We only present the proof of $(3)$ of Proposition \ref{prop:1stmo:aprioriestimate} since $(1)$ follows by the same argument. Let $\ell_{a,b}=\ell_{a,b}(\upi)$ denote the number of $(a,b)$-component in a projected configuration $\upi$. Then, 
	\begin{multline}\label{eq:apriori:1stmo:inter:1}
	   \E \bZ_\la[(\ee_{\frac{1}{c+1}})^{\mathsf{c}} \quad\textnormal{and}\quad \forall \fff,~~~~f(\fff)\leq v(\fff)+1]\leq \E \bZ_\la[\exists a \leq \frac{7n}{2^k}\quad\textnormal{s.t.}\quad \sum_{b=1}^{a+1} \ell_{a,b}>n2^{-\frac{ka}{c+1}}]\\
	    \leq \E\bZ_\la[\exists a \leq \frac{7n}{2^k},b\leq a+1 \quad\textnormal{s.t.}\quad\ell_{a,b}>n2^{-\frac{11ka}{10(c+1)}}],
	\end{multline}
	where the last inequality is because $2^{\frac{ka}{10(c+1)}}\geq a+1$ for any $a\geq 1$, given large enough $k$ and $c \leq 3$. Recalling the definition of $\textnormal{\textbf{R}}^{a,b}_{\ell,A}(\dot{H}_\circ,\kF_\circ)$ in Lemma \ref{lem:apriori:1stmo:comparison}, we can bound 
	\begin{equation*}
	    \frac{\E\bZ_\la[\exists  a \leq \frac{7n}{2^k},b\leq a+1 \quad\textnormal{s.t.}\quad\ell_{a,b}>n2^{-\frac{11ka}{10(c+1)}}]}{\E \bZ_\la} \leq \sup_{\dot{H}_\circ,\kF_\circ}\left\{ \sum_{a=1}^{ 7n/2^k}\sum_{b=1}^{a+1}\sum_{\ell\geq \ell_{\textnormal{max}}(a)}\sum_{A\geq 0}\textnormal{\textbf{R}}^{a,b}_{\ell,A}(\dot{H}_\circ,\kF_\circ)\right\}.
	\end{equation*}
	where $\ell_{\textnormal{max}}(a)\equiv \ceil{n2^{-\frac{11ka}{10(c+1)}}}$. For any $\dot{H}_\circ$ and $\kF_\circ$, Lemma \ref{lem:apriori:1stmo:comparison} shows
	\begin{equation}\label{eq:proof:apriori:1stmo:2}
	\begin{split}
	   \sum_{a=1}^{ 7n/2^k}\sum_{b=1}^{a+1}\sum_{\ell\geq \ell_{\textnormal{max}}(a)}\sum_{A\geq 0}\textnormal{\textbf{R}}^{a,b}_{\ell,A}(\dot{H}_\circ,\kF_\circ)&\lesssim_{k} \sum_{a=1}^{ 7n/2^k}\sum_{\ell\geq \ell_{\textnormal{max}}(a)}\sum_{b=1}^{a+1}\sum_{A\geq 0}\left(\frac{n}{k\ell a}\left(\frac{Ck}{2^k}\right)^{a}(Ck)^{b}\right)^{\ell}\left(\frac{Cbk}{n}\right)^{A}\\
	   &\lesssim \sum_{a=1}^{ 7n/2^k}\sum_{\ell\geq \ell_{\textnormal{max}}(a)}\left(\frac{Cn}{\ell a}\left(\frac{Ck^2}{2^k}\right)^{a}\right)^{\ell},
	\end{split}
	\end{equation}
	where the universal constant $C$ may differ in each line. Note that for $k$ large enough $(Ck^2/2^k)^{a} \leq 2^{-11ka/12}\leq (\ell_{\max}(a)/n)^{5(c+1)/6}$, so we can further bound the \textsc{rhs} of the equation above by
	\begin{equation*}
	\sum_{a=1}^{ 7n/2^k}\sum_{\ell\geq \ell_{\textnormal{max}}(a)}\left(\frac{Cn}{\ell a}\left(\frac{Ck^2}{2^k}\right)^{a}\right)^{\ell}\leq	\sum_{a=1}^{ 7n/2^k}\sum_{\ell\geq \ell_{\textnormal{max}}(a)} \left(\frac{C}{a}\left(\frac{\ell}{n}\right)^{\frac{5c-1}{6}}\right)^{\ell}\lesssim \sum_{a=1}^{ 7n/2^k}  \frac{1}{a}\left(\frac{\ell_{\textnormal{max}}(a)}{n}\right)^{(\frac{5c-1}{6})\ell_{\textnormal{max}}(a)}.
	\end{equation*}
	Note that $\frac{5}{6}c-\frac{1}{6} \geq \frac{2}{3}c$ for $c\geq 1$ and $\ell \to (\ell/n)^{2c\ell/3}, 1\leq \ell \leq n2^{-11k/10(c+1)}$ is maximized at $\ell =1$, thus
	\begin{equation}\label{eq:apriori:1stmo:inter:2}
	    \sum_{a=1}^{ 7n/2^k}  \frac{1}{a}\left(\frac{\ell_{\textnormal{max}}(a)}{n}\right)^{(\frac{5c-1}{6})\ell_{\textnormal{max}}(a)}\leq n^{-\frac{2c}{3}} \sum_{a=1}^{ 7n/2^k}  \frac{1}{a}\lesssim n^{-\frac{2c}{3}}\log n,
	\end{equation}
	which concludes the proof of Proposition \ref{prop:1stmo:aprioriestimate}-(3).
	\end{proof}
	\begin{proof}[Proof of Proposition \ref{prop:1stmo:aprioriestimate} $(2)$]
		Since each clause in a projected component has internal degree at least $2$, the number of internal edges $q$ of $\ell$ $(a,b)$-components satisfy $q\equiv \ell(a+b-1)+A \geq 2b \ell$. Thus, we have that 
    \begin{equation*}
    \frac{ \E\bZ_\la[\exists b\geq a+2,~~~~~\ell_{a,b}\geq 1]}{\E \bZ_\la}\leq \sup_{\dot{H}_\circ,\kF_\circ}\left\{\sum_{a=1}^{7n/2^k}\sum_{b=a+2}^{7km/2^k}\sum_{\ell\geq 1}\sum_{A\geq \ell(b-a+1)}\textnormal{\textbf{R}}^{a,b}_{\ell,A}(\dot{H}_\circ,\kF_\circ)\right\}\,.
    \end{equation*}
    We use Lemma \ref{lem:apriori:1stmo:comparison} to further bound the \textsc{rhs} above by
	\begin{multline}\label{eq:proof:apriori:1stmo:1}
    \frac{ \E\bZ_\la[\exists b\geq a+2,~~~~~\ell_{a,b}\geq 1]}{\E \bZ_\la}\lesssim_{k} \sum_{a=1}^{ 7n/2^k}\sum_{b=a+2}^{7km/2^k}\sum_{\ell\geq 1}\sum_{A\geq \ell(b-a+1)}\left(\frac{n}{2kb\ell}\left(\frac{Ck}{2^k}\right)^{a}(Ck)^{b}\right)^{\ell}\left(\frac{Cak}{n}\right)^{A}
    \\\leq 2\sum_{a=1}^{ 7n/2^k}\sum_{b=a+2}^{7km/2^k}\sum_{\ell\geq 1}\left(\frac{Ca}{2^{ka}}\frac{(Ck)^{2b}}{\ell b}\left(\frac{a}{n}\right)^{b-a}\right)^{\ell}\leq 4\sum_{a=1}^{ 7n/2^k}\sum_{b=a+2}^{7km/2^k}\frac{C^{2a+1}k^{2a}a}{2^{ka}b}\left(\frac{C^2k^2a}{n}\right)^{b-a},
    \end{multline}
	where the last inequality is due to $a/n\leq 7/2^k$. We can further bound
	\begin{equation*}
	    \sum_{a=1}^{ 7n/2^k}\sum_{b=a+2}^{7km/2^k}\frac{C^{2a+1}k^{2a}a}{2^{ka}b}\left(\frac{C^2k^2a}{n}\right)^{b-a}\leq 2\sum_{a=1}^{ 7n/2^k}\frac{1}{n^2}\frac{C^{2a+5}k^{2a+4}a^2}{2^{ka}}\lesssim_{k}\frac{1}{n^2},
	\end{equation*}
	concluding the proof.
	\end{proof}
	\subsubsection{Contribution from cycles}\label{subsubsec:apriori:cycles} Given a projected component $\ppp$, we find a subtree $\Psi_{\tr}(\ppp)$ of $\ppp$, which is a valid projected component, by the following algorithm.
	\begin{enumerate}[label=\textnormal{Step \arabic*:}]
	\item If any, find a clause $a \in F(\ppp)$ such that it has internal degree $2$ and deleting $a$ doesn't affect the connectivity of $\ppp$. Then, delete $a$ and all the half-edges adjacent to $a$, namely $k-2$ boundary edges and the half-edges included in the internal edges, $e_1=(av_1)$ and $e_2=(av_2)$. The half-edges of $e_1$ and $e_2$ hanging on $v_1$ and $v_2$ respectively become a boundary half-edge, so give the color $\fs$ to them.
	
	\item Repeat Step $1$ until there is no such clause.
	
	\item If any, find a tree-excess edge $e=(a^\prime v^\prime)$, i.e. an edge after deletion doesn't affect the connectivity of the graph. Cut $e$ in half to make two boundary half-edges adjacent to $a^\prime$ and $v^\prime$ respectively. The new boundary half-edge adjacent to $a^\prime$ is colored $\bb$ while the one adjacent to $v^\prime$ is colored $\fs$. Note that by Step $(1)$ and $(2)$, $a^\prime$ must have internal degree at least $2$ after $e$ is cut, which guarantees the validity of $a^\prime$.
	\item Repeat Step $3$ until there is no such edge.
	\end{enumerate}
	We make the following observations regarding $\Psi_{\tr}(\ppp)$:
	\begin{itemize}
	\item Let $\Delta(\ppp)$ be the number of clauses deleted after Step $(1)$ and $(2)$ and denote $\gamma(\ppp)=e(\ppp)-v(\ppp)-f(\ppp)$. Then, $\Delta(\ppp)\leq \gamma(\ppp)+1$ holds since deletion of $\Delta(\ppp)$ clauses and $2\Delta(\ppp)$ internal edges in Step $(1)$ and $(2)$ do not affect the connectivity of the graph.
	\item Because $\Psi_{\tr}(\ppp)$ is a tree, Step $(1)$-$(4)$ deletes $\gamma(\ppp)+\Delta(\ppp)+1$ number of internal edges.
	\item For any $\fff\in \FFF$ corresponding to $\ppp$ through \eqref{eq:def:weight:proj:comp}, $\Psi_{\tr}(\fff)$ can be defined through the same algorithm above, with a slight change of Step $(3)$: the new boundary half-edge adjacent to $a^\prime$ has the same literal information as that of $e$, and its color (either $\bb_0$ or $\bb_1$) is determined by taking the same color as the other boundary half-edges adjacent to $a^\prime$. Then, $\Psi_{\tr}(\fff)$ corresponds to $\Psi_{\tr}(\ppp)$ and $w\left(\Psi_{\tr}(\fff)\right)^{\la} \leq 2^{-k\Delta(\ppp)}w(\fff)^\la$ holds since we have obtained $\Psi_{\tr}(\fff)$ from $\fff$ by deleting $\Delta(\ppp)$ clauses and some internal edges. Also, every $\fff^\prime$ corresponding to $\Psi_{\tr}(\ppp)$ is obtained by $\fff^\prime = \Psi_{\tr}(\fff)$ for some $\fff$ corresponding to $\ppp$, so 
	\begin{equation}\label{eq:apriori:cyclic:weight:ratio}
	    w\left(\Psi_{\tr}(\ppp)\right)^\la\leq 2^{-k\Delta(\ppp)}w(\ppp)^{\la}\,.
	\end{equation}
	\end{itemize}
	
	For $\ell,r, \gamma,\Delta\geq 0$, let $\Xi_{\ell,r}^{\gamma,\Delta}$ denote the collection of free subgraphs $\kF$ such that
	\begin{itemize}
	    \item $\kF=\sqcup_{i=1}^{\ell}\ppp_i$, where $\ppp_1,...,\ppp_r$ are cyclic projected components and $\ppp_{r+1},...,\ppp_{\ell}$ are tree projected components.
	    \item $\sum_{i=1}^{r}\gamma(\ppp_i)=\gamma$ and $\sum_{i=1}^{r}\Delta(\ppp_i)=\Delta$.
	    \item $\sum_{i=1}^{\ell} v(\ppp_i)\leq 7n/2^k$ and for any $v\geq 1$, $|\{i:v(\ppp_i)=v\}| \leq n2^{-kv/4}$.
	\end{itemize}
	Define $\Psi_{\ell,r}^{\gamma,\Delta}:\Xi_{\ell,r}^{\gamma,\Delta}\to \Xi_{\ell,0}^{0,0}$ by applying $\Psi_{\tr}$ componentwise, i.e. $\Psi_{\ell,r}^{\gamma,\Delta}(\kF)\equiv \sqcup_{i=1}^{\ell} \Psi_{\tr}(\ppp_i)$, for $\kF=\sqcup_{i=1}^{\ell} \ppp_i$. Note that in order for the set $\Xi_{\ell,r}^{\gamma,\Delta}$ to be non-empty, $\Delta \leq \gamma+r$ must hold. The following lemma is the crux of the proof of Proposition \ref{prop:1stmo:aprioriestimate} $(4)$.
	\begin{lemma}\label{lem:apriori:1stmo:ratio:cycle}
		For $k\geq k_0$, $n_\tsf \leq 7n/2^k$, $E_\rr \leq 7nd/2^k$,$m/n \in [\alpha_{\textsf{lbd}}, \alpha_{\textsf{ubd}}], n\geq n_0(k), r\geq 1,\gamma\geq 1, 0 \leq \Delta \leq \gamma+r$ and $\kF^\prime \in \Xi_{\ell,0}^{0,0}$, we have
	\begin{equation*}
	    \textnormal{\textbf{S}}^{\gamma,\Delta}_{\ell,r}(\dot{H}_\circ,\kF^\prime)\equiv \sup_{\kF \in (\Psi_{\ell,r}^{\gamma,\Delta})^{-1}(\kF^\prime)}\left|(\Psi_{\ell,r}^{\gamma,\Delta})^{-1}(\kF^\prime)\right|\frac{\E\bZ_\la[\dot{H}_\circ,\kF]}{\E\bZ_\la[\dot{H}_\circ,\kF^\prime]}\lesssim_{k} \frac{1}{r!}\left(\frac{Ck^2}{2^k}\right)^{r}\left(\frac{C\log^{3} n}{n}\right)^{\gamma},
	\end{equation*}
	where $C$ is a universal constant.
	\end{lemma}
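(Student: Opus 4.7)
The plan is to proceed in parallel with the strategy used for Lemma \ref{lem:apriori:1stmo:comparison}: first express the ratio $\E\bZ_\la[\dot{H}_\circ,\kF]/\E\bZ_\la[\dot{H}_\circ,\kF^\prime]$ via the closed form \eqref{eq:apriori:firstmo:expansion}, then separately enumerate the preimage $(\Psi_{\ell,r}^{\gamma,\Delta})^{-1}(\kF^\prime)$. Since the variable set $V(\kF)=V(\kF^\prime)$ is unchanged by tree-ification, both $n_\tsf$ and $E_\rr$, together with the multinomial factor $c(n_\tsf,\dot{H}_\circ)$, drop out of the ratio. The quantities that move are the number of non-separating clauses (which decreases by $\Delta$ in passing from $\kF$ to $\kF^\prime$), the number of internal free edges $E_\tsf$ (which decreases by $q:=\gamma+\Delta+r$), and the number of $\fs$ boundary edges $E_\fs$ (which increases by $q$, since each cut internal edge releases exactly one $\fs$ half-edge on its variable end).

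For the weight ratio I would invoke \eqref{eq:apriori:cyclic:weight:ratio} componentwise, which gives the bound $w(\kF^\prime)^\la\le 2^{-k\Delta}w(\kF)^\la$. Combining this with falling-factorial estimates analogous to those producing the displayed line just above \eqref{eqn:apriori:1stmo:ratio:final}, and using Proposition \ref{prop:apriori:1stmo:separating} to control $f(m_\tns,E_\fs)/f(m_\tns+\Delta,E_\fs+q)$, the ratio of partition functions should collapse to something of the form
\begin{equation*}
\frac{\E\bZ_\la[\dot{H}_\circ,\kF]}{\E\bZ_\la[\dot{H}_\circ,\kF^\prime]}\;\lesssim_k\; 2^{k\Delta}\,\Big(\frac{1}{nd}\Big)^{q}\,e^{O(q)}.
\end{equation*}

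For the combinatorial factor $|(\Psi_{\ell,r}^{\gamma,\Delta})^{-1}(\kF^\prime)|$ I would fix $\kF^\prime=\sqcup_{i=1}^\ell \ppp^\prime_i$ and reconstruct every $\kF\in(\Psi_{\ell,r}^{\gamma,\Delta})^{-1}(\kF^\prime)$ by (i) choosing which $r$ of the $\ell$ tree components to upgrade to cyclic components, (ii) distributing the $\gamma$ cycle-surpluses and $\Delta$ Step 1--2 clauses across these $r$ components, (iii) inserting $\Delta$ new clauses (each costing a factor $O(m)\cdot O(1)$ for location and literal assignments), and (iv) adding $\gamma+\Delta+r$ internal edges whose endpoints must land inside the chosen component. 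Step (i) produces $\binom{\ell}{r}$ and combines with $1/\ell!$ after symmetrization; steps (iii)--(iv) contribute at most $(Cm)^\Delta$ and $(Ckd\cdot v_{\max})^{\gamma+\Delta+r}$ choices for endpoints, where $v_{\max}\lesssim_k \log n$ because the profile lies in the exponential-decay regime required by the hypotheses on $\Xi_{\ell,r}^{\gamma,\Delta}$. The $1/r!$ from symmetrization survives to the final bound, giving
\begin{equation*}
|(\Psi_{\ell,r}^{\gamma,\Delta})^{-1}(\kF^\prime)|\;\lesssim_k\; \frac{1}{r!}\,(Cn)^\Delta\,(Ck\log n)^{\gamma+\Delta+r}.
\end{equation*}

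Multiplying the two bounds and grouping powers indexed by $r$, $\Delta$, and $\gamma$ separately, the $2^{k\Delta}$ from the weight ratio is absorbed by a compensating $(1/nd)^\Delta$ and $(Cn)^\Delta$, leaving behind one factor of $k^2/2^k$ per cyclic component (from the single extra matching that turns a tree into a cycle) and one factor of $\log^3 n/n$ per unit of cycle surplus (the $\log^3$ tracking the three powers of component size that arise when an extra internal edge both picks its endpoints and respects the $\fs$-edge count). The main technical obstacle I anticipate is bookkeeping the interplay between $\gamma$, $\Delta$, and $r$ so that the bound is uniform in the decomposition $\Delta\le\gamma+r$; one must show that the excess combinatorics from large $\Delta$ is fully compensated by the factor $2^{-k\Delta}$ hidden inside the weight bound, otherwise the ratio would degrade for cycle structures that are many short-cycle clusters rather than a single long cycle. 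This should follow from the observation that Step 1--2 deletions can only occur at clauses of internal degree two, so the $\Delta$ combinatorial cost is always dominated by the $2^{k\Delta}$ suppression coming from the weight of the reinserted clauses.
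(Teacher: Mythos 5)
Your high-level strategy matches the paper's: bound the ratio $\E\bZ_\la[\dot{H}_\circ,\kF]/\E\bZ_\la[\dot{H}_\circ,\kF']$ and the preimage count separately, then multiply. However, the two core quantities are derived with errors that undermine the conclusion. First, the weight-ratio step is logically unsound: from $w(\kF')^\la \le 2^{-k\Delta}w(\kF)^\la$ you only get the \emph{lower} bound $w(\kF)^\la/w(\kF')^\la \ge 2^{k\Delta}$, which cannot be inserted into the upper bound on the partition-function ratio. What the paper actually uses in \eqref{eq:apriori:upperbound:cyclicmo:ratio} is the opposite direction, $w(\kF)^\la/w(\kF')^\la \le 2^{-k\Delta}$: each reinserted degree-two clause contributes a factor $2^{-k}$ through the normalization $2^{-k|F(\fff)|}$ in $w^{\textnormal{com}}(\fff)^\la = w^{\textnormal{lit}}(\fff)^\la 2^{-k|F(\fff)|}$ (note the exponent $\la$ does \emph{not} hit this normalization), so the cyclic component is exponentially lighter, not heavier. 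You have this right in your final sentence (``the $2^{k\Delta}$ \emph{suppression}'') but your displayed ratio bound states $2^{+k\Delta}$ -- the two cannot both hold, and only the suppression direction yields the lemma.

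Second, the preimage bound $\frac{1}{r!}(Cn)^\Delta (Ck\log n)^{\gamma+\Delta+r}$ is missing ingredients without which the final grouping cannot produce $(Ck^2/2^k)^r$. You omit the factor $\prod_i\binom{\ell_i}{r_i}\le\prod_i\ell_i^{r_i}/r_i!$ for choosing which trees to lift, the factor $d$ per half-edge endpoint (present in your prose but dropped in the display), and the per-component factorials $1/\Delta_i!$ and $1/(r_i+\gamma_i-\Delta_i)!$. These matter: the $\ell_i$ factors, summing against $a_i^2$, are precisely what let the $r$-dependent factor reach $\frac{k^2}{n}\sum_i\ell_i a_i^2\lesssim k^2/2^k$; with your bounds, the per-$r$ factor would instead be of order $\log n/(n2^k)$, which does not match the stated conclusion. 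Moreover, the paper resolves the $\Delta$-dependence via the monotonicity of $x\mapsto(c/x)^x$ on $(0,c/e)$ applied to the per-component factor $\bigl(ka_i(r_i+\gamma_i)/(b_i^{\max}\Delta_i)\bigr)^{\Delta_i}$, optimized at $\Delta_i=r_i+\gamma_i$; you identify this as the main obstacle but offer no mechanism. Finally, the $\log^3 n$ in the $\gamma$-term comes from $\sum_{a=1}^{O(\log n/k)}a^2\lesssim (\log n)^3/k^3$ over distinct component sizes, not from ``three powers of component size that arise when an extra internal edge picks its endpoints.'' Taken together, these gaps mean the argument as written does not close; the route is right, but the bookkeeping on the $\Delta$-optimization and on $\sum_i\ell_i a_i^2,\sum_i a_i^2$ is where the proof actually lives.
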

	\begin{proof}
	For any $\kF \in (\Psi_{\ell,r}^{\gamma,\Delta})^{-1}(\kF^\prime)$, $\kF$ has $\Delta$ more non-separating clauses and $\gamma+r+\Delta$ more free edges than $\kF^\prime$. Hence, using \eqref{eq:apriori:1stmo:separating} (see Proposition \ref{prop:apriori:1stmo:separating} below for the proof) with $\delta_1=\Delta$ and $\delta_2=r+\gamma+\Delta$, the same calculation as done in \eqref{eq:apriori:1stmo:long} shows
	\begin{equation}\label{eq:apriori:upperbound:cyclicmo:ratio}
	    \frac{\E\bZ_\la[\dot{H}_\circ,\kF]}{\E\bZ_\la[\dot{H}_\circ,\kF^\prime]}\lesssim_{k} \frac{w(\kF)^\la}{w(\kF^\prime)^\la}\left(\frac{1}{nd}\right)^{\gamma+r+\Delta}e^{O(\gamma+r+\Delta)}\leq \frac{1}{2^{k\Delta}}\left(\frac{1}{nd}\right)^{\gamma+r+\Delta}e^{O(\gamma+r+\Delta)},
	\end{equation}
	where the last inequality is due to \eqref{eq:apriori:cyclic:weight:ratio}. We turn to upper bound $\left|(\Psi_{\ell,r}^{\gamma,\Delta})^{-1}(\kF^\prime)\right|$. Enumerate all projected components of $\kF^\prime$ by the number of variables and suppose there exists $\ell_i$ $a_i$-components for $1\leq i \leq K$, where $a_i$-component denotes a component with $a_i$ variables. Here, we assume $\{a_i\}_{1\leq i \leq K}$ are all different. Recalling $\kF^\prime \in \Xi_{\ell,0}^{0,0}$, we make the following observations.
	\begin{itemize}
	    \item Let $b^{\max}_i$ be the maximum number of clauses among $a_i$-components, then $b^{\max}_i < a_i$ holds since $\kF^\prime$ consists of tree components and each clause in the component has internal degree at least $2$.
	    \item $\sum_{i=1}^{K}\ell_i=\ell$ and $\sum_{i=1}^{K}\ell_i a_i \leq 7n/2^k$. Moreover, $1\leq \ell_i\leq n 2^{-ka_i/4}$ for any $1\leq i \leq K$. In particular, $a_i\leq \frac{4\log_2 n}{k}$.
	\end{itemize}
	Now observe that $\kF \in (\Psi_{\ell,r}^{\gamma,\Delta})^{-1}(\kF^\prime)$ can be generated as follows. Fix some $r_i, \ell_i$ and $\gamma_i$ for $1\leq i \leq K$. We iterate the following procedure for $1\leq i \leq K$. First, choose $r_i$ components from $\ell_i$ $a_i$-components to form cyclic components. Then, form $r_i+\gamma_i-\Delta_i$ edges among boundary half-edges of $r_i$ components, corresponding to the deleted cyclic edges from Step $(3)$ and $(4)$ above. The number of possible ways to do this can be bounded above by $\frac{(rda_i)^{r_i+\gamma_i-\Delta_i}(kb^{\max}_i)^{r_i+\gamma_i-\Delta_i}}{(r_i+\gamma_i-\Delta_i)!}$, which follows from the same argument as done to show \eqref{eq:apriori:Omega:upperbound}. Next, choose half-edges $c^{1}_1, ...,c^{1}_{\Delta_i}$ among half-edges adjacent to separating clauses. Then, choose half-edges $c^{2}_j,1\leq j\leq \Delta_i$, adjacent to the clause that $c^{1}_j$ is located at. $c^{1}_j$ and $c^{2}_j$ must have partners $d^{1}_j$ and $d^{2}_j$, which are adjacent to variables in the same cylcic $a_i$-component. $\{(c^{1}_j,d^{1}_j),(c^{2}_j,d^{2}_j)\}_{1\leq j \leq \Delta_i}$ form the edges deleted in Step $(1)$ and $(2)$ above and there are $2^{\Delta_i}\Delta_i!$ lists of ordered pairs yielding the same set of $2\Delta_i$ edges. Finally assign a literal, either $0$ or $1$ to the $r_i+\gamma_i+\Delta_i$ new edges. Therefore,
	\begin{multline}\label{eq:apriori:upperbound:Psi:inverse}
	    \left|(\Psi_{\ell,r}^{\gamma,\Delta})^{-1}(\kF^\prime)\right|\leq 2^{r+\gamma+\Delta}\sum_{\sum_{i=1}^{K}r_i=r}\sum_{\sum_{i=1}^{K}\gamma_i=\gamma}\sum_{\substack{\sum_{i=1}^{K}\Delta_i=\Delta\\ 0\leq \Delta_i \leq r_i+\gamma_i}}\prod_{i=1}^{K}\Bigg\{{\ell_i\choose r_i}\frac{(r_idka_ib^{\max}_i)^{r_i+\gamma_i-\Delta_i}}{(r_i+\gamma_i-\Delta_i)!}\\
	    \times\frac{(km)^{\Delta_i}k^{\Delta_i}(r_ida_i)^{\Delta_i}(da_i)^{\Delta_i}}{2^{\Delta_i}\Delta_i!}\Bigg\}.
	\end{multline}
	We can upper bound the term inside the product by
	\begin{multline}\label{eq:apriori:upperbound:Psi:inverse-2}
	{\ell_i\choose r_i}\frac{(r_idka_ib^{\max}_i)^{r_i+\gamma_i-\Delta_i}}{(r_i+\gamma_i-\Delta_i)!}\frac{(r_i d^2k^2ma_i^2)^{\Delta_i}}{2^{\Delta_i}\Delta_i!}\leq \frac{\ell_i^{r_i}}{r_i!}\frac{(r_idka_ib^{\max}_i)^{r_i+\gamma_i}}{(r_i+\gamma_i)!}\frac{1}{\Delta_i!}\left(\frac{dkma_i(r_i+\gamma_i)}{2b^{\max}_i}\right)^{\Delta_i}\\
	\leq e^{r_i+\gamma_i+\Delta_i}\frac{(\ell_i dka_i b^{\max}_i)^{r_i}}{r_i!}(dka_ib^{\max}_i)^{\gamma_i}\left(\frac{dkma_i(r_i+\gamma_i)}{2b^{\max}_i\Delta_i}\right)^{\Delta_i},
	\end{multline}
	where we used $x!\geq (x/e)^{x}$ in the last inequality. Hence, \eqref{eq:apriori:upperbound:cyclicmo:ratio}, \eqref{eq:apriori:upperbound:Psi:inverse} and \eqref{eq:apriori:upperbound:Psi:inverse-2} altogether show
	\begin{multline}
	        \textnormal{\textbf{S}}^{\gamma,\Delta}_{\ell,r}(\dot{H}_\circ,\kF^\prime)\lesssim_{k} e^{O(r+\gamma+\Delta)}\sum_{\sum_{i=1}^{K}r_i=r}\sum_{\sum_{i=1}^{K}\gamma_i=\gamma}\sum_{\substack{\sum_{i=1}^{K}\Delta_i=\Delta\\ 0\leq \Delta_i \leq r_i+\gamma_i}}\prod_{i=1}^{K}\Bigg\{\frac{1}{r_i!}\left(\frac{k\ell_i a_ib^{\max}_i}{n}\right)^{r_i}\left(\frac{ka_ib^{\max}_i}{n}\right)^{\gamma_i}\\
	        \times\left(\frac{ka_i(r_i+\gamma_i)}{b^{\max}_i\Delta_i}\right)^{\Delta_i}\Bigg\},
	\end{multline}
	where we used $d\leq k2^k$ to bound the term involving $\Delta_i$ in the equation above. Note that fixing $c>0$, $x\to (c/x)^{x}$ is increasing for $0<x<c/e$. Since $ka_i/b^{\max}_i>k>e$, the term involving $\Delta_i$ in the equation above is maximized at $\Delta_i=r_i+\gamma_i$, in the regime $0\leq \Delta_i \leq r_i+\gamma_i$. Also, the total number of $\Delta_i$ possible is $r_i+\gamma_i+1 \leq e^{r_i+\gamma_i}$, so we can further bound the \textsc{rhs} above by
	\begin{equation}\label{eq:apriori:cycle:final-1}
	\begin{split}
	\textnormal{\textbf{S}}^{\gamma,\Delta}_{\ell,r}(\dot{H}_\circ,\kF^\prime)&\lesssim_{k} \sum_{\sum_{i=1}^{K}r_i=r}\sum_{\sum_{i=1}^{K}\gamma_i=\gamma}\prod_{i=1}^{K}\Bigg\{\frac{1}{r_i!}\left(\frac{Ck^2\ell_i a_i^2}{n}\right)^{r_i}\left(\frac{Ck^2a_i^2}{n}\right)^{\gamma_i}\Bigg\}\\
	&\leq \frac{1}{r!}\left(\frac{Ck^2\sum_{i=1}^{K}\ell_i a_i^2}{n}\right)^{r}\left(\frac{Ck^2\sum_{i=1}^{K}a_i^2}{n}\right)^{\gamma},
	\end{split}
	\end{equation}
	where $C$ is a universal constant and used $\Delta \leq r+\gamma$ in the first inequality while we used the crude bound $1\leq \gamma!/\prod_{i=1}^{K} \gamma_i!$ in the second inequality. Finally, note that we can crudely bound
	\begin{equation}\label{eq:apriori:cycle:final-2}
	    \sum_{i=1}^{K}\ell_i a_i^2\leq  \frac{28n}{2^k}+n\sum_{a\geq 5}a^2 2^{-ka/4}\leq \frac{Cn}{2^k}\quad\textnormal{and}\quad \sum_{i=1}^{K}a_i^2 \leq \sum_{a=1}^{\frac{4\log_2 n}{k}}a^2\leq \frac{C\log^{3}n}{k^3}.
	\end{equation}
	Therefore, \eqref{eq:apriori:cycle:final-1} and \eqref{eq:apriori:cycle:final-2} conclude the proof.
	\end{proof}
	\begin{proof}[Proof of Proposition \ref{prop:1stmo:aprioriestimate} (4)]
	By Lemma \ref{lem:apriori:1stmo:ratio:cycle}, we have
	\begin{multline}
	    \frac{\E\bZ_{\lambda}[n_{\textnormal{cyc}}\geq r, e_{\textnormal{mult}}\geq \gamma,~~~~~~~\textnormal{and}~~~~~~~ \ee_{\frac{1}{4}}]}{\E\bZ_\la^{\tr}}\leq \sup_{\dot{H}_\circ,\ell\geq 1,\kF^\prime \in \Xi_{\ell,0}^{0,0}}\left\{\sum_{r^\prime=r}^{\ell}\sum_{\gamma^\prime\geq\gamma}\sum_{\Delta=0}^{r^\prime+\gamma^\prime} \textnormal{\textbf{S}}^{\gamma^\prime,\Delta}_{\ell,r^\prime}(\dot{H}_\circ,\kF^\prime)\right\}\\
	    \lesssim_{k} \sum_{r^\prime\geq r}\sum_{\gamma^\prime\geq\gamma}\frac{r^\prime+\gamma^\prime+1}{r^\prime !}\left(\frac{Ck^2}{2^k}\right)^{r^\prime}\left(\frac{C\log^{3} n}{nk}\right)^{\gamma^\prime}\leq \frac{1}{r!}\left(\frac{C^\prime k^2}{2^k}\right)^r \left(\frac{C^\prime \log^{3} n}{nk}\right)^{\gamma},
	\end{multline}
	where we used $r^\prime+\gamma^\prime+1\leq e^{r^\prime+\gamma^\prime}$ in the last inequality and $C^\prime$ is an another universal constant.
	\end{proof}
	\subsubsection{Estimates on separating constraints}\label{subsubsec:apriori:separating}
	We now aim to prove \eqref{eq:apriori:1stmo:separating}. From its definition in \eqref{eq:apriori:technical-2}, $f(m_\tns, E_\fs)$ equals the expectation of the contribution of $\hat{v}$ from separating clauses, under uniform matching of $km_\tns$ half-edges, of which $E_\fs$ are $\fs$ edges and others are $\bb$. Note that $\hat{v}(\cdot)$ is completely determined by the number of $\fs$ edges, so we write $\hat{v}(x)$ for the value of $\hat{v}$ containing $x$ number of $\fs$ edges and $k-x$ number of $\bb$ edges. Writing $\xi$ to be the proportion of $\fs$ edges, we have
	\begin{equation*}
	f(m_\tns,km_\tns \xi) = \E_{\xi}\left[\prod_{i=1}^{m_\tns} \hat{v}(X_i)\bigg|\sum_{i=1}^{m_\tns} X_i= km_\tns \xi \right],
	\end{equation*}
	where $\E_\xi$ denotes the expectation with respect to i.i.d. random variables $X_1,...,X_{m_\tns}$ with $X_i \sim \textnormal{Binomial}(k,\xi)$. We calculate $f(m_\tns, km_\tns\xi)$ by introducing a rescaling factor $\gamma \in \R$. If we let $p_\xi(x)\equiv \binom{k}{x}\xi^{x}(1-\xi)^{k-x}$, then
    \begin{equation}\label{eq:apriori:1stmo:f:prior}
    \begin{split}
    f(m_\tns,km_\tns \xi)
    &=\frac{\sum_{(x_i)_{i\leq m_\tns}}\one\Big(\sum_{i=1}^{m_{\tns}}x_i=km_\tns \xi\Big)e^{-km_\tns \xi \gamma} \prod_{i=1}^{m_\tns}\hat{v}(x_i)p_{\xi}(x_i)e^{\gamma x_i}}{\P_{\xi}\Big(\sum_{i=1}^{km_\tns}X_i=km_\tns \xi\Big)}\\
    &=\frac{\P_{\gamma,\xi}\Big(\sum_{i=1}^{km_\tns}\wt{X}_i=km_\tns \xi\Big)}{\P_{\xi}\Big(\sum_{i=1}^{km_\tns}X_i=km_\tns \xi\Big)}\cdot \exp\Big\{-m_{\tns}\Big(k\xi\gamma-\Lambda_{\xi}(\gamma)\Big)\Big\}\,,
    \end{split}
    \end{equation}
    where $\Lambda_{\xi}(\gamma)\equiv \log \E_{\xi}\big[\hat{v}(X)e^{\gamma X}\big]$, $X\sim \textnormal{Binomial}(k,\xi)$, and $\P_{\gamma,\xi}$ denotes probability with respect to i.i.d. random variables $\wt{X}_1,\ldots, \wt{X}_{m_{\tns}}\sim \nu_{\gamma,\xi}$. Here, $\nu_{\gamma,\xi}\in \PPP(\{0,\ldots,k\})$ is defined by   
	\begin{equation*}
	    \nu_{\gamma,\xi}(x)=\frac{p_{\xi}(x)\hat{v}(x)\exp(\gamma x)}{\sum_{x=0}^{k-2}p_{\xi}(x)\hat{v}(x)\exp(\gamma x)},\quad 0\leq x \leq k.
	\end{equation*}
    Here, we note that $\hat{v}(x)=0$ for $x\in \{k-1,k\}$ since a valid separating clause must have at least $2$ $\bb$-colored edges. We take $\gamma=\gamma(\xi)$ so that $\partial_{\gamma} \Lambda_{\xi}(\gamma)\equiv \sum_{x}x\nu_{\gamma,\xi}(x)=k\xi$, whose existence is guaranteed by Lemma \ref{lem:apriori:1stmo:gamma:exist} below for $0<\xi\leq 10/2^k$. Then, local CLT \cite{Borokov17} shows that we can approximate 
    \begin{equation*}
         f(m_\tns,km_\tns \xi)
   \asymp \bigg(\frac{\Var_{\xi}(X_1)}{\Var_{\gamma(\xi),\xi}(\wt{X}_1)}\bigg)^{1/2}\exp\Big\{-m_{\tns}\Big(k\xi\gamma-\Lambda_{\xi}\big(\gamma(\xi)\big)\Big)\Big\}\,.
    \end{equation*}
    Lemma \ref{lem:apriori:1stmo:gamma:exist} below shows that $\gamma(\xi)$ is uniformly bounded by $O(k/2^k)$. Also, since $\hat{v}(x)\in [1/2,1]$ for $0\leq x\leq k-2$, we have $\frac{\nu_{\gamma(\xi),\xi}(x)}{p_{\xi}(x)}\asymp_k 1$ uniformly over $0\leq x\leq k-2$. Thus, we have uniformly over the regime $0<\xi\leq 10/2^k$ that
	\begin{equation}\label{eq:apriori:1stmo:f}
	    f(m_\tns,km_\tns\xi)\asymp_k, \bigg(\frac{\Var_{\xi}(X_1)}{\Var_{\xi}(X_1 \,|\, X_1\leq k-2)}\bigg)^{1/2} \exp\Big\{-m_{\tns}\Big(k\xi\gamma-\Lambda_{\xi}\big(\gamma(\xi)\big)\Big)\Big\}\asymp_k \exp\Big\{-m_{\tns}\Big(k\xi\gamma-\Lambda_{\xi}\big(\gamma(\xi)\big)\Big)\Big\}\,.
	\end{equation}
	\begin{lemma}\label{lem:apriori:1stmo:gamma:exist}
	For $X \sim \textnormal{Binomial}(k,\xi)$, define $\Lambda_{\xi}(\gamma)\equiv \log \E_{\theta}[\hat{v}(X)e^{\gamma X}]$ for $\gamma \in \R$. In the regime where $0<\xi\leq 10/2^k$, there exists a unique $\gamma(\xi)$ such that $\partial_{\gamma}\Lambda_{\xi}\left(\gamma(\xi)\right)=k\xi$ and satisfies
	\begin{equation}\label{eq:lem:apriori:1stmo:gamma:upperbound}
	    \sup_{0<\xi\leq 10/2^k}|\gamma(\xi)|\lesssim \frac{k}{2^k}\quad\textnormal{and}\quad \sup_{0<\xi\leq 10/2^k}|\Lambda_{\xi}\left(\gamma(\xi)\right)|\lesssim \frac{k}{2^k}.
	\end{equation}
	\end{lemma}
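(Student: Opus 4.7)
The plan is to first dispatch existence and uniqueness by standard convex-analytic considerations, and then obtain the two quantitative bounds via careful Taylor expansion around $\gamma = 0$, exploiting the fact that $\hat{v}(x) = 1 - 2^{x-k+1}$ differs from $1$ only by an exponentially small amount.

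First, since $\hat{v}(x) > 0$ for every $x \in \{0,\dots,k-2\}$, the function $\gamma \mapsto \Lambda_\xi(\gamma)$ is the log-moment-generating function of a strictly positive measure on $\{0,\dots,k-2\}$, hence strictly convex. Consequently $\partial_\gamma \Lambda_\xi$ is strictly increasing; its limits as $\gamma \to \pm\infty$ are $0$ and $k-2$ respectively (since the tilted measure concentrates on $x=0$ or $x=k-2$), so $\partial_\gamma \Lambda_\xi$ is a bijection from $\R$ onto $(0,k-2)$. For $k$ large and $\xi \leq 10/2^k$, the target value $k\xi$ lies in $(0,k-2)$, yielding existence and uniqueness of $\gamma(\xi)$.

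For the quantitative bounds I will expand around $\gamma = 0$. Writing $\hat{v}(x) = 1 - 2^{x-k+1}$ and using the identity $\E_\xi[X] = k\xi$ for $X \sim \mathrm{Binomial}(k,\xi)$, a direct computation gives
\begin{equation*}
\E_\xi[\hat v(X)] \;=\; 1 \;-\; 2^{1-k}(1+\xi)^k \;+\; O\!\bigl((k\xi)^{k-1}\bigr) \;=\; 1 - O(2^{-k}),
\end{equation*}
\begin{equation*}
\E_\xi[X \hat v(X)] \;=\; k\xi \;-\; 2^{1-k}\cdot 2k\xi (1+\xi)^{k-1} \;+\; O\!\bigl((k\xi)^{k-1}\bigr) \;=\; k\xi\bigl(1 - O(k/2^k)\bigr),
\end{equation*}
where the error in the second identity collects the small boundary contribution from $x = k-1, k$ that we chop off. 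Dividing gives $\partial_\gamma \Lambda_\xi(0) = k\xi\bigl(1 - O(k/2^k)\bigr)$, so in particular $\lvert k\xi - \partial_\gamma \Lambda_\xi(0)\rvert \lesssim k^2 \xi/2^k$.

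Next I will pin down $\gamma(\xi)$ by bounding $\partial_\gamma^2 \Lambda_\xi$ from below on a neighborhood of $0$. The second derivative equals the variance of $X$ under the tilted measure $\propto \hat v(x) e^{\gamma x}$ times Binomial weights. A short calculation shows that for $|\gamma| \leq 1$ and $\xi \leq 10/2^k$, the tilted measure still concentrates near $0$ with a mass of order $k\xi e^\gamma$ at $\{x \geq 1\}$, giving $\partial_\gamma^2 \Lambda_\xi(\gamma) \asymp k\xi$ uniformly on $[-1,1]$. A bootstrap via the mean value theorem then shows that $\gamma(\xi)$ remains in $[-1,1]$ (since $\partial_\gamma \Lambda_\xi(\pm 1)$ differ from $k\xi$ by a factor bounded away from $1$), and a second application yields
\begin{equation*}
\lvert \gamma(\xi) \rvert \;\lesssim\; \frac{\lvert k\xi - \partial_\gamma \Lambda_\xi(0)\rvert}{\inf_{|\gamma|\leq 1} \partial_\gamma^2 \Lambda_\xi(\gamma)} \;\lesssim\; \frac{k^2\xi/2^k}{k\xi} \;=\; \frac{k}{2^k}.
\end{equation*}
Finally, Taylor expansion of $\Lambda_\xi$ gives $\Lambda_\xi(\gamma(\xi)) = \Lambda_\xi(0) + \gamma(\xi) \partial_\gamma \Lambda_\xi(\gamma^*)$ for some $\gamma^*$ between $0$ and $\gamma(\xi)$, which is $O(2^{-k}) + O(k/2^k)\cdot O(k\xi) = O(k/2^k)$, establishing the second bound.

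The main obstacle is the double use of the tilted-variance estimate: one needs $\partial_\gamma^2 \Lambda_\xi$ bounded \emph{below} by a positive constant times $k\xi$, not just above, so that the mean value argument can be inverted. The delicate point is that the tilted mass on $\{x \geq 1\}$ is really of order $k\xi$, not smaller, which requires verifying that the factor $\hat v(1) = 1 - 2^{2-k}$ does not degenerate—this is automatic for $k$ large but needs to be written out carefully.
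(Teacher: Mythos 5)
Your proof is correct, but it takes a genuinely different route from the paper's. For existence, the paper perturbs $\Lambda_\xi$ by $\tfrac{1}{2}\eps\gamma^2$, invokes Rockafellar's theorem to get a critical point $\gamma_\eps$, and then runs a compactness argument to show $\gamma_\eps$ stays bounded as $\eps\to 0$; your observation that $\partial_\gamma\Lambda_\xi$ is a continuous strictly increasing bijection from $\R$ onto $(0,k-2)$ (because $\hat v>0$ on $\{0,\dots,k-2\}$ and vanishes at $k-1,k$) is simpler and dispatches existence immediately. For the quantitative bounds, the paper exploits the structural fact that the exponential tilt of a Binomial$(k,\xi)$ is again a Binomial$(k,\mu(\gamma))$ with $\mu(\gamma)=\xi e^\gamma/(1-\xi+\xi e^\gamma)$, and uses the identity $x\,p_\mu(x)=k\mu\, q_\mu(x-1)$ to read off $\xi=\mu(1+O(k2^{-k}))$ directly from the stationarity equation, which then gives $|\gamma(\xi)|\lesssim k/2^k$ with no derivative estimates. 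Your route — Taylor expansion at $\gamma=0$ showing $|k\xi-\partial_\gamma\Lambda_\xi(0)|\lesssim k^2\xi/2^k$, combined with the two-sided bound $\partial_\gamma^2\Lambda_\xi(\gamma)\asymp k\xi$ on $[-1,1]$ and a mean-value inversion — is more generic and would survive perturbations of $\hat v$, at the cost of having to justify the variance lower bound (which, as you note, reduces to the tilted measure carrying mass of order $k\xi$ on $\{x\ge 1\}$ with $\hat v(1)$ bounded away from $0$; this is fine for large $k$). The paper's argument buys an exact identification of the tilt parameter; yours buys robustness and a cleaner existence step.

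One minor inaccuracy: the paper's $\hat v$ satisfies $\hat v(0)=1-(k+1)2^{1-k}$, not $1-2^{1-k}$, so your first displayed expansion should read $\E_\xi[\hat v(X)]=1-O(k2^{-k})$ rather than $1-O(2^{-k})$. This is immaterial downstream — your estimates $\partial_\gamma\Lambda_\xi(0)=k\xi(1-O(k/2^k))$ and $\Lambda_\xi(0)=O(k/2^k)$ still hold, and the target bound \eqref{eq:lem:apriori:1stmo:gamma:upperbound} has room for the extra factor of $k$ — but the expansion as written is not quite right.
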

	\begin{proof}
	Uniqueness of $\gamma(\xi)$ is guaranteed by strict convexity of $\Lambda_{\xi}(\cdot)$. To see existence, note that
	\begin{equation*}
	    \partial_{\gamma}\Lambda_{\xi}(\gamma)= \frac{\E_{\mu}[X\hat{v}(X)]}{\E_{\mu}[\hat{v}(X)]}\quad\textnormal{for}\quad \mu\equiv\mu(\gamma)\equiv \frac{\xi e^{\gamma}}{1-\xi+\xi e^{\gamma}}.
	\end{equation*}
	Hence, $\partial_{\gamma}\Lambda_{\xi}(\gamma)\in [0,k]$ and for any $\eps>0$, $\Lambda_{\xi}(\gamma)+\frac{1}{2}\eps\gamma^2$ is a convex function with derivative $ \partial_{\gamma}\Lambda_{\xi}(\gamma)+\eps \gamma$ tending in norm to $\infty$ as $|\gamma|\to \infty$. By Rockafellar's theorem (see e.g. Lemma 2.3.12 of \cite{DZ10}), there exists a unique $\gamma_{\eps}$ such that $\partial_{\gamma}\Lambda_{\xi}(\gamma_\eps)+\eps\gamma_{\eps}=k\xi$. We now show that $\gamma_{\eps}$ stays in a bounded region as $\eps \to 0$. We first claim that $\mu_\eps \equiv \mu(\gamma_\eps)\leq\frac{1}{k}$ for small enough $\eps$: suppose $\mu(\gamma_\eps)>\frac{1}{k}$. Then $e^{\gamma_\eps}>\frac{1-\xi}{(k-1)\xi}$ holds, and recalling $1/2\leq \hat{v}(x)\leq 1$ for $x\leq k-2$,
	\begin{equation*}
	    \gamma_\eps = \frac{1}{\eps}\left(k\xi - \frac{\E_{\mu_\eps}[X\hat{v}(X)]}{\E_{\mu_\eps}[\hat{v}(X)]}\right)\leq \frac{1}{\eps}\left(k\xi -\frac{\E_{\mu_\eps}[X\one\{X\leq k-2\}]}{2}\right)\leq\frac{1}{\eps}\left(k\xi -\frac{1}{4k}\right)\ll 0,
	\end{equation*}
	contradicting $e^{\gamma_\eps}>\frac{1-\xi}{(k-1)\xi}$. Thus $\limsup_{\eps \to 0}\gamma_{\eps}$ must be finite. For the lower bound,
	\begin{equation*}
	     \gamma_\eps = \frac{1}{\eps}\left(k\xi - \frac{\E_{\mu_\eps}[X\hat{v}(X)]}{\E_{\mu_\eps}[\hat{v}(X)]}\right)\geq \frac{1}{\eps}\left(k\xi -2\E_{\mu_\eps}[X\mid X\leq k-2]\right)\geq\frac{1}{\eps}\left(k\xi -2\mu_{\eps}\right)\geq\frac{\xi}{\eps}\left(k -\frac{2 e^{\gamma_\eps}}{1-\xi}\right),
	\end{equation*}
	so $\liminf_{\eps\to 0}\gamma_{\eps}$ must be finite. Therefore, there exists a unique $\gamma$ such that $\partial_\gamma\Lambda_\xi(\gamma)=k\xi$ and satisfies $\mu(\gamma)\leq \frac{1}{k}$.
	
	We now turn to prove \eqref{eq:lem:apriori:1stmo:gamma:upperbound}. Observe that $\hat{v}(x)=1-h(x)-\one\{x\geq k-1\}$, where
	\begin{equation}\label{eq:apriori:def:h}
	    h(x) \equiv
	    \begin{cases}
	    \frac{k+1}{2^{k-1}} &x=0\\
	    \frac{2^{x}}{2^{k-1}}&x=1,..., k-2\\
	    0& x=k-1,k
	    \end{cases}
	\end{equation}
	If we denote by $q_{\mu}$ the law of $Y \sim \textnormal{Binomial}(k-1,\mu)$, then $xp_{\mu}(x)=k\mu q_{\mu}(x-1)$, so
	\begin{equation}\label{eq:apriori:lem:crux}
	    k\xi=\frac{\E_{\mu}[X\hat{v}(X)]}{\E_{\mu}[\hat{v}(X)]}=k\mu\left(\frac{1-\P_{\mu}^{k-1}(X\geq k-2)-\E_{\mu}^{k-1}[h(X+1)]}{1-\P_{\mu}^{k}(X\geq k-1)-\E_{\mu}^{k}[h(X)]}\right),
	\end{equation}
	where $\mu=\mu\left(\gamma(\xi)\right)$ and $\P_{\mu}^{\ell}$ denotes the law of $\textnormal{Binomial}(\ell,\mu)$. Since we have already shown that $\mu\leq \frac{1}{k}$ holds, \eqref{eq:apriori:def:h} and \eqref{eq:apriori:lem:crux} show that $\xi= \mu\left(1+O(k2^{-k})\right)$, which implies the first inequality of \eqref{eq:lem:apriori:1stmo:gamma:upperbound}. Finally, observe that
	\begin{equation*}
	\begin{split}
	    \left|\Lambda_{\xi}\left(\gamma(\xi)\right)\right|
	    &=\left|\log\E_{\mu}[\hat{v}(X)]+\log\E_{\xi}[e^{\gamma(\xi) X}]\right|\\
	    &=\left|\log\left(1-\P_{\mu}(X\geq k-1)-\E_{\mu}[h(X)]\right)+k\log(1-\xi+\xi e^{\gamma(\xi)})\right|\lesssim \frac{k}{2^k},
	\end{split}
	\end{equation*}
	concluding the proof of \eqref{eq:lem:apriori:1stmo:gamma:upperbound}.
	\end{proof}
	\begin{prop}\label{prop:apriori:1stmo:separating}
	In the regime $(1-\frac{21k}{2^{k+1}})m\leq m_\tns \leq m_\tns +\delta_1\leq m$ and $0\leq E_\fs \leq E_\fs +\delta_2 \leq \frac{7k}{2^k}m$, where $\delta_1,\delta_2$ are integers, \eqref{eq:apriori:1stmo:separating} holds.
	\end{prop}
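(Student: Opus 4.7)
The plan is to feed the asymptotic formula \eqref{eq:apriori:1stmo:f} into a two-point perturbation analysis. Write $\xi := E_\fs/(km_\tns)$ and $\xi':=(E_\fs+\delta_2)/(k(m_\tns+\delta_1))$, and set $G(\xi):=G_{\gamma(\xi),\xi}(\nu^\star_{\gamma(\xi),\xi}) = \gamma(\xi)k\xi - \Lambda_\xi(\gamma(\xi))$. Then \eqref{eq:apriori:1stmo:f} expresses the target ratio as a polynomial prefactor (ratios of $C(\nu^\star)$ and $(km_\tns\xi(1-\xi))^{1/2}$) times
\[
\exp\bigl(-m_\tns G(\xi) + (m_\tns+\delta_1)G(\xi')\bigr) = \exp\Bigl(\delta_1 G(\xi') + m_\tns\bigl(G(\xi')-G(\xi)\bigr)\Bigr).
\]
The goal is to show each of these two factors is at most $e^{O(\delta_1+\delta_2)}$ up to a $k$-dependent constant.

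The heart of the proof is computing $G'(\xi)$ via the envelope theorem. Differentiating the defining identity $\partial_\gamma \Lambda_\xi(\gamma(\xi)) = k\xi$, the contributions involving $\gamma'(\xi)$ cancel, leaving $G'(\xi) = k\gamma(\xi) - \partial_\xi \Lambda_\xi(\gamma)\big|_{\gamma=\gamma(\xi)}$. A direct computation of the $\xi$-derivative of the binomial log-moment generating function gives $\partial_\xi \Lambda_\xi(\gamma) = \E_{\textnormal{tilt}}[X/\xi - (k-X)/(1-\xi)]$, and at $\gamma = \gamma(\xi)$ the tilted mean of $X$ is $k\xi$, so this vanishes. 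Hence $G'(\xi) = k\gamma(\xi)$. By Lemma \ref{lem:apriori:1stmo:gamma:exist}, $|\gamma(\xi)| + |\Lambda_\xi(\gamma(\xi))| \lesssim k/2^k$ uniformly in the stated range $0<\xi \lesssim k/2^k$, which gives $|G(\xi)| \lesssim k/2^k$ and $|G'(\xi)|\lesssim k^2/2^k$.

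Using these bounds, $|\delta_1 G(\xi')| \lesssim \delta_1 k/2^k = O(\delta_1)$. For the interpolation term, a direct algebraic manipulation yields $m_\tns(\xi'-\xi) = (\delta_2 m_\tns - E_\fs \delta_1)/(k(m_\tns+\delta_1))$, so in the stated regime $m_\tns|\xi'-\xi| \lesssim \delta_2/k + \delta_1/2^k$. Combining with the mean value theorem and the bound on $G'$,
\[
m_\tns\,|G(\xi')-G(\xi)| \;\lesssim\; \frac{k^2}{2^k}\Bigl(\frac{\delta_2}{k}+\frac{\delta_1}{2^k}\Bigr) \;=\; O(\delta_1+\delta_2),
\]
which gives the desired exponential bound.

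The main obstacle is the polynomial prefactor ratio, particularly the factor $C(\nu^\star_{\gamma(\xi),\xi})$, which blows up with $k$ when $\xi$ is small because $\nu^\star(x) \sim (k\xi)^x/x!$ for $x \geq 1$. Since the proposition only requires $\lesssim_k$, the $k$-dependent blowup is harmless provided we can show the ratio $C(\nu^\star_\xi)/C(\nu^\star_{\xi'})$ is $O_k(1)$. This follows from continuity of $\nu^\star$ in $\xi$ on the region $\xi \in (0, Ck/2^k]$ where the support of $\nu^\star$ is the full set $\{0,\ldots,k-2\}$. The degenerate boundary case $E_\fs = 0$ (or $E_\fs+\delta_2=0$) must be handled separately by a direct combinatorial computation: when $\xi=0$, $f(m_\tns,0) = \hat v(0)^{m_\tns} = (1-(k+1)/2^{k-1})^{m_\tns}$, and $f(m_\tns+\delta_1, \delta_2)$ can be expanded by conditioning on which of the $m_\tns+\delta_1$ clauses receive $\fs$-edges, yielding the same $e^{O(\delta_1+\delta_2)}$ bound after using $\hat{v}(x) = 1 - h(x)$ from \eqref{eq:apriori:def:h}. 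The square-root prefactor $(km_\tns\xi(1-\xi))^{1/2}$ is handled similarly, with the ratio bounded away from zero and infinity in the non-degenerate regime and treated directly in the $\xi = 0$ corner.
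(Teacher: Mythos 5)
Your argument follows the paper's proof in all essentials: both start from the Laplace-type formula \eqref{eq:apriori:1stmo:f}, both exploit the envelope cancellation $\partial_\theta\Lambda_{\xi}(\gamma)=0$ at $\theta=\xi$, $\gamma=\gamma(\xi)$ (stated in the paper as \eqref{eq:apriori:1stmo:compute:partial:theta}), and both invoke Lemma \ref{lem:apriori:1stmo:gamma:exist} to bound $|\gamma(\xi)|$ and $|\Lambda_\xi(\gamma(\xi))|$ by $O(k2^{-k})$. The only structural difference is that the paper interpolates along the joint path $t\mapsto (m_t,\xi_t)$ and bounds $\sup_t|f'(t)|$, whereas you split the exponent as $\delta_1 G(\xi')+m_\tns\bigl(G(\xi')-G(\xi)\bigr)$ and apply the mean value theorem in $\xi$ alone; these are equivalent computations and both yield the required $O(\delta_1+\delta_2)$ bound for the exponential factor. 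The treatment of $E_\fs=0$ is also fine, though the paper disposes of it in one line via $f(m_\tns,0)\le f(m_\tns,1)$.

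The one step that does not hold as written is your claim that the prefactor ratio $C(\nu^\star_{\gamma(\xi),\xi})/C(\nu^\star_{\gamma(\xi'),\xi'})$ is $O_k(1)$ by continuity. It is not uniformly bounded in $\delta_1,\delta_2$, and continuity on $(0,Ck2^{-k}]$ cannot give a uniform bound because that interval is not compact and $C(\nu^\star_{\gamma(\xi),\xi})\to\infty$ as $\xi\downarrow 0$ (indeed $\nu^\star(x)\asymp (k\xi)^x/x!$ for $x\ge 1$, as you note). Concretely, taking $E_\fs=1$ and $\delta_2$ of order $km2^{-k}$ makes $\xi'/\xi$ of order $m$, and the ratio then behaves like $(\xi'/\xi)^{(k-1)(k-2)/4}=m^{\Theta(k^2)}$. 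What is true, and all that the proposition requires, is the weaker bound $\lesssim_k e^{C(\delta_1+\delta_2)}$: write the ratio out explicitly as $(\xi'/\xi)^{(k-1)(k-2)/4}$ times factors of the form $\bigl((1-\xi')/(1-\xi)\bigr)^{O(k^2)}$ and $\exp\bigl(O(k)\cdot|\Lambda_\xi(\gamma(\xi))|\bigr)$, which are $O(1)$ by Lemma \ref{lem:apriori:1stmo:gamma:exist}, and then use $\xi'/\xi\le 1+\delta_2/E_\fs\le 1+\delta_2$ together with $(1+\delta_2)^{k^2/4}\lesssim_k e^{\delta_2}$ (and symmetrically $\xi/\xi'\le 1+\delta_1/m_\tns$). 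This is exactly how the paper handles the prefactor; with that substitution your proof is complete.
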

	\begin{proof}
	We may assume $E_\fs \neq 0$ since $\frac{f(m_\tns,0)}{f(m_\tns,1)}=\frac{1-(k+1)/2^{k-1}}{1-1/2^{k-2}}\leq 1$. Let $m_\tns^\prime \equiv m_\tns+\delta_1, \xi\equiv \frac{E_\fs}{km_\tns}, \xi^\prime \equiv \frac{E_\fs+\delta_2}{km_\tns^\prime}, \gamma\equiv\gamma(\xi)$ and $\gamma^\prime\equiv \gamma(\xi^\prime)$. Note that $0<\xi,\xi^\prime \leq 10/2^k$ and $m_\tns \xi\leq m_\tns^\prime \xi^\prime$, so \eqref{eq:apriori:1stmo:f} shows
	\begin{equation}\label{eq:apriori:1stmo:prop:intermediate}
	    \frac{f(m_\tns,E_\fs)}{f(m_\tns+\delta_1,E_\fs+\delta_2)}
	    \lesssim_k \exp\Big\{m_\tns\Big(k\gamma \xi -\Lambda_{\xi}(\gamma)\Big)-m_\tns^\prime \Big(k\gamma^\prime \xi^\prime-\Lambda_{\xi^\prime}(\gamma^\prime)\Big)\Big\}
	\end{equation}
	Let $m_t\equiv m_\tns+t \delta_1,\xi_t \equiv \frac{E_\fs+t\delta_2}{km_t},\gamma_t \equiv \gamma(\xi_t)$ and $\bar{f}(t) \equiv m_t\left(k\gamma_t\xi_t-\Lambda_{\xi_t}(\gamma_t)\right)$ for $0\leq t\leq 1$. Then,
	\begin{equation}\label{eq:apriori:1stmo:compute:partial:theta}
	m_\tns\Big(k\gamma \xi -\Lambda_{\xi}(\gamma)\Big)-m_\tns^\prime \Big(k\gamma^\prime \xi^\prime-\Lambda_{\xi^\prime}(\gamma^\prime)\Big)=\bar{f}(0)-\bar{f}(1)\leq \sup_{0\leq t\leq 1}\left|\frac{d\bar{f}(t)}{dt}\right|.
	\end{equation}
	We now aim to upper bound $\left|\frac{d\bar{f}(t)}{dt}\right|$. Note that we can compute $\partial_{\xi}\Lambda_{\xi_t}(\gamma_t)$ by
	\begin{equation*}
	   \partial_{\xi}\Lambda_{\xi_t}(\gamma_t)=\xi_t^{-1}\partial_{\gamma}\Lambda_{\xi_t}(\gamma_t)-(1-\xi_t)^{-1}\left(k-\partial_{\gamma}\Lambda_{\xi_t}(\gamma_t)\right)=0,
	\end{equation*}
	where the last equality is because $\partial_{\gamma}\Lambda_{\xi_t}(\gamma_t)=k\xi_t$. Hence, $\frac{d\bar{f}(t)}{dt}$ can be computed by
	\begin{equation}\label{eq:apriori:1stmo:prop:intermediate-2}
	    \left|\frac{d\bar{f}(t)}{dt}\right|=\left|\frac{dm_t}{dt}\left(k\gamma_t\xi_t-\Lambda_{\xi_t}(\gamma_t)\right)+m_t k \gamma_t \frac{d\xi_t}{dt}\right|=\Big|\delta_2\gamma_t-\delta_1 \Lambda_{\xi_t}(\gamma_t)\Big|\lesssim \frac{k}{2^k}(\delta_1+\delta_2)\,,
	\end{equation}
	where the inequality is due to Lemma \ref{lem:apriori:1stmo:gamma:exist}. Therefore, \eqref{eq:apriori:1stmo:prop:intermediate}-\eqref{eq:apriori:1stmo:prop:intermediate-2} conclude the proof.
	\end{proof}
	\subsection{Second moment}\label{subsec:apriori:secmo}
	Given a \textsc{nae-sat} instance $\GGG$ and a pair projected coloring $\bupi$, let $\pF=\pF(\bupi)$ be the union-free subgraph of $\bupi$. Here, unlike for the first moment, we view $\pF=\pF(\bupi)$ as the labeled subgraph, which is the disjoint union of projected union components $\upp$ of $\bupi$. That is, the boundary half-edge $e$ of a projected union component is labeled by $\bpi_e\in \Omega_{\textnormal{pj},2}^\fs$, and the inner full-edge $e$ of a projected union component is labeled by $(\bpi_e,\tL_e)\in\Omega_{\textnormal{pj},2}^\ff\times\{0,1\}$. For the first moment, such labeling scheme was redundant because every boundary half-edge adjacent to a variable (resp. clause) have the spin $\fs$ (resp. $\bb$). However, there are more variety of spins of boundary half-edges in the second moment. 
 
 We encode $\pF$ by the matching $M_{\pF}$ between the half-edges adjacent to union-free variables and pair-nonseparating clauses, where the half-edges involved in the matching are labeled by $(\bpi,\tL)\in\Omega_{\textnormal{pj},2}^\ff\times\{0,1\}$ while the rest of the half-edges are labeled by $\bpi\in \Omega_{\textnormal{pj},2}^\fs$. Moreover, let $\pHdot = \pHdot(\bupi)$ and $\pHhat=\pHhat(\bupi)$ denote the empirical distribution of pair-frozen variables and pair-forcing variables of $\bupi$ respectively, i.e. 
	\begin{equation*}
	\begin{split}
	    \pHdot(\butau)&=\frac{1}{n}|\{v\in V:\bupi_{\delta v}=\butau\}|\quad\textnormal{for}\quad \butau \in \prescript{}{2}\Omega_{\circ}\\
	    \pHhat(\butau)&=\frac{1}{m}|\{a\in F:\bupi_{\delta a}=\butau\}|\quad\textnormal{for}\quad \butau \in \prescript{}{2}\Omega_{\textnormal{fc}},
	\end{split}
	\end{equation*}
	where $\prescript{}{2}\Omega_{\circ}\equiv \{\rr\rr^=, \rr\rr^{\neq}, \bb\bb^=, \bb\bb^{\neq}, \rr\bb^=, \rr\bb^{\neq}, \bb\rr^=,\bb\rr^{\neq}\}^{d}\backslash\left(\{\bb\bb^=, \bb\bb^{\neq},\bb\rr^=,\bb\rr^{\neq}\}^{d}\sqcup\{\bb\bb^=, \bb\bb^{\neq}, \rr\bb^=, \rr\bb^{\neq}\}^{d}\right)$ and
	$\prescript{}{2}\Omega_{\textnormal{fc}}\equiv \textnormal{Per}\left(\rr\rr^{=}, (\bb\bb^=)^{k-1}\right)\sqcup \textnormal{Per}\left(\rr\rr^{\neq}, (\bb\bb^{\neq})^{k-1}\right)\sqcup \textnormal{Per}\left(\rr\bb^{=},\bb\rr^=, (\bb\bb^{\neq})^{k-2}\right)\sqcup \textnormal{Per}\left(\rr\bb^{\neq},\bb\rr^{\neq}, (\bb\bb^{=})^{k-2}\right)$. Denote by $\bZ^{2}_{\ula}[\pHdot,\pHhat,\pF]$ the contribution to $\bZ^{2}_{\ula}$ from pair projected configurations $\bupi$ with $\pHdot[\bupi]=\pHdot, \pHhat[\bupi]=\pHhat$ and $\pF[\bupi]=\pF$. Observe that the overlap defined by Definition \ref{def:overlapofcol} is determined by $\pHdot$ and $\pF$, which we denote by $\zeta(\pHdot,\pF)$.
	
	We now aim to compute $\bZ^{2}_{\ula}[\pHdot,\pHhat,\pF]$ using a similar matching scheme as the one used in \eqref{eq:apriori:1stmo:matching}. Note that the total mass of $\pHhat$ determines the number of pair-forcing clauses, which we denote by $m_{\textnormal{fc}}\equiv |F_{\textnormal{fc}}|$, where $F_{\textnormal{fc}}$ is the set of pair-forcing clauses. Let $n_\tsf$ and $m-m_\ns=|F_\sep|$ be the number of union-free variables and pair-separating clauses respectively, determined by $\pF$. Moreover, let $\bw(\pF)^{\ula} \equiv \prod_{\upp \in \pF} \ppjw(\upp)^{\ula}$, where $ \ppjw(\ppp)^{\ula}$ is defined in \eqref{eq:def:weight:proj:comp:2ndmo} and $\upp\in \pF$ denotes the projected union component $\upp$ in $\pF$. Similar to \eqref{eq:apriori:techinical-1} and \eqref{eq:apriori:1stmo:matching}, we can use Lemma \ref{lem:weight:projected} to have
	\begin{equation}\label{eq:apriori:2ndmo:matching}
	    \E \bZ^{2}_{\ula}[\pHdot,\pHhat,\pF]=2^{n-n_\tsf}\bw(\pF)^{\ula} c(n_\tsf,m_\ns, \pHdot,\pHhat)\E\left[\prod_{a\in F_\sep}\hat{v}_2(\bupi_{\delta a})\one\left\{B_1 \cap B_2 \cap B_3 \cap B_4\right\}\right],
	\end{equation}
	where $c(n_\tsf,m_\ns, \pHdot,\pHhat)\equiv \binom{n-n_\tsf}{n\pHdot}\binom{m-m_\ns}{m_{\textnormal{fc}}}\binom{m_{\textnormal{fc}}}{m_{\textnormal{fc}}\pHhat}$, and the expectation in the \textsc{rhs} is with respect to uniform matching of $nd$ half-edges with empirical distribution determined by $\pF$ and $\pHdot$, and
	\begin{equation*}
	\begin{split}
	    B_1 &\equiv \{\textnormal{Free edges are matched according to $M_{\kF}$ and the half-edges adjacent to pair-forcing}\\
	    &\textnormal{clauses are matched to the half-edges adjacent to pair-frozen variables with the same color}\},\\
	    B_2 &\equiv \{\textnormal{Boundary half-edges of $\pF$ adjacent to clauses are matched to half-edges adjacent to}\\
	    &\textnormal{pair-frozen variables with the same color}\},\\
	    B_3 &\equiv \{\textnormal{Clauses, which are not pair-forcing, have at most one red edge}\},\\
	    B_4 &\equiv \{\textnormal{$\hat{v}_2(\bupi)_{\delta a} \neq 0$ for $a\in F_\sep$}\}.
	\end{split}
	\end{equation*}
	Let $\bar{H}_\circ(\cdot)\equiv d^{-1}\sum_{\butau \in \prescript{}{2}{\Omega_\circ}}\pHdot(\butau)\sum_{i=1}^{d}\one\{\btau_i=\cdot \}$ and $\bar{H}_{\textnormal{fc}}(\cdot)\equiv d^{-1}\sum_{\butau \in \prescript{}{2}{\Omega_{\textnormal{fc}}}}\pHhat(\butau)\sum_{i=1}^{k}\one\{\btau_i=\cdot \}$ be the empirical distribution of colors adjacent to pair-frozen variables and pair-forcing clauses respectively. Let $E_\tsf$ be the number of free edges, determined by $\pF$. Then, we have
	\begin{equation}\label{eq:apriori:2ndmo:B1}
	    \P(B_1)=\frac{\prod_{\bsigma\in  \prescript{}{2}{\Omega_{\textnormal{fz}}}}\left(nd\bar{H}_{\circ}(\bsigma)\right)_{nd\bar{H}_{\textnormal{fc}}(\bsigma)}}{(nd)_{km_{\textnormal{fc}}+E_{\tsf}}},
	\end{equation}
	where $\prescript{}{2}{\Omega_{\textnormal{fz}}}\equiv \{\rr\rr^=, \rr\rr^{\neq}, \bb\bb^=, \bb\bb^{\neq}, \rr\bb^=, \rr\bb^{\neq}, \bb\rr^=,\bb\rr^{\neq}\}$. Let $g_{\partial}(\bsigma)\equiv g_{\partial,\pF}(\bsigma)$ be the the number of clause-adjacent boundary edges of $\pF$ for $\bsigma \in \prescript{}{2}{\Omega_{\textnormal{fz}}}\backslash \{\rr\rr^=, \rr\rr^{\neq}\}$. It is then straightforward to compute
	\begin{equation*}
	    \P(B_2\mid B_1)=\frac{\prod_{\bsigma\in \prescript{}{2}{\Omega_{\textnormal{fz}}}\backslash \{\rr\rr^=, \rr\rr^{\neq}\} }\left(nd\bar{H}_{\circ}(\bsigma)-nd\bar{H}_{\textnormal{fc}}(\bsigma)\right)_{g_{\partial}(\bsigma)}}{(nd-km_{\textnormal{fc}}-E_\tsf)_{km_\ns-E_\tsf}}.
	\end{equation*}
	Let $g(\rr)$ denote the number of unmatched red edges conditioned on the event $B_1 \cap B_2$. Note that $$g(\rr)= \sum_{\bsigma\in \{\fs\rr,\rr\fs,\rr\bb^=, \rr\bb^{\neq}, \bb\rr^=,\bb\rr^{\neq}\}}g_{\rr}(\bsigma),$$ where $g_{\rr}(\bsigma)$ for $\bsigma\in\{\fs\rr,\rr\fs\}$ is the number of variable-adjacent boundary half-edges of $\pF$ colored $\bsigma$ and $g_{\rr}(\bsigma)$ for $\bsigma\in\{\rr\bb^=, \rr\bb^{\neq}, \bb\rr^=,\bb\rr^{\neq}\}$ is the number of unmatched  $\bsigma$-half-edges adjacent to pair-frozen variables, conditioned on the event $B_1 \cap B_2$. Denote by $m_{\tns}\equiv m-m_{\ns}-m_{\textnormal{fc}}$ the number of pair-separating, but non-pair-forcing, clauses. Then, we can compute
	\begin{equation*}
	    \P(B_3\mid B_1\cap B_2) =\frac{k^{g(\rr)}(m_{\tns})_{g(\rr)}}{(km_{\tns})_{g(\rr)}}.
	\end{equation*}
	Finally, define $\underline{E}\equiv \left(E(\bsigma)\right)_{\bsigma \in \{\bb\bb^{=},\bb\bb^{\neq},\bb\fs,\fs\bb,\fs\fs\}}$, where $E(\bsigma)$ is the number of unmatched $\bsigma$-half-edges conditioned on the event $B_1\cap B_2 \cap B_3$. We note that $\underline{E}$ is determined by $\pHdot, \pHhat$ and $\pF$. Recalling the fact if $a\in F_{\sep}$ is forcing in either copy, then $\hat{v}_2(\bupi_{\delta a})=2^{-k+1}$, we can write
	\begin{equation}\label{eq:apriori:2ndmo:B4}
	\begin{split}
	    \E\left[\prod_{a\in F_\sep}\hat{v}_2(\bupi_{\delta a})\one\left\{B_1 \cap B_2 \cap B_3 \cap B_4\right\}\right] &= 2^{-(k-1)m_{\rr}}\E\left[\prod_{a\in F_\sep\backslash F_{\textnormal{fc}}}\hat{v}_2(\bupi_{\delta a})\one\left\{B_1 \cap B_2 \cap B_3 \cap B_4\right\}\right]\\
	    &\equiv 2^{-(k-1)m_{\rr}}f_2(m_{\tns}, \mathbf{g}_{\rr},\underline{E}),
	\end{split}
	\end{equation}
	where $\mathbf{g}_{\rr}\equiv \left(g_{\rr}(\bsigma)\right)_{\bsigma \in \{\fs\rr,\rr\fs,\rr\bb^=, \rr\bb^{\neq}, \bb\rr^=,\bb\rr^{\neq}\}}$ and $m_{\rr}$ denotes the total number of clauses containing red edges, i.e. forcing in either copy, conditioned on $B_1\cap B_2\cap B_3$. Note that $m_{\rr}$ is determined by $\pHdot$ and $\pHhat$. Therefore, \eqref{eq:apriori:2ndmo:B1}-\eqref{eq:apriori:2ndmo:B4} altogether show
	\begin{multline}\label{eq:apriori:2ndmo:moment:final}
	    \E \bZ^{2}_{\ula}[\pHdot,\pHhat,\pF] =2^{n-n_\tsf-(k-1)m_{\rr}}c(n_\tsf,m_\ns, \pHdot,\pHhat)\bw(\pF)^{\ula}\frac{\prod_{\bsigma\in  \prescript{}{2}{\Omega_{\textnormal{fz}}}}\left(nd\bar{H}_{\circ}(\bsigma)\right)_{nd\bar{H}_{\textnormal{fc}}(\bsigma)}}{(nd)_{km_{\textnormal{fc}}+E_{\tsf}}}\\
	    \times \frac{\prod_{\bsigma\in \prescript{}{2}{\Omega_{\textnormal{fz}}}\backslash \{\rr\rr^=, \rr\rr^{\neq}\} }\left(nd\bar{H}_{\circ}(\bsigma)-nd\bar{H}_{\textnormal{fc}}(\bsigma)\right)_{g_{\partial}(\bsigma)}}{(nd-km_{\textnormal{fc}}-E_\tsf)_{km_\ns-E_\tsf}}\frac{k^{g(\rr)}(m_{\tns})_{g(\rr)}}{(km_{\tns})_{g(\rr)}}f_2(m_{\tns}, \mathbf{g}_{\rr},\underline{E})
	\end{multline}
	\subsubsection{Exponential decay of union-free tree frequencies}\label{subsubsec:exp:decay:2ndmo} Having \eqref{eq:apriori:2ndmo:moment:final} in hand, we proceed in the same fashion as in the first moment. Let $\pF_\circ$ be a free subgraph in pair projected coloring, which does not have any isolated union-free variable nor any projected union component with $a$ variables and $b$ clauses. Denote a projected union component with $a$ variables and $b$ clauses by union $(a,b)$-component. Let $\prescript{}{2}{\Omega}^{a,b}_{\ell,A}(n_\tsf; \kF_\circ)$ be the set of free subgraphs $\kF$ in pair projected coloring such that
	\begin{itemize}
	    \item $\kF$ contains $\kF_\circ$ and has $|V(\kF)|=n_\tsf$ variables.
	    \item $\kF\backslash \kF_\circ$ consists of $\ell$ union $(a,b)$- components with all remaining projected union components having a single free variable.
	    \item Union $(a,b)$-components have $q\equiv \ell(a+b-1)+A$ internal edges. 
	\end{itemize}
	Define $\prescript{}{2}{\Phi}_{\ell,A}^{a,b}:\prescript{}{2}{\Omega}^{a,b}_{\ell,A}(n_\tsf; \kF_\circ)\to\prescript{}{2}{\Omega}^{a,b}_{0,0}(n_\tsf; \kF_\circ)$ by the following: for $\pF \in \prescript{}{2}{\Omega}^{a,b}_{\ell,A}(n_\tsf; \kF_\circ)$, let $\upp_1,...,\upp_\ell$ be the $\ell$ union $(a,b)$-components. For each $\upp_i, 1\leq i \leq \ell$, delete all clauses of $\upp_i$ and all half-edges adjacent to $\upp_i$. Then, the variables of $\upp_i$ become isolated with $d$ half-edges adjacent to them and one of the $d$ half-edges must contain $\ff$ color in at least one copy. Change all $\ff$ to $\fs$ in each half-edge, e.g. $\ff\sigma$ for $\sigma \in \{\rr,\bb,\fs\}$ is changed to $\fs\sigma$. Hence, $\upp_i$ is changed to $a$ isolated projected union components with valid neighbor colors, which we denote by $\Phi(\upp_i)$. Note that the each isolated component of $\Phi(\upp_i)$ has boundary colors, which have marginal $\fs^{d}$ in at least one of the copy. Then, $\prescript{}{2}{\Phi}_{\ell,A}^{a,b}(\pF)$ is defined by the resulting free subgraph, i.e. $\prescript{}{2}{\Phi}_{\ell,A}^{a,b}(\pF) \equiv (\pF\backslash \cup_{i=1}^{\ell}\upp_i) \cup \left(\cup_{i=1}^{\ell}\Phi(\upp_i)\right)$. We make the following observations on $\prescript{}{2}{\Phi}_{\ell,A}^{a,b}$:
	\begin{itemize}
	    \item For each union-free variable $v$ in the free subgraph $\prescript{}{2}{\Phi}_{\ell,A}^{a,b}(\pF)$, $v$ is frozen in first copy if and only if $v$ is frozen in first copy for $\pF$. The same holds for the second copy, so we have
	    \begin{equation*}
	        \zeta(\pHdot,\pF)=\zeta\left(\pHdot,\prescript{}{2}{\Phi}_{\ell,A}^{a,b}(\pF)\right)
	    \end{equation*}
	    \item For each union $(a,b)$-component $\upp_i$ ($1\leq i \leq \ell$) in $\pF$, there are at most $2^b$ many union-free component corresponding to $\pF$, i.e. $|(\textsf{R}_2)^{-1}(\upp_i)|\leq 2^b$. Hence, $\ppjw(\upp_i)^{\ula}\leq \frac{2^b}{2^{kb}}\ppjw\left(\Phi(\upp_i)\right)^{\ula}$. Therefore,
	    \begin{equation}\label{eq:apriori:2ndmo:F:PhiF:ratio}
	        \bw(\pF)^{\ula} \leq 2^{-(k-1)\ell b}\bw\left(\prescript{}{2}{\Phi}_{\ell,A}^{a,b}(\pF)\right)^{\ula}. 
	    \end{equation}
	\end{itemize}
	
	The following lemma is an analog of Lemma \ref{lem:apriori:1stmo:comparison} for the second moment.
	\begin{lemma}\label{lem:apriori:2ndmo:comparison}
	For $k\geq k_0$, $n_\tsf \leq 14n/2^k$, $E_\rr \leq 14nd/2^k$,$m/n \in [\alpha_{\textsf{lbd}}, \alpha_{\textsf{ubd}}]$, and $n\geq n_0(k)$, the following inequality holds. For $\pF^\prime \in \prescript{}{2}{\Omega}^{a,b}_{0,0}(n_\tsf; \kF_\circ)$ with $\zeta(\pHdot, \pF^\prime)\in [\frac{1}{2}-\frac{k^2}{2^{k/2}},\frac{1}{2}+\frac{k^2}{2^{k/2}}]$
	\begin{equation*}
	\begin{split}
	    \prescript{}{2}{\textnormal{\textbf{R}}}^{a,b}_{\ell,A}(\dot{H}_\circ,\pHhat, \pF^\prime) &\equiv \sup_{\pF \in \left(\prescript{}{2}{\Phi}_{\ell,A}^{a,b}\right)^{-1}(\pF^\prime)}\left|\left(\prescript{}{2}{\Phi}_{\ell,A}^{a,b}\right)^{-1}(\pF^\prime)\right|\frac{\E \bZ^{2}_{\ula}[\dot{H}_\circ,\pHhat,\pF]}{\E \bZ^{2}_{\ula}[\dot{H}_\circ,\pHhat, \pF^\prime]}\\
	    &\lesssim_{k} \left(\frac{n}{k q}\left(\frac{Ck}{2^k}\right)^{a}(Ck)^{b}\right)^{\ell}\left(\frac{C(a\land b)k}{n}\right)^{A},
	\end{split}
	\end{equation*}
	where $C$ is a universal constant.
	\end{lemma}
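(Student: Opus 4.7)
The plan is to follow the blueprint of Lemma \ref{lem:apriori:1stmo:comparison} while exploiting the near-independence hypothesis $|\rho(\pHdot,\pF^\prime) - 1/2| \leq k^2/2^{k/2}$ together with the pair-model moment formula \eqref{eq:apriori:2ndmo:moment:final}. I would first write the ratio $\E\bZ^{2}_{\ula}[\pHdot,\pHhat,\pF]/\E\bZ^{2}_{\ula}[\pHdot,\pHhat,\pF^\prime]$ for $\pF \in (\prescript{}{2}{\Phi}_{\ell,A}^{a,b})^{-1}(\pF^\prime)$ by taking the quotient of \eqref{eq:apriori:2ndmo:moment:final}. Crucially, the statistics $n_{\tsf}$, $m_{\textnormal{fc}}$, $\pHdot$, $\pHhat$, and consequently $m_{\rr}$, $\bar{H}_\circ$, $\bar{H}_{\textnormal{fc}}$, and $\mathbf{g}_{\rr}$ are all preserved by $\prescript{}{2}{\Phi}_{\ell,A}^{a,b}$, since the transformation only rearranges free half-edges and changes some $\ff$ labels to $\fs$ at detached variables. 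Hence nearly every Pochhammer factor cancels; what remains is the weight ratio $\bw(\pF)^{\ula}/\bw(\pF^\prime)^{\ula} \leq 2^{-(k-1)\ell b}$ from \eqref{eq:apriori:2ndmo:F:PhiF:ratio}, together with correction factors for the $q = \ell(a+b-1)+A$ extra free edges and the $\ell b$ new non-pair-separating clauses.

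Second, the pair separating-clause factor $f(m_{\tns},\mathbf{g}_{\rr},\underline{E})$ transforms under $\prescript{}{2}{\Phi}_{\ell,A}^{a,b}$ by a shift of $m_{\tns}$ by $\ell b$ and of the $\underline{E}$ coordinates (corresponding to $\fs\fs,\bb\fs,\fs\bb$) by $O(q)$. I would establish the pair analogue of Proposition \ref{prop:apriori:1stmo:separating}, namely $f(m_\tns,\mathbf{g}_{\rr},\underline{E})/f(m_\tns+\delta_1,\mathbf{g}_{\rr},\underline{E}+\underline{\delta}_2) \lesssim_k e^{O(\delta_1 + |\underline{\delta}_2|)}$, under the near-independence regime. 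Combining this with the Pochhammer simplifications gives the pair-model analogue of \eqref{eqn:apriori:1stmo:ratio:final}, namely
\begin{equation*}
\frac{\E\bZ^{2}_{\ula}[\pHdot,\pHhat,\pF]}{\E\bZ^{2}_{\ula}[\pHdot,\pHhat,\pF^\prime]} \lesssim_k \frac{1}{2^{k\ell b}}\Bigl(\frac{1}{nd}\Bigr)^{q}e^{O(q)}.
\end{equation*}
To count $|(\prescript{}{2}{\Phi}_{\ell,A}^{a,b})^{-1}(\pF^\prime)|$, I mimic the construction in \eqref{eq:apriori:Omega:upperbound}: from the pool of isolated union-free variables of $\pF^\prime$ having an $\ff$ coordinate in the correct copies, pick $\ell a$ variables and recruit $\ell b$ pair-separating clauses to be converted into non-pair-separating; split these into $\ell$ groups; specify the $q$ internal half-edge pairings inside each group; and assign literals. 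The resulting multinomial bound matches \eqref{eq:apriori:ratio:omega}, giving an $e^{O(q)}n^{\ell(a+b)}d^{q+\ell b}k^{q-\ell b}(a\wedge b)^{A} 2^{-k\ell a}q^{-\ell}$ factor. Multiplying with the moment ratio and using $q = \ell(a+b-1)+A$ yields the stated estimate exactly as in \eqref{eq:lem:apriori:1stmo:comparison:final}.

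The main obstacle is the pair version of Proposition \ref{prop:apriori:1stmo:separating}. In the single-copy case, the proof reduced to a one-dimensional Lagrangian optimization over the empirical distribution of the number of $\fs$-edges per pair-separating non-forcing clause, with the key uniform bound $|\gamma(\xi)|,|\Lambda_\xi(\gamma(\xi))| \lesssim k/2^k$ of Lemma \ref{lem:apriori:1stmo:gamma:exist}. In the pair model, the analogous Laplace problem is three-dimensional, indexed by the per-clause counts of $\fs\fs$, $\bb\fs$, and $\fs\bb$ edges. The near-independence constraint $|\rho(\pHdot,\pF^\prime)-1/2|\leq k^2/2^{k/2}$ forces the marginal proportions in the two copies to be almost independent, so the three-dimensional tilt factorizes up to an $O(k^2/2^{k/2})$ error into two copies of the one-dimensional problem. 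Invoking Lemma \ref{lem:apriori:1stmo:gamma:exist} in each coordinate gives $O(k/2^k)$ bounds on each Lagrange multiplier, and then the interpolation argument of Proposition \ref{prop:apriori:1stmo:separating} adapts verbatim, with the partial derivatives along the one-parameter deformation again controlled by $O(k/2^k)$ times the parameter increment, thus producing the required $e^{O(\delta_1+|\underline{\delta}_2|)}$ bound.
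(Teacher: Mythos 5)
Your overall strategy is the same as the paper's: quotient the pair-model moment formula \eqref{eq:apriori:2ndmo:moment:final}, bound the weight ratio by \eqref{eq:apriori:2ndmo:F:PhiF:ratio}, control the separating-clause factor by a pair analogue of Proposition \ref{prop:apriori:1stmo:separating}, and count the inverse image as in \eqref{eq:apriori:Omega:upperbound}. However, there is a genuine gap in the two central estimates, and it is exactly where the near-independence hypothesis does its real work.

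First, your count of $\big|(\prescript{}{2}{\Phi}_{\ell,A}^{a,b})^{-1}(\pF^\prime)\big|$ is too small by a factor of $2^{k\ell b}$. Unlike the single-copy projected model, where a clause-adjacent boundary half-edge of a projected component carries only the label $\bb$, in the pair model each reconstructed non-pair-separating clause has $\sim k-2$ boundary half-edges whose colors must be chosen from $\{\bb\bb^{=},\bb\bb^{\neq},\rr\bb,\bb\rr,\dots\}$, contributing up to $2^{k-2}+2(k-2)$ choices per clause, i.e.\ $2^{k\ell b}e^{O(q)}$ in total (cf.\ \eqref{eq:apriori:2ndmo:Phi:inverse:bound}). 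An upper bound on a counting quantity that omits this factor is simply false. Second, your assertion that ``nearly every Pochhammer factor cancels'' misses the boundary-matching term (the paper's term (C) in \eqref{eq:apriori:2ndmo:long}): each of those $\approx k\ell b$ newly colored boundary half-edges must be matched to a frozen-variable half-edge of the \emph{same} pair-color, and the hypothesis $|\rho(\pHdot,\pF^\prime)-\tfrac12|\le k^2 2^{-k/2}$ forces $\bar{H}_\circ(\bb\bb^{=}),\bar{H}_\circ(\bb\bb^{\neq})\le \tfrac12+k^2 2^{-k/2}$, so this matching costs an extra $(1/2)^{k\ell b}e^{O(q)}$. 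The true ratio is therefore $\lesssim_k 2^{-2k\ell b}(nd)^{-q}e^{O(q)}$ as in \eqref{eq:apriori:2ndmo:ratio}, not $2^{-k\ell b}(nd)^{-q}e^{O(q)}$. Your two errors are exact reciprocals and cancel in the product, so your final display is numerically correct, but the derivation does not establish it, and it obscures the fact that without near-independence (e.g.\ in the near-identical regime) the matching cost disappears and the lemma fails.

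A smaller point: under the disassembly the $\bb\bb^{=}$ and $\bb\bb^{\neq}$ coordinates of $\underline{E}$ shift by up to $k\ell b$, not only the $\fs$-type coordinates by $O(q)$. A separating-constraint bound of the form $e^{O(\|\underline{\delta}\|_1)}$ would then contribute $e^{O(k\ell b)}$, which cannot be absorbed into $(Ck)^{\ell b}$; you need the $O(k^4 2^{-k/2})$ prefactor on the exponent as in Proposition \ref{prop:apriori:2ndmo:separating}. Your closing paragraph suggests you expect derivatives of size $O(k/2^k)$, which would suffice, but the bound you actually state in the second paragraph is the weak form and should be corrected.
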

	\begin{proof}
	Given $\pF \in \left(\prescript{}{2}{\Phi}_{\ell,A}^{a,b}\right)^{-1}(\pF^\prime)$, we first compute $\frac{\E \bZ^{2}_{\ula}[\dot{H}_\circ,\pHhat,\pF]}{\E \bZ^{2}_{\ula}[\dot{H}_\circ,\pHhat, \pF^\prime]}$. By \eqref{eq:apriori:2ndmo:moment:final}, we have
	\begin{equation}\label{eq:apriori:2ndmo:long}
	\begin{split}
	     &\frac{\E \bZ^{2}_{\ula}[\dot{H}_\circ,\pHhat,\pF]}{\E \bZ^{2}_{\ula}[\dot{H}_\circ,\pHhat, \pF^\prime]}\\ &\leq\underbrace{\frac{\bw(\pF)^{\ula}}{\bw(\pF^\prime)^{\ula}}}_\text{(A)}~~\underbrace{\frac{(nd)_{km_{\textnormal{fc}}+E_{\tsf}^\prime}}{(nd)_{km_{\textnormal{fc}}+E_{\tsf}}}}_\text{(B)}~~\overbrace{\frac{\frac{\prod_{\bsigma\in \prescript{}{2}{\Omega_{\textnormal{fz}}}\backslash \{\rr\rr^=, \rr\rr^{\neq}\} }\left(nd\bar{H}_{\circ}(\bsigma)-nd\bar{H}_{\textnormal{fc}}(\bsigma)\right)_{g_{\partial}(\bsigma)}}{(nd-km_{\textnormal{fc}}-E_\tsf)_{km_\ns-E_\tsf}}}{\frac{\prod_{\bsigma\in \prescript{}{2}{\Omega_{\textnormal{fz}}}\backslash \{\rr\rr^=, \rr\rr^{\neq}\} }\left(nd\bar{H}_{\circ}(\bsigma)-nd\bar{H}_{\textnormal{fc}}(\bsigma)\right)_{g_{\partial}^\prime(\bsigma)}}{(nd-km_{\textnormal{fc}}-E_\tsf^\prime)_{km_\ns^\prime-E_\tsf^\prime}}}}^\text{(C)}~~\overbrace{\frac{\frac{k^{g(\rr)}(m_{\tns})_{g(\rr)}}{(km_{\tns})_{g(\rr)}}}{\frac{k^{g^\prime(\rr)}(m_{\tns}^\prime)_{g^\prime(\rr)}}{(km_{\tns}^\prime)_{g^\prime(\rr)}}}}^\text{(D)}~~\overbrace{\frac{f_2(m_{\tns}, \mathbf{g}_{\rr},\underline{E})}{f_2(m_{\tns}^\prime, \mathbf{g}_{\rr}^\prime,\underline{E}^\prime)}}^\text{(E)},
	\end{split}
	\end{equation}
	where $E_\tsf^\prime, m_\ns^\prime, g_{\partial}^\prime, m_{\tns\tns}^\prime, g^\prime(\rr), \mathbf{g}_{\rr}^\prime$ and $\underline{E}^\prime$ correspond to $\pF^\prime$. We make the following observations:
	\begin{itemize}
	\item $E_\tsf^\prime =E_\tsf -q,m_\ns^\prime = m_\ns -\ell b$ and $m_{\tns}^\prime=m_{\tns}+\ell b$.
	\item $g_{\partial}(\bsigma)-g_{\partial}^\prime(\bsigma)\geq 0$ for $\bsigma\in \prescript{}{2}{\Omega}_{\textnormal{fz}}\backslash \{\rr\rr^{=},\rr\rr^{\neq}\}$ and $\sum_{\bsigma\in \prescript{}{2}{\Omega}_{\textnormal{fz}}\backslash \{\rr\rr^{=},\rr\rr^{\neq}\}}\left(g_{\partial}(\bsigma)-g_{\partial}^\prime(\bsigma)\right)=k\ell b -q$.
	\item $g^\prime_{\rr}(\bsigma)-g_{\rr}(\bsigma)=g_{\partial}(\bsigma)-g_{\partial}^\prime(\bsigma)$, for $\bsigma\in \{\bb\rr^{=},\bb\rr^{\neq},\rr\bb^{=},\rr\bb^{\neq}\}$. Since every non-pair-forcing clause must contain at most one red edge, $\sum_{\bsigma\in \{\bb\rr^{=},\bb\rr^{\neq},\rr\bb^{=},\rr\bb^{\neq}\}}\left(g^\prime_{\rr}(\bsigma)-g_{\rr}(\bsigma)\right)\leq \ell b$.
	\item $g^\prime_{\rr}(\bsigma)-g_{\rr}(\bsigma)\geq 0$ for $\bsigma \in\{\fs\rr,\rr\fs\}$ and $\sum_{\bsigma \in \{\fs\rr,\rr\fs\}}\left(g^\prime_{\rr}(\bsigma)-g_{\rr}(\bsigma)\right) \leq q$.
	\item $E^\prime(\bsigma)-E(\bsigma)\geq 0$ for $\bsigma \in \{\bb\bb^{=},\bb\bb^{\neq},\fs\bb,\bb\fs,\fs\fs\}$, $\sum_{\bsigma \in \{\fs\bb,\bb\fs,\fs\fs\}}\left(E^\prime(\bsigma)-E(\bsigma)\right)\leq q$ and\\
    $\sum_{\bsigma \in \bb\bb^{=},\bb\bb^{\neq}}\left(E^\prime(\bsigma)-E(\bsigma)\right)\leq k\ell b$
	\end{itemize}
	With above observations in mind, $\text{(A),(B),(C),(D),(E)}$ in \eqref{eq:apriori:2ndmo:long} can be bounded by the following:
	\begin{itemize}
	    \item $\textnormal{(A)}\leq 2^{-(k-1)\ell b}$ by \eqref{eq:apriori:2ndmo:F:PhiF:ratio}.
	    \item $\textnormal{(B)}=\frac{1}{(nd-km_{\textnormal{fc}}-E_\tsf^\prime)_{q}}\leq e^{O(q)}\left(\frac{1}{nd}\right)^{q}$ since $E_\tsf+km_{\textnormal{fc}}\leq \frac{28km}{2^k}$.
	    \item $\zeta(\pHdot, \pF) \in [\frac{1}{2}-\frac{k^2}{2^{k/2}},\frac{1}{2}+\frac{k^2}{2^{k/2}}]$ implies that $nd\pHdot(\bb\bb^{=}), nd\pHdot(\bb\bb^{\neq})\leq \left(\frac{1}{2}+\frac{k^2}{2^{k/2}}\right)nd$, so
	    \begin{equation*}
	    \begin{split}
	        \textnormal{(C)}=&\frac{(nd-km_{\textnormal{fc}}-E_\tsf+q)_{km_\ns^\prime-E_\tsf^\prime}}{(nd-km_{\textnormal{fc}}-E_\tsf)_{km_\ns^\prime-E_\tsf^\prime}}\frac{\prod_{\bsigma\in \prescript{}{2}{\Omega_{\textnormal{fz}}}\backslash \{\rr\rr^=, \rr\rr^{\neq}\} }\left(nd\bar{H}_{\circ}(\bsigma)-nd\bar{H}_{\textnormal{fc}}(\bsigma)-g_{\partial}^\prime(\bsigma)\right)_{g_{\partial}(\bsigma)-g_{\partial}^\prime(\bsigma)}}{(nd-km_{\textnormal{fc}}-E_\tsf)_{km_\ns^\prime-E_\tsf^\prime}}\\
	        \leq &e^{O(q)}\left(\frac{1}{2}\right)^{k\ell b -q}\leq e^{O(q)}\left(\frac{1}{2}\right)^{k\ell b}.
	    \end{split}
	    \end{equation*}
	    \item Note that $m_{\tns\tns}=m-m_\ns- m_{\textnormal{fc}}\geq (1-28k/2^k)m$, so we can bound
	    \begin{equation*}
	        \textnormal{(D)}=\prod_{i=0}^{g(\rr)-1}\frac{km_{\tns}-ki}{km_{\tns}-i}\left(\prod_{i=0}^{g^\prime(\rr)-1}\frac{km_{\tns}^\prime-ki}{km_{\tns}^\prime-i}\right)^{-1}\leq \prod_{i=g(\rr)}^{g^\prime(\rr)-1}\frac{km_{\tns}^\prime-ki}{km_{\tns}^\prime-i}\leq e^{O(q)}.
	    \end{equation*}
	    \item In Section \ref{subsubsec:apriori:separating:2ndmo}, we show in Proposition \ref{prop:apriori:2ndmo:separating} that in the stated regime,
	    \begin{equation}\label{eq:apriori:2ndmo:separating}
	        \frac{f_2(m_{\tns}, \mathbf{g}_{\rr},\underline{E})}{f_2(m_{\tns}+\delta_\circ, \mathbf{g}_{\rr}+\underline{\delta}_{\rr},\underline{E}+\underline{\delta})}\lesssim_{k}\exp\bigg\{O\left(\frac{k^{4}}{2^{k/2}}\right)\Big(||\underline{\delta}_{\rr}||_1+||\underline{\delta}||_1\Big)\bigg\},
	    \end{equation}
	    for $\delta_\circ\geq 0, \underline{\delta}_{\rr}\geq 0$ and $\underline{\delta}=\left(\delta(\bsigma)\right)_{\bsigma \in \{\bb\bb^{=},\bb\bb^{\neq},\fs\bb,\bb\fs,\fs\fs\}}$ with $\delta(\fs\bb),\delta(\bb\fs),\delta(\fs\fs)\geq 0$. Using \eqref{eq:apriori:2ndmo:separating} for $\delta_{\circ}=\ell b, \underline{\delta}_{\rr}=\mathbf{g}_{\rr}^\prime-\mathbf{g}_{\rr}$ and $\underline{\delta}=\underline{E}^\prime -\underline{E}$ shows $\textnormal{(E)}\leq e^{O(q)}$.
	\end{itemize}
	Therefore, the \textsc{rhs} of \eqref{eq:apriori:2ndmo:long} can be bounded by
	\begin{equation}\label{eq:apriori:2ndmo:ratio}
	    \frac{\E \bZ^{2}_{\ula}[\dot{H}_\circ,\pHhat,\pF]}{\E \bZ^{2}_{\ula}[\dot{H}_\circ,\pHhat, \pF^\prime]}\lesssim_{k} \frac{1}{2^{2k\ell b}}\left(\frac{1}{nd}\right)^{q}e^{O(q)}.
	\end{equation}
	We remark that compared to \eqref{eqn:apriori:1stmo:ratio:final}, the extra $2^{-k\ell b}$ term comes from matching the clause-adjacent boundary half-edges of $\ell$  union $(a,b)$ components in the near-independence regime. We turn now to upper bound $\left|\left(\prescript{}{2}{\Phi}_{\ell,A}^{a,b}\right)^{-1}(\pF^\prime)\right|$. Note that $\pF \in\left(\prescript{}{2}{\Phi}_{\ell,A}^{a,b}\right)^{-1}(\pF^\prime) $ can be obtained by the same procedure as in the procedure to obtain $\pF^\prime \in \Omega_{\ell,A}^{a,b}(n_{\tsf},\kF_\circ)$ in the paragraph above \eqref{eq:apriori:Omega:upperbound}, except that we do not choose $n_{\tsf}-|V(\pF_\circ)|$ variables among $n-|V(\pF_\circ)|$, since they are already determined by $\pF^\prime$, and we have to choose the colors of the edges of $\ell$ union-$(a,b)$ components components in $\pF$. There are at most $22$ possible choices for the colors and the literals of the $q$ inner edges. For the new boundary half-edges adjacent to $\ell b$ clauses, there are at most $2^{k-2}+2(k-2)$ choices for each $\ell b$ clauses, where the maximum number of choices comes from the clauses having $2$ internal edges with color $\ff\bb_{x}$ and $\ff \bb_{x^\prime}$, so the same calculation done in \eqref{eq:apriori:Omega:upperbound} show
	\begin{equation}\label{eq:apriori:2ndmo:Phi:inverse:bound}
	    \left|\left(\prescript{}{2}{\Phi}_{\ell,A}^{a,b}\right)^{-1}(\pF^\prime)\right|\leq {m-|F(\pF_\circ)| \choose\ell b}{n_{\tsf}-|V(\pF_\circ)| \choose \ell a} \frac{(\ell a)!(\ell b)!}{\ell!(a!)^\ell (b!)^\ell}\frac{(\ell da)^{q}(kb)^q}{q!}2^{k\ell b}e^{O(q)}.
	\end{equation}
	Observe that compared to the bound \eqref{eq:apriori:omega:inter}, \eqref{eq:apriori:2ndmo:Phi:inverse:bound} has an extra $2^{k\ell b}$ term, matching the extra $2^{-k \ell b}$ term of \eqref{eq:apriori:2ndmo:ratio} compared to \eqref{eqn:apriori:1stmo:ratio:final}. Therefore, having \eqref{eq:apriori:2ndmo:ratio} and \eqref{eq:apriori:2ndmo:Phi:inverse:bound} in hand, the same calculation done in \eqref{eq:lem:apriori:1stmo:comparison:final} concludes the proof.
	\end{proof}
	Having Lemma \ref{lem:apriori:2ndmo:comparison} in hand, the proof of Proposition \ref{prop:2ndmo:aprioriestimate} $(1),(2),(3)$ is a repeat of the proof of Proposition \ref{prop:1stmo:aprioriestimate} $(1),(2),(3)$, and hence is omitted.
	\subsubsection{Contribution from cycles}\label{subsubsec:apriori:cycles:2ndmo}
	Given a projected union component $\upp$, we find a subtree $\prescript{}{2}{\Psi}_{\tr}(\upp)$ of $\upp$, which is a valid projected union component, using a similar algorithm as the one used to define $\Psi_{\tr}(\ppp)$:
	\begin{enumerate}
	\item[\textnormal{Step $1^\prime$:}] Find and delete a clause $a \in F(\upp)$ such that it has internal degree 2 with internal edges $e_1=(av_1)$ and $e_2=(av_2)$, and deleting $a$ doesn't affect the connectivity of $\upp$. Say $e_1$ has color $\sigma_1^{1}\sigma_1^{2}$ and $e_2$ has color $\sigma_2^{1} \sigma_2^{2}$, where $\sigma_{i}^{j} \in \{\rr_0,\rr_1,\bb_0,\bb_1,\fs,\ff\}, i,j\in \{1,2\}$. The half-edges of $e_1$ and $e_2$ hanging on $v_1$ and $v_2$ respectively become boundary half-edges with color $\tau_1^{1}\tau_1^{2}$ and $\tau_2^{1}\tau_2^{2}$, where $\tau_i^{j} \in \{\rr,\bb,\fs\}, i,j \in \{1,2\}$ is obtained from $\sigma_i^{j}$ by deleting $0$ and $1$ if it has any, and substituting $\ff$ by $\fs$.
	\item[\textnormal{Step $2^\prime$:}] Repeat \textnormal{Step $1^\prime$} until there is no such clause.
	\item[\textnormal{Step $3^\prime$:}] Find a tree-excess edge $e=(a^\prime v^\prime)$, with color $\bsigma = (\sigma^{1}\sigma^{2})$ and cut $e$ in half to make two boundary half-edges adjacent to $a^\prime$ and $v^\prime$ respectively. The new boundary half-edge adjacent to $v^\prime$ is colored $\tau^{1}\tau^{2}$, where $\tau^{i}, i\in \{1,2\}$ is obtained from $\sigma^{i}$ by the same procedure as in Step $1^\prime$ above while the new boundary half-edge adjacent to $a^\prime$ is colored $\bsigma^\prime \in \{\bb\bb^{=}, \bb\bb^{\neq}\}$, where $\bsigma^\prime$ is chosen from $\{\bb\bb^{=}, \bb\bb^{\neq}\}$(may not be unique) to make the colors neighboring $a^\prime$ to be valid, e.g. if there exists a boundary edge of $a^\prime$ colored $\rr \bb^{=}$, we must take $\bsigma^\prime$ to be $\bb\bb^{\neq}$.
	\item[\textnormal{Step $4^\prime$:}] Repeat \textnormal{Step $3^\prime$} until there is no such edge.
	\end{enumerate}
	We make the following observations on $\prescript{}{2}{\Psi}_{\tr}(\upp)$:
	\begin{itemize}
	\item For a projected union component $\upp$, let $\Delta(\upp) \equiv |F(\upp)|-|F(\prescript{}{2}{\Psi}_{\tr}(\upp)|$. Then, $\Delta(\upp)\leq \gamma(\upp)+1$.
	\item $\prescript{}{2}{\Psi}_{\tr}(\upp)$ has $\gamma(\upp)+\Delta(\upp)+1$ less internal edges than $\upp$.
	\item Similar to \eqref{eq:apriori:cyclic:weight:ratio}, we have
	\begin{equation}\label{eq:apriori:2ndmo:cylcic:weight:ratio}
	\bw\left(\prescript{}{2}{\Psi}_{\tr}(\upp)\right)^{\ula}\leq 2^{-k\Delta(\upp)}\bw(\upp)^{\ula}
	\end{equation}
	\end{itemize}
	For $\ell,r, \gamma,\Delta\geq 0$, let $\prescript{}{2}{\Xi}_{\ell,r}^{\gamma,\Delta}$ denote the collection of free subgraphs $\kF$ such that
	\begin{itemize}
	    \item $\kF=\sqcup_{i=1}^{\ell}\upp_i$, where $\upp_1,...,\upp_r$ are cyclic projected union components and $\upp_{r+1},...,\upp_{\ell}$ are tree projected union components.
	    \item $\sum_{i=1}^{r}\gamma(\upp_i)=\gamma$ and $\sum_{i=1}^{r}\Delta(\upp_i)=\Delta$.
	    \item $\sum_{i=1}^{\ell} v(\upp_i)\leq 14n/2^k$ and for any $v\geq 1$, $|\{i:v(\upp_i)=v\}| \leq n2^{-kv/4}$.
	\end{itemize}
	Define $\prescript{}{2}{\Psi}_{\ell,r}^{\gamma,\Delta}:\prescript{}{2}{\Xi}_{\ell,r}^{\gamma,\Delta}\to \prescript{}{2}{\Xi}_{\ell,0}^{0,0}$ by acting $\prescript{}{2}{\Psi}_{\tr}$ componentwise. Note that in order for the set $\prescript{}{2}{\Xi}_{\ell,r}^{\gamma,\Delta}$ to be non-empty, $\Delta \leq \gamma+r$ must hold. The following lemma is an analog of Lemma \ref{lem:apriori:1stmo:ratio:cycle} for the second moment.
	\begin{lemma}\label{lem:apriori:2ndmo:ratio:cycle}
	For $k\geq k_0$, $n_\tsf \leq 14n/2^k$, $E_\rr \leq 14nd/2^k$,$m/n \in [\alpha_{\textsf{lbd}}, \alpha_{\textsf{ubd}}], n\geq n_0(k), r\geq 1,\gamma\geq 1, 0\leq \Delta \leq \gamma+r$ and $\kF^\prime \in \prescript{}{2}{\Xi}_{\ell,0}^{0,0}$, with $\zeta(\pHdot, \pF^\prime) \in [\frac{1}{2}-\frac{k^2}{2^{k/2}},\frac{1}{2}+\frac{k^2}{2^{k/2}}]$, we have 
	\begin{equation*}
	    \prescript{}{2}{\textnormal{\textbf{S}}}^{\gamma,\Delta}_{\ell,r}(\dot{H}_\circ,\pHhat, \kF^\prime)\equiv \sup_{\kF \in (\prescript{}{2}{\Psi}_{\ell,r}^{\gamma,\Delta})^{-1}(\kF^\prime)}\left|(\prescript{}{2}{\Psi}_{\ell,r}^{\gamma,\Delta})^{-1}(\kF^\prime)\right|\frac{\E\bZ_\la[\dot{H}_\circ,\pHhat, \kF]}{\E\bZ_\la[\dot{H}_\circ,\pHhat, \kF^\prime]}\lesssim_{k} \frac{1}{r!}\left(\frac{Ck^2}{2^k}\right)^{r}\left(\frac{C\log^{3} n}{n}\right)^{\gamma},
	\end{equation*}
	where $C$ is a universal constant.
	\end{lemma}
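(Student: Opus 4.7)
The plan is to mirror the proof of Lemma \ref{lem:apriori:1stmo:ratio:cycle} step by step, replacing the first-moment expansion by the second-moment expansion \eqref{eq:apriori:2ndmo:moment:final}, and carefully tracking the extra factors of $2^{-k}$ coming from the pair structure as was already done when passing from Lemma \ref{lem:apriori:1stmo:comparison} to Lemma \ref{lem:apriori:2ndmo:comparison}.

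First I would bound the partition-function ratio $\E\bZ^{2}_{\ula}[\dot{H}_\circ,\pHhat,\pF]/\E\bZ^{2}_{\ula}[\dot{H}_\circ,\pHhat,\pF']$ for any $\pF\in(\prescript{}{2}{\Psi}_{\ell,r}^{\gamma,\Delta})^{-1}(\pF')$. The five multiplicative pieces (A)--(E) appearing in \eqref{eq:apriori:2ndmo:long} can be estimated in exactly the same way as in Lemma \ref{lem:apriori:2ndmo:comparison}, except that now $\pF$ has $\Delta$ more non-pair-separating clauses and $\gamma+r+\Delta$ more free edges than $\pF'$ (instead of $\ell b$ and $q$). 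Using the analogue of \eqref{eq:apriori:2ndmo:F:PhiF:ratio} for the subtree map, namely $\bw(\pF)^{\ula}\le 2^{-(k-1)\Delta}\bw(\pF')^{\ula}$ (which is \eqref{eq:apriori:2ndmo:cylcic:weight:ratio} applied componentwise), together with the near-independence hypothesis $\rho(\pHdot,\pF')\in[\tfrac12-k^2 2^{-k/2},\tfrac12+k^2 2^{-k/2}]$ to handle (C), and the second-moment separating estimate \eqref{eq:apriori:2ndmo:separating} applied with increments of order $\gamma+r+\Delta$ to handle (E), I expect to reach
\begin{equation*}
\frac{\E\bZ^{2}_{\ula}[\dot{H}_\circ,\pHhat,\pF]}{\E\bZ^{2}_{\ula}[\dot{H}_\circ,\pHhat,\pF']}\;\lesssim_{k}\;\frac{1}{2^{2k\Delta}}\Big(\frac{1}{nd}\Big)^{\gamma+r+\Delta}e^{O(\gamma+r+\Delta)},
\end{equation*}
the extra $2^{-k\Delta}$ relative to \eqref{eq:apriori:upperbound:cyclicmo:ratio} being exactly the matching-cost factor that distinguishes the pair model from the single copy.

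Next I would upper bound $|(\prescript{}{2}{\Psi}_{\ell,r}^{\gamma,\Delta})^{-1}(\pF')|$ by the same generation procedure used for $|\,(\Psi^{\gamma,\Delta}_{\ell,r})^{-1}(\pF')|$ in Lemma \ref{lem:apriori:1stmo:ratio:cycle}: enumerate the cyclic components by variable count, choose which $r_i$ of the $\ell_i$ $a_i$-trees to promote, choose the $r_i+\gamma_i-\Delta_i$ tree-excess edges, and choose the $\Delta_i$ clauses (and the four half-edges attached to each) that are re-created in Step $1'$--$2'$. The only difference from the first moment is that each new edge now carries a pair spin label in $\Omega_{\textnormal{pj},2}^{\fs}\cup(\Omega_{\textnormal{pj},2}^{\ff}\times\{0,1\})$, and each recreated clause must be given one of at most $2^{k-2}+2(k-2)$ consistent pair-spin neighborhoods, as in \eqref{eq:apriori:2ndmo:Phi:inverse:bound}. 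Overall this introduces an extra factor of $2^{k\Delta}e^{O(\gamma+r+\Delta)}$ over \eqref{eq:apriori:upperbound:Psi:inverse}, which is designed to cancel the extra $2^{-k\Delta}$ from Step 1, just as happened in Lemma \ref{lem:apriori:2ndmo:comparison}.

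Finally, multiplying these two bounds gives a pair-model analogue of \eqref{eq:apriori:cycle:final-1}, and the same two coarse estimates \eqref{eq:apriori:cycle:final-2} on $\sum_i \ell_i a_i^2$ and $\sum_i a_i^2$ (which depend only on the variable counts, not on the pair structure, and hence transfer verbatim because $\pF'\in\prescript{}{2}{\Xi}_{\ell,0}^{0,0}$ still satisfies $b_i^{\max}<a_i$ and the exponential-decay constraint $|\{i:v(\upp_i)=v\}|\le n 2^{-kv/4}$) yield the desired bound. The main obstacle I anticipate is bookkeeping in (C) and (E) of Step 1: one must verify that the separating-clause corrections in the pair model — whose Lagrangian/LDP analysis is more intricate because of the richer spin space $\prescript{}{2}{\Omega}_{\textnormal{fz}}$ and the role of $\hat v_2$ — indeed give only an $e^{O(\gamma+r+\Delta)}$ factor uniformly in the near-independence regime. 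Granting the stated estimate \eqref{eq:apriori:2ndmo:separating} (to be proved in Proposition \ref{prop:apriori:2ndmo:separating}), the argument is a routine second-moment transcription of Lemma \ref{lem:apriori:1stmo:ratio:cycle}.
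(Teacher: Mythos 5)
Your proposal is correct and follows the paper's proof essentially verbatim: bound the partition-function ratio by $2^{-2k\Delta}(nd)^{-(\gamma+r+\Delta)}e^{O(\gamma+r+\Delta)}$ via the five-piece decomposition of \eqref{eq:apriori:2ndmo:long} (with one $2^{-k\Delta}$ from the weight ratio \eqref{eq:apriori:2ndmo:cylcic:weight:ratio} and another from the near-independence constraint on the recreated clause-adjacent boundary spins, handling the separating corrections by Proposition \ref{prop:apriori:2ndmo:separating}), then bound $|(\prescript{}{2}{\Psi}_{\ell,r}^{\gamma,\Delta})^{-1}(\kF')|$ by its first-moment counterpart times $2^{k\Delta}e^{O(\gamma+r+\Delta)}$, and conclude exactly as in Lemma \ref{lem:apriori:1stmo:ratio:cycle}. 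The only minor slip is writing $\bw(\pF)^{\ula}\le 2^{-(k-1)\Delta}\bw(\pF')^{\ula}$ where the paper uses $2^{-k\Delta}$, but this discrepancy is absorbed in $e^{O(\Delta)}$ and is immaterial.
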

	\begin{proof}
	Fix some $\pF \in (\prescript{}{2}{\Psi}_{\ell,r}^{\gamma,\Delta})^{-1}(\kF^\prime)$. Recall \eqref{eq:apriori:2ndmo:long} and let $m_{\tns\tns}, \mathbf{g}_{\rr}, \underline{E}$ correspond to $\pF$ and let $m_{\tns\tns^\prime}, \mathbf{g}^\prime_{\rr}, \underline{E}^\prime$ correspond to $\pF^\prime$. Note that $\pF$ has $(k-1)\Delta -\gamma-r$ more clause-adjacent boundary half-edges and $\gamma+r+\Delta$ more internal edges compared to $\pF^\prime$. Moreover, observe that $m_{\tns\tns}^\prime= m_{\tns\tns}+\Delta, \mathbf{g}_{\rr} \leq \mathbf{g}^\prime_{\rr}, ||\mathbf{g}_{\rr}^\prime||_1 \leq ||\mathbf{g}_{\rr}||_1 +\gamma+r, E(\bsigma) \leq E^\prime(\bsigma), \bsigma \in \{\bb\fs,\fs\bb,\fs\fs\}$ and $||\underline{E}^\prime-\underline{E}||_1\leq k\Delta+\gamma+r$, so assuming \eqref{eq:apriori:2ndmo:separating}, similar calculations done in \eqref{eq:apriori:2ndmo:long} and \eqref{eq:apriori:2ndmo:ratio} show
	\begin{equation}\label{eq:apriori:2ndmo:cyclic:ratio}
	\frac{\E\bZ_\la[\dot{H}_\circ,\pHhat, \kF]}{\E\bZ_\la[\dot{H}_\circ,\pHhat, \kF^\prime]} \leq \frac{\bw(\pF)^{\ula}}{\bw(\pF^\prime)^{\ula}}\frac{1}{2^{k\Delta}}\left(\frac{1}{nd}\right)^{\gamma+r+\Delta}e^{O(\gamma+r+\Delta)}\leq \frac{1}{2^{2k\Delta}}\left(\frac{1}{nd}\right)^{\gamma+r+\Delta}e^{O(\gamma+r+\Delta)},
	\end{equation}
	where the last inequality is due to \eqref{eq:apriori:2ndmo:cylcic:weight:ratio}. We turn to upper bound $\left|(\prescript{}{2}{\Psi}_{\ell,r}^{\gamma,\Delta})^{-1}(\kF^\prime)\right|$. Enumerate all projected union components of $\kF^\prime$ by the number of variables and suppose there exists $\ell_i$ $a_i$-components for $1\leq i \leq K$, where $a_i$-component denotes a component with $a_i$ variables. Here, we assume $\{a_i\}_{1\leq i \leq K}$ are all different. Let $b^{\textnormal{max}}_i$ be the maximum number of clauses among $\ell_i$ $a_i$-components. Recalling \eqref{eq:apriori:upperbound:Psi:inverse}, $\left|(\prescript{}{2}{\Psi}_{\ell,r}^{\gamma,\Delta})^{-1}(\kF^\prime)\right|$ can be upper bounded by the same quantity, except there are extra choices for the colors of the internal edges and boundary edges. We can bound the number of such choices by $2^{k\Delta}e^{O(\gamma+r+\Delta)}$, so we have
	\begin{multline}\label{eq:apriori:upperbound:Psi:inverse:2ndmo}
	    \left|(\prescript{}{2}{\Psi}_{\ell,r}^{\gamma,\Delta})^{-1}(\kF^\prime)\right|\leq e^{O(r+\gamma+\Delta)}2^{k\Delta}\sum_{\sum_{i=1}^{K}r_i=r}\sum_{\sum_{i=1}^{K}\gamma_i=\gamma}\sum_{\substack{\sum_{i=1}^{K}\Delta_i=\Delta\\ 0\leq \Delta_i \leq r_i+\gamma_i}}\prod_{i=1}^{K}\Bigg\{{\ell_i\choose r_i}\frac{(r_idka_ib^{\max}_i)^{r_i+\gamma_i-\Delta_i}}{(r_i+\gamma_i-\Delta_i)!}\\
	    \times\frac{(km)^{\Delta_i}k^{\Delta_i}(r_ida_i)^{\Delta_i}(da_i)^{\Delta_i}}{2^{\Delta_i}\Delta_i!}\Bigg\}.
	\end{multline}
	Having \eqref{eq:apriori:2ndmo:cyclic:ratio} and \eqref{eq:apriori:upperbound:Psi:inverse:2ndmo} in hand, the rest of the proof is identical to the proof of Lemma \ref{lem:apriori:1stmo:ratio:cycle}.
	\end{proof}
	Having Lemma \ref{lem:apriori:2ndmo:ratio:cycle} in hand, the proof of Proposition \ref{prop:2ndmo:aprioriestimate} $(4)$ is identical to the proof of Proposition \ref{prop:1stmo:aprioriestimate} $(4)$.
	\begin{proof}[Proof of Lemma \ref{cor:B:close:optimal:2ndmo}]
	Throughout, we fix $\beta \in (1,10)$. First, Lemma \ref{lem:apriori:2ndmo:comparison} implies that we have
	\begin{equation}\label{eq:overlap:optimal:2ndmo:1}
	\E\bZ^{2}_{\ula^\star,\ind}\big[n_{\vvv}\geq 1\textnormal{ for some } \vvv\in \FFF_2\textnormal{ s.t. } f(\vvv)\geq v(\vvv)+\log^{\beta} n\big]\leq e^{-\Omega_{k}(\log^{\beta+1} n)}\E \bZ^2_{\ula^\star \ind}.
	\end{equation}
	Indeed, let $\ell_{a,b}$ be the number of union $(a,b)$-components. Then, proceeding in a similar fashion as in \eqref{eq:proof:apriori:1stmo:1}, Lemma \ref{lem:apriori:2ndmo:comparison} shows
	\begin{equation*}
	   \frac{ \E\bZ^2_{\ula^\star,\ind}[\exists b\geq a+\log^{\beta} n,~~~~~\ell_{a,b}\geq 1]}{\E \bZ^2_{\ula^\star,\ind}}\lesssim_{k}\sum_{a=1}^{ 14n/2^k}\sum_{b=a+\lceil\log^{\beta} n\rceil}^{14km/2^k}\frac{C^{2a+1}k^{2a}a}{2^{ka}b}\left(\frac{C^2k^2a}{n}\right)^{b-a}\leq e^{-\Omega_{k}(\log^{\beta+1} n)}.
	\end{equation*}
	Similarly, Lemma \ref{lem:apriori:2ndmo:comparison} also implies that
	\begin{equation}\label{eq:overlap:optimal:2ndmo:2}
	    \E \bZ^{2}_{\ula^\star,\ind}\big[n_{\vvv}\geq 1\textnormal{ for some } \vvv\textnormal{ s.t. } v(\vvv)\geq\log^{\beta} n\textnormal{ and }n_{\vvv}=0\textnormal{ if }f(\vvv)\geq v(\vvv)+\log^{\beta} n\big]\leq e^{-\Omega_{k}(\log^{\beta} n)}\E \bZ^2_{\ula^\star \ind}.
	\end{equation}
	Indeed, proceeding in a similar fashion as done in \eqref{eq:proof:apriori:1stmo:2}, Lemma \ref{lem:apriori:2ndmo:comparison} shows
	\begin{equation*}
	     \frac{ \E\bZ^2_{\ula^\star,\ind}\big[\exists a\geq \log^{\beta}n, b\leq a+\log^{\beta} n\textnormal{ s.t. }\ell_{a,b}\geq 1\big]}{\E \bZ^2_{\ula^\star,\ind}}\lesssim_{k}\sum_{a=\lceil\log^{\beta} n\rceil}^{7n/2^k}\sum_{b=1}^{a+\lceil\log^{\beta} n\rceil}\sum_{\ell\geq 1}\left(\frac{n}{k\ell a}\left(\frac{Ck}{2^k}\right)^{a}(Ck)^{b}\right)^{\ell}\leq e^{-\Omega_{k}(\log^{\beta} n)}.
	\end{equation*}
	Next, we control the number of multi-cyclic edges: let $\prescript{}{2}{e}_{\textnormal{mult}}$ denote the number of multi-cyclic edges of union-free components as before. Using similar calculations as in the proof of Lemma \ref{lem:apriori:2ndmo:ratio:cycle}, we will show
	\begin{equation}\label{eq:overlap:optimal:2ndmo:3}
	    \E \bZ^{2}_{\ula^\star,\ind}\big[\prescript{}{2}{e}_{\textnormal{mult}}\geq \log^{\beta}n\textnormal{ and }n_{\vvv}=0\textnormal{ if }v(\vvv)\geq\log^{\beta} n\big]\leq e^{-\Omega_{k}(\log^{\beta+1} n)}\E \bZ^{2}_{\ula^\star,\ind}.
	\end{equation}
	Note that in the \textsc{lhs} of the equation above, the union-free component profile is no longer guaranteed to have exponential decay $\pee_{\frac{1}{4}}$ as before. In particular the analog of \eqref{eq:apriori:cycle:final-2} is no longer guaranteed. However, having \eqref{eq:apriori:2ndmo:cyclic:ratio} in hand, we can proceed similarly up to \eqref{eq:apriori:cycle:final-1} in the proof of Lemma \ref{lem:apriori:1stmo:ratio:cycle} and instead of the bound \eqref{eq:apriori:cycle:final-2}, we can bound
	\begin{equation*}
	    \sum_{i=1}^{K}\ell_i a_i^2\leq  \Big(\sum_{i=1}^{K}\ell_i a_i\Big)\max_{1\leq i\leq K} a_i\leq \frac{14n}{2^k}\log^{\beta} n \quad\textnormal{and}\quad\sum_{i=1}^{K}a_i^2\leq \sum_{a=1}^{\lfloor \log^{\beta} n \rfloor }a_i^2\leq \log^{3\beta}n.
	\end{equation*}
	Then, we can plug the equation above into the analog of \eqref{eq:apriori:cycle:final-1} in the second moment to show
	\begin{equation*}
	\begin{split}
	   \frac{\E \bZ^{2}_{\ula^\star,\ind}\big[\prescript{}{2}{e}_{\textnormal{mult}}\geq \log^{\beta}n\textnormal{ and }n_{\vvv}=0\textnormal{ if }v(\vvv)\geq\log^{\beta} n\big]}{\E\bZ^2_{\ula^\star,\ind}}
	   &\lesssim_{k}\sum_{r\geq 0}\sum_{\gamma\geq \log^{\beta}n}\frac{1}{r!}\big(C_k\log^{\beta }n\big)^{r}\Big(\frac{C_k\log^{3\beta}n}{n}\Big)^{\gamma}\\
	   &\leq e^{-\Omega_k(\log^{\beta+1}n)},
	 \end{split}
	\end{equation*}
	which gives \eqref{eq:overlap:optimal:2ndmo:3}.
	
	Moreover, Lemma \ref{lem:apriori:2ndmo:comparison} also shows the following weaker exponential decay of union-free components with stronger error bound:
	\begin{equation}\label{eq:overlap:optimal:2ndmo:4}
	\E \bZ^2_{\ula^\star,\ind}\big[\exists v\leq \log^{\beta} n\textnormal{ s.t. } \sum_{v(\vvv)=v}n_{\vvv}\geq \max(n2^{-kv/2},n^{1/20})\textnormal{ and }\prescript{}{2}{e}_{\textnormal{mult}}\leq \log^{\beta}n\big]=e^{-\Omega_k(n^{1/20})}\E\bZ^2_{\ula^\star,\ind}.
	\end{equation}
	Indeed, note that for a union-free component $\vvv$, the number of multi-cyclic edges in $\vvv$ is at least $f(\vvv)-v(\vvv)$ since the clauses in $\vvv$ have internal degree at least $2$. Thus, $\prescript{}{2}{e}_{\textnormal{mult}}\geq \sum_{\vvv\in \FFF_2}n_{\vvv}\one\{f(\vvv)\geq v(\vvv)+1\}$ holds. Hence, we can bound the \textsc{lhs} of the equation above by
	\begin{equation*}
	\begin{split}
	    	&\E \bZ^2_{\ula^\star,\ind}\big[\exists v\leq \log^{\beta} n\textnormal{ s.t. } \sum_{v(\vvv)=v}n_{\vvv}\geq \max(n2^{-kv/2},n^{1/20})\textnormal{ and }\prescript{}{2}{e}_{\textnormal{mult}}\leq \log^{\beta}n\big]\\
	    	&\leq \E \bZ^2_{\ula^\star,\ind}\big[\exists v\leq \log^{\beta} n\textnormal{ s.t. } \sum_{v(\vvv)=v, f(\vvv)\leq v}n_{\vvv}\geq 0.5\max(n2^{-kv/2},n^{1/20})\big]
	\end{split}
	\end{equation*}
	for large enough $n$. Then, proceeding similarly as done in \eqref{eq:proof:apriori:1stmo:2}, we can use Lemma \ref{lem:apriori:2ndmo:comparison} to further bound the \textsc{rhs} of the equation above by the following: if we let $\ell_{\max}^\prime(a)\equiv \frac{0.5}{a}\max(n2^{-ka/2},n^{1/20})$, then
	\begin{equation*}
	\begin{split}
	    &\frac{\E \bZ^2_{\ula^\star,\ind}\big[\exists v\leq \log^{\beta} n\textnormal{ s.t. } \sum_{v(\vvv)=v, f(\vvv)\leq v}n_{\vvv}\geq 0.5\max(n2^{-kv/2},n^{1/20})\big]}{ \E\bZ^2_{\ula^\star,\ind}}\\
	    &\leq\sum_{a=1}^{\lfloor \log^{\beta}n \rfloor}\sum_{b=1}^{a}\sum_{\ell\geq \ell_{\max}^\prime(a)}\left(\frac{n}{k\ell a}\left(\frac{Ck}{2^k}\right)^{a}(Ck)^{b}\right)^{\ell}\lesssim_{k} \sum_{a=1}^{\lfloor \log^{\beta}n \rfloor}\left(\frac{n}{k\ell_{\max}^\prime(a)a}\left(\frac{C^2 k^2}{2^k}\right)^{a}\right)^{\ell_{\max}^\prime(a)}\leq e^{-\Omega_k(n^{1/20})},
	\end{split}
	\end{equation*}
	which gives \eqref{eq:overlap:optimal:2ndmo:4}.
	
	Next, we show that the number of cyclic union-free components are at most $\log^{\beta} n$ with $e^{-\Omega_k(\log^{\beta}n)}$ error bound: let $\prescript{}{2}{n}_{\cyc}$ denote the number of cyclic union-free components as before. Then, we will show that 
	\begin{equation}\label{eq:overlap:optimal:2ndmo:5}
	\begin{split}
	   &\E\bZ^2_{\ula^\star,\ind}\big[\prescript{}{2}{n}_{\cyc}\geq \log^{\beta} n, n_{\vvv}=0\textnormal{ if }v(\vvv)\geq\log^{\beta}n,\textnormal{ and } \sum_{v(\vvv)=v}n_{\vvv}\leq \max(n2^{-kv/2},n^{1/20})\textnormal{ for }v\leq\log^{\beta}n \big]\\
	    &\leq e^{-\Omega_{k}(\log^{\beta} n)}\E \bZ^2_{\ula^\star \ind}.
	\end{split}
	\end{equation}
	Indeed, proceeding in a similar fashion as in the proof of \eqref{eq:overlap:optimal:2ndmo:3}, instead of the bound \eqref{eq:apriori:cycle:final-2}, we can bound
	\begin{equation*}
	\begin{split}
	    &\sum_{i=1}^{K}\ell_i a_i^2
	    =\sum_{a_i\leq\frac{1.9\log n}{k\log 2}}\ell_i a_i^2+\sum_{\frac{1.9\log n}{k\log 2}<a_i<\log^{\beta} n}\ell_i a_i^2\leq\frac{14n}{2^k}+n\sum_{a\geq 2}a^2 2^{-ka/2}+n^{1/20}\sum_{a=\lceil\frac{1.9\log n}{k\log 2}\rceil}^{ \lfloor \log^{\beta}n\rfloor }a^2\leq \frac{C}{2^k}n;\\
	    &\sum_{i=1}^{K}a_i^2\leq \sum_{a=1}^{\lfloor \log^{\beta} n \rfloor }a_i^2\leq \log^{3\beta}n, 
	\end{split}
	\end{equation*}
	and plug it into the analog of \eqref{eq:apriori:cycle:final-1} in the second moment to show
	\begin{equation*}
	\begin{split}
	    &\frac{\E\bZ^2_{\ula^\star,\ind}\big[\prescript{}{2}{n}_{\cyc}\geq \log^{\beta} n, n_{\vvv}=0\textnormal{ if }v(\vvv)\geq\log^{\beta}n,\textnormal{ and } \sum_{v(\vvv)=v}n_{\vvv}\leq \max(n2^{-kv/2},n^{1/20})\textnormal{ for }v\leq\log^{\beta}n \big]}{\E\bZ^2_{\ula^\star,\ind}}\\
	    &\lesssim_{k}\sum_{r\geq \log^{\beta}n}\sum_{\gamma\geq 0}\frac{1}{r!}\left(\frac{C^\prime k^2}{2^k}\right)^{r}\left(\frac{C^\prime k^2\log^{3\beta}n}{n}\right)^{\gamma}=e^{-\Omega_k(\log^{\beta}n)}.
	\end{split}
	\end{equation*}
	Having \eqref{eq:overlap:optimal:2ndmo:1}-\eqref{eq:overlap:optimal:2ndmo:5} in hand, we can proceed similarly as in the proof of Lemma \ref{cor:B:close:optimal:1stmo} (cf. \eqref{eq:cor:B:close:step1}) to show
	\begin{multline}\label{eq:overlap:optimal:2ndmo:6}
	    \E\bZ^2_{\ula^\star,\ind}\Big[||\bB-\bB^\star_{\ula^\star}||_1\geq n^{-0.45}, \prescript{}{2}{n}_{\cyc}\leq \log^{\beta} n, n_{\vvv}=0\textnormal{ if }v(\vvv)+f(\vvv)\geq 3\log^{\beta}n,\\
	    \textnormal{ and }\forall v\leq\log^{\beta}n, \sum_{v(\vvv)=v}n_{\vvv}\leq \max(n2^{-kv/2},n^{1/20})\Big]=e^{-\Omega_k(n^{0.1})}\E\bZ^2_{\ula^\star,\ind}.
	\end{multline}
	In proving \eqref{eq:overlap:optimal:2ndmo:6}, we need an analog of $\TT(\cdot)$, defined in the proof of Proposition \ref{prop:ratio:uni:1stmo}, for the second moment. Since we are in the independence regime, there are enough $\bb\bb^{=}$ and $\bb\bb^{\neq}$ edges that can be swapped with to decompose cyclic union-free components into single union-free tree with the right boundary colors. The same argument as in the first moment can be applied to show \eqref{eq:overlap:optimal:2ndmo:6}.
	
	Finally, because of Lemma \ref{lem:overlap:pairflipsymm}, we have $\sum_{\uuu\in \FFF_2^{\tr}}p^\star_{\uuu,\ula^\star}\cdot \textnormal{overlap}(\uuu)=0$. Thus, proceeding in a similar fashion as done in \eqref{eq:cor:B:close:technical-1} and \eqref{eq:cor:B:close:technical-2} in the proof of Lemma \ref{cor:B:close:optimal:1stmo}, we can show
	\begin{equation}\label{eq:overlap:optimal:2ndmo:7}
	\begin{split}
	    &\E\bZ^2_{\ula^\star,\ind}\Big[\big|\sum_{\uuu\in \FFF_2^{\tr}}n_{\uuu}\cdot \textnormal{overlap}(\uuu)\big|\geq n^{-0.6}, ||\bB-\bB^\star_{\ula^\star}||_1\leq n^{-0.45},\prescript{}{2}{n}_{\cyc}\leq \log^{\beta} n,\textnormal{ and } n_{\vvv}=0\textnormal{ if }v(\vvv)+f(\vvv)\geq 3\log^{\beta}n \Big]\\
	    &\leq e^{-\Omega_k(n^{0.2})}\E\bZ^2_{\ula^\star,\ind}.
	\end{split}
	\end{equation}
	Therefore, \eqref{eq:overlap:optimal:2ndmo:1}-\eqref{eq:overlap:optimal:2ndmo:7} concludes the proof.
	\end{proof}
	\subsubsection{Estimates on separating constraints}\label{subsubsec:apriori:separating:2ndmo}
	We now aim to prove \eqref{eq:apriori:2ndmo:separating}. Let $\XXX\equiv \{\bb\bb^{=}, \bb\bb^{\neq},\bb\fs,\fs\bb,\fs\fs\}$ and for $\btau \in \YYY\equiv \{\fs\rr,\rr\fs,\bb\rr^{=},\bb\rr^{\neq},\rr\bb^{=},\rr\bb^{\neq}\}$, define $\Omega_{\btau}$ to be the set of $\{x_{\bsigma}\}_{\bsigma \in \XXX} \in \Z_{\geq 0}^{\XXX}$ satisfying the following:
	\begin{itemize}
	    \item $\sum_{\bsigma \in \XXX}x_{\bsigma}=k-1$.
	    \item If the number of $\bsigma$-color in $\bupi=(\bpi_1,...,\bpi_{k-1})$ is $x_{\bsigma}$ for $\bsigma \in \XXX$, then $\hat{v}_2(\bupi,\btau)\neq 0$, i.e. $\bupi$ can neighbor a clause with $\btau$-color, and $(\bupi,\btau)$ is not pair-forcing.
	\end{itemize}
	For example $\Omega_{\rr\bb^{=}}=\{\bx\in \Z_{\geq 0}^{\XXX}: \sum_{\bsigma \in \XXX}x_{\bsigma}=k-1, x_{\fs\bb}=x_{\fs\fs}=0,x_{\bb\bb^{\neq}}\leq k-3, 1\leq x_{\bb\bb^{=}}\leq k-2\}$. Similarly, define $\Omega_{\fs\fs}$ to be the set of $\{x_{\bsigma}\}_{\bsigma \in \XXX} \in \Z_{\geq 0}^{\XXX}$ satisfying the following:
	\begin{itemize}
	\item $\sum_{\bsigma \in \XXX}x_{\bsigma}=k$.
	\item If $\bupi=(\bpi_1,...,\bpi_{k})$ has $x_{\bsigma}$ number of $\bsigma$-color for $\bsigma \in \XXX$, then $\hat{v}_2(\bupi)\neq 0$.
	\end{itemize}
	Moreover, recalling the definition of $f_2(m_{\tns}, \mathbf{g}_{\rr}, \underline{E})$ in \eqref{eq:apriori:2ndmo:B4}, denote by $p_{\btau}, \btau \in \YYY,$ the fraction of $\btau$-colored clauses among $m_{\tns}$ clauses:
	\begin{equation}\label{eq:apriori:2ndmo:def:bp}
	    \bp \equiv (\bp_{\YYY}, p_{\tns\tns}),\quad\textnormal{where}\quad \bp_{\YYY}\equiv (p_{\btau})_{\btau \in \YYY} \equiv \frac{\mathbf{g}_{\rr}}{m_{\tns}}\quad\textnormal{and}\quad p_{\tns\tns} \equiv 1-\sum_{\btau \in \YYY}p_{\btau}
	\end{equation}
	Define $\kappa \equiv \kappa(\bp)\equiv k-\sum_{\btau \in \YYY}p_{\btau}$  and let $\xi_{\bsigma}, \bsigma \in \XXX,$ denotes the fraction of half-edges colored $\bsigma$ among $\kappa m_{\tns}$ half-edges:
	\begin{equation}\label{eq:apriori:2ndmo:def:bxi}
	    \bxi \equiv \left(\xi_{\bsigma}\right)_{\bsigma \in \XXX}\equiv \frac{\underline{E}}{\kappa m_{\tns}}.
	\end{equation}
	With a slight abuse of notation, we let $f_2(m_{\tns},m_{\tns}\bp,\kappa m_{\tns}\bxi)\equiv f_2(m_{\tns},m_{\tns}\bp_{\YYY},\kappa m_{\tns}\bxi)$. Note that $\hat{v}_2(\cdot)$ is determined by the number of $\bsigma$-colored edges for $\bsigma \in \XXX$, which we denote by $\hat{v}_2(\bx)$ for $\bx =\left(x_{\bsigma}\right)_{\bsigma \in \XXX}$. Thus, we can express $f_2(m_{\tns}, m_{\tns}\bp, \kappa m_{\tns} \bxi)$ as
	\begin{equation*}
	    f_2(m_{\tns}, m_{\tns}\bp, \kappa m_{\tns} \bxi)= \E_{\bxi}\left[\prod_{i=1}^{m_{\tns}p_{\tns\tns}}\hat{v}_2(X_i^{\tns\tns})\cdot \prod_{\btau \in \YYY\cup\{\tns\tns\}}\prod_{i=1}^{m_{\tns}p_{\btau}}\one\left\{X_i^{\btau} \in \Omega_{\btau}\right\}\bigg|\sum_{\btau \in \YYY\cup\{\tns\tns\}}\sum_{i=1}^{m_{\tns}p_{\btau}}X^{\btau}_i= \kappa m_{\tns} \bxi\right]\,,
	\end{equation*}
    where $X_i^{\btau}\sim \textnormal{Multinomial}(k-\one\{\btau \in \YYY\}, \bxi)$ for $\btau\in \YYY\cup\{\tns\tns\}$ and $1\leq i \leq m_{\tns}p_{\btau}$ are independent multinomial random variables, and $\E_{\bxi}$ denotes the expectation with respect to $(X_i^{\btau})_{\btau\in \YYY\cup\{\tns\tns\}, i\leq m_{\tns}p_{\btau}}$. We denote their marginal probabilities by
    \begin{equation}\label{eq:def:multinomial:prob}
	    p_{\bxi}^{\ell}(\bx)\equiv \P(X= \bx),\quad X\sim \textnormal{Multinomial}(\ell,\bxi)\,.
	\end{equation}
    Analogously to \eqref{eq:apriori:1stmo:f}, we aim to approximate $f_2(m_{\tns}, m_{\tns}\bp, \kappa m_{\tns} \bxi)$ by introducing a rescaling factor $\bgamma\equiv (\gamma_{\bsigma})_{\bsigma \in \XXX} \in \R^{\XXX}$ and using local CLT. Proceeding in the same fashion as in \eqref{eq:apriori:1stmo:f:prior}, we have 
    \begin{equation}\label{eq:apriori:2ndmo:f:prior}
    f_2(m_{\tns}, m_{\tns}\bp, \kappa m_{\tns} \bxi)=\frac{\P_{\bgamma, \bxi}\Big(\sum_{\btau \in \YYY\cup\{\tns\tns\}}\sum_{i=1}^{m_{\tns}p_{\btau}}\wt{X}^{\btau}_i= \kappa m_{\tns} \bxi\Big)}{\P_{\bxi}\Big(\sum_{\btau \in \YYY\cup\{\tns\tns\}}\sum_{i=1}^{m_{\tns}p_{\btau}}X^{\btau}_i= \kappa m_{\tns} \bxi\Big)}\exp\Big\{-m_{\tns}\Big(\kappa\langle \bgamma, \bxi \rangle - \Lambda_{\bxi}(\bp,\bgamma)\Big)\Big\}\,,
    \end{equation}
    where $\Lambda_{\bxi}(\bp,\bgamma)$ is defined by \begin{equation}
	\Lambda_{\bxi}(\bp,\bgamma) \equiv \sum_{\btau \in \YYY}p_{\btau}\log\left(\sum_{\bx \in \Omega_{\btau}}p_{\bxi}^{k-1}(\bx)e^{\langle \bgamma, \bx \rangle}\right)+p_{\tns\tns}\log\left(\sum_{\bx \in \Omega_{\tns\tns}}\hat{v}_2(\bx)p_{\bxi}^{k}(\bx)e^{\langle \bgamma, \bx \rangle}\right)\,,
    \end{equation}
    and $\P_{\bgamma,\bxi}$ denotes the probability with respect to independent random variables $(\wt{X}_i^{\btau})_{\btau\in \YYY\cup\{\tns\tns\}, i\leq m_{\tns}p_{\btau}}$ with $\wt{X}_i^{\btau}\sim \nu_{\btau}$. Here $\nu_{\btau}\equiv \nu_{\btau,\bgamma,\bxi} \in \PPP(\Omega_{\btau})$ is defined by  
    \begin{equation}\label{eq:apriori:2ndmo:def:bnu:star}
	      \nu_{\btau}(\bx) \equiv \frac{p_{\bxi}^{k-1}(\bx)e^{\langle \bgamma, \bx \rangle}\one\{\bx \in \Omega_{\btau}\}}{\sum_{\bx^\prime\in \Omega_{\btau}}p_{\bxi}^{k-1}(\bx^\prime)e^{\langle \bgamma, \bx^\prime \rangle}},~~\btau \in \YYY,\quad\textnormal{and}\quad \nu_{\tns\tns}(\bx) \equiv \frac{\hat{v}_2(\bx)p_{\bxi}^{k}(\bx)e^{\langle \bgamma, \bx \rangle}\one\{\bx\in\Omega_{\tns\tns}\}}{\sum_{\bx^\prime\in \Omega_{\btau}}\hat{v}_2(\bx^\prime)p_{\bxi}^{k}(\bx^\prime)e^{\langle \bgamma, \bx^\prime \rangle}}.
	  \end{equation}
	  To use local CLT, we take $\bgamma=\bgamma(\bp,\bxi)$ such that
   \begin{equation}\label{eq:apriori:2ndmo:partial:Lambda:xi}\nabla_{\bgamma}\Lambda_{\xi}\left(\bp,\bgamma(\bp,\bxi)\right)=\kappa\bxi
	  \end{equation}
	  whose existence is guaranteed by Lemma \ref{lem:apriori:2ndmo:gamma:exists} below. Before proceeding, we make the following observations.
	  \begin{itemize}
	  \item If $\xi_{\bsigma}=0$ for $\bsigma \in \XXX$, then $\partial_{\gamma_{\bsigma}}\Lambda_{\xi}(\bp,\bgamma^\prime)=0$ for every $\bgamma^\prime$ and $\nabla_{\bgamma} \Lambda(\bp,\bgamma^\prime)$ does not depend on $\gamma^\prime_{\bsigma}$.
	  \item If $\partial_{\gamma_{\bsigma}}\Lambda_{\xi}(\bp,\bgamma^\prime)=\kappa\bxi_{\bsigma}$ for $\bsigma \in \XXX -\{\bb\bb^{=}\}$, then $\partial_{\gamma_{\bb\bb^{=}}}\Lambda_{\xi}(\bp,\bgamma^\prime)=\kappa\bxi_{\bb\bb^{=}}$.
	  \item If $\bar{\bgamma}= (\bar{\bgamma})_{\bsigma\in\XXX}$ satisfies $\sum_{\bsigma \in \XXX} \bar{\bgamma}_{\bsigma}=0$, then $\nabla_{\bgamma}\Lambda_{\bxi}(\bp,\bgamma^\prime)=\nabla_{\bgamma}\Lambda_{\bxi}(\bp,\bgamma^\prime+\bar{\bgamma})$
	  \end{itemize}
	  Thus, in showing \eqref{eq:apriori:2ndmo:partial:Lambda:xi}, it suffices to show that there exists $\bgamma(\bp,\bxi)=\left(\gamma_{\bsigma}(\bp,\bxi)\right)_{\bsigma \in \XXX}$ such that $\gamma_{\bsigma}(\bp,\bxi)=0$ for $\xi_{\bsigma}=0$, $\gamma_{\bb\bb^{=}}(\bp,\bxi)=0$, and
	  \begin{equation}\label{eq:apriori:2ndmo:xi:prime}
	      \partial_{\gamma_{\bsigma}}\Lambda_{\xi}\left(\bp,\bgamma(\bp,\bxi)\right)=\kappa\bxi_{\bsigma},\quad\textnormal{for}\quad \bsigma \in \XXX^\prime \equiv \XXX^\prime_{\bxi}\equiv \{\bsigma\in \XXX: \xi_{\bsigma} \neq 0\}\backslash \{\bb\bb^{=}\}.
	  \end{equation}
	 Taking $\bgamma=\bgamma(\bp,\bxi)$ which satisfies \eqref{eq:apriori:2ndmo:partial:Lambda:xi} and plugging it into \eqref{eq:apriori:2ndmo:f:prior}, we have by local CLT \cite{Borokov17} that 
     \begin{equation*}
         f_2(m_{\tns}, m_{\tns}\bp, \kappa m_{\tns} \bxi)=\left(\frac{\det\Big(\sum_{\btau\in \YYY\cup\{\tns\tns\}}p_{\btau}\Cov_{\bxi}\big(X^{\btau}\big)\Big)}{\det\Big(\sum_{\btau\in \YYY\cup\{\tns\tns\}}p_{\btau}\Cov_{\bgamma(\bp,\bxi),\bxi}\big(\wt{X}^{\btau}\big)\Big)}\right)^{1/2}\exp\Big\{-m_{\tns}\Big(\kappa\big\langle \bgamma(\bp,\bxi), \bxi \big\rangle - \Lambda_{\bxi}\big(\bp,\bgamma(\bp,\bxi)\big)\Big)\Big\}\,.
     \end{equation*}
     Here, with abuse of notation, we took the convention that $\Cov_{\bxi}\big(X^{\btau}_1\big)$ and $\Cov_{\bgamma(\bp,\bxi),\bxi}\big(\wt{X}^{\btau}_1\big)$ are $|\XXX^\prime| \times |\XXX^\prime|$ matrices that encodes the covariance of $(X^{\btau}_{\bsigma})_{\bsigma\in \XXX^\prime}$ and $(\wt{X}^{\btau}_{\bsigma})_{\bsigma\in \XXX^\prime}$ respectively. That is, we delete the coordinate $\bsigma= \bb\bb^{=}$ and $\bsigma$ such that $\xi_{\bsigma}=0$. With such convention, Lemma \ref{lem:apriori:2ndmo:cov:comparable} below shows that $\det\big(\sum_{\btau\in \YYY\cup\{\tns\tns\}}p_{\btau}\Cov_{\bxi}\big(X^{\btau}\big)\big)\asymp_{k} \det\big(\sum_{\btau\in \YYY\cup\{\tns\tns\}}p_{\btau}\Cov_{\bgamma(\bp,\bxi),\bxi}\big(\wt{X}^{\btau}\big)\big)$ holds uniformly over $\bxi$ and $\bp$ such that $\xi_{\bb\bb^{=}}, \xi_{\bb\bb^{\neq}} \in [\frac{1}{2}-\frac{3k^2}{2^{k/2}}, \frac{1}{2}+\frac{3k^2}{2^{k/2}}]$ and $\sum_{\btau \in \YYY} p_{\btau} \leq \frac{15k}{2^k}$. Therefore, we have that 
     \begin{equation}\label{eq:apriori:2ndmo:f:final}
          f_2(m_{\tns}, m_{\tns}\bp, \kappa m_{\tns} \bxi)\asymp_k \exp\Big\{-m_{\tns}\Big(\kappa\big\langle \bgamma(\bp,\bxi), \bxi \big\rangle - \Lambda_{\bxi}\big(\bp,\bgamma(\bp,\bxi)\big)\Big)\Big\}.
     \end{equation}
     \begin{lemma}\label{lem:apriori:2ndmo:gamma:exists}
	  In the regime where $\xi_{\bb\bb^{=}}, \xi_{\bb\bb^{\neq}} \in [\frac{1}{2}-\frac{3k^2}{2^{k/2}}, \frac{1}{2}+\frac{3k^2}{2^{k/2}}]$ and $\sum_{\btau \in \YYY} p_{\btau} \leq \frac{15k}{2^k}$, there exists a unique $\bgamma= \bgamma(\bp, \bxi)=\left(\gamma_{\bsigma}(\bp,\bxi)\right)_{\bsigma \in \XXX}$ such that it satisfies the following.
	  \begin{enumerate}
	      \item $\gamma_{\bsigma}(\bp,\bxi)=0$ if $\xi_{\bsigma}=0$. Also, $\gamma_{\bb\bb^{=}}(\bp,\bxi)=0$.
	      \item $\partial_{\gamma_{\bsigma}}\Lambda_{\bxi}\left(\bp,\bgamma(\bp,\bxi)\right) = \kappa \xi_{\bsigma}$ for $\bsigma\in \XXX^{\prime}$, where $\XXX^\prime$ is defined in \eqref{eq:apriori:2ndmo:xi:prime}.
	      \item $||\bgamma(\bp,\bxi)||_1\lesssim \frac{k^3}{2^{k/2}}$ and $||\nabla_{\bp}\Lambda_{\xi}\left(\bp,\bgamma(\bp,\bxi)\right)||_1\lesssim \frac{k^4}{2^{k/2}}$.
	  \end{enumerate}
	  \end{lemma}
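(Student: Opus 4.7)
The plan is to follow the same strategy as Lemma \ref{lem:apriori:1stmo:gamma:exist}, adapting it to the higher-dimensional pair-model setting and to the constrained subspace $\HH := \{\bgamma \in \R^{\XXX} : \gamma_{\bb\bb^=} = 0 \text{ and } \gamma_{\bsigma} = 0 \text{ for all } \bsigma \text{ with } \xi_{\bsigma} = 0\}$. First, uniqueness will follow from strict convexity of $\bgamma \mapsto \Lambda_{\bxi}(\bp,\bgamma)$ restricted to $\HH$: the Hessian equals the covariance of $\langle \bgamma, X\rangle$ where $X$ is drawn from the appropriate weighted multinomial, and on $\HH$ this covariance is strictly positive because the supports of the multinomials on $\Omega_{\btau}$ and $\Omega_{\tns\tns}$ distinguish all coordinate directions indexed by $\XXX' \cup \{\bb\bb^=\}$ modulo the kernel $\bgamma \propto \mathbf{1}$, which we have already quotiented out by the constraint $\gamma_{\bb\bb^=} = 0$.

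Second, for existence, I would adopt the Rockafellar regularization trick used in Lemma \ref{lem:apriori:1stmo:gamma:exist}: add $\tfrac{\varepsilon}{2}\|\bgamma\|_2^2$ to the restriction of $\Lambda_{\bxi}(\bp,\cdot)$ to $\HH$. The regularized functional is strictly convex with gradient tending in norm to $\infty$ as $\|\bgamma\|\to\infty$, so Rockafellar's theorem (see, e.g., Lemma 2.3.12 of \cite{DZ10}) gives a unique $\bgamma_{\varepsilon} \in \HH$ solving $\partial_{\gamma_{\bsigma}}\Lambda_{\bxi}(\bp,\bgamma_{\varepsilon}) + \varepsilon\gamma_{\varepsilon,\bsigma} = \kappa\xi_{\bsigma}$ for all $\bsigma \in \XXX'$. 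It then remains to show $\bgamma_{\varepsilon}$ stays bounded as $\varepsilon \downarrow 0$. If some coordinate $\gamma_{\varepsilon,\bsigma}$ tended to $+\infty$ (resp.\ $-\infty$) along a subsequence, then the corresponding tilted multinomial would concentrate on configurations with $X_{\bsigma}$ close to $k$ (resp.\ $0$); since $\xi_{\bsigma}$ is bounded away from both these extremes in the stated regime, this would contradict $\partial_{\gamma_{\bsigma}}\Lambda_{\bxi}(\bp,\bgamma_{\varepsilon}) \to \kappa\xi_{\bsigma}$. Extracting a subsequential limit then yields the desired $\bgamma(\bp,\bxi)$.

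The main obstacle will be item~(3), the quantitative bounds. My plan is to linearize around the reference configuration $\bp = 0$, $\bgamma = 0$, at which the tilted multinomial weighted by $\hat{v}_2$ and marginalized over $\Omega_{\tns\tns}$ produces a marginal $\bxi^{\star}$ satisfying $\xi^{\star}_{\bb\bb^=} = \xi^{\star}_{\bb\bb^{\neq}} = \tfrac{1}{2} + O(k/2^k)$ (the $O(k/2^k)$ correction coming from the $\hat{v}_2$ factor, which differs from $1$ only on configurations with $k-1$ or $k$ blue edges of a single parity, exactly as for the single-copy $h$ in \eqref{eq:apriori:def:h}) and $\xi^{\star}_{\bsigma} \lesssim k/2^k$ for $\bsigma \in \{\fs\bb,\bb\fs,\fs\fs\}$. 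The hypothesis on $\bxi$ then gives $\|\bxi - \bxi^{\star}\|_1 \lesssim k^2/2^{k/2}$. Writing the gradient map $\Psi: \HH \to \R^{\XXX'}$, $\Psi(\bgamma) := \nabla_{\bgamma}\Lambda_{\bxi}(\bp,\bgamma)|_{\XXX'}$, I would invoke the implicit/inverse function theorem provided I can lower-bound the Hessian of $\Lambda_{\bxi}(\bp,\cdot)$ on $\HH$ at the base point by a positive-definite matrix with operator-norm-inverse of order $O(1/\kappa)$. The key estimate is therefore that the covariance of the constrained multinomial restricted to $\HH$ is bounded below by $c \kappa I$ for an absolute $c>0$; this is precisely where $\xi_{\bb\bb^=}, \xi_{\bb\bb^{\neq}}$ being close to $\tfrac{1}{2}$ enters, since both these directions are non-degenerate and dominate the covariance structure of the draws from $\Omega_{\tns\tns}$. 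Given the Hessian bound, the implicit function theorem yields $\|\bgamma(\bp,\bxi)\|_1 \lesssim \|\bp\|_1 + \|\bxi-\bxi^{\star}\|_1 \lesssim k/2^k + k^2/2^{k/2} \lesssim k^3/2^{k/2}$, as required.

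Finally, the bound $\|\nabla_{\bp}\Lambda_{\bxi}(\bp,\bgamma(\bp,\bxi))\|_1 \lesssim k^4/2^{k/2}$ will follow from the envelope identity: since $\bgamma(\bp,\bxi)$ is characterized by $\nabla_{\bgamma}\Lambda_{\bxi}(\bp,\bgamma) = \kappa\bxi$ on $\XXX'$, differentiating along $\bp$ reduces to evaluating $\partial_{p_{\btau}}\Lambda_{\bxi}(\bp,\bgamma) = \log\bigl(\sum_{\bx\in\Omega_{\btau}} p^{k-1}_{\bxi}(\bx)e^{\langle\bgamma,\bx\rangle}\bigr) - \log\bigl(\sum_{\bx\in\Omega_{\tns\tns}} \hat{v}_2(\bx) p^{k}_{\bxi}(\bx)e^{\langle\bgamma,\bx\rangle}\bigr)$ at the solution, which by a direct expansion using the already-established bound on $\bgamma$ and the near-independence of $\bxi$ contributes at most $O(k^4/2^{k/2})$ per coordinate of $\bp$. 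Assembling these gives all three claimed properties.
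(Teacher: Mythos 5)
Your overall architecture (Rockafellar regularization for existence, strict convexity on the constrained subspace for uniqueness, a perturbative argument for the quantitative bounds) matches the paper's, but the step you propose for item (3) fails. You claim the Hessian of $\bgamma\mapsto\Lambda_{\bxi}(\bp,\bgamma)$ restricted to your subspace $\HH$ is bounded below by $c\kappa I$, on the grounds that the $\bb\bb^{=}$ and $\bb\bb^{\neq}$ directions ``dominate the covariance structure.'' This is not true: the hypothesis only pins $\xi_{\bb\bb^{=}}$ and $\xi_{\bb\bb^{\neq}}$ near $\tfrac12$, while $\xi_{\fs\bb},\xi_{\bb\fs},\xi_{\fs\fs}$ may be arbitrarily small positive numbers (they only need to be nonzero to index a coordinate of $\XXX^{\prime}$). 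The diagonal entry of the covariance of the tilted multinomial in, say, the $\fs\fs$ direction is of order $\kappa\mu_{\fs\fs}$, which degenerates with $\xi_{\fs\fs}$, so there is no uniform positive-definite lower bound and the inverse function theorem as you invoke it gives no control on $\gamma_{\fs\fs}$. What saves the statement is that the residual $\partial_{\gamma_{\bsigma}}\Lambda_{\bxi}(\bp,0)-\kappa\xi_{\bsigma}$ is itself proportional to $\xi_{\bsigma}$, so the relevant quantity is a ratio, not a difference; the paper makes this precise by proving the multiplicative relation $\xi_{\bsigma}=\mu_{\bsigma}\big(1+O(k^{3}2^{-k/2})\big)$ coordinate by coordinate and then reading off $|\gamma_{\bsigma}|$ from the normalization identity $\sum_{\bsigma\in\XXX}\xi_{\bsigma}e^{\gamma_{\bsigma}}=1+O(k^{3}2^{-k/2})$ obtained at $\bsigma=\bb\bb^{=}$ where $\gamma_{\bb\bb^{=}}=0$. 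You would need to replace your uniform Hessian bound with an argument of this multiplicative type.

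A second, smaller gap is in the boundedness of $\bgamma_{\varepsilon}$ as $\varepsilon\downarrow 0$. Your contradiction ``if $\gamma_{\varepsilon,\bsigma}\to-\infty$ the tilted multinomial concentrates near $X_{\bsigma}=0$, contradicting $\xi_{\bsigma}$ bounded away from the extremes'' again fails for $\bsigma\in\{\fs\bb,\bb\fs,\fs\fs\}$, where $\xi_{\bsigma}$ is not bounded away from $0$. Moreover, the conditioning on the events $\Omega_{\btau}$ and $\Omega_{\tns\tns}$ could in principle prevent the conditional mean of $X_{\bsigma}$ from vanishing; one must show $\E_{\bmu}[X_{\bsigma}\mid X\in\Omega_{\btau}]=O_{k}(\mu_{\bsigma})$, which requires first establishing that $\P_{\bmu_{\varepsilon}}(X\in\Omega_{\btau})$ stays bounded below via the lower bounds on $\mu_{\varepsilon,\bb\bb^{=}}$ and $\mu_{\varepsilon,\bb\bb^{\neq}}$, so that the gradient genuinely vanishes as $\gamma_{\varepsilon,\bsigma}\to-\infty$ while $\kappa\xi_{\bsigma}$ stays strictly positive. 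The $+\infty$ direction also needs care when several coordinates diverge simultaneously; the paper handles it by summing the regularized stationarity equations over the diverging set and exploiting $\kappa\xi_{\bb\bb^{=}}\approx k/2$.
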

	 Before going into the proof of Lemma \ref{lem:apriori:2ndmo:gamma:exists}, we introduce the necessary notations. For a vector $\bx \in \R^{\XXX}$, denote $\bx^{-}\equiv (\bx_{\bsigma})_{\bsigma \in \XXX^\prime}$. Denote $\Lambda^{-}_{\bxi}(\bp,\bgamma^{-})\equiv \Lambda_{\bxi}(\bp,\bgamma)$, where $\bgamma$ is obtained from $\bgamma^{-}$ by concatenating $0$ for coordinates $\bsigma \in \XXX\backslash\XXX^\prime$. Then, $\Lambda^{-}_{\bxi}(\bp,\bgamma^{-})=\sum_{\btau \in \YYY\cup \{\fs\fs\}}p_{\btau}\Lambda_{\bxi}^{\btau}(\bgamma^{-})$, where 
	  \begin{equation*}
	      \Lambda^{\btau}_{\xi}(\bgamma^{-})\equiv \log\left(\sum_{\bx \in \Omega_{\btau}}p_{\bxi}^{k-1}(\bx)e^{\langle \bgamma^{-}, \bx^{-} \rangle}\right), \btau \in \YYY,\quad\textnormal{and}\quad \Lambda^{\tns\tns}_{\xi}(\bgamma^{-})\equiv\log\left(\sum_{\bx \in \Omega_{\tns\tns}}\hat{v}_2(\bx)p_{\bxi}^{k}(\bx)e^{\langle \bgamma^{-}, \bx^{-} \rangle}\right).
	  \end{equation*}
	We note that $\partial_{\fs\fs}\Lambda_{\xi}^{\btau}(\bgamma^{-})=0$ for $\btau \in \YYY$ since $x\in \Omega_{\btau},\btau \in \YYY$ implies that $x_{\fs\fs}=0$. Define
	\begin{equation}\label{eq:2ndmo:def:bmu}
	    \bmu\equiv \bmu(\bgamma^{-})\equiv \left(\frac{\xi_{\sigma}e^{\gamma_{\bsigma}}}{\sum_{\bsigma^\prime\in \XXX} \xi_{\sigma^\prime}e^{\gamma_{\bsigma^\prime}}}\right)_{\bsigma \in \XXX},\quad\textnormal{where}\quad \gamma_{\bsigma}\equiv 0\quad\textnormal{for}\quad\bsigma \in \XXX\backslash \XXX^\prime
	\end{equation}
	Then, we have the following identity for $\bsigma \in \XXX^\prime$:
	  \begin{equation}\label{eq:2ndmo:compute:partial:Lambda}
	      \partial_{\gamma_{\bsigma}}\Lambda^{\btau}_{\bxi}(\bgamma^{-})=\E_{\bmu}^{k-1}[X_{\bsigma}\mid X\in \Omega_{\btau}],\tau \in \YYY,\quad\textnormal{and}\quad \partial_{\gamma_{\bsigma}}\Lambda^{\tns\tns}_{\bxi}(\bgamma^{-})=\frac{\E_{\bmu}^{k}[X_{\bsigma}\hat{v}_2(X)\one\{X\in\Omega_{\tns\tns}\}]}{\E_{\bmu}^{k}[\hat{v}_2(X)\one\{X\in \Omega_{\tns\tns}\}]},
	  \end{equation}
	  where $\E^{\ell}_{\bmu}$ is the expectation with respect to $p^{\ell}_{\bmu}$, defined in \eqref{eq:def:multinomial:prob}. To this end, we define
	  \begin{equation}\label{eq:2ndmo:define:partial:Lambda:bb}
	      \partial_{\gamma_{\bb\bb^{=}}}\Lambda^{\btau}_{\bxi}(\bgamma^{-})\equiv \E_{\bmu}^{k-1}[X_{\bb\bb^{=}}\mid X\in \Omega_{\btau}],\tau \in \YYY,\quad\textnormal{and}\quad \partial_{\gamma_{\bb\bb^{=}}}\Lambda^{\tns\tns}_{\bxi}(\bgamma^{-})\equiv\frac{\E_{\bmu}^{k}[X_{\bb\bb^{=}}\hat{v}_2(X)\one\{X\in\Omega_{\btau}\}]}{\E_{\bmu}^{k}[\hat{v}_2(X)\one\{X\in \Omega_{\btau}\}]}.
	  \end{equation}
	  Analogously, let $\partial_{\gamma_{\bb\bb^{-}}}\Lambda^{-}_{\bxi}(\bgamma^{-})\equiv \sum_{\btau \in \YYY\cup\{\tns\tns\}}\partial_{\gamma_{\bb\bb^{-}}}\Lambda^{\btau}_{\bxi}(\bgamma^{-})$. The following lemma will be crucial for the proof of Lemma \ref{lem:apriori:2ndmo:gamma:exists}.
	  \begin{lemma}\label{lem:partial:Lambda:upper:bound}
	  $\partial_{\gamma_{\tns\tns}}\Lambda^{\tns\tns }_{\bxi}(\bgamma^{-})\leq 8k\mu_{\tns\tns}$ and $\partial_{\gamma_{\bsigma}}\Lambda^{\tns\tns }_{\bxi}(\bgamma^{-})\leq 8(k\mu_{\bsigma}+2)$ for $\bsigma \in \XXX\backslash\{\tns\tns\}$.
	\end{lemma}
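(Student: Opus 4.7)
From the definitions \eqref{eq:2ndmo:compute:partial:Lambda} and \eqref{eq:2ndmo:define:partial:Lambda:bb}, each derivative $\partial_{\gamma_\bsigma}\Lambda^{\tns\tns}_{\bxi}(\bgamma^-)$ is the ratio
\[
R_\bsigma := \frac{\E_\bmu^k\bigl[X_\bsigma\,\hat v_2(X)\,\one\{X\in\Omega_{\tns\tns}\}\bigr]}{\E_\bmu^k\bigl[\hat v_2(X)\,\one\{X\in\Omega_{\tns\tns}\}\bigr]},
\]
where $X$ is $\textnormal{Multinomial}(k,\bmu)$-distributed. The workhorse is the elementary identity $\E_\bmu^k[X_\bsigma f(X)] = k\mu_\bsigma\,\E_\bmu^{k-1}[f(X+e_\bsigma)]$ for any function $f$; applied with $f = \hat v_2\,\one_{\Omega_{\tns\tns}}$ together with the crude bound $\hat v_2 \le 1$, this yields
\[
\E_\bmu^k\bigl[X_\bsigma\,\hat v_2(X)\,\one\{X\in\Omega_{\tns\tns}\}\bigr] \;\le\; k\mu_\bsigma\,\P_\bmu^{k-1}\!\bigl(X+e_\bsigma\in\Omega_{\tns\tns}\bigr).
\]
The task therefore reduces to lower-bounding the denominator of $R_\bsigma$ in terms of the $(k-1)$-dimensional probability on the right-hand side.

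For the first bound $(\bsigma=\fs\fs)$, the plan is to compare $\P_\bmu^{k-1}\bigl(X+e_{\fs\fs}\in\Omega_{\tns\tns}\bigr)$ with the denominator $\E_\bmu^k\bigl[\hat v_2(X)\,\one\{X\in\Omega_{\tns\tns}\}\bigr]$ via a coupling argument: given a length-$(k-1)$ configuration $X$ with $X+e_{\fs\fs}\in\Omega_{\tns\tns}$, drawing a fresh coordinate $Y\sim\bmu$ and appending $Y$ instead of $\fs\fs$ produces a length-$k$ configuration that again lies in $\Omega_{\tns\tns}$ with probability bounded below by an absolute constant, while the value of $\hat v_2$ changes by at most a bounded multiplicative factor. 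The latter uses the random-literal representation $\hat v_2(\bupi)=\E^{\lit}[\hat I^{\lit}(\sig^1\oplus\uL)\hat I^{\lit}(\sig^2\oplus\uL)]$ and the observation that altering a single coordinate of the clause perturbs this quantity by at most a factor of $O(1)$. Collecting the comparison gives $R_{\fs\fs} \le 8k\mu_{\fs\fs}$.

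For the second bound $(\bsigma\neq\fs\fs)$, the same reduction handles the regime $k\mu_\bsigma \ge 2$, producing the bound $R_\bsigma \le 8k\mu_\bsigma$. In the complementary regime $k\mu_\bsigma < 2$, the numerator is dominated by configurations with $X_\bsigma\ge 1$; because $e_\bsigma$ has a fixed type (different from $\fs\fs$), only a bounded number of structural discrepancies between $\{X+e_\bsigma\in\Omega_{\tns\tns}\}$ and $\{X\in\Omega_{\tns\tns}\}$ arise, and one checks that this residual contribution is at most $16$ times the denominator. Adding the two regimes produces the stated $8(k\mu_\bsigma+2)$ bound.

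The main obstacle is the comparison carried out in the second paragraph: verifying that the validity event $\{X+e_\bsigma\in\Omega_{\tns\tns}\}$ and the weight $\hat v_2(X+e_\bsigma)$ both change in a controlled way when the added spin $e_\bsigma$ is swapped for a typical $\bmu$-distributed spin. The argument must hold for arbitrary $\bmu$, including degenerate choices that concentrate mass on a single coordinate incompatible with $\Omega_{\tns\tns}$; the factor $8$ and the additive constant $16$ in the statement are exactly the quantitative price of handling these boundary configurations uniformly in $\bmu$.
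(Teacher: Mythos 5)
Your reduction via the identity $\E_{\bmu}^{k}[X_{\bsigma}f(X)]=k\mu_{\bsigma}\,\E_{\bmu}^{k-1}[f(X+e_{\bsigma})]$ is correct and is essentially the same first step the paper takes (it rewrites $x\,p_{\bmu}(x)$ in terms of the $(k-1)$-fold law), and your use of $\hat v_2\le 1$ in the numerator against $\hat v_2\ge \tfrac14$ on $\Omega_{\tns\tns}$ in the denominator is also sound. The gap is in the comparison that follows. Your plan requires, in effect, $\P_{\bmu}^{k-1}\bigl(X+e_{\bsigma}\in\Omega_{\tns\tns}\bigr)\le C\,\P_{\bmu}^{k}\bigl(X\in\Omega_{\tns\tns}\bigr)$ for an absolute constant $C$, and the coupling you describe (replace the appended $e_{\bsigma}$ by a fresh $Y\sim\bmu$) only yields this when a $\bmu$-typical coordinate is at least as helpful for the constraints defining $\Omega_{\tns\tns}$ as the specific coordinate $e_{\bsigma}$. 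That holds for $\bsigma=\fs\fs$ (which contributes nothing to the constraints, so any replacement can only help), but fails for the other types: take $\mu_{\bb\bb^{=}}=\delta$, $\mu_{\fs\fs}=1-\delta$ and all other masses zero; then $\P_{\bmu}^{k-1}(X+e_{\bb\bb^{=}}\in\Omega_{\tns\tns})\asymp k\delta$ while $\P_{\bmu}^{k}(X\in\Omega_{\tns\tns})\asymp k^2\delta^2$, so the ratio is of order $(k\delta)^{-1}$ and is unbounded as $\delta\to0$. In exactly this regime the product $k\mu_{\bsigma}\cdot\P_{\bmu}^{k-1}(\cdots)/\P_{\bmu}^{k}(\cdots)$ collapses to an absolute constant rather than to $O(k\mu_{\bsigma})$, which is precisely why the target bound has the additive $+2$; but your treatment of this regime (``only a bounded number of structural discrepancies arise, and one checks that this residual contribution is at most $16$ times the denominator'') is an assertion of the conclusion, not an argument. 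The entire content of the lemma for $\bsigma\in\XXX\setminus\{\tns\tns\}$ lives in that unproved step.

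The paper closes this gap by a different mechanism: after reducing to $8\,\E_{\bmu}^{k}[X_{\bsigma}\mid X\in\Omega_{\tns\tns}^{\dagger}]$ (the factor $8$ absorbing $\hat v_2\ge\tfrac14$ and the excluded configuration, which carries at most half the mass of $\Omega_{\tns\tns}^{\dagger}$), it realizes the multinomial as a random walk $X_t=\sum_{j\le t}\one\{Z_j=\cdot\}$ and stops at $\tau=\inf\{t:X_t\in\Omega_{\tns\tns}^{\dagger}\}$. At the hitting time the number of $\bsigma$-coordinates accumulated is at most $2$ (for $\bsigma\in\{\bb\bb^{=},\bb\bb^{\neq}\}$ this is immediate; for $\bsigma\in\{\bb\fs,\fs\bb\}$ it needs a two-stage stopping time $\tau=\tau_{\ast\bb}\vee\tau_{\bb\ast}$ together with the monotonicity estimate \eqref{eq:multionimal:conditional:decreasing}), and the post-$\tau$ increments are unconditioned, contributing at most $k\mu_{\bsigma}$. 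This is where the ``$+2$'' genuinely comes from, and it is the piece your proposal is missing. If you want to salvage your route, you would need to replace the coupling comparison by an argument of this type; the two-regime split on $k\mu_{\bsigma}\gtrless 2$ does not by itself produce one.
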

	\begin{proof}  
	We first make the following observations.
	\begin{itemize}
	    \item We have the lower bound $\hat{v}_2(\bx)\geq \frac{1}{4}$ for $\bx \in \Omega_{\tns\tns}$.
	    \item $\Omega_{\tns\tns} = \Omega^{\dagger}_{\tns\tns}\backslash \{x_{\fs\fs}=k-2, x_{\fs\bb}=x_{\bb\fs}=0,x_{\bb\bb^{=}}=x_{\bb\bb^{\neq}}=1\}$, where $\bar{\Omega}_{\tns\tns}$ is defined by
	    \begin{equation*}
	    \Omega_{\tns\tns}^{\dagger}\equiv \{\bx\in \Z_{\geq 0}^{\XXX}:\sum_{\bsigma \in \XXX}x_{\bsigma}=k, x_{\fs\bb}+x_{\bb\bb^{=}}+x_{\bb\bb^{\neq}}\geq 2,\quad\textnormal{and}\quad x_{\bb\fs}+x_{\bb\bb^{=}}+x_{\bb\bb^{\neq}}\geq 2\}
	    \end{equation*}
	\end{itemize}
	With the above observations in mind, we can upper bound $\partial_{\gamma_{\bsigma}}\Lambda^{\tns\tns}_{\bxi}(\bgamma^{-})$ by
	\begin{equation*}
	    \partial_{\gamma_{\bsigma}}\Lambda^{\tns\tns}_{\bxi}(\bgamma^{-})=\frac{\E_{\bmu}^{k}[X_{\bsigma}\hat{v}_2(X)\one\{X\in\Omega_{\tns\tns}\}]}{\E_{\bmu}^{k}[\hat{v}_2(X)\one\{X\in \Omega_{\tns\tns}\}]}\leq \frac{4 \E^{k}_{\bmu}[X_{\bsigma}\one\{X \in \Omega_{\tns\tns}^{\dagger}\}]}{\P_{\bmu}^{k}(X \in \Omega_{\tns\tns}^{\dagger})-\P_{\bmu}^{k}(X_{\fs\fs}=k-2,X_{\bb\bb^{=}}=X_{\bb\bb^{\neq}}=1)}.
	\end{equation*}
	Note that $\frac{\P_{\bmu}^{k}(X_{\fs\fs}=k-2,X_{\bb\bb^{=}}=X_{\bb\bb^{\neq}}=1)}{\P_{\bmu}^{k}(X \in \Omega_{\tns\tns}^{\dagger})}\leq \frac{\P_{\bmu}^{k}(X_{\fs\fs}=k-2,X_{\bb\bb^{=}}=X_{\bb\bb^{\neq}}=1)}{\P_{\bmu}^{k}(X_{\fs\fs}=k-2,X_{\bb\bb}=2)}=\frac{2\mu_{\bb\bb^{=}}\mu_{\bb\bb^{\neq}}}{(\mu_{\bb\bb^{=}}+\mu_{\bb\bb^{\neq}})^{2}}\leq \frac{1}{2}$, where we abbreviated $X_{\bb\bb}=X_{\bb\bb^{=}}+X_{\bb\bb^{\neq}}$, so we can further bound the \textsc{rhs} of the above equation by
	\begin{equation}\label{eq:apriori:2ndmo:lem:crucial}
	     \partial_{\gamma_{\bsigma}}\Lambda^{\tns\tns}_{\bxi}(\bgamma^{-})\leq 8 \E_{\bmu}^{k}[X_{\bsigma}\mid X\in \Omega_{\tns\tns}^{\dagger}].
	\end{equation}
	To this end, we aim to upper bound $\E_{\bmu}^{k}[X_{\bsigma}\mid X\in \Omega_{\tns\tns}^{\dagger}]$. The bound for $\bsigma= \fs\fs$ is the easiest:
	\begin{equation*}
	    \E_{\bmu}^{k}[X_{\fs\fs}\mid X\in \Omega_{\tns\tns}^{\dagger}]=k \mu_{\fs\fs}\frac{1-\P_{\mu}^{k-1}(X_{\fs\bb}+X_{\bb\bb}\leq 1 \quad\textnormal{or}\quad X_{\bb\fs}+X_{\bb\bb}\leq 1)}{1-\P_{\mu}^{k}(X_{\fs\bb}+X_{\bb\bb}\leq 1 \quad\textnormal{or}\quad X_{\bb\fs}+X_{\bb\bb}\leq 1)}\leq k \mu_{\fs\fs},
	\end{equation*}
	which finishes the proof of the claim for $\bsigma=\fs\fs$. Turning to $\bsigma \in \XXX\backslash\{\fs\fs\}$, consider $X \sim \textnormal{Multinomial}(k,\bmu)$ as the $k$th step of the random walk
	\begin{equation*}
	    X_{t} \equiv (X_{t,\bsigma})_{\bsigma \in \XXX} \equiv\left(\sum_{i=1}^{t}\one\{Z_{j}=\bsigma\}\right)_{\bsigma\in \XXX},
	\end{equation*}
	where $(Z_j)_{1\leq j \leq k}$ are independent $\XXX$-valued random variables with law $\P(Z_j=\bsigma)=\mu_{\bsigma}, \bsigma \in \XXX$. Define the stopping time $\tau \equiv \inf\{t\geq 0: X_{t} \in \Omega_{\tns\tns}^{\dagger}\}$, so $\{\tau \leq k\}=\{X \in \Omega_{\tns\tns}^{\dagger}\}$. Note that $X_{\tau,\bb\bb^{=}}, X_{\tau, \bb\bb^{\neq}}\leq 2$, thus applying the Markov property shows the inequality below for $\bsigma \in \{\bb\bb\}$:
	\begin{equation*}
	    \E_{\bmu}^{k}[X_{\bsigma}\mid X\in \Omega_{\tns\tns}^{\dagger}] \leq 2+\E_{\bmu}[X_{k,\bsigma}-X_{\tau,\bsigma}\mid \tau \leq k]\leq 2+\E_{\bmu}[X_{k-\tau, \bsigma}]\leq 2+k\mu_{\bsigma}.
	\end{equation*}
	Finally, we prove the claim for $\bsigma \in \{\bb\fs, \fs\bb\}$: write $\ast$ for $\{\bb, \fs\}$, e.g. $x_{\ast\bb}=x_{\fs\bb}+x_{\bb\bb}$, and define the stopping time $\tau_{\ast\bb}\equiv \inf\{t\geq 0: X_{t,\ast\bb}\geq 2\}$ and symmetrically $\tau_{\bb\ast}$.  Then, $\tau = \tau_{\ast\bb} \vee \tau_{\bb\ast}$, so we have
	\begin{multline}\label{eq:apriori:2ndmo:lem:technical-1}
	    \E^{k}_{\bmu}[X_{\bb\fs}\mid X\in \Omega_{\tns\tns}^{\dagger}]=\frac{\E_{\bmu}[X_{\bb\fs}\one\{\tau \leq k\}]}{\P_{\bmu}(\tau \leq k)}\leq 2+\frac{\E_{\bmu}[(X_{\bb\fs}-X_{\tau_{\bb\ast},\bb\fs})\one\{\tau \leq k\}]}{\P_{\bmu}(\tau \leq k)}\\
	    =2+\sum_{\ell\leq k }\sum_{\bx}\frac{\P_{\bmu}(\tau_{\bb\ast}=\ell, X_{\ell}=x, \tilde{X}_{k-\ell,\ast\bb}\geq 2-x_{\ast\bb})}{\P_{\bmu}(\tau \leq k)}\E_{\bmu}[\tilde{X}_{k-\ell,\bb\fs}\mid \tilde{X}_{k-\ell,\ast\bb}\geq 2-x_{\ast\bb}],
	\end{multline}
	where $(\tilde{X}_{t})_{t\geq 0}$ is an independent realization of the random walk $(X_t)_{t\geq 0}$. Now observe that for any $t\geq t^\prime \geq 0$, $\E_{\bmu}[\tilde{X}_{t,\bb\fs}\mid \tilde{X}_{t,\ast\bb}=t^\prime]=\frac{(t-t^\prime)\mu_{\bb\fs}}{1-\mu_{\ast\bb}}$ is decreasing in $t^\prime$, so we have
	\begin{equation}\label{eq:multionimal:conditional:decreasing}
	\E_{\bmu}[\tilde{X}_{t, \bb\fs}\mid \tilde{X}_{t,\ast\bb}\geq l]=\frac{\sum_{t^\prime \geq l}\E_{\bmu}[\tilde{X}_{t,\bb\fs}\one\{\tilde{X}_{t,\ast\bb}=t^\prime\}]}{\sum_{t^\prime \geq l}\P_{\bmu}(\tilde{X}_{t,\ast\bb}=t^\prime)}\leq \frac{\sum_{t^\prime\geq 0}\E_{\bmu}[\tilde{X}_{t,\bb\fs}\one\{\tilde{X}_{t,\ast\bb}=t^\prime\}]}{\sum_{t^\prime \geq 0}\P_{\bmu}(\tilde{X}_{t,\ast\bb}=t^\prime)}=t\mu_{\bb\fs}.
	\end{equation}
	Therefore, applying \eqref{eq:multionimal:conditional:decreasing} to the \textsc{rhs} of \eqref{eq:apriori:2ndmo:lem:technical-1} shows
	\begin{equation*}
	     \E^{k}_{\bmu}[X_{\bb\fs}\mid X\in \Omega_{\tns\tns}^{\dagger}]\leq 2+\max_{\ell \leq k, l\leq 2}\E_{\bmu}[\tilde{X}_{k-\ell, \bb\fs}\mid \tilde{X}_{k-\ell, \ast\bb}\geq l]\leq 2+k \mu_{\bb\fs}.
	\end{equation*}
	Symmetrically $\E^{k}_{\bmu}[X_{\fs\bb}\mid X\in \Omega_{\tns\tns}^{\dagger}]\leq 2+k \mu_{\fs\bb}$ holds, which combined with \eqref{eq:apriori:2ndmo:lem:crucial} concludes the proof of the claim.
	\end{proof}
	
	\begin{proof}[Proof of Lemma \ref{lem:apriori:2ndmo:gamma:exists}]
	We only provide the proof for the case where $\XXX^\prime = \{\bb\bb^{\neq}, \fs\bb,\bb\fs, \fs\fs\}$, i.e. when $\xi_{\bb\fs}, \xi_{\fs\bb}, \xi_{\fs\fs} >0$, since other cases follow by simpler argument. Uniqueness of $\bgamma(\bp,\bxi)$ follows from the strict convexity of $\bgamma^{-} \to \Lambda^{-}_{\bxi}(\bp, \bgamma^{-})$. We now aim to show existence.  
	Take $\eps>0$ small and consider the function $\bgamma^{-} \to \Lambda^{-}_{\bxi}(\bp,\bgamma^{-})+\frac{1}{2}\eps||\bgamma^{-}||_2^{2}$. Since the norm of the derivative tends to $\infty$ as $||\bgamma^{-}||_2\to \infty$, by Rockafellar's theorem, there exists a unique $\bgamma^{-}_{\eps}\equiv (\gamma_{\eps,\bsigma})_{\bsigma \in \XXX^\prime}$ such that 
	\begin{equation}\label{eq:apriori:2ndmo:lem:gamma:eps}
	\partial_{\bgamma_{\bsigma}}\Lambda^{-}_{\bxi}(\bp,\bgamma^{-}_{\eps})+\eps \gamma_{\eps,\bsigma} = \kappa \xi_{\bsigma},\quad\bsigma \in \XXX^\prime.
	\end{equation}
	With the estimates shown in Lemma \ref{lem:partial:Lambda:upper:bound}, we show that $\bgamma^{-}_\eps$ stays in a bounded region as $\eps \to 0$. Let $\bmu_{\eps}\equiv(\bmu_{\eps,\bsigma})_{\bsigma\in \XXX}\equiv \bmu(\bgamma_{\eps}^{-})$. We first claim that $\limsup_{\eps \to 0}\gamma_{\eps,\bsigma}<\infty$ for every $\bsigma \in \XXX^\prime$. Suppose by contradiction that $A\equiv \{\bsigma \in \XXX^\prime:\limsup_{\eps \to 0}\gamma_{\eps,\bsigma}=\infty\} \neq \emptyset$. Observe that \eqref{eq:2ndmo:compute:partial:Lambda} and \eqref{eq:2ndmo:define:partial:Lambda:bb} show
	\begin{equation*}
	    \sum_{\bsigma \in \XXX} \partial_{\gamma_{\bsigma}}\Lambda_{\bxi}^{-}(\bp,\bgamma^{-}_{\eps})= \kappa = \kappa\sum_{\bsigma \in \XXX}\xi_{\bsigma}.
	\end{equation*}
	Thus, we can sum \eqref{eq:apriori:2ndmo:lem:gamma:eps} over $\bsigma \in A$ and use Lemma \ref{lem:partial:Lambda:upper:bound} to bound
	\begin{equation*}
	\begin{split}
	    \sum_{\bsigma \in A}\gamma_{\eps,\bsigma}&=\eps^{-1}(\sum_{\bsigma \in \XXX\backslash A}\partial_{\gamma_{\bsigma}}\Lambda_{\bxi}^{-}(\bp,\bgamma^{-}_{\eps})-\sum_{\bsigma \in \XXX\backslash A}\kappa \xi_{\bsigma})\\
	    &\leq \eps^{-1}\bigg(|\XXX\backslash A|\Big(16+(k-1)\sum_{\btau \in \YYY}p_{\btau}\Big)+8p_{\tns\tns}\sum_{\bsigma \in \XXX\backslash A}\mu_{\eps,\bsigma}-\sum_{\bsigma \in \XXX\backslash A}\kappa \xi_{\bsigma}\bigg).
	\end{split}
	\end{equation*}
	By our assumption, $A \neq \emptyset$, $\lim_{\eps \to 0}\mu_{\eps,\bsigma}=0$ for $\bsigma \in \XXX\backslash A$ by the definition of $\bmu$ in \eqref{eq:2ndmo:def:bmu}. Moreover $\bb\bb^{=} \in \XXX \backslash A$, because $A \subset \XXX^\prime$. Thus the \textsc{rhs} of the above equation tends to $-\infty$ as $\eps \to 0$, since $\kappa\xi_{\bb\bb^{=}}=\frac{k}{2}-O(\frac{k^2}{2^{k/2}})\gg 64$. Hence, the above equation shows $\lim_{\eps \to 0}  \sum_{\bsigma \in A}\gamma_{\eps,\bsigma}=0$, which contradicts the definition of $A$. Therefore, $\limsup_{\eps \to 0} \gamma_{\eps,\bsigma}<\infty$ for every $\bsigma \in \XXX^\prime$.
	
	Turning to the lower bound, plug in $\bsigma=\bb\bb^{\neq}$ in \eqref{eq:apriori:2ndmo:lem:gamma:eps} and use Lemma \ref{lem:partial:Lambda:upper:bound} to have
	\begin{equation*}
	   \gamma_{\eps,\bb\bb^{\neq}}=\eps^{-1}\left(\kappa\xi_{\bb\bb^{\neq}}-\partial_{\bgamma_{\bb\bb^{\neq}}}\Lambda^{-}_{\bxi}(\bp,\bgamma^{-}_{\eps})\right)\geq \eps^{-1}\left(\kappa \xi_{\bb\bb^{\neq}}-(k-1)\sum_{\btau \in \YYY}p_{\btau}-8p_{\tns\tns}(k\mu_{\eps,\bb\bb^{\neq}}+2)\right),
	\end{equation*}
	which shows $\liminf_{\eps \to 0}\gamma_{\eps,\bb\bb^{\neq}}>-\infty$, since $p_{\tns\tns}=1-\sum_{\tau\in \YYY}=1-O(\frac{k}{2^k})$. Furthermore, since $\limsup_{\eps \to 0} \gamma_{\eps,\bb\bb^{\neq}}<\infty$, the equation above shows
	\begin{equation}\label{eq:mu:eps:BB:equal:lower}
	    \liminf_{\eps\to 0}\mu_{\eps,\bb\bb^{\neq}} \geq \frac{1}{20}
	\end{equation}
	Similarly, plugging in $\bsigma=\fs\fs$ in \eqref{eq:apriori:2ndmo:lem:gamma:eps} and using Lemma \ref{lem:partial:Lambda:upper:bound} shows
	\begin{equation*}
	    \gamma_{\eps, \fs\fs}= \eps^{-1}\left(\kappa\xi_{\fs\fs}-\partial_{\bgamma_{\fs\fs}}\Lambda^{-}_{\bxi}(\bp, \bgamma^{-}_{\eps})\right)=\eps^{-1}\left(\kappa\xi_{\fs\fs}-\partial_{\bgamma_{\fs\fs}}\Lambda_{\bxi}^{\tns\tns}(\bgamma^{-}_{\eps})\right)\geq \eps^{-1}(\kappa \xi_{\fs\fs}-8kp_{\tns\tns} \mu_{\eps,\fs\fs}),
	\end{equation*}
	which shows $\liminf_{\eps \to 0}\gamma_{\eps,\fs\fs}>-\infty$, since we have assumed $\XXX^\prime = \{\bb\bb^{\neq},\bb\fs,\fs\bb,\fs\fs\}$, i.e. $\xi_{\fs\fs} \neq 0$. Turning to the case $\bsigma=\bb\fs$, note that $\partial_{\gamma_{\bb\fs}}\Lambda^{\btau}_{\bxi}(\bgamma^{-}_{\eps})=0$, if $\btau \in \{\bb\rr^{=},\bb\rr^{\neq},\fs\rr\}$, so we have
	 \begin{equation}\label{eq:2ndmo:lem:gamma:eps:BS}
	 \begin{split}
	  \gamma_{\eps,\bb\fs}
	  &=\eps^{-1}\left(\kappa\xi_{\bb\fs}-\sum_{\btau\in \{\rr\bb^{=},\rr\bb^{\neq},\rr\fs\}}p_{\btau}\E^{k-1}_{\bmu_{\eps}}[X_{\bb\fs}\mid X\in \Omega_{\btau}]- p_{\tns\tns}\frac{\E_{\bmu_{\eps}}^{k}[X_{\bb\fs}\hat{v}_2(X)\one\{X\in\Omega_{\tns\tns}\}]}{\E_{\bmu_{\eps}}^{k}[\hat{v}_2(X)\one\{X\in \Omega_{\tns\tns}\}]}\right)\\
	  &\geq \eps^{-1}\left(\kappa\xi_{\bb\fs}-\sum_{\btau\in \{\rr\bb^{=},\rr\bb^{\neq},\rr\fs\}}\frac{p_{\btau}k\mu_{\eps,\bb\fs}}{\P_{\bmu_{\eps}}(X\in \Omega_{\btau})}- \frac{p_{\tns\tns}4k\mu_{\eps,\bb\fs}}{\P_{\bmu_{\eps}}(X\in \Omega_{\tns\tns})}\right)\\
	  &\geq \eps^{-1}\left(\kappa\xi_{\bb\fs}-\sum_{\btau\in \{\rr\bb^{=},\rr\bb^{\neq},\rr\fs\}}\frac{p_{\btau}k\mu_{\eps,\bb\fs}}{\mu_{\eps,\bb\bb^{=}}^{2}\mu_{\eps,\bb\bb^{\neq}}^{k-3}}- \frac{p_{\tns\tns}4k\mu_{\eps,\bb\fs}}{\mu_{\eps,\bb\bb^{=}}^{2}\mu_{\eps,\bb\bb^{\neq}}^{k-2}}\right),
	 \end{split}
	 \end{equation}
	 where the first inequality is due to the bound $\hat{v}_2(\bx)\geq \frac{1}{4}$ for $\bx \in \Omega_{\tns\tns}$, and the second inequality is due to the fact $\{x_{\bb\bb^{=}}=2,x_{\bb\bb^{\neq}}=k-3\}\in \Omega_{\btau},\btau\in\{\rr\bb^{=},\rr\bb^{\neq},\rr\fs\}$, and $\{x_{\bb\bb^{=}}=2,x_{\bb\bb^{\neq}}=k-2\}\in \Omega_{\tns\tns}$. Now, recall \eqref{eq:mu:eps:BB:equal:lower}, and note that $\liminf_{\eps \to 0}\mu_{\eps,\bb\bb^{=}}>0$ holds, because $\limsup_{\eps \to 0}\gamma_{\eps,\bsigma}<\infty$. Hence, \eqref{eq:2ndmo:lem:gamma:eps:BS} shows that $\liminf_{\eps\to 0}\gamma_{\eps,\bb\fs}>-\infty$ since we assumed $\xi_{\bb\fs}>0$. Symmetrically, $\liminf_{\eps \to 0}\gamma_{\eps,\fs,\bb}>-\infty$ holds, which concludes the proof of the boundedness of $\bgamma_{\eps}^{-}$ as $\eps \to 0$. It follows by compactness argument that $\bgamma_{\eps}^{-}$ converges to $\bgamma^{-}=\bgamma^{-}(\bp,\bxi)$ satisfying 
	 \begin{equation}\label{eq:apriori:2ndmo:lem:gamma:minus}
	     \partial_{\bgamma_{\bsigma}}\Lambda^{-}_{\bxi}(\bp,\bgamma^{-})= \kappa \xi_{\bsigma},\quad\bsigma \in \XXX^\prime.
	 \end{equation}

	 Finally, we aim to show $||\bgamma(\bp,\bxi)||_1\lesssim\frac{k^3}{2^{k/2}}$ and $||\nabla_{\bp}\Lambda_{\xi}\left(\bp,\bgamma(\bp,\bxi)\right)||_1\lesssim \frac{k^3}{2^{k/2}}$. For simplicity, we abbreviate $\bgamma = \bgamma(\bp,\bxi)$ and $\bmu \equiv (\mu_{\bsigma})_{\bsigma\in\XXX}\equiv \bmu\left(\bgamma(\bp,\bxi)\right)$ from now on. The crucial observations are summarized as follows.
	 \begin{itemize}
	     \item \eqref{eq:mu:eps:BB:equal:lower} shows that $\mu_{\bb\bb^{\neq}}\geq \frac{1}{20}$ and symmetrically, $\mu_{\bb\bb^{=}}\geq \frac{1}{20}$, so
	     \begin{equation}\label{eq:apriori:2ndmo:BB:crude}
	         \mu_{\bb\bb^{\neq}}, \mu_{\bb\bb^{=}}\geq \frac{1}{20}\quad\textnormal{and}\quad \mu_{\bb\fs}+\mu_{\fs\bb}+\mu_{\fs\fs}\leq \frac{9}{10}
	     \end{equation}
	     \item Similar to $h(x)$ defined in \eqref{eq:apriori:def:h}, let $h_2(\bx)\equiv \left(1-\hat{v}_2(\bx)\right) \one\{\bx \in \Omega_{\tns\tns}\}$. Then, $\hat{v}_2(\bx)\one\{\bx\in \Omega_{\tns\tns}\}=1-h_2(\bx)-\one\{\bx\notin \Omega_{\tns\tns}\}$ holds and we have the following upper bound for $h_2(\bx)$:
	     \begin{equation}\label{eq:2ndmo:upperbound:h:two}
	         h_2(\bx)\leq h_2^{\textnormal{up}}(\bx)\equiv \frac{2^{x_{\fs\fs}+x_{\fs\bb}}}{2^{k-1}}+\frac{2^{x_{\fs\fs}+x_{\bb\fs}}}{2^{k-1}}
	     \end{equation}
	 \end{itemize}
	 With the above observations in mind, we now show the improved bound $\mu_{\bb\bb^{=}}, \mu_{\bb\bb^{\neq}}\geq \frac{1}{2}-O(\frac{k^2}{2^{k/2}})$. First note that by \eqref{eq:2ndmo:compute:partial:Lambda}, we have
	 \begin{equation}\label{eq:aprori:2ndmo:lem:inter-0}
	 \begin{split}
	     \partial_{\gamma_{\bb\bb^{\neq}}}\Lambda_{\bxi}^{\tns\tns}(\bgamma^{-})=k\mu_{\bb\bb^{\neq}}\frac{1-\E_{\bmu}^{k-1}[h_2(X+\one_{\bb\bb^{\neq}})]-\P_{\bmu}^{k-1}(X+\one_{\bb\bb^{\neq}}\notin \Omega_{\tns\tns})}{1-\E_{\bmu}^{k}[h_2(X)]-\P_{\bmu}^{k}(X\notin \Omega_{\tns\tns})}.
	 \end{split}
	 \end{equation}
	 Using union bound and the crude bound \eqref{eq:apriori:2ndmo:BB:crude}, we can upper bound $\P_{\bmu}^{k}(X\notin \Omega_{\tns\tns})$ by
	 \begin{equation}\label{eq:apriori:2ndmo:lem:inter-1}
	 \begin{split}
	     \P_{\bmu}^{k}(X\notin \Omega_{\tns\tns})&\leq \P_{\bmu}^{k}(X_{\bb\bb}+X_{\bb\fs}\leq 1)+\P_{\bmu}^{k}(X_{\bb\bb}+X_{\fs\bb}\leq 1)+\P_{\bmu}^{k}(X_{\fs\fs}=k-2,X_{\bb\bb^{=}}=X_{\bb\bb^{\neq}}=1)\\
	     &\lesssim k^2 0.9^{k}
	 \end{split}
	 \end{equation}
	 Similarly, $\P_{\bmu}^{k-1}(X+\one_{\bb\bb^{\neq}}\notin \Omega_{\tns\tns})\lesssim k^2 0.9^{k}$ holds. Moreover, \eqref{eq:2ndmo:upperbound:h:two} shows
	 \begin{equation}\label{eq:apriori:2ndmo:lem:inter-2}
	     \E_{\bmu}^{k}[h_2(X)]\leq \E_{\bmu}^{k}[h_2^{\textnormal{up}}(X)]=\frac{(1+\mu_{\fs\fs}+\mu_{\fs\bb})^{k}}{2^{k-1}}+\frac{(1+\mu_{\fs\fs}+\mu_{\bb\fs})^{k}}{2^{k-1}}\lesssim 0.95^{k}.
	 \end{equation}
	 Similarly, $\E_{\bmu}^{k-1}[h_2(X+\one_{\bb\bb^{\neq}})]\lesssim 0.95^{k}$ holds. Hence, plugging in the bound \eqref{eq:apriori:2ndmo:lem:inter-1} and \eqref{eq:apriori:2ndmo:lem:inter-2} into \eqref{eq:aprori:2ndmo:lem:inter-0} and using \eqref{eq:apriori:2ndmo:lem:gamma:minus} for $\bsigma=\bb\bb^{\neq}$ show
	 \begin{equation*}
	 \kappa\xi_{\bb\bb^{\neq}}=\sum_{\btau \in \YYY}p_{\btau}\partial_{\gamma_{\bb\bb^{\neq}}}\Lambda_{\xi}^{\btau}(\bgamma^{-})+p_{\tns\tns}\Lambda^{\tns\tns}_{\bxi}(\bgamma^{-})=O\left(\frac{k^2}{2^k}\right)+\frac{k\mu_{\bb\bb^{\neq}}}{1-O(0.95^k)},
	 \end{equation*}
	which implies that $\mu_{\bb\bb^{\neq}}\geq \frac{1}{2}-O(0.95^k)$. Symmetrically, $\mu_{\bb\bb^{=}}\geq \frac{1}{2}-O(0.95^k)$ holds, so $\mu_{\fs\bb}+\mu_{\bb\fs}+\mu_{\fs\fs}=O(0.95^k)$. Note that we can iterate once more, i.e. use $\mu_{\fs\bb}+\mu_{\bb\fs}+\mu_{\fs\fs}=O(0.95^k)$ to get improved bounds for \eqref{eq:apriori:2ndmo:lem:inter-1} and \eqref{eq:apriori:2ndmo:lem:inter-2}, to show that
	\begin{equation}\label{eq:mu:bb:final}
	    \mu_{\bb\bb^{=}},\mu_{\bb\bb^{\neq}}\geq \frac{1}{2}-O\left(\frac{k^2}{2^{k/2}}\right)\quad\textnormal{and}\quad \mu_{\bb\fs}+\mu_{\fs\bb}+\mu_{\fs\fs}=O\left(\frac{k^2}{2^{k/2}}\right).
	\end{equation}
	Having \eqref{eq:mu:bb:final} in hand, we claim that $\frac{\partial_{\gamma_{\bsigma}}\Lambda^{\btau}_{\bxi}(\bgamma^{-})}{(k-\one\{\btau \in \YYY\})\mu_{\bsigma}}=1+O(\frac{k^3}{2^{k/2}})$ for all $\bsigma \in \XXX$ and $\btau \in \YYY\cup\{\tns\tns\}$, except for the case when $\bsigma$ and $\btau$ are incompatible. Here, $\bsigma$ and $\btau$ are defined to be incompatible if and only if $\btau\in \{\rr\ast\}$ and $\bsigma \in \{\fs\ast\}$ or $\btau\in \{\ast\rr\}$ and $\bsigma \in \{\ast\fs\}$. For incompatible $\bsigma$ and $\btau$, $\partial_{\gamma_{\bsigma}}\Lambda_{\bxi}^{\btau}(\bgamma^{-})=0$. Otherwise, for $\btau \in \YYY$, we have
	\begin{equation}\label{eq:apriori:2ndmo:lem:partial:Lambda:tau}
	    \frac{\partial_{\gamma_{\bsigma}}\Lambda^{\btau}_{\bxi}(\bgamma^{-})}{(k-1)\mu_{\bsigma}}=\frac{1-\P_{\bmu}^{k-2}(X+\one_{\bsigma}\notin \Omega_{\btau})}{1-\P_{\bmu}^{k-1}(X\notin \Omega_{\btau})}.
	\end{equation}
	For any $\btau \in \YYY$, $\{\bx\not\in \Omega_{\btau}\}\subset \{\bx_{\ast\fs}\geq 1\}\cup\{\bx_{\fs\ast}\geq 1\}\cup \{x_{\bb\bb^{=}}\geq 1\}\cup\{x_{\bb\bb^{\neq}}\geq 1\}$, so by union bound,
	\begin{equation}\label{eq:2ndmo:super:technical-0}
	    \P_{\bmu}^{k-1}(X\notin \Omega_{\btau})\leq k\mu_{\ast \fs}+k\mu_{\fs\ast}+k(1-\mu_{\bb\bb^{=}})^{k-1}+k(1-\mu_{\bb\bb^{\neq}})^{k-1}=O\left(\frac{k^3}{2^{k/2}}\right)
	\end{equation}
	Similarly, $\P_{\bmu}^{k-2}(X+\one_{\bsigma}\notin \Omega_{\btau})=O(\frac{k^{3}}{2^{k/2}})$ holds for $\bsigma$ compatible with $\btau$. Hence, $  \frac{\partial_{\gamma_{\bsigma}}\Lambda^{\btau}_{\bxi}(\bgamma^{-})}{(k-1)\mu_{\bsigma}}=1+O(\frac{k^{3}}{2^{k/2}})$ holds by \eqref{eq:apriori:2ndmo:lem:partial:Lambda:tau}. For the case of $\btau=\{\fs\fs\}$, similar calculations done in \eqref{eq:aprori:2ndmo:lem:inter-0}-\eqref{eq:apriori:2ndmo:lem:inter-2} show $\frac{\partial_{\gamma_{\bsigma}}\Lambda^{\tns\tns}_{\bxi}(\bgamma^{-})}{k\mu_{\bsigma}}=1+O(\frac{k^{3}}{2^{k/2}})$ for all $\bsigma \in \XXX$. Hence, for all cases we have
	\begin{equation}\label{eq:apriori:2ndmo:lem:most:important}
	    \frac{\partial_{\gamma_{\bsigma}}\Lambda^{\btau}_{\bxi}(\bgamma^{-})}{(k-\one\{\btau \in \YYY\})\mu_{\bsigma}}=1+O\left(\frac{k^3}{2^{k/2}}\right),\quad\forall~\textnormal{compatible}\quad \bsigma\in \XXX\quad\textnormal{and}\quad \btau \in \YYY\cup \{\tns\tns\}.
	\end{equation}
	Therefore, by \eqref{eq:apriori:2ndmo:lem:gamma:minus} and \eqref{eq:apriori:2ndmo:lem:most:important}, we have the following for $\bsigma \in \{\ast\fs\}$.
	\begin{equation*}
	    \kappa\xi_{\bsigma}=\left((k-1)p_{\rr\ast}+kp_{\tns\tns}\right)\mu_{\bsigma}\left(1+O\left(\frac{k^3}{2^{k/2}}\right)\right),
	\end{equation*}
	which implies that $\xi_{\bsigma}=\mu_{\bsigma}\left(1+O(\frac{k^3}{2^{k/2}})\right)$. Analogously, for $\bsigma \in \{\fs\ast\}\cup \{\bb\bb^{=},\bb\bb^{\neq}\}$, the same holds. (Recall that \eqref{eq:apriori:2ndmo:lem:gamma:minus} implies that $\partial_{\gamma_{\bb\bb^{=}}}\Lambda^{-}_{\bxi}(\bp,\bgamma^{-})=\kappa \xi_{\bb\bb^{=}}$.) Therefore, we conclude that
	\begin{equation}\label{eq:apriori:2ndmo:lem:final}
	    \xi_{\bsigma}=\mu_{\bsigma}\left(1+O\left(\frac{k^3}{2^{k/2}}\right)\right),\quad\forall \bsigma \in \XXX.
	\end{equation}
	In particular, recalling $\gamma_{\bb\bb^{=}}\equiv 0$, taking $\bsigma=\bb\bb^{=}$ in the equation above shows 
	\begin{equation}\label{eq:2ndmo:super:technical}
	    \sum_{\bsigma \in \XXX}\xi_{\bsigma}e^{\gamma_{\bsigma}}=1+O\left(\frac{k^3}{2^{k/2}}\right),
	\end{equation}
	which in turn implies $|\gamma_{\bsigma}|=O(\frac{k^3}{2^{k/2}})$ for $\bsigma \in \XXX^\prime$ by \eqref{eq:lem:apriori:1stmo:comparison:final}. To conclude, note that for $\btau \in \YYY$
	\begin{equation*}
	    \partial_{p_{\btau}}\Lambda_{\bxi}(\bp,\bgamma)=\Lambda^{\btau}_{\bxi}(\bgamma^{-})=\log\left(1-\P_{\mu}^{k-1}(X\notin \Omega_{\btau})\right)+(k-1)\log(\sum_{\bsigma\in \XXX}\xi_{\bsigma}e^{\gamma_{\bsigma}})=O\left(\frac{k^4}{2^{k/2}}\right),
	\end{equation*}
	where the last bound is due to \eqref{eq:2ndmo:super:technical-0} and \eqref{eq:2ndmo:super:technical}. Similarly, for $\btau=\fs\fs$,
	\begin{equation*}
	    \partial_{p_{\tns\tns}}\Lambda_{\bxi}(\bp,\bxi)=\Lambda_{\bxi}^{\tns\tns}(\bgamma^{-})=\log\left(1-\E_{\bmu}^{k}[h_2(X)]-\P_{\bmu}^{k}(X\notin \Omega_{\tns\tns})\right)+k\log(\sum_{\bsigma\in \XXX}\xi_{\bsigma}e^{\gamma_{\bsigma}})=O\left(\frac{k^4}{2^{k/2}}\right),
	\end{equation*}
	which altogether shows $||\nabla_{\bp}\Lambda_{\xi}\left(\bp,\bgamma(\bp,\bxi)\right)||_1\lesssim \frac{k^4}{2^{k/2}}$.
	  \end{proof}
    \begin{lemma}\label{lem:apriori:2ndmo:cov:comparable}
        Recalling the notation $\XXX^\prime \equiv \{\bsigma \in \XXX\setminus \{\bb\bb^{=}\}: \xi_{\bsigma}\neq 0\}$, let $\Cov_{\bxi}(X^{\btau})$ and $\Cov_{\bgamma, \bxi}(\wt{X}^{\btau})$ respectively denote the covariance matrices of $(X^{\btau}_{\bsigma})_{\bsigma\in \XXX^\prime}$ and $(\wt{X}^{\btau}_{\bsigma})_{\bsigma\in \XXX^\prime}$, where $X^{\btau}\sim \textnormal{Multinomial}(k-\one(\btau\in \YY),\bxi)$ and $\wt{X}^{\btau}\sim \nu_{\btau,\bgamma,\bxi}$. Then, uniformly over $\bxi\equiv (\xi_{\bsigma})_{\bsigma\in \XXX},\bp\equiv (p_{\btau})_{\btau\in \YYY\cup\{\tns\tns\}}$ such that $\xi_{\bb\bb^{=}}, \xi_{\bb\bb^{\neq}} \in [\frac{1}{2}-\frac{3k^2}{2^{k/2}}, \frac{1}{2}+\frac{3k^2}{2^{k/2}}]$ and $\sum_{\btau \in \YYY} p_{\btau} \leq \frac{15k}{2^k}$, we have that 
        \begin{equation*}
            \det\bigg(\sum_{\btau\in \YYY\cup\{\tns\tns\}}p_{\btau}\Cov_{\bxi}\Big(X^{\btau}\Big)\bigg)\asymp_{k} \det\bigg(\sum_{\btau\in \YYY\cup\{\tns\tns\}}p_{\btau}\Cov_{\bgamma(\bp,\bxi),\bxi}\Big(\wt{X}^{\btau}\Big)\bigg)\,.
        \end{equation*}
    \end{lemma}
    \begin{proof}
       We only provide the proof for the case where $\XXX^\prime =\{\bb\bb^{\neq}, \bb\fs, \fs\bb,\fs\fs\}$, i.e. when $\xi_{\bb\fs},\xi_{\fs\bb},\xi_{\fs\fs}>0$ since other cases follow by simpler argument. Throughout, we treat $X^{\btau}$ as a vector in $\R^{\XXX\setminus\{\bb\bb^{=}\}}$, i.e. $X^{\btau}\equiv (X^{\btau}_{\bsigma})_{\bsigma \in \{\bb\bb^{\neq}, \bb\fs,\fs\bb,\fs\fs\}}$, and similarly $\wt{X}^{\btau}\equiv (\wt{X}^{\btau}_{\bsigma})_{\bsigma \in \{\bb\bb^{\neq}, \bb\fs,\fs\bb,\fs\fs\}}$. Also, for simplicity, we denote $\bgamma\equiv \bgamma(\bp,\bxi)$.

       First, since $\Cov_{\bxi}(X^{\btau})=\big(k-\one(\tau\in \YYY)\big)\big(\textnormal{diag}(\bxi^{-})-\bxi^{-}(\bxi^{-})^{T}\big)$, where $\bxi^{-}\equiv (\xi_{\bsigma})_{\bsigma\in \{\bb\bb^{\neq},\bb\fs,\fs\bb,\fs\fs\}}$, we have 
       \begin{equation}\label{eq:cov:easy}
           \det\bigg(\sum_{\btau\in \YYY\cup\{\tns\tns\}}p_{\btau}\Cov_{\bxi}\Big(X^{\btau}\Big)\bigg)=\det\bigg(\kappa\Big(\textnormal{diag}(\bxi^{-})-\bxi^{-}(\bxi^{-})^{T}\Big)\bigg)\asymp_k \xi_{\bb\fs}\cdot \xi_{\fs\bb}\cdot \xi_{\fs\fs}\,, 
       \end{equation}
       where the last estimate holds since it is straightforward to check using $\max_{\bsigma\in \{\bb\fs,\fs\bb,\fs\fs\}}\xi_{\bsigma}\lesssim k^2/2^{k/2}$ that the determinant of $\textnormal{diag}(\bxi^{-})-\bxi^{-}(\bxi^{-})^{T}$ is dominated by the product of its diagonal elements. To this end, we aim to show that $\det\big(\sum_{\btau\in \YYY\cup \{\tns\tns\}}p_{\tau}\Cov_{\bgamma,\bxi}(\wt{X}^{\btau})\big)\asymp_k \xi_{\bb\fs}\cdot \xi_{\fs\bb}\cdot \xi_{\fs\fs}$. To do so, we first claim that
       \begin{equation}\label{eq:cov:first:claim}
\Cov_{\bgamma,\bxi}\big(\wt{X}^{\btau}_{\bsigma_1},\wt{X}^{\btau}_{\bsigma_2}\big)\lesssim k^2\xi_{\bsigma_1}\cdot \xi_{\bsigma_2}\quad\textnormal{for}\quad \btau\in \YYY\cup\{\tns\tns\}\,,\,\,\bsigma_1,\bsigma_2\in \XXX\setminus \{\bb\bb^{=}\}\,,~~~\textnormal{and}~~~\bsigma_1\neq\bsigma_2\,.
       \end{equation}
       To show \eqref{eq:cov:first:claim}, observe that for all $\btau\in \YYY\cup\{\tns\tns\}$, $(x_{\bsigma})_{\bsigma\in \XXX}\in \Omega_{\btau}$ holds if $x_{\bb\bb^{=}}+x_{\bb\bb^{\neq}}=k-\one\{\btau\in \YYY\}$ and $x_{\bb\fs}=x_{\fs\bb}=x_{\fs\fs}=0$. Thus, it follows that for $\btau\in \YYY\cup\{\tns\tns\}$, 
       \begin{equation}\label{eq:cov:prob:lower}
       \P_{\bxi}\Big(X^{\btau}\in \Omega_{\btau}\Big)\geq (\xi_{\bb\bb^{=}}+\xi_{\bb\bb^{\neq}})^k\geq \Big(1-O(k^2 2^{-k/2})\Big)^k\gtrsim 1.
       \end{equation}
       Moreover, recall the definition of $\nu_{\btau}$ in \eqref{eq:apriori:2ndmo:def:bnu:star}, and that $\hat{v}_2(\bx)\in [1/4,1]$ holds for $\bx\in \Omega_{\tns\tns}$. Thus, it follows that for all $\btau\in \YYY\cup\{\tns\tns\}$ and $\bsigma \in \XXX\setminus \{\bb\bb^{=}\}$, 
       \begin{equation}\label{eq:first:claim:first}
           \E_{\bgamma,\bxi}\big[\wt{X}^{\btau}_{\bsigma}\big]\lesssim e^{2k||\bgamma||_1}\E_{\bxi}\big[X^{\btau}_{\bsigma}\big]\lesssim k\xi_{\bsigma}\,, 
       \end{equation}
       where we used Lemma \ref{lem:apriori:2ndmo:gamma:exists} in the last inequality. Similarly, we have that for $\bsigma_1\neq \bsigma_2$,
       \begin{equation*}
           \E_{\bgamma,\bxi}\big[\wt{X}^{\btau}_{\bsigma_1}\wt{X}^{\btau}_{\bsigma_2}\big]\lesssim e^{4k||\bgamma||_1}\E_{\bxi}\big[X^{\btau}_{\bsigma_1}X^{\btau}_{\bsigma_2}\big]\lesssim k^2\xi_{\bsigma_1}\cdot \xi_{\bsigma_2}\,.
       \end{equation*}
       Combining with \eqref{eq:first:claim:first} shows the estimate \eqref{eq:cov:first:claim}. 

       Second, we claim that for $\bsigma \in \XXX\setminus \{\bb\bb^{=}\}$, 
       \begin{equation}\label{eq:cov:second:claim}
\Var_{\bgamma,\bxi}\big(\wt{X}^{\tns\tns}_{\bsigma}\big)\gtrsim k \xi_{\bsigma}\,.
       \end{equation}
       To show \eqref{eq:cov:second:claim}, note that $\nu_{\tns\tns}(\bx) \asymp p_{\bxi}^k(\bx)$ holds uniformly over $\bx\in \Omega_{\tns\tns}$ by definition of $\nu_{\tns\tns}$ in \eqref{eq:apriori:2ndmo:def:bnu:star} and Lemma \ref{lem:apriori:2ndmo:gamma:exists}. Also, $\P_{\bxi}(X^{\tns\tns}\in \Omega_{\tns\tns})\gtrsim 1$ holds by \eqref{eq:cov:prob:lower}. Thus, we have
       \begin{equation}\label{eq:cov:var:lower}
\Var_{\bgamma,\bxi}\big(\wt{X}^{\tns\tns}_{\bsigma}\big)\gtrsim \Var_{\bxi}\big(X^{\tns\tns}_{\bsigma}\big|X\in \Omega_{\tns\tns}\big)\geq \E_{\bxi}\Big[\Big(X^{\tns\tns}_{\bsigma}-\E_{\bxi}[X^{\tns\tns}_{\bsigma}|X^{\tns\tns}\in \Omega_{\tns\tns}]\Big)^2\one\{X^{\tns\tns}\in \Omega_{\tns\tns}\}\Big]\,.
       \end{equation}
       For $\bsigma \in \{\bb\fs,\fs\bb,\fs\fs\}$, we have $\E_{\bxi}[X^{\tns\tns}_{\bsigma}|X^{\tns\tns}\in \Omega_{\tns\tns}]\lesssim k\xi_{\bsigma}\lesssim k^4 2^{-k/2}$ by \eqref{eq:cov:prob:lower}. Hence, we can further lower bound 
       \begin{equation*}
\Var_{\bgamma,\bxi}\big(\wt{X}^{\tns\tns}_{\bsigma}\big)\gtrsim \P_{\bxi}\Big(X^{\tns\tns}_{\bsigma}=1\,,\, X^{\tns\tns}_{\bb\bb^{=}}+X^{\tns\tns}_{\bb\bb^{\neq}}=k-1\Big)\gtrsim k\xi_{\bsigma}\,. 
       \end{equation*}
       For $\bsigma=\bb\bb^{\neq}$, note that $\E_{\bxi}[X^{\tns\tns}_{\bb\bb^{\neq}}|X^{\tns\tns}\in \Omega_{\tns\tns}]=\frac{k}{2}+O(\frac{k^4}{2^{k/2}})$ holds. Thus, we can further bound the RHS of \eqref{eq:cov:var:lower} by 
       \begin{equation*}
\Var_{\bgamma,\bxi}\big(\wt{X}^{\tns\tns}_{\bb\bb^{\neq}}\big)\gtrsim \E_{\bxi}\Big[\Big(X^{\tns\tns}_{\bb\bb^{\neq}}-\E_{\bxi}[X^{\tns\tns}_{\bb\bb^{\neq}}|X^{\tns\tns}\in \Omega_{\tns\tns}]\Big)^2\one\{X^{\tns\tns}_{\bb\bb^{=}}+X^{\tns\tns}_{\bb\bb^{\neq}}=k\}\Big]\geq \frac{k}{4}-O\Big(\frac{k^5}{2^{k/2}}\Big)\,,
       \end{equation*}
       which finishes the proof of the second claim \eqref{eq:cov:second:claim}. 

       Third, note that by \eqref{eq:first:claim:first}, we have for all $\btau\in \YYY\cup\{\tns\tns\}$ and $\bsigma \in\XXX\setminus\{\bb\bb^{=}\}$,
       \begin{equation}\label{eq:cov:third:claim}
           \Var_{\bgamma,\bxi}\big(\wt{X}^{\btau}_{\bsigma}\big)\leq k \E_{\bgamma,\bxi}\big[\wt{X}^{\btau}_{\bsigma}\big]\lesssim k^2 \xi_{\bsigma}\,.
       \end{equation}
       In the regime $\sum_{\btau \in \YYY} p_{\btau} \leq \frac{15k}{2^k}$, the estimates \eqref{eq:cov:first:claim}, \eqref{eq:cov:second:claim}, and \eqref{eq:cov:third:claim} show that the determinant of $\sum_{\btau\in \YYY\cup\{\tns\tns\}}p_{\btau}\Cov_{\bgamma,\bxi}\big(\wt{X}^{\btau}\big)$ is dominated by the product of its diagonal elements, which implies that 
       \begin{equation*}
           \det\bigg(\sum_{\btau\in \YYY\cup\{\tns\tns\}}p_{\btau}\Cov_{\bgamma,\bxi}\Big(\wt{X}^{\btau}\Big)\bigg)\asymp_k \xi_{\bb\fs}\cdot \xi_{\fs\bb}\cdot \xi_{\fs\fs}\,.
       \end{equation*}
       Combining with \eqref{eq:cov:easy} concludes the proof.
    \end{proof}
	Having Lemma \ref{lem:apriori:2ndmo:gamma:exists} and Lemma \ref{lem:apriori:2ndmo:cov:comparable} in hand, we now prove \eqref{eq:apriori:2ndmo:separating}.
	\begin{prop}\label{prop:apriori:2ndmo:separating}
    Consider $\mathbf{g}_{\rr}\equiv (g_{\rr}(\btau))_{\btau\in \YYY}\in \Z_{\geq 0}^{\YYY}$ and $\underline{E}\equiv (E(\bsigma))_{\bsigma\in \in \XXX}\in \Z_{\geq 0}^{\XXX}$. Further consider $\delta_\circ\geq 0, \underline{\delta}_{\rr}\equiv (\delta_{\rr}(\btau))_{\btau\in \YYY}$, and $\underline{\delta}=\left(\delta(\bsigma)\right)_{\bsigma \in \XXX}$, where all the coordinates are non-negative. Assume that $(1-\frac{28k}{2^k})m\leq m_{\tns}\leq m_{\tns}+\delta_{\circ} \leq m$ and $\sum_{\btau \in \YYY} g_{\rr}(\btau)+\delta_{\rr}(\btau)\leq \frac{14k}{2^k}m$ hold. Further assume that $E_{\bb\bb^{=}}\wedge E_{\bb\bb^{\neq}} \geq \left(\frac{1}{2}-\frac{2k^2}{2^{k/2}}\right)m$ and $\left(E_{\bb\bb^{=}}+\delta(\bb\bb^{=})\right)\wedge \left(E_{\bb\bb^{\neq}}+\delta(\bb\bb^{\neq})\right)\geq \left(\frac{1}{2}-\frac{2k^2}{2^{k/2}}\right)m$ hold. Then, the estimate \eqref{eq:apriori:2ndmo:separating} holds. 
	\end{prop}
	\begin{proof}
	We first introduce the necessary notations. Recalling the definition of $\bp$ and $\bxi$ in \eqref{eq:apriori:2ndmo:def:bp} and \eqref{eq:apriori:2ndmo:def:bxi}, let 
	\begin{equation*}
	\begin{split}
	    &m_{\tns}^\prime \equiv m_{\tns}+\delta_{\circ},\quad\bp^\prime \equiv (\bp_{\YYY}^\prime, p_{\tns\tns}^\prime),\quad\textnormal{where}\quad \bp_{\YYY}^\prime \equiv (p_{\btau}^\prime)_{\btau \in \YYY} \equiv \frac{\mathbf{g}_{\rr}+\underline{\delta}_{\rr}}{m_{\tns}^\prime}\quad\textnormal{and}\\
	    &p_{\tns\tns}^\prime \equiv 1-\sum_{\btau \in \YYY}p_{\btau}^\prime,\quad\kappa^\prime\equiv k-\sum_{\btau \in \YYY}p_{\btau}^\prime,\quad \textnormal{and}\quad \bxi^\prime \equiv \frac{\underline{E}+\underline{\delta}}{\kappa^\prime m_{\tns}^\prime}.
	\end{split}
	\end{equation*}
	Recall $\bgamma(\bp,\bxi)$ as in Lemma \ref{lem:apriori:2ndmo:gamma:exists} and abbreviate $\bgamma\equiv (\gamma_{\bsigma})_{\bsigma\in \XXX}\equiv \bgamma(\bp,\bxi)$ and $\bgamma^\prime\equiv (\gamma^\prime_{\bsigma})_{\bsigma\in \XXX} \equiv \bgamma(\bp^\prime,\bxi^\prime)$ for simplicity. Recalling the estimate \eqref{eq:apriori:2ndmo:f:final}, which follows from Lemmas \ref{lem:apriori:2ndmo:gamma:exists} and \ref{lem:apriori:2ndmo:cov:comparable}, we have that
	\begin{multline}\label{eq:apriori:2ndmo:prop:firststep}
	      \frac{f_2(m_{\tns},m_{\tns}\bp, m_{\tns}\bxi)}{f_2(m_{\tns}^\prime,m_{\tns}^\prime\bp^\prime, m_{\tns}^\prime\bxi^\prime)} \lesssim \exp\bigg\{m_{\tns}^\prime\Big(\kappa^\prime \langle \bgamma^\prime, \xi^\prime \rangle - \Lambda_{\bxi^\prime}(\bp^\prime,\bgamma^\prime)\Big)-m_{\tns}\Big(\kappa\langle \bgamma, \xi \rangle - \Lambda_{\bxi}(\bp,\bgamma)\Big)\bigg\}.
	\end{multline}
    For $0\leq t \leq 1$, define
	\begin{equation*}
	\begin{split}
	&m_t \equiv m_{\tns}+t \delta_{\circ},\quad \bp_t\equiv (\bp_{\YYY,t},p_{\tns\tns, t}),\quad\textnormal{where}\quad \bp_{\YYY,t}\equiv (p_{\btau,t})_{\btau \in \YYY}\equiv \frac{\mathbf{g}_{\rr}+t\underline{\delta}_{\rr}}{m_t}\quad\textnormal{and}\\
	&p_{\tns\tns,t}\equiv 1- \sum_{\btau \in \YYY} p_{\btau, t},\quad \kappa_t\equiv k-\sum_{\btau \in \YYY}p_{\btau, t},\quad\bxi_{t}\equiv \frac{\underline{E}+t\underline{\delta}}{\kappa_{t}m_t},\quad\textnormal{and}\quad \bgamma_t\equiv (\gamma_{\bsigma,t})_{\bsigma \in \XXX}\equiv \bgamma(\bp_{t},\bxi_{t}).
	\end{split}
	\end{equation*}
	Further, let $\bar{f}_2(t)\equiv m_t\left(\kappa_t \langle \bgamma_t,\bxi_t \rangle- \Lambda_{\bxi_t}(\bp_t,\bgamma_t)\right)$. Since $\bar{f}_2(t)=m_t \sup_{\bgamma}\left\{\langle \bgamma, \kappa_t\bxi_t\rangle -\Lambda_{\xi_t}(\bp_t,\bgamma)\right\}$, $\bar{f}_2(t)$ is continuous in $[0,1]$ and differentiable in $(0,1)$. Thus, we can bound
	\begin{equation}\label{eq:2ndmo:prop:upperbound:by:fprime}
	\bigg|m_{\tns}^\prime\Big(\kappa^\prime \langle \bgamma^\prime, \xi^\prime \rangle - \Lambda_{\bxi^\prime}(\bp^\prime,\bgamma^\prime)\Big)-m_{\tns}\Big(\kappa\langle \bgamma, \xi \rangle - \Lambda_{\bxi}(\bp,\bgamma)\Big)\bigg| = |\bar{f}_2(1)-\bar{f}_2(0)|\leq \sup_{0\leq t\leq 1}\bigg|\frac{d\bar{f}_2(t)}{dt}\bigg|.
	\end{equation}
	To this end, we compute $\frac{d\bar{f}_2(t)}{dt}$. Since $\nabla_{\bgamma} \Lambda_{\bxi_t}(\bp_t, \bgamma_t)=\kappa_t\bxi_t$,
	\begin{equation*}
	    \frac{d\bar{f}_2(t)}{dt}= \langle \bgamma_t, \underline{\delta}\rangle -\delta_{\circ}\Lambda_{\bxi_t}(\bp_t,\bgamma_t)-m_t\bigg\langle\frac{d\bxi_t}{dt}\,,\,\nabla_{\bxi}\Lambda_{\bxi_t}(\bp_t,\bgamma_t)\bigg\rangle-m_t\bigg\langle \frac{d\bp(t)}{dt}\,,\,\nabla_{\bp}\Lambda_{\bxi_t}(\bp_t,\bxi_t)\bigg\rangle.
	\end{equation*}
	Similar calculations as in \eqref{eq:apriori:1stmo:compute:partial:theta} show $\nabla_{\bxi}\Lambda_{\bxi_t}(\bp_t,\bgamma_t)=0$. Also, $m_t\frac{d\bp(t)}{dt}=\underline{\delta}_{\rr}-\delta_{\circ}\bp_t$ and $\langle \bp_t, \nabla_{\bp}\Lambda_{\bxi_t}(\bp_t,\bxi_t) \rangle =\Lambda_{\bxi_t}(\bp_t,\bgamma_t)$. Hence, Lemma \ref{lem:apriori:2ndmo:gamma:exists} shows
	\begin{equation}\label{eq:2ndmo:prop:fprime:bound}
	    \bigg|\frac{d\bar{f}_2(t)}{dt}\bigg| =\bigg|\langle \bgamma_t, \underline{\delta}\rangle-\Big\langle \underline{\delta}_{\rr}, \nabla_{\bp}\Lambda_{\bxi_t}(\bp_t,\bxi_t) \Big\rangle \bigg|\lesssim \frac{k^4}{2^{k/2}}\Big(||\underline{\delta}||_1+||\underline{\delta}_{\rr}||_1\Big).
	\end{equation}
	Therefore, by \eqref{eq:apriori:2ndmo:prop:firststep}, \eqref{eq:2ndmo:prop:upperbound:by:fprime}, and \eqref{eq:2ndmo:prop:fprime:bound}, we have 
	\begin{equation*}
	\frac{f_2(m_{\tns},m_{\tns}\bp, m_{\tns}\bxi)}{f_2(m_{\tns}^\prime,m_{\tns}^\prime\bp^\prime, m_{\tns}^\prime\bxi^\prime)} \lesssim_{k} \exp\left\{O\left(\frac{k^4}{2^{k/2}}\right)\Big(||\underline{\delta}||_1+||\underline{\delta}_{\rr}||_1\Big)\right\},
	\end{equation*}
	which concludes the proof of \eqref{eq:apriori:2ndmo:separating}.
	\end{proof}

		\section{Compatibility properties }
	\label{sec:appendix:Properties of BP fixed point}
	
	In this section, we establish compatibility properties of the BP fixed point, which were used in Sections \ref{sec:1stmo} and \ref{sec:2ndmo}. In Section \ref{subsec:app:compat:BP:1stmo}, we consider the single copy model, corresponding to results in Section \ref{sec:1stmo}, and in Section \ref{subsec:compat:pair}, we consider the pair copy model, corresponding to results in Section \ref{sec:2ndmo}. Throughout, we let $\textnormal{per}(\underline{z}):= \{(z_{\pi(1)},\ldots, z_{\pi(\ell)}):\pi\in S_{\ell} \}$ be the set of permutations for a vector $\underline{z}=(z_1,\ldots,z_{\ell})$.
	
	\subsection{Compatibility of the BP fixed point in the single-copy model}\label{subsec:app:compat:BP:1stmo}
	
	For a free tree $\ttt\in \FFF_{\tr}$, recall the definition of the coloring $\sig(\ttt)$, defined in \eqref{eq:def:col on freetree}. The following lemma is the crux of the compatibility results for the single-copy model.
	\begin{lemma}\label{lem:compat:opt:H:opt:p:tree}
	For $\sig \in \Omega^{\ell}, \ell \geq 1$, define $\langle \sig \rangle$ similarly to \eqref{eq:def:spin multi index} by $\langle \sig \rangle (\sigma) \equiv \sum_{i=1}^{\ell}\one\{\sigma=\sigma_i\}, \forall \sigma\in \Omega$, i.e. $\langle \sig \rangle$ is the empirical count of the spins $\{\sigma_1,...,\sigma_\ell\}$. If $\sig \in \Omega_L^{k}$ is non-separating, we have
	\begin{equation}\label{eq:lem:compat:opt:H:opt:p:tree:clause}
	    \frac{d}{k}\binom{k}{\langle \sig \rangle} \hat{H}^\star_{\la,L}(\sig)=\sum_{\ttt\in \FFF_{\tr}}p^\star_{\ttt,\la,L}\big|\{a\in F(\ttt):\sig_{\delta a}(\ttt)\in \textnormal{per}(\sig)\}\big|
	\end{equation}
	Moreover, for free $\sig \in \Omega_L^{d}$ , i.e. $\dsigma_i \in \{\ff\},\forall 1\leq i \leq d$, and $\sigma \in \Omega_L\cap\{\ff\}$, we have
	\begin{equation}\label{eq:lem:compat:opt:H:opt:p:tree:variable}
	\begin{split}
	    \binom{d}{\langle \sig \rangle}\dot{H}^\star_{\la,L}(\sig)&=\sum_{\ttt\in \FFF_{\tr}}p^\star_{\ttt,\la,L}\big|\{v\in V(\ttt):\sig_{\delta v}(\ttt)\in \textnormal{per}(\sig)\}\big|\\
	    d\bar{H}^\star_{\la,L}(\sigma)&=\sum_{\ttt\in \FFF_{\tr}}p^\star_{\ttt,\la,L}\big|\{e\in E(\ttt):\sigma_e(\ttt)=\sigma\}\big|
	\end{split}
	\end{equation}
	The analogs hold for the untruncated model, where we drop the subscript $L$ in the equations above. 
	\end{lemma}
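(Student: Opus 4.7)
My plan is to prove \eqref{eq:lem:compat:opt:H:opt:p:tree:clause}, \eqref{eq:lem:compat:opt:H:opt:p:tree:variable} by exploiting the tree structure of free trees together with the self-consistency of the \textsc{bp} fixed point $\dot{q}^\star$ (and its partner $\hat{q}^\star = \hat{\textnormal{BP}}\dot{q}^\star$). I will handle the truncated case; the untruncated case follows by the same argument with the bound $v(\ttt) \le L$ removed.

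The key observation is that $p^\star_{\ttt,\la,L}$ factorises into local terms indexed by vertices, clauses, and boundary edges of $\ttt$. Indeed, combining the definitions of $J_\ttt$ and $w_\ttt^\la$, the formula \eqref{eq:optimal:tree:1stmo} may be rewritten as a product over $v \in V(\ttt)$, $a \in F(\ttt)$ and $e \in E(\ttt) \sqcup \partial\ttt$ of local factors built from $\dot\Phi, \hat\Phi, \bar\Phi$, $\dot q^\star, \hat q^\star$ and the normalising constants $\dot\ZZZ^\star, \hat\ZZZ^\star, \bar{\mathfrak Z}^\star$ (with $\dot q^\star(\bb_0)$ and $2^{-\la}\hat q^\star(\fs)$ accounting for $\dot{\partial}\ttt$ and $\hat{\partial}\ttt$ respectively). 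Next, for each free tree $\ttt$ and each clause $a \in F(\ttt)$, excising $a$ together with its incident edges and half-edges splits $\ttt$ into a clause-root and $k$ directed subtrees $\dot t_1, \ldots, \dot t_k$ hanging off the edges of $\delta a$; each $\dot t_i$ is itself a valid variable-to-clause cavity tree in the sense of Section~\ref{subsubsec:msg:config}, with root message $\dot\tau_i = \dot\sigma_{e_i}(\ttt)$.

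I would then sum $p^\star_{\ttt,\la,L}$ over all pairs $(\ttt,a)$ with $\sig_{\delta a}(\ttt) \in \textnormal{per}(\sig)$. After the factorisation above, this sum decouples: the contribution of the chosen clause is a factor $\hat\Phi(\sig)^\la$ (with literals averaged) together with the boundary weights at $a$, while the contribution of each subtree $\dot t_i$ is a cavity-type sum that, by the defining recursions \eqref{eq:def:bethe:bpmsg:dot}--\eqref{eq:def:bethe:bpmsg:hat} and the fixed-point equation $\dot q^\star = \dot{\textnormal{BP}}\hat{\textnormal{BP}}\dot q^\star$, collapses to $\dot q^\star(\dot\tau_i)$ times the appropriate normalising factors. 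The combinatorial multiplicity $\binom{k}{\langle\sig\rangle}$ appears because we count ordered tuples $(\dot t_1,\ldots,\dot t_k)$ whose root-spin multiset equals $\langle\sig\rangle$, and the global factor $d/k$ arises because the free-tree profile is normalised per variable ($h^\star(\circ) \sim$ free trees per variable) whereas the right-hand side counts per clause (giving the conversion $\alpha = d/k$). Putting everything together reproduces exactly $\tfrac{d}{k}\binom{k}{\langle\sig\rangle}\hat H^\star_{\la,L}(\sig)$ via the defining formula \eqref{eq:H:q:1stmo} for $\hat H^\star$. The variable identity and edge identity in \eqref{eq:lem:compat:opt:H:opt:p:tree:variable} follow from the completely analogous excision argument, applied respectively at a variable $v \in V(\ttt)$ (splitting $\ttt$ into $d$ clause-to-variable cavity subtrees and invoking $\hat q^\star = \hat{\textnormal{BP}}\dot q^\star$) and at an edge $e \in E(\ttt)$ (splitting $\ttt$ into one variable-to-clause and one clause-to-variable cavity subtree, and using the formula for $\bar H^\star$).

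The main obstacle I anticipate is the careful bookkeeping of the normalising constants $\bar{\mathfrak Z}^\star, \dot\ZZZ^\star, \hat\ZZZ^\star$ and of the boundary factors $\dot q^\star(\bb_0)$, $2^{-\la}\hat q^\star(\fs)$: when a cavity subtree $\dot t_i$ is re-summed to $\dot q^\star(\dot\tau_i)$, one must verify that each exponent of $\dot\ZZZ^\star$ (resp.~$\hat\ZZZ^\star$) arising from $v(\dot t_i)$ (resp.~$f(\dot t_i)$) is properly absorbed, and that the $\dot q^\star(\bb_0)$ and $2^{-\la}\hat q^\star(\fs)$ weights at the boundary of the original $\ttt$ redistribute consistently to the boundaries of the subtrees. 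A clean way to do this is to first prove the statement by induction on the depth of $\ttt$ after rooting at the marked clause (resp.~variable or edge), where at each level the \textsc{bp} equation is used precisely once; this also handles the truncation $v(\ttt)\le L$ transparently since the truncated \textsc{bp} map $\textnormal{BP}_{\la,L}$ obeys the same fixed-point identity restricted to $\Omega_L$.
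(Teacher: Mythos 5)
Your proposal is correct and is essentially the same argument as the paper's, just read in the opposite direction. The paper starts from $\hat{H}^\star_{\la,L}(\sig) = (\hat{\mathfrak{Z}}^\star)^{-1}\hat\Phi(\sig)^\la\prod_i \dot q^\star(\dsigma_i)$ and iteratively expands each $\dot q^\star(\dsigma_i)$, $\hat q^\star(\hat\tau_j)$ via the fixed-point equations $\dot q^\star = \dot{\textnormal{BP}}\hat q^\star$, $\hat q^\star = \hat{\textnormal{BP}}\dot q^\star$ until everything is a polynomial in $\dot q^\star(\bb_0)$ and $\hat q^\star(\fs)$; the monomials are then read off as free trees $\ttt$ rooted at a clause $a_0$, and the coefficient of each monomial is shown to equal $\frac{k}{d}\frac{J_\ttt}{\binom{k}{\langle\sig\rangle}}\bigl|\{a\in F(\ttt):\sig_{\delta a}(\ttt)\in\textnormal{per}(\sig)\}\bigr|$ times the local weight product. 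You start from the right-hand side, factorise $p^\star_{\ttt,\la,L}$, excise the marked clause $a$, and contract the resulting cavity subtrees back to $\dot q^\star(\dot\tau_i)$ using the same fixed-point identity; this is precisely the inverse of the paper's expansion, with identical ingredients. Two points you gloss over but correctly flag as the real work: (i) the asymmetry between variable and clause nodes (at a variable the children messages compatible with a given parent message are unique, while at a clause there are multiple valid children configurations corresponding to the literal choices and different subtree shapes) is what makes the multiplicity come out to exactly the embedding number, via the counting identity in the paper's displayed formula \eqref{eq:compat:arranging:tree:1stmo}; (ii) the $d/k$ prefactor arises because $J_\ttt$ carries $d^{1-v(\ttt)}k^{-f(\ttt)}$ and is defined for a variable root, so re-rooting at the marked clause introduces $\hat J^{\text{emb}}_\ttt = (k/d)J_\ttt$. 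Your proposed induction on the depth after rooting at the marked clause is exactly the clean way to verify the normalisation bookkeeping, and is what the paper's "iterate this procedure" does implicitly.
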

	\begin{proof}
	We only prove \eqref{eq:lem:compat:opt:H:opt:p:tree:clause} since \eqref{eq:lem:compat:opt:H:opt:p:tree:variable} and the analog for the truncated model follow by a similar argument. For simplicity, denote $\dot{q}^\star=\dot{q}^\star_{\la,L}$ and $\hat{q}^\star =\hat{\textnormal{BP}}_{\la,L}\dot{q}^\star_{\la,L}$. Then, $\dot{q}^\star=\dot{\textnormal{BP}}_{\la,L}\hat{q}^\star$ holds since $\dot{q}^\star$ is the BP fixed point. Thus, recalling the normalizing constant $\hat{\mathfrak{Z}}^\star=\hat{\mathfrak{Z}}^\star_{\dot{q}^\star_{\la,L}}$ for $\hat{H}^\star_{\la,L}$, we have
	\begin{equation}\label{eq:expand:H:op:lambda:L}
	\hat{H}^\star_{\la,L}(\sig)=(\hat{\mathfrak{Z}}^\star)^{-1}\hat{\Phi}(\sig)^\la \prod_{i=1}^{k}\dot{q}^\star(\dsigma_i)=(\hat{\mathfrak{Z}}^\star)^{-1}(\dot{\ZZZ}^\star)^{-k}\hat{\Phi}(\sig)^\la \prod_{i=1}^{k}\left\{\sum_{\utau\in \Omega_L^{d}:\dot{\tau}_1=\dsigma_i}\bar{\Phi}(\tau_1)^{\la}\dot{\Phi}(\utau)^\la\prod_{j=2}^{d}\hat{q}^\star(\hat{\tau}_j)\right\},
	\end{equation}
	where $\dot{\ZZZ}^\star=\dot{\ZZZ}_{\hat{q}^\star_{\la,L}}$ is the normalizing constant for $\dot{\textnormal{BP}}_{\la,L}\hat{q}^\star_{\la,L}$. Observe that we can further expand the \textsc{rhs} of the equation above by $\hat{q}^\star=\hat{\textnormal{BP}}_{\la,L}\dot{q}^\star$. We can iterate this procedure using the relationship $\dot{q}^\star=\dot{\textnormal{BP}}_{\la,L}\hat{q}^\star, \hat{q}^\star=\hat{\textnormal{BP}}_{\la,L}\dot{q}^\star$ until $\hat{H}^\star_{\la,L}(\sig)$ is expressed as a polynomial of $\dot{q}^\star(\bb_0)=\dot{q}^\star(\bb_1)$ and $\hat{q}^\star(\fs)$. Note that the degrees of $\dot{q}^\star(\bb_0)$ and $\hat{q}^\star(\fs)$ are determined by $\sig$ by summing up the clause-adjacent and variable-adjacent boundary half-edges in $\dsigma_1,...,\dsigma_k$ respectively. To this end, we now aim to compute the coefficient in front of the monomial of $\dot{q}^\star(\bb_0)$ and $\hat{q}^\star(\fs)$, when we expand $\hat{H}^\star_{\la,L}(\sig)$.

	To begin with, we view $\sig$ as joining the trees $\dsigma_1,...,\dsigma_k$ at a root clause $ a_{0}$ to form a tree $T$. Denote the set of variables and the clauses of $T$ by $V(T)$ and $F(T)$ respectively. Note that viewing $a_0$ as a root, every $v\in V(T)$ and $a\in F(T)\backslash \{a_0\}$ has a parent edge in its neighbor $\delta v$ and $\delta a$, which we denote by $e_0(v)$ and $e_0(a)$ respectively. We call elements of $\delta v\backslash \{e_0(v)\}$ and $\delta a\backslash \{e_0(a)\}$ children edges. Then, we make the following crucial observations.
	\begin{itemize}
	    \item Given $v\in V(T)$ and $\dsigma_{e_0(v)}\in \{\ff\}$, there exists a unique set of clause-to-variables colorings $\{\hat{\sigma}_e\}_{e\in \delta v \backslash e_0(v)}$, which are compatible with $\dsigma_{e_0(v)}$. That is, if $\utau \in \Omega_{L}^{d}$ and $\dot{\Phi}(\utau)\neq 0$ with $\dot{\tau}_1=\dsigma_{e_0(v)}$, then $\{\tau_2,...,\tau_d\}$ is fully determined as a multiset. This is since there is a unique $\{\hat{\sigma}_e\}_{e\in \delta v \backslash e_0(v)}$ such that $\dot{T}\left(\{\hat{\sigma}_e\}_{e\in \delta v \backslash e_0(v)}\right)=\dsigma_{e_0(v)}$, where $\dot{T}$ is defined in Definition \eqref{def:model:msg config}.
	    \item The same need not hold for $a\in F(T)\backslash \{a_0\}$ and $\hat{\sigma}_{e_0(a)}\in \{\ff\}$: there could be many valid coloring for children edges of $a$, which are compatible with the parent edge coloring $\hat{\sigma}_{e_0(a)}$. This is because of the nature of the iteration in \eqref{eq:def:localeq:msg}, where if $\{\dsigma_e\}_{e\in \delta a\backslash e_0(a)}$ is compatible with $\hat{\sigma}_{e_0(a)}$, then $\{\dsigma_e\oplus \uL\}_{e\in \delta a\backslash e_0(a)}$ is also compatible for $\uL\in \{0,1\}^{d-1}$.
	    \item Given a set of choices for the colorings of the children edges of $a\in F(T)\backslash \{a_0\}$, there exists a unique free tree $\ttt$ that corresponds to these choices. Moreover, it is not hard to see that after fixing a free tree $\ttt$, the number of choices for the colorings of the children edges of $v\in V(T)$ and $a\in F(T)\backslash \{a_0\}$ which give rise to $\ttt$ is given by \begin{equation}\label{eq:compat:arranging:tree:1stmo}
	        \prod_{v\in V(\ttt)}\frac{1}{d}\binom{d}{\langle \sig_{\delta v}\rangle}\prod_{\substack{a\in F(\ttt)\\a\neq a_0}}\frac{1}{k}\binom{k}{\langle \sig_{\delta a}\rangle}\Big|\{a\in F(\ttt):\sig_{\delta a}(\ttt)=\sig\}\Big|=\frac{k}{d} \frac{J_{\ttt}}{\binom{k}{\langle \sig \rangle}}\Big|\{a\in F(\ttt):\sig_{\delta a}(\ttt)\in \textnormal{per}(\sig)\}\Big|,
	    \end{equation}
	    where $\langle \sig_{\delta v} \rangle$ and $\langle \sig_{\delta a}\rangle$ are defined in \eqref{eq:def:spin multi index}.
	\end{itemize}
	With the above observations and the paragraph below \eqref{eq:expand:H:op:lambda:L} in mind, we can compute
	\begin{equation*}
	\begin{split}
	    \hat{H}^\star_{\la,L}(\sig)&=\sum_{\ttt\in \FFF_{\tr}}\bigg\{\frac{k}{d} \frac{J_{\ttt}}{\binom{k}{\langle \sig \rangle}}\Big|\{a\in F(\ttt):\sig_{\delta a}(\ttt)\in \textnormal{per}(\sig)\}\Big|(\hat{\mathfrak{Z}}^\star)^{-1}(\dot{\ZZZ}^\star)^{-|V(\ttt)|}(\hat{\ZZZ}^\star)^{-(|F(\ttt)|-1)}\\
	    &\quad\quad\quad\quad\times\prod_{v\in V(\ttt)}\dot{\Phi}\left(\sig_{\delta v}(\ttt)\right)^{\la} \prod_{a\in F(\ttt)}\hat{\Phi}\left(\sig_{\delta a}(\ttt)\right)^{\la} \prod_{e\in E(\ttt)}\bar{\Phi}\left(\sig_{e}(\ttt)\right)^{\la}\dot{q}^\star(\bb_0)^{|\dot{\partial}\ttt|}\hat{q}^\star(\fs)^{|\hat{\partial}\ttt|}\bigg\}\\
	    &=\sum_{\ttt\in \FFF_{\tr}}\frac{k}{d} \frac{p^\star_{\ttt,\la,L}}{\binom{k}{\langle \sig \rangle}}\Big|\{a\in F(\ttt):\sig_{\delta a}(\ttt)\in \textnormal{per}(\sig)\}\Big|,
	\end{split}
	\end{equation*}
	where the last equality holds because $(\hat{\mathfrak{Z}}^\star)^{-1}\hat{\ZZZ}^\star=(\bar{\mathfrak{Z}}^\star)^{-1}$ and $\dot{\Phi}(\sigma)=2$ holds if $\hat{\sigma}=\fs$. This finishes the proof of \eqref{eq:lem:compat:opt:H:opt:p:tree:clause}.
	\end{proof}
	\begin{lemma}\label{lem:compat:optfr:optbd:1stmo}
	$B^\star_{\lambda,L}$ and $(p_{\ttt, \lambda,L}^\star)_{v(\ttt)\leq L}$, defined in Definition \ref{def:opt:bdry:1stmo}, are compatible. Namely, for $x\in \{\circ, \bb_0,\bb_1,\fs\}$,
	\begin{equation}\label{eq:lem:compat:optfr:optbd:1stmo}
	\sum_{\ttt:v(\ttt)\leq L} p_{\ttt, \lambda,L}^\star \eta_{\ttt}(x)=h^\star_{\lambda,L}(x),
	\end{equation}
	where $\eta_{\ttt}(\circ)\equiv 1$. The same holds for the untruncated model. 
	\end{lemma}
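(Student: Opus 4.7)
The plan is to derive the identity \eqref{eq:lem:compat:optfr:optbd:1stmo} as a direct algebraic consequence of Lemma \ref{lem:compat:opt:H:opt:p:tree} combined with the definition \eqref{eq:def:optimal:bdry} of $B^\star_{\la,L}$ as a projection of $H^\star_{\la,L}$ and the compatibility equations \eqref{eq:def:compat:1stmo}, \eqref{eq:def:compat:1stmo:tree} that characterize $\uh$ in terms of $B$. I will only discuss the untruncated model, as the truncated case is handled by the same argument with an extra restriction $v(\ttt)\leq L$. The key point is that each marginal of $H^\star_\la$ (namely $\dot{H}^\star_\la$, $\hat{H}^\star_\la$, and $\bar{H}^\star_\la$) is, modulo the $\fs$-folding, equal to the corresponding marginal of $B^\star_\la$, with the ``remainder'' accounting for precisely the free-tree portion.

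First I would handle $x\in\{\bb_0,\bb_1\}$. Applying \eqref{eq:def:compat:1stmo} with $\bar{B}=\bar{B}^\star_\la$ and $\hat{B}=\hat{B}^\star_\la$ gives
\[
h^\star_\la(\bb_0) \;=\; d\bar{B}^\star_\la(\bb_0) \;-\; \frac{d}{k}\sum_{\utau\in(\hat{\partial}^\bullet)^k}\hat{B}^\star_\la(\utau)\sum_{j=1}^k\one\{\sigma_j=\bb_0\}.
\]
Since $\tau_\fs=\tau$ for $\tau\in\{\rr,\bb\}$, the first term equals $d\bar{H}^\star_\la(\bb_0)$, which counts \emph{all} edges with spin $\bb_0$. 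Unfolding $\hat{B}^\star_\la(\utau)=\sum_{\sig:\sig_\fs=\utau}\hat{H}^\star_\la(\sig)$ and applying \eqref{eq:lem:compat:opt:H:opt:p:tree:clause} summed over separating $\sig\in\Omega^k$ weighted by the number of $\bb_0$ entries shows that the subtracted term equals the number of $\bb_0$ half-edges at separating clauses, summed over free trees. The remaining contribution is then precisely the number of $\bb_0$ half-edges at non-separating clauses, i.e.\ those in $\dot{\partial}\ttt$, yielding $\sum_{\ttt}p^\star_{\ttt,\la}\eta_\ttt(\bb_0)$. The case $x=\bb_1$ is identical. For $x=\fs$, the second term in \eqref{eq:def:compat:1stmo} vanishes because $(\dot{\partial}^\bullet)^d$ contains no $\fs$ coordinate, so $h^\star_\la(\fs)=d\bar{B}^\star_\la(\fs)$. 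Then applying \eqref{eq:lem:compat:opt:H:opt:p:tree:variable} summed over free $\sigma$ with $\hat{\sigma}=\fs$ produces $\sum_{\ttt}p^\star_{\ttt,\la}\eta_\ttt(\fs)$, since these are exactly the boundary half-edges of $\hat{\partial}\ttt$.

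For $x=\circ$, I would use the Euler characteristic identity \eqref{eq:def:compat:1stmo:tree}. Summing the three identities of Lemma \ref{lem:compat:opt:H:opt:p:tree} appropriately shows $1-\langle\dot{B}^\star_\la,\one\rangle=\sum_\ttt p^\star_{\ttt,\la}|V(\ttt)|$, $\frac{d}{k}(1-\langle\hat{B}^\star_\la,\one\rangle)=\sum_\ttt p^\star_{\ttt,\la}|F(\ttt)|$, and $d(1-\langle\bar{B}^\star_\la,\one\rangle)=\sum_\ttt p^\star_{\ttt,\la}|E(\ttt)|$. Substituting into \eqref{eq:def:compat:1stmo:tree} gives $h^\star_\la(\circ)=\sum_\ttt p^\star_{\ttt,\la}\bigl(|V(\ttt)|+|F(\ttt)|-|E(\ttt)|\bigr)=\sum_\ttt p^\star_{\ttt,\la}$, where the last equality follows from the tree Euler characteristic $|V(\ttt)|+|F(\ttt)|-|E(\ttt)|=1$ applied to each free tree $\ttt$. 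The main obstacle in executing this plan is the careful bookkeeping of where boundary half-edges of free trees are counted: one must verify that $\dot{\partial}\ttt$ edges appear in $\bar{B}^\star_\la(\bb_x)$ (not in $\bar{B}^\star_\la(\fs)$), whereas $\hat{\partial}\ttt$ edges appear in $\bar{B}^\star_\la(\fs)$, so that the three ``mass deficits'' $1-\langle\cdot,\one\rangle$ are matched exactly with internal edges and the counts in Lemma \ref{lem:compat:opt:H:opt:p:tree} are applied to the correct sub-sums.
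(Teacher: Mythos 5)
Your overall route matches the paper's: express $h^\star_\la(x)$ through the compatibility equations \eqref{eq:def:compat:1stmo}--\eqref{eq:def:compat:1stmo:tree}, translate the $B^\star_\la$-marginals to $H^\star_\la$-marginals via the $\fs$-folding in Definition \ref{def:opt:bdry:1stmo}, and then invoke Lemma \ref{lem:compat:opt:H:opt:p:tree}. Your $x=\circ$ argument via the Euler characteristic and the reduction of $\bb_1$ to $\bb_0$ by symmetry are the paper's arguments verbatim.

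Two details in the execution need fixing, one cosmetic and one substantive. In the $\bb_0$ paragraph you say that \eqref{eq:lem:compat:opt:H:opt:p:tree:clause} is ``applied summed over separating $\sig$,'' but that identity is stated and proved only for \emph{non}-separating $\sig$. What you actually need is simpler: the identification of the subtracted term with $\bb_0$-edges at separating clauses is immediate from the $\fs$-folding in the definition of $\hat{B}^\star_\la$, with no use of the lemma; the lemma is applied to the \emph{remainder}, which is exactly what your concluding sentence correctly asserts, so this is just a slip in wording. The substantive issue is for $x=\fs$: you invoke the edge identity from \eqref{eq:lem:compat:opt:H:opt:p:tree:variable}, which sums over $e\in E(\ttt)$, i.e.\ internal edges of the free tree. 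But a color $\sigma$ with $\hat\sigma=\fs$ never occurs on an internal edge (for $e\in E(\ttt)$ the clause $a(e)$ lies inside the tree, hence is non-separating, hence $\hat\sigma_e\neq\fs$); such $\sigma$ live only on the boundary half-edges $\hat\partial\ttt$. Read literally, the edge identity would therefore give $0$ rather than $\eta_\ttt(\fs)$, which is not what you want. The paper instead routes this case through the clause marginal $\hat{B}^\star_\la$ and the clause identity \eqref{eq:lem:compat:opt:H:opt:p:tree:clause}, counting the $\fs$-half-edges from the separating-clause side, which after unpacking recovers the $\hat\partial\ttt$-count. You should either adopt that route, or first establish an extended version of the edge identity that covers boundary half-edges in $\partial\ttt$.
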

	\begin{proof}
	We consider the truncated model throughout the proof. The result for untruncated model will follow by the same argument. In what follows, we will often omit the subscripts $\lambda$ and $L$ for simplicity. Note that it suffices to prove \eqref{eq:lem:compat:optfr:optbd:1stmo} for $x \in \{\circ,\bb_0,\fs\}$ since $x=\bb_1$ case follows from $x=\bb_0$ case: Define $\ttt \oplus 1 \in \FFF$ to be the free tree obtained from $\ttt$ by flipping the boundary literals and colors adjacent to clauses (inner literals are the same). Then, $J_{\ttt\oplus 1}=J_{\ttt}$, so $p_{\ttt\oplus1}^\star =p_{\ttt}$. Hence,
	\begin{equation*}
	   \sum_{\ttt \in \FFF} p_{\ttt}^\star \eta_{\ttt}(\bb_1)=\sum_{\ttt \in \FFF} p_{\ttt\oplus 1}^\star \eta_{\ttt\oplus 1}(\bb_0)=\sum_{\ttt \in \FFF} p_{\ttt}^\star \eta_{\ttt}(\bb_0),
	\end{equation*}
	and by $0,1$ symmetry of the BP fixed point (see \eqref{eq:def:bp:contract:set:1stmo}), $h^\star(\bb_0)=h^\star(\bb_1)$. We now divide cases.
	
	First, we deal with the case where $x=\bb_0$. Observe that for $\dot{q}^\star=\dot{q}^\star_{\la,L}$, $\hat{q}^\star = \hat{\textnormal{BP}}\dot{q}^\star_{\la,L}$, and the normalizing constant $\hat{\mathfrak{Z}}^\star=\hat{\mathfrak{Z}}_{\dot{q}^\star_{\la,L}}$ for $H^\star=H^\star_{\la,L}$, we can compute
	\begin{equation}\label{eq:compat:1stmo:technical-1}
	    \sum_{\sig \in \Omega_L^{k}}\hat{H}^\star(\sig)\one\{\sigma_1=\bb_0\}=\frac{\dot{q}^\star(\bb_0)}{\hat{\mathfrak{Z}}^\star}\sum_{\sig\in \Omega_L^{k}, \sigma_1=\bb_0}\hat{\Phi}(\sig)^\la\prod_{i=2}^{k}\dot{q}^\star(\sigma_i)=\frac{\dot{q}^\star(\bb_0)}{\hat{\mathfrak{Z}}^\star}\hat{\ZZZ}^\star \hat{q}^\star(\bb_0)=\bar{B}^\star(\bb_0),
	\end{equation}
	where $\hat{\ZZZ}^\star=\hat{\ZZZ}_{\dot{q}^\star_{\la,L}}$ is the normalizing constant for $\hat{\textnormal{BP}}\dot{q}^\star_{\la,L}$ , and the last equality is due to $(\hat{\mathfrak{Z}}^\star)^{-1}\hat{\ZZZ}^\star=(\bar{\mathfrak{Z}}^\star)^{-1}$. On the other hand, recalling the definition of $\hat{B}^\star_{\la,L}$ in \eqref{eq:def:optimal:bdry}, we can compute the contribution from separating $\sig\in \Omega_L^{k},\sigma_1=\bb_0$ by
	\begin{equation}\label{eq:compat:1stmo:technical-2}
	   \sum_{\sig\in \Omega_L^{k}:\textnormal{separating $\sig$}}\hat{H}^\star(\sig)\one\{\sigma_1=\bb_0\}=\sum_{\utau \in \hat{\partial}^{k}}\hat{B}^\star(\utau)\one\{\tau_1=\bb_0\}.
	\end{equation}
	Thus, by \eqref{eq:compat:1stmo:technical-1}, \eqref{eq:compat:1stmo:technical-2}, and the definition of $h^\star(\bb_0)$, we have
	\begin{equation}\label{eq:compat:1stmo:technical-3}
	\frac{1}{d}h^\star(\bb_0)=\bar{B}^\star(\bb_0)-\sum_{\utau \in \hat{\partial}^{k}}\hat{B}^\star(\utau)\one\{\tau_1=\bb_0\}= \sum_{\sig\in \Omega_L^{k}:\textnormal{non-separating $\sig$}}\hat{H}^\star(\sig)\one\{\sigma_1=\bb_0\}.
	\end{equation}
	Now, put an equivalence relation on $\sig \in \Omega_L^{k}$ by $\sig_1\sim \sig_2$ if and only if $\sig_2$ can be obtained by permuting $\sig_1$. Note that if $\sig_1\sim\sig_2$ and $\sig_1$ is non-separating, then $\sig_2$ is also non-separating with $H^\star(\sig_1)=H^\star(\sig_2)$. Also, for $\sigma_{\sim} \in \Omega_L^{k}/\sim$, the number of $\sig\in \sigma_{\sim}$ with $\sigma_1=\bb_0$ is given by $\frac{\eta_{\sigma_{\sim}}(\bb_0)}{k}\binom{k}{\langle \sigma_{\sim}\rangle}$, where $\eta_{\sigma_{\sim}}(\bb_0)$ counts the number of $\bb_0$ in $\sigma_{\sim}$ and $\langle \sigma_{\sim} \rangle$ is the empirical count of the spins in $\sigma_{\sim}$. Hence, by \eqref{eq:compat:1stmo:technical-3}, we have
	\begin{equation}\label{eq:compat:1stmo:bb:final}
	\begin{split}
	    h^\star(\bb_0)
	    &=d\sum_{\sigma_{\sim}\in \Omega_L^{k}/\sim:\textnormal{non-separating}}\hat{H}^\star(\sigma_{\sim})\frac{\eta_{\sigma_{\sim}}(\bb_0)}{k}\binom{k}{\langle \sigma_{\sim}\rangle}\\
	    &=\sum_{\sigma_{\sim}\in \Omega_L^{k}/\sim:\textnormal{non-separating}}\sum_{\ttt\in \FFF_{\tr}}p^\star_{\ttt}\big|\{a\in F(\ttt): \sig_{\delta a}(\ttt) \in \sigma_{\sim}\}\big|\eta_{\sigma_{\sim}}(\bb_0)=\sum_{\ttt\in \FFF_{\tr}:v(\ttt)\leq L}p^\star_{\ttt}\eta_{\ttt}(\bb_0),
	\end{split}
	\end{equation}
	where the second equality is due to Lemma \ref{lem:compat:opt:H:opt:p:tree}. This finishes the proof of \eqref{eq:lem:compat:optfr:optbd:1stmo} for $x=\bb_0$.
	
	Turning to the second case of $x=\fs$, by definition of $h^\star(\fs)$,
	\begin{equation}\label{eq:compat:1stmo:technical-4}
	    h^\star(\fs)=d\sum_{\sig\in \hat{\partial}^{k}}\hat{B}^\star(\sig)\one\{\sigma_1=\fs\}=d\sum_{\sig\in \Omega_L^{k}}\hat{H}^\star(\sig)\one\{\hat{\sigma}_1=\fs\}.
	\end{equation}
	Having \eqref{eq:compat:1stmo:technical-4} in hand, the same computations done in \eqref{eq:compat:1stmo:bb:final}, which were based on Lemma \ref{lem:compat:opt:H:opt:p:tree}, finish the proof for the case of $x=\fs$.
	
	Finally, we deal with the case where $x=\circ$. By definition of $\dot{B}^\star$ in \eqref{eq:def:optimal:bdry},
	\begin{equation*}
	    1-\langle \dot{B}^\star, \one \rangle =\sum_{\sig\in \Omega_L^{d}:\dsigma_i \in \{\ff\},\forall 1\leq i \leq d}\dot{H}^\star(\sig)=\sum_{\ttt\in \FFF_{\tr}:v(\ttt)\leq L}p_{\ttt}^\star v(\ttt),
	\end{equation*}
	where the last equality is due to Lemma \ref{lem:compat:opt:H:opt:p:tree}. Proceeding in the same fashion, we have
	\begin{equation*}
	    1-\langle \hat{B}^\star, \one \rangle=\frac{k}{d}\sum_{\ttt\in \FFF_{\tr}:v(\ttt)\leq L}p^\star_{\ttt}f(\ttt)\quad\textnormal{and}\quad 1-\langle \bar{B}^\star, \one \rangle =\frac{1}{d}\sum_{\ttt\in \FFF_{\tr}:v(\ttt)\leq L}p^\star_{\ttt}e(\ttt).
	\end{equation*}
	Therefore, by definition of $h^\star(\circ)$ given in \eqref{eq:def:compat:1stmo:tree},
	\begin{equation*}
	    h^\star(\circ)=\sum_{\ttt\in \FFF_{\tr}:v(\ttt)\leq L}p^\star_{\ttt}\left(v(\ttt)+f(\ttt)-e(\ttt)\right)=\sum_{\ttt\in \FFF_{\tr}:v(\ttt)\leq L}p^\star_{\ttt},
	\end{equation*}
	which concludes the proof for the case $x=\circ$.
	\end{proof}
	\begin{lemma}\label{lem:1stmo:BP:compatibility}
	Recall $\dot{h}^\star_{\la,L} \equiv \dot{h}_{\dot{q}^\star_{\la,L}}$ from \eqref{eq:1stmo:hdot:intermsof:qdot}. Then, we have
	\begin{equation}\label{eq:lem:1stmo:BP:compatibility}
	\dot{h}^\star_{\la,L}(\dsigma)=
	\begin{cases}
	B^\star_{\la,L}(\dot{\sigma}) &\dsigma \in \{\rr,\bb\}\\
	\frac{1}{d}\sum_{\ttt:v(\ttt)\leq L}p^\star_{\ttt,\la,L}\sum_{e\in E(\ttt)}\one\{\dot{\sigma}_e(\ttt)=\dsigma\} &\dsigma \in \{\ff\}.
	\end{cases}
	\end{equation}
	\end{lemma}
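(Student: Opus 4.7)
The plan is to leverage the BP fixed point property together with Lemma~\ref{lem:compat:opt:H:opt:p:tree}, following the blueprint already established in the proof of Lemma~\ref{lem:compat:optfr:optbd:1stmo}. The first reduction is to observe that since $\dot{q}^\star_{\la,L}$ is a BP fixed point (Proposition~\ref{prop:BPcontraction:1stmo}), we have $\textnormal{BP}\dot{q}^\star_{\la,L}=\dot{q}^\star_{\la,L}$, and hence the definition of $\dot{h}_{\dot{q}}$ in \eqref{eq:1stmo:hdot:intermsof:qdot} collapses to the simpler marginal expression
\begin{equation*}
\dot{h}^\star_{\la,L}(\dsigma)=\sum_{\sig\in\Omega_L^k}\hat{H}^\star_{\la,L}(\sig)\,\one\{\dot{\sigma}_1=\dsigma\}.
\end{equation*}
This reduces everything to analyzing a one-coordinate marginal of $\hat{H}^\star_{\la,L}$.

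For $\dsigma\in\{\rr,\bb\}$, I would reuse the computation \eqref{eq:compat:1stmo:technical-1} verbatim (with $\bb_0$ replaced by the general symbol): expanding $\hat{H}^\star_{\la,L}(\sig)=(\hat{\mathfrak{Z}}^\star)^{-1}\hat{\Phi}(\sig)^\la\prod_{i=1}^k\dot{q}^\star(\dot{\sigma}_i)$ and pulling out the $i=1$ factor gives
$\dot{q}^\star(\dsigma)\hat{q}^\star(\dsigma)\hat{\ZZZ}^\star/\hat{\mathfrak{Z}}^\star$, which by the identity $(\hat{\mathfrak{Z}}^\star)^{-1}\hat{\ZZZ}^\star=(\bar{\mathfrak{Z}}^\star)^{-1}$ and the formula \eqref{eq:H:q:1stmo} for $\bar{H}_{\dot{q}}$ equals $\bar{H}^\star_{\la,L}(\dsigma)$. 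Since for $\dsigma\in\{\rr,\bb\}\subset\hat{\partial}^\bullet$ the restriction defining $\bar{B}^\star_{\la,L}$ in \eqref{eq:def:optimal:bdry} contains only the diagonal term, this is exactly $\bar{B}^\star_{\la,L}(\dsigma)$, and hence the designated $B^\star_{\la,L}(\dsigma)$ coordinate.

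For $\dsigma\in\{\ff\}$, I first note that $\dot{\sigma}_1=\dsigma\in\{\ff\}$ forces $\sig$ to be non-separating, since any separating clause has only $\{\rr,\bb,\fs\}$-type spins around it. Hence the sum is over non-separating $\sig$ only, which is the regime where \eqref{eq:lem:compat:opt:H:opt:p:tree:clause} applies. I would then introduce the permutation equivalence $\sim$ on $\Omega_L^k$ as in the proof of Lemma~\ref{lem:compat:optfr:optbd:1stmo}, noting that $\hat{H}^\star_{\la,L}$ is constant on $\sim$-classes and that the number of representatives with $\sigma_1=\sigma$ for a given $\sigma\in\sigma_\sim$ equals $(k^{-1}\langle\sigma_\sim\rangle(\sigma))\binom{k}{\langle\sigma_\sim\rangle}$. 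Summing over $\sigma\in\sigma_\sim$ with $\dot{\sigma}=\dsigma$ and applying \eqref{eq:lem:compat:opt:H:opt:p:tree:clause} to each class yields
\begin{equation*}
\dot{h}^\star_{\la,L}(\dsigma)=\frac{1}{d}\sum_{\ttt\in\FFF_\tr:\,v(\ttt)\le L}p^\star_{\ttt,\la,L}\sum_{a\in F(\ttt)}\sum_{e\in\delta a}\one\{\dot{\sigma}_e(\ttt)=\dsigma\}.
\end{equation*}

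The final, purely combinatorial, step is to replace the double sum over $(a,e)$ with the single sum over $e\in E(\ttt)$. Because $\dsigma\in\{\ff\}$, the indicator vanishes whenever $e$ is a boundary half-edge (those carry spins in $\{\bb,\rr,\fs\}$), so only internal edges $e\in E(\ttt)$ contribute; and every internal edge of the bipartite tree $\ttt$ has exactly one clause endpoint, giving a bijection between contributing $(a,e)$-pairs with $a\in F(\ttt)$ and $e\in E(\ttt)$. This yields the claimed formula. I do not expect any serious obstacle: the crux is the marginalization identity coming from the BP fixed point plus the tree-combinatorial bookkeeping already encoded in Lemma~\ref{lem:compat:opt:H:opt:p:tree}; once these are invoked, the argument is a direct bookkeeping exercise parallel to the proof of Lemma~\ref{lem:compat:optfr:optbd:1stmo}.
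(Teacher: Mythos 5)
Your proof follows the paper's own argument step by step: reduce $\dot{h}^\star_{\la,L}$ to a one-coordinate marginal of $\hat{H}^\star_{\la,L}$ via the BP fixed point, handle $\dsigma\in\{\rr,\bb\}$ by the computation of \eqref{eq:compat:1stmo:technical-1}, and for $\dsigma\in\{\ff\}$ reorganize by permutation classes and invoke \eqref{eq:lem:compat:opt:H:opt:p:tree:clause}, exactly as the paper does. The only difference is that you spell out the final $(a,e)\leftrightarrow e\in E(\ttt)$ bookkeeping and the non-separating reduction more explicitly than the paper's terse proof, so this is essentially the same approach.
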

	\begin{proof}
	Note that $\dot{h}^\star_{\la,L}=\dot{h}[H^\star_{\la,L}]$ from their definitions. Thus, $\dot{h}^\star_{\la,L}(\dsigma)=B^\star_{\la,L}(\dsigma)$ holds for $\dsigma \in \{\rr,\bb\}$. For the case of $\dsigma \in \{\ff\}$, we can proceed in a similar fashion as done in \eqref{eq:compat:1stmo:bb:final} to compute
	\begin{equation*}
	\begin{split}
	    \dot{h}^\star_{\la,L}(\dsigma)&=\sum_{\utau\in \Omega_L^{k}}\hat{H}^\star_{\la,L}(\utau)\one\{\dot{\tau}_1=\dsigma\}=\sum_{\sigma_{\sim}\in \Omega_L^{k}/\sim:\textnormal{non-separating}}\hat{H}^\star_{\la,L}(\sigma_{\sim})\frac{\eta_{\sigma_{\sim}}(\dsigma)}{k}\binom{k}{\langle \sigma_{\sim}\rangle}\\
	    &=\sum_{\ttt\in \FFF_{\tr}:v(\ttt)\leq L}p^\star_{\ttt,\la,L}\sum_{e\in E(\ttt)}\one\{\dot{\sigma}_e(\ttt)=\dsigma\}.
	\end{split}
	\end{equation*}
	Here, $\eta_{\sigma_{\sim}}(\dsigma)$ denotes the number of variable-to-clause spins in $\sigma_{\sim}$ which equal $\dsigma$, and we used Lemma \ref{lem:compat:opt:H:opt:p:tree} in the final equality.
	\end{proof}

	\subsection{Compatibility in the pair-copy model}\label{subsec:compat:pair}

Fix a tuple of constants $\ula =(\lambda^1, \lambda^2)$ such that $\lambda^1, \lambda^2 \in [0,1]$, and let $\uuu$ denote a union-free tree. The density of $\uuu$ at optimality $\bp^\star_{\uuu, \ula, L}$ is given by \eqref{eq:optimal:tree:2ndmo}. Note that although $\dot{\bq}^\star = \dot{q}_{\lambda^1, L}^\star \otimes \dot{q}_{\lambda^2, L}^\star$ is a probability measure on the truncated space $\Omega_L^2$, the size of $\uuu$ in \eqref{eq:optimal:tree:2ndmo} does not need to be bounded.
We state the compatibility result for the pair model as follows, which is an analog of Lemmas \ref{lem:compat:optfr:optbd:1stmo} and \ref{lem:1stmo:BP:compatibility} combined.

\begin{cor}\label{cor:compat:2ndmo}
	$\buh_{\ula,L}^\star$ and $(\bp^\star_{\uuu, \ula,L})$ are compatible in the sense that for any $\bx \in \{\circ\}\sqcup \dot{\partial}_2 \sqcup \hat{\partial}_2$,
	\begin{equation*}
	\sum_{\uuu\in \FFF_2^{\tr}} \bp_{\uuu, \ula,L}^\star \eta_\uuu(\bx) = \bh_{\ula,L}^\star (\bx).
	\end{equation*} 
	Moreover, let $\dbh^\star_{\ula,L}:=\dbh[\dbq^\star_{\ula,L}]$, where $\dbh[\dbq]$ can be written explicitly by \eqref{eq:2ndmo:hdot:intermsof:qdot}. Then, we have
	\begin{equation*}
		\dbh^\star_{\ula,L}(\dot{\bsigma})=
		\begin{cases}
		\bB^\star_{\ula,L}(\dot{\bsigma}) &\dot{\bsigma} \in \{\rr,\bb \}^2\\
		\frac{1}{d}\sum_{\uuu} {\bp}^\star_{\uuu,\ula,L}\sum_{e\in E(\uuu)}\one\Big\{\dot{\pi}\big((\dot{\bsigma}_e^{\textsf{u}}(\uuu)\big)=\dot{\bsigma} \Big\} &\dot{\bsigma} \in \dot{\Omega}_L^2 \setminus \{\rr, \bb \}^2.
		\end{cases}
	\end{equation*}
	The analogs hold for the untruncated model, where we drop the subscript $L$ in the equation above.
	\end{cor}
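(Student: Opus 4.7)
The plan is to mirror the proofs of Lemmas \ref{lem:compat:optfr:optbd:1stmo} and \ref{lem:1stmo:BP:compatibility}, with the product fixed point $\dbq^\star_{\ula,L}=\dot{q}^\star_{\la^1,L}\otimes\dot{q}^\star_{\la^2,L}$ playing the role of $\dot{q}^\star_{\la,L}$ and union-free trees replacing free trees. The first step is to establish the pair analogue of Lemma \ref{lem:compat:opt:H:opt:p:tree}: for a non-pair-separating $\bsig\in\Omega_{2,L}^{k}$,
\begin{equation*}
\frac{d}{k}\binom{k}{\langle\bsig\rangle}\hbH^\star_{\ula,L}(\bsig)=\sum_{\uuu\in\FFF_2^{\tr}}\bp^\star_{\uuu,\ula,L}\bigl|\{a\in F(\uuu):\bsig^{\textsf{u}}_{\delta a}(\uuu)\in\textnormal{per}(\bsig)\}\bigr|,
\end{equation*}
with analogous identities on the variable side and the edge side. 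The derivation iterates $\dbq^\star=\dot{\textnormal{BP}}_{\ula,L}\hbq^\star$ and $\hbq^\star=\hat{\textnormal{BP}}_{\ula,L}\dbq^\star$ to expand $\hbH^\star_{\ula,L}(\bsig)$ as a polynomial in the coordinates of $\dbq^\star$ and $\hbq^\star$. Viewing the root clause as joining the subtrees $\dot{\bsigma}_1,\ldots,\dot{\bsigma}_k$, the local recursion \eqref{eq:union:coloring:recursive} forces unique children colors at each union-free variable, while at each non-root clause the $0/1$-freedom of the literals generates the combinatorial factor $\prod_{a}k^{-1}\binom{k}{\langle\bsig^{\textsf{u}}_{\delta a}\rangle}$ appearing in $J_\uuu$; the normalizing constants collect to $(\bar{\mathfrak{Z}}^\star_2)^{-1}(\dot{\mathfrak{Z}}^\star_2)^{-v(\uuu)}(\hat{\mathfrak{Z}}^\star_2)^{-f(\uuu)}$ to reproduce \eqref{eq:optimal:tree:2ndmo}.

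Granted this identity, the first claim of the corollary is proved as in Lemma \ref{lem:compat:optfr:optbd:1stmo}. For $\bx\in\dot{\partial}_2$, the pair BP fixed-point computation gives
\begin{equation*}
\sum_{\bsig\in\Omega_{2,L}^k}\hbH^\star_{\ula,L}(\bsig)\one\{\bsigma_1=\bx\}=\bbB^\star_{\ula,L}(\bx)+\sum_{\butau\in(\hat{\partial}_2^\bullet)^k}\hbB^\star_{\ula,L}(\butau)\one\{\btau_1=\bx\},
\end{equation*}
so $d^{-1}\bh^\star_{\ula,L}(\bx)$ equals the sum restricted to non-pair-separating $\bsig$, which is then rewritten as $d^{-1}\sum_{\uuu}\bp^\star_{\uuu,\ula,L}\eta_\uuu(\bx)$ via the pair analogue of Lemma \ref{lem:compat:opt:H:opt:p:tree}. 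The case $\bx\in\hat{\partial}_2$ is symmetric. For $\bx=\circ$, apply the pair analogue of Lemma \ref{lem:compat:opt:H:opt:p:tree} with totally-free $\bsig$ on the variable side, with non-pair-separating $\bsig$ on the clause side, and with boundary spins on the edge side, yielding
\begin{equation*}
1-\langle\dbB^\star_{\ula,L},\one\rangle=\sum_\uuu\bp^\star_{\uuu,\ula,L}\,v(\uuu),\qquad 1-\langle\hbB^\star_{\ula,L},\one\rangle=\tfrac{k}{d}\sum_\uuu\bp^\star_{\uuu,\ula,L}\,f(\uuu),
\end{equation*}
\begin{equation*}
1-\langle\bbB^\star_{\ula,L},\one\rangle=\tfrac{1}{d}\sum_\uuu\bp^\star_{\uuu,\ula,L}\,e(\uuu),
\end{equation*}
and the pair version of \eqref{eq:def:compat:1stmo:tree} then gives $\bh^\star_{\ula,L}(\circ)=\sum_\uuu\bp^\star_{\uuu,\ula,L}$.

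For the second claim, on $\{\rr,\bb\}^2$ the equality $\dbh^\star_{\ula,L}(\dot{\bsigma})=\bbB^\star_{\ula,L}(\dot{\bsigma})$ follows directly from the product form of $\dbq^\star_{\ula,L}$ together with the pair analogue of \eqref{eq:1stmo:hdot:intermsof:qdot}. For $\dot{\bsigma}\in\dot{\Omega}_L^2\setminus\{\rr,\bb\}^2$, i.e.~free in at least one copy, write $\dbh^\star_{\ula,L}(\dot{\bsigma})=\sum_{\butau\in\Omega_{2,L}^k}\hbH^\star_{\ula,L}(\butau)\one\{\dot{\btau}_1=\dot{\bsigma}\}$; the pair-separating contribution vanishes because $\dot{\bsigma}$ carries an $\ff$, and grouping the remainder by permutation equivalence classes and applying the pair analogue of Lemma \ref{lem:compat:opt:H:opt:p:tree} converts the sum into $d^{-1}\sum_\uuu\bp^\star_{\uuu,\ula,L}\sum_{e\in E(\uuu)}\one\{\dot{\pi}(\dot{\bsigma}^{\textsf{u}}_e(\uuu))=\dot{\bsigma}\}$, as claimed. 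The truncated and untruncated versions are handled by identical algebraic manipulations, since the argument never invokes finiteness of the spin support.

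The principal obstacle is the bookkeeping in step 1. In the single-copy case, at each non-root clause the free literal choices correspond cleanly to the multinomial factor $k^{-1}\binom{k}{\langle\sig_{\delta a}\rangle}$, as in \eqref{eq:compat:arranging:tree:1stmo}. In the pair model, the richer inner-color space $\Omega^{\textnormal{in}}_2=\{\ff\}\times\{\rr,\bb,\fs\}\sqcup\{\rr,\bb,\fs\}\times\{\ff\}$ means one must track, at each internal edge, which of the two copies carries the $\ff$ and verify that the pair recursion \eqref{eq:union:coloring:recursive}, applied independently in each copy, produces every compatible union-coloring exactly once as the literals on $\delta a\setminus e_0(a)$ range over $\{0,1\}^{k-1}$. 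Once this is confirmed, the rest of the argument is a mechanical transcription of Lemmas \ref{lem:compat:opt:H:opt:p:tree}, \ref{lem:compat:optfr:optbd:1stmo} and \ref{lem:1stmo:BP:compatibility}.
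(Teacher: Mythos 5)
Your proposal is correct and follows essentially the same route as the paper: the paper's proof likewise first isolates the pair analogue of Lemma \ref{lem:compat:opt:H:opt:p:tree} (stated there as Lemma \ref{cor:compat:opt:H:2ndmo}) and then transcribes the arguments of Lemmas \ref{lem:compat:optfr:optbd:1stmo} and \ref{lem:1stmo:BP:compatibility}. The only (harmless) imprecision is that in your key identity the union-coloring $\bsig^{\textsf{u}}_{\delta a}(\uuu)$ should be passed through the projection $\pi$ before being compared with the pair coloring $\bsig$, as you do correctly later for the edge marginals.
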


The lemma below, which is an analog of Lemma \ref{lem:compat:opt:H:opt:p:tree} for the pair model, can be proven by a similar argument as in the proof of Lemma \ref{lem:compat:opt:H:opt:p:tree}.

\begin{lemma}\label{cor:compat:opt:H:2ndmo}
For $\ell\geq 1$, let $\pi(\bsigma^{\textsf{u}}_1,...,\bsigma^{\textsf{u}}_{\ell}):=\big(\pi(\bsigma^{\textsf{u}}_i)\big)_{i\leq \ell}$ for $\bsigma^{\textsf{u}}_{i}\in \Omega^{\textsf{u}}, i\leq \ell$. For pair-non-separating $\bsig \in \Omega_{2,L}^{k}$, we have the following. 
	\begin{equation}\label{eq:lem:compat:opt:H:opt:p:tree:clause:2ndmo}
	\frac{d}{k}\binom{k}{\langle \bsig \rangle} \hat{\bH}^\star_{\ula,L}(\bsig)=\sum_{\uuu\in \FFF_{2}^{\tr}}\bp^\star_{\uuu,\ula,L}\Big|\Big\{a\in F(\uuu):\pi\big(\bsig_{\delta a}^{\textsf{u}}(\uuu)\big)\in \textnormal{per}(\bsig)\Big\}\Big|
	\end{equation}
	Moreover, for union-free $\bsig \in \Omega_{2,L}^{d}$, i.e. either $\dsigma_i^1 \in \{\ff\},\forall 1\leq i \leq d$ or $\dsigma_i^2 \in \{\ff\},\forall 1\leq i \leq d$,  we have
	\begin{equation}\label{eq:lem:compat:opt:H:opt:p:tree:variable:2ndmo}
	\begin{split}
	\binom{d}{\langle \bsig \rangle}\dot{\bH}^\star_{\ula,L}(\bsig)=\sum_{\uuu\in \FFF_2^{\tr}}\bp^\star_{\uuu,\la,L}\Big|\Big\{v\in V(\ttt):\pi\big(\bsig_{\delta v}^{\textsf{u}}(\uuu)\big)\in \textnormal{per}(\bsig)\Big\}\Big|
	\end{split}
	\end{equation}
Finally, for  $\bsigma \in \Omega_{2,L}$ such that $\sigma^1 \in \{\ff \} $ or $\sigma^2 \in \{\ff \}$, we have
\begin{equation*}
d\bar{\bH}^\star_{\ula,L}(\bsigma)=\sum_{\uuu\in \FFF_2^{\tr}}\bp^\star_{\uuu,\ula,L}\Big|\Big\{e\in E(\ttt):\pi\big(\bsigma_e^{\textsf{u}}(\uuu)\big)=\bsigma\Big\}\Big|.
\end{equation*}
	The analogs hold for the untruncated model, where we drop subscript $L$ in the equations above. 
\end{lemma}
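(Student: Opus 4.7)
The plan is to adapt the proof of Lemma~\ref{lem:compat:opt:H:opt:p:tree} essentially verbatim to the pair-copy model, exploiting the fact that the product measure $\dbq^\star_{\ula,L} = \dot{q}^\star_{\lambda^1,L}\otimes \dot{q}^\star_{\lambda^2,L}$ is the unique BP fixed point for $\textnormal{BP}_{\ula,L} = \dot{\textnormal{BP}}_{\ula,L}\circ \hat{\textnormal{BP}}_{\ula,L}$ guaranteed by Proposition~\ref{prop:BPcontraction:2ndmo}. I will focus on the clause identity \eqref{eq:lem:compat:opt:H:opt:p:tree:clause:2ndmo}, since the variable identity \eqref{eq:lem:compat:opt:H:opt:p:tree:variable:2ndmo} and the edge identity follow by entirely analogous bookkeeping.

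First, starting from the formula $\hat{\bH}^\star_{\ula,L}(\bsig) = (\hat{\mathfrak{Z}}_2^\star)^{-1}\hat{\Phi}_2^{\ula}(\bsig)\prod_{i=1}^k \dbq^\star(\dot{\bsigma}_i)$, I will repeatedly substitute the BP fixed-point relations $\dbq^\star=\dot{\textnormal{BP}}_{\ula,L}\hbq^\star$ and $\hbq^\star=\hat{\textnormal{BP}}_{\ula,L}\dbq^\star$, each substitution expanding one level deeper into the neighborhood of the root clause. Because the tree is truncated and $\bsig$ is fixed, the recursion terminates when every branch has either been closed off by a boundary spin in $\dot{\partial}_2\sqcup\hat{\partial}_2$ (generating a factor of $\dbq^\star(\bx)$ or $\hbq^\star(\bx)$) or by a pair-frozen variable or pair-forcing clause. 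After reorganization, $\hat{\bH}^\star_{\ula,L}(\bsig)$ is expressed as a sum over all rooted, bipartite-factor trees $T$ whose boundary spins lie in $\dot{\partial}_2 \sqcup \hat{\partial}_2$, weighted by products of the local factors $\dot{\Phi}_2^{\ula}, \hat{\Phi}_2^{\ula}, \bar{\Phi}_2^{\ula}$ and the boundary factors $g(\bx)\hbq^\star(\bx)$ and $\dbq^\star(\bx)$.

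The key translation step is to identify such a rooted tree $T$ (with its inherited pair-coloring along $E(T)\sqcup \partial T$) with a union-free tree $\uuu\in \FFF_2^{\tr}$ rooted at a distinguished clause $a_0$, via the one-to-one correspondence $T \leftrightarrow (\uuu, a_0)$ induced by \eqref{eq:def:col:union:tree} and the projection $\pi$. The combinatorial factor matching the single-copy relation \eqref{eq:compat:arranging:tree:1stmo} is obtained by noting that for each interior variable $v\in V(\uuu)$ (resp.\ each non-root clause $a\in F(\uuu)\setminus\{a_0\}$), the number of orderings of the children pair-spins consistent with $\bsig^{\textsf{u}}(\uuu)$ is $\frac{1}{d}\binom{d}{\langle \bsig^{\textsf{u}}_{\delta v}(\uuu)\rangle}$ (resp.\ $\frac{1}{k}\binom{k}{\langle \bsig^{\textsf{u}}_{\delta a}(\uuu)\rangle}$), while the root clause contributes the factor $\big|\{a\in F(\uuu) : \pi(\bsig^{\textsf{u}}_{\delta a}(\uuu)) \in \textnormal{per}(\bsig)\}\big|/\binom{k}{\langle \bsig\rangle}$. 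Multiplying these together and using the identity $J_\uuu = d^{1-v(\uuu)}k^{-f(\uuu)}\prod_v \binom{d}{\langle \bsig^{\textsf{u}}_{\delta v}\rangle}\prod_a \binom{k}{\langle \bsig^{\textsf{u}}_{\delta a}\rangle}$ from \eqref{eq:def:embnum:pairr} reproduces exactly the weight defining $\bp^\star_{\uuu,\ula,L}$ in \eqref{eq:optimal:tree:2ndmo}, after collecting the normalizing constants via the relation $(\hat{\mathfrak{Z}}_2^\star)^{-1}\hat{\mathscr{Z}}_2^\star = (\bar{\mathfrak{Z}}_2^\star)^{-1}$.

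The only genuine subtlety, and the step that needs the most care, is the handling of the pair spin space and the equivalence relation \eqref{eq:def:freetreeequivclass} extended to union-free trees: the map $T\mapsto (\uuu, a_0)$ is well-defined only at the level of equivalence classes, so when summing over $T$ one must verify that trees producing the same $\bsig^{\textsf{u}}(\uuu)$ are correctly counted by the permutation-index factors. This bookkeeping is parallel to the observation in the single-copy proof that given $\dot{\bsigma}_{e_0(v)}\in \{\ff\}\times\{\rr,\bb,\fs\}\cup\{\rr,\bb,\fs\}\times\{\ff\}$, the children clause-to-variable colorings are determined as a multiset by the recursive definition \eqref{eq:union:coloring:recursive} via $\textsf{j}_2$. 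The statements \eqref{eq:lem:compat:opt:H:opt:p:tree:variable:2ndmo} and the edge identity then follow by rooting at a variable or an edge instead of a clause and running the same expansion; the untruncated case is identical, with the BP fixed point from Proposition~\ref{prop:BPcontraction:2ndmo}(1) with $L=\infty$ and the termination of the recursion guaranteed by the exponential decay of $\bp^\star_{\uuu,\ula}$ in $v(\uuu)$, which follows from the pair-model analogue of Lemma~\ref{lem:opt:tree:decay}.
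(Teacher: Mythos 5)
Your proposal is correct and follows exactly the route the paper takes: the paper's own proof of this lemma is a one-line remark that it "can be proven by a similar argument as in the proof of Lemma \ref{lem:compat:opt:H:opt:p:tree}," and your expansion of the BP fixed point $\dbq^\star_{\ula,L}=\dot{q}^\star_{\lambda^1,L}\otimes\dot{q}^\star_{\lambda^2,L}$, the identification of rooted trees with union-free trees via $\bsig^{\textsf{u}}(\uuu)$ and $\pi$, and the matching of multinomial factors with $J_\uuu$ and the boundary weights $\dbq^\star(\bx)$, $g(\bx)\hbq^\star(\bx)$ is precisely that adaptation, carried out in more detail than the paper provides.
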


\begin{proof}[Proof of Corollary \ref{cor:compat:2ndmo}]
	The proof follows the same as that of Lemmas \ref{lem:compat:optfr:optbd:1stmo} and \ref{lem:1stmo:BP:compatibility}, where we use Lemma \ref{cor:compat:opt:H:2ndmo} in the places where Lemma \ref{lem:compat:opt:H:opt:p:tree} is used.
\end{proof}
We conclude this section by proving \eqref{eq:freeenergy:product}:
\begin{lemma}\label{lem:freeenergy:product}
    For $\ula\in [0,1]^2$, we have
    \begin{align}
	    &\bF_{\ula,L}(\bB^\star_{\ula,L},s^\star_{\ula,L})=F_{\la_1,L}(B^\star_{\la_1,L},s^\star_{\la_1,L})+F_{\la_2,L}(B^\star_{\la_2,L},s^\star_{\la_2,L}); \label{eq:freeenergy:product:truncated}\\
	    &\bF_{\ula}(\bB^\star_{\ula},s^\star_{\ula})=F_{\la_1}(B^\star_{\la_1},s^\star_{\la_1})+F_{\la_2}(B^\star_{\la_2},s^\star_{\la_2}) \label{eq:freeenergy:product:untruncated}.
	\end{align}
\end{lemma}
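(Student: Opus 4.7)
The proof rests on the product structure $\dbq^\star_{\ula,L} = \dot{q}^\star_{\la_1,L} \otimes \dot{q}^\star_{\la_2,L}$ from Proposition \ref{prop:BPcontraction:2ndmo}, together with a Bethe-type closed-form expression for the free energy at the optimum. I focus on the truncated identity \eqref{eq:freeenergy:product:truncated}; the untruncated version \eqref{eq:freeenergy:product:untruncated} then follows by letting $L \to \infty$, using Proposition \ref{prop:BPcontraction:1stmo}(2) and Lemma \ref{lem:conv:psi:theta} to ensure continuity of each ingredient.

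First, observe that $\theta^\star_s = 0$ and $\btheta^\star_{s_1} = \btheta^\star_{s_2} = 0$ by \eqref{eq:def:opt:theta} and \eqref{eq:def:opt:theta:2ndmo}, so $F_{\la,L}(B^\star_{\la,L}, s^\star_{\la,L}) = F_{\la,L}(B^\star_{\la,L})$ and $\bF_{\ula,L}(\bB^\star_{\ula,L}, \bs^\star_{\ula,L}) = \bF_{\ula,L}(\bB^\star_{\ula,L})$; thus it suffices to prove the factorization in the $B$-only form. Next, the weight functions $\dot{\Phi}_2^{\ula}, \hat{\Phi}_2^{\ula}, \bar{\Phi}_2^{\ula}$ defined in \eqref{eq:def:Phi:pair} are tensor products of their single-copy versions, and Lemma \ref{lem:decompose:Phi:hat} gives $\hat{v}_2 = \hat{v} \otimes \hat{v}$. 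Substituting $\dbq^\star_{\ula,L}$ into \eqref{eq:H:q:2ndmo} therefore yields $\dbH^\star_{\ula,L} = \dot{H}^\star_{\la_1,L} \otimes \dot{H}^\star_{\la_2,L}$ and analogous factorizations for $\hbH^\star$ and $\bbH^\star$, together with the factorizations of the three normalizing constants
\begin{equation*}
\dot{\mathfrak{Z}}^\star_2 = \dot{\mathfrak{Z}}^\star(\la_1)\,\dot{\mathfrak{Z}}^\star(\la_2), \qquad \hat{\mathfrak{Z}}^\star_2 = \hat{\mathfrak{Z}}^\star(\la_1)\,\hat{\mathfrak{Z}}^\star(\la_2), \qquad \bar{\mathfrak{Z}}^\star_2 = \bar{\mathfrak{Z}}^\star(\la_1)\,\bar{\mathfrak{Z}}^\star(\la_2).
\end{equation*}

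The core step is to establish the Bethe-type identity
\begin{equation*}
F_{\la,L}(B^\star_{\la,L}) = \log\dot{\mathfrak{Z}}^\star + \tfrac{d}{k}\log\hat{\mathfrak{Z}}^\star - d\log\bar{\mathfrak{Z}}^\star,
\end{equation*}
and its pair analogue $\bF_{\ula,L}(\bB^\star_{\ula,L}) = \log\dot{\mathfrak{Z}}^\star_2 + \tfrac{d}{k}\log\hat{\mathfrak{Z}}^\star_2 - d\log\bar{\mathfrak{Z}}^\star_2$, with \emph{the same} coefficients $(1, d/k, -d)$ dictated by the $(d,k)$-regularity of the factor graph. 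I would verify this by direct manipulation: starting from $F_{\la,L}(B^\star) = \Psi_\circ(B^\star) - \langle \utheta^{\star,-}_{\la,L}, \uh^\star_{\la,L}\rangle$, the three entropy-like terms constituting $\Psi_\circ(B^\star)$ in \eqref{eq:def:Psi:circ:p:circ:1stmo} are re-expressed via the $H^\star$-level identities in \eqref{eq:H:q:1stmo}, using the compatibility statements of Lemma \ref{lem:compat:optfr:optbd:1stmo}, Lemma \ref{lem:1stmo:BP:compatibility}, and (in the pair case) Corollary \ref{cor:compat:2ndmo} to pass between the boundary profile $B^\star$ and the full coloring profile $H^\star$. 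The explicit formulas \eqref{eq:def:opt:theta} for $\utheta^\star$ are then substituted, and after collecting terms everything collapses into the stated logarithmic combination. Combining the two Bethe identities with the normalizer factorizations from the previous paragraph immediately yields \eqref{eq:freeenergy:product:truncated}.

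The main obstacle is the verification of the Bethe identity. Although conceptually standard, the bookkeeping is unwieldy because $\Psi_\circ(B^\star)$ mixes sums over three distinct index sets ($\dot{\partial}^\bullet$-frozen configurations, $\hat{\partial}^\bullet$-separating clauses, and $\hat{\partial}^\bullet$-boundary spins), while the components of $\utheta^\star$ in \eqref{eq:def:opt:theta} carry fractional exponents such as $\tfrac{k}{kd-k-d}$ and $\tfrac{d}{kd-k-d}$ that must cancel precisely in order to produce the clean coefficients $(1, d/k, -d)$. The payoff is that once the Bethe identity is in hand, the fact that $\dot{\mathfrak{Z}}^\star_2, \hat{\mathfrak{Z}}^\star_2, \bar{\mathfrak{Z}}^\star_2$ factor under the product structure of $\dbq^\star_{\ula,L}$ makes the factorization of the free energy completely transparent, and the two cases of the lemma follow with no additional work.
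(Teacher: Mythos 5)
Your overall plan matches the paper's: derive a Bethe-type closed form for the free energy at the BP fixed point, then combine it with the factorization of the normalizing constants coming from the product structure $\dbq^\star_{\ula,L}=\dot{q}^\star_{\la_1,L}\otimes\dot{q}^\star_{\la_2,L}$. (The paper proves the untruncated identity directly and notes the truncated case follows by the same computation, rather than your truncated-then-$L\to\infty$ order, but that is cosmetic.) However, your second paragraph contains a genuine error. The clause weight $\hat{\Phi}_2^{\ula}$ is \emph{not} a tensor product: from \eqref{eq:def:Phi:pair} it equals $\hat{v}_2\cdot\bigl((\hat{\Phi}^{\textnormal{m}})^{\la_1}\otimes(\hat{\Phi}^{\textnormal{m}})^{\la_2}\bigr)$, where $\hat{v}_2(\bsigma)=\E^{\lit}\bigl[\hat{I}^{\lit}(\sig^1\oplus\uL)\hat{I}^{\lit}(\sig^2\oplus\uL)\bigr]$ couples the two copies through the shared literals $\uL$. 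Your claim $\hat{v}_2=\hat{v}\otimes\hat{v}$ is false --- for $\sig^1=\sig^2=(\bb_0,\dots,\bb_0)$ one has $\hat{v}_2(\sig^1,\sig^1)=\hat{v}(\sig^1)=1-2^{-k+1}$, not $\hat{v}(\sig^1)^2$ --- and Lemma \ref{lem:decompose:Phi:hat} only asserts the factorization $\hat{\Phi}^{\lit}=\hat{I}^{\lit}\hat{\Phi}^{\textnormal{m}}$, saying nothing about $\hat{v}_2$. Consequently $\hbH^\star_{\ula,L}$ does \emph{not} factor as $\hat{H}^\star_{\la_1,L}\otimes\hat{H}^\star_{\la_2,L}$.

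Fortunately, the factorization your argument actually uses --- that of the normalizing constant $\hat{\mathfrak{Z}}_2^\star$ into the two single-copy ones --- is still true, but for a subtler reason: the $\oplus 1$-invariance of the BP fixed point (the first condition in \eqref{eq:def:bp:contract:set:1stmo}). Writing $\hat{\mathfrak{Z}}_2^\star$ as a literal average of the product of two single-copy clause sums and changing variables to $\tau^j:=\sig^j\oplus\uL$ in each, the $\uL$-dependence in $\prod_i\dot{q}^\star_{\la_j}(\dot{\tau}^j_i\oplus\tL_i)$ cancels by $\dot{q}^\star_{\la_j}(\dsigma\oplus 1)=\dot{q}^\star_{\la_j}(\dsigma)$, so the double sum decouples. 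This yields $\hat{\mathfrak{Z}}_{2,\dbq^\star_{\ula,L}}=\hat{\mathfrak{Z}}_{\dot{q}^\star_{\la_1,L}}\hat{\mathfrak{Z}}_{\dot{q}^\star_{\la_2,L}}$, and the factorizations of $\dot{\mathfrak{Z}}_2^\star$ and $\bar{\mathfrak{Z}}_2^\star$ are similar. Once the false tensor-product claim is replaced with this argument, the remaining steps of your outline (the Bethe-identity manipulation starting from $\Psi_\circ(B^\star)-\langle\utheta^{\star,-},\uh^\star\rangle$ and the final factorization) are sound and match the paper's proof.
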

\begin{proof}
    We only prove \eqref{eq:freeenergy:product:untruncated}, since the proof of \eqref{eq:freeenergy:product:truncated} follows by the same argument. Recalling the normalizing constants $\dot{\mathfrak{Z}}_2, \hat{\mathfrak{Z}}_2$ and $\bar{\mathfrak{Z}}_2$ for $\dbH_{\dbq},\hbH_{\dbq}$ and $\bbH_{\dbq}$ in \eqref{eq:H:q:2ndmo}, denote $\dot{\mathfrak{Z}}_2^\star\equiv \dot{\mathfrak{Z}}_{2,\dbq^\star_{\ula}}, \hat{\mathfrak{Z}}_2^\star\equiv \hat{\mathfrak{Z}}_{2,\dbq^\star_{\ula}}$ and $\bar{\mathfrak{Z}}_2^\star\equiv \bar{\mathfrak{Z}}_{2,\dbq^\star_{\ula}}$. Also, we abbreviate $\dbq^\star\equiv \dbq^\star_{\ula}$ and $\hbq^\star\equiv \hbq^\star_{\ula}$ for simplicity. Then, we first aim to prove
    \begin{equation}\label{eq:compute:sec:mo:free:energy}
        \bF_{\ula}(\bB^\star_{\ula},s^\star_{\ula})=\log \dot{\mathfrak{Z}}_2^\star+\alpha \log \hat{\mathfrak{Z}}_2^\star-d\log  \bar{\mathfrak{Z}}_2^\star.
    \end{equation}
    Recalling the definition of $\bF_{\ula}(\cdot)$ from Proposition \ref{prop:exist:free:energy:2ndmo}, it is not hard to see that
    \begin{equation*}
        \bF_{\ula}(\bB^\star_{\ula},s^\star_{\ula})\equiv \Psi_{\circ}(\bB^\star_{\ula})-\big\langle \butheta^\star_{\ula}, (\buh^\star_{\ula},\bs^\star_{\ula})\big\rangle.
    \end{equation*}
    To this end, we first calculate $\Psi_{\circ}(\bB^\star_{\ula})$. Define $\dot{q}^\star_{\la}(\fs):=\dot{q}^\star_{\la}(\ff)$ (note that apriori, $\dot{q}_{\la}(\fs)$ is not defined since $\fs\notin \dot{\Omega}$) and for $(\sigma^1,\sigma^2)$ such that $\sigma^1=\fs$ or $\sigma^2=\fs$, define $\dbq^\star(\sigma^1,\sigma^2)=\dot{q}^\star_{\la_1}(\sigma^1)\dot{q}^\star_{\la_2}(\sigma^2)$. Then, it is straightforward to see from Definition \ref{def:opt:bdry:2ndmo} that the following holds:
    \begin{equation}\label{eq:opt:bdry:2ndmo}
    \begin{split}
        \dbB^\star_{\ula}(\bsig)&\equiv \frac{\prod_{i=1}^{d} \hbq^\star(\bsigma_i)}{\dot{\mathfrak{Z}}_2^\star}\quad\textnormal{for}\quad\bsig \in (\dot{\partial}_2^{\bullet})^{d}\quad\textnormal{if}\quad\dot{\Phi}_2(\bsig)\neq 0;\\
        \hbB^\star_{\ula}(\bsig)&\equiv \frac{\hat{v}_2(\bsig)\prod_{i=1}^{k}\dbq^\star(\bsigma_i)}{\hat{\mathfrak{Z}}_2^\star}\quad\textnormal{for}\quad\bsig \in (\hat{\partial}_2^{\bullet})^{k};\\
        \bbB^\star_{\ula}(\bsigma)&\equiv \frac{g(\bsigma)\dbq^\star(\bsigma)\hbq^\star(\bsigma)}{\bar{\mathfrak{Z}}_2^\star}\quad\textnormal{for}\quad\bsigma\in \hat{\partial}_2^{\bullet},
    \end{split}
    \end{equation}
    where $g(\sigma^1,\sigma^2):=2^{-\la_1\one\{\sigma^1=\fs\}-\la_2\one\{\sigma^2=\fs\}}$. Having \eqref{eq:opt:bdry:2ndmo} in hand and recalling $\big(\bh^\star_{\ula}(\bx)\big)_{\bx\in \dot{\partial}_2\sqcup\hat{\partial}_2}$, which can be computed from $\bB^\star_{\ula}$ by \eqref{eq:def:compat:2ndmo}, it is straightforward to compute
    \begin{equation}\label{eq:compute:Psi:B:star}
    \begin{split}
        \Psi_{\circ}(\bB^\star_{\ula})=&\sum_{\bx \in \dot{\partial}_2}\bh^\star_{\ula}(\bx)\log \dbq^\star(\bx)+\sum_{\bx \in \hat{\partial}_2}\bh^\star_{\ula}(\bx)\log\big(g(\bx)\hbq^\star(\bx)\big)\\
        &\quad\quad\quad+\big\langle \dbB^\star_{\ula},\one \big\rangle \log \dot{\mathfrak{Z}}_2^\star+\alpha \big\langle \hbB^\star_{\ula},\one \big\rangle \log \hat{\mathfrak{Z}}_2^\star-d\big\langle \bbB^\star_{\ula},\one \big\rangle \log \bar{\mathfrak{Z}}_2^\star.
    \end{split}
    \end{equation}
    Moreover, from the definition of $\butheta^\star$ in \eqref{eq:def:opt:theta:2ndmo}, we can compute
    \begin{equation}\label{eq:compute:theta:star:dot:h:star}
    \begin{split}
        \big\langle \butheta^\star_{\ula}, (\buh^\star_{\ula},\bs^\star_{\ula})\big\rangle&=\sum_{\bx \in \dot{\partial}_2}\bh^\star_{\ula}(\bx)\log \dbq^\star(\bx)+\sum_{\bx \in \hat{\partial}_2}\bh^\star_{\ula}(\bx)\log\big(g(\bx)\hbq^\star(\bx)\big)-\big(1-\big\langle \dbB^\star_{\ula},\one \big\rangle\big)\log \dot{\ZZZ}^\star_2\\
        &-\alpha\big(1-\big\langle \hbB^\star_{\ula},\one \big\rangle\big)\log \hat{\ZZZ}^\star_2-\Big(1-\big\langle \dbB^\star_{\ula},\one \big\rangle+\alpha\big((1-\big\langle \hbB^\star_{\ula},\one \big\rangle\big)-d\big(1-\big\langle \bbB^\star_{\ula},\one \big\rangle\big)\Big)\log \bar{\mathfrak{Z}}^\star_2,
    \end{split}
    \end{equation}
    where we used the fact $\sum_{\bx\in\dot{\partial}_2}\bh^\star_{\ula}(\bx)=d\big(\big\langle \bbB^\star_{\ula},\one\big\rangle-\big\langle \hbB^\star_{\ula},\one\big\rangle\big)$ and $\sum_{\bx\in\hat{\partial}_2}\bh^\star_{\ula}(\bx)=d\big(\big\langle \bbB^\star_{\ula},\one\big\rangle-\big\langle \dbB^\star_{\ula},\one\big\rangle\big)$, which can be obtained from \eqref{eq:def:compat:2ndmo}. Since $\dot{\ZZZ}^\star_2=(\bar{\mathfrak{Z}}^\star_2)^{-1}\dot{\mathfrak{Z}}^\star_2$ and $\hat{\ZZZ}^\star_2=(\bar{\mathfrak{Z}}^\star_2)^{-1}\hat{\mathfrak{Z}}^\star_2$ holds, we can substract \eqref{eq:compute:theta:star:dot:h:star} from \eqref{eq:compute:Psi:B:star} to see that our goal \eqref{eq:compute:sec:mo:free:energy} holds.
    
    Then, the same argument shows that the analog of \eqref{eq:compute:sec:mo:free:energy} holds for the first moment:
    \begin{equation}\label{eq:compute:first:mo:free:energy}
    F_{\la}(B^\star_{\la},s^\star_{\la})=\log \dot{\mathfrak{Z}}^\star+\alpha \log \hat{\mathfrak{Z}}^\star-d\log  \bar{\mathfrak{Z}}^\star,
    \end{equation}
    where $\dot{\mathfrak{Z}}^\star\equiv \dot{\mathfrak{Z}}_{\dot{q}^\star_{\la}},\hat{\mathfrak{Z}}^\star\equiv \hat{\mathfrak{Z}}_{\dot{q}^\star_{\la}}, \bar{\mathfrak{Z}}^\star\equiv \bar{\mathfrak{Z}}_{\dot{q}^\star_{\la}}$ are normalizing constants for $\dot{H}^\star_{\la}, \hat{H}^\star_{\la}, \bar{H}^\star_{\la}$ in \eqref{eq:H:q:1stmo}. Then, \eqref{eq:compute:sec:mo:free:energy} and \eqref{eq:compute:first:mo:free:energy} finish the proof of \eqref{eq:freeenergy:product:untruncated} since $\dot{\mathfrak{Z}}_{2,\dbq^\star_{\ula}}=\dot{\mathfrak{Z}}_{\dot{q}^\star_{\la_1}}\dot{\mathfrak{Z}}_{\dot{q}^\star_{\la_2}}, \hat{\mathfrak{Z}}_{2,\dbq^\star_{\ula}}=\hat{\mathfrak{Z}}_{\dot{q}^\star_{\la_1}}\hat{\mathfrak{Z}}_{\dot{q}^\star_{\la_2}}$ and $\bar{\mathfrak{Z}}_{2,\dbq^\star_{\ula}}=\bar{\mathfrak{Z}}_{\dot{q}^\star_{\la_1}}\bar{\mathfrak{Z}}_{\dot{q}^\star_{\la_2}}$ hold.
\end{proof}



	\section{Continuity of tree optimization}
	\label{sec:appendix:continuity:tree:optmization}
	In this section, we gather continuity properties which were used in Section \ref{subsec:resampling}.
	
	\subsection{Continuity in the single-copy model} We first show that $s[\sig]$ for $\sig \in \Omega^{E}$ is a Lipschitz function with respect to $\dot{H}[\sig]$, which was used in the proof of Proposition \ref{prop:maxim:1stmo}. Recall that we define $\utau=(\tau_1,\ldots \tau_d) \in \Omega^d$ to be free if $\tau_i \in \{\fF\}$ holds for all $1\leq i\leq d$.
	\begin{lemma}\label{lem:H:dot:to:s:lipschitz}
	Given a valid \naesat instance $\GGG$ and a valid coloring $\sig\in \Omega^{E}$ on $\GGG$, let $\dot{H}=\dot{H}[\sig]$. Then, we have
	\begin{equation}\label{eq:lem:H:dot:to:s:lipschitz}
	    \big| s[\sig]-s^\star_{\la}\big| \leq \log 2 \sum_{\utau\in \Omega^{d}:\textnormal{free}}\big|\dot{H}(\utau)-\dot{H}^\star_{\la}(\utau)\big|.
	\end{equation}
	\end{lemma}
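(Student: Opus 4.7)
The plan is to rewrite both $s[\sig]$ and $s^\star_\la$ as linear functionals of $\dot{H}[\sig]$ and $\dot{H}^\star_\la$ against a common bounded weight function $\phi:\{\utau\in\Omega^d:\textnormal{free}\}\to[0,\log 2]$, so that the bound reduces to applying $|\phi|\leq \log 2$ to the difference.

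The first step is a structural observation: for any free tree $\ttt\in\mathscr{F}_{\tr}$ and any variable $v\in V(\ttt)$, the spin neighborhood $\sig_{\delta v}(\ttt)\in\Omega^d$ uniquely identifies $\ttt$. This is because each clause-to-variable message $\hat{\sigma}_e$ at $v$ records, via the $\hat{T}$ rule of Definition \ref{def:model:msg config}, the complete subtree of $\ttt$ hanging off of $a(e)$ on the side opposite $v$, and these subtrees together with $v$ reconstruct $\ttt$. Consequently, for every free $\utau\in\Omega^d$ there is at most one $\ttt(\utau)\in\mathscr{F}_{\tr}$ with $\utau=\sig_{\delta v}(\ttt(\utau))$ for some $v$. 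Define
\begin{equation*}
\phi(\utau):=\frac{s^{\lit}_{\ttt(\utau)}}{v(\ttt(\utau))}
\end{equation*}
whenever this is defined, and $\phi(\utau):=0$ otherwise. Since $w^{\lit}_\ttt$ counts $\naesat$ extensions on $\ttt$ and satisfies $1\leq w^{\lit}_\ttt\leq 2^{v(\ttt)}$, we have $0\leq \phi(\utau)\leq \log 2$.

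The second step is to express both $s[\sig]$ and $s^\star_\la$ as $\langle \dot{H},\phi\rangle$ and $\langle \dot{H}^\star_\la,\phi\rangle$. Counting free variables directly, $s[\sig]=n^{-1}\sum_\ttt n_\ttt s^{\lit}_\ttt =n^{-1}\sum_{v:x_v=\ff}\phi(\sig_{\delta v})=\sum_{\utau\textnormal{ free}}\dot{H}(\utau)\phi(\utau)$, using that each free variable $v$ lies in a unique free tree $\ttt$ with $\sig_{\delta v}=\sig_{\delta v}(\ttt)$. For the optimal profile, the identity \eqref{eq:lem:compat:opt:H:opt:p:tree:variable} of Lemma \ref{lem:compat:opt:H:opt:p:tree} says $\binom{d}{\langle \utau\rangle}\dot{H}^\star_\la(\utau)=\sum_\ttt p^\star_{\ttt,\la}|\{v\in V(\ttt):\sig_{\delta v}(\ttt)\in\textnormal{per}(\utau)\}|$ for free $\utau$; multiplying both sides by $\phi(\utau)$, summing over permutation classes, and using that $\dot{H}^\star_\la$ and $\phi$ are permutation-invariant yields $s^\star_\la=\sum_\ttt p^\star_{\ttt,\la}s^{\lit}_\ttt=\sum_{\utau\textnormal{ free}}\dot{H}^\star_\la(\utau)\phi(\utau)$.

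The third step simply subtracts the two identities and applies $|\phi|\leq \log 2$ to the difference of measures, which gives the claimed bound. The main obstacle is Step 1 — verifying that a single variable's spin neighborhood recovers the entire free tree containing it — but this is just an unfolding of the BP message-passing definition and uses no new input. Once that is in hand, the rest is bookkeeping, with the compatibility identity from Lemma \ref{lem:compat:opt:H:opt:p:tree} providing the link between $\dot{H}^\star_\la$ and $(p^\star_{\ttt,\la})_{\ttt\in\mathscr{F}_{\tr}}$ in the format required.
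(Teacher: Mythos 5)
Your overall strategy --- writing both $s[\sig]$ and $s^\star_\la$ as $\langle\cdot,\phi\rangle$ for a common weight $\phi\in[0,\log 2]$ on free $\utau\in\Omega^d$ and then applying the triangle inequality --- is sound, and Steps 2 and 3 are correct once $\phi$ is well-defined. In fact this reformulation groups by $\sigma_\sim$-classes rather than by the size $s$ alone, which is a cleaner way to organize the paper's argument.

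The gap is in Step 1: a free $\utau=\sig_{\delta v}(\ttt)\in\Omega^d$ does \emph{not} uniquely determine $\ttt\in\mathscr{F}_{\tr}$. The clause-to-variable message $\hat{\sigma}_e$ is a boundary-labelled tree: it records the graphical structure and boundary labels of the subtree hanging off $a(e)$, but \emph{not} the coloring at that subtree's internal edges. Flipping a literal $\tL_{e'}$ at an edge $e'$ farther from $v$ changes $(\dot{\sigma}_{e'},\hat{\sigma}_{e'})$ --- and hence changes the equivalence class $\ttt$ --- while leaving $\sig_{\delta v}(\ttt)$ untouched, because the $\oplus 1$-twist is absorbed on the way back to $v$ by flip-invariant spins such as $\textsf{j}(\fs,\ldots,\fs)$. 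This non-uniqueness is exactly what the paper flags in the proof of Lemma \ref{lem:compat:opt:H:opt:p:tree}: ``there could be many valid coloring for children edges of $a$.'' What \emph{does} hold, and what actually makes $\phi(\utau)=s^{\lit}_\ttt/v(\ttt)$ well-defined, is the weaker statement that both $v(\ttt)$ and $s^{\lit}_\ttt$ are constant over all $\ttt$ compatible with a given $\utau$: $v(\ttt)$ because the graphical structure \emph{is} fully encoded in the messages, and $s^{\lit}_\ttt$ because the number of \naesat\ solutions is invariant under literal flips (the $0/1$-symmetry). This constancy is precisely the observation the paper makes at the start of its own proof (``in such a process, $s_\ttt^{\lit}$ stays constant''). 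Replace your uniqueness claim with it and the argument closes.
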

	\begin{proof}
	As before, put an equivalence relation on $\Omega^{d}$ by $\sig_1\sim \sig_2$ if and only if $\sig_2$ can be obtained from $\sig_1$ by a permutation. Then, for $\sigma_{\sim}\in \Omega^{d}/ \sim$ and $\dot{H}=\dot{H}[\sig]$, we have 
	\begin{equation}\label{eq:H:dot:relation:p:tree}
	    \sum_{\utau \in \sigma_{\sim}}\dot{H}(\utau)=\sum_{\ttt\in \FFF_{\tr}}p_{\ttt}[\sig]\big|\{v\in V(\ttt):\sig_{\delta v}(\ttt)\in \sigma_{\sim}\}\big|
	\end{equation}
	It is not hard to see that for $\ttt_1,\ttt_2\in \FFF_{\tr}$ with $\big|\{v\in V(\ttt_1):\sig_{\delta v}(\ttt_1)\in \sigma_{\sim}\}\big|, \big|\{v\in V(\ttt_2):\sig_{\delta v}(\ttt_2)\in \sigma_{\sim}\}\big|\geq 1$, $s_{\ttt_1}^{\lit}=s_{\ttt_2}^\lit$ holds. This is because we can determine such $\ttt$ from $\sigma_{\sim}$ by choosing colorings of the children edges, described in the proof of Lemma \ref{lem:compat:opt:H:opt:p:tree}, and in such a process, $s_{\ttt}^\lit$ stays constant. To this end, for $s\in \log \Z\equiv \{\log n: n\in \Z\}$,  define
	\begin{equation*}
	\begin{split}    
	    \Omega_{\sim}(s)&\equiv \{\sigma_{\sim} \in \Omega^{d}/\sim: \exists \ttt\in \FFF_{\tr}(s)\textnormal{ s.t. } \sig_{\delta v}(\ttt)\in \sigma_{\sim}\textnormal{ for some } v\in V(\ttt)\},\quad\textnormal{where}\\
	    \FFF_{\tr}(s) &\equiv \{\ttt\in \FFF_{\tr}:s_{\ttt}^{\lit}=s\}. 
	\end{split}    
	\end{equation*}
	By the observation above, $\big\{\Omega_{\sim}(s)\big\}_{s\in \log \Z}$ are disjoint, and \eqref{eq:H:dot:relation:p:tree} shows
	\begin{equation}\label{eq:H:dot:relation:p:tree:2}
	    \sum_{\sigma_{\sim}\in \Omega_{\sim}(s)}\sum_{\utau \in \sigma_{\sim}}\dot{H}(\utau)=\sum_{\ttt\in \FFF_{\tr}(s)}p_{\ttt}[\sig]v(\ttt)
	\end{equation}
	On the other hand, recalling Remark \ref{rem:compat:bdry:tree}, we have
	\begin{equation}\label{lem:H:dot:to:s:lipschitz:inter1}
	    \big|s[\sig]-s^\star_{\la}\big|=\Big|\sum_{s\in \log \Z}s\sum_{\ttt\in \FFF_{\tr}(s)}\big(p_{\ttt}[\sig]-p^\star_{\ttt,\la}\big) \Big|\leq \log 2 \sum_{s\in \log \Z}\Big|\sum_{\ttt\in \FFF_{\tr}(s)}v_{\ttt}\big(p_{\ttt}[\sig]-p^\star_{\ttt,\la}\big)\Big|,
	\end{equation}
	where the last inequality holds due to the triangle inequality and the fact that $s_{\ttt}^\lit \leq v(\ttt)\log 2$. Since the analog of \eqref{eq:H:dot:relation:p:tree} holds for $\dot{H}^\star_{\la}$ and $p^\star_{\ttt,\la}$ by Lemma \ref{lem:compat:opt:H:opt:p:tree}, \eqref{eq:H:dot:relation:p:tree:2} shows we can compute
	\begin{multline}\label{lem:H:dot:to:s:lipschitz:inter2}
	   \sum_{s\in \log \Z}\Big|\sum_{\ttt\in \FFF_{\tr}(s)}v_{\ttt}\big(p_{\ttt}[\sig]-p^\star_{\ttt,\la}\big)\Big|
	   =\sum_{s\in \log \Z}\Big|\sum_{\sigma_{\sim} \in \Omega_{\sim}(s)}\sum_{\utau \in \sigma_{\sim}}\big(\dot{H}(\utau)-\dot{H}^\star_{\la}(\utau)\big) \Big|\\
	   \leq \sum_{s\in \log \Z}\sum_{\sigma_{\sim} \in \Omega_{\sim}(s)}\sum_{\utau \in \sigma_{\sim}}\big|\dot{H}(\utau)-\dot{H}^\star_{\la}(\utau)\big|=\sum_{\utau\in \Omega^{d}:\textnormal{free}}\big|\dot{H}(\utau)-\dot{H}^\star_{\la}(\utau)\big|.
	\end{multline}
	Therefore, \eqref{lem:H:dot:to:s:lipschitz:inter1} and \eqref{lem:H:dot:to:s:lipschitz:inter2} finish the proof of \eqref{eq:lem:H:dot:to:s:lipschitz}.
	\end{proof}
	Recall the definition of the measure $\nu_{\dot{q}}\in \PPP(\Omega_{\DD})$ in \eqref{eq:def:nu:q:dot}. The next lemma shows that $\dot{q}\to\nu_{\dot{q}}$ is Lipschitz continuous in total variation distance under suitable condition.
	\begin{lemma}\label{lem:nu:q:continuous:1stmo}
	 Suppose $\dot{q}_1,\dot{q}_2\in \PPP(\dot{\Omega})$ satisfy $\dot{q}_1(\bb),\dot{q}_2(\bb) \geq C_k$, for constant $C_k>0$, which only depends on $k$. Then, there exists another constant $C_k^\prime>0$, which only depends on $k$ such that 
	\begin{equation}\label{eq:lem:nu:q:continuous:1stmo}
	    ||\nu_{\dot{q}_1}-\nu_{\dot{q}_2}||_1\leq C_k^\prime||\dot{q}_1-\dot{q}_2||_1
	\end{equation}
	\end{lemma}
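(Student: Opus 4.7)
The plan is to use the standard ratio-of-measures trick. Write $\nu_{\dot{q}}(\sig_\DD) = N_{\dot{q}}(\sig_\DD)/Z_{\dot{q}}$ with $N_{\dot{q}}(\sig_\DD) := w_\DD(\sig_\DD)^\la \prod_{e\in\delta\DD}\dot{q}(\dsigma_e)$ and $Z_{\dot{q}} := \sum_{\sig_\DD\in\Omega_\DD} N_{\dot{q}}(\sig_\DD)$. The elementary identity $\tfrac{N_1}{Z_1} - \tfrac{N_2}{Z_2} = \tfrac{N_1-N_2}{Z_1} + \tfrac{N_2(Z_2-Z_1)}{Z_1 Z_2}$ then reduces everything to
\begin{equation*}
\Vert \nu_{\dot{q}_1} - \nu_{\dot{q}_2}\Vert_1 \;\leq\; \frac{\Vert N_{\dot{q}_1} - N_{\dot{q}_2}\Vert_1 + |Z_{\dot{q}_1}-Z_{\dot{q}_2}|}{Z_{\dot{q}_1}},
\end{equation*}
so I must (a) bound the numerator by $C_k\Vert\dot{q}_1-\dot{q}_2\Vert_1$ and (b) bound $Z_{\dot{q}_1}$ from below by a positive constant depending only on $k$.

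For (a), enumerate $\delta\DD = \{e_1,\dots,e_M\}$ with $M = d(k-1)$, and apply the usual telescope $\prod_j \dot{q}_1(\dsigma_{e_j}) - \prod_j \dot{q}_2(\dsigma_{e_j}) = \sum_{j_0} (\dot{q}_1-\dot{q}_2)(\dsigma_{e_{j_0}}) \prod_{j<j_0}\dot{q}_1(\dsigma_{e_j})\prod_{j>j_0}\dot{q}_2(\dsigma_{e_j})$. Taking absolute values, multiplying by $w_\DD(\sig_\DD)^\la$, and summing, I get
\begin{equation*}
\Vert N_{\dot{q}_1} - N_{\dot{q}_2}\Vert_1 \;\leq\; \sum_{j_0=1}^{M}\sum_{\dot{\tau}\in\dot{\Omega}} |\dot{q}_1(\dot{\tau})-\dot{q}_2(\dot{\tau})|\cdot S_{j_0}(\dot{\tau}),
\end{equation*}
where $S_{j_0}(\dot{\tau})$ is the $\{e\neq e_{j_0}\}$-marginal of $w_\DD^\la\prod_{e\neq e_{j_0}} \max(\dot{q}_1,\dot{q}_2)(\dsigma_e)$ at $\dsigma_{e_{j_0}}=\dot{\tau}$. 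Using the very crude bounds $\dot{\Phi},\hat{\Phi}\leq 1$ and $\bar{\Phi}\leq 2$ (so $w_\DD(\sig_\DD)^\la\leq 2^d$), together with the fact that $\dot{q}_i$ is a probability measure, gives $S_{j_0}(\dot{\tau})\leq 2^d$ uniformly in $\dot{\tau}$. Thus $\Vert N_{\dot{q}_1}-N_{\dot{q}_2}\Vert_1 \leq M\cdot 2^d\cdot \Vert\dot{q}_1-\dot{q}_2\Vert_1$; the same bound applies to $|Z_{\dot{q}_1}-Z_{\dot{q}_2}|$ because $Z_{\dot{q}}$ is itself a marginal of $N_{\dot{q}}$.

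For (b), I construct one explicit valid coloring $\sig_\DD^\circ\in\Omega_\DD$ whose boundary spins lie in $\{\rr,\bb\}$: put alternating $\bb_0/\bb_1$ on the $k-1$ boundary edges of each clause so every clause is separating (so $\hat{I}^\lit=1$), and arrange the variable-side spins so that $\dot{I}(\sig_{\delta v})=1$ (e.g.\ by having one clause be forcing, producing one $\rr$ edge at $v$, with the remaining $d-1$ internal edges blue). Then $w_\DD(\sig_\DD^\circ)^\la$ is a positive absolute constant and $\prod_{e\in\delta\DD}\dot{q}(\dsigma_e^\circ)\geq \dot{q}(\bb)^M$, so $Z_{\dot{q}} \geq c_k$ by the hypothesis $\dot{q}(\bb)\geq C_k$. (In every invocation of the lemma in Section \ref{subsec:resampling}, $\dot{q}$ lies in $\mathbf{\Gamma}_C$ by construction, so the assumption $\dot{q}(\bb)\geq C_k$ automatically carries with it $\dot{q}(\rr)\gtrsim_k 1$ by \eqref{eq:def:bp:contract:set:1stmo}, which is what is needed to make $\sig_\DD^\circ$ valid.) Combining (a) and (b) with $C_k' := 2M\cdot 2^d/c_k$ yields \eqref{eq:lem:nu:q:continuous:1stmo}.

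The main obstacle is really just the construction and lower bound in (b): one needs to exhibit a valid $\sig_\DD^\circ$ on the depth-$3/2$ tree whose boundary uses only $\{\rr,\bb\}$ and then check that all three weight factors $\dot{\Phi},\hat{\Phi},\bar{\Phi}$ are bounded away from $0$ at that particular configuration. Once this is done, step (a) is an essentially cost-free telescope. The constants $\kappa_k, c_k$ and hence $C_k'$ come out depending on $k$ (through $d$ and through the BP-weight values) but not on $L$, which is exactly what is required by the applications in Lemma \ref{lem:quadratic:growth:Xi:1stmo} and Lemma \ref{lem:1stmo:negdef:onestep}.
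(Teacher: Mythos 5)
Your overall architecture coincides with the paper's: the ratio decomposition of $\nu_{\dot q}$, the telescoping of the product over $\delta\DD$, a uniform upper bound on $w_\DD^\la$, and a lower bound on the normalizing constant $Z_{\dot q}$. However, two of the supporting estimates are not justified correctly. The bound $w_\DD(\sig_{\DD})^\la\le 2^d$ via ``$\bar\Phi\le 2$'' rests on a false premise: $\bar\Phi(\sigma)=\bar{\varphi}(\sigma)=\big(\sum_{\bx}\dot{\mm}[\dot{\sigma}](\bx)\,\hat{\mm}[\hat{\sigma}](\bx)\big)^{-1}$ equals $2$ only when $\hat\sigma=\fs$ and can be arbitrarily large on deep free spins where $\dot{\mm}[\dot\sigma]$ and $\hat{\mm}[\hat\sigma]$ are nearly mutually singular. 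Since $\Omega_\DD$ is infinite, a uniform bound on $w_\DD$ genuinely requires the Bethe cancellation: expanding $\dot\varphi\prod\bar\varphi\prod\hat z\prod\hat v$ as in \eqref{eq:compute:w:DD} collapses the product to $\big(\sum_{\bx\in\{0,1\}}\prod_{e\in\delta v}\{1-\prod_{e'}\dot{\mm}[\dot\sigma_{e'}](\bx)\}\big)^\la\prod\hat v\le 2^\la$. The bound you need is true, but not for the reason you give.

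More seriously, your witness configuration in step (b) does not work under the lemma's stated hypothesis. Making the central variable frozen with one forcing clause puts an $\rr$ spin on an internal edge, so a positive weight requires $\dot q(\rr)>0$ --- but the lemma assumes only $\dot q_i(\bb)\ge C_k$, and your parenthetical appeal to $\mathbf{\Gamma}_C$ imports a hypothesis that is not in the statement. In addition, $\prod_{e\in\delta\DD}\dot q(\dsigma_e^\circ)\ge\dot q(\bb)^{M}$ is wrong for a single configuration: $\dot q(\bb)=\dot q(\bb_0)+\dot q(\bb_1)$, and one of the two individual masses could vanish. The repair (which is the paper's choice) is to take the central variable free, every clause separating with $\hat\sigma_e=\fs$ on all internal edges, and to sum over the entire family of boundary assignments in $\{\bb_0,\bb_1\}^{(k-1)d}$; literal averaging gives each such assignment the clause weight $\hat v=1-2^{-k+2}$, so $Z_{\dot q}\ge 2^\la(1-2^{-k+2})^d\,\dot q(\bb)^{(k-1)d}\gtrsim_k 2^\la$ using only the stated hypothesis. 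With these two repairs your telescope argument goes through and yields the claimed $k$-dependent, $L$-independent constant.
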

	\begin{proof}
	First, we claim the bound $w_{\DD}(\sig_{\DD})^{\la}\leq 2^{\la}$: if $\sig_{\delta v}\in \{\bb,\rr\}^{d}$, we have $$w_{\DD}(\sig_{\DD})^{\la}=\prod_{e\in \delta v}\hat{\Phi}(\sig_{\delta a(e)})^{\la}\leq 1.$$ Otherwise $\{\sigma_e\}_{e\in \delta v} \subset \{\ff\}$, if we assume $w_{\DD}(\sig_{\DD})\neq 0$. Thus, we can use Lemma \ref{lem:decompose:Phi:hat} to obtain
	\begin{equation}\label{eq:compute:w:DD}
	    w_{\DD}(\sig_{\DD})^{\la}=\dot{\Phi}(\sig_{\delta v})^{\la}\prod_{e \in \delta v}\left\{\hat{z}[\hat{\sigma}_e]^{\la}\hat{v}(\sig_{\delta a(e)})\right\}=\bigg(\sum_{\bx \in \{0,1\}}\prod_{e \in \delta v}\Big\{1-\prod_{e^\prime \in \delta a(e)\backslash e}\dot{\mm}[\dot{\sigma}_{e^\prime}](\bx)\Big\}\bigg)^{\la}\prod_{e \in \delta v}\hat{v}(\sig_{\delta a(e)}),
	\end{equation}
	where the second equality is due to the definition of $\dot{\varphi}$ in \eqref{eq:def:phi} and the definition of $\hat{\mm}$ in \eqref{eq:def:bethe:bpmsg:hat}. Hence, we have $w_{\DD}(\sig_{\DD})^{\la}\leq 2^{\la}$ for all the cases.
	
	Next, we lower bound the the normalizing constant $Z_{\dot{q}}$ for $\nu_{\dot{q}}$ by using the stated bound for $\dot{q}(\bb)$. For $(\dot{\tau}_e)_{e\in \delta \DD} \in \{\bb\}^{(k-1)d}$, consider the unique coloring $\sig_{\DD}^{\fs} \equiv \sig_{\DD}^{\fs}[(\dot{\tau}_e)_{e\in \delta \DD}]\in\Omega_{\DD}$, which is valid, i.e. $w_{\DD}(\sig_{\DD}^{\fs})\neq 0$, and satisfy the following $2$ conditions:
	\begin{itemize}
	    \item For $e \in \delta \DD$, $\dot{\sigma}_e=\dot{\tau}_e$.
	    \item For $e\in \delta v$, $\hat{\sigma}_e=\fs$. Hence, $a(e)$ is a separating clause and $v$ is free variable. 
	\end{itemize}
	Using \eqref{eq:compute:w:DD}, it is straightforward to compute $w_{\DD}(\sig_{\DD}^{\fs})^{\la}=2^{\la}(1-2^{-k+2})^{d}$ for any $\sig_{\DD}^{\fs}=\sig_{\DD}^{\fs}[(\dot{\tau}_e)_{e\in \delta \DD}]$. Thus, for $\dot{q}\in \PPP(\dot{\Omega})$ with $\dot{q}(\bb)\geq C_k$, we have
	\begin{equation}\label{eq:lowerbound:Z:q:dot}
	\begin{split}
	    Z_{\dot{q}}\equiv \sum_{\sig_{\DD}\in \Omega_{\DD}}w_{\DD}(\sig_{\DD})^{\la}\prod_{e\in \delta \DD}\dot{q}(\dot{\sigma}_e)&\geq \sum_{(\dot{\tau}_e)_{e\in \delta \DD} \in \{\bb\}^{(k-1)d}} w_{\DD}\left(\sig_{\DD}^{\fs}[(\dot{\tau}_e)_{e\in \delta \DD}]\right)^{\la}\prod_{e\in \delta \DD}\dot{q}(\dot{\tau}_e)\\
	    &=2^{\la}(1-2^{-k+2})^{d}\dot{q}(\bb)^{(k-1)d}\gtrsim_{k}2^{\la}.
	\end{split}
	\end{equation}
	
	Finally, we prove our goal \eqref{eq:lem:nu:q:continuous:1stmo}: by the triangle inequality, we can bound
	\begin{equation*}
	\begin{split}
	    ||\nu_{\dot{q}_1}-\nu_{\dot{q}_2}||_1&\leq \sum_{\sig_{\DD}\in \Omega_{\DD}}\frac{w_{\DD}(\sig_{\DD})^{\la}}{Z_{\dot{q}_1}}\bigg|\prod_{e\in \delta \DD}\dot{q}_2(\dot{\sigma}_e)-\prod_{e\in \delta \DD}\dot{q}_1(\dot{\sigma}_e)\bigg|+\frac{|Z_{\dot{q}_2}-Z_{\dot{q}_1}|}{Z_{\dot{q}_1}}\\
	    &\leq \sum_{\sig_{\DD}\in \Omega_{\DD}}\frac{2w_{\DD}(\sig_{\DD})^{\la}}{Z_{\dot{q}_1}}\bigg|\prod_{e\in \delta \DD}\dot{q}_2(\dot{\sigma}_e)-\prod_{e\in \delta \DD}\dot{q}_1(\dot{\sigma}_e)\bigg|\lesssim_{k}\sum_{\sig_{\DD}\in \Omega_{\DD}}\bigg|\prod_{e\in \delta \DD}\dot{q}_2(\dot{\sigma}_e)-\prod_{e\in \delta \DD}\dot{q}_1(\dot{\sigma}_e)\bigg|,
	\end{split}
	\end{equation*}
	where the final inequality is due to the bounds $w_{\DD}(\sig_{\DD})^{\la}\leq 2^{\la}$ and \eqref{eq:lowerbound:Z:q:dot}. Using the triangle inequality once more on the \textsc{rhs} of the equation above, we have
	\begin{equation*}
	  ||\nu_{\dot{q}_1}-\nu_{\dot{q}_2}||_1\lesssim_{k}\sum_{\sig_{\DD}\in \Omega_{\DD}}\bigg|\prod_{e\in \delta \DD}\dot{q}_2(\dot{\sigma}_e)-\prod_{e\in \delta \DD}\dot{q}_1(\dot{\sigma}_e)\bigg|\leq (k-1)d||\dot{q}_2-\dot{q}_1||_1,
	\end{equation*}
	which concludes the proof.
	\end{proof}
	The next lemma plays an important role in proving Lemma \ref{lem:exists:qdot:hdot:exp:tail}.
	\begin{lemma}\label{lem:hdot:exptail:q:b:lowerbound}
	Consider $\dot{h}\in \PPP(\dot{\Omega}_L)$, which satisfies $\dot{h}(\rr)\vee \dot{h}(\ff)\leq \frac{10}{2^k}$ and denote $\dot{q}=\dot{q}_L[\dot{h}]$. Then, there exists a constant $C_k>0$, which only depends on $k$, such that $\dot{q}(\bb) \geq C_k$.
	\end{lemma}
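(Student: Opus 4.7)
\medskip

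\noindent\textbf{Plan.} The proof will proceed by contrapositive: if $\dot{q}(\bb)$ is very small, I will show that $\dot{h}_{\dot{q}}(\rr) + \dot{h}_{\dot{q}}(\ff) > 20/2^k$, contradicting the hypothesis $\dot{h}(\rr) \vee \dot{h}(\ff) \leq 10/2^k$. Since $\dot{h} = \dot{h}_{\dot{q}}$ by the defining relation of $\dot{q}_L[\dot{h}]$ in \eqref{eq:1stmo:hdot:intermsof:qdot}, this is the only identity I will exploit. Concretely, I will assume $\dot{q}(\bb) \leq C_k$ for a small constant $C_k = C_k(k) > 0$ to be chosen, so that $\dot{q}(\{\rr, \ff\}) \geq 1 - C_k \geq 1/2$.

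\medskip

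\noindent\textbf{Preliminary bounds.} First I will bound the normalizing constant $Z'_{\dot{q}}$ from above by a dimensional constant (using $\hat{\Phi}(\sig)^\lambda \leq 1$, this is at most $1$), and bound it from below on the subset of purely-blue configurations so that $Z'_{\dot{q}} \gtrsim_k \dot{q}(\bb)^{k-1}$ on appropriate terms. These crude bounds will suffice to turn lower bounds on individual terms of \eqref{eq:1stmo:hdot:intermsof:qdot} into lower bounds on $\dot{h}_{\dot{q}}$.

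\medskip

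\noindent\textbf{Case A: $\dot{q}(\ff) \geq (1 - C_k)/2$.} I will look at clause configurations in \eqref{eq:1stmo:hdot:intermsof:qdot} where $\dot{\sigma}_2, \ldots, \dot{\sigma}_{k-1}$ are all free (tree-valued) and the ``root'' message $\text{BP}\dot{q}(\dot{\sigma}_k)$ is also mostly free. When $k-1$ of the $k$ incoming variable-to-clause messages are free, the clause is non-separating, so the outgoing message $\dot{\sigma}_1$ is itself free. Summing over these free configurations yields a lower bound on $\dot{h}_{\dot{q}}(\ff)$ of order $c_k \, \dot{q}(\ff)^{k-1}$, which is a constant of order $\Omega_k(1)$ under the Case A assumption and hence exceeds $10/2^k$ for large $k$.

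\medskip

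\noindent\textbf{Case B: $\dot{q}(\rr) \geq (1-C_k)/2$.} This is the trickier case because we lack an a~priori $0/1$-symmetry on $\dot{q}$. I will split further on whether $\dot{q}(\rr_0)$ and $\dot{q}(\rr_1)$ are comparable or unbalanced. The key observation is that $\hat{\Phi}(\sig) = 0$ whenever all $k$ Boolean values $\dsigma_i \oplus \tL$-values coincide (NAE violated), so a valid clause with mostly-$\rr_0$ incoming messages must have at least one $\dsigma_i \in \{\rr_1, \bb_1, \ff\}$. In the balanced sub-case ($\dot{q}(\rr_0), \dot{q}(\rr_1) \gtrsim_k 1$), configurations where many $\rr_0$'s and $\rr_1$'s are mixed force the outgoing message on edge $1$ to lie in $\{\rr, \fs, \ff\}$, giving the desired contribution to $\dot{h}_{\dot{q}}(\rr) + \dot{h}_{\dot{q}}(\ff)$. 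In the unbalanced sub-case (WLOG $\dot{q}(\rr_0)$ is the dominant red mass), I will use the Case~A argument together with $\dot{q}(\rr_1) + \dot{q}(\bb) + \dot{q}(\ff) \geq $ a small but nonzero quantity — valid configurations forced to include at least one of these ``non-$\rr_0$'' spins then produce outgoing messages of type $\rr$ or $\ff$ with probability comparable to $\dot{q}(\rr_0)^{k-2}$ (after the appropriate combinatorial count over which position carries the non-$\rr_0$ spin).

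\medskip

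\noindent\textbf{Main obstacle.} The delicate step is Case B, in particular the unbalanced red sub-case. I need to show that the ``variety-forcing'' effect of the NAE constraint is strong enough to generate $\Omega_k(1)$ output probability on $\{\rr, \ff\}$ from predominantly red input, even when the red mass is entirely concentrated on a single color $\rr_0$. This requires a careful enumeration of clause configurations, tracking both the spin count and the literal assignment implicit in $\hat{\Phi}(\sig)^\lambda$, to locate enough contributing configurations. Once this is established, choosing $C_k$ small enough relative to the constants from the combinatorial bounds on $\hat{\Phi}^\lambda$ and $Z'_{\dot{q}}$ will close the contradiction.
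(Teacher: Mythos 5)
Your overall strategy --- a contrapositive run entirely on the single-clause identity \eqref{eq:1stmo:hdot:intermsof:qdot} --- is genuinely different from the paper's, which instead proves two \emph{relative} inequalities, $\dot{q}(\bb)\geq\dot{q}(\ff)$ and $\dot{q}(\rr)\leq 2^{5k}\dot{q}(\bb)$, each by contradiction and each carried out on the full depth-$\tfrac{3}{2}$ tree $\DD$ under the measure $\nu_{\dot q}$ (the first by showing that $\dot q(\ff)>\dot q(\bb)$ forces $\gtrsim d$ free spins on $\delta\DD$ in expectation, the second by comparing $\P_{\nu}(X_{\rr}^{\delta}=d-1)$ with the bound $\lesssim k2^{-k}$ imposed by $\dot h(\rr)$; both exploit $d\asymp k2^{k}$). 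Avoiding $\DD$ is legitimate in principle, but as written your argument does not close. The decisive gap is Case B: this is the substantive content of the lemma --- it is precisely the paper's entire second step --- and you leave it as a description of what must be shown ("this requires a careful enumeration\dots once this is established\dots") rather than a proof. Note also that the mechanism is a reweighting, not a "generation of output messages": $\dot h(\dot\sigma)\propto\dot q(\dot\sigma)\cdot m_{\dot q}(\dot\sigma)$ where $m_{\dot q}(\dot\sigma)$ is the conditional clause weight given $\dot\sigma_1=\dot\sigma$, so what you must compare is $m_{\dot q}(\rr)$ against $m_{\dot q}(\bb)$ (using that a valid clause carries at most one red spin, so $m_{\dot q}(\rr)\asymp\dot q(\bb)^{k-3}\textnormal{BP}\dot q(\bb)2^{-k+1}$ while $m_{\dot q}(\bb)$ picks up both the one-red and the no-red configurations). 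That computation, including the free-spin contributions and the $\textnormal{BP}\dot q$ factor at position $k$, is the work the lemma requires and is absent.

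Case A also fails quantitatively as stated. With $\dot q(\ff)\geq(1-C_k)/2$ the bound $\dot h_{\dot q}(\ff)\gtrsim c_k\,\dot q(\ff)^{k-1}$ gives at best about $2c_k\cdot 2^{-k}$, which does \emph{not} exceed the threshold $10\cdot 2^{-k}$ unless $c_k>5$; your $c_k$ comes from crude bounds ($\hat\Phi^{\lambda}\le 1$ and a lower bound on $Z'_{\dot q}$ by the all-blue mass, which is vacuous here since $\dot q(\bb)$ is assumed tiny). What actually works --- and is what the paper does in its first step --- is a ratio argument showing $\dot h(\ff)/\dot h(\bb)\gtrsim\dot q(\ff)/\dot q(\bb)$, which requires verifying that $m_{\dot q}(\bb)$ and $m_{\dot q}(\dot\sigma)$ for free $\dot\sigma$ are comparable; this in turn forces you to confront the compatibility and $\Omega_L$-truncation constraints on all-free clause configurations (the $\hat\sigma_i$ are joins of the other $\dot\sigma_j$, and large trees may violate the truncation) and to justify, rather than assume, that $\textnormal{BP}\dot q$ places comparable mass on free spins. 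With these two repairs the single-clause route should go through --- and may even yield a better constant than $2^{-5k}$ --- but the proposal as it stands establishes neither case.
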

	\begin{proof}
	We first show $\dot{q}(\bb)\geq \dot{q}(\ff)$ by crude estimates: suppose by contradiction that $\dot{q}(\bb)<\dot{q}(\ff)$ holds. Denote $\nu=\nu_{\dot{q}}\in \PPP(\Omega_{\DD})$, where $\nu_{\dot{q}}$ is defined in \eqref{eq:def:nu:q:dot}. Also, denote by $\E_{\nu}$ the expectation taken with respect to $\nu$. Since $\dot{h}=\dot{h}\left[H^\tr[\nu]\right]$,
	\begin{equation}\label{eq:compute:dot:h:ff:by:nu}
	    \E_{\nu}\bigg[\sum_{e\in \delta \DD} \one\{\dot{\sigma}_e\in \ff\}\bigg]=\dot{h}(\ff)(k-1)d \leq \frac{10kd}{2^k}\leq 10k^2.
	\end{equation}
	To compute the \textsc{lhs} of the equation above, label the clauses in $\DD$ by $a_1,..,a_{d}$ and let $e_i=(a_i v), 1\leq i \leq d$, where $v$ is the unique variable in $\DD$. We divide cases into where $v$ is free or frozen: for the case where $v$ is free, we condition on the number of non-separating clauses in $\DD$ and the spins adjacent to them. Fix $1\leq i_1<i_2<...<i_{\ell}\leq d$ and $\utau_1,...,\utau_\ell \in \Omega_L^{k}$, which are non-separating. Then, since non-separating clauses have at least $2$ free spins adjacent to them and $\sig_{a_i}, 1\leq i \leq d$ are independent conditional on $\sig_{\delta v}$,
	\begin{equation}\label{eq:conditional:v:free:long}
	\begin{split}
	    &\E_{\nu}\bigg[\sum_{e\in \delta \DD} \one\{\dot{\sigma}_e\in \ff\}\bigg| \sig_{a_{i_j}}=\utau_{j}, 1\leq j \leq \ell,\textnormal{ and }\hat{\sigma}_{e_i}=\fs\textnormal{ if } i \notin \{i_1,...,i_{\ell}\}\bigg]\\
	&\geq \ell+\sum_{i \notin \{i_1,...,i_{\ell}\}}\E_{\nu}\bigg[\sum_{e\in \delta a_i\backslash e_i}\one\{\dot{\sigma}_e\in \ff\}\bigg| \hat{\sigma}_{e_i}=\fs\bigg]=\ell+(d-\ell)\frac{\sum_{j=0}^{k-3}j\binom{k-1}{j}(1-2^{-k+j+2})\dot{q}(\ff)^{j}\dot{q}(\bb)^{k-1-j}}{\sum_{j=0}^{k-3}\binom{k-1}{j}(1-2^{-k+j+2})\dot{q}(\ff)^{j}\dot{q}(\bb)^{k-1-j}}\geq d,
	\end{split}
	\end{equation}
	where the last inequality holds because we assumed $\dot{q}(\ff)>\dot{q}(\bb)$. Thus, we have
	\begin{equation}\label{eq:conditional:v:free:final}
	    \E_{\nu}\bigg[\sum_{e\in \delta \DD} \one\{\dot{\sigma}_e\in \ff\}\bigg|\textnormal{$v$ is free}\bigg]\geq d
	\end{equation}
	Turning to the case where $v$ is frozen, let $X_{\rr}^{\delta}$ be the number of clauses in $\DD$ which have a red edge among $\delta \DD$ and let $X_{\rr}^{\textnormal{in}}$ be the number of red edges among $e_1,...,e_d$. By Markov's inequality,
	\begin{equation}\label{eq:X:rr:delta:small}
	    \P_{\nu}\left(X_{\rr}^{\delta}\geq \frac{d}{3}\right)\leq \frac{3\E_{\nu}[X_{\rr}^{\delta}]}{d}=3(k-1)\dot{h}(\rr)\leq \frac{30k}{2^k}.
	\end{equation}
	Note that $v$ is frozen if and only if $X_{\rr}^{\textnormal{in}}\geq 1$, so again by Markov's inequality,
	\begin{equation}\label{eq:X:rr:in:intermediate}
	    \P_{\nu}\bigg(X_{\rr}^{\textnormal{in}}\geq \frac{d}{3} \bigg|\textnormal{$v$ is frozen}\bigg)\leq \frac{3\E_{\nu}[X_{\rr}^{\textnormal{in}}\mid X_{\rr}^{\textnormal{in}}\geq 1]}{d}=\frac{3}{d}\frac{\sum_{j=1}^{d}j\binom{d}{j}\left(2^{-k+1}\dot{q}(\bb)^{k-1}\right)^{j}A^{d-j}}{\left(2^{-k+1}\dot{q}(\bb)^{k-1}+A\right)^{d}-A^{d}},
	\end{equation}
	where $A\equiv \sum_{\sig\in \Omega_L^{k}, \sigma_1\in \{\bb\}}\hat{\Phi}(\sig)^{\la}\prod_{i=2}^{k}\dot{q}(\dsigma_i)$. Since $\hat{v}(\sig)\geq 1/2$ for valid separating $\sig$, we can lower bound $A$ by the contribution from separating $\sig$ as
	\begin{equation*}
	    A\geq \frac{1}{2}\left(\left(\dot{q}(\bb)+\dot{q}(\ff)\right)^{k-1}-\dot{q}(\ff)^{k-1}\right)\geq \frac{2^{k-1}-1}{2}\dot{q}(\bb)^{k-1}\geq 2^{k-3}\dot{q}(\bb)^{k-1}
	\end{equation*}
	Hence, we can use the inequality above to further bound the \textsc{rhs} of \eqref{eq:X:rr:in:intermediate} by
	\begin{equation}\label{eq:X:rr:in:small}
	\begin{split}
	    \P_{\nu}\bigg(X_{\rr}^{\textnormal{in}}\geq \frac{d}{3} \bigg|\textnormal{$v$ is frozen}\bigg)
	    &\leq 3\frac{2^{-k+1}\dot{q}(\bb)^{k-1}\left(2^{-k+1}\dot{q}(\bb)^{k-1}+A\right)^{d-1}}{\left(2^{-k+1}\dot{q}(\bb)^{k-1}+A\right)^{d}-A^{d}}\leq\frac{3}{d}\left(\frac{2^{-k+1}\dot{q}(\bb)^{k-1}+A}{A}\right)^{d-1}\\
	    &\lesssim \frac{1}{d}\lesssim \frac{1}{k2^k}
	\end{split}
	\end{equation}
	Having \eqref{eq:X:rr:delta:small} and \eqref{eq:X:rr:in:small} in mind, we condition on the event where $X_{\rr}^{\delta}=\ell_1\leq \frac{d}{3}$ and $X_{\rr}^{\textnormal{in}}=\ell_2\in [1,\frac{d}{3}]$. Similar to the calculations done in \eqref{eq:conditional:v:free:long}, we can lower bound
	\begin{equation}\label{eq:conditional:v:frozen:final}
	\begin{split}
	    &\E_{\nu}\bigg[\sum_{e\in \delta \DD}\one\{\dsigma_e\in \ff\}\bigg| X_{\rr}^{\delta}=\ell_1,X_{\rr}^{\textnormal{in}}=\ell_2\bigg]\\
	    &\geq (d-\ell_1-\ell_2)\frac{\sum_{\sig\in\{\bb,\ff\}^{k},\sigma_1\in \{\bb\}}\hat{\Phi}(\sig)^{\la}\prod_{i=2}\dot{q}(\dsigma_i)\sum_{j=2}^{k}\one\{\dsigma_i\in \{\ff\}\}}{\sum_{\sig\in\{\bb,\ff\}^{k},\sigma_1\in \{\bb\}}\hat{\Phi}(\sig)^{\la}\prod_{i=2}^{k}\dot{q}(\dsigma_i)}\geq \frac{d}{3},
	\end{split}
	\end{equation}
	where the last inequality holds because $\dot{q}(\ff)>\dot{q}(\bb)$ and $\ell_1,\ell_2\leq \frac{d}{3}$. Therefore, \eqref{eq:conditional:v:free:final} and \eqref{eq:conditional:v:frozen:final} show
	\begin{equation}\label{eq:nu:free:too:many}
	    \E_{\nu}\bigg[\sum_{e\in \delta \DD} \one\{\dot{\sigma}_e\in \ff\}\bigg]\geq \frac{d}{3}\P_{\nu}\left(X_{\rr}^{\delta}\leq \frac{d}{3}, X_{\rr}^{\textnormal{in}}\leq \frac{d}{3}\right)\geq \frac{d}{3}\left(1-\frac{C k}{2^k}\right),
	\end{equation}
	where the last inequality is due to \eqref{eq:X:rr:delta:small}, \eqref{eq:X:rr:in:small} and $\P_{\nu}(X_{\rr}^{\delta}\geq \frac{d}{3})\leq \P_{\nu}(X_{\rr}^{\delta}\geq \frac{d}{3}\mid\textnormal{$v$ is frozen})$. Hence, in the regime of $d\geq k 2^{k}$, \eqref{eq:compute:dot:h:ff:by:nu} contradicts \eqref{eq:nu:free:too:many} for large $k$, so we conclude that $\dot{q}(\bb)\geq \dot{q}(\ff)$.
	
	Next, we show by rough estimates that $\dot{q}(\rr)\leq 2^{5k}\dot{q}(\bb)$ holds. Suppose by contradiction that $\dot{q}(\rr)\geq 2^{5k}\dot{q}(\bb)$ holds. Recalling \eqref{eq:X:rr:delta:small}, we have
	\begin{equation}\label{eq:upperbound:prob:X:rr:delta:close:d}
	    \frac{k}{2^k}\gtrsim \P_{\nu}\left(X_{\rr}^{\delta}=d-1\right)=\frac{d(k-1)^{d-1}2^{-(k-1)d}\dot{q}(\rr)^{d-1}\dot{q}(\bb)^{(k-2)d+1}}{\sum_{\sig\in \Omega_L^{k}}w_{\DD}(\sig_{\DD})^{\la}\prod_{e\in \delta \DD}\dot{q}(\dsigma_e)}
	\end{equation}
	We now upper bound the denominator in the \textsc{rhs} of the equation above by specifying the number of $\rr$ edge in $\delta\DD$. Recalling the fact $w_{\DD}(\sig_{\DD})^{\la}\leq 2^{\la}$ from the proof of Lemma \ref{lem:nu:q:continuous:1stmo}, for $0\leq \ell \leq d-1$,
	\begin{equation*}
	\sum_{\substack{\sig\in \Omega_L^{k}\\ |\{e\in \delta \DD:\dsigma_e\in \{\rr\}\}|=\ell}}w_{\DD}(\sig_{\DD})^{\la}\prod_{e\in \delta \DD}\dot{q}(\dsigma_e)\leq 2^{\la} \binom{d}{\ell}\left((k-1)2^{-k+1}\dot{q}(\rr)\dot{q}(\bb)^{k-1}\right)^{\ell}\left(\dot{q}(\bb)+\dot{q}(\ff)\right)^{(d-\ell)(k-1)}.
	\end{equation*}
	Hence, using the bound $\dot{q}(\bb)\geq \dot{q}(\ff)$ and $\binom{d}{\ell}\leq d^{d-\ell}$, we have
	\begin{equation*}
	\begin{split}
	\frac{\sum_{\sig\in \Omega_L^{k}}w_{\DD}(\sig_{\DD})^{\la}\prod_{e\in \delta \DD}\dot{q}(\dsigma_e)}{d(k-1)^{d-1}2^{-(k-1)d}\dot{q}(\rr)^{d-1}\dot{q}(\bb)^{(k-2)d+1}}
	&\leq 2\sum_{\ell=1}^{d-1}\frac{2^{2(k-1)(d-\ell)}d^{d-\ell-1}}{(k-1)^{d-\ell-1}}\left(\frac{\dot{q}(\bb)}{\dot{q}(\rr)}\right)^{d-\ell-1}\\
	&\leq 2^{2k}\sum_{\ell=0}^{d-1}\left(\frac{2^{2k-2}d\dot{q}(\bb)}{(k-1)\dot{q}(\rr)}\right)^{\ell} \lesssim \frac{d}{k2^{k}}\lesssim 1,
	\end{split}
	\end{equation*}
	which contradicts \eqref{eq:upperbound:prob:X:rr:delta:close:d} for large $k$. Therefore, we conclude that $\dot{q}(\rr)\leq 2^{5k}\dot{q}(\bb)$ holds, which together with $\dot{q}(\bb)\geq \dot{q}(\ff)$ shows $\dot{q}(\bb)\gtrsim 2^{-5k}$.
	\end{proof}
	Having Lemma \ref{lem:hdot:exptail:q:b:lowerbound} in hand, we prove the following Lemma, which implies Lemma \ref{lem:exists:qdot:hdot:exp:tail}.
	\begin{lemma}\label{lem:exists:qdot:hdot:exp:tail:improved}
	Suppose $\dot{h}\in \PPP(\dot{\Omega})$ satisfies $\dot{h}(\rr)\vee \dot{h}(\ff)\leq \frac{9}{2^k}$ and $\sum_{\dsigma: v(\dsigma)\geq L}\dot{h}(\dsigma)\leq 2^{-ckL}$ for all $L\geq 1$, where $c>0$ is an absolute constant. Then, there exists a unique $\dot{q}\equiv \dot{q}[\dot{h}]\in \PPP(\dot{\Omega})$ such that $\dot{h}_{\dot{q}}=\dot{h}$. Moreover, there exists a constant $C_k$ and $C_k^\prime$ such that $\dot{q}(\bb) \geq C_k$ and $\sum_{v(\dsigma)\geq L}\dot{q}(\dsigma)\leq C_k^\prime 2^{-ckL}$.
	\end{lemma}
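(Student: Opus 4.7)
The plan is to obtain $\dot{q}$ as a limit of truncated solutions. For each large $L$, I would define a truncated marginal $\dot{h}^{(L)} \in \PPP(\dot{\Omega}_L)$ by setting $\dot{h}^{(L)}(\dot{\sigma}) := \dot{h}(\dot{\sigma})$ for $v(\dot{\sigma}) < L$ and redistributing the tail mass $\sum_{v(\dot{\sigma}) \geq L} \dot{h}(\dot{\sigma}) \leq 2^{-ckL}$ onto the blue spins, which for $L$ large enough keeps $\dot{h}^{(L)}(\rr) \vee \dot{h}^{(L)}(\ff) \leq 10/2^k$. The finite-dimensional theory of \cite[Appendix~C]{ssz16} then produces a unique $\dot{q}^{(L)} \in \PPP(\dot{\Omega}_L)$ with $\dot{h}_{\dot{q}^{(L)}} = \dot{h}^{(L)}$, and Lemma \ref{lem:hdot:exptail:q:b:lowerbound} gives the uniform lower bound $\dot{q}^{(L)}(\bb) \geq C_k$.

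The main technical step is to transfer the exponential tail of $\dot{h}^{(L)}$ to a uniform exponential tail on $\dot{q}^{(L)}$. To this end I would work directly with \eqref{eq:1stmo:hdot:intermsof:qdot}: given $\dot{\sigma}$, lower bound $\dot{h}_{\dot{q}^{(L)}}(\dot{\sigma})$ by the single contribution with $\dot{\sigma}_1 = \dot{\sigma}$ and $\sigma_2 = \cdots = \sigma_k = \bb$ (a non-forcing configuration with $\hat{\Phi} \geq c_k$), yielding a bound of the form
\begin{equation*}
\dot{h}^{(L)}(\dot{\sigma}) \;\geq\; \frac{c_k}{Z'_{\dot{q}^{(L)}}}\,\dot{q}^{(L)}(\dot{\sigma})\,\dot{q}^{(L)}(\bb)^{k-2}\,\hat{\textnormal{BP}}\dot{q}^{(L)}(\bb).
\end{equation*}
A parallel crude upper bound $Z'_{\dot{q}^{(L)}} \lesssim 1$, together with a blue lower bound on $\hat{\textnormal{BP}}\dot{q}^{(L)}$ inherited from $\dot{q}^{(L)}(\bb)\geq C_k$ (as in the proof of Lemma \ref{lem:hdot:exptail:q:b:lowerbound}), gives $\dot{q}^{(L)}(\dot{\sigma}) \leq C_k^{\prime} \dot{h}^{(L)}(\dot{\sigma})$ with $C_k^\prime$ independent of $L$. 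Summing over $v(\dot{\sigma}) \geq L_0$ yields $\sum_{v(\dot{\sigma}) \geq L_0} \dot{q}^{(L)}(\dot{\sigma}) \leq C_k^\prime 2^{-ckL_0}$. This uniform inverse BP estimate is the technical heart of the argument and I expect it to be the main obstacle, since keeping the constants independent of $L$ requires the blue-lower-bound to propagate cleanly from $\dot{q}^{(L)}$ to $\hat{\textnormal{BP}}\dot{q}^{(L)}$.

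With tightness (on the discrete alphabet $\dot{\Omega}$, tightness reduces to a diagonal extraction on a countable set) and the uniform bounds in hand, I would extract a subsequential limit $\dot{q}^{(L_j)} \to \dot{q}$ coordinatewise. Both the lower bound $\dot{q}(\bb) \geq C_k$ and the tail estimate pass to the limit. To pass $\dot{h}_{\dot{q}^{(L)}} = \dot{h}^{(L)}$ to the limit, I would split the sum in \eqref{eq:1stmo:hdot:intermsof:qdot} at some truncation level $L^\prime$: the head converges by bounded convergence using the uniform lower bound on $\dot{q}(\bb)$, while the tail is uniformly $O(2^{-ckL^\prime})$ by the exponential tail estimate, so sending first $L \to \infty$ and then $L^\prime \to \infty$ gives $\dot{h}_{\dot{q}} = \dot{h}$.

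For uniqueness, suppose $\dot{q}$ and $\tilde{\dot{q}}$ are two solutions. Both satisfy the blue lower bound and inherit the inverse BP exponential tail estimate from the same $\dot{h}$ by the argument above. Let $\dot{q}^{(L)}$, $\tilde{\dot{q}}^{(L)}$ denote their normalised restrictions to $\dot{\Omega}_L$; the corresponding $\dot{h}_{\dot{q}^{(L)}}$ and $\dot{h}_{\tilde{\dot{q}}^{(L)}}$ both agree with $\dot{h}^{(L)}$ up to an $L^1$ error of order $2^{-ckL}$ coming from truncation. By the finite-dimensional uniqueness and Lipschitz dependence from \cite[Appendix~C]{ssz16} (which is essentially the finite-$L$ version of Lemma \ref{lem:nu:q:continuous:1stmo}), this forces $\|\dot{q}^{(L)} - \tilde{\dot{q}}^{(L)}\|_1 \to 0$, and combined with the uniform tail estimates, $\dot{q} = \tilde{\dot{q}}$. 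This completes the plan.
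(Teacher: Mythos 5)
Your existence argument is essentially the paper's: truncate $\dot{h}$, solve the finite-dimensional problem, get the uniform blue lower bound from Lemma \ref{lem:hdot:exptail:q:b:lowerbound}, transfer the exponential tail to $\dot{q}^{(L)}$ by lower-bounding the forward map at the all-blue configuration, and pass to a subsequential limit via Prokhorov. The only cosmetic difference is that the paper runs the all-blue lower bound through the depth-$\frac32$ tree $\DD$ (so it only needs $\dot{q}_L(\bb)\gtrsim_k 1$, not a separate bound on $\textnormal{BP}\dot{q}_L(\bb)$), and it verifies $\dot{h}_{\dot q}=\dot h$ in the limit via the Lipschitz continuity of $\dot q\mapsto \nu_{\dot q}$ (Lemma \ref{lem:nu:q:continuous:1stmo}) rather than by splitting the clause sum; both routes are fine.

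The uniqueness step, however, has a genuine gap. To conclude $\|\dot{q}^{(L)}-\tilde{\dot{q}}^{(L)}\|_1\to 0$ from $\|\dot{h}_{\dot{q}^{(L)}}-\dot{h}_{\tilde{\dot{q}}^{(L)}}\|_1\lesssim 2^{-ckL}$ you need the \emph{inverse} map $\dot{h}\mapsto \dot{q}_L[\dot{h}]$ to be Lipschitz (or at least to have a modulus of continuity) \emph{uniformly in $L$}. That is not what \cite[Appendix C]{ssz16} provides: for fixed finite $L$ it gives existence, uniqueness and continuity of the inverse map, but with no quantitative, $L$-independent modulus. Nor is it "the finite-$L$ version of Lemma \ref{lem:nu:q:continuous:1stmo}" — that lemma concerns the \emph{forward} map $\dot{q}\mapsto\nu_{\dot{q}}$. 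The $L$-uniform inverse Lipschitz estimate is precisely the content of Proposition \ref{prop:1stmo:Lipschitz:hdot:qdot}, which is a substantial result proved only in an $\eps_L$-neighborhood of the BP fixed point $\dot{h}^\star_{\la,L}$ using the BP contraction; your $\dot{h}$ is a general measure satisfying only the mass and tail conditions, so that proposition does not apply. Without uniformity in $L$, a truncation error of size $2^{-ckL}$ in $\dot{h}$ gives no control on $\dot{q}^{(L)}-\tilde{\dot{q}}^{(L)}$, and the argument collapses. The paper avoids this entirely with a variational argument in the untruncated space: if $\dot{h}_{\dot{q}_1}=\dot{h}_{\dot{q}_2}=\dot{h}$, then for any $\nu$ with $\dot{h}\big[H^{\tr}[\nu]\big]=\dot{h}$ one has
\begin{equation*}
\HH(\nu_{\dot{q}_i})+\la\big\langle \log w_{\DD},\,\nu_{\dot{q}_i}\big\rangle-\HH(\nu)-\la\big\langle \log w_{\DD},\,\nu\big\rangle=\DD_{\textnormal{KL}}(\nu\,\|\,\nu_{\dot{q}_i})\ge 0,
\end{equation*}
so both $\nu_{\dot{q}_1}$ and $\nu_{\dot{q}_2}$ maximize the strictly concave functional in \eqref{eq:express:Lambda:op:sup}; hence $\nu_{\dot{q}_1}=\nu_{\dot{q}_2}$, and since $\dot{q}_i(\bb)>0$ one recovers $\dot{q}_1=\dot{q}_2$. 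You should replace your uniqueness paragraph with an argument of this type.
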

	\begin{proof}
	Define $\dot{h}_L\in \PPP(\dot{\Omega}_L)$, the $L$-truncated version of $\dot{h}$, as follows.
	\begin{equation*}
	    \dot{h}_L(\dsigma) \equiv \frac{\dot{h}(\dsigma)\one\{v(\dsigma)\leq L\}}{\sum_{\dot{\tau}\in \dot{\Omega}_L}\dot{h}(\dot{\tau})}.
	\end{equation*}
	Also, denote $\dot{q}_L \equiv \dot{q}_L[\dot{h}_L]$. We first argue that $\left\{\dot{q}_L\right\}_{L\geq 1}$ is tight. Consider $L$ large enough so that $\dot{h}_L(\dsigma)\leq \frac{10}{9}\dot{h}(\dsigma)$ holds for all $\dsigma \in \dot{\Omega}$. In particular, $\dot{h}_L(\rr) \vee \dot{h}_L(\ff) \leq \frac{10}{2^k}$, so Lemma \ref{lem:hdot:exptail:q:b:lowerbound} implies that $\dot{q}_L(\bb)\geq C_k>0$ for all $L$ large enough. Fix $T\leq L$ and denote $\nu_L = \nu_{\dot{q}_L}$. Then, since $\dot{h}_L = \dot{h}[\nu_L]$,
	\begin{equation*}
	    \frac{10}{9}2^{-ckT}\geq \sum_{v(\dot{\tau})\geq T}\dot{h}_L(\dot{\tau}) \geq \sum_{v(\dot{\tau})\geq T} \sum_{\sig_{\DD}\in \Omega_{\DD}}\nu_L[\sig_{\DD}]\one\left\{\dsigma_1=\dot{\tau}, \dsigma_2,...,\dsigma_{(k-1)d}\in \{\bb\}\right\},
	\end{equation*}
	where we identified $\delta \DD\equiv \{1,2,...,(k-1)d\}$. Similar to $\sig_{\DD}^{\fs}[(\dot{\tau}_e)_{e\in \delta \DD}]$ considered in the proof of Lemma \ref{lem:nu:q:continuous:1stmo}, we can consider $\sig_{\DD}$ with $\hat{\sigma}_e=\fs$ for $e\in \delta v$ to further lower bound the \textsc{rhs} of the equation above by
	\begin{equation*}
	     \frac{10}{9}2^{-ckT}\geq (Z_{\dot{q}_L})^{-1}\sum_{v(\dot{\tau})\geq T}2^{\la}(1-2^{-k+2})^{d-1}(1-2^{-k+3})\dot{q}_L(\bb)^{(k-1)d-1}\dot{q}_L(\dot{\tau}),
	\end{equation*}
	where $Z_{\dot{q}_L}$ is the normalizing constant for $\nu_L$. Using the fact $w_{\DD}(\sig_{\DD})^\la \leq 2^{\la}$, it is straightforward to upper bound $Z_{\dot{q}_L}\leq 2^{\la}$, so the equation above and $\dot{q}_L(\bb)\gtrsim_k 1$ show
	\begin{equation}\label{eq:dot:q:L:exp:tight}
	    \sum_{v(\dot{\tau})\geq T}\dot{q}_L(\dot{\tau})\lesssim_{k}2^{-ckT}.
	\end{equation}
	Thus, $\{\dot{q}_L\}_{L\geq 1}$ is tight, so by Prokhorov's theorem, there exists a subsequence $\{L_i\}_{i\geq 1}$ and $\dot{q}\in \PPP(\dot{\Omega})$ such that $\dot{q}_{L_i}$ converges to $\dot{q}$ in total variation distance. In particular, $\dot{q}(\bb)\geq C_k$, where $C_k$ is the constant from Lemma \ref{lem:hdot:exptail:q:b:lowerbound}, and \eqref{eq:dot:q:L:exp:tight} shows that $\sum_{v(\dot{\tau})\geq T}\dot{q}(\dot{\tau})\lesssim_{k}2^{-ckT}$ for all $T\geq 1$. We now argue that $\dot{h}_{\dot{q}}=\dot{h}$. Note that $\dot{h}_{\dot{q}}=\dot{h}\left[H^\tr[\nu_{\dot{q}}]\right]$ holds by definition, and $\nu \to \dot{h}\left[H^\tr[\nu]\right]$ is a linear projection. Hence, Lemma \ref{lem:nu:q:continuous:1stmo} shows 
	\begin{equation}\label{eq:bound:dot:h:by:dot:q}
	    ||\dot{h}_{\dot{q}}-\dot{h}_L||_1\lesssim ||\nu_{\dot{q}}-\nu_{\dot{q}_L}||_1\lesssim_k ||\dot{q}-\dot{q}_L||_1.
	\end{equation}
	Therefore, $\lim_{L\to\infty}||\dot{h}_{\dot{q}}-\dot{h}_L||_1=0$ and since $\lim_{L\to\infty}||\dot{h}_L-\dot{h}||_1=0$ by the exponential decay of the tail of $\dot{h}$, we conclude that $\dot{h}_{\dot{q}}=\dot{h}$ holds.
	
	What remains to be proven is the uniqueness of $\dot{q}$ satisfying $\dot{h}_{\dot{q}}=\dot{h}$. Suppose we have $\dot{h}_{\dot{q}_1}=\dot{h}_{\dot{q}_2}=\dot{h}$. Then for both $i=1,2$, $\nu_{\dot{q}_i}$ achieves the supremum in \eqref{eq:express:Lambda:op:sup}, since for any $\nu\in \PPP(\Omega_{\DD})$ with $\dot{h}\left[H^\tr[\nu]\right]=\dot{h}$, 
	\begin{equation*}
	\HH(\nu_{\dot{q}_i})+\la\left\langle \log w_{\DD}, \nu_{\dot{q}_i} \right\rangle - \HH(\nu)-\la\left\langle \log w_{\DD}, \nu\right\rangle =\DD_{\textnormal{KL}}(\nu||\nu_{\dot{q}_i})\geq 0.
	\end{equation*}
	On the other hand, the optimization in \eqref{eq:express:Lambda:op:sup} with respect to $\nu$ is strictly concave, so there exists a unique maximizer. Thus, $\nu_{\dot{q}_1}=\nu_{\dot{q}_2}$. Also, $\dot{q}_1(\bb),\dot{q}_2(\bb)>0$, since otherwise $\dot{h}(\bb)=0$. Having $\nu_{\dot{q}_1}=\nu_{\dot{q}_2}$ with $\dot{q}_1(\bb),\dot{q}_2(\bb)>0$, it is straightforward to see that $\dot{q}_1=\dot{q}_2$, which concludes the proof.
	\end{proof}
	\begin{lemma}\label{lem:Xi:continuous:1stmo}
	Recall the definition of $\bDelta^{\textnormal{exp}}_{C}$ in \eqref{eq:def:bDelta:exp:decay} and endow $\bDelta^{\textnormal{exp}}_{C}$ with topology induced by total variation distance. Then for any $C>0$, $\bXi:\bDelta^{\textnormal{exp}}_{C}\to \R_{\geq 0}$ is continuous.
	\end{lemma}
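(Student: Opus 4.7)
The strategy is to decompose $\bXi(H^\sm)=\bLa^{\op}(\dot{h}[H^\sm])-\bLa(H^\sm)$ and show continuity of each piece on $\bDelta^{\textnormal{exp}}_{C}$. The map $H^\sm\mapsto \dot{h}[H^\sm]$ is a linear projection given by a countable sum (see \eqref{eq:def:hdot}) and is continuous in $L^1$ on $\bDelta^{\textnormal{exp}}_{C}$ because the projection sends the uniform exponential decay of $\hat{H}^\sm$ (inherited from the exponential decay hypothesis on $\dot{h}$, via the fact that $v(\utau)\le k v(\dot{\tau}_j)$ for some $j$ in any valid clause-coloring $\utau$) to exponential decay of $\dot{h}[H^\sm]$. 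The plan is therefore to prove (i) continuity of $H^\sm\mapsto\bLa(H^\sm)$ on $\bDelta^{\textnormal{exp}}_{C}$, and (ii) continuity of $\dot{h}\mapsto\bLa^{\op}(\dot{h})$ on the image $\dot{h}[\bDelta^{\textnormal{exp}}_{C}]$.

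For (i), the first observation is that on $\bDelta^{\textnormal{exp}}_{C}$, each of $\dot{H}^\sm,\hat{H}^\sm,\bar{H}^\sm$ inherits uniform exponential decay from $\dot{h}[H^\sm]$ (for instance, for $\utau\in\Omega^k$ with $v(\utau)\ge kL$, at least one $\dot{\tau}_j$ has $v(\dot{\tau}_j)\ge L-1$, so $\hat{H}^\sm(\{v(\utau)\ge kL\})\lesssim_k 2^{-Ck(L-1)}$). Together with the crude bound $|\log w_{\DD}(\sig_{\DD})|\lesssim_k v(\sig_{\DD})$ shown in the proof of Lemma \ref{lem:nu:q:continuous:1stmo}, and the corresponding bounds $|\log\dot{\Phi}|,|\log\hat{\Phi}^{\textnormal{m}}|,|\log\bar{\Phi}|,|\log\hat{v}|\lesssim_k v(\cdot)$, each of the linear functionals appearing in $\bLa(H^\sm)$ is a weighted sum that converges absolutely with uniform tail control on $\bDelta^{\textnormal{exp}}_{C}$; thus they are continuous by dominated convergence. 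For the entropy contributions $\HH(\dot{H}^\sm),\HH(\hat{H}^\sm),\HH(\bar{H}^\sm)$, the universal inequality $|p\log p-q\log q|\le|p-q|(1+|\log(p\vee q)|)$ combined with the uniform exponential decay gives equicontinuity of the truncated entropies, and the tails contribute at most $O_k(L 2^{-CkL})$, which vanishes uniformly as the truncation $L\to\infty$.

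For (ii), the key is Lemma \ref{lem:exists:qdot:hdot:exp:tail:improved}, which provides a unique $\dot{q}[\dot{h}]\in\PPP(\dot{\Omega})$ for each $\dot{h}$ with exponential tail and the two-sided bound $\dot{q}(\bb)\gtrsim_k 1$, $\sum_{v(\dsigma)\ge L}\dot{q}(\dsigma)\lesssim_k 2^{-ckL}$. First I would show that $\dot{h}\mapsto\dot{q}[\dot{h}]$ is continuous on $\dot{h}[\bDelta^{\textnormal{exp}}_{C}]$ by a tightness-plus-uniqueness argument: if $\dot{h}_n\to\dot{h}$ in $L^1$ with uniform exponential decay, then $\dot{q}_n=\dot{q}[\dot{h}_n]$ inherits a uniform exponential tail bound (re-running the tightness argument in Lemma \ref{lem:exists:qdot:hdot:exp:tail:improved}), so by Prokhorov any subsequence has a further $L^1$-convergent subsequence $\dot{q}_{n_i}\to\dot{q}^\star$; continuity of $\dot{q}\mapsto\dot{h}_{\dot{q}}$ (an immediate consequence of Lemma \ref{lem:nu:q:continuous:1stmo} applied to the formula \eqref{eq:1stmo:hdot:intermsof:qdot}, since $\dot{q}_{n_i}(\bb)\gtrsim_k 1$ uniformly) forces $\dot{h}_{\dot{q}^\star}=\dot{h}$, so by the uniqueness clause of Lemma \ref{lem:exists:qdot:hdot:exp:tail:improved}, $\dot{q}^\star=\dot{q}[\dot{h}]$, and hence the full sequence $\dot{q}_n\to\dot{q}[\dot{h}]$ in $L^1$. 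Then Lemma \ref{lem:nu:q:continuous:1stmo} gives $\nu_{\dot{q}_n}\to\nu_{\dot{q}[\dot{h}]}$ in $L^1$, and the representation \eqref{eq:express:Lambda:op:sup} yields $\bLa^{\op}(\dot{h})=\HH(\nu_{\dot{q}[\dot{h}]})+\la\langle\log w_{\DD},\nu_{\dot{q}[\dot{h}]}\rangle$; the same entropy-plus-linear-functional analysis used in step (i) applies to $\nu_{\dot{q}[\dot{h}]}$ on $\Omega_\DD$, which also inherits an exponential tail in the size $v(\sig_\DD)$ from the tail bound on $\dot{q}[\dot{h}]$.

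The main obstacle is the continuity of $\dot{h}\mapsto\dot{q}[\dot{h}]$, which is qualitative (via compactness and the uniqueness from Lemma \ref{lem:exists:qdot:hdot:exp:tail:improved}) rather than quantitative. The careful bookkeeping of uniform exponential tail bounds for $\hat{H}^\sm,\dot{H}^\sm,\bar{H}^\sm,\nu_{\dot{q}[\dot{h}]}$ throughout, which is what makes the entropy and logarithmic linear functionals continuous despite the countably infinite state space, is the main technical point; once these tails are controlled uniformly on $\bDelta^{\textnormal{exp}}_{C}$, continuity follows by dominated convergence.
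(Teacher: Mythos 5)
Your proposal takes essentially the same route as the paper: decompose $\bXi=\bLa^{\op}\circ\dot{h}[\cdot]-\bLa$, and prove continuity of $\dot{h}\mapsto\bLa^{\op}(\dot{h})$ by first establishing continuity of $\dot{h}\mapsto\dot{q}[\dot{h}]$ via tightness (Prokhorov on the uniformly exponentially tight family $\{\dot{q}_n\}$) combined with the uniqueness clause of Lemma \ref{lem:exists:qdot:hdot:exp:tail:improved} and the Lipschitz continuity of $\dot{q}\mapsto\nu_{\dot{q}}$ from Lemma \ref{lem:nu:q:continuous:1stmo}. That core step is identical in both.

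Two noteworthy differences in the details, neither fatal: (a) you represent $\bLa^{\op}(\dot{h})$ as $\HH(\nu_{\dot{q}[\dot{h}]})+\lambda\langle\log w_{\DD},\nu_{\dot{q}[\dot{h}]}\rangle$ and run an entropy-continuity argument on $\Omega_\DD$, whereas the paper first rewrites $\bLa^\op(\dot{h})=\log Z_{\dot{q}}-\langle\dot{h},\log\dot{q}\rangle$ and reduces the problem to $\|\dot{q}_n\log\dot{q}_n-\dot{q}\log\dot{q}\|_1\to0$; the two are algebraically equivalent. (b) You control the entropy difference with $|p\log p-q\log q|\le|p-q|(1+|\log(p\vee q)|)$; the paper uses the cleaner observation that $x\mapsto x^2\log(x^2)$ has bounded derivative on $[0,1]$, so $|p\log p-q\log q|\lesssim|\sqrt{p}-\sqrt{q}|$, and then applies Cauchy--Schwarz to control the tail sums. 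Your bound is valid, but it does not by itself deliver a summable tail: you would still need to pass to $\sqrt{\nu}$ (or use $x|\log x|\lesssim\sqrt{x}$) and, crucially, invoke the counting estimate $|\{\dot\sigma:v(\dot\sigma)=T\}|\le 32^T$ (from \cite[Section 7.5]{FlajoletSedgewick}, used in the paper's proof) so that the exponential decay $2^{-CkT/2}$ beats the growth of the state space. You write that the exponential decay hypothesis ``gives equicontinuity of the truncated entropies'' and bounds the tail by $O_k(L2^{-CkL})$, but this step is where the counting estimate actually does the work, and you should make that dependence explicit. You also spend effort on continuity of $\bLa(H^\sm)$ itself, which the paper dispatches in one sentence (``$\bLa(H)$ is continuous from its definition''); your parenthetical derivation of exponential decay for $\hat{H}^\sm$ from that of $\dot{h}[H^\sm]$ would need more care, since $\dot{h}$ only constrains the marginals on coordinates $j\ge2$ of $\hat{H}^\sm$ and not directly the first coordinate $\dot\tau_1$ --- but this concern equally applies to the paper's tightness claim for $\bDelta^{\textnormal{exp}}_{C}$, so it is not a defect specific to your write-up.
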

	\begin{proof}
	Note that $\bLa(H)$ is continuous from its definition, so it suffices to prove that $\dot{h}\to \bLa^\op(\dot{h})$ is continuous among $\dot{h}$ satisfying $\sum_{v(\dsigma)\geq L}\dot{h}(\dsigma)\leq 2^{-CkL}, L \geq 1$ and $\dot{h}(\rr)\vee\dot{h}(\ff) \leq \frac{9}{2^k}$. 
	
	Suppose $\{\dot{h}_n\}_{n\geq 1}$ satisfy such conditions with $\lim_{n\to\infty}||\dot{h}_n-\dot{h}||_1=0$. Denote $\dot{q}_n=\dot{q}[\dot{h}_n]$ and $\dot{q}=\dot{q}[\dot{h}]$ whose existence is guaranteed by Lemma \ref{lem:exists:qdot:hdot:exp:tail:improved}. We first show that $\lim_{n\to\infty}||\dot{q}_n-\dot{q}||_1=0$: note that Lemma \ref{lem:exists:qdot:hdot:exp:tail:improved} again shows that for a constant $C_k,C_k^\prime>0$,
	\begin{equation}\label{eq:q:n:tail:small}
	    \dot{q}(\bb) \geq C_k\quad\textnormal{ and }\sum_{v(\dsigma)\geq L}\dot{q}_n(\dsigma) \leq C_{k}^\prime 2^{-CkL}\textnormal{ for all }L \geq 1.
	\end{equation}
	Thus, any subsequence of $\{\dot{q}_n\}_{n\geq 1}$ admits a further subsequence converging to some limit $\dot{q}^\prime$ by Prokhorov's theorem. By the same argument as done in \eqref{eq:bound:dot:h:by:dot:q}, $\dot{h}_{\dot{q}^\prime}=\dot{h}=\dot{h}_{\dot{q}}$ holds, so the uniqueness of such $\dot{q}$ guaranteed by Lemma~\ref{lem:exists:qdot:hdot:exp:tail:improved} shows that $\dot{q}^\prime=\dot{q}$ holds. Therefore, $\lim_{n\to\infty}||\dot{q}_n-\dot{q}||_1=0$.
	
	Now, we aim to prove our goal $\lim_{n\to\infty}\bLa^\op(\dot{h}_n)=\bLa^\op(\dot{h})$. It is straightforward to compute
	\begin{equation*}
	    \bLa^\op(\dot{h}_n)=\log Z_{\dot{q}_n} -\langle \dot{h}_n, \log \dot{q}_n \rangle,
	\end{equation*}
	where $Z_{\dot{q}_n}$ is the normalizing constant for $\nu_{\dot{q}_n}$. Also, it is straightforward to see from $w_{\DD}(\sig_{\DD})^\la \leq 2^{\la}$ that $\lim_{n\to\infty}Z_{\dot{q}_n}=Z_{\dot{q}}$ holds. To this end, we aim to prove $\lim_{n\to \infty} \langle \dot{h}_n,\log \dot{q}_n \rangle = \langle \dot{h},\log \dot{q}\rangle$ for the rest of the proof. Denote $\nu_n =\nu_{\dot{q}_n}$. Then, $\dot{h}\left[H^\tr[\nu_n]\right]=\dot{h}_n$, so 
	\begin{equation*}
	    \langle \dot{h}_n, \log \dot{q}_n \rangle = (Z_{\dot{q}_n})^{-1}\sum_{\sig_{\DD}\in \Omega_{\DD}}w_{\DD}(\sig_{\DD})^{\la}\dot{q}_n(\dsigma_1)\log \dot{q}_n(\dsigma_1)\prod_{i=2}^{(k-1)d}\dot{q}_n(\dsigma_i),
	\end{equation*}
	where we identified $\delta \DD\equiv \{1,2,...,(k-1)d\}$. The analog holds for $\dot{q}$, so it suffices to show
	\begin{equation*}
	    \lim_{n\to \infty}\sum_{\sig_{\DD}\in \Omega_{\DD}}w_{\DD}(\sig_{\DD})^{\la}\dot{q}_n(\dsigma_1)\log \dot{q}_n(\dsigma_1)\prod_{i=2}^{(k-1)d}\dot{q}_n(\dsigma_i)=\sum_{\sig_{\DD}\in \Omega_{\DD}}w_{\DD}(\sig_{\DD})^{\la}\dot{q}(\dsigma_1)\log \dot{q}(\dsigma_1)\prod_{i=2}^{(k-1)d}\dot{q}(\dsigma_i).
	\end{equation*}
	Observe that by the triangle inequality and the bound $w_{\DD}(\sig_{\DD})^{\la}\leq 2^{\la}$, we have
	\begin{equation*}
	\begin{split}
	    &\bigg|\sum_{\sig_{\DD}\in \Omega_{\DD}}w_{\DD}(\sig_{\DD})^{\la}\dot{q}_n(\dsigma_1)\log \dot{q}_n(\dsigma_1)\prod_{i=2}^{(k-1)d}\dot{q}_n(\dsigma_i)-\sum_{\sig_{\DD}\in \Omega_{\DD}}w_{\DD}(\sig_{\DD})^{\la}\dot{q}(\dsigma_1)\log \dot{q}(\dsigma_1)\prod_{i=2}^{(k-1)d}\dot{q}(\dsigma_i)\bigg|\\
	    &\lesssim ||\dot{q}_n \log \dot{q}_n -\dot{q} \log \dot{q}||_1+(kd-d-1)||\dot{q}\log\dot{q}||_1||\dot{q}_n-\dot{q}||_1,
	\end{split}
	\end{equation*}
	where we abbreviated $\dot{q}\log\dot{q}\equiv \{\dot{q}(\dsigma)\log\dot{q}(\dsigma)\}_{\dsigma\in \dot{\Omega}}$. Therefore, we now aim to prove
	\begin{equation}\label{eq:q:logq:convergent}
	   \lim_{n\to \infty}\sum_{\dsigma \in \dot{\Omega}}\big|\dot{q}_n(\dsigma)\log \dot{q}_n(\dsigma)-\dot{q}(\dsigma)\log \dot{q}(\dsigma)\big|=0\textnormal{ and } \sum_{\dsigma \in \dot{\Omega}}\dot{q}(\dsigma)\log\dot{q}(\dsigma)<\infty
	\end{equation}
	To prove the equation above, note that $x \to x^2\log(x^2)$ has bounded derivative in $[0,1]$, so 
	\begin{equation}\label{eq:bound:q:logq:by:sqrt:q}
	    ||\dot{q}_n \log \dot{q}_n -\dot{q} \log \dot{q}||_1 \lesssim \sum_{\dsigma \in \dot{\Omega}}\big|\sqrt{\dot{q}_n(\dsigma)}-\sqrt{\dot{q}(\dsigma)}\big|\textnormal{ and }||\dot{q}\log\dot{q}||_1\lesssim \sum_{\dsigma \in \dot{\Omega}}\sqrt{\dot{q}(\dsigma)}
	\end{equation}
	Observe that using Cauchy Schwartz, we have the following tail estimates:
	\begin{equation}\label{eq:tail:sqrt:q:n:intermediate}
	    \sum_{v(\dsigma)\geq L} \sqrt{\dot{q}(\dsigma)}\leq \sum_{T=L}^{\infty}\Big(\sum_{v(\dsigma)=T}\dot{q}(\dsigma)\Big)^{1/2}\Big|\{\dsigma:v(\dsigma)=T\}\Big|^{1/2}\lesssim_{k}\sum_{T=L}^{\infty} 2^{-CkT/2}\Big|\{\dsigma:v(\dsigma)=T\}\Big|^{1/2},
	\end{equation}
	where the last inequality is due to \eqref{eq:q:n:tail:small}. Note that we can upper bound $|\{\dsigma:v(\dsigma)=T\}|$ as follows. $\dsigma \in \dot{\Omega}$ is fully determined by specifying the underlying graph and the color of the clause-adjacent boundary half-edges, either $\bb_0$ or $\bb_1$. If $v(\dsigma)=T$, then $f(\dsigma)\leq T$, where $f(\dsigma)$ is the number of clauses in $\dsigma$, because each clause has internal degree at least $2$ in the tree $\dsigma$. The number of isomorphism class of graphs with $K$ vertices is at most $4^K$(see \cite[Section 7.5]{FlajoletSedgewick}), so we can bound $ \Big|\{\dsigma:v(\dsigma)=T\}\Big|\leq 4^{2T}2^{T}=32^T$.
	Plugging in this bound to \eqref{eq:tail:sqrt:q:n:intermediate} shows
	\begin{equation}\label{eq:tail:sqrt:q:n:final}
	    \sum_{v(\dsigma)\geq L} \sqrt{\dot{q}(\dsigma)}\lesssim_{k}\sum_{T=L}^{\infty} 2^{-(\frac{Ck}{2}-32\log 2)T}\lesssim_{k}2^{-C^\prime kL},
	\end{equation}
	where we assumed $k$ is large enough. Hence, the second claim of \eqref{eq:q:logq:convergent} holds. Also, the analog also holds for $\dot{q}_n$. Thus,
	\begin{equation*}
	    \limsup_{n \to \infty}\sum_{\dsigma\in \dot{\Omega}}\big|\sqrt{\dot{q}_n(\dsigma)}-\sqrt{\dot{q}(\dsigma)}\big|\leq C_k 2^{-C^\prime k L}+\limsup_{n \to \infty}\sum_{\dsigma:v(\dsigma)\leq L}\big|\sqrt{\dot{q}_n(\dsigma)}-\sqrt{\dot{q}(\dsigma)}\big|=C_k 2^{-C^\prime k L},
	\end{equation*}
	and sending $L\to \infty$ shows $\lim_{n\to\infty}\sum_{\dsigma\in \dot{\Omega}}\big|\sqrt{\dot{q}_n(\dsigma)}-\sqrt{\dot{q}(\dsigma)}\big|=0$. Therefore, together with \eqref{eq:bound:q:logq:by:sqrt:q}, this finishes the proof of \eqref{eq:q:logq:convergent}.
	\end{proof}
	The next lemma gives some estimates on the values of BP fixed point $\dot{q}^\star_{\la,L}$, which can be read off from \cite[Appendix A]{ssz22}. It will be important for Proposition \ref{prop:1stmo:Lipschitz:hdot:qdot} below.
		\begin{lemma}\label{lem:1stmo:BPfixedpoint:estimates}
		For some absolute constant $C>0$, the following holds for $\la\in [0,1]$ and $L \geq 1$:
		\begin{enumerate}
		\item $\dot{q}^{\star}_{\la,L}(\rr) \in (\frac{1}{2},\frac{1}{2}+\frac{C}{2^k}]$.
		\item $\dot{q}^{\star}_{\la,L}(\bb) \in [\frac{1}{2}-\frac{C}{2^k},\frac{1}{2})$.
		\item $\dot{q}^{\star}_{\la,L}(\ff) \leq \frac{C}{2^k}$.
		\end{enumerate}
		
	\end{lemma}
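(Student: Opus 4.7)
The plan is to leverage the contraction result of Proposition \ref{prop:BPcontraction:1stmo}, which already guarantees $\dot{q}^\star_{\la,L}\in \mathbf{\Gamma}_C$ for some absolute constant $C>0$, and then extract the three claimed bounds from the defining inequalities of $\mathbf{\Gamma}_C$ together with normalization. Recall that $\mathbf{\Gamma}_C$, as defined in \eqref{eq:def:bp:contract:set:1stmo}, consists of $\dot{q}\in\PPP(\dot{\Omega})$ with $\dot{q}(\dot{\sigma})=\dot{q}(\dot{\sigma}\oplus 1)$ and
\[
\tfrac{\dot{q}(\rr)+2^k\dot{q}(\ff)}{C}\leq \dot{q}(\bb) \leq \tfrac{\dot{q}(\rr)}{1-C2^{-k}}.
\]
Item (3) then follows directly: the left inequality yields $2^k\dot{q}^\star(\ff)\le C\dot{q}^\star(\bb)\le C$, hence $\dot{q}^\star(\ff)\le C/2^k$, so replacing $C$ by a larger absolute constant gives the claim.

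For the remaining two items, normalization $\dot{q}^\star(\rr)+\dot{q}^\star(\bb)+\dot{q}^\star(\ff)=1$ combined with (3) gives $\dot{q}^\star(\rr)+\dot{q}^\star(\bb)=1-O(1/2^k)$, while the right inequality above gives $\dot{q}^\star(\bb)\le \dot{q}^\star(\rr)(1+O(1/2^k))$. Plugging the second bound into the first yields $\dot{q}^\star(\rr)\ge \tfrac{1}{2}-O(1/2^k)$ and symmetrically $\dot{q}^\star(\bb)\le \tfrac{1}{2}+O(1/2^k)$; together with the normalization identity these give the two-sided bounds $\dot{q}^\star(\rr),\dot{q}^\star(\bb)\in[\tfrac{1}{2}-O(1/2^k),\tfrac{1}{2}+O(1/2^k)]$. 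What remains to prove is the strict inequality $\dot{q}^\star(\rr)>\tfrac{1}{2}>\dot{q}^\star(\bb)$ (with the explicit $C/2^k$ upper bound on $\dot{q}^\star(\rr)-\tfrac{1}{2}$).

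The strict separation is the main obstacle: the $\mathbf{\Gamma}_C$-membership alone permits $\dot{q}^\star(\rr)=\dot{q}^\star(\bb)$ up to $O(1/2^k)$. The plan to close this gap is to substitute $\dot{q}^\star$ back into the fixed-point identity $\dot{q}^\star=\dot{\textnormal{BP}}_{\la,L}\hat{\textnormal{BP}}_{\la,L}\dot{q}^\star$ and directly compare $\dot{q}^\star(\rr)$ and $\dot{q}^\star(\bb)$ using the explicit form of the BP equations \eqref{eq:pre:BP:1stmo}. The key structural fact is that a forcing ($\rr$) clause-to-variable message arises whenever all the other variable-to-clause messages feeding a clause take matching blue values, while a non-forcing ($\bb$) message is strictly more constrained (it requires at least one additional free message or a cancellation pattern). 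In the regime $\dot{q}^\star(\bb)=\tfrac{1}{2}+O(1/2^k)$, $\dot{q}^\star(\ff)=O(1/2^k)$ one can expand the BP recursion to leading order and verify that $\dot{q}^\star(\rr)-\dot{q}^\star(\bb)$ equals a positive quantity of order $1/2^k$, which both closes the sign and yields the $C/2^k$ bound.

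This final step is the only delicate part of the argument; the reduction in the first two paragraphs is purely algebraic from Proposition \ref{prop:BPcontraction:1stmo}. As noted in the statement, the required computation is carried out in detail in \cite[Appendix A]{ssz16}, and we simply follow the same expansion of the BP fixed-point equation, observing that neither step relies on $L$, so the bounds are uniform in $L\ge 1$ and in $\la\in[0,1]$.
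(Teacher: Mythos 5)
Your overall plan mirrors the paper's: extract what you can from $\dot{q}^\star_{\la,L}\in\mathbf{\Gamma}_{C'}$ (Proposition~\ref{prop:BPcontraction:1stmo}), then invoke the BP fixed-point structure and the computations of \cite{ssz16}. Item (3) and the one-sided bound $\dot{q}^\star(\rr)\ge\tfrac12-O(2^{-k})$ do follow from $\mathbf{\Gamma}_{C'}$ plus normalization as you say, but the claim that this also yields the two-sided bounds is incorrect. The condition $\mathbf{\Gamma}_{C'}$ only lower-bounds $\dot{q}(\bb)$ by $\dot{q}(\bb)\ge(\dot{q}(\rr)+2^k\dot{q}(\ff))/C'\ge\dot{q}(\rr)/C'$, which combined with normalization gives merely $\dot{q}(\rr)\le C'/(C'+1)$ --- nowhere near $\tfrac12+O(2^{-k})$ for a fixed large constant $C'$. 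The upper bound on $\dot{q}^\star(\rr)$ (equivalently, a lower bound on $\dot{q}^\star(\bb)$) cannot be obtained without actually engaging the BP map; the paper gets it by citing \cite[Lemma A.4]{ssz16} directly, and you should do the same rather than present it as an algebraic consequence of $\mathbf{\Gamma}_{C'}$-membership.

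For the strict inequality $\dot{q}^\star(\rr)\ge\tfrac12$, the paper uses a cleaner device than the leading-order expansion you propose. A BP fixed point satisfies the exact identity $\dot{q}(\rr)-\dot{q}(\bb)=[\dot{\mm}^\lambda\dot{q}](\ff)+[(1-\dot{\mm})^\lambda\dot{q}](\ff)$ (see \cite[Lemma B.2]{ssz16}), and the right-hand side is $\ge\dot{q}(\ff)$ because $x^\lambda+(1-x)^\lambda\ge1$ for every $x\in[0,1]$, $\lambda\in[0,1]$. Adding this to normalization gives $2\dot{q}^\star(\rr)\ge1$ at once, with no expansion or $2^{-k}$ bookkeeping. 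The ``key structural fact'' in your proposal about how $\rr$ versus $\bb$ messages arise is too vague to close the sign on its own --- it really amounts to re-deriving this identity, so I recommend you invoke it directly.
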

		\begin{proof}
	   The lemma follows as a consequence of computations done in \cite{ssz22}. From Proposition \ref{prop:BPcontraction:1stmo}, recall that $\dot{q}^{\star}_{\la, L} \in \mathbf{\Gamma}_{C'}$ for some absolute constant $C'>0$ and $\dot{q}^{\star}_{\la, L} = \BP[\dot{q}^{\star}_{\la, L}]$. Thus, it suffices to obtain the desired conclusion for $\BP[\dot{q}^{\star}_{\la, L}]$. From \cite[Lemma A.4]{ssz22}, there exists an absolute constant $C>0$ such that
	   \begin{equation*}
	   \frac{1}{2} - \frac{C}{2^k} \le \dot{q}^{\star}_{\la, L} (\rr) , \dot{q}^{\star}_{\la, L}(\bb) \le \frac{1}{2} + \frac{C}{2^k} , \quad \dot{q}^{\star}_{\lambda, L} (\ff) \le \frac{C}{2^k}.
	   \end{equation*}
	   
	   To obtain the conclusion, it suffices to show that $\dot{q}^{\star}_{\lambda, L} \ge \frac{1}{2}$. Recall Definition \ref{def:msg:m}, and view $\dot{\mm}[\dot{\sigma}]$ for a coloring spin $\dot \sigma$  as the definition using the equivalence of the message configurations and the colorings. We write
	   \begin{equation*}
[	   \dot{\mm}^\lambda \dot{q}](\ff) := \sum_{\dot{\sigma} \in \ff} \dot{\mm}[\dot{\sigma}](1)^\lambda \dot{q}(\dot{\sigma}).
	   \end{equation*}
	   Then, from the definition of $\BP$, one can observe that a fixed point $\dot{q}$ of $\BP$ must satisfy
	   \begin{equation*}
	   \dot{q}(\rr) = \dot{q}(\bb) + [ \dot{\mm}^\lambda \dot{q}](\ff) + [(1-\dot{\mm})^\lambda \dot{q}](\ff),
	   \end{equation*}
	   which is the assumption of \cite[Lemma B.2]{ssz22}. (For details, we refer to Appendices A and B of \cite{ssz22}.) Then, we can conculde the proof from the fact that $ [ \dot{\mm}^\lambda \dot{q}](\ff) + [(1-\dot{\mm})^\lambda \dot{q}](\ff) \ge \dot{q}(\ff)$.
	\end{proof}
	The next proposition played a crucial role in the proof of Lemma \ref{lem:1stmo:negdef:onestep} and Proposition \ref{prop:negdef}.
	\begin{prop}
	\label{prop:1stmo:Lipschitz:hdot:qdot}
	Fix $k\geq k_{0}$. Recall that for $\dot{h}\in \PPP(\dot{\Omega}_L)$, $\dot{q}_L[\dot{h}]\equiv \dot{q}_{\la,L}[\dot{h}]\in \PPP(\dot{\Omega}_L)$ is determined by \eqref{eq:def:nu:q:dot} with inverse function $\dot{q} \to \dot{h}_{\dot{q}}$ in \eqref{eq:1stmo:hdot:intermsof:qdot}, and denote $\dot{h}^\star_{L}\equiv \dot{h}^\star_{\la,L}, \dot{q}^\star_{L} \equiv \dot{q}^\star_{\la,L}$. Then, there exists $\eps_L>0$ and a constant $C_k$, which may depend on $k$ but not on $L$, such that
	\begin{equation}\label{eq:lem:1stmo:Lipschitz:hdot:qdot}
	    ||\dot{h}-\dot{h}^\star_{L}||_1<\eps_L, \dot{h}\in \PPP(\dot{\Omega}_L) \implies ||\dot{q}_L[\dot{h}]-\dot{q}^\star_L||_1\leq C_k ||\dot{h}-\dot{h}^\star_L||_1
	\end{equation}
	\end{prop}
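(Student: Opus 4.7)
The plan is to apply the implicit function theorem to the equation $\dot{h}_{\dot{q}} = \dot{h}$ around the base point $(\dot{q}^\star_L, \dot{h}^\star_L)$, with $\dot{h}_{\dot{q}}$ given by the explicit formula \eqref{eq:1stmo:hdot:intermsof:qdot}. Since $\dot{h} \mapsto \dot{q}_L[\dot{h}]$ is the local inverse, the desired Lipschitz bound will follow once I show that the Jacobian $J_L := D_{\dot{q}} \dot{h}_{\dot{q}} \big|_{\dot{q} = \dot{q}^\star_L}$, viewed as an operator on the tangent space of $\PPP(\dot{\Omega}_L)$ equipped with the $\ell^1$ norm, is invertible with $||J_L^{-1}||_{\ell^1 \to \ell^1} \le C_k$ for some constant $C_k$ depending only on $k$ and not on $L$. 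The radius $\eps_L$ will in general depend on $L$ (to keep us inside the linearization regime) but the Lipschitz constant itself will not.

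First I will rewrite $\dot{h}_{\dot{q}}(\dot{\sigma}) = Z_{\dot{q}}^{-1}\, \dot{q}(\dot{\sigma})\, A_{\dot{q}}(\dot{\sigma})$, where $A_{\dot{q}}(\dot{\sigma})$ is the weighted sum obtained by fixing $\dot{\sigma}_1 = \dot{\sigma}$ in \eqref{eq:1stmo:hdot:intermsof:qdot} and summing over $(\dot{\sigma}_i)_{i=2}^{k-1}$ and $\sigma_k$, and $Z_{\dot{q}}$ is the associated normalizer. Differentiating at $\dot{q}^\star_L$ yields a splitting $J_L = J_L^{\mathrm{fac}} + J_L^{\mathrm{wt}} + J_L^{\mathrm{bp}} + J_L^{\mathrm{nrm}}$, corresponding respectively to variations of the front factor $\dot{q}(\dot{\sigma})$, of the $k-2$ internal factors $\dot{q}(\dot{\sigma}_i)$ for $i=2,\dots,k-1$ inside $A_{\dot{q}}$, of the map $\BP\dot{q}$ in the $k$-th slot, and of the normalizer $Z_{\dot{q}}$. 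The dominant piece $J_L^{\mathrm{fac}}$ is diagonal with entry $A_{\dot{q}^\star_L}(\dot{\sigma})/Z_{\dot{q}^\star_L} = \dot{h}^\star_L(\dot{\sigma})/\dot{q}^\star_L(\dot{\sigma})$, the last equality coming from the fixed-point relation.

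Next I bound the three non-diagonal pieces uniformly in $L$. For $J_L^{\mathrm{bp}}$, the contraction estimate \eqref{eq:BPcontraction:1stmo} of Proposition \ref{prop:BPcontraction:1stmo} gives $||D\,\BP||_{\ell^1 \to \ell^1} \lesssim k^2/2^k$, which transfers to $||J_L^{\mathrm{bp}}||_{\ell^1 \to \ell^1} = O_k(2^{-k})$. For $J_L^{\mathrm{wt}}$, each of the $k-2$ internal derivatives produces a Markov-type kernel whose entries come from marginalizing $\hat{\Phi}^\lambda \prod \dot{q}^\star_L$ with one coordinate free; using Lemma \ref{lem:1stmo:BPfixedpoint:estimates} to localize most mass on $\{\bb\}$ together with the tree-frequency decay from Lemma \ref{lem:opt:tree:decay}, each such kernel has $\ell^1$ column sums $O_k(2^{-k/2})$, so $||J_L^{\mathrm{wt}}||_{\ell^1 \to \ell^1}$ is small uniformly in $L$. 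The rank-one correction $J_L^{\mathrm{nrm}}$ is bounded in the same fashion.

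It remains to invert $J_L^{\mathrm{fac}}$ uniformly in $L$. The key is a two-sided bound $c_k \le \dot{h}^\star_L(\dot{\sigma})/\dot{q}^\star_L(\dot{\sigma}) \le C_k$ for all $\dot{\sigma} \in \dot{\Omega}_L$, which I will establish from the compatibility identities of Lemma \ref{lem:compat:opt:H:opt:p:tree} and Lemma \ref{lem:1stmo:BP:compatibility}: both numerator and denominator can be expanded through the same recursive BP structure, so size-dependent weights cancel. With $||(J_L^{\mathrm{fac}})^{-1}||_{\ell^1 \to \ell^1} \le 1/c_k$ and the three perturbative terms summing to operator norm $o_k(1)$ uniformly in $L$, a Neumann series inverts $J_L$ and yields $||J_L^{-1}||_{\ell^1 \to \ell^1} \le C_k'$. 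The implicit function theorem on the tangent space then gives \eqref{eq:lem:1stmo:Lipschitz:hdot:qdot} with Lipschitz constant $C_k'$ and some radius $\eps_L > 0$. The main obstacle will be the uniform lower bound on the ratio $\dot{h}^\star_L(\dot{\sigma})/\dot{q}^\star_L(\dot{\sigma})$ for $\dot{\sigma} \in \{\ff\}$ corresponding to large free trees, where one must carefully unpack the recursive BP identities so that the size-dependent factors in $\dot{q}^\star_L(\dot{\sigma})$ (coming from the tree recursion) and those in $A_{\dot{q}^\star_L}(\dot{\sigma})$ (coming from summing over the remaining $k-1$ slots) cancel.
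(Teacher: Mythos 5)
Your linearization framework is the right skeleton---the paper's actual proof is, in effect, a nonlinear implementation of inverting the diagonal part of the Jacobian---but two of your quantitative claims are false, and without them the argument does not close. First, $\|J_L^{\mathrm{wt}}\|_{\ell^1\to\ell^1}$ is not small. Perturbing an internal factor $\dot{q}(\dsigma_i)$ in a direction $\dot{\tau}\in\{\bb\}$ changes the dominant (all-$\bb$) contribution to $A_{\dot{q}}(\dsigma)$ for \emph{every} $\dsigma$; summing over the $k-2$ internal slots and dividing by $Z_{\dot{q}^\star_L}\asymp 2^{-k}$, the $\bb$-columns of $J_L^{\mathrm{wt}}$ have $\ell^1$ mass $\Theta(k)$, not $O_k(2^{-k/2})$, and this survives restriction to the tangent space (test against $v=e_{\bb_0}-e_{\rr_0}$: the $\rr_0$-column is smaller by a factor $2^{-k}$ because a red internal spin forces the clause). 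The smallness you need only emerges after combining $J_L^{\mathrm{wt}}$ with $J_L^{\mathrm{nrm}}$: to leading order the all-$\bb$ part of $D_{\dot{q}}A_{\dot{q}}(\dsigma)[v]$ is proportional to $A_{\dot{q}}(\dsigma)$ with a $\dsigma$-independent constant, and is therefore cancelled exactly by the normalizer derivative. Identifying this cancellation of the all-$\bb$ configurations---rather than bounding the pieces separately, and in particular not treating $J_L^{\mathrm{nrm}}$ as an independently negligible rank-one term---is the core of the argument, and your proposal is missing it.

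Second, even granting a small off-diagonal part $P:=J_L^{\mathrm{wt}}+J_L^{\mathrm{bp}}+J_L^{\mathrm{nrm}}$, the Neumann condition is $\|(J_L^{\mathrm{fac}})^{-1}P\|<1$, and you cannot get this from $\|(J_L^{\mathrm{fac}})^{-1}\|\cdot\|P\|$. The diagonal entries $\dot{h}^\star_L(\dsigma)/\dot{q}^\star_L(\dsigma)$ are as small as $\Theta(2^{-k})$: for $\dsigma\in\{\rr\}$ one has $\dot{h}^\star_L(\rr)\lesssim 2^{-k}$ (forcing edges are rare) while $\dot{q}^\star_L(\rr)\geq\tfrac12$ by Lemma \ref{lem:1stmo:BPfixedpoint:estimates}, so $\|(J_L^{\mathrm{fac}})^{-1}\|_{\ell^1\to\ell^1}\gtrsim 2^k$ and the product with a perturbation of size even $k^2 2^{-k}$ is $\Theta(k^2)$. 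One must bound $(J_L^{\mathrm{fac}})^{-1}P$ directly, coordinate by coordinate, in a norm adapted to the disparate scales of $\dot{q}^\star_L$; the paper does this with the weighted norm $\|a\|_{\ff}=\sum_{\dsigma\in\{\rr,\bb\}}|a(\dsigma)|+2^k\sum_{\dsigma\in\{\ff\}}|a(\dsigma)|$ together with a case analysis over the number of red and free spins among the remaining $k-1$ slots, obtaining a contraction factor $Ck^22^{-k}$ and a final Lipschitz constant $C_k\asymp 2^{2k}$. Your plan for the uniform-in-$L$ lower bound on the diagonal ratio (including large free trees) is fine---restricting the sum in \eqref{eq:1stmo:hdot:intermsof:qdot} to the all-$\bb$ configuration in the other slots already gives $\dot{h}^\star_L(\dsigma)\gtrsim 2^{-k}\dot{q}^\star_L(\dsigma)$ with no size-dependent loss---but a lower bound of order $2^{-k}$ is precisely what makes the naive Neumann scheme fail.
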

	\begin{proof}
	Throughout the proof, we denote by $C>0$ a universal constant. Lemma \ref{lem:1stmo:BPfixedpoint:estimates} in Appendix \ref{sec:appendix:Properties of BP fixed point} shows that $\dot{q}^\star_{L}(\rr)=\frac{1}{2}+O(\frac{1}{2^k}), \dot{q}^\star_{L}(\bb)=\frac{1}{2}-O(\frac{1}{2^k})$ and $\dot{q}^\star_{L}(\ff)=O(\frac{1}{2^k})$. Note that $\dot{h}\to\dot{q}_L[\dot{h}]$ is continuous(cf. \cite[Appendix C]{ssz22}), so we take $\eps_L>0$ small enough so that the following holds for all $||\dot{h}-\dot{h}^\star_L||_1<\eps_L$:
	\begin{itemize}
	    \item $\dot{q}_L[\dot{h}]^{\textnormal{av}}\in \Gamma\equiv \Gamma_{C}$, where $\Gamma_{C}$ is defined in \eqref{eq:BPcontraction:1stmo}. Here, $\dot{q}^{\textnormal{av}}\in \PPP(\dot{\Omega}_L)$ is defined by $\dot{q}^{\textnormal{av}}(\dot{\sigma})\equiv \frac{\dot{q}(\dot{\sigma})+\dot{q}(\dot{\sigma}\oplus 1)}{2}, \dot{\sigma} \in \dot{\Omega}_L$. Hence, by Proposition \ref{prop:BPcontraction:1stmo}, $||\textnormal{BP} \dot{q}_L[\dot{h}]-\dot{q}^\star_{L}||_1 \lesssim \frac{k^2}{2^k}||\dot{q}_L[\dot{h}]-\dot{q}^\star_{L}||_1$.
	    \item $\textnormal{BP} \dot{q}_L[\dot{h}](\rr), \dot{q}_L[\dot{h}](\rr)\in [\frac{1}{2},\frac{1}{2}+\frac{C}{2^k}]$ and $\textnormal{BP} \dot{q}_L[\dot{h}](\bb), \dot{q}_L[\dot{h}](\bb)\in [\frac{1}{2}-\frac{C}{2^k}, \frac{1}{2}]$.
	    \item $\textnormal{BP} \dot{q}_L[\dot{h}](\ff), \dot{q}_L[\dot{h}](\ff)\leq \frac{C}{2^k}$.
	\end{itemize}
	For $\dot{h} \in \PPP(\dot{\Omega}_L)$, define $\dot{q}^\circ[\dot{h}]\equiv \dot{q}^\circ_{L}[\dot{h}]\in \PPP(\dot{\Omega}_L)$ by 
	\begin{equation}\label{eq:def:qdot:Z:circ}
	    \dot{q}^\circ_{L}[\dot{h}](\dot{\sigma}) \equiv \frac{1}{Z^{\circ}_{\dot{h}}}\frac{\dot{h}(\dot{\sigma})}{\dot{h}^\star_L(\dot{\sigma})}\dot{q}^\star_L(\dot{\sigma}),\dot{\sigma}\in \dot{\Omega}_L,\textnormal{  where  } Z_{\dot{h}}^\circ \equiv \sum_{\dot{\sigma}\in \dot{\Omega}_L}\frac{\dot{h}(\dot{\sigma})}{\dot{h}^\star_L(\dot{\sigma})}\dot{q}^\star_L(\dot{\sigma})
	\end{equation}
	For a signed measure $a$ on $\dot{\Omega}_L$, define the norm $||a||_{\ff}\equiv \sum_{\dot{\sigma} \in\{\rr,\bb\}}|a(\dsigma)|+2^{k}\sum_{\dsigma \in \{\ff\}}|a(\dsigma)|$. Then, we claim the two inequalities stated below. For $\eps_L>0$ small enough and $||\dot{h}-\dot{h}^\star_L||_1<\eps_L,\dot{h}\in \PPP(\dot{\Omega}_L)$.
	\begin{align}
	    ||\dot{q}^\circ_L[\dot{h}]-\dot{q}^\star_L||_{\ff} &\leq C2^{2k} ||\dot{h}-\dot{h}^\star_L||_1, \label{eq:goal-1:lem:1stmo:Lipschitz:hdot:qdot}\\
	    ||\dot{q}^\circ_L[\dot{h}]-\dot{q}_L[\dot{h}]||_{\ff}&\leq C\frac{k^2}{2^k}||\dot{q}_L[\dot{h}]-\dot{q}^\star_L||_{\ff},\label{eq:goal-2:lem:1stmo:Lipschitz:hdot:qdot}
	\end{align}
	The two inequalities above imply \eqref{eq:lem:1stmo:Lipschitz:hdot:qdot} by the following: for $||\dot{h}-\dot{h}^\star_L||_1<\eps_L,\dot{h}\in \PPP(\dot{\Omega}_L)$,
	\begin{equation*}
	    \left(1-C\frac{k^2}{2^k}\right)||\dot{q}_L[\dot{h}]-\dot{q}^\star_L||_{\ff}\leq ||\dot{q}_L[\dot{h}]-\dot{q}^\star_L||_{\ff}-||\dot{q}^\circ_L[\dot{h}]-\dot{q}_L[\dot{h}]||_{\ff}\leq  ||\dot{q}^\circ_L[\dot{h}]-\dot{q}^\star_L||_{\ff}\leq  C2^{2k}||\dot{h}-\dot{h}^\star_L||_1,
	\end{equation*}
	so that for $k$ large enough, $||\dot{q}_L[\dot{h}]-\dot{q}^\star_L||_{1}\leq ||\dot{q}_L[\dot{h}]-\dot{q}^\star_L||_{\ff}\lesssim 2^{2k}||\dot{h}-\dot{h}^\star_L||_1$.
	
	Hence, it suffices to prove \eqref{eq:goal-1:lem:1stmo:Lipschitz:hdot:qdot} and \eqref{eq:goal-2:lem:1stmo:Lipschitz:hdot:qdot} for $||\dot{h}-\dot{h}^\star_L||_1<\eps_L,\dot{h}\in \PPP(\dot{\Omega}_L)$. The proof of \eqref{eq:goal-1:lem:1stmo:Lipschitz:hdot:qdot} is easier: dropping the subscript $L$ for simplicity, we have
	\begin{equation}\label{eq:goal-1:Lipschitz:1ststep}
	\begin{split}
	    ||\dot{q}^\circ[\dot{h}]-\dot{q}^\star||_{\ff}&=\sum_{\dsigma}\frac{2^{k\one\left\{\dsigma \in \{\ff\}\right\}}}{Z^\circ_{\dot{h}}}\frac{\dot{q}^\star(\dsigma)}{\dot{h}^\star(\dsigma)}\Big|Z^\circ_{\dot{h}}\dot{h}^\star(\dsigma)-\dot{h}(\dsigma)\Big|
	    \\&\leq \sum_{\dsigma}\frac{2^{k\one\left\{\dsigma \in \{\ff\}\right\}}}{Z^\circ_{\dot{h}}}\frac{\dot{q}^\star(\dsigma)}{\dot{h}^\star(\dsigma)}\Big|\dot{h}^\star(\dsigma)-\dot{h}(\dsigma)\Big|+\frac{\big|Z^\circ_{\dot{h}}-1\big|}{Z^\circ_{\dot{h}}}\sum_{\dsigma}\dot{q}^\star(\dsigma)2^{k\one\left\{\dsigma \in \{\ff\}\right\}}. 
	\end{split}
	\end{equation}
	We first upper bound $\frac{\dot{q}^\star(\dsigma)}{\dot{h}^\star(\dsigma)}$ in the \textsc{rhs} of the equation above: recall \eqref{eq:1stmo:hdot:intermsof:qdot} and take $\sig \in (\dot{\sigma}, \bb^{k-1})$ for $\dsigma \in \{\rr,\bb\}$ and $\sig \in (\dot{\sigma}\fs, \bb^{k-1})$ for $\dsigma \in \{\ff\}$ in the sum of \eqref{eq:1stmo:hdot:intermsof:qdot} to lower bound $\dot{h}^\star(\dsigma)$ by
	\begin{equation}\label{eq:lowerbound:hdot:star}
	    \dot{h}^\star(\dsigma)\geq \frac{\dot{q}^\star(\dsigma)}{Z^\prime_{\dot{q}^\star}}\frac{1}{2^k}\dot{q}^\star(\bb)^{k-1} \gtrsim\frac{\dot{q}^\star(\dsigma)}{ Z^\prime_{\dot{q}^\star}}\frac{1}{2^{2k}},
	\end{equation}
	where the last inequality is because $\dot{q}^\star(\bb)=\frac{1}{2}-O(\frac{1}{2^k})$. Also, because $\dot{q}^\star(\rr)=\frac{1}{2}+O(\frac{1}{2^k}), \dot{q}^\star(\ff) =O(\frac{1}{2^k})$, it is not hard to see that $Z^\prime_{\dot{q}^\star}=\sum_{\sig\in \Omega_L^k}\hat{\Phi}(\sig)^\la \prod_{i=1}^{k}\dot{q}^\star(\dsigma_i) \asymp \frac{1}{2^k}$, where the main contribution comes from $\sig \in \bb^{k}$. Hence, \eqref{eq:lowerbound:hdot:star} shows 
	\begin{equation}\label{eq:upperbound:hdot:qdot:ratio}
	    \sup_{\dsigma \in \dot{\Omega}_L}\frac{\dot{q}^\star(\dsigma)}{\dot{h}^\star(\dsigma)}\lesssim 2^k
	\end{equation}
	Using the equation above, we can also estimate $Z^\circ_{\dot{h}}$, defined in \eqref{eq:def:qdot:Z:circ}, by
	\begin{equation}\label{eq:1stmo:estimate:Z:circ}
	    |Z^\circ_{\dot{h}}-1|=\bigg|\sum_{\dsigma \in \dot{\Omega}_L} \Big(\frac{\dot{h}(\dsigma)}{\dot{h}^\star(\dsigma) }-1\Big)\dot{q}^\star(\dsigma)\bigg|\leq \sup_{\dsigma \in \dot{\Omega}_L}\frac{\dot{q}^\star(\dsigma)}{\dot{h}^\star(\dsigma)} ||\dot{h}-\dot{h}^\star||_1\lesssim 2^k||\dot{h}-\dot{h}^\star||_1,
	\end{equation}
	so taking $\eps_L$ small enough, $Z^\circ_{\dot{h}}\geq \frac{1}{2}$ for $||\dot{h}-\dot{h}^\star||_1<\eps_L$. Therefore, plugging \eqref{eq:upperbound:hdot:qdot:ratio} and \eqref{eq:1stmo:estimate:Z:circ} into the \textsc{rhs} of \eqref{eq:goal-1:Lipschitz:1ststep} shows our first claim \eqref{eq:goal-1:lem:1stmo:Lipschitz:hdot:qdot}.
	
	Turning to the second claim \eqref{eq:goal-2:lem:1stmo:Lipschitz:hdot:qdot}, for $\dot{q} \in \PPP(\dot{\Omega}_L)$, define the positive measure $\mu_{\dot{q}}$ on $\dot{\Omega}_L$ by
	\begin{equation}\label{eq:def:mu:qdot}
	\mu_{\dot{q}}(\dsigma)\equiv \sum_{\sig \in \Omega_L^k, \dsigma_1=\dsigma}\hat{\Phi}(\sig)^\la \prod_{i=2}^{k-1}\dot{q}(\dsigma_i) \textnormal{BP}\dot{q}(\dsigma_k),\quad\textnormal{for}\quad \dsigma \in \dot{\Omega}_L.
	\end{equation}
	Then, \eqref{eq:1stmo:hdot:intermsof:qdot} shows that $\dot{h}_L[\dot{q}](\dsigma)=\frac{\dot{q}(\dsigma)}{Z^\prime_{\dot{q}}}\mu_{\dot{q}}(\dsigma)$ for $\dot{q} \in \PPP(\dot{\Omega}_L)$, so plugging it into \eqref{eq:def:qdot:Z:circ} shows
	\begin{multline}\label{eq:1stmo:Lipschitz:very:long}
	    ||\dot{q}^\circ[\dot{h}]-\dot{q}[\dot{h}]||_{\ff}
	    =\sum_{\dsigma\in \dot{\Omega}_L}\frac{2^{k\one\left\{\dsigma \in \{\ff\}\right\}}\dot{q}[\dot{h}](\dsigma)}{Z^\circ_{\dot{h}}}\bigg|\sum_{\dot{\tau}\in \dot{\Omega}_L}\dot{q}[\dot{h}](\dot{\tau})\frac{\mu_{\dot{q}[\dot{h}]}(\dot{\tau})}{\mu_{\dot{q}^\star}(\dot{\tau})}-\frac{\mu_{\dot{q}[\dot{h}]}(\dsigma)}{\mu_{\dot{q}^\star}(\dsigma)}\bigg|\\
	    \leq \sum_{\dsigma\in \dot{\Omega}_L}\sum_{\dot{\tau}\in \dot{\Omega}_L}\frac{2^{k\one\left\{\dsigma \in \{\ff\}\right\}}\dot{q}[\dot{h}](\dsigma)\dot{q}[\dot{h}](\dot{\tau})}{Z^\circ_{\dot{h}}}\bigg|\frac{\mu_{\dot{q}[\dot{h}]}(\dot{\tau})}{\mu_{\dot{q}^\star}(\dot{\tau})}-\frac{\mu_{\dot{q}[\dot{h}]}(\dsigma)}{\mu_{\dot{q}^\star}(\dsigma)}\bigg|\lesssim \sup_{\dsigma\in \dot{\Omega}_L, \dot{\tau}\in \dot{\Omega}_L}\bigg|\frac{\mu_{\dot{q}[\dot{h}]}(\dot{\tau})}{\mu_{\dot{q}^\star}(\dot{\tau})}-\frac{\mu_{\dot{q}[\dot{h}]}(\dsigma)}{\mu_{\dot{q}^\star}(\dsigma)}\bigg|,
	\end{multline}
	where the first inequality is due to the triangle inequality, and the second inequality is due to \eqref{eq:1stmo:estimate:Z:circ} and the bound $\dot{q}[\dot{h}](\ff)=O(\frac{1}{2^k})$. We now claim that for $||\dot{h}-\dot{h}^\star_L||_1<\eps_L,\dot{h}\in \PPP(\dot{\Omega}_L)$,
	\begin{equation}\label{eq:finalgoal:lem:1stmo:Lipschitz:hdot:qdot}
	    \sup_{\dsigma\in \dot{\Omega}_L, \dot{\tau}\in \dot{\Omega}_L}\bigg|\frac{\mu_{\dot{q}[\dot{h}]}(\dot{\tau})}{\mu_{\dot{q}^\star}(\dot{\tau})}-\frac{\mu_{\dot{q}[\dot{h}]}(\dsigma)}{\mu_{\dot{q}^\star}(\dsigma)}\bigg| \leq C\frac{k^2}{2^k}||\dot{q}[\dot{h}]-\dot{q}^\star||_{\ff}
	\end{equation}
	It is clear from \eqref{eq:1stmo:Lipschitz:very:long} that \eqref{eq:finalgoal:lem:1stmo:Lipschitz:hdot:qdot} implies our second claim \eqref{eq:goal-2:lem:1stmo:Lipschitz:hdot:qdot}. Thus the rest of the proof is devoted to proving \eqref{eq:finalgoal:lem:1stmo:Lipschitz:hdot:qdot}. Henceforth, we denote $\dot{q}=\dot{q}_L[\dot{h}]$ for simplicity. Note that $\dot{q}$ satisfy $||\textnormal{BP}\dot{q}-\dot{q}^\star||_1\lesssim \frac{k^2}{2^k}||\dot{q}-\dot{q}^\star||_1$, $\textnormal{BP}\dot{q}(\rr),\dot{q}(\rr)=\frac{1}{2}+O(\frac{1}{2^k}), \textnormal{BP}\dot{q}(\bb), \dot{q}(\bb)=\frac{1}{2}-O(\frac{1}{2^k})$, and $\textnormal{BP}\dot{q}(\ff),\dot{q}(\ff)=O(\frac{1}{2^k})$.
	
	First, observe that it suffices to prove \eqref{eq:finalgoal:lem:1stmo:Lipschitz:hdot:qdot} for $\dot{\tau}=\rr_0$, by a triangle inequality. Also, since $\mu_{\dot{q}}(\dsigma)=\mu_{\dot{q}}(\dsigma\oplus 1)$, we may assume $\dsigma \in \{\bb,\ff\}$. Next, lower bounding $\mu_{\dot{q}^\star}(\dsigma)$ in the similar fashion as in \eqref{eq:lowerbound:hdot:star}, i.e. taking $(\sigma_2,...,\sigma_k)\in \bb^{k-1}$ in the sum of \eqref{eq:def:mu:qdot}, shows $\mu_{\dot{q}^\star}(\dsigma)\gtrsim \frac{1}{2^k}$ for $\dsigma \in \{\bb,\ff\}$, so
	\begin{equation}\label{eq:Lipschitz:finalgoal:firststep}
	    \bigg|\frac{\mu_{\dot{q}}(\dot{\rr_0})}{\mu_{\dot{q}^\star}(\rr_0)}-\frac{\mu_{\dot{q}}(\dsigma)}{\mu_{\dot{q}^\star}(\dsigma)}\bigg|= \bigg|\frac{\dot{q}(\bb)^{k-2}\textnormal{BP}\dot{q}(\bb)}{\dot{q}^\star(\bb)^{k-1}}-\frac{\mu_{\dot{q}}(\dsigma)}{\mu_{\dot{q}^\star}(\dsigma)}\bigg|\lesssim 2^{2k}\bigg|\dot{q}(\bb)^{k-2}\textnormal{BP}\dot{q}(\bb)\mu_{\dot{q}}(\dsigma)-\dot{q}^\star(\bb)^{k-1}\mu_{\dot{q}}(\dsigma)\bigg|.
	\end{equation}
	We now aim to show $|\dot{q}(\bb)^{k-2}\textnormal{BP}\dot{q}(\bb)\mu_{\dot{q}}(\dsigma)-\dot{q}^\star(\bb)^{k-1}\mu_{\dot{q}}(\dsigma)| \lesssim \frac{k^2}{2^{3k}}||\dot{q}-\dot{q}^\star||_{\ff}$. Note the following:
	\begin{equation}\label{eq:1stmo:Lipschitz:f:expression}
	\begin{split}
	    &\bigg|\dot{q}(\bb)^{k-2}\textnormal{BP}\dot{q}(\bb)\mu_{\dot{q}}(\dsigma)-\dot{q}^\star(\bb)^{k-1}\mu_{\dot{q}}(\dsigma)\bigg|= \bigg|\sum_{\sig\in \Omega_L^k,\dot{\sigma}_1=\dot{\sigma}}G(\sig)\bigg|,\quad\textnormal{where}\\
	    &G(\sig) \equiv \hat{\Phi}(\sig)^\la\Big(\prod_{i=2}^{k-1}\dot{q}(\dsigma_i)\textnormal{BP}\dot{q}(\dsigma_k)\dot{q}^\star(\bb)^{k-1}-\prod_{i=2}^{k}\dot{q}^\star(\dsigma_i)\dot{q}(\bb)^{k-2}\textnormal{BP}\dot{q}(\bb)\Big)
	\end{split}
	\end{equation}
	The crucial observation is that writing $\sig =(\sigma_1, \sig^{-1})$, the contribution of $\sig^{-1}\in \bb^{k-1}$ to the sum in \eqref{eq:1stmo:Lipschitz:f:expression} is zero, i.e. $\sum_{\dsigma_1=\dsigma, \sig^{-1}\in \bb^{k-1}}G(\sig)=0$. To this end, we deal with the case when $\sig^{-1}\notin \bb^{k-1}$ and divide the sum in \eqref{eq:1stmo:Lipschitz:f:expression} into the following $4$ cases. Let $D_i\equiv D_i(\dsigma),i=1,2,3,4$ be defined by
	\begin{equation*}
	\begin{split}
	    D_1 &\equiv \{\sig \in \Omega_L^k:\dsigma_1=\dsigma\textnormal{ and }\sig^{-1}\in \textnormal{per}(\sigma \bb^{k-2})\textnormal{ for some }\sigma\textnormal{ with }\hat{\sigma}=\fs\}\\
	    D_2 &\equiv \{\sig \in \Omega_L^k:\dsigma_1=\dsigma\textnormal{ and }\exists 2\leq i \leq k, \sigma_i \in \{\rr\}\}\\
	    D_3 &\equiv \{\sig \in \Omega_L^k:\dsigma_1=\dsigma, \exists 2\leq i<j\leq k, \dot{\sigma}_i,\dot{\sigma}_j \in \{\ff\}, \textnormal{ and }\sig\textnormal{ is separating.}\}\\
	    D_4 &\equiv \{\sig \in \Omega_L^k:\dsigma_1=\dsigma\textnormal{ and }\sig\textnormal{ is non-separating.}\}
	\end{split}
	\end{equation*}
	Let $f_i(\dsigma) \equiv \Big|\sum_{\sig \in D_i}G(\sig)\Big|, 1\leq i \leq 4$. Then, the triangle inequality shows
	\begin{equation}\label{eq:Lipschitz:finalgoal:secondstep}
	    \Big|\dot{q}(\bb)^{k-2}\textnormal{BP}\dot{q}(\bb)\mu_{\dot{q}}(\dsigma)-\dot{q}^\star(\bb)^{k-1}\mu_{\dot{q}}(\dsigma)\Big| \leq f_1(\dsigma)+f_2(\dsigma)+f_3(\dsigma)+f_4(\dsigma).
	\end{equation}
	To this end, for $\dsigma \in \{\bb,\ff\}$, we show $f_i(\dsigma) \lesssim \frac{k^2}{2^{3k}}||\dot{q}-\dot{q}^\star||_{\ff}$ separately for $1\leq i \leq 4$. First, using the bound $\hat{\Phi}(\sig)^\la \leq 1$ and a triangle inequality, it is straightforward to bound
	\begin{equation}\label{eq:bound:f:1:first}
	    f_1(\dsigma)\leq (k-2) \dot{q}(\bb)^{k-3}\dot{q}^\star(\bb)^{k-2}\textnormal{BP}\dot{q}(\bb)\Big|\dot{q}(\ff)\dot{q}^\star(\bb)-\dot{q}^\star(\ff)\dot{q}(\bb)\Big|
	    +\dot{q}(\bb)^{k-2}\dot{q}^\star(\bb)^{k-2}\Big|\textnormal{BP}\dot{q}(\ff)\dot{q}^\star(\bb)-\dot{q}^\star(\ff)\textnormal{BP}\dot{q}(\bb)\Big|,
	\end{equation}
	Using the elementary fact $|ab-a^\prime b^\prime|\leq |a-a^\prime|b^\prime+a^\prime|b-b^\prime|$ and the bound $\dot{q}(\bb), \dot{q}^\star(\bb)= \frac{1}{2}-O(\frac{1}{2^k})$, we can further bound the \textsc{rhs} of the equation above by
	\begin{equation}\label{eq:bound:f:1:final}
	    f_1(\dsigma)\lesssim \frac{k}{2^{2k}}|\dot{q}(\ff)-\dot{q}^\star(\ff)|+\frac{k}{2^{2k}}\dot{q}^{\star}(\ff)|\dot{q}(\bb)-\dot{q}^\star(\bb)|+\frac{1}{2^{2k}}||\textnormal{BP}\dot{q}-\dot{q}^\star||_1\lesssim \frac{k^2}{2^{3k}}||\dot{q}-\dot{q}^\star||_{\ff},
	\end{equation}
	where the final inequality is due to $||\textnormal{BP}\dot{q}-\dot{q}^\star||_1\lesssim \frac{k^2}{2^{k}}||\dot{q}-\dot{q}^\star||_1$, $\dot{q}^\star(\ff)=O(\frac{1}{2^k})$ and the fact that we have weighted $\ff$ spins by $2^k$ in the definition of $||\cdot||_{\ff}$.
	
	Second, we bound $f_2(\dsigma)$. Note that $\hat{\Phi}(\sig)^{\la}=2^{-k+1}$ when $\sig$ is valid and has a $\rr$ spin. Proceeding in a similar fashion as in \eqref{eq:bound:f:1:first} and \eqref{eq:bound:f:1:final}, we can bound
	\begin{multline}
	    f_2(\dsigma)\leq \frac{k-2}{2^{k-1}} \dot{q}(\bb)^{k-3}\dot{q}^\star(\bb)^{k-2}\textnormal{BP}\dot{q}(\bb)\Big|\dot{q}(\rr)\dot{q}^\star(\bb)-\dot{q}^\star(\rr)\dot{q}(\bb)\Big|\\
	    +\frac{1}{2^{k-1}}\dot{q}(\bb)^{k-2}\dot{q}^\star(\bb)^{k-2}\Big|\textnormal{BP}\dot{q}(\rr)\dot{q}^\star(\bb)-\dot{q}^\star(\rr)\textnormal{BP}\dot{q}(\bb)\Big|\lesssim \frac{k}{2^{3k}}||\dot{q}-\dot{q}^\star||_{\ff}.
	\end{multline}
	
	To bound $f_3(\dsigma)$ and $f_4(\dsigma)$, the following elementary inequality will be useful: given finite sets $\XXX_{1},...,\XXX_{\ell}$ and positive measures $\mu_{i},\nu_{i}$ on $\XXX_{i}, 1\leq i \leq \ell$, the triangle inequality shows
	\begin{equation}\label{eq:Lipschitz:useful:ineq}
	\begin{split}
	    \sum_{\ux\in \prod_{i=1}^{\ell}\XXX_i} \bigg|\prod_{i=1}^{\ell}\mu_i(x_i)-\prod_{i=1}^{\ell}\nu_i(x_i) \bigg|
	    &\leq \sum_{\ux\in \prod_{i=1}^{\ell}\XXX_i} \sum_{i=1}^{\ell}\bigg(\prod_{j<i}\nu_j(x_j)\prod_{j>i}\mu_j(x_j)\bigg)\bigg|\mu_i(x_i)-\nu_i(x_i)\bigg|\\
	    &=\sum_{i=1}^{\ell}\bigg(\prod_{j<i}||\nu_j||_1\prod_{j>i}||\mu_j||_1\bigg)||\mu_i-\nu_i||_1.
	\end{split}
	\end{equation}
	Note that for separating $\sig=(\sigma_1,...,\sigma_k)$, either $\sigma_i\in \{\bb\}$ or $\sigma_i=(\dot{\sigma},\fs)$ with $\dsigma \in \{\ff\}$. Thus, to bound $f_3(\dsigma)$, we can split the sum $\sum_{\sig \in D_3}$ by the location of free spins and use \eqref{eq:Lipschitz:useful:ineq} with $\ell=2k-2$. Recalling $\hat{\Phi}(\sig)^\la \leq 1$, $\dot{q}(\ff),\textnormal{BP}\dot{q}(\ff), \dot{q}^{\star}(\ff)\leq \frac{C}{2^k}$ and $\dot{q}(\bb),\textnormal{BP}\dot{q}(\bb), \dot{q}^{\star}(\bb)\leq \frac{1}{2}$, we can bound
	\begin{equation}\label{eq:bound:f:3}
	\begin{split}
	   f_3(\dsigma) &\leq \sum_{i=2}^{k-2} \binom{k-1}{i}\left(i\left(\frac{C}{2^k}\right)^{i-1}\frac{1}{2^{2k-i-2}}+(2k-2-i)\left(\frac{C}{2^k}\right)^{i}\frac{1}{2^{2k-i-3}}\right) ||\dot{q}-\dot{q}^\star||_1\vee||\textnormal{BP}\dot{q}-\dot{q}^\star||_1\\
	   &\lesssim \frac{k^2}{2^{3k}}||\dot{q}-\dot{q}^\star||_1\leq\frac{k^2}{2^{3k}}||\dot{q}-\dot{q}^\star||_{\ff} 
	\end{split}
	\end{equation}
	For non-separating $\sig=(\sigma_1,...,\sigma_k)$, suppose there are $i$ free spins among $\sigma_1,...,\sigma_k$ and the rest are in $\{\bb\}$. Then, by Lemma \ref{lem:decompose:Phi:hat},
	\begin{equation}\label{eq:bound:f:4:ingredient-1}
	    \hat{\Phi}(\sig)^\la = \hat{\Phi}^{\textnormal{m}}(\sig)^{\la} \hat{v}(\sig)\leq 2^{\la}\hat{v}(\sig)\leq 2^{\la}\frac{2}{2^{k-i}},
	\end{equation}
	where the last inequality is because there are $2$ choices for the literals on the edges colored $\bb$ since $\sig$ is non-separating. Also, note that for $\dsigma_1,...,\dsigma_k \in \dot{\Omega}_L$,
	\begin{equation}\label{eq:bound:f:4:ingredient-2}
	    \big|\{\utau \in \Omega_L^k:\hat{\Phi}(\utau)\neq 0\textnormal{ and } \dot{\tau}_j=\dsigma_j, 1\leq j \leq k\}\big|\leq 2^{i+1},
	\end{equation}
	since the literals uniquely define $\utau$ if $\dot{\tau}_j$'s are determined and there are $2^{i+1}$ number of choices of literals. With \eqref{eq:bound:f:4:ingredient-1} and \eqref{eq:bound:f:4:ingredient-2} in hand, we can bound $f_4(\dsigma)$ in the similar fashion as in \eqref{eq:bound:f:3}: separating clauses have at least $2$ free spins, so we can bound
	\begin{equation}\label{eq:bound:f:4}
	\begin{split}
	   f_4(\dsigma) &\lesssim \sum_{i=1}^{k-1}\frac{2^{2i}}{2^k} \binom{k-1}{i}\left(i\left(\frac{C}{2^k}\right)^{i-1}\frac{1}{2^{2k-i-2}}+(2k-2-i)\left(\frac{C}{2^k}\right)^{i}\frac{1}{2^{2k-i-3}}\right) ||\dot{q}-\dot{q}^\star||_1\\
	   &\lesssim \frac{k}{2^{3k}}||\dot{q}-\dot{q}^\star||_1\leq\frac{k^2}{2^{3k}}||\dot{q}-\dot{q}^\star||_{\ff} .
	\end{split}
	\end{equation}
	Therefore, $f_i(\dsigma)\lesssim \frac{k^2}{2^{3k}} ||\dot{q}-\dot{q}^\star||_{\ff}$ for $i=1,2,3,4$ holds and together with \eqref{eq:Lipschitz:finalgoal:firststep} and \eqref{eq:Lipschitz:finalgoal:secondstep}, this concludes the proof of our final goal \eqref{eq:finalgoal:lem:1stmo:Lipschitz:hdot:qdot}.
	\end{proof}

	\subsection{Continuity in the pair model}
	\label{subsec:app:conti:2ndmo}
		In this subsection, we derive the analogs of the results in the previous subsection corresponding to the pair model. It is obvious that Lemmas \ref{lem:H:dot:to:s:lipschitz}, 
	\ref{lem:nu:q:continuous:1stmo} and \ref{lem:1stmo:BPfixedpoint:estimates} hold the same for the pair model without any modification. 
	
	The counterpart of Lemma \ref{lem:hdot:exptail:q:b:lowerbound} can be derived by analogous approach as well, but we give the precise statement and briefly discuss the necessary adjustments for its proof.
	
	\begin{cor}\label{cor:hdot:exptail:q:b:lowerbd}
		Let $\dot{\bh} \in \PPP(\dot{\Omega}_L^2)$ satisfy $\dot{\bh}(\{ \dot{\bsigma}: \dot{\sigma}^1 \textnormal{ or } \dot{\sigma}^2 \in \{\rr, \ff \} \}) \le C2^{-k}$, and write $\dot{\bq}=\dot{\bq}_L[\dot{\bh}]$. Then, there exists a constant $C_k>0$ depending only on $k$, such that
		\begin{equation*}
		\dot{\bq}(\bb\bb^=) + \dot{\bq}(\bb\bb^{\neq}) \ge C_k.
		\end{equation*}
	\end{cor}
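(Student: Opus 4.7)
The proof strategy mirrors the single-copy argument of Lemma \ref{lem:hdot:exptail:q:b:lowerbound}, carried out in the pair model. The plan is to establish two crude estimates in sequence: first, that the pair-blue mass $\dot{\bq}(\bb\bb^{=}) + \dot{\bq}(\bb\bb^{\neq})$ dominates the pair-free mass $\dot{\bq}(\ff\ff)$ (and more generally, the total mass on pairs containing a free coordinate), and second, that the pair-red mass is at most polynomially larger in $2^k$ than the pair-blue mass. Combined with the fact that $\dot{\bq}$ is a probability measure and the hypothesis on $\dot{\bh}$, these yield the desired $\Omega_k(1)$ lower bound.

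For the first step, suppose by contradiction that $\dot{\bq}(\bb\bb^{=}) + \dot{\bq}(\bb\bb^{\neq}) < \dot{\bq}(\ff\ff)$, or more generally that the blue mass is dominated by the union-free mass. Write $\bnu \equiv \bnu_{\dot{\bq}} \in \PPP(\Omega_{\DD}^{2})$ for the pair analogue of $\nu_{\dot{q}}$. Using $\dot{\bh} = \dot{\bh}[H^{\tr}[\bnu]]$ together with the hypothesis $\dot{\bh}(\{\dot{\bsigma} : \dot{\sigma}^{1} \in \{\ff\} \text{ or } \dot{\sigma}^{2} \in \{\ff\}\}) \le C 2^{-k}$, one obtains the upper bound
\begin{equation*}
\E_{\bnu}\Big[\sum_{e \in \delta \DD} \one\{\dot{\sigma}^{1}_{e} \in \{\ff\} \text{ or } \dot{\sigma}^{2}_{e} \in \{\ff\}\}\Big] \lesssim k^{2}.
\end{equation*}
On the other hand, by conditioning on whether the root variable $v$ of $\DD$ is pair-frozen or union-free and repeating the case analysis of \eqref{eq:conditional:v:free:long}--\eqref{eq:conditional:v:frozen:final}, one shows that the same expectation is $\gtrsim d$. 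For the pair-free case, the relevant ratio involves sums over pair-colorings of $(k-1)$ edges adjacent to a non-pair-separating clause, where the assumption $\dot{\bq}(\bb\bb) < \dot{\bq}(\ff\ff)$ forces the expected count of free-in-either-copy spins to be at least $d$. For the pair-frozen case, one bounds the number of pair-red edges via Markov's inequality as in \eqref{eq:X:rr:delta:small} and \eqref{eq:X:rr:in:small}, using the hypothesis on $\dot{\bh}$ and replacing the single-copy lower bound $A \ge 2^{k-3} \dot{q}(\bb)^{k-1}$ by a pair analogue $\bA \gtrsim 2^{k-3} (\dot{\bq}(\bb\bb^{=}) + \dot{\bq}(\bb\bb^{\neq}))^{k-1}$, valid because both all-$\bb\bb^{=}$ and all-$\bb\bb^{\neq}$ neighborhoods yield valid pair-separating non-forcing clauses with pair-weight bounded below by an absolute constant. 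Since $d \asymp \alpha k \ge k 2^{k}$ by Remark \ref{rem:k:adjusted}, the two bounds contradict each other for $k$ large.

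For the second step, suppose by contradiction that $\dot{\bq}(\rr\rr^{=}) + \dot{\bq}(\rr\rr^{\neq}) \ge 2^{Ck}(\dot{\bq}(\bb\bb^{=}) + \dot{\bq}(\bb\bb^{\neq}))$ for a large constant $C$. Following \eqref{eq:upperbound:prob:X:rr:delta:close:d}, one estimates $\P_{\bnu}(X_{\rr\rr}^{\delta} = d-1)$ from below by the explicit formula proportional to $\dot{\bq}(\rr\rr)^{d-1} \dot{\bq}(\bb\bb)^{(k-2)d+1}$ and from above by $k \cdot 2^{-k}$ (via the Markov-type bound on pair-red mass coming from the hypothesis on $\dot{\bh}$). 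Splitting the normalization $Z_{\dot{\bq}}$ according to the number of pair-red edges in $\delta \DD$ yields a geometric series in $2^{2k} d \dot{\bq}(\bb\bb)/(k \dot{\bq}(\rr\rr))$, which sums to $O(1)$ provided $C$ is large enough, producing a contradiction. Combining with the first step gives $\dot{\bq}(\bb\bb^{=}) + \dot{\bq}(\bb\bb^{\neq}) \gtrsim_{k} 1$.

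The main obstacle is the combinatorial bookkeeping of the pair-spin space: there are many more valid neighborhood types than in the single-copy case, and ``pair-separating but non-pair-forcing'' is a more delicate condition than its single-copy analogue, since a clause must be separating in both copies simultaneously. In particular, the pair analogue of the explicit computation in \eqref{eq:conditional:v:free:long}, which compares weighted sums of binomial-type expressions to the unweighted total, must be verified by enumerating the pair-colorings $\sig_{\delta a}$ compatible with a given parent color $\hat{\bsigma}_{e_{0}(a)}$. Once these pair-analogue counting bounds are in hand, the logical structure of the single-copy argument transfers verbatim.
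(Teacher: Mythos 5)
Your overall strategy (transplant the two-step crude argument from Lemma \ref{lem:hdot:exptail:q:b:lowerbound}) matches the paper's, and your Step~1 is correct in spirit. But there is a genuine gap in your Step~2 that stems from a feature of the pair model that has no single-copy analogue.

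Your estimate $\P_{\bnu}(X^\delta_{\rr\rr} = d-1) \propto \dot{\bq}(\rr\rr)^{d-1}\dot{\bq}(\bb\bb)^{(k-2)d+1}$ conflates the $=$ and $\neq$ variants. A clause that is pair-forcing with $\rr\rr^{=}$-color on its forced edge must carry $\bb\bb^{=}$ on \emph{all} of its $k-1$ other edges, and likewise for $\rr\rr^{\neq}$ with $\bb\bb^{\neq}$; these two contributions do not sum into powers of $\dot{\bq}(\bb\bb) := \dot{\bq}(\bb\bb^{=}) + \dot{\bq}(\bb\bb^{\neq})$. Consequently your geometric-series contradiction assumes, implicitly, that $\dot{\bq}(\bb\bb^{=}) \asymp \dot{\bq}(\bb\bb^{\neq})$. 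If instead (say) $\dot{\bq}(\rr\rr^{\neq})$ is huge while $\dot{\bq}(\bb\bb^{\neq})$ is extremely small, then the $\rr\rr^{\neq}$-mass is heavily suppressed in $\dot{\bh}$ by the powers of $\dot{\bq}(\bb\bb^{\neq})$, the ``red leaves at $\delta\DD$ are rare'' bound gives no contradiction, and your argument stalls. The paper handles exactly this scenario as a separate case (Part~2, Case~2 in the detailed appendix proof): when $\dot{\bq}(\bb\bb^{\neq})/\dot{\bq}(\bb\bb^{=}) < 1/1000$, it abandons the ``rare-red'' contradiction altogether and instead observes that at every leaf where both $\bb\bb^{=}$ and $\bb\bb^{\neq}$ are valid choices, the former is chosen $\gtrsim 1000$ times more often, forcing $\dot{\bh}(\bb\bb^{=}) - \dot{\bh}(\bb\bb^{\neq}) \gtrsim 1$, which is a separate contradiction requiring a different estimate on the frequency of clause-types where only one of $\bb\bb^{=}/\bb\bb^{\neq}$ is valid. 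This balancing argument between $\bb\bb^{=}$ and $\bb\bb^{\neq}$ has no counterpart in the single copy and cannot be recovered by ``transferring the argument verbatim.''

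A second, smaller gap: your Step~2 hypothesis and geometric series only control $\dot{\bq}(\rr\rr^{=}) + \dot{\bq}(\rr\rr^{\neq})$, but the spins $\rr\bb^{=}, \rr\bb^{\neq}, \bb\rr^{=}, \bb\rr^{\neq}$ (forcing in exactly one copy) also carry mass that must be bounded relative to $\dot{\bq}(\bb\bb)$ before the probability-measure constraint closes the argument. The paper's Part~2 handles all eight non-$\gggg\gggg$ boundary colors via an enumeration of ten clause-neighborhood types (e.g.~$\rr\bb1$, $\rr\bb2$, $\bb\rr1$, $\bb\rr2$, $\bb\bb2$--$\bb\bb5$), each carrying a distinct lower bound on $\P_{\bnu}$, whereas your outline only tracks $\rr\rr$. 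This is the ``combinatorial bookkeeping'' you flagged as the main obstacle, but you do not actually supply a way to surmount it, and the $\bb\bb^{=}$ vs.~$\bb\bb^{\neq}$ imbalance is where the bookkeeping becomes more than bookkeeping.
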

	
	\begin{proof}
		The proof of Lemma \ref{lem:hdot:exptail:q:b:lowerbound} consisted of two separate parts where we showed $\dot{q}(\bb) \ge \dot{q}(\ff)$ and $\dot{q}(\bb) \ge 2^{-5k} \dot{q}(\rr)$. We take a similar approach, aiming to establish
		\begin{equation*}
\begin{split}
		\dot{\bq}(\bb\bb^=) + \dot{\bq}(\bb\bb^{\neq}) &\ge \dot{\bq}(\bb\ff) + \dot{\bq}(\ff\bb) + \dot{\bq}(\ff\ff);\\
		\dot{\bq}(\bb\bb^=) + \dot{\bq}(\bb\bb^{\neq}) &\ge 2^{-5k}  \dot{\bq} ( \{ \dot{\bsigma}: \dot{\sigma}^1 \textnormal{ or } \dot{\sigma}^2 \in \{\rr \} \}).
\end{split}
		\end{equation*}
		The first inequality can be obtained by studying $\dot{\bh}(\{ \dot{\bsigma}: \dot{\sigma}^1 \textnormal{ or } \dot{\sigma}^2 \in \{\ff \} \})$, in the same way it is done in Lemma \ref{lem:hdot:exptail:q:b:lowerbound}. For the second one, we study the number of half-edges in $\delta \mathcal{D}$ that are forcing in at least one copy, which is a natural counterpart of $X_\rr^\delta$. Further details are omitted due to similarity.
	\end{proof}

\begin{proof}[Details of the proof of Corollary \ref{cor:hdot:exptail:q:b:lowerbd}]

	Define $\gggg := \{\bb,\ff \}$ and $\pppp := \{\rr, \bb \}.$ We use this to simplify the pair-coloring as well; for instance, we write $\gggg\bb := \{\bb\bb^=, \bb\bb^{\neq}, \ff\bb \}$. Also, let $\rrrr := \{0,1 \}$.
		
		Proof is done in two steps:
		\begin{enumerate}
			\item Show $ \max\{ \dot{\bq}(\bb \ff ), \dot{\bq} (\ff\bb), \dot{\bq}(\ff\ff) \} \le \frac{1}{1000}\dot{\bq}(\gggg\gggg)$.
			
			\item Show $(1-\dot{\bq}(\gggg\gggg)) \le 2^{1000k} \dot{\bq}(\gggg\gggg).$
		\end{enumerate}
		
		\vspace{3mm}
		
		{\bf Part 1.} ~ We deal with $\dot{\bq}(\bb\ff)$, $\dot{\bq}(\ff\bb)$ and $\dot{\bq}(\ff\ff)$ separately. Dealing with the first two is done analogously, and working with $\dot{\bq}(\ff\ff)$ is simpler. We present the details for $\dot{\bq}(\bb\ff)$ and also some comments regarding what changes are needed for $\dot{\bq}(\ff\ff)$.
		
		Suppose that $\dot{\bq}(\bb\ff) \ge \frac{1}{1000}\dot{\bq}(\gggg\gggg).$ Define $\bb\bb := \{\bb\bb^=, \bb\bb^{\neq} \}.$ This means that 
		\begin{equation}\label{eq:qbfweight}
		\begin{split}
		\dot{\bq}(\gggg\gggg)=\dot{\bq}(\bb\bb) + \dot{\bq}(\ff\bb) + \dot{\bq}(\bb\ff) + \dot{\bq}(\ff\ff) &\ge \frac{1000}{999} (\dot{\bq}(\bb\bb)+ \dot{\bq}(\ff\bb));\\
\dot{\bq}(\bb\gggg)=		\dot{\bq}(\bb\bb) + \dot{\bq}(\bb\ff) &\ge \frac{1000}{999}\dot{\bq}
(\bb\bb).		\end{split}
		\end{equation}
		Using this along with the assumption 
		$\dot{\bh}(\{ \dot{\bsigma}: \dot{\sigma}^1 \textnormal{ or } \dot{\sigma}^2 \in \{\rr\} \}) \le C2^{-k}$, we will deduce contradiction by showing $\dot{\bh}(\{ \dot{\bsigma}: \dot{\sigma}^1 \textnormal{ or } \dot{\sigma}^2 \in \{ \ff \} \}) >> C2^{-k}$.
		
	With a little abuse of notation, we denote by $\bsigma_v$ the pair-frozen model spin at $v$. To use the same approach as the first moment, we look at the cases when $\bsigma_v \in \{\ff\ff\},  \{\rrrr \ff \}, \{\ff\rrrr \}, \{\rrrr\rrrr^=, \rrrr\rrrr^{\neq} \}$. Note that the cases $\rrrr\ff$ and $\ff\rrrr$ should be treated differently. Recall the definition of $\nu=\nu_{\dot{\bq}}$ for the pair model. Also, we write $a_1,\ldots, a_d \sim v$ , $e_i = (va_i)$, $i=1,\ldots, d$, and $\{e_{ij}\}_{j=2}^k$ the half-edges at $\delta \DD$ adjacent to $a_i$.
		
		\vspace{3mm}
		{\bf Part 1: Case 1.} ~ $\bsigma_v = \ff\ff$.
		
		In this case, we always have $\{\bsig_{e_{ij}} \}_j \in (\gggg\gggg)^{k-1}$ for all $i$. Due to \eqref{eq:qbfweight} and from the same argument as before, we get
		\begin{equation}
		\E_\nu [\#\ff \textnormal{ in the 2nd copy at }\delta \DD | \bsigma_v = \ff\ff  ] \gtrsim d,
		\end{equation}
		since on a $\gggg\gggg$ color at $\delta\DD$
  we are likely to see $\bb\ff$ with probability bounded from below by an absolute constant. Full detail can be written using the binomial expansion similarly as \eqref{eq:conditional:v:free:long}.

		\vspace{3mm}
		{\bf Part 1: Case 2.} ~ $\bsigma_v \in \rrrr\ff$.
		
		In this case, $\bsigma_{e_i} \in \{\bb\ff, \rr\ff \}.$ We treat the two cases separately, and also divide $\bb\ff$ into two types as follows:
		\begin{enumerate}
			\item [$\bb\ff1$.]  $\bsigma_{e_i} \in  \bb\ff$ and $(\bsigma_{e_{ij}} ) \in (\gggg\gggg)^{k-1}$. In this case, the clause weight is $\hat{v}_2 (\bsig_{a_i})  \asymp 1$.
			
			\item [$\bb\ff2$.] $\bsigma_{e_i} \in  \bb\ff$ and $(\bsigma_{e_{ij}} ) \in (\rr\gggg, (\bb\gggg)^{k-2})$. Here, and throughout the proof, the notation $(\rr\gggg, (\bb\gggg)^{k-2})$ means the configurations that belong to $(\rr\gggg, \bb\gggg, \ldots, \bb\gggg)$ and its permutations.  In this case, the clause weight is $\hat{v}_2 (\bsig_{a_i})  = 2^{-k+1}$.
			
			\item [$\rr\ff$.] $\bsigma_{e_i} \in \rr\ff$ and $(\bsigma_{e_{ij}} ) \in (\bb\gggg)^{k-1}$.  In this case, the clause weight is $\hat{v}_2 (\bsig_{a_i})  = 2^{-k+1}$.
		\end{enumerate}
		On the edges $e_i$'s, suppose that we condition on the number of each type: $(\bb\ff1, \bb\ff2, \rr\ff) := (\#\{i: \bsigma_{e_i} \in \bb\ff1 \}, \#\{i: \bsigma_{e_i} \in \bb\ff2 \}, \#\{i: \bsigma_{e_i} \in \rr\ff \}) = (x_1, x_2, x_3)$. Because of \eqref{eq:qbfweight}, we have
		\begin{equation}\label{eq:eq111}
		\E_\nu [\#\ff \textnormal{ in the 2nd copy at } \delta \DD | \bsigma_v \in  \rrrr\ff, (x_1, x_2, x_3)] \gtrsim d,
		\end{equation} 
		since for all three types, in $\gggg\gggg$ (resp. $\bb\gggg$), we are likely to see $\gggg\ff$ (resp. $\bb\ff$) with probability bounded below by an absolute constant.
		
		\vspace{3mm}
		
		{\bf Part 1: Case 3.} $\bsigma_v \in \ff\rrrr$.
		
		Although this case should be dealt differently from the previous one, the classification of $e_i$ can be done in the same way:
		\begin{enumerate}
				\item [$\ff\bb1$.]  $\bsigma_{e_i} \in  \ff\bb$ and $(\bsigma_{e_{ij}} ) \in (\gggg\gggg)^{k-1}$. In this case, the clause weight is $\hat{v}_2 (\bsig_{a_i})  \asymp 1$.
				
				\item [$\ff\bb2$.] $\bsigma_{e_i} \in  \ff\bb$ and $(\bsigma_{e_{ij}} ) \in (\gggg\rr, (\gggg\bb)^{k-2})$.  In this case, the clause weight is $\hat{v}_2 (\bsig_{a_i})  = 2^{-k+1}$.
				
				\item [$\ff\rr$.] $\bsigma_{e_i} \in \ff\rr$ and $(\bsigma_{e_{ij}} ) \in (\gggg\bb)^{k-1}$.  In this case, the clause weight is $\hat{v}_2 (\bsig_{a_i})  = 2^{-k+1}$.
		\end{enumerate}
		Suppose that we condition on $(\bb\ff1, \bb\ff2, \rr\ff) = (x_1, x_2, x_3)$. Then, we have 
			\begin{equation}
			\E_\nu [\#\ff \textnormal{ in the 2nd copy at } \delta \DD | \bsigma_v \in \ff\rrrr, (x_1, x_2, x_3)] \gtrsim x_1,
			\end{equation} 
		by the same reason as above. In this case, we cannot gain anything about $x_2$ and $x_3$. However, if we just condition on $\#\ff\bb2 = x_2$ and $\#\ff \rr \ge 1$ (which is necessary to have $\bsigma_v \in \ff\rrrr$), we see that
		\begin{equation}
		\E_\nu [\# \ff\bb1 |  \bsigma_v \in \ff\rrrr, \#\ff\bb2 = x_2, \#\ff\rr \ge 1] \gtrsim d-1-x_2,
		\end{equation}
		since $\dot{\bq}(\gggg\gggg )\ge \dot{\bq}(\gggg\bb)$ and the clause weight of $\ff\bb1$ is bigger than that of $\ff\rr$.
		
		Finally, we know from the assumption that it is rare to have many $\ff\bb2$, due to the bound on $\#\rr$ at $\delta\DD$:
		\begin{equation}\label{eq:part1case3eq1}
		\E_\nu [ \bsigma_v \in \ff\rrrr,  \ \#\ff\bb2 \ge \frac{d}{2} ] \lesssim 2^{-2k}.
		\end{equation}
		
		Combining the three observations, we get
		\begin{equation}
		\E_\nu[\# \ff \textnormal{ in the 2nd copy at }\delta \DD ; \, \bsigma_v \in \ff\rrrr] \gtrsim d \P_\nu(\bsigma_v \in \ff\rrrr) - C2^{-2k},
		\end{equation}
		where the $C$ in the right is from \eqref{eq:part1case3eq1}.
		
	\vspace{3mm}
	
	{\bf Part 1: Case 4.} ~ $\bsigma_v \in \rrrr\rrrr^=$.
	
	The case $\bsigma_v \in \rrrr\rrrr^{\neq}$ can be done in the same way; it is clear from the proof below and hence those details are omitted.
	
	There are 10 types to classify $e_i$'s in this case
	\begin{enumerate}
			\item [$\rr\rr$.]  $\bsigma_{e_i} \in  \rr\rr^=$ and $(\bsigma_{e_{ij}} ) \in (\bb\bb^=)^{k-1}$. In this case, the clause weight is $\hat{v}_2 (\bsig_{a_i})  =2^{-k+1}$.
			
			\item [$\rr\bb1$.]  $\bsigma_{e_i} \in  \rr\bb^=$ and $(\bsigma_{e_{ij}} ) \in (\bb\gggg)^{k-1}$. In this case, the clause weight is $\hat{v}_2 (\bsig_{a_i})  = 2^{-k+1}$.
			
			\item [$\rr\bb2$.]  $\bsigma_{e_i} \in  \rr\bb^=$ and $(\bsigma_{e_{ij}} ) \in (\bb\rr^=, (\bb\bb^{\neq})^{k-2})$. In this case, the clause weight is $\hat{v}_2 (\bsig_{a_i})  = 2^{-k+1}$.
			
			\item [$\bb\rr1$.]  $\bsigma_{e_i} \in  \bb\rr^=$ and $(\bsigma_{e_{ij}} ) \in (\gggg\bb)^{k-1}$. In this case, the clause weight is $\hat{v}_2 (\bsig_{a_i}) = 2^{-k+1}$.
			
			\item [$\bb\rr2$.]  $\bsigma_{e_i} \in  \bb\rr^=$ and $(\bsigma_{e_{ij}} ) \in (\rr\bb^=, (\bb\bb^{\neq})^{k-2})$. In this case, the clause weight is $\hat{v}_2 (\bsig_{a_i})  = 2^{-k+1}$.
			
			\item [$\bb\bb1$.]  $\bsigma_{e_i} \in  \bb\bb^=$ and $(\bsigma_{e_{ij}} ) \in (\gggg\gggg)^{k-1}$. In this case, the clause weight is $\hat{v}_2 (\bsig_{a_i})  \asymp 1$.
			
			\item [$\bb\bb2$.]  $\bsigma_{e_i} \in  \bb\bb^=$ and $(\bsigma_{e_{ij}} ) \in (\rr\rr^=, (\bb\bb^=)^{k-2})$. In this case, the clause weight is $\hat{v}_2 (\bsig_{a_i})  = 2^{-k+1}$.
			
				\item [$\bb\bb3$.]  $\bsigma_{e_i} \in  \bb\bb^=$ and $(\bsigma_{e_{ij}} ) \in (\rr\bb^{\neq}, \bb\rr^{\neq}, (\bb\bb^=)^{k-3})$. In this case, the clause weight is $\hat{v}_2 (\bsig_{a_i})  = 2^{-k+1}$.
				
					\item [$\bb\bb4$.]  $\bsigma_{e_i} \in  \bb\bb^=$ and $(\bsigma_{e_{ij}} ) \in (\rr\bb^{\neq}, (\bb\gggg)^{k-2})$. In this case, the clause weight is $\hat{v}_2 (\bsig_{a_i})  = 2^{-k+1}$.
					
				\item [$\bb\bb5$.]  $\bsigma_{e_i} \in  \bb\bb^=$ and $(\bsigma_{e_{ij}} ) \in (\bb\rr^{\neq},  (\gggg\bb)^{k-2})$. In this case, the clause weight is $\hat{v}_2 (\bsig_{a_i})  = 2^{-k+1}$.
	\end{enumerate}
	Let us call $(\rr\rr, \rr\bb1, \rr\bb2, \bb\rr1, \bb\rr2, \bb\bb1, \ldots, \bb\bb5) = (x_1, \ldots, x_{10})$. Conditioned on $(x_3, x_5, x_7, x_8, x_9, x_{10})$, which are the numbers of the types that have an $\rr$ at $\delta \DD$, we are more likely to have the types $\rr\bb1$ (resp. $\bb\bb1$) than $\rr\rr$  (resp. $\bb\rr1$). Thus, we have
	\begin{equation}\label{eq:eq112}
	\E_\nu [x_2+x_6 | \bsigma_v \in \rrrr\rrrr^=,  (x_3, x_5, x_7, x_8, x_9, x_{10})] \gtrsim d-x_{\rr}-1.
	\end{equation}
	This expression works for the corner cases such as $x_{\rr}:= x_3+x_5+x_7+x_8+x_9+x_{10} = d$, or    $x_{\rr} =d-1$ and $x_3\cdot x_5 = 0$, since the right hand side becomes nonpositive. Also, by assumption, we have
	\begin{equation}\label{qwer}
	\P_\nu (\bsigma_v \in \rrrr\rrrr^=, x_{\rr} \ge \frac{d}{2}) \lesssim 2^{-2k}.
	\end{equation}
	Moreover, similarly as before, \eqref{eq:qbfweight} implies that
	\begin{equation}\label{wer}
	\E_\nu [ \# \ff \textnormal{ in the 2nd copy at }\delta\DD | \bsigma_v \in \rrrr\rrrr^=,  x_2+x_6] \gtrsim x_2+x_6.
	\end{equation}
	Thus, we combine these to obtain that
	\begin{equation}\label{rewq}
	\E_\nu [ \# \ff \textnormal{ in the 2nd copy at }\delta\DD; \ \bsigma_v \in \rrrr\rrrr^=] \gtrsim d\P_\nu(\bsigma_v\in \rrrr\rrrr^=) - C2^{-2k}.
	\end{equation}
	
	\vspace{3mm}
	
	{\bf Part1: Conclusion.} ~ Combining the conclusions of the four cases, we get
	\begin{equation}
	\E_\nu [\# \ff \textnormal{ in the 2nd copy at } \delta\DD] \gtrsim d-C2^{-2k},
	\end{equation}
	which gives
	\begin{equation}
	\dot{\bh}(\sigma^2\in \{\ff \} ) \gtrsim k^{-1}.
	\end{equation}
	Therefore, we deduce a contradiction and hence we must have $\dot{\bq}(\bb\ff) \le \frac{1}{1000} \dot{\bq}(\gggg\gggg).$
	
	\vspace{3mm}
	
	{\bf Part 1: Comments for the case } $\dot{\bq}(\ff\ff) \le \frac{1}{1000} \dot{\bq}(\gggg\gggg)$.
	
	Case 1: $\bsigma_v = \ff\ff$ and Case3: $\bsigma_v \in \ff\rrrr$ can be carried out precisely the same as the above analysis. 
	
	In Case 2: $\bsigma_v \in \rrrr\ff$, \eqref{eq:eq111} is no longer true under the assumption $\dot{\bq}(\ff\ff) \ge \frac{1}{1000}\dot{\bq}(\gggg\gggg).$ However, we can still say 
	\begin{equation}
\E_\nu [ \#\ff \textnormal{ in the 2nd copy at } \delta \DD| \bsigma_v \in \rrrr\ff, (x_1, x_2, x_3)] \gtrsim x_1,
	\end{equation}
	and conditional on $x_2$ we can get a lower bound on the expected $x_1$ analogously to \eqref{eq:eq112}, and then upper bound the probability of having too large $x_2$ by \eqref{qwer}.
	
	For Case 4: $\bsigma_v \in \rrrr\rrrr^=$, instead of \eqref{eq:eq112} we have
	\begin{equation}
	\E_\nu[x_6| \bsigma_v \in \rrrr\rrrr^=, (x_3,x_5,x_7,x_8, x_9, x_{10})] \gtrsim d-x_{\rr}-1,
	\end{equation}
	and instead of \eqref{wer} we have
	\begin{equation}
	\E_\nu [ \# \ff \textnormal{ in the 2nd copy at }\delta\DD | \bsigma_v \in \rrrr\rrrr^=,  x_6] \gtrsim x_6.
	\end{equation}
	The other arguments are the same and hence we get \eqref{rewq}.

	\vspace{3mm}
	
	{\bf Part 2. } ~ So far, we proved that $\dot{\bq}(\bb\bb) \ge \frac{99}{100} \dot{\bq}(\gggg\gggg).$ Suppose that $1- \dot{\bq}(\gggg\gggg) \ge 2^{1000k} \dot{\bq}(\gggg\gggg).$
	
	We combine all the picture from the previous argument. Let $Z$ denote the partition function for $\nu$, i.e.,
	\begin{equation}
	Z= \sum_{\bsig_{\delta \DD}} \bw(\bsig_{\delta \DD})^{\ula} \prod_{e\in \delta \DD} \dot{\bq}(\bsigma_e).
	\end{equation}

 	To begin with, note that 
 	\begin{equation}
 	\P_\nu(\bsigma_{v} = \ff\ff) \le  \frac{2^{\lambda^1+\lambda^2}}{Z} (\dot{\bq}(\gggg\gggg)^{k-1} )^{d},
 	\end{equation}
 	since the  weight satisfies $ \bw(\bsig_{\delta \DD})^{\ula}  \le 2^{\lambda^1+\lambda^2}$, and the leaves $\delta \DD$ cannot have any reds when $\bsigma_v = \ff\ff$.
 	
 	For the case $\bsigma_v \in \rrrr\ff$, recall the analysis from Part 1: Case 2, and note that we must have at least one $\rr\ff$-type edge $e_i$. Thus, we can write 
 	\begin{equation}
 \begin{split}
 	\P_\nu(\bsigma_v \in \rrrr\ff) \ge \frac{1}{Z} \left[ \left\{ \frac{1}{C}\dot{\bq}(\gggg\gggg)^{k-1} + \frac{1}{2^{k-1}} \dot{\bq}(\pppp\gggg) \dot{\bq}(\bb\gggg)^{k-2}  \right\}^{d} - \left\{ \frac{1}{C}\dot{\bq}(\gggg\gggg)^{k-1} + \frac{1}{2^{k-1}}  \dot{\bq}(\bb\gggg)^{k-1}  \right\}^{d} \right],
 \end{split}
 	\end{equation}
 	where $C>0$ is an absolute constant coming from the fact that the clause weight of a separating clause is $\gtrsim 1$. We can rewrite this as
 	\begin{equation}
 	\P_\nu(\bsigma_v \in \rrrr\ff) \ge \frac{\dot{\bq}(\rr\gggg) \dot{\bq}(\bb\gggg)^{k-2} }{2^{k-1}Z} \left\{ \frac{1}{C}\dot{\bq}(\gggg\gggg)^{k-1} + \frac{\dot{\bq}(\pppp\gggg) \dot{\bq}(\bb\gggg)^{k-2}}{2^{k-1}}   \right\}^{d-1} ,
 	\end{equation}
 	by just fixing $e_1$ to be type $\rr\ff$.
 	
 	Similarly, for $\bsigma_v \in \ff\rrrr,$ we have
 	\begin{equation}
 	\P_\nu(\bsigma_v \in \ff\rrrr) \ge \frac{\dot{\bq}(\gggg\rr) \dot{\bq}(\gggg\bb)^{k-2} }{2^{k-1}Z} \left\{ \frac{1}{C}\dot{\bq}(\gggg\gggg)^{k-1} + \frac{\dot{\bq}(\gggg\pppp) \dot{\bq}(\gggg\bb)^{k-2}}{2^{k-1}}   \right\}^{d-1} .
 	\end{equation}
 	
 	Finally, for $\bsigma_v \in \rrrr\rrrr^=$, 
		we just count the case where $e_1$ is fixed to be type $\rr\rr$. Then, we see that
		\begin{equation}
		\P_\nu(\bsigma_v \in \rrrr\rrrr^=) \ge \frac{\dot{\bq}(\rr\rr^=)\dot{\bq}(\bb\bb^=)^{k-2}}{2^{k-1}Z} \left\{ \frac{1}{C} \dot{\bq}(\gggg\gggg)^{k-1} + \frac{\dot{\bq}(\gggg\pppp) \dot{\bq}(\gggg\bb)^{k-2}}{2^{k-1}}+
		\frac{\dot{\bq}(\pppp\gggg) \dot{\bq}(\bb\gggg)^{k-2}}{2^{k-1}}+ \frac{\dot{\bq}(\rr\rr^=)\dot{\bq}(\bb\bb^=)^{k-2}}{2^{k-1}}  \right\}^{d-1}
		\end{equation}
		Similarly,
			\begin{equation}
			\P_\nu(\bsigma_v \in \rrrr\rrrr^{\neq}) \ge \frac{\dot{\bq}(\rr\rr^{\neq})\dot{\bq}(\bb\bb^{\neq})^{k-2}}{2^{k-1}Z} \left\{ \frac{1}{C} \dot{\bq}(\gggg\gggg)^{k-1} + \frac{\dot{\bq}(\gggg\pppp) \dot{\bq}(\gggg\bb)^{k-2}}{2^{k-1}}+
			\frac{\dot{\bq}(\pppp\gggg) \dot{\bq}(\bb\gggg)^{k-2}}{2^{k-1}}+ \frac{\dot{\bq}(\rr\rr^{\neq})\dot{\bq}(\bb\bb^{\neq})^{k-2}}{2^{k-1}}  \right\}^{d-1}.
			\end{equation}
			
	To conclude the proof, without loss of generality we assume $\dot{\bq}(\bb\bb^=)\ge \dot{\bq}(\bb\bb^{\neq})$, and we divide into two cases where $\frac{\dot{\bq}(\bb\bb^{\neq})}{\dot{\bq}(\bb\bb^=)} $		is bigger than $\frac{1}{1000}$ or not.
	
	\vspace{3mm}
	
	{\bf Part 2: Case 1.} $\frac{\dot{\bq}(\bb\bb^{\neq})}{\dot{\bq}(\bb\bb^=)}  \ge \frac{1}{1000}$.
	
	In this case, combined with the assumption of Part 2 that $1-\dot{\bq}(\gggg\gggg) \ge 2^{1000k} \dot{\bq}(\gggg\gggg)$ and the conclusion from Part 1 that $\dot{\bq}(\bb\bb) \ge \frac{99}{100}\dot{\bq}(\gggg\gggg)$, one of the following must hold true:
	\begin{equation}\label{qer}
	\begin{split}
	&\dot{\bq}(\gggg\gggg)^{k-1} \ll \frac{ \dot{\bq}(\rr\gggg) \dot{\bq}(\bb\gggg)^{k-2}}{2^{k-1}}; \quad\textnormal{or}\quad
	\dot{\bq}(\gggg\gggg)^{k-1} \ll \frac{\dot{\bq}(\gggg\rr) \dot{\bq}(\gggg\bb)^{k-2}}{2^{k-1}}; \quad\textnormal{or}  \\
	&\dot{\bq}(\gggg\gggg)^{k-1} \ll \frac{\dot{\bq}(\rr\rr^=)\dot{\bq}(\bb\bb^=)^{k-2}}{2^{k-1}};\quad\textnormal{or}\quad \dot{\bq}(\gggg\gggg)^{k-1}\ll \frac{\dot{\bq}(\rr\rr^{\neq})\dot{\bq}(\bb\bb^{\neq})^{k-2}}{2^{k-1}}.
	\end{split}
	\end{equation}
	This is because at least one of $\dot{\bq}(\rr\gggg), \dot{\bq}(\gggg\rr), \dot{\bq}(\rr\rr^=), \dot{\bq}(\rr\rr^{\neq})$ should be greater than $2^{999k}\dot{\bq}(\gggg\gggg).$ This will give us that $\P_\nu (\bsigma_v = \ff\ff)\ll 1$.
	Moreover, observe that the above four equations also come from the types (in Part 1 of the proof) $\ff\bb2$, $\bb\ff2$, $\bb\bb2$ ($\bb\bb2^=$ and $\bb\bb2^{\neq}$), respectively.		These are the clauses that have a red leaf on $\delta \DD$, and this means that given $\bsigma_v \in \{\rrrr\ff, \ff\rrrr, \rrrr\rrrr \}$, we are more likely to have  neighboring clauses that have a red leaf than those who do not. The details can be carried out analogously to that of the first moment, and hence we get
	\begin{equation}
	\E_\nu[ \#\rr \textnormal{ in either copy at }\delta\DD ] \gtrsim d,
	\end{equation}
	contradicting $\dot{\bh}(\{ \dot{\bsigma}: \dot{\sigma}^1 \textnormal{ or } \dot{\sigma}^2 \in \{\rr \} \}) \le C2^{-k}$.
	
	\vspace{3mm}
	
	{\bf Part 2: Case 2.} $\frac{\dot{\bq}(\bb\bb^{\neq})}{\dot{\bq}(\bb\bb^=)}  < \frac{1}{1000}$.
	
	In this case, note that we can have the case where none of \eqref{qer} is true. For instance, $\dot{\bq}(\rr\rr^{\neq})$ is very big, all other red-including $\dot{\bq}$-weights are very small, and $\dot{\bq}(\bb\bb^{\neq})$ is extremely small. However, in such a case, on $\delta\DD$, $\bb\bb^=$ is going to be selected about 1000 times more often than $\bb\bb^{\neq}$ when both colors are valid. To be precise, in all types of $e_i$ mentioned in Part 1 except $\rr\rr$, $\rr\bb 2$, $\bb\rr 2$, $\bb\bb 2$ and $\bb\bb 3$, both $\bb\bb^=$ and $\bb\bb^{\neq}$ are valid choices for $\gggg\gggg, \gggg\bb, \bb\gggg$, and in such a situation $\bb\bb^=$ is going to be selected 1000 times more often.		Thus, if we have
	\begin{equation}
	\begin{split}
	\E_\nu[\#\rr\rr + \#\rr\bb 2 + \#\bb\rr 2 +\#\bb\bb 2 + \#\bb\bb 3] \ll d, \textnormal{ and} \\
	\dot{\bh}(\{ \dot{\bsigma}: \dot{\sigma}^1 \textnormal{ or } \dot{\sigma}^2 \in \{\rr, \ff \} \}) \le C2^{-k},
	\end{split}
	\end{equation}
	then we must have $	\dot{\bh}(\bb\bb^=) - \dot{\bh} (\bb\bb^{\neq} ) \gtrsim 1$ which is a contradiction. On the other hand, suppose that we had
	\begin{equation}
	\E_\nu[\#\rr\rr + \#\rr\bb 2 + \#\bb\rr 2 +\#\bb\bb 2 + \#\bb\bb 3] \gtrsim d.
	\end{equation}
	Note that the clauses $\rr\bb 2$, $\bb\rr 2$, $\bb\bb 2$ and $\bb\bb 3$ all carry at least one red color on $\delta \DD$. Also, the $\rr\rr^{\neq}$-types are less likely to be seen than $\bb\bb 2^{\neq}$-type since we are assuming $\dot{\bq}(\rr\rr^{\neq}) >> \dot{\bq}(\bb\bb^{\neq})$. Therefore, 
	\begin{equation}
	\E_\nu[\#\rr\rr + \#\rr\bb 2 + \#\bb\rr 2 +\#\bb\bb 2 + \#\bb\bb 3] \gtrsim d
	\end{equation}
	must imply
	\begin{equation}
	\E_\nu[\#\rr \textnormal{ on }\delta\DD] \gtrsim d.
	\end{equation}
	This will violate the assumption $	\dot{\bh}(\{ \dot{\bsigma}: \dot{\sigma}^1 \textnormal{ or } \dot{\sigma}^2 \in \{\rr \} \}) \le C2^{-k}$.
\end{proof}

	In the proofs of Lemmas \ref{lem:exists:qdot:hdot:exp:tail:improved} and \ref{lem:Xi:continuous:1stmo}, it is straight-forward to see that the techniques used in their proofs are generic in the sense that they do not rely on the specific properties of $\dot{q}$ except $\dot{q}(\bb) \ge C_k$ obtained from Lemma \ref{lem:hdot:exptail:q:b:lowerbound}. Thus, we can extend those results analogously to the case of the pair model, utilizing Corollary \ref{cor:hdot:exptail:q:b:lowerbd} instead of Lemma \ref{lem:hdot:exptail:q:b:lowerbound}. We can state the results as follows.
	
	\begin{cor}\label{cor:2ndmo:hdot:uniqueness}
		Suppose $\dot{\bh} \in \PPP(\dot{\Omega}^2)$ satisfies $\dot{\bh}(\{ \dot{\bsigma}: \dot{\sigma}^1 \textnormal{ or } \dot{\sigma}^2 \in \{\rr,\ff \}  \}) \le c^{-1}2^{-k}$ and $\sum_{\dot{\bsigma}: v(\dot{\bsigma}) \ge L } \dot{\bh}(\dot{\bsigma}) \le 2^{-ckL}$ for all $L\ge 1$, where $c>0$ is an  absolute constant. Then, there exists a unique $\dot{\bq} = \dot{\bq}[\dot{\bh}] \in \PPP(\dot{\Omega}^2)$ such that $\dot{\bh}_{\dot{\bq}} = \dot{\bh}$. Moreover, there exists a constant $c_k$ such that $\dot{\bq}(\bb\bb^=) + \dot{\bq}(\bb\bb^{\neq}) \ge c_k$ and $\sum_{v(\dot{\bsigma}) \ge L } \dot{\bq}(\dot{\bsigma}) \le c_k^{-1} 2^{-ckL}$.
		
		Further, for any $C>0$, $\Xi_2: \boldsymbol{\Delta}_C^{\textnormal{exp}} \to \R_{\ge 0}$ is continuous.
	\end{cor}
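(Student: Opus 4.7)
The plan is to mirror the two-stage argument used for the single-copy case in Lemmas \ref{lem:exists:qdot:hdot:exp:tail:improved} and \ref{lem:Xi:continuous:1stmo}, substituting Corollary \ref{cor:hdot:exptail:q:b:lowerbd} for Lemma \ref{lem:hdot:exptail:q:b:lowerbound} wherever the mass lower bound on the ``bulk'' spins is invoked. First, for the existence and tail-decay claim, define the $L$-truncated profile $\dot{\bh}_L(\dot{\bsigma}):=\dot{\bh}(\dot{\bsigma})\one\{v(\dot{\bsigma})\le L\}/\dot{\bh}(\{v\le L\})$ and set $\dot{\bq}_L:=\dot{\bq}_L[\dot{\bh}_L]$, whose existence and uniqueness in the truncated setting follow from the finite-dimensional optimization discussed in Appendix C of \cite{ssz16}. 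For $L$ large enough that $\dot{\bh}_L$ satisfies the hypothesis of Corollary \ref{cor:hdot:exptail:q:b:lowerbd}, we obtain $\dot{\bq}_L(\bb\bb^=)+\dot{\bq}_L(\bb\bb^{\neq})\ge c_k$.

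Next, to upgrade this uniform bulk bound into an exponential tail bound, I would repeat the test-configuration argument from \eqref{eq:dot:q:L:exp:tight}: for $T\le L$, compare $\dot{\bh}_L(\{v\ge T\})$ to the contribution from configurations $\bsig_{\DD}$ where all leaves are the prescribed large spin and the remaining $(k-1)d-1$ leaves are $\bb\bb^=$, and use $\bw_\DD(\cdot)^{\ula}\le 2^{\lambda^1+\lambda^2}$ together with the bulk lower bound from Corollary \ref{cor:hdot:exptail:q:b:lowerbd} to conclude $\sum_{v(\dot{\bsigma})\ge T}\dot{\bq}_L(\dot{\bsigma})\lesssim_k 2^{-ckT}$. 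Prokhorov's theorem then extracts a limit $\dot{\bq}$, and the pair-model analogue of Lemma \ref{lem:nu:q:continuous:1stmo} (which holds under the same $\dot{\bq}(\bb\bb)\ge c_k$ hypothesis) shows that $\dot{\bh}_{\dot{\bq}}$ is the $L^1$-limit of $\dot{\bh}_L$, hence equals $\dot{\bh}$. Uniqueness follows from strict concavity of the tree-optimization functional, as in the single-copy argument: two candidates $\dot{\bq}_1,\dot{\bq}_2$ would both produce measures $\bnu_{\dot{\bq}_i}$ achieving the same supremum, hence coincide, and strict positivity of the $\bb\bb$-mass forces $\dot{\bq}_1=\dot{\bq}_2$.

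For the continuity of $\Xi_2:\bDelta_C^{\textnormal{exp}}\to \R_{\ge 0}$, I would follow the layout of Lemma \ref{lem:Xi:continuous:1stmo}: reduce to continuity of $\dot{\bh}\mapsto \bLa_2^{\op}(\dot{\bh})$, then to continuity of $\dot{\bh}\mapsto\dot{\bq}[\dot{\bh}]$ (via the tight-subsequence and uniqueness argument just above), and finally reduce the claim $\bLa_2^{\op}(\dot{\bh}_n)\to \bLa_2^{\op}(\dot{\bh})$ to showing $\langle \dot{\bh}_n,\log\dot{\bq}_n\rangle\to\langle\dot{\bh},\log\dot{\bq}\rangle$. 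This last convergence is handled by the $\sqrt{\dot{\bq}}$-Cauchy--Schwarz trick from \eqref{eq:bound:q:logq:by:sqrt:q}--\eqref{eq:tail:sqrt:q:n:final}; the only point that needs verification in the pair model is the crude count $|\{\dot{\bsigma}\in\dot{\Omega}^2:v(\dot{\bsigma})=T\}|\le A^T$ for some absolute $A$, which follows from the same underlying-graph enumeration (at most $4^{2T}$ isomorphism classes of bipartite graphs with $\le T$ variables and $\le T$ clauses) times a bounded number of colorings per edge. Since $c$ in the hypothesis can be taken large enough to dominate $\log A$, summability is retained.

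The main obstacle I anticipate is the first step, namely deducing the uniform-in-$L$ bulk mass lower bound $\dot{\bq}_L(\bb\bb)\ge c_k$ for the truncated approximants. The single-copy proof (Lemma \ref{lem:hdot:exptail:q:b:lowerbound}) hinged on crude dichotomies like ``$\dot{q}(\bb)\ge\dot{q}(\ff)$'' that have genuinely many more cases in the pair model because of the mixed spins $\bb\ff, \ff\bb, \ff\ff, \rr\bb^{\neq}$, etc. Corollary \ref{cor:hdot:exptail:q:b:lowerbd} packages this bookkeeping, but one must verify that its hypothesis $\dot{\bh}(\{\dot{\sigma}^1\text{ or }\dot{\sigma}^2\in\{\rr,\ff\}\})\le C/2^k$ is preserved under the $L$-truncation for all sufficiently large $L$; this is straightforward because the truncation distortion is bounded by $\dot{\bh}(\{v>L\})=o_L(1)$, allowing us to absorb the correction into the constant. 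Once this is in hand, the remainder of the argument is a faithful transcription of the single-copy case with the appropriate pair-model notations substituted throughout.
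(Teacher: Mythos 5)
Your proposal is correct and follows essentially the same route as the paper, which likewise establishes this corollary by transcribing the proofs of Lemmas \ref{lem:exists:qdot:hdot:exp:tail:improved} and \ref{lem:Xi:continuous:1stmo} to the pair model with Corollary \ref{cor:hdot:exptail:q:b:lowerbd} substituted for Lemma \ref{lem:hdot:exptail:q:b:lowerbound}. The only cosmetic caveat is that since the bulk bound controls only the sum $\dot{\bq}(\bb\bb^=)+\dot{\bq}(\bb\bb^{\neq})$, the test-configuration step should sum over all $\{\bb\bb^=,\bb\bb^{\neq}\}$-assignments of the remaining leaves rather than fixing them all to $\bb\bb^=$.
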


	We derive an analog of Proposition \ref{prop:1stmo:Lipschitz:hdot:qdot} for the pair model. Let $\dot{\bq}$ be a probability measure on $\Omega_{2,L}$, and we give the pair-model version of  \eqref{eq:1stmo:hdot:intermsof:qdot} as follows. 
	
	\begin{equation}\label{eq:2ndmo:hdot:intermsof:qdot}
	\dot{\bh}_L[\dot{\bq}](\dot{\sigma})\equiv \sum_{\bsig \in \Omega_{2,L}^{k}}\frac{\hat{\Phi}_2(\bsig)^\la}{Z^\prime_{\dot{\bq}}}\prod_{i=1}^{k-1}\dot{\bq}(\dot{\bsigma}_i) \textnormal{BP}\dot{\bq}(\dot{\bsigma}_k)\one\{\dot{\bsigma}_1=\dot{\bsigma}\}.
	\end{equation}
	Moreover, for a probability measure $\dot{\bh} $ on $\Omega_{2,L}$, we define $\dot{\bh}^{\textnormal{av}}$ to be $\dot{\bh}^{\textnormal{av}}(\bsig) = \frac{1}{2} (\dot{ \bh} (\bsigma) + \dot{\bh}(\bsigma \oplus 1 )$.
Our goal is to show the following.
	\begin{lemma}
		\label{lem:2ndmo:Lipschitz:hdot:qdot}
		Fix $k\geq k_{0}$. Recall the BP fixed point $\dot{\bq}^\star_{L}= \dot{\bq}^\star_{\ula, L} \dot{q}^\star_{\la^1,L} \otimes \dot{q}^\star_{\la^2,L}$ in Proposition \ref{prop:BPcontraction:1stmo} and let $\dot{\bh}^\star_{L}\equiv \dot{\bh}_L[\dot{\bq}^\star_{L}]$. Then, there exists $\eps_L>0$ and a constant $C_k$, which may depend on $k$ but not on $L$, such that
		\begin{equation}\label{eq:lem:2ndmo:Lipschitz:hdot:qdot}
		\dot{\bh} = \dot{\bh}^{\textnormal{av}}, \,  ||\dot{\bh}-\dot{\bh}^\star_{L}||_1<\eps_L, \dot{\bh}\in \PPP(\dot{\Omega}_{2,L}) \implies ||\dot{\bq}_L[\dot{\bh}]-\dot{\bq}^\star_L||_1\leq C_k ||\dot{\bh}-\dot{\bh}^\star_L||_1.
		\end{equation}
	\end{lemma}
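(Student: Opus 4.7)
The plan is to mimic the proof of Proposition \ref{prop:1stmo:Lipschitz:hdot:qdot}, replacing $\dot\Omega_L$ by $\dot\Omega_{2,L}$ and using the pair-model BP contraction (Proposition \ref{prop:BPcontraction:2ndmo}) in place of its single-copy counterpart. The role of the assumption $\dot{\bh}=\dot{\bh}^{\textnormal{av}}$ is precisely to ensure that $\dot{\bq}[\dot{\bh}]$ inherits the symmetry $\dot{\bq}(\dot{\bsigma})=\dot{\bq}(\dot{\bsigma}\oplus \mathbf{1})$, and combined with Corollary \ref{cor:hdot:exptail:q:b:lowerbd}, continuity of $\dot{\bh}\mapsto \dot{\bq}_L[\dot{\bh}]$, and the estimates near $\dot{\bq}^\star_{\ula,L}$, places $\dot{\bq}[\dot{\bh}]$ in the contraction set $\mathbf{\Gamma}(c,1)$ where $\|\BP\dot{\bq}-\dot{\bq}^\star_L\|_1\lesssim k^4 2^{-k}\|\dot{\bq}-\dot{\bq}^\star_L\|_1$.

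Concretely, I will introduce the auxiliary measure $\dot{\bq}^\circ[\dot{\bh}](\dot{\bsigma}):=(Z^\circ_{\dot{\bh}})^{-1}\dot{\bh}(\dot{\bsigma})\dot{\bq}^\star_L(\dot{\bsigma})/\dot{\bh}^\star_L(\dot{\bsigma})$, together with a weighted norm $\|\cdot\|_{\boldsymbol{\ff}}$ on signed measures on $\dot\Omega_{2,L}$ that assigns weight $1$ to spins in $\{\rr,\bb\}^2$, weight $2^k$ to spins with exactly one $\ff$ coordinate, and weight $4^k$ to the $\ff\ff$ class, matching the order of magnitude of $\dot{\bq}^\star_L$. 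Using Lemma \ref{lem:1stmo:BPfixedpoint:estimates} (applied to each factor) and selecting explicit product configurations $\bsig$ in \eqref{eq:2ndmo:hdot:intermsof:qdot} of the form $(\dot{\bsigma},\bb\bb^{=},\ldots,\bb\bb^{=})$ and $(\dot{\bsigma}\fs\fs,\bb\bb^{=},\ldots,\bb\bb^{=})$ (and their separating analogues), I obtain the pointwise lower bound $\dot{\bh}^\star_L(\dot{\bsigma})/\dot{\bq}^\star_L(\dot{\bsigma})\gtrsim 4^{-2k}$, which yields the first inequality
\begin{equation*}
\|\dot{\bq}^\circ[\dot{\bh}]-\dot{\bq}^\star_L\|_{\boldsymbol{\ff}}\;\leq\; C_k\, 4^{2k}\,\|\dot{\bh}-\dot{\bh}^\star_L\|_1,
\end{equation*}
together with $|Z^\circ_{\dot{\bh}}-1|\lesssim 4^{2k}\|\dot{\bh}-\dot{\bh}^\star_L\|_1$.

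The second, more delicate inequality asserts
\begin{equation*}
\|\dot{\bq}^\circ[\dot{\bh}]-\dot{\bq}[\dot{\bh}]\|_{\boldsymbol{\ff}}\;\leq\; C\,\frac{k^c}{4^k}\,\|\dot{\bq}[\dot{\bh}]-\dot{\bq}^\star_L\|_{\boldsymbol{\ff}},
\end{equation*}
whose proof reduces, exactly as in \eqref{eq:1stmo:Lipschitz:very:long}-\eqref{eq:finalgoal:lem:1stmo:Lipschitz:hdot:qdot}, to bounding $\sup_{\dot{\bsigma},\dot{\btau}}|\mu_{\dot{\bq}[\dot{\bh}]}(\dot{\bsigma})/\mu_{\dot{\bq}^\star_L}(\dot{\bsigma})-\mu_{\dot{\bq}[\dot{\bh}]}(\dot{\btau})/\mu_{\dot{\bq}^\star_L}(\dot{\btau})|$, where $\mu_{\dot{\bq}}$ is the pair analogue of \eqref{eq:def:mu:qdot}. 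Taking $\dot{\btau}=\rr\rr^{=}$ and splitting the sum $\sum_{\bsig^{-1}\in\Omega_{2,L}^{k-1}}$ into the four regimes (i) all coordinates of $\bsig^{-1}$ lie in $\{\bb\bb\}$ (zero contribution), (ii) exactly one separating-with-single-free coordinate, (iii) at least one $\rr$-containing spin, and (iv) non-pair-separating clauses, I can bound each piece term-by-term via the elementary inequality \eqref{eq:Lipschitz:useful:ineq} and the weight estimates $\hat{\Phi}_2(\bsig)^{\ula}\lesssim 2^{\la_1+\la_2}\hat{v}_2(\bsig)$, $\hat{v}_2(\bsig)\lesssim 4^{-(k-j)}$ with $j$ the number of free coordinates in $\bsig$. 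The pair-model BP contraction delivers the $\|\BP\dot{\bq}-\dot{\bq}^\star_L\|_1$ factor needed in case (iv); everywhere else the powers of $4^{-k}$ come directly from the clause weights. Combining the two inequalities gives $(1-C k^c 4^{-k})\|\dot{\bq}[\dot{\bh}]-\dot{\bq}^\star_L\|_{\boldsymbol{\ff}}\leq C_k 4^{2k}\|\dot{\bh}-\dot{\bh}^\star_L\|_1$, and the result follows upon noting that $\|\cdot\|_1\leq \|\cdot\|_{\boldsymbol{\ff}}$.

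The main obstacle will be the case analysis in the second inequality. In the single-copy proof, the four types $D_1,\ldots,D_4$ suffice because there are essentially only three free spin colors $\{\rr,\bb,\ff\}$. In the pair model, each coordinate independently takes values in $\{\rr,\bb,\fs,\ff\}$ (when paired appropriately), so the number of meaningful classes of $\bsig^{-1}$ grows significantly — and the clause weight $\hat{\Phi}_2$ interacts nontrivially with mixed patterns such as $\rr\bb^{\neq},\bb\rr^{\neq}$. The key bookkeeping is to organize the sum by the number $j_1$ of single-free coordinates and $j_2$ of $\ff\ff$ coordinates in $\bsig^{-1}$; then each such term contributes at most $(2^k\cdot 2^{-k})^{j_1}(4^k\cdot 4^{-k})^{j_2}$ after weighting by $\|\cdot\|_{\boldsymbol{\ff}}$, and an additional factor $4^{-k}$ survives either from the clause weight (cases (ii), (iii)) or from the BP-contraction bound (case (iv)). This is the step where care is needed to ensure the constants do not depend on $L$, but the structure is entirely parallel to the proof of Proposition \ref{prop:1stmo:Lipschitz:hdot:qdot}.
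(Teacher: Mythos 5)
Your plan reproduces the two-inequality scheme of Proposition \ref{prop:1stmo:Lipschitz:hdot:qdot}, but it has a genuine gap at the step you label case (i): the claim that the contribution from $\bsig^{-1}\in\{\bb\bb^=,\bb\bb^{\neq}\}^{k-1}$ cancels exactly is false in the pair model. When $\dot{\btau}=\rr\rr^=$, the quantity $\boldsymbol{\mu}_{\dot{\bq}}(\rr\rr^=)$ receives contribution \emph{only} from $\bsig^{-1}=(\bb\bb^=)^{k-1}$ (a pair-forcing clause must force the same value in both copies), so the reference monomial is $\dot{\bq}(\bb\bb^=)^{k-2}\BP_2\dot{\bq}(\bb\bb^=)$, whereas for a generic $\dot{\bsigma}$ the dominant contribution to $\boldsymbol{\mu}_{\dot{\bq}}(\dot{\bsigma})$ ranges over all of $\{\bb\bb^=,\bb\bb^{\neq}\}^{k-1}$. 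Matching these leaves a residual of order $k\,|\dot{\bq}(\bb\bb^=)-\dot{\bq}(\bb\bb^{\neq})|$. Since $\dot{\bq}^\star_L$ is a product measure with $\dot{\bq}^\star_L(\bb\bb^=)=\dot{\bq}^\star_L(\bb\bb^{\neq})$, this residual is comparable to $k\,\|\dot{\bq}-\dot{\bq}^\star_L\|_1$ with no small prefactor, so your contraction step $(1-Ck^c4^{-k})\|\dot{\bq}-\dot{\bq}^\star_L\|_{\boldsymbol{\ff}}\le C_k\|\dot{\bh}-\dot{\bh}^\star_L\|_1$ does not close. (Note also that the averaging hypothesis $\dot{\bh}=\dot{\bh}^{\textnormal{av}}$ only forces $\dot{\bq}(\dot{\bsigma})=\dot{\bq}(\dot{\bsigma}\oplus\mathbf{1})$, which identifies $\bb_0\bb_0$ with $\bb_1\bb_1$ but does \emph{not} identify $\bb\bb^=$ with $\bb\bb^{\neq}$.)

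The missing ingredient is a separate a priori bound on $|\dot{\bq}(\bb\bb^=)-\dot{\bq}(\bb\bb^{\neq})|$ in terms of $\|\dot{\bh}-\dot{\bh}^\star_L\|_1$. The paper supplies this by a derivative computation (Lemmas \ref{lem:2ndmo:Lip:deriv} and \ref{lem:2ndmo:Lip:aux2}): writing $\dot{\bh}=\dot{\bh}_1[\dot{\bq},\ldots,\dot{\bq},\BP_2\dot{\bq}]$ and expanding via the mean value theorem, one finds $\partial\dot{\bh}_1(\bb\bb^=)/\partial\dot{\bq}_1(\bb\bb^=)=2+O(k2^{-k})$ and $\partial\dot{\bh}_1(\bb\bb^=)/\partial\dot{\bq}_1(\bb\bb^{\neq})=-2+O(k2^{-k})$, with all other derivatives negligible, whence
\begin{equation*}
|\dot{\bq}(\bb\bb^=)-\dot{\bq}(\bb\bb^{\neq})|\;\leq\; C\,\|\dot{\bh}-\dot{\bh}^\star_L\|_1+\frac{k^2}{2^k}\,\|\dot{\bq}-\dot{\bq}^\star_L\|_{\ff}.
\end{equation*}
Only after adding this third inequality to your two does the linear system in the unknowns $\|\dot{\bq}-\dot{\bq}^\star_L\|_{\ff}$ and $|\dot{\bq}(\bb\bb^=)-\dot{\bq}(\bb\bb^{\neq})|$ close and yield the claimed Lipschitz bound. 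You should therefore weaken your second inequality to $\|\dot{\bq}^\circ-\dot{\bq}\|_{\ff}\le \frac{k^2}{2^k}\|\dot{\bq}-\dot{\bq}^\star_L\|_{\ff}+Ck|\dot{\bq}(\bb\bb^=)-\dot{\bq}(\bb\bb^{\neq})|$ and add the derivative argument; the remainder of your case analysis (types (ii)--(iv)) is sound and parallels the paper's treatment.
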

	
	The proof relies on that of Proposition \ref{prop:1stmo:Lipschitz:hdot:qdot}, but we need an extra argument to take care of the discrepency between the spins $\bb\bb^= $ versus $\bb\bb^{\neq}$. For the rest of this subsection, we write $\dot{\bq} = \dot{\bq}_L[\dot{\bh}]$, $\dot{\bq}^\star = \dot{\bq}^\star_L$ and $\dot{\bh}^\star = \dot{\bh}^\star_L$ for convenience. 
	Since we assume that $\dot{\bh} = \dot{\bh}^{\textnormal{av}}$, we have $\dot{\bq} = \dot{\bq}^{\textnormal{av}}$. Thus, from now on, we view $\dot{\bh}, \dot{\bh}^{\star}_L, \dot{\bq}$ and $\dot{\bq}^{\star}_L$ as probability measures on the projected color space \begin{equation*}
	\dot{\Omega}_{\textnormal{pj}, L}:= \Omega_{\pj}^{\fs} \sqcup \left\{ \dot{\Omega}_{2,L} \setminus \{\rr, \bb \}^2 \right\};\qquad \Omega_{\pj}^{\fs}:= \{\rr\rr^=, \rr\rr^{\neq}, \bb\bb^=, \bb\bb^{\neq}, \rr\bb^=, \rr\bb^{\neq}, \bb\rr^=,\bb\rr^{\neq} \}.
	\end{equation*}
	Moreover, we write $C>0$ to denote an absolute constant that does not depend on $k$, $L$.

	For a signed measure $\ba $ on $\Omega_{2,L}$, we define the $||\cdot ||_\ff$-norm as before, by
	\begin{equation*}
	||\ba||_\ff := \sum_{\bsigma \in \{\rr,\bb \}^2 }|\ba(\bsigma)| + \sum_{\bsigma \notin \{\rr,\bb \}^2} |\ba(\bsigma)| 2^k.
	\end{equation*}
	We also define 
	\begin{equation*}
	\dot{\bq}^\circ(\dot{\bsigma}) \equiv \frac{1}{Z^{\circ}_{\dot{\bh}}}\frac{\dot{\bh}(\dot{\bsigma})}{\dot{\bh}^\star(\dot{\bsigma})}\dot{\bq}^\star(\dot{\bsigma}),\dot{\bsigma}\in \dot{\Omega}_{\textnormal{pj}, L},\textnormal{  where  } Z_{\dot{\bh}}^\circ \equiv \sum_{\dot{\bsigma}\in \dot{\Omega}_{\pj, L}}\frac{\dot{\bh}(\dot{\bsigma})}{\dot{\bh}^\star(\dot{\bsigma})}\dot{\bq}^\star(\dot{\bsigma}).
	\end{equation*}
	
	Then, the proof of Lemma \ref{lem:2ndmo:Lipschitz:hdot:qdot} will be obtained from the following three steps.
	\begin{lemma}\label{lem:2ndmo:Lip:aux}
		Under the setting of Lemma \ref{lem:2ndmo:Lipschitz:hdot:qdot}, we have
		\begin{equation}\label{eq:2ndmo:Lip:aux}
		\begin{split}
		&||\dot{\bq}^\circ - \dot{\bq}^{\star} ||_\ff \le C_k || \dot{\bh} - \dot{\bh}^{\star} ||_1 ;\\
		&|| \dot{\bq}^\circ - \dot{\bq} ||_\ff \le \frac{k^2}{2^k} || \dot{\bq} - \dot{\bq}^\star||_\ff + Ck | \dot{\bq}(\bb\bb^=) - \dot{\bq}(\bb\bb^{\neq})|.
		\end{split}
		\end{equation}
	\end{lemma}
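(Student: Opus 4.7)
The plan is to parallel the proof of Proposition \ref{prop:1stmo:Lipschitz:hdot:qdot}, making the necessary pair-model adjustments. Both inequalities are obtained by adapting the single-copy arguments, with the second inequality picking up an extra correction term that reflects the asymmetry between $\bb\bb^=$ and $\bb\bb^{\neq}$ spins.

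For the first inequality, I would proceed as in the derivation of \eqref{eq:goal-1:lem:1stmo:Lipschitz:hdot:qdot}. Triangle inequality gives
\begin{equation*}
||\dot{\bq}^\circ - \dot{\bq}^\star||_\ff \leq \sum_{\dot{\bsigma}} \frac{2^{k\one\{\dot{\bsigma} \notin \{\rr,\bb\}^2\}}}{Z^\circ_{\dot{\bh}}} \frac{\dot{\bq}^\star(\dot{\bsigma})}{\dot{\bh}^\star(\dot{\bsigma})} |\dot{\bh}(\dot{\bsigma}) - \dot{\bh}^\star(\dot{\bsigma})| + \frac{|Z^\circ_{\dot{\bh}} - 1|}{Z^\circ_{\dot{\bh}}} \sum_{\dot{\bsigma}} \dot{\bq}^\star(\dot{\bsigma}) 2^{k\one\{\dot{\bsigma} \notin \{\rr,\bb\}^2\}}.
\end{equation*}
The key estimate is $\sup_{\dot{\bsigma} \in \dot{\Omega}_{\pj,L}} \dot{\bq}^\star(\dot{\bsigma})/\dot{\bh}^\star(\dot{\bsigma}) \lesssim 2^k$, obtained by restricting the sum in the pair-model version of \eqref{eq:2ndmo:hdot:intermsof:qdot} to configurations where every remaining leaf of the root clause equals $\bb\bb^=$ (analogue of \eqref{eq:lowerbound:hdot:star}). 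Combined with the crude bound $\bw_\DD(\bsig_\DD)^{\ula} \leq 2^{\lambda^1 + \lambda^2}$ and Corollary \ref{cor:hdot:exptail:q:b:lowerbd}, which provides $\dot{\bq}^\star(\bb\bb^=) + \dot{\bq}^\star(\bb\bb^{\neq}) \gtrsim_k 1$, the first inequality follows, with $|Z^\circ_{\dot{\bh}} - 1| \lesssim 2^k \|\dot{\bh} - \dot{\bh}^\star\|_1$ just as in \eqref{eq:1stmo:estimate:Z:circ}.

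For the second inequality, I would introduce the pair-model analogue of \eqref{eq:def:mu:qdot},
\begin{equation*}
\bmu_{\dot{\bq}}(\dot{\bsigma}) := \sum_{\bsig \in \Omega_{2,L}^k,\ \dot{\bsigma}_1 = \dot{\bsigma}} \hat{\Phi}_2(\bsig)^{\ula} \prod_{i=2}^{k-1} \dot{\bq}(\dot{\bsigma}_i)\, \BP\dot{\bq}(\dot{\bsigma}_k),
\end{equation*}
so that $\dot{\bh}_L[\dot{\bq}](\dot{\bsigma}) = \dot{\bq}(\dot{\bsigma}) \bmu_{\dot{\bq}}(\dot{\bsigma})/Z'_{\dot{\bq}}$. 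The analogue of the chain of inequalities culminating in \eqref{eq:1stmo:Lipschitz:very:long} reduces the problem to bounding $\sup_{\dot{\bsigma},\dot{\btau}} |\bmu_{\dot{\bq}}(\dot{\btau})/\bmu_{\dot{\bq}^\star}(\dot{\btau}) - \bmu_{\dot{\bq}}(\dot{\bsigma})/\bmu_{\dot{\bq}^\star}(\dot{\bsigma})|$, which by triangle inequality reduces to taking $\dot{\btau} = \rr\rr^=$. I would classify the configurations $\bsig$ feeding into $\bmu$ according to the number and types of non-$\bb\bb$ spins among the remaining leaves, in analogy with the single-copy categories $D_1, \ldots, D_4$, now refined to account for the pair-model spin space. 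Each category is handled by the elementary inequality \eqref{eq:Lipschitz:useful:ineq}, and the pair-model BP contraction from Proposition \ref{prop:BPcontraction:2ndmo}, which gives $\|\BP\dot{\bq} - \dot{\bq}^\star\|_1 \lesssim (k^4/2^k)\|\dot{\bq} - \dot{\bq}^\star\|_1$ whenever $\dot{\bq}^{\textnormal{av}} \in \mathbf{\Gamma}(1,1)$, lets one absorb $\BP\dot{\bq}$ terms into $\|\dot{\bq} - \dot{\bq}^\star\|_\ff$.

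The main obstacle is accounting for the residual term $Ck|\dot{\bq}(\bb\bb^=) - \dot{\bq}(\bb\bb^{\neq})|$. It originates from the leading contribution where the $k-1$ remaining leaves of the clause all carry $\bb\bb$ spins (analogue of the cancellation $\sum_{\sig^{-1} \in \bb^{k-1}} G(\sig) = 0$ in the single-copy proof). In the single-copy setting, all such configurations carry the same weight $\hat{\Phi}$, producing an exact cancellation. In the pair model, however, $\hat{\Phi}_2$ depends on the multiset of $\bb\bb^=$ and $\bb\bb^{\neq}$ spins because a pair-separating clause must have valid \textsc{nae} assignments in both copies simultaneously, and swapping one $\bb\bb^=$ for a $\bb\bb^{\neq}$ among the $k-1$ leaves does change the weight by an $O(1)$ factor. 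Expanding the resulting multinomial difference and tracking the $k-1$ positions where such a single swap can occur yields precisely the bound $Ck|\dot{\bq}(\bb\bb^=) - \dot{\bq}(\bb\bb^{\neq})|$; this term is the only one that cannot be absorbed via BP contraction and is the genuinely pair-model-specific contribution. The subsequent step in the proof of Lemma \ref{lem:2ndmo:Lipschitz:hdot:qdot} must then independently bound $|\dot{\bq}(\bb\bb^=) - \dot{\bq}(\bb\bb^{\neq})|$ in terms of $\|\dot{\bh} - \dot{\bh}^\star\|_1$ using the ``$\kappa = 1$'' refinement from Proposition \ref{prop:BPcontraction:2ndmo}(2), which upgrades $\mathbf{\Gamma}(c,0)$ to $\mathbf{\Gamma}(c,1)$ and controls exactly this difference by $k^9/2^{ck}$ times $\dot{\bq}(\bb\bb)$.
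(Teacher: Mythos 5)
Your outline of the first inequality is correct and matches the paper's approach, and your reduction of the second inequality to bounding $\sup_{\dot{\bsigma},\dot{\btau}}\big|\bmu_{\dot{\bq}}(\dot{\btau})/\bmu_{\dot{\bq}^\star}(\dot{\btau}) - \bmu_{\dot{\bq}}(\dot{\bsigma})/\bmu_{\dot{\bq}^\star}(\dot{\bsigma})\big|$ is also the route taken in the paper. However, your explanation of where the residual term $Ck\,|\dot{\bq}(\bb\bb^=)-\dot{\bq}(\bb\bb^{\neq})|$ comes from is wrong. You attribute it to the clause weight $\hat{\Phi}_2$ changing ``by an $O(1)$ factor'' when one $\bb\bb^=$ leaf is swapped for a $\bb\bb^{\neq}$ leaf; but a direct computation shows that for a pair-separating clause with all-$\bb\bb$ leaves the weight $\hat{v}_2$ depends on the number of disagreements only at order $2^{-k}$ (e.g.\ from $1-2^{-k+1}$ with no disagreements to $1-2^{-k+2}$ with one disagreement), so a single swap only moves the weight by a multiplicative factor $1+O(2^{-k})$. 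This effect is subdominant and would not by itself produce a term that escapes absorption into $\frac{k^2}{2^k}\|\dot{\bq}-\dot{\bq}^\star\|_\ff$.

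The actual mechanism, which the paper highlights explicitly, is structural: when $\dot{\btau}=\rr\rr^=$, the forcing constraint in both copies forces the remaining $k-1$ leaves to be $(\bb\bb^=)^{k-1}$, so $\bmu_{\dot{\bq}}(\rr\rr^=)$ is a monomial purely in $\dot{\bq}(\bb\bb^=)$ (times $\textnormal{BP}\dot{\bq}(\bb\bb^=)$). For any $\dot{\bsigma}\notin\{\rr\rr^=,\rr\rr^{\neq}\}$, by contrast, the leading contribution to $\bmu_{\dot{\bq}}(\dot{\bsigma})$ is a full sum over $\{\bb\bb^=,\bb\bb^{\neq}\}^{k-1}$. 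At the fixed point $\dot{\bq}^\star(\bb\bb^=)=\dot{\bq}^\star(\bb\bb^{\neq})$, so these two ratios coincide to leading order when $\dot{\bq}(\bb\bb^=)=\dot{\bq}(\bb\bb^{\neq})$, and their discrepancy is linear in $|\dot{\bq}(\bb\bb^=)-\dot{\bq}(\bb\bb^{\neq})|$ with a combinatorial factor of order $k$ from the $k-1$ leaf positions. This term-by-term failure of cancellation (not the weight variation) is what generates the extra term, and is exactly why the paper isolates the case $\dot{\btau}\in\{\rr\rr^=,\rr\rr^{\neq}\}$ separately. Your proposed follow-up via Proposition~\ref{prop:BPcontraction:2ndmo}(2) also cannot be used directly, since that item requires $\dot{\bq}$ to be a $\textnormal{BP}$ fixed point, whereas $\dot{\bq}_L[\dot{\bh}]$ for generic $\dot{\bh}$ is not; the paper instead controls $|\dot{\bq}(\bb\bb^=)-\dot{\bq}(\bb\bb^{\neq})|$ by a separate derivative argument (Lemma~\ref{lem:2ndmo:Lip:deriv}) in the companion Lemma~\ref{lem:2ndmo:Lip:aux2}.
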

	
	\begin{lemma}\label{lem:2ndmo:Lip:aux2}
		Under the setting of Lemma \ref{lem:2ndmo:Lipschitz:hdot:qdot}, we have
		\begin{equation*}
		 C || \dot{\bh} - \dot{\bh}^{\star} || _1 \ge |\dot{\bq}(\bb\bb^=) - \dot{\bq}(\bb\bb^{\neq})| - \frac{k^2}{2^k} || \dot{\bq} - \dot{\bq}^\star||_\ff.
		\end{equation*}
	\end{lemma}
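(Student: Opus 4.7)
The plan is to use the defining relation \eqref{eq:2ndmo:hdot:intermsof:qdot} to expand the quantity $\dot{\bh}(\bb\bb^=) - \dot{\bh}(\bb\bb^{\neq})$ and extract its leading-order dependence on $\dot{\bq}(\bb\bb^=) - \dot{\bq}(\bb\bb^{\neq})$. First I would note that by the $0/1$ symmetry of the single-copy BP fixed point (Lemma \ref{lem:1stmo:BPfixedpoint:estimates}) and the product structure $\dot{\bq}^\star = \dot{q}^\star_{\lambda^1,L} \otimes \dot{q}^\star_{\lambda^2,L}$, we have $\dot{\bq}^\star(\bb\bb^=) = \dot{\bq}^\star(\bb\bb^{\neq})$ and consequently $\dot{\bh}^\star(\bb\bb^=) = \dot{\bh}^\star(\bb\bb^{\neq})$. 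Hence $|\dot{\bh}(\bb\bb^=) - \dot{\bh}(\bb\bb^{\neq})| \le 2\,\|\dot{\bh}-\dot{\bh}^\star\|_1$, so it suffices to show
\begin{equation*}
\big|\dot{\bh}(\bb\bb^=) - \dot{\bh}(\bb\bb^{\neq})\big| \;\ge\; c\,\big|\dot{\bq}(\bb\bb^=) - \dot{\bq}(\bb\bb^{\neq})\big| - C\frac{k^2}{2^k}\|\dot{\bq}-\dot{\bq}^\star\|_\ff
\end{equation*}
for some absolute constant $c > 0$, using the $\bb\bb$-block as the dominant source.

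Next I would write the defining formula \eqref{eq:2ndmo:hdot:intermsof:qdot} for the two spins $\bb\bb^=$ and $\bb\bb^{\neq}$ and subtract. Partition the sum over $\bsig \in \Omega_{2,L}^k$ by the following types for the remaining coordinates $(\bsigma_2,\ldots,\bsigma_k)$, mirroring the four-case split $(D_1,\ldots,D_4)$ used in the proof of Proposition \ref{prop:1stmo:Lipschitz:hdot:qdot}: (i) all coordinates lie in $\{\bb\bb^=,\bb\bb^{\neq}\}$; (ii) exactly one free-type spin appears and the rest are in $\{\bb\bb^=,\bb\bb^{\neq}\}$; (iii) at least one red-containing spin is present; (iv) two or more free-type spins are present or the clause is non-separating. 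The leading term arises from (i): expanding $\prod_{i=1}^{k-1}\dot{\bq}(\bsigma_i)\,\mathrm{BP}\dot{\bq}(\bsigma_k)$ and using that $\hat{\Phi}_2$ treats $\bb\bb^=$ and $\bb\bb^{\neq}$ asymmetrically relative to the spin fixed in position~$1$, one obtains a term of the form
\begin{equation*}
\big[\dot{\bq}(\bb\bb^=)-\dot{\bq}(\bb\bb^{\neq})\big]\cdot\bigl(\dot{\bq}(\bb\bb^=)+\dot{\bq}(\bb\bb^{\neq})\bigr)^{k-2}\cdot\bigl(\mathrm{BP}\dot{\bq}(\bb\bb^=)+\mathrm{BP}\dot{\bq}(\bb\bb^{\neq})\bigr)\cdot\Theta(1),
\end{equation*}
where the $\Theta(1)$ factor comes from the ratio of $\hat{\Phi}_2$-weights between compatible configurations versus the normalizer $Z'_{\dot{\bq}}$. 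By the pair-model analogue of Lemma \ref{lem:1stmo:BPfixedpoint:estimates}, $\dot{\bq}(\bb\bb^=)+\dot{\bq}(\bb\bb^{\neq})$ and $\mathrm{BP}\dot{\bq}(\bb\bb^=)+\mathrm{BP}\dot{\bq}(\bb\bb^{\neq})$ are each bounded below by an absolute positive constant, so this contribution has magnitude at least $c\,|\dot{\bq}(\bb\bb^=)-\dot{\bq}(\bb\bb^{\neq})|$.

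For the error types (ii)--(iv), I would apply the same case-by-case bounds developed in the proof of Proposition \ref{prop:1stmo:Lipschitz:hdot:qdot} (specifically the estimates analogous to \eqref{eq:bound:f:1:final}--\eqref{eq:bound:f:4}), which exploit the smallness of the spins outside $\{\bb\bb^=,\bb\bb^{\neq}\}$: the red and free density in $\dot{\bq}$ is $O(2^{-k})$, the clause weight $\hat{\Phi}_2^\lambda$ for non-separating or red-containing clauses carries an extra $2^{-k}$ factor, and the $\ff$-weighting built into $\|\cdot\|_\ff$ compensates for this. Together these produce an error at most $C (k^2/2^k)\|\dot{\bq}-\dot{\bq}^\star\|_\ff$. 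Combining the leading estimate with the error bound, rearranging, and then chaining with the bound $|\dot{\bh}(\bb\bb^=) - \dot{\bh}(\bb\bb^{\neq})| \le 2\|\dot{\bh}-\dot{\bh}^\star\|_1$ gives the claim.

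The main obstacle I anticipate is the careful bookkeeping of the contribution from type (iii) (clauses with a red spin at some position $2 \le i \le k$): here $\hat{\Phi}_2^\lambda \le 2^{-k+1}$, and one must make sure that the contribution is truly sub-leading after subtraction. This requires using the pair-model contraction estimate $\|\mathrm{BP}\dot{\bq} - \dot{\bq}^\star\|_1 \lesssim (k^2/2^k)\|\dot{\bq}-\dot{\bq}^\star\|_1$ from Proposition \ref{prop:BPcontraction:2ndmo}, together with the elementary inequality $|ab - a'b'| \le |a-a'|b' + a'|b-b'|$ iterated in the length-$(k-1)$ product, exactly as in the single-copy argument. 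The symmetry of $\dot{\bh}$ under $\oplus 1$, which forces the $\dot{\bq}$-symmetry exploited above, is essential throughout.
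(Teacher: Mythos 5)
Your plan is essentially correct and follows the same underlying idea as the paper: the quantity $\dot{\bh}(\bb\bb^=) - \dot{\bh}(\bb\bb^{\neq})$ is controlled to leading order by $\dot{\bq}(\bb\bb^=) - \dot{\bq}(\bb\bb^{\neq})$ because the bulk of the clause sum is carried by all-$\bb\bb$ configurations. The paper, however, packages this via a different route: it first proves the auxiliary Lemma~\ref{lem:2ndmo:Lip:deriv}, which gives exact formulas and clean estimates for the partial derivatives $\partial\dot{\bh}_1(\bb\bb^=)/\partial\dot{\bq}_j(\dot{\btau})$, and then applies a mean-value/integral expansion to write $\dot{\bh}(\bb\bb^=) - \dot{\bh}^\star(\bb\bb^=)$ as a sum of derivative terms times $\dot{\bq}_j(\dot{\btau}) - \dot{\bq}^\star(\dot{\btau})$. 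The derivative identity \eqref{eq:2ndmo:Lip:deriv:same} makes the $\pm 2$ coefficients at $\dot{\btau} \in \{\bb\bb^=, \bb\bb^{\neq}\}$ immediate, and the $O(k2^{-k})$ bounds elsewhere follow from the all-$\bb\bb$ dominance argument once. Your route avoids the derivative machinery and directly expands by the types $D_1,\ldots,D_4$, which is more elementary but requires re-deriving essentially the same cancellation and smallness estimates by hand, including a careful handling of the flip-asymmetry in the $\srr$-containing and free-containing error types. Both approaches yield the same lower bound and neither is fundamentally shorter; the paper's derivative formula is arguably cleaner because it isolates the normalizer's contribution automatically.

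One point that is genuinely imprecise in your write-up: you attribute the appearance of the leading factor $\dot{\bq}(\bb\bb^=) - \dot{\bq}(\bb\bb^{\neq})$ to ``$\hat{\Phi}_2$ treat[ing] $\bb\bb^=$ and $\bb\bb^{\neq}$ asymmetrically relative to the spin fixed in position 1.'' This is not the right mechanism. For all-$\bb\bb$ clauses, the weights $\hat{v}_2(\bb\bb^=,\cdots)$ and $\hat{v}_2(\bb\bb^{\neq},\cdots)$ agree up to $O(2^{-k})$, and that small discrepancy contributes only to the error terms, not to the $\Theta(1)$ leading coefficient. The leading factor $\dot{\bq}(\bb\bb^=)-\dot{\bq}(\bb\bb^{\neq})$ arises simply from the explicit factor $\dot{\bq}(\dot{\bsigma}_1)$ in the defining formula \eqref{eq:2ndmo:hdot:intermsof:qdot}, once you fix $\dot{\bsigma}_1 \in \{\bb\bb^=, \bb\bb^{\neq}\}$ and observe that the remaining sums over $(\bsigma_2,\ldots,\bsigma_k) \in \{\bb\bb^=,\bb\bb^{\neq}\}^{k-1}$ are identical for the two choices. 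If you were to compute the $\Theta(1)$ constant from the $\hat{\Phi}_2$-asymmetry as your wording suggests, you would end up with an $O(2^{-k})$ coefficient and the lemma would fail. The claimed form of the leading term is nevertheless correct, so this is a flaw in the explanation rather than in the final answer, but you should fix the justification before carrying out the case analysis.
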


	\begin{proof}[Proof of Lemma \ref{lem:2ndmo:Lipschitz:hdot:qdot}]
		Summing the first two inequalities in Lemma \ref{lem:2ndmo:Lip:aux} gives
		\begin{equation*}
		C_k || \dot{\bh} -\dot{\bh}^\star||_1 \ge \left( 1- \frac{k^2}{2^k} \right) || \dot{\bq} - \dot{\bq}^\star||_\ff -Ck | \dot{\bq}(\bb\bb^=) - \dot{\bq}(\bb\bb^{\neq})|.
		\end{equation*}
		Thus, we conclude the proof by combining with Lemma \ref{lem:2ndmo:Lip:aux2}. 
	\end{proof}
	
	In the proof of Lemmas \ref{lem:2ndmo:Lip:aux} and \ref{lem:2ndmo:Lip:aux2}, we will assume that $\dot{\bq}$ is very close to $\dot{\bq}^\star$ as in the beginning of the proof of Proposition \ref{prop:1stmo:Lipschitz:hdot:qdot}. This is possible since the map $\dot{\bh} \mapsto \dot{\bq}$ is continuous \cite[Appendix C]{ssz22}. More specifically, we take $\eps_L >0$ small enough so that the following holds for all $||\dot{\bh} - \dot{\bh}^\star||_1 < \eps_L$:
	\begin{itemize}
		\item $\dot{\bq} \in \Gamma$ where $\Gamma$ is defined in \eqref{eq:BPcontraction:1stmo}. Hence, by Proposition \ref{prop:BPcontraction:1stmo}, $||\BP_2[\dot{\bq}] \dot{\bq}^\star ||_1 \lesssim \frac{k^2}{2^k} ||  \dot{\bq}-\dot{\bq}^\star ||_1 .$
		
		\item For $\bsigma \in \Omega_{\pj}^{\fs}$, 
		we have
		\begin{equation}\label{eq:2ndmo:Lip:assumption}
		\left|\dot{\bq}(\bsigma) - \dot{\bq}^\star(\bsigma)\right| \le \frac{C}{2^k}, \quad \left|\BP_2\dot{\bq}(\bsigma) - \dot{\bq}^\star(\bsigma)\right| \le \frac{C}{2^k}.
		\end{equation}
	\end{itemize}

	\begin{proof}[Proof of Lemma \ref{lem:2ndmo:Lip:aux}]
		We first remark that the first inequality in \eqref{eq:2ndmo:Lip:aux} follows analogously from the argument \eqref{eq:goal-1:Lipschitz:1ststep}--\eqref{eq:1stmo:estimate:Z:circ}. The second inequality follows similarly from the proof of \eqref{eq:goal-2:lem:1stmo:Lipschitz:hdot:qdot}, by estimating the quantity
\begin{equation}\label{eq:goal-2:lem:2ndmo:Lip}
\sup_{\dot{\bsigma}\in \dot{\Omega}_{\pj,L}, \dot{\btau}\in \dot{\Omega}_{\pj, L}}\bigg|\frac{\boldsymbol{\mu}_{\dot{\bq}}(\dot{\btau})}{\boldsymbol{\mu}_{\dot{\bq}^\star}(\dot{\btau})}-\frac{\boldsymbol{\mu}_{\dot{\bq}}(\dot{\bsigma})}{\boldsymbol{\mu}_{\dot{\bq}^\star}(\dot{\bsigma})}\bigg|,
\end{equation}		
where we defined
			\begin{equation}\label{eq:def:mu:qdot:2nd}
			\boldsymbol{\mu}_{\dot{\bq}}(\dot{\bsigma})\equiv \sum_{\bsig \in \Omega_{\pj, L}^k, \dot{\bsigma}_1=\dot{\bsigma}}\hat{\Phi}_2(\bsig)^\la \prod_{i=2}^{k-1}\dot{\bq}(\dot{\bsigma}_i) \textnormal{BP}_2\dot{\bq}(\dot{\bsigma}_k),\quad\textnormal{for}\quad \dot{\bsigma} \in \dot{\Omega}_{\pj, L}.
			\end{equation}
		It is not difficult to see that if $\dot{\bsigma}, \dot{\btau} \notin  \{\rr\rr^=, \rr\rr^{\neq} \}$, then
		\begin{equation}\label{eq:goal-2:2ndmo:aux1}
		\bigg|\frac{\boldsymbol{\mu}_{\dot{\bq}}(\dot{\btau})}{\boldsymbol{\mu}_{\dot{\bq}^\star}(\dot{\btau})}-\frac{\boldsymbol{\mu}_{\dot{\bq}}(\dot{\bsigma})}{\boldsymbol{\mu}_{\dot{\bq}^\star}(\dot{\bsigma})}\bigg| \le \frac{k^2}{2^k} || \dot{\bq} - \dot{\bq}^\star||_\ff,
		\end{equation}
		based on the same argument as that from Proposition \ref{prop:1stmo:Lipschitz:hdot:qdot}. The only difference is that in the pair model, we do not have the same cancellation property as \eqref{eq:1stmo:Lipschitz:f:expression} and the explanation below it. However, if $\dot{\bsigma}, \dot{\bsigma } \notin  \{\rr\rr^=, \rr\rr^{\neq} \}$, then except $O(k2^{-k})$ of the total contribution from the case $\btau^{-1}, \bsig^{-1} \in \{\bb\bb^=, \bb\bb^{\neq} \}^{k-1}$ gets cancelled out by the same argument, and hence we get the upper bound \eqref{eq:goal-2:2ndmo:aux1}.
		
		On the other hand, if $\dot{\bsigma}  $ or  $\dot{\btau} \in  \{\rr\rr^=, \rr\rr^{\neq} \}$, we can obtain that
			\begin{equation}\label{eq:goal-2:2ndmo:aux2}
			\bigg|\frac{\boldsymbol{\mu}_{\dot{\bq}}(\dot{\btau})}{\boldsymbol{\mu}_{\dot{\bq}^\star}(\dot{\btau})}-\frac{\boldsymbol{\mu}_{\dot{\bq}}(\dot{\bsigma})}{\boldsymbol{\mu}_{\dot{\bq}^\star}(\dot{\bsigma})}\bigg| \le \frac{k^2}{2^k} || \dot{\bq} - \dot{\bq}^\star||_\ff + Ck | \dot{\bq}(\bb\bb^=) - \dot{\bq}(\bb\bb^{\neq})|,
			\end{equation}
		by performing the same analysis as \eqref{eq:Lipschitz:finalgoal:firststep} and the analysis below, using \eqref{eq:Lipschitz:useful:ineq}. Note that  if $\btau = \rr\rr^=$, then $\underline{\btau} = (\rr\rr^=, (\bb\bb^=)^{k-1})$ is the only configuration that can contribute to \eqref{eq:def:mu:qdot:2nd}.
		\end{proof}
		
		The remaining goal is to establish Lemma \ref{lem:2ndmo:Lip:aux2}. For a collection of $k$ probability measures $\underline{\dot{\bq}} = ( \dot{\bq}_1 , \ldots, \dot{\bq}_k)$ on $\dot{\Omega}_{\pj, L}$, and $j,l\in[k]$, we define the probability measures $\dot{\bh}_j[\underline{\dot{\bq}}]$ on  $\dot{\Omega}_{\pj, L}$ and $\dot{\bh}_{j,l}[\underline{\dot{\bq}}]$ on  $\dot{\Omega}_{\pj, L}^2$ as
		\begin{equation}\label{eq:2ndmo:Lip:hdot:generalq}
		 \begin{split}
		&  \dot{\bh}_j(\dot{\bsigma}) = \dot{\bh}_j[\underline{\dot{\bq}}](\dot{\bsigma})
		  := 
		  \frac{\dot{\bq}_j(\dot{\bsigma})}{Z^j_{\underline{\dot{\bq}}}} \sum_{\bsig \in \Omega_{\pj, L}^k, \dot{\bsigma}_j=\dot{\bsigma}}\hat{\Phi}_2(\bsig)^\la \prod_{i\neq j}\dot{\bq}(\dot{\bsigma}_i) ,\quad\textnormal{for}\quad \dot{\bsigma} \in \dot{\Omega}_{\pj, L};\\
		 & \dot{\bh}_{j,l}[\underline{\dot{\bq}}](\dot{\bsigma}, \dot{\bsigma}')
	  := 
		  \frac{\dot{\bq}_j(\dot{\bsigma}) \dot{\bq}_l(\dot{\bsigma}')}{Z^{j,l}_{\underline{\dot{\bq}}}} \sum_{\substack{\bsig \in \Omega_{\pj, L}^k, \\ \dot{\bsigma}_j=\dot{\bsigma}, \dot{\bsigma}_l = \dot{\bsigma}' }}\hat{\Phi}_2(\bsig)^\la \prod_{i\neq j,l}\dot{\bq}(\dot{\bsigma}_i) ,\quad\textnormal{for}\quad \dot{\bsigma}, \dot{\bsigma}' \in \dot{\Omega}_{\pj, L},
		 \end{split}
		\end{equation}
		where $Z_{\underline{\dot{\bq}}}^j, Z_{\underline{\dot{\bq}}}^{j,l}$ are the normalizing constants. We compute how much $\dot{\bh}_j$ changes as we vary the input $\dot{\bq}_l$. For $\delta\in \mathbb{R}$, $j,l \in[k]$ and $\dot{\btau} \in \dot{\Omega}_{\pj,L}$, let $\dot{\bq}_l^{\dot{\btau}:\delta}  $ be the measure defined as $\dot{\bq}_l^{\dot{\btau}:\delta} (\dot{\bsigma}) = \dot{\bq}_l(\dot{\bsigma})$ for all $\dot{\bsigma} \neq \dot{\btau}$, and
		\begin{equation*}
		\dot{\bq}_l^{\dot{\btau}:\delta} (\btau) = \dot{\bq}_l(\dot{\btau}) + \delta.
		\end{equation*}
		We also write $\underline{\dot{\bq}}^{l, \dot{\btau}:\delta} = (\dot{\bq}_1,\ldots, \dot{\bq}_l^{\dot{\btau}:\delta} , \ldots, \dot{\bq}_k)$, that is, switching the $l$-th coordinate of $\underline{\bq}$ to $\dot{\bq}_l^{\dot{\btau}:\delta} $. Although $\dot{\bq}_l^{\dot{\btau}:\delta} $ is not a probability measure anymore, we can define $\dot{\bh}_j[\underline{\dot{\bq}}^{l, \dot{\btau}:\delta} ]$ the same as above. We define the derivative
		\begin{equation*}
		\frac{\partial \dot{\bh}_j(\dot{\bsigma})}{\partial \dot{\bq}_l(\dot{\btau} )} := \lim_{\delta \to 0} \frac{1}{\delta} \left[ \dot{\bh}_j[\underline{\dot{\bq}}^{l, \dot{\btau}:\delta} ] (\dot{\bsigma})  - \dot{\bh}_j[\underline{\dot{\bq}}](\dot{\bsigma})\right].
		\end{equation*}
		Then, we have the following estimates on the derivatives of $\dot{\bh}_j$.
		
		\begin{lemma}\label{lem:2ndmo:Lip:deriv}
			Let $\dot{\bq}_1, \ldots, \dot{\bq}_k$ be the $k$ probability measures as above, where each of them satisfies \eqref{eq:2ndmo:Lip:assumption}, and let $\dot{\bh}_1 = \dot{\bh}[\underline{\dot{\bq}}]$ as above. Then, for $l \neq 1$, we have for all $\dot{\bsigma},\dot{\btau} \in \dot{\Omega}_{\pj, L}$ that
			\begin{equation}\label{eq:2ndmo:Lip:deriv:diff}
				\frac{\partial \dot{\bh}_1(\dot{\bsigma})}{\partial \dot{\bq}_l(\dot{\btau} )} = \frac{\dot{\bh}_{1,j}(\dot{\bsigma}, \dot{\btau}) - \dot{\bh}_1(\dot{\bsigma}) \dot{\bh}_l(\dot{\btau}) }{\dot{\bq}_l(\dot{\btau})} = O(k 2^{-k}).
			\end{equation}
			When $l = 1$, we have
			\begin{equation}\label{eq:2ndmo:Lip:deriv:same}
				\frac{\partial \dot{\bh}_1(\dot{\bsigma})}{\partial \dot{\bq}_1(\dot{\btau} )} = \frac{\dot{\bh}_1(\dot{\bsigma}) \mathds{1}\{\dot{\bsigma} = \dot{\btau} \} }{\dot{\bq}_1(\dot{\btau}) } - \frac{ \dot{\bh}_1(\dot{\bsigma}) \dot{\bh}_1(\dot{\btau}) }{ \dot{\bq}_1(\dot{\btau}) }.
			\end{equation}
			In particular, we have
			\begin{equation}\label{eq:2ndmo:Lip:deriv:same:estim}
				\frac{\partial \dot{\bh}_1(\bb\bb^= )}{\partial \dot{\bq}_1(\dot{\btau} )} =
				\begin{cases}
				2 + O(k2^{-k}) &  \dot{\btau} = \bb\bb^=;\\
				-2 + O(k2^{-k}) & \dot{\btau} = \bb\bb^{\neq};\\
				O(1) & \dot{\btau} \notin \Omega_{\pj}^{\fs};\\
				O(k2^{-k}) & \textnormal{otherwise}.
				\end{cases}
			\end{equation}
		\end{lemma}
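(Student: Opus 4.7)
The identities \eqref{eq:2ndmo:Lip:deriv:diff} and \eqref{eq:2ndmo:Lip:deriv:same} will follow from direct differentiation. Let $Z = \sum_{\bsig \in \Omega_{\pj,L}^k} \hat{\Phi}_2(\bsig)^\la \prod_{i=1}^k \dot{\bq}_i(\dot{\bsigma}_i)$ and $N_j(\dot{\bsigma}) = \sum_{\bsig:\, \dot{\bsigma}_j = \dot{\bsigma}} \hat{\Phi}_2(\bsig)^\la \prod_{i \neq j} \dot{\bq}_i(\dot{\bsigma}_i)$, so that $\dot{\bh}_j(\dot{\bsigma}) = \dot{\bq}_j(\dot{\bsigma}) N_j(\dot{\bsigma})/Z$. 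For $l \neq 1$, only $N_1$ and $Z$ depend on $\dot{\bq}_l(\dot{\btau})$; writing $M(\dot{\bsigma}, \dot{\btau}) := \sum_{\bsig:\, \dot{\bsigma}_1 = \dot{\bsigma},\, \dot{\bsigma}_l = \dot{\btau}} \hat{\Phi}_2(\bsig)^\la \prod_{i \neq 1, l} \dot{\bq}_i(\dot{\bsigma}_i)$, one checks that $\partial_{\dot{\bq}_l(\dot{\btau})} N_1(\dot{\bsigma}) = M(\dot{\bsigma}, \dot{\btau})$ and $\partial_{\dot{\bq}_l(\dot{\btau})} Z = Z\, \dot{\bh}_l(\dot{\btau}) / \dot{\bq}_l(\dot{\btau})$, and the quotient rule delivers the first equality in \eqref{eq:2ndmo:Lip:deriv:diff}. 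For $l = 1$, the extra factor $\dot{\bq}_1(\dot{\bsigma})$ in the numerator generates the $\mathds{1}\{\dot{\bsigma} = \dot{\btau}\}$ term, yielding \eqref{eq:2ndmo:Lip:deriv:same}.

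The bulk of the work lies in the $O(k\, 2^{-k})$ estimate in \eqref{eq:2ndmo:Lip:deriv:diff}, which amounts to showing approximate independence of the spins $\dot{\bsigma}_1$ and $\dot{\bsigma}_l$ under the joint distribution $\dot{\bh}_{1,l}$. Under \eqref{eq:2ndmo:Lip:assumption}, each $\dot{\bq}_i$ places mass $\frac{1}{4} + O(2^{-k})$ on each of $\bb\bb^=$ and $\bb\bb^{\neq}$, so the dominant contribution to both $Z$ and $M(\dot{\bsigma}, \dot{\btau})$ comes from configurations $\bsig \in \{\bb\bb^=, \bb\bb^{\neq}\}^k$. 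On such configurations, the clause weight decomposes via (the pair-model analogue of) Lemma \ref{lem:decompose:Phi:hat} as $\hat{\Phi}_2(\bsig)^\la = \hat{\Phi}_2^{\textnormal{m}}(\bsig)^\la \hat{v}_2(\bsig)$, with $\hat{\Phi}_2^{\textnormal{m}}$ constant and $\hat{v}_2$ differing from $1$ by $O(2^{-k})$, so that the covariance $\dot{\bh}_{1,l}(\dot{\bsigma}, \dot{\btau}) - \dot{\bh}_1(\dot{\bsigma}) \dot{\bh}_l(\dot{\btau})$ is controlled by the probability of seeing a non-blue coordinate among positions other than $1$ and $l$. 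A union bound over the $k - 2$ remaining positions yields a contribution of $O(k/2^k) \cdot \dot{\bq}_l(\dot{\btau})$; division by $\dot{\bq}_l(\dot{\btau})$ gives the desired bound. For $\dot{\btau}$ carrying a free structure, one additionally uses the exponential tail decay inherited from Corollary \ref{cor:hdot:exptail:q:b:lowerbd} to match the smallness of $\dot{\bq}_l(\dot{\btau})$ in the denominator with a corresponding smallness in the numerator.

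The explicit values in \eqref{eq:2ndmo:Lip:deriv:same:estim} will then be read off from \eqref{eq:2ndmo:Lip:deriv:same} by substituting the approximate values $\dot{\bq}_1(\bb\bb^=) = \dot{\bq}_1(\bb\bb^{\neq}) = 1/8 + O(2^{-k})$ (a direct consequence of Lemma \ref{lem:1stmo:BPfixedpoint:estimates} and the product structure $\dot{\bq}^\star = \dot{q}^\star_{\la^1,L} \otimes \dot{q}^\star_{\la^2,L}$) and $\dot{\bh}_1(\bb\bb^=) = \dot{\bh}_1(\bb\bb^{\neq}) = 1/2 + O(k\, 2^{-k})$ (obtained by a perturbative computation using \eqref{eq:2ndmo:Lip:hdot:generalq} and the recipe of the previous paragraph). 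These values give $(1/2)/(1/8) - (1/2)^2/(1/8) = 2$ at $\dot{\btau} = \bb\bb^=$ and $-(1/2)(1/2)/(1/8) = -2$ at $\dot{\btau} = \bb\bb^{\neq}$, each up to error $O(k\, 2^{-k})$. For $\dot{\btau} \notin \Omega_{\pj}^{\fs}$ the ratio $\dot{\bh}_1(\dot{\btau})/\dot{\bq}_1(\dot{\btau}) = O(1)$ (both decay exponentially in tree size at comparable rates by the BP recursion), producing $O(1)$; for $\dot{\btau} \in \Omega_{\pj}^{\fs}$ containing a red spin, one uses $\dot{\bh}_1(\dot{\btau}) = O(k\, 2^{-k})$ together with $\dot{\bq}_1(\dot{\btau}) = \Theta(1)$ to obtain $O(k\, 2^{-k})$.

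The main obstacle will be the case analysis required to establish the approximate independence bound of the second paragraph uniformly over all boundary spin types $\dot{\btau}$, and in particular handling the contributions from configurations with reds or free spins at positions other than $1$ and $l$; this parallels (and in fact simplifies) the combinatorial counting in the proof of Lemma \ref{lem:partial:Lambda:upper:bound}.
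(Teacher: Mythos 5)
Your proof is correct and follows essentially the same approach as the paper's: the identities come from direct differentiation of \eqref{eq:2ndmo:Lip:hdot:generalq}, the $O(k2^{-k})$ bound from the dominance of $\{\bb\bb^=,\bb\bb^{\neq}\}^{k-2}$ configurations in the remaining coordinates (yielding approximate factorization $\dot{\bh}_{1,l}\approx\dot{\bh}_1\dot{\bh}_l$ together with $\dot{\bh}_l(\dot{\btau})\lesssim\dot{\bq}_l(\dot{\btau})$), and the explicit values in \eqref{eq:2ndmo:Lip:deriv:same:estim} from substituting the asymptotics of $\dot{\bq}_1$ and $\dot{\bh}_1$ near the BP fixed point. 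One small slip: in your second paragraph you write that each $\dot{\bq}_i$ places mass $\tfrac14+O(2^{-k})$ on each of $\bb\bb^=$ and $\bb\bb^{\neq}$, whereas the correct value (which you then use in the third paragraph) is $\tfrac18+O(2^{-k})$.
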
 
		
		\begin{proof}
			The first identity of \eqref{eq:2ndmo:Lip:deriv:diff} and \eqref{eq:2ndmo:Lip:deriv:same} can directly be obtained from differentiating \eqref{eq:2ndmo:Lip:hdot:generalq}. For $j\neq 1$ and $\dot{\bsigma}, \dot{\btau} \notin \{\rr\rr^=, \rr\rr^{\neq} \}$, we note that 
			$(1-O(k2^{-k}))$ of the contribution to $\dot{\bh}_{1,l}(\dot{\bsigma}, \dot{\btau})$ comes from  $\dot{\bsig} \in \{ (\dot{\bsigma}, \dot{\btau} ) \} \times \{ \bb\bb^=, \bb\bb^{\neq} \}^{k-2}$, due to the assumption \eqref{eq:2ndmo:Lip:assumption}. Thus, in such a case, we have
			\begin{equation*}
			\dot{\bh}_{1,l}(\dot{\bsigma}, \dot{\btau}) = \left(1+ O(k2^{-k}) \right) \dot{\bh}_{1}(\dot{\bsigma}) \dot{\bh}_{l}( \dot{\btau}).
			\end{equation*}
			Due to the same reason it is straight-forward to see $\dot{\bh}_l(\dot{\btau}) \le C \dot{\bq}_l(\dot{\btau})$, and hence this gives the second identity of \eqref{eq:2ndmo:Lip:deriv:diff} for $\dot{\bsigma}, \dot{\btau} \notin \{\rr\rr^=, \rr\rr^{\neq} \}$. Establishing the identity for the case $\dot{\bsigma}$ or $\dot{\btau} \in  \{\rr\rr^=, \rr\rr^{\neq} \}$ is more straight-forward and we omit the details. \eqref{eq:2ndmo:Lip:deriv:same:estim} also follows from the same idea, applied to the formula \eqref{eq:2ndmo:Lip:deriv:same}. We note that 
			\begin{equation*}
			\dot{\bh}_1(\bb\bb^=) = \frac{1}{2} + O(k2^{-k}), \quad \dot{\bq}_1(\bb\bb^=) = \frac{1}{8} + O(2^{-k}) = \dot{\bq}_1(\bb\bb^{\neq}),
			\end{equation*}
			which gives the first two estimates of \eqref{eq:2ndmo:Lip:deriv:same:estim}. We leave the rest of the details to the interested reader.
		\end{proof}
		
		\begin{proof}[Proof of Lemma \ref{lem:2ndmo:Lip:aux2}]
			It suffices to show that
			\begin{equation*}
			C | \dot{\bh}(\bb\bb^=) - \dot{\bh}^\star(\bb\bb^=)| \ge | \dot{\bq}(\bb\bb^=) - \dot{\bq}(\bb\bb^{\neq})| - \frac{k^2}{2^k} || \dot{\bq} - \dot{\bq}^\star ||_\ff.
			\end{equation*}
			Observe that $\dot{\bh}$ can be written as follows using the above notation:
			\begin{equation*}
			\dot{\bh} = \dot{\bh}_1[ \dot{\bq}, \ldots, \dot{\bq}, \BP\dot{\bq}].
			\end{equation*} 
			Hence, from the derivatives of $\dot{\bh}$ and the mean value theorem,  we rewrite as
			\begin{equation}\label{eq:2ndmo:Lip:hdotdiff:expansion:deriv}
			  \dot{\bh}(\bb\bb^=) - \dot{\bh}^\star(\bb\bb^=)
			 =
			 \sum_{l=1}^k \sum_{\dot{\btau} \in\dot{\Omega}_{\pj,L} } \frac{\partial \dot{\bh}_1(\bb\bb^=)}{ \partial \dot{\bq}_j(\dot{\btau}) } ( \dot{\bq}_j(\dot{\btau}) - \dot{\bq}^\star(\dot{\btau}) ),
			\end{equation}
			where $\dot{\bq}_j = \dot{\bq}$ for $j \le k-1$ and $\dot{\bq}_k = \BP\dot{\bq}$. Here, when applying the mean value theorem, in principle we need to be precise on which point $\underline{\dot{\bq}}$ we evaluate the derivatives. However, each derivative has the same size scale for all $\underline{\dot{\bq}}$ with \eqref{eq:2ndmo:Lip:assumption} as given in Lemma \ref{lem:2ndmo:Lip:deriv}, we slightly abuse the notation as above.
			
			Then, we can estimate the \textsc{rhs} of \eqref{eq:2ndmo:Lip:hdotdiff:expansion:deriv} using the previous lemma, by
			\begin{equation*}
\begin{split}
			|\dot{\bh}(\bb\bb^=) - \dot{\bh}^\star(\bb\bb^{=}) | \ge& 2| (\dot{\bq}_1(\bb\bb^=) - \dot{\bq}^\star(\bb\bb^= )) - ( \dot{\bq}_1(\bb\bb^{\neq}) - \dot{\bq}^\star(\bb\bb^{\neq} ) ) | - \frac{k}{2^k} \sum_{j=2}^k || \dot{\bq}_j - \dot{\bq}^\star ||_1 - \frac{k}{2^k} || \dot{\bq}_1 -\dot{\bq}^\star ||_\ff \\
			\ge& 2 |\dot{\bq}(\bb\bb^=) -\dot{\bq}(\bb\bb^{\neq}) | - \frac{k^2}{2^k} || \dot{\bq} - \dot{\bq}^\star ||_\ff,
\end{split}
			\end{equation*}
			concluding the proof.
					\end{proof}

\section{The second moment in the correlated regime}\label{subsec:app:2ndmo:corr}

In this section, we provide the proof of Proposition \ref{prop:2ndmo:correlated overlap} and Lemma \ref{lem:identical regime decay estim}. Throughout the proof, note that for any $\lambda \in [0,1]$,
	\begin{equation*}
	\E \bZ_\lambda \le \E\bZ_1 = 2^n \left(1-2^{-k-1}\right)^m = \exp\left(O(n/2^k) \right).
	\end{equation*}
	Moreover, $\bZ_0$ corresponds to the total number of clusters (without the size restriction). We also define $\E \bZ^2_{0, \textnormal{id}}$ and $\E \bZ^2_{0, \textnormal{int}}$ analogously to \eqref{eq:def:corrN}. 
	
	We begin with establishing the first statement of Proposition \ref{prop:2ndmo:correlated overlap}.

\begin{proof}[Proof of Proposition \ref{prop:2ndmo:correlated overlap}, Part 1] Note that for any $\bs\in [0,\log2)^2$, we have $\bN_{\bs, \textnormal{int}}^2\leq \bZ_{0,\textnormal{int}}^2 $. Thus, it suffices to show $\E\bZ_{0,\textnormal{int}}^2\leq e^{-\Omega(nk^2 2^{-k})}$. From the proof of \cite[Proposition 1.1]{dss16}, we have
	\begin{equation*}
	\E \bZ_{0,\textnormal{int}}^2 \le  \sup_{2^{-\frac{3k}{4}} \le  \zeta \le 1- k^22^{-\frac{k}{2} }} \exp \left(n\left(\Phi + \bar{\textnormal{\textbf{a}}}(\zeta) + O(k2^{-k})\right) \right),
	\end{equation*}
	where $\Phi$ and $\bar{\textnormal{\textbf{a}}}(\zeta)$ be defined as in its proof (we use $\zeta$ instead of $\alpha$ in \cite{dss16}).  In particular, $\Phi = \Phi(d) := \log 2 + \frac{d}{k} \log(1-2^{-k-1}) = O(2^{-k})$. In the proof of \cite[Proposition 1.1]{dss16}, they showed that
	\begin{equation*}
	\sup\left\{ \bar{\textnormal{\textbf{a}}}(\zeta) -\Phi : 2^{-\frac{3k}{4}} \le  \zeta\le k^{-\frac{4}{5}} \right\} \lesssim -k 2^{-\frac{3k}{4}}.
	\end{equation*}
	Further, in the same proof, we have $ \bar{\textnormal{\textbf{a}}}''(\zeta)<-3$ on $\frac{\log^2 k}{k} \le \zeta \le 1-\frac{\log^2 k}{k}$ with $\bar{\textnormal{\textbf{a}}}(\frac{1}{2}) = \Phi$ gives that
	\begin{equation*}
	\sup\left\{ \bar{\textnormal{\textbf{a}}}(\zeta) -\Phi : k^22^{-\frac{k}{2}} \le | 2\zeta - 1 | \le 1-2^{-\frac{3k}{4}} \right\} \lesssim -k^2 2^{-k}.
	\end{equation*}
	This concludes the proof for the intermediate regime.
\end{proof}

Since the second statement of Proposition \ref{prop:2ndmo:correlated overlap} is a direct consequence of Lemma \ref{lem:identical regime decay estim}, we focus on establishing the latter. Moreover, the truncated model will follow the same proof as in the untruncated model, so we focus on the untruncated model. The conclusion will be obtained as a consequence of Lemmas 4.8 and 4.9 of \cite{dss16}. These two lemmas have shown that $\E \bZ_{0,\textnormal{id}}^2 \lesssim \E \bZ_0$. Our conclusion will follow by observing that the argument from those lemmas can be applied analogously to $\E \bN_{\us, \textnormal{id}}^2$. Without loss of generality, we work with the case where $\pi(\srr\srr^{\neq}) < \pi(\srr\srr^=)$.

In \cite{dss16}, \eqref{eq:2ndmo:identical:mainlem} (without restricting the sizes to be $\us_n$) followed from the estimate on the following type of quantity:
\begin{equation}\label{eq:2ndmo:identical:goal}
\P \left(\left. {\ux}^2 \textnormal{ is valid} \right| \, {\ux}^1 \textnormal{ is valid}\, \right).
\end{equation}
Note that the argument given in \cite{dss16} holds for any $\ux^1 \in \{0,1,\ff \}^V$, i.e., the frozen configuration of the first copy, as long as its number of free variables is equal to the prescribed amount. However, their proof relies on the uniform random matching of half-edges, which is not directly generalizable to our case: we also have a prescribed size $s^1$ of $\ux^1$ that prevents us from exploiting the randomness of a  uniform perfect matching.

Therefore, our approach is to not only condition on $\ux^1 \in \{0,1,\ff \}^V$ being a valid frozen configuration, but also prescribe the connections between the free variables so that its size is equal to $s^1$. This will be done in the same fashion as Proposition \ref{prop:1stmo:B nt decomp}. However, we get rid of the constraints on the size of the second copy; without loss of generality we can assume $\pi_n(\ff\srr) \le \pi_n(\srr\ff)$, and in such a case  it suffices to show 
\begin{equation*}
\E \bN_{s_n^1}^2[\pi_n] :=	\sum_{s_n^2}\E \bN_{(s_n^1, s_n^2)}^2[\pi_n] \le \widetilde{C} 2^{-k\Delta/10} \left(\E \bN_{s_n^1}[\pi_n^1] \right) + e^{-cn2^{-k/2}},
\end{equation*}
instead of \eqref{eq:2ndmo:identical:mainlem}. 
Following this plan, we fix the $\{\srr,\ff \}^2$-configuration $(\underline{\eta}^1,\underline{\eta}^2)\in\{\srr,\ff \}^{2V}$ of both copies, but without a restriction on the size of the second copy. 

For a formal discussion, we introduce the notion of \textit{marked free component} and \textit{marked profile} as follows.

\begin{defn}
	Let $\fff \in \FFF$ be a free component (Definition \ref{def:freecomp:basic}).  A \textbf{marked free component}  is a pair $\underline{\fff} := (\fff, \mm)$ where $\mm \in \{\srr, \ff \}^{V(\fff)}$ illustrates an additional $\{\srr,\ff \}$-labeling on the variables of $\fff$. $\mm \in \{\srr, \ff \}^{V(\fff)}$ describes the prescribed frozen configuration of the second copy on $\fff$ which is the free component of the first copy, where $\mm_v=\srr, v\in V(\fff)$ means that $v$ is a frozen variable in the second copy. We denote the space of marked free components by $\FFF_{\textnormal{m}}$, and let $\FFF_{\textnormal{m}}^{\tr}$ be its subspace of marked free components whose graphical structure is a tree. For a marked free component $\underline{\ttt} = (\ttt, \mm)\in \FFF_{\textnormal{m}}^{\tr}$, we denote its free component part (resp.~marking on the component) by $\fff(\underline{\ttt}):= \ttt$ (resp.~$\mm(\underline{\ttt}):= \mm$). Moreover, let $v_\srr(\underline{\ttt})$ and $v_\ff(\underline{\ttt})$ denote the number of variables in $\fff(\underline{\ttt})$ that are marked as $\srr$ and $\ff$ by $\mm(\underline{\ttt})$, respectively.
	
	Let $\pi$ be a probability measure on $\{\srr\srr^=, \srr\srr^{\neq}, \srr\ff,\ff\srr, \ff\ff \}$. A \textbf{marked profile} is a tuple $(\pi, \{n_{\underline{\ttt}} \}_{\FFF_{\textnormal{m}}^{\tr}} )$ satisfying the compatibility condition given by
	\begin{equation*}
	\begin{split}
	\pi_{\ff\srr} = \frac{1}{n} \sum_{\underline{\ttt}} v_{\srr}(\underline{\ttt}) n_{\underline{\ttt}}; \quad \pi_{\ff\ff} = \frac{1}{n} \sum_{\underline{\ttt}} v_\ff(\underline{\ttt}) n_{\underline{\ttt}}.
	\end{split}
	\end{equation*}
\end{defn}

For a marked free component $\underline{\fff} = (\fff, \mm)$, let $\textsf{p}(\underline{\fff}) := \fff$. For a marked profile $(\pi, \{n_{\underline{\ttt}} \})$, we define
\begin{equation*}
\Delta_{\ttt} = \Delta_{\ttt}(\{n_{\underline{\ttt}} \}) := \sum_{\underline{\ttt}: \textsf{p}(\underline{\ttt}) = \ttt} n_{\underline{\ttt}} \mathds{1}\{ v_{\srr}(\underline{\ttt}) \ge 1 \},
\end{equation*}
that is, the number of free trees $\ttt$ in the first copy that have at least one frozen variable in their second copy. Then, it is clear that
\begin{equation*}
\Delta[\pi ] = n(\pi_{\srr\srr^{\neq}} + \pi_{\srr\ff} + \pi_{\ff\srr}) \le n(\pi_{\srr\srr^{\neq}} + \pi_{\srr\ff}) + \sum_{\ttt} v(\ttt) \Delta_\ttt =: \hat{\Delta}[\pi, \{n_{\underline{\ttt}} \}].
\end{equation*}
We will show a stronger version of Lemma \ref{lem:identical regime decay estim}, in terms of $\hat{\Delta}[\pi, \{n_{\underline{\ttt}} \} ]$ instead of $\Delta[\pi]$. To this end, we carry out our analysis under a fixed (first-copy) free tree profile $\{n_\ttt \}$ and the number of trees $\{\Delta_\ttt \}$ that contain a frozen second-copy variable. For $\pi$, we define $\pi_{\srr \bullet} := (\pi_{\srr\srr^=}, \pi_{\srr\srr^{\neq}},  \pi_{\srr\ff})$. For a given tuple $(\pi_{\srr \bullet}, \{n_\ttt \} , \{\Delta_\ttt \})$, we then have $\hat{\Delta} = \hat{\Delta}[\pi_{\srr \bullet}, \{\Delta_\ttt \}] $ given as above. For a marked profile $(\pi, \{n_{\underline{\ttt}} \})$, we write $(\pi, \{n_{\underline{\ttt}} \}) \sim (\pi_{\srr \bullet}, \{n_\ttt \}, \{\Delta_\ttt \})$ to indicate its compatibility with $\pi_{\srr \bullet}$ and $\{\Delta_\ttt \} = \{\Delta_{\ttt} (\{n_{\underline{\ttt}} \}) \}$.

\begin{lemma}\label{lem:identical:aux}
	Let $\{n_\ttt \}$ be an arbitrarily given free tree profile satisfying \eqref{eq:def:exp:decay:profile}, and let $\pi_{\srr \bullet}$ be a measure on $\{\srr\srr^=, \srr\srr^{\neq}, \srr\ff \}$ whose total mass is equal to $1- \frac{1}{n}\sum_{\ttt} n_\ttt$ and $\pi_{\srr\ff} \ge \frac{1}{n} \sum_{\ttt} n_\ttt$ (i.e., $\pi_{\srr \ff} \ge \pi_{\ff\srr}$). Moreover, let $\{\Delta_\ttt \}$ be an arbitrarily given tuple of numbers such that $\Delta_\ttt \le n_\ttt$ and $\hat{\Delta}= \hat{\Delta}[\pi_{\srr\bullet}, \{\Delta_\ttt \}] \le n/2^{k/2}$. Also, define $s_n^1 =\frac{1}{n} \sum_{\underline{\ttt}}  n_{\underline{\ttt}}\log w^{\lit}(\ttt)$. Then, we have
	\begin{equation*}
	\E \bN_{s_n^1}^2[\pi_{\srr \bullet},\{n_{{\ttt}} \}, \{\Delta_\ttt \}] \le \widetilde{C}2^{-ck\hat{\Delta}} \E \bN_{s_n^1}[\pi^1] + e^{-cn/2^{k/2}},
	\end{equation*}
	where $\widetilde{C}, c>0$ are absolute constants independent of $k$.
\end{lemma}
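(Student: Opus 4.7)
The plan is to reduce the lemma to a conditional probability estimate: given a valid first-copy frozen configuration $\ux^1$ of the prescribed profile and size, bound the probability that the second copy $\ux^2$ determined by a compatible marked profile is also valid. Concretely, I would sum $\E\bN^2_{s_n^1}[\pi_{\srr\bullet},\{n_\ttt\},\{\Delta_\ttt\}]$ over marked profiles $(\pi,\{n_{\underline{\ttt}}\}) \sim (\pi_{\srr\bullet},\{n_\ttt\},\{\Delta_\ttt\})$. Using the approach of Proposition \ref{prop:1stmo:B nt decomp} adapted to \emph{marked} free trees (where each internal variable carries an additional $\srr$ or $\ff$ label for the second copy), I would write
\begin{equation*}
\E\bN^2_{s_n^1}[\pi_{\srr\bullet},\{n_\ttt\},\{\Delta_\ttt\}] \;=\; \Bigl(\text{first-copy combinatorial factor}\Bigr)\cdot\Bigl(\text{second-copy conditional validity}\Bigr),
\end{equation*}
where the first-copy factor, summed over marked profiles with marginal $\{n_\ttt\}$, reproduces $\E\bN_{s_n^1}[\pi^1]$ exactly (the size constraint $s_n^1$ depends only on the first-copy tree profile). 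Thus the lemma reduces to showing that the conditional validity probability is at most $\widetilde C \, 2^{-ck\hat\Delta}+e^{-cn/2^{k/2}}/\E\bN_{s_n^1}[\pi^1]$.

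To bound the conditional validity, I would follow the random-matching argument of \cite[Lemmas 4.8, 4.9]{dss16}, now conditioned not on arbitrary $\ux^1$ but on the finer event that $\ux^1$ has the prescribed free tree profile $\{n_\ttt\}$ and values at the frozen variables. The benefit of this finer conditioning is that, apart from the constraints that force the first copy to be valid, the remaining matching randomness is still close to uniform among matchings that respect $\{n_\ttt\}$; \cite[Lemma 4.8]{dss16} applies once we verify that the empirical profile of second-copy frozen variables is typical under this conditional measure, and the atypical event of concentration contributes the error $e^{-cn/2^{k/2}}$ by standard large deviation. The decay factor $2^{-ck\hat\Delta}$ then comes from the same per-site analysis as in \cite[Lemma 4.9]{dss16}: each of the $n\pi_{\srr\srr^{\neq}}+n\pi_{\srr\ff}$ copy-level disagreements costs $2^{-\Omega(k)}$ because the second copy must find a forcing clause (with the correct literal pattern) at that variable, and each tree $\ttt$ with $\Delta_\ttt\geq 1$ contributes an additional $2^{-\Omega(kv(\ttt))\Delta_\ttt}$ because every formerly-free variable in such a tree that becomes $\srr$ in the second copy demands a specific separating-clause structure there; summing these costs reproduces $\hat\Delta = n(\pi_{\srr\srr^{\neq}}+\pi_{\srr\ff})+\sum_\ttt v(\ttt)\Delta_\ttt$.

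The main obstacle will be carrying the random-matching estimate of \cite{dss16} through while the first-copy tree profile $\{n_\ttt\}$ is prescribed, since their argument treats the full matching as uniform. I expect to resolve this by noting that fixing $\{n_\ttt\}$ only removes a polynomial (in $n$) amount of matching freedom, and the half-edges adjacent to frozen/separating vertices of the first copy remain effectively uniformly matchable for the purpose of producing forcing clauses in the second copy. Concretely, I would isolate the free-tree parts of the graph (where the matching is determined by the tree profile) and argue that the remaining frozen-boundary matching is uniform conditional on the prescribed marginal; the per-site $2^{-\Omega(k)}$ estimates in \cite[Lem.~4.9]{dss16} then go through unchanged. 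A secondary technical point is that, because we do not restrict $s_n^2$, we must sum an unrestricted generating series in the second copy; this is harmless provided we exploit the exponential tail of $\{n_\ttt\}$ from \eqref{eq:def:exp:decay:profile} to bound the total weight contribution of second-copy tree structures uniformly, which is exactly the mechanism controlling the failure probability $e^{-cn/2^{k/2}}$.
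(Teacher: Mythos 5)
Your high-level reduction — condition on the first-copy profile, isolate the conditional validity of $\ux^2$, then reproduce a $2^{-ck\hat\Delta}$ decay and push atypical events into the $e^{-cn/2^{k/2}}$ error — is aligned with the paper's proof, and your reference to \cite[Lemmas 4.8, 4.9]{dss16} is the right starting point. However, two of your claimed mechanisms are concretely wrong.

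First, you attribute the $2^{-\Omega(kv(\ttt))\Delta_\ttt}$ factor to ``every formerly-free variable in such a tree that becomes $\srr$ in the second copy demands a specific separating-clause structure there.'' That is not how the decay arises. The paper charges only \emph{one} cost per affected tree: the probability that the tree is connected by a clause to a $\srr\srr^{\neq}$-variable, roughly $\Delta_{\srr\srr^{\neq}} v(\ttt)kd/n$, times a single literal-constraint cost $\lesssim 2^{-k}$ — there is no per-variable charge inside the tree. The factor $2^{-ckv(\ttt)\Delta_\ttt}$ comes instead from the exponential tail assumption $n_\ttt \le n 2^{-ckv(\ttt)}$ applied to the binomial $\binom{n_\ttt}{\Delta_\ttt}$ in \eqref{eq:identicalreg:freetreedecay}: one chooses which $\Delta_\ttt$ of the $n_\ttt$ available trees are affected, and the scarcity of large trees does the work. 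In particular, $v(\ttt)\Delta_\ttt$ is deliberately an \emph{over}count of the actual disagreements; the per-variable interpretation you give would require, in effect, that every $\ff\srr$-variable separately encounters a forcing clause in the second copy, which is neither what happens combinatorially nor what the estimate captures.

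Second, your claim that ``fixing $\{n_\ttt\}$ only removes a polynomial (in $n$) amount of matching freedom'' is false — prescribing the full free-tree profile fixes an exponential amount of internal matching — and it also misses the actual mechanism by which the paper adapts the uniform-matching argument of \cite{dss16}. The correct observation, which you need, is structural rather than quantitative: the conditioning events $\Omega_{B,\bx}$, $Q_\srr$, $F_\delta$ only constrain the matching between $\underline{\eta}^1$-forced variables and $\underline{\eta}^1$-forcing clauses, so that \emph{conditionally} on these events the matching between the boundary half-edges of the free trees and the remaining $m - m^1_{\textnormal{for}}$ clauses is genuinely uniform. That independence is what lets you compute the probability of $Q_\ff$ by a direct counting argument. (Relatedly, $\Omega_A$, $\Omega_B$ in \cite[Lemma 4.8]{dss16} are concentration events for the \emph{first}-copy statistics $\nu_{\ge 2}$, $m_{\textnormal{f}}$, $\gamma$, not for the second-copy frozen profile as your proposal suggests.) Without this decomposition, the remaining matching cannot be treated as uniform and the per-site $2^{-\Omega(k)}$ bounds do not carry over.
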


\begin{proof}
    	We begin with  introducing some notations to utilize the results of \cite{dss16}. Let $\underline{\eta}^1$ denote a $\{\srr,\ff \}$-configuration on $\GGG$, and for $1\le j\le k$ let $m\nu_j$ count the number of clauses adjacent to exactly $j$ $\underline{\eta}^1$-free variables. Moreover, let $m_{\textnormal{f}}$ denote the number of $\ux$-forcing clauses, and let $\gamma$ denote the fraction of frozen variables which are $\underline{\eta}^1$-forced at most $\sqrt{k}$ times. We define the event $\Omega_B$ as
	\begin{equation}\label{eq:def:identicalregime:Omegas}
	\begin{split}
	\Omega_B :=\left\{ \left| 1- \frac{m_{\textnormal{f}}}{mk2^{-k+1} } \right| \le 2^{-k/8} \right\} \bigcap \left\{ \gamma \le \frac{k^2}{2^{k/2}} \right\}.
	\end{split}
	\end{equation}
	Let $n_\ff$ denote the number of free variables.  From \cite[Lemma 4.8]{dss16}, we have for any $s$ that
	\begin{equation*}
	\E \left[ \bN_{s}[n_\ff = n\beta];\, (\Omega_B)^{\textsf{c}} \right] \le \E \left[\bZ_0[n_\ff = n\beta] ;\, (\Omega_B)^{\textsf{c}} \right] \le (\E \bN_{s} + 1) \exp \left(-5nk^2 2^{-k} \right).
	\end{equation*}
	The second inequality is due to $\E\bZ_0 \le \exp(O(n2^{-k}))$. Here, we remark that \cite[Lemma 4.8]{dss16} also have a similar bound for the event $\Omega_A:= \{1-\nu_0-\nu_1\leq k^3\beta^2\}$, which is used in the proof of \cite[Lemma 4.9]{dss16}. However, since we impose exponential decay on the free trees such bound for the event $\Omega_A$ is not necessary as seen below.
	
	Let $\underline{\omega} = (\underline{\eta}^1, \underline{\eta}^2) $ denote a pair-$\{\srr, \ff \}$ configuration, with the given empirical measure $\pi$. For $\omega \in \{\srr ,\ff \}^2$, write $V_\omega := \{v\in V: \omega_v = \omega \}$. We also decompose the event $\Omega_B$ into disjoint events $\Omega_{B, \bx}$, where $\bx := (\nu_0, \nu_1, m_{\textnormal{f}}, \gamma)$ is the tuple of quantities defined above with respect to $\ux^1$. Let $p$ denote the fraction of frozen variables in $\ux^1$, and let $\eps$ be the constant satisfying $p\eps = \pi(\srr\srr^{\neq}) + \pi(\srr\ff)$. Define $F_\delta$ to be the event that exactly $np\eps\delta$ variables in $V_{\srr\srr^{\neq}}\cup V_{\srr\ff}$ are $\underline{\eta}^1$-forced $\le \sqrt{k}$ times, and let $m_{\textnormal{for}}^1$ denote the number of $\underline{\eta}^1$-forcing clauses.
	
	Our goal is to investigate the probability \eqref{eq:2ndmo:identical:goal} in terms of the marked free profile. Namely, let $(\pi_{\srr \bullet}, \{n_{{\ttt}} \}, \{\Delta_\ttt\} )$  be given as the assumption, such that $s_n^1 =\frac{1}{n} \sum_{\underline{\ttt}}  n_{\underline{\ttt}}\log w^{\lit}(\ttt)$. Then, we consider the probability
	\begin{equation*}
	\P \left(\left. {\underline{\eta}}^2 \textnormal{ is valid} \right| \,  (\pi_{\srr \bullet}, \{n_{{\ttt}} \}, \{\Delta_\ttt\} ), \, \Omega_{B,\bx}, \, F_\delta, \, m_{\textnormal{for}}^1 \right),
	\end{equation*}
	where the probability is taken over uniform random matching of the half-edges, which is equivalent to studying the partition function $\E \bZ_{\ula}^{\tr}$ with $\ula = (\lambda, 0)$.
	
	Let $Q_\srr$ denote the event that each $\underline{\eta}^1$-forcing clause is incident to at least one other $\{\srr\srr^{\neq}, \srr\ff \}$-variable. Further, let $Q_\ff$ denote the event that for each free tree $\ttt$, there are exactly $\Delta_\ttt$ of them having at least one $\underline{\eta}^2$-forced variable in it. Note that $\{\underline{\eta}^2 \textnormal{ is valid } \} \subset Q_\srr \cap Q_\ff$. Our main goal is to control the probability of $Q_\ff$ conditioned on $\{\underline{\eta}^1 \textnormal{ valid}, \, (\pi_{\srr \bullet}, \{n_{{\ttt}} \}, \{\Delta_\ttt\} ), \, \Omega_{B,\bx}, \, Q_\srr, \, F_\delta, \, m_{\textnormal{for}}^1  \}.$ The probability for $Q_\srr $ can be estimated analogously to \cite[Lemma 4.9]{dss16}.
	
	The events $Q_\srr$, $\Omega_{B,\bx}$, $F_\delta$ are only related to the matching between $\underline{\eta}^1$-forced variables and $m_{\textnormal{for}}^1$ clauses that are $\underline{\eta}^1$-forcing, given these events the matching between the (variable-adjacent) boundary half-edges of the free trees and $m-m_{\textnormal{for}}^1$ clauses that are non-$\underline{\eta}^1$-forcing is made uniformly at random. Moreover, to have event $Q_\ff$, for each $\ttt $, there must be $\Delta_\ttt$ trees among $n_{\ttt}$ of them that have a $\ff\srr$ variable. Note that a free tree which has a $\ff\srr$ variable must be connected to at least one external clause which is $\underline{\eta}^2$-forcing. Such a clause is separating, but non-forcing, in $\underline{\eta}^1$, and forcing in $\underline{\eta}^2$. Thus, the clause must be adjacent to at least one $\srr\srr^{\neq}$-variable, because one of the adjacent rigid variables in $\underline{\eta}^1$ must flip, otherwise it would be non-forcing in $\underline{\eta}^2$. Hence, we bound the conditional probability of $Q_\ff$ as follows:
	\begin{itemize}
		\item For each free tree that has an $\ff\srr$-variable, we select one $\srr\srr^{\neq}$ variable and compute the probability that the two are connected by a clause.
		
		\item Such a clause must be $\underline{\eta}^2$-forcing too. The conditional probability to have a  literal assignment that forces the $\ff\srr$-variable given that it's valid is at most $2^{-k+2}$, since the probability of having valid literal assignments for separating clauses joining a free tree is at least $\frac{1}{2}$.
	\end{itemize}
	Thus, writing $\Delta_{\srr\srr^{\neq}}:= n\pi_{\srr\srr^{\neq}}$, the conditional probability of $Q_\ff$ satisfies
	\begin{equation*}
	\P(Q_\ff \, | \, \underline{\eta}^1 \textnormal{ valid}, \, (\pi_{\srr \bullet}, \{n_{{\ttt}} \}, \{\Delta_\ttt\} ), \, \Omega_{B,\bx}, \, Q_\srr, \, F_\delta, \, m_{\textnormal{for}}^1) 
	\le	\left( 				\prod_{\ttt} {n_\ttt \choose \Delta_\ttt} \right)  \prod_{\ttt} \left(\Delta_{\srr\srr^{\neq }} \frac{ v(\ttt) kd}{n} 2^{-k+2} \right)^{\Delta_\ttt} .
	\end{equation*}
	Abbreviating $\Delta_\ff := \sum_{\ttt} \Delta_\ttt$ and using the bound $n_\ttt \le n 2^{-ckv(\ttt)}$, the above is upper  bounded by
	\begin{equation}\label{eq:identicalreg:freetreedecay}
	\begin{split}
	&	\exp\left(\Delta_\ff \log \Delta_{\srr\srr^{\neq}} -\sum_{\ttt} ckv(\ttt)\Delta_\ttt + \sum_{\ttt} \Delta_\ttt \log \left(\frac{v(\ttt)k^2}{\Delta_\ttt} \right) + O(\Delta_\ff) \right) \\
	&=
	\exp \left( \sum_{\ttt} \Delta_\ttt \log \left( \frac{\Delta_{\srr\srr^{\neq}} v(\ttt)k^2}{2^{ckv(\ttt)/2} \Delta_\ttt } \right)  - \sum_{\ttt} \frac{c}{2}kv(\ttt)\Delta_\ttt \right) \le \exp\left(-\sum_{\ttt} \frac{ck}{2} v(\ttt)\Delta_\ttt  + O(\Delta_{\srr\srr^{\neq}}) \right),
	\end{split}
	\end{equation}
	where the last inequality followed by the fact that $x \log (\frac{a}{x}) \le \frac{a}{e}$. In particular, this holds for all $m_{\textnormal{for}}^1$ and hence we can remove the conditioning on $m_{\textnormal{for}}^1$.
	
	We combine this bound with the bound on the conditional probability of $Q_\srr$ derived in \cite{dss16}. We set $p $ to be the total mass of $\pi_{\srr\bullet}$, let $\gamma$ be as \eqref{eq:def:identicalregime:Omegas}, and let $np\varepsilon := n(\pi_{\srr\srr^{\neq}} + \pi_{\srr\ff})$. Define the constant $$\textbf{c}_\srr^{\pi_{\srr\bullet}, \bx, \delta}:= 2^{np\varepsilon} {np\gamma \choose np\varepsilon\delta} {np(1-\gamma) \choose np\varepsilon(1-\delta)}
	$$
	be the number of choices of placing $\{\srr\srr^{\neq}, \srr\ff \}$-variables, in such a way that  $np\varepsilon\delta$ of them are $\underline{\eta}^1$-forced at most $\sqrt{k}$ times from their adjacent clauses. Here, $2^{np\varepsilon}$ is an upper bound on the number of ways to assign either $\srr\srr^{\neq}$ or $\srr\ff$. Then, we have
	\begin{equation*}
 \begin{split}
	&\E[\bN_{s_n^1}^2[\pi_{\srr \bullet}, \{n_{\ttt} \}, \{\Delta_\ttt \} ] ]\\
 &\le
	\sum_{\bx}\E [ \bN_{s_n^1}[\pi^1, \{n_\ttt \}]; \, \Omega_{B,\bx}] \sum_{\delta:\, np\eps\delta\in \Z \cap [0,np\eps]}
	\textbf{c}_{\srr}^{\pi_{\srr\bullet}, \bx,\delta} \P\left(\left. Q_{\srr} \cap Q_{\ff} \right| \, (\pi_{\srr \bullet}, \{n_{{\ttt}} \}, \{\Delta_\ttt\} ), \, \Omega_{B,\bx}, \, F_\delta \right).
 \end{split}
	\end{equation*}
	Following the proof of \cite[Lemma 4.9]{dss16} and equation (40) therein gives that
	\begin{equation*}
	\sum_{\delta:\, np\eps\delta\in \Z \cap [0,np\eps]} \textbf{c}_\srr^{\pi_{\srr\bullet}, \bx, \delta} \P(Q_\srr |  \, (\pi_{\srr \bullet}, \{n_{{\ttt}} \}, \{\Delta_\ttt\} ), \, \Omega_{B,\bx}, \, F_\delta ) \le \exp\left(-ckn(\pi_{\srr\srr^{\neq}} + \pi_{\srr\ff}) \right),
	\end{equation*}
	where $c>0$ is an absolute constant. Thus, combining with \eqref{eq:identicalreg:freetreedecay} gives
	\begin{equation*}
	\E[\bN_{s_n^1}^2[\pi_{\srr \bullet}, \{n_{\ttt} \}, \{\Delta_\ttt \} ] ] \le 
	\E [ \bN_{s_n^1}[\pi^1, \{n_\ttt \}]] \exp(-ck\hat{\Delta}) + e^{-cn/2^{k/2}},
	\end{equation*}
	where the second term in the \textsc{rhs} is from the contributions of $\Omega_B^c$. This concludes the proof of the lemma.
\end{proof}

\begin{proof}[Proof of Lemma \ref{lem:identical regime decay estim}]
	The proof follows directly from Lemma \ref{lem:identical:aux} by summing over all $\{\Delta_\ttt \}$ that gives $\hat{\Delta}[\pi_{\srr \bullet}, \{\Delta_\ttt \}] = \hat{\Delta}$. For each $v>0$, there are at most $(Ck)^v$ distinct free trees of size $v(\ttt) = v$ for a universal constant $C< \infty$. Thus, to bound the total number of choices of $\{\Delta_\ttt \}$, we first count the number of solutions $\{b_v \}_{v=1}^{\hat{\Delta}}$ such that $\sum_{v=1}^{\hat{\Delta}} vb_v = \hat{\Delta}$, and for each $\{b_v \}_{v=1}^{\hat{\Delta}}$ the number of choices of $\{{\Delta}_\ttt \}$ satisfying $\sum_{\ttt: v(\ttt) = v} \Delta_\ttt = b_v$ is bounded by $\prod_{v=1}^{\hat{\Delta}} ((Ck)^v)^{b_v} = (Ck)^{\hat{\Delta}}$. The number of solutions $\{b_v \}$ can crudely be bounded by $\prod_{v=1}^{\hat{\Delta}} \left(\frac{\hat{\Delta}}{v} + 1 \right) = {2\hat{\Delta} \choose \hat{\Delta }} \le 4^{\hat{\Delta}}$. Hence, the total number of choices of $\{\Delta_\ttt \}$ is bounded by $(4Ck)^{\hat{\Delta}}$, which can be absorbed by the $e^{-ck\hat{\Delta}}$-decay.
\end{proof}

\end{document}